\theoremstyle{plain}
\newcommand{\axiom}[1]{\index[axiom]{#1}}
\theoremstyle{plain} % Pour tous ce qui ressemble � des th�or�mes.
\newtheorem{theorem}{Theorem}[section] %[subsection sert � num�roter les th�or�mes d'apr�s la sous-section o� ils se trouvent
\newtheorem{Prop}[theorem]{Proposition} % Propositions. Avec l'option "[Def]", les propositions (et tout le reste) seront num�rot�es collectivement avec les d�finitions.
\newtheorem{Lem}[theorem]{Lemma} % Lemmes.
\newtheorem{Thm}[theorem]{Theorem} % Th�or�mes.
\newtheorem{Fact}[theorem]{Fact} % Faits.
\newtheorem{Cor}[theorem]{Corollary} % Corollaires.
\newtheorem{proposition}[theorem]{Proposition}
\newtheorem{corollary}[theorem]{Corollary}
\newtheorem{lemma}[theorem]{Lemma}
\newtheorem{assumption}[theorem]{Assumption}
\theoremstyle{definition}% Pour tout ce qui ressemble � des d�finitions : d�finitions, notations...
\newtheorem{thm*}{Theorem}
\newtheorem{Def}[theorem]{Definition}
\newtheorem{definition}[theorem]{Definition}
\newtheorem{Def,Thm}[theorem]{Theorem-Definition} %Pour ce qui est une d�finition par un th�or�me
\newtheorem{Def,Prop}[theorem]{Proposition-Definition} %Pour ce qui est une d�finition par un th�or�me
\newtheorem{definition/proposition}[theorem]{Definition/Proposition}
\newtheorem{Not}[theorem]{Notation}
\newtheorem{Hyp}[theorem]{Assumption} % Notations. Avec l'option "[section]", les d�finitions (et tout le reste) seront num�rot�es en fonction de la section courante.
\theoremstyle{remark} % Pour tout ce qui ressemble � des remarques.
\newtheorem{example}[theorem]{Example}
\newtheorem{remark}[theorem]{Remark}
\newtheorem{Ex}[theorem]{Example} % Exemples.
\newtheorem{Rq}[theorem]{Remark} % Remarques.
\newtheorem{Rqs}[theorem]{Remarks} % Remarques.
\makeatletter \@addtoreset{figure}{section}\makeatother
\newcommand{\R}{\mathbb{R}}
\newcommand{\A}{\mathbb{A}}
\newcommand{\N}{{\mathbb{Z}_{\geq 0}}}
\newcommand{\Z}{\mathbb{Z}}
\newcommand{\Q}{\mathbb{Q}}
\newcommand{\Ne}{\mathbb{N}^*}
\newcommand{\I}{\mathcal{I}}
\newcommand{\Id}{\mathrm{Id}}
\newcommand{\F}{\mathbb{F}}
\newcommand{\FCC}{\mathscr{F}}
\newcommand{\supp}{\mathrm{supp}}
\newcommand{\cl}{\mathrm{cl}}
\newcommand{\GB}{\mathbf{G}}
\newcommand{\Kb}{\mathbb{K}}
\newcommand{\EC}{\mathcal{E}}
\newcommand{\FC}{\mathcal{F}}
\newcommand{\LC}{\mathcal{L}}
\newcommand{\MC}{\mathcal{M}}
\newcommand{\OC}{\mathcal{O}}
\newcommand{\SC}{\mathcal{S}}
\newcommand{\VC}{\mathcal{V}}
\newcommand{\XC}{\mathcal{X}}
\newcommand{\red}[1]{{\color{red}#1}}
\newcommand{\blue}[1]{{\color{blue}#1}}
\newcommand{\RF}{\mathfrak{R}}
\newcommand{\ACC}{\mathscr{A}}
\newcommand{\PCC}{\mathscr{P}}
\newcommand{\Rtot}{R}
\newcommand{\germ}{\operatorname{germ}}
\newcommand{\NGL}{N_{\mathrm{GL}_{\ell+1}}}
\newcommand{\NS}{N_{\mathrm{SL}_{\ell+1}}}
\newcommand{\TG}{T_{\mathrm{GL}_{\ell+1}}}
\newcommand{\TS}{T_{\mathrm{SL}_{\ell+1}}}
\newcommand{\Waff}{W^{\mathrm{aff}}}
\newcommand{\Wext}{\widetilde{W}}
\date{\today}
\title{$\Lambda$-buildings associated to quasi-split groups over $\Lambda$-valued fields}
\author[1]{Auguste \textsc{H{\'e}bert}}
\author[2]{Diego \textsc{Izquierdo} }
\author[3]{Benoit \textsc{Loisel}}
\affil[1]{auguste.hebert@univ-lorraine.fr}
\affil[2]{diego.izquierdo@polytechnique.edu}
\affil[3]{benoit.loisel@math.univ-poitiers.fr}
\begin{document}

%\linenumbers
\maketitle

\begin{abstract}
Let $\mathbf{G}$ be a quasi-split reductive group and $\mathbb{K}$ be a Henselian field equipped with a valuation $\omega:\mathbb{K}^{\times}\rightarrow \Lambda$, where $\Lambda$ is a non-zero totally ordered abelian group. In 1972 and 1984, Bruhat and Tits constructed a building on which the group $\mathbf{G}(\mathbb{K})$ acts provided that $\Lambda$ is a subgroup of $\mathbb{R}$. In this paper, we deal with the general case where there are no assumptions on $\Lambda$ and we construct a set on which $\mathbf{G}(\mathbb{K})$ acts. We then prove that it is a $\Lambda$-building, in the sense of Bennett. 
\end{abstract}

\tableofcontents

\section{Introduction}

Reductive groups over non-Archimedean local fields have been extensively  studied for the past sixty years. To study such a group $\mathbf{G}$ and the group of its rational points $G$, Bruhat and Tits associated in \cite{ BruhatTits1} and \cite{ BruhatTits2} a space $\I$ - called a \textbf{Bruhat-Tits building} - on which $G$ acts. The space $\I$ encapsulates significant information about the group $G$.

In the 1970's, Kato (\cite{kato}) and Parshin (\cite{parshinhigher}) introduced \textbf{higher-dimensional local fields}, a natural generalization of the usual local fields. A $0$-local field is by definition a finite field, and for $d>1$, a $d$-dimensional field is a complete discrete valuation field whose residue field is a $(d-1)$-local field. For instance, $1$-local fields coincide with usual non-Archimedean local fields. The equicharacteristic $2$-local fields are the fields of the form $k(\!(t)\!)$ with $k$ a $1$-local field, but there are other $2$-local fields that have mixed characteristic.

Higher-dimensional local fields play an important role both in algebraic geometry and in number theory. On the one hand, just as $p$-adic fields encode local information on arithmetic schemes with relative dimension $0$ such as $\mathrm{Spec}(\mathbb{Z})$, $2$-local fields encode local information on arithmetic curves such as $\mathbb{A}^1_{\mathbb{Z}}$, and $d$-local fields encode local information on arithmetic schemes with relative dimension $d-1$ over $\mathbb{Z}$. On the other hand, by Kato's work, higher-dimensional local fields also provide the good framework to generalize local class field theory, which is a crucial step in the understanding of $p$-adic fields. Taking into account that class field theory is the most basic example of Langlands correspondence, one may ask whether Langlands' ideas can be studied in a higher-dimensional setting, and then it seems natural to study reductive groups over higher-dimensional local fields. 

 In this article, we will work over a field $\mathbb{K}$ that is endowed with a valuation $\omega: \mathbb{K}^{\times} \rightarrow \Lambda$, where $\Lambda$ is any non-zero totally ordered abelian group. This covers the case of $d$-local fields for $d> 0$ since they are endowed with a $\mathbb{Z}^d$-valuation, where $\Z^d$ is equipped with the lexicographical order. It also allows us to work with a field of the form $\mathbb{C}(\!(t_1)\!)\cdots(\!(t_{\ell+1})\!)$, which is the geometric counterpart of higher-dimensional local fields, since it encodes local information in higher-dimensional complex varieties. Note that $\Lambda$ does not need to be discrete and could for instance be the group $\mathbb{R} \times \mathbb{R}$ with the lexicographical order. It could also have infinite rank.

In the case where $\mathbb{K}$ is a higher-dimensional local field, Parshin constructed in \cite{parshin1994higher},  \cite{parshin2000invitation}  a  ``higher Bruhat-Tits building'' on which $\mathrm{PGL}_n(\Kb)$ acts, for $n\in \Ne$. 
 %Suppose now that $\Kb$ is a field equipped with a valuation $\omega:\Kb\rightarrow \Lambda$, where $\Lambda$ is a totally ordered abelian group.  %In \cite{morgan1984valuations}, Morgan and Shalen defined the notion of $\Lambda$-tree and then constructed a $\Lambda$-tree on which $\mathrm{SL}_2(\Kb)$. Inspired by these works, and
  Independently of this work, Bennett defined in \cite{bennett1994affine} a notion of $\Lambda$-building for $\Lambda$ a non-zero totally ordered abelian group and, for any field $\mathbb{K}$ equipped with a valuation $\omega:\mathbb{K}\rightarrow \Lambda \cup\{+\infty\}$, he constructed such a $\Lambda$-building on which $\mathrm{SL}_n(\Kb)$ acts. As sets (when we forget the topological structures), the buildings of Bennett and Parshin are very close.

Given any non-zero totally ordered abelian group $\Lambda$, any $\Lambda$-valued field $\mathbb{K}$ and any quasi-split reductive group $\mathbf{G}$ over $\mathbb{K}$, the goal of this article is to construct a $\Lambda$-building $\I(\Kb,\omega,\GB)$ endowed with a suitable $\GB(\Kb)$-action.
This partially answers \cite[Problem 2 p 187]{parshin1994higher}.
 
\subsection{Bennett's \texorpdfstring{$\Lambda$-buildings}{Lambda}} 

When $\Kb$ is a field equipped with a nontrivial valuation $\omega:\Kb\rightarrow \R$ and $\mathbf{G}$ is a reductive $\mathbb{K}$-group, Bruhat and Tits associated to $\GB(\Kb)$  its  \textbf{Bruhat-Tits building} $\I(\Kb,\omega,\GB)$ on which $\GB(\Kb)$ acts. When $\GB=\mathrm{SL}_2$ and $\omega$ is discrete for example, this space is a simplicial tree.

The Bruhat-Tits building $\I(\Kb,\omega,\GB)$ is covered by subsets called \textbf{apartments}, which are Euclidean spaces equipped with an arrangement of hyperplanes called \textbf{walls}.  These apartments are all obtained by translation by an element of $\GB(\Kb)$ from a standard apartment $\A(\mathbf{G},\Kb,\omega)$.
The hyperplane arrangement of $\A(\mathbf{G},\Kb,\omega)$ depends on the root system of $\GB$ and on the set of values of $\omega$. It naturally defines the notion of a face on $\A(\mathbf{G},\Kb,\omega)$ and by translation, on $\I(\Kb,\omega,\GB)$. Then we have the following important properties:
\begin{itemize}
\item (I1) for every two faces, there exists an apartment containing them;

\item (I2) for every two apartments $A$ and $B$, there exists an isomorphism of affine spaces from $A$ to $B$ fixing $A\cap B$ and preserving the hyperplane arrangement.
\end{itemize}

These properties motivate the definition of an \textbf{abstract building}: as a first approximation (see Definition~\ref{defBuildings}  for a precise definition), it is a set covered with subspaces called apartments, which satisfy (I1) and (I2) and which are isomorphic to a standard apartment $\A$ depending on a root system.

Let now $\Lambda$ be any non-zero totally ordered abelian group. Since every totally ordered abelian group can be embedded in an ordered real vector space, let's assume for simplicity that $\Lambda$ itself is a real vector space. 
 Bennett then defined in \cite{bennett1994affine} the notion of a \textbf{$\Lambda$-building}: it is a set covered by subsets called apartments, all isomorphic to a standard apartment $\A\otimes_\R\Lambda$, and satisfying axioms similar to those of a Bruhat-Tits building.

\paragraph{Examples of $\Lambda$-buildings} When $\Lambda\subset\R$ and $\Kb$ is a field equipped with a valuation $\omega: \mathbb{K} \rightarrow \Lambda \cup \{\infty\}$, the Bruhat-Tits building of $(\GB,\mathbb{K},\omega)$ is a $\Lambda$-building. In the case that $\Lambda$ cannot be embedded as an ordered abelian group in $\R$, there are four main previously known classes of examples of $\Lambda$-buildings: \begin{enumerate}

\item Let $\Kb$ be a field that is equipped with a valuation $\omega:\Kb\rightarrow \Lambda \cup\{\infty\}$.  In \cite{morgan1984valuations}, inspired by Serre's construction of the tree of $\mathrm{SL}_2(\Kb)$ when $\Lambda=\Z$, Morgan and Shalen define the notion of a $\Lambda$-tree and construct a $\Lambda$-tree on which $\mathrm{SL}_2(\Kb)$ acts. Generalizing these works, Bennett defines in \cite[Example 3.2]{bennett1994affine} a $\Lambda$-building on which $\mathrm{SL}_n(\Kb)$ acts, for $n\in \Z_{\geq 2}$. Independently, in \cite{parshin1994higher} and \cite{parshin2000invitation}, Parshin also introduces some higher buildings associated to $\mathrm{SL}_n(\Kb)$ but does not relate them to the structure of $\Lambda$-buildings.

\item Let $\Lambda,\Lambda'$ be non-zero totally ordered abelian groups and $e:\Lambda\rightarrow \Lambda'$ be a morphism of ordered groups.  Then Schwer and Struyve   construct a functor from the category of $\Lambda$-buildings to the category of $\Lambda'$-buildings, compatible  with $e$ (see \cite{schwer2012lambda}). Using this and using ultraproducts, they construct nontrivial examples of $\Lambda$-buildings, for $\Lambda\nsubseteq \R$. They in particular construct ultracones and asymptotic cones of buildings (see \cite[Section 6]{schwer2012lambda}).

\item Let $\mathcal{G}$ be a semi-simple Lie group. Let $\mathbb{K}_{\mathrm{real}}$ be a real closed nonarchimedean field and $\mathbb{O}_{\mathrm{real}}\subset \mathbb{K}_{\mathrm{real}}$ be an o-convex valuation ring. Then Kramer and Tent equip $\mathcal{G}(\Kb_{\mathrm{real}})/\mathcal{G}(\mathbb{O}_{\mathrm{real}})$ with the structure of a $\Lambda_{\mathrm{real}}$-building, where $\Lambda_{\mathrm{real}}=\Kb_{\mathrm{real}}^*/\mathbb{O}_{\mathrm{real}}^*$, see \cite[Theorem 4.3]{kramer2004asymptotic} and \cite{kramer2009ultrapowers}. They deduce that the asymptotic cone of $\mathcal{G}(\Kb_{\mathrm{real}})$ and the ultracone of $\mathcal{G}(\Kb_{\mathrm{real}})$ are $\Lambda$-buildings, for some $\Lambda$ (see \cite[Corollary 4.4]{kramer2004asymptotic}). Using these results, they give a new proof of Margulis's conjecture (see \cite[§ 5]{kramer2004asymptotic}).

\item Let $\Kb$ be a field equipped with a surjective valuation $\omega:\Kb\rightarrow \Lambda'\cup\{\infty\}$, where $\Lambda'$ is an infinite  subgroup of $\R$. Let $G^\circ$ be a semi-simple group over $\Kb$ and $G=G^\circ\left(\Kb[t,t^{-1}])\right)$ be the untwisted affine Kac-Moody group associated.  In \cite[4.1]{rousseau2006immeubles}, Rousseau associates  a $(\Z\times \Lambda')$-building to $G$ on which it acts.
\end{enumerate}

Our result yields a new class of examples of $\Lambda$-buildings.

\subsection{The building \texorpdfstring{$\I(\Kb,\omega,\GB)$}{I(K,w,G)}}

In differential geometry, when one works with some real Lie group $G$, it is often useful to study the action of $G$ on a symmetric space.
For instance, if $G = \mathrm{SL}_n(\mathbb{R})$, then one may consider the action of $\mathrm{SL}_n(\mathbb{R})$ on $X = \mathrm{SL}_n(\mathbb{R}) / \mathrm{SO}_n(\mathbb{R})$. The manifold $X$ is then the quotient of a real Lie group by a maximal compact subgroup.
According to Goldman-Iwahori (\cite{GoldmanIwahori}), the space $X$ can be identified with the space of norms on $\mathbb{R}^n$ up to homothety.

 Now, when one works over the $p$-adic field $\Q_p$ instead of $\mathbb{R}$, by analogy with Goldman-Iwahori's result, one can associate to the group  $\mathrm{SL}_n(\Q_p)$ the space $\mathcal{I}(\mathrm{SL}_n,\Q_p)$ of ultrametric norms over $\Q^n_p$ up to homothety.
This is a particular case of a more general construction given by Bruhat-Tits in \cite[10.2]{BruhatTits1}. In order to generalize the previous constructions and study in this context other classical groups of Lie type over Henselian valued fields, Tits introduced the definition of buildings in the 1960's.

When $\mathbb{K}$ is a field endowed with a valuation $\omega:  \mathbb{K}^{\times} \rightarrow \mathbb{Z}$, several approaches have been developed to construct a Bruhat-Tits building $\mathcal{I}(\mathbb{K},\omega,\mathbf{G})$ associated to a reductive $\mathbb{K}$-group $\mathbf{G}$.
The most elementary construction relies on lattices.
For instance, when $\mathbf{G}$ is split and has type $A_n$ (e.g. $\mathbf{G} = \mathrm{GL}_n$, $\mathrm{SL}_n$ or $\mathrm{PGL}_n$), one may define $\I(\Kb,\omega,\GB)$ as the set of $\mathbb{O}$-lattices contained in $\Kb^n$ up to homothety, where $\mathbb{O}$ stands for the ring of integers of $\Kb$.
The action of $\GB(\Kb)$ on $\I(\Kb,\omega,\GB)$ is then induced by the action of $\GB(\Kb)$ on the $\mathbb{O}$-lattices of $\Kb^n$ (see \cite[Chapter II]{serre1997arbres} for the case where $n=2$).
Note that this construction depends substantially on the Lie type of the group $\mathbf{G}$ and a case by case definition has to be settled.\footnote{
Note that in the remarkable case of type $A_n$, there are other similar approaches such as using maximal orders (see \cite{Vigneras-Quaternions} and \cite{serre1997arbres} for $n=2$).}

Parshin \cite{parshin1994higher} and Bennett \cite{bennett1994affine} both use this approach with lattices to construct a space analogous to $\mathcal{I}(\mathbb{K},\omega,\mathrm{SL}_n)$ when the valuation $\omega$ takes values in a totally ordered abelian group that might be different from $\mathbb{Z}$.

A more general approach due to Bruhat and Tits (\cite{ BruhatTits1}, \cite{ BruhatTits2}) mainly consists in generalizing the construction as a quotient of $\mathbf{G}(\mathbb{K})$ by a maximal compact subgroup.
Unfortunately, maximal compact subgroups need not be pairwise conjugate in general.
That is why, for an arbitrary reductive group $\GB$ over a $\Lambda$-valued field with $\Lambda\subset \R$, Bruhat and Tits' construction relies on the use of parahoric subgroups. More precisely, they start by defining the standard apartment $\A$ of their building as an affine space endowed with an action by affine transformations of the group $N$ of rational points of the normalizer $\mathbf{N}$ of a maximal $\mathbb{K}$-split torus of $\mathbf{G}$.
The root system $\Phi$ of $\mathbf{G}$ can be regarded as a set of affine maps on $\A$.
Then, for each element $x$ of $\A$, Bruhat and Tits define a parahoric subgroup $P_x\subset G$, which depends on the values $\alpha(x)$, for $\alpha\in \Phi$. They finally define $\I=\I(\Kb,\omega,\GB)$ as the set $\mathbf{G}(\mathbb{K}) \times \A /\sim$, where $\sim$ is an equivalence relation on $\mathbf{G}(\mathbb{K}) \times \A$ whose definition involves the parahoric subgroups $P_x$ for $x\in \A$.
The group $\mathbf{G}(\mathbb{K})$ acts on $\I$ by $g \cdot [g',x]=[gg',x]$, for $g,g'\in \mathbf{G}(\mathbb{K})$ and $x\in \A$, so that $P_x$ is the stabilizer of $x$ in $G$ for each $x$.
This is the approach we follow in this paper to deal with the case when $\Lambda$ is not necessarily contained in $\mathbb{R}$.

For that purpose, when $\mathbf{G}$ is assumed to be a quasi-split group, we can consider a Chevalley-Steinberg system (i.e. a parametrization of the root groups $\mathbf{U}_\alpha$ of $\mathbf{G}$ taking into account the Galois extension $\widetilde{\mathbb{K}} / \mathbb{K}$ that splits $\mathbf{G}$).
To such a pinning, we associate a space $\I$. We then need to prove that it is a $\Lambda$-building. A part of the proof consists in proving that $G$ satisfies certain decompositions, namely the Iwasawa decomposition and the Bruhat decomposition. To prove them, we generalize the proof by Bruhat and Tits to our framework. After proving these decompositions, the main issue is to prove that certain retractions are $1$-Lipschitz continuous. When $\Lambda\subset \R$, a standard proof of this property uses the fact that the line segments of $\R$ are compacts. This is no longer true in our framework and we thus need some additional work to prove this property.

\subsection{Affine structure of the standard apartment}

An important step in our construction consists in understanding the geometry of the apartments of our building. Roughly speaking, the apartment will be a tensor product $Y \otimes_\mathbb{Z} \Rtot$ where $Y$ is the finitely generated free $\mathbb{Z}$-module of cocharacters of a maximal torus of $\mathbf{G}$ and $\Rtot$ is some ordered ring that contains the valuation groups of all finite extensions of $\mathbb{K}$. For instance, in the classical case of a Henselian valued field $\mathbb{K}$ with a discrete valuation $\omega: \mathbb{K}^* \to \Lambda = \mathbb{Z}$, one usually takes  $\Rtot=\mathbb{R}$.

Now, assume that $\Lambda = \mathbb{Z}^d$ equipped with the lexicographical order and let $Y = \mathbb{Z}^n = \bigoplus_{i=1}^n \mathbb{Z} e_i$.
A natural way to proceed in this case is to consider the ring $\Rtot = \mathbb{R}[t] / (t^d)$ equipped with the lexicographical order induced by the degree of monomials.
On this example, the apartment $\A = Y \otimes_{\mathbb{Z}} \Rtot$ will be a $d n$-dimensional $\mathbb{R}$-vector space and a free $\Rtot$-module of rank $n$.
Thus, there are two viewpoints for an element $x \in \A$:
one can write either $x = \sum_{i=1}^n \lambda_i e_i$ with $\lambda_i \in \Rtot$ (structure of $\Rtot$-module) or $x= \sum_{s=0}^{d-1} x_s t^s$ with $x_s \in Y \otimes \mathbb{R}$. The first viewpoint allows to endow the apartment $\A$ with a geometric and combinatorial structure (it is an $R$-affine space together with combinatorial data such as chambers, faces, sectors...). The second viewpoint allows to endow the apartment with a fibration $\A \rightarrow Y \otimes_{\mathbb{R}} \mathbb{R}[t]/(t^{d'})$ that we will extend to the whole building for each $d' \leq d$.

In the situation of a general non-zero totally ordered abelian group $\Lambda$, there seems not to be a natural way to endow $\Rtot$ with a totally ordered ring structure. The apartment $\A$ will then be defined by replacing $\Rtot$ by a totally ordered real vector space $\RF^S$ together with an increasing embedding $\Lambda \to \RF^S$.

\subsection{Main results}

We now briefly describe the main results of this paper, see Theorem~\ref{thmMain} for a more precise statement.

Let $\Lambda$ be a non-zero totally ordered abelian group. Define the rank $S:=\mathrm{rk}(\Lambda)$ of $\Lambda$ as the (totally ordered) set of Archimedean equivalence classes of $\Lambda$. For instance, if $\Lambda = \mathbb{Z}^n$ for some $n\geq 1$, then $S=\{1,2,...,n\}$. By Hahn's embedding theorem, $\Lambda$ can then be regarded as a subgroup of: $$\RF^S:= \{(x_s)_{s\in S} \in \mathbb{R}^S \, |\, \text{the support of $(x_s)_{s\in S}$ is a well-ordered subset of $S$}\}.$$ Let now $\mathbb{K}$ be a field with a valuation $\omega: \mathbb{K} \rightarrow \Lambda \cup \{\infty\}$, fix a quasi-split (connected) reductive $\mathbb{K}$-group $\mathbf{G}$ and let $\mathbf{S}$ be a maximal split torus in $\mathbf{G}$ with cocharacter module $X_*(\mathbf{S})$. If $\mathbf{G}$ is not split, assume that $\mathbb{K}$ is Henselian. Our results can then be summarized as follows:

\begin{itemize}
\item[(i)] We construct a set $\I=\I(\Kb,\omega,\GB)$, a  $\Lambda$-distance $d:\I\times\I\rightarrow \Lambda_{\geq 0}:= \{\lambda \in \Lambda | \lambda \geq 0\}$ and we equip $(\I,d)$ with the structure of an $\RF^S$-building whose apartments are modelled on some quotient of $X_*(\mathbf{S})\otimes \RF^S$. The group $G=\mathbf{G}(\mathbb{K})$ acts isometrically on $\I$ and the induced action on the set of apartments is transitive.

\item[(ii)] Let $s \in S$, let $S_{\leq s} = \{ t\in S | t\leq s\}$ and let $\pi_{\mathfrak{R}^S,\leq s}: \mathfrak{R}^S \rightarrow \mathfrak{R}^{S_{\leq s}}$ be the natural projection. Consider the valuation $\omega_{\leq s} = \pi_{\RF^S,\leq s} \circ \omega$. We construct  an (explicit) surjective map:
$$\pi_{\leq s}: \mathcal{I}(\mathbb{K},\omega,\mathbf{G}) \rightarrow \mathcal{I}(\mathbb{K},\omega_{\leq s},\mathbf{G})$$
compatible with the $\mathbf{G}(\mathbb{K})$-action such that, for each $X \in \mathcal{I}(\mathbb{K},\omega_{\leq s},\mathbf{G})$, the fiber $\pi_{\leq s}^{-1}(X)$ is a product:
$$\mathcal{I}_X \times \tilde{V},$$
where $\tilde{V}$ is a $\ker(\pi_{\mathfrak{R}^S,\leq s})$-module and $\mathcal{I}_X$ is a
$\ker(\pi_{\mathfrak{R}^S,\leq s})$-building. 
\end{itemize}

In the particular case when $\mathbf{G}$ splits over $\mathbb{K}$, one can even replace $\mathfrak{R}^S$ by $\Lambda$ and construct a $\Lambda$-building associated to $\mathbf{G}$ (see section \ref{section_replacingRS}).

\subsection{Structure of the paper}

 In section~\ref{secLattice_building}, we describe the buliding of $\mathrm{SL}_{\ell+1}(\mathbb{K})$, where $\ell\in \Z_{\geq 1}$ and where  $\mathbb{K}$ is a field equipped with a valuation. The aim of this elementary section is to give an intuition on the building associated to a more general reductive group.  We use the lattice approach. 

In section~\ref{secAbstract_definition_buildings}, we provide all useful definitions concerning the construction of $\Lambda$-buildings.
In subsection~\ref{secDefinitionApartment}, we introduce all preliminary definitions that are necessary to define $\Lambda$-buildings.
These definitions are used all along the article.
In subsection~\ref{subBennett_definition}, we provide the definition of $\Lambda$-buildings themselves.
This allows us to state the main Theorem in subsection~\ref{subMain_theorem}.

In sections~\ref{SecVRGD} and~\ref{SecParahoricBruhat}, we follow the strategy of Bruhat and Tits in order to provide a generalization of the Iwasawa decomposition and the Bruhat decomposition.
Doing this, we introduce some abstract subgroups that generalize parahoric subgroups and get a better understanding of the action of $G$ on the $\Lambda$-building.

In section~\ref{sectionLambdaBuildingFromVRGD}, we glue up the apartments via an equivalence relation similar to the one introduced by Bruhat and Tits in order to provide a space $\mathcal{I}(\mathbb{K},\omega,\mathbf{G})$ which will be the $\Lambda$-building associated to $\mathbf{G}$. We then prove that it satisfies all the $\Lambda$-building axioms except the axiom~\ref{axiomCO}.

In section~\ref{SecQuasiSplitGroups}, we use the classical reductive group theory over a field (Chevalley-Steinberg systems, Borel-Tits theory) in order to construct data that satisfy the axioms of sections~\ref{SecVRGD} and~\ref{SecParahoricBruhat}.

In section~\ref{projectionsec}, inspired by work of Parshin (\cite{parshin1994higher}, \cite{parshin2000invitation}), we prove that a surjective morphism of totally ordered abelian groups $f:\Lambda \rightarrow \Lambda'$ induces a projection map from $\mathcal{I} (\mathbb{K},\omega,\mathbf{G}) \rightarrow \mathcal{I}(\mathbb{K},f\circ\omega,\mathbf{G})$ which is surjective and compatible with the action of $G$. We then give a detailed description of the fibers of this projection. 

Section~\ref{sectionCO} is dedicated to the proof of axiom~\ref{axiomCO} and completes the proof of the main Theorem.

In section~\ref{section_replacingRS}, we see how one can replace the totally ordered abelian group $\mathfrak{R}^S$ by a smaller ordered abelian group $\Gamma$ and get a $\Gamma$-building instead of an $\mathfrak{R}^S$-building. In particular, we prove that one can always associate a $\Lambda$-building with a \emph{split} reductive group over a $\Lambda$-valued field.

In section~\ref{secBuilding_sld}, we relate the building of $\mathrm{SL}_{\ell+1}$ constructed by using lattices to the building that we construct by using parahoric subgroups.

\paragraph{Acknowledgements} The first author would like to thank Petra Schwer for useful discussions on $\Lambda$-buildings.  The authors would also like to thank the anonymous referee whose careful comments were very useful to improve this text.

\paragraph{Funding} : The first and the third author were partially supported by the ANR grant ANR-15-CE40-0012.  (The French National Research Agency).

\section{The group \texorpdfstring{$\mathrm{SL}_{\ell +1}$}{SL(l+1)}}\label{secLattice_building}

In this section, we briefly explain how to construct the building associated to the special linear group over a valued field by using lattices and we describe its structure. We invite the reader to keep this particular case in mind, since it will be helpful to get a better understanding of the rest of the article, which is dedicated to the more general case of quasi-split reductive groups.

Let $\mathbb{K}$ be a field equipped with a surjective valuation $\omega:\mathbb{K}\rightarrow \Lambda\cup\{+\infty\}$, where $\Lambda$ is a totally ordered abelian group. Let $\mathbb{O} :=\omega^{-1}(\Lambda_{\geq 0})$ be the ring of integers in $\mathbb{K}$.

Given a positive integer $\ell$, fix an $\ell+1$-dimensional $\mathbb{K}$-vector space $V$. An \textbf{$\mathbb{O} $-lattice} in $V$ is an $\mathbb{O} $-submodule of $V$ of the form $\mathbb{O} b_0 \oplus ... \oplus \mathbb{O} b_{\ell}$ for some $K$-basis $(b_0,...,b_{\ell})$ of $V$. If $L_1$ and $L_2$ are two $\mathbb{O} $-lattices in $V$, we say that they are \textbf{homothetic} if there exists $a\in \mathbb{K}^{\times}$ such that $L_2=aL_1$. In that case, we denote $L_1 \sim L_2$. Let $\mathcal{I}^{\mathcal{L}}(\mathrm{SL}(V),\omega)$ be the set of $\mathbb{O} $-lattices in $V$ modulo the homothety relation. We say that $\mathcal{I}^{\mathcal{L}}(\mathrm{SL}(V),\omega)$ is the \textbf{lattice $\Lambda$-building} of $(\mathrm{SL}(V),\omega)$. The class of an $\mathbb{O} $-lattice $V$ in $\mathcal{I}^{\mathcal{L}}(\mathrm{SL}(V),\omega)$ is denoted $[L]$.

 Given two $\mathbb{O} $-lattices $L_1$ and $L_2$ such that $L_2\subset L_1$, we can always find $a_0,\dots,a_\ell\in \mathbb{O}$ such that $L_1/L_2 \simeq \mathbb{O} /a_0\mathbb{O}  \oplus ... \oplus \mathbb{O} /a_{\ell}\mathbb{O} $. We write $L_2\leq L_1$  when at least one of the $a_i$'s is a unit in $\mathbb{O} $. In other words, $L_2\leq L_1$ if, and only if, $L_2\subset L_1$ and $L_1/L_2 \simeq \mathbb{O} /a_1\mathbb{O}  \oplus ... \oplus \mathbb{O} /a_{\ell}\mathbb{O} $, for some $a_1,...,a_{\ell}\in \mathbb{O} $. In that case, the $\ell$-tuple $(\omega(a_1),...,\omega(a_{\ell}))$ is determined up to permutation, thus $d(L_1,L_2):=\max \{\omega(a_1),...,\omega(a_{\ell})\} \in \Lambda$ is well-defined. Since $a_1,...,a_{\ell}$ are in $\mathbb{O} $, we have $d(L_1,L_2)\geq 0$. 
 
Now, given $x_1,x_2\in \mathcal{I}^{\mathcal{L}}(\mathrm{SL}(V),\omega)$, one can easily check that there exist  $L_1\in x_1$ and $L_2\in x_2$ such that $L_2\leq L_1$. Then $d_{\mathbb{O} }(x_1,x_2):=d(L_1,L_2)\in \Lambda_{\geq 0}$ does not depend on the choices of $L_1$ and $L_2$, and the map $d:\mathcal{I}^{\mathcal{L}}(\mathrm{SL}(V),\omega)\rightarrow \Lambda$ is a distance on $\mathcal{I}^{\mathcal{L}}(\mathrm{SL}(V),\omega)$.

\subsection{The projection \texorpdfstring{$\pi$}{pi}}

Fix now a convex subgroup $\Lambda_0$ of $\Lambda$, and set $\Lambda_1 := \Lambda/\Lambda_0$. The group $\Lambda_1$ is naturally endowed with the structure of a totally ordered abelian group.

Let $\omega_1:\mathbb{K}\rightarrow \Lambda_1\cup\{\infty\}$ be the composite of the valuation $\omega$ followed by the projection $\Lambda \rightarrow \Lambda_1$, and set:
\begin{gather*}
\mathbb{M} := \omega^{-1}(\Lambda_{> 0}),\\
\mathcal{O} := \omega_1^{-1}((\Lambda_1)_{\geq 0}),\;\;\;\;\;
\mathcal{M} := \omega_1^{-1}((\Lambda_1)_{> 0}),\\
\mathcal{K}_1:= \mathcal{O}/\mathcal{M},\;\;\;\;\;
\mathcal{O}_1 := \mathbb{O} /\mathcal{M},\;\;\;\;\;
\mathcal{M}_1 := \mathbb{M}/\mathcal{M},\\
\kappa := \mathbb{O} /\mathbb{M} = \mathcal{O}_1/\mathcal{M}_1.
\end{gather*}

\begin{example}\label{exCas_de_Fq((u))((t))}
Suppose that $\mathbb{K}=\mathbb{F}_q(\!(u)\!)(\!(t)\!)$ is  equipped with $v:\mathbb{K}\rightarrow \Z^2$ defined by $v(t^au^b)=(a,b)$ for all $a,b\in \Z$. Let $\Lambda_0=\{0\}\times \Z$. Then $v_1:\mathbb{K}\rightarrow \Z$ maps $t^au^b$ to $a$ for all $a,b\in \Z$. One has $\mathbb{O} =\F_q[\![u]\!] \oplus t\F_q(\!(u)\!)[\![t]\!]$, $\mathbb{M}=u\F_q[\![u]\!] \oplus t\F_q(\!(u)\!)[\![t]\!] $, $\mathcal{O}=\F_q(\!(u)\!)[\![t]\!]$, $\mathcal{M}=t\F_q(\!(u)\!)[\![t]\!]$. 
\end{example}

\begin{lemma}\label{quotann}
The ring $\mathcal{O}_1$ is a valuation ring in $\mathcal{K}_1$ with valuation group:
$$\mathcal{K}_1^{\times}/\mathcal{O}_1^{\times} \cong \Lambda_0.$$
\end{lemma}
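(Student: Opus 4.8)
The statement is essentially a local-ring/valuation-theory fact about the three-step filtration $\mathcal{M} \subset \mathbb{M} \subset \mathbb{O} \subset \mathcal{O} \subset \mathbb{K}$, so I would prove it by a direct computation with these rings, keeping the picture of Example~\ref{exCas_de_Fq((u))((t))} in mind. First I would record the basic inclusions: since $\Lambda_{>0}$ maps into $(\Lambda_1)_{\geq 0}$, we have $\mathbb{M} \subset \mathcal{O}$, and since $(\Lambda_1)_{>0}$ pulls back to $\Lambda_{>0}$, we have $\mathcal{M} \subset \mathbb{M} \subset \mathbb{O} \subset \mathcal{O}$; thus $\mathcal{O}_1 = \mathbb{O}/\mathcal{M}$ is indeed a subring of $\mathcal{K}_1 = \mathcal{O}/\mathcal{M}$, and $\mathcal{M}_1 = \mathbb{M}/\mathcal{M}$ is an ideal of $\mathcal{O}_1$. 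I would also note that $\mathcal{M}$ is a prime ideal of $\mathcal{O}$ (as $\omega_1$ is a valuation), so $\mathcal{K}_1$ is a field, namely the residue field of the valuation ring $\mathcal{O}$ of $(\mathbb{K},\omega_1)$.

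The core step is to construct the valuation on $\mathcal{K}_1$ with value group $\Lambda_0$. The natural candidate: for $\bar{x} \in \mathcal{K}_1^\times$, lift to $x \in \mathcal{O} \setminus \mathcal{M}$, i.e. $\omega(x) \in \omega^{-1}((\Lambda_1)_{\geq 0}) \setminus \omega^{-1}((\Lambda_1)_{>0})$, which means $\omega_1(x) = 0$, i.e. $\omega(x) \in \Lambda_0$. So I would define $\bar{\omega}_0(\bar{x}) := \omega(x) \in \Lambda_0$. I must check this is well-defined: if $x' = x + m$ with $m \in \mathcal{M}$, then $\omega(m) \in \omega^{-1}((\Lambda_1)_{>0})$, hence $\omega(m) > \Lambda_0 \ni \omega(x)$ in $\Lambda$ (because $\Lambda_0$ is convex and $\omega_1(m) > 0$ forces $\omega(m)$ to dominate every element of $\Lambda_0$), so $\omega(x+m) = \omega(x)$. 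Then $\bar{\omega}_0$ is visibly multiplicative, and the ultrametric inequality $\bar{\omega}_0(\bar{x} + \bar{y}) \geq \min(\bar{\omega}_0(\bar{x}), \bar{\omega}_0(\bar{y}))$ descends from the one for $\omega$. Surjectivity onto $\Lambda_0$ is clear since $\omega$ is surjective onto $\Lambda$ and $\Lambda_0 \subseteq \Lambda$. This exhibits $\bar{\omega}_0$ as a valuation on the field $\mathcal{K}_1$ with value group $\Lambda_0$.

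Finally I would identify its valuation ring with $\mathcal{O}_1$. By definition the valuation ring of $\bar{\omega}_0$ is $\{\bar{x} : \bar{\omega}_0(\bar{x}) \geq 0\} \cup \{0\}$; lifting, this is $\{x \in \mathcal{O} : \omega(x) \in (\Lambda_0)_{\geq 0} \text{ or } \omega_1(x) > 0\}/\mathcal{M}$. The condition ``$\omega(x) \geq 0$ in $\Lambda_0$ or $\omega_1(x) > 0$'' is, since $\Lambda_0$ is convex and $\omega_1(x) > 0 \Rightarrow \omega(x) > 0$, exactly the condition $\omega(x) \geq 0$ in $\Lambda$, i.e. $x \in \mathbb{O}$. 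Hence the valuation ring is $\mathbb{O}/\mathcal{M} = \mathcal{O}_1$, and its maximal ideal is $\mathcal{M}_1 = \mathbb{M}/\mathcal{M}$, with residue field $\mathcal{O}_1/\mathcal{M}_1 = \mathbb{O}/\mathbb{M} = \kappa$, consistent with the notation introduced. I do not anticipate a genuine obstacle here; the only point requiring care — and the step I would treat most carefully — is the repeated use of convexity of $\Lambda_0$ to compare elements of $\Lambda$ across the two ``scales'' (elements of $\Lambda_0$ versus elements with strictly positive image in $\Lambda_1$), which is what makes all the ``or'' conditions collapse to clean inequalities.
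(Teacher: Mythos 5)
Your proof is correct and follows essentially the same route as the paper: define $\omega_0(\bar{x}) := \omega(x)$ via a lift, use convexity of $\Lambda_0$ (equivalently, $\omega_1(m)>0 \Rightarrow \omega(m) > \omega(x)$) to show well-definedness, and then identify the valuation ring with $\mathbb{O}/\mathcal{M} = \mathcal{O}_1$. You simply spell out the verifications the paper leaves as "easy to check."
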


\begin{proof}
Let $x \in \mathcal{K}_1^{\times}$ and consider two liftings $\tilde{x}$ and $\tilde{x}'$ in $ \mathcal{O}$ of $x$. Observe that $\tilde{x}$ and $\tilde{x}'$ lie in $\mathcal{O}^{\times}$, so that $\omega(\tilde{x})$ and $\omega(\tilde{x}')$ are both in $\Lambda_0$. Moreover, since $\omega_1(\tilde{x}'-\tilde{x})>0$ and $\omega_1(\tilde{x})=0$, we have $\omega(\tilde{x}'-\tilde{x})>\omega(\tilde{x})$ and $\omega(\tilde{x})=\omega(\tilde{x}')$.  We can therefore define a map $\omega_0: \mathcal{K}_1^{\times} \rightarrow \Lambda_0$ by $\omega_0(x):=\omega(\tilde{x}) \in \Lambda_0$. It is then easy to check that $\omega_0$ is a surjective valuation on $\mathcal{K}_1$ and that the associated valuation ring is $\mathcal{O}_1$.
\end{proof}

Recall that $\mathcal{I}^{\mathcal{L}}(\mathrm{SL}(V),\omega)$ (resp. $\mathcal{I}^{\mathcal{L}}(\mathrm{SL}(V),\omega_1)$) stands for the set of $\mathbb{O} $-lattices of $V$ (resp. the set of $\OC $-lattices of $V$) up to homothety, and consider the map $\pi:\mathcal{I}^{\mathcal{L}}(\mathrm{SL}(V),\omega)\rightarrow \mathcal{I}^{\mathcal{L}}(\mathrm{SL}(V),\omega_1)$ defined by $\pi([L])=[\OC .L]$.  

\begin{theorem}\label{sll}
\begin{itemize}
\item[(i)] The map $\pi$ is surjective and $\mathrm{SL}(V)$-equivariant. 
\item[(ii)] For any $\mathcal{O}$-lattice $L$ of $V$, the stabilizer of the fiber $\pi^{-1}([L])$ is $\mathrm{SL}(L)$. Moreover, the action of $\mathrm{SL}(L)$ on $\pi^{-1}([L])$ factors through $\mathrm{SL}(L/\mathcal{M}L)$.
\item[(iii)] Let $\omega_0$ be the valuation of $\mathcal{K}_1$ given by lemma \ref{quotann}. For any $\mathcal{O}$-lattice $L$ of $V$, there is an $\mathrm{SL}(L/\mathcal{M}L)$-equivariant bijection $\mathrm{Res}_L$ between $\pi^{-1}([L]) $ and the lattice $\Lambda_0$-building $\mathcal{I}^{\mathcal{L}}(\mathrm{SL}(L/\mathcal{M}L),\omega_0)$ of $(\mathrm{SL}(L/\mathcal{M}L),\omega_0)$.
\end{itemize}
\end{theorem}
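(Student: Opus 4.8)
\textbf{Proof plan for Theorem~\ref{sll}.}

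The plan is to analyse the three parts in turn, working concretely with lattices. For \textbf{(i)}, surjectivity of $\pi$: given an $\mathcal{O}$-lattice $L' = \mathcal{O} b_0 \oplus \dots \oplus \mathcal{O} b_\ell$ of $V$, one checks that $L := \mathbb{O} b_0 \oplus \dots \oplus \mathbb{O} b_\ell$ is an $\mathbb{O}$-lattice with $\mathcal{O}\cdot L = L'$, so $\pi([L]) = [L']$. $G$-equivariance is immediate from $\mathcal{O}\cdot(gL) = g(\mathcal{O}\cdot L)$ for $g \in G = \mathrm{SL}(V)$, since $g$ is $\mathcal{O}$-linear. (One should also record that $\pi$ is well-defined on homothety classes, using $\mathcal{O}\cdot(aL) = a(\mathcal{O}\cdot L)$ for $a \in \mathbb{K}^\times$.)

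For \textbf{(ii)}, fix an $\mathcal{O}$-lattice $L$. First I would show that $\mathrm{Stab}_G(\pi^{-1}([L]))$ equals the stabilizer of $[L] \in \mathcal{I}^{\mathcal{L}}(\mathrm{SL}(V),\omega_1)$, which I claim is $\mathrm{SL}(L) := \{g \in \mathrm{SL}(V) \mid gL = L\}$: indeed if $g[L]=[L]$ then $gL = aL$ for some $a \in \mathbb{K}^\times$, and comparing the "volumes" (the determinant has $\omega_1$-valuation $0$ since $g \in \mathrm{SL}$, while $aL$ versus $L$ contributes $(\ell+1)\omega_1(a)$) forces $\omega_1(a) = 0$; after rescaling $a$ by a unit of $\mathcal{O}$, which does not change $aL$, one reduces to $gL = L$. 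Then, to see the action of $\mathrm{SL}(L)$ on $\pi^{-1}([L])$ factors through $\mathrm{SL}(L/\mathcal{M}L)$, I would show that the kernel of $\mathrm{SL}(L) \to \mathrm{SL}(L/\mathcal{M}L)$ — i.e. elements congruent to the identity modulo $\mathcal{M}$ — acts trivially on every $\mathbb{O}$-lattice $M \subset V$ with $\mathcal{O}\cdot M = L$. The point here is that such an $M$ satisfies $\mathcal{M}L = \mathcal{M}M \subset M$ (because $\mathcal{M}\subset \mathcal{M}$ and $\mathcal{O}M = L$ give $\mathcal{M}L = \mathcal{M}\mathcal{O}M = \mathcal{M}M \subseteq M$), and $g \equiv \mathrm{Id} \bmod \mathcal{M}$ means $(g - \mathrm{Id})L \subseteq \mathcal{M}L \subseteq M$, hence $gM = M$; so $g$ fixes $[M]$.

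For \textbf{(iii)}, the heart of the matter: build the bijection $\mathrm{Res}_L$. Given $[M] \in \pi^{-1}([L])$, choose the representative $M$ with $\mathcal{M}L \subseteq M \subseteq L$ and $\mathcal{O}\cdot M = L$ (existence: scale so $M \subseteq L$ with $\mathcal{O} M = L$; then $M \supseteq \mathcal{M}\mathcal{O}M = \mathcal{M}L$). The image $\bar{M} := M/\mathcal{M}L$ is then an $\mathcal{O}_1 = \mathbb{O}/\mathcal{M}$-submodule of $L/\mathcal{M}L$, and since $\mathcal{O}_1 = \mathbb{O}/\mathcal{M}$ is the valuation ring of $\mathcal{K}_1 = \mathcal{O}/\mathcal{M}$ with valuation $\omega_0$ and value group $\Lambda_0$ by Lemma~\ref{quotann}, while $L/\mathcal{M}L$ is a $\mathcal{K}_1$-vector space of dimension $\ell+1$ (as $L/\mathcal{M}L = (\mathcal{O}/\mathcal{M})\otimes_{\mathcal{O}} L$ and $\mathcal{O}\cdot M = L$ translates into $\bar M$ spanning $L/\mathcal{M}L$ over $\mathcal{K}_1$), one checks $\bar M$ is a full $\mathcal{O}_1$-lattice in $L/\mathcal{M}L$; set $\mathrm{Res}_L([M]) := [\bar M] \in \mathcal{I}^{\mathcal{L}}(\mathrm{SL}(L/\mathcal{M}L),\omega_0)$. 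For well-definedness on homothety classes one verifies that two choices of representative $M$ differ by scaling by $\mathbb{O}^\times$ modulo $\mathcal{M}L$, which induces scaling of $\bar M$ by $\mathcal{O}_1^\times$. For injectivity, reconstruct $M$ from $\bar M$ as the preimage of $\bar M$ under $L \to L/\mathcal{M}L$; for surjectivity, any $\mathcal{O}_1$-lattice in $L/\mathcal{M}L$ lifts to such an $M$. $\mathrm{SL}(L/\mathcal{M}L)$-equivariance follows by chasing the reduction map $\mathrm{SL}(L) \to \mathrm{SL}(L/\mathcal{M}L)$ through the construction.

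\textbf{Main obstacle.} The routine-but-delicate point is keeping the three rings $\mathbb{O}, \mathcal{O}, \mathcal{O}_1$ and their maximal ideals $\mathbb{M}, \mathcal{M}, \mathcal{M}_1$ straight, and in particular verifying that the correspondence $M \leftrightarrow \bar M = M/\mathcal{M}L$ is a bijection between $\{\,\mathbb{O}\text{-lattices } M : \mathcal{M}L \subseteq M \subseteq L,\ \mathcal{O}M = L\,\}$ and full $\mathcal{O}_1$-lattices in $L/\mathcal{M}L$, and that it descends correctly to homothety classes on both sides (the homothety group on the target being $\mathcal{K}_1^\times$, on the source being $\mathbb{K}^\times$ but effectively $\mathbb{O}^\times/(1+\mathcal{M})$ once we fix $M$ between $\mathcal{M}L$ and $L$). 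The determinant/volume bookkeeping in (ii) — comparing $\omega_1$-valuations to conclude $\omega_1(a)=0$ — is the one genuinely arithmetic step and should be spelled out carefully.
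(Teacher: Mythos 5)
Your plan is correct and follows essentially the same route as the paper's proof: the same determinant argument identifies the stabilizer of the fiber with $\mathrm{SL}(L)$, the same congruence argument shows the kernel of $\mathrm{SL}(L)\to\mathrm{SL}(L/\mathcal{M}L)$ fixes every lattice in the fiber, and $\mathrm{Res}_L$ is built from the same normalized representative $\mathcal{M}L\subseteq M\subseteq L$ with $\mathcal{O}M=L$. The only slip is that the residual homothety ambiguity of that representative is by $\mathcal{O}^{\times}$ (not $\mathbb{O}^{\times}$), which reduces to $\mathcal{K}_1^{\times}$ and is absorbed by the homothety relation on the target, so the argument goes through unchanged.
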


\begin{proof}
\begin{itemize}
\item[(i)] This is straightforward.

\item[(ii)] Since $\pi$ is $\mathrm{SL}(V)$-equivariant, the stabilizer of $\pi^{-1}([L])$ coincides with the fixator $\mathrm{Fix}([L])$ of $[L] \in \mathcal{I}^{\mathcal{L}}(\mathrm{SL}(V),\omega_1)$. The inclusion $\mathrm{SL}(L) \subseteq \mathrm{Fix}([L])$ is obvious. Conversely, if $g \in \mathrm{Fix}([L])$, then $gL=aL$ for some $a \in \mathbb{K}^{\times}$. Hence $a^{-1}g \in \mathrm{GL}(L)$, and $\det (a^{-1}g) = a^{-\ell-1}\in \mathcal{O}^{\times}$. This implies that $a\in \mathcal{O}^{\times}$ and that $g\in \mathrm{SL}(L)$. The stabilizer of $\pi^{-1}([L])$ is therefore $\mathrm{SL}(L)$. 

Now let $g  \in \ker \big(\mathrm{SL}(L) \rightarrow \mathrm{SL}(L/\mathcal{M}L)\big)$. We can write $g = \Id_L + u$ for some $u \in \mathrm{End}_{\mathcal{O}}(L,\mathcal{M}L)$ (i.e. $u(L)\subset \mathcal{M}L$).  Let $\tilde{L}$ be an $\OC $-lattice such that $\pi([\tilde{L}]) = [L]$.  Hence $g(\tilde{L}) \subseteq \tilde{L} + \mathcal{M}\tilde{L} = \tilde{L}$. Since $g$ has determinant 1, we deduce that $g(\tilde{L})=\tilde{L}$. Hence the action of $\mathrm{SL}(L)$ on $\pi^{-1}([L])$ factors through $\mathrm{SL}(L/\mathcal{M}L)$.
\item[(iii)] Let $\MC_{\mathcal{O}_1,L}$ be the set of $\mathcal{O}_1$-submodules of the $\mathcal{K}_1$-vector space $L/\mathcal{M}L$ quotiented by the homothety relation. We are first going to define a residue map:
$$\mathrm{Res}_L: \pi^{-1}([L]) \rightarrow \MC_{\mathcal{O}_1,L}.$$
To do so, let $\tilde{L}$ be any $\mathbb{O} $-lattice in $V$ such that $[\tilde{L}] \in \pi^{-1}([L])$. Since $[\mathcal{O}\tilde{L}] = [L]$, we can find $a \in \mathbb{K}^{\times}$ such that $a\mathcal{O}\tilde{L}=L$. The element $a$ is uniquely determined by $\tilde{L}$ up to multiplication by a unit in $\mathcal{O}$ and we have $\mathcal{M}L = a\mathcal{M}\tilde{L} \subseteq a\tilde{L}$. Hence the $\mathcal{O}_1$-module $a\tilde{L}/\mathcal{M}L$ can be seen as a submodule of the $\mathcal{K}_1$-vector space $L/\mathcal{M}L$ and its class in $\MC_{\OC_1,L}$ only depends on the class $[\tilde{L}]$ of $\tilde{L}$ in $\mathcal{I}^{\mathcal{L}}(\mathrm{SL}(V),\omega)$. We can therefore define the residue map $\mathrm{Res}_L$ by setting $\mathrm{Res}_L([\tilde{L}]):= [a\tilde{L}/\mathcal{M}L] \in \mathcal{I}^{\mathcal{L}}(\mathrm{SL}(L/\mathcal{M}L),\omega_0)$. Moreover, for any $g \in \mathrm{SL}(L)$, we have $ag(\mathcal{O}\tilde{L}) = g(L) = L$, and hence:
$$g\cdot \mathrm{Res}_L([\tilde{L}]) = g \cdot [a\tilde{L}/\mathcal{M}L] = [g(a\tilde{L})/\mathcal{M}L]= [ag(\tilde{L})/\mathcal{M}L]=\mathrm{Res}_L([g\cdot \tilde{L}]).$$
The residue map is therefore $\mathrm{SL}(L/\mathcal{M}L)$-equivariant.\\

Let's now prove that the image of the residue map is $\mathcal{I}^{\mathcal{L}}(\mathrm{SL}(L/\mathcal{M}L),\omega_0)$. To do so, fix an $\mathcal{O}$-basis $(b_0,...,b_{\ell})$ of $L$ and set $\tilde{L}_0:=\mathbb{O} b_0 \oplus ... \oplus \mathbb{O} b_{\ell}$. Given an $\mathbb{O}$-lattice $\tilde{L}$ in $V$ such that $[\tilde{L}] \in \pi^{-1}([L])$, we can find an $\mathbb{O}$-basis $(c_0,...,c_{\ell})$ of $\tilde{L}_0$ and elements $a,x_0,...,x_{\ell}$ in $\mathbb{K}^{\times}$ such that $a\mathcal{O}\tilde{L}=L$ and $\tilde{L} = \mathbb{O} x_0c_0 \oplus ... \oplus \mathbb{O} x_{\ell}c_{\ell}$. Since:
$$a(\mathcal{O}x_0c_0 \oplus ... \oplus \mathcal{O}x_{\ell}c_{\ell}) = a\mathcal{O}\tilde{L}=L=\mathcal{O}\tilde{L}_0=\mathcal{O}c_0 \oplus ... \oplus \mathcal{O}c_{\ell},$$
the elements $ax_0,...,ax_{\ell}$ all lie in $\mathcal{O}^{\times}$. Hence, if $\overline{c}_0,...,\overline{c}_{\ell}$ denote the projections of $c_0,...,c_{\ell}$ in $L/\MC L$ and if $y_0,...,y_{\ell}$ stand for the projections of $ax_0,...,ax_{\ell}$ in $\mathcal{K}_1^{\times}$, the family $(\overline{c}_0,...,\overline{c}_{\ell})$ is a basis in $L/\MC L$ and:
$$\mathrm{Res}_L([\tilde{L}]) = [\mathcal{O}_1 y_0\overline{c}_0 \oplus ... \oplus \mathcal{O}_1 y_{\ell}\overline{c}_{\ell}].$$
We deduce that $\mathrm{Res}_L([\tilde{L}]) \in \mathcal{I}^{\mathcal{L}}(\mathrm{SL}(L/\mathcal{M}L),\omega_0)$ and that the image of the residue map is contained in $\mathcal{I}^{\mathcal{L}}(\mathrm{SL}(L/\mathcal{M}L),\omega_0)$.

Conversely, given an $\mathcal{O}_1$-lattice $\overline{\mathcal{L}}$ in $L/\mathcal{M}L$, we can find $g \in \mathrm{GL}(\tilde{L}_0/\mathcal{M}\tilde{L}_0)$ such that $g(\tilde{L}_0/\mathcal{M}L) = \overline{\mathcal{L}}$. Since the projection $\mathrm{GL}(\tilde{L}_0) \rightarrow \mathrm{GL}(\tilde{L}_0/\mathcal{M}\tilde{L}_0)$ is surjective, we may lift $g$ into an element $\tilde{g} \in \mathrm{GL}(\tilde{L}_0) \subseteq \mathrm{GL}(L)$. We then have $ \mathcal{O}(\tilde{g}\cdot\tilde{L}_0) = \tilde{g}\cdot \mathcal{O}\tilde{L}_0 =\tilde{g} \cdot L = L$ and:
$$\mathrm{Res}_L([\tilde{g}(\tilde{L}_0)])= [\tilde{g}(\tilde{L}_0)/\mathcal{M}L]= [g(\tilde{L}_0/\MC L)] = [\overline{\mathcal{L}}].$$
This proves that the image of the residue map is $\mathcal{I}^{\mathcal{L}}(\mathrm{SL}(L/\mathcal{M}L),\omega_0)$.

We finish the proof of the theorem by establishing the injectivity of $\mathrm{Res}_L$. Consider two $\mathbb{O}$-lattices $\tilde{L}$ and $\tilde{L}'$ such that $[\tilde{L}]$ and $[\tilde{L}']$ are in $\pi^{-1}([L]) $ and $\mathrm{Res}_L([\tilde{L}]) = \mathrm{Res}_L([\tilde{L}'])$. We can then find $a$ and $a'$ in $\mathbb{K}^{\times}$ such that $a\mathcal{O}\tilde{L}=a'\mathcal{O}\tilde{L}'=L$ and $[(a\tilde{L})/\mathcal{M}\tilde{L}] = [(a'\tilde{L}')/\mathcal{M}\tilde{L}] \in B(\mathrm{SL}(L/\mathcal{M}L),\omega_0)$. Hence there exists $b \in \mathcal{O}^{\times}$ such that $(a\tilde{L})/\mathcal{M}\tilde{L} = (ba'\tilde{L}')/\mathcal{M}\tilde{L} \subseteq \tilde{L}/\mathcal{M}\tilde{L}$. Therefore, $a\tilde{L} = ba'\tilde{L}'$ and $[\tilde{L}]=[\tilde{L}']$.
\end{itemize}
\end{proof}

Figures \ref{figsl2a} and \ref{figsl2b} represent the projection $\pi$ for $\mathrm{SL}_2$ and $\mathrm{SL}_3$.

\begin{figure}[H]

\centering

\includegraphics[scale=0.3]{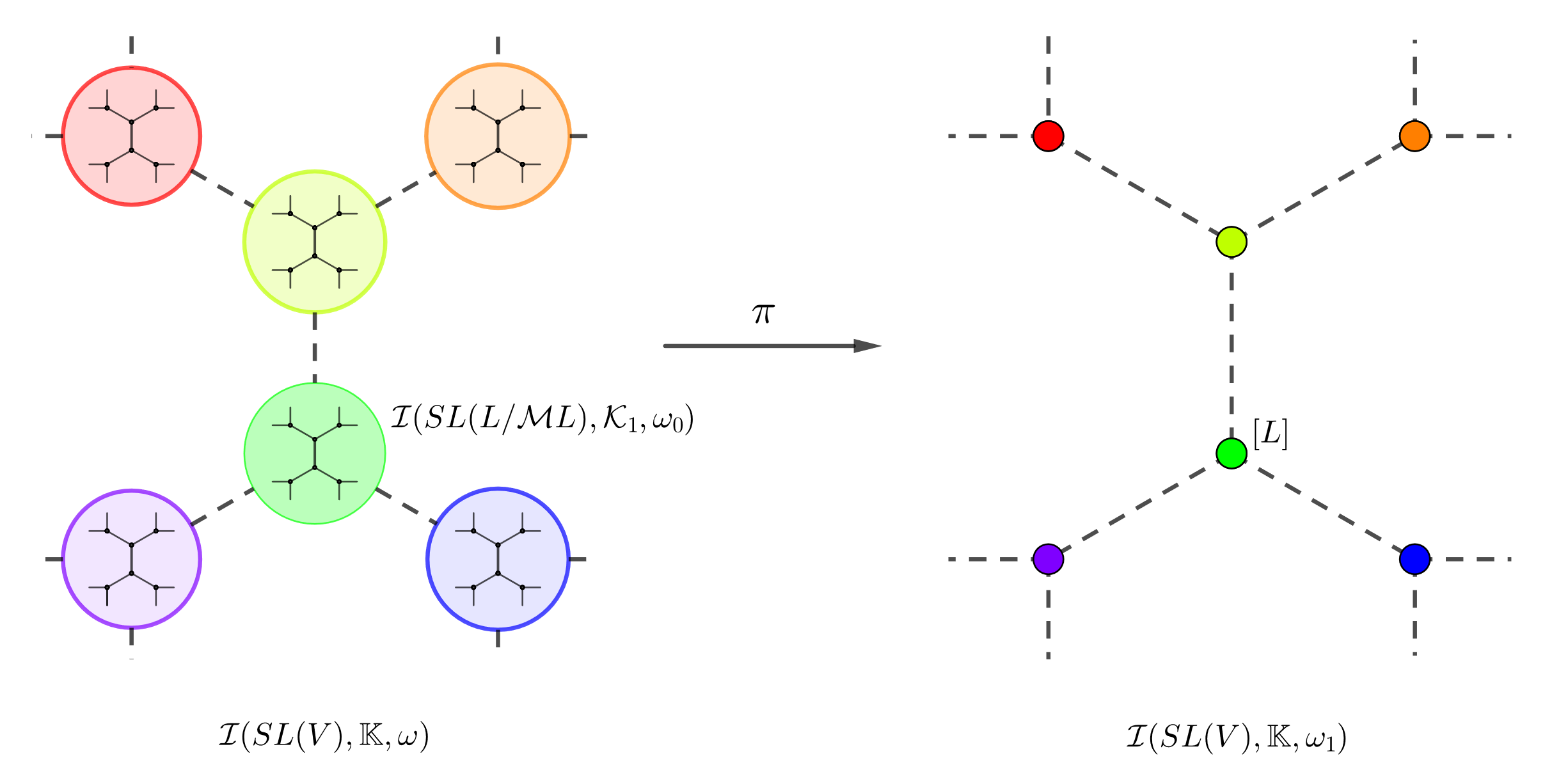}

\caption{ \label{figsl2a} The building of $\mathrm{SL}(V)$ when $V$ is a $2$-dimensional vector space over a field $\mathbb{K}$ endowed with a valuation $\omega: \mathbb{K}^{\times} \rightarrow \mathbb{Z}^2$.}

\includegraphics[scale=0.6]{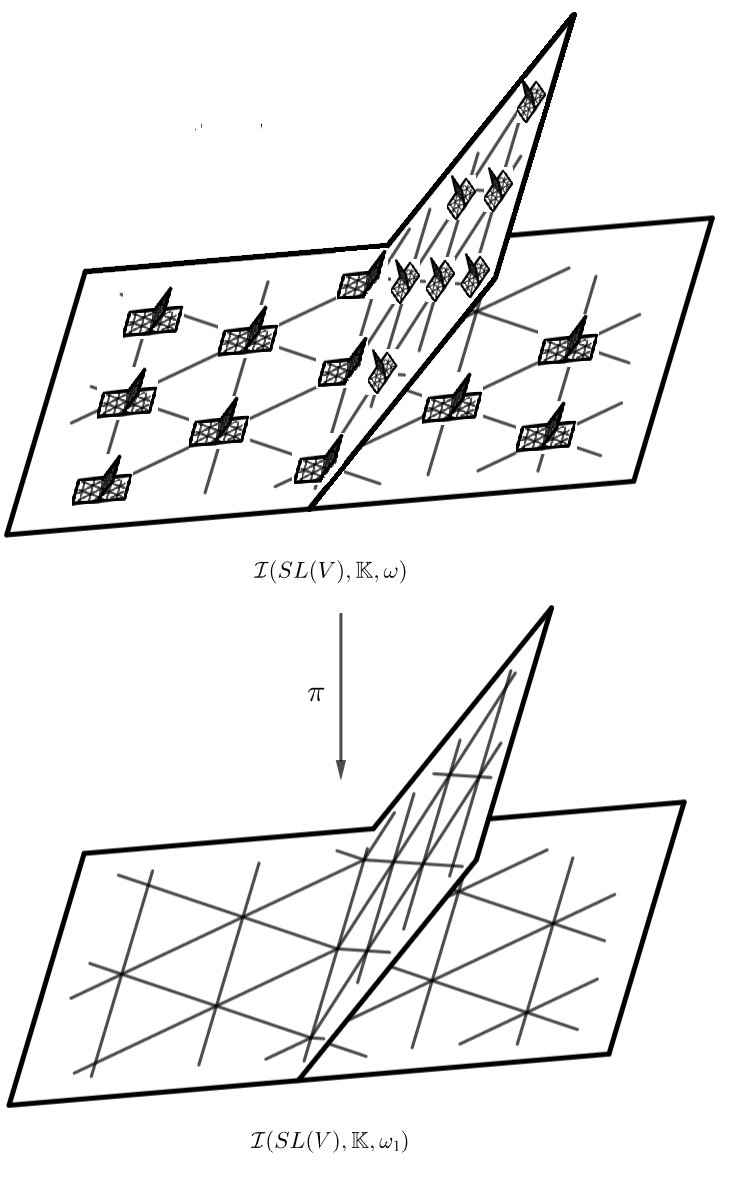}

\caption{\label{figsl2b} The building of $\mathrm{SL}(V)$ when $V$ is a $3$-dimensional vector space over a field $\mathbb{K}$ endowed with a valuation $\omega: \mathbb{K}^{\times} \rightarrow \mathbb{Z}^2$.}

\end{figure}

\subsection{The \texorpdfstring{$\mathbb{Z}^2$}{Z2}-tree of \texorpdfstring{$\mathrm{SL}_2$}{SL(2)}}

From now on, we assume that $\ell=1$ and that $\Lambda= \mathbb{Z}^2$ with the lexicographic order. The field $\mathbb{K}$ is therefore endowed with a $\mathbb{Z}^2$-valuation, and we fix two elements $t$ and $u$ of $\mathbb{K}$ with respective valuations $(1,0)$ and $(0,1)$. We are interested in the lattice $\mathbb{Z}^2$-tree of $\mathrm{SL}_2$ over $\mathbb{K}$. The ordered group $\Lambda$ has $\Lambda_0:= \{0 \} \times \mathbb{Z}$ as a convex subgroup. We can hence introduce the quotient $\Lambda_1:= \Lambda/\Lambda_0$ and adopt all the notations of the previous section. In particular, we have a valuation $\omega_1$, as well as a projection:
$$\pi: \mathcal{I}^{\mathcal{L}}(\mathrm{SL}_2(V),\omega) \rightarrow \mathcal{I}^{\mathcal{L}}(\mathrm{SL}_2(V),\omega_1).$$
By theorem \ref{sll}, all the fibers of $\pi$ are isomorphic to the usual tree of $\mathrm{SL}_2$ over the $\mathbb{Z}$-valued field $\mathcal{K}_1$.

\begin{remark}
The results presented in this section were first observed by Parshin in \cite{parshin1994higher}. We will not include the proofs: all of them can be done using elementary linear algebra or, alternatively, by applying to $\mathrm{SL}_2$ the general results that will be proved in the sequel of the article (see section \ref{secBuilding_sld}).
\end{remark}

A \textbf{lattice apartment} in $\mathcal{I}^{\mathcal{L}}(\mathrm{SL}_2(V),\omega)$ is a set of the form $\{\mathbb{O}e_1 \oplus \mathbb{O}x_{\lambda}e_2] \, | \, \lambda \in \Lambda \}$ for some basis $(e_1,e_2)$ of $V$ and some family $(x_{\lambda})_{\lambda \in \Lambda}$ in $\mathbb{K}$ such that $\omega(x_{\lambda}) = \lambda$ for each $\lambda$. The action of $G$ on $\mathcal{I}^{\mathcal{L}}(\mathrm{SL}_2(V),\omega)$ obvously preserves this apartment system.

 In order to get a better understanding of $\mathcal{I}^{\mathcal{L}}(\mathrm{SL}_2(V),\omega)$ and its apartment system, we are going to glue the different fibers of $\pi$. For $P \in \mathcal{I}^{\mathcal{L}}(\mathrm{SL}_2(V),\omega_1)$, the fiber $\pi^{-1}(P)$ will be called the \textbf{infinitesimal subtree} of $\mathcal{I}^{\mathcal{L}}(\mathrm{SL}_2(V),\omega)$ above $P$ and will be denoted $T_P$.

Let $\partial_1 \mathcal{I}^{\mathcal{L}}(\mathrm{SL}(V),\omega)$ be the set of $\mathbb{O}$-submodules of $V$ of the form $\mathbb{O} b_1\oplus \OC b_2$, where $(b_1,b_2)$ is a $\mathbb{K}$-basis of $V$, quotiented by the homothety relation. 

\begin{definition}
\begin{itemize}
\item[(i)] Let $P\in \mathcal{I}^{\mathcal{L}}(\mathrm{SL}(V),\omega_1)$. A \textbf{(group-theoretic) ray}\index{ray!group-theoretic} of $T_P$ is a sequence of the form $([\mathbb{O} b_1\oplus \mathbb{O} u^{n} b_2])_{n\in \N}\in T_P^{\N}$, for some basis $(b_1,b_2)$ of $V$.  We say that two rays $(P_n)_{n\in \N}$ and $(Q_n)_{n\in \N}$ satisfy $(P_n)\sim (Q_n)$ if there exists $k\in \Z$ such that $P_{n+k}=Q_n$ for all $n\gg 0$. A class of rays for this relation is called an \textbf{end} of $T_P$. We denote by $\EC(T_P)$ the set of ends of $T_P$ and we set $\EC(\mathcal{I}^{\mathcal{L}}(\mathrm{SL}(V),\omega))=\bigcup_{P\in \mathcal{I}^{\mathcal{L}}(\mathrm{SL}(V),\omega)}\EC(T_P)$
\item[(ii)] Let $\epsilon\in \{-,+\}$ and set $\eta(-)=1$ and $\eta(+)=0$. Let $P$ be an element in $ \partial_1 \mathcal{I}^{\mathcal{L}}(\mathrm{SL}(V),\omega)$ and take $E \in \EC(\mathcal{I}^{\mathcal{L}}(\mathrm{SL}(V),\omega))$.  We say that $E$ \textbf{converges} towards $P^\epsilon$ if there exists a $\mathbb{K}$-basis $(b_1,b_2)$ of $V$ such that the ray $\left( [\mathbb{O}b_1\oplus \mathbb{O} u^{-\epsilon n}b_2] \right)$ represents $E$ and $P=[\mathbb{O}b_1\oplus \OC t^{\eta(\epsilon)} b_2]$. This definition is inspired by the fact that $\bigcap_{n\in \N} u^{n} \mathbb{O}=t\OC$ and $\bigcup_{n\in \N}u^{-n} \mathbb{O}=\OC$.
\end{itemize}
\end{definition}

\begin{remark}
One has: 
\begin{gather*}
[\mathbb{O} e_1\oplus \mathbb{O} u^m e_2]\underset{m\rightarrow +\infty}{\rightarrow}[\mathbb{O} e_1\oplus \OC t e_2]^-,\\
[\mathbb{O} e_1\oplus \mathbb{O} u^me_2]=[\mathbb{O} u^{-m} e_1\oplus \mathbb{O} e_2]\underset{m\rightarrow +\infty}{\rightarrow} [\OC e_1\oplus \mathbb{O} e_2]^+.
\end{gather*}
\end{remark}

The following proposition shows that limits are uniquely defined:

\begin{proposition}
Let $\epsilon \in \{+,-\}$ and let $E$ be an element in $ \mathcal{E}(\mathcal{I}^{\mathcal{L}}(\mathrm{SL}(V),\omega)$.
Then there exists a unique element $P \in \partial_1 \mathcal{I}^{\mathcal{L}}(\mathrm{SL}(V),\omega)$ such that  $E \rightarrow P^{\epsilon}$.
\end{proposition}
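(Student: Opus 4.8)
The statement has two parts: existence and uniqueness of the limit point $P \in \partial_1 \mathcal{I}^{\mathcal{L}}(\mathrm{SL}(V),\omega)$ for a given end $E$ and a given sign $\epsilon$. The strategy is to reduce to a concrete computation with bases of $V$ and lattices, exploiting the transitivity of the $\mathrm{SL}(V)$-action (or $\mathrm{GL}(V)$-action) on the relevant configurations. First I would fix $\epsilon \in \{+,-\}$; by symmetry (replacing $u$ by $u^{-1}$, equivalently swapping the roles of the two basis vectors) it suffices to treat one sign, say $\epsilon = +$, so that $\eta(\epsilon) = 0$ and the candidate limit is of the form $[\OC b_1 \oplus \mathbb{O} b_2]$.

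\textbf{Existence.} Pick any ray $\left([\mathbb{O} c_1 \oplus \mathbb{O} u^{-\epsilon n} c_2]\right)_{n}$ representing $E$, for some basis $(c_1, c_2)$ of $V$; such a representative exists by definition of an end of the infinitesimal subtree $T_P$ (every end is, after a shift, a group-theoretic ray, and a group-theoretic ray has exactly this shape after possibly renaming the basis). For $\epsilon = +$ this ray is $[\mathbb{O} c_1 \oplus \mathbb{O} u^{-n} c_2] = [\mathbb{O} u^n c_1 \oplus \mathbb{O} c_2]$, and the guiding computation recorded in the Remark, $\bigcup_{n \in \N} u^{-n}\mathbb{O} = \OC$, shows that the natural limit is $P := [\OC c_1 \oplus \mathbb{O} c_2] \in \partial_1 \mathcal{I}^{\mathcal{L}}(\mathrm{SL}(V),\omega)$, and $E \to P^+$ directly from the definition of convergence (take $(b_1, b_2) = (c_1, c_2)$). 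So existence is essentially built into the definitions once one has a representing ray in the right normal form.

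\textbf{Uniqueness.} Suppose $E \to P^\epsilon$ and $E \to Q^\epsilon$ with $P, Q \in \partial_1$. Unwinding the definition of convergence, there are $\mathbb{K}$-bases $(b_1,b_2)$ and $(b_1', b_2')$ of $V$ such that both rays $\left([\mathbb{O} b_1 \oplus \mathbb{O} u^{-\epsilon n} b_2]\right)$ and $\left([\mathbb{O} b_1' \oplus \mathbb{O} u^{-\epsilon n} b_2']\right)$ represent $E$, with $P = [\mathbb{O} b_1 \oplus \OC t^{\eta(\epsilon)} b_2]$ and $Q = [\mathbb{O} b_1' \oplus \OC t^{\eta(\epsilon)} b_2']$. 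Because the two rays are equivalent in the sense of the ray-equivalence relation (both represent $E$), there is a shift $k \in \Z$ with $[\mathbb{O} b_1 \oplus \mathbb{O} u^{-\epsilon(n+k)} b_2] = [\mathbb{O} b_1' \oplus \mathbb{O} u^{-\epsilon n} b_2']$ for all $n \gg 0$. The crux is then a rigidity statement for infinite group-theoretic rays: two such rays that eventually coincide must, up to scaling each basis vector by a unit and possibly a global shift, have a common direction, i.e. $\mathbb{K} b_1 = \mathbb{K} b_1'$ (the "apex direction" of the ray is intrinsic). Granting this, one changes basis so that $b_1' = a b_1$ for some $a \in \mathbb{K}^\times$ and $b_2' = \beta b_1 + b b_2$ with $b \in \mathbb{K}^\times$, and then a direct lattice computation — tracking the equality $\mathbb{O} b_1 \oplus \mathbb{O} u^{-\epsilon(n+k)} b_2 = \xi_n(\mathbb{O} b_1' \oplus \mathbb{O} u^{-\epsilon n} b_2')$ up to homothety and passing to the union/intersection over $n$ — forces $\mathbb{O} b_1 \oplus \OC t^{\eta(\epsilon)} b_2 = \mathbb{O} b_1' \oplus \OC t^{\eta(\epsilon)} b_2'$ up to homothety, i.e. $P = Q$.

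\textbf{Main obstacle.} The only non-formal point is the rigidity statement: that the sequence of lattices along a group-theoretic ray determines the line $\mathbb{K} b_1$ (the direction the ray "escapes along"), so that the two representing bases can be matched up. This is where one genuinely uses that $\Lambda_0 = \{0\} \times \Z \cong \Z$ is discrete and that $\OC$ is a rank-one valuation ring with value group $\Z$ (Lemma~\ref{quotann}) — equivalently, that $T_P$ is a genuine simplicial $\Z$-tree, in which a pair of rays that agree eventually share a ray, and each ray has a well-defined pair of endpoints on a suitable boundary. Concretely I would argue inside a single lattice apartment of $T_P$ containing (a tail of) the ray, show the ray is an end of that apartment in the classical $\mathrm{SL}_2$-tree sense, and invoke the classical fact that an end of the $\mathrm{SL}_2(\mathcal{K}_1)$-tree determines a unique point of $\partial_1$, i.e. a unique $\OC_1$-line configuration — this is exactly the $\Z$-valued case of the classical correspondence between ends of the Bruhat–Tits tree and points of $\mathbb{P}^1$. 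Once this classical input is in place, everything else is bookkeeping with the homothety relation and the identities $\bigcap_n u^n \mathbb{O} = t\OC$, $\bigcup_n u^{-n}\mathbb{O} = \OC$ already highlighted in the text.
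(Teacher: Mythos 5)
The paper offers no proof to compare against: the remark at the end of this subsection explicitly omits all proofs of the section, saying they follow either from elementary linear algebra or from the general results proved later. So your proposal has to stand on its own. Your existence argument is fine. The uniqueness argument, however, hinges on a rigidity claim that is false. You assert that if the rays attached to $(b_1,b_2)$ and $(b_1',b_2')$ eventually coincide, then $\mathbb{K}b_1=\mathbb{K}b_1'$. Take $b_1'=b_1+tb_2$ and $b_2'=b_2$: since $\omega(t/u^n)=(1,-n)>0$ for every $n$, one has $t\in u^n\mathbb{O}$ and hence $\mathbb{O}b_1'\oplus\mathbb{O}u^nb_2'=\mathbb{O}b_1\oplus\mathbb{O}u^nb_2$ for all $n$, so the two rays are literally identical while $\mathbb{K}b_1'\neq\mathbb{K}b_1$. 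The classical input you invoke does not rescue this: an end of the tree of $\mathrm{SL}_2(\mathcal{K}_1)$ determines a $\mathcal{K}_1$-line in $L/\mathcal{M}L$, not a $\mathbb{K}$-line in $V$, and $b_1$, $b_1'$ above have the same image in $L/\mathcal{M}L$. Consequently the normal form $b_1'=ab_1$, $b_2'=\beta b_1+bb_2$ on which your final "direct lattice computation" relies is unavailable in general; the omitted case $b_1'=ab_1+cb_2$ with $c\neq0$ is precisely where the work lies. (In the counterexample the conclusion still holds, because $tb_2\in t\mathcal{O}b_2$ gives $\mathbb{O}b_1'\oplus t\mathcal{O}b_2'=\mathbb{O}b_1\oplus t\mathcal{O}b_2$ — but by a mechanism your argument does not see.)

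The gap can be closed without any statement about directions. Say $\epsilon=-$. After a shift, $M_n:=\mathbb{O}b_1\oplus\mathbb{O}u^{n+k}b_2$ and $M_n':=\mathbb{O}b_1'\oplus\mathbb{O}u^nb_2'$ satisfy $M_n=a_nM_n'$ for $n\geq N$ and some $a_n\in\mathbb{K}^\times$. The invariant factors of $M_n$ in $M_N$ have valuations $\{0,(0,n-N)\}$, while those of $(a_n/a_N)M_n'$ in $M_N'$ have valuations $\{\omega(a_n/a_N),\ \omega(a_n/a_N)+(0,n-N)\}$; since these multisets must agree, $a_n/a_N\in\mathbb{O}^\times$. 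Replacing $(b_1',b_2')$ by $(a_Nb_1',a_Nb_2')$, one may therefore assume $M_n=M_n'$ as actual lattices for all $n\geq N$. Intersecting over $n$ and using $\bigcap_n u^n\mathbb{O}=t\mathcal{O}$ yields $\mathbb{O}b_1\oplus t\mathcal{O}b_2=\mathbb{O}b_1'\oplus t\mathcal{O}b_2'$, i.e.\ $P=Q$; for $\epsilon=+$ one takes unions and uses $\bigcup_n u^{-n}\mathbb{O}=\mathcal{O}$. This is presumably the "elementary linear algebra" route the authors allude to, and it replaces your rigidity claim entirely.
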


This allows us to introduce two maps:
\begin{gather*}
{\lim}^+: \EC(\mathcal{I}^{\mathcal{L}}(\mathrm{SL}(V),\omega)) \rightarrow \partial_1 \mathcal{I}^{\mathcal{L}}(\mathrm{SL}(V),\omega), \\
{\lim}^-: \EC(\mathcal{I}^{\mathcal{L}}(\mathrm{SL}(V),\omega)) \rightarrow \partial_1 \mathcal{I}^{\mathcal{L}}(\mathrm{SL}(V),\omega),
\end{gather*}
that send each $E \in \mathcal{E}(\mathcal{I}^{\mathcal{L}}(\mathrm{SL}(V),\omega)$ to the unique element $P \in \partial_1 \mathcal{I}^{\mathcal{L}}(\mathrm{SL}(V),\omega)$ such that $E \rightarrow P^{+}$ and $E \rightarrow P^{-}$ respectively. One can then prove the following theorem:

\begin{theorem} \label{arbre}
Let  $P\in \mathcal{I}^{\mathcal{L}}(\mathrm{SL}(V),\omega_1)$ and set: $$\SC(P,1):=\{Q\in \mathcal{I}^{\mathcal{L}}(\mathrm{SL}(V),\omega_1)|\ d(P,Q)=1\}.$$ Consider the map:
\begin{align*}
\pi_1: \partial_1 \mathcal{I}^{\mathcal{L}}(\mathrm{SL}(V),\omega) & \rightarrow \mathcal{I}^{\mathcal{L}}(\mathrm{SL}_2(V),\omega_1)\\
[L] & \mapsto [\mathcal{O}L].
\end{align*}
Then:

\begin{enumerate}
\item  the map $\lim^+:\EC(T_P)\rightarrow \pi_1^{-1}(P)$ is a bijection,

\item the map $\pi\circ \lim^-: \EC(T_P)\rightarrow \SC(P,1)$ is a bijection,

\item for all $E\in \EC(T_P)$, there exists a unique $\tilde{E}\in \EC(\mathcal{I}^{\mathcal{L}}(\mathrm{SL}(V),\omega))$ such that $\lim^- E=\lim^+ \tilde{E}$. Moreover, $\lim^+ E = \lim^- \tilde{E}$ and $\tilde{E}\in \EC(T_{\pi(\lim^- E)})$.
\end{enumerate}

\end{theorem}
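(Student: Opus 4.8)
The plan is to work entirely inside a fixed apartment of the lattice tree, which reduces everything to an explicit computation on bases, and then transport the result by the $G$-action. So first I would fix $P \in \mathcal{I}^{\mathcal{L}}(\mathrm{SL}(V),\omega_1)$, choose a $\mathbb{K}$-basis $(b_1,b_2)$ with $[\OC b_1 \oplus \OC b_2]$ the chosen lift of $P$ (after applying an element of $G$ we may assume this is the ``standard'' lift), and describe $T_P = \pi^{-1}(P)$ concretely: by Theorem~\ref{sll}(iii) it is $G$-equivariantly identified with the usual tree of $\mathrm{SL}_2$ over the $\mathbb{Z}$-valued field $\mathcal{K}_1$, so its group-theoretic rays $([\mathbb{O}c_1 \oplus \mathbb{O}u^n c_2])_{n\in\N}$ reduce mod $\mathcal{M}$ to honest geodesic rays in that classical tree, and two such rays are equivalent exactly when the corresponding classical rays are cofinal, i.e. share an end. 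This already gives a bijection between $\EC(T_P)$ and the set of ends of the classical $\mathrm{SL}_2$-tree over $\mathcal{K}_1$, which by Serre's theory is in bijection with $\mathbb{P}^1(\mathcal{K}_1)$.

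For part (1), I would unwind the definition of $\lim^+$: given an end $E$ of $T_P$ represented by $([\mathbb{O}c_1 \oplus \mathbb{O}u^{-n}c_2])_n = ([\mathbb{O}u^n c_1 \oplus \mathbb{O}c_2])_n$, the identity $\bigcup_n u^{-n}\mathbb{O} = \OC$ shows that $\lim^+ E = [\OC c_1 \oplus \OC c_2] \in \partial_1\mathcal{I}^{\mathcal{L}}$, and $\pi_1$ of this is $[\OC c_1 \oplus \OC c_2] = P$ (as an $\omega_1$-lattice class), so $\lim^+$ does land in $\pi_1^{-1}(P)$. Conversely, an element of $\pi_1^{-1}(P)$ is a class $[\OC c_1 \oplus \OC c_2]$ with $\OC(\OC c_1 \oplus \OC c_2) \sim L$; modulo $\mathcal{M}$ this is exactly a choice of basis line in $L/\mathcal{M}L$, i.e. a point of $\mathbb{P}^1(\mathcal{K}_1)$, matching the classical-end count — so I would check surjectivity by exhibiting, for each such $[\OC c_1 \oplus \OC c_2]$, the ray $([\mathbb{O}u^n c_1 \oplus \mathbb{O}c_2])_n$ converging to it, and injectivity by noting that $\lim^+$ is constant on each equivalence class and that distinct ends give non-homothetic $\OC$-lattices (again visible after reducing mod $\mathcal{M}$). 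For part (2), the same ray has $\lim^-$ governed by $\bigcap_n u^n\mathbb{O} = t\OC$, giving $\lim^- E = [t\OC c_1 \oplus \OC c_2]$ (up to relabelling $\epsilon$), and applying $\pi$ yields $[\OC(t\OC c_1 \oplus \OC c_2)] = [t\OC c_1 \oplus \OC c_2]$ as an $\omega_1$-lattice, which satisfies $d(P,\cdot) = 1$ since $t$ has $\omega_1$-value $1$; bijectivity onto $\SC(P,1)$ then follows because the neighbours of $P$ in the classical $\omega_1$-tree are themselves parametrized by $\mathbb{P}^1(\mathcal{K}_1)$, matching $\EC(T_P)$, and one checks the two parametrizations agree on the nose via the basis description.

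For part (3), given $E \in \EC(T_P)$ with $\lim^- E = P' := [t\OC c_1 \oplus \OC c_2] \in \partial_1\mathcal{I}^{\mathcal{L}}$, I would produce $\tilde E$ as the end of the infinitesimal subtree $T_{\pi(P')}$ ``pointing back toward $P$'': explicitly, $\pi(P') = [\OC c_1 \oplus \OC c_2]' $ lies in $\SC(P,1)$, and the ray $([\mathbb{O}u^{-n}c_1 \oplus \mathbb{O}c_2])_n$ inside $T_{\pi(P')}$ has $\lim^+$ equal to $[\OC c_1 \oplus \OC c_2]$ — but this must be reconciled with $\lim^- E = [t\OC c_1 \oplus \OC c_2]$; the correct statement is obtained by adjusting the basis by $t$, i.e. $\tilde E$ is represented by $([t\mathbb{O}u^{-n}c_1 \oplus \mathbb{O}c_2])_n$, and then $\lim^+\tilde E = [t\OC c_1 \oplus \OC c_2] = \lim^- E$ while $\lim^-\tilde E = [t^2\OC c_1\oplus \OC c_2] \sim [\OC c_1 \oplus \OC c_2]$ after rescaling — wait, here one uses that $t\OC \cdot t\OC$ is not $t^2$-scaling but that the homothety class collapses appropriately, so $\lim^-\tilde E = \lim^+ E$ as claimed. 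Uniqueness of $\tilde E$ follows from part (1) applied to the tree $T_{\pi(P')}$: the condition $\lim^+\tilde E = \lim^- E$ pins down $\tilde E$ uniquely in $\EC(T_{\pi(P')})$ via the injectivity of $\lim^+$ there, and $\pi(\lim^- E) = \pi(P') = \pi(\lim^-\tilde E)$ identifies the correct subtree. The main obstacle I anticipate is bookkeeping the $t$-versus-$u$ rescalings carefully so that the homothety classes line up — the geometric content is transparent (in the asymptotic-cone picture, $\lim^-$ and $\lim^+$ swap the two ``directions'' along a shared boundary point), but matching the lattice normalizations in $\partial_1\mathcal{I}^{\mathcal{L}}$, where classes are taken only up to $\mathbb{K}^\times$-homothety and the two relevant valuations interact, requires attention; I would handle it by always reducing to the standard lift and tracking the exponent of $t$ modulo the homothety ambiguity.
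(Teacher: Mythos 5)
Your overall strategy — reduce everything to explicit bases, use Theorem~\ref{sll}(iii) to identify $T_P$ with the classical tree over $\mathcal{K}_1$, and match $\EC(T_P)$, $\pi_1^{-1}(P)$ and $\SC(P,1)$ all with $\mathbb{P}^1(\mathcal{K}_1)$ — is exactly the "elementary linear algebra" route the paper has in mind (it omits the proof), and parts (1) and (2) are essentially right. But you repeatedly write the limit objects as $[\OC c_1\oplus\OC c_2]$ or $[t\OC c_1\oplus\OC c_2]$. Elements of $\partial_1\mathcal{I}^{\mathcal{L}}$ are \emph{mixed} modules $[\mathbb{O}b_1\oplus\OC b_2]$; by the definition of convergence, $\lim^+$ of $([\mathbb{O}c_1\oplus\mathbb{O}u^{-n}c_2])_n$ is $[\mathbb{O}c_1\oplus\OC c_2]$, with the first factor still an $\mathbb{O}$-module. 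This is not cosmetic: if the limits were pure $\OC$-lattices, $\pi_1^{-1}(P)$ would be a single point and (1) would be false. The residual $\mathbb{O}$-structure on the first factor is precisely what makes the fiber a full $\mathbb{P}^1(\mathcal{K}_1)$, so your identification of $\pi_1^{-1}(P)$ with lines in $L/\MC L$ needs to be run with the correct mixed modules.

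The genuine gap is in part (3), where your computation visibly fails and the "wait\dots the homothety class collapses appropriately" does not repair it: $[t^2\OC c_1\oplus\OC c_2]$ is \emph{not} homothetic to $[\OC c_1\oplus\OC c_2]$ (dividing by $t^2$ moves the $t$'s to the other factor, it does not cancel them), and your candidate $\tilde E$ carries the $t$ on the wrong factor. The correct bookkeeping: take $E$ represented by $([\mathbb{O}b_1\oplus\mathbb{O}u^{n}b_2])_n$ in $T_P$. Then $\lim^-E=[\mathbb{O}b_1\oplus\OC tb_2]$ directly, and rewriting the ray as $([\mathbb{O}u^{-n}b_1\oplus\mathbb{O}b_2])_n$ gives $\lim^+E=[\OC b_1\oplus\mathbb{O}b_2]$. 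Now set $\tilde E$ to be the class of $([\mathbb{O}b_1\oplus\mathbb{O}\,tu^{-n}b_2])_n$; these lattices have $\OC$-span $\OC b_1\oplus\OC tb_2$, so $\tilde E\in\EC(T_{\pi_1(\lim^-E)})$, and with $(c_1,c_2)=(b_1,tb_2)$ one reads off $\lim^+\tilde E=[\mathbb{O}b_1\oplus\OC tb_2]=\lim^-E$, while rewriting as $([\mathbb{O}u^{n}b_1\oplus\mathbb{O}tb_2])_n$ gives $\lim^-\tilde E=[\mathbb{O}tb_2\oplus\OC tb_1]=[\OC b_1\oplus\mathbb{O}b_2]=\lim^+E$. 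Finally, for uniqueness your appeal to "$\pi(\lim^-E)=\pi(\lim^-\tilde E)$" does not identify which subtree $\tilde E$ lives in; the right argument is that if $\tilde E\in\EC(T_Q)$ with $\lim^+\tilde E=\lim^-E$, then part (1) forces $Q=\pi_1(\lim^+\tilde E)=\pi_1(\lim^-E)$, and injectivity of $\lim^+$ on $\EC(T_Q)$ then pins $\tilde E$ down.
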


One can then decide to glue each end $E \in \mathcal{E}(\mathcal{I}^{\mathcal{L}}(\mathrm{SL}(V),\omega))$ with the unique end $\tilde{E} \in \mathcal{E}(\mathcal{I}^{\mathcal{L}}(\mathrm{SL}(V),\omega))$ such that $\lim^- E=\lim^+ \tilde{E}$. Once this glueing is done, apartments are then maximal paths in $\mathcal{I}^{\mathcal{L}}(\mathrm{SL}(V),\omega)$, as illustrated in figure \ref{figsl2c}.

\begin{figure}[H]

\centering

\includegraphics[scale=1.5]{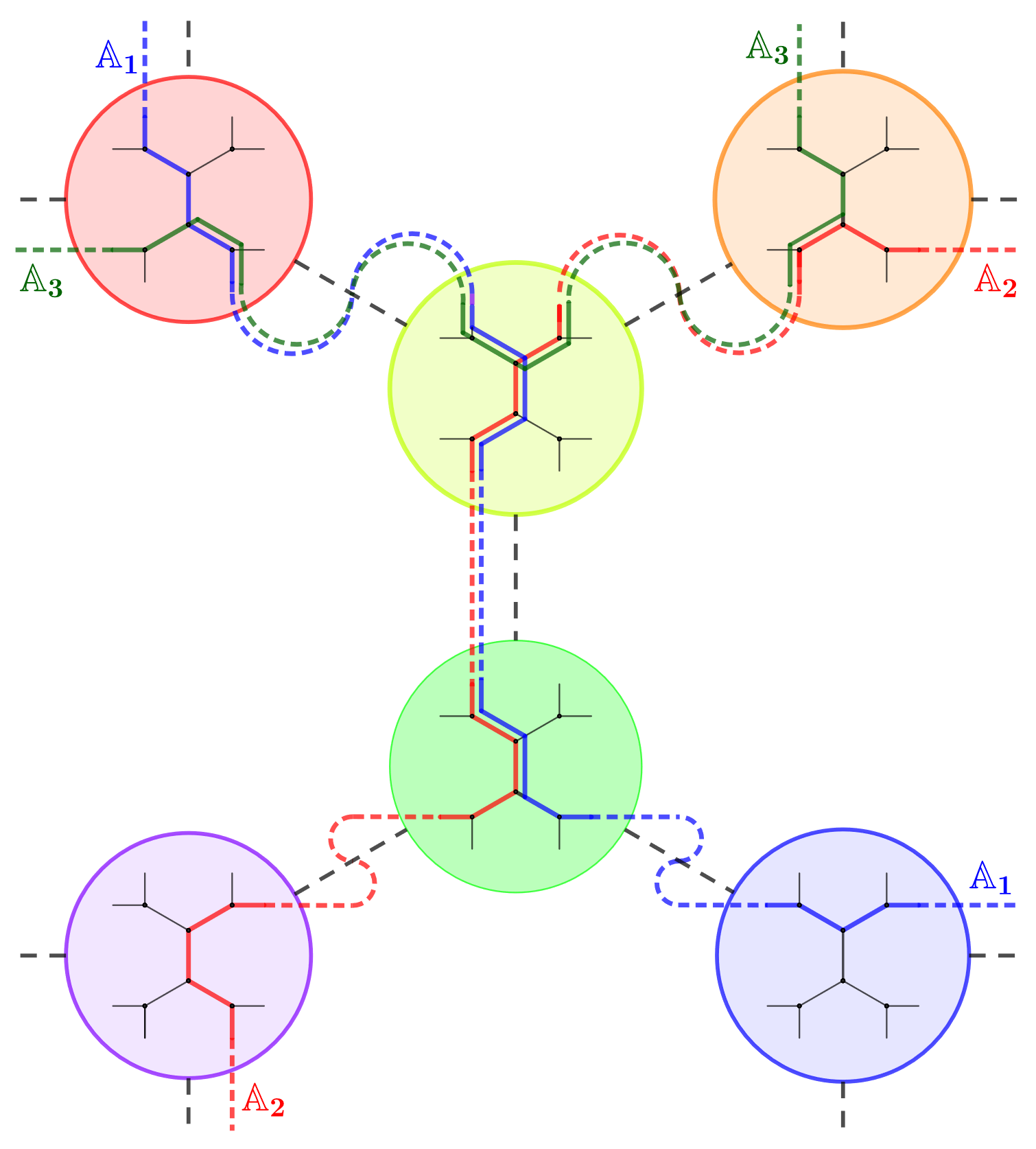}

\caption{\label{figsl2c} Three apartments in the $\mathbb{Z}^2$-building of $\mathrm{SL}_2$ over a $\mathbb{Z}^2$-valued field.}

\end{figure}

By construction, any apartment of $\mathcal{I}^{\mathcal{L}}(\mathrm{SL}(V),\mathbb{K})$ is isomorphic to $\Lambda$. If we fix a basis $(e_1,e_2)$ of $V$ and we denote by $\mathbb{A}$ the apartment associated to this basis, the stabilizer of $\mathbb{A}$ in $G$ is the group $N$ of elements of $\mathrm{SL}(V)$ whose matrices in the basis $(e_1,e_2)$ are of the form:
$$\begin{pmatrix}
\star & 0 \\
0 & \star
\end{pmatrix}
\;\;\;\;\; \mathrm{or}\;\;\;\;\;\begin{pmatrix}
0 & \star \\
\star & 0
\end{pmatrix}.$$
The group $N$ acts on  $\mathbb{A}$ through the quotient $N/T_b$ where $T_b$ is the group of elements of $\mathrm{SL}(V)$ whose matrices in the basis $(e_1,e_2)$ are of the form:
$$\begin{pmatrix}
a & 0 \\
0 & a^{-1}
\end{pmatrix}$$
with $a \in \mathbb{O}^{\times}$. The quotient $\Wext:=N/T_b$ is called the \textbf{extended affine affine Weyl group}
and it is spanned by the elements:
$$w_0=\begin{pmatrix} 0 & 1 \\
-1 & 0
\end{pmatrix}, w_1=\begin{pmatrix} 0 & u\\
-u^{-1} & 0\end{pmatrix},
w_2=\begin{pmatrix} 0 & t \\ 
-t^{-1} & 0\end{pmatrix}.$$ 
One can then check that:
\begin{itemize}
    \item the set $\{0,1\}^2$ is a fundamental domain for the action of $\Wext$ on $\A$.
    \item the set $[0,1]^2\cup (1,2)\times (0,1]=\big([0,2)\times [0,1]\big)\setminus \big((1,2)\times \{0\}\big)$ is a fundamental domain for the action of $\Wext$ on $\A\otimes_\mathbb{Z} \mathbb{R}$.
\end{itemize}
 The action of $\Wext$ on $\A$ and $\A\otimes_\mathbb{Z} \mathbb{R}$ is represented in figure \ref{figsl2d}.

\begin{figure}[H]

\centering

\includegraphics[scale=0.45]{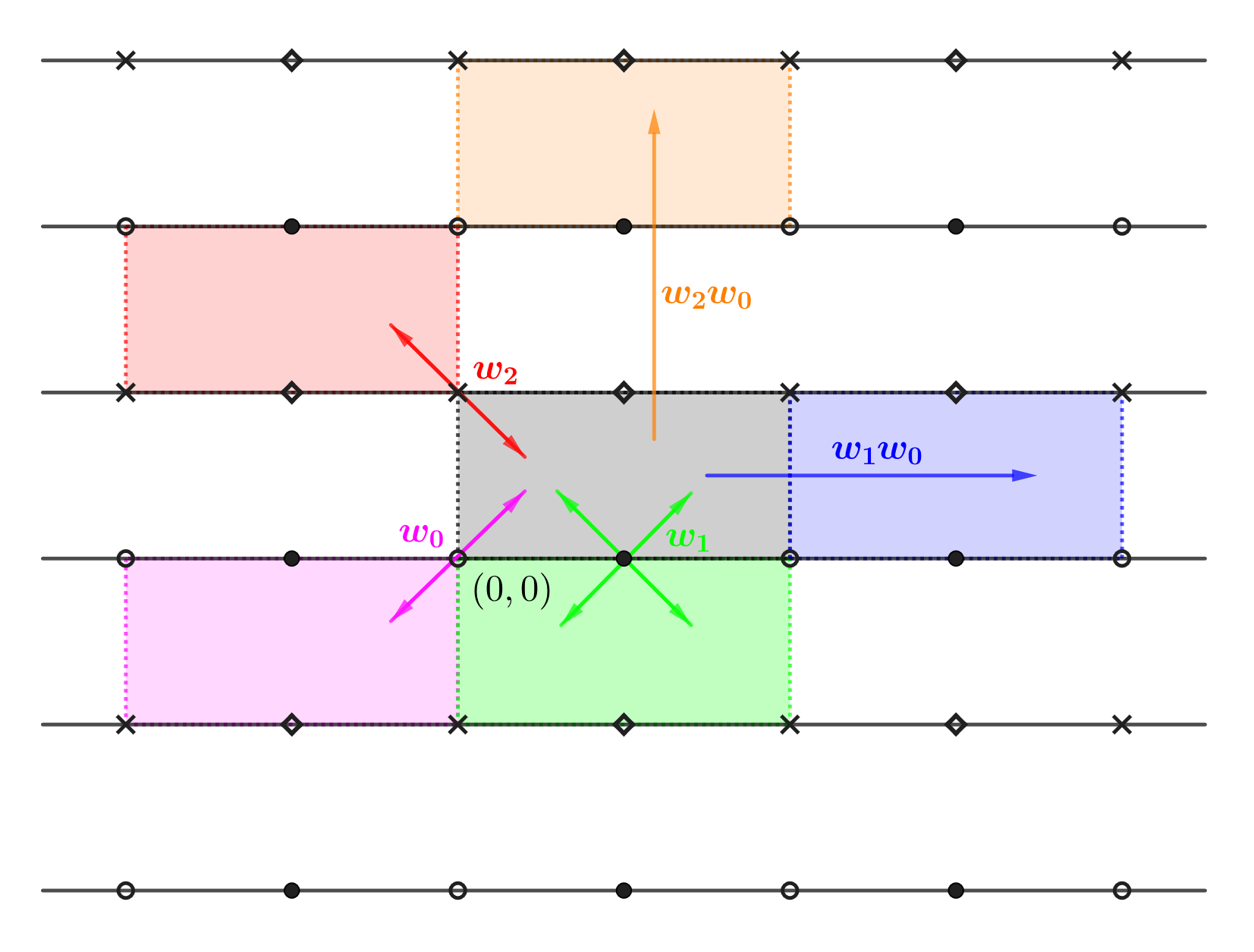}

\caption{\label{figsl2d} The action of the extended affine Weyl group on the standard apartment of the building of $\mathrm{SL}(V)$ when $V$ is a $2$-dimensional vector space over a field $\mathbb{K}$ endowed with a valuation $\omega: \mathbb{K}^{\times} \rightarrow \mathbb{Z}^2$.}

\end{figure}

\section{Abstract definition of \texorpdfstring{$\RF^S$}{RS}-buildings and statement of the main theorem}\label{secAbstract_definition_buildings}

This section is dedicated to recalling the definition $\Lambda$-buildings as defined by Bennett in \cite{bennett1994affine}. By doing so, we also introduce a certain number of new constructions that turn out to be useful to describe the linear algebra, the affine geometry and the combinatorics of the apartments of those buildings. At the end of the section, we state our main theorem. 

\subsection{Definition of the standard apartment}
\label{secDefinitionApartment}

\subsubsection{Root system, vectorial apartment over \texorpdfstring{$\R$}{R} and Weyl group}\label{subsec:rootsystems}

Let $V_\mathbb{R}$ be a finite dimensional vector space over $\R$.
Let $\Phi\subset (V_\R)^*$ be a root system\index{root system} as in the definition of \cite[6.1.1]{bourbaki1981elements}. Let $\Phi^\vee\subset V_\R$ be the dual root system of $\Phi$.\index{root system!dual} In particular, $\Phi$ (resp. $\Phi^\vee$) is a finite subset of $(V_\R^*)\setminus\{0\}$ (resp $V_\R\setminus\{0\}$) and there is a bijection ${^\vee }:\Phi\rightarrow \Phi^\vee$ such that $\alpha(\alpha^\vee)=2$ for each $\alpha\in \Phi$.

In order to deal with non-reduced root systems,\index{root system!non-reduced} let us recall some definitions and facts from \cite[VI §1.3 \& §1.4]{bourbaki1981elements}.
A root $\alpha \in \Phi$ is said to be multipliable\index{root!multipliable} if $2\alpha \in \Phi$; otherwise, it is said to be non-multipliable\index{root!non-multipliable}.
A root $\alpha \in \Phi$ is said to be divisible\index{root!divisible} if $\frac{1}{2}\alpha \in \Phi$; otherwise, it is said to be non-divisible\index{root!non-divisible}.
The set of non-divisible roots, denoted by $\Phi_{\mathrm{nd}}$\index[notation]{p@$\Phi_{\mathrm{nd}}$}, is a root system.

\begin{Not}\label{NotRootSystem}
Given two roots $\alpha,\beta \in \Phi$ with respective coroots $\alpha^\vee,\beta^\vee \in \Phi^\vee$, we denote: 
\begin{itemize}
\item $\Phi(\alpha,\beta) = \{r\alpha+s\beta \in \Phi,\ (r,s)\in \mathbb{Z}^2\}$\index[notation]{p@$\Phi(\alpha,\beta)$};
\item $(\alpha,\beta) = \{r\alpha+s\beta \in \Phi,\ (r,s)\in (\mathbb{Z}_{>0})^2\}$\index[notation]{a@$(\alpha,\beta)$};
\item $r_\alpha(\beta) = \beta - \beta(\alpha^\vee) \alpha$;
\item $r_\alpha(\beta^\vee) = \beta^\vee - \alpha(\beta^\vee) \alpha^\vee$\index[notation]{r@$r_\alpha$}.
\end{itemize}
\end{Not}

Note that $\Phi(\alpha,\beta)$ is a root system of rank $1$ or $2$ depending on the fact that $\alpha$ and $\beta$ are, or not, collinear.
The subset $(\alpha,\beta)$ is a positively closed subset of $\Phi(\alpha,\beta)$ when $\beta \not\in - \mathbb{R}_{>0} \alpha$.

The map $r_\alpha: \Phi \to \Phi$ (resp. $r_\alpha: \Phi^\vee \to \Phi^\vee$) extends linearly to a reflection $r_\alpha \in \mathrm{GL}(V_\R^*)$ (resp. $r_\alpha \in \mathrm{GL}(V_\R)$).
It satisfies $r_\alpha^2 = \operatorname{id}$ and $r_{2\alpha} = r_\alpha$ when $2\alpha \in \Phi$.
We denote by $W(\Phi)$\index[notation]{w@$W(\Phi)$} (resp. $W(\Phi^\vee)$) the subgroup of $\mathrm{GL}(V_\R^*)$ (resp. $\mathrm{GL}(V_\R)$) generated by all the $r_\alpha$.
The groups $W(\Phi)$ and $W(\Phi^\vee)$ are both finite and they are called the Weyl groups\index{Weyl group}
associated to $\Phi$ and $\Phi^\vee$ respectively.
Moreover, for any basis $\Delta$ of $\Phi$, the Weyl group $W(\Phi)$ of $\Phi$ is generated by the $r_\alpha$ for $\alpha \in \Delta$.
There is a natural isomorphism of finite groups $W(\Phi) \to W(\Phi^\vee)$ sending $r_\alpha \in W(\Phi)$ to $r_\alpha \in W(\Phi^\vee)$ for any $\alpha \in \Phi$.
By abuse of notations, we denote by $w^{-1}$ the image of an element $w \in W(\Phi)$ through this isomorphism.
The group $W^v$ defined up to this isomorphism is called the \textbf{vectorial Weyl group}\index{Weyl group!vectorial}.
Note that for any $\alpha \in \Phi$ and any $w \in W(\Phi)$, one has $w(\alpha) = \alpha \circ w^{-1}$.

A \textbf{vector chamber}\index{chamber!vector}\index{vector chamber} $C^v_\R$ is a connected component of $V_\R\setminus\bigcup_{\alpha\in\Phi}\alpha^{-1}(\{0\})$.
If  $\overline{C}^v_\R$ stands for the closure of $C^v_\R$ in $V_\R$ for the usual topology, the associated \textbf{basis}\index{chamber!basis} $\Delta_{C^v_\R}$ of $\Phi$ is the set of roots $\alpha\in \Phi$ such that $\alpha(C^v_\R)>0$ and $\alpha^{-1}(\{0\})\cap \overline{C}^v_\R$ spans a hyperplane of $V_\R$.
Let $\Phi_{C^v_\R}=\{\alpha\in \Phi\mid \alpha(C^v_\R)>0\}$ be the set of positive roots for $C^v_\R$. Then $\Phi=\Phi_{C^v_\R}\sqcup -\Phi_{C^v_\R}$ and $\Phi_{C^v_\R}\subset \bigoplus_{\alpha\in \Delta_{C^v_\R}} \N \alpha$. For every vector chamber $C^v_\R$ of $V_\R$, $(W^v,\{r_{\alpha}\mid \alpha\in \Delta_{C^v_\R}\})$ is a Coxeter system.
 
 We now fix a vector chamber $C^v_{f,\R}$ of $V_\R$, that we call the \textbf{fundamental chamber},\index{chamber!fundamental}\index{fundamental chamber} and we set $\Delta_f=\Delta_{C^v_{f,\R}}$. We denote by $\ell$ the length map on $W^v$ associated to the generating set $\{r_\alpha\mid \alpha\in\Delta_{C^v_{f,\R}}\}$.

Let $\Delta$ be a basis of $\Phi$.
For $\alpha\in \Delta$ we denote by $\varpi_\alpha^\vee$\index[notation]{p@$\varpi_\alpha^\vee$} the \textbf{fundamental coweight}\index{coweight!fundamental} in $V_\R$ associated with $\alpha$, i.e the unique element of $V_\R$ such that for all $\beta\in \Delta$, one has $\beta(\varpi_\alpha^\vee)=\delta_{\alpha,\beta}$.

\subsubsection{\texorpdfstring{$R$}{R}-aff spaces}\label{raff}

Let $\mathbf{G}$ be a quasi-split reductive group defined over a field $\mathbb{K}$ endowed with a valuation $\omega:\mathbb{K}\rightarrow \Lambda\cup\{\infty\}$ for some totally ordered abelian group $\Lambda$. In the classical Bruhat-Tits theory, when $\Lambda =  \mathbb{Z}$, the apartments of the building associated to $\mathbf{G}$ are all modeled on a real affine space. The choice to work over the real numbers in this context comes from the observation that the valuation group $\Lambda$ can be seen as an ordered subgroup of $\mathbb{R}$. \\

If now $\Lambda =\mathbb{Z}^n$ with the lexicographical order, one cannot embed $\Lambda$ into the real numbers as an ordered subgroup anymore. To tackle this difficulty, we will replace the ring $\mathbb{R}$ by the ring $\mathbb{R}[t]/(t^n)$ endowed with the lexicographical order associated to the basis $(1,t,t^2,...,t^{n-1})$. Each apartment of the "higher" building we will associate to $\mathbf{G}$ will then be an $\mathbb{R}[t]/(t^n)$-affine space, that is a set $E$ endowed with a free and transitive action of an $\mathbb{R}[t]/(t^n)$-module $\overrightarrow{E}$.\\

In the general case, where $\Lambda$ is any non-zero totally ordered abelian group, one would like to model the apartments of the "higher" building associated to $\mathbf{G}$ on an $R$-affine space for some totally ordered commutative ring $R$ containing $\Lambda$. This is morally what we will do in the sequel and one should keep this idea in mind to understand the upcoming constructions. However, this point of view is not completely correct since in some cases the base $R$ we will consider will not be endowed with a ring structure. In general, $R$ will only be a totally ordered non-zero real vector space, that is a real vector space endowed with a total order $<$ satisfying the following two properties: 

\begin{itemize}
\item[(1)] $x+y>0$ for any $x,y \in R$ such that $x>0$ and $ y>0$;
\item[(2)] $\lambda x >0$ for any $x\in R$ and $\lambda \in\mathbb{R}$ such that $x>0$ and $\lambda>0$.
\end{itemize}
In this setting, we cannot model the apartments of the "higher" building associated to $\mathbf{G}$ on an $R$-affine space since this notion makes no sense. In order to still do affine geometry over the base $R$ just as if $R$ had a ring structure, the trick will consist in replacing the category of $R$-affine spaces by the category $\mathrm{Aff}_R$ defined as follows:
\begin{itemize}
\item[-] Objects: pairs $(V,A)$ where $V$ is a free  $\mathbb{Z}$-module of finite type and $A$ is a real affine space  with underlying vector space $V_R:=V \otimes_{\mathbb{Z}} R$;
\item[-] Morphisms: if $(V,A)$ and $(W,B)$ are two objects, a morphism from $ (V,A)$ to $(W,B)$ is an affine map $f: A \rightarrow B$ whose linear part $\overrightarrow{f}$ is induced by tensorization from a $\mathbb{Z}$-linear map $\overrightarrow{f}_{\mathrm{core}}: V \rightarrow W$. Equivalently, the map $f: A \rightarrow B$ is of the form:
$$f(\,\cdot\,) = (\overrightarrow{f}_{\mathrm{core}}\otimes \mathrm{Id}_R) (\,\cdot\, - a) + b$$
for some $a \in A$ and $b\in B$.
\end{itemize}
The objects of $\mathrm{Aff}_R$ will be called \textbf{$R$-aff spaces}\index{R-aff@$R$-aff!space} and the morphisms \textbf{$R$-aff maps}\index{R-aff@$R$-aff!map}. The apartments of the "higher" building associated to $\mathbf{G}$ will then be modeled on $R$-aff spaces, and all the maps we will define on those apartments will be $R$-aff maps. By doing so, we will be able to carry out all the arguments and all the proofs in exactly the same way as if $R$ had a ring structure.

 \begin{remark}
Many of the constructions we will do in the sequel can be done in the more general situation where $R$ is not a real vector space but just an ordered abelian group or an ordered $\Q$-vector space. The notions of $R$-aff spaces and $R$-aff maps still make sense in those cases.
 \end{remark}

\subsubsection{Affine apartments over a totally ordered abelian group \texorpdfstring{$R$}{R}}\label{subsubTopology_totally_ordered_ring}

Motivated by the previous paragraph, we fix a totally ordered abelian group $R$ that will often be chosen later on to be a real vector space and we aim at giving a precise definition of an affine apartment over $R$.

\begin{Not}
We introduce a symbol $\infty$ and we extend the total ordering on $\Rtot$ to a total ordering of the set $\Rtot \cup \{\infty\}$ by setting $\lambda < \infty$ for any $\lambda \in \Rtot$.

If $\lambda\in \Rtot$ and $\top$ is a binary relation on $\Rtot$ (typically $\leqslant, <, >,\geqslant$), we denote by 
\[\Rtot_{ \top \lambda} = \{\mu \in \Rtot,\ \mu \top \lambda\}.\]

For any $\lambda < \mu$ in $\Rtot \cup \{\infty\}$, we denote the intervals:
\begin{itemize}
\item $[\lambda,\mu] = \{\nu \in \Rtot \cup\{\infty\},\ \lambda \leqslant \nu \leqslant \mu\}$;
\item $]\lambda,\mu] = \{\nu \in \Rtot \cup\{\infty\},\ \lambda < \nu \leqslant \mu\}$;
\item $[\lambda,\mu[ = \{\nu \in \Rtot,\ \lambda \leqslant \nu < \mu\}$;
\item $]\lambda,\mu[ = \{\nu \in \Rtot,\ \lambda < \nu < \mu\}$.
\end{itemize}
We equip $\Rtot$ with the order topology for which a base is given by the sets $\Rtot_{>\lambda}$ and $\Rtot_{< \lambda}$ for $\lambda \in \Rtot$  . Thus, $\Rtot$ is a completely normal Hausdorff space.
\end{Not}
\vspace{3ex}

\textbf{a) The vectorial apartment $V_R$ and its combinatorial structure}\\
 
Let us now come back to the setting described in section \ref{subsec:rootsystems} and consider a basis $\Delta^\vee$ of $\Phi^\vee_{\mathrm{nd}}$.
Denote by $V_\mathbb{Z}$ the free $\mathbb{Z}$-submodule of $V_\mathbb{R}$ spanned by the fundamental coweights, that is the lattice of \textbf{coweights}\index{coweight}, which contains $\Phi^\vee$.
Since $W^v$ acts transitively on the set of basis of $\Phi^\vee_{\mathrm{nd}}$ \cite[VI.1.5]{bourbaki1981elements}, this does not depend on the choice of $\Delta^\vee$. We then set $V_R: = V_\mathbb{Z} \otimes_{\mathbb{Z}} R$\index[notation]{v@$V_\R$}, and for $v\in V_\mathbb{Z}$ and $\lambda \in R$, we denote $\lambda v := v \otimes \lambda \in V_R$.

Fix a root  $\alpha \in \Phi$. By definition of root systems, $\alpha(\Phi^\vee) \subset \mathbb{Z}$.
Thus $\alpha$ induces a $\mathbb{Z}$-linear map $V_{\mathbb{Z}} \rightarrow \mathbb{Z}$, and by tensorization one gets an $R$-linear map $\alpha \otimes \mathrm{Id}_R: V_R \rightarrow R$. By abuse of notation this last linear map will still be denoted by $\alpha$. 

For $\lambda \in \Rtot$, set: 
$$H_{\Rtot,\alpha,\lambda} := \alpha^{-1}(\{-\lambda\}) \subset V_\Rtot, \;\;\;\;\; \mathring{D}_{\Rtot,\alpha,\lambda} := \alpha^{-1}(\Rtot_{>-\lambda}), \;\;\;\;\;D_{\Rtot,\alpha,\lambda} := \alpha^{-1}(\Rtot_{\geqslant -\lambda}) = H_{\Rtot,\alpha,\lambda} \sqcup \mathring{D}_{\Rtot,\alpha,\lambda}.$$

In the case $\lambda=0$, we also denote those sets by $H_{R,\alpha}$, $\mathring{D}_{R,\alpha}$ and $D_{R,\alpha}$ respectively. When there is no ambiguity, we might sometimes denote $H_{\alpha,\lambda}$\index[notation]{h@$H_{\alpha,\lambda}$}, $\mathring{D}_{\alpha,\lambda}$\index[notation]{d@$\mathring{D}_{\alpha,\lambda}$} and $D_{\alpha,\lambda}$\index[notation]{d@$D_{\alpha,\lambda}$} instead of $H_{\Rtot,\alpha,\lambda}$, $\mathring{D}_{\Rtot,\alpha,\lambda}$ and $D_{\Rtot,\alpha,\lambda}$, and $H_\alpha$, $\mathring{D}_\alpha$ and $D_\alpha$ instead of $H_{R,\alpha}$, $\mathring{D}_{R,\alpha}$ and $D_{R,\alpha}$. We equip $V_\Rtot$ with the topology generated by the sets  $\mathring{D}_{\alpha,\lambda}$ for $\alpha \in \Phi$ and $\lambda \in \Rtot$. The linear maps $\alpha: V_\Rtot \to \Rtot$ are then automatically continuous.

If $\Delta$ is a basis of $\Phi$, we denote by $\Phi^+_\Delta$ (resp. $\Phi^-_\Delta$) the  set $\Phi\cap\bigoplus_{\alpha\in \Delta}\Z_{\geq 0}\alpha$.\index[notation]{p@$\Phi^+_\Delta$} (resp. $-\Phi^+_\Delta$). We then have $\Phi=\Phi_\Delta^+\cup \Phi_\Delta^-$.

For any basis $\Delta$ of $\Phi$   and any subset $\Delta_P \subset \Delta$, we set:
 \[F^v_\Rtot(\Delta,\Delta_P)=\bigcap_{\alpha\in \Delta_P} H_{\Rtot,\alpha}\cap \bigcap_{\alpha\in \Delta\setminus\Delta_P} \mathring{D}_{\Rtot,\alpha} = \left\{ v \in V_\Rtot,\ \forall \alpha \in \Delta,\ \begin{array}{rl} \alpha(v) = 0 & \text{ if } \alpha \in \Delta_P\\ \alpha(v) > 0 & \text{ if } \alpha \not\in \Delta_P \end{array}\right\}\]
 \index[notation]{f@$F^v_\Rtot(\Delta,\Delta_P)$}
 and
 \[\overline{F}^v_\Rtot(\Delta,\Delta_P)=\bigcap_{\alpha\in \Delta_P} H_{\Rtot,\alpha}\cap \bigcap_{\alpha\in \Delta\setminus\Delta_P} D_{\Rtot,\alpha} = \left\{ v \in V_\Rtot,\ \forall \alpha \in \Delta,\ \begin{array}{rl} \alpha(v) = 0 & \text{ if } \alpha \in \Delta_P\\ \alpha(v) \geqslant 0 & \text{ if } \alpha \not\in \Delta_P \end{array}\right\}.\]
 
If $\Delta$ is any basis of $\Phi$ and $\Delta_P = \emptyset$, we set $C_{\Rtot,\Delta}^v = F_\Rtot^v(\Delta,\emptyset)$ and $\overline{C}^v_{\Rtot,\Delta} = \overline{F}_\Rtot^v(\Delta,\emptyset)$\index[notation]{c@$C_{\Delta}^v$}.
Note that those definition make sense even for $\Rtot = \mathbb{Z}$.
As before, if $\Rtot$ is clear in the context, we may omit to mention it in the notation.

If we fix a basis $\Delta_f$ of $\Phi$, then we denote by $F^v_\Rtot(\Delta_P)$ instead of $F^v_\Rtot(\Delta_f,\Delta_P)$ for $\Delta_P\subset \Delta_f$.
The \textbf{fundamental chamber}\index{chamber!fundamental} is the set $C^v_{f,\Rtot}=F^v_\Rtot(\emptyset)$\index[notation]{c@$C^v_{f,\Rtot}$}. A \textbf{vector face}\index{face!vector} (resp. \textbf{vector chamber})\index{chamber!vector} is a set of the form $w \cdot F^v_\Rtot(\Delta_P)$ (resp. $w \cdot C^v_{f,\Rtot}$), for some $w\in W^v$ and $\Delta_P\subset \Delta_f$.
Vector faces form a partition of $V_\Rtot$ and in Corollary~\ref{corWeylVectorial}, we get that the closure of the fundamental chamber is a fundamental domain for the action of $W^v$ on $V_\Rtot$.

\begin{Lem}\label{LemIntersectionChambers}
For any basis $\Delta$ of $\Phi$, we have $C^v_\Delta = \{v \in V_\Rtot,\ \forall \alpha \in \Phi^+_\Delta, \alpha(v) > 0\}$ and $\overline{C}^v_\Delta = \{v \in V_\Rtot,\ \forall \alpha \in \Phi^+_\Delta, \alpha(v) \geqslant 0\}$.

If $\Delta'$ is another basis of $\Phi$, we have $C^v_\Delta \cap C^v_{\Delta'} \neq \emptyset \Longleftrightarrow \Delta = \Delta'$.
\end{Lem}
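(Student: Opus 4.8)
The plan is to treat the two assertions separately. The first is a direct computation relying only on the order-compatible group law on $\Rtot$, and the second is deduced from it together with the fact (recalled in \S\ref{subsec:rootsystems}) that a basis of $\Phi$ is recovered from its set of positive roots.

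For the description of $C^v_\Delta$ and $\overline{C}^v_\Delta$: by definition $C^v_\Delta = F^v_\Rtot(\Delta,\emptyset) = \{v \in V_\Rtot : \alpha(v) > 0\ \text{for all}\ \alpha \in \Delta\}$ and $\overline{C}^v_\Delta = \{v \in V_\Rtot : \alpha(v) \geqslant 0\ \text{for all}\ \alpha \in \Delta\}$. Since $\Delta \subseteq \Phi^+_\Delta$, the inclusions $\supseteq$ in both claimed equalities are immediate, so only the reverse inclusions need an argument. I would fix $v$ in the candidate right-hand side and a positive root $\beta \in \Phi^+_\Delta$, and use the basic property of positive systems recalled in \S\ref{subsec:rootsystems} (namely $\Phi_{C^v_\R} \subseteq \bigoplus_{\alpha \in \Delta} \N\alpha$) to write $\beta = \sum_{\alpha \in \Delta} n_\alpha\alpha$ with $n_\alpha \in \N$, not all zero since $\beta \neq 0$. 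Evaluating at $v$, and using that $\beta \colon V_\Rtot \to \Rtot$ is obtained by tensorization from the corresponding $\Z$-linear map, gives $\beta(v) = \sum_{\alpha \in \Delta} n_\alpha\,\alpha(v)$. If $v \in C^v_\Delta$, every $\alpha(v)$ is $>0$ and some $n_\alpha$ is $>0$, so one term $n_\alpha\,\alpha(v)$ is $>0$ and the rest are $\geqslant 0$; as $\Rtot$ is a totally ordered abelian group, such a sum is $>0$, whence $\beta(v) > 0$. If $v \in \overline{C}^v_\Delta$, every term $n_\alpha\,\alpha(v)$ is $\geqslant 0$, so $\beta(v) \geqslant 0$. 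This settles both equalities; note that only the ordered-abelian-group axioms on $\Rtot$ are used, no multiplication.

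For the second assertion, the implication $\Leftarrow$ follows once $C^v_\Delta \neq \emptyset$, which holds because one may pick $r \in \Rtot_{>0}$ (here $\Rtot \neq \{0\}$ is used) and $v_0 \in V_\Z$ with $\alpha(v_0) \in \Z_{>0}$ for every $\alpha \in \Delta$ (such $v_0$ exists by linear independence of $\Delta$, after clearing denominators), and then $v_0 \otimes r \in C^v_\Delta$. For $\Rightarrow$, suppose $v \in C^v_\Delta \cap C^v_{\Delta'}$. By the first assertion, $\gamma(v) > 0$ for every $\gamma \in \Phi^+_\Delta \cup \Phi^+_{\Delta'}$. If $\Phi^+_\Delta \neq \Phi^+_{\Delta'}$, choose $\gamma \in \Phi^+_\Delta$ with $\gamma \notin \Phi^+_{\Delta'}$; since $\Phi = \Phi^+_{\Delta'} \sqcup (-\Phi^+_{\Delta'})$, this forces $-\gamma \in \Phi^+_{\Delta'}$, so $\gamma(v) > 0$ and $(-\gamma)(v) > 0$, which is absurd. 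Hence $\Phi^+_\Delta = \Phi^+_{\Delta'}$. It then remains to observe that $\Phi^+_\Delta$ determines $\Delta$: applying the first assertion over $\Rtot = \R$ gives $C^v_{\R,\Delta} = \{v \in V_\R : \gamma(v) > 0\ \text{for all}\ \gamma \in \Phi^+_\Delta\}$, so $\Phi^+_\Delta$ determines the vector chamber $C^v_{\R,\Delta}$ of $V_\R$, and hence its associated basis, which is $\Delta$ (this last point is also \cite[VI.1]{bourbaki1981elements}); thus $\Delta = \Delta'$.

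I do not expect a genuine obstacle: the statement is elementary. The two points deserving care are (a) phrasing the first computation purely in terms of the ordered-abelian-group structure of $\Rtot$, so that it is valid for the general $\Rtot$ of \S\ref{subsubTopology_totally_ordered_ring}, and (b) reusing that first assertion over $\R$ in order to reduce the injectivity of $\Delta \mapsto \Phi^+_\Delta$ to the already recalled correspondence between bases of $\Phi$ and vector chambers of $V_\R$, rather than reproving it; the non-emptiness of $C^v_\Delta$ is the only place where $\Rtot \neq \{0\}$ enters.
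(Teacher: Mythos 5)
Your proof is correct and follows essentially the same route as the paper: both establish the descriptions of $C^v_\Delta$ and $\overline{C}^v_\Delta$ by writing each $\alpha\in\Phi^+_\Delta$ as a non-negative integer combination of simple roots and evaluating at $v$, and both reduce the disjointness statement to the fact that distinct bases have distinct positive systems. The only cosmetic differences are that the paper argues the second part in the contrapositive direction (picking $\alpha\in\Phi^+_{\Delta'}\setminus\Phi^+_\Delta$ and noting $-\alpha(v)>0$ on $C^v_\Delta$) and simply asserts the injectivity of $\Delta\mapsto\Phi^+_\Delta$, whereas you justify it via chambers over $\R$ and also record the non-emptiness of $C^v_\Delta$, which the paper handles separately (Lemmas~\ref{LemWellChosen} and~\ref{LemHalfLineInSector}).
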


\begin{proof}
Let $\alpha \in \Phi^+_\Delta$ and $v \in C^v_\Delta$ (resp. $\overline{C}^v_\Delta$).
By \cite[VI.1.6]{bourbaki1981elements}, there exist non-negative and not all zero integers $n_\beta$ such that $\alpha = \sum_{\beta \in \Delta} n_\beta \beta$.
Since $\beta(v) >0$ (resp. $\geqslant 0$) for any $\beta \in \Delta$, we get that $\sum_{\beta \in \Delta} n_\beta \beta(v) > 0$ (resp. $\geqslant 0$).
The converse is immediate since $\Delta \subset \Phi^+_\Delta$.
Hence $C^v_\Delta = \{v \in V_\Rtot,\ \forall \alpha \in \Phi^+_\Delta, \alpha(v) > 0\}$ and $\overline{C}^v_\Delta = \{v \in V_\Rtot,\ \forall \alpha \in \Phi^+_\Delta, \alpha(v) \geqslant 0\}$.

Suppose $\Delta' \neq \Delta$, then $\Phi^+_{\Delta'} \neq \Phi^+_{\Delta}$ and let $\alpha \in \Phi^+_{\Delta'} \setminus \Phi^+_{\Delta}$.
Hence $-\alpha \in \Phi^+_\Delta$.
For any $v \in C^v_{\Delta}$, we have $-\alpha(v) > 0$ so that $v \not\in C^v_{\Delta'}$.
\end{proof}

\begin{Def}\label{DefHalfLine}
For any $v \in V_\Z$, denote by $\delta_v = \{ \lambda v,\ \lambda \in \Rtot_{>0}\}$.
This is called the ``open'' half-line\index{half-line!open} in $V_\Rtot$ with  direction $v \in V_{\mathbb{Z}}$.
\end{Def}

\begin{Lem}\label{LemWellChosen}
For any basis $\Delta$ of $\Phi$ and any $\Delta_P \subset \Delta$, the $\mathbb{Z}$-vector face $ F_{\mathbb{Z}}^v(\Delta,\Delta_P)$ is non-empty.
\end{Lem}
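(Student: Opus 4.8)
The plan is to produce an explicit integer point of $F_{\mathbb{Z}}^v(\Delta,\Delta_P)$ built from the fundamental coweights attached to the basis $\Delta$. Write $\Delta = \{\alpha_1,\dots,\alpha_n\}$. By the very definition of a basis of a root system, $\Delta$ is an $\R$-basis of $V_\R^*$, so it admits a dual basis $\varpi^\vee_1,\dots,\varpi^\vee_n$ of $V_\R$, characterized by $\alpha_j(\varpi^\vee_i)=\delta_{ij}$ for all $i,j$. If one forgets the integrality requirement, the element $\sum_{\alpha_i\in\Delta\setminus\Delta_P}\varpi^\vee_i$ already lies in the real vector facet $F_\R^v(\Delta,\Delta_P)$; the real work is thus to upgrade this to a point of the lattice $V_{\mathbb{Z}}$.

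First I would check that each $\varpi^\vee_i$ lies in $V_{\mathbb{Z}}\otimes_{\mathbb{Z}}\Q$. Fix the basis $\Delta^\vee = \{\beta^\vee_1,\dots,\beta^\vee_n\}$ of $\Phi^\vee_{\mathrm{nd}}$ used to define $V_{\mathbb{Z}}$, and expand $\varpi^\vee_i = \sum_{k} x_{ik}\,\beta^\vee_k$ with $x_{ik}\in\R$. The equations $\alpha_j(\varpi^\vee_i)=\delta_{ij}$ turn into the linear system $\sum_k x_{ik}\,\alpha_j(\beta^\vee_k)=\delta_{ij}$, whose coefficient matrix $\big(\alpha_j(\beta^\vee_k)\big)_{j,k}$ has integer entries (roots are integer-valued on coroots) and is invertible over $\R$ (the $\alpha_j$ form a basis of $V_\R^*$, the $\beta^\vee_k$ a basis of $V_\R$, and the evaluation pairing is non-degenerate). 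Hence the $x_{ik}$ are rational, so $\varpi^\vee_i\in V_{\mathbb{Z}}\otimes_{\mathbb{Z}}\Q$; taking a common denominator yields an integer $N\geq 1$ with $N\varpi^\vee_i\in V_{\mathbb{Z}}$ for all $i$.

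Then I would set $v := \sum_{\alpha_i\in\Delta\setminus\Delta_P} N\varpi^\vee_i \in V_{\mathbb{Z}}$ and check the defining conditions: for $\alpha_j\in\Delta_P$ the index $j$ does not appear in the sum, so $\alpha_j(v)=\sum_{\alpha_i\in\Delta\setminus\Delta_P}N\delta_{ij}=0$; for $\alpha_j\in\Delta\setminus\Delta_P$ one gets $\alpha_j(v)=N>0$. By the description $F_{\mathbb{Z}}^v(\Delta,\Delta_P)=\{\,v\in V_{\mathbb{Z}}\mid \alpha(v)=0\text{ for }\alpha\in\Delta_P,\ \alpha(v)>0\text{ for }\alpha\in\Delta\setminus\Delta_P\,\}$ recalled above, this gives $v\in F_{\mathbb{Z}}^v(\Delta,\Delta_P)$, so the facet is non-empty.

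I do not expect any serious obstacle: the whole content is the linear-algebra fact that the dual basis of $\Delta$ has rational coordinates in the coroot lattice $V_{\mathbb{Z}}$. A slightly more conceptual phrasing of the same idea: $F_\R^v(\Delta,\Delta_P)$ is a non-empty, relatively open subset of the $\Q$-rational subspace $\bigcap_{\alpha\in\Delta_P}\ker\alpha\subset V_\R$, hence meets $V_{\mathbb{Z}}\otimes\Q$, and multiplying such a rational point by a suitable positive integer preserves all the (homogeneous) sign conditions and lands in $V_{\mathbb{Z}}$.
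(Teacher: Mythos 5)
Your proof is correct and is essentially the same as the paper's: both arguments come down to the fact that the matrix $\bigl(\alpha_j(\beta^\vee_k)\bigr)$ is an integer matrix invertible over $\Q$, so one can solve the sign/vanishing conditions rationally in the coroot lattice and clear denominators by a positive integer. The paper phrases this as solving $CN=mP$ for the Cartan matrix $C$ rather than via the dual basis of coweights, but the content is identical.
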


\begin{proof}
The element $x= \sum_{\alpha \in \Delta \smallsetminus \Delta_P} \varpi_\alpha^\vee$ is in the lattive of coweights $V_\Z$.
For $\alpha \in \Delta$, we have that $\alpha(x) =0$ if $\alpha \in \Delta_P$ and $\alpha(x) =1$ otherwise.
Hence $x \in F_{\Z}^v(\Delta,\Delta_P)$ by definition.
\end{proof}

\begin{Lem}\label{LemHalfLineInSector}
For any $v \in F_\mathbb{Z}^v(\Delta,\Delta_P)$, we have $\delta_v \subset F_\Rtot^v(\Delta,\Delta_P)$.
\end{Lem}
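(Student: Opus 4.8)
The plan is to check directly that every point of the half-line $\delta_v$ satisfies the defining (in)equalities of $F^v_\Rtot(\Delta,\Delta_P)$. First I would fix an arbitrary scalar $\lambda \in \Rtot_{>0}$ and look at the point $\lambda v = v \otimes \lambda \in V_\Rtot$. For each root $\alpha \in \Delta$, recalling from the beginning of Section~\ref{subsubTopology_totally_ordered_ring} that $\alpha$ acts on $V_\Rtot$ as the map $\alpha \otimes \mathrm{Id}_\Rtot$ induced by tensorization from the $\mathbb{Z}$-linear map $\alpha\colon V_\mathbb{Z} \to \mathbb{Z}$, one computes $\alpha(\lambda v) = \alpha(v)\,\lambda$, where $\alpha(v)$ is an honest integer since $v \in V_\mathbb{Z}$.

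Next I would split into the two cases dictated by the definition of $F^v_\mathbb{Z}(\Delta,\Delta_P)$. If $\alpha \in \Delta_P$, then $\alpha(v) = 0$ by hypothesis, so $\alpha(\lambda v) = 0$. If $\alpha \in \Delta \setminus \Delta_P$, then $\alpha(v) > 0$ in $\mathbb{Z}$, hence $\alpha(v) \geqslant 1$, and therefore $\alpha(\lambda v) = \alpha(v)\,\lambda$ is an integer multiple ($\geqslant 1$) of the positive element $\lambda$, which is $> 0$ by repeated application of axiom (1) for a totally ordered abelian group. This shows $\lambda v \in F^v_\Rtot(\Delta,\Delta_P)$, and since $\lambda \in \Rtot_{>0}$ was arbitrary we conclude $\delta_v \subset F^v_\Rtot(\Delta,\Delta_P)$.

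The only point that needs any care — and it is a very mild one — is the bookkeeping identity $\alpha(\lambda v) = \alpha(v)\,\lambda$ together with the positivity of a positive-integer multiple of a positive element of $\Rtot$; both are immediate consequences of the conventions fixed in Section~\ref{subsubTopology_totally_ordered_ring}, so there is no real obstacle here. The lemma is essentially a compatibility check between the $\mathbb{Z}$-structure and the $\Rtot$-structure on the vectorial apartment, and the argument above is already complete.
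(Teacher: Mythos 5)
Your proof is correct and follows essentially the same route as the paper's: compute $\alpha(\lambda v) = \alpha(v)\lambda$ for each $\alpha \in \Delta$ and split into the cases $\alpha \in \Delta_P$ (giving $0$) and $\alpha \in \Delta \setminus \Delta_P$ (giving a positive integer multiple of $\lambda > 0$, hence positive). No issues.
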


\begin{proof}
For any $\alpha \in \Delta$, any $v \in C_{\mathbb{Z},\Delta}^v$ and any $\lambda \in \Rtot_{>0}$, we have $\alpha( \lambda v) = \alpha(v)\lambda $.
If $\alpha \in \Delta_P$, then $\alpha(v \lambda) = 0$.
If $\alpha \not\in \Delta_P$, then $\alpha(v) \in \mathbb{Z}_{> 0}$ by assumption on $v$, and since $\lambda > 0$, we deduce that $\alpha(v) \lambda > 0$.
Thus $\alpha(\lambda v) > 0$ for any $\alpha \in \Delta \setminus \Delta_P$.
Therefore $\lambda v \in F_\Rtot^v(\Delta,\Delta_P)$ for any $\lambda\in \Rtot_{>0}$.
\end{proof}

\begin{Rq}
Note that the set $\{ \lambda x,\ \lambda \in \Rtot_{>0} \}$ for $x \in C_{\Rtot,\Delta}^v$ is not contained in $C_{\Rtot,\Delta}^v$ in general, even in the case when $\Rtot$ is a ring and $x$ is $\Rtot$-torsion-free.
For instance, take $\Rtot = \mathbb{R}\llbracket t\rrbracket /(t^2)$ with the lexicographical order $a_{1} + t b_{1} < a_{2} + t b_{2} \Longleftrightarrow a_1 < a_2 \text{ or } a_1 = a_2 \text{ and } b_1 < b_2$.
Take $\Phi = \{\pm \alpha, \pm \beta, \pm (\alpha+\beta)\}$ of type $A_2$.
Take $x = 2 \alpha^\vee + (1+t) \beta^\vee$.
Then $\alpha( x ) = 4 - (1+t) = 3-t > 0$ and $\beta( x ) = 2(1+t) - 2 = 2t > 0$, so that $x \in C_{\Rtot,\{\alpha,\beta\}}^v$.
Thus for any $\lambda \in \Rtot \setminus \{0\}$, we have $\alpha(\lambda x) = (3-t) \lambda \neq 0$ since $3-t$ is invertible in $\Rtot$ so that, in particular, $x$ is torsion-free.
But $\beta(t x) =  t (2t) = 0$ so that $t x \not\in C_{\Rtot,\{\alpha,\beta\}}^v$ with $t > 0$.
\end{Rq}

The following Proposition generalizes \cite[7.3.5]{BruhatTits1}. Since the topology of the $\Rtot$-module $V_\Rtot$ is not easy to manipulate (for instance, it is not necessarily a connected space so that vector chambers cannot be defined as some connected components), we prove it in a combinatorial way instead of a topological way.

\begin{Prop}\label{PropGoodSector}
Set $\Rtot_\Q:= R \otimes \Q$ and assume that $\Rtot = \Rtot_\Q$.
Let $x \in V_\Rtot$ and $\Delta,\Delta'$ be two bases of $\Phi$.
There exists a unique $w \in W(\Phi)$ such that $x \in \overline{C}_{w(\Delta)}^v$ and $C^v_{w(\Delta)} \cap (x+C^v_{\Delta'}) \neq \emptyset$.
\end{Prop}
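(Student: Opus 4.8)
The plan is to reduce the statement to the classical situation over $\mathbb{R}$ (or over $\mathbb{Q}$) where Proposition 7.3.5 of Bruhat--Tits applies, and then bootstrap it to the general totally ordered $\mathbb{Q}$-vector space $\Rtot$ by a combinatorial argument controlling the signs of the finitely many linear forms $\alpha \in \Phi$. Write $\Rtot$ as a $\mathbb{Q}$-vector space equipped with its total order; the key finiteness is that the information determining which chamber $\overline{C}^v_{w(\Delta)}$ contains $x$, and which chamber $C^v_{w(\Delta)}$ meets $x + C^v_{\Delta'}$, depends only on the signs (in $\Rtot$) of the values $\alpha(x)$, $\alpha(y)$, and the compatibility conditions among them, for $\alpha$ ranging over the finite set $\Phi$. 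So the strategy is: first establish \emph{existence} of a suitable $w$, then establish \emph{uniqueness}.

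For existence, first I would choose $w$ so that $x \in \overline{C}^v_{w(\Delta)}$: this is the statement that the closure of some Weyl chamber based on $\Delta$ contains $x$, i.e. that $\overline{C}^v_{w(\Delta)}$ for $w \in W(\Phi)$ cover $V_\Rtot$. Since the $w(\Delta)$ run over all bases of $\Phi$ as $w$ runs over $W(\Phi)$, and $\overline{C}^v_{\Delta'} = \{v \mid \forall \alpha \in \Phi^+_{\Delta'},\ \alpha(v) \geqslant 0\}$ by Lemma~\ref{LemIntersectionChambers}, this amounts to saying that for every $x$ there is a system of positive roots $\Phi^+$ with $\alpha(x) \geqslant 0$ for all $\alpha \in \Phi^+$; one builds such a $\Phi^+$ greedily, choosing a generic linear functional or, more carefully over an ordered $\mathbb{Q}$-vector space, choosing a regular element $\xi$ of the dual lattice such that $\xi$ "refines" the order induced by $x$ — concretely, pick $\Phi^+ = \{\alpha \in \Phi \mid \alpha(x) > 0\} \cup \{\alpha \mid \alpha(x) = 0,\ \alpha(\xi) > 0\}$ for a suitably generic $\xi \in V_\mathbb{Z}^*$, and check that this is a system of positive roots (closed, and $\Phi = \Phi^+ \sqcup -\Phi^+$). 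Among the (possibly several) such $w$, I then need to further select one so that $C^v_{w(\Delta)} \cap (x + C^v_{\Delta'}) \neq \emptyset$. The point is that the ambiguity in $w$ among those with $x \in \overline{C}^v_{w(\Delta)}$ is exactly the stabilizer of the face of $x$, i.e. the parabolic subgroup $W_x$ generated by the reflections $r_\alpha$ with $\alpha(x) = 0$; and inside the corresponding linear span (the subspace where $x$ lies as an interior point of its face) we are back to asking that $C^v_{w(\Delta)}$ meets $x + C^v_{\Delta'}$, which near $x$ reduces to the analogous statement for the smaller root system $\Phi_x = \{\alpha \in \Phi \mid \alpha(x) = 0\}$ acting on the quotient/subspace — and there $x$ plays no role, so we just need $C^v_{w(\Delta)}$ and $C^v_{\Delta'}$ to share a chamber direction, which is achieved by adjusting $w$ by an element of $W_x$ using that $W(\Phi_x)$ acts transitively on the chambers of $\Phi_x$. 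Translating "meets $x + C^v_{\Delta'}$" into sign conditions: we need a point $y \in C^v_{\Delta'}$, i.e. $\beta(y) > 0$ for all $\beta \in \Phi^+_{\Delta'}$, with $x + y \in C^v_{w(\Delta)}$, i.e. $\alpha(x) + \alpha(y) > 0$ for all $\alpha \in \Phi^+_{w(\Delta)}$; when $\alpha(x) > 0$ this is automatic for $y$ small in the appropriate sense, and when $\alpha(x) = 0$ it becomes $\alpha(y) > 0$, which is the condition $\Phi^+_{w(\Delta)} \cap \Phi_x \subseteq \Phi^+_{\Delta'} \cap \Phi_x$ (up to the $W_x$-action, an equality) — realizable by the right coset representative. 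The existence of a genuine such $y$ in $V_\Rtot$ (not just a formal sign pattern) uses Lemma~\ref{LemWellChosen} / Lemma~\ref{LemHalfLineInSector}: pick $v \in V_\mathbb{Z}$ in the interior of the relevant face so that $\delta_v$ lies in $C^v_{\Delta'}$, and scale it by a small enough positive element of $\Rtot$ relative to $\min\{\alpha(x) \mid \alpha(x) > 0\}$; here the hypothesis $\Rtot = \Rtot_\mathbb{Q}$ (divisibility) is what lets us always find such a small positive scalar, which is the analogue of using small real $\varepsilon > 0$.

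For uniqueness, suppose $w_1, w_2 \in W(\Phi)$ both satisfy the two conditions. From $x \in \overline{C}^v_{w_1(\Delta)} \cap \overline{C}^v_{w_2(\Delta)}$ we get, by the first part of Lemma~\ref{LemIntersectionChambers}, that $w_2^{-1}w_1$ lies in the parabolic $W_x$ fixing (the face of) $x$; so $w_1$ and $w_2$ differ only by an element stabilizing $x$'s face, and both $C^v_{w_1(\Delta)}$ and $C^v_{w_2(\Delta)}$ must meet $x + C^v_{\Delta'}$. Now I would show the set of $w \in W_x \cdot w_2$ whose chamber meets $x + C^v_{\Delta'}$ is a singleton: after translating by $-x$, meeting $x + C^v_{\Delta'}$ forces the part of $\Phi^+_{w(\Delta)}$ vanishing on $x$ to be contained in $\Phi^+_{\Delta'}$ (same computation as above, since $\alpha(x) = 0$ makes $\alpha(x + y) = \alpha(y)$), i.e. it pins down $\Phi^+_{w(\Delta)} \cap \Phi_x$ to equal $\Phi^+_{\Delta'} \cap \Phi_x$, and a system of positive roots of $\Phi_x$ is determined by no more data than that, so there is exactly one coset of the parabolic of $W_x$ fixing... — more precisely, since $\Phi_x$ is itself a root system and the map $w \mapsto \Phi^+_{w(\Delta)} \cap \Phi_x$ from $W_x$-cosets to systems of positive roots of $\Phi_x$ is a bijection (because $W(\Phi_x)$ acts simply transitively on such systems), the condition "$\Phi^+_{w(\Delta)} \cap \Phi_x = \Phi^+_{\Delta'} \cap \Phi_x$" has a unique solution $w \in W_x \cdot w_2$. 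Hence $w_1 = w_2$.

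The main obstacle I anticipate is the passage from "formal sign pattern" to "an actual point of $V_\Rtot$ realizing it", i.e. making precise that one can choose the auxiliary regular element $\xi \in V_\mathbb{Z}^*$ compatibly and then scale the chosen $v \in V_\mathbb{Z}$ by a small enough positive $\lambda \in \Rtot$ so that all the strict inequalities $\alpha(x) + \lambda \alpha(v) > 0$ hold simultaneously for the finitely many $\alpha$ with $\alpha(x) > 0$; this is where the hypothesis $\Rtot = \Rtot_\mathbb{Q}$ is genuinely used, since in a general ordered abelian group one cannot always divide down, whereas in a $\mathbb{Q}$-vector space the finitely many positive values $\alpha(x)$ have, say, a positive lower bound up to the Archimedean class and one can scale below it. The combinatorial bookkeeping — identifying the ambiguity with $W_x$ and reducing the "meets $x + C^v_{\Delta'}$" condition to a statement purely about the sub-root-system $\Phi_x$ — is the other delicate point, but it is exactly parallel to the classical proof of \cite[7.3.5]{BruhatTits1} and should go through verbatim once the topology is replaced by the sign-pattern combinatorics as flagged before the Proposition.
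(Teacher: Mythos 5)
Your proposal is correct, and the uniqueness half is essentially the paper's argument (constancy of the sign of each root on the vector chamber $C^v_{\Delta'}$, i.e.\ Lemma~\ref{lemSign_of_a_vectorial_face}, applied to a root separating the two candidate chambers), merely routed through the parabolic subgroup $W_x = W(\Phi_x)$. The existence half, however, is organized genuinely differently. Both proofs target the same positive system, namely $P = \{\alpha \in \Phi \mid \alpha(x) > 0\} \cup \{\alpha \in \Phi \mid \alpha(x)=0,\ \alpha(v) > 0\}$ for a regular integral vector $v \in C^v_{\mathbb{Z},\Delta'}$; in the paper $P$ is exactly the complement of the set $\Psi$. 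You exhibit $P$ directly, check closure, invoke the simple transitivity of $W(\Phi)$ on positive systems to write $P = \Phi^+_{w(\Delta)}$, and then produce a point $x+\lambda v$ of $C^v_{w(\Delta)} \cap (x + C^v_{\Delta'})$ with $\lambda \in \Rtot_{>0}$ small --- which is precisely where $\Rtot = \Rtot_{\mathbb{Q}}$ is used, since one must get strictly below $\min\{\alpha(x) \mid \alpha(x)>0\}$ divided by a positive integer. The paper instead reaches $w$ by induction on the cardinality of $\Psi \cap \Phi^+_\Delta$, performing one simple reflection at a time; this trades your appeal to the bases--positive-systems bijection for a case analysis showing $-\alpha \notin \Psi$. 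Your version is shorter and arguably more transparent; the paper's is closer to the Bruhat--Tits template and keeps everything at the level of elementary sign manipulations.

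Three small points to tighten, none of which is a real gap. First, the implication ``$x \in \overline{C}^v_{w_1(\Delta)} \cap \overline{C}^v_{w_2(\Delta)}$ forces $w_2 w_1^{-1} \in W_x$'' does not follow from Lemma~\ref{LemIntersectionChambers} alone: you need to note that each $\Phi^+_{w_i(\Delta)}$ contains every $\alpha$ with $\alpha(x)>0$ and none with $\alpha(x)<0$, so the two positive systems coincide outside $\Phi_x$, and an element of $W(\Phi_x)$ matching their restrictions to $\Phi_x$ automatically matches them globally because it preserves $\alpha(x)$. Second, the auxiliary regular element should be taken in $V_{\mathbb{Z}}$, not $V_{\mathbb{Z}}^*$ (the roots are the linear forms); its existence is Lemma~\ref{LemWellChosen}, and for this Proposition you should simply take it to be $v \in C^v_{\mathbb{Z},\Delta'}$, which is what makes the two conditions compatible. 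Third, do not quote the covering $V_\Rtot = \bigcup_w \overline{C}^v_{w(\Delta)}$: in the paper that is Corollary~\ref{corWeylVectorial}, deduced \emph{from} the present Proposition, so your independent construction of $P$ is exactly what keeps the argument non-circular.
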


\begin{proof}
Since any basis of a root system is contained in $\Phi_{\mathrm{nd}}$, we can assume that $\Phi$ is reduced (i.e. $\Phi = \Phi_{\mathrm{nd}}$).

For  the  uniqueness, consider $w,w' \in W(\Phi)$ such that
$x \in \overline{C}^v_{w(\Delta)} \cap \overline{C}^v_{w'(\Delta)}$ and the intersections 
$C^v_{w(\Delta)} \cap (x + C^v_{\Delta'})$ and 
$C^v_{w'(\Delta)} \cap (x + C^v_{\Delta'}) $ are both non-empty.
Let $y \in C^v_{w(\Delta)} \cap (x + C^v_{\Delta'})$ and $z \in C^v_{w'(\Delta)} \cap (x + C^v_{\Delta'})$.
Let $\alpha \in \Phi^+_{w(\Delta)}$. Then $\alpha(y) > 0$ and $\alpha(x) \geqslant 0$ by Lemma~\ref{LemIntersectionChambers}.
By contradiction, assume that $\alpha \not\in \Phi^+_{w'(\Delta)}$. Then $\alpha(z) < 0$ and $\alpha(x) \leqslant 0$.
Thus $\alpha(x) = 0$ so that $\alpha(y-x) > 0$ and $\alpha(z-x) < 0$ by linearity of $\alpha$.
Hence the sign of $\alpha$ is non-constant on $C^v_{\Delta'}$ which is a contradiction.
We deduce that $\Phi^+_{w(\Delta)} \subseteq \Phi^+_{w'(\Delta)}$. The other inclusion can be proved in the same way, and hence $\Phi^+_{w(\Delta)} = \Phi^+_{w'(\Delta)}$. Thus $w(\Delta) = w'(\Delta)$, so that $w = w'$ according to \cite[VI.1.5 Thm.2]{bourbaki1981elements}.

For existence, we proceed as follows:
let $v \in C^v_{\mathbb{Z},\Delta'}$ so that $\delta_v \subset C^v_{\Rtot,\Delta'}$ according to Lemma~\ref{LemHalfLineInSector} and $\alpha(v) \in \mathbb{Z} \setminus \{0\}$ for any $\alpha \in \Phi$.
We denote by $\Psi$ the set of roots $\alpha \in \Phi$ such that $\alpha(x) < 0$ or $\forall \eta \in \Rtot_{>0},\ \exists \varepsilon \in ]0,\eta[,\ \alpha(x+\varepsilon v) \leqslant 0$.
We denote by $n(\Delta)$ the cardinality of $\Psi \cap \Phi_\Delta^+$.
We prove, by induction on $n$, that for any basis $\Delta$ of $\Phi$ such that $n(\Delta) \leqslant n$, there exists $w\in W(\Phi)$ such that $x \in \overline{C}^v_{w(\Delta)}$ and $C^v_{w(\Delta)} \cap (x + C^v_{\Delta'}) \neq \emptyset$.

Base case: suppose that $\Delta \cap \Psi = \emptyset$.
For any $\alpha \in \Delta$, we have $\alpha(x) \geqslant 0$ and: $$\exists \eta_\alpha >0,\ \forall \varepsilon \in ]0,\eta_\alpha[,\ \alpha(x+\varepsilon v) > 0.$$
Denote by $\eta = \min \{\eta_\alpha,\ \alpha \in \Delta\} > 0$.
Since any $\beta \in \Phi_\Delta^+$ is a linear combination with non-negative integer coefficients of the simple roots $\alpha \in \Delta$, we have $\beta(x) \geqslant 0$ and $\beta(x+\varepsilon v) > 0$ for any $\varepsilon \in ]0,\eta[$.
Hence $\Psi \cap \Phi_\Delta^+ = \emptyset$, so that $n(\Delta) = 0$, and $x \in \overline{C}^v_{\Delta}$.
Moreover, if we set $y = x + \varepsilon v$ for some $\varepsilon \in ]0,\eta[$, we have $y \in C^v_{\Delta} \cap (x + \delta_v) \subset C^v_\Delta \cap (x + C^v_{\Delta'})$ according to Lemma~\ref{LemHalfLineInSector}.
In particular, if $\Delta$ is any basis of $\Phi$ such that $n(\Delta) = 0$, we have, in particular, $\Delta \cap \Psi = \emptyset$ and we have seen that the element $w = \operatorname{id}$ provides the basis of the induction.

Induction step: Assume now that $n(\Delta) = n > 0$.
In this case, $\Delta \cap \Psi \neq \emptyset$, and hence we can choose $\alpha \in \Delta \cap \Psi$. By definition of $\Psi$, we have 
\[\alpha(x) < 0 \qquad \text{ or }\qquad \forall \eta > 0,\ \exists \varepsilon \in ]0,\eta[,\ \alpha(x+\varepsilon v) \leqslant 0.\]
Let us first prove that $-\alpha \not\in \Psi$.

First case: suppose that $\alpha(x) < 0$.
Then $-\alpha(x) > 0$.

\textbullet\ If $\alpha(v) \in \mathbb{Z}_{<0}$, let $\eta$ be any element in $\Rtot_{>0}$.
Then for any $\varepsilon \in ]0,\eta[$, we have $-\alpha(x+ \varepsilon v) > -\alpha(v) \varepsilon > 0$.

\textbullet\ If $\alpha(v) \in \mathbb{Z}_{>0}$, let $\eta = - \frac{1}{\alpha(v)} \alpha(x) > 0$.
Then for any $\varepsilon \in ]0,\eta[$, we have 
\[-\alpha(x + \varepsilon v) = - \alpha(x) - \alpha(v) \varepsilon > -\alpha(x) -\alpha(v) \eta = 0.\]
Hence, in both cases, we get $-\alpha \not\in \Psi$.

Second case: suppose that for any $\eta > 0$ there exists $\rho \in ]0,\eta[$ such that $\alpha(x+\rho v) \leqslant 0$.

\textbullet\ If $\alpha(v) \in \mathbb{Z}_{>0}$, then $-\alpha(x) \geqslant \alpha(v)\rho >0$.
Moreover, for any $\varepsilon \in ]0,\rho[$, we have $-\alpha(x+ \varepsilon v) \geqslant \alpha(v) (\rho - \varepsilon)  > 0$.
Hence $-\alpha \not\in \Psi$.

\textbullet\ If $\alpha(v) \in \mathbb{Z}_{<0}$, suppose by contradiction that $-\alpha(x) < 0$.
Then $-\alpha(x) \geqslant \alpha(v) \rho > \alpha(v) \eta$.
Thus, for $\eta = -\frac{1}{2\alpha(v)} \alpha(x) > 0$, we get $-\alpha(x) > -\frac{1}{2} \alpha(x)$ which is a contradiction.
Hence $-\alpha(x) \geqslant 0$.
Now, for any $\varepsilon \in ]0,\rho[$, we have $-\alpha(x + \varepsilon v) \geqslant -\alpha(v) \varepsilon > 0$.
Thus $-\alpha \not\in \Psi$.

As a consequence, in all cases, we get $-\alpha \not\in \Psi$.
According to \cite[VI.1.6 Cor. 1 of Prop. 17]{bourbaki1981elements}, we know that $r_\alpha$ stabilizes $\Phi_\Delta^+ \setminus \{\alpha\} = \Phi_{r_\alpha(\Delta)}^+ \setminus\{-\alpha\}$.
Thus: $$ \Phi_{r_\alpha(\Delta)}^+ \cap \Psi  = \left( \Phi_\Delta^+ \setminus \{\alpha\} \right) \cap \Psi,$$ so that $n(r_\alpha(\Delta)) = n(\Delta)-1$.
By induction, since $r_\alpha(\Delta)$ is a basis of $\Phi$, we know that there exists $w \in W(\Phi)$ such that $x \in \overline{C}_{w \circ r_\alpha(\Delta)}^v$  and $C^v_{w \circ r_\alpha(\Delta)} \cap (x + C^v_{\Delta'}) \neq \emptyset$.
\end{proof}

\begin{Cor}\label{corWeylVectorial}
Let $\Delta$ be a basis of $\Phi$. For any $\Delta_P \subset \Delta$ and any $w \in W(\Phi)$, we have $w \cdot F^v_\Rtot(\Delta,\Delta_P) =F^v_\Rtot(w(\Delta),w(\Delta_P))$.
Moreover $V_\Rtot=\bigcup_{w\in W(\Phi)} w \cdot \overline{C}^v_\Delta$.
\end{Cor}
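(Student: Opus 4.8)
The plan is to split the statement into its two assertions: the equivariance identity $w\cdot F^v_\Rtot(\Delta,\Delta_P)=F^v_\Rtot(w(\Delta),w(\Delta_P))$, which is purely formal, and the covering $V_\Rtot=\bigcup_{w\in W(\Phi)}w\cdot\overline{C}^v_\Delta$, which is the substantive part.

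For the equivariance, I would first record that $W^v\cong W(\Phi^\vee)$ acts on the lattice $V_\mathbb{Z}$ — this lattice is $W$-stable, being the $\mathbb{Z}$-span of $\Phi^\vee_{\mathrm{nd}}$ — and hence $\Rtot$-linearly on $V_\Rtot=V_\mathbb{Z}\otimes_\mathbb{Z}\Rtot$; write $\sigma_w$ for the automorphism of $V_\Rtot$ attached to $w\in W(\Phi)$. The only input needed is the relation $w(\alpha)=\alpha\circ w^{-1}$ recalled before the statement, which reads $w(\alpha)=\alpha\circ\sigma_w^{-1}$ as linear forms on $V_\Rtot$, and therefore gives $w(\alpha)\bigl(\sigma_w(v)\bigr)=\alpha(v)$ for every $v\in V_\Rtot$. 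Since $w$ carries $\Delta$ bijectively onto $w(\Delta)$ and $\Delta_P$ onto $w(\Delta_P)$, and since $F^v_\Rtot(\Delta,\Delta_P)$ is cut out by the conditions $\alpha(v)=0$ for $\alpha\in\Delta_P$ and $\alpha(v)>0$ for $\alpha\in\Delta\setminus\Delta_P$, applying $\sigma_w$ and transporting each condition through this identity yields the claim at once; running the same computation with $\geqslant$ in place of $>$ gives in particular $w\cdot\overline{C}^v_\Delta=\overline{C}^v_{w(\Delta)}$.

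For the covering, only the inclusion $\subseteq$ requires work. Fix $x\in V_\Rtot$; by the first part it suffices to produce a basis $\Delta'=w(\Delta)$ of $\Phi$ with $x\in\overline{C}^v_{\Delta'}$, which by Lemma~\ref{LemIntersectionChambers} amounts to $\alpha(x)\geqslant 0$ for all $\alpha\in\Phi^+_{\Delta'}$. The quickest route is to invoke Proposition~\ref{PropGoodSector} with $\Delta'=\Delta$ (when $\Rtot=\Rtot_\mathbb{Q}$), which directly supplies such a $w$. A self-contained variant that avoids any hypothesis on $\Rtot$: choose, via Lemma~\ref{LemWellChosen}, a vector $v\in C^v_{\mathbb{Z},\Delta}$, so that $\alpha(v)\in\mathbb{Z}\setminus\{0\}$ for every $\alpha\in\Phi$, and set
\[\Phi^+:=\{\alpha\in\Phi\mid\alpha(x)>0\}\cup\{\alpha\in\Phi\mid\alpha(x)=0,\ \alpha(v)>0\}.\]
Then $\Phi^+$ is a positive system of $\Phi$: for each $\alpha$ exactly one of $\pm\alpha$ lies in it (breaking ties by the sign of $\alpha(v)$ when $\alpha(x)=0$), and if $\alpha,\beta\in\Phi^+$ with $\alpha+\beta\in\Phi$ then $(\alpha+\beta)(x)=\alpha(x)+\beta(x)\geqslant 0$, equality forcing $\alpha(x)=\beta(x)=0$ and hence $(\alpha+\beta)(v)=\alpha(v)+\beta(v)>0$, so $\alpha+\beta\in\Phi^+$. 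By the standard correspondence between positive systems and bases, $\Phi^+=\Phi^+_{\Delta'}$ for a basis $\Delta'$, and $\Delta'=w(\Delta)$ for a unique $w\in W(\Phi)$ since $W(\Phi)$ acts simply transitively on bases; and $\alpha(x)\geqslant 0$ on $\Phi^+_{\Delta'}$ by construction, so $x\in\overline{C}^v_{\Delta'}=w\cdot\overline{C}^v_\Delta$ by the first part.

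I expect the only genuine obstacle to be in the covering statement: one must produce a chamber adapted to $x$ without the topological crutches available over $\mathbb{R}$, and the delicate point is checking that the tie-breaking set $\Phi^+$ is closed under addition — the potential failure occurs exactly when the sum of two boundary roots ($\alpha(x)=\beta(x)=0$) again satisfies $(\alpha+\beta)(x)=0$, and it is resolved precisely because the auxiliary form $v$ is strictly positive on both, hence on the sum. Everything else is bookkeeping with positive systems versus bases, together with Lemmas~\ref{LemIntersectionChambers} and~\ref{LemWellChosen} (or Proposition~\ref{PropGoodSector} if one is willing to assume $\Rtot=\Rtot_\mathbb{Q}$).
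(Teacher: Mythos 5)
Your proposal is correct. The first half (transporting the defining conditions via $w(\alpha)=\alpha\circ w^{-1}$) and your ``quickest route'' for the covering (invoke Proposition~\ref{PropGoodSector} with $\Delta'=\Delta$) are exactly the paper's proof. Your self-contained variant for the covering is, however, a genuinely different argument and worth comparing. The paper reaches the covering through Proposition~\ref{PropGoodSector}, whose proof is an induction on the number of roots in $\Phi^+_\Delta$ that are ``negative or infinitesimally negative'' at $x$ in the direction $v$, and which carries the standing hypothesis $\Rtot=\Rtot_\mathbb{Q}$ (used there to form quantities like $-\frac{1}{\alpha(v)}\alpha(x)$); the corollary as stated does not repeat that hypothesis, so the paper's proof implicitly inherits it. Your tie-breaking construction --- declaring $\alpha$ positive when $\alpha(x)>0$, and when $\alpha(x)=0$ deciding by the sign of the integer $\alpha(v)$ for a fixed $v\in C^v_{\mathbb{Z},\Delta}$ supplied by Lemma~\ref{LemWellChosen} --- produces a closed positive system directly, and the only facts you need are Bourbaki's correspondence between closed positive systems and bases and the simple transitivity of $W(\Phi)$ on bases, both already cited in the paper. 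This avoids the induction entirely and removes the $\mathbb{Q}$-divisibility assumption on $\Rtot$; its closure verification (the case $\alpha(x)=\beta(x)=0$ resolved by $(\alpha+\beta)(v)>0$) is exactly the delicate point and you handle it correctly. What your variant does not recover is the extra content of Proposition~\ref{PropGoodSector} --- uniqueness of $w$ and the nonemptiness of $C^v_{w(\Delta)}\cap(x+C^v_{\Delta'})$ --- which the paper needs later (e.g.\ Notation~\ref{NotGoodSector} for the Bruhat decomposition), but that is not required for this corollary.
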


\begin{proof}
For any $\alpha \in \Delta$ and any $x \in V_\Rtot$, we have
\[(w \cdot \alpha) \big( w \cdot x \big) =  \alpha \big(w^{-1} w \cdot x\big) = \alpha(x) \]
since $w \cdot \alpha = \alpha \circ w^{-1}$.
Thus $x \in F^v_\Rtot(\Delta,\Delta_P) \Longleftrightarrow w(x) \in w \cdot F^v_\Rtot(\Delta,\Delta_P) \Longleftrightarrow  w(x) \in F^v_\Rtot(w(\Delta),w(\Delta_P))$.

By Proposition~\ref{PropGoodSector}, for any $x \in V_\Rtot$, there exists $w \in W^v$ such that $x \in \overline{C}_{\Rtot,w(\Delta)}^v = w \cdot \overline{C}^v_\Rtot$. Hence we get the second equality.
\end{proof}

\begin{comment}
 \red{
\begin{lemma}\label{l_fixator_vector_face}
    Let $\Delta$ be a basis of $\Phi$ and $\Delta_P\subset \Delta$. Then the pointwise fixator of $F^v_\Rtot(\Delta,\Delta_P)$ in $W(\Phi)$ is $W_{\Delta_P}=\langle r_{\alpha}\mid \alpha\in \Delta_P\rangle$.
\end{lemma}

\begin{proof}
Set $F_\Rtot=F^v_{\Rtot}(\Delta,\Delta_P)$ and $F_\Z=F^v_{\Z}(\Delta,\Delta_P)$. Let $w\in W(\Phi)$ be such that $w.F_{\Rtot}=F_{\Rtot}$.  Let $v\in F_\Z$ and $\lambda\in \Rtot_{>0}$. By Lemma~\ref{LemHalfLineInSector}, $\lambda v\in F_\Rtot$ and $w.\lambda v=\lambda w.v\in F_\Rtot$. Let $\alpha\in \Delta_P$ (resp. $\Delta\setminus \Delta_P)$. Then $\lambda\alpha(w.v)=0$ (resp. $\lambda \alpha(w.v)>0$)  and thus $\alpha(w.v)=0$ (resp. $\alpha(w.v)>0$, otherwise we would have $\lambda \alpha(w.v)<0$ or $\lambda \alpha(w.v)=0$). Consequently $w.F_\Z\subset F_\Z$. By \cite[Chapitre V, 3.3 Proposition 1]{Bourbaki-Lie456} we deduce $w\in W_{\Delta_P}$. Conversely, take $\alpha\in \Delta_P$. Let $x\in F_\Rtot$. Then $r_\alpha.x=x-\alpha(x)\alpha^\vee=x$ by Fact~\ref{FactActionOfReflection} \blue{A : faut-il mettre le lemme après pour ne pas citer un fait ultérieur ?}. Therefore $r_\alpha$ fixes $F_\Rtot$ and $W_{\Delta_P}$ fixes $F_\Rtot$.
\end{proof}}
\end{comment}

\begin{lemma}\label{lemSign_of_a_vectorial_face}
Let $\beta\in \Phi$ and $F^v$ be a vector face of $V_\Rtot$. Then either $\beta(F^v) \subset \Rtot_{>0}$ or $\beta(F^v)=\{0\}$ or $\beta(F^v)\subset \Rtot_{<0}$. 
\end{lemma}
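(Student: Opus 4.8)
The strategy is to reduce to a ``standard'' vector face and then argue combinatorially, in the spirit of the proof of Lemma~\ref{LemIntersectionChambers}. First, by definition a vector face of $V_\Rtot$ is of the form $w\cdot F^v_\Rtot(\Delta_f,\Delta_P)$ for some $w\in W^v$ and $\Delta_P\subset\Delta_f$; by Corollary~\ref{corWeylVectorial} this equals $F^v_\Rtot(\Delta,\Delta_P')$ with $\Delta=w(\Delta_f)$ a basis of $\Phi$ and $\Delta_P'=w(\Delta_P)\subset\Delta$. So I may assume $F^v=F^v_\Rtot(\Delta,\Delta_P)$ for a basis $\Delta$ of $\Phi$ and a subset $\Delta_P\subset\Delta$. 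Since $\Phi=\Phi^+_\Delta\sqcup(-\Phi^+_\Delta)$ and $\beta(F^v)\subset\Rtot_{<0}$ is equivalent to $(-\beta)(F^v)\subset\Rtot_{>0}$, I may moreover assume $\beta\in\Phi^+_\Delta$, so that it only remains to show $\beta(F^v)=\{0\}$ or $\beta(F^v)\subset\Rtot_{>0}$.

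Next, by \cite[VI.1.6]{bourbaki1981elements} write $\beta=\sum_{\alpha\in\Delta}n_\alpha\alpha$ with all $n_\alpha\in\mathbb{Z}_{\geq 0}$. For $v\in F^v_\Rtot(\Delta,\Delta_P)$ one has $\alpha(v)=0$ for $\alpha\in\Delta_P$ and $\alpha(v)>0$ for $\alpha\in\Delta\setminus\Delta_P$, hence
$$\beta(v)=\sum_{\alpha\in\Delta\setminus\Delta_P}n_\alpha\,\alpha(v).$$
Each summand lies in $\Rtot_{\geq 0}$ (in a totally ordered abelian group one has $n\lambda\geq 0$ whenever $n\in\mathbb{Z}_{\geq 0}$ and $\lambda>0$, and $n\lambda>0$ as soon as $n>0$), and a finite sum of non-negative elements is non-negative, so $\beta(v)\geq 0$. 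The dichotomy is then controlled by $\mathrm{supp}(\beta)=\{\alpha\in\Delta\mid n_\alpha\neq 0\}$, which does not depend on $v$: if $\mathrm{supp}(\beta)\subset\Delta_P$ every summand vanishes and $\beta(v)=0$ for all $v\in F^v$; if instead there is $\alpha_0\in\mathrm{supp}(\beta)\setminus\Delta_P$, then $n_{\alpha_0}\alpha_0(v)>0$ while the remaining summands are $\geq 0$, so $\beta(v)>0$ for all $v\in F^v$. Combining with the reduction step yields the three stated cases.

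I do not expect any real obstacle here: the only mildly delicate point, compared to the classical situation over $\R$, is that one must not invoke connectedness or continuity of sign but instead rely purely on the order arithmetic of $\Rtot$ --- that a finite sum of non-negative elements is non-negative, strictly positive as soon as one term is, and that $n\lambda>0$ for $n$ a positive integer and $\lambda>0$ --- all of which are immediate from the axioms of a totally ordered abelian (real vector) space.
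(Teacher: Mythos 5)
Your proof is correct and follows essentially the same route as the paper's: the paper conjugates the root by $w^{-1}$ back to the fundamental face rather than conjugating the face to a general basis, but both then reduce to $\beta$ positive, expand it as a non-negative integer combination of simple roots, and read off the sign from whether the support of $\beta$ is contained in $\Delta_P$. No gaps.
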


\begin{proof}
Write $F^v=w \cdot F^v_\Rtot(\Delta_P)$, where $\Delta_P\subset \Delta_f$ and $w\in W^v$. Let $\beta'=w^{-1}.\beta$.
Then $\beta'\big(F^v_R(\Delta_P)\big)=\beta(F^v)$.
Let $\Phi^+$ be the set of positive roots defined from $\Delta_f$.
Up to replacing $\beta'$ by $-\beta'$, we may assume that $\beta'\in \Phi^+$.
Write $\beta'=\sum_{\alpha\in \Delta_f} n_\alpha\alpha$, with $n_\alpha\in \N$ for all $\alpha\in \Delta_f$.
Then if $\{\alpha\in \Delta_f\mid n_\alpha\neq 0\}\subset \Delta_P$, $\beta'(F^v)=\{0\}$ and else $\beta'(F^v) \subset \Rtot_{>0}$. The lemma follows.
\end{proof}

\begin{lemma}\label{l_neighbourhood}
Let $x\in V_\Rtot$. Let  $\mathscr{C}_x$ be the set of vector chambers containing $x$ in their closures. Then $\bigcup_{C\in \mathscr{C}_x} \overline{C}$ contains a neighbourhood of $x$.
\end{lemma}

\begin{proof}
    Let $\mathscr{C}'$ be the set of vector chambers of $V_{\Rtot}$ which do not belong to $\mathscr{C}_x$. If $C\in \mathscr{C}'$, we have $x\in V_{\Rtot}\setminus \overline{C}$ and $V_\Rtot\setminus \overline{C}$ is open. By Corollary~\ref{corWeylVectorial}, we have: \[ V_{\Rtot}\setminus \bigcup_{C'\in \mathscr{C}'} \overline{C'} = \left(\bigcup_{C\in \mathscr{C}_x\cup \mathscr{C}'} \overline{C}\right)\setminus \bigcup_{C'\in \mathscr{C}'} \overline{C'}=\bigcup_{C\in \mathscr{C}_x\cup\mathscr{C'} } \left(\overline{C}\setminus \bigcup_{C'\in \mathscr{C}'} \overline{C'}\right)=\left(\bigcup_{C\in \mathscr{C}_x} \overline{C}\right)\setminus \bigcup_{C'\in \mathscr{C}'} \overline{C'}.\] Therefore $\bigcup_{C\in \mathscr{C}_x}\overline{C}\supset V_{\Rtot}\setminus\bigcup_{C\in\mathscr{C}'} \overline{C},$ which is an open set containing $x$.
\end{proof}

Given a vector face $F^v$, we set: 
\begin{gather*}
\Phi^+_{F^v} := \{\beta \in \Phi,\ \beta(F^v) \subset \Rtot_{>0}\},\\ \Phi^-_{F^v} := \{\beta \in \Phi,\ \beta(F^v) \subset \Rtot_{<0}\},\\ \Phi^0_{F^v} := \{\beta \in \Phi,\ \beta(F^v) = \{0\}\}.
\end{gather*}
\index[notation]{p@$\Phi^+_{F}$}\index[notation]{p@$\Phi^-_{F}$}\index[notation]{p@$\Phi^0_{F}$}
\vspace{3ex}

\textbf{b) The affine apartment as an $R$-aff space}\\

Let's now move on to affine geometry and to $R$-aff spaces. In section \ref{raff}, $R$-aff spaces were defined when $R$ was a totally ordered real vector space. In order to be able to argue over other rings than the real numbers, we are going to consider here the slightly more general situation where $R$ is just a totally ordered abelian group. One can define in this setting a
category $\mathrm{Aff}_R$\index{R-aff@$R$-aff!category} of \textbf{$R$-aff spaces}\index{R-aff@$R$-aff!space} in exactly the same way as in section \ref{raff}:
\begin{itemize}
\item[-] Objects: an $R$-aff space is a pair $(V,A)$ where $V$ is a free $\mathbb{Z}$-module of finite type and $A$ is an affine space with underlying $\mathbb{Z}$-module $\overrightarrow{A}=V_R:=V \otimes_{\mathbb{Z}} R$;
\item[-] Morphisms: if $(V,A)$ and $(V',A')$ are two $R$-aff spaces, an $R$-aff map from $ (V,A)$ to $(V',A')$ is an affine map $f: A \rightarrow A'$ whose linear part $\overrightarrow{f}$ is induced by tensorization from a $\mathbb{Z}$-linear map $\overrightarrow{f}_{\mathrm{core}}: V \rightarrow V'$.
\end{itemize}
By abuse of notation, we will sometimes say that $A$ is an $R$-aff space with underlying $\mathbb{Z}$-module $V_R$ to mean that the pair $(V,A)$ is an $R$-aff space. 

Given an $R$-aff space $(V,A)$, we define the \textbf{aff-group}\index{aff!group} of $A$ by:
\[ \operatorname{Aff}_{R}(A) := \mathrm{Aut}_{\mathrm{Aff}_R}(V,A). \]
\index[notation]{a@$\operatorname{Aff}_{R}(A)$}
By introducing the action of $\mathrm{GL}(V)$ on $V_R$ induced by the embedding:
\begin{align*}
\mathrm{GL}(V) & \hookrightarrow \mathrm{GL}(V_R)\\
g & \mapsto g \otimes \mathrm{Id}_R
\end{align*}
and by fixing an origin $o$ of $A$, we can explicitly describe the aff-group as a semi-direct product:
\[ \operatorname{Aff}_R(A) \cong V_R \rtimes \mathrm{GL}(V).\]
Its action on $A$ is then given by the formula:
\[ (v,g) \cdot x = o + g(x-o) + v\qquad \forall (v,g) \in V_\Rtot \rtimes \mathrm{GL}(V),\ \forall x \in A.\]
The image of an element $f \in \operatorname{Aff}_R(A)$ in $\mathrm{GL}(V)$ is none other than $\overrightarrow{f}_{\mathrm{core}}$. By abuse of notation, we will denote it $\overrightarrow{f}$ in the sequel.

\begin{Not}
Let $\mathbb{A}_\Rtot$ be an $\Rtot$-aff space with some origin $o$ and underlying $\mathbb{Z}$-module $V_\Rtot$ defined as in~\ref{subsubTopology_totally_ordered_ring}.
Note that $V_\Rtot=0$ and $\mathbb{A}_\Rtot = \{o\}$ when $\Phi = \emptyset$ so that any action of any group on $\mathbb{A}_\Rtot$ is trivial.
In the rest of this section, we assume that $\Phi \neq \emptyset$ but any result can obviously be extended to the case of an empty root system.

Any root $\alpha\in \Phi$ induces canonically a continuous map $V_\Rtot \rightarrow \Rtot$ so that for any $\alpha \in \Phi$ and any $\lambda\in \Rtot$, one can define:
\begin{itemize}
\item an $R$-aff map\index{R-aff@$R$-aff!map} $\theta_{\alpha,\lambda}: \mathbb{A}_\Rtot \to \Rtot$ by  $\theta_{\alpha,\lambda}(x) = \alpha(x-o) +\lambda$;\index[notation]{t@$\theta_{\alpha,\lambda}$}
\item an "aff hyperplane"\index{aff!hyperplane} $H_{\alpha,\lambda} = \theta_{\alpha,\lambda}^{-1}(\{0\})$;\index[notation]{h@$H_{\alpha,\lambda}$}
\item an "open aff half-space"\index{aff!half-space} $\mathring{D}_{\alpha,\lambda} = \theta_{\alpha,\lambda}^{-1}(\Rtot_{>0})$ (resp. "closed aff half-space" $D_{\alpha,\lambda} = \theta_{\alpha,\lambda}^{-1}(\Rtot_{\geqslant 0})$).\index[notation]{d@$\mathring{D}_{\alpha,\lambda}$}
\end{itemize} 

By abuse of notation, for any root $\alpha \in \Phi$ and any point $x \in \mathbb{A}_\Rtot$, we will often denote $\alpha(x)$ instead of $\alpha(x-o)$.

We denote, by convention, $D_{\alpha,\infty} = \mathring{D}_{\alpha, \infty} = \mathbb{A}_\Rtot$ for any $\alpha \in \Phi$ so that $x \in D_{\alpha,\infty} \Longleftrightarrow \infty \geqslant -\alpha(x)$ extends the definition of the $D_{\alpha,\lambda} = \{x \in \mathbb{A}_\Rtot,\ \lambda \geqslant -\alpha(x) \}$ to any $\lambda \in \Rtot \cup \{\infty\}$.
\end{Not}

For $(\alpha,\lambda) \in \Phi \times \Rtot$, the element $\theta_{\alpha,\lambda}(x)\alpha^\vee$ belongs to the $\mathbb{Z}$-module $V_\Rtot$ for any $x \in \mathbb{A}_\Rtot$.
Thus, we define a map $r_{\alpha,\lambda}: \mathbb{A}_\Rtot \to \mathbb{A}_\Rtot$  by $r_{\alpha,\lambda}(x) = x - \theta_{\alpha,\lambda}(x) \alpha^\vee$.\index[notation]{r@$r_{\alpha,\lambda}$}
It is an $R$-aff map:

\begin{Fact}\label{FactReflectionIdentification}
For any $\alpha \in \Phi$ and any $\lambda \in \Rtot$,
the element $r_{\alpha,\lambda} \in \operatorname{Aff}_{R}(\mathbb{A}_R)$ is identified with the pair $(- \lambda \alpha^\vee,r_\alpha) \in V_\Rtot \rtimes \mathrm{GL}(V_\Z)$.
In particular, $\overrightarrow{r_{\alpha,\lambda}} = r_\alpha \in \mathrm{GL}(V_\Z)$.
\end{Fact}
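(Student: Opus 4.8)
The plan is to prove the statement by a direct computation that unwinds the two descriptions of elements of $\operatorname{Aff}_R(\mathbb{A}_R)$: on one side the defining formula $r_{\alpha,\lambda}(x) = x - \theta_{\alpha,\lambda}(x)\alpha^\vee$, and on the other side the explicit action $(v,g)\cdot x = o + g(x-o) + v$ of $V_\Rtot \rtimes \mathrm{GL}(V_\mathbb{Z})$ on $\mathbb{A}_R$ that was written down when identifying the aff-group as a semi-direct product. Showing these agree for the pair $(-\lambda\alpha^\vee, r_\alpha)$ simultaneously proves that $r_{\alpha,\lambda}$ is an $R$-aff map and identifies it with that pair.

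First I would recall the closed form of the vectorial reflection $r_\alpha$. By Notation~\ref{NotRootSystem} one has $r_\alpha(\beta^\vee) = \beta^\vee - \alpha(\beta^\vee)\alpha^\vee$ on coroots; since $r_\alpha$ is linear (and $r_\alpha = r_{2\alpha}$ when $2\alpha \in \Phi$), this extends to $r_\alpha(v) = v - \alpha(v)\alpha^\vee$ for every $v \in V_\mathbb{R}$. This endomorphism is $\mathbb{Z}$-linear and preserves $V_\mathbb{Z}$, since $W^v$ acts on $V_\mathbb{Z}$ as recalled in Section~\ref{subsec:rootsystems}; tensoring with $R$ one gets $r_\alpha(v) = v - \alpha(v)\alpha^\vee$ for all $v \in V_\Rtot$, where $\alpha\colon V_\Rtot \to \Rtot$ now denotes the tensorized linear form.

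Then, writing $v := x - o \in V_\Rtot$, one has $\theta_{\alpha,\lambda}(x) = \alpha(v) + \lambda$ by definition, so that
\[
r_{\alpha,\lambda}(x) \;=\; x - \bigl(\alpha(v)+\lambda\bigr)\alpha^\vee \;=\; o + \bigl(v - \alpha(v)\alpha^\vee\bigr) + \bigl(-\lambda\alpha^\vee\bigr) \;=\; o + r_\alpha(x-o) + \bigl(-\lambda\alpha^\vee\bigr),
\]
which is exactly the action of $(-\lambda\alpha^\vee, r_\alpha) \in V_\Rtot \rtimes \mathrm{GL}(V_\mathbb{Z})$ on $x$. Hence $r_{\alpha,\lambda} \in \operatorname{Aff}_R(\mathbb{A}_R)$ — in particular it is an $R$-aff map, its linear part being induced from the $\mathbb{Z}$-linear automorphism $r_\alpha$ of $V_\mathbb{Z}$ — and reading off the $\mathrm{GL}(V_\mathbb{Z})$-component gives $\overrightarrow{r_{\alpha,\lambda}} = r_\alpha$, as claimed.

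There is essentially no obstacle here: the argument is a one-line identification once the closed form of the reflection is in hand. The only mildly delicate point is the passage from the reflection formula on coroots to the form $r_\alpha(v) = v - \alpha(v)\alpha^\vee$ on all of $V_\Rtot$ and its compatibility with the extension of scalars $\mathbb{Z} \to R$; both are immediate from the definitions recalled in Section~\ref{subsec:rootsystems}, and the fact that $\theta_{\alpha,\lambda}(x)\alpha^\vee$ indeed lies in $V_\Rtot$ was already observed just before the statement.
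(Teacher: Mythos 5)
Your proof is correct and follows essentially the same route as the paper's: both unwind the semi-direct product action $(v,g)\cdot x = o + g(x-o) + v$ and check it agrees term-by-term with $x - \theta_{\alpha,\lambda}(x)\alpha^\vee$, using the closed form $r_\alpha(w) = w - \alpha(w)\alpha^\vee$. The paper runs the computation in the opposite direction and concludes by faithfulness of the action, but the content is identical.
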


\begin{proof}
For $x \in \mathbb{A}_R$, we have 
\begin{align*}
 (-\lambda \alpha^\vee,r_\alpha)(x) &= o + r_\alpha(x-o) - \lambda \alpha^\vee\\
 &= o + (x-o) - \alpha(x-o) \alpha^\vee - \lambda \alpha^\vee\\
 &= x - \theta_{\alpha,\lambda}(x) \alpha^\vee\\
 &= r_{\alpha,\lambda}(x)
\end{align*}
We get the identification by faithfulness of the action of $\mathrm{Aff}_{R}(\mathbb{A}_{R})$ on $\mathbb{A}_R$.
\end{proof}

\begin{Def}
The $R$-aff map $r_{\alpha,\lambda}$ is called an "aff-reflection" with respect to the aff-hyperplane $H_{\alpha,\lambda}$\index{aff!reflection} because it satisfies:
\begin{itemize}
\item $r_{\alpha,\lambda}^2 = \operatorname{id}_{\mathbb{A}_\Rtot}$;
\item $r_{\alpha,\lambda}(x) = x \Longleftrightarrow x \in H_{\alpha,\lambda}$;
\item $r_{\alpha,\lambda}(D_{\alpha,\lambda})  =D_{-\alpha,-\lambda}$.
\end{itemize}
\end{Def}

\begin{proof}
The second statement is immediate from the formula since $\lambda \alpha^\vee = 0 \Longleftrightarrow \lambda = 0$ for any $\lambda  \in R$.

Let $y = r_{\alpha,\lambda}(x) = x - \theta_{\alpha,\lambda}(x) \alpha^\vee$.
Then
\begin{align*}
\theta_{\alpha,\lambda}(y) =& \alpha(y) + \lambda\\
=& \alpha(x) - \alpha(\alpha^\vee) \theta_{\alpha,\lambda}(x) + \lambda\\
=& \Big(\alpha(x) + \lambda\Big) - 2 \theta_{\alpha,\lambda}(x)\\
=& - \theta_{\alpha,\lambda}(x)
\end{align*}
Thus we get the first statement:
\begin{align*}
r_{\alpha,\lambda}(y) = & y- \theta_{\alpha,\lambda}(y) \alpha^\vee\\
=&y + \theta_{\alpha,\lambda}(x)\alpha^\vee\\
=&x
\end{align*}

Finally, the third statement is given by the following Fact~\ref{FactActionOfReflection}.
\end{proof}

\begin{Fact}\label{FactActionOfReflection}
For any $(\alpha,\lambda),(\beta,\mu) \in \Phi \times \Rtot$, we have:
\[ r_{\alpha,\lambda}(\mathring{D}_{\beta,\mu}) = \mathring{D}_{\gamma,\rho}
\qquad \text{ and } \qquad
r_{\alpha,\lambda}(H_{\beta,\mu}) = H_{\gamma,\rho}
\]
for $(\gamma,\rho) \in \Phi \times \Rtot$ such that:
\begin{align*}
\gamma = r_\alpha(\beta) & = \beta - \beta(\alpha^\vee) \alpha &
\rho = \mu - \beta(\alpha^\vee) \lambda
\end{align*}
\end{Fact}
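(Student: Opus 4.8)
The plan is to reduce the statement to the single identity of $\Rtot$-valued $R$-aff maps
\[ \theta_{\beta,\mu} \circ r_{\alpha,\lambda} = \theta_{\gamma,\rho} \colon \mathbb{A}_\Rtot \to \Rtot, \]
and then to read off both claimed equalities by pulling back the subsets $\Rtot_{>0}$ and $\{0\}$ of $\Rtot$. First I would use that $r_{\alpha,\lambda}$ is a bijection of $\mathbb{A}_\Rtot$ equal to its own inverse (the involutivity $r_{\alpha,\lambda}^2 = \operatorname{id}_{\mathbb{A}_\Rtot}$ recorded just above), so that a point $y \in \mathbb{A}_\Rtot$ lies in $r_{\alpha,\lambda}(\mathring{D}_{\beta,\mu})$ if and only if $r_{\alpha,\lambda}(y) \in \mathring{D}_{\beta,\mu}$, i.e. $\theta_{\beta,\mu}(r_{\alpha,\lambda}(y)) > 0$, and likewise $y \in r_{\alpha,\lambda}(H_{\beta,\mu})$ if and only if $\theta_{\beta,\mu}(r_{\alpha,\lambda}(y)) = 0$.

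Then I would perform the one computation that does the work. Using the formula $r_{\alpha,\lambda}(y) = y - \theta_{\alpha,\lambda}(y)\,\alpha^\vee$, the $\Rtot$-linearity of $\beta \colon V_\Rtot \to \Rtot$ (with $\beta(\alpha^\vee) \in \mathbb{Z}$), and the convention $\alpha(x) := \alpha(x-o)$, one obtains, for every $y \in \mathbb{A}_\Rtot$:
\begin{align*}
\theta_{\beta,\mu}\big(r_{\alpha,\lambda}(y)\big)
&= \beta\big(r_{\alpha,\lambda}(y) - o\big) + \mu
 = \beta(y-o) - \theta_{\alpha,\lambda}(y)\,\beta(\alpha^\vee) + \mu\\
&= \beta(y-o) - \big(\alpha(y-o) + \lambda\big)\beta(\alpha^\vee) + \mu\\
&= \big(\beta - \beta(\alpha^\vee)\alpha\big)(y-o) + \big(\mu - \beta(\alpha^\vee)\lambda\big)
 = \theta_{\gamma,\rho}(y).
\end{align*}
This is exactly $\theta_{\beta,\mu} \circ r_{\alpha,\lambda} = \theta_{\gamma,\rho}$. (Alternatively it could be extracted from Fact~\ref{FactReflectionIdentification} together with the relation $w(\alpha) = \alpha \circ w^{-1}$, but the direct substitution is shorter.)

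Combining the two steps gives $y \in r_{\alpha,\lambda}(\mathring{D}_{\beta,\mu}) \Longleftrightarrow \theta_{\gamma,\rho}(y) > 0 \Longleftrightarrow y \in \mathring{D}_{\gamma,\rho}$, and verbatim the same argument with $>$ replaced by $=$ gives $r_{\alpha,\lambda}(H_{\beta,\mu}) = H_{\gamma,\rho}$ (and, replacing $>$ by $\geqslant$, also $r_{\alpha,\lambda}(D_{\beta,\mu}) = D_{\gamma,\rho}$). It only remains to note that the pair $(\gamma,\rho)$ is again admissible: $\gamma = r_\alpha(\beta) \in \Phi$ since $r_\alpha$ stabilizes the root system $\Phi$, and $\rho = \mu - \beta(\alpha^\vee)\lambda \in \Rtot$ trivially. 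I do not anticipate any genuine obstacle: the statement is a formal consequence of the definition of $r_{\alpha,\lambda}$ and the linearity of roots, and the only point requiring a little care is keeping track of the direction of the involution, so that one computes $\theta_{\beta,\mu} \circ r_{\alpha,\lambda}$ rather than some transposed version of it.
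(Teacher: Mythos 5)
Your proof is correct and rests on exactly the same substitution computation as the paper's; the only difference is organizational, in that you package it as the single identity $\theta_{\beta,\mu}\circ r_{\alpha,\lambda}=\theta_{\gamma,\rho}$ and take preimages of $\Rtot_{>0}$ and $\{0\}$, whereas the paper argues by double inclusion (proving $r_{\alpha,\lambda}(\mathring{D}_{\beta,\mu})\subset\mathring{D}_{\gamma,\rho}$ and then applying that inclusion to $(\gamma,\rho)$ together with $r_{\alpha,\lambda}^2=\operatorname{id}$). Your use of the involutivity is legitimate, since that identity is established independently of this Fact.
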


\begin{proof}
These are exactly the same results as in \cite[VI§2]{bourbaki1981elements}.
We recall that $W^v$ is generated by the $r_\alpha: \Phi \to \Phi$ for $\alpha \in \Phi$ given by $r_\alpha(\beta) = \beta - \beta(\alpha^\vee) \alpha$ so that $r_\alpha^2 = \operatorname{id}$.

We firstly prove that $r_{\alpha,\lambda}(\mathring{D}_{\beta,\mu}) \subset \mathring{D}_{\gamma,\rho}$
when $\gamma = r_\alpha(\beta)$ and $\rho = \mu - \beta(\alpha^\vee) \lambda$.
Note that $\gamma(\alpha^\vee) = \beta(\alpha^\vee) - \beta(\alpha^\vee) \alpha(\alpha^\vee) = - \beta(\alpha^\vee)$.
Hence, if we take $x \in \mathring{D}_{\beta,\mu}$ and we set $y := r_{\alpha,\lambda}(x)$, then we have:
\begin{align*}
\gamma(y) = & \gamma(x) - \gamma(\alpha^\vee) \theta_{\alpha,\lambda}(x)\\
= & \Big( \beta(x) - \beta(\alpha^\vee) \alpha(x)  \Big) + \beta(\alpha^\vee) \theta_{\alpha,\lambda}(x) \\
=& \beta(x) - \beta(\alpha^\vee) \alpha(x) + \beta(\alpha^\vee) \Big(\alpha(x) + \lambda\Big)\\
=& \beta(x) + \beta(\alpha^\vee) \lambda\\
> & -\mu + \beta(\alpha^\vee) \lambda = - \rho
\end{align*}
Thus $y \in \mathring{D}_{\gamma,\rho}$.

Conversely, choose $y \in \mathring{D}_{\gamma,\rho}$ and set $x := r_{\alpha,\lambda}(y)$.
Then $y = r_{\alpha,\lambda}(x)$ since $r_{\alpha,\lambda}^2 = \operatorname{id}$.
Thus $x \in r_{\alpha,\lambda}(\mathring{D}_{\gamma,\rho}) \subset \mathring{D}_{r_\alpha(\gamma),\rho -\gamma(\alpha^\vee)\lambda}$.
Since $r_\alpha(\gamma) =  r_\alpha^2(\beta) = \beta$ and
$\rho - \gamma(\alpha^\vee)\lambda = \rho + \beta(\alpha^\vee) \lambda= \mu$,
we get that $r_{\alpha,\lambda}(\mathring{D}_{\gamma,\rho}) \subset \mathring{D}_{\beta,\mu}$.
Thus $\mathring{D}_{\gamma,\rho} = r_{\alpha,\lambda} ( \mathring{D}_{\beta,\mu} )$.

The same equality holds for affine hyperplanes.
\end{proof}

\begin{definition}\label{defApartment}
An \textbf{affine apartment}\index{apartment!affine} over $\Rtot$ is a $5$-tuple

$\underline{\A_\Rtot}=\big(\A_\Rtot,V_\R,\Phi, (\Gamma_\alpha)_{\alpha\in\Phi}, \widehat{W}\big)$ such that:
\begin{enumerate}
\item $V_\R$ is a finite dimensional $\R$-vector space;

\item $\Phi$ is a root system over $(V_\R)^*$;

\item if $V_\Z$ denotes the lattice of coweights of $\Phi$ in $V_\R$, the pair $(V_\Z,\A_\Rtot)$ is an $R$-aff space;

\item\label{itCondition_Gamma_alpha}  $(\Gamma_\alpha)_{\alpha\in\Phi}$ is a family of unbounded subsets of $\Rtot$ containing $0$, satisfying the following property.
Let $\mathscr{H}=\{H_{\alpha,\lambda}\mid \alpha\in \Phi,\lambda\in \Gamma_\alpha\}$. For $H=H_{\alpha,\lambda}\in \mathscr{H}$ denote by $r_H$ the aff reflection $r_{\alpha,\lambda}$ of $\A_R$ fixing $H$ and whose vectorial part is $r_\alpha$.  Then $r_H$ stabilizes $\mathscr{H}$ for every $H\in \mathscr{H}$;

 \item $\widehat{W}$ is a subgroup of $W^v\ltimes \A_{\Rtot}$  containing the $r_{H}$, for $H\in \mathscr{H}$ and stabilizing $\mathscr{H}$. \index[notation]{w@$\widehat{W}$}
\end{enumerate} 
\end{definition}

When we will associate an apartment to a reductive group $G$, $\widehat{W}$ will be taken to be the extended affine Weyl group, which is  the group of the automorphisms of $\A_\Rtot$ induced by an element of $G$ stabilizing $\A_\Rtot$.

A \textbf{sector-face}\index{face!sector}\index{sector-face} $Q$ (resp. a \textbf{sector}\index{sector} $Q$) is a set of the form $x+F^v$ (resp. $x+C^v$) for some $x\in \A_\Rtot$ and some vector face $F^v$ (resp. vector chamber $C^v$) of $V_\Rtot$.
The \textbf{direction}\index{sector!direction} of $Q$ is $F^v$ (resp. $C^v$) and its \textbf{base point}\index{sector!base point} is $x$.

A set of the form $D_{\alpha,\lambda}$ (resp $\mathring{D}_{\alpha,\lambda}$, resp. $H_{\alpha,\lambda}$) for $\lambda\in \Gamma_\alpha$ and $\alpha\in \Phi$ is called a \textbf{half-apartment}\index{apartment!half} (resp. an \textbf{open half-apartment}, resp a \textbf{wall}\index{wall}) of $\A_\Rtot$.

A set of the form $D_{\alpha,\lambda}$\index[notation]{d@$D_{\alpha,\lambda}$} (resp $\mathring{D}_{\alpha,\lambda}$, resp. $H_{\alpha,\lambda}$\index[notation]{h@$H_{\alpha,\lambda}$}) for $\lambda\in \Rtot$ and $\alpha\in \Phi$ is called a \textbf{phantom half-apartment}\index{apartment!phantom half} (resp. a \textbf{phantom open half-apartment}, resp a \textbf{phantom wall}\index{wall!phantom}) of $\A_\Rtot$. This phantom part terminology is there to indicate that the objects are carried by the directions induced by the roots but not by the values prescribed by the sets $\Gamma_\alpha$. When in section~\ref{SecQuasiSplitGroups} we make a use of algebraic groups over a valued field, these phantom parts may appear after some extension of the base field.
 
The \textbf{affine weyl group}\index{Weyl group!affine}  $\Waff$\index[notation]{w@$\Waff$} of $\A_\Rtot$ is the subgroup of $\mathrm{Aff}_R(\A_\Rtot)$ generated by the $r_H$, for $H\in\mathscr{H}$.
By Fact~\ref{FactReflectionIdentification}, it is a subgroup of $V_\Rtot\rtimes W^v$.
By condition~(\ref{itCondition_Gamma_alpha}) and Fact~\ref{FactActionOfReflection}, if $\alpha\in \Phi$ and $\lambda \in \Gamma_\alpha$, then $r_\alpha(H_{\alpha,\lambda})=H_{-\alpha,\lambda}=H_{\alpha,-\lambda}$ and thus $-\Gamma_\alpha=\Gamma_\alpha$.
Since $W^v$ is generated by the $r_\alpha$, applying Fact~\ref{FactActionOfReflection}, we get that for any $\alpha\in \Phi$, $w\in W^v$ and $\lambda\in \Gamma_\alpha$, we have $w \cdot H_{\alpha,\lambda}=H_{w \cdot \alpha,\lambda}$.
Thus $\Gamma_\alpha=\Gamma_{w \cdot \alpha}$.

\subsubsection{Local combinatorial structure of the affine apartment}
\label{SubsecGerms}

In the classical Bruhat-Tits theory, when one studies a (quasi-split) reductive group $\mathbf{G}$ over a field $\mathbb{K}$ endowed with a valuation $\omega: \mathbb{K} \rightarrow \mathbb{Z} \cup \{\infty\}$, the local combinatorial structure of the apartments of the building associated to $\mathbf{G}$ is usually encoded thanks to the notion of a chamber. Things become more complicated when one works over a field $\mathbb{K}$ that is endowed with a non-discrete valuation, for instance with a surjective valuation $\omega: \mathbb{K} \rightarrow \Q \cup \{\infty\}$. In that case, the notion of a chamber makes no sense as a set anymore and the local combinatorial structure is usually encoded thanks to filters. \\

In this paragraph, we would like to define such local combinatorial structures on the affine apartment that has been introduced in the previous section. For that purpose, we fix a totally ordered abelian group $R$ as well as an affine apartment  $\underline{\A_\Rtot}=\big(\A_\Rtot,V_\R,\Phi, (\Gamma_\alpha)_{\alpha\in\Phi},\widehat{W}\big)$ over $\Rtot$.
Let $V_\Z$ be the lattice of coweights in $V_\R$ of $\Phi$ and set $V_R := V_\Z \otimes_{\mathbb{Z}} R$.

\vspace{3ex}

\textbf{a) Filters}\\

A filter\index{filter} on a set $\mathcal{E}$ is a nonempty set $\FCC$ of nonempty subsets of $\mathcal{E}$ such that, for all subsets $E$, $E'$ of $\mathcal{E}$, one has:\begin{itemize}
\item $E$, $E'\in \FCC$ implies $E\cap E'\in \FCC$ 

\item $E'\subset E$ and $E'\in \FCC$ implies $E\in \FCC$.

\end{itemize}

If $\mathcal{E}$ is a set and $\FCC,\FCC'$ are filters on $\mathcal{E}$, we define $\FCC\Cup \FCC'$\index[notation]{Z@$\Cup$} to be the filter $\{E\cup E'\mid  (E,E')\in \FCC\times \FCC'\}$.

Let $\mathcal{E},\mathcal{E}'$ be sets, $f:\mathcal{E}\rightarrow\mathcal{E}'$ be a map and $\FCC$ be a filter on $\mathcal{E}$. Then $f(\FCC):=\{E'\subset \mathcal{E}'\mid \exists E\in \FCC, E'\supset f(E)\}$ is  a filter on $\mathcal{E}'$. We say that a map fixes a  filter if it fixes at least one element of this  filter.

\begin{comment}
\red{If $\mathcal{E}$ is  a topological space and $\FCC$ is a filter on $\EC$, then the \textbf{closure of $\FCC$} is $\overline{\FCC}=\{E'\subset \EC\mid \exists E\in \FCC, E'\supset \overline{E}\}.$}
\end{comment}

If $\FCC$ is a filter on a set $\mathcal{E}$, and $E$ is a subset of $\mathcal{E}$, one says that $\FCC$ contains $E$ if every element of $\FCC$ contains $E$. We denote it $\FCC\Supset E$\index[notation]{Z@$\Supset$}. If $E$ is nonempty, the \textbf{principal filter}\index{filter!principal} on $\EC$ associated with $E$ is the filter  $\FCC_{E,\EC}$\index[notation]{f@$\FCC_{E,\EC}$} of subsets of $\EC$ containing $E$. 

 A filter $\FCC$ is said to be contained in another filter $\FCC'$ (resp. in a subset $Z$ in $\mathcal{E}$) if every set in $\FCC'$  is in $\FCC$ (resp. if $Z\in \FCC$). We then denote $\FCC\Subset  \FCC'$\index[notation]{Z@$\Subset$} (resp. $\FCC\Subset Z$).
 
 These definitions of containment\index{filter!containment} are inspired by the following facts. Let $\EC$ be a set, $\FCC$ be a filter on $\EC$ and $E,E'\subset \EC$. Then :\begin{itemize}
\item $E\subset E'$ if and only if $\FCC_{E,\EC}\Subset \FCC_{E',\EC}$,

\item $E\Subset \FCC$ if and only if $\FCC_{E,\EC}\Subset \FCC$,

\item $E\Supset \FCC$ if and only if $\FCC_{E,\EC}\Supset \FCC$.

\end{itemize}

\vspace{3ex}

\textbf{b) The enclosure of a filter}\\

We now define the enclosure of a filter on $\A_\Rtot$.
The main motivation to introduce this object is the fact that in a building, the intersection of two apartments $A$ and $B$ is a finite intersection of half-apartments of $A$, see Axiom~\ref{axiomA2} in Definition~\ref{defBuildings}.

\begin{definition}\label{defEnclosure}
Let $\underline{\A_\Rtot} = \left( \mathbb{A}_\Rtot, V_\R, \Phi, \left(\Gamma_\alpha\right)_{\alpha\in \Phi},\widehat{W}\right)$ be an apartment.
Let $\VC$ be a filter on $\A_\Rtot$.
Then we define the \textbf{enclosure}\index{filter!enclosure} $\cl(\VC)$ of $\VC$ as:
\[\cl(\VC)=\{X\subset \A_\Rtot\mid \exists (\lambda_\alpha)\in \prod_{\alpha\in \Phi} (\Gamma_\alpha\cup \{\infty\})\mid \ X\supset \bigcap_{\alpha\in \Phi} D_{\alpha,\lambda_\alpha}\Supset \VC\}.\]
If $\Omega$ is a subset of $\A_\Rtot$, we set $\cl(\Omega)=\cl(\FCC_{\Omega,\A_\Rtot})$.
A subset $\Omega$ of $\A_\Rtot$ is said to be \textbf{enclosed}\index{enclosed subset} if it is an element of $\cl(\Omega)$, that is, if it is a finite intersection of half-apartments.
\end{definition}

Our definition of the enclosure is inspired by \cite[2.2.2]{gaussent2008kac}, but it differs from the enclosure $\cl_{\mathrm{BT}}$ defined in \cite[7.1.2]{ BruhatTits1}. Indeed:
\begin{itemize}
\item If $\Omega\subset \A_\Rtot$, then $\cl(\Omega)$ is a filter whereas $\cl_{\mathrm{BT}}(\Omega)$ is a set.
But even if we identify a set with the associated principal filter, the notions differ.
Indeed, suppose for example that  $\A=\R$ and that $\Gamma_\alpha=\Q$, for all $\alpha\in \Phi$. Let $x\in \R\setminus \Q$.
Then $\cl(\{x\})$ is the set of subsets of $\R$ containing a neighborhood of $x$, whereas $\cl_{\mathrm{BT}}(\{x\})=\{x\}$ and
$\FCC_{\{x\},\A_\Rtot}\neq \cl(\{x\})$.

\item In fact, one has $\cl(\Omega)\Supset \cl_{\mathrm{BT}}(\Omega)$ for every subset $\Omega$ of $\A_\Rtot$: our enclosure is therefore bigger.
\end{itemize}

\begin{Ex}
Suppose that $\A_R$ is associated with a split reductive group over a field $\mathbb{K}$ equipped with a valuation $\omega:\mathbb{K}\rightarrow \Lambda\cup\{\infty\}$. Then one has $\Gamma_\alpha=\Lambda$ for all $\alpha\in \Phi$. Suppose that $\omega$ is discrete, that is $\Lambda=\Z$ (up to renormalization) and set $R=\R$. Then if $\Omega$ is a subset of $\A_R$, $\cl(\Omega)$ is the principal filter on $\A_\R$ associated to $\cl_{\mathrm{BT}}(\Omega)$ and thus we can avoid the use of filters. When $\Lambda$ is not contained in $\R$ however the enclosure of a set is not necessarily a principal filter, even if $\Lambda$ is discrete. Indeed, set $\Lambda=\Z^2$, $V_\R=\R$, $R=\R^2$  and $\A_R=\R\otimes V_\R=\R^2$, where $R$ is equipped with the lexicographical order. Consider $\Omega=\{0\}\times \R\subset \A_R$. Then \[\cl(\Omega)=\{X\subset \R^2\mid \exists a,b\in \R,\ X\supset [(-1,a),(1,b)]_{\R^2}\},\] and this filter is not principal.
\end{Ex}

\vspace{3ex}

\textbf{c) Germs, faces and local faces}\\

Let $x$ be an element in $ \A_\Rtot$ and let $F^v$ be a vector face in $V_\Rtot$. Consider the sector-face $Q=x+F^v$.
The \textbf{germ of $Q$ at $x$}\index{germ} is the filter $\germ_x(Q)=\{\Omega\cap Q\mid \Omega\subset \A_R \text{ is a neighborhood of }x\}$.
A \textbf{local face}\index{face!local} (resp. a \textbf{local chamber}\index{chamber!local}) is a filter of the form $\germ_x(Q)$\index[notation]{g@$\germ_x(\cdot)$} for some sector-face $Q$ (resp. sector $Q$) based at $x$.
If $w_0$ is the longest element of $W^v$ and $Q_1 = x + F^v_1$ and $Q_2 = x+F_2^v$ are two sector-faces, we say that $Q_1$ and $Q_2$ have opposite directions or that the local chambers $\germ_x(Q_1)$ and $\germ_x(Q_2)$ are opposite if $F_v^2=w_0 \cdot F_v^1$.

This will be a useful notation for Bruhat decomposition~\ref{ThmBruhatDecomposition}. Note that $germ_x(Q)$ is denoted  $F^\ell(x,F^v)$ in \cite{rousseau2011masures}.

The \textbf{face}\index{face} $\FC_{x,F^v}$\index[notation]{f@$\FC_{x,F^v}$} associated to $x$ and $F^v$ is the filter on $\A_\Rtot$ generated by the sets of the form \[\XC=\bigcap_{\alpha\in \Psi} \mathring{D}_{\alpha, \lambda_\alpha}\cap \bigcap_{\alpha\in \Phi \setminus\Psi }D_{\alpha,\lambda_\alpha},\]

where $\Psi\subset  \Phi$ and  $(\lambda_\alpha)\in  (\Gamma_\alpha\cup\{\infty\})^\Phi$ such that $\XC \Supset (x+F^v)\cap \Omega$, for some neighbourhood $\Omega$ of $x$ in $\A_\Rtot$.

If $F^v = \{0\}$ and $\alpha(x) \in \Gamma_\alpha$ for any $\alpha \in \Phi$, then $\FC_{x,F^v}$ is called a \textbf{vertex}\index{vertex}. It is the principal filter associated to $\{x\}$.

The \textbf{germ of $Q$ at infinity}\index{germ!at infinity} is the filter $\germ_\infty(Q)=\{\Omega\subset  \A\mid \exists \xi\in F^v,\ \Omega\supset x+\xi+F^v\}$\index[notation]{g@$\germ_\infty(\cdot)$}.
If $Q$ is a sector, then a subset $\XC$ of $\A$ is in $\germ_\infty(Q)$ if and only if $\XC$ contains a subsector of $Q$.

\begin{lemma}\label{lemEnclosure_point_in_sector}
Let $Q$ be a sector of $\A_\Rtot$ with base point $x$ and let $y\in Q$. Write $Q=x+C^v$, where $C^v$ is a vector chamber of $\A_\Rtot$. Then $\cl(\{x,y\})\Supset \mathcal{F}_{x,C^v}\Supset \germ_x(Q)$  and $\cl(\germ_x(Q))=\cl(\FC_{x,C^v})$.
\end{lemma}

\begin{proof}
By definition, we have $\FC_{x,C^v}\Supset \germ_x(Q)$.

Let $C^v$ be the vector chamber of $\A_{\Rtot}$ such that $Q=x+C^v$. Let $\Delta$ be the basis of $\Phi$ associated to $C^v$. Let $\Omega\in \cl(\{x,y\})$. Let $(\lambda_\alpha)_{\alpha\in \Phi}\in (\Gamma_\alpha\cup \{\infty\})^\Phi$ be such that  $\Omega\supset\bigcap_{\alpha\in \Phi} D_{\alpha,\lambda_\alpha}\supset \{x,y\}$. Then for all $\alpha\in \Phi_{\Delta}^-$: $$\alpha(x)>\alpha(y)\geq -\lambda_\alpha$$ and for all $\alpha\in \Phi_{\Delta}^+$: $$\alpha(x)\geq -\lambda_\alpha.$$ Set $\XC_\Rtot=\bigcap_{\alpha\in \Phi_\Delta^-} \mathring{D}_{\alpha,\lambda_\alpha}$. Then $\XC_{\Rtot} \ni x$ and $\bigcap_{\alpha\in \Phi_{\Delta}^+}\mathring{D}_{\alpha,\lambda_\alpha}\supset Q$. Thus \[\Omega\supset \bigcap_{\alpha\in \Phi} D_{\alpha,\lambda_\alpha}\supset \bigcap_{\alpha\in\Phi_{\Delta}^-}D_{\alpha,\lambda_\alpha}\cap \bigcap_{\alpha\in \Phi_{\Delta}^+}\mathring{D}_{\alpha,\lambda_\alpha}\supset \XC_{\Rtot} \cap Q\] and thus $\Omega\in \germ_x(Q)$, which proves   that $\cl(\{x,y\})\Supset \FC_{x,C^v}$.

As $\germ_x(Q)\Subset \FC_{x,C^v}$, we have $\cl(\germ_x(Q))\supset \cl(\FC_{x,C^v})$ (as sets of subset of $\A_\Rtot$). Let now $\Omega\in \cl(\germ_x(Q))$. Then there exists an open subset $\Omega'$ of $\A_\Rtot$ containing $x$ and $(\lambda_\alpha)\in (\Gamma_\alpha\cup \{\infty\})^{\Phi}$ such that  $\Omega\supset \bigcap_{\alpha\in \Phi}D_{\alpha,\lambda_\alpha}\supset \Omega'\cap Q$. Then  $\bigcap_{\alpha\in \Phi}D_{\alpha,\lambda_\alpha}\in \cl(\FC_{x,C^v})$ and thus $\Omega\in \cl(\FC_{x,C^v})$. Therefore $\cl(\germ_x(Q))\subset \cl(\FC_{x,C^v})$. Lemma follows.
\end{proof}

\subsubsection{\texorpdfstring{$\Rtot$}{R}-distances on affine apartments}\label{subsubMetric}

As before, fix a totally ordered abelian group $R$ and consider an affine apartment $\underline{\A_\Rtot}=\big(\A_\Rtot,V_\R,\Phi, (\Gamma_\alpha)_{\alpha\in\Phi},\widehat{W}\big)$ over $R$.
Let $V_\Z$ be the lattice of coweights in $V_\R$ of $\Phi$ and set $V_R := V_\Z \otimes_{\mathbb{Z}} R$\index[notation]{v@$V_R$}. The goal of this section consists in endowing $\A_\Rtot$ with a $\Waff$-invariant distance.

First of all, for $\lambda \in \Rtot$, we define the absolute value of $\lambda$ as: 
$$|\lambda| = \begin{cases}
-\lambda \text{\;\;\;\;\; if $\lambda \in \Rtot_{<0}$}\\
\lambda \text{\;\;\;\;\; otherwise.}
\end{cases}$$
We have $|\lambda + \mu| \leqslant |\lambda| + |\mu|$  and $|n \lambda | = |n| |\lambda| $ for any $\lambda,\mu \in \Rtot$ and $n\in \mathbb{Z}$.

Let now $\Phi^+$ be any choice of a subset of positive roots and set $\| x \|_\Rtot  = \sum_{\alpha \in \Phi^+} | \alpha(x) |$ for $x\in V_R$. 
Then $2 \|x\|_\Rtot = \sum_{\alpha \in \Phi} |\alpha(x) |$.
Since $\Rtot$ is a torsion-free $\mathbb{Z}$-module, by definition of root systems, this defines a $W(\Phi^\vee)$-invariant map $\| \cdot \|_\Rtot: V_\Rtot \to \Rtot_{\geqslant 0}$ that does not depend on the choice of $\Phi^+$ and that satisfies:
\begin{gather*}
\|v+w \|_\Rtot \leq \| v \|_\Rtot + \| w \|_\Rtot,\\
\|\lambda v \|_\Rtot = \|v\|_\mathbb{Z} |\lambda|,
\end{gather*}
 for any $v,w \in V_\Z$ and any $\lambda \in \Rtot$.

Therefore the map $d_\Rtot^{\mathrm{std}}: \mathbb{A}_\Rtot \times \mathbb{A}_\Rtot \to \Rtot_{\geqslant 0}$\index[notation]{d@$d_\Rtot^{\mathrm{std}}$} defined by $d_\Rtot^{\mathrm{std}}(x,y) = \|y-x\|_\Rtot$ defines an $\Rtot$-distance\index{R-distance@$R$-distance} on $\mathbb{A}_\Rtot $, that is a map $d:\A_\Rtot \times \A_\Rtot \rightarrow \Rtot$ such that for all $x,y,z\in \A_\Rtot$, one has:
\begin{enumerate}
\item $d(x,y)\geqslant 0$, and $d(x,y)=0$ if and only if $x=y$;
\item $d(x,y)=d(y,x)$;
\item $d(x,y)\leq d(x,z)+d(z,y)$.
\end{enumerate} 
The map $d^{\mathrm{std}}_\Rtot$ is $\widehat{W}$-invariant
and coincides with the \textbf{standard $\Rtot$-distance}\index{R-distance@$R$-distance!standard} considered by Bennett in \cite{bennett1994affine}.

For $x\in \A_\Rtot$ and $\varepsilon\in \Rtot_{>0}$,
we denote by $B_\Rtot(x,\varepsilon)$ the set $\{y\in \A_\Rtot|\ d(x,y)<\varepsilon\}$.
The topology of $\mathbb{A}_\Rtot$ defined in Subsubsection~\ref{subsubTopology_totally_ordered_ring} thanks to the sets $\mathring{D}_{\alpha,\lambda}$ coincides with the topology that has the $B_\Rtot(x,\varepsilon)$ as a base.

\subsubsection{The real vector space \texorpdfstring{$R=\RF^S$}{RS}}\label{subsubRSMetrics}

In the previous sections, we have studied the notion of an affine apartment over a totally ordered abelian group $R$. In the classical Bruhat-Tits theory, when one is interested in reductive groups over a field $\mathbb{K}$ endowed with a valuation $\omega: \mathbb{K}^{\times} \rightarrow \mathbb{Z}$, one chooses $R =\mathbb{R}$, so that the valuation group $\mathbb{Z}$ is an ordered subgroup of $R$.

 In the present article, we will work over a field $\mathbb{K}$ that is endowed with a valuation $\omega: \mathbb{K}^{\times} \rightarrow \Lambda$ for some non-zero totally ordered abelian group $\Lambda$. Let $\mathrm{rk}(\Lambda)$ be the rank of $\Lambda$, that is the (totally ordered) set of Archimedean equivalence classes of $\Lambda$. Hahn's embedding theorem (see for instance \cite{Gravett}) then states that $\Lambda$ can always be embedded as an ordered subgroup into the lexicographically ordered real vector subspace of $\mathbb{R}^{\mathrm{rk}(\Lambda)}$ given by families $(x_s)_{s\in \mathrm{rk}(\Lambda)}$ with well-ordered support. It is therefore natural to introduce the definition:

\begin{definition}\label{DefRS}
Given $S$ a totally ordered set, \texorpdfstring{$\RF^S$}{RS}\index[notation]{r@$\RF^S$} is the real vector subspace of $\mathbb{R}^S$ whose elements are given by families $(x_s)_{s\in S} \in \mathbb{R}^S$ with well-ordered support. It is endowed with the lexicographical order.
\end{definition}

The totally ordered abelian group $R$ will hence be chosen to be the totally ordered real vector space $\RF^ {\mathrm{rk}(\Lambda)}$.

\begin{Ex}\label{ordinals}
In the case $\Lambda = \mathbb{Z}^n$ for some $n\geq 1$, then $\mathrm{rk}(\Lambda)=\{1,2,...,n\}$, and hence $R$ will be chosen to be $\mathbb{R}^n$. As we have already explained, in this situation, $R$ can be endowed with an $\mathbb{R}$-algebra structure by setting $R=\mathbb{R}[t]/(t^n)$, but this cannot be done in general.
\end{Ex}

We finish this section by giving some properties of affine apartments over $\RF^S$ for some fixed totally ordered set $S$. In order to simplify the notation, we denote by $V_S$, $\A_S$, $F^v_S(\Delta_P)$, etc. instead of $V_{\RF^S}$, $\A_{\RF^S}$, $F^v_{\RF^S}(\Delta_P)$, etc.

If $s\in S $ and $\top$ is a binary relation on $S$ (for example $\leq, <, >,\ldots$), we denote by $\A_{ \top s}$ the space $\A_{\{t\in S|t\top s\}}$ and we introduce the projection $\pi_{\top s}:\A_S\rightarrow \A_{\top s}$ defined by $\pi_{\top s}\big((x_t)_{t\in S}\big)=(x_t)_{t \top s}$, for $(x_s)\in \A_{S}$. We fix an origin $o$ of $\A_S$ and an origin $o_\R$ of $\A_\R$.

\begin{lemma}\label{lemDistance}
The distance $d_S^{\mathrm{std}}:\A_S\times \A_S\rightarrow \RF^S$ satisfies the following properties:
\begin{enumerate}
\item\label{itBalls_open} For all $\varepsilon\in \RF^S_{>0}$, for all $s\in S$, there exists $t\in [s,+\infty[$ and an open neighborhood $\XC_\R$ of $o_\R$ in $\A_\R$ such that $\pi_{\leq t}\big (B_S(o,\epsilon)\big)\supset \{\pi_{<t}(o) \}\times \XC_\R$. 

\item\label{itBalls_bounded} For all $s\in S$, for every open subset $\XC_\R$ of $\A_\R$ containing $\{o_\R\}$, there exists $\epsilon\in \RF^S_{>0}$ such that $B_S(o,\varepsilon)\subset \{\pi_{<s}(o)\}\times \XC_{\R}\times \A_{>s}$.

\item\label{itDistance_translation} It  is invariant under translation, that is: for all $x,y,z\in \A_S$, $d_S^{\mathrm{std}}(x,y)=d_S^{\mathrm{std}}(x+z,y+z)$.

\item\label{itWeyl_compatibility} It is Weyl-compatible\index{R-distance@$R$-distance!Weyl compatible} in the definition of  \cite[Definition 3.1]{bennett2014axiomatic}. 

\end{enumerate}
\end{lemma}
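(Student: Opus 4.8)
The plan is to reduce all four assertions to a single explicit computation about the standard norm on $V_S:=V_\Z\otimes_\Z\RF^S$. I would first fix a concrete model: identify $V_S$ with the set of families $v=(v_u)_{u\in S}$, each $v_u$ in the direction space $V_\Z\otimes_\Z\R$ of $\A_\R$, with well-ordered support, so that every root acts coordinatewise, $\alpha(v)=(\alpha(v_u))_u\in\RF^S$; and identify $\A_{\leq t}$ with $\A_{<t}\times\A_\R$ so that $o$ corresponds to $(\pi_{<t}(o),o_\R)$. For $u\in S$ and an element $a$ (of $V_\Z\otimes_\Z\R$ or of $\R$) write $a\,\mathbf{1}_u$ for the family equal to $a$ at index $u$ and to $0$ elsewhere. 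Writing $v=y-o$, recall $d_S^{\mathrm{std}}(o,y)=\|v\|_S=\sum_{\alpha\in\Phi^+}|\alpha(v)|$, and that $\|\cdot\|_\R=\sum_{\alpha\in\Phi^+}|\alpha(\cdot)|$ is a genuine norm on $V_\Z\otimes_\Z\R$ (positive definiteness being part of the already established fact that $d_\Rtot^{\mathrm{std}}$ is an $\Rtot$-distance).

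The key computation I would carry out is: for $v\neq 0$ with $u_0:=\min\supp(v)$, the element $\|v\|_S\in\RF^S$ also has $\min\supp(\|v\|_S)=u_0$, and its $u_0$-coordinate equals $\|v_{u_0}\|_\R>0$. The proof is short: for $u<u_0$ every $\alpha(v)_u=\alpha(v_u)=0$, hence $|\alpha(v)|_u=0$; and since the first nonzero coordinate of $\alpha(v)\in\RF^S$ lies in $S_{\geq u_0}$, the sign of $\alpha(v)$ coincides with that of $\alpha(v_{u_0})$ whenever $\alpha(v_{u_0})\neq 0$, so in all cases $|\alpha(v)|_{u_0}=|\alpha(v_{u_0})|$ (real absolute value); summing over $\Phi^+$ gives both claims. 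This is the only place where care is needed, since the absolute value on $\RF^S$ is \emph{not} taken coordinatewise.

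Granted that computation, items~(\ref{itBalls_open}) and~(\ref{itBalls_bounded}) are one-line lexicographic comparisons. For~(\ref{itBalls_open}): given $\varepsilon$ and $s$, I would set $s_0:=\min\supp(\varepsilon)$, $t:=\max(s,s_0)\in S$ (so $t\geq s$), and $\XC_\R:=B_\R(o_\R,\varepsilon_{s_0})$; then for $q=o_\R+\delta\in\XC_\R$ the point $y:=o+\delta\,\mathbf{1}_t$ satisfies $\pi_{\leq t}(y)=(\pi_{<t}(o),q)$ and $d_S^{\mathrm{std}}(o,y)=\|\delta\|_\R\,\mathbf{1}_t$, which is $<\varepsilon$ because $t\geq s_0$ and $\|\delta\|_\R<\varepsilon_{s_0}$ (the first possibly-nonzero coordinate of $\varepsilon-\|\delta\|_\R\,\mathbf{1}_t$ is the one at $s_0$, equal to $\varepsilon_{s_0}-\|\delta\|_\R$ when $t=s_0$ and to $\varepsilon_{s_0}$ when $t>s_0$, positive in both cases); hence $(\pi_{<t}(o),q)\in\pi_{\leq t}(B_S(o,\varepsilon))$. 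For~(\ref{itBalls_bounded}): given $s$ and an open $\XC_\R\ni o_\R$, pick $r>0$ with $B_\R(o_\R,r)\subseteq\XC_\R$ (the $d_\R^{\mathrm{std}}$-balls form a base of the topology of $\A_\R$) and take $\varepsilon:=r\,\mathbf{1}_s$; if $\|y-o\|_S<\varepsilon$ and $v:=y-o\neq 0$ with $u_0=\min\supp(v)$, then $u_0<s$ is impossible (the computation gives $(\varepsilon-\|v\|_S)_{u_0}=-\|v_{u_0}\|_\R<0$ with all earlier coordinates zero, i.e.\ $\|v\|_S>\varepsilon$), so $v_u=0$ for $u<s$; moreover either $u_0>s$ (then $v_s=0$) or $u_0=s$ (then $(\|v\|_S)_s=\|v_s\|_\R$, and $\|v\|_S<r\,\mathbf{1}_s$ forces $\|v_s\|_\R<r$), so in either case $o_\R+v_s\in B_\R(o_\R,r)\subseteq\XC_\R$, which is exactly $y\in\{\pi_{<s}(o)\}\times\XC_\R\times\A_{>s}$.

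Finally, item~(\ref{itDistance_translation}) is immediate from $d_S^{\mathrm{std}}(x+z,y+z)=\|(y+z)-(x+z)\|_S=d_S^{\mathrm{std}}(x,y)$, and for item~(\ref{itWeyl_compatibility}) I would check the conditions of \cite[Definition 3.1]{bennett2014axiomatic}: $W^{\mathrm{aff}}$-invariance of $d_S^{\mathrm{std}}$ was recorded right after its definition, invariance under translations is item~(\ref{itDistance_translation}), and invariance under $W(\Phi^\vee)$ is built into $\|\cdot\|_S$ by construction. The only genuine obstacle in the whole argument is the bookkeeping around the non-coordinatewise absolute value on $\RF^S$ and the well-ordered-support condition, and this is entirely absorbed into the key computation of the second paragraph.
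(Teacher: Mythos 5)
Your overall strategy is the same as the paper's: locate $s_0=\min\supp(\varepsilon)$, reduce everything to the leading coordinate of $\|v\|_S$ via the observation that $(\|v\|_S)_{u_0}=\|v_{u_0}\|_\R$ at $u_0=\min\supp(v)$, and use real balls in $\A_\R$. That key computation is correct, item~(1) is fine (the strictness $\|\delta\|_\R<\varepsilon_{s_0}$ comes for free from the open ball), and item~(3) is immediate.

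There is, however, one step in item~(2) that fails as written: the claim that $\|v\|_S<r\,\mathbf{1}_s$ forces $\|v_s\|_\R<r$. Your own key computation only controls the \emph{leading} coordinate of $\|v\|_S$; the later coordinates of a sum of absolute values in $\RF^S$ need not be nonnegative, so one can have $\|v\|_S<r\,\mathbf{1}_s$ with $(\|v\|_S)_s=r$ exactly. Concretely, in rank one with $v=\alpha^\vee\otimes(1,-1)$ one gets $\alpha(v)=(2,-2)>0$, hence $\|v\|_S=(2,-2)<(2,0)=2\cdot\mathbf{1}_s$, while $\|v_s\|_\R=2$. Then $o_\R+v_s$ lies on the sphere of radius $r$, outside the open ball $B_\R(o_\R,r)$, and possibly outside $\XC_\R$, so your inclusion $B_S(o,\varepsilon)\subset\{\pi_{<s}(o)\}\times\XC_\R\times\A_{>s}$ can fail at this boundary. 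The repair is exactly what the paper does: take $\varepsilon=\tfrac{r}{2}\,\mathbf{1}_s$, so that $\|v\|_S<\varepsilon$ gives $\|v_s\|_\R\leqslant\tfrac{r}{2}<r$. Separately, for item~(4) your list of conditions ($W^{\mathrm{aff}}$-, translation- and $W(\Phi^\vee)$-invariance) is not the content of Weyl compatibility in the sense of \cite[Definition 3.1]{bennett2014axiomatic}, which is a genuinely stronger condition; the paper simply invokes \cite[Lemma 10.1]{bennett2014axiomatic}, which proves it for the standard distance, and you should do the same rather than treat it as a consequence of invariance.
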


\begin{proof}
Point~(\ref{itDistance_translation}) is clearly satisfied and point~(\ref{itWeyl_compatibility}) is \cite[Lemma 10.1]{bennett2014axiomatic}). Let us prove~(\ref{itBalls_open}).
Let $\epsilon\in \RF^S_{>0}$ and $s\in S$. Let $s_0=\min \{s'\in \supp(\epsilon)\mid\epsilon_{s'}>0\}$. Suppose $s\leq s_0$. Let  $\XC_\R=\{x\in \A_{\R}\mid d_\R^{\mathrm{std}}(o_\R,x)<\frac{1}{2}\epsilon_{s_0}\}$. Then $\pi_{\leq s_0}\big (B_S(o,\epsilon)\big)\supset \{\pi_{<s_0}(o)\}\times \XC_\R$. Suppose now $s>s_0$. Then $B_S(o,\epsilon)\supset \{\pi_{<s}(o)\}\times \A_{[s,+\infty[}$ and thus $\pi_{\leq s}\big( B_S(o,\epsilon)\big)\supset \{\pi_{<s}(o)\}\times \A_\R$, which proves~(\ref{itBalls_open}).

Let us prove~(\ref{itBalls_bounded}). Let $s\in S$ and let $\XC_\R$ be an open neighborhood of $o_\R$ in $\A_\R$. Let $\epsilon_\R \in \R_{>0}$ be such that $\{x\in \A_\R \mid d(o_\R,x)<\epsilon_\R\}\subset  \XC_\R$. Let $\epsilon=( \frac{1}{2}\delta_{t,s}\epsilon_\R)_{t\in S}\in \RF^S$. Then $B_S(o,\epsilon)\subset \{\pi_{<s}(o)\}\times \XC_\R\times \A_{]s,+\infty[}$, which proves~(\ref{itBalls_bounded}).
\end{proof}

\subsection{Definition of \texorpdfstring{$\RF^S$}{Lambda}-buildings}\label{subBennett_definition}

\subsubsection{Bennett's definition of \texorpdfstring{$\RF^S$}{RS}-buildings}\label{ss_Bennetts_def_buildings}

Let $S$ be a totally ordered set and let $\underline{\A_S}=(\A_S,V_\R,\Phi,(\Gamma_\alpha)_{\alpha\in\Phi},\widehat{W})$ be an affine apartment over $\RF^S$ (see Definition~\ref{defApartment}). An \textbf{apartment of  type $\underline{\A_S}$}\index{apartment!type} is a set $A$ equipped with a nonempty set $\mathrm{Isom} (\A_S,A)$ of bijections $f:\A_S\rightarrow A$ such that if $f_0\in \mathrm{Isom}(\A_S,A)$, then $\mathrm{Isom}(\A_S,A)=\{f_0\circ  w\mid w\in \widehat{W}\}$.
An \textbf{isomorphism}\index{apartment!isomorphism} between two apartments $A,A'$ is a bijection $\phi:A\rightarrow A'$ such that there exists $f_0 \in \mathrm{Isom}(\A_S,A)$ such that $\phi\circ f_0\in \mathrm{Isom}(\A_S,A')$.
 
 Each apartment $A$ of type $\underline{\A_S}$ can be equipped with  the structure of an $R$-aff space by choosing a bijection $f:\A_S\rightarrow A$ in $\mathrm{Isom} (\A_S,A)$.

We extend all the notions that are preserved by  $\widehat{W}$ to each apartment.
In particular half-apartments, walls, enclosure, sector-faces, local germs, germs at infinity, ... are well defined in each apartment of type $\A_S$.   

We say that an apartment contains  a filter if it contains at least one element of this filter. Recall that we say that a map fixes a  filter if it fixes at least one element of this  filter.
 
\begin{definition}\label{defBuildings}
An $\RF^S$-building\index{building}\index{R-building@$\RF^S$-building} is a set $\I$ equipped with a covering $\ACC$ by subsets called \textbf{apartments}\index{apartment} such that:
\begin{enumerate}[label={(A\arabic*)}]
\item\label{axiomA1} Each $A\in \ACC$ is equipped with the structure of an apartment of type $\underline{\A_S}$.\axiom{A1@\ref{axiomA1}}

\item\label{axiomA2}  If $A, A'$ are two apartments, then $A\cap A'$ is enclosed in $A$ and there exists an isomorphism $\phi:A\rightarrow A'$ fixing $A\cap A'$.\axiom{A2@\ref{axiomA2}}

\item\label{axiomA3} For any pair of points in $\I$, there is an apartment containing both.\axiom{A3@\ref{axiomA3}}

\end{enumerate}

\medskip

Given a  $\widehat{W}$-invariant
$\RF^S$-distance $d$ on the model space $\A_S$, axioms~\ref{axiomA1}–\ref{axiomA3} imply the existence of a function $d: \I\times \I\rightarrow \RF^S$ satisfying all conditions of the definition of an $\RF^S$-distance except possibly the triangle inequality. The distance is defined as follows. Let $x,y\in \I$ and $A$ be an apartment containing both of them. Then, if we choose an element $f\in \mathrm{Isom}(\A_S,A)$, the distance between $x$ and $y$ is the distance between $f^{-1}(x)$ and $f^{-1}(y)$. This turns out not to depend on any choices.

\medskip

\begin{enumerate}[label={(A\arabic*)}]
\setcounter{enumi}{+3}
\item\label{axiomA4}  For any pair of sector-germs in $\I$, there is an apartment containing both.\axiom{A4@\ref{axiomA4}}

\item\label{axiomA5}  For any apartment $A$ and all $x\in A$, there exists a retraction $\rho_{A,x}:\I\rightarrow A$ such that $\rho_{A,x}$ does not increase distances and $\rho_{A,x}^{-1}(\{x\})=\{x\}$. \axiom{A5@\ref{axiomA5}}

\item\label{axiomA6} Let $A_1,A_2,A_3$ be apartments such that $A_1\cap A_2$, $A_2\cap A_3$ and $A_3\cap A_1$ are half-apartments. Then $A_1\cap A_2\cap A_3$ is nonempty.\axiom{A6@\ref{axiomA6}}

\end{enumerate}

\begin{remark}
Suppose that $S$ is reduced to a single element (thus $\RF^S\simeq \R$). The axioms~\ref{axiomA1} to~\ref{axiomA4} correspond to the axioms~\ref{axiomA1} to~\ref{axiomA4} of \cite[1.2]{parreau2000immeubles}. Axiom~\ref{axiomA5} corresponds to  axiom~(A5') of \cite{parreau2000immeubles} and axiom~\ref{axiomA6} corresponds to axiom~\ref{axiomA5} of \cite[1.4]{parreau2000immeubles}. Note that under this assumption,~\ref{axiomA6} is a consequence of the axioms~\ref{axiomA1} to~\ref{axiomA5}. 
\end{remark}

\end{definition}

  \subsubsection{Equivalent definition of \texorpdfstring{$\RF^S$}{RS}-buildings}

\medskip

In \cite{bennett2014axiomatic}, Bennett and Schwer introduce several other equivalent definitions of $\mathfrak{R}^S$-buildings. We now briefly recall one of them, that will be useful in the sequel.
 
 Let $\I$ be a set satisfying axioms~\ref{axiomA1},~\ref{axiomA2} and~\ref{axiomA3}, and consider the following two extra axioms:

\begin{enumerate}[label={(GG)}]
\item\label{axiomGG} Any two local chambers based at the same vertex are contained in a common apartment.\axiom{GG@\ref{axiomGG}}
\end{enumerate}
\begin{enumerate}[label={(CO)}]
\item\label{axiomCO} Say that two sectors $Q,Q'$ based at the same point $x$ are \textbf{opposite at $x$}\index{sector!opposite} if there exists an apartment $A$ containing $\germ_x(Q),\germ_x(Q')$ and such that $\germ_x(Q)$ and $\germ_x(Q')$ are opposite in $A$. If $Q$ and $Q'$ are two such sectors, then there exists a unique apartment containing $Q$ and $Q'$. \axiom{CO@\ref{axiomCO}}
\end{enumerate}

As a particular case of \cite[Theorem 3.3]{bennett2014axiomatic}, we have:

\begin{theorem}\label{thmBennett-Schwer-3.3}
Let $(\I,d)$ be a set satisfying~\ref{axiomA1},~\ref{axiomA2} and~\ref{axiomA3}, where $d$ is a Weyl-compatible $\RF^S$-distance on $\A_S$. Then $\I$ is a $\Lambda$-building if and only if $\I$ satisfies~\ref{axiomGG} and~\ref{axiomCO}. 

\end{theorem}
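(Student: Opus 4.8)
The plan is to obtain the statement as a specialization of \cite[Theorem 3.3]{bennett2014axiomatic}, which proves the equivalence of several axiom systems for $\Lambda$-buildings whenever $\Lambda$ is an ordered abelian group embedding into an ordered real vector space. First I would fix the dictionary: the totally ordered real vector space $\RF^S$ is such a group; the model apartment $\underline{\A_S}=(\A_S,V_\R,\Phi,(\Gamma_\alpha)_{\alpha\in\Phi})$ of Definition~\ref{defApartment} is an instance of the model spaces considered in loc.\ cit.; and the standard distance $d_S^{\mathrm{std}}$ is $W^{\mathrm{aff}}$-invariant and Weyl-compatible by Lemma~\ref{lemDistance}, so the distance hypothesis of the theorem is met. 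Under this dictionary, axioms~\ref{axiomA1}–\ref{axiomA6} of Definition~\ref{defBuildings} translate into the axiom system of \cite{bennett1994affine} used in \cite{bennett2014axiomatic} (the reformulations with filters carried out in Subsection~\ref{secDefinitionApartment} being exactly the ones needed to phrase enclosures, local faces and germs at infinity over a possibly non-discrete value group), while \ref{axiomGG} and \ref{axiomCO} are the two axioms that Bennett and Schwer put in place of \ref{axiomA4}–\ref{axiomA6}. With these identifications, the asserted ``if and only if'' is verbatim \cite[Theorem 3.3]{bennett2014axiomatic}.

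Should a more self-contained argument be desired, one would prove the two implications directly. For ``$\I$ a building $\Rightarrow$ \ref{axiomGG} and \ref{axiomCO}'', the main tool is the ``sundial'' gluing: given two sectors (resp.\ two local sector-germs) sharing a point $x$, one builds a common apartment by gluing along a wall through $x$ two half-apartments coming from apartments that already contain each sector, using the retractions of \ref{axiomA5} to control distances and \ref{axiomA6} to ensure the relevant triple intersections are non-empty; the uniqueness clause of \ref{axiomCO} then follows from \ref{axiomA2}, since an isomorphism between two apartments both containing a pair of opposite sectors must pointwise fix an enclosed set which already spans the whole apartment. For the converse, one recovers the retractions $\rho_{A,x}$ by projecting along apartments produced by \ref{axiomGG}, checks that they are $1$-Lipschitz using Weyl-compatibility of $d$, and deduces \ref{axiomA4} and \ref{axiomA6} from a germ-chasing argument combining \ref{axiomGG} with the uniqueness part of \ref{axiomCO}.

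The only real difficulty in writing this out in full is bookkeeping: one has to check that the filter-theoretic objects introduced throughout Section~\ref{secAbstract_definition_buildings} match those of \cite{bennett2014axiomatic} term by term, so that the words ``particular case'' are fully justified. Since this check is routine and the underlying equivalence is already established in loc.\ cit., I would conclude by simply invoking the cited theorem.
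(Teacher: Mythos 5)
Your proposal matches the paper exactly: the paper states this result with no proof beyond the phrase ``As a particular case of \cite[Theorem 3.3]{bennett2014axiomatic}'', so invoking that theorem after checking that $\RF^S$, the model apartment, and the Weyl-compatible distance fit Bennett--Schwer's hypotheses is precisely the intended argument. The additional self-contained sketch you offer is not needed and is not attempted in the paper.
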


\subsection{Main theorem}\label{subMain_theorem}

\begin{theorem}\label{thmMain}
Let $\Lambda$ be a non-zero totally ordered abelian group with rank $S$, so that $\Lambda$ can be seen as a totally ordered subgroup of $\mathfrak{R}^S$. Let $\mathbb{K}$ be a field with a valuation $\omega: \mathbb{K} \rightarrow \Lambda \cup \{\infty\}$, let $\mathbf{G}$ be a quasi-split (connected) reductive $\mathbb{K}$-group and let $\mathbf{S}$ be a maximal split torus in $\mathbf{G}$ with cocharacter module $X_*(\mathbf{S})$. If $\mathbf{G}$ is not split, we assume that $\mathbb{K}$ is Henselian. 
\begin{itemize}
\item[(i)] The set $\mathcal{I}(G)=\mathcal{I}(\mathbb{K},\omega,\mathbf{G})$ defined in section~\ref{buildredgp} and endowed with the distance introduced in~\ref{subsubMetric} is an $\mathfrak{R}^S$-building whose apartments have type:
$$\underline{\mathbb{A}_S} = (\mathbb{A}_S,V_{\mathbb{R}},\Phi,(\Gamma_{\alpha})_{\alpha \in \Phi},\widetilde{W})$$
where $V_{\mathbb{R}}$ is the quotient of the real vector space $ X_*(\mathbf{S}) \otimes_{\mathbb{Z}} \mathbb{R}$ by the orthogonal of the roots of $\mathbf{G}$, $\mathbb{A}_S$ is an $\mathfrak{R}^S$-aff space whose underlying vector space is $V_{\mathbb{R}} \otimes_{\mathbb{R}} \mathfrak{R}^S$, $\Phi$ is the root system associated to $\mathbf{G}$ in $V_{\mathbb{R}}^*$ and, for $\alpha \in \Phi$, $\Gamma_{\alpha}$ is a subset of $\mathfrak{R}^S$ that generates a subgroup in which $\Lambda$ has finite index. The group  $\Wext$ is the extended affine Weyl group, obtained by restriction to $\A_S$ of  the action of the stabilizer of $\A_S$ in $\mathbf{G}(\mathbb{K})$. The group $\mathbf{G}(\mathbb{K})$ acts on $\mathcal{I}(G)$ by isometries whose restrictions to apartments are all real $\mathfrak{R}^S$-aff maps. The induced action on the set of apartments is transitive.
\item[(ii)] Let $s \in S$, let $S_{\leq s} = \{ t\in S | t\leq s\}$ and let $\pi_{\mathfrak{R}^S,\leq s}: \mathfrak{R}^S \rightarrow \mathfrak{R}^{S_{\leq s}}$ be the natural projection. Consider the valuation $\omega_{\leq s} = \pi_{\RF^S,\leq s} \circ \omega$. There exists an (explicit) surjective map:
$$\pi_{\leq s}: \mathcal{I}(\mathbb{K},\omega,\mathbf{G}) \rightarrow \mathcal{I}(\mathbb{K},\omega_{\leq s},\mathbf{G})$$
compatible with the $\mathbf{G}(\mathbb{K})$-action such that, for each $X \in \mathcal{I}(\mathbb{K},\omega_{\leq s},\mathbf{G})$, the fiber $\pi_{\leq s}^{-1}(X)$ is a product:
$$\mathcal{I}_X \times \left( \langle \Phi_X  \rangle^{\perp} \otimes_{\mathbb{R}} \ker(\pi_{\mathfrak{R}^S,\leq s})\right)$$
where $\Phi_X$ is a root system contained in $\Phi$, $\langle \Phi_X  \rangle^{\perp}$ is the orthogonal of $\Phi_X$ in $V_{\mathbb{R}}$, and $\mathcal{I}_X$ is a
$\ker(\pi_{\mathfrak{R}^S,\leq s})$-building. The apartments of $\mathcal{I}_X$ have type:
$$\underline{\mathbb{A}_X} = (\mathbb{A}_X,V_{\mathbb{R}}/\langle \Phi_X  \rangle^{\perp},\Phi_X,(\Gamma_{X,\alpha})_{\alpha \in \Phi},\widetilde{W}_{X})$$
where $\mathbb{A}_X$ is a $\ker(\pi_{\mathfrak{R}^S,\leq s})$-aff space whose underlying vector space is $\ker(\pi_{\mathfrak{R}^S,\leq s})$-module is $\left( V_{\mathbb{R}}/\langle \Phi_X  \rangle^{\perp}\right) \otimes_{\mathbb{R}} \ker(\pi_{\mathfrak{R}^S,\leq s})$, the group $\widetilde{W}_{X}$ is some subgroup of $\mathrm{Aff}_{\ker(\pi_{\mathfrak{R}^S,\leq s})}(\mathbb{A}_X)$, and for $\alpha \in \Phi_X$, $\Gamma_{X,\alpha}$ is some subset of $\ker(\pi_{\mathfrak{R}^S,\leq s})$. 
\end{itemize}
\end{theorem}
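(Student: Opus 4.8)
The plan is to assemble the constructions of Sections~\ref{SecVRGD}--\ref{sectionCO} and to conclude via the recognition criterion of Theorem~\ref{thmBennett-Schwer-3.3}. For part~(i), I would follow the Bruhat--Tits strategy. First, from Borel--Tits theory and a Chevalley--Steinberg system of $\mathbf{G}$ relative to $\mathbf{S}\subset\mathbf{T}$ and a finite Galois extension $\widetilde{\mathbb{K}}/\mathbb{K}$ splitting $\mathbf{G}$ (using the Henselian hypothesis to extend $\omega$ to $\widetilde{\mathbb{K}}$ when $\mathbf{G}$ is not split), Section~\ref{SecQuasiSplitGroups} produces a valued root group datum fulfilling the axioms of Sections~\ref{SecVRGD}--\ref{SecParahoricBruhat}. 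This pins down the standard apartment $\underline{\mathbb{A}_S}$: here $V_{\mathbb{R}}$ is the quotient of $X_*(\mathbf{S})\otimes_{\mathbb{Z}}\mathbb{R}$ by the orthogonal of the roots, $\Phi\subset V_{\mathbb{R}}^*$ is the root system of $\mathbf{G}$, and each $\Gamma_\alpha\subset\mathfrak{R}^S$ is read off from values of the extended valuation on $\widetilde{\mathbb{K}}$; since $[\widetilde{\mathbb{K}}:\mathbb{K}]<\infty$, the subgroup of $\mathfrak{R}^S$ generated by all the $\Gamma_\alpha$ contains $\Lambda$ with finite index. Sections~\ref{SecVRGD} and~\ref{SecParahoricBruhat} then furnish the Iwasawa and Bruhat decompositions of $G$, the family of parahoric-type subgroups $P_\Omega$ indexed by filters on $\mathbb{A}_S$, and the action of $N$ on $\mathbb{A}_S$ through $W^{\mathrm{aff}}\subset V_S\rtimes W^v$.

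Next, Section~\ref{sectionLambdaBuildingFromVRGD} sets $\mathcal{I}(G)=(G\times\mathbb{A}_S)/\!\sim$ with $\sim$ the Bruhat--Tits-type gluing relation built from the $P_x$; its apartments are the $G$-translates of the image of $\{1\}\times\mathbb{A}_S$, each carrying the structure of an apartment of type $\underline{\mathbb{A}_S}$. Because $d_S^{\mathrm{std}}$ is $W^{\mathrm{aff}}$-invariant and Weyl-compatible (Lemma~\ref{lemDistance}), it descends to a $W^{\mathrm{aff}}$-invariant $\mathfrak{R}^S$-distance on $\mathcal{I}(G)$, the group $G=\mathbf{G}(\mathbb{K})$ acts by isometries whose restrictions to apartments are $\mathfrak{R}^S$-aff maps (induced by the $W^{\mathrm{aff}}$-action of $N$), and the action on apartments is transitive by construction. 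One then checks, as in Section~\ref{sectionLambdaBuildingFromVRGD}, axioms~\ref{axiomA1}--\ref{axiomA5} and~\ref{axiomGG} --- \ref{axiomA3} comes from the Cartan/Bruhat decomposition and \ref{axiomA2} from the structure of the $P_\Omega$ via the Iwasawa decomposition --- and, as in Section~\ref{sectionCO}, axiom~\ref{axiomCO}. Since \ref{axiomA1}--\ref{axiomA3} hold and $d_S^{\mathrm{std}}$ is Weyl-compatible, Theorem~\ref{thmBennett-Schwer-3.3} yields that $\mathcal{I}(G)$ is an $\mathfrak{R}^S$-building of type $\underline{\mathbb{A}_S}$, which is part~(i).

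For part~(ii), set $\Lambda'=\omega_{\leq s}(\mathbb{K}^\times)$ and let $f\colon\Lambda\to\Lambda'$ be the surjection induced by $\pi_{\mathfrak{R}^S,\leq s}$, with convex kernel $\Lambda\cap\ker(\pi_{\mathfrak{R}^S,\leq s})$. Section~\ref{projectionsec}, following Parshin, builds from $f$ the surjective $G$-equivariant map $\pi_{\leq s}\colon\mathcal{I}(\mathbb{K},\omega,\mathbf{G})\to\mathcal{I}(\mathbb{K},\omega_{\leq s},\mathbf{G})$, which on an apartment is a chart composed with $\mathbb{A}_S\to\mathbb{A}_{\leq s}$. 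Fixing $X$ in the target, one analyses the fixator $P_X$ of $X$ in $G$ and its action on $\pi_{\leq s}^{-1}(X)$: this action is expected to factor through a quotient identified with the rational points of a reductive group over a residue-type field, whose relative root system is the subsystem $\Phi_X\subset\Phi$ of roots whose walls pass through $X$. Splitting the maximal torus into its $\Phi_X$-part and its central part $\langle \Phi_X \rangle^{\perp}$ gives the product $\pi_{\leq s}^{-1}(X)=\mathcal{I}_X\times(\langle \Phi_X \rangle^{\perp}\otimes_{\mathbb{R}}\ker(\pi_{\mathfrak{R}^S,\leq s}))$, where the second factor carries the translation action of the central torus and $\mathcal{I}_X$ is the building of the residual group over its $\ker(\pi_{\mathfrak{R}^S,\leq s})$-valued residue field --- constructed just as in part~(i), hence a $\ker(\pi_{\mathfrak{R}^S,\leq s})$-building with apartments of type $\underline{\mathbb{A}_X}$, $\Gamma_{X,\alpha}$ being the germ at $X$ of $\Gamma_\alpha$. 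This is the general-rank counterpart of Theorems~\ref{sll} and~\ref{arbre} and of Section~\ref{secBuilding_sld}.

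The main obstacle, as signalled in the introduction, is axiom~\ref{axiomA5}: over $\mathbb{R}$ one upgrades an infinitesimal non-expansion estimate for $\rho_{A,x}$ to a global one using compactness of line segments, but segments of $\mathfrak{R}^S$ are neither compact nor connected, so a more delicate argument is needed --- for instance a transfinite descent along $S$ that propagates the Lipschitz bound rank by rank, in the spirit of the rank-lowering projections of part~(ii). Axiom~\ref{axiomCO}, automatic in real rank one, is the other point demanding genuine work (Section~\ref{sectionCO}).
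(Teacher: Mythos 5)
Your outline reproduces the paper's architecture and the logical chain closes correctly: construct the valued root group datum from a Chevalley--Steinberg system (Section~\ref{SecQuasiSplitGroups}), glue $\mathcal{I}(G)=(G\times\mathbb{A}_S)/\!\sim$ (Section~\ref{sectionLambdaBuildingFromVRGD}), verify \ref{axiomA1}, \ref{axiomA2}, \ref{axiomA3}, \ref{axiomGG}, \ref{axiomCO}, and conclude by Theorem~\ref{thmBennett-Schwer-3.3} using the Weyl-compatibility of $d_S^{\mathrm{std}}$; part~(ii) is Section~\ref{projectionsec} plus Theorem~\ref{thmFibers_buildings}. One remark on part~(ii): the paper does not identify the fiber with the building of a residual reductive group over a residue field; it works abstractly, showing that $\big(\overline{T}_{0,X_1},(\overline{U}_{0,\alpha}),(\overline{M}_{0,\alpha}),(\overline{\varphi}_{\alpha})\big)$ is itself a valued generating root group datum with a compatible $\overline{N}_{0,X_1}$-action, and then reapplies the general machinery of Sections~\ref{SecVRGD}--\ref{sectionLambdaBuildingFromVRGD}. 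Your ``residual group'' picture is the classical one and would require extra justification here (integral models are never constructed in this paper), whereas the abstract route only needs the group-theoretic verifications already set up.

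The one place where your plan goes off the rails is the final paragraph on axiom~\ref{axiomA5}. You simultaneously invoke Theorem~\ref{thmBennett-Schwer-3.3} (under which \ref{axiomA4}, \ref{axiomA5}, \ref{axiomA6} are automatic once \ref{axiomA1}--\ref{axiomA3}, \ref{axiomGG} and \ref{axiomCO} are established) and announce that \ref{axiomA5} is ``the main obstacle'' requiring ``a transfinite descent along $S$.'' The paper never proves \ref{axiomA5} directly, and your descent argument is not substantiated; if you intend to rely on it, that is a gap, and if you rely on Theorem~\ref{thmBennett-Schwer-3.3}, it is superfluous. The genuine extra work, where the failure of compactness of segments actually bites, is \ref{axiomCO}: the paper handles it by a sufficient criterion for \ref{axiomCO} in $\mathbb{R}$-buildings (Lemma~\ref{lemCO_for_R_buildings}, which does use compactness of real segments via Lemma~\ref{lemSpliting_segments_sector_friendly}), a proof that the projections $\pi_{\leq s}$ preserve opposition of local sector-germs, an induction on the minimal $s$ where two candidate points differ when $S$ has a minimum, and finally an embedding into $\mathcal{I}(\mathbb{K}(\!(t)\!),\hat\omega,\mathbf{G})$ to remove the hypothesis that $S$ has a minimum. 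Your sketch should route the ``additional work'' there rather than to \ref{axiomA5}. A minor misattribution: \ref{axiomA2} comes from the analysis of the parahoric subgroups $\widehat{P}_\Omega$ (Propositions~\ref{propBT7.1.11} and~\ref{propAxiom(A2)}), not from the Iwasawa decomposition, which is used instead to place a local face and a sector-germ at infinity in a common apartment (Lemma~\ref{lemDecompositions_axioms}), a statement needed precisely for the \ref{axiomCO} argument.
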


We now briefly explain the strategy of proof. In sections~\ref{SecVRGD} and~\ref{SecParahoricBruhat}, we introduce an abstract notion of $\mathfrak{R}^S$-valued root group data, generalizing the more classical notion of $\mathbb{R}$-valued root group data introduced by Bruhat and Tits, and in section~\ref{sectionLambdaBuildingFromVRGD}, we prove that one can always associate an $\mathfrak{R}^S$-building to such an $\mathfrak{R}^S$-valued root group data. This tool then allows us to deal in a unified way with the two parts of the theorem: each time we have to prove that some construction is an $\mathfrak{R}^S$-building, we check that it is the $\mathfrak{R}^S$-building associated to some $\mathfrak{R}^S$-valued root group data.

Let's more precisely decribe how the construction of the building $\I:=\I(\Kb,\omega,\GB)$ goes through. 

We start from the datum of a quasi-split redutive $\mathbb{K}$-group $\mathbf{G}$ and a maximal $\mathbb{K}$-split torus $\mathbf{S}$ of $\mathbf{G}$.
We let $\mathbf{T}$ and $\mathbf{N}$ be respectively the centralizer and the normalizer of $\mathbf{S}$ in $\mathbf{G}$.
The standard apartment $\A_S$ is defined as an $\RF^S$-aff space whose underlying vector space is the scalar extension to $\RF^S$ of the abelian group of $\mathbb{K}$-cocharacters of $\mathbf{T}$.
We construct an action of $N = \mathbf{N}(\mathbb{K})$ on $\A_S$.
If $\Phi$ stands for the $\mathbb{K}$-root system of $(\mathbf{G},\mathbf{S})$, we use a Chevalley-Steinberg system $(x_\alpha)_{\alpha \in \Phi}$ of $\mathbf{G}$ (i.e. a parametrization of root groups $\mathbf{U}_\alpha$ compatible with the Galois action of the splitting extension $\widetilde{\mathbb{K}} / \mathbb{K}$ of $\mathbf{G}$) in order to define the parahoric subgroups $P_{\VC}$ of $G = \mathbf{G}(\mathbb{K})$, for every filter $\VC$ on $\A_S$.
Inspired by the classical constructions of Bruhat and Tits, we then define $\I$ as $G\times \A_S/\sim$, where $\sim$ is an equivalence relation defined in section~\ref{sectionLambdaBuildingFromVRGD}. 

We then need to prove that $\I$ satisfies the axioms~\ref{axiomA1} to~\ref{axiomA6}. For this, we use Theorem~\ref{thmBennett-Schwer-3.3} and we prove that $\I$ satisfies the axioms~\ref{axiomA1},~\ref{axiomA2},~\ref{axiomA3}, \ref{axiomGG} and~\ref{axiomCO}. The fact that $\I$ satisfies~\ref{axiomA1} follows immediately from the definitions. The axiom~\ref{axiomA2} is obtained similarly as in \cite{ BruhatTits1}. In order to prove \ref{axiomGG} and~\ref{axiomA3} we generalize the Bruhat decomposition in our setting (see Theorem~\ref{ThmBruhatDecomposition}). This requires to first prove that $G$ satisfies an Iwasawa decomposition (see Theorem~\ref{thmIwasawa}). Restated in terms of buildings, this decomposition asserts that if $F$ is a face of $\I$ and $C_\infty$ is a sector-germ at infinity of $\I$, then there exists an apartment containing $F$ and $C_\infty$. 

Section~\ref{projectionsec} is dedicated to the projection map $\pi_{\leq s}$ that has been introduced in part (ii) of theorem~\ref{thmMain}. We first construct the map $\pi_{\leq s}$ itself, and we give an explicit description of its fibers. We then prove that those fibers are associated to an $\mathfrak{R}^S$-valued root group datum. By the general theory developed in sections~\ref{SecVRGD},~\ref{SecParahoricBruhat} and~\ref{sectionLambdaBuildingFromVRGD}, we deduce the decomposition of theorem~\ref{thmMain}:
$$\pi_{\leq s}^{-1}(X) = \mathcal{I}_X \times \left( \langle \Phi_X  \rangle^{\perp} \otimes_{\mathbb{R}} \mathfrak{R}^S\right)$$
for some $\mathcal{I}_X$ that satisfies axioms~\ref{axiomA1},~\ref{axiomA2},~\ref{axiomA3},~\ref{axiomA4} and \ref{axiomGG}.

In section~\ref{sectionCO}, we finish the proof of theorem~\ref{thmMain} by establishing axiom~\ref{axiomCO}. This axiom is more geometric in nature. In order to prove it we first give a sufficient condition for an `` $\R$-building'' to satisfy~\ref{axiomCO} (see Lemma~\ref{lemCO_for_R_buildings}). Using this criterion and  the projection maps defined in section~\ref{projectionsec}, we prove that our building satisfies~\ref{axiomCO}.

\section{\texorpdfstring{$\Rtot$}{R}-valued root group datum}
\label{SecVRGD}

In this section, if $G$ is a group and $X,Y$ are subsets of $G$, we denote by:
\begin{itemize}
\item $1$ the identity element of $G$;
\item $X Y = \{xy,\ x \in X,\ y \in Y\}$ the subset of $G$ obtained as  image of the map $X \times Y \to G$ given by multiplication in $G$;
\item $\langle X,Y \rangle$\index[notation]{Z@$[\cdot,\cdot]$} the subgroup of $G$ generated by $X \cup Y$; 
\item $[X,Y]$ the subgroup of $G$ generated by the set of commutators $[x,y]$ for $x \in X$ and $y \in Y$.\index{subgroup!commutator}
\end{itemize}

\subsection{Abstract groups and axioms of a root group datum}

We recall the following definition from \cite[6.1.1]{BruhatTits1}.

\begin{Def}\label{DefRGD}
Let $G$ be a group and $\Phi$ be a root system.
A root group datum\index{root group datum} of $G$ of type $\Phi$ is a system
$(T, (U_\alpha, M_\alpha)_{\alpha \in \Phi})$\index[notation]{t@$T$}\index[notation]{u@$U_\alpha$}\index[notation]{m@$M_\alpha$} satisfying the following axioms:
\begin{enumerate}[label={(RGD\arabic*)}]
\item\label{axiomRGD1} $T$ is a subgroup of $G$ and, for any root $\alpha \in \Phi$, the set $U_\alpha$ is a nontrivial subgroup of $G$, called the root group of $G$ associated to $\alpha$;\axiom{RGD1@\ref{axiomRGD1}}
\item\label{axiomRGD2} for any roots $\alpha,\beta \in \Phi$ such that $\beta \not\in \mathbb{R}_{< 0} \alpha$, the commutator subgroup $[U_\alpha, U_\beta]$ is contained in the subgroup generated by the root groups $U_{\gamma}$ for $\gamma \in (\alpha,\beta)$;\axiom{RGD2@\ref{axiomRGD2}}
\item\label{axiomRGD3} if $\alpha$ is a multipliable root, we have $U_{2\alpha} \subset U_\alpha$ and $U_{2\alpha} \neq U_\alpha$;\axiom{RGD3@\ref{axiomRGD3}}
\item\label{axiomRGD4} for any root $\alpha \in \Phi$, the set $M_\alpha$ is a right coset of $T$ in $G$ and we have $U_{-\alpha} \setminus \{ 1 \} \subset U_\alpha M_\alpha U_\alpha$;\axiom{RGD4@\ref{axiomRGD4}}
\item\label{axiomRGD5} for any roots $\alpha,\beta \in \Phi$ and any $m \in M_\alpha$, we have $m U_\beta m^{-1} = U_{r_\alpha(\beta)}$;\axiom{RGD5@\ref{axiomRGD5}}
\item\label{axiomRGD6} for any choice of positive roots $\Phi^+$ on $\Phi$, we have ${T U^+ \cap U^- = \{1\}}$ where $U^+$ (resp. $U^-$) denotes the subgroup generated by the $U_{\alpha}$ for $\alpha \in \Phi^+$ (resp. $\alpha \in \Phi^-= - \Phi^+$).\axiom{RGD6@\ref{axiomRGD6}}
\end{enumerate}

A root group datum is said generating\index{root group datum!generating} if $G$ is generated by the subgroups $T$ and the $U_\alpha$ for $\alpha\in \Phi$.
As in \cite[6.1.2(10)]{BruhatTits1}, we denote by $N$\index[notation]{n@$N$} the subgroup of $G$ generated by the $M_\alpha$ for $\alpha \in \Phi$ if $\Phi \neq \emptyset$ and by $N = T$ otherwise.

We recall that, according to \cite[6.1.2(10)]{BruhatTits1}, axiom~\ref{axiomRGD5} defines an epimorphism ${^v\!}\nu: N \to W(\Phi)$ such that ${^v\!}\nu(m) = r_\alpha$\index[notation]{n@${^v\mkern-3mu}\nu$} for any $m \in M_\alpha$, any $\alpha \in \Phi$.
Thus, for any $\alpha \in \Phi$ and any $n \in N$, we have $n U_{\alpha} n^{-1} = U_{{^v\!}\nu(\alpha)}$.
\end{Def}

\begin{Ex}
Let $\mathbb{K}$ be any field and $\mathbf{G}$ be a reductive $\mathbb{K}$-group, $\mathbf{S}$ a maximal $\mathbb{K}$-split torus of $\mathbf{G}$ and $\mathbf{Z} = \mathcal{Z}_{\mathbf{G}}(\mathbf{S})$.
According to \cite[4.1.19]{BruhatTits2}, there exist right cosets $M_\alpha$ such that $\mathbf{G}(\mathbb{K})$ admits a generating root group datum $\Big(\mathbf{Z}(\mathbb{K}),\big(\mathbf{U}_\alpha(\mathbb{K}),M_\alpha\big)_{\alpha\in\Phi}\Big)$ of type $\Phi$ which is the $\mathbb{K}$-root system of $\mathbf{G}$ with respect to $\mathbf{S}$.
In particular, for such a root group datum, one can apply any result of section \cite[6.1]{BruhatTits1}.
Moreover, $N = \mathcal{N}_{\mathbf{G}}(\mathbf{S})(\mathbb{K})$ in this example.
\end{Ex}

In Bruhat-Tits theory, it appears to be useful to consider some groups $Z$ generated by some well-chosen subgroups $X_\alpha$ of the root groups $U_\alpha$.
A first result is given by Proposition \cite[6.1.6]{BruhatTits1}, for a group generated by non-trivial subsets $X_\alpha$ indexed over a positively closed subset $\Psi \subset \Phi^+$ of roots, assuming a "condition (i)".
A second result is given by Proposition \cite[6.4.9]{BruhatTits1}, for a group generated by non-trivial subsets $X_\alpha$ indexed over the whole root system, under some specific conditions on the $X_\alpha$ that are denoted by $U_{\alpha,f}$ in \cite[§6]{BruhatTits1}.
In fact, we observe that the proof of this Proposition only relies on two axioms of ``quasi-concavity'' (QC1) and (QC2) (see \cite[6.4.7]{BruhatTits1}) that are satisfied by a quasi-concave map $f$, and that we can translate those conditions onto conditions over the groups $X_\alpha$.
Nevertheless, according to addendum \cite[E2]{BruhatTits2}, condition (QC2) is a bit too weak for some general results, so that it is useful to assume that the $X_\alpha$ also satisfy a condition (QC0).
In our definition, the condition~\ref{axiomQC2} takes into account simultaneoulsy  both conditions (QC0) and (QC2) of \cite{BruhatTits2}.
Moreover, it appears to be useful to consider an additional subgroup $Y$\index[notation]{y@$Y$} that normalizes the $X_\alpha$.
Thus, we will use the following definition:

\begin{Def}\label{DefQC}
Let $\left( T, (U_\alpha,M_\alpha)_{\alpha \in \Phi}\right)$ be a generating root group datum of a group $G$ and $N$ be the subgroup of $G$ generated by $T$ and the $M_\alpha$ for $\alpha\in \Phi$.
Let $(X_\alpha)_{\alpha\in\Phi}$ be a family of subgroups $X_\alpha \subset U_{\alpha}$ for $\alpha \in \Phi$ and $Y$ be a subgroup of $T$.

For $\alpha\in \Phi_{\mathrm{nd}}$, denote by:
\begin{itemize}
\item $X_{2\alpha}$ the trivial subgroup if $\alpha \in \Phi$ and $2\alpha \not\in\Phi$;
\item $L_\alpha$\index[notation]{l@$L_\alpha$} the subgroup generated by $X_\alpha$, $X_{2\alpha}$, $X_{-\alpha}$, $X_{-2\alpha}$ and $Y$;
\item $N_\alpha = L_\alpha \cap N$\index[notation]{n@$N_\alpha$}.
\end{itemize}

We say that the family $\left((X_\alpha)_{\alpha \in \Phi},Y\right)$ is quasi-concave\index{quasi-concave} if it satisfies the axioms:
\begin{enumerate}[label={(QC\arabic*)}]
\item\label{axiomQC1} $L_\alpha = X_\alpha X_{2\alpha} X_{-\alpha} X_{-2\alpha} N_\alpha = X_{-\alpha} X_{-2\alpha} X_\alpha X_{2\alpha} N_\alpha$ for any $\alpha \in \Phi_{\mathrm{nd}}$;\axiom{QC1@\ref{axiomQC1}}
\item\label{axiomQC2} for every $\alpha,\beta \in \Phi$ with $\beta \not\in -\mathbb{R}_{>0}\alpha$, the commutator group $[X_\alpha,X_\beta]$ is contained in the group $X_{(\alpha,\beta)}$ generated by the $X_\gamma$ for $\gamma \in (\alpha,\beta)$;\axiom{QC2@\ref{axiomQC2}}
\item\label{axiomQC3} $Y$ normalizes $X_\alpha$ for every $\alpha \in \Phi$.\axiom{QC3@\ref{axiomQC3}}
\end{enumerate}

If $Y$ is trivial, by abuse of language, the family of groups $(X_\alpha)_{\alpha \in \Phi}$ is said quasi-concave.
\end{Def}

Note that condition~\ref{axiomQC2} implies that $X_{\alpha}$ normalizes $X_{2\alpha}$ so that one can also write $L_\alpha = X_{2\alpha} X_\alpha X_{-\alpha} X_{-2\alpha} N_\alpha$ in~\ref{axiomQC1} for instance.

Because axiom~\ref{axiomQC2} does not depend on $Y$, we will say by abuse of language that the family $(X_\alpha)_{\alpha\in \Phi}$ satisfies~\ref{axiomQC2} when this condition is satisfied.

With this definition, we get the following Proposition analogous to \cite[6.4.9]{BruhatTits1}.

\begin{Prop}\label{PropDecompositionQC}
Let $\left((X_\alpha)_{\alpha\in\Phi},Y\right)$ be a quasi-concave family of groups.
Denote by $X$ the subgroup of $G$ generated by $Y$ and by the $X_\alpha$ for $\alpha \in \Phi$.
Suppose that $\Phi$ is non-empty. Then for any choice of a subset of positive roots $\Phi^+$ of $\Phi$:

\begin{enumerate}[label={(\arabic*)}]
\item\label{PropDecQC:1} $U_\alpha \cap X = X_\alpha X_{2\alpha}$ for any $\alpha \in \Phi_\mathrm{nd}$;
\item\label{PropDecQC:2} the product map $\displaystyle \prod_{\alpha \in \Phi^+_{\mathrm{nd}}} \left( X_\alpha X_{2\alpha} \right) \to X \cap U^+$ (resp.
 $\displaystyle \prod_{\alpha \in \Phi^+_{\mathrm{nd}}} \left( X_{-\alpha} X_{-2\alpha} \right) \to X \cap U^-$) induced by  multiplication in $G$ is a bijection for any ordering on the product;
\item\label{PropDecQC:3} we have $X = (X \cap U^+) (X \cap U^-) (X \cap N)$ for any choice of $\Phi^+$ in $\Phi$;
\item\label{PropDecQC:4} the group $X \cap N$ is generated by the $N_\alpha$ for $\alpha \in \Phi_{\mathrm{nd}}$.
\end{enumerate}
\end{Prop}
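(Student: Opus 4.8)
The plan is to reduce everything to the positively closed case already treated by Bruhat--Tits, namely \cite[6.1.6]{BruhatTits1}, plus an induction on rank using the rank $1$ axiom~\ref{axiomQC1}. Throughout, fix a choice of positive roots $\Phi^+$. The key preliminary observation is that axiom~\ref{axiomQC2}, applied to a positively closed subset $\Psi$ of roots, says precisely that the family $(X_\alpha)_{\alpha \in \Psi}$ satisfies ``condition (i)'' of \cite[6.1.6]{BruhatTits1}; hence for every positively closed $\Psi \subset \Phi$ the subgroup $X_\Psi := \langle X_\alpha \mid \alpha \in \Psi\rangle$ decomposes as an ordered product $\prod_{\alpha \in \Psi_{\mathrm{nd}}} (X_\alpha X_{2\alpha})$ with uniqueness of the factorization, for any ordering. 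Applied to $\Psi = \Phi^+$ and $\Psi = \Phi^-$ this already gives the content of~\ref{PropDecQC:2} except that I still have to identify $X_{\Phi^+}$ with $X \cap U^+$; the inclusion $X_{\Phi^+} \subseteq X \cap U^+$ is clear, and the reverse inclusion will fall out of~\ref{PropDecQC:3}.

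First I would prove~\ref{PropDecQC:3}. Let $g \in X$; write $g$ as a word in $Y$ and the $X_{\pm\alpha}$. Using~\ref{axiomQC3} one can move all occurrences of $Y$ to the right, and using~\ref{axiomQC2} to rewrite commutators $[X_\alpha, X_\beta]$ inside $X_{(\alpha,\beta)}$, one brings $g$ into the form $g = u_+ u_- n$ with $u_+ \in X_{\Phi^+}$, $u_- \in X_{\Phi^-}$ and $n$ a product of elements coming from the rank $1$ groups $N_\alpha$; the mechanism is exactly that of \cite[6.4.9]{BruhatTits1}, the role of the quasi-concave function's axioms (QC1),(QC2) there being played here by~\ref{axiomQC1} and~\ref{axiomQC2}. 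The crucial input making this terminate is~\ref{axiomQC1}: whenever a factor $x_{-\alpha} \in X_{-\alpha}$ sits to the left of factors from $X_\beta$ with $\beta \in \Phi^+$, one uses $L_\alpha = X_\alpha X_{2\alpha} X_{-\alpha} X_{-2\alpha} N_\alpha$ (equivalently the other ordering in~\ref{axiomQC1}) to push the negative part rightward at the cost of producing new positive factors and an element of $N_\alpha$, and the length of the word in the relevant sense strictly decreases. Once $g = u_+ u_- n$, the element $n$ lies in $X \cap N$ because $u_+, u_-, g \in X$. This simultaneously shows $X = (X \cap U^+)(X \cap U^-)(X \cap N)$ and, combined with the root group datum axiom~\ref{axiomRGD6} ($T U^+ \cap U^- = \{1\}$) and the fact that $N$ normalizes the $U_\gamma$, that $X \cap U^+ = X_{\Phi^+}$ and $X \cap U^- = X_{\Phi^-}$, giving~\ref{PropDecQC:2} in full, and also~\ref{PropDecQC:1} by intersecting with a single root group $U_\alpha$ and using the uniqueness of the factorization in~\ref{PropDecQC:2} together with~\ref{axiomRGD6}.

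Finally I would prove~\ref{PropDecQC:4}: that $X \cap N = \langle N_\alpha \mid \alpha \in \Phi_{\mathrm{nd}}\rangle =: N_X$. The inclusion $N_X \subseteq X \cap N$ is immediate. For the converse, take $n \in X \cap N$ and, by~\ref{PropDecQC:3} applied \emph{inside} the subgroup $N_X \langle X_\alpha\rangle$ — or more directly by the decomposition argument above — write $n = u_+ u_- n'$ with $n' \in N_X$, $u_\pm \in X_{\Phi^\pm}$; then $u_+ u_- = n (n')^{-1} \in N$, and using $\overline{^v\nu}$ and the Bruhat-type decomposition of the root group datum (specifically that $U^+ \cap N U^- = \{1\}$, which follows from~\ref{axiomRGD6} and the standard consequences in \cite[6.1]{BruhatTits1}) one forces $u_+ = u_- = 1$, whence $n = n' \in N_X$.

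The main obstacle I anticipate is the bookkeeping in the rewriting argument for~\ref{PropDecQC:3}: making precise the ``length'' or well-founded ordering on words in the $X_{\pm\alpha}$ and $Y$ that strictly decreases under the moves licensed by~\ref{axiomQC1}, \ref{axiomQC2}, \ref{axiomQC3}, so that the procedure provably terminates. In the $\mathbb{R}$-valued setting Bruhat--Tits handle this via the quasi-concave function; here one must check that replacing the function-theoretic hypotheses by the abstract axioms~\ref{axiomQC1}--\ref{axiomQC3} still supplies exactly what the induction needs — in particular that~\ref{axiomQC1} holds not just for $\alpha$ but is stable enough under the $N_\alpha$-conjugations produced along the way. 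Everything else is a faithful transcription of \cite[6.1.6]{BruhatTits1} and \cite[6.4.9]{BruhatTits1}.
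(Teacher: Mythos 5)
Your overall architecture matches the paper's: reduce the unipotent parts to \cite[6.1.6]{BruhatTits1} via~\ref{axiomQC2}, get~\ref{PropDecQC:2} and~\ref{PropDecQC:1} as by-products of~\ref{PropDecQC:3}, and settle~\ref{PropDecQC:4} with the spherical Bruhat decomposition and~\ref{axiomRGD6}. But the core of~\ref{PropDecQC:3} as you describe it --- a word-rewriting procedure terminating because ``the length of the word in the relevant sense strictly decreases'' --- is a genuine gap, and you correctly sense this yourself. There is no obvious length function that decreases: the move licensed by~\ref{axiomQC1} replaces a single factor of $X_{-\alpha}$ sitting to the left of positive factors by a product containing \emph{both} positive and negative factors together with an element of $N_\alpha$, and the commutators supplied by~\ref{axiomQC2} for $\alpha\in\Phi^+$, $\beta\in\Phi^-$ non-opposite land in $X_{(\alpha,\beta)}$, where $(\alpha,\beta)$ generally contains roots of both signs. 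So the procedure as stated need not terminate, and the conjugations by the $N_\alpha$ produced along the way further scramble the word.

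The way the paper (following the actual mechanism of \cite[6.4.9]{BruhatTits1}, which is \emph{not} a length induction) closes this is by a closure argument on a candidate set rather than a rewriting of words. One sets $Z=\langle N_\alpha\rangle$ and shows that the \emph{set} $X^-X^+Z$ is unchanged when the basis $\Delta$ is replaced by $r_\alpha(\Delta)$ for $\alpha\in\Delta$: writing $X^\pm = \widehat{X}_{\pm\alpha}X_{\pm\alpha}$ with $\widehat{X}_{\pm\alpha}=\prod_{\beta\in\Phi^+_{\mathrm{nd}}\setminus\{\alpha\}}X_{\pm\beta}$ a subgroup (by \cite[6.1.6]{BruhatTits1}) normalized by $X_{\alpha}$, $X_{-\alpha}$, $Y$ and hence by $L_\alpha$ and $N_\alpha$, one computes $X^-X^+Z=\widehat{X}_{-\alpha}L_\alpha\widehat{X}_\alpha Z$ and then re-expands $L_\alpha$ using the \emph{other} ordering in~\ref{axiomQC1} to land on the decomposition attached to $r_\alpha(\Delta)$. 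Independence of the basis then gives stability of $X^-X^+Z$ under left multiplication by every $X_\alpha$ ($\alpha\in\Phi_{\mathrm{nd}}$) and by $Y$, so $X^-X^+Z=X$ with no induction at all. You should replace your termination argument by this basis-change invariance; the rest of your outline (identification of $X\cap U^\pm$ with $X^\pm$ via \cite[6.1.15(c)]{BruhatTits1} and~\ref{axiomRGD6}, and the proof of~\ref{PropDecQC:4}) then goes through as you wrote it, provided you also first invoke Lemma~\ref{lemSaturationQC} to assume $X_{2\alpha}\subset X_\alpha$.
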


Note that, by definition of $N_{\alpha}$, even if $Y=1$, it may happen that $X_\alpha = X_{-\alpha} = 1$ but $N_{\alpha} \neq 1$ for a multipliable root $\alpha$.
Moreover, since $N_{2\alpha} \subset N_{\alpha}$ for any multipliable root $\alpha$, the group $X \cap N$ is also generated by the $N_{\alpha}$ for $\alpha \in \Phi$.

\begin{Lem}[{see \cite[E2]{BruhatTits2}}]\label{lemSaturationQC}
If $\left((X_\alpha)_{\alpha\in\Phi},Y\right)$ is a quasi-concave family of groups, then also is $\left((X_\alpha X_{2\alpha})_{\alpha\in\Phi},Y\right)$.
\end{Lem}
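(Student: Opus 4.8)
Set $X'_\alpha := X_\alpha X_{2\alpha}$ for every $\alpha\in\Phi$, with the convention that $X_{2\alpha}=\{1\}$ whenever $2\alpha\notin\Phi$; the plan is to check the three axioms~\ref{axiomQC1},~\ref{axiomQC2} and~\ref{axiomQC3} for the family $\big((X'_\alpha)_{\alpha\in\Phi},Y\big)$. First I would record some elementary identifications. If $\alpha$ is multipliable, then $(\alpha,2\alpha)=\emptyset$ (an element $r\alpha+2s\alpha$ with $r,s\geq 1$ is a positive multiple $\geq 3$ of $\alpha$, hence not a root), so~\ref{axiomQC2} for the original family gives $[X_\alpha,X_{2\alpha}]=\{1\}$ and $X'_\alpha$ is a subgroup of $U_\alpha$; if $\alpha$ is not multipliable then $X'_\alpha=X_\alpha$. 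Since $4\alpha\notin\Phi$, the ``$2\alpha$-term'' attached to the new family is $X'_{2\alpha}=X_{2\alpha}X_{4\alpha}=X_{2\alpha}$, so $X'_\alpha X'_{2\alpha}=X'_\alpha$; and for $\alpha\in\Phi_{\mathrm{nd}}$, the subgroup $L'_\alpha$ generated by $X'_{\alpha},X'_{2\alpha},X'_{-\alpha},X'_{-2\alpha},Y$ is generated by the same elements as $L_\alpha$, whence $L'_\alpha=L_\alpha$ and $N'_\alpha:=L'_\alpha\cap N=N_\alpha$. With these identities, axiom~\ref{axiomQC1} for the new family is word for word~\ref{axiomQC1} for the old one, since $X'_\alpha X'_{2\alpha}X'_{-\alpha}X'_{-2\alpha}N'_\alpha=X_\alpha X_{2\alpha}X_{-\alpha}X_{-2\alpha}N_\alpha=L_\alpha=L'_\alpha$ and similarly for the reversed factorization. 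Axiom~\ref{axiomQC3} is immediate: $Y$ normalizes $X_\alpha$ and $X_{2\alpha}$, hence their product subgroup $X'_\alpha$.

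The substance is~\ref{axiomQC2}. Fix $\alpha,\beta\in\Phi$ with $\beta\notin\mathbb{R}_{>0}(-\alpha)$; I must show $[X'_\alpha,X'_\beta]\subset X'_{(\alpha,\beta)}$, where $X'_{(\alpha,\beta)}$ is generated by the $X'_\gamma$ for $\gamma\in(\alpha,\beta)$. Writing a generic $\gamma\in(\alpha,\beta)$ as $\gamma=p\alpha+q\beta$ with $p,q\in\mathbb{Z}_{>0}$, one checks directly the purely combinatorial facts: if $2\gamma\in\Phi$ then $2\gamma\in(\alpha,\beta)$; one has $\gamma\notin\mathbb{R}_{>0}(-\alpha)$ and $\gamma\notin\mathbb{R}_{>0}(-\beta)$ (either would force $\beta\in\mathbb{R}_{>0}(-\alpha)$); and $(i\alpha,\gamma)\subset(\alpha,\beta)$, $(j\beta,\gamma)\subset(\alpha,\beta)$, $(i\alpha,j\beta)\subset(\alpha,\beta)$ for all $i,j\in\{1,2\}$. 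The first fact gives $X'_{(\alpha,\beta)}=X_{(\alpha,\beta)}=:M$. Applying~\ref{axiomQC2} for the original family together with the displayed inclusions, $[X_{i\alpha},X_\gamma]\subset M$ and $[X_{j\beta},X_\gamma]\subset M$ for every $\gamma\in(\alpha,\beta)$ and $i,j\in\{1,2\}$; hence $X_\alpha,X_{2\alpha},X_\beta,X_{2\beta}$ each normalize $M$, so $M$ is normal in $\Gamma:=\langle X_\alpha,X_{2\alpha},X_\beta,X_{2\beta},M\rangle$. In the quotient $\Gamma/M$, the images of $X_\alpha$ and $X_{2\alpha}$ commute with those of $X_\beta$ and $X_{2\beta}$, because $[X_{i\alpha},X_{j\beta}]\subset X_{(i\alpha,j\beta)}\subset M$ by~\ref{axiomQC2}; therefore the image of $\langle X_\alpha,X_{2\alpha}\rangle\supset X'_\alpha$ commutes with that of $\langle X_\beta,X_{2\beta}\rangle\supset X'_\beta$, which is exactly $[X'_\alpha,X'_\beta]\subset M=X'_{(\alpha,\beta)}$.

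The only real work lies in~\ref{axiomQC2}, and even there it is confined to the elementary root combinatorics of writing elements of $(\alpha,\beta)$, $(i\alpha,\gamma)$, $(i\alpha,j\beta)$ as positive integer combinations of $\alpha$ and $\beta$, together with the standard group-theoretic principle that if $[A,C]$, $[A,D]$, $[B,C]$, $[B,D]$ all lie in a subgroup $M$ that is normalized by $A,B,C,D$, then $[\langle A,B\rangle,\langle C,D\rangle]\subset M$ (pass to $\langle A,B,C,D,M\rangle/M$). I do not expect any genuine obstacle beyond this bookkeeping, since all structural input — the decomposition~\ref{axiomQC1}, the commutation relations~\ref{axiomQC2}, and the normalization~\ref{axiomQC3} — is already supplied by the hypothesis on the original family.
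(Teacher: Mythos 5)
Your proof is correct and follows essentially the same route as the paper's: axiom~\ref{axiomQC3} is immediate, axiom~\ref{axiomQC1} reduces to the observation that the products and the groups $L_\alpha$, $N_\alpha$ are unchanged, and axiom~\ref{axiomQC2} rests on the fact that $X_{(\alpha,\beta)}$ is normalized by $X_{i\alpha}$ and $X_{j\beta}$ together with the pairwise commutator relations. The only (cosmetic) difference is that you pass to the quotient $\Gamma/M$ where the paper instead expands $[xy,uv]$ via the identity $[ab,c]=a[b,c][c,a^{-1}]a^{-1}$; your version also spells out the root-combinatorial verifications (e.g.\ $\gamma\notin-\mathbb{R}_{>0}\alpha$ for $\gamma\in(\alpha,\beta)$) that the paper leaves implicit.
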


\begin{proof}
Let $\mathcal{X} = \left((X_\alpha)_{\alpha\in\Phi},Y\right)$ and $\mathcal{X}' = \left((X_\alpha X_{2\alpha})_{\alpha\in\Phi},Y\right)$.
Axiom~\ref{axiomQC3} is immediate for $\mathcal{X}'$.
Since $X_{4\alpha}$ is trivial and $X_{2\alpha} \subset U_{2\alpha}$ normalizes $X_\alpha \subset U_\alpha$ by axiom~\ref{axiomRGD2} (indeed $[U_\alpha,U_{2\alpha}] = 1$), we deduce axiom~\ref{axiomQC1} for $\mathcal{X}'$ from axiom~\ref{axiomQC1} for $\mathcal{X}$.

Now, let $\alpha, \beta \in \Phi$ with $\beta \not\in -\mathbb{R}_{>0} \alpha$.
Let $Z = \langle X_\gamma \rangle_{\gamma \in (\alpha,\beta)}$.
For any $\gamma \in (\alpha, \beta)$, the subgroup $X_\gamma$ normalizes $Z$ by axiom~\ref{axiomQC2} for $\mathcal{X}$ since $(\alpha,\beta) = (\mathbb{Z}_{>0} \alpha + \mathbb{Z}_{>0} \beta)\cap \Phi$.
Let $x \in X_\alpha$, $y \in X_{2\alpha}$, $u\in X_\beta$ and $v \in X_{2\beta}$.
It suffices to prove that $[xy,uv]$ belongs to $Z$.
Using the usual formula $[ab,c]=a[b,c][c,a^{-1}]a^{-1}$, valid for any elements $a,b,c$ of any group, and the fact  that $Z$ is normalized by $x,y,u,v$,  we have that $[xy, uv]\in Z$. Thus, we deduce axiom~\ref{axiomQC2} for $\mathcal{X}'$.
\end{proof}

%\green{
\begin{proof}[Proof of Proposition~\ref{PropDecompositionQC}.]
According to Lemma~\ref{lemSaturationQC}, we may assume that $X_\alpha =X_\alpha X_{2\alpha}$ for any $\alpha \in \Phi$.

Consider an arbitrary ordering on $\Phi^+_{\mathrm{nd}}$ (resp. $\Phi^-_{\mathrm{nd}}$).
Let $f_+: \prod_{\alpha \in \Phi_{\mathrm{nd}}^+} U_\alpha \to G$ (resp. $f_-: \prod_{\alpha \in \Phi_{\mathrm{nd}}^-} U_\alpha \to G$) be the map induced by multiplication.

Denote by $X^+ = f_+\left(  \prod_{\alpha \in \Phi_{\mathrm{nd}}^+} X_\alpha \right)$ and by $X^- = f_-\left(  \prod_{\alpha \in \Phi_{\mathrm{nd}}^-} X_\alpha \right)$.
According to axiom~\ref{axiomQC2} and \cite[6.1.6]{BruhatTits1}, we know that the restriction of the map $f_+$ (resp. $f_-$) to $\prod_{\alpha \in \Phi^+_{\mathrm{nd}}} X_\alpha$ (resp. $\prod_{\alpha \in \Phi^-_{\mathrm{nd}}} X_\alpha$) induces a bijection onto $X^+$ (resp. $X^-$) and that $X^+$ (resp. $X^-$) is a subgroup of $G$.
In fact, $X^+ \subset U^+ \cap X$ and $X^- \subset U^- \cap X$.
To prove~\ref{PropDecQC:2}, it suffices to prove that these inclusions are equalities.

Denote $L_\alpha$ and $N_\alpha$ as in Definition~\ref{DefQC}.
Denote by $Z$ the group generated by the $N_\alpha$ for $\alpha \in \Phi$.
Note that $Y \subset N_\alpha$ for every $\alpha \in \Phi_{\mathrm{nd}}$, and therefore $Y \subset Z$ since $\Phi_{\mathrm{nd}}$ is non-empty.
By definition, $X^- X^+ Z$ is a subset of $X$ as product of subgroups. 

We will prove that this subset $X^- X^+ Z$ does not depend on the basis (or Weyl chamber) $\Delta$ defining the choice of positive roots $\Phi^+$ in the root system $\Phi$.
More precisely, we prove that it is the same set if we replace $\Delta$ by $r_\alpha(\Delta)$ for $\alpha \in \Delta$ a simple root and we are done since the $r_\alpha$ for $\alpha \in \Delta$ generate the Weyl group of $\Phi$.
Let $\alpha \in \Delta \subset \Phi_{\mathrm{nd}}$ and denote by $\widehat{X}_\alpha$ (resp. $\widehat{X}_{-\alpha}$) the product of the $X_\beta$
(resp. $X_{-\beta}$) for $\beta \in \Phi^+_{\mathrm{nd}} \setminus \{\alpha\}$.
According to axiom~\ref{axiomQC2}, one can apply \cite[6.1.6]{BruhatTits1} to the family of groups $X_\alpha$ and $X_\beta$ for $\beta \in \Phi^+_{\mathrm{nd}} \setminus \{\alpha\}$ to get that $\widehat{X}_\alpha$ is a subgroup of $G$ normalized by $X_\alpha$.
Similarly, using the fact that $r_\alpha(\alpha)=-\alpha$ and $r_{\alpha}(\Phi^+_{\mathrm{nd}}\setminus\{\alpha\})=\Phi^+_{\mathrm{nd}}\setminus\{\alpha\}$ we have that $X_{-\alpha}$ normalizes $\widehat{X}_\alpha$.

By the same way, $\widehat{X}_{-\alpha}$ is normalized by $X_\alpha$ and $X_{-\alpha}$.
Moreover, $\widehat{X}_\alpha$ and $\widehat{X}_{-\alpha}$ are normalized by $Y$ since $Y$ normalizes the $X_\alpha$ by~\ref{axiomQC3}.
Therefore,  as by assumption, $X_{2\alpha }\subset X_\alpha$, we deduce that $L_\alpha$ normalizes $\widehat{X}_\alpha$ and $\widehat{X}_{-\alpha}$, and so does $N_\alpha$.
Moreover $L_\alpha = X_\alpha X_{-\alpha} N_\alpha = X_{-\alpha} X_\alpha N_\alpha$ by~\ref{axiomQC1}.
Hence we have
\begin{align*}
X^- X^+ Z
=& \left(\widehat{X}_{-\alpha} X_{-\alpha}\right) \left( X_\alpha \widehat{X}_\alpha \right) \left( N_\alpha Z \right)\\
=& \widehat{X}_{-\alpha} X_{-\alpha} X_{\alpha}  N_\alpha \widehat{X}_\alpha Z\\
=& \widehat{X}_{-\alpha} L_\alpha \widehat{X}_\alpha Z\\
=& \widehat{X}_{-\alpha} X_\alpha X_{-\alpha}  N_\alpha \widehat{X}_\alpha Z\\
=& \left( \widehat{X}_{-\alpha} X_\alpha \right)\left( X_{-\alpha}  \widehat{X}_\alpha\right) N_\alpha Z\\
=& \left( \prod_{\beta \in r_\alpha(\Phi^-_{\mathrm{nd}})} X_\beta \right)
\left( \prod_{\beta \in r_\alpha(\Phi^+_{\mathrm{nd}})} X_\beta \right)
Z
\end{align*}
As a consequence, the set $X^- X^+ Z$ does not depend on the choice of $\Delta$ and, therefore, is stable by left multiplication by elements in $X_\alpha$ for any $\alpha \in \Phi_{\mathrm{nd}}$.
Moreover, it is stable by left multiplication by elements in $Y$ since $Y$ normalizes $X^-$ and $X^+$.
Thus, $X^- X^+ Z = X$.

Now, let $g \in X \cap U^-$ and write it as $g = x^- x^+ z$ with $x^- \in X^-$, $x^+ \in X^+$ and $z \in Z$.
Then $x^+ z = \left(x^-\right)^{-1} g \in U^-$.
Using a Bruhat decomposition \cite[6.1.15 (c)]{BruhatTits1}, we have $z=1$ since $N \to U^+ \backslash G / U^-$ is a bijection.
Hence $x^+ \in U^+ \cap U^- = \{1\}$ by axiom~\ref{axiomRGD6}.
Thus we get $X \cap U^- = X^-$.
This proves the surjectivity of the map $X^- \to X \cap U^-$ which is also injective by restriction of a bijection.
By symmetry, the same holds for the map $X^+ \to X \cap U^+$ and we get~\ref{PropDecQC:2}.
We deduce~\ref{PropDecQC:1} from~\ref{PropDecQC:2} by intersection with $U_\alpha$ for $\alpha \in \Phi_{\mathrm{nd}}$.

If $n \in X \cap N$, write it as $n = x^- x^+ z$ with $x^-\in U^-$, $x^+\in U^+$ and $z\in Z$.
Then $n = z$ using the Bruhat decomposition \cite[6.1.15 (c)]{BruhatTits1}.
This proves $Z = X \cap N$ which is~\ref{PropDecQC:4}
and, therefore, we deduce~\ref{PropDecQC:3}.
\end{proof}
%}

Using a valuation of a root group datum, we will apply the proposition above to various examples of quasi-concave families of groups, see Example~\ref{ExQC} or Proposition~\ref{PropUstarQC}.

\subsection{\texorpdfstring{$\Rtot$}{R}-valuation of a root group datum}\label{subsection_Rvaluation}

In the following, we will assume that $\Rtot$ is a non-zero totally ordered abelian group.

When $\Rtot \neq \mathbb{R}$, there is no reason for $\Rtot$ to satisfy the least-upper-bound property. In particular, in this work, we avoid to introduce a notion of infimum and supremum in $\Rtot$. 
We could maybe do this by considering  the totally ordered monoid of the convex subsets of $\Rtot$ containing $\infty$ but it is not easy to manipulate.
Obviously, for $\Rtot = \mathbb{R}$, such a monoid has been introduced in \cite[6.4.1]{BruhatTits1}.
This difference firstly appears in the definition of the subsets of values $\Gamma'_\alpha$ (see Notation~\ref{not:Gamma}).

The definition of a valuation of a root group datum, given by \cite[6.2.1]{BruhatTits1}, can be naturally extended as follows:
\begin{Def}\label{DefValuation}
Let $\Phi$ be a root system and $(T,(U_\alpha, M_\alpha)_{\alpha \in \Phi})$ be a root group datum.
An $\Rtot$-valuation\index{R-valuation@$R$-valuation}\index{valuation!of a root group datum}\index{root group datum!valuation} of the root group datum is a family $(\varphi_\alpha)_{\alpha \in \Phi}$ of maps $\varphi_\alpha : U_\alpha \to \Rtot \cup \{\infty\}$\index[notation]{p@$\varphi_\alpha$} satisfying the following axioms:
\begin{enumerate}[label={(V\arabic*)}]
\setcounter{enumi}{-1}
\item\label{axiomV0} for any $\alpha \in \Phi$, the set $\varphi_\alpha(U_\alpha)$ contains at least $3$ elements;\axiom{V0@\ref{axiomV0}}
\item\label{axiomV1}  for any $\alpha \in \Phi$ and $\lambda \in \Rtot \cup \{\infty\}$, the set $U_{\alpha,\lambda} = \varphi^{-1}_\alpha( [\lambda,\infty] ) $\index[notation]{u@$U_{\alpha,\lambda}$} is a subgroup of $U_\alpha$ and $U_{\alpha,\infty} = \{1\}$;\axiom{V1@\ref{axiomV1}}
\item\label{axiomV2}  for any $\alpha \in \Phi$ and $m \in M_\alpha$, the map $U_{-\alpha} \setminus \{1\} \to \Rtot$ defined by $u \mapsto \varphi_{-\alpha}(u) - \varphi_{\alpha}(mum^{-1})$ is constant;\axiom{V2@\ref{axiomV2}}
\item\label{axiomV3}  for any $\alpha,\beta \in \Phi$ such that $\beta \not\in \mathbb{R}_{\leqslant 0} \alpha$ and any $\lambda,\mu \in \Rtot$, the commutator group $[U_{\alpha,\lambda},U_{\beta,\mu}]$ is contained in the group generated by the $U_{r\alpha + s\beta, r \lambda + s \mu}$ for $r,s \in \mathbb{Z}_{>0}$ such that $r\alpha+s\beta \in \Phi$;\axiom{V3@\ref{axiomV3}}
\item\label{axiomV4}  for any multipliable root $\alpha \in \Phi$, the map $\varphi_{2\alpha}$ is the restriction of the map $2 \varphi_\alpha$ to $U_{2\alpha}$;\axiom{V4@\ref{axiomV4}}
\item\label{axiomV5}  for any $\alpha \in \Phi$ and $u \in U_{\alpha}$, for any $u',u'' \in U_{-\alpha}$ such that $u'uu'' \in M_\alpha$, we have $\varphi_{-\alpha}(u') = - \varphi_\alpha(u)$.\axiom{V5@\ref{axiomV5}}
\end{enumerate}
\end{Def}

It is convenient to introduce notation of the trivial subgroup $U_{2\alpha,\lambda} = \{1\}$ for any $\alpha \in \Phi$ such that $2\alpha \not\in \Phi$ and $\lambda \in \Rtot \cup \{\infty\}$.

As in Bruhat-Tits theory, in section~\ref{SecQuasiSplitGroups}, we will use  Chevalley-Steinberg systems in order to provide such a valuation. Namely, $\Rtot$ will be the abelian group $\mathfrak{R}^S$ so that $\Lambda = \omega(\mathbb{K}^*)$ will be canonically identified to a subset of $\Rtot$.
For instance, if $\mathbf{G}$ is split, then the root groups $\mathbf{U}_\alpha$ are isomorphic to $\mathbb{G}_a$. Thus, the pinnings of these groups give isomorphisms $x_\alpha: \mathbb{K} \to \mathbf{U}_\alpha(\mathbb{K})$ and one can define $\varphi_\alpha: \mathbf{U}_\alpha(\mathbb{K}) \to \Lambda \subset \mathfrak{R}^S$ by $\varphi_\alpha \circ x_\alpha = \omega$.

\begin{Lem}\label{LemAxiomV1}
Axiom~\ref{axiomV1} is equivalent to the following axiom:
\begin{enumerate}[label={(V\arabic*bis)}]
\setcounter{enumi}{0}
\item\label{axiomV1bis} for any $\alpha \in \Phi$ and any $u,v \in U_{\alpha}$, we have $\varphi_{\alpha}(uv^{-1}) \geqslant \min(\varphi_\alpha(u), \varphi_\alpha(v))$ and $\varphi_\alpha^{-1}(\{\infty\}) = \{1\}$.\axiom{V1bis@\ref{axiomV1bis}}
\end{enumerate}

In particular, for any $\alpha \in \Phi$,
\begin{enumerate}[label={(\arabic*)}]
\item\label{LemAxiomV1:1} for any $u \in U_\alpha$, we have $\varphi_\alpha(u^{-1}) = \varphi_\alpha(u)$;
\item\label{LemAxiomV1:2} for any $u,v \in U_\alpha$ such that $\varphi_\alpha(v) > \varphi_\alpha(u)$, we have $\varphi_\alpha(uv) = \varphi_\alpha(u)=\varphi_\alpha(vu)$.
\end{enumerate}
\end{Lem}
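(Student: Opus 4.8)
The statement has two halves: the equivalence \ref{axiomV1}$\Leftrightarrow$\ref{axiomV1bis}, and the two consequences \ref{LemAxiomV1:1} and \ref{LemAxiomV1:2}. I would prove the equivalence by establishing both implications directly, and then prove the consequences working under \ref{axiomV1bis} — which, once the equivalence is known, is harmless. The underlying principle is the standard one that the super-level sets of a ``valuation-like'' map are subgroups precisely when the map satisfies an ultrametric-type inequality; the only point requiring attention is the uniform treatment of the symbol $\infty$, and the fact that all the manipulations below are ``ultrametric'', hence go through verbatim over an arbitrary totally ordered abelian group $\Rtot$ without any use of suprema or infima.

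\textbf{\ref{axiomV1} $\Rightarrow$ \ref{axiomV1bis}.} From $U_{\alpha,\infty} = \{1\}$ one reads off $\varphi_\alpha^{-1}(\{\infty\}) = \{1\}$, and in particular $\varphi_\alpha(1) = \infty$. Now fix $u,v \in U_\alpha$ and put $\lambda = \min(\varphi_\alpha(u),\varphi_\alpha(v))$. If $\lambda = \infty$ then $u=v=1$ and $\varphi_\alpha(uv^{-1}) = \infty \geq \lambda$. Otherwise $\lambda \in \Rtot$ and $u,v \in U_{\alpha,\lambda} = \varphi_\alpha^{-1}([\lambda,\infty])$, which is a subgroup by \ref{axiomV1}; hence $uv^{-1} \in U_{\alpha,\lambda}$, i.e. $\varphi_\alpha(uv^{-1}) \geq \lambda = \min(\varphi_\alpha(u),\varphi_\alpha(v))$. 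This is \ref{axiomV1bis}.

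\textbf{\ref{axiomV1bis} $\Rightarrow$ \ref{axiomV1}, and \ref{LemAxiomV1:1}.} Assume \ref{axiomV1bis}; then $\varphi_\alpha(1) = \infty$ since $\varphi_\alpha^{-1}(\{\infty\}) = \{1\}$. Taking $u = 1$ in the inequality of \ref{axiomV1bis} gives $\varphi_\alpha(v^{-1}) \geq \min(\varphi_\alpha(1),\varphi_\alpha(v)) = \varphi_\alpha(v)$ for every $v \in U_\alpha$; applying this to $v^{-1}$ yields the reverse inequality, so $\varphi_\alpha(v^{-1}) = \varphi_\alpha(v)$, which is \ref{LemAxiomV1:1}. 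Next, fix $\lambda \in \Rtot \cup \{\infty\}$. The set $U_{\alpha,\lambda}$ contains $1$ (as $\varphi_\alpha(1) = \infty \geq \lambda$), is stable under inversion by \ref{LemAxiomV1:1}, and for $u,v \in U_{\alpha,\lambda}$ one has
\[\varphi_\alpha(uv) = \varphi_\alpha\big(u\,(v^{-1})^{-1}\big) \geq \min(\varphi_\alpha(u),\varphi_\alpha(v^{-1})) = \min(\varphi_\alpha(u),\varphi_\alpha(v)) \geq \lambda,\]
so $U_{\alpha,\lambda}$ is a subgroup of $U_\alpha$; and $U_{\alpha,\infty} = \varphi_\alpha^{-1}(\{\infty\}) = \{1\}$. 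This is \ref{axiomV1}.

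\textbf{\ref{LemAxiomV1:2}.} Assume \ref{axiomV1bis} and let $u,v \in U_\alpha$ with $\varphi_\alpha(v) > \varphi_\alpha(u)$. Using \ref{LemAxiomV1:1},
\[\varphi_\alpha(uv) \geq \min\big(\varphi_\alpha(u),\varphi_\alpha(v^{-1})\big) = \min\big(\varphi_\alpha(u),\varphi_\alpha(v)\big) = \varphi_\alpha(u).\]
If this were strict, then from $u = (uv)v^{-1}$ we would get $\varphi_\alpha(u) \geq \min(\varphi_\alpha(uv),\varphi_\alpha(v)) > \varphi_\alpha(u)$, a contradiction; hence $\varphi_\alpha(uv) = \varphi_\alpha(u)$. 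Running the same argument on the pair $(v,u)$ (writing $vu = v\,(u^{-1})^{-1}$ and $u = v^{-1}(vu)$) gives $\varphi_\alpha(vu) = \varphi_\alpha(u)$, completing the proof. I do not anticipate any real obstacle: the whole lemma is bookkeeping around the equivalence ``balls are subgroups $\Leftrightarrow$ ultrametric inequality'', the only care being the systematic handling of the value $\infty$.
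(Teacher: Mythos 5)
Your proof is correct and follows essentially the same route as the paper's: both directions of the equivalence are obtained by the standard "balls are subgroups $\Leftrightarrow$ ultrametric inequality" argument, point (1) by taking $u=1$ and symmetrizing, and point (2) by the inequality $\varphi_\alpha(uv)\geqslant\varphi_\alpha(u)$ combined with $u=(uv)v^{-1}$ (the paper runs this as a sandwich of inequalities rather than a contradiction, which is only a cosmetic difference). Your explicit handling of the $\lambda=\infty$ case and of $\varphi_\alpha(vu)$ is slightly more detailed than the paper's, but the substance is identical.
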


\begin{proof}Consider $\alpha \in \Phi$.
By definition, $U_{\alpha,\infty} = \{1\}$ is equivalent to $\varphi_\alpha^{-1}(\{\infty\}) = \{1\}$.

Suppose axiom~\ref{axiomV1}.
Consider $u,v \in U_{\alpha}$ and let $\lambda = \min(\varphi_\alpha(u), \varphi_\alpha(v))$.
Hence $U_{\alpha,\lambda}$ is a subgroup containing $u,v$ since $\varphi_\alpha(u) \geqslant \lambda$ and  $\varphi_\alpha(v) \geqslant \lambda$.
Therefore, $uv^{-1} \in U_{\alpha,\lambda}$ gives us $\varphi_{\alpha}(uv^{-1}) \geqslant \lambda$.

Conversely, suppose axiom~\ref{axiomV1bis}.
Consider $\lambda \in \Rtot$ and pick $u,v \in U_{\alpha,\lambda}$.
Then $\varphi_\alpha(uv^{-1}) \geqslant \min(\varphi_\alpha(u),\varphi_\alpha(v)) \geqslant \lambda$.
Hence $uv^{-1} \in U_{\alpha,\lambda}$ and this proves that $U_{\alpha,\lambda}$ is a subgroup of $U_\alpha$.

\ref{LemAxiomV1:1} Hence, for any $v \in U_\alpha$, if we take $u=1$, then $\varphi_\alpha(v^{-1}) = \varphi_\alpha(uv^{-1}) \geqslant \min(\varphi_\alpha(1),\varphi_\alpha(v)) = \varphi_\alpha(v)$ and this inequality is also true for $v^{-1}$ instead of $v$.

\ref{LemAxiomV1:2} We have $\varphi_\alpha(u) = \varphi_\alpha((uv)v^{-1}) \geqslant \min(\varphi_\alpha(uv),\varphi_\alpha(v^{-1})) \geqslant \min(\varphi_\alpha(u), \varphi_\alpha(v),\varphi_\alpha(v^{-1})) = \varphi_\alpha(u)$ whenever  $\varphi_\alpha(v^{-1}) = \varphi_\alpha(v) > \varphi_\alpha(u)$.
\end{proof}

\begin{Lem}\label{LemAxiomV5}
Under the assumption of axiom~\ref{axiomV1bis}, axiom~\ref{axiomV5} is equivalent to the following axiom:
\begin{enumerate}[label={(V\arabic*bis)}]
\setcounter{enumi}{4}
\item\label{axiomV5bis} for any $\alpha \in \Phi$ and $u \in U_{\alpha}$, for any $u',u'' \in U_{-\alpha}$ such that $u'uu'' \in M_\alpha$, we have $\varphi_{-\alpha}(u'') = - \varphi_\alpha(u)$.\axiom{V5bis@\ref{axiomV5bis}}
\end{enumerate}
\end{Lem}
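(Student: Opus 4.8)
The plan is to exploit the evident symmetry of the relation $u'uu'' \in M_\alpha$ under passing to inverses, combined with the invariance of the maps $\varphi_\beta$ under inversion. This invariance is exactly point~\ref{LemAxiomV1:1} of Lemma~\ref{LemAxiomV1}, which needs axiom~\ref{axiomV1bis} (equivalently~\ref{axiomV1}); this is why the equivalence of~\ref{axiomV5} and~\ref{axiomV5bis} is only asserted under that hypothesis.

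First I would record the identity $M_\alpha^{-1} = M_\alpha$ for every $\alpha \in \Phi$. Indeed, by axiom~\ref{axiomRGD4} the set $M_\alpha$ is a right coset of $T$, so $M_\alpha = Tm$ for any chosen $m \in M_\alpha$. Moreover $m^2 \in N$ and ${^v\!}\nu(m^2) = r_\alpha^2 = \operatorname{id}$, hence $m^2 \in T$ since ${^v\!}\nu$ induces an isomorphism $N/T \cong W(\Phi)$ (see \cite[6.1.2(10)]{BruhatTits1}). Therefore $m^{-1} = m^{-2}m \in Tm = M_\alpha$, which yields $M_\alpha^{-1} \subseteq M_\alpha$, and hence $M_\alpha^{-1} = M_\alpha$. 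In particular, whenever $u \in U_\alpha$ and $u',u'' \in U_{-\alpha}$ satisfy $u'uu'' \in M_\alpha$, the inverse relation $(u'')^{-1}u^{-1}(u')^{-1} \in M_\alpha$ also holds, with $(u'')^{-1},(u')^{-1} \in U_{-\alpha}$ and $u^{-1} \in U_\alpha$, using that $U_{\pm\alpha}$ are subgroups by axiom~\ref{axiomRGD1}.

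Assume now axiom~\ref{axiomV1bis}, so that $\varphi_\beta(w^{-1}) = \varphi_\beta(w)$ for all $\beta \in \Phi$ and $w \in U_\beta$, by point~\ref{LemAxiomV1:1} of Lemma~\ref{LemAxiomV1}. If~\ref{axiomV5} holds, I would apply it to the relation $(u'')^{-1}u^{-1}(u')^{-1} \in M_\alpha$ above, with $(u'')^{-1}$ playing the role of the first $U_{-\alpha}$-factor: this gives $\varphi_{-\alpha}((u'')^{-1}) = -\varphi_\alpha(u^{-1})$, which by invariance under inversion reads $\varphi_{-\alpha}(u'') = -\varphi_\alpha(u)$, i.e.\ axiom~\ref{axiomV5bis}. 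Conversely, if~\ref{axiomV5bis} holds, applying it to the same relation, now with $(u')^{-1}$ as the second $U_{-\alpha}$-factor, gives $\varphi_{-\alpha}((u')^{-1}) = -\varphi_\alpha(u^{-1})$, i.e.\ $\varphi_{-\alpha}(u') = -\varphi_\alpha(u)$, which is axiom~\ref{axiomV5}. I do not expect any genuine obstacle; the only step worth spelling out is the identity $M_\alpha^{-1} = M_\alpha$, which is routine root-group-datum bookkeeping, and the use of Lemma~\ref{LemAxiomV1} is precisely what forces the hypothesis~\ref{axiomV1bis}.
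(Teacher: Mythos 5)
Your proof is correct and follows essentially the same route as the paper's: invert the relation $u'uu''\in M_\alpha$ to get $(u'')^{-1}u^{-1}(u')^{-1}\in M_\alpha$, apply the known axiom to the inverted relation, and conclude via the inversion-invariance of $\varphi_{\pm\alpha}$ from Lemma~\ref{LemAxiomV1}. The only difference is cosmetic: the paper cites \cite[6.1.2(4)]{BruhatTits1} for $M_\alpha^{-1}=M_\alpha$, whereas you verify it directly from the root group datum axioms.
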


%\green{
\begin{proof}
Let $u \in U_\alpha$ and $u',u''\in U_{-\alpha}$ such that $u'uu'' \in M_\alpha$.
Then  by \cite[6.1.2(4)]{BruhatTits1}, we know that $(u'')^{-1} u^{-1} (u')^{-1} \in M_\alpha$.
Hence by axiom~\ref{axiomV5}, we have $\varphi_{-\alpha}((u'')^{-1}) = \varphi_{\alpha}(u^{-1})$.
By Lemma~\ref{LemAxiomV1}, we have $\varphi_{-\alpha}((u'')^{-1}) = \varphi_{-\alpha}(u'')$ and $\varphi_{\alpha}((u)^{-1}) = \varphi_{\alpha}(u)$.
Hence $\varphi_{-\alpha}(u'') = \varphi_\alpha(u)$ which gives us axiom~\ref{axiomV5bis}.
We get the converse by symmetry.
\end{proof}
%}

In all the following, we assume that a root group datum $(T,(U_\alpha, M_\alpha)_{\alpha \in \Phi})$ and an $\Rtot$-valuation $(\varphi_\alpha)_{\alpha \in \Phi}$ are given.
When $\alpha \in \Phi$ is such that $2\alpha \not\in \Phi$, we define $U_{2\alpha}  = \{1\}$.

The valuation enables us to  introduce the following sets of values:
\begin{Not}\label{not:Gamma}
For any root $\alpha \in \Phi \cup 2\Phi$, we define the following subsets, called \textbf{sets of values} associated to $\alpha$,\index{set of values} of $\Rtot$:
\begin{itemize}
\item $\Gamma_\alpha = \varphi_\alpha(U_\alpha \setminus \{1\})$;\index[notation]{g@$\Gamma_\alpha$}
\item $\displaystyle \Gamma'_\alpha = \Big\{\varphi_\alpha(u),\ u \in U_\alpha \setminus\{1\} \text{ and } U_{\alpha,\varphi_\alpha(u)} = \bigcap_{v \in U_{2\alpha}} U_{\alpha,\varphi_\alpha(u v)} \Big\} \subset \Gamma_\alpha$.\index[notation]{g@$\Gamma'_\alpha$}
\end{itemize}
\end{Not}

\begin{Rqs}\label{rqGamma_alpha}~\vspace{-1em} \

\begin{enumerate}[ref={\theproposition.\arabic*}]
\item\label{RkOppositeSetOfValue} From axiom~\ref{axiomV5} and \cite[6.1.2(2)]{BruhatTits1}, we deduce $\Gamma_{-\alpha} = - \Gamma_\alpha$.
\item\label{RkGamma_non_multipliable} By definition, if $2\alpha \not\in \Phi$, we have $\Gamma'_\alpha = \Gamma_\alpha$.
\item As in the Bruhat-Tits theory, when $\alpha$ is multipliable, it may happen that $\Gamma'_\alpha$ is empty (dense valuation); it may happen that the intersection $2\Gamma'_\alpha \cap \Gamma_{2\alpha}$ is non-empty (discrete valuation with unramified splitting extension $\widetilde{\mathbb{K}} / \mathbb{K}$).
\end{enumerate}
\end{Rqs}

\begin{Fact}\label{FactDecompositionSetOfValue} From axioms~\ref{axiomV1} and~\ref{axiomV4}, we deduce $2\Gamma_\alpha = 2\Gamma'_\alpha \cup \Gamma_{2\alpha}$.
\end{Fact}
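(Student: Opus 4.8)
The plan is to prove the equality by double inclusion, treating the non-multipliable case first: if $2\alpha \notin \Phi$ then $U_{2\alpha} = \{1\}$, so $\Gamma_{2\alpha} = \emptyset$, while $\Gamma'_\alpha = \Gamma_\alpha$ by Remark~\ref{RkGamma_non_multipliable}, and the identity holds trivially. So the real content is the case $2\alpha \in \Phi$, where I will use repeatedly Lemma~\ref{LemAxiomV1} (a consequence of~\ref{axiomV1}): $\varphi_\alpha(w^{-1}) = \varphi_\alpha(w)$, the ultrametric inequality $\varphi_\alpha(xy) \geqslant \min(\varphi_\alpha(x),\varphi_\alpha(y))$, the fact that $\varphi_\alpha(w) = \infty$ forces $w = 1$, and point~\ref{LemAxiomV1:2} giving $\varphi_\alpha(xy) = \varphi_\alpha(x)$ when $\varphi_\alpha(y) > \varphi_\alpha(x)$.

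For $2\Gamma'_\alpha \cup \Gamma_{2\alpha} \subseteq 2\Gamma_\alpha$ the argument is short: $\Gamma'_\alpha \subseteq \Gamma_\alpha$ holds by Notation~\ref{not:Gamma}, and for any $v \in U_{2\alpha}\setminus\{1\} \subseteq U_\alpha \setminus \{1\}$ one has $\varphi_{2\alpha}(v) = 2\varphi_\alpha(v) \in 2\Gamma_\alpha$ by axiom~\ref{axiomV4}, which gives $\Gamma_{2\alpha} \subseteq 2\Gamma_\alpha$.

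The main step is $2\Gamma_\alpha \subseteq 2\Gamma'_\alpha \cup \Gamma_{2\alpha}$. Given $u \in U_\alpha\setminus\{1\}$, set $\lambda := \varphi_\alpha(u)$; I want $2\lambda \in 2\Gamma'_\alpha \cup \Gamma_{2\alpha}$. If $\lambda \in \Gamma'_\alpha$ we are done, so assume $\lambda \notin \Gamma'_\alpha$. The inclusion $\bigcap_{v\in U_{2\alpha}} U_{\alpha,\varphi_\alpha(uv)} \subseteq U_{\alpha,\lambda}$ always holds (take $v=1$, as $1 \in U_{2\alpha}$), and it must be strict because $u$ is a candidate with $\varphi_\alpha(u) = \lambda$ witnessing that the defining condition of $\Gamma'_\alpha$ fails. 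So I can pick $x \in U_{\alpha,\lambda}$ and $v \in U_{2\alpha}$ with $\varphi_\alpha(x) < \varphi_\alpha(uv)$; since $\varphi_\alpha(x) \geqslant \lambda$ this gives $\varphi_\alpha(uv) > \lambda$. Then $\varphi_\alpha(v) = \varphi_\alpha(u^{-1}(uv)) \geqslant \min(\lambda, \varphi_\alpha(uv)) = \lambda$, and $\varphi_\alpha(v) > \lambda = \varphi_\alpha(u)$ would force $\varphi_\alpha(uv) = \lambda$ by point~\ref{LemAxiomV1:2} of Lemma~\ref{LemAxiomV1}, a contradiction; hence $\varphi_\alpha(v) = \lambda$, so in particular $v \neq 1$, and $2\lambda = 2\varphi_\alpha(v) = \varphi_{2\alpha}(v) \in \Gamma_{2\alpha}$ by~\ref{axiomV4}.

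I expect no genuine obstacle here; the only point demanding some care is that $\Rtot$ need not admit suprema, so one cannot argue with a quantity like ``$\sup_v \varphi_\alpha(uv)$'' and must instead work with the intersection of the subgroups $U_{\alpha,\varphi_\alpha(uv)}$, which is precisely why $\Gamma'_\alpha$ is defined the way it is. Everything else is a routine manipulation of the ultrametric properties together with axiom~\ref{axiomV4}.
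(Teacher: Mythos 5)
Your proof is correct and takes essentially the same approach as the paper's: the key mechanism in both is to compare $\varphi_\alpha(v)$ with $\varphi_\alpha(u)$ for $v \in U_{2\alpha}$ using Lemma~\ref{LemAxiomV1} together with axiom~\ref{axiomV4}. You simply run the argument in the contrapositive direction (failure of the $\Gamma'_\alpha$-condition for $u$ produces $v \in U_{2\alpha}\setminus\{1\}$ with $\varphi_\alpha(v)=\varphi_\alpha(u)$, hence $2\lambda\in\Gamma_{2\alpha}$), whereas the paper assumes $2\varphi_\alpha(u)\notin\Gamma_{2\alpha}$ and verifies the $\Gamma'_\alpha$-condition directly.
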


\begin{proof}
By definition $2\Gamma'_\alpha \subset 2\Gamma_\alpha$ and, by axiom~\ref{axiomV4}, we have $\Gamma_{2\alpha} \subset 2\Gamma_\alpha$.

Conversely, let $\lambda \in 2\Gamma_\alpha \setminus \Gamma_{2\alpha}$.
Let $u \in U_\alpha$ such that $2 \varphi_\alpha(u) = \lambda$.
Then for any $v \in U_{2\alpha}$, we have $\lambda \neq \varphi_{2\alpha}(v)$ by definition.
Thus $2 \varphi_\alpha(u) = \lambda \neq 2 \varphi_{\alpha}(v)$ by axiom~\ref{axiomV4} and therefore $\varphi_\alpha(u) \neq \varphi_\alpha(v)$ since $\Rtot$ is $\mathbb{Z}$-torsion free.
If $\varphi_\alpha(v) > \varphi_\alpha(u)$, we have $\varphi_\alpha(uv) = \varphi_\alpha(u)$ by Lemma~\ref{LemAxiomV1}\ref{LemAxiomV1:2}.
Thus $U_{\alpha,\varphi_\alpha(u)} = U_{\alpha,\varphi_\alpha(uv)}$.
If $\varphi_\alpha(v) < \varphi_\alpha(u)$, we have $\varphi_\alpha(uv) = \varphi_\alpha(v) < \varphi_\alpha(u)$ by Lemma~\ref{LemAxiomV1}\ref{LemAxiomV1:1} and~\ref{LemAxiomV1:2}.
Thus $U_{\alpha,\varphi_\alpha(u)} \subset U_{\alpha,\varphi_\alpha(uv)}$.
Hence we have $U_{\alpha,\varphi_\alpha(u)} \subset \bigcap_{v \in U_{2\alpha}} U_{\alpha,\varphi_{\alpha}(uv)}$ and this is, in fact, an equality by considering $v = 1 \in U_{2\alpha}$.
Thus $\varphi_\alpha(u) \in \Gamma'_\alpha$.
Therefore $\lambda = 2 \varphi_\alpha(u) \in 2 \Gamma'_\alpha$.
\end{proof}

\begin{Not}\label{NotValuedCoset}
For $\alpha \in \Phi$ and $\lambda \in \Rtot$, we denote:
\[M_{\alpha,\lambda} = M_\alpha \cap U_{-\alpha} \varphi_{\alpha}^{-1}(\{\lambda\}) U_{-\alpha}.\]\index[notation]{m@$M_{\alpha,\lambda}$}
\end{Not}

We provide some details of \cite[6.2.2]{BruhatTits1}:

\begin{Prop}\label{PropValuedCoset}
Let $\alpha \in \Phi$ and $\lambda \in \Rtot$.
\begin{enumerate}[label={(\arabic*)}]
\item\label{PropValuedCoset:emptyness} $M_{\alpha,\lambda}$ is non-empty if, and only if, $\lambda \in \varphi_{\alpha}(U_\alpha \setminus \{1\}) = \Gamma_\alpha$;
\item\label{PropValuedCoset:reverselambda} $\varphi_\alpha^{-1}(\{\lambda\}) \subset U_{-\alpha,-\lambda} M_{\alpha,\lambda} U_{-\alpha,-\lambda}$;
\item\label{PropValuedCoset:insideLevi} $M_{\alpha,\lambda} \subset \varphi_{-\alpha}^{-1}(\{-\lambda\})\varphi_{\alpha}^{-1}(\{\lambda\})\varphi_{-\alpha}^{-1}(\{-\lambda\}) \subset U_{-\alpha,-\lambda} U_{\alpha,\lambda} U_{-\alpha,-\lambda}$;
\item\label{PropValuedCoset:opposite} $M_{-\alpha,-\lambda} = M_{\alpha,\lambda}$;
\item\label{PropValuedCoset:multipliable} $M_{2\alpha,2\lambda} \subset M_{\alpha, \lambda}$.
\end{enumerate}

\end{Prop}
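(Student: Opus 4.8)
The plan is to follow \cite[6.2.2]{BruhatTits1} closely, carefully tracking where the real-valued arguments there use only order-theoretic facts (so that they survive verbatim in our setting of a general totally ordered abelian group $\Rtot$) and where small modifications are needed because $\Rtot$ need not have suprema. Throughout, the key structural input is axiom~\ref{axiomRGD4} (so $U_{-\alpha}\setminus\{1\}\subset U_\alpha M_\alpha U_\alpha$, and by \cite[6.1.2]{BruhatTits1} also $M_\alpha\subset U_{-\alpha}U_\alpha U_{-\alpha}$ with uniqueness of the factors), together with axioms~\ref{axiomV2} and~\ref{axiomV5}/\ref{axiomV5bis} relating $\varphi_\alpha$ and $\varphi_{-\alpha}$ along an element $m\in M_\alpha$, and Lemma~\ref{LemAxiomV1}.

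First I would prove~\ref{PropValuedCoset:insideLevi}, since the other parts lean on it. Take $m\in M_{\alpha,\lambda}$; by definition $m\in M_\alpha$ and $m=u'u u''$ with $u',u''\in U_{-\alpha}$ and $\varphi_\alpha(u)=\lambda$. Since $m\in M_\alpha$, axiom~\ref{axiomV5} gives $\varphi_{-\alpha}(u')=-\varphi_\alpha(u)=-\lambda$ and axiom~\ref{axiomV5bis} (available by Lemma~\ref{LemAxiomV5}) gives $\varphi_{-\alpha}(u'')=-\lambda$; note $u'\neq 1\neq u''$ because $m\notin U_\alpha$ (as $M_\alpha$ is a coset of $T$, which meets $U_\alpha$ trivially by~\ref{axiomRGD6} applied with a suitable $\Phi^+$). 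This yields $M_{\alpha,\lambda}\subset \varphi_{-\alpha}^{-1}(\{-\lambda\})\varphi_\alpha^{-1}(\{\lambda\})\varphi_{-\alpha}^{-1}(\{-\lambda\})$, and the second inclusion is immediate from $\varphi_\alpha^{-1}(\{\lambda\})\subset U_{\alpha,\lambda}$. For~\ref{PropValuedCoset:emptyness}: if $M_{\alpha,\lambda}\neq\emptyset$ then by the above $\lambda\in\varphi_\alpha(U_\alpha\setminus\{1\})=\Gamma_\alpha$; conversely, given $u\in U_\alpha$ with $\varphi_\alpha(u)=\lambda$, note $u\neq 1$, so $u^{-1}\in U_\alpha\setminus\{1\}$, and applying~\ref{axiomRGD4} to $u^{-1}$... actually one should use that $U_{-\alpha}\setminus\{1\}\subset U_\alpha M_\alpha U_\alpha$ together with \cite[6.1.2(2)]{BruhatTits1} which produces, for each $u\in U_\alpha\setminus\{1\}$, elements $u',u''\in U_{-\alpha}\setminus\{1\}$ with $u'uu''\in M_\alpha$; by construction this lands in $M_{\alpha,\lambda}$. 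Part~\ref{PropValuedCoset:reverselambda} then follows: given $u$ with $\varphi_\alpha(u)=\lambda$, produce $u',u''\in U_{-\alpha}$ with $m:=u'uu''\in M_{\alpha,\lambda}$; by~\ref{PropValuedCoset:insideLevi} and~\ref{axiomV5} we have $\varphi_{-\alpha}(u')=\varphi_{-\alpha}(u'')=-\lambda$, hence $u',u''\in U_{-\alpha,-\lambda}$, so $u=(u')^{-1}m(u'')^{-1}\in U_{-\alpha,-\lambda}M_{\alpha,\lambda}U_{-\alpha,-\lambda}$, using Lemma~\ref{LemAxiomV1}\ref{LemAxiomV1:1} to see the inverses stay in $U_{-\alpha,-\lambda}$.

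Finally,~\ref{PropValuedCoset:opposite} is pure symmetry in $\alpha\leftrightarrow-\alpha$: $M_{-\alpha}=M_\alpha$ by \cite[6.1.2]{BruhatTits1}, and the defining condition $M_\alpha\cap U_\alpha\varphi_{-\alpha}^{-1}(\{-\lambda\})U_\alpha$ coincides with $M_{\alpha,\lambda}$ once one rewrites an element $u'uu''$ of $M_{\alpha,\lambda}$ (with $u\in U_\alpha$, $\varphi_\alpha(u)=\lambda$, $u',u''\in U_{-\alpha,-\lambda}$) — here one needs that such $m$ can also be written with the $U_{-\alpha}$-part in the middle, which is exactly the content of~\ref{PropValuedCoset:insideLevi} read from the $-\alpha$ side, combined with uniqueness of Bruhat-type factorizations. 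For~\ref{PropValuedCoset:multipliable}: if $m\in M_{2\alpha,2\lambda}$ then $m=u'uu''$ with $u\in U_{2\alpha}$, $\varphi_{2\alpha}(u)=2\lambda$, $u',u''\in U_{-2\alpha}$; by axiom~\ref{axiomV4}, $U_{2\alpha}\subset U_\alpha$ with $\varphi_\alpha(u)=\tfrac12\varphi_{2\alpha}(u)=\lambda$ (valid since $\Rtot$ is $\mathbb{Z}$-torsion-free, so "$\tfrac12$" is meaningful on the subgroup generated by $\varphi$-values once we know it is even), and similarly $U_{-2\alpha}\subset U_{-\alpha}$, and $M_{2\alpha}\subset M_\alpha$ by~\ref{axiomRGD3}/the root group datum axioms; hence $m\in M_\alpha\cap U_{-\alpha}\varphi_\alpha^{-1}(\{\lambda\})U_{-\alpha}=M_{\alpha,\lambda}$. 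The main obstacle I anticipate is purely bookkeeping rather than conceptual: in \cite{BruhatTits1} several of these facts are phrased using infima/suprema over cosets and over the monoid of convex subsets of $\mathbb{R}\cup\{\infty\}$, which we deliberately avoid (cf.\ the discussion preceding Definition~\ref{DefValuation}); so the real work is to reorganize each step so that it only ever compares two explicit values $\varphi_\alpha(u)$ and $\varphi_\alpha(v)$ via Lemma~\ref{LemAxiomV1}, never taking a bound over an infinite family. Once that reorganization is in place, every inequality used is an instance of the ultrametric inequality in~\ref{axiomV1bis} and no completeness of $\Rtot$ is needed.
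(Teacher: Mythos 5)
Your proposal is correct and takes essentially the same route as the paper's proof: part (3) via axioms (V5)/(V5bis), part (1) via \cite[6.1.2(2)]{BruhatTits1}, part (2) by producing $m=u'uu''\in M_{\alpha,\lambda}$ and reading off the values of $u',u''$, part (4) by rewriting the factorization of $m$ from the $-\alpha$ side (the paper makes your ``uniqueness of Bruhat-type factorizations'' step explicit via the conjugation $v=mu''^{-1}m^{-1}\in U_{-\alpha}$), and part (5) via (V4) and $\mathbb{Z}$-torsion-freeness of $\Rtot$. No changes needed.
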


\begin{proof}
\ref{PropValuedCoset:emptyness} is a consequence of \cite[6.1.2(2)]{BruhatTits1}, since $\lambda \neq \infty$.

\ref{PropValuedCoset:reverselambda} For any $u \in \varphi_{\alpha}^{-1}(\{\lambda\})$, axioms~\ref{axiomRGD1} and~\ref{axiomRGD4} provide elements $u',u'' \in U_{-\alpha}$ such that $m:=u'uu'' \in M_{\alpha,\lambda}$.
By axioms~\ref{axiomV1bis},~\ref{axiomV5} and~\ref{axiomV5bis}, we have $\varphi_{-\alpha}((u')^{-1}) = - \varphi_\alpha(u) = \varphi_{-\alpha}((u'')^{-1}) = -\lambda$. 

\ref{PropValuedCoset:insideLevi} For any $m \in M_{\alpha,\lambda}$, by definition, there exist $u',u'' \in U_{-\alpha}$ and $u \in U_{\alpha}$ such that $m = u'uu''$ and $\varphi_\alpha(u) = \lambda$.
By axioms~\ref{axiomV5} and~\ref{axiomV5bis}, we have $\varphi_{-\alpha}(u') = - \varphi_\alpha(u) = \varphi_{-\alpha}(u'') = -\lambda$.

\ref{PropValuedCoset:opposite}  For any $\alpha \in \Phi$ and $\lambda \in \Rtot$, consider $m \in M_{-\alpha,-\lambda}$.
By~\ref{PropValuedCoset:insideLevi} and axiom~\ref{axiomV1bis}, there exist $u',u'' \in U_{\alpha}$ and $u \in U_{-\alpha}$ such that $\varphi_{-\alpha}(u) = -\lambda = - \varphi_\alpha(u') = - \varphi_\alpha(u'')$ and $m = u'uu''$.
Consider $v=mu''^{-1}m^{-1}$ so that $u' = v m u^{-1}$.
By \cite[6.2.1(2)]{BruhatTits1} $v \in U_{-\alpha}$ and by \cite[6.2.1(4)]{BruhatTits1}, $m \in M_\alpha$.
Hence $m = v^{-1} u' u^{-1} \in M_\alpha \cap U_{-\alpha} \varphi_{\alpha}^{-1}(\{\lambda\}) U_{-\alpha} = M_{\alpha,\lambda}$.
Hence $M_{-\alpha,-\lambda} \subset M_{\alpha,\lambda}$ and we get the converse inclusion by exchanging $(\alpha,\lambda)$ with $(-\alpha,-\lambda)$.

\ref{PropValuedCoset:multipliable}
If $\alpha$ is multipliable, then $U_{2\alpha} \subset U_{\alpha}$ and $U_{-2\alpha} \subset U_{-\alpha}$ by axiom~\ref{axiomRGD3} and $M_{2\alpha} = M_\alpha$ by \cite[6.1.2(4)]{BruhatTits1}.
For $u \in \varphi_{2\alpha}^{-1}(\{2\lambda\})$, by axiom~\ref{axiomV4}, we have $2 \lambda = \varphi_{2\alpha}(u) = 2 \varphi_\alpha(u)$.
Since $\Rtot$ is $\mathbb{Z}$-torsion free, it gives $u \in \varphi_{\alpha}^{-1}(\{\lambda\})$.
\end{proof}

Here, we follow a different strategy than in \cite[6.2]{BruhatTits2} and we do not work with the notion of ``valuations équipollentes''.

We introduce the following useful Lemma from \cite[7.5]{Landvogt} with a different proof since we do not define integral models here:

\begin{Lem}\label{LemConjugationMaUb}
Let $\alpha \in \Phi_{\mathrm{nd}}$, $\lambda \in \Gamma_\alpha$, $\beta \in \Phi$ and $\mu \in \Rtot$.
For any $m \in M_{\alpha,\lambda}$, we have
\[ m U_{\beta,\mu} m^{-1} = U_{r_\alpha(\beta),\mu - \beta(\alpha^\vee) \lambda}.\]
In particular, we have $m U_{\alpha,\lambda} m^{-1} = U_{-\alpha,-\lambda}$.
\end{Lem}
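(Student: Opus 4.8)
The plan is to reduce the conjugation statement to the combined information of axioms~\ref{axiomV2}, \ref{axiomV5}, \ref{axiomV5bis} together with the identification of the vectorial part of $m$ provided by the root group datum axiom~\ref{axiomRGD5}. First I would record that for $m \in M_{\alpha,\lambda} \subset M_\alpha$, axiom~\ref{axiomRGD5} gives $m U_\beta m^{-1} = U_{r_\alpha(\beta)}$, so that the asserted equality is really a statement about the shift of the valuation $\varphi_{r_\alpha(\beta)}$ under conjugation; equivalently, one must prove that there is a constant $c = c(\alpha,\beta,m)$ with $\varphi_{r_\alpha(\beta)}(m u m^{-1}) = \varphi_\beta(u) + c$ for all $u \in U_\beta \setminus \{1\}$, and then evaluate $c = -\beta(\alpha^\vee)\lambda$.

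The second step is to obtain the affine (shift) behaviour. For $\beta = \pm\alpha$ this is direct: by Proposition~\ref{PropValuedCoset}\ref{PropValuedCoset:insideLevi} one can write $m = u' u u''$ with $u \in \varphi_\alpha^{-1}(\{\lambda\})$, $u',u'' \in \varphi_{-\alpha}^{-1}(\{-\lambda\})$, and then axioms~\ref{axiomV5}, \ref{axiomV5bis} pin down the values; combined with the Bruhat-Tits relation $m U_{\alpha,\lambda} m^{-1} = U_{-\alpha,-\lambda}$ (which follows from \cite[6.2.1]{BruhatTits1} type manipulations, i.e. axiom~\ref{axiomV2} applied with this particular $m$), one gets the special case. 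For general $\beta$, I would argue as in \cite[6.2]{BruhatTits1}: the map $u \mapsto \varphi_{r_\alpha(\beta)}(mum^{-1}) - \varphi_\beta(u)$ is constant on $U_\beta \setminus \{1\}$ (this is precisely the content of axiom~\ref{axiomV2} once one knows $m \in M_\alpha$, modulo replacing $\alpha$ by $\beta$ in the statement of~\ref{axiomV2} and using that $M_\alpha$ normalizes the relevant subgroups), so only the value of the constant needs to be determined.

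The third step, determining the constant, is where I would do the actual computation, and I expect it to be the main obstacle. The idea is to compute $c$ on a well-chosen element. Using axiom~\ref{axiomV3} (or~\ref{axiomQC2}) to commute things into $U_\alpha$-components, or more efficiently using the rank-one subgroup $L_\alpha$ generated by $U_{\pm\alpha}$ and reducing to $\mathrm{SL}_2$- or $\mathrm{PGL}_2$-type computations inside $\langle U_\alpha, U_{-\alpha}\rangle$, one tracks how $m$, whose ``translation part'' is $-\lambda\alpha^\vee$ by the same reasoning as in Fact~\ref{FactReflectionIdentification}, acts. Pairing the cocharacter $-\lambda\alpha^\vee$ against the root $\beta$ yields the shift $-\beta(\alpha^\vee)\lambda$ on the $U_\beta$-side, which after applying the reflection $r_\alpha$ becomes the claimed value for $U_{r_\alpha(\beta),\mu - \beta(\alpha^\vee)\lambda}$. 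Concretely, I would test on $u \in U_{\beta,\mu} \setminus U_{\beta,\mu'}$ for $\mu' > \mu$ with $\mu \in \Gamma_\beta$ and verify $mum^{-1} \in U_{r_\alpha(\beta), \mu - \beta(\alpha^\vee)\lambda}$ but not in any deeper subgroup, using~\ref{axiomV2} for the special element $m$ to reduce to the already-established case $\beta = \alpha$ via $\varphi_{r_\alpha(\beta)} \circ \mathrm{Ad}(m) $ factoring appropriately. Finally, the ``in particular'' clause is the case $\beta = \alpha$, $\mu = \lambda$: then $r_\alpha(\alpha) = -\alpha$ and $\mu - \beta(\alpha^\vee)\lambda = \lambda - 2\lambda = -\lambda$, giving $m U_{\alpha,\lambda} m^{-1} = U_{-\alpha,-\lambda}$ as claimed.
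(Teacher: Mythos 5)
Your treatment of the collinear case $\beta\in\{\pm\alpha,\pm2\alpha\}$ is essentially the paper's: write $m=u'uu''$ via Proposition~\ref{PropValuedCoset}\ref{PropValuedCoset:insideLevi}, pin down the values with~\ref{axiomV5}/\ref{axiomV5bis}, and then invoke~\ref{axiomV2}. The ``in particular'' clause is also handled correctly. But the non-collinear case contains a genuine gap.

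The gap is your claim that the constancy of $u\mapsto \varphi_{r_\alpha(\beta)}(mum^{-1})-\varphi_\beta(u)$ on $U_\beta\setminus\{1\}$ is ``precisely the content of axiom~\ref{axiomV2}, modulo replacing $\alpha$ by $\beta$.'' Axiom~\ref{axiomV2} only concerns $m\in M_\alpha$ acting on $U_{-\alpha}$; replacing $\alpha$ by $\beta$ would require $m\in M_\beta$, which you do not have. For $\beta\notin\mathbb{R}\alpha$ no axiom directly asserts this constancy — in the paper it is exactly Proposition~\ref{PropWeylSetOfValues}, which is \emph{deduced from} Lemma~\ref{LemConjugationMaUb}, so your route is circular. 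Your fallback in the third step (reading off the shift from the ``translation part $-\lambda\alpha^\vee$'' of $m$) conflates the affine action of $\nu(m)$ on the apartment with the group-theoretic conjugation; the compatibility between the two for the filtration subgroups is precisely what is being established here (and later in Lemma~\ref{LemActionUtheta} and Proposition~\ref{PropUthetaEgalUa}), so it cannot be assumed.

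What the paper actually does for $\beta\in\Phi_{\mathrm{nd}}\setminus\mathbb{R}\alpha$ is a quasi-concavity argument driven by axiom~\ref{axiomV3}: set $t(r\alpha+s\beta)=r\lambda+s\mu$ for $s>0$ and $t=\infty$ for $s\leqslant 0$, check that the family $X_\gamma=U_{\gamma,t(\gamma)}$ on $\Phi(\alpha,\beta)$ is quasi-concave, show that the group $Z$ it generates is normalized by $U_{\alpha,\lambda}$ and $U_{-\alpha,-\lambda}$ and hence by $M_{\alpha,\lambda}\subset U_{-\alpha,-\lambda}U_{\alpha,\lambda}U_{-\alpha,-\lambda}$, and conclude $mU_{\beta,\mu}m^{-1}\subset U_{r_\alpha(\beta)}\cap Z=U_{r_\alpha(\beta),t(r_\alpha(\beta))}U_{2r_\alpha(\beta),2t(r_\alpha(\beta))}$ by Proposition~\ref{PropDecompositionQC}\ref{PropDecQC:1}. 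Note also the paper's initial reduction: it suffices to prove the single inclusion $mU_{\beta,\mu}m^{-1}\subset U_{r_\alpha(\beta),\mu-\beta(\alpha^\vee)\lambda}$ for all data, since applying it to $(r_\alpha(\beta),\mu-\beta(\alpha^\vee)\lambda)$ yields the reverse inclusion; this spares you from having to show that nothing lands ``deeper,'' which your proposed test element argument would require. The divisible case $\beta\in\Phi\setminus\Phi_{\mathrm{nd}}$ then follows by intersecting with the result for $\tfrac{1}{2}\beta$.
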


\begin{proof}
It suffices to prove that $m U_{\beta,\mu} m^{-1} \subset U_{r_\alpha(\beta),\mu - \beta(\alpha^\vee) \lambda}$.
Indeed if we know this inclusion for all $\alpha \in \Phi_{\mathrm{nd}}$, $\lambda \in \Gamma_\alpha$, $\beta \in \Phi$ and $\mu \in \Rtot$, we can apply it to $\beta':=r_{\alpha}(\beta)$ and $\mu':=\mu-\beta(\alpha^\vee)\lambda$ instead of $\beta$ and $\mu$, and we get the reverse inclusion.

We distinguish three cases on $\beta$.

\paragraph{First case: $\beta \in \Phi_{\mathrm{nd}} \setminus \mathbb{R}\alpha $.}
Define  $t : \Phi(\alpha,\beta) \to \Rtot\cup\{\infty\}$ by $t(r\alpha+s\beta) = r\lambda+s\mu$ if $s > 0$ and $t(\gamma) = \infty$ if $s \leqslant 0$, for $r,s\in \Z$ such that $r\alpha+s\beta\in \Phi(\alpha,\beta)$.

Denote by $X_\gamma = U_{\gamma,t(\gamma)}$.
Then the family of groups $(X_\gamma)_{\gamma \in \Phi(\alpha,\beta)}$ is quasi-concave.
Indeed, for every $\gamma \in \Phi(\alpha,\beta)$ we have either $X_{2\gamma} = X_\gamma = 1$ or $X_{-2\gamma} = X_{-\gamma} = 1$ so that axiom~\ref{axiomQC1} is satisfied.
Let $r_1,s_1,r_2,s_2 \in \mathbb{Z}$ be such that $\gamma_1 = r_1 \alpha + s_1 \beta$ and $\gamma_2 = r_2 \alpha + s_2 \beta$ are in $\Phi(\alpha,\beta)$ with $\gamma_2 \not \in \mathbb{R}_{<0} \gamma_1$.
If $s_1 \leqslant 0$ or $s_2 \leqslant 0$, then we have $[X_{\gamma_1}, X_{\gamma_2}] = 1$.
Suppose $s_1,s_2 > 0$. Let $(r,s)\in (\Z_{\geq 0})^2\setminus \{0\}$ be such that $r\gamma_1+s\gamma_2\in \Phi(\alpha,\beta)$.
Then $r t(\gamma_1) + s t(\gamma_2) = t(r \gamma_1 + s \gamma_2)$ so that axiom~\ref{axiomQC2} is satisfied according to axiom~\ref{axiomV3}.

Let $Z$ be the group generated by the $X_\gamma$ for $\gamma \in \Phi(\alpha,\beta)$.
For any $\gamma \in \Phi(\alpha,\beta)$, we have $X_\gamma = 1$ if $\gamma \in \mathbb{R} \alpha$ and $[U_{\alpha,\lambda},X_\gamma] \subset Z$ by axiom~\ref{axiomV3} otherwise.
Thus $U_{\alpha,\lambda}$ normalizes $Z$.
By the same way, $U_{-\alpha,-\lambda}$ normalizes $Z$ so that $M_{\alpha,\lambda}$ normalizes $Z$ by Proposition~\ref{PropValuedCoset}~\ref{PropValuedCoset:insideLevi}.
Moreover, for every $m \in M_{\alpha,\lambda}$, we have $m U_\alpha m^{-1} = U_{r_\alpha(\beta)}$ by \cite[6.1.2(10)]{BruhatTits1}.
Thus $m U_{\beta,\mu} m^{-1} \subset U_{r_\alpha(\beta)} \cap Z = U_{r_{\alpha}(\beta),t(r_{\alpha}(\beta))} U_{2 r_\alpha(\beta), 2 t(r_\alpha(\beta))}$ by Proposition~\ref{PropDecompositionQC}\ref{PropDecQC:1}.
Finally, $U_{2 r_\alpha(\beta), t(2 r_\alpha(\beta))} \subset U_{r_{\alpha}(\beta),t(r_{\alpha}(\beta))}$ by definition of $t$ since $t(2 r_\alpha(\beta)) = 2 t(r_\alpha(\beta))$.

\paragraph{Second case: $\beta \in \mathbb{R}\alpha \cap \Phi_{\mathrm{nd}} = \{\pm \alpha\}$}
Since $M_{\alpha,\lambda} = M_{-\alpha,-\lambda}$, $r_\alpha = r_{-\alpha}$ and $(-\alpha)^\vee = -\alpha^\vee$ it suffices to do it for $\beta=-\alpha$.
Let $m \in M_{\alpha,\lambda}$ and write it as $m=u' u u''$ with $u',u'' \in \varphi_{-\alpha}^{-1}(\{-\lambda\})$ and $u \in \varphi_\alpha^{-1}(\{\lambda\})$ which is possible by Proposition~\ref{PropValuedCoset}~\ref{PropValuedCoset:insideLevi}.
Then $u' = m(u'')^{-1}m^{-1} m u^{-1}$ with $m(u'')^{-1} m^{-1} \in U_{\alpha}$.
Thus, axiom~\ref{axiomV5} applied to $(mu''m^{-1})u'u= m \in M_{-\alpha}$ gives us $\varphi_{\alpha}(mu''m^{-1}) = -\varphi_{-\alpha}(u') = \lambda$.
Let $v \in U_{-\alpha,\mu} \setminus \{1\}$.
Then axiom~\ref{axiomV2} gives us $\varphi_{-\alpha}(v) - \varphi_{\alpha}(mvm^{-1}) = \varphi_{-\alpha}(u'') - \varphi_{\alpha}(mu''m^{-1}) = - 2 \lambda$.
Hence $\varphi_{\alpha}(mvm^{-1}) = \varphi_{\alpha}(v) + 2 \lambda \geqslant \mu + 2 \lambda = \mu - \beta(\alpha^\vee) \lambda$.
Hence $m U_{\beta,\mu} m^{-1} \subset U_{r_\alpha(\beta),\mu-\beta(\alpha^\vee)\lambda}$.

\paragraph{Third case: $\beta \in \Phi \setminus \Phi_{\mathrm{nd}}$:}
By \cite[6.1.2 (10)]{BruhatTits1} and the two previous cases, we have $m U_{\beta,\mu} m^{-1} \subset m U_{\frac{1}{2}\beta,\frac{1}{2}\mu} m^{-1} \cap U_{r_\alpha(\beta)} \subset  U_{r_\alpha(\beta), \mu -  \beta(\alpha^\vee) \lambda}$.

\end{proof}

\begin{Prop}[{see \cite[6.2.7]{BruhatTits1}}]\label{PropWeylSetOfValues}
Let $\alpha \in \Phi_{\mathrm{nd}}$ and $\beta \in \Phi$.

For any $\lambda \in \Gamma_\alpha$, any $m \in M_{\alpha,\lambda}$ and any $u \in U_\beta$, we have
\[\varphi_{r_\alpha(\beta)}(mum^{-1}) = \varphi_\beta(u) - \beta(\alpha^\vee) \lambda.\]

In particular, we have
\[ \Gamma_{r_\alpha(\beta)} = \Gamma_\beta - \beta(\alpha^\vee) \Gamma_\alpha.\]
\end{Prop}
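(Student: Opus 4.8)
The plan is to reduce the statement to Lemma~\ref{LemConjugationMaUb}, which already computes the conjugate of a filtration subgroup $U_{\beta,\mu}$ by an element $m \in M_{\alpha,\lambda}$. The point is that $m U_\beta m^{-1} = U_{r_\alpha(\beta)}$ by axiom~\ref{axiomRGD5} together with \cite[6.1.2(10)]{BruhatTits1}, so the map $u \mapsto mum^{-1}$ is a bijection $U_\beta \to U_{r_\alpha(\beta)}$, and we just need to track what it does to the valuation $\varphi$. First I would fix $u \in U_\beta \setminus \{1\}$ and set $\mu = \varphi_\beta(u) \in \Gamma_\beta$, so that $u \in U_{\beta,\mu} \setminus U_{\beta,\mu'}$ for every $\mu' > \mu$ in $\Gamma_\beta$ (using axiom~\ref{axiomV1bis} to phrase this cleanly). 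By Lemma~\ref{LemConjugationMaUb}, $m U_{\beta,\mu} m^{-1} = U_{r_\alpha(\beta),\,\mu - \beta(\alpha^\vee)\lambda}$, hence $mum^{-1} \in U_{r_\alpha(\beta),\,\mu - \beta(\alpha^\vee)\lambda}$, which gives $\varphi_{r_\alpha(\beta)}(mum^{-1}) \geqslant \mu - \beta(\alpha^\vee)\lambda$.

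For the reverse inequality I would apply the same argument to $m^{-1}$ and $mum^{-1}$: note that $m^{-1} \in M_{\alpha,\lambda}$ up to adjusting by $T$, or more directly observe that $m \in M_{\alpha,\lambda} = M_{-\alpha,-\lambda}$ by Proposition~\ref{PropValuedCoset}\ref{PropValuedCoset:opposite}, and that $r_\alpha(r_\alpha(\beta)) = \beta$ while $r_\alpha(\beta)(\alpha^\vee) = -\beta(\alpha^\vee)$. Applying Lemma~\ref{LemConjugationMaUb} with the root $r_\alpha(\beta)$ in place of $\beta$, the element $mum^{-1}$ in place of $u$, and $\nu := \varphi_{r_\alpha(\beta)}(mum^{-1})$ in place of $\mu$, we get $m^{-1}(mum^{-1})m = u \in U_{\beta,\,\nu - r_\alpha(\beta)(\alpha^\vee)\lambda} = U_{\beta,\,\nu + \beta(\alpha^\vee)\lambda}$, whence $\mu = \varphi_\beta(u) \geqslant \nu + \beta(\alpha^\vee)\lambda$, i.e. $\nu \leqslant \mu - \beta(\alpha^\vee)\lambda$. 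Combining the two inequalities yields $\varphi_{r_\alpha(\beta)}(mum^{-1}) = \varphi_\beta(u) - \beta(\alpha^\vee)\lambda$. (For $u = 1$ the identity is trivial since $\varphi$ takes value $\infty$ on $1$ and $m \cdot 1 \cdot m^{-1} = 1$.)

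Finally, the statement on sets of values follows by taking the union over $u \in U_\beta \setminus\{1\}$ and $\lambda \in \Gamma_\alpha$: since for fixed $\lambda \in \Gamma_\alpha$ we may choose $m \in M_{\alpha,\lambda}$ (non-empty by Proposition~\ref{PropValuedCoset}\ref{PropValuedCoset:emptyness}) and $u \mapsto mum^{-1}$ is a bijection $U_\beta \to U_{r_\alpha(\beta)}$, we get $\Gamma_{r_\alpha(\beta)} = \varphi_{r_\alpha(\beta)}(U_{r_\alpha(\beta)} \setminus \{1\}) = \{\varphi_\beta(u) - \beta(\alpha^\vee)\lambda : u \in U_\beta \setminus\{1\}\} = \Gamma_\beta - \beta(\alpha^\vee)\lambda$, and letting $\lambda$ range over $\Gamma_\alpha$ gives $\Gamma_{r_\alpha(\beta)} = \Gamma_\beta - \beta(\alpha^\vee)\Gamma_\alpha$ (here one should note $\Gamma_{r_\alpha(\beta)}$ does not depend on $\lambda$, so the set on the right is in fact independent of the choice, which is consistent because $\Gamma_\alpha = -\Gamma_\alpha$ by Remark~\ref{RkOppositeSetOfValue}). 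I do not expect a serious obstacle here: the only mild subtlety is being careful that $m^{-1}$ lies in a coset of the form $M_{\alpha',\lambda'}$ so that Lemma~\ref{LemConjugationMaUb} applies in the reverse direction, which is handled by Proposition~\ref{PropValuedCoset}\ref{PropValuedCoset:opposite}.
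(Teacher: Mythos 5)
Your proof is correct and follows essentially the same route as the paper's: the inequality $\varphi_{r_\alpha(\beta)}(mum^{-1}) \geqslant \varphi_\beta(u) - \beta(\alpha^\vee)\lambda$ from Lemma~\ref{LemConjugationMaUb}, the reverse inequality by conjugating back (using $r_\alpha(\beta)(\alpha^\vee) = -\beta(\alpha^\vee)$), and the identity on sets of values by letting $u$ and $\lambda$ vary. The only nitpick is your justification that $m^{-1}$ lies in $M_{\alpha,\lambda}$: Proposition~\ref{PropValuedCoset}\ref{PropValuedCoset:opposite} only gives $M_{\alpha,\lambda}=M_{-\alpha,-\lambda}$, whereas the correct reason is \cite[6.1.2(4)]{BruhatTits1} combined with $\varphi_\alpha(u^{-1})=\varphi_\alpha(u)$ (Lemma~\ref{LemAxiomV1}).
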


\begin{proof}
Set $\gamma = r_\alpha(\beta)$,  $\mu = \varphi_\beta(u)$ and  $v=mum^{-1}$.
By Lemma~\ref{LemConjugationMaUb}, we have $v \in U_{\gamma, \mu - \beta(\alpha^\vee) \lambda}$.
Thus $\varphi_\gamma(v) \geqslant \varphi_\beta(u) - \beta(\alpha^\vee)\lambda$.
Since $m^{-1} \in M_{\alpha,\lambda}$, exchanging the roles of $u$ and $v$ in this inequality, we get
$\varphi_{r_\alpha(\gamma)}(m^{-1}vm) = \varphi_\beta(u) \geqslant \varphi_\gamma(v) - \gamma(\alpha^\vee) \lambda$.
Since $\gamma(\alpha^\vee) = r_\alpha(\beta)(\alpha^\vee) = - \beta(\alpha^\vee)$, we get the equality.

The equality $\Gamma_{r_\alpha(\beta)} = \Gamma_\beta - \beta(\alpha^\vee) \Gamma_\alpha$ becomes clear since $\lambda, \mu = \varphi_\beta(u), \nu = \varphi_{r_\alpha(\beta)}(mum^{-1})$ run through all the values of $\Gamma_\alpha, \Gamma_\beta, \Gamma_{r_\alpha(\beta)}$ respectively.
\end{proof}

\begin{Not}
For any $\alpha \in \Phi$, we denote by $\widetilde{\Gamma}_\alpha = \langle \Gamma_\alpha - \Gamma_\alpha \rangle$\index[notation]{g@$\widetilde{\Gamma}_\alpha$} the subgroup of $\Rtot$ spanned by $\{x-y,\ x,y \in \Gamma_\alpha\}$.
Obviously, we have that $\Gamma_\alpha \subset \widetilde{\Gamma}_\alpha $ whenever $0 \in \Gamma_\alpha$.
\end{Not}

\begin{Cor}\label{CorWeylSetOfValues}
Let $\alpha \in \Phi_{\mathrm{nd}}$ and $\beta \in \Phi$.
Then $\Gamma_\beta = \Gamma_{\beta} - \beta(\alpha^\vee) \widetilde{\Gamma}_\alpha$.

If $\Gamma_\alpha \subset \widetilde{\Gamma}_\alpha$, then $\Gamma_{r_\alpha(\beta)} =\Gamma_\beta$.
\end{Cor}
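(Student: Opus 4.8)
The statement to prove has two parts. First, that $\Gamma_\beta = \Gamma_\beta - \beta(\alpha^\vee) \widetilde{\Gamma}_\alpha$ for any $\alpha \in \Phi_{\mathrm{nd}}$ and $\beta \in \Phi$; second, that if moreover $\Gamma_\alpha \subset \widetilde{\Gamma}_\alpha$ then $\Gamma_{r_\alpha(\beta)} = \Gamma_\beta$. Both will follow from Proposition~\ref{PropWeylSetOfValues}, which gives $\Gamma_{r_\alpha(\beta)} = \Gamma_\beta - \beta(\alpha^\vee)\Gamma_\alpha$, combined with a bootstrapping argument exploiting that $r_\alpha$ is an involution.

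The plan for the first part is as follows. If $\beta(\alpha^\vee) = 0$ the claim is trivial, so assume $n := \beta(\alpha^\vee) \neq 0$. Applying Proposition~\ref{PropWeylSetOfValues} twice, once with the pair $(\alpha,\beta)$ and once with the pair $(\alpha, r_\alpha(\beta))$ (noting $r_\alpha(\beta)(\alpha^\vee) = -n$ and $r_\alpha^2 = \mathrm{id}$), I get
\begin{align*}
\Gamma_\beta &= \Gamma_{r_\alpha(r_\alpha(\beta))} = \Gamma_{r_\alpha(\beta)} + n\,\Gamma_\alpha = \big(\Gamma_\beta - n\,\Gamma_\alpha\big) + n\,\Gamma_\alpha.
\end{align*}
Iterating, for every $k \geq 0$ one finds $\Gamma_\beta = \Gamma_\beta - k n\,\Gamma_\alpha + kn\,\Gamma_\alpha$; more usefully, since $0 \in \Gamma_\alpha$, the equality $\Gamma_\beta = \Gamma_\beta - n\,\Gamma_\alpha + n\,\Gamma_\alpha$ together with $0 \in \Gamma_\alpha$ already gives both inclusions $\Gamma_\beta \subseteq \Gamma_\beta - n\,\Gamma_\alpha$ and (taking the other summand zero) $\Gamma_\beta - n\,\Gamma_\alpha \subseteq \Gamma_\beta$, hence $\Gamma_\beta = \Gamma_\beta - n\,\Gamma_\alpha$. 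Now I want to upgrade $\Gamma_\alpha$ to $\widetilde\Gamma_\alpha = \langle \Gamma_\alpha - \Gamma_\alpha\rangle$. Since $\Gamma_\beta = \Gamma_\beta - n\,\Gamma_\alpha$ and also (replacing $\beta$ by $r_\alpha(\beta)$, which does not change the set of values by the equality just obtained applied symmetrically) we have $\Gamma_\beta = \Gamma_\beta + n\,\Gamma_\alpha$, I can add and subtract elements of $\Gamma_\alpha$ repeatedly to absorb any element of the group generated by $\Gamma_\alpha - \Gamma_\alpha$; concretely, any $z \in \widetilde\Gamma_\alpha$ is a finite $\mathbb{Z}$-combination of differences $x_i - y_i$ with $x_i,y_i \in \Gamma_\alpha$, and each such difference is absorbed using $\Gamma_\beta = \Gamma_\beta + n\Gamma_\alpha = \Gamma_\beta - n\Gamma_\alpha$ one step at a time. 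This yields $\Gamma_\beta = \Gamma_\beta - n\,\widetilde\Gamma_\alpha$, which is the first claim.

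For the second part, assume $\Gamma_\alpha \subset \widetilde\Gamma_\alpha$. By Proposition~\ref{PropWeylSetOfValues}, $\Gamma_{r_\alpha(\beta)} = \Gamma_\beta - \beta(\alpha^\vee)\,\Gamma_\alpha \subseteq \Gamma_\beta - \beta(\alpha^\vee)\,\widetilde\Gamma_\alpha = \Gamma_\beta$ by the first part. Applying the same with $\beta$ replaced by $r_\alpha(\beta)$ gives $\Gamma_\beta = \Gamma_{r_\alpha(r_\alpha(\beta))} \subseteq \Gamma_{r_\alpha(\beta)}$, and hence $\Gamma_{r_\alpha(\beta)} = \Gamma_\beta$. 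I expect the main (though still minor) obstacle to be the careful bookkeeping in the absorption argument that passes from $\Gamma_\alpha$ to $\widetilde\Gamma_\alpha$: one must check that it is legitimate to iterate the identity $\Gamma_\beta = \Gamma_\beta \pm \beta(\alpha^\vee)\Gamma_\alpha$ finitely many times and that the group structure of $\widetilde\Gamma_\alpha$ (finite $\mathbb{Z}$-linear combinations of differences) is exhausted this way — using crucially that $0 \in \Gamma_\alpha$ and that $\widetilde\Gamma_\alpha$ is, by definition, generated as a \emph{group} by $\Gamma_\alpha - \Gamma_\alpha$.
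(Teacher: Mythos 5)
There is a genuine gap in your argument for the first part: you invoke ``$0 \in \Gamma_\alpha$'', but this is not an assumption of the corollary and is false in general. By definition $\Gamma_\alpha = \varphi_\alpha(U_\alpha\setminus\{1\})$, and the whole point of distinguishing \emph{special} from \emph{semi-special} valuations in this paper is that $0\in\Gamma_\alpha$ can fail (see the example $\Gamma_\alpha = 1+2\mathbb{Z}$, or the ramified $\mathrm{SU}(h)$ example). Consequently your intermediate claim $\Gamma_\beta = \Gamma_\beta - n\,\Gamma_\alpha$ is unjustified: from $\Gamma_\beta = (\Gamma_\beta - n\Gamma_\alpha) + n\Gamma_\alpha$ you cannot ``take the other summand zero'' unless $0\in\Gamma_\alpha$. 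The subsequent step is also circular: you justify $\Gamma_\beta = \Gamma_\beta + n\Gamma_\alpha$ by saying that replacing $\beta$ with $r_\alpha(\beta)$ ``does not change the set of values'', but that is precisely the second assertion of the corollary, which needs the extra hypothesis $\Gamma_\alpha\subset\widetilde\Gamma_\alpha$.

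The fix is to skip the detour through $\Gamma_\beta = \Gamma_\beta - n\Gamma_\alpha$ entirely and iterate the correct identity, which is what the paper does. Applying Proposition~\ref{PropWeylSetOfValues} twice gives $\Gamma_\beta = \Gamma_\beta - n\,(\Gamma_\alpha - \Gamma_\alpha)$, where $\Gamma_\alpha-\Gamma_\alpha$ is the \emph{set} of differences; this set is symmetric ($-(\Gamma_\alpha-\Gamma_\alpha)=\Gamma_\alpha-\Gamma_\alpha$) and contains $0$, so iterating the identity $k$ times absorbs any finite sum $\sum_{i=1}^k(x_i-y_i)$ with $x_i,y_i\in\Gamma_\alpha$, and such sums exhaust the group $\widetilde\Gamma_\alpha=\langle\Gamma_\alpha-\Gamma_\alpha\rangle$. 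This yields $\Gamma_\beta = \Gamma_\beta - n\,\widetilde\Gamma_\alpha$ with no appeal to $0\in\Gamma_\alpha$. Your treatment of the second part (under the hypothesis $\Gamma_\alpha\subset\widetilde\Gamma_\alpha$) is correct and coincides with the paper's.
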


\begin{proof}
Applying Proposition~\ref{PropWeylSetOfValues} twice, we get that
\[ \Gamma_{r_\alpha(r_\alpha(\beta))} = \Gamma_{r_\alpha(\beta)} - r_\alpha(\beta)(\alpha^\vee) \Gamma_\alpha = \Gamma_\beta - \beta(\alpha^\vee) \Gamma_\alpha + \beta(\alpha^\vee) \Gamma_\alpha\]
so that $\Gamma_\beta = \Gamma_\beta - \beta(\alpha^\vee) \left(\Gamma_\alpha - \Gamma_\alpha\right)$.
We conclude by iterating this equality.

If $\Gamma_\alpha \subset \widetilde{\Gamma}_\alpha$, then $\Gamma_{r_\alpha(\beta)} = \Gamma_\beta - \beta(\alpha^\vee) \Gamma_\alpha \subset \Gamma_\beta - \beta(\alpha^\vee) \widetilde{\Gamma}_\alpha = \Gamma_\beta$ and we conclude exchanging the roles of $\beta=r_\alpha(r_\alpha(\beta))$ and $r_\alpha(\beta)$.
\end{proof}

\begin{Cor}[{see \cite[6.2.16]{BruhatTits1}}]\label{cor_Symmetry_properties_Gamma_alpha}\label{CorGamma-alpha}
For any $\alpha \in \Phi$, we have $\Gamma_{\alpha} = - \Gamma_{-\alpha}$ and, if $\Gamma_\alpha \subset \widetilde{\Gamma}_\alpha$, then $\Gamma_{\alpha} = \Gamma_{-\alpha} = - \Gamma_\alpha$.
\end{Cor}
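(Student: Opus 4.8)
The plan is to read the first equality off Remark~\ref{RkOppositeSetOfValue} and to reduce the second one to Corollary~\ref{CorWeylSetOfValues}, splitting into the non-divisible and divisible cases.

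The equality $\Gamma_{\alpha} = -\Gamma_{-\alpha}$ is exactly Remark~\ref{RkOppositeSetOfValue}, so only the case $\Gamma_\alpha \subset \widetilde{\Gamma}_\alpha$ requires an argument, and in view of that first equality it suffices to prove $\Gamma_{-\alpha} = \Gamma_\alpha$. If $\alpha$ is non-divisible, I would apply Corollary~\ref{CorWeylSetOfValues} with reflecting root $\alpha \in \Phi_{\mathrm{nd}}$ and $\beta = \alpha$: since $\Gamma_\alpha \subset \widetilde{\Gamma}_\alpha$, it gives $\Gamma_{-\alpha} = \Gamma_{r_\alpha(\alpha)} = \Gamma_\alpha$ because $r_\alpha(\alpha) = -\alpha$, and then combining with the first equality yields $\Gamma_\alpha = \Gamma_{-\alpha} = -\Gamma_\alpha$.

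If $\alpha$ is divisible, write $\alpha = 2\gamma$ with $\gamma = \tfrac12\alpha \in \Phi$; then $\gamma$ is multipliable, hence non-divisible, and $r_\gamma(2\gamma) = -2\gamma = -\alpha$. One cannot invoke Corollary~\ref{CorWeylSetOfValues} with reflecting root $\alpha$ directly, so I would first upgrade the hypothesis $\Gamma_\alpha\subset\widetilde{\Gamma}_\alpha$ to the inclusion $\Gamma_\gamma \subset \widetilde{\Gamma}_\gamma$. For that, Fact~\ref{FactDecompositionSetOfValue} gives $\Gamma_{2\gamma} \subset 2\Gamma_\gamma$, whence $\Gamma_{2\gamma} - \Gamma_{2\gamma} \subset 2(\Gamma_\gamma - \Gamma_\gamma) \subset 2\widetilde{\Gamma}_\gamma$ and therefore $\widetilde{\Gamma}_{2\gamma} \subset 2\widetilde{\Gamma}_\gamma$. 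Choosing $x_0 \in \Gamma_{2\gamma}$ (nonempty by axiom~\ref{axiomV0}), the hypothesis gives $x_0 \in \widetilde{\Gamma}_{2\gamma}$, so $x_0 = 2\mu_0$ with $\mu_0 \in \Gamma_\gamma$ and $x_0 = 2\nu$ with $\nu \in \widetilde{\Gamma}_\gamma$; since $\Rtot$ is $\mathbb{Z}$-torsion-free this forces $\mu_0 = \nu \in \widetilde{\Gamma}_\gamma$, and then $\mu = (\mu - \mu_0) + \mu_0 \in \widetilde{\Gamma}_\gamma$ for every $\mu \in \Gamma_\gamma$, i.e.\ $\Gamma_\gamma \subset \widetilde{\Gamma}_\gamma$. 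Now Corollary~\ref{CorWeylSetOfValues} applied with reflecting root $\gamma \in \Phi_{\mathrm{nd}}$ and $\beta = 2\gamma$ yields $\Gamma_{-\alpha} = \Gamma_{r_\gamma(2\gamma)} = \Gamma_{2\gamma} = \Gamma_\alpha$, and again combining with the first equality gives $\Gamma_\alpha = \Gamma_{-\alpha} = -\Gamma_\alpha$.

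The only mildly delicate point is this last promotion of $\Gamma_{2\gamma} \subset \widetilde{\Gamma}_{2\gamma}$ to $\Gamma_\gamma \subset \widetilde{\Gamma}_\gamma$ in the divisible case, which relies on the inclusion $\Gamma_{2\gamma}\subset 2\Gamma_\gamma$ from Fact~\ref{FactDecompositionSetOfValue} together with $\mathbb{Z}$-torsion-freeness of $\Rtot$; everything else is a direct appeal to Remark~\ref{RkOppositeSetOfValue} and Corollary~\ref{CorWeylSetOfValues}.
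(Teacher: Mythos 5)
Your proof is correct, but it is organized differently from the paper's. For the first equality you simply quote Remark~\ref{RkOppositeSetOfValue}, whereas the paper re-derives $\Gamma_{-\alpha}=-\Gamma_\alpha$ from Proposition~\ref{PropWeylSetOfValues} by reflecting in the non-divisible root $\beta=\alpha$ or $\tfrac12\alpha$ and using $\alpha(\beta^\vee)\Gamma_\beta\supset 2\Gamma_\alpha$ (Fact~\ref{FactDecompositionSetOfValue}); this uniform choice of $\beta$ is what lets the paper treat divisible and non-divisible roots at once. For the second assertion the paper stays with Proposition~\ref{PropWeylSetOfValues}: it iterates the inclusion $\Gamma_{-\alpha}\supset\Gamma_\alpha-2\Gamma_\alpha$ to get the identity $\Gamma_{-\alpha}=\Gamma_{-\alpha}+2\widetilde\Gamma_{-\alpha}$ and concludes directly, with no case distinction. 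You instead route everything through Corollary~\ref{CorWeylSetOfValues}, whose hypothesis is placed on the \emph{reflecting} root; this forces your case split and, in the divisible case $\alpha=2\gamma$, the extra step of transferring $\Gamma_{2\gamma}\subset\widetilde\Gamma_{2\gamma}$ down to $\Gamma_\gamma\subset\widetilde\Gamma_\gamma$. That transfer is the one genuinely new ingredient of your argument, and it is sound: $\Gamma_{2\gamma}\subset 2\Gamma_\gamma$ gives $\widetilde\Gamma_{2\gamma}\subset 2\widetilde\Gamma_\gamma$, the set $\Gamma_{2\gamma}$ is non-empty by~\ref{axiomV0}, and $\mathbb{Z}$-torsion-freeness of $\Rtot$ (automatic for a totally ordered abelian group) lets you divide by $2$ to land one element of $\Gamma_\gamma$ in $\widetilde\Gamma_\gamma$ and hence all of them. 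The trade-off is that your version is more modular (it reuses the packaged Corollary~\ref{CorWeylSetOfValues}) at the cost of the case analysis, while the paper's is shorter and uniform but requires unwinding the reflection formula by hand.
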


\begin{proof}
If $\alpha$ is divisible, set $\beta = \frac{1}{2}\alpha$; otherwise, set $\beta = \alpha$ so that $\beta$ is non-divisible.
Then $r_\beta(\alpha) = -\alpha$ and $\alpha(\beta^\vee) \Gamma_\beta \supset 2 \Gamma_\alpha$ according to Fact~\ref{FactDecompositionSetOfValue}.
Hence, applying Proposition~\ref{PropWeylSetOfValues}, we get that
\begin{equation}\label{eqnInclusionGamma2}
\Gamma_{-\alpha} = \Gamma_{r_\beta(\alpha)} = \Gamma_{\alpha} - \alpha(\beta^\vee) \Gamma_\beta \supset \Gamma_\alpha - 2 \Gamma_\alpha \supset - \Gamma_\alpha
\end{equation}
Thus $- \Gamma_\alpha \subset \Gamma_{-\alpha}$ and we deduce an equality from this inclusion by exchanging the roles of $\alpha$ and $-\alpha$.
In particular, $\widetilde{\Gamma}_\alpha=\widetilde{\Gamma}_{-\alpha}$.
Applying (\ref{eqnInclusionGamma2}) twice, we get
\[\Gamma_{-\alpha} \supset \Gamma_{\alpha} - 2 \Gamma_{\alpha} \supset (\Gamma_{-\alpha} - 2 \Gamma_{-\alpha}) - 2 \Gamma_{\alpha}=\Gamma_{-\alpha}-2(\Gamma_{-\alpha}+\Gamma_{\alpha}) = \Gamma_{-\alpha} - 2 (\Gamma_{-\alpha} - \Gamma_{-\alpha}).\]
We deduce that $\Gamma_{-\alpha}=\Gamma_{-\alpha}+2\tilde{\Gamma}_{-\alpha}$.
If $\Gamma_\alpha \subset \widetilde{\Gamma}_\alpha$, then
$\Gamma_\alpha \subset -\Gamma_\alpha + 2 \Gamma_\alpha \subset \Gamma_{-\alpha} + 2 \widetilde{\Gamma}_{-\alpha} = \Gamma_{-\alpha}$.
We deduce the equality by exchanging the roles of $\alpha$ and $-\alpha$.
\end{proof}

\begin{Def}
Let $(\varphi_\alpha)_{\alpha \in \Phi}$ be an $\Rtot$-valuation of a root group datum.
It is called \textbf{special}\index{R-valuation@$R$-valuation!special}\index{valuation!special}\index{special valuation} if $0 \in \Gamma_\alpha$ for every $\alpha \in \Phi_{\mathrm{nd}}$.
One says that it is \textbf{semi-special}\index{R-valuation@$R$-valuation!semi-special}\index{valuation!semi-special}\index{semi-special valuation} if for any root $\alpha \in \Phi_{\mathrm{nd}}$ we have $\Gamma_\alpha \subset \widetilde{\Gamma}_\alpha$.
\end{Def}

Note that, in this definition, there are no conditions for divisible roots.
The definition of special valuation is due to \cite[6.2.13]{BruhatTits1}.
As we will see later, there are some natural valuations that are not special but only semi-special, as shown in Example~\ref{ExSUhGalois}.
A special valuation is semi-special but the converse is not true in general.

\begin{Ex}
If $\Gamma_\alpha = 1 + 2 \mathbb{Z}$, then $\widetilde{\Gamma}_\alpha = 2 \mathbb{Z}$ does not contain $\Gamma_\alpha$.
Hence $1 + 2 \mathbb{Z}$ cannot be the set of values of $\alpha$ for a semi-special valuation.

If $\Gamma_\alpha = \bigg( \Big( \frac{1}{2} + \mathbb{Z} \Big) \times \mathbb{Z} \bigg) \cup \bigg(\mathbb{Z} \times \Big(1 + 2 \mathbb{Z}\Big)\bigg)$, then $\widetilde{\Gamma}_\alpha = \frac{1}{2} \mathbb{Z} \times \mathbb{Z} \supset \Gamma_\alpha$ while $0 \not\in \Gamma_\alpha$.
Hence if there exists a valuation for which $\Gamma_\alpha$ is a set of values for $\alpha$, then this valuation is necessarily semi-special but non-special.
The existence of such a valuation is provided by Example~\ref{ExGaloisRamified} applied to the group $\mathrm{SU}(h)$ described in section~\ref{subsecParametrization} (see Example~\ref{ExSUhGalois}).
\end{Ex}

\begin{Rq}[{see \cite[6.2.17]{BruhatTits1}}]
Let $\widetilde{\Gamma}$ be a torsion-free abelian group and consider the quotient $\pi : \widetilde{\Gamma} \to \widetilde{\Gamma} / 2 \widetilde{\Gamma} =: \widetilde{X}$. 
Then $\widetilde{X}$ is a $\mathbb{F}_2$-vector space.
Let $X$ be a non-empty generating set of $\widetilde{X}$ and define $\Gamma=\pi^{-1}(X)$.
Then $\Gamma = \Gamma + 2 \widetilde{\Gamma}$.
Moreover,  $\langle \Gamma-\Gamma \rangle = 2 \widetilde{\Gamma}$ if $X$ contains at most on point and $\langle \Gamma- \Gamma \rangle = \widetilde{\Gamma}$ otherwise.

Conversely, let $\Gamma$ be any nonzero subset of a torsion-free abelian group such that $\Gamma = \Gamma + 2 \widetilde{\Gamma}$ with $\widetilde{\Gamma} = \langle \Gamma-\Gamma \rangle$. Then $\Gamma $ is built in this way with $X = \pi(\Gamma) \subset \widetilde{X} = \langle \Gamma \rangle / 2 \widetilde{\Gamma}$.

By this way, we have described all the possible sets of a semi-special valuation and we observe that discrete semi-special valuations with values in $\mathbb{R}$ are always special.
\end{Rq}

\begin{Prop}[{see \cite[6.2.14]{BruhatTits1}}]\label{PropWeylInvarianceSetOfValues}
Let $w \in W(\Phi)$ and $\alpha \in \Phi$.
If the valuation $(\varphi_\alpha)_{\alpha \in \Phi}$ is semi-special, then $\Gamma_{w(\alpha)} = \Gamma_{\alpha}$.
If the valuation is special, then $\Gamma'_{w(\alpha)} = \Gamma'_{\alpha}$.
\end{Prop}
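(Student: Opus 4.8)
The plan is to reduce everything to the action of the simple reflections $r_\alpha$ for $\alpha \in \Phi$ and then apply Corollary~\ref{CorWeylSetOfValues} and Corollary~\ref{cor_Symmetry_properties_Gamma_alpha}. Recall that $W(\Phi)$ is generated by the reflections $r_\gamma$ for $\gamma \in \Phi_{\mathrm{nd}}$ (indeed $r_{2\gamma} = r_\gamma$, so only non-divisible roots matter), so it suffices to prove the statement when $w = r_\gamma$ for a single $\gamma \in \Phi_{\mathrm{nd}}$ and then iterate: if $\Gamma_{r_\gamma(\beta)} = \Gamma_\beta$ for all $\beta \in \Phi$ and all $\gamma \in \Phi_{\mathrm{nd}}$, then for $w = r_{\gamma_1} \cdots r_{\gamma_k}$ one gets $\Gamma_{w(\alpha)} = \Gamma_\alpha$ by induction on $k$ (applying the case $k=1$ to $\beta = r_{\gamma_2}\cdots r_{\gamma_k}(\alpha)$ and $\gamma = \gamma_1$).

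First, assume the valuation is semi-special, i.e.\ $\Gamma_\delta \subset \widetilde{\Gamma}_\delta$ for every $\delta \in \Phi_{\mathrm{nd}}$. Fix $\gamma \in \Phi_{\mathrm{nd}}$ and $\beta \in \Phi$. If $\beta$ is non-divisible, Corollary~\ref{CorWeylSetOfValues} directly gives $\Gamma_{r_\gamma(\beta)} = \Gamma_\beta$ using $\Gamma_\gamma \subset \widetilde{\Gamma}_\gamma$. If $\beta$ is divisible, write $\beta = 2\beta'$ with $\beta' \in \Phi_{\mathrm{nd}}$; then $r_\gamma(\beta) = 2 r_\gamma(\beta')$ and by axiom~\ref{axiomV4} one has $\Gamma_{2\delta} \subset 2\Gamma_\delta$ and more precisely, from Fact~\ref{FactDecompositionSetOfValue}, $2\Gamma_\delta = 2\Gamma'_\delta \cup \Gamma_{2\delta}$; since we control $\Gamma_{\beta'}$ and $\Gamma_{r_\gamma(\beta')}$ via the non-divisible case, we reduce the equality $\Gamma_{r_\gamma(\beta)} = \Gamma_\beta$ to the corresponding statement for the sets $\Gamma'$, which I address next. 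So the semi-special case hinges on the $\Gamma'$-invariance, which I now handle under the special hypothesis.

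Now assume the valuation is special, i.e.\ $0 \in \Gamma_\delta$ for every $\delta \in \Phi_{\mathrm{nd}}$; note a special valuation is in particular semi-special, so $\Gamma_{w(\alpha)} = \Gamma_\alpha$ already holds and we need $\Gamma'_{w(\alpha)} = \Gamma'_\alpha$. Again it suffices to treat $w = r_\gamma$, $\gamma \in \Phi_{\mathrm{nd}}$. The definition of $\Gamma'_\beta$ (Notation~\ref{not:Gamma}) is intrinsic in terms of the filtration groups $U_{\beta,\mu}$ and $U_{2\beta,\mu}$: it is the set of $\varphi_\beta(u)$ such that $U_{\beta,\varphi_\beta(u)} = \bigcap_{v \in U_{2\beta}} U_{\beta,\varphi_\beta(uv)}$. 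The key input is Proposition~\ref{PropWeylSetOfValues} together with Lemma~\ref{LemConjugationMaUb}: for $m \in M_{\gamma,\lambda}$ (choosing any $\lambda \in \Gamma_\gamma$, which exists and may be taken to be $0$ by speciality when convenient, though any $\lambda$ works), conjugation by $m$ carries $U_{\beta,\mu}$ to $U_{r_\gamma(\beta), \mu - \beta(\gamma^\vee)\lambda}$ and carries $U_{2\beta,\mu}$ to $U_{2 r_\gamma(\beta), \mu - 2\beta(\gamma^\vee)\lambda}$, compatibly, and it shifts $\varphi_\beta$ to $\varphi_{r_\gamma(\beta)}$ by the constant $-\beta(\gamma^\vee)\lambda$. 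Hence $m(-)m^{-1}$ is a group isomorphism $U_\beta \to U_{r_\gamma(\beta)}$ respecting the whole filtration data up to the affine shift $\mu \mapsto \mu - \beta(\gamma^\vee)\lambda$, so it maps the condition defining $\Gamma'_\beta$ to the condition defining $\Gamma'_{r_\gamma(\beta)}$; therefore $\Gamma'_{r_\gamma(\beta)} = \Gamma'_\beta - \beta(\gamma^\vee)\lambda$. Since this holds for every $\lambda \in \Gamma_\gamma$ and, by the semi-special case already proved, $\Gamma_\gamma \subset \widetilde{\Gamma}_\gamma$ so that applying the relation twice (as in the proof of Corollary~\ref{CorWeylSetOfValues}) kills the shift, we conclude $\Gamma'_{r_\gamma(\beta)} = \Gamma'_\beta$. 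Finally, when $\beta \in \Phi \setminus \Phi_{\mathrm{nd}}$ is divisible, $\Gamma'_\beta = \Gamma_\beta$ by Remark~\ref{RkGamma_non_multipliable} applied to $2\beta \notin \Phi$ — wait, $\beta$ divisible means $\beta/2 \in \Phi$, and then $\Gamma'_\beta = \Gamma_\beta$ since $2\beta \notin \Phi$ — so this case follows from the already established $\Gamma_{w(\alpha)} = \Gamma_\alpha$.

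The main obstacle I anticipate is the bookkeeping in the $\Gamma'$ case: one must verify carefully that $m(-)m^{-1}$ is compatible simultaneously with the two filtrations $(U_{\beta,\mu})_\mu$ and $(U_{2\beta,\mu})_\mu$ and with the intersection $\bigcap_{v\in U_{2\beta}}$ defining $\Gamma'$, and that the constant shift $-\beta(\gamma^\vee)\lambda$ can genuinely be absorbed. The absorption step is exactly the doubling trick from Corollary~\ref{CorWeylSetOfValues}: applying the shifted equality for $r_\gamma$ and then again for $r_\gamma$ gives $\Gamma'_\beta = \Gamma'_\beta - \beta(\gamma^\vee)(\Gamma_\gamma - \Gamma_\gamma)$, whence $\Gamma'_\beta = \Gamma'_\beta + \beta(\gamma^\vee)\widetilde{\Gamma}_\gamma$, and then $\Gamma'_{r_\gamma(\beta)} = \Gamma'_\beta - \beta(\gamma^\vee)\Gamma_\gamma \subset \Gamma'_\beta - \beta(\gamma^\vee)\widetilde\Gamma_\gamma = \Gamma'_\beta$, with the reverse inclusion by symmetry. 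Everything else is a routine reduction to simple reflections.
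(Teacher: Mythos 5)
Your overall strategy is the paper's: reduce to a single non-divisible reflection $r_\gamma$, get the $\Gamma$-statement from Corollary~\ref{CorWeylSetOfValues}, and get the $\Gamma'$-statement by conjugating the whole filtration by an element of $M_{\gamma,\lambda}$ via Lemma~\ref{LemConjugationMaUb} and Proposition~\ref{PropWeylSetOfValues}. Your $\Gamma'$ argument is correct; the only difference from the paper is that the paper takes $\lambda=0$ (available by speciality), so no shift appears and no absorption step is needed, whereas you allow arbitrary $\lambda\in\Gamma_\gamma$ and then absorb the shift $-\beta(\gamma^\vee)\lambda$ by the doubling trick. That variant in fact yields $\Gamma'$-invariance under the semi-special hypothesis alone, which is slightly stronger than what the proposition asserts.

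There is, however, one genuinely flawed step: your treatment of \emph{divisible} roots $\beta=2\beta'$ in the semi-special case. First, the proposed reduction via Fact~\ref{FactDecompositionSetOfValue} does not work: from $2\Gamma_{\beta'}=2\Gamma'_{\beta'}\cup\Gamma_{2\beta'}$ and the equalities $\Gamma_{r_\gamma(\beta')}=\Gamma_{\beta'}$, $\Gamma'_{r_\gamma(\beta')}=\Gamma'_{\beta'}$ one cannot recover $\Gamma_{2r_\gamma(\beta')}=\Gamma_{2\beta'}$, because the two pieces of the union may overlap (see Remark~\ref{rqGamma_alpha}), so the decomposition does not determine $\Gamma_{2\beta'}$. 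Second, as written your reduction makes the semi-special statement depend on $\Gamma'$-invariance, which you only claim ``under the special hypothesis''; for a semi-special non-special valuation this would leave the divisible case unproved. The fix is immediate and is what the paper does: Corollary~\ref{CorWeylSetOfValues} is stated for \emph{every} $\beta\in\Phi$ (only $\alpha$ is required to be non-divisible), so it already gives $\Gamma_{r_\gamma(\beta)}=\Gamma_\beta$ for divisible $\beta$ directly, and the entire detour should be deleted. (Likewise, your final remark that $\Gamma'_\beta=\Gamma_\beta$ for divisible $\beta$ is correct and disposes of the $\Gamma'$-statement in that case.)
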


\begin{proof}
Since $W(\Phi)$ is generated by the $r_\alpha$ for $\alpha \in \Delta$ for a basis $\Delta \subset \Phi$, it suffices to prove it for $w = r_\alpha$ with $\alpha \in \Delta$.

Applying Corollary~\ref{CorWeylSetOfValues}, we have $\Gamma_\beta = \Gamma_{r_\alpha(\beta)}$ for any $\alpha \in \Delta \subset \Phi_{\mathrm{nd}}$ and any $\beta \in \Phi$.
Thus $\Gamma_{w(\beta)} = \Gamma_\beta$ for any $w \in W(\Phi)$ and any $\beta \in \Phi$.

For a multipliable root $\beta$, it remains to compare $\Gamma'_\beta$ and $\Gamma'_{r_\alpha(\beta)}$.
If the valuation is special, there exists $m \in M_{\alpha,0}$.
For any $u \in U_\beta$ and any $u' \in U_{2\beta}$, we have 
\[\varphi_\beta(uu') = \varphi_{r_\alpha(\beta)}((mum^{-1})(mu'm^{-1}))
\qquad \text{ and }\qquad
\varphi_\beta (u)=\varphi_{r_\alpha(\beta)}(mum^{-1})\]
by Proposition~\ref{PropWeylSetOfValues}.
Thus $m U_{\beta, \varphi_{\beta}(uu')} m^{-1} = U_{r_\alpha(\beta),\varphi_{r_\alpha(\beta)}(vv')}$ where $v= mum^{-1} \in U_{r_\alpha(\beta)}$ and $v' = mu'm^{-1} \in U_{2r_\alpha(\beta)}$.
Thus
\begin{align*}
U_{\beta,\varphi_\beta(u)} = \bigcap_{u' \in U_{2\beta}} U_{\beta,\varphi_\beta(uu')}
& \Longleftrightarrow mU_{\beta,\varphi_\beta(u)}m^{-1}  = \bigcap_{u' \in U_{2\beta}} m U_{\beta,\varphi_\beta(uu')} m^{-1}\\
& \Longleftrightarrow U_{r_\alpha(\beta),\varphi_{r_\alpha(\beta)}(v)}= \bigcap_{v' \in U_{2r_\alpha(\beta})} U_{r_\alpha(\beta),\varphi_{r_\alpha(\beta)}(vv')}.
\end{align*}
Hence $\Gamma'_\beta = \Gamma'_{r_\alpha(\beta)}$ and we deduce that $\Gamma'_{w(\beta)} = \Gamma'_\beta$ for any $w \in W^v$.
\end{proof}

\begin{Cor}
Let $\Delta \subset \Phi$ be any basis.
A valuation is special (resp. semi-special) if, and only if, $\forall \alpha \in \Delta,\ 0\in \Gamma_\alpha$ (resp. $\Gamma_\alpha \subset \widetilde{\Gamma}_\alpha$).
\end{Cor}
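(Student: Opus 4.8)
The plan is to deduce the statement from the $W(\Phi)$-invariance of the sets of values $\Gamma_\beta$, which can already be extracted from Corollary~\ref{CorWeylSetOfValues} under a hypothesis much weaker than (semi-)speciality. Both defining conditions bear only on non-divisible roots, and any basis $\Delta$ of $\Phi$ is contained in $\Phi_{\mathrm{nd}}$, so the two forward implications are immediate and require no argument; the content lies in the two converses.

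For the semi-special converse I would start from the assumption that $\Gamma_\alpha \subset \widetilde{\Gamma}_\alpha$ for every $\alpha \in \Delta$. For each such $\alpha$ (which lies in $\Phi_{\mathrm{nd}}$), the second assertion of Corollary~\ref{CorWeylSetOfValues} gives $\Gamma_{r_\alpha(\beta)} = \Gamma_\beta$ for every $\beta \in \Phi$. Since $W(\Phi)$ is generated by the $r_\alpha$ for $\alpha \in \Delta$, writing an arbitrary $w \in W(\Phi)$ as a product of such simple reflections and iterating (a routine induction on the word length in the Coxeter system $(W^v,\{r_\alpha \mid \alpha \in \Delta\})$) yields $\Gamma_{w(\beta)} = \Gamma_\beta$ for all $w \in W(\Phi)$ and all $\beta \in \Phi$. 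Now take any $\beta \in \Phi_{\mathrm{nd}}$: since $\Phi_{\mathrm{nd}}$ is a reduced root system admitting $\Delta$ as a basis, one has $\beta = w(\gamma)$ for some $w \in W(\Phi)$ and some $\gamma \in \Delta$ (every root of a reduced root system is $W$-conjugate to a simple root, see \cite[VI.1.5]{bourbaki1981elements}). Then $\Gamma_\beta = \Gamma_\gamma$, hence also $\widetilde{\Gamma}_\beta = \langle \Gamma_\beta - \Gamma_\beta \rangle = \langle \Gamma_\gamma - \Gamma_\gamma \rangle = \widetilde{\Gamma}_\gamma$, so that $\Gamma_\beta = \Gamma_\gamma \subset \widetilde{\Gamma}_\gamma = \widetilde{\Gamma}_\beta$; this is exactly semi-speciality.

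For the special converse I would argue the same way: if $0 \in \Gamma_\alpha$ for every $\alpha \in \Delta$, then a fortiori $\Gamma_\alpha \subset \widetilde{\Gamma}_\alpha$ for every $\alpha \in \Delta$ (since $0 \in \Gamma_\alpha$ forces $\Gamma_\alpha \subset \widetilde{\Gamma}_\alpha$), so the previous paragraph applies and gives $\Gamma_{w(\beta)} = \Gamma_\beta$ for all $w \in W(\Phi)$ and $\beta \in \Phi$; then for $\beta \in \Phi_{\mathrm{nd}}$, writing $\beta = w(\gamma)$ with $\gamma \in \Delta$ gives $0 \in \Gamma_\gamma = \Gamma_\beta$, which is speciality.

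The only genuine subtlety — and the step I would be most careful about — is the conjugacy statement $\beta \in \Phi_{\mathrm{nd}} \Rightarrow \beta \in W(\Phi)\cdot\Delta$: it is standard, but it must be applied to $\Phi_{\mathrm{nd}}$, which is reduced and has $\Delta$ as a basis, rather than to $\Phi$ itself, and it is precisely what lets the argument descend from one orbit representative in $\Delta$ to all non-divisible roots. Everything else reduces to the induction above together with the two recalled elementary facts about $\Gamma_\alpha$ and $\widetilde{\Gamma}_\alpha$.
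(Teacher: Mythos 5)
Your proof is correct and follows essentially the same route as the paper: the forward implications are trivial since $\Delta\subset\Phi_{\mathrm{nd}}$, and the converses follow from $W(\Phi)\cdot\Delta=\Phi_{\mathrm{nd}}$ together with Weyl-invariance of the sets $\Gamma_\beta$. Your only (welcome) refinement is to invoke Corollary~\ref{CorWeylSetOfValues} directly rather than Proposition~\ref{PropWeylInvarianceSetOfValues}, whose stated hypothesis is full (semi-)speciality; this makes explicit that the invariance argument needs the condition only on the simple roots, which is exactly what the converse requires.
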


\begin{proof}
Assume that, for any $\alpha \in \Delta$, we have that $\Gamma_\alpha \subset \widetilde{\Gamma}_\alpha$.
Let $\beta \in \Phi$. We know that there is $\alpha \in \Delta$ and $w\in W(\Phi)$ such that $w(\alpha)=\beta$ by~\cite[VI.1.5 Prop. 15]{bourbaki1981elements}.
According to~\cite[VI.1.5, Thm.2 (vii)]{bourbaki1981elements}, we can write $w = r_{\alpha_\ell} \circ \cdots \circ r_{\alpha_1}$ with $\alpha_i\in \Delta$.
For $0 \leqslant i \leqslant \ell$, let $\beta_i = r_{\alpha_i} \circ \cdots \circ r_{\alpha_1}(\alpha)$ so that $\beta_0 = \alpha$ and $\beta_\ell = \beta$.
Applying Corollary~\ref{CorWeylSetOfValues}, we deduce by induction that $\Gamma_{\beta_i} = \Gamma_{\alpha}$ for any $0 \leqslant i \leqslant n$.
Hence $\Gamma_\beta = \Gamma_\alpha \subset \widetilde{\Gamma}_\alpha = \widetilde{\Gamma}_\beta$.

Hence, a valuation is semi-special if, and only if, $\forall \alpha\in \Delta,\ \Gamma_\alpha \subset \widetilde{\Gamma}_\alpha$.

By applying Proposition~\ref{PropWeylInvarianceSetOfValues}, we deduce the case for special valuations.
\end{proof}

\subsection{Action of \texorpdfstring{$N$}{N} on an \texorpdfstring{$\Rtot$}{R}-aff space}

We use the notations introduced in section~\ref{subsubTopology_totally_ordered_ring}.
In the rest of this section, we assume that $\Phi \neq \emptyset$ but any result can obviously be extended to the case of an empty root system.
We consider an $R$-aff space $\mathbb{A}_\Rtot = (V,A)$ and we fix an origin $o \in \mathbb{A}_\Rtot$.

For $\alpha \in \Phi$ and $u \in U_\alpha \setminus \{1\}$, we denote by $m(u)$\index[notation]{m@$m(u)$} the unique element in $M_\alpha$ given by \cite[6.1.2(2)]{BruhatTits1}.

\begin{Def}\label{DefCompatibleAction}
Let $\nu : N \to \operatorname{Aff}_{R}(\mathbb{A}_R)$\index[notation]{n@$\nu$} be an action of $N$ onto $\mathbb{A}$ by $R$-aff endomorphisms.
We say that the action of $N$ onto $\mathbb{A}_R$ is \textbf{compatible}\index{compatible action} with the valuation $\left( \varphi_\alpha \right)_{\alpha\in \Phi}$ if:
\begin{enumerate}[label={(CA\arabic*)}]
\item\label{axiomCA1} the linear part of this action is equal to ${^v\!}\nu:N \to W(\Phi)$ defined in \cite[6.1.2(10)]{BruhatTits1};\axiom{CA1@\ref{axiomCA1}}
\item\label{axiomCA2} for any $\alpha \in \Phi$ and any $u \in U_{\alpha} \setminus \{1\}$, we have $2 \varphi_\alpha(u) + \alpha\Big( \nu\big(m(u)\big)(o) - o\Big) = 0$.\axiom{CA2@\ref{axiomCA2}}
\item\label{axiomCA3} for any $\alpha \in \Phi$ and $m \in M_\alpha$, the element $\nu(m)$ has order $2$.\axiom{CA3@\ref{axiomCA3}}
\end{enumerate}
\end{Def}

In the rest of this section, we assume that an action $\nu: N \to \operatorname{Aff}_{R}(\mathbb{A}_R)$ 
satisfying~\ref{axiomCA1}, \ref{axiomCA2} and~\ref{axiomCA3} is given.

\begin{Lem}\label{LemMuNu}
For any $\alpha \in \Phi$ and any $u \in U_{\alpha} \setminus \{1\}$, we have $\nu(m(u)) = r_{\alpha,\varphi_\alpha(u)}$.

In particular, for any $m \in M_{\alpha,\lambda}$, we have $\nu(m) = r_{\alpha,\lambda}$.
\end{Lem}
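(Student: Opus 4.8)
The plan is to prove Lemma~\ref{LemMuNu} directly from the compatibility axioms~\ref{axiomCA1} and~\ref{axiomCA2}, together with Fact~\ref{FactReflectionIdentification}. Fix $\alpha \in \Phi$ and $u \in U_\alpha \setminus \{1\}$, and write $\lambda = \varphi_\alpha(u)$. By axiom~\ref{axiomCA1}, the linear part of $\nu(m(u))$ is ${^v\!}\nu(m(u))$, which equals $r_\alpha$ since $m(u) \in M_\alpha$. Hence $\nu(m(u))$ is an $R$-aff map whose linear part is $r_\alpha$, so by the semi-direct product description $\operatorname{Aff}_R(\mathbb{A}_R) \cong V_R \rtimes \mathrm{GL}(V)$ it is of the form $(v, r_\alpha)$ for a unique $v \in V_R$; equivalently $\nu(m(u)) = r_{\alpha,\mu}$ for some $\mu \in R$, because by Fact~\ref{FactReflectionIdentification} the reflections $r_{\alpha,\mu}$ are exactly the elements $(-\mu\alpha^\vee, r_\alpha)$ and these run over all pairs with linear part $r_\alpha$ as $\mu$ ranges over $R$ (here using that $\alpha^\vee \neq 0$ and $R$ is torsion-free, so $\mu \mapsto -\mu\alpha^\vee$ is injective). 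It remains to identify $\mu = \lambda$.

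For this, I would compute $\alpha\big(\nu(m(u))(o) - o\big)$ in two ways. On the one hand, since $\nu(m(u)) = r_{\alpha,\mu}$, the formula $r_{\alpha,\mu}(x) = x - \theta_{\alpha,\mu}(x)\alpha^\vee$ gives $\nu(m(u))(o) - o = -\theta_{\alpha,\mu}(o)\alpha^\vee = -\mu\alpha^\vee$ (recall $\theta_{\alpha,\mu}(o) = \alpha(o - o) + \mu = \mu$). Applying $\alpha$ and using $\alpha(\alpha^\vee) = 2$, we get $\alpha\big(\nu(m(u))(o) - o\big) = -2\mu$. On the other hand, axiom~\ref{axiomCA2} states precisely that $2\varphi_\alpha(u) + \alpha\big(\nu(m(u))(o) - o\big) = 0$, i.e. $\alpha\big(\nu(m(u))(o) - o\big) = -2\lambda$. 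Comparing, $-2\mu = -2\lambda$, and since $R$ is torsion-free this forces $\mu = \lambda$. Therefore $\nu(m(u)) = r_{\alpha,\varphi_\alpha(u)}$, which is the first assertion.

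For the second assertion, let $m \in M_{\alpha,\lambda}$. By Notation~\ref{NotValuedCoset}, $m \in M_\alpha$ and $m \in U_{-\alpha}\varphi_\alpha^{-1}(\{\lambda\})U_{-\alpha}$, so we may write $m = u' u u''$ with $u', u'' \in U_{-\alpha}$ and $u \in U_\alpha$ with $\varphi_\alpha(u) = \lambda$. Since $u \in U_\alpha \setminus \{1\}$ (as $\varphi_\alpha(u) = \lambda \neq \infty$ by axiom~\ref{axiomV1bis}), we have $m(u) \in M_\alpha$ the canonical element from \cite[6.1.2(2)]{BruhatTits1}. The point is that $m$ and $m(u)$ lie in the same coset $M_\alpha = T\,m(u)$, and one checks that $m = m(u)$: indeed, by the construction recalled in \cite[6.1.2(2)]{BruhatTits1}, the element $m$ is characterised among elements of $M_\alpha$ of the form $u' u u''$ (with the given $u$) by being $m(u)$; alternatively, $m m(u)^{-1} \in T$ acts on $\mathbb{A}_R$ by a translation (its linear part is $r_\alpha r_\alpha = \operatorname{id}$), but $\nu(m) = (v, r_\alpha)$ and $\nu(m(u)) = r_{\alpha,\lambda} = (-\lambda\alpha^\vee, r_\alpha)$ have the same linear part, so $\nu(m m(u)^{-1}) = \nu(m)\nu(m(u))^{-1}$ is a translation whose value we determine. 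The cleanest route is to invoke that, by the first part and axiom~\ref{axiomCA2} applied to $u$, $\nu(m(u)) = r_{\alpha,\lambda}$, and to note that $m$ and $m(u)$ induce the same $R$-aff map: this is the content of \cite[6.2.1]{BruhatTits1} transported to our setting, or follows because any two elements of $M_{\alpha,\lambda}$ differ by an element of $T \cap \ker(\nu)$-type behaviour forced by axiom~\ref{axiomCA2}. I expect the main obstacle to be precisely this last identification $\nu(m) = \nu(m(u))$ for a general $m \in M_{\alpha,\lambda}$: it requires knowing that the translation part contributed by the $T$-component is trivial, which should follow from the fact that every $m \in M_{\alpha,\lambda}$ can be written $u' u u''$ with $\varphi_\alpha(u) = \lambda$ together with Proposition~\ref{PropValuedCoset}\ref{PropValuedCoset:insideLevi} pinning down $\varphi_{-\alpha}(u') = \varphi_{-\alpha}(u'') = -\lambda$, and then a direct computation of $\nu$ on such a product using axiom~\ref{axiomCA2} for $u$.
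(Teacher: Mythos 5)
Your proof of the first assertion is exactly the paper's argument: axiom~\ref{axiomCA1} forces $\nu(m(u)) = r_{\alpha,\mu}$ for some $\mu \in \Rtot$, and axiom~\ref{axiomCA2} together with $\alpha(\alpha^\vee) = 2$ and the fact that $\Rtot$ is $\mathbb{Z}$-torsion free pins down $\mu = \varphi_\alpha(u)$. For the second assertion your stated ``main obstacle'' is not one: writing $m = u'uu''$ with $u',u'' \in U_{-\alpha}$ and $\varphi_\alpha(u) = \lambda$, the uniqueness in \cite[6.1.2(2)]{BruhatTits1} already gives $m = m(u)$ on the nose (not merely $\nu(m) = \nu(m(u))$), so the first route you sketch is the paper's proof and the alternative translation-part computation is unnecessary.
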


\begin{proof}
Since $m(u) \in M_\alpha$, we have ${^v\!}\nu(m(u)) = r_\alpha$ by axiom~\ref{axiomCA1}.
Let $v \in V_\Rtot$ be such that $\nu\big(m(u)\big) = (r_\alpha,v) \in \mathrm{GL}(V_\Z) \ltimes V_\Rtot$.
By axiom~\ref{axiomCA3}, we have that $\nu\big(m(u)\big)^2 = (\operatorname{id},0) = \big(\operatorname{id},v+r_\alpha(v)\big)$.
Hence $2v - \alpha(v) \alpha^\vee = 0$.
We have $\alpha(\nu(m(u))(o) - o) = \alpha(v) = -2 \varphi_\alpha(u)$ by axiom~\ref{axiomCA2}.
Hence $2 v + 2 \varphi_\alpha(u) \alpha^\vee = 0$.
Since $\Rtot$ is torsion-free, we deduce that $v = -\varphi_\alpha(u) \alpha^\vee$.
Whence $\nu\big(m(u)\big) = \big(r_\alpha, -\varphi_\alpha(u) \alpha^\vee \big) = r_{\alpha,\varphi_\alpha(u)}$.

Let $m \in M_{\alpha,\lambda}$. By definition (see Notation~\ref{NotValuedCoset}), there exist $u \in \varphi_\alpha^{-1}(\{\lambda\}) \subset U_{\alpha,\lambda}$, $u',u'' \in U_{-\alpha}$ such that $m = u'uu''$ . Thus $m = m(u)$ and $\nu(m) = \nu(m(u)) = r_{\alpha,\lambda}$.
\end{proof}

We want to understand how $N$ acts by conjugation on the set of groups $U_{\alpha,\lambda}$ for $\alpha \in \Phi$ and $\lambda \in \Rtot$.
To do this, we introduce a set of affine maps $\Theta$ containing the $\theta_{\alpha,\lambda}$ and an action of $N$ onto this set.

\begin{Not}
Consider $\Theta = \{\theta_{\alpha,\lambda},\ \alpha \in \Phi,\ \lambda\in \Rtot\}$.\index[notation]{t@$\Theta$}
We endow $\Theta$ with the natural partial ordering given by:
\[ \theta \geqslant \theta' \Longleftrightarrow \forall x \in \mathbb{A}_R,\ \theta(x) \geqslant \theta'(x).\]
\end{Not}

\begin{Lem}\label{LemActionTheta}
The following formula defines an action of $N$ on $\Theta$:
\[\forall \alpha \in\Phi,\ \forall \lambda \in \Rtot,\
n \cdot \theta_{\alpha,\lambda} := \theta_{\alpha,\lambda} \circ \nu(n^{-1}) = \theta_{{^v\!}\nu(n)(\alpha), \lambda + \alpha\left( \nu(n^{-1}) (o) - o\right)}.\]
In particular, for any $\alpha,\beta \in \Phi$ and any $\lambda,\mu \in \Rtot$, we have
\[\theta_{\alpha,\lambda} \circ r_{\beta,\mu} = \theta_{r_\beta(\alpha),\lambda - \alpha(\beta^\vee) \mu}.\]

\end{Lem}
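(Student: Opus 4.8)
The plan is to verify directly that the proposed formula $n \cdot \theta_{\alpha,\lambda} := \theta_{\alpha,\lambda} \circ \nu(n^{-1})$ defines a group action on $\Theta$, and that the right-hand side indeed lands in $\Theta$ with the stated parameters. First I would check that $\Theta$ is stable under this operation: for $n \in N$, the map $\nu(n^{-1})$ is an $R$-aff automorphism of $\mathbb{A}_R$ with linear part ${^v\!}\nu(n^{-1}) = {^v\!}\nu(n)^{-1}$ by axiom~\ref{axiomCA1}. Writing $\nu(n^{-1})(x) = {^v\!}\nu(n)^{-1}(x - o) + o + v_n$ where $v_n := \nu(n^{-1})(o) - o \in V_R$, we compute for any $x \in \mathbb{A}_R$:
\[
\theta_{\alpha,\lambda}\big(\nu(n^{-1})(x)\big) = \alpha\big({^v\!}\nu(n)^{-1}(x-o) + v_n\big) + \lambda = \big({^v\!}\nu(n)(\alpha)\big)(x - o) + \alpha(v_n) + \lambda,
\]
using that ${^v\!}\nu(n)(\alpha) = \alpha \circ {^v\!}\nu(n)^{-1}$ (the abuse of notation recalled after Notation~\ref{NotRootSystem}) and that $\alpha$ is $R$-linear on $V_R$. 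Since ${^v\!}\nu(n)(\alpha) \in \Phi$ as $W(\Phi)$ preserves $\Phi$, this is exactly $\theta_{{^v\!}\nu(n)(\alpha),\, \lambda + \alpha(v_n)}$, which lies in $\Theta$; this proves the displayed identity for $n \cdot \theta_{\alpha,\lambda}$.

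Next I would check the two axioms of a group action. For the identity: $1 \cdot \theta_{\alpha,\lambda} = \theta_{\alpha,\lambda} \circ \nu(1) = \theta_{\alpha,\lambda} \circ \mathrm{id} = \theta_{\alpha,\lambda}$. For compatibility with composition, given $n, n' \in N$:
\[
n \cdot (n' \cdot \theta_{\alpha,\lambda}) = (n' \cdot \theta_{\alpha,\lambda}) \circ \nu(n^{-1}) = \theta_{\alpha,\lambda} \circ \nu(n'^{-1}) \circ \nu(n^{-1}) = \theta_{\alpha,\lambda} \circ \nu(n'^{-1} n^{-1}) = \theta_{\alpha,\lambda} \circ \nu((nn')^{-1}) = (nn') \cdot \theta_{\alpha,\lambda},
\]
where we used that $\nu$ is a group homomorphism into $\operatorname{Aff}_R(\mathbb{A}_R)$. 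This establishes that the formula defines an action of $N$ on $\Theta$.

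Finally, for the particular case: by Lemma~\ref{LemMuNu}, $\nu(r_{\beta,\mu}) = r_{\beta,\mu}$ is realized by an element of $N$ (namely any $m \in M_{\beta,\mu}$ when $\Gamma_\beta \ni \mu$; more intrinsically one applies the general identity with the affine reflection $r_{\beta,\mu}$ directly, since the computation above only used that $\nu(n^{-1})$ is an $R$-aff map whose linear part sends $\alpha$ to a root). Taking $\nu(n^{-1}) = r_{\beta,\mu}$ and using Fact~\ref{FactReflectionIdentification}, which identifies $r_{\beta,\mu}$ with $(-\mu\beta^\vee, r_\beta)$, we get $v_n = r_{\beta,\mu}(o) - o = -\mu\beta^\vee$, so $\alpha(v_n) = -\mu\,\alpha(\beta^\vee)$, and ${^v\!}r_{\beta,\mu}(\alpha) = r_\beta(\alpha)$. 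Substituting into the displayed formula gives $\theta_{\alpha,\lambda} \circ r_{\beta,\mu} = \theta_{r_\beta(\alpha),\, \lambda - \alpha(\beta^\vee)\mu}$, as claimed. I do not expect a serious obstacle here: the only points requiring care are the bookkeeping with the ${^v\!}\nu(n)(\alpha) = \alpha \circ {^v\!}\nu(n)^{-1}$ convention and making sure the composition-order of $\nu(n^{-1})\nu(n'^{-1})$ matches $\nu((nn')^{-1})$, both of which are routine.
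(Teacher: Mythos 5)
Your proof is correct, and the underlying computation is the same as the paper's, but you organize it differently. The paper first establishes that $\Theta$ is stable under the operation by reducing to the generators $m\in M_\beta$ of $N$ (using~\ref{axiomCA1} to write $\nu(m)=r_{\beta,\mu}$ and computing $\theta_{\alpha,\lambda}\circ r_{\beta,\mu}$ explicitly), and only afterwards determines the constant for a general $n$ by evaluating $\theta_{\alpha,\lambda}\circ\nu(n^{-1})$ at the origin $o$. You instead carry out the affine bookkeeping for an arbitrary $n$ in one pass, which yields the stated parameters directly and makes the generator reduction unnecessary; this is slightly cleaner and, as you note, immediately covers the second displayed identity for \emph{arbitrary} $\mu\in R$, where $r_{\beta,\mu}$ need not lie in $\nu(N)$ (your initial appeal to Lemma~\ref{LemMuNu} would only cover $\mu\in\Gamma_\beta$, but your fallback to the general affine computation via Fact~\ref{FactReflectionIdentification} is exactly right and matches what the paper does). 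Your explicit verification of the two group-action axioms is a harmless addition that the paper leaves implicit.
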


\begin{proof}
Let $\theta_{\alpha,\lambda} \in \Theta$.
It suffices to prove that the map $\theta_{\alpha,\lambda} \circ \nu(n^{-1})$ belongs to $\Theta$.
Since $N$ is generated by the $M_\beta$ for $\beta \in \Phi$, it suffices to prove it for any $\beta \in \Phi$ and any $m \in M_\beta$.
By Lemma~\ref{LemMuNu},
there is a constant $\mu$ depending on $m$ such that $\nu(m) = r_{\beta,\mu}$.
Then $\theta_{\alpha,\lambda}(r_{\beta,\mu}(x)) = (\alpha - \alpha(\beta^\vee) \beta)(x) + \lambda - \alpha(\beta^\vee) \mu = r_\beta(\alpha)(x) + \lambda - \alpha(\beta^\vee) \mu$.
Thus $\theta \circ \nu(m) \in \Theta$.
Hence the formula $n \cdot \theta = \theta \circ \nu(n^{-1})$ defines an action of $N$ on $\Theta$.

Let $n \in N$. Since the map ${^v\!}\nu : N \to \mathrm{GL}(V^*)$ induces an action of $N$ on $\Phi$, for any $\alpha \in \Phi$ and any $\lambda \in \Rtot$, there is a value $\mu \in \Rtot$ such that $\theta_{\alpha,\lambda} \circ \nu(n^{-1}) = \theta_{{^v\!}\nu(n)(\alpha),\mu}$.
Evaluating this map in the origin $o$, we get $\mu = \theta_{{^v\!}\nu(n)(\alpha),\mu}(o) = \theta_{\alpha,\lambda} \circ \nu(n^{-1})(o) = \alpha\left( \nu(n^{-1})(o) - o \right) + \lambda$.

\end{proof}

\begin{Not}
For $\theta = \theta_{\alpha,\lambda} \in \Theta$, define $U_\theta = \{u \in U_{\alpha},\ \theta \circ \nu(m(u)) \leqslant -\theta\}$.\index[notation]{u@$U_\theta$}
\end{Not}

We recall the following result from \cite[7.3]{Landvogt}:

\begin{Lem}\label{LemActionUtheta}
For any $\theta \in \Theta$, any $n \in N$, we have $n U_\theta n^{-1} = U_{n \cdot \theta}$.
\end{Lem}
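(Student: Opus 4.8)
The statement to prove is Lemma~\ref{LemActionUtheta}: for any $\theta \in \Theta$ and any $n \in N$, we have $n U_\theta n^{-1} = U_{n \cdot \theta}$. The plan is to unwind the definition of $U_\theta$ and use the conjugation behaviour of the elements $m(u)$ together with the action of $N$ on $\Theta$ established in Lemma~\ref{LemActionTheta}. Write $\theta = \theta_{\alpha,\lambda}$, so that $U_\theta = \{u \in U_\alpha,\ \theta \circ \nu(m(u)) \leqslant -\theta\}$. Since $N$ is generated by the $M_\beta$, it would suffice to treat the case $n \in M_\beta$, but in fact the argument is uniform in $n$, so I would just work with a general $n \in N$.

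First I would record the key identity: for $u \in U_\alpha \setminus \{1\}$ and $n \in N$, the element $n u n^{-1}$ lies in $U_{{^v\!}\nu(n)(\alpha)}$ (by axiom~\ref{axiomRGD5}, since $n U_\alpha n^{-1} = U_{{^v\!}\nu(n)(\alpha)}$), and moreover $m(n u n^{-1}) = n\, m(u)\, n^{-1}$. This last fact follows from the uniqueness characterization of $m(\,\cdot\,)$ in \cite[6.1.2(2)]{BruhatTits1}: $m(u)$ is the unique element of $M_\alpha$ such that $u' u u'' \in M_\alpha$ with $u' = m(u) u^{-1} m(u)^{-1} \in U_{-\alpha}$ etc., and conjugating the whole configuration by $n$ produces the analogous configuration for $n u n^{-1}$ with respect to the root ${^v\!}\nu(n)(\alpha)$, whose unique solution is therefore $n m(u) n^{-1}$. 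Hence $\nu(m(n u n^{-1})) = \nu(n) \nu(m(u)) \nu(n)^{-1}$.

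Now I would compute. An element $v \in U_{n\cdot\theta}$ is of the form $v = n u n^{-1}$ with $u \in U_\alpha$ (bijectively, since conjugation by $n$ maps $U_\alpha$ onto $U_{{^v\!}\nu(n)(\alpha)}$, which is the root group attached to $n \cdot \theta = \theta_{{^v\!}\nu(n)(\alpha),\,\lambda + \alpha(\nu(n^{-1})(o)-o)}$ by Lemma~\ref{LemActionTheta}). By definition, $v \in U_{n\cdot\theta}$ iff $(n\cdot\theta)\circ\nu(m(v)) \leqslant -(n\cdot\theta)$. Using $n\cdot\theta = \theta \circ \nu(n^{-1})$ and the identity $\nu(m(v)) = \nu(n)\nu(m(u))\nu(n)^{-1}$, the left-hand side becomes $\theta \circ \nu(n^{-1}) \circ \nu(n) \circ \nu(m(u)) \circ \nu(n)^{-1} = \theta \circ \nu(m(u)) \circ \nu(n^{-1}) = \big(\theta\circ\nu(m(u))\big)\circ\nu(n^{-1})$, i.e. $n \cdot \big(\theta\circ\nu(m(u))\big)$ in the notation of Lemma~\ref{LemActionTheta} applied to the element $\theta\circ\nu(m(u)) = \theta_\alpha' \in \Theta$ (this is in $\Theta$ since $\nu(m(u)) = r_{\alpha,\varphi_\alpha(u)}$ by Lemma~\ref{LemMuNu}, so $\theta\circ\nu(m(u)) \in \Theta$). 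Likewise $-(n\cdot\theta) = (-\theta)\circ\nu(n^{-1}) = n\cdot(-\theta)$. So the condition reads $n\cdot\big(\theta\circ\nu(m(u))\big) \leqslant n\cdot(-\theta)$.

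Finally I would observe that the action of $N$ on affine maps via $\eta \mapsto \eta\circ\nu(n^{-1})$ preserves the partial order $\leqslant$ on $\Theta$: indeed $\nu(n^{-1})$ is a bijection of $\mathbb{A}_R$, so $\eta_1 \circ \nu(n^{-1}) \leqslant \eta_2 \circ \nu(n^{-1})$ (meaning $\eta_1(\nu(n^{-1})x) \leqslant \eta_2(\nu(n^{-1})x)$ for all $x$) is equivalent to $\eta_1 \leqslant \eta_2$ after the substitution $x \mapsto \nu(n)x$. Therefore $n\cdot\big(\theta\circ\nu(m(u))\big) \leqslant n\cdot(-\theta)$ is equivalent to $\theta\circ\nu(m(u)) \leqslant -\theta$, i.e. to $u \in U_\theta$. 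Putting this together: $v \in U_{n\cdot\theta}$ iff $v = nun^{-1}$ with $u \in U_\theta$, which is exactly $U_{n\cdot\theta} = n U_\theta n^{-1}$. The main (mild) obstacle is the clean verification that $m(nun^{-1}) = n m(u) n^{-1}$; everything else is a bookkeeping exercise with the order-preserving $N$-action on $\Theta$ that was already set up in Lemmas~\ref{LemActionTheta} and~\ref{LemMuNu}.
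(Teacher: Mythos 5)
Your proposal is correct and follows essentially the same route as the paper: both hinge on the identity $m(nun^{-1}) = n\,m(u)\,n^{-1}$ (obtained from the uniqueness statement in \cite[6.1.2(2)]{BruhatTits1} together with $nU_{\pm\alpha}n^{-1}=U_{\pm{^v\!}\nu(n)(\alpha)}$), followed by the computation $\theta\circ\nu(n^{-1})\circ\nu(m(nun^{-1}))=\theta\circ\nu(m(u))\circ\nu(n^{-1})$ and the observation that precomposing with the bijection $\nu(n^{-1})$ preserves the inequality. The only cosmetic difference is that you prove the equivalence in one pass using order-preservation, whereas the paper proves one inclusion and then applies it to $(n^{-1},n\cdot\theta)$ to get equality.
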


%\green{
\begin{proof}
Let $u \in U_\theta \subset U_\alpha$ for some $\alpha \in \Phi$.
Consider $n \in N$ and denote $\beta = {^v\!}\nu(n)(\alpha) \in \Phi$.

Suppose that $u \neq 1$ and consider the element $m(u) = u'uu'' \in U_{-\alpha}U_{\alpha}U_{-\alpha} \cap M_\alpha$, with $u',u'' \in U_{-\alpha}$ uniquely determined by $u$.
For any $n \in N$, we have $nm(u)n^{-1} = (nu'n^{-1})(nun^{-1})(nu''n^{-1})$.
By \cite[6.1.2(10)]{BruhatTits1}, $(nu'n^{-1}), (nu''n^{-1}) \in U_{-\beta}$ and $nun^{-1} \in U_{\beta}$.
Thus, by  uniqueness in \cite[6.1.2(2)]{BruhatTits1}, we have $m(nun^{-1}) = n m(u) n^{-1} \in M_\beta$.

For any $x \in \mathbb{A}_\Rtot$, we have 
\begin{align*}
\theta \circ \nu(n^{-1})\left( \nu(m(nun^{-1}))(x) \right)
& = \theta \circ \nu(n^{-1}) \circ \left( \nu(n) \circ \nu(m(u)) \circ \nu(n^{-1}) \right) (x)\\
& = \theta \circ \nu(m(u)) \left( \nu(n^{-1})(x) \right)\\
& \leqslant - \theta \left( \nu(n^{-1})(x) \right)
\end{align*}
Thus $\theta \circ \nu(n^{-1}) \circ \nu(m(nun^{-1})) \leqslant - \theta \circ \nu(n^{-1})$ which means that $nun^{-1} \in U_{\theta \circ \nu(n^{-1})} = U_{n \cdot \theta}$.

Hence $n U_\theta n^{-1} \subset U_{n \cdot \theta}$ for any $n \in N$ and any $\theta \in \Theta$.
Thus, by applying it to $(n^{-1}, n \cdot \theta)$ we get $n^{-1} U_{n \cdot \theta} n \subset U_{\theta}$ which gives the equality $n U_\theta n^{-1} = U_{n \cdot \theta}$.
\end{proof}
%}

We get the following consequence as in \cite[7.7]{Landvogt}:

\begin{Prop}\label{PropUthetaEgalUa}
For any $\alpha \in \Phi$ and any $\lambda \in \Rtot$, we have $U_{\alpha,\lambda} = U_{\theta_{\alpha,\lambda}}$.
\end{Prop}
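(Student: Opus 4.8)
The plan is to unwind both sides of the claimed equality into explicit inequalities between affine maps sharing the same linear part, and then to read off the conclusion from the two preceding lemmas.

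First I would fix $\alpha \in \Phi$, $\lambda \in \Rtot$ and set $\theta = \theta_{\alpha,\lambda}$. The identity element $1 \in U_\alpha$ lies in $U_{\alpha,\lambda}$ (it is a subgroup and $\varphi_\alpha(1) = \infty \geqslant \lambda$) as well as in $U_\theta$ by inspection, so it suffices to treat $u \in U_\alpha \setminus \{1\}$. For such $u$, Lemma~\ref{LemMuNu} gives $\nu(m(u)) = r_{\alpha,\varphi_\alpha(u)}$, and the displayed formula at the end of Lemma~\ref{LemActionTheta}, applied with $\beta = \alpha$ and $\mu = \varphi_\alpha(u)$ (using $r_\alpha(\alpha) = -\alpha$ and $\alpha(\alpha^\vee) = 2$), yields
\[ \theta \circ \nu(m(u)) = \theta_{\alpha,\lambda}\circ r_{\alpha,\varphi_\alpha(u)} = \theta_{-\alpha,\, \lambda - 2\varphi_\alpha(u)}. \]
On the other hand, directly from the definition $\theta_{\alpha,\lambda}(x) = \alpha(x-o)+\lambda$ one reads off $-\theta_{\alpha,\lambda} = \theta_{-\alpha,-\lambda}$.

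Then I would observe that for a fixed root $\gamma$ and any $\mu_1,\mu_2 \in \Rtot$ the difference $\theta_{\gamma,\mu_1} - \theta_{\gamma,\mu_2}$ is the constant $\mu_1 - \mu_2$ on $\mathbb{A}_\Rtot$, so $\theta_{\gamma,\mu_1} \leqslant \theta_{\gamma,\mu_2} \Longleftrightarrow \mu_1 \leqslant \mu_2$. Hence the condition $\theta \circ \nu(m(u)) \leqslant -\theta$ defining $U_\theta$ is equivalent to $\lambda - 2\varphi_\alpha(u) \leqslant -\lambda$, that is $2\lambda \leqslant 2\varphi_\alpha(u)$; and since $\Rtot$ is a totally ordered abelian group, multiplication by $2$ reflects the order, so this is equivalent to $\lambda \leqslant \varphi_\alpha(u)$, i.e. $u \in U_{\alpha,\lambda}$. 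Together with the case $u = 1$ this gives $U_\theta = U_{\alpha,\lambda}$.

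I do not expect a genuine obstacle: the statement is a formal consequence of Lemmas~\ref{LemMuNu} and~\ref{LemActionTheta}. The only two points requiring a word of care are the treatment of $u = 1$, where $m(u)$ is undefined (handled by noting $1$ belongs to both sides and restricting the computation to $U_\alpha \setminus \{1\}$), and the elementary fact $2a \leqslant 2b \Longleftrightarrow a \leqslant b$ in a totally ordered abelian group, which is immediate.
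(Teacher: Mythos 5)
Your proposal is correct and follows essentially the same route as the paper: both reduce to Lemma~\ref{LemMuNu} ($\nu(m(u)) = r_{\alpha,\varphi_\alpha(u)}$) and then compare $\theta_{\alpha,\lambda}\circ r_{\alpha,\varphi_\alpha(u)}$ with $-\theta_{\alpha,\lambda}$; the paper does this by a direct pointwise computation yielding $\theta_{\alpha,\lambda}\circ\nu(m(u)) = -\theta_{\alpha,\lambda} + 2(\lambda-\varphi_\alpha(u))$, while you route it through the composition formula of Lemma~\ref{LemActionTheta}, which amounts to the same calculation. Your explicit treatment of $u=1$ is a small extra care the paper omits.
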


\begin{proof}
Let $u \in U_\alpha$ and $\mu = \varphi_\alpha(u)$.
By Lemma~\ref{LemMuNu}, we have $\nu(m(u)) = r_{\alpha,\mu}$.
For any $x \in \mathbb{A}_\Rtot$, we have
\begin{align*}
\theta_{\alpha,\lambda}\left( \nu(m(u))(x) \right) = & 
\theta_{\alpha,\lambda}\left( x - \left( \alpha(x-o) + \mu \right) \alpha^\vee \right)\\
= & -\alpha(x-o) - 2 \mu + \lambda\\
= & -\theta_{\alpha,\lambda}(x) + 2 (\lambda-\mu)
\end{align*}
Thus $u \in U_{\theta_{\alpha,\lambda}} \Longleftrightarrow \theta_{\alpha,\lambda} \circ \nu(m(u)) \leqslant - \theta_{\alpha,\lambda} \Longleftrightarrow \mu \geqslant \lambda \Longleftrightarrow u \in U_{\alpha,\lambda}$.
\end{proof}

\begin{Cor}\label{CorActionNUa}
For any $\alpha \in \Phi$, any $\lambda \in \Rtot$ and any $n \in N$, we have
\[n U_{\alpha,\lambda} n^{-1} = U_{{^v\!}\nu(n)(\alpha), \lambda + \alpha( \nu(n^{-1})(o) - o)}.\]
In particular, $\ker \nu$ normalizes $U_{\alpha,\lambda}$.

\end{Cor}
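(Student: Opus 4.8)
The plan is to chain together the three immediately preceding results: Proposition~\ref{PropUthetaEgalUa}, Lemma~\ref{LemActionUtheta} and Lemma~\ref{LemActionTheta}. First I would rewrite the left-hand side using Proposition~\ref{PropUthetaEgalUa}, which identifies $U_{\alpha,\lambda}$ with $U_{\theta_{\alpha,\lambda}}$, so that $n U_{\alpha,\lambda} n^{-1} = n U_{\theta_{\alpha,\lambda}} n^{-1}$. Then Lemma~\ref{LemActionUtheta} gives $n U_{\theta_{\alpha,\lambda}} n^{-1} = U_{n \cdot \theta_{\alpha,\lambda}}$, for the action of $N$ on $\Theta$ introduced just before Lemma~\ref{LemActionTheta}.

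Next I would compute $n \cdot \theta_{\alpha,\lambda}$ explicitly. Lemma~\ref{LemActionTheta} states precisely that $n \cdot \theta_{\alpha,\lambda} = \theta_{\alpha,\lambda} \circ \nu(n^{-1}) = \theta_{{^v\!}\nu(n)(\alpha),\, \lambda + \alpha(\nu(n^{-1})(o) - o)}$. Feeding this back through Proposition~\ref{PropUthetaEgalUa} in the reverse direction, $U_{\theta_{{^v\!}\nu(n)(\alpha),\, \lambda + \alpha(\nu(n^{-1})(o)-o)}} = U_{{^v\!}\nu(n)(\alpha),\, \lambda + \alpha(\nu(n^{-1})(o)-o)}$, which is exactly the asserted identity. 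So the whole argument is a three-step substitution; there is no genuine obstacle, since all the content has been packaged into the preceding lemmas. The only point requiring a little care is the bookkeeping of inverses and of the direction of the $N$-action on $\Theta$, which is already handled by the statement of Lemma~\ref{LemActionTheta}.

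For the final assertion, I would specialize to $n \in \ker \nu$. Then $\nu(n) = \operatorname{id}_{\mathbb{A}_\Rtot}$, so in particular its vectorial part ${^v\!}\nu(n)$, which is the linear part of $\nu(n)$ by axiom~\ref{axiomCA1}, is the identity of $\mathrm{GL}(V^*)$; hence ${^v\!}\nu(n)(\alpha) = \alpha$. Moreover $\nu(n^{-1})(o) = o$, so $\alpha(\nu(n^{-1})(o) - o) = 0$. Substituting into the formula just established yields $n U_{\alpha,\lambda} n^{-1} = U_{\alpha,\lambda}$, i.e. $\ker \nu$ normalizes each $U_{\alpha,\lambda}$.
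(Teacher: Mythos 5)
Your proof is correct and follows exactly the route the paper takes: the paper's own proof is the one-line observation that the corollary is a consequence of Proposition~\ref{PropUthetaEgalUa} and Lemma~\ref{LemActionTheta} (with Lemma~\ref{LemActionUtheta} supplying the conjugation step, just as in your chain). Your handling of the $\ker\nu$ specialization is likewise the intended one.
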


\begin{proof}
This is a consequence of Proposition~\ref{PropUthetaEgalUa} and Lemma~\ref{LemActionTheta}.
\end{proof}

\begin{Cor}[{see \cite[6.2.10]{BruhatTits1}}]\label{CorAffineRootsInvariant}
The subset $\Sigma := \{\theta_{\alpha,\lambda} \in \Theta,\ \alpha \in \Phi, \lambda \in \Gamma'_\alpha\}$ is invariant under the action of $N$.
\end{Cor}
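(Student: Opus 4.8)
The plan is to reduce the statement to the action of a generating set of $N$ and then track how conjugation by $N$ moves the filtration subgroups $U_{\alpha,\lambda}$. Recall that $N$ is generated by the elements $m(u)$ with $u \in U_\gamma \setminus \{1\}$ and $\gamma \in \Phi_{\mathrm{nd}}$ (for a divisible root $2\gamma$ one has $M_{2\gamma} = M_\gamma$, so non-divisible roots suffice), and that by Lemma~\ref{LemMuNu} such an $m(u)$ lies in $M_{\gamma,\rho}$ with $\rho := \varphi_\gamma(u) \in \Gamma_\gamma$ and satisfies $\nu(m(u)) = r_{\gamma,\rho}$. Since $\Sigma$ carries an action of the group $N$, it suffices to show that $n \cdot \theta_{\alpha,\lambda} \in \Sigma$ for every such generator $n \in M_{\gamma,\rho}$, every $\alpha \in \Phi$ and every $\lambda \in \Gamma'_\alpha$; applying this also to $n^{-1}$ yields $n \cdot \Sigma = \Sigma$ for all $n \in N$.

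Fix such $n = m \in M_{\gamma,\rho}$ with $\gamma \in \Phi_{\mathrm{nd}}$, and given $\lambda \in \Gamma'_\alpha$ choose $u \in U_\alpha \setminus \{1\}$ with $\varphi_\alpha(u) = \lambda$ witnessing the membership, i.e. $U_{\alpha,\lambda} = \bigcap_{u' \in U_{2\alpha}} U_{\alpha,\varphi_\alpha(uu')}$. As $\nu(m^{-1}) = r_{\gamma,\rho}$, Lemma~\ref{LemActionTheta} gives $m \cdot \theta_{\alpha,\lambda} = \theta_{\alpha,\lambda} \circ r_{\gamma,\rho} = \theta_{\beta,\mu}$ with $\beta := r_\gamma(\alpha)$ and $\mu := \lambda - \alpha(\gamma^\vee)\rho$, so it remains to prove $\mu \in \Gamma'_\beta$. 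Put $v := mum^{-1}$; by \ref{axiomRGD5} we have $v \in U_\beta \setminus \{1\}$, and Proposition~\ref{PropWeylSetOfValues} gives $\varphi_\beta(v) = \varphi_\alpha(u) - \alpha(\gamma^\vee)\rho = \mu$. Thus $\mu$ already lies in $\Gamma_\beta$ and only the intersection condition in the definition of $\Gamma'_\beta$ has to be checked.

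For that, I would transport the witnessing identity through conjugation by $m$. Conjugation by $m$ is an automorphism of $G$ restricting, via \ref{axiomRGD5} and $r_\gamma(2\alpha) = 2\beta$, to a bijection $U_{2\alpha} \to U_{2\beta}$, $u' \mapsto v' := mu'm^{-1}$; and for each $\nu_0 \in \Rtot$ Lemma~\ref{LemConjugationMaUb} gives $m U_{\alpha,\nu_0} m^{-1} = U_{\beta,\, \nu_0 - \alpha(\gamma^\vee)\rho}$, while Proposition~\ref{PropWeylSetOfValues} gives $\varphi_\beta(vv') = \varphi_\beta\!\big(m(uu')m^{-1}\big) = \varphi_\alpha(uu') - \alpha(\gamma^\vee)\rho$ for every $u' \in U_{2\alpha}$. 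Conjugating $U_{\alpha,\lambda} = \bigcap_{u' \in U_{2\alpha}} U_{\alpha,\varphi_\alpha(uu')}$ by $m$ and using that conjugation commutes with intersections, these identities give $U_{\beta,\mu} = U_{\beta,\varphi_\beta(v)} = \bigcap_{v' \in U_{2\beta}} U_{\beta,\varphi_\beta(vv')}$, which is exactly $\mu \in \Gamma'_\beta$. Hence $m \cdot \theta_{\alpha,\lambda} = \theta_{\beta,\mu} \in \Sigma$, and the reduction of the first paragraph completes the proof.

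None of this is deep; the step needing the most care is simply keeping the two roles of roots apart — the root $\gamma$ of the reflection versus the moving root $\alpha$ — so that the coefficient $\alpha(\gamma^\vee)\rho$ produced by Lemma~\ref{LemActionTheta}, Proposition~\ref{PropWeylSetOfValues} and Lemma~\ref{LemConjugationMaUb} is the same in all three. When $2\alpha \notin \Phi$ (so $2\beta \notin \Phi$), the groups $U_{2\alpha}$ and $U_{2\beta}$ are trivial, $\Gamma'_\alpha = \Gamma_\alpha$, $\Gamma'_\beta = \Gamma_\beta$, and the argument collapses to the trivial observation that $\mu \in \Gamma_\beta$, so no separate case is needed.
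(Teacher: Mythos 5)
Your handling of a single element $m \in M_{\gamma,\rho}$ with $\rho \in \Gamma_\gamma$ is correct (the computation via Lemma~\ref{LemActionTheta}, Proposition~\ref{PropWeylSetOfValues} and Lemma~\ref{LemConjugationMaUb} does transport the intersection condition defining $\Gamma'_\alpha$ to $\Gamma'_\beta$), but the opening reduction contains a genuine gap: $N$ is \emph{not} generated by the elements $m(u)$. By definition $N$ is generated by the cosets $M_\alpha = T m_\alpha$, and since $M_\alpha M_\alpha^{-1} = T$, the group $N$ contains all of $T$; the subgroup generated by the $m(u)$ only meets $T$ in the subgroup generated by the elements $m(u)m(u')^{-1}$, which need not be all of $T$ (already for the root group datum of $\mathrm{GL}_2$ every $m(u)$ has determinant $1$, so these products never reach a diagonal element of non-unit determinant). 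The missing elements act nontrivially on $\Theta$: by Lemma~\ref{LemActionTheta}, $t \in T$ sends $\theta_{\alpha,\lambda}$ to $\theta_{\alpha,\lambda+\alpha(\nu(t^{-1})(o)-o)}$, so invariance of $\Sigma$ under $T$ is exactly the assertion $\Gamma'_\alpha + \alpha(\nu(t^{-1})(o)-o) = \Gamma'_\alpha$, which your argument never establishes. Equivalently, a coset $M_\gamma$ contains elements $m$ with $\nu(m) = r_{\gamma,\rho'}$ for $\rho' \notin \Gamma_\gamma$, and for those neither Proposition~\ref{PropWeylSetOfValues} nor Lemma~\ref{LemConjugationMaUb} applies.

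To close the gap you must handle an arbitrary $n \in N$ (or at least add the case $n = t \in T$, which together with your case does generate $N$). The paper's proof does this uniformly: for any $n \in N$ and any $u$ with $\varphi_\alpha(u) = \lambda \in \Gamma'_\alpha$, it proves $n U_{\alpha,\varphi_\alpha(u)} n^{-1} = U_{\beta,\varphi_\beta(nun^{-1})}$ with $\beta = {^v\!}\nu(n)(\alpha)$, by applying Corollary~\ref{CorActionNUa} once to $n$ (giving $\varphi_\beta(nun^{-1}) \geqslant \lambda + \alpha(\nu(n^{-1})(o)-o)$) and once to $n^{-1}$ (giving the reverse inequality), so that the shift of the parameter is pinned down without any appeal to $\rho \in \Gamma_\gamma$. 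Once that identity is available for all $n$, your transport of the condition $U_{\alpha,\lambda} = \bigcap_{u' \in U_{2\alpha}} U_{\alpha,\varphi_\alpha(uu')}$ goes through verbatim, and the rest of your argument is sound.
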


\begin{proof}
Consider $\alpha \in \Phi$ and $\lambda \in \Gamma'_\alpha$ and $n \in N$.
By definition, there exists $u \in U_{\alpha,\lambda}$ such that $\lambda = \varphi_\alpha(u)$ and $U_{\alpha,\lambda} = \bigcap_{v \in U_{2\alpha}} U_{\alpha,\varphi_\alpha(uv)}$.
Set $\beta = {^v\!}\nu(n)(\alpha)$.
We prove that 
\begin{equation}
n U_{\alpha,\varphi_\alpha(u)} n^{-1} = U_{\beta,\varphi_\beta(nun^{-1})}.\label{eqnActionN}
\end{equation}

Indeed, let $\mu \in \Rtot$ be such that $\theta_{\beta,\mu} = n \cdot \theta_{\alpha,\lambda}$.
Then $\mu = \lambda + \alpha\big(\nu(n^{-1})(o) - o \big)$ and $\lambda = \mu + \beta\big(\nu(n)(o) - o \big)$ by Lemma~\ref{LemActionTheta}.
Let $u' = nun^{-1}$ and $\mu' = \varphi_\beta(u)$.
Then $u' \in nU_{\alpha,\lambda}n^{-1} = U_{\beta,\mu}$ by Corollary~\ref{CorActionNUa}, so that $\mu' \geqslant \mu$.
Applying Corollary~\ref{CorActionNUa} again, we have $u=n^{-1}u'n \in U_{\alpha,\mu'+\beta\left(\nu(n)(o)-o\right)}$.
Thus $\lambda = \varphi_\alpha(u) \geqslant \mu'+\beta\big(\nu(n)(o)-o\big) = \mu' - \mu + \lambda$.
Hence $\mu = \mu'$ and we have shown equality~(\ref{eqnActionN}).

Using the equality $U_{\alpha,\lambda} = \bigcap_{v \in U_{2\alpha}} U_{\alpha,\varphi_\alpha(uv)}$, we have:
\[\bigcap_{v' \in U_{2\beta}} U_{\beta,\varphi_\beta(u'v')} = \bigcap_{v \in U_{2\alpha}} U_{\beta,\varphi_\beta(nuvn^{-1})} = n \bigcap_{v \in U_{2\alpha}} U_{\alpha,\varphi_\alpha(uv)}n^{-1} = n U_{\alpha,\varphi_\alpha(u)} n^{-1} = U_{\beta,\varphi_\beta(u')}.\]
Hence $\mu' = \varphi_\beta(u') \in \Gamma'_\beta$.
This proves that $\theta_{\beta,\mu} = n \cdot \theta_{\alpha,\lambda} \in \Sigma$ for any $n\in N$ and any $\theta_{\alpha,\lambda} \in \Sigma$.
\end{proof}

In section~\ref{sectionLambdaBuildingFromVRGD}, we will see that the datum of the valuation of a root group datum, together with a compatible action allows us to define a geometric space that automatically satisfies most of axioms of $\Lambda$-buildings.
Because it may be simpler to provide an explicit affine action of $N$ onto $\mathbb{A}_{\RF^S}$, we will make the following assumption:

\begin{Hyp}\label{HypCAVRGD}
It is given a group $G$ and a non-empty root system $\Phi$.
It is given a generated root group datum $\left( T, (U_\alpha,M_\alpha)_{\alpha \in \Phi} \right)$ of $G$.
It is given an $\Rtot$-valuation $\left(\varphi_{\alpha}\right)_{\alpha\in \Phi}$ of the root group datum.
Let $N$ be the group generated by the $M_\alpha$ for $\alpha \in \Phi$.
It is given an action $\nu: N \to \operatorname{Aff}_{R}(\mathbb{A}_\Rtot)$ compatible with the valuation.
\end{Hyp}

\begin{Not}\label{NotWeylAffine}
Under the above assumption, we denote by $T_b := \ker \nu$.\index[notation]{t@$T_b$}
According to \cite[6.1.11(ii)]{BruhatTits1}, $T_b$ is a subgroup of $T$ since $T=\ker {^v\!}\nu$.

We denote by
$\Wext := \nu(N) \subset \operatorname{Aff}_{R}(\mathbb{A}_\Rtot)$
and we call it the \textbf{extended affine Weyl group}.\index{Weyl group!extended affine}\index{affine Weyl group!extended}\index[notation]{w@$\widetilde{W}$}
We denote by $W^v := {^v\!}\nu(N)$ and we call it the \textbf{spherical Weyl group}\index{Weyl group!spherical}\index{spherical Weyl group}\index[notation]{w@$W^v$}
and by $\overline{W}$ the subgroup, isomorphic to  $W^v$, of $\operatorname{Aff}_{R}(\mathbb{A}_\Rtot)$ generated by the $r_{\alpha,0}$ for $\alpha \in \Phi$.

We denote by $\Waff$\index[notation]{w@$\Waff$} the subgroup of $\operatorname{Aff}_{R}(\mathbb{A}_\Rtot)$ generated by the $r_{\alpha,\lambda}$ for $\alpha \in \Phi$ and $\lambda \in \Gamma'_\alpha$
and we call it the \textbf{affine Weyl group}.\index{Weyl group!affine}\index{affine Weyl group}

\end{Not}

\begin{Rq}
The group $T_b$ is denoted by $H$ in \cite[6.2.11]{BruhatTits1}.
This notation emphasizes the fact that when $T = \mathbf{T}(\mathbb{K})$ denotes the rational points a maximal torus of a (quasi) split semi-simple group, then $T_b$ denotes the bounded elements of this torus.
In that case, it can be called a ``maximal bounded torus''.
Be careful that $T_b$ may not be commutative, nor bounded in general.

By definition
$\Wext \simeq N / T_b$
and $W^v \simeq N / T$.
\end{Rq}

Let $\Delta$ be a basis of $\Phi$.
For $\alpha\in \Delta$
we recall that $\varpi_\alpha^\vee$ denotes
the fundamental coweight in $V_\R$ associated with $\alpha$, i.e the unique element of $V_\R$ such that for all $\beta\in \Delta$, one has $\beta(\varpi_\alpha^\vee)=\delta_{\alpha,\beta}$.

\begin{Prop}[{see \cite[6.2.20]{BruhatTits1}}]\label{PropAffineWeylGroup}
The group $\Waff$
is a normal subgroup of
$\Wext$
and for any $\alpha \in \Phi_{\mathrm{nd}}$, the group of translations $\widetilde{\Gamma}_\alpha \alpha^\vee$ is a subgroup of
$\Waff$.

Assume that the valuation $(\varphi_\alpha)_{\alpha \in \Phi}$ is semi-special.
Let $\Delta$ be a basis of $\Phi$.
Then
\begin{enumerate}[label={(\arabic*)}]
\item\label{PropWeylAffine1}
$\overline{W}$ is a subgroup of
$\Waff$.
In particular,
$\Waff = \left( V_\Rtot \cap 
\Waff\right) \rtimes W^v$ and
$\Wext = \left( V_\Rtot \cap \Wext \right) \rtimes W^v$.
\item\label{PropWeylAffine2} $V_\Rtot \cap \Waff = \bigoplus_{\alpha \in \Delta} \widetilde{\Gamma}_\alpha \alpha^\vee$ for any basis $\Delta \subset \Phi$.
\item\label{PropWeylAffine3} $V_\Rtot \cap \Wext \subset \bigoplus_{\alpha \in \Delta} \widetilde{\Gamma}_\alpha \varpi_\alpha^\vee$.
\end{enumerate}
\end{Prop}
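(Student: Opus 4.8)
The plan is to follow the strategy of \cite[6.2.20]{BruhatTits1}, adapting each step to the setting where $\Rtot$ is a totally ordered abelian group rather than $\R$. The proof splits naturally into the general assertions (which hold for any valuation) and the assertions under the semi-special hypothesis.

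\textbf{Step 1: $\widetilde{W}$ is normal in $W^{\mathrm{aff}}$ and contains the translations $\widetilde{\Gamma}_\alpha \alpha^\vee$.}
For normality, take a generator $r_{\alpha,\lambda}$ of $\widetilde{W}$ with $\alpha \in \Phi$, $\lambda \in \Gamma'_\alpha$, and $n \in N$. By Lemma~\ref{LemActionTheta} and Fact~\ref{FactReflectionIdentification}, conjugating the reflection $r_{\alpha,\lambda}=\nu(m)$ for $m\in M_{\alpha,\lambda}$ (cf. Lemma~\ref{LemMuNu}) by $\nu(n)$ yields the reflection $r_{\gamma,\mu}$ where $\gamma = {^v\!}\nu(n)(\alpha)$ and $\mu$ is the value appearing in $n\cdot\theta_{\alpha,\lambda}$; by Corollary~\ref{CorAffineRootsInvariant} the set $\Sigma = \{\theta_{\alpha,\lambda} : \lambda \in \Gamma'_\alpha\}$ is $N$-invariant, so $r_{\gamma,\mu}\in \widetilde{W}$. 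Hence $\nu(n)\widetilde{W}\nu(n)^{-1}=\widetilde{W}$. For the translations: given $\alpha\in\Phi_{\mathrm{nd}}$ and $\lambda,\mu \in \Gamma'_\alpha$, the composition $r_{\alpha,\lambda}\circ r_{\alpha,\mu}$ is, by Fact~\ref{FactReflectionIdentification}, the translation by $(\mu-\lambda)\alpha^\vee$. Since $\Gamma'_\alpha$ is nonempty only if... — more carefully, one uses Fact~\ref{FactDecompositionSetOfValue} ($2\Gamma_\alpha = 2\Gamma'_\alpha\cup\Gamma_{2\alpha}$) together with the products $r_{\alpha,\lambda}r_{\alpha,\mu}$ and $r_{2\alpha,2\lambda}r_{2\alpha,2\mu}$ to see that all translations by elements of $\langle \Gamma_\alpha - \Gamma_\alpha\rangle \alpha^\vee = \widetilde{\Gamma}_\alpha\alpha^\vee$ lie in $\widetilde{W}$; here one invokes $r_{2\alpha,2\lambda}\in\widetilde W$ via Proposition~\ref{PropValuedCoset}\ref{PropValuedCoset:multipliable} when $\alpha$ is multipliable, and uses that $\langle\Gamma'_\alpha - \Gamma'_\alpha, \tfrac12(\Gamma_{2\alpha}-\Gamma_{2\alpha})\rangle$ generates $\widetilde\Gamma_\alpha$.

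\textbf{Step 2: under semi-specialness, $\overline{W}\subset\widetilde{W}$.}
For $\alpha\in\Phi_{\mathrm{nd}}$ and any $\lambda\in\Gamma'_\alpha$, the vectorial reflection $r_\alpha = \overrightarrow{r_{\alpha,\lambda}}$ equals $r_{\alpha,\lambda}$ composed with the translation by $\lambda\alpha^\vee$; since $\lambda\in\Gamma_\alpha\subset\widetilde{\Gamma}_\alpha$ (semi-special!) and $\widetilde\Gamma_\alpha\alpha^\vee\subset\widetilde W$ by Step 1, we get $r_\alpha\in\widetilde W$. Because $W^v$ is generated by the $r_\alpha$, $\alpha\in\Delta$, and $\Delta\subset\Phi_{\mathrm{nd}}$, we obtain $\overline W\subset\widetilde W$. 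The semidirect product decompositions $\widetilde W = (V_\Rtot\cap\widetilde W)\rtimes W^v$ and $W^{\mathrm{aff}}=(V_\Rtot\cap W^{\mathrm{aff}})\rtimes W^v$ then follow formally: $\widetilde W$ and $W^{\mathrm{aff}}$ are subgroups of $V_\Rtot\rtimes W^v$ (Fact~\ref{FactReflectionIdentification} and Notation~\ref{NotWeylAffine}) whose images in $W^v$ are all of $W^v$ (they contain $\overline W$), and whose kernels of the projection to $W^v$ are $V_\Rtot\cap\widetilde W$, $V_\Rtot\cap W^{\mathrm{aff}}$ respectively, and the inclusion $\overline W\subset\widetilde W\subset W^{\mathrm{aff}}$ splits both projections.

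\textbf{Step 3: computing the translation parts.}
For \ref{PropWeylAffine2}: the inclusion $\bigoplus_{\alpha\in\Delta}\widetilde\Gamma_\alpha\alpha^\vee\subset V_\Rtot\cap\widetilde W$ is Step 1. For the converse, one argues as in \cite[6.2.20]{BruhatTits1}: any translation in $\widetilde W$ is a product of reflections $r_{\alpha_i,\lambda_i}$; projecting to $W^v$ and using that $W^v$ acts on the $\widetilde\Gamma_\alpha$ by $\widetilde\Gamma_{w(\alpha)}=\widetilde\Gamma_\alpha$ (Corollary~\ref{CorGamma-alpha}, hence Proposition~\ref{PropWeylInvarianceSetOfValues} gives $\Gamma_{w\alpha}=\Gamma_\alpha$ and so $\widetilde\Gamma_{w\alpha}=\widetilde\Gamma_\alpha$), one collects the translation contributions: each reflection $r_{\alpha,\lambda}$ contributes a translation by $\lambda\alpha^\vee\in\widetilde\Gamma_\alpha\alpha^\vee$ (modulo the $W^v$-action), and summing gives an element of $\sum_{\alpha\in\Phi}\widetilde\Gamma_\alpha\alpha^\vee = \bigoplus_{\alpha\in\Delta}\widetilde\Gamma_\alpha\alpha^\vee$ (the last equality because $\widetilde\Gamma_\alpha$ is $W^v$-invariant and the $\alpha^\vee$, $\alpha\in\Phi$, are $W^v$-conjugate to the simple coroots; one has to be a little careful about the multipliable/divisible coroots, using that $\widetilde\Gamma_{2\alpha}\subset\widetilde\Gamma_\alpha$). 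For \ref{PropWeylAffine3}: an element of $V_\Rtot\cap W^{\mathrm{aff}}=\nu(N)\cap V_\Rtot$ is $\nu(n)$ for some $n\in N$ acting as a translation; writing the translation vector $v$, the condition that $\nu(n)$ normalizes the affine root system $\Sigma$ forces, via Lemma~\ref{LemActionTheta}, that $\alpha(v)\in\widetilde\Gamma_\alpha$ for each $\alpha\in\Delta$ (more precisely, $\theta_{\alpha,\lambda}\mapsto\theta_{\alpha,\lambda-\alpha(v)}$ must preserve $\Gamma'_\alpha$ up to the group generated, and iterating using $\Gamma_\alpha-\Gamma_\alpha$ shows $\alpha(v)\in\widetilde\Gamma_\alpha$); expanding $v$ in the basis of fundamental coweights, $v=\sum_{\alpha\in\Delta}\alpha(v)\varpi_\alpha^\vee\in\bigoplus_{\alpha\in\Delta}\widetilde\Gamma_\alpha\varpi_\alpha^\vee$.

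The main obstacle I expect is Step 3, specifically making the ``collecting translations'' argument of \cite[6.2.20]{BruhatTits1} rigorous without recourse to any metric or completeness property of $\Rtot$: in the classical proof one sometimes implicitly uses archimedean features of $\R$, and here one must instead rely purely on the combinatorics encoded in Lemma~\ref{LemActionTheta}, Corollary~\ref{CorAffineRootsInvariant}, the $W^v$-invariance of the $\widetilde\Gamma_\alpha$ from Proposition~\ref{PropWeylInvarianceSetOfValues}, and the decomposition $2\Gamma_\alpha=2\Gamma'_\alpha\cup\Gamma_{2\alpha}$ from Fact~\ref{FactDecompositionSetOfValue} to handle the multipliable roots. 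The delicate point in \ref{PropWeylAffine3} is establishing the divisibility-type constraint $\alpha(v)\in\widetilde\Gamma_\alpha$ from $N$-invariance of $\Sigma$ alone; I would isolate this as a short lemma about which affine translations preserve $\{\theta_{\alpha,\lambda}:\lambda\in\Gamma'_\alpha\}$.
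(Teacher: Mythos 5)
Your proposal is correct and, for the normality statement, the translation subgroups $\widetilde{\Gamma}_\alpha\alpha^\vee$, and parts (1)--(2), it follows essentially the same route as the paper: generate $\widetilde{W}$ by the $r_{\alpha,\lambda}$ with $\lambda\in\Gamma_\alpha$ (absorbing the multipliable case via $r_{2\alpha,2\lambda}=r_{\alpha,\lambda}$ and Fact~\ref{FactDecompositionSetOfValue}), get normality from Corollary~\ref{CorAffineRootsInvariant}, obtain the translations as products $r_{\alpha,\mu}r_{\alpha,\nu}=(\nu-\mu)\alpha^\vee$, and deduce $\overline{W}\subset\widetilde{W}$ from $\Gamma_\alpha\subset\widetilde{\Gamma}_\alpha$; for (2) the paper likewise first proves $W^v$-invariance of $\widetilde{V}=\bigoplus_{\alpha\in\Delta}\widetilde{\Gamma}_\alpha\alpha^\vee$ (via Corollary~\ref{CorWeylSetOfValues} and Proposition~\ref{PropWeylInvarianceSetOfValues}) and then checks each generator $r_{\beta,\mu}=(-\mu\beta^\vee,r_\beta)$ lies in $\widetilde{V}\rtimes W^v$, which is your ``collecting translations'' argument made precise.

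The one place you genuinely diverge is (3). You derive $\alpha(v)\in\widetilde{\Gamma}_\alpha$ from the $N$-invariance of $\Sigma$ (Corollary~\ref{CorAffineRootsInvariant}) and Lemma~\ref{LemActionTheta}: a translation $v\in\nu(N)$ sends $\theta_{\alpha,\lambda}$ to $\theta_{\alpha,\lambda-\alpha(v)}$, so $\Gamma'_\alpha-\alpha(v)=\Gamma'_\alpha$ and hence $\alpha(v)\in\Gamma'_\alpha-\Gamma'_\alpha$ whenever $\Gamma'_\alpha\neq\emptyset$ (when $\Gamma'_\alpha=\emptyset$ you must pass to $2\alpha$ and use $\tfrac12\Gamma_{2\alpha}\subset\Gamma_\alpha$). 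The paper instead uses the commutator trick: $[v,r_{\alpha,0}]=v-r_\alpha(v)=\alpha(v)\alpha^\vee$ lies in $\widetilde{W}\cap V_\Rtot=\widetilde{V}$ because $\widetilde{W}$ is normal and $r_{\alpha,0}\in\widetilde{W}$ by (1). The paper's argument is shorter but consumes (1) and (2) and hence semi-specialness; yours is more direct and in fact does not need the semi-special hypothesis for (3). Two small imprecisions to fix when writing this up: the group $\widetilde{\Gamma}_\alpha=\langle\Gamma_\alpha-\Gamma_\alpha\rangle$ is not generated by $\Gamma'_\alpha-\Gamma'_\alpha$ and $\tfrac12(\Gamma_{2\alpha}-\Gamma_{2\alpha})$ alone (you also need the cross-differences, which is why the paper works with all of $\Gamma_\alpha$ at once); and for the divisible-coroot step the correct inclusion is $\tfrac12\widetilde{\Gamma}_{2\alpha}\subset\widetilde{\Gamma}_\alpha$, coming from $(2\gamma)^\vee=\tfrac12\gamma^\vee$ and $\tfrac12\Gamma_{2\gamma}\subset\Gamma_\gamma$, not $\widetilde{\Gamma}_{2\alpha}\subset\widetilde{\Gamma}_\alpha$ as written.
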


\begin{proof}
We fix a basis $\Delta$ of $\Phi$.
For $\alpha \in \Phi$ and $\lambda \in \Gamma_\alpha$, we have either $\lambda \in \Gamma'_\alpha$ or $2\lambda \in \Gamma'_{2\alpha}$, by Fact~\ref{FactDecompositionSetOfValue} and Remark~\ref{rqGamma_alpha}. Moreover if $2\alpha\in \Phi$ and $2\lambda\in \Gamma'_{2\alpha}$, then $r_{\alpha,\lambda}=r_{2\alpha,2\lambda}$.
Thus $r_{\alpha,\lambda}$ belongs to $\Waff$.
Hence $\Waff$ is also the group generated by the $r_{\alpha,\lambda}$ for $\alpha \in \Phi$ and $\lambda \in \Gamma_{\alpha}$.

From Proposition~\ref{PropValuedCoset}~\ref{PropValuedCoset:emptyness} and Lemma~\ref{LemMuNu}, we deduce that $\Waff$ is a subgroup of $\Wext$.
Using Corollary~\ref{CorAffineRootsInvariant}, we know that for any $n \in N$, $\alpha \in \Phi$ and $\lambda \in \Gamma'_\alpha$, there are $\beta \in \Phi$ and $\mu \in \Gamma'_\beta$ such that $\nu(n) r_{\alpha,\lambda} \nu(n^{-1}) = r_{\beta,\mu}$.
Hence $\Waff$ is a normal subgroup of $\Wext$.

Let $\mu, \nu \in \Gamma_\alpha$.
There exist $m,n \in N$ such that $m \in M_{\alpha,\mu}$ and $n \in M_{\alpha,\nu}$ so that $r_{\alpha,\mu} = \nu(m)$ and $r_{\alpha,\nu} = \nu(n)$.
Then $\nu(mn)=r_{\alpha,\mu} \circ r_{\alpha,\nu} = (\nu - \mu ) \alpha^\vee \in \Waff$.
Hence $\widetilde{\Gamma}_\alpha \alpha^\vee = \langle \Gamma_\alpha - \Gamma_\alpha \rangle \alpha^\vee \subset \Waff$.
As a consequence, the group $\widetilde{V}= \bigoplus_{\alpha \in \Delta} \widetilde{\Gamma}_\alpha \alpha^\vee$ is a subgroup of $V_\Rtot \cap \Waff$. 

From now on, we assume that the valuation is semi-special. 

\ref{PropWeylAffine1} Let $\alpha\in \Phi$ and $\lambda \in \Gamma_\alpha$.
By Corollary~\ref{cor_Symmetry_properties_Gamma_alpha}, $-\lambda\alpha^\vee\in \widetilde{\Gamma}_{\alpha}\alpha^\vee\subset \Waff$.
Thus $r_{\alpha,\lambda}\circ (-\lambda\alpha^\vee)=r_{\alpha,0}\in  \Waff$ and thus $\overline{W}$ is a subgroup of $\Waff$.

\ref{PropWeylAffine2}
Recall that, according to \cite[VI.1.1 Lem.2]{bourbaki1981elements}, for any $\alpha \in \Phi$ and any $w \in W^v$, we have that $w(\alpha^\vee) = \big( w(\alpha) \big)^\vee$.

We firstly prove that $\widetilde{V}$ is $W^v$-invariant.
It suffices to show that it is invariant by applying $r_\alpha$ for $\alpha \in \Delta$.
Let $\alpha,\beta \in \Delta$, then $r_\alpha(\widetilde{\Gamma}_\beta \beta^\vee) = \widetilde{\Gamma}_\beta \big(r_\alpha(\beta)\big)^\vee$.
Since the valuation is semi-special, we deduce from Corollary~\ref{CorWeylSetOfValues} that we have $\widetilde{\Gamma}_\beta = \widetilde{\Gamma}_{r_\alpha(\beta)}$.
Hence $r_\alpha(\widetilde{\Gamma}_\beta \beta^\vee) \subset \widetilde{V}$.
Thus $\widetilde{V}$ is $r_\alpha$-invariant for any $\alpha \in \Delta$ and therefore $W^v$-invariant.

We have shown that $\Waff \supset \widetilde{V} \rtimes W^v$.
Conversely, consider any $\beta \in \Phi$ and any $\mu \in \Gamma'_\beta$.
Assume firstly that $\beta$ is non-divisible.
By \cite[VI.1.5 Prop.15]{bourbaki1981elements}, there exists $w \in W^v$ and $\alpha \in \Delta$ such that $\alpha = w(\beta)$.
Hence $w(-\mu \beta^\vee) = -\mu \big(w(\beta)\big)^\vee \in \widetilde{\Gamma}_\beta \alpha^\vee = \widetilde{\Gamma}_{w^{-1}(\alpha)} \alpha^\vee = \widetilde{\Gamma}_{\alpha} \alpha^\vee$ according to Proposition~\ref{PropWeylInvarianceSetOfValues}.
Hence $-\mu \beta^\vee \in w^{-1}(\widetilde{V}) = \widetilde{V}$, because $\widetilde{V}$ is $W^v$-invariant.
Assume that $\beta$ is divisible and write $\beta = 2 \gamma$ with $\gamma$ non-divisible.
Then $\widetilde{\Gamma}_\beta \beta^\vee = \widetilde{\Gamma}_{2\gamma} \frac{1}{2} \gamma^\vee \subset \widetilde{\Gamma}_\gamma \gamma^\vee$ since $\Gamma_\gamma = \Gamma'_\gamma \cup \frac{1}{2} \Gamma'_{2\gamma}$ by Fact~\ref{FactDecompositionSetOfValue}.
Hence $-\mu \beta^\vee \in \widetilde{V}$ for any $\beta \in \Phi$ and any $\mu \in \Gamma'_\beta$.
Therefore $(-\mu \beta^\vee, r_\beta) =  r_{\beta,\mu} \in \widetilde{V} \rtimes W^v$.
Since these elements generate $\Waff$, we deduce that
$\Waff = \widetilde{V} \rtimes W^v$
and it follows that $V_\Rtot \cap  \Waff= \widetilde{V}$.

\ref{PropWeylAffine3}
Let $v \in \Wext \cap V_\Rtot$ and $\alpha \in \Delta$.
Then $[v,r_{\alpha,0}] \in  \Waff \cap V_\Rtot = \widetilde{V}$ since $\Waff$ is normalized by $\Wext$.
But $[v,r_{\alpha,0}] = v - r_\alpha(v) = \alpha(v) \alpha^\vee$.
Hence $\alpha(v) \in \widetilde{\Gamma}_\alpha$ for every $\alpha \in \Delta$ and therefore $v \in \bigoplus_{\alpha \in \Delta} \widetilde{\Gamma}_\alpha \varpi_\alpha^\vee$ by definition of the fundamental weights.
\end{proof}

\subsection{Rank-one Levi subgroups}

In this section, we work under data and notations of assumption~\ref{HypCAVRGD}.

\begin{Not}\label{NotLeviRankOne}
For any $\alpha \in \Phi$, any $\lambda \in \Rtot$ and any $\varepsilon \in \Rtot_{\geqslant 0}$, we define the subgroups
\begin{itemize}
\item $U'_{\alpha,\lambda} = \varphi_\alpha^{-1}(]\lambda,+\infty]) = \bigcup_{\mu > \lambda} U_{\alpha,\mu}$;\index[notation]{u@$U'_{\alpha,\lambda}$}
\item $L^\varepsilon_{\alpha,\lambda}$ (resp. $L'_{\alpha,\lambda}$) the subgroup generated by $U_{\alpha,\lambda}$ and $U_{-\alpha,-\lambda+\varepsilon}$ (resp. $U_{\alpha,\lambda}$ and $U'_{-\alpha,-\lambda}$);\index[notation]{l@$L^\varepsilon_{\alpha,\lambda}$}\index[notation]{l@$L'_{\alpha,\lambda}$}
\item $N^\varepsilon_{\alpha,\lambda} = N \cap L^\varepsilon_{\alpha,\lambda}$ and $N'_{\alpha,\lambda} = N \cap L'_{\alpha,\lambda}$;\index[notation]{n@$N^\varepsilon_{\alpha,\lambda}$}\index[notation]{n@$N'_{\alpha,\lambda}$}
\item $T^\varepsilon_{\alpha,\lambda} = T \cap L^\varepsilon_{\alpha,\lambda}$ and $T'_{\alpha,\lambda} = T \cap L'_{\alpha,\lambda}$.\index[notation]{t@$T^\varepsilon_{\alpha,\lambda}$}\index[notation]{t@$T'_{\alpha,\lambda}$}
\end{itemize}

If $\varepsilon = 0$, we will denote $L_{\alpha,\lambda}, N_{\alpha,\lambda}, T_{\alpha,\lambda}$ instead of $L^0_{\alpha,\lambda}, N^0_{\alpha,\lambda}, T^0_{\alpha,\lambda}$.\index[notation]{l@$L_{\alpha,\lambda}$}\index[notation]{n@$N_{\alpha,\lambda}$}\index[notation]{t@$T_{\alpha,\lambda}$}
\end{Not}

\begin{Rq}\label{RkRankOneRootGroup}

We have $U_{\alpha,\lambda} \supseteq U'_{\alpha,\lambda}$ with equality if, and only if, $\lambda \not\in \Gamma_\alpha$.

Indeed, if $\exists u \in U_{\alpha,\lambda} \setminus U'_{\alpha,\lambda}$, then $\varphi_\alpha(u)\in [\lambda,+\infty]\setminus ]\lambda,+\infty]=\{\lambda\}$.
Thus $\lambda = \varphi_\alpha(u) \in \Gamma_\alpha$.
Conversely, if $\lambda \in \Gamma_\alpha$, there exists $u \in U_\alpha$ such that $\varphi_\alpha(u) = \lambda$ and then $u\in U_{\alpha,\lambda}\setminus U'_{\alpha,\lambda}$.

Note that, $L_{\alpha,\lambda} = L_{-\alpha,-\lambda}$ by definition, but $L'_{-\alpha,-\lambda}$ may differ from $L'_{\alpha,\lambda}$ since the groups $U_{\alpha,\lambda}$ and $U'_{\alpha,\lambda}$ are distinct for $\lambda \in \Gamma_\alpha$.

\end{Rq}

\begin{Rq}\label{RkVarChangeinT}
Since $L_{\alpha,\lambda}^\varepsilon = \langle U_{\alpha,\lambda}, U_{-\alpha,-\lambda+\varepsilon} \rangle$, taking $\beta = -\alpha$ and $\mu = -\lambda+\varepsilon$, we have $L_{\alpha,\lambda}^\varepsilon = \langle U_{\beta,\mu},U_{-\beta,-\mu+\varepsilon} \rangle$.
Hence $L_{\alpha,\lambda}^\varepsilon = L_{-\alpha,-\lambda+\varepsilon}^\varepsilon$.
By intersection with $T$, we deduce that $T_{\alpha,\lambda}^\varepsilon = T_{-\alpha,-\lambda+\varepsilon}^\varepsilon$.

For $\varepsilon' \geqslant 0$, we have $U_{\alpha,\lambda+\varepsilon'} \subset U_{\alpha,\lambda}$ and $U_{-\alpha,-\lambda+\varepsilon} = U_{-\alpha,-(\lambda+\varepsilon')+(\varepsilon+\varepsilon')}$.
Hence we have $L_{\alpha,\lambda+\varepsilon'}^{\varepsilon+\varepsilon'} \subset L_{\alpha,\lambda}^\varepsilon$ and thus $T_{\alpha,\lambda+\varepsilon'}^{\varepsilon+\varepsilon'} \subset T_{\alpha,\lambda}^\varepsilon$. By the same way $L_{\alpha,\lambda}^{\varepsilon +\varepsilon'} \subset L_{\alpha,\lambda}^\varepsilon$ and $T_{\alpha,\lambda}^{\varepsilon +\varepsilon'} \subset T_{\alpha,\lambda}^\varepsilon$.
\end{Rq}

The following lemma can be proven exactly as in \cite[6.3.1]{BruhatTits1}.

\begin{Lem}\label{LemTripleLeviCommutation}
Let $\alpha \in \Phi$.
\begin{enumerate}[label={(\arabic*)}]
\item\label{LemTripleLeviCommutation:1} For any $u \in U_\alpha$ and $v \in U_{-\alpha}$ such that $\varphi_\alpha(u) + \varphi_{-\alpha}(v) > 0$, there is a unique triple $(u',t,v') \in U_\alpha \times T \times U_{-\alpha}$ such that $vu = u'tv'$.
\item\label{LemTripleLeviCommutation:2} Moreover, we have $t \in T_b$, $\varphi_{\alpha}(u') = \varphi_{\alpha}(u)$ and $\varphi_{-\alpha}(v') = \varphi_{-\alpha}(v)$.
\item\label{LemTripleLeviGroup} Let $X_\alpha$, $X_{-\alpha}$, $H$ be respectively subsets of $U_\alpha$, $U_{-\alpha}$, $T_b$. Set $X = X_\alpha H X_{-\alpha}$. Then $X \cap N = X \cap T = X \cap T_b = H$.
\end{enumerate}
\end{Lem}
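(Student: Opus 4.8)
\textbf{Plan of proof for Lemma~\ref{LemTripleLeviCommutation}.}

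The statement is the direct analogue of \cite[6.3.1]{BruhatTits1}, and the plan is to follow that proof line by line, checking that every step only uses axioms~\ref{axiomRGD1}--\ref{axiomRGD6}, the valuation axioms~\ref{axiomV0}--\ref{axiomV5} (together with the reformulations in Lemmas~\ref{LemAxiomV1} and~\ref{LemAxiomV5}), the compatibility axioms~\ref{axiomCA1}--\ref{axiomCA2}, and the results already established about the $U_{\alpha,\lambda}$ and $M_{\alpha,\lambda}$ (Proposition~\ref{PropValuedCoset}, Lemma~\ref{LemMuNu}, Corollary~\ref{CorActionNUa}), none of which needed $\Rtot = \mathbb{R}$. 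The only place where the real numbers could conceivably intervene in Bruhat--Tits is through completeness/supremum arguments on the filtration; since here the hypotheses are phrased as strict inequalities between \emph{single} values $\varphi_\alpha(u)$ and $\varphi_{-\alpha}(v)$ rather than through infima, no such argument is needed and the translation is purely formal.

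\textbf{Step 1 (existence of the decomposition, part~\ref{LemTripleLeviCommutation:1}).} Assume $u \neq 1$ and $v \neq 1$ (otherwise the statement is trivial with $t=1$). By axiom~\ref{axiomRGD4} we may write $v = u_1 m u_2$ with $u_1, u_2 \in U_\alpha$ and $m \in M_{-\alpha}$, hence $m = m(v) \in M_{-\alpha, \varphi_{-\alpha}(v)} = M_{\alpha,-\varphi_{-\alpha}(v)}$ by Proposition~\ref{PropValuedCoset}\ref{PropValuedCoset:opposite} and the uniqueness in \cite[6.1.2(2)]{BruhatTits1}. Then $vu = u_1 m u_2 u$, and conjugating $u$ across $m$: $m u_2 u = (m (u_2 u) m^{-1}) m$, where by Corollary~\ref{CorActionNUa} (or Lemma~\ref{LemConjugationMaUb}) $m(u_2 u) m^{-1} \in U_{-\alpha, \varphi_\alpha(u_2 u) + 2\varphi_{-\alpha}(v)}$. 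Writing $m = u_3 m' u_4$ once more via axiom~\ref{axiomRGD4} and sorting the resulting product of elements of $U_\alpha$, $U_{-\alpha}$ and $M_\alpha \subset N$ using the commutation relations from axiom~\ref{axiomRGD2} and the hypothesis $\varphi_\alpha(u)+\varphi_{-\alpha}(v)>0$ to guarantee that the $U_\alpha$-part absorbs against the $U_{-\alpha}$-part, one arrives at $vu = u' n v'$ with $u' \in U_\alpha$, $v' \in U_{-\alpha}$ and $n \in N$; since $vu \in U_{-\alpha} U_\alpha$, the Bruhat-type uniqueness \cite[6.1.15]{BruhatTits1} forces $n \in T$. (In practice the cleanest route, exactly as in Bruhat--Tits, is to work inside the rank-one group $L_{\alpha,\mu}$ for a suitable $\mu$ and to use $L_{\alpha,\mu} = U_{\alpha,\mu} U_{-\alpha,-\mu} N_{\alpha,\mu}$ from Proposition~\ref{PropDecompositionQC}\ref{PropDecQC:3} applied to the quasi-concave family attached to $\Phi(\alpha,-\alpha)$, together with $N_{\alpha,\mu} \cap U^\pm = 1$.) Uniqueness of $(u',t,v')$ is then immediate from axiom~\ref{axiomRGD6}: if $u' t v' = u'' t'' v''$ then $(u'')^{-1} u' = t'' v'' (v')^{-1} t^{-1} \in U_\alpha \cap (T U^-) = \{1\}$, hence $u'=u''$, and similarly $v'=v''$, whence $t=t''$.

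\textbf{Step 2 (part~\ref{LemTripleLeviCommutation:2}: $t \in T_b$ and the values are preserved).} Apply the compatible action $\nu$ and the linear map $\alpha$ to the identity $vu = u' t v'$. Since $\nu$ sends $U_\alpha$ into translations killed by $\alpha$... more precisely: $u,u',v,v'$ lie in the unipotent groups and act on $\mathbb{A}_\Rtot$ in a way controlled by Proposition~\ref{PropUthetaEgalUa}, while $\nu(t)$ is a translation by an element of $V_\Rtot$; comparing the effect of both sides on the origin $o$ and using $\alpha\big(\nu(m(u))(o)-o\big) = -2\varphi_\alpha(u)$ from axiom~\ref{axiomCA2} together with Lemma~\ref{LemMuNu}, one computes $\alpha\big(\nu(t)(o)-o\big) = 0$; running the same argument with every root $\beta$ (using that $u,v,u',v' \in L_{\alpha,\lambda}$ so only $\alpha$ and $2\alpha$ directions are involved) shows $\nu(t)$ is the identity translation, i.e. $t \in \ker\nu = T_b$. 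For the values: from $vu = u'tv'$ and $t \in T_b \subset T$ which normalizes each $U_{\pm\alpha,\mu}$ with trivial value shift (Corollary~\ref{CorActionNUa} with $^v\!\nu(t)=\mathrm{id}$ and $\nu(t^{-1})(o)=o$), a filtration/minimality argument — reading the identity modulo $U_{\alpha,\varphi_\alpha(u)+\text{something}}$ and using Lemma~\ref{LemAxiomV1}\ref{LemAxiomV1:2} — gives $\varphi_\alpha(u') = \varphi_\alpha(u)$ and $\varphi_{-\alpha}(v') = \varphi_{-\alpha}(v)$.

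\textbf{Step 3 (part~\ref{LemTripleLeviGroup}).} The inclusion $H \subseteq X \cap T_b \subseteq X \cap T \subseteq X \cap N$ is clear since $H \subseteq T_b$ and $1 \in X_\alpha$, $1 \in X_{-\alpha}$. Conversely let $g = x t x' \in X \cap N$ with $x \in X_\alpha$, $t \in H$, $x' \in X_{-\alpha}$. Then $x^{-1} g (x')^{-1} = t \in T_b$, so $g = x t x'$ with $g \in N$; write the Bruhat decomposition: $g \in N$ and $g \in U_\alpha T U_{-\alpha}$, and by the uniqueness \cite[6.1.15]{BruhatTits1} of such a writing (the $U_\alpha$- and $U_{-\alpha}$-components of an element of $N$ being trivial by axiom~\ref{axiomRGD6}, since $T U^+ \cap U^- = \{1\}$) we get $x = x' = 1$ and $g = t \in H$. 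Hence $X \cap N = H$, and the intermediate equalities follow.

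\textbf{Main obstacle.} The delicate point is Step 1: organizing the product manipulations so that the ``overlap'' condition $\varphi_\alpha(u) + \varphi_{-\alpha}(v) > 0$ is actually used at the right moment to collapse the $U_\alpha$/$U_{-\alpha}$ interaction into a single element of $T$, without ever needing to take a supremum of values. I expect this to go through verbatim as in \cite[6.3.1]{BruhatTits1} precisely because the quasi-concave machinery of Proposition~\ref{PropDecompositionQC} and the value-shift formulas of Lemma~\ref{LemConjugationMaUb} and Corollary~\ref{CorActionNUa} have already been set up over a general totally ordered abelian group $\Rtot$; the residual work is bookkeeping, and the uniqueness statements rest only on axiom~\ref{axiomRGD6}.
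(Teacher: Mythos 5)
Your Step 3 and your uniqueness argument in Step 1 are fine and match the paper. But both the existence argument and the proof that $t\in T_b$ have genuine gaps.

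For existence, your primary argument (write $v=u_1mu_2$, push $u$ across $m$, then ``sort'' the resulting product) only lands you in $U_\alpha U_{-\alpha}M_\alpha$, and rewriting $m$ via~\ref{axiomRGD4} again just recurses; you never exhibit the mechanism by which the hypothesis $\varphi_\alpha(u)+\varphi_{-\alpha}(v)>0$ forces the $N$-component into $T$. The paper's route is different and much shorter: the rank-one group $L_{-\alpha}=\langle U_\alpha,U_{-\alpha},T\rangle$ decomposes as $M_\alpha U_{-\alpha}\cup U_\alpha T U_{-\alpha}$ by \cite[6.1.2(4)\&(7)]{BruhatTits1}, and if $vu$ lay in the first piece one could find $u''\in U_{-\alpha}$ with $vuu''\in M_\alpha$, whence $\varphi_{-\alpha}(v)=-\varphi_\alpha(u)$ by axiom~\ref{axiomV5}, contradicting the strict inequality. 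That is the one place the hypothesis is used. Your proposed ``cleanest route'' via Proposition~\ref{PropDecompositionQC} applied to a quasi-concave family is moreover circular: verifying axiom~\ref{axiomQC1} for the pair $(U_{\alpha,\mu},U_{-\alpha,-\mu})$ is exactly the content of Proposition~\ref{PropRankOneLevi} and Lemma~\ref{LemQC1}, both of which are proved \emph{from} the present lemma.

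For part~(2), your argument that $t\in T_b$ by ``applying the compatible action $\nu$ to the identity $vu=u'tv'$'' does not make sense: $\nu$ is only defined on $N$, and $u,v,u',v'$ are unipotent elements with no action on $\mathbb{A}_\Rtot$ at this stage (Proposition~\ref{PropUthetaEgalUa} describes the filtration subgroups, not an action of $U_\alpha$ on the apartment). The paper first proves the value equalities: writing $u'=w'mw''$ via~\ref{axiomRGD4} and $u=(v^{-1}w')(mt)(t^{-1}w''tv')$, axioms~\ref{axiomV5}/\ref{axiomV5bis} give $\varphi_{-\alpha}(w')=-\varphi_\alpha(u')$ and $\varphi_{-\alpha}(v^{-1}w')=-\varphi_\alpha(u)$, and since $\varphi_{-\alpha}(v)>-\varphi_\alpha(u)=\varphi_{-\alpha}(v^{-1}w')$, Lemma~\ref{LemAxiomV1}\ref{LemAxiomV1:2} yields $\varphi_\alpha(u')=\varphi_\alpha(u)=:\lambda$ (and then $\varphi_{-\alpha}(v')=\varphi_{-\alpha}(v)$ by applying the result to the inverse identity). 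Only then does one get $t\in T_b$, by observing that both $m$ and $tm$ lie in $M_{\alpha,\lambda}$, so $\nu(t)=\nu(tm)\nu(m)^{-1}=r_{\alpha,\lambda}r_{\alpha,\lambda}^{-1}=\operatorname{id}$ by Lemma~\ref{LemMuNu}. You should rework Steps 1 and 2 along these lines.
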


\begin{proof}
\ref{LemTripleLeviCommutation:1}
Denote by $L_{-\alpha}$ the subgroup of $G$ generated by $U_\alpha$, $U_{-\alpha}$ and $T$.
By \cite[6.1.2 (4) \& (7)]{BruhatTits1}, we know that $L_{-\alpha} = M_{\alpha} U_{-\alpha} \cup U_\alpha T U_{-\alpha}$.
Suppose $vu \in M_\alpha U_{-\alpha}$ and choose $u'' \in U_{-\alpha}$ such that $vuu'' \in M_\alpha$.
By axiom~\ref{axiomV5}, we have $\varphi_{-\alpha}(v)=- \varphi_\alpha(u)$ which contradicts the assumption.
Hence $vu \in U_\alpha T U_{-\alpha}$ and we get the existence.
The uniqueness is an immediate consequence of \cite[6.1.2(3)]{BruhatTits1} and axiom ~\ref{axiomRGD6}.

\ref{LemTripleLeviCommutation:2} By  uniqueness, it is obvious if $u=1$ or $v=1$.
Assume that $u \neq 1$ and $v \neq 1$.
Then, applying axiom~\ref{axiomRGD6}, we have $u' \neq 1$.
By axiom~\ref{axiomRGD4}, there exist $w,w' \in U_{-\alpha}$ and $m \in M_{-\alpha} = M_\alpha$ such that $u' = w' m w''$.
Moreover $u = v^{-1} u' t v' = (v^{-1} w') (mt) (t^{-1} w'' t v')$ where $(v^{-1} w')\in U_{-\alpha}$, $(mt) \in M_\alpha$ and $(t^{-1} w'' t v') \in U_{-\alpha}$.
Hence by axioms~\ref{axiomV5},~\ref{axiomV5bis} and Lemma~\ref{LemAxiomV1}\ref{LemAxiomV1:1} we have
\begin{equation}\label{Triple:eqn1}
\varphi_{-\alpha}(w') = - \varphi_\alpha(u') = \varphi_{-\alpha}(w'')
\end{equation}
and
\begin{equation}\label{Triple:eqn2}
\varphi_{-\alpha}(v^{-1} w') = - \varphi_\alpha(u) = \varphi_{-\alpha}(t^{-1} w'' t v').
\end{equation}
The assumption on $u$ and $v$ and equation (\ref{Triple:eqn2}) give us
\begin{equation}\label{Triple:eqn3}
\varphi_{-\alpha}(v) > - \varphi_\alpha(u) = \varphi_{-\alpha}(v^{-1}w').
\end{equation}
Hence, equations (\ref{Triple:eqn1}), (\ref{Triple:eqn2}), inequation (\ref{Triple:eqn3}) and Lemma~\ref{LemAxiomV1}\ref{LemAxiomV1:2} give
\begin{equation}\label{Triple:eqn4}
-\varphi_\alpha(u') = \varphi_{-\alpha}(w') = \varphi_{-\alpha}(vv^{-1}w') = \varphi_{-\alpha}(v^{-1}w') = -\varphi_\alpha(u).
\end{equation}
Since $(vu)^{-1} = u^{-1} v^{-1} = (v')^{-1} t^{-1} (u')^{-1}$, we can apply the result we just proved with $-\alpha$, $v^{-1}$ and $(v')^{-1}$ instead of $\alpha$, $u$ and $v^{-1}$
in order to have $\varphi_{-\alpha}(v^{-1}) = \varphi_{-\alpha}((v')^{-1})$.
Applying Lemma~\ref{LemAxiomV1}\ref{LemAxiomV1:1}, we get $\varphi_{-\alpha}(v) = \varphi_{-\alpha}(v')$.

Denote $\lambda = \varphi_\alpha(u) = \varphi_\alpha(u')$.
We know that \[m = (w')^{-1} u' (w'')^{-1} \in M_\alpha \cap U_{-\alpha} \varphi_\alpha^{-1}(\{\lambda\}) U_{-\alpha} = M_{\alpha,\lambda}.\]
In the same way, we have $tm \in M_{\alpha,\lambda}$.
Hence, by Lemma~\ref{LemMuNu}, $\nu(t) = \nu(tm) \nu(m)^{-1} =r_{\alpha,\lambda} r_{\alpha,\lambda}^{-1}= 1$.

\ref{LemTripleLeviGroup}
We have $H \subset X \cap T_b \subset X \cap T \subset X \cap N$ by definition.
Using a Bruhat decomposition \cite[6.1.15(c)]{BruhatTits1} with $U_\alpha \subset U^+$ and $U_{-\alpha} \subset U^-$, we deduce that $X \cap N \subset H$.
\end{proof}

The following Proposition is similar to \cite[6.3.2, 6.3.3]{BruhatTits1} and \cite[8.1--8.6]{Landvogt}, including considerations on the groups $L^\varepsilon_{\alpha,\lambda}$ we have introduced.

\begin{Prop}\label{PropRankOneLevi}
Consider any $\alpha\in \Phi$, any $\lambda \in \Rtot$ and any $\varepsilon \in \Rtot_{>0}$. Under the notations~\ref{NotLeviRankOne}, we have:
\begin{enumerate}[label={(\arabic*)}]
\item\label{PropRankOneLevi:epsilon} We have $L'_{\alpha,\lambda} = U_{\alpha,\lambda} U'_{-\alpha,-\lambda} T'_{\alpha,\lambda}$ and $L^\varepsilon_{\alpha,\lambda} = U_{\alpha,\lambda}  U_{-\alpha,-\lambda+\varepsilon}T^\varepsilon_{\alpha,\lambda}$ for any ordering on the factors.
\item\label{PropRankOneLevi:normalizers} Moreover $N'_{\alpha,\lambda} = T'_{\alpha,\lambda}$ and $N^\varepsilon_{\alpha,\lambda} = T^\varepsilon_{\alpha,\lambda}$.
\item\label{PropRankOneLevi:inGamma} If $\lambda \in \Gamma_\alpha$, then for any $m \in M_{\alpha,\lambda}$, we have $L_{\alpha,\lambda} = \left( U_{\alpha,\lambda} T_{\alpha,\lambda} U'_{-\alpha,-\lambda}\right) \sqcup \left( U_{\alpha,\lambda} m T_{\alpha,\lambda} U_{\alpha,\lambda}\right)$ and $N_{\alpha,\lambda} = T_{\alpha,\lambda} \{1,m\}$.
\item\label{PropRankOneLevi:notinGamma} If $\lambda \not\in \Gamma_\alpha$, we have $L_{\alpha,\lambda} = U_{\alpha,\lambda} T_{\alpha,\lambda} U_{-\alpha,-\lambda}$ and $N_{\alpha,\lambda} = T_{\alpha,\lambda} = T'_{\alpha,\lambda}$.
\item\label{PropRankOneLevi:tori} The groups $T_{\alpha,\lambda}$, $T'_{\alpha,\lambda}$ and $T^\varepsilon_{\alpha,\lambda}$ are subgroups of $T_b$.
\end{enumerate}
\end{Prop}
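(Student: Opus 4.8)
The plan is to follow the strategy of \cite[6.3.2, 6.3.3]{BruhatTits1}, adapted to the $\Rtot$-valued setting. The whole proof rests on the triple commutation Lemma~\ref{LemTripleLeviCommutation} together with the Bruhat decomposition \cite[6.1.15]{BruhatTits1} for the ambient root group datum. First I would prove~\ref{PropRankOneLevi:epsilon}, starting with $L^\varepsilon_{\alpha,\lambda}$. Consider the set $S = U_{\alpha,\lambda} U_{-\alpha,-\lambda+\varepsilon} T^\varepsilon_{\alpha,\lambda}$. To show $S$ is a group it suffices to show it is stable under left multiplication by $U_{\alpha,\lambda}$, by $U_{-\alpha,-\lambda+\varepsilon}$ and by $T^\varepsilon_{\alpha,\lambda}$; stability under $U_{\alpha,\lambda}$ and $T^\varepsilon_{\alpha,\lambda}$ is immediate, and for $U_{-\alpha,-\lambda+\varepsilon}$ one writes $v u = u' t v'$ using Lemma~\ref{LemTripleLeviCommutation}\ref{LemTripleLeviCommutation:1} (the hypothesis $\varphi_\alpha(u)+\varphi_{-\alpha}(v) \geqslant \lambda + (-\lambda+\varepsilon) = \varepsilon > 0$ is exactly what is needed), and then by~\ref{LemTripleLeviCommutation:2} one has $t \in T_b$, $\varphi_\alpha(u') \geqslant \lambda$, $\varphi_{-\alpha}(v') \geqslant -\lambda+\varepsilon$, so $t \in T_b \cap L^\varepsilon_{\alpha,\lambda} = T^\varepsilon_{\alpha,\lambda}$ and $v u \in S$. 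Hence $S$ is a subgroup, it contains $U_{\alpha,\lambda}$ and $U_{-\alpha,-\lambda+\varepsilon}$, and is contained in $L^\varepsilon_{\alpha,\lambda}$, so $S = L^\varepsilon_{\alpha,\lambda}$. Any ordering of the three factors works by symmetry (swapping $\alpha \leftrightarrow -\alpha$ via Remark~\ref{RkVarChangeinT}). The case of $L'_{\alpha,\lambda}$ is identical, replacing $U_{-\alpha,-\lambda+\varepsilon}$ by $U'_{-\alpha,-\lambda} = \bigcup_{\mu>\lambda} U_{-\alpha,-\mu}$: for $v \in U'_{-\alpha,-\lambda}$ and $u \in U_{\alpha,\lambda}$ we still have $\varphi_\alpha(u)+\varphi_{-\alpha}(v) > 0$.

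For~\ref{PropRankOneLevi:normalizers}, apply Lemma~\ref{LemTripleLeviCommutation}\ref{LemTripleLeviGroup} with $X_\alpha = U_{\alpha,\lambda}$, $X_{-\alpha} = U_{-\alpha,-\lambda+\varepsilon}$ (resp. $U'_{-\alpha,-\lambda}$) and $H = T^\varepsilon_{\alpha,\lambda}$ (resp. $T'_{\alpha,\lambda}$); note these $H$ are indeed contained in $T_b$, which is part of~\ref{PropRankOneLevi:tori} that I would establish simultaneously here, since $H = X \cap T_b$ in the notation of that Lemma once we know $X \cap T = X \cap T_b$. We get $L^\varepsilon_{\alpha,\lambda} \cap N = T^\varepsilon_{\alpha,\lambda}$ and $L'_{\alpha,\lambda} \cap N = T'_{\alpha,\lambda}$, which is~\ref{PropRankOneLevi:normalizers}, and in particular $T^\varepsilon_{\alpha,\lambda}, T'_{\alpha,\lambda} \subset T_b$.

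For~\ref{PropRankOneLevi:notinGamma}, assume $\lambda \notin \Gamma_\alpha$. By Remark~\ref{RkRankOneRootGroup}, $U_{-\alpha,-\lambda} = U'_{-\alpha,-\lambda}$ (since $-\lambda \notin \Gamma_{-\alpha}$ by Remark~\ref{RkOppositeSetOfValue}), so $L_{\alpha,\lambda} = L'_{\alpha,\lambda} = U_{\alpha,\lambda} T'_{\alpha,\lambda} U_{-\alpha,-\lambda}$ by~\ref{PropRankOneLevi:epsilon}, and $T_{\alpha,\lambda} = L_{\alpha,\lambda}\cap T = L'_{\alpha,\lambda}\cap T = T'_{\alpha,\lambda}$; the equality $N_{\alpha,\lambda} = T_{\alpha,\lambda}$ follows as in~\ref{PropRankOneLevi:normalizers} from Lemma~\ref{LemTripleLeviCommutation}\ref{LemTripleLeviGroup} — there is no $M_\alpha$-part because $M_{\alpha,\lambda}$ is empty when $\lambda \notin \Gamma_\alpha$ by Proposition~\ref{PropValuedCoset}\ref{PropValuedCoset:emptyness}.

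The main work, and the expected obstacle, is~\ref{PropRankOneLevi:inGamma}: the case $\lambda \in \Gamma_\alpha$, where the rank-one Levi $L_{\alpha,\lambda} = L^0_{\alpha,\lambda}$ genuinely has a ``big cell'' $U_{\alpha,\lambda}T_{\alpha,\lambda}U'_{-\alpha,-\lambda}$ plus a ``small cell'' $U_{\alpha,\lambda} m T_{\alpha,\lambda} U_{\alpha,\lambda}$. Here the triple commutation argument no longer directly closes, because for $u \in U_{\alpha,\lambda}$, $v \in U_{-\alpha,-\lambda}$ with $\varphi_\alpha(u) = \lambda = -\varphi_{-\alpha}(v)$ the sum is zero and Lemma~\ref{LemTripleLeviCommutation}\ref{LemTripleLeviCommutation:1} fails; instead $vu$ may land in the $M_\alpha$-stratum $M_{\alpha}U_{-\alpha}$ of the rank-one group $\langle U_\alpha, U_{-\alpha}, T\rangle = M_\alpha U_{-\alpha} \sqcup U_\alpha T U_{-\alpha}$ (see \cite[6.1.2(4),(7)]{BruhatTits1}). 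The plan is: fix $m \in M_{\alpha,\lambda}$ (nonempty by Proposition~\ref{PropValuedCoset}\ref{PropValuedCoset:emptyness}); set $D = U_{\alpha,\lambda}T_{\alpha,\lambda}U'_{-\alpha,-\lambda} \cup U_{\alpha,\lambda} m T_{\alpha,\lambda} U_{\alpha,\lambda}$. One checks $D \subset L_{\alpha,\lambda}$ using $m \in L_{\alpha,\lambda}$ and that $T_{\alpha,\lambda} = L_{\alpha,\lambda}\cap T$ together with Lemma~\ref{LemConjugationMaUb} (which gives $m U_{\alpha,\lambda} m^{-1} = U_{-\alpha,-\lambda}$, so $m$ conjugates the relevant groups into $L_{\alpha,\lambda}$). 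Conversely, to see $D$ is stable under left multiplication by $U_{\alpha,\lambda}$, $U_{-\alpha,-\lambda}$ and $T_{\alpha,\lambda}$: multiplication by $U_{\alpha,\lambda}$ on the left is clear on both pieces (using that $U_{\alpha,\lambda}$ normalizes nothing problematic — the first piece absorbs it, and $U_{\alpha,\lambda}\cdot U_{\alpha,\lambda}m T_{\alpha,\lambda}U_{\alpha,\lambda} \subset U_{\alpha,\lambda}mT_{\alpha,\lambda}U_{\alpha,\lambda}$); multiplication by $T_{\alpha,\lambda}$ uses that $T_{\alpha,\lambda} \subset T_b$ normalizes $U_{\alpha,\lambda}$ and $U'_{-\alpha,-\lambda}$ by Corollary~\ref{CorActionNUa} and commutes with $m$ up to an element of $T_{\alpha,\lambda}$ (since $m T_{\alpha,\lambda} m^{-1} = T_{\alpha,\lambda}$, as $m$ normalizes $L_{\alpha,\lambda}$ and $T$); the hard case is left multiplication by $v \in U_{-\alpha,-\lambda}$. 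Write $v = v_0 v_1$ with $\varphi_{-\alpha}(v_0) = -\lambda$ exactly (or $v \in U'_{-\alpha,-\lambda}$, handled by the earlier argument). For $v_0 u$ with $u \in U_{\alpha,\lambda}$: if $\varphi_\alpha(u) > \lambda$ then the sum of valuations is $>0$ and Lemma~\ref{LemTripleLeviCommutation} applies, landing in the first piece; if $\varphi_\alpha(u) = \lambda$, use $m^{-1}v_0 \in U_{\alpha}$ (by Lemma~\ref{LemConjugationMaUb}, $m^{-1}U_{-\alpha,-\lambda}m = U_{\alpha,\lambda}$, and $\varphi_\alpha(m^{-1}v_0 m)$ can be computed via Proposition~\ref{PropWeylSetOfValues}), write $v_0 = m w$ with $w \in U_{\alpha,\lambda}$, so $v_0 u = m w u \in m U_{\alpha,\lambda} = m T_{\alpha,\lambda} U_{\alpha,\lambda} \cdot (\text{something})$ — more precisely one massages $m U_{\alpha,\lambda}$ using that $m U_{\alpha,\lambda} m^{-1} = U_{-\alpha,-\lambda}$ and $m^2 \in T_{\alpha,\lambda}$ to land back in $D$. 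The disjointness of the two pieces of $D$ follows from the Bruhat decomposition of \cite[6.1.15(c)]{BruhatTits1}: the first piece maps into the identity coset $U^+\backslash G/U^-$ under the relevant stratification while the second maps into the $r_\alpha$-coset. Finally $N_{\alpha,\lambda} = L_{\alpha,\lambda}\cap N = T_{\alpha,\lambda}\{1,m\}$ is read off from this decomposition: intersecting $D$ with $N$ kills the unipotent parts (again by \cite[6.1.2, 6.1.15]{BruhatTits1}) leaving $T_{\alpha,\lambda}$ from the first piece and $m T_{\alpha,\lambda} = T_{\alpha,\lambda}m$ from the second. This last bookkeeping — keeping careful track of which stratum each product lands in and verifying the normalization relations $m T_{\alpha,\lambda} m^{-1} = T_{\alpha,\lambda}$ and $T_{\alpha,\lambda}\subset T_b$ — is where the real care is needed, but each individual step is an $\Rtot$-valued transcription of the corresponding step in \cite[6.3.2, 6.3.3]{BruhatTits1} or \cite[8.1--8.6]{Landvogt}, using that $\Rtot$ is $\mathbb{Z}$-torsion-free and totally ordered in place of the reals.
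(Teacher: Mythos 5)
Your overall strategy coincides with the paper's: the triple--commutation set $U_{\alpha,\lambda}U_{-\alpha,-\lambda+\varepsilon}(\cdot)$ for~\ref{PropRankOneLevi:epsilon}, spherical Bruhat for~\ref{PropRankOneLevi:normalizers}, the reduction $U_{-\alpha,-\lambda}=U'_{-\alpha,-\lambda}$ for~\ref{PropRankOneLevi:notinGamma}, and the two-cell set $\left(U_{\alpha,\lambda}\,\cdot\,U'_{-\alpha,-\lambda}\right)\cup\left(U_{\alpha,\lambda}m\,\cdot\,U_{\alpha,\lambda}\right)$ for~\ref{PropRankOneLevi:inGamma}. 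Two points need repair, though.

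First, there is a circularity in taking the third factor to be $T^\varepsilon_{\alpha,\lambda}=T\cap L^\varepsilon_{\alpha,\lambda}$ (resp.\ $T_{\alpha,\lambda}$) from the start: stability of your set under multiplication by that factor requires it to normalize $U_{\alpha,\lambda}$, hence to lie in $T_b$ (Corollary~\ref{CorActionNUa}) --- which is precisely assertion~\ref{PropRankOneLevi:tori}, not yet available. You acknowledge this but do not resolve it. The clean fix, which is what the paper does, is to run the whole argument with $H=L^\varepsilon_{\alpha,\lambda}\cap T_b$ (resp.\ $H=L_{\alpha,\lambda}\cap T_b$), which manifestly normalizes the filtration subgroups, and only at the end identify $H$ with $T^\varepsilon_{\alpha,\lambda}$ (resp.\ $T_{\alpha,\lambda}$) via Lemma~\ref{LemTripleLeviCommutation}\ref{LemTripleLeviGroup} together with the disjointness of the two cells.

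Second, in the hard step of~\ref{PropRankOneLevi:inGamma} the identity ``$v_0=mw$ with $w\in U_{\alpha,\lambda}$'' is false: $v_0$ is unipotent while $mw$ lies in the cell $M_\alpha U_\alpha$, and Lemma~\ref{LemConjugationMaUb} gives $v_0=mwm^{-1}$, which does not by itself place $v_0u$ in your set $D$. The correct route is Proposition~\ref{PropValuedCoset}\ref{PropValuedCoset:reverselambda} and~\ref{PropValuedCoset:opposite}: any $v_0\in\varphi_{-\alpha}^{-1}(\{-\lambda\})$ satisfies $m'=u'v_0u''$ for some $u',u''\in U_{\alpha,\lambda}$ and $m'\in M_{\alpha,\lambda}$, and then $\nu(mm')=r_{\alpha,\lambda}^2=\operatorname{id}$ (Lemma~\ref{LemMuNu}) forces $mm'\in T_b\cap L_{\alpha,\lambda}$, whence $v_0\in U_{\alpha,\lambda}\,(T_b\cap L_{\alpha,\lambda})\,m\,U_{\alpha,\lambda}$. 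With these two corrections your argument matches the paper's proof.
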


\begin{proof}
\ref{PropRankOneLevi:epsilon} Consider $\varepsilon \in \Rtot_{>0}$ and $H_\varepsilon = L^\varepsilon_{\alpha,\lambda} \cap T_b$. 
Then $H_\varepsilon$ normalizes $U_{\alpha,\lambda}$ and $U_{-\alpha,-\lambda + \varepsilon}$ since so does $T_b$ by Corollary~\ref{CorActionNUa}.
Consider $X_\varepsilon = U_{\alpha,\lambda} U_{-\alpha,-\lambda+\varepsilon} H_\varepsilon$.
Then $X_\varepsilon$ is a subset of $L^\varepsilon_{\alpha,\lambda}$ stable by multiplication on the left by elements in $H_\varepsilon$ and $U_{\alpha,\lambda}$.

Moreover, $U_{-\alpha,-\lambda+\varepsilon} U_{\alpha,\lambda} \subset U_{\alpha,\lambda} \left( T_b \cap L^\varepsilon_{\alpha,\lambda} \right) U_{-\alpha,-\lambda+\varepsilon}$ by Lemma~\ref{LemTripleLeviCommutation}.
Hence $X_\varepsilon$ is stable by multiplication on the left by elements in $U_{-\alpha,-\lambda+\varepsilon}$ since $H_\varepsilon$ normalizes the subgroup $U_{-\alpha,-\lambda+\varepsilon}$.
Hence $X_\varepsilon = L^\varepsilon_{\alpha,\lambda}$.
By Lemma~\ref{LemTripleLeviCommutation}~\ref{LemTripleLeviGroup},
we get $T^\varepsilon_{\alpha,\lambda} = H_\varepsilon$ and therefore $T^\varepsilon_{\alpha,\lambda} \subset T_b$.

Since $U'_{-\alpha,-\lambda} = \bigcup_{\varepsilon > 0} U_{-\alpha,-\lambda+\varepsilon}$ is an increasing union, we have $L'_{\alpha,\lambda} = \bigcup_{\varepsilon > 0} L^\varepsilon_{\alpha,\lambda}$ as increasing union so that the equality $L'_{\alpha,\lambda} = \bigcup_{\varepsilon > 0} U_{\alpha,\lambda} T^\varepsilon_{\alpha,\lambda} U_{-\alpha,-\lambda+\varepsilon} = U_{\alpha,\lambda} T'_{\alpha,\lambda} U'_{-\alpha,-\lambda}$ holds.

Finally, since $U_{\alpha,\lambda}$ and $U_{-\alpha,-\lambda+\varepsilon}$ (resp. $U'_{-\alpha,-\lambda}$) are subgroups normalized by $T^\varepsilon_{\alpha,\lambda}$ (resp. $T'_{\alpha,\lambda}$), we get the equality for any ordering by applying the inverse map.

\ref{PropRankOneLevi:normalizers} If $n \in N^\varepsilon_{\alpha,\lambda}$, then $n \in U^+ T U^-$ and using a spherical Bruhat decomposition \cite[6.1.15 (c)]{BruhatTits1}, we get $n \in T$.
Thus $N^\varepsilon_{\alpha,\lambda} \subset T^\varepsilon_{\alpha,\lambda}$.
The same holds in $L'_{\alpha,\lambda} \subset U^+ T U^-$.

\ref{PropRankOneLevi:inGamma} We know that $\emptyset \neq M_{\alpha,\lambda} \subset U_{\alpha,\lambda} U_{-\alpha,-\lambda} U_{\alpha,\lambda} \subset L_{\alpha,\lambda}$ by Proposition~\ref{PropValuedCoset}\ref{PropValuedCoset:emptyness} and~\ref{PropValuedCoset:insideLevi} and definitions.
Consider $H = L_{\alpha,\lambda} \cap T_b \subset T_{\alpha,\lambda}$ and $m \in M_{\alpha,\lambda}$.
Define \[X = \left( U_{\alpha,\lambda} H U'_{-\alpha,-\lambda}\right) \cup \left( U_{\alpha,\lambda} m H U_{\alpha,\lambda}\right) \subset L_{\alpha,\lambda}.\]
By Lemma~\ref{LemMuNu}, we know that $\nu(m) = r_{\alpha,\lambda}$.
Hence, we have $\nu(m^2) = r_{\alpha,\lambda}^2 = \operatorname{id}$.
Thus $m^2 \in T_b \cap L_{\alpha,\lambda} = H$ and $m^{-1} \in Hm = mH$.
Since $U_{-\alpha,-\lambda}  = m^{-1} U_{\alpha,\lambda} m$ by Lemma~\ref{LemConjugationMaUb}, we deduce that $L_{\alpha,\lambda}$ is generated by $U_{\alpha,\lambda}$ and $m$.
Thus, it suffices to prove that $X$ is stable by right multiplication by $m$ and elements in the group $U_{\alpha,\lambda}$.
It is convenient to firstly prove that $X$ is stable by right multiplication by elements in the group $H$.

Since $H \subset T_b$ normalizes $U'_{-\alpha,-\lambda}$ and $U_{\alpha,\lambda}$ and $T'_{\alpha,\lambda} = L'_{\alpha,\lambda} \cap T_b \subset H$, we deduce from~\ref{PropRankOneLevi:epsilon} that $L'_{\alpha,\lambda} H = U_{\alpha,\lambda} H U'_{-\alpha,-\lambda}$ is a group.
Hence $X H = X U_{\alpha,\lambda} = X$.

On the one hand, we have $U_{\alpha,\lambda} H U'_{-\alpha,-\lambda} m \subset U_{\alpha,\lambda} H m U_{\alpha,\lambda}$.
On the other hand, we have $U_{\alpha,\lambda} H m U_{\alpha,\lambda} m \subset U_{\alpha,\lambda} H U_{-\alpha,-\lambda} m^2 = U_{\alpha,\lambda} H U'_{-\alpha,-\lambda} \cup U_{\alpha,\lambda} H \varphi_{-\alpha}^{-1}(\{-\lambda\})$.
Let $u \in \varphi_{-\alpha}^{-1}(\{-\lambda\})$.
By Proposition~\ref{PropValuedCoset}\ref{PropValuedCoset:reverselambda} and~\ref{PropValuedCoset:opposite}, there exist $u',u'' \in U_{\alpha,\lambda}$ and $m' \in M_{-\alpha,-\lambda} =M_{\alpha,\lambda}$ such that $m' = u'uu''$, so that $m' \in L_{\alpha,\lambda}$.
But $\nu(mm') = \nu(m)\nu(m') = r^2_{\alpha,\lambda} = \operatorname{id}$ by Lemma~\ref{LemMuNu} since $m,m' \in M_{\alpha,\lambda}$.
Thus $mm' \in T_b \cap L_{\alpha,\lambda} = H$.
Hence $u \in U_{\alpha,\lambda} H m U_{\alpha,\lambda}$.
As a consequence, $U_{\alpha,\lambda} H \varphi_{-\alpha}^{-1}(\{-\lambda\}) \subset U_{\alpha,\lambda} H m U_{\alpha,\lambda}$ since $H \subset T_b$ normalizes $U_{\alpha,\lambda}$.
This proves that $X m \subset X$ and therefore $L_{\alpha,\lambda} = X$.
Moreover, $\left(U_{\alpha,\lambda} H U'_{-\alpha,-\lambda} \right) \cap \left( U_{\alpha,\lambda} m H U_{\alpha,\lambda} \right) = \emptyset$.
Indeed, by contradiction, we would have $m \in U_{\alpha,\lambda} H U'_{-\alpha,-\lambda} H U_{\alpha,\lambda} = H L'_{\alpha,\lambda}$ since $H\subset T_b$ normalizes $U'_{-\alpha,-\lambda}$ and $U_{\alpha,\lambda}$.
Thus $m \in N \cap H L'_{\alpha,\lambda} = H N'_{\alpha,\lambda} \subset T_b$ by~\ref{PropRankOneLevi:normalizers} which is a contradiction with $m \in M_{\alpha,\lambda}$.

It remains to show that $H = T_{\alpha,\lambda}$.
Let $t \in T_{\alpha,\lambda} = T \cap L_{\alpha,\lambda}$. By contradiction, suppose that $t \in U_{\alpha,\lambda} m H U_{\alpha,\lambda}$.
Write $m = u v u'$ with $u,u' \in U_{\alpha,\lambda}$ and $v \in \varphi_{-\alpha}^{-1}(\{-\lambda\})$.
This is possible by Proposition~\ref{PropValuedCoset}~\ref{PropValuedCoset:opposite} and~\ref{PropValuedCoset:insideLevi}.
Then $v \in T U_\alpha \cap U_{-\alpha} = \{1\}$ by axiom~\ref{axiomRGD6} which is a contradiction with $\varphi_{-\alpha}(v) = -\lambda$.
Thus $t \in U_{\alpha,\lambda} H U'_{-\alpha,-\lambda}$.

By Lemma~\ref{LemTripleLeviCommutation}~\ref{LemTripleLeviGroup}, 
we deduce $H = T_{\alpha,\lambda} \subset T_b$.

Let $n \in N_{\alpha,\lambda}$.
If $n \in  U_{\alpha,\lambda} T_{\alpha,\lambda} U'_{-\alpha,-\lambda} \subset U^- T U^+$, then $n \in T$ using a Bruhat decomposition \cite[6.1.15(c)]{BruhatTits1}.
Hence $n \in T \cap L_{\alpha,\lambda} = T_{\alpha,\lambda}$.
Otherwise, $n \in U_{\alpha,\lambda} m T_{\alpha,\lambda} U_{\alpha,\lambda}$.
Hence $nm \in U_{\alpha,\lambda} T_{\alpha,\lambda} U_{-\alpha,-\lambda}$ because $m^2 \in H \subset T_{\alpha,\lambda}$ and by Lemma~\ref{LemConjugationMaUb}.
Thus $nm \in T$ and therefore $n \in T_{\alpha,\lambda} m$.
Hence $N_{\alpha,\lambda} = T_{\alpha,\lambda} \{1,m\}$.

\ref{PropRankOneLevi:notinGamma} If $\lambda \not\in \Gamma_\alpha$, we know that $-\lambda \not\in - \Gamma_\alpha = \Gamma_{-\alpha}$ by Remark~\ref{rqGamma_alpha}.
Hence we have $U_{-\alpha,-\lambda}=U'_{-\alpha,-\lambda}$ by Remark~\ref{RkRankOneRootGroup} and therefore $L'_{\alpha,\lambda} = L_{\alpha,\lambda}$ by definition.
Hence $N_{\alpha,\lambda} = N'_{\alpha,\lambda} = T'_{\alpha,\lambda}$.
Since $L'_{\alpha,\lambda} = U_{\alpha,\lambda} T'_{\alpha,\lambda} U_{-\alpha,-\lambda}$, we deduce that $T_{\alpha,\lambda} = T'_{\alpha,\lambda}$
by Lemma~\ref{LemTripleLeviCommutation}~\ref{LemTripleLeviGroup}.

\ref{PropRankOneLevi:tori} has been shown among the proof.
\end{proof}

\begin{Cor}\label{CorLeviRankOne}
For any $\varepsilon \geqslant 0$, any $\alpha\in \Phi$ and any $\lambda \in \Rtot$, we have \[L^\varepsilon_{\alpha,\lambda} 
= U_{\alpha,\lambda} U_{-\alpha,-\lambda + \varepsilon} N^\varepsilon_{\alpha,\lambda}
= U_{-\alpha,-\lambda + \varepsilon} U_{\alpha,\lambda} N^\varepsilon_{\alpha,\lambda} .\]
\end{Cor}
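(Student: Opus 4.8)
The statement of Corollary~\ref{CorLeviRankOne} is a mild reformulation of Proposition~\ref{PropRankOneLevi}, replacing the tori $T^\varepsilon_{\alpha,\lambda}$ by the groups $N^\varepsilon_{\alpha,\lambda}$. The plan is to split into the cases $\varepsilon > 0$ and $\varepsilon = 0$, both of which are already essentially settled by Proposition~\ref{PropRankOneLevi}. First I would treat $\varepsilon > 0$. By Proposition~\ref{PropRankOneLevi}\ref{PropRankOneLevi:normalizers}, one has $N^\varepsilon_{\alpha,\lambda} = T^\varepsilon_{\alpha,\lambda}$, so the claimed identity $L^\varepsilon_{\alpha,\lambda} = U_{\alpha,\lambda} U_{-\alpha,-\lambda+\varepsilon} N^\varepsilon_{\alpha,\lambda} = U_{-\alpha,-\lambda+\varepsilon} U_{\alpha,\lambda} N^\varepsilon_{\alpha,\lambda}$ is exactly Proposition~\ref{PropRankOneLevi}\ref{PropRankOneLevi:epsilon} rewritten, including the fact that the decomposition holds for any ordering of the three factors (so in particular the two orderings listed above).

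Second I would treat $\varepsilon = 0$, where $L^0_{\alpha,\lambda} = L_{\alpha,\lambda}$ and $N^0_{\alpha,\lambda} = N_{\alpha,\lambda}$. Here one distinguishes whether $\lambda \in \Gamma_\alpha$ or not. If $\lambda \notin \Gamma_\alpha$, then by Proposition~\ref{PropRankOneLevi}\ref{PropRankOneLevi:notinGamma} we have $L_{\alpha,\lambda} = U_{\alpha,\lambda} T_{\alpha,\lambda} U_{-\alpha,-\lambda}$ and $N_{\alpha,\lambda} = T_{\alpha,\lambda}$, and since $T_{\alpha,\lambda} \subseteq T_b$ normalizes both $U_{\alpha,\lambda}$ and $U_{-\alpha,-\lambda}$ (by Corollary~\ref{CorActionNUa}), we may freely move the factor $T_{\alpha,\lambda} = N_{\alpha,\lambda}$ to the right and reorder $U_{\alpha,\lambda}$ and $U_{-\alpha,-\lambda}$, which gives both desired equalities. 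If $\lambda \in \Gamma_\alpha$, then by Proposition~\ref{PropRankOneLevi}\ref{PropRankOneLevi:inGamma} we have $L_{\alpha,\lambda} = \left( U_{\alpha,\lambda} T_{\alpha,\lambda} U'_{-\alpha,-\lambda}\right) \sqcup \left( U_{\alpha,\lambda} m T_{\alpha,\lambda} U_{\alpha,\lambda}\right)$ and $N_{\alpha,\lambda} = T_{\alpha,\lambda}\{1,m\}$ for any $m \in M_{\alpha,\lambda}$; I would then check that $U_{\alpha,\lambda} U_{-\alpha,-\lambda} N_{\alpha,\lambda}$ contains both pieces of this union. The first piece is contained in $U_{\alpha,\lambda} U_{-\alpha,-\lambda} T_{\alpha,\lambda}$ after moving $T_{\alpha,\lambda}$ right past $U'_{-\alpha,-\lambda} \subseteq U_{-\alpha,-\lambda}$ (using that $T_{\alpha,\lambda}$ normalizes it); the second piece: using $U_{-\alpha,-\lambda} = m^{-1} U_{\alpha,\lambda} m$ from Lemma~\ref{LemConjugationMaUb} and $m^2 \in T_{\alpha,\lambda}$, one rewrites $m T_{\alpha,\lambda} U_{\alpha,\lambda} = m U_{\alpha,\lambda} T_{\alpha,\lambda}$ (normalization) and then $U_{\alpha,\lambda} m U_{\alpha,\lambda} T_{\alpha,\lambda} \subseteq U_{\alpha,\lambda} m U_{\alpha,\lambda} m^{-1} \cdot m T_{\alpha,\lambda} = U_{\alpha,\lambda} U_{-\alpha,-\lambda}\, m T_{\alpha,\lambda} \subseteq U_{\alpha,\lambda} U_{-\alpha,-\lambda} N_{\alpha,\lambda}$ since $m T_{\alpha,\lambda} \subseteq N_{\alpha,\lambda}$. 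The reverse inclusion $U_{\alpha,\lambda} U_{-\alpha,-\lambda} N_{\alpha,\lambda} \subseteq L_{\alpha,\lambda}$ is immediate from the definitions, since all three factors lie in $L_{\alpha,\lambda}$. This gives the first equality; the second, with $U_{\alpha,\lambda}$ and $U_{-\alpha,-\lambda}$ swapped, follows symmetrically (or by applying the inverse map together with $L^\varepsilon_{\alpha,\lambda} = L^\varepsilon_{-\alpha,-\lambda+\varepsilon}$ from Remark~\ref{RkVarChangeinT}).

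The only genuinely delicate point is bookkeeping in the case $\lambda \in \Gamma_\alpha$, $\varepsilon = 0$: one must be careful that $U'_{-\alpha,-\lambda}$ is a subgroup of $U_{-\alpha,-\lambda}$ (immediate from Remark~\ref{RkRankOneRootGroup}), and that the element $m$ can be absorbed into $N_{\alpha,\lambda}$ only after commuting it past one copy of $U_{\alpha,\lambda}$ via $m^{-1} U_{\alpha,\lambda} m = U_{-\alpha,-\lambda}$. No new ideas beyond Proposition~\ref{PropRankOneLevi} and Lemma~\ref{LemConjugationMaUb} are needed; the corollary is purely a repackaging.
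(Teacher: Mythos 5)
Your proof is correct and follows essentially the same route as the paper: the case $\varepsilon>0$ is read off from Proposition~\ref{PropRankOneLevi}(1)--(2), and the case $\varepsilon=0$ splits on $\lambda\in\Gamma_\alpha$, using Proposition~\ref{PropRankOneLevi}(3)--(5), the fact that $T_{\alpha,\lambda}\subset T_b$ normalizes the relevant root groups, and $mU_{\alpha,\lambda}m^{-1}=U_{-\alpha,-\lambda}$ from Lemma~\ref{LemConjugationMaUb}. The only difference is cosmetic: you spell out the absorption of $m$ into $N_{\alpha,\lambda}$ in more detail than the paper does.
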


\begin{proof}
If $\varepsilon > 0$, it is a consequence of~\ref{PropRankOneLevi:epsilon},~\ref{PropRankOneLevi:normalizers} and~\ref{PropRankOneLevi:tori} since $T_b$ normalizes $U_{\alpha,\lambda}$ and $U_{-\alpha,-\lambda+\varepsilon}$.

If $\varepsilon = 0$, the first equality is a consequence of~\ref{PropRankOneLevi:inGamma},~\ref{PropRankOneLevi:notinGamma} and ~\ref{PropRankOneLevi:tori} since $T_b$ normalizes $U_{-\alpha,-\lambda}$, $U'_{-\alpha,-\lambda}$ and $U_{\alpha,\lambda}$ and since we have $mU_{\alpha,\lambda}m ^{-1} = U_{-\alpha,-\lambda}$ by Lemma~\ref{LemConjugationMaUb}.
The last equality is obtained in the same way by exchanging $(\alpha, \lambda)$ with $(-\alpha,-\lambda)$ since $L_{\alpha,\lambda} = L_{-\alpha,-\lambda}$ by definition and, therefore, $N_{\alpha,\lambda} = N_{-\alpha,-\lambda}$.
\end{proof}

\subsubsection*{Technical lemmas of computation of some commutators}

We want to estimate some commutators in terms of the valuation of root groups.

The following Lemma is \cite[6.3.5]{BruhatTits1} with $\varepsilon$ denoting the $r+s$ of Bruhat-Tits.

\begin{Lem}\label{LemCommutation-a2a}
Let $\alpha \in \Phi$.
Let $\lambda\in \Rtot$ and $\varepsilon > 0$.
For any $u \in U_{2\alpha,2\lambda}$ and any $v \in U_{-\alpha,-\lambda + \varepsilon}$, we have
\[ [u,v] \in U_{\alpha,\lambda+\varepsilon} T_b U_{-\alpha,-\lambda+2\varepsilon}.\]
\end{Lem}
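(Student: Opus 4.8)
The statement is an estimate on the commutator $[u,v]$ for $u \in U_{2\alpha,2\lambda}$ and $v \in U_{-\alpha,-\lambda+\varepsilon}$. The natural setting is the rank-one (or rank-two) root subsystem $\Phi(\alpha,-\alpha) = \{\pm\alpha,\pm 2\alpha\}$ generated by $\alpha$ (recall $\alpha$ is multipliable here, since otherwise $U_{2\alpha} = \{1\}$ and the statement is trivial). The plan is to work inside the Levi-type subgroup $L := L^{\varepsilon}_{\alpha,\lambda-\varepsilon}$ or a suitable variant, and to track the $\varphi$-values of all the pieces produced by rewriting $[u,v] = u \cdot (v u^{-1} v^{-1})$ using the commutation relations available from axioms~\ref{axiomV3}, \ref{axiomV4} and from Lemma~\ref{LemTripleLeviCommutation}.

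\textbf{Key steps.} First I would reduce to the case $u \neq 1$, $v \neq 1$ (otherwise the commutator is trivial and lies in any group). Write $\mu := \varphi_{2\alpha}(u) \geqslant 2\lambda$ and $\nu := \varphi_{-\alpha}(v) \geqslant -\lambda+\varepsilon$, so $\varphi_\alpha(u) = \tfrac12\mu \geqslant \lambda$ by axiom~\ref{axiomV4}, and note $\varphi_\alpha(u) + \varphi_{-\alpha}(v) \geqslant \lambda + (-\lambda+\varepsilon) = \varepsilon > 0$. Hence Lemma~\ref{LemTripleLeviCommutation}\ref{LemTripleLeviCommutation:1}--\ref{LemTripleLeviCommutation:2} applies to the pair $(u \in U_\alpha,\ v \in U_{-\alpha})$: there is a unique triple $(u',t,v') \in U_\alpha \times T_b \times U_{-\alpha}$ with $vu = u' t v'$, and moreover $\varphi_\alpha(u') = \varphi_\alpha(u) = \tfrac12\mu$ and $\varphi_{-\alpha}(v') = \varphi_{-\alpha}(v) = \nu$. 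From $vuv^{-1} = u'tv'v^{-1}$ we want to extract $[u,v] = u^{-1}(vuv^{-1})$; the issue is that this only tells us $u$ and $v u v^{-1}$ agree up to the $T_b$-factor $t$ and the $U_{-\alpha}$-factors, and does not yet see the finer value $\lambda+\varepsilon$. So the more delicate step is to apply axiom~\ref{axiomV3} directly: since $-\alpha \notin \mathbb{R}_{\leqslant 0}(2\alpha)$ fails (they are negatively proportional!), one cannot use \ref{axiomV3} for the pair $(2\alpha,-\alpha)$ naively. Instead I would follow the strategy of \cite[6.3.5]{BruhatTits1}: use the decomposition of $L^{\varepsilon}_{\alpha,\lambda}$-type groups from Proposition~\ref{PropRankOneLevi}\ref{PropRankOneLevi:epsilon} and Corollary~\ref{CorLeviRankOne}, rewriting $vu = u'' \tau v''$ inside an appropriate $L^{\varepsilon'}_{\alpha,\cdot}$ with factors in $U_{\alpha,?}$, $T_b$, $U_{-\alpha,?}$, and reading off the exponents. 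Concretely: $u \in U_{2\alpha,2\lambda} \subset U_{\alpha,\lambda}$ and $v \in U_{-\alpha,-\lambda+\varepsilon}$, so $vu \in U_{-\alpha,-\lambda+\varepsilon} U_{\alpha,\lambda} \subset L^{\varepsilon}_{\alpha,\lambda} = U_{\alpha,\lambda} U_{-\alpha,-\lambda+\varepsilon} T^{\varepsilon}_{\alpha,\lambda}$, but we need the sharper statement that the "$U_{\alpha}$"-part of $[u,v]$ actually lies in $U_{\alpha,\lambda+\varepsilon}$ and the "$U_{-\alpha}$"-part in $U_{-\alpha,-\lambda+2\varepsilon}$. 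This comes from the fact that $u \in U_{2\alpha,2\lambda}$ is "deeper" than a generic element of $U_{\alpha,\lambda}$: conjugating $v \in U_{-\alpha,-\lambda+\varepsilon}$ by $u$ should shift the value by roughly $2(\varphi_\alpha(u) + \varphi_{-\alpha}(v) - \text{something}) \geqslant 2\varepsilon$, and symmetrically. I would make this precise by expanding $[u,v]$ as a product dictated by Lemma~\ref{LemTripleLeviCommutation} applied twice (once to $(u,v)$ and once to $(u^{-1}, v u v^{-1})$ or similar), at each stage invoking Lemma~\ref{LemAxiomV1}\ref{LemAxiomV1:1}--\ref{LemAxiomV1:2} to control how values add.

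\textbf{Main obstacle.} The hard part will be getting the exact exponents $\lambda+\varepsilon$ and $-\lambda+2\varepsilon$ rather than cruder bounds like $\lambda$ and $-\lambda+\varepsilon$; this requires genuinely using the hypothesis $u \in U_{2\alpha,2\lambda}$ (not merely $u \in U_{\alpha,\lambda}$) and the doubling axiom~\ref{axiomV4}, together with axiom~\ref{axiomV3} for the pairs $(\alpha,-\alpha)$ and $(2\alpha,-\alpha)$ where applicable — precisely the situation where Bruhat--Tits in \cite[6.3.5]{BruhatTits1} have to argue carefully. Since $\Rtot$ need not have suprema, I will avoid any appeal to "smallest value" arguments and instead phrase everything through the subgroup filtration $U_{\alpha,\lambda} = \varphi_\alpha^{-1}([\lambda,\infty])$ and Lemma~\ref{LemAxiomV1}, exactly as the rest of this section does. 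I expect the proof to be a bookkeeping exercise once the correct sequence of rewrites is fixed, mirroring \cite[6.3.5]{BruhatTits1} step by step; the only genuinely new point is checking that no step used the completeness or the Archimedean property of $\mathbb{R}$, which it does not, since all manipulations are within finitely many root groups and use only the ordered abelian group structure of $\Rtot$.
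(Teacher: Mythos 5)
Your proposal correctly isolates the difficulty (the naive application of Lemma~\ref{LemTripleLeviCommutation} only yields $[u,v]\in U_{\alpha,\lambda}T_bU_{-\alpha,-\lambda+\varepsilon}$, not the sharper exponents) and points at the right reference, but the concrete mechanism you sketch does not close the gap. Lemma~\ref{LemTripleLeviCommutation} preserves valuations exactly — $\varphi_\alpha(u')=\varphi_\alpha(u)$ and $\varphi_{-\alpha}(v')=\varphi_{-\alpha}(v)$ — so applying it twice, to $(u,v)$ and then to $(u^{-1},vuv^{-1})$, can never improve the exponents from $\lambda$ to $\lambda+\varepsilon$ or from $-\lambda+\varepsilon$ to $-\lambda+2\varepsilon$. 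The improvement has to come from somewhere else, and your heuristic about "conjugating $v$ by $u$ shifting the value by roughly $2\varepsilon$" is not substantiated by any lemma you cite.

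The missing ingredients are the following. First, decompose $u=v'mv''$ with $m\in M_{2\alpha,2\mu}$, $v',v''\in U_{-2\alpha,-2\mu}$ and $v=u'nu''$ with $n\in M_{-\alpha,\rho}$, $u',u''\in U_{\alpha,-\rho}$, using Proposition~\ref{PropValuedCoset}\ref{PropValuedCoset:emptyness}--\ref{PropValuedCoset:reverselambda} (here $\mu=\varphi_\alpha(u)\geqslant\lambda$, $\rho=\varphi_{-\alpha}(v)\geqslant-\lambda+\varepsilon$). Second, expand the commutator in \emph{two} ways, $[u,v]=(uu')(n[u'',u^{-1}]n^{-1})(nu^{-1}n^{-1})(u')^{-1}$ and $[u,v]=v'(m[v'',v]m^{-1})(mvm^{-1})(v')^{-1}v^{-1}$; the hypothesis $u\in U_{2\alpha}$ is used precisely here to kill $[u'',u^{-1}]$ and $[v'',v]$ via~\ref{axiomRGD2}. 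Third — and this is where the sharp exponents actually appear — apply Lemma~\ref{LemConjugationMaUb} to get $nu^{-1}n^{-1}\in U_{-\alpha,\mu+2\rho}\subset U_{-\alpha,-\lambda+2\varepsilon}$ and $mvm^{-1}\in U_{\alpha,\rho+2\mu}\subset U_{\alpha,\lambda+\varepsilon}$. Each expansion then gives the sharp bound on one side only ($[u,v]\in U_\alpha T_b U_{-\alpha,-\lambda+2\varepsilon}$ from the first, $[u,v]\in U_{\alpha,\lambda+\varepsilon}T_bU_{-\alpha}$ from the second, after a single use of Lemma~\ref{LemTripleLeviCommutation} to reorder), and the uniqueness of the $U^+TU^-$ decomposition (axiom~\ref{axiomRGD6}) intersects them. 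Without the $M$-coset decompositions and Lemma~\ref{LemConjugationMaUb}, the bookkeeping you describe cannot produce the stated exponents.
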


%\green{
\begin{proof}
We can assume that $u \neq 1$ and $v \neq 1$.

Let $\mu = \varphi_{\alpha}(u) = \frac{1}{2} \varphi_{2\alpha}(u) \in \frac{1}{2} \Gamma_{2\alpha} \subset \Gamma_\alpha$ and $\rho = \varphi_{-\alpha}(v) \in \Gamma_{-\alpha}$.
Consider $m \in M_{2\alpha,2\mu}$ and $n \in M_{-\alpha,\rho}$ which is possible by Proposition~\ref{PropValuedCoset}~\ref{PropValuedCoset:emptyness}.
Let $v',v'' \in U_{-2\alpha,-2\mu}$ be such that $u = v'mv''$ which is possible by Proposition~\ref{PropValuedCoset}~\ref{PropValuedCoset:reverselambda}.
Let $u',u'' \in U_{\alpha,-\rho}$ be such that $v = u'nu''$.

On the one hand, one can write
\begin{align}
 [u,v] =& u ( u'nu'') u^{-1} (u'nu'')^{-1}  \nonumber \\
 =& (u  u')nu'' u^{-1} (u'')^{-1} u u^{-1} n^{-1} (u')^{-1}  \nonumber \\
 =& (uu') ( n [u'',u^{-1} ] n^{-1} )( n u^{-1} n^{-1} ) (u')^{-1} \label{eqnCommutator1}
\end{align}
By axiom~\ref{axiomRGD2}, we have $[u'',u^{-1}] = 1$.
By Lemma~\ref{LemConjugationMaUb}, we have $n u^{-1} n^{-1} \in n U_{\alpha,\mu} n^{-1} = U_{-\alpha,\mu+2\rho}$.
Since $\mu + \rho \geqslant \varepsilon >0$,
by Lemma~\ref{LemTripleLeviCommutation}, we have 
$U_{-\alpha,\mu+2\rho} U_{\alpha,-\rho} \subset U_{\alpha,-\rho} T_b U_{-\alpha,\mu+2\rho}$.
Moreover, $\mu + 2 \rho \geqslant - \lambda + 2 \varepsilon$,
whence $[u,v] \in U_{\alpha} T_b U_{-\alpha,-\lambda+2\varepsilon}$.

On the other hand, an analogous writing gives
\begin{equation}
 [u,v] = v' (m [v'', v] m^{-1}) (m v m^{-1}) (v')^{-1} v^{-1}\label{eqnCommutator2}
\end{equation}
By axiom~\ref{axiomRGD2}, we have $[v'',v] = 1$.

By Lemma~\ref{LemConjugationMaUb}, we have $m v m^{-1} \in m U_{-\alpha,\rho} m^{-1} = U_{\alpha,\rho+2\mu}$.
By Lemma~\ref{LemTripleLeviCommutation}, we have
$U_{-\alpha,-\mu}U_{\alpha,\rho+2\mu} \subset U_{\alpha, \rho+2\mu}T_bU_{-\alpha,-\mu}$.
Moreover $\rho+2 \mu \geqslant \lambda + \varepsilon$,
whence $[u,v] \subset U_{\alpha, \lambda+ \varepsilon} T_b U_{-\alpha}$.

By uniqueness of the writing in $U^+TU^-$ (axiom~\ref{axiomRGD6}), equations (\ref{eqnCommutator1}) and (\ref{eqnCommutator2}) give
\[ [u,v] \in U_{\alpha} T_b U_{-\alpha,-\lambda+2\varepsilon} \cap U_{\alpha,\lambda+\varepsilon} T_b U_{-\alpha} \subset U_{\alpha,\lambda+\varepsilon} T_b U_{-\alpha,-\lambda+2\varepsilon}.\]
\end{proof}
%}

The following Lemma is \cite[6.3.6]{BruhatTits1} with $\varepsilon$ denoting the $k+\ell$ of Bruhat-Tits.

\begin{Lem}\label{LemRankOneLeviWith2a}
Let $\alpha \in \Phi$.
Let $\lambda \in \Rtot$ and $\varepsilon > 0$.
Then the product
\[ U_{2\alpha,2\lambda-\varepsilon} U_{\alpha,\lambda} T_b U_{-\alpha,-\lambda+\varepsilon} \]
is a group.
\end{Lem}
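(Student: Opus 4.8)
The goal is to show that the set $P := U_{2\alpha,2\lambda-\varepsilon} U_{\alpha,\lambda} T_b U_{-\alpha,-\lambda+\varepsilon}$ is a subgroup of $G$. The strategy, following \cite[6.3.6]{BruhatTits1}, is first to reduce to the non-divisible case: since the hypotheses and conclusion are symmetric in the way $\alpha$ enters, we may assume $\alpha \in \Phi_{\mathrm{nd}}$ (if $\alpha$ is divisible then $2\alpha \notin \Phi$, $U_{2\alpha,2\lambda-\varepsilon}=\{1\}$ and $P = L^\varepsilon_{\alpha,\lambda}$ by Corollary~\ref{CorLeviRankOne}, so we are done). So assume $\alpha$ is non-divisible, possibly multipliable. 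The key observation is that $P$ is visibly stable under left multiplication by $U_{2\alpha,2\lambda-\varepsilon}$, by $U_{\alpha,\lambda}$ (using that $U_{2\alpha}$ normalizes $U_\alpha$ by axiom~\ref{axiomRGD2}, more precisely $[U_\alpha,U_{2\alpha}]=1$), and by $T_b$ (since $T_b$ normalizes all the $U_{\beta,\mu}$ by Corollary~\ref{CorActionNUa}). Hence it suffices to show $P$ is also stable under left multiplication by $U_{-\alpha,-\lambda+\varepsilon}$, and then to check $1 \in P$ and that $P$ is stable under inversion; the group generated by these left-multiplication operations together with the fact that $P$ contains the generating subgroups will then force $P$ to be the subgroup $\langle U_{2\alpha,2\lambda-\varepsilon}, U_{\alpha,\lambda}, T_b, U_{-\alpha,-\lambda+\varepsilon}\rangle$.

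\textbf{Main step: absorbing $U_{-\alpha,-\lambda+\varepsilon}$ on the left.} Take $v \in U_{-\alpha,-\lambda+\varepsilon}$; I must show $v \cdot U_{2\alpha,2\lambda-\varepsilon} U_{\alpha,\lambda} T_b U_{-\alpha,-\lambda+\varepsilon} \subseteq P$. Write $v \cdot (u_2 u_1 t w)$ with $u_2 \in U_{2\alpha,2\lambda-\varepsilon}$, $u_1 \in U_{\alpha,\lambda}$, $t \in T_b$, $w \in U_{-\alpha,-\lambda+\varepsilon}$. The plan is to move $v$ to the right past $u_2$ and $u_1$ one factor at a time, controlling the commutators. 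For $[v, u_2^{-1}]$ — equivalently for $v u_2$ versus $u_2 v$ — I invoke Lemma~\ref{LemCommutation-a2a} (with roles of $\alpha$ and $-\alpha$ suitably exchanged): the commutator $[u_2, v]$ lies in $U_{\alpha,\lambda+\varepsilon} T_b U_{-\alpha,-\lambda+2\varepsilon} \subseteq U_{\alpha,\lambda} T_b U_{-\alpha,-\lambda+\varepsilon}$, so writing $v u_2 = u_2 [u_2^{-1},v^{-1}]^{\pm} v$ and absorbing the commutator into the middle factors keeps everything inside $U_{2\alpha,2\lambda-\varepsilon} U_{\alpha,\lambda} T_b U_{-\alpha,-\lambda+\varepsilon}$. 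Then for $v$ past $u_1$: the product $U_{-\alpha,-\lambda+\varepsilon} U_{\alpha,\lambda}$ is handled by Lemma~\ref{LemTripleLeviCommutation}~\ref{LemTripleLeviCommutation:1}, which gives $U_{-\alpha,-\lambda+\varepsilon} U_{\alpha,\lambda} \subseteq U_{\alpha,\lambda} T_b U_{-\alpha,-\lambda+\varepsilon}$ (here one must check the hypothesis $\varphi_\alpha(u_1) + \varphi_{-\alpha}(v) \geq \lambda + (-\lambda+\varepsilon) = \varepsilon > 0$, which holds). Combining, $v u_2 u_1 \in U_{2\alpha,2\lambda-\varepsilon} U_{\alpha,\lambda} T_b U_{-\alpha,-\lambda+\varepsilon}$, and then the trailing $T_b U_{-\alpha,-\lambda+\varepsilon}$ from the original element is absorbed using that $T_b$ normalizes $U_{-\alpha,-\lambda+\varepsilon}$ and $U_{-\alpha,-\lambda+\varepsilon}$ is a group. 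The anticipated bookkeeping subtlety is that the $T_b$ factor produced by Lemma~\ref{LemTripleLeviCommutation} sits in the wrong slot and must be commuted back across the $U_{\alpha,\lambda}$ and $U_{2\alpha,2\lambda-\varepsilon}$ factors (legitimate since $T_b$ normalizes both).

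\textbf{Finishing.} Once $P$ is shown stable under left multiplication by each of the four generating subgroups, and since $1 \in P$, the set $P$ equals the subgroup $X$ generated by $U_{2\alpha,2\lambda-\varepsilon}$, $U_{\alpha,\lambda}$, $T_b$ and $U_{-\alpha,-\lambda+\varepsilon}$: indeed $P \subseteq X$ trivially, and $X \subseteq P$ because $X \cdot 1 \subseteq P$ follows from stability of $P$ under all the generating left-multiplications together with $1 \in P$. Hence $P$ is a group. The main obstacle is purely computational: organizing the commutator manipulations so that every intermediate product stays in the prescribed order $U_{2\alpha,\bullet} U_{\alpha,\bullet} T_b U_{-\alpha,\bullet}$; all the analytic content is already packaged in Lemmas~\ref{LemCommutation-a2a} and~\ref{LemTripleLeviCommutation}, and no new estimate on valuations is needed beyond checking the inequalities $\lambda+\varepsilon \geq \lambda$, $-\lambda+2\varepsilon \geq -\lambda+\varepsilon$, and $\mu+2\rho \geq -\lambda+2\varepsilon$ that appear when applying those lemmas.
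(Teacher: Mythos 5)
Your proof is correct and follows essentially the same route as the paper's: the paper establishes stability of the product under right multiplication by each of the four factors (using Lemma~\ref{LemTripleLeviCommutation} for $U_{-\alpha,-\lambda+\varepsilon}$ against $U_{\alpha,\lambda}$ and Lemma~\ref{LemCommutation-a2a} for the interaction with $U_{2\alpha,2\lambda-\varepsilon}$), whereas you check stability under left multiplication, which is the mirror image of the identical computation. The only imprecision is the divisible-case shortcut: $U_{\alpha,\lambda}T_bU_{-\alpha,-\lambda+\varepsilon}$ equals $T_bL^\varepsilon_{\alpha,\lambda}$ rather than $L^\varepsilon_{\alpha,\lambda}$ itself (Corollary~\ref{CorLeviRankOne} only places $N^\varepsilon_{\alpha,\lambda}\subset T_b$ in the middle slot), but since $T_b$ normalizes $L^\varepsilon_{\alpha,\lambda}$ the set is still a group, and your main argument in fact covers that case uniformly via the convention $U_{2\alpha,\mu}=\{1\}$.
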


\begin{proof}
By Lemma~\ref{LemTripleLeviCommutation}, this subset is stable by right multiplication by elements in $U_{\alpha,\lambda}$, $T_b$ and $U_{-\alpha,-\lambda+\epsilon}$.
If $u \in U_{2\alpha,2\lambda-\varepsilon}$.
Then for any $v \in U_{-\alpha,-\lambda+\varepsilon}$, we have
\[ vu = u [u^{-1}, v] v \in u U_{\alpha,\lambda}T_bU_{-\alpha,-\lambda+\frac{3}{2}\varepsilon}v\]
by Lemma~\ref{LemCommutation-a2a}.
Thus $U_{2\alpha,2\lambda-\varepsilon} U_{\alpha,\lambda} T_b vu \subset U_{2\alpha,2\lambda-\varepsilon} U_{\alpha,\lambda} T_b U_{2\alpha,2\lambda-\varepsilon} U_{\alpha,\lambda}T_bU_{-\alpha,-\lambda+\frac{3}{2}\varepsilon}U_{-\alpha,-\lambda+\varepsilon}$.

Since $[U_{2\alpha},U_\alpha]=\{1\}$ by axiom~\ref{axiomRGD2}, the group $U_{2\alpha,2\lambda-\varepsilon}$ normalizes $U_{\alpha,\lambda}$ and $T_b$.
Since $U_{-\alpha,-\lambda + \frac{3}{2} \varepsilon}$ is a subgroup of $U_{-\alpha,-\lambda+\varepsilon}$, we are done.

\end{proof}

The following Lemma is \cite[6.3.7]{BruhatTits1} with $\varepsilon$ denoting the $k+\ell$ and $\lambda$ denoting the $k$ of Bruhat-Tits. 

\begin{Lem}\label{LemCommutatorRankOne}
Let $\alpha \in \Phi$.
Let $\lambda \in \Rtot$ and $\varepsilon >0$.
For any $u \in U_{\alpha,\lambda}$ and $v \in U_{-\alpha,-\lambda+\varepsilon}$, we have
\begin{equation}\label{eqnCommutator3}
[u,v] \in U_{2\alpha,2\lambda+\varepsilon} U_{\alpha,\lambda+\varepsilon} T_b U_{-\alpha,-\lambda+2\varepsilon} U_{-2\alpha,-2\lambda + 3\varepsilon}.
\end{equation}

If, moreover, $u \in U_{2\alpha}$, that is $u \in U_{2\alpha,2\lambda}$, then
\begin{equation}\label{eqnCommutator4}
[u,v] \in U_{2\alpha,2\lambda+\varepsilon} U_{\alpha,\lambda+\varepsilon} T_b U_{-\alpha,-\lambda+3\varepsilon} U_{-2\alpha,-2\lambda + 4\varepsilon}.
\end{equation}

\end{Lem}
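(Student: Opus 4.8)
The statement is the exact analogue of \cite[6.3.7]{BruhatTits1}, so the plan is to transpose Bruhat--Tits' argument, using the technical lemmas already established in this section. First I would reduce to the case where $\alpha$ is non-divisible: if $\alpha$ is divisible, write $\alpha = 2\gamma$ with $\gamma$ non-divisible and apply the result for $\gamma$, noting that $U_{2\alpha} = \{1\}$ and $U_{4\gamma}=\{1\}$ trivialize half the factors. For non-divisible $\alpha$, I would separate two subcases according to whether $2\alpha \in \Phi$ or not. If $2\alpha \notin \Phi$, then all the $U_{2\alpha,\cdot}$ and $U_{-2\alpha,\cdot}$ factors are trivial and (\ref{eqnCommutator3}) collapses to $[u,v] \in U_{\alpha,\lambda+\varepsilon}T_b U_{-\alpha,-\lambda+2\varepsilon}$, which follows from axiom~\ref{axiomV3} together with Lemma~\ref{LemTripleLeviCommutation}\ref{LemTripleLeviCommutation:1} exactly as in the last display of the proof of Lemma~\ref{LemCommutation-a2a}; and (\ref{eqnCommutator4}) is vacuous since $U_{2\alpha}=\{1\}$ forces $u=1$.

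The substance is the multipliable case $2\alpha \in \Phi$. Here I would expand $[u,v]$ in two different ways, mimicking equations (\ref{eqnCommutator1}) and (\ref{eqnCommutator2}) in the proof of Lemma~\ref{LemCommutation-a2a}. The key input is axiom~\ref{axiomV3}: the commutator $[U_{\alpha,\lambda},U_{-\alpha,-\lambda+\varepsilon}]$ is contained in the group generated by the $U_{r\alpha+s(-\alpha),r\lambda+s(-\lambda+\varepsilon)}$ for $r,s \in \mathbb{Z}_{>0}$. The roots of the form $r\alpha - s\alpha = (r-s)\alpha$ that actually lie in $\Phi(\alpha,-\alpha) = \{\pm\alpha,\pm 2\alpha\}$ with $r,s>0$ contribute exactly $U_{\alpha,\mu}$ (from $(r,s)=(2,1)$, giving $2\lambda - \lambda + \varepsilon = \lambda+\varepsilon$), $U_{2\alpha,\mu}$ (from $(r,s)=(3,1)$, giving $3\lambda-\lambda+\varepsilon = 2\lambda+\varepsilon$), $U_{-\alpha,\mu}$ (from $(r,s)=(1,2)$, giving $\lambda - 2\lambda+2\varepsilon = -\lambda+2\varepsilon$), and $U_{-2\alpha,\mu}$ (from $(r,s)=(1,3)$, giving $\lambda - 3\lambda+3\varepsilon = -2\lambda+3\varepsilon$). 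Using Lemma~\ref{LemRankOneLeviWith2a} to recognize $U_{2\alpha,2\lambda+\varepsilon}U_{\alpha,\lambda+\varepsilon}T_bU_{-\alpha,-\lambda+2\varepsilon}$ (after the appropriate shift $\lambda \rightsquigarrow \lambda+\varepsilon$, $\varepsilon \rightsquigarrow \varepsilon$) as a group, and reordering factors via Lemma~\ref{LemTripleLeviCommutation} and the commutation relations $[U_{2\alpha},U_\alpha]=\{1\}$ from axiom~\ref{axiomRGD2}, I would collect everything into the claimed form $U_{2\alpha,2\lambda+\varepsilon} U_{\alpha,\lambda+\varepsilon} T_b U_{-\alpha,-\lambda+2\varepsilon} U_{-2\alpha,-2\lambda+3\varepsilon}$. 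The subgroup structure provided by Lemma~\ref{LemRankOneLeviWith2a} is what guarantees that the iterated products do not leak outside the target set.

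For the refinement (\ref{eqnCommutator4}), where one additionally assumes $u \in U_{2\alpha}$, so $\varphi_\alpha(u) \geq \lambda$ forces $u \in U_{2\alpha,2\lambda}$ by axiom~\ref{axiomV4}, I would rerun the same computation but now apply axiom~\ref{axiomV3} to $[U_{2\alpha,2\lambda},U_{-\alpha,-\lambda+\varepsilon}]$: the relevant roots $r(2\alpha)+s(-\alpha) = (2r-s)\alpha$ in $\{\pm\alpha,\pm 2\alpha\}$ with $r,s>0$ give $U_{\alpha,\mu}$ from $(r,s)=(1,1)$ with value $2\lambda - \lambda+\varepsilon = \lambda+\varepsilon$, $U_{2\alpha,\mu}$ from $(r,s)=(1,0)$ — excluded since $s>0$ — so actually from $(r,s)=(2,2)$ giving $4\lambda-2\lambda+2\varepsilon = 2\lambda+2\varepsilon$, $U_{-\alpha,\mu}$ from $(r,s)=(1,3)$ giving $2\lambda-3\lambda+3\varepsilon = -\lambda+3\varepsilon$, and $U_{-2\alpha,\mu}$ from $(r,s)=(1,4)$ giving $2\lambda-4\lambda+4\varepsilon = -2\lambda+4\varepsilon$; this matches the indices in (\ref{eqnCommutator4}). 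Alternatively, and perhaps more cleanly, (\ref{eqnCommutator4}) follows by combining Lemma~\ref{LemCommutation-a2a} (which already gives $[u,v] \in U_{\alpha,\lambda+\varepsilon}T_bU_{-\alpha,-\lambda+2\varepsilon}$ for $u \in U_{2\alpha,2\lambda}$, $v \in U_{-\alpha,-\lambda+\varepsilon}$) with a second commutator estimate obtained by applying the first part of the present lemma, and then reshuffling; I expect the bookkeeping of the precise value-shifts to be the main obstacle, but it is purely mechanical given Lemmas~\ref{LemTripleLeviCommutation}, \ref{LemCommutation-a2a} and~\ref{LemRankOneLeviWith2a}.
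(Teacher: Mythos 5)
There is a genuine gap, and it sits at the heart of your argument. You propose to apply axiom~\ref{axiomV3} directly to the pair $[U_{\alpha,\lambda},U_{-\alpha,-\lambda+\varepsilon}]$ and to enumerate the roots $r\alpha+s(-\alpha)$ with $r,s\in\mathbb{Z}_{>0}$. But axiom~\ref{axiomV3} (like~\ref{axiomRGD2}) carries the hypothesis $\beta\not\in\mathbb{R}_{\leqslant 0}\alpha$, which excludes precisely the pair $(\alpha,-\alpha)$. The commutator of opposite root groups is \emph{not} controlled by any product of root groups: it involves the torus, which is exactly why a $T_b$ factor appears in the statement — a group generated by $U_{r\alpha+s\beta,\cdot}$'s could never produce it. The same problem undermines your treatment of the non-multipliable case: Lemma~\ref{LemTripleLeviCommutation}\ref{LemTripleLeviCommutation:1}--\ref{LemTripleLeviCommutation:2} only yields $[u,v]\in U_{\alpha,\lambda}T_bU_{-\alpha,-\lambda+\varepsilon}$, whereas the claim is the strictly sharper $U_{\alpha,\lambda+\varepsilon}T_bU_{-\alpha,-\lambda+2\varepsilon}$; the $\varepsilon$-improvement is the whole content of the lemma and does not come for free from that decomposition.

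The mechanism that actually works (and that the paper uses, following \cite[6.3.7]{BruhatTits1}) is the double-expansion trick already set up in the proof of Lemma~\ref{LemCommutation-a2a}. One writes $v=u'nu''$ with $u',u''\in U_{\alpha,-\rho}$ and $n\in M_{-\alpha,-\rho}$, and $u=v'mv''$ with $v',v''\in U_{-\alpha,-\mu}$ and $m\in M_{\alpha,\mu}$ (Proposition~\ref{PropValuedCoset}), then expands $[u,v]$ in the two forms (\ref{eqnCommutator1}) and (\ref{eqnCommutator2}). Axiom~\ref{axiomV3} is applied only to the \emph{same-sign} commutators $[u'',u^{-1}]\in U_{2\alpha,\mu-\rho}$ and $[v'',v]\in U_{-2\alpha,\rho-\mu}$, which are then conjugated by $n$ and $m$ via Lemma~\ref{LemConjugationMaUb} to land in $U_{-2\alpha,-2\lambda+3\varepsilon}$ and $U_{2\alpha,2\lambda+\varepsilon}$ respectively. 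Lemma~\ref{LemRankOneLeviWith2a} regroups each expression, giving $[u,v]\in U_\alpha T_bU_{-\alpha,-\lambda+2\varepsilon}U_{-2\alpha,-2\lambda+3\varepsilon}$ from one expansion and $[u,v]\in U_{2\alpha,2\lambda+\varepsilon}U_{\alpha,\lambda+\varepsilon}T_bU_{-\alpha}$ from the other; intersecting the two by uniqueness in $U^+TU^-$ (axiom~\ref{axiomRGD6}) yields (\ref{eqnCommutator3}). Your proposal contains none of this two-sided expansion and intersection, so it cannot be repaired by bookkeeping alone. Your fallback for (\ref{eqnCommutator4}) via Lemma~\ref{LemCommutation-a2a} is closer in spirit to what is actually done there (the hypothesis $u\in U_{2\alpha}$ kills $[u'',u^{-1}]$ and sharpens the $U_{-2\alpha}$ factor to $-2\lambda+4\varepsilon$, after which Lemma~\ref{LemCommutation-a2a} and axiom~\ref{axiomRGD6} finish), but it still presupposes the correct proof of (\ref{eqnCommutator3}).
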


Note that $U_{2\alpha,2\lambda+2\varepsilon} U_{\alpha,\lambda+\varepsilon} = U_{\alpha,\lambda+\varepsilon}$.

%\green{
\begin{proof}
We can assume that $u \neq 1$ and $v \neq 1$.
We keep the same notations as in the proof of Lemma~\ref{LemCommutation-a2a}:
$\lambda \leqslant \mu = \varphi_{\alpha}(u) \in \Gamma_\alpha$ and $-\lambda+\varepsilon \leqslant \rho = \varphi_{-\alpha}(v) \in \Gamma_{-\alpha}$;
$m \in M_{\alpha,\mu}$ and $n  \in M_{-\alpha,\rho}$;
$v',v'' \in U_{-\alpha,-\mu}$ such that $u = v'mv''$;
$u',u'' \in U_{\alpha,-\rho}$ such that $v = u'nu''$.

By axiom~\ref{axiomV3}, we have $[u'',u^{-1}] \in U_{2\alpha,\mu-\rho} $.
Hence, by Lemma~\ref{LemConjugationMaUb}, we have 
\[ n U_{2\alpha,\mu-\rho} n^{-1} \subset U_{-2\alpha} \cap n U_{\alpha,\frac{\mu-\rho}{2}} n^{-1} = U_{-2\alpha} \cap U_{-\alpha,\frac{\mu-\rho}{2} + 2 \rho} = U_{-2\alpha,\mu+3\rho}.\]

Moreover, $nu^{-1}n^{-1} \in n U_{\alpha,\mu} n^{-1} = U_{-\alpha, \mu+2 \rho}$ by Lemma~\ref{LemConjugationMaUb}.
Thus, formula (\ref{eqnCommutator1}) gives

\[ [u,v] \in U_\alpha U_{-2\alpha,\mu+3\rho} U_{-\alpha,\mu+2\rho} U_{\alpha,-\rho}.\]
By Lemma~\ref{LemRankOneLeviWith2a}
applied to $(-\alpha,\mu+2\rho, \mu + \rho)$ in the role of $(\alpha, \lambda,\varepsilon)$, 
the product $U_{-2\alpha,\mu+3\rho} U_{-\alpha,\mu+2\rho} T_b U_{\alpha,-\rho}$
is a group which is equal to

$U_{\alpha,-\rho} T_b U_{-\alpha, \mu+2\rho} U_{-2\alpha,\mu +3\rho}$
by applying inverse map.

Since $\mu + 2 \rho \geqslant - \lambda + 2 \varepsilon$ and $\mu + 3 \rho \geqslant - 2 \lambda + 3 \varepsilon$
we get
\[ [u,v] \in U_\alpha T_b U_{-\alpha,-\lambda+2\varepsilon} U_{-2\alpha,-2\lambda+3\varepsilon}.\]

By axiom~\ref{axiomV3}, we have $[v'',v] \in U_{-2\alpha,\rho-\mu} $.
Hence, by Lemma~\ref{LemConjugationMaUb}, we have 
\[ m U_{-2\alpha,\rho-\mu} m^{-1} \subset U_{2\alpha} \cap m U_{-\alpha,\frac{\rho-\mu}{2}} m^{-1} = U_{2\alpha} \cap U_{\alpha,\frac{\rho-\mu}{2} + 2 \mu} = U_{2\alpha,3\mu+\rho}.\]
Moreover, $mvm^{-1} \in m U_{-\alpha,\rho} m^{-1} \subset U_{\alpha, \rho + 2 \mu}$ by Lemma~\ref{LemConjugationMaUb}.
Thus, formula (\ref{eqnCommutator2}) gives
\[ [u,v] \in U_{-\alpha,-\mu} U_{2\alpha,3\mu+\rho} U_{\alpha,\rho + 2 \mu} U_{-\alpha}.\]
Applying Lemma~\ref{LemRankOneLeviWith2a} to

$(\alpha, \rho+2\mu, \rho + \mu)$ in  the role of $(\alpha, \lambda, \varepsilon)$,
since $\rho + 2 \mu \geqslant \lambda+ \varepsilon$ and $\rho + 3 \mu \geqslant 2 \lambda + \varepsilon$

we get
\[ [u,v] \in U_{2\alpha,2\lambda+\varepsilon} U_{\alpha,\lambda+\varepsilon} T_b U_{-\alpha}.\]

Uniqueness of the writting in $U^+TU^-$ (axiom~\ref{axiomRGD6}) gives (\ref{eqnCommutator3}).

If, moreover, $u \in U_{2\alpha}$, then we have $[u'',u^{-1}] = 1$ by axiom~\ref{axiomRGD2}.

Moreover, $nu^{-1}n^{-1} \in U_{-2\alpha} \cap U_{-\alpha,\mu + 2 \rho} = U_{-2\alpha, 2 \mu + 4 \rho}$.
Thus
\[ [u,v] \in U_\alpha U_{-2\alpha,2\mu + 4 \rho} U_{\alpha,-\rho}.\]
Applying Lemma~\ref{LemRankOneLeviWith2a} to $(-\alpha,2\mu+3\rho,2(\mu+\rho))$, we get 
\[ [u,v] \in U_\alpha T_b U_{-\alpha,2\mu + 3 \rho} U_{-2\alpha,2\mu + 4 \rho}.\]
Since $2 \mu + 3 \rho \geqslant - \lambda + 3 \varepsilon$ and $2 \mu + 4 \rho \geqslant - 2\lambda + 4 \varepsilon$, we obtain
\[ [u,v] \in U_\alpha T_b U_{-\alpha,-\lambda + 3 \varepsilon} U_{-2\alpha,-2 \lambda + 4 \varepsilon}.\]

Hence, by Lemma~\ref{LemCommutation-a2a} and uniqueness of the writting in $U^+TU^-$ (axiom~\ref{axiomRGD6}), we get (\ref{eqnCommutator4}).
\end{proof}
%}

\begin{Not}
Let $\lambda \in \Rtot$, $\varepsilon > 0$ and $\alpha \in \Phi$.
Consider $u \in U_{\alpha,\lambda}$ and $v \in U_{-\alpha,-\lambda + \varepsilon}$.
Then, according to Lemma~\ref{LemTripleLeviCommutation} and axiom~\ref{axiomRGD6}, there is a unique $t(u,v)\in T_b$ such that $[u,v] \in U_{\alpha,\lambda} t(u,v) U_{-\alpha,-\lambda+\varepsilon}$.
The element $t(u,v)$ is then called the $T$-component of $[u,v]$.
\end{Not}

The following Proposition details \cite[6.3.9]{BruhatTits1}.
This proposition is really important to prove that the fibers of the $\RF^S$-building for the projection maps also are buildings, see Proposition~\ref{PropTstar1InTb}.

\begin{Prop}\label{PropTcomponent}
Let $\lambda \in \Rtot$, $\varepsilon > 0$ and $\alpha \in \Phi$. Then 
$T^\varepsilon_{\alpha,\lambda}$ is the group generated by the $T$-components $t(u,v)$ of the commutators $[u,v]$ for $u \in U_{\alpha,\lambda}$ and $v \in U_{-\alpha,-\lambda+\varepsilon}$.

In particular, $T'_{\alpha,\lambda}$ is the group generated by the $T$-components $t(u,v)$ of commutators $[u,v]$ for $u \in U_{\alpha,\lambda}$ and $v \in U'_{-\alpha,-\lambda}$.
\end{Prop}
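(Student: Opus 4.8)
The plan is to show the two inclusions: the group $\widetilde{T}^\varepsilon_{\alpha,\lambda}$ generated by the $T$-components $t(u,v)$ (for $u \in U_{\alpha,\lambda}$, $v \in U_{-\alpha,-\lambda+\varepsilon}$) is contained in $T^\varepsilon_{\alpha,\lambda}$, and conversely. The first inclusion is almost immediate: by definition $[u,v] \in U_{\alpha,\lambda}\, t(u,v)\, U_{-\alpha,-\lambda+\varepsilon} \subset L^\varepsilon_{\alpha,\lambda}$, and $[u,v]$ also lies in $L^\varepsilon_{\alpha,\lambda}$ since $u, v$ do; moreover $t(u,v) \in T_b$ by construction. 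Writing $t(u,v) = u_1^{-1}[u,v]u_2^{-1}$ with $u_1 \in U_{\alpha,\lambda}$, $u_2 \in U_{-\alpha,-\lambda+\varepsilon}$ shows $t(u,v) \in L^\varepsilon_{\alpha,\lambda} \cap T_b \subset L^\varepsilon_{\alpha,\lambda} \cap T = T^\varepsilon_{\alpha,\lambda}$. Hence $\widetilde{T}^\varepsilon_{\alpha,\lambda} \subset T^\varepsilon_{\alpha,\lambda}$.

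For the reverse inclusion, the strategy is to prove that the set $X := U_{\alpha,\lambda}\, \widetilde{T}^\varepsilon_{\alpha,\lambda}\, U_{-\alpha,-\lambda+\varepsilon}$ is a subgroup of $G$; then since it contains $U_{\alpha,\lambda}$ and $U_{-\alpha,-\lambda+\varepsilon}$, it contains $L^\varepsilon_{\alpha,\lambda}$, so $X = L^\varepsilon_{\alpha,\lambda}$ (the reverse inclusion being clear), and then by Lemma~\ref{LemTripleLeviCommutation}\ref{LemTripleLeviGroup} — applied with $H = \widetilde{T}^\varepsilon_{\alpha,\lambda} \subset T_b$, which we may do since $\widetilde{T}^\varepsilon_{\alpha,\lambda}$ normalizes $U_{\alpha,\lambda}$ and $U_{-\alpha,-\lambda+\varepsilon}$ by Corollary~\ref{CorActionNUa} — we get $L^\varepsilon_{\alpha,\lambda} \cap T = \widetilde{T}^\varepsilon_{\alpha,\lambda}$, i.e. $T^\varepsilon_{\alpha,\lambda} = \widetilde{T}^\varepsilon_{\alpha,\lambda}$. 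To show $X$ is a group: it is manifestly stable under right multiplication by elements of $U_{\alpha,\lambda}$, of $U_{-\alpha,-\lambda+\varepsilon}$, and of $\widetilde{T}^\varepsilon_{\alpha,\lambda}$ (using that the latter is a group normalizing the two root subgroups). The only nontrivial point is stability under left multiplication by $U_{-\alpha,-\lambda+\varepsilon}$; equivalently, given $v \in U_{-\alpha,-\lambda+\varepsilon}$ and $u \in U_{\alpha,\lambda}$, one must rewrite $vu$ in the form $u' t v'$ with $u' \in U_{\alpha,\lambda}$, $t \in \widetilde{T}^\varepsilon_{\alpha,\lambda}$, $v' \in U_{-\alpha,-\lambda+\varepsilon}$. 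Now $vu = u\,[u^{-1},v]\,v$, and $[u^{-1},v]$ lies in $U_{\alpha,\lambda}\, t(u^{-1},v)\, U_{-\alpha,-\lambda+\varepsilon}$ by definition of the $T$-component (with $t(u^{-1},v) \in \widetilde{T}^\varepsilon_{\alpha,\lambda}$), so $vu \in U_{\alpha,\lambda}\, \widetilde{T}^\varepsilon_{\alpha,\lambda}\, U_{-\alpha,-\lambda+\varepsilon} = X$, using once more that $\widetilde{T}^\varepsilon_{\alpha,\lambda}$ normalizes the root subgroups to absorb the various factors into the canonical order.

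The "in particular" clause for $T'_{\alpha,\lambda}$ follows by passing to the limit over $\varepsilon > 0$: since $U'_{-\alpha,-\lambda} = \bigcup_{\varepsilon>0} U_{-\alpha,-\lambda+\varepsilon}$ is an increasing union and $L'_{\alpha,\lambda} = \bigcup_{\varepsilon>0} L^\varepsilon_{\alpha,\lambda}$ (as in the proof of Proposition~\ref{PropRankOneLevi}\ref{PropRankOneLevi:epsilon}), we get $T'_{\alpha,\lambda} = \bigcup_{\varepsilon > 0} T^\varepsilon_{\alpha,\lambda}$, and a commutator $[u,v]$ with $u \in U_{\alpha,\lambda}$, $v \in U'_{-\alpha,-\lambda}$ has $v \in U_{-\alpha,-\lambda+\varepsilon}$ for some $\varepsilon > 0$, so its $T$-component is one of the generators of $T^\varepsilon_{\alpha,\lambda}$; the union of these over $\varepsilon$ generates $T'_{\alpha,\lambda}$.

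The main obstacle I expect is the bookkeeping in verifying that $X$ is closed under left multiplication by $U_{-\alpha,-\lambda+\varepsilon}$ — specifically, making sure all the absorption steps (pushing $U_{-\alpha,\cdot}$ factors past $U_{\alpha,\cdot}$ factors, and past the torus part) stay within the prescribed groups. This is where one needs Lemma~\ref{LemTripleLeviCommutation} for the commutation $U_{-\alpha,-\lambda+\varepsilon}U_{\alpha,\lambda} \subset U_{\alpha,\lambda}T_b U_{-\alpha,-\lambda+\varepsilon}$ — but crucially one must check the $T_b$-component produced there is again a $T$-component of a commutator, hence in $\widetilde{T}^\varepsilon_{\alpha,\lambda}$; this is exactly the content of the identity $vu = u[u^{-1},v]v$ combined with the definition of $t(u^{-1},v)$, so no extra input is needed, but the order of operations must be arranged carefully.
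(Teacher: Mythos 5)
Your proof is correct and follows essentially the same route as the paper's: both show that $U_{\alpha,\lambda}\,\widetilde{T}^\varepsilon_{\alpha,\lambda}\,U_{-\alpha,-\lambda+\varepsilon}$ is stable under multiplication by the generators of $L^\varepsilon_{\alpha,\lambda}$ via the identity $vu = u\,[u^{-1},v]\,v$, hence equals $L^\varepsilon_{\alpha,\lambda}$, and then identify the torus part (the paper via Proposition~\ref{PropRankOneLevi} together with uniqueness in axiom~\ref{axiomRGD6}, you via Lemma~\ref{LemTripleLeviCommutation}, which amounts to the same thing). The passage to $T'_{\alpha,\lambda}$ by taking the increasing union over $\varepsilon>0$ is also identical.
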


\begin{proof}
Denote by $X$ the subgroup of $T_b$ generated by $\{ t(u,v),\ u \in U_{\alpha,\lambda},\ v\in U_{-\alpha,-\lambda+\varepsilon}\}$.
Since $[u,v] \in L_{\alpha,\lambda}^\varepsilon$, we have $X \subset L_{\alpha,\lambda}^\varepsilon \cap T_b$.
We prove that $Y = U_{\alpha,\lambda} X U_{-\alpha,-\lambda+\varepsilon}$ is a group.
It is stable by left multiplication by elements in $U_{\alpha,\lambda}$ and $X$ since the subgroup $X \subset T_b$ normalizes $U_{\alpha,\lambda}$.
For $v \in U_{-\alpha,-\lambda+\varepsilon}$ and $uxw \in Y$, we have
$vuxw = u [u^{-1},v]vxw$.
Let $u' \in U_{\alpha,\lambda}$ and $v' \in U_{-\alpha,-\lambda+\varepsilon}$ be such that $[u^{-1},v] = u' t(u^{-1},v) v'$.
Then $vuxw = (u u') (t(u^{-1},v) x) (x^{-1}v' vx) w$.
We have $u u' \in U_{\alpha,\lambda}$ and $t(u^{-1},v) x \in X$.
Moreover, since $x \in X$ normalizes $U_{-\alpha,-\lambda+\varepsilon}$, we have $(x^{-1}v' vx) w \in U_{-\alpha,-\lambda+\varepsilon}$.
This proves that $Y$ is a subgroup of $L^\varepsilon_{\alpha,\lambda}$ containing $U_{\alpha,\lambda}$ and $U_{-\alpha,-\lambda+\varepsilon}$.
Hence $Y = L^\varepsilon_{\alpha,\lambda}$.
By Proposition~\ref{PropRankOneLevi}~\ref{PropRankOneLevi:epsilon} and  uniqueness in axiom~\ref{axiomRGD6}, we get $X = T^\varepsilon_{\alpha,\lambda}$.

Since $U'_{-\alpha,-\lambda}$ is the union $\bigcup_{\varepsilon > 0} U_{-\alpha,-\lambda+\varepsilon}$, we get that $T'_{\alpha,\lambda}= \bigcup_{\varepsilon > 0} T^\varepsilon_{\alpha,\lambda}$ by intersection of $L'_{\alpha,\lambda} = \bigcup_{\varepsilon > 0} U_{\alpha,\lambda} U_{-\alpha,-\lambda+\varepsilon} T^\varepsilon_{\alpha,\lambda}$ with $T$.
This gives the second assertion of the Proposition.
\end{proof}

We obtain the following Corollary, corresponding to \cite[6.4.25 (ii) and (iii)]{BruhatTits1} which does not use infimum and supremum.

\begin{Cor}\label{CorCommutationTaUa}
Let $\lambda \in \Rtot$, $\varepsilon \geqslant 0$ and $\alpha \in \Phi$.
Then the following commutator subgroups satisfy:
\begin{equation}
\left[ T_{\alpha,\lambda}^\varepsilon, U_{\alpha,\lambda} \right] \subset U_{\alpha,\lambda + \varepsilon} U_{2\alpha,2\lambda+\varepsilon} \label{eqnCommutator5}
\end{equation} 
and
\begin{equation}
\left[ T_{\alpha,\lambda}^\varepsilon, U_{-\alpha,-\lambda} \right] \subset U_{-\alpha,-\lambda + \varepsilon} U_{-2\alpha,-2\lambda+\varepsilon}.\label{eqnCommutator6} \end{equation}
Moreover, 
\begin{equation}
\left[ T_{\alpha,\lambda}', U_{\alpha,\lambda} \right] \subset U'_{\alpha,\lambda} \label{eqnCommutator7}
\end{equation}
and
\begin{equation}
\left[ T_{\alpha,\lambda}', U_{-\alpha,-\lambda} \right] \subset U'_{-\alpha,-\lambda} \label{eqnCommutator8}
\end{equation}
\end{Cor}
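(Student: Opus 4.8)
The four inclusions are the analogue of \cite[6.4.25]{BruhatTits1}, and the natural strategy is to deduce them from the structural description of $L^\varepsilon_{\alpha,\lambda}$ obtained in Proposition~\ref{PropRankOneLevi}, Corollary~\ref{CorLeviRankOne} and the commutator estimate of Proposition~\ref{PropTcomponent}. The key point is that, by Proposition~\ref{PropTcomponent}, the group $T^\varepsilon_{\alpha,\lambda}$ is generated by the $T$-components $t(u,v)$ of commutators $[u,v]$ with $u \in U_{\alpha,\lambda}$ and $v \in U_{-\alpha,-\lambda+\varepsilon}$. So the plan is: first establish the inclusions for a single such generator $t := t(u,v)$, and then observe that both sides of each inclusion are stable under the relevant products, so that the general case follows by induction on the length of a word in the generators.

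First I would treat \eqref{eqnCommutator5}. By definition of $t(u,v)$ we have $[u,v] = u_1\, t\, v_1$ for some $u_1 \in U_{\alpha,\lambda}$ and $v_1 \in U_{-\alpha,-\lambda+\varepsilon}$; hence $t = u_1^{-1} [u,v] v_1^{-1}$. Now for any $w \in U_{\alpha,\lambda}$ we want to control $[t,w] = t w t^{-1} w^{-1}$. Substituting the expression for $t$ and using that $T_b$ normalizes each $U_{\pm\alpha,\mu}$ (Corollary~\ref{CorActionNUa}), together with the refined commutator bound \eqref{eqnCommutator3} of Lemma~\ref{LemCommutatorRankOne} (which places $[u,v]$ inside $U_{2\alpha,2\lambda+\varepsilon} U_{\alpha,\lambda+\varepsilon} T_b U_{-\alpha,-\lambda+2\varepsilon} U_{-2\alpha,-2\lambda+3\varepsilon}$), one should be able to push $t w t^{-1}$ into $w \cdot U_{\alpha,\lambda+\varepsilon} U_{2\alpha,2\lambda+\varepsilon}$ modulo lower terms, after intersecting with $U^+$ via the uniqueness in axiom~\ref{axiomRGD6}. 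The precise bookkeeping mimics \cite[6.4.25]{BruhatTits1}; the one subtlety over a general $\Rtot$ is that the inequalities $2\lambda+\varepsilon \leqslant \mu-\rho$, $\rho+2\mu \geqslant \lambda+\varepsilon$, etc., used there remain valid since $\mu \geqslant \lambda$ and $\rho \geqslant -\lambda+\varepsilon$ and $\Rtot$ is an ordered group. Then \eqref{eqnCommutator6} is obtained from \eqref{eqnCommutator5} by exchanging $(\alpha,\lambda)$ with $(-\alpha,-\lambda+\varepsilon)$: indeed $T^\varepsilon_{\alpha,\lambda} = T^\varepsilon_{-\alpha,-\lambda+\varepsilon}$ by Remark~\ref{RkVarChangeinT}, and $U_{-\alpha,-\lambda} \supset U_{-\alpha,-\lambda+\varepsilon}$ (when $\varepsilon=0$ this is an equality), so $[T^\varepsilon_{\alpha,\lambda},U_{-\alpha,-\lambda}] = [T^\varepsilon_{-\alpha,-\lambda+\varepsilon}, U_{-\alpha,-(-\lambda+\varepsilon)+\varepsilon'}]$ for suitable $\varepsilon' \geqslant 0$; if $\varepsilon>0$ one combines the two root cases; if $\varepsilon=0$ one applies \eqref{eqnCommutator5} directly with the roles of $\alpha$ and $-\alpha$ swapped. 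The case $\varepsilon = 0$ of \eqref{eqnCommutator5}, \eqref{eqnCommutator6} needs a separate but easier argument using the explicit description of $L_{\alpha,\lambda}$ in Proposition~\ref{PropRankOneLevi}\ref{PropRankOneLevi:inGamma}--\ref{PropRankOneLevi:notinGamma}, together with $m U_{\alpha,\lambda} m^{-1} = U_{-\alpha,-\lambda}$ from Lemma~\ref{LemConjugationMaUb}.

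For \eqref{eqnCommutator7} and \eqref{eqnCommutator8}, I would simply pass to the limit: since $U'_{-\alpha,-\lambda} = \bigcup_{\varepsilon>0} U_{-\alpha,-\lambda+\varepsilon}$ is an increasing union, Proposition~\ref{PropRankOneLevi}\ref{PropRankOneLevi:epsilon} and Proposition~\ref{PropTcomponent} give $T'_{\alpha,\lambda} = \bigcup_{\varepsilon>0} T^\varepsilon_{\alpha,\lambda}$. Take $t \in T'_{\alpha,\lambda}$; then $t \in T^\varepsilon_{\alpha,\lambda}$ for some $\varepsilon > 0$, and by \eqref{eqnCommutator5} we get $[t, U_{\alpha,\lambda}] \subset U_{\alpha,\lambda+\varepsilon} U_{2\alpha,2\lambda+\varepsilon} \subset U_{\alpha,\lambda+\varepsilon} U_{\alpha,\lambda+\varepsilon} = U_{\alpha,\lambda+\varepsilon} \subset U'_{\alpha,\lambda}$ (using axiom~\ref{axiomV4} so that $U_{2\alpha,2\lambda+\varepsilon} \subset U_{\alpha,\lambda+\varepsilon}$, and $\lambda+\varepsilon > \lambda$). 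The same argument with \eqref{eqnCommutator6} yields \eqref{eqnCommutator8}. The main obstacle I anticipate is purely technical: carrying out the commutator manipulation for \eqref{eqnCommutator5} cleanly in the setting where $\Rtot$ is a general totally ordered abelian group rather than $\mathbb{R}$, since one cannot invoke infima/suprema of value sets as Bruhat--Tits do; this requires replacing each use of $\inf/\sup$ by a direct comparison of individual values $\mu = \varphi_\alpha(u)$, $\rho = \varphi_{-\alpha}(v)$ against $\lambda$ and $\varepsilon$, exactly as was already done in the proof of Lemma~\ref{LemCommutatorRankOne}.
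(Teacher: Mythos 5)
Your overall strategy for \eqref{eqnCommutator5}, \eqref{eqnCommutator7} and \eqref{eqnCommutator8} matches the paper's: reduce to the generators $t(u,v)$ of $T^\varepsilon_{\alpha,\lambda}$ via Proposition~\ref{PropTcomponent}, control $[t(u,v),x]$ by conjugating the factors of $[u,v]=u_2u_1t(u,v)v_1v_2$ inside a suitable product $Z=U_{2\alpha,2\lambda+\varepsilon}U_{\alpha,\lambda+\varepsilon}T_bU_{-\alpha,-\lambda+\varepsilon}U_{-2\alpha,-2\lambda+\varepsilon}$ (which one must first check is a group, using Lemma~\ref{LemCommutatorRankOne}), intersect with $U_\alpha$ via \ref{axiomRGD6}, and then pass to the increasing union over $\varepsilon>0$ for the primed statements. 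Two remarks on the details: the $\varepsilon=0$ case is immediate (since $T_{\alpha,\lambda}\subset T_b$ normalizes $U_{\alpha,\lambda}\supset U_{2\alpha,2\lambda}$), no appeal to Proposition~\ref{PropRankOneLevi} is needed; and your inclusion $U_{2\alpha,2\lambda+\varepsilon}\subset U_{\alpha,\lambda+\varepsilon}$ is false as stated (axiom~\ref{axiomV4} only gives $2\varphi_\alpha(u)\geqslant 2\lambda+\varepsilon$, hence $\varphi_\alpha(u)>\lambda$ but not $\varphi_\alpha(u)\geqslant\lambda+\varepsilon$); fortunately $U_{2\alpha,2\lambda+\varepsilon}\subset U'_{\alpha,\lambda}$ still holds because $\mathbb{Z}$-torsion-freeness of $\Rtot$ lets you cancel the factor $2$ in the strict inequality, so \eqref{eqnCommutator7} survives.

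The genuine gap is in your derivation of \eqref{eqnCommutator6}. Exchanging $(\alpha,\lambda)$ with $(-\alpha,-\lambda+\varepsilon)$ and using $T^\varepsilon_{\alpha,\lambda}=T^\varepsilon_{-\alpha,-\lambda+\varepsilon}$ (Remark~\ref{RkVarChangeinT}) turns \eqref{eqnCommutator5} into a bound for $\left[T^\varepsilon_{\alpha,\lambda},U_{-\alpha,-\lambda+\varepsilon}\right]$, i.e.\ it only controls commutators with elements $x\in U_{-\alpha}$ satisfying $\varphi_{-\alpha}(x)\geqslant -\lambda+\varepsilon$. But \eqref{eqnCommutator6} is a statement about all of $U_{-\alpha,-\lambda}$, which for $\varepsilon>0$ strictly contains $U_{-\alpha,-\lambda+\varepsilon}$; the elements with $-\lambda\leqslant\varphi_{-\alpha}(x)<-\lambda+\varepsilon$ are exactly the hard case, and "combining the two root cases" does not reach them (there is no $\varepsilon'\geqslant 0$ for which $U_{-\alpha,-\lambda}$ is the group appearing in a swapped instance of \eqref{eqnCommutator5}). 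The paper handles this case by a separate argument: for such $x$, set $\mu=\varphi_{-\alpha}(x)$ and $\varepsilon'=\lambda+\mu\in[0,\varepsilon)$, write $x=ymz$ with $m\in M_{-\alpha,\mu}$ and $y,z\in U_{\alpha,-\mu}$ using Proposition~\ref{PropValuedCoset}, introduce the auxiliary group $Z'=U_{\alpha,-\mu}T_bU_{-\alpha,-\lambda+\varepsilon}U_{-2\alpha,-2\lambda+\varepsilon+\varepsilon'}$ furnished by Lemma~\ref{LemRankOneLeviWith2a}, and bound $[t,y]$, $[t,z]$ by \eqref{eqnCommutator5} applied at level $-\mu$ with gap $\varepsilon-\varepsilon'$, $[t,m]\in T_b$ via $\nu$, and $m[t,z]m^{-1}$ via Lemma~\ref{LemConjugationMaUb}, before concluding with the uniqueness in \ref{axiomRGD6}. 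Without an argument of this kind your proof of \eqref{eqnCommutator6}, and hence of \eqref{eqnCommutator8}, is incomplete.
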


%\green{
\begin{proof}
If $\varepsilon = 0$, the inclusions are immediate since $T_{\alpha,\lambda} \subset T_b$ normalizes $U_{\alpha,\lambda}$ and $U_{2\alpha,2\lambda} \subset U_{\alpha,\lambda}$.
Assume that $\varepsilon >0$ and consider the product
\[Z = U_{2\alpha,2\lambda+\varepsilon} U_{\alpha,\lambda+\varepsilon} T_b U_{-\alpha,-\lambda+\varepsilon} U_{-2\alpha,-2\lambda+\varepsilon}.\]

To prove that $Z$ is a group, it suffices to prove that $Z$ is stable by right multiplication by elements in the subgroups $U_{2\alpha,2\lambda+\varepsilon}$, $ U_{\alpha,\lambda+\varepsilon}$, $T_b$, $U_{-\alpha,-\lambda+\varepsilon}$ and $U_{-2\alpha, -2\lambda+\varepsilon}$.
It is obvious for the three last ones.
Let $u \in U_{2\alpha,2\lambda+\varepsilon} U_{\alpha,\lambda+\varepsilon}$ and $v \in U_{-\alpha,-\lambda+\varepsilon} U_{-2\alpha,-2\lambda+\varepsilon}$.
Set $\mu = \lambda + \frac{\varepsilon}{2}$.
Then $u \in U_{\alpha,\mu}$ and $v \in U_{-\alpha,-\mu+\varepsilon}$.
Thus, applying~(\ref{eqnCommutator3}) of Lemma~\ref{LemCommutatorRankOne},
\[[u^{-1},v] \in U_{2\alpha,2\mu+\varepsilon} U_{\alpha,\mu+\varepsilon} T_b U_{-\alpha,-\mu+2\varepsilon} U_{-2\alpha,-2\mu + 3\varepsilon}
=U_{2\alpha,2\lambda+2\varepsilon} U_{\alpha,\lambda+\frac{3}{2}\varepsilon} T_b U_{-\alpha,-\lambda+\frac{3}{2}\varepsilon} U_{-2\alpha,-2\lambda + 2\varepsilon}.\]
Hence $[u^{-1},v] \in Z$ and therefore
\[
vu=u[u^{-1},v]v \in uZv = Z.
\]
Hence $Z U_{2\alpha,2\lambda+\varepsilon} U_{\alpha,\lambda+\varepsilon} = Z$ and therefore $Z$ is a group.

We prove (\ref{eqnCommutator5}) by showing that the following set of generators satisfy:
\[\{[t(u,v),x],\ x,u \in U_{\alpha,\lambda},\ v \in U_{-\alpha,-\lambda+\varepsilon}\}\subset U_{\alpha,\lambda+\varepsilon}U_{2\alpha,2\lambda+\varepsilon}.\]
Let $u,x\in U_{\alpha,\lambda}$ and $v\in U_{-\alpha,-\lambda+\varepsilon}$.
We will show that $[t(u,v),x] \in Z$.
By Lemma~\ref{LemCommutatorRankOne}, we have $[u,v] \in Z$ and we can write it as $[u,v] = u_{2} u_{1} t(u,v) v_{1} v_{2}$.
By Lemma~\ref{LemCommutatorRankOne} and axiom~\ref{axiomV3}, all the commutators $[v^{-1},xu]$, $[v^{-1},x]$, $[x,u_{1}]$, $[x,u_{2}]$, $[x,v_{1}]$ and $[x,v_{2}]$ are in $Z$.
We have that $x[u,v]x^{-1} \in Z$.
Indeed, $xuv = v[v^{-1},xu]xu \in Z xu$ and $xvu = v[v^{-1},x]xu \in Z xu$ since $v \in Z$.
Moreover $xu_ix^{-1} = [x,u_i]u_i \in Z$ and $xv_ix^{-1} = [x,v_i]v_i \in Z$ for $i \in \{1,2\}$ since $u_i,v_i \in Z$.
Hence $xt(u,v)x^{-1} \in Z$ and we get $[t(u,v),x] \in Z$ since $t(u,v) \in T_b \subset Z$.
Since $T_b$ normalizes $U_{\alpha,\lambda}$, we have $[t(u,v),x] \in U_\alpha$.
Thus $[t(u,v),x] \in Z \cap U_\alpha = U_{2\alpha,2\lambda+\varepsilon} U_{\alpha,\lambda+\varepsilon}$.

We prove inclusion (\ref{eqnCommutator6}) on the set of generators $[t(u,v),x]$ for $x \in U_{-\alpha,-\lambda}$, $u \in U_{\alpha,\lambda}$ and $v \in U_{-\alpha,-\lambda+\varepsilon}$.
Let $\mu = \varphi_{-\alpha}(x) \in \Gamma_{-\alpha}$.
If $\mu \geqslant -\lambda+\varepsilon$, then $x \in Z$ and therefore $[t(u,v),x] \in Z$.
Otherwise, denote $\varepsilon' = \lambda+\mu$ so that $0 \leqslant \varepsilon' < \varepsilon$.
By Lemma~\ref{LemRankOneLeviWith2a} applied to the root $-\alpha$ and the parameters $-\lambda + \varepsilon \in \Rtot$ and $\varepsilon-\varepsilon' > 0$, 
the set
\[Z' = U_{\alpha,-(-\lambda+\varepsilon)+(\varepsilon-\varepsilon')} T_b U_{-\alpha,-\lambda+\varepsilon} U_{-2\alpha,2(-\lambda+\varepsilon)-(\varepsilon-\varepsilon')} = U_{\alpha,-\mu} T_b U_{-\alpha,-\lambda+\varepsilon} U_{-2\alpha,-2\lambda+\varepsilon+\varepsilon'}\]
is a group.
Using Proposition~\ref{PropValuedCoset}~\ref{PropValuedCoset:reverselambda}, we write $x=ymz$, with $m \in M_{-\alpha,\mu} = M_{\alpha,-\mu}$ and $y,z \in U_{\alpha,-\mu}$.

Denote $t=t(u,v)$.
Then \[[t,x] = tymzt^{-1}z^{-1}m^{-1}y^{-1} = [t,y] y [t,m] m [t,z] m^{-1} y^{-1}.\]
We have $[t,m] \in T_b \subset Z'$ since $\nu([t,m]) = [1,r_{-\alpha,\mu}] = \operatorname{id}$.

As in Remark~\ref{RkVarChangeinT}, we have
\[
L_{\alpha,\lambda}^\varepsilon
= \langle U_{\alpha,\lambda}, U_{-\alpha,-\lambda+\varepsilon} \rangle
= \langle U_{\alpha,-\mu+\varepsilon'}, U_{-\alpha,\mu+\varepsilon-\varepsilon'}\rangle
 \subset \langle U_{\alpha,-\mu}, U_{-\alpha,\mu+\varepsilon-\varepsilon'}\rangle
= L_{\alpha,-\mu}^{\varepsilon-\varepsilon'}
\]
By inclusion (\ref{eqnCommutator5}), we have $[t,y],[t,z] \in U_{\alpha,-\mu+\varepsilon-\varepsilon'} U_{2\alpha,-2\mu+\varepsilon-\varepsilon'} \subset U_{\alpha,-\mu} \subset Z'$.

By Lemma~\ref{LemConjugationMaUb}, we have \[m U_{\alpha,-\mu+\varepsilon} U_{2\alpha,-2\mu+\varepsilon} m^{-1} = U_{-\alpha,-\mu+\varepsilon+2\mu} U_{-2\alpha,-2\mu+\varepsilon+4\mu}=U_{-\alpha,\mu+\varepsilon}U_{-2\alpha,2\mu+\varepsilon}.\]
Since $\mu+\varepsilon \geqslant -\lambda+\varepsilon$ and $2\mu+\varepsilon \geqslant -2\lambda+\varepsilon+\varepsilon'$, we have $m [t,z] m^{-1} \in U_{-\alpha,-\lambda+\varepsilon} U_{-2\alpha,-2\lambda+\varepsilon+\varepsilon'} \subset Z'$.
Since $y \in Z'$, we get that $[t,x] \in Z'$ as a product of such elements.
Moreover, $[t,x] \in U_{-\alpha}$ since $T$  normalizes $U_{-\alpha}$.
Hence, we have either $[t,x] \in Z \cap U_{-\alpha}$ or $[t,x] \in Z' \cap U_{-\alpha}$.
By uniqueness in axiom~\ref{axiomRGD6}, we get $U_{-\alpha,-\lambda+\varepsilon} U_{-2\alpha,-2\lambda+\varepsilon+\varepsilon'} = Z' \cap U_{-\alpha} \subset Z \cap U_{-\alpha} = U_{-\alpha,-\lambda+\varepsilon} U_{-2\alpha,-2\lambda+\varepsilon}$ and we are done. 

Since $L'_{\alpha,\lambda}$ is the increasing union $L'_{\alpha,\lambda}= \bigcup_{\varepsilon > 0} L_{\alpha,\lambda}^\varepsilon$,
and so is $T'_{\alpha,\lambda}= \bigcup_{\varepsilon > 0} T_{\alpha,\lambda}^\varepsilon$,
we get that the commutator subgroup $[T'_{\alpha,\lambda},U_{\alpha,\lambda}]$ (resp. $[T'_{\alpha,\lambda},U_{-\alpha,-\lambda}]$) is the increasing union of the commutator subgroups
$\bigcup_{\varepsilon > 0} [T^\varepsilon_{\alpha,\lambda},U_{\alpha,\lambda}] \subset \bigcup_{\varepsilon > 0} U_{\alpha,\lambda+\varepsilon} U_{2\alpha,2\lambda+\varepsilon}$

The latter union is equal to $U'_{\alpha,\lambda}$.
Indeed it contains $U'_{\alpha,\lambda}$
by definition.
If $\varepsilon >0$ satisfies $2\lambda + \varepsilon \in \Gamma_{2\alpha}$, then by Fact~\ref{FactDecompositionSetOfValue}, there exists $\mu \in \Gamma_\alpha$ such that $2\mu = 2\lambda + \varepsilon$.
Denote $\varepsilon' = \mu - \lambda$. Then $\varepsilon' > 0$ since $2\varepsilon' = \varepsilon > 0$.
Thus $U_{2\alpha,2\lambda + \varepsilon} \subset U_{\alpha,\lambda+\varepsilon'} \subset U'_{\alpha,\lambda}$.
Otherwise, for any $u \in U_{2\alpha,2\lambda+\varepsilon}$, we have $\varphi_{2\alpha}(u) > 2\lambda+\varepsilon$.
Hence, there is $\varepsilon' > 0$ such that $u \in U_{2\alpha,2\lambda + \varepsilon + \varepsilon'} \subset U_{\alpha,\lambda + \min(\varepsilon,\varepsilon')} \subset U'_{\alpha,\lambda}$.
Thus we get $U_{2\alpha,2\lambda+\varepsilon} \subset U'_{\alpha,\lambda}$ for every $\varepsilon > 0$ so that we get the desired equality.
As a consequence, we get inclusion~(\ref{eqnCommutator7}).
By symmetry, we obtain inclusion~(\ref{eqnCommutator8}).

\end{proof}
%}

\subsection{Subgroups generated by unipotent elements}

In this section, we work under data and notations of~\ref{HypCAVRGD}.
Moreover, we fix a non-empty subset $\Omega \subset \mathbb{A}_\Rtot$\index[notation]{o@$\Omega$}.
For simplicity, we will forget the chosen origin $o$ of $\mathbb{A}_\Rtot$ in this section, i.e. we will denote by $\alpha(x)$ the quantity $\alpha(x-o)$ by abuse of notation.

\begin{Rq}
In Bruhat-Tits theory, some results on groups are expressed in terms of concave functions $f$ \cite[§6.4]{BruhatTits1} associated to abstract groups endowed with the structure of a valued root group datum.
In fact, only some well-chosen concave functions are considered for reductive groups over local fields. For instance, the optimized function $f'$ associated to a concave function $f$ is defined in \cite[4.5.2]{BruhatTits2} as:
\[ f'(\alpha) = \inf \{ \lambda \in \Gamma'_\alpha,\ \lambda \geqslant f(\alpha) \text{ or, when } \frac{\alpha}{2} \in \Phi,\ \frac{\lambda}{2} \geqslant f(\frac{\alpha}{2}) \}.\]
In  Bruhat-Tits theory,  since the valuation group $\Lambda$ is contained in $\R$, $f'(\alpha)$ is well defined and is an element of $\Gamma'_\alpha$ so that it is relevant to work with the group $U_{\alpha,f'(\alpha)}$.
More specifically, for a subset $\Omega \in \mathbb{A}_{\mathbb{R}}$, some concave functions $f_\Omega$ can be considered in order to define parahoric subgroups $\widehat{P}_\Omega\subset G$.
Once the building $\I$ has been constructed, this subgroup $\widehat{P}_\Omega$ turns out to be the pointwise stabilizer of $\Omega$.

Recall that the quasi-concave maps were defined by:
\[f_\Omega(\alpha) = \inf\{\lambda \in \Rtot,\ \forall x \in \Omega,\ \alpha(x) + \lambda \geqslant 0\} = \inf \bigcap_{x \in \Omega} [-\alpha(x),+\infty[ = \sup \{ -\alpha(x),\ x \in \Omega\}\]
Here, we do not use infimum and supremum, so that we replace this definition by some intersections or unions of groups.

Note that in the Bruhat-Tits theory, such a function $f_\Omega$ satisfies $f_\Omega(2\alpha) = 2 f_\Omega(\alpha)$ which is the case of equality of the concave inequality $2 f(\alpha) \geqslant f(2\alpha)$.
This equality induces an equality of groups $U_{\alpha,f} = U_{\alpha,f} U_{2\alpha,f}$ in \cite[§6.4]{BruhatTits1}.
\end{Rq}

\begin{Not}
We denote by:
\begin{align*}
U_{\alpha,\Omega} &= \bigcap_{x \in \Omega} U_{\alpha,-\alpha(x)}
&U'_{\alpha,\Omega} &= \bigcap_{x \in \Omega} U'_{\alpha,-\alpha(x)}
\end{align*}\index[notation]{u@$U_{\alpha,\Omega}$}\index[notation]{u@$U'_{\alpha,\Omega}$}

We denote by $U_\Omega$ (resp. $U'_\Omega$) the subgroup of $G$ generated by the union of subgroups $U_{\alpha,\Omega}$ (resp. $U'_{\alpha,\Omega}$) for $\alpha \in \Phi$.\index[notation]{u@$U_{\Omega}$}\index[notation]{u@$U'_{\Omega}$}

It is convenient to introduce the notation $U_{2\alpha,\Omega}$ (resp. $U'_{2\alpha,\Omega}$) as being the trivial group when $\alpha \in \Phi$ and $2\alpha \not\in \Phi$.

If it is given a choice of positive roots $\Phi^+$, we denote $U^+$ and $U^-$ as in~\ref{axiomRGD6}. We denote by:
\begin{align*}
U^+_{\Omega} &= U_{\Omega} \cap U^+
&U'^+_{\Omega} &= U'_{\Omega} \cap U^+\\
U^-_{\Omega} &= U_{\Omega} \cap U^-
&U'^-_{\Omega} &= U'_{\Omega} \cap U^-\\
T_{\Omega} &= U_{\Omega} \cap T
&T'_{\Omega} &= U'_{\Omega} \cap T\\
\widetilde{N}_{\Omega} &= U_{\Omega} \cap N
&\widetilde{N}'_{\Omega} &= U'_{\Omega} \cap N\\
\end{align*}
\index[notation]{u@$U^+_{\Omega}$}
\index[notation]{u@$U^-_{\Omega}$}
\index[notation]{u@$U'^+_{\Omega}$}
\index[notation]{u@$U'^-_{\Omega}$}
\index[notation]{t@$T_{\Omega}$}
\index[notation]{t@$T'_{\Omega}$}
\index[notation]{n@$\widetilde{N}_{\Omega}$}
\index[notation]{n@$\widetilde{N}'_{\Omega}$}

If $\Omega = \{x\}$, we denote $U_{\alpha,x}$, $U_x$, etc. instead of $U_{\alpha,\{x\}}$, $U_{\{x\}}$, etc.
\end{Not}

Note that the notation $\widetilde{N}_\Omega$ was not introduced in \cite{BruhatTits1}.
In the context of algebraic groups, under favourable assumptions (typically a simply-connectedness assumption), the rational points of a group are generated by the unipotent elements.
That is why we denote with a $\widetilde{N}_\Omega$ the $N$-component of the group $U_\Omega$ generated by some unipotent elements.

\begin{Fact}
By definition, we have the following equalities:
\[ U_{\alpha,\Omega} = \bigcap_{x \in \Omega} U_{\alpha,x} = \varphi_\alpha^{-1}\left( \bigcap_{x \in \Omega} [-\alpha(x),\infty] \right)\]
and
\[ U'_{\alpha,\Omega} = \bigcap_{x \in \Omega} U'_{\alpha,x} = \varphi_\alpha^{-1}\left( \bigcap_{x \in \Omega} ]-\alpha(x),\infty] \right).\]
\end{Fact}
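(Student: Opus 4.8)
The statement to be proved is the \textbf{Fact} immediately following the definitions of $U_{\alpha,\Omega}$ and $U'_{\alpha,\Omega}$: namely that
\[ U_{\alpha,\Omega} = \bigcap_{x\in\Omega} U_{\alpha,x} = \varphi_\alpha^{-1}\Bigl(\bigcap_{x\in\Omega}[-\alpha(x),\infty]\Bigr),\qquad U'_{\alpha,\Omega}=\bigcap_{x\in\Omega}U'_{\alpha,x}=\varphi_\alpha^{-1}\Bigl(\bigcap_{x\in\Omega}\,]-\alpha(x),\infty]\Bigr). \]
This is essentially a bookkeeping assertion unwinding notation, so the plan is simply to trace definitions carefully. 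The first equalities in each line, $U_{\alpha,\Omega}=\bigcap_{x\in\Omega}U_{\alpha,x}$ and $U'_{\alpha,\Omega}=\bigcap_{x\in\Omega}U'_{\alpha,x}$, are immediate from the Notation block defining $U_{\alpha,\Omega}=\bigcap_{x\in\Omega}U_{\alpha,-\alpha(x)}$ together with the abbreviation $U_{\alpha,x}:=U_{\alpha,\{x\}}=U_{\alpha,-\alpha(x)}$ (and likewise with primes); there is nothing to do beyond citing those definitions.

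For the second equalities I would argue as follows. By Definition~\ref{DefValuation}, axiom~\ref{axiomV1}, for each $x\in\Omega$ one has $U_{\alpha,-\alpha(x)}=\varphi_\alpha^{-1}\bigl([-\alpha(x),\infty]\bigr)$, and by Notation~\ref{NotLeviRankOne} one has $U'_{\alpha,-\alpha(x)}=\varphi_\alpha^{-1}\bigl(\,]-\alpha(x),\infty]\bigr)$. Intersecting over $x\in\Omega$ and using that preimage commutes with intersection, i.e.\ $\bigcap_{x}\varphi_\alpha^{-1}(I_x)=\varphi_\alpha^{-1}\bigl(\bigcap_x I_x\bigr)$ for any family of subsets $I_x\subset\Rtot\cup\{\infty\}$, gives
\[ U_{\alpha,\Omega}=\bigcap_{x\in\Omega}\varphi_\alpha^{-1}\bigl([-\alpha(x),\infty]\bigr)=\varphi_\alpha^{-1}\Bigl(\bigcap_{x\in\Omega}[-\alpha(x),\infty]\Bigr), \]
and similarly for $U'_{\alpha,\Omega}$ with half-open intervals. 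One should also note for completeness that $\Omega$ is assumed non-empty, so the intersections are genuine intersections of a non-empty family and $U_{\alpha,\Omega}$ is in particular still a subgroup of $U_\alpha$ (an intersection of subgroups), consistent with the Notation's use of the word ``subgroups''; and that the convention $U_{2\alpha,\Omega}=\{1\}$ when $2\alpha\notin\Phi$ is compatible with the displayed formulas under the convention $U_{2\alpha,\mu}=\{1\}$ already fixed after Definition~\ref{DefValuation}.

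There is no real obstacle here: the only thing to be careful about is the direction of the interval endpoints — $U_{\alpha,\lambda}=\varphi_\alpha^{-1}([\lambda,\infty])$ with $\lambda=-\alpha(x)$, matching the sign conventions set in the Notation block $D_{\alpha,\lambda}=\{x:\lambda\ge-\alpha(x)\}$ — and the fact that $U'_{\alpha,\lambda}$ uses the \emph{open} lower endpoint $]\lambda,\infty]$ by its very definition $U'_{\alpha,\lambda}=\varphi_\alpha^{-1}(\,]\lambda,+\infty])=\bigcup_{\mu>\lambda}U_{\alpha,\mu}$. Once these conventions are aligned, the proof is a single line of ``preimage commutes with intersection,'' so I would present it as a two-sentence proof rather than anything longer.
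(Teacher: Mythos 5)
Your proposal is correct and matches the paper, which states this Fact without proof precisely because it is the definitional unwinding you describe: $U_{\alpha,x}=U_{\alpha,-\alpha(x)}=\varphi_\alpha^{-1}([-\alpha(x),\infty])$ by axiom~\ref{axiomV1}, $U'_{\alpha,x}=\varphi_\alpha^{-1}(\,]-\alpha(x),\infty])$ by Notation~\ref{NotLeviRankOne}, and preimages commute with intersections. Nothing is missing.
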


Note that the intersections $\bigcap_{x \in \Omega} [-\alpha(x),\infty]$ and $\bigcap_{x \in \Omega} ]-\alpha(x),\infty]$ are convex subsets of $\Rtot$ that may not be intervals of $\Rtot$ when $\Rtot \neq \mathbb{R}$.

\begin{Fact}
The group $T_b$ normalizes $U_{\alpha,\Omega}$ and $U'_{\alpha,\Omega}$ for any $\alpha \in \Phi$.
\end{Fact}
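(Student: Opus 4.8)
The final statement to prove is the Fact that $T_b$ normalizes $U_{\alpha,\Omega}$ and $U'_{\alpha,\Omega}$ for any $\alpha \in \Phi$.

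\medskip

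The plan is to reduce this to Corollary~\ref{CorActionNUa}, which already tells us that $\ker \nu = T_b$ normalizes each individual group $U_{\alpha,\lambda}$ for $\lambda \in \Rtot$. First I would recall that by the displayed Fact immediately preceding, we have the descriptions
\[ U_{\alpha,\Omega} = \bigcap_{x \in \Omega} U_{\alpha,-\alpha(x)} \qquad \text{and} \qquad U'_{\alpha,\Omega} = \bigcap_{x \in \Omega} U'_{\alpha,-\alpha(x)}. \]
So $U_{\alpha,\Omega}$ and $U'_{\alpha,\Omega}$ are intersections of the groups $U_{\alpha,\lambda}$ (resp. $U'_{\alpha,\lambda}$) as $\lambda = -\alpha(x)$ ranges over a subset of $\Rtot$. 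Since a group that normalizes each member of a family of subgroups automatically normalizes their intersection, it suffices to know that $T_b$ normalizes each $U_{\alpha,-\alpha(x)}$ and each $U'_{\alpha,-\alpha(x)}$.

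\medskip

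For the groups $U_{\alpha,\lambda}$, this is exactly Corollary~\ref{CorActionNUa}: taking $n \in T_b = \ker \nu$, we have $\nu(n^{-1}) = \operatorname{id}$ and ${^v\!}\nu(n) = \operatorname{id}$, so $\alpha(\nu(n^{-1})(o) - o) = 0$, whence $n U_{\alpha,\lambda} n^{-1} = U_{{^v\!}\nu(n)(\alpha),\lambda} = U_{\alpha,\lambda}$ for every $\lambda \in \Rtot$. For the groups $U'_{\alpha,\lambda}$, recall from Notation~\ref{NotLeviRankOne} that $U'_{\alpha,\lambda} = \bigcup_{\mu > \lambda} U_{\alpha,\mu}$ is an increasing union; conjugation by $n \in T_b$ is a group automorphism of $G$, hence commutes with this union, and since $n U_{\alpha,\mu} n^{-1} = U_{\alpha,\mu}$ for each $\mu > \lambda$ by the previous sentence, we get $n U'_{\alpha,\lambda} n^{-1} = U'_{\alpha,\lambda}$. (Alternatively, $U'_{\alpha,\lambda} = \varphi_\alpha^{-1}(\,]\lambda,\infty]\,)$, so the same argument as for $U_{\alpha,\lambda}$ applies with the half-open interval in place of the closed one.)

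\medskip

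Putting these together: for $n \in T_b$ and any $\alpha \in \Phi$,
\[ n\, U_{\alpha,\Omega}\, n^{-1} = n \Big( \bigcap_{x \in \Omega} U_{\alpha,-\alpha(x)} \Big) n^{-1} = \bigcap_{x \in \Omega} n\, U_{\alpha,-\alpha(x)}\, n^{-1} = \bigcap_{x \in \Omega} U_{\alpha,-\alpha(x)} = U_{\alpha,\Omega}, \]
and likewise for $U'_{\alpha,\Omega}$, using that conjugation by $n$ permutes the subsets of $G$ bijectively so that it commutes with arbitrary intersections. This establishes the Fact. There is essentially no obstacle here — the only point requiring a word of care is the observation that conjugation commutes with intersections (and with the increasing union defining $U'$), which is immediate since $g \mapsto n g n^{-1}$ is a bijection of $G$; the real content has already been done in Corollary~\ref{CorActionNUa}.
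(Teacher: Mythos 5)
Your proof is correct and follows essentially the same route as the paper: both reduce to Corollary~\ref{CorActionNUa} (which gives that $T_b=\ker\nu$ normalizes each $U_{\alpha,\lambda}$), handle $U'_{\alpha,\lambda}$ as an increasing union of such groups, and conclude by noting that conjugation commutes with the intersections over $x\in\Omega$. Nothing is missing; your write-up is just a more explicit version of the paper's one-line argument.
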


\begin{proof}
For any $x \in \Omega$ and any $\varepsilon \geqslant 0$, we know by Corollary\ref{CorActionNUa} that $T_b$ normalizes $U_{\alpha,-\alpha(x)+\varepsilon}$.
It remains true by taking increasing unions and intersections of these groups.
\end{proof}

\begin{Lem}\label{LemU2ainUa}
For any $\alpha \in \Phi_{\mathrm{nd}}$, we have $U_{2\alpha,\Omega} \subset U_{\alpha,\Omega}$.
\end{Lem}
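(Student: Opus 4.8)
The statement $U_{2\alpha,\Omega} \subset U_{\alpha,\Omega}$ for a non-divisible root $\alpha$ (so that $2\alpha$ may or may not lie in $\Phi$) should follow by unwinding the definitions and using axiom~\ref{axiomV4}. First I would dispose of the trivial case: if $2\alpha \notin \Phi$, then by convention $U_{2\alpha,\Omega}$ is the trivial group and the inclusion is immediate. So assume $2\alpha \in \Phi$.

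\textbf{The main computation.} Take $u \in U_{2\alpha,\Omega}$; I must show $u \in U_{\alpha,\Omega}$, i.e. $\varphi_\alpha(u) \in \bigcap_{x \in \Omega} [-\alpha(x),\infty]$, equivalently $\varphi_\alpha(u) \geqslant -\alpha(x)$ for every $x \in \Omega$. Recall $U_{2\alpha,\Omega} = \bigcap_{x\in\Omega} U_{2\alpha,-2\alpha(x)} = \varphi_{2\alpha}^{-1}\big(\bigcap_{x\in\Omega}[-2\alpha(x),\infty]\big)$, so $\varphi_{2\alpha}(u) \geqslant -2\alpha(x)$ for all $x \in \Omega$. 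Since $\alpha$ is multipliable (as $2\alpha \in \Phi$), axiom~\ref{axiomV4} tells us $\varphi_{2\alpha}$ is the restriction of $2\varphi_\alpha$ to $U_{2\alpha}$, so $2\varphi_\alpha(u) = \varphi_{2\alpha}(u) \geqslant -2\alpha(x) = 2(-\alpha(x))$ for every $x \in \Omega$. Since $\Rtot$ is a torsion-free abelian group and in particular $2$-torsion-free and $2$-divisibility is order-preserving (multiplication by the positive integer $2$ preserves the order because $\Rtot$ is totally ordered and torsion-free, so $2a \geqslant 2b \Rightarrow a \geqslant b$), I can cancel the factor $2$ to obtain $\varphi_\alpha(u) \geqslant -\alpha(x)$ for all $x \in \Omega$. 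Hence $u \in \bigcap_{x\in\Omega} U_{\alpha,-\alpha(x)} = U_{\alpha,\Omega}$, which is exactly what is required.

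\textbf{Expected obstacle.} There is essentially no obstacle here; the only point requiring a moment's care is the cancellation of $2$ in the inequality $2\varphi_\alpha(u) \geqslant 2(-\alpha(x))$, which uses that $\Rtot$ is a totally ordered torsion-free abelian group (already part of the standing hypotheses on $\Rtot$). One should also note $\varphi_\alpha(u) \neq \infty$ is not needed — the inequality works equally well with the convention $\lambda < \infty$ — and that $U_{2\alpha} \subset U_\alpha$ by axiom~\ref{axiomRGD3}, so that $\varphi_\alpha(u)$ is even defined. I would write the argument in three or four lines, citing axiom~\ref{axiomV4} and the torsion-freeness of $\Rtot$ as the only inputs.
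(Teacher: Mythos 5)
Your proof is correct and is essentially the paper's own argument: the paper likewise invokes axiom~\ref{axiomV4} to get $U_{2\alpha,2\lambda}\subset U_{\alpha,\lambda}$ for each $\lambda\in\Rtot$ (which is exactly your element-level cancellation of the factor $2$, valid in any totally ordered abelian group) and then intersects over $x\in\Omega$. Nothing is missing.
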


\begin{proof}
By axiom~\ref{axiomV4}, we have $U_{2\alpha,2\lambda} \subset U_{\alpha,\lambda}$ for any $\lambda \in \Rtot$.
Hence
\[U_{2\alpha,\Omega} = \bigcap_{x \in \Omega} U_{2\alpha,x} = \bigcap_{x \in \Omega} U_{2\alpha,-2\alpha(x)} \subset \bigcap_{x \in \Omega} U_{\alpha,-\alpha(x)} = \bigcap_{x \in \Omega} U_{\alpha,x} = U_{\alpha,\Omega}.\]
\end{proof}

\begin{Not}\label{NotLevialphaOmega}
Let $\alpha \in \Phi$.
We denote by $L_{\alpha,\Omega}$ (resp. $L'_{\alpha,\Omega}$) the subgroup of $G$ generated by $U_{\alpha,\Omega}$ and $U_{-\alpha,\Omega}$ (resp. $U_{\alpha,\Omega}$ and $U'_{-\alpha,\Omega}$).
\index[notation]{l@$L_{\alpha,\Omega}$}\index[notation]{l@$L'_{\alpha,\Omega}$}
We denote 
\begin{align*}
T_{\alpha,\Omega} &= L_{\alpha,\Omega} \cap T &
T'_{\alpha,\Omega} &= L'_{\alpha,\Omega} \cap T \\
\widetilde{N}_{\alpha,\Omega} &= L_{\alpha,\Omega} \cap N &
\widetilde{N}'_{\alpha,\Omega} &= L'_{\alpha,\Omega} \cap N.
\end{align*}
\index[notation]{t@$T_{\alpha,\Omega}$}\index[notation]{t@$T'_{\alpha,\Omega}$}
\index[notation]{n@$\widetilde{N}_{\alpha,\Omega}$}\index[notation]{n@$\widetilde{N}'_{\alpha,\Omega}$}
\end{Not}

Note that since $\Omega$ is not empty, there exists some point $x \in \Omega$ so that we have $L_{\alpha,x} \supset L_{\alpha,\Omega}$ since $U_{\alpha,x} \supset U_{\alpha,\Omega}$ and $U_{-\alpha,x} \supset U_{-\alpha,\Omega}$.
Therefore $T_{\alpha,\Omega}$ and $T'_{\alpha,\Omega}$ are subgroups of $T_b$ according to Proposition~\ref{PropRankOneLevi}\ref{PropRankOneLevi:tori} applied to $L_{\alpha,x} = L_{\alpha,-\alpha(x)}$.

Note that, $L_{\alpha,\Omega} = L_{-\alpha,\Omega}$ by definition, but $L'_{-\alpha,\Omega}$ may differ from $L'_{\alpha,\Omega}$.

The following Lemma corresponds to \cite[6.4.7 (QC1)]{BruhatTits1} with a different proof since we do not use concave maps.

\begin{Lem}\label{LemQC1}
For any $\alpha \in \Phi$, we have:
\begin{equation}
\tag{QC1}
 L_{\alpha,\Omega}
= U_{-\alpha,\Omega} U_{\alpha,\Omega} \widetilde{N}_{\alpha,\Omega}
= U_{\alpha,\Omega} U_{-\alpha,\Omega} \widetilde{N}_{\alpha,\Omega}\label{eqQC1}
\end{equation}\axiom{QC1@\ref{axiomQC1}}
\begin{equation}
\tag{QC1'}
 \widetilde{N}'_{\alpha,\Omega} = T'_{\alpha,\Omega} \subset T_b
\qquad \text{ and } \qquad
L'_{\alpha,\Omega} = U_{\alpha,\Omega} T'_{\alpha,\Omega} U'_{-\alpha,\Omega} = U'_{-\alpha,\Omega} T'_{\alpha,\Omega} U_{\alpha,\Omega}.\label{eqQC1'}
\end{equation}\axiom{QC1'@(\ref{eqQC1'})}
\end{Lem}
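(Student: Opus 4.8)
The plan is to establish \emph{(QC1')} first and then bootstrap \emph{(QC1)} from it together with the single-value rank-one Levi decompositions of Proposition~\ref{PropRankOneLevi}. Recall throughout that $\Omega\neq\emptyset$, that $U_{\alpha,\Omega}=\varphi_\alpha^{-1}\big(\bigcap_{x\in\Omega}[-\alpha(x),\infty]\big)$ while $U'_{-\alpha,\Omega}=\varphi_{-\alpha}^{-1}\big(\bigcap_{x\in\Omega}\,]\alpha(x),\infty]\big)$, and that $T'_{\alpha,\Omega}\subset T_b$ normalizes $U_{\alpha,\Omega}$ and $U'_{-\alpha,\Omega}$ (observed just before the statement, via Proposition~\ref{PropRankOneLevi}\ref{PropRankOneLevi:tori}). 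We also use repeatedly that $T_b$ normalizes each $U_{\pm\alpha,\Omega}$ and $U'_{\pm\alpha,\Omega}$, and the identity $mU_{\alpha,\lambda}m^{-1}=U_{-\alpha,-\lambda}$ from Lemma~\ref{LemConjugationMaUb}.

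For \emph{(QC1')} I would set $X:=U_{\alpha,\Omega}\,T'_{\alpha,\Omega}\,U'_{-\alpha,\Omega}$ and show it is a subgroup of $L'_{\alpha,\Omega}$. Stability under right multiplication by $T'_{\alpha,\Omega}$ and by $U'_{-\alpha,\Omega}$, and under left multiplication by $U_{\alpha,\Omega}$ and by $T'_{\alpha,\Omega}$, is immediate since $T'_{\alpha,\Omega}\subset T_b$ normalizes the two root groups. The only real point is stability under left multiplication by $U'_{-\alpha,\Omega}$: given nontrivial $v\in U'_{-\alpha,\Omega}$ and $u\in U_{\alpha,\Omega}$, fixing any $x_0\in\Omega$ one has $\varphi_\alpha(u)\geqslant-\alpha(x_0)$ and $\varphi_{-\alpha}(v)>\alpha(x_0)$ (strict, since $v$ lies in the ``open'' group), hence $\varphi_\alpha(u)+\varphi_{-\alpha}(v)>0$. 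Lemma~\ref{LemTripleLeviCommutation} then produces $vu=u'tv'$ with $u'\in U_\alpha$, $v'\in U_{-\alpha}$, $t\in T_b$, $\varphi_\alpha(u')=\varphi_\alpha(u)$ and $\varphi_{-\alpha}(v')=\varphi_{-\alpha}(v)$; thus $u'\in U_{\alpha,\Omega}$, $v'\in U'_{-\alpha,\Omega}$, and $t=(u')^{-1}vu(v')^{-1}\in L'_{\alpha,\Omega}\cap T_b=T'_{\alpha,\Omega}$, so $vu\in X$. Therefore $X=L'_{\alpha,\Omega}$ (it is a subgroup containing $U_{\alpha,\Omega}$ and $U'_{-\alpha,\Omega}$), and applying the inverse map gives the second factorization; finally $\widetilde N'_{\alpha,\Omega}=L'_{\alpha,\Omega}\cap N=T'_{\alpha,\Omega}\subset T_b$ by Lemma~\ref{LemTripleLeviCommutation}\ref{LemTripleLeviGroup} applied with $X_\alpha=U_{\alpha,\Omega}$, $X_{-\alpha}=U'_{-\alpha,\Omega}$, $H=T'_{\alpha,\Omega}$.

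For \emph{(QC1)} the new phenomenon is that the ``closed'' group $U_{-\alpha,\Omega}$ may contain elements whose $\varphi_{-\alpha}$-value sits at the boundary, so a product $vu$ with $\varphi_\alpha(u)+\varphi_{-\alpha}(v)=0$ can leave $U_{\alpha,\Omega}T_bU_{-\alpha,\Omega}$ and fall into $M_\alpha U_{-\alpha}$. I would first pin this down: from $L_{\alpha,\Omega}\subset\bigcap_{x\in\Omega}L_{\alpha,-\alpha(x)}$, Proposition~\ref{PropRankOneLevi}\ref{PropRankOneLevi:inGamma} and~\ref{PropRankOneLevi:notinGamma}, and the pairwise disjointness of the cosets $M_{\alpha,\mu}$ (a consequence of Proposition~\ref{PropValuedCoset}\ref{PropValuedCoset:insideLevi}), one gets that $M_\alpha\cap L_{\alpha,\Omega}\neq\emptyset$ if and only if $\alpha$ is constant on $\Omega$, equal to some $\mu$ with $-\mu\in\Gamma_\alpha$, in which case $M_\alpha\cap L_{\alpha,\Omega}=M_{\alpha,-\mu}$. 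If this set is empty, I put $H:=L_{\alpha,\Omega}\cap T_b$ and run the $X:=U_{\alpha,\Omega}HU_{-\alpha,\Omega}$ argument exactly as for \emph{(QC1')}: for nontrivial $u\in U_{\alpha,\Omega}$, $v\in U_{-\alpha,\Omega}$ one cannot have $\varphi_\alpha(u)+\varphi_{-\alpha}(v)=0$, since equality would force $\varphi_{-\alpha}(v)=\alpha(x)$ for every $x\in\Omega$, hence $\alpha$ constant on $\Omega$ with value in $\Gamma_{-\alpha}$, whence $M_\alpha\cap L_{\alpha,\Omega}\neq\emptyset$, a contradiction; so Lemma~\ref{LemTripleLeviCommutation} applies and yields $L_{\alpha,\Omega}=U_{\alpha,\Omega}HU_{-\alpha,\Omega}$ with $\widetilde N_{\alpha,\Omega}=H=T_{\alpha,\Omega}$, the reversed ordering following by replacing $\alpha$ by $-\alpha$ (using $L_{\alpha,\Omega}=L_{-\alpha,\Omega}$, $\widetilde N_{\alpha,\Omega}=\widetilde N_{-\alpha,\Omega}$, $M_\alpha=M_{-\alpha}$). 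If instead $M_\alpha\cap L_{\alpha,\Omega}\neq\emptyset$, then $U_{\alpha,\Omega}=U_{\alpha,-\mu}$ and $U_{-\alpha,\Omega}=U_{-\alpha,\mu}$, so $L_{\alpha,\Omega}=L_{\alpha,-\mu}$ is a single-value rank-one Levi with $-\mu\in\Gamma_\alpha$; fixing $m\in M_{\alpha,-\mu}$, Proposition~\ref{PropRankOneLevi}\ref{PropRankOneLevi:inGamma} gives $L_{\alpha,-\mu}=\big(U_{\alpha,-\mu}T_{\alpha,-\mu}U'_{-\alpha,\mu}\big)\sqcup\big(U_{\alpha,-\mu}mT_{\alpha,-\mu}U_{\alpha,-\mu}\big)$ and $\widetilde N_{\alpha,\Omega}=N_{\alpha,-\mu}=T_{\alpha,-\mu}\{1,m\}$. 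Rewriting the two cells using $mU_{\alpha,-\mu}m^{-1}=U_{-\alpha,\mu}$, that $T_{\alpha,-\mu}\subset T_b$ normalizes everything in sight, and $U_{-\alpha,\mu}=U'_{-\alpha,\mu}\sqcup\varphi_{-\alpha}^{-1}(\{\mu\})\subset U'_{-\alpha,\mu}\cup U_{\alpha,-\mu}M_{\alpha,-\mu}U_{\alpha,-\mu}$ (Proposition~\ref{PropValuedCoset}\ref{PropValuedCoset:reverselambda} and~\ref{PropValuedCoset:opposite}), one checks $L_{\alpha,\Omega}=U_{\alpha,\Omega}U_{-\alpha,\Omega}\widetilde N_{\alpha,\Omega}$, the reverse inclusion being trivial because all three factors lie in the group $L_{\alpha,\Omega}$; the other ordering is obtained symmetrically.

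The main obstacle is exactly this boundary case in \emph{(QC1)}: recognizing that $\varphi_\alpha(u)+\varphi_{-\alpha}(v)=0$ can occur only when $\alpha$ is constant on $\Omega$ with value $\mu$ such that $-\mu\in\Gamma_\alpha$, and that in that situation $L_{\alpha,\Omega}$ collapses to a genuine single-value rank-one Levi group to which Proposition~\ref{PropRankOneLevi}\ref{PropRankOneLevi:inGamma} applies verbatim. Once this dichotomy is isolated, all the rest is bookkeeping with the triple commutation Lemma~\ref{LemTripleLeviCommutation}, the conjugation identity of Lemma~\ref{LemConjugationMaUb}, and the normalization properties of $T_b$.
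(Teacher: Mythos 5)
Your argument is correct, but it takes a genuinely different route from the paper's. The paper handles (QC1) and (QC1') in one stroke: given $g\in L_{\alpha,\Omega}$ written as a word $\prod h_i$ in $U_{\alpha,\Omega}\cup U_{-\alpha,\Omega}$ (resp.\ $U_{\alpha,\Omega}\cup U'_{-\alpha,\Omega}$), it sets $\lambda=\min\varphi_\alpha(h_i)$, $\mu=\min\varphi_{-\alpha}(h_i)$ and $\varepsilon=\lambda+\mu\geqslant 0$ (resp.\ $>0$), observes that the whole word lives in the single rank-one group $L^\varepsilon_{\alpha,\lambda}\subset L_{\alpha,\Omega}$, and then simply quotes Corollary~\ref{CorLeviRankOne}; the dichotomies ($\varepsilon=0$ versus $\varepsilon>0$, $\lambda\in\Gamma_\alpha$ versus not) are thus entirely absorbed into the already-proven single-value statements. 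You instead re-run the group-closure argument directly at the level of $\Omega$ via Lemma~\ref{LemTripleLeviCommutation}, which forces you to isolate the boundary case $\varphi_\alpha(u)+\varphi_{-\alpha}(v)=0$ and to prove the dichotomy ``$M_\alpha\cap L_{\alpha,\Omega}\neq\emptyset$ iff $\alpha$ is constant on $\Omega$ with value $\mu$, $-\mu\in\Gamma_\alpha$'', after which $L_{\alpha,\Omega}$ collapses to $L_{\alpha,-\mu}$ and Proposition~\ref{PropRankOneLevi}\ref{PropRankOneLevi:inGamma} applies verbatim. Both routes are sound; the paper's min-of-valuations trick is shorter and avoids the case analysis, while yours makes explicit the geometric content (when $L_{\alpha,\Omega}$ is literally a single-value Levi) and gives the slightly finer byproduct that $\widetilde N_{\alpha,\Omega}=T_{\alpha,\Omega}$ unless $\alpha$ is constant on $\Omega$ with value in $-\Gamma_\alpha$. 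One small citation quibble: the pairwise disjointness of the $M_{\alpha,\mu}$ follows from Lemma~\ref{LemMuNu} (distinct $\mu$ give distinct reflections $r_{\alpha,\mu}$) rather than from Proposition~\ref{PropValuedCoset}\ref{PropValuedCoset:insideLevi}; this does not affect the validity of your argument.
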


Note that the group $U_\alpha$ may not be commutative, when $\alpha$ is multipliable for instance.

\begin{proof}
Obviously, the set $ U_{\alpha,\Omega} U_{-\alpha,\Omega} \widetilde{N}_{\alpha,\Omega}$ is contained in $L_{\alpha,\Omega}$ and the set $U_{\alpha,\Omega} U'_{-\alpha,\Omega} \widetilde{N}'_{\alpha,\Omega}$ is contained in $L'_{\alpha,\Omega}$ by definition. 
Let $g \in L_{\alpha,\Omega}$ (resp. $L'_{\alpha,\Omega}$) and write it as a product $g = \prod_{i=1}^m h_i$ with $h_{i} \in U_{\alpha,\Omega} \cup U_{-\alpha,\Omega}$ (resp. $h_{i} \in U_{\alpha,\Omega} \cup U'_{-\alpha,\Omega}$).
For $1 \leqslant i \leqslant m$, denote
\begin{align*}
\lambda_i =& \left\{
 \begin{array}{cl}
  \varphi_\alpha(h_i) & \text{ if } h_i \in U_{\alpha,\Omega},\\
  \infty & \text{ if } h_i \in U_{-\alpha,\Omega},
 \end{array}\right.&
\mu_i =& \left\{
 \begin{array}{cl}
  \infty & \text{ if } h_i \in U_{\alpha,\Omega},\\
  \varphi_{-\alpha}(h_i) & \text{ if } h_i \in U_{-\alpha,\Omega}.
 \end{array}\right.&
\end{align*}
Hence for any $x \in \Omega$, we have $\lambda_i \geqslant - \alpha(x)$ and $\mu_i \geqslant -(-\alpha)(x) = \alpha(x)$ (resp. $\mu_i > \alpha(x)$).
Let $\lambda = \min_{1 \leqslant i \leqslant m} \lambda_i$ and $\mu = \min_{1 \leqslant i \leqslant m} \mu_i$.
Then $\lambda = \lambda_i$ for some $i$ so that $U_{\alpha,\lambda} =U_{\alpha,\lambda_i} \subset U_{\alpha,x}$
and $\mu = \mu_j$ for some $j$ so that $U_{-\alpha, \mu} = U_{-\alpha,\mu_j} \subset U_{-\alpha,x}$.
Moreover $h_i \in U_{\alpha,\lambda} \cup U_{-\alpha,\mu}$ for any $1 \leqslant i \leqslant m$ by definition of $\lambda$ and $\mu$.
Let $\varepsilon = \lambda + \mu \geqslant -\alpha(x) + \alpha(x) = 0$ (resp. $\varepsilon > 0$).
Thus $g$ is in the group generated by $U_{\alpha,\lambda}$ and $U_{-\alpha,\mu} = U_{-\alpha,-\lambda+\varepsilon}$ which is $L^\varepsilon_{\alpha,\lambda} = U_{\alpha,\lambda} U_{-\alpha,-\lambda+\varepsilon} N^\varepsilon_{\alpha,\lambda} $ by Corollary~\ref{CorLeviRankOne}.
Since $U_{\alpha,\lambda} \subset U_{\alpha,x}$ for every $x \in \Omega$, we have $U_{\alpha,\lambda} \subset U_{\alpha,\Omega}$.
By the same way, $U_{-\alpha,-\lambda+\varepsilon} \subset U_{-\alpha,\Omega}$.
Hence $L^\varepsilon_{\alpha,\lambda} \subset L_{\alpha,\Omega}$.
Thus $N^\varepsilon_{\alpha,\lambda} = N \cap L^\varepsilon_{\alpha,\lambda} \subset N \cap L_{\alpha,\Omega} = \widetilde{N}_{\alpha,\Omega}$.
Moreover, $N^\varepsilon_{\alpha,\lambda} = T^\varepsilon_{\alpha,\lambda} \subset T'_{\alpha,\Omega}$ when $\varepsilon > 0$ by Proposition~\ref{PropRankOneLevi}\ref{PropRankOneLevi:normalizers}.
This gives $g \in U_{\alpha,\Omega} U_{-\alpha,\Omega} \widetilde{N}_{\alpha,\Omega}$ for any $g \in L_{\alpha,\Omega}$ (resp. $g \in U_{\alpha,\Omega} U'_{-\alpha,\Omega} T'_{\alpha,\Omega}$ for any $g \in L'_{\alpha,\Omega}$).
\

Moreover, using a Bruhat decomposition \cite[6.1.15(c)]{BruhatTits1}, it gives $L'_{\alpha,\Omega} \cap N = \widetilde{N}'_{\alpha,\Omega} \subset T$, which gives the first part of (\ref{eqQC1'}).
Since $\widetilde{N}'_{\alpha,\Omega} = T'_{\alpha,\Omega} \subset T_b$ normalizes $U_{\alpha,\Omega}$, we get the first equality of the second part of (\ref{eqQC1'}).

We get the second equality by applying inverse map.

\end{proof}

\begin{Lem}\label{LemCommutationUaU'a}
Let $\alpha \in \Phi$. The following commutator subgroups satisfy 
\begin{align*}
[U_{\alpha,\Omega},U'_{\alpha,\Omega}] &\subset U'_{2\alpha,\Omega} &
[U_{\alpha,\Omega},U'_{-\alpha,\Omega}] &\subset U'_{\alpha,\Omega} U'_{-\alpha,\Omega} T'_{\alpha,\Omega}
\end{align*}
\end{Lem}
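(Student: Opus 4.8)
The statement concerns two commutator inclusions: $[U_{\alpha,\Omega},U'_{\alpha,\Omega}] \subset U'_{2\alpha,\Omega}$ and $[U_{\alpha,\Omega},U'_{-\alpha,\Omega}] \subset U'_{\alpha,\Omega} U'_{-\alpha,\Omega} T'_{\alpha,\Omega}$. The plan is to reduce each inclusion to a pointwise statement about the rank-one data already analyzed, using the facts that $U_{\alpha,\Omega} = \bigcap_{x\in\Omega} U_{\alpha,-\alpha(x)}$ and $U'_{\alpha,\Omega} = \bigcap_{x\in\Omega} U'_{\alpha,-\alpha(x)}$, and then invoke Corollary~\ref{CorCommutationTaUa} (inclusions~(\ref{eqnCommutator7}) and~(\ref{eqnCommutator8})) and Lemma~\ref{LemQC1}.

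\textbf{First inclusion.} First I would take $u \in U_{\alpha,\Omega}$ and $v \in U'_{\alpha,\Omega}$, both nontrivial. Set $\lambda = \varphi_\alpha(u)$ and note $\varphi_\alpha(v) > \lambda'$ for a suitable comparison; more precisely, for each $x\in\Omega$ we have $\varphi_\alpha(u) \geqslant -\alpha(x)$ and $\varphi_\alpha(v) > -\alpha(x)$. Using axiom~\ref{axiomV3} applied to the pair $(\alpha,\alpha)$ (taking into account that only $r\alpha+s\beta = 2\alpha$ with $r=s=1$ can occur, since $3\alpha\notin\Phi$), the commutator $[U_{\alpha,\mu_1},U_{\alpha,\mu_2}]$ lies in $U_{2\alpha,\mu_1+\mu_2}$. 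Taking $\mu_1 = \varphi_\alpha(u)$ and $\mu_2 = \varphi_\alpha(v)$, we get $[u,v] \in U_{2\alpha,\varphi_\alpha(u)+\varphi_\alpha(v)}$. For each $x\in\Omega$, we have $\varphi_\alpha(u)+\varphi_\alpha(v) > -2\alpha(x)$ (strict because $\varphi_\alpha(v) > -\alpha(x)$ is strict), hence $[u,v] \in U_{2\alpha,-2\alpha(x)+\eta}$ for some $\eta>0$, and therefore $[u,v] \in U'_{2\alpha,-2\alpha(x)}$. Intersecting over $x\in\Omega$ gives $[u,v] \in U'_{2\alpha,\Omega}$. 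Since $U'_{2\alpha,\Omega}$ is a subgroup (being an intersection of subgroups $U'_{2\alpha,-2\alpha(x)}$), this extends from generators to the whole commutator subgroup. If $2\alpha\notin\Phi$, axiom~\ref{axiomRGD2} gives $[U_\alpha,U_\alpha]$ trivial in the relevant sense — actually $[U_{\alpha,\mu_1},U_{\alpha,\mu_2}]=1$ — and $U'_{2\alpha,\Omega}$ is trivial, so the inclusion holds vacuously.

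\textbf{Second inclusion.} For $u \in U_{\alpha,\Omega}$ and $v \in U'_{-\alpha,\Omega}$, I would pick a point $x\in\Omega$ and argue inside the rank-one group $L'_{\alpha,x} = L'_{\alpha,-\alpha(x)}$. Since $v \in U'_{-\alpha,\Omega} = \bigcup_{\varepsilon>0} U_{-\alpha,\Omega}$ after unwinding — more carefully, for each $x$ there is $\varepsilon_x > 0$ with $v \in U_{-\alpha,-\alpha(x)+\varepsilon_x}$ — I would fix $\lambda$ with $u\in U_{\alpha,\lambda}$ and $\lambda \geqslant -\alpha(x)$ for all $x$ is not automatic, so instead set $\lambda = \varphi_\alpha(u)$ and observe $\lambda \geqslant -\alpha(x)$; then $v\in U'_{-\alpha,-\lambda}$ as well (since $\varphi_{-\alpha}(v) > \alpha(x) \geqslant -\lambda$... here I need to be careful that $\lambda \geqslant -\alpha(x)$ gives $-\lambda \leqslant \alpha(x) < \varphi_{-\alpha}(v)$, so indeed $v \in U'_{-\alpha,-\lambda}$). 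Thus $[u,v] \in L'_{\alpha,\lambda}$. By Proposition~\ref{PropRankOneLevi}\ref{PropRankOneLevi:epsilon}, $L'_{\alpha,\lambda} = U_{\alpha,\lambda} U'_{-\alpha,-\lambda} T'_{\alpha,\lambda}$, and using the commutator estimates of Lemma~\ref{LemCommutatorRankOne} and Corollary~\ref{CorCommutationTaUa}, I would track that $[u,v]$ actually lands in $U'_{\alpha,\lambda} T'_{\alpha,\lambda} U'_{-\alpha,-\lambda}$. Concretely, writing the $T$-component decomposition $[u,v] = u_1 t(u,v) v_1$ with $u_1 \in U_{\alpha,\lambda}$, $v_1 \in U'_{-\alpha,-\lambda}$, $t(u,v)\in T'_{\alpha,\lambda}$ (Proposition~\ref{PropTcomponent}), and then using~(\ref{eqnCommutator3}) with the strict inequality coming from $v\in U'$ to upgrade $U_{\alpha,\lambda}$ to $U'_{\alpha,\lambda}$. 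Then I would intersect over $x\in\Omega$: each factor $U'_{\alpha,-\alpha(x)}$, $U'_{-\alpha,-\alpha(x)}$, $T'_{\alpha,x}$ intersects to $U'_{\alpha,\Omega}$, $U'_{-\alpha,\Omega}$, $T'_{\alpha,\Omega}$ respectively, but one must be careful that the decomposition factors depend on $x$; the cleanest route is to observe that $[u,v]$ lies in $L'_{\alpha,\Omega} = U_{\alpha,\Omega} T'_{\alpha,\Omega} U'_{-\alpha,\Omega}$ by~(\ref{eqQC1'}) and separately that $[u,v] \in U'_\alpha$-type refinement, then combine via uniqueness in axiom~\ref{axiomRGD6}.

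\textbf{Main obstacle.} The delicate point is the second inclusion: getting the strict (primed) refinement $U'_{\alpha,\Omega} U'_{-\alpha,\Omega}$ rather than merely $U_{\alpha,\Omega} U_{-\alpha,\Omega}$, and doing so uniformly over all $x\in\Omega$ without the intermediate decomposition data varying uncontrollably. I expect the resolution to mirror the proof of Corollary~\ref{CorCommutationTaUa}: work at a single well-chosen point, use Lemma~\ref{LemRankOneLeviWith2a} to recognize the relevant product as a group, apply the commutator estimate~(\ref{eqnCommutator3}), then use the uniqueness of the $U^+TU^-$ writing (axiom~\ref{axiomRGD6}) to pin down that $[u,v]$ lands in the primed version; finally take the intersection over $\Omega$ using that all the groups in play are normalized by $T_b \supset T'_{\alpha,\Omega}$ and that intersections of the $U'_{\pm\alpha,-\alpha(x)}$ give the $\Omega$-versions. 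The first inclusion is essentially routine given axiom~\ref{axiomV3}.
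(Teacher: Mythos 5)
Your proposal is correct and follows essentially the same route as the paper: the first inclusion via axiom~\ref{axiomV3} with the strict inequality contributed by the primed factor, and the second via the rank-one estimate~(\ref{eqnCommutator3}) combined with the uniqueness of the $U_\alpha T U_{-\alpha}$ factorization of $[u,v]$ (Lemma~\ref{LemTripleLeviCommutation}), which makes the factors independent of $x$ so that the pointwise bounds $u'\in U'_{\alpha,x}$, $v'\in U'_{-\alpha,x}$ can be intersected over $\Omega$ and force $t(u,v)\in L'_{\alpha,\Omega}\cap T=T'_{\alpha,\Omega}$. The detour through~(\ref{eqQC1'}) you suggest is harmless but unnecessary; the paper applies the unique triple decomposition directly.
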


\begin{proof}
We check it on a set of generators.
Consider $u \in U_{\alpha,\Omega}$ and $u' \in U'_{\alpha,\Omega}$.
Pick  any $x \in \Omega$ and set $\lambda=-\alpha(x) \in \Rtot$.
Let $\mu = \varphi_\alpha(u) = \lambda +\varepsilon$ and $\mu' = \varphi_\alpha(u') = \lambda+\varepsilon'$ with $\varepsilon \geqslant 0$ and $\varepsilon' > 0$.
Then, by axiom~\ref{axiomV3}, we have $[u,u'] \in U_{2\alpha,2\lambda+\varepsilon+\varepsilon'} \subset U'_{2\alpha,2\lambda} = U'_{2\alpha,x}$.
Hence $[u,u'] \in \bigcap_{x \in \Omega} U'_{2\alpha,x} = U'_{2\alpha,\Omega}$.

Consider $u \in U_{\alpha,\Omega}$ and $v \in U'_{-\alpha,\Omega}$.
For any $x \in \Omega$, let $\lambda = -\alpha(x) \in \Rtot$.
Then $u \in U_{\alpha,\lambda}$ and there exists $\varepsilon > 0$ such that $v \in U_{-\alpha,-\lambda+\varepsilon}$.
By Lemma~\ref{LemCommutatorRankOne}, we have $[u,v] \in U_{2\alpha,2\lambda+\varepsilon} U_{\alpha,\lambda+\varepsilon} t(u,v) U_{-\alpha,-\lambda+2\varepsilon} U_{-2\alpha,-2\lambda+3\varepsilon}$ where $t(u,v)$ is the $T$-component of $[u,v]$.
Hence, by Lemma~\ref{LemTripleLeviCommutation}, there are $u' \in U_\alpha$, $v' \in U_{-\alpha}$ uniquely determined such that $[u,v] = u't(u,v)v'$.
Moreover, $2\varphi_{\alpha}(u') \geqslant 2\lambda+\varepsilon > -2\alpha(x)$ and $2\varphi_{-\alpha}(v') \geqslant -2\lambda+3\varepsilon > 2\alpha(x)$.

Hence $u' \in U'_{\alpha,x}$ and $v' \in U'_{-\alpha,x}$ for any $x \in \Omega$.
Hence $u' \in U'_{\alpha,\Omega}$ and $v' \in U'_{-\alpha,\Omega}$.
Thus by Lemma~\ref{LemTripleLeviCommutation}~\ref{LemTripleLeviCommutation:2},
$t(u,v) \in L'_{\alpha,\Omega} \cap T = T'_{\alpha,\Omega}$.
Hence $[u,v] \in U'_{\alpha,\Omega} T'_{\alpha,\Omega} U'_{-\alpha,\Omega} = U'_{\alpha,\Omega} U'_{-\alpha,\Omega} T'_{\alpha,\Omega}$ since $T'_{\alpha,\Omega} \subset T_b$ normalizes $U'_{-\alpha,\Omega}$.
\end{proof}

The following Lemma corresponds to \cite[6.4.7 (QC2)]{BruhatTits1}. It is an immediate consequence of  axiom~\ref{axiomV3} and of the definitions.

\begin{Lem}\label{LemQC2}
Let $\alpha,\beta \in \Phi$ be such that $\beta \not\in - \mathbb{R}_{>0} \alpha$.
Let $U_{(\alpha,\beta),\Omega}$ (resp. $U'_{(\alpha,\beta),\Omega}$) be the subgroup of $G$ generated by the $U_{\gamma,\Omega}$ (resp. $U'_{\gamma,\Omega}$) for $\gamma \in (\alpha,\beta)$ (see Notation~\ref{NotRootSystem}).
Then the commutator subgroups satisfy:
\begin{equation}
\tag{QC2}
[U_{\alpha,\Omega},U_{\beta,\Omega} ] \subset U_{(\alpha,\beta),\Omega}
\label{eqQC2}
\end{equation}
\axiom{QC2@\ref{axiomQC2}}
\begin{equation}
\tag{QC2'}
[U_{\alpha,\Omega},U'_{\beta,\Omega} ] \subset U'_{(\alpha,\beta),\Omega}
\label{eqQC2'}
\end{equation}\axiom{QC2'@(\ref{eqQC2'})}
\end{Lem}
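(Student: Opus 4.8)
\textbf{Proof plan for Lemma~\ref{LemQC2}.} As the statement says, both inclusions follow from axiom~\ref{axiomV3} together with the definitions of $U_{\gamma,\Omega}$ and $U'_{\gamma,\Omega}$; the content is entirely bookkeeping about which subgroups $U_{\gamma,\mu}$ are forced to contain a given commutator $[u,v]$, and then intersecting over $x \in \Omega$. I would begin by reducing to generators: since $U_{\alpha,\Omega}$ is generated by its elements and similarly $U_{\beta,\Omega}$, and since the commutator map is compatible with the usual commutator identities $[ab,c] = a[b,c][c,a^{-1}]a^{-1}$ and $[a,bc] = [a,b]\,b[a,c]b^{-1}$, it suffices to check that $[u,v] \in U_{(\alpha,\beta),\Omega}$ for $u \in U_{\alpha,\Omega}$ and $v \in U_{\beta,\Omega}$ (and likewise in the primed case), using that $U_{(\alpha,\beta),\Omega}$ is normalized by $U_{\alpha,\Omega}$ and $U_{\beta,\Omega}$ — the latter normalization is itself a consequence of~\ref{axiomQC2} applied to $(\alpha,\beta)$, exactly as in the proof of Lemma~\ref{lemSaturationQC}, so this part is routine.

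For the main inclusion~(\ref{eqQC2}), fix $u \in U_{\alpha,\Omega}$ and $v \in U_{\beta,\Omega}$, both assumed nontrivial (the trivial cases being obvious). Since $\beta \not\in -\mathbb{R}_{>0}\alpha$, in particular $\beta \not\in \mathbb{R}_{\leqslant 0}\alpha$, so axiom~\ref{axiomV3} applies with $\lambda = \varphi_\alpha(u)$ and $\mu = \varphi_\beta(v)$: the commutator $[u,v]$ lies in the subgroup generated by the $U_{r\alpha+s\beta,\, r\lambda+s\mu}$ for $r,s \in \mathbb{Z}_{>0}$ with $r\alpha+s\beta \in \Phi$. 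Now for each $x \in \Omega$ I would use that $u \in U_{\alpha,-\alpha(x)}$ and $v \in U_{\beta,-\beta(x)}$, hence $\lambda \geqslant -\alpha(x)$ and $\mu \geqslant -\beta(x)$, so $r\lambda + s\mu \geqslant -r\alpha(x) - s\beta(x) = -(r\alpha+s\beta)(x)$ for all positive $r,s$; this gives $U_{r\alpha+s\beta,\,r\lambda+s\mu} \subset U_{r\alpha+s\beta,\,-(r\alpha+s\beta)(x)}$. Taking the intersection over $x \in \Omega$ yields $U_{r\alpha+s\beta,\,r\lambda+s\mu} \subset U_{\gamma,\Omega}$ where $\gamma = r\alpha+s\beta \in (\alpha,\beta)$, and therefore $[u,v] \in U_{(\alpha,\beta),\Omega}$.

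For the primed inclusion~(\ref{eqQC2'}), the argument is the same except that $v \in U'_{\beta,\Omega}$ means $\mu = \varphi_\beta(v) > -\beta(x)$ strictly for every $x \in \Omega$. Then $r\lambda + s\mu > -r\alpha(x) + (-s\beta(x)) \geqslant$ — more precisely $r\lambda \geqslant -r\alpha(x)$ and $s\mu > -s\beta(x)$ with $s > 0$, so $r\lambda + s\mu > -(r\alpha+s\beta)(x)$ — which places $U_{\gamma,\,r\lambda+s\mu}$ inside $U'_{\gamma,-\gamma(x)}$, and intersecting over $\Omega$ gives $U_{\gamma,\,r\lambda+s\mu} \subset U'_{\gamma,\Omega}$. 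Hence $[u,v] \in U'_{(\alpha,\beta),\Omega}$. I do not expect any serious obstacle here: the only mild subtlety is being careful that the strict inequality is preserved precisely because $s \geqslant 1$ (so the strict part coming from $\mu$ survives), which is why one needs $\gamma = r\alpha+s\beta$ with $s > 0$ — and this is exactly how $(\alpha,\beta)$ is defined in Notation~\ref{NotRootSystem}.
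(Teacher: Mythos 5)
Your proof is correct and is essentially identical to the paper's: both apply axiom~\ref{axiomV3} to a pair of generators and then use $\lambda \geqslant -\alpha(x)$, $\mu \geqslant -\beta(x)$ (resp.\ $\mu' > -\beta(x)$, the strictness surviving because $s>0$) for each $x\in\Omega$ to place $U_{r\alpha+s\beta,\,r\lambda+s\mu}$ inside $U_{\gamma,\Omega}$ (resp.\ $U'_{\gamma,\Omega}$). Your opening reduction via commutator identities and normalization is unnecessary, since the paper defines $[X,Y]$ as the \emph{subgroup generated by} the commutators $[x,y]$, so checking generators suffices immediately.
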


%\green{
\begin{proof}
Let $u \in U_{\alpha,\Omega}$, $v \in U_{\beta,\Omega}$ and $v' \in U'_{\beta,\Omega}$.
Let $\lambda = \varphi_\alpha(u)$, $\mu=\varphi_{\beta}(v)$ and $\mu' = \varphi_\beta(v')$.
Let $r,s\in \mathbb{Z}_{>0}$ be such that $\gamma= r\alpha+s\beta \in \Phi$.
Then for any $x \in \Omega$, we have $\lambda \geqslant -\alpha(x)$, $\mu \geqslant - \beta(x)$ and $\mu' > -\beta(x)$.
Hence $r\lambda + s\mu \geqslant -r \alpha(x) - s \beta (x) = -\gamma(x)$ and $r\lambda + s\mu' > -r \alpha(x) - s \beta (x) = -\gamma(x)$.
Thus $U_{\gamma,r\lambda + s\mu} \subset U_{\gamma,x}$ and $U_{\gamma,r\lambda + s\mu'} \subset U'_{\gamma,x}$.
Because this inclusion holds for any $x\in\Omega$, we deduce that $U_{\gamma,r\lambda + s\mu} \subset U_{\gamma,\Omega}$ and $U_{\gamma,r\lambda + s\mu'} \subset U'_{\gamma,\Omega}$.
By axiom~\ref{axiomV3}, the commutator $[u,v]$ (resp. $[u,v']$) is contained in the group generated by the $U_{r\alpha+s\beta,r\lambda + s \mu} \subset U_{r\alpha+s\beta,\Omega}$ (resp. $U_{r\alpha+s\beta,r\lambda + s \mu'} \subset U'_{r\alpha+s\beta,\Omega}$).
\end{proof}
%}

\begin{Ex}\label{ExQC}
According to Lemmas~\ref{LemQC1}(\ref{eqQC1}) and~\ref{LemQC2}(\ref{eqQC2}), the family $\left((U_{\alpha,\Omega})_{\alpha\in\Phi},Y\right)$ is quasi-concave for every subgroup $Y$ of $T_b$ since $T_b$ normalizes each $U_{\alpha,\Omega}$ for $\alpha \in \Phi$ and $U_{\alpha,\Omega} = U_{\alpha,\Omega} U_{2\alpha,\Omega}$ by Lemma~\ref{LemU2ainUa}.
In particular, for $Y=1$, we get from Proposition~\ref{PropDecompositionQC}:
\begin{enumerate}[label={(\arabic*)}]
\item\label{ExDecUOmega:1} $U_\alpha \cap U_\Omega = U_{\alpha,\Omega}$ and $U_{2\alpha} \cap U_{\Omega} = U_{2\alpha} \cap U_{\alpha,\Omega} = U_{2\alpha,\Omega}$ for any $\alpha \in \Phi_{\mathrm{nd}}$.
\item\label{ExDecUOmega:2} The product map $\displaystyle \prod_{\alpha \in \Phi^+_{\mathrm{nd}}} U_{\alpha,\Omega} \to U^+_\Omega = U_\Omega \cap U^+$ (resp.
 $\displaystyle \prod_{\alpha \in \Phi^+_{\mathrm{nd}}} U_{-\alpha,\Omega} \to  U^-_\Omega = U_\Omega \cap U^-$) is a bijection for any ordering on the product.
\item\label{ExDecUOmega:3} We have $U_\Omega = U^+_\Omega U^-_\Omega \widetilde{N}_\Omega = U^{-}_\Omega U^+_{\Omega} \widetilde{N}_\Omega$.
\item\label{ExDecUOmega:4} The group $\widetilde{N}_\Omega$ is generated by the $\widetilde{N}_{\alpha,\Omega}$ for $\alpha \in \Phi_{\mathrm{nd}}$.
\end{enumerate}

Since for every $\alpha \in \Phi$, the group $T'_{\alpha,\Omega} \subset T_b$ normalizes $U'_{\alpha,\Omega}$ and $U'_{-\alpha,\Omega}$, we deduce from Lemmas~\ref{LemCommutationUaU'a} and~\ref{LemQC1}(\ref{eqQC1'}) that $U'_{\alpha,\Omega} U'_{-\alpha,\Omega} (T \cap U'_{\Omega})$ is a group for any ordering of the product.
Since $U'_{\alpha,\Omega}$ is a subgroup of $U_{\alpha,\Omega}$, we deduce from Lemma~\ref{LemQC2}(\ref{eqQC2'}) that the family $\left((U'_{\alpha,\Omega})_{\alpha\in\Phi},Y\right)$ is quasi-concave for any subgroup $Y$ of $T_b$ since $T_b$ normalizes the $U'_{\alpha,\Omega}$ for $\alpha \in \Phi$.
\end{Ex}

We have, as in \cite[6.4.25(i)]{BruhatTits1}, the following:

\begin{Prop}\label{PropCommutationTbUa}
Let $\alpha,\beta \in \Phi$ be two roots such that $\beta \not\in \mathbb{R} \alpha$.
Let $\lambda,\mu \in \Rtot$ and $\varepsilon \geqslant 0$.
Then the following commutator subgroup satisfies
\[ \left[ T_{\beta,\mu}^\varepsilon, U_{\alpha,\lambda} \right] \subset U_{\alpha,\lambda + \varepsilon} U_{2\alpha,2\lambda+\varepsilon}. \]
Moreover, 
\[ \left[ T_{\beta,\mu}', U_{\alpha,\lambda} \right] \subset U'_{\alpha,\lambda}. \]
\end{Prop}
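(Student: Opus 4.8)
The statement is an analogue of \cite[6.4.25(i)]{BruhatTits1}, and the natural strategy is to reduce it to the rank-one computation already carried out in Corollary~\ref{CorCommutationTaUa}, using the commutation relations between root groups attached to non-proportional roots. First I would treat the case $\varepsilon = 0$: then $T_{\beta,\mu}^0 = T_{\beta,\mu} \subset T_b$ by Proposition~\ref{PropRankOneLevi}\ref{PropRankOneLevi:tori}, and $T_b$ normalizes $U_{\alpha,\lambda}$ and $U_{2\alpha,2\lambda}$ by Corollary~\ref{CorActionNUa}, so $[T_{\beta,\mu},U_{\alpha,\lambda}] \subset U_{\alpha,\lambda}$ trivially, which is the desired inclusion (since $U_{\alpha,\lambda} \supset U_{2\alpha,2\lambda}$). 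So from now on assume $\varepsilon > 0$.

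The key point is to use the explicit description of $T_{\beta,\mu}^\varepsilon$ provided by Proposition~\ref{PropTcomponent}: it is generated by the $T$-components $t(u,v)$ of commutators $[u,v]$ with $u \in U_{\beta,\mu}$, $v \in U_{-\beta,-\mu+\varepsilon}$. So it suffices to bound $[t(u,v),x]$ for such $u,v$ and $x \in U_{\alpha,\lambda}$. I would write $[u,v] = u' t(u,v) v'$ with $u' \in U_{\beta,\mu}$, $v' \in U_{-\beta,-\mu+\varepsilon}$ (uniqueness from Lemma~\ref{LemTripleLeviCommutation} and axiom~\ref{axiomRGD6}), and then express $[t(u,v),x]$ via $x[u,v]x^{-1}$ and $xu'x^{-1}$, $xv'x^{-1}$, as done in the proof of Corollary~\ref{CorCommutationTaUa} for inclusion~(\ref{eqnCommutator5}). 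Concretely: $x[u,v]x^{-1} = (xux^{-1})(xvx^{-1})(xu^{-1}x^{-1})(xv^{-1}x^{-1})$ — no wait, more carefully $x[u,v]x^{-1} = [xux^{-1}, xvx^{-1}]$ is not what we want; rather one uses that $x[u,v]x^{-1} \cdot [u,v]^{-1} = [x,[u,v]]$, and one needs that $[x,u]$, $[x,v]$, $[x,u']$, $[x,v']$ all lie in a controlled group. This is where axiom~\ref{axiomV3} enters: since $\beta \notin \mathbb{R}\alpha$, the commutators $[U_{\alpha,\lambda}, U_{\pm\beta,\cdot}]$ land in groups $U_{r\alpha \pm s\beta, \cdot}$ for $r,s \in \mathbb{Z}_{>0}$, all of which have the form $U_{\gamma,\nu}$ with $\nu$ strictly larger than $-\gamma(\text{anything in }\Omega)$-type bounds. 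The cleanest route is to introduce, as in the proof of Corollary~\ref{CorCommutationTaUa}, the auxiliary group
\[ Z = U_{2\alpha,2\lambda+\varepsilon} U_{\alpha,\lambda+\varepsilon} T_b U_{-\alpha,-\lambda+\varepsilon} U_{-2\alpha,-2\lambda+\varepsilon}, \]
show it is a group (via Lemma~\ref{LemCommutatorRankOne}), check that all the relevant commutators $[x, u]$, $[x,v]$ (for $u \in U_{\beta,\mu}$, $v \in U_{-\beta,-\mu+\varepsilon}$) lie in $Z$ by axiom~\ref{axiomV3} — because $Z$ absorbs $U_{\alpha,\lambda+\varepsilon}$, $U_{2\alpha,2\lambda+\varepsilon}$ and their negatives — and then deduce $[t(u,v),x] \in Z$, hence $[t(u,v),x] \in Z \cap U_\alpha = U_{2\alpha,2\lambda+\varepsilon}U_{\alpha,\lambda+\varepsilon}$ since $T_b$ normalizes $U_{\alpha,\lambda}$ and $[t(u,v),x] \in U_\alpha$. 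One subtlety: unlike in Corollary~\ref{CorCommutationTaUa}, here $u,v$ live in $U_{\pm\beta,\cdot}$ rather than $U_{\pm\alpha,\cdot}$, so I must verify that $x u x^{-1}$ differs from $u$ by an element of $Z$; this follows from axiom~\ref{axiomV3} applied to the pair $(\alpha,\beta)$ (resp. $(\alpha,-\beta)$), whose output roots $r\alpha+s\beta \in \Phi$ with $r \geq 1$ satisfy $r\lambda + s\mu \geq \lambda + \varepsilon$ is false in general — rather one gets membership in groups indexed by roots not proportional to $\alpha$, which are absorbed once one enlarges $Z$ appropriately, or one argues that such commutators commute suitably. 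The safest formulation keeps $Z$ as above and notes that the commutators $[x,u], [x,v]$ lie in the group generated by $U_{\gamma, \cdot}$ for $\gamma$ a strictly positive combination of $\alpha$ and $\pm\beta$; these are \emph{not} in $Z$ directly, so I would instead follow the exact pattern of \cite[6.4.25]{BruhatTits1}: work inside $U_\Omega$-type groups, or more simply invoke that the whole computation happens inside the rank-two subgroup attached to $\Phi(\alpha,\beta)$ and apply Lemma~\ref{LemQC2} together with Corollary~\ref{CorCommutationTaUa}.

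For the ``moreover'' part, $[T'_{\beta,\mu}, U_{\alpha,\lambda}] \subset U'_{\alpha,\lambda}$, I would use that $T'_{\beta,\mu} = \bigcup_{\varepsilon > 0} T^\varepsilon_{\beta,\mu}$ (Proposition~\ref{PropTcomponent}), apply the first part to get $[T^\varepsilon_{\beta,\mu}, U_{\alpha,\lambda}] \subset U_{\alpha,\lambda+\varepsilon}U_{2\alpha,2\lambda+\varepsilon} \subset U'_{\alpha,\lambda}$ for every $\varepsilon > 0$ (using $\varepsilon > 0$ to get strict inequality $\lambda + \varepsilon > \lambda$, hence $U_{\alpha,\lambda+\varepsilon} \subset U'_{\alpha,\lambda}$, and similarly $U_{2\alpha,2\lambda+\varepsilon} \subset U'_{\alpha,\lambda}$ by the absorption argument already used at the end of the proof of Corollary~\ref{CorCommutationTaUa}), and take the increasing union. \textbf{The main obstacle} I anticipate is precisely the bookkeeping for non-proportional $\beta$: ensuring that the ``extra'' commutators $[x,u]$ with $u \in U_{\pm\beta,\cdot}$ — which by axiom~\ref{axiomV3} land in root groups $U_\gamma$ with $\gamma$ genuinely two-dimensional in span — do not spoil the final extraction of the $U_\alpha$-component. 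Resolving this requires either enlarging the auxiliary group $Z$ to a group containing all $U_{\gamma,\cdot}$ for $\gamma$ in the relevant positively-closed set and re-checking it is a group (using Lemma~\ref{LemQC2} and iterated applications of \cite[6.1.6]{BruhatTits1}), or — probably cleaner — restricting attention to the subgroup generated by all $U_{\gamma,\cdot}$ for $\gamma \in \Phi(\alpha,\beta)$, where the previous rank-$\leq 2$ lemmas apply verbatim, and observing that $T^\varepsilon_{\beta,\mu} \cap \langle U_{\gamma,\cdot} \rangle$ still generates enough of $T^\varepsilon_{\beta,\mu}$ to conclude. I would follow the second route, mirroring \cite[6.4.25]{BruhatTits1} as closely as possible.
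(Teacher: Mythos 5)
Your reduction of the case $\varepsilon=0$ and your treatment of the ``moreover'' part (taking the increasing union over $\varepsilon>0$ and identifying $\bigcup_{\varepsilon>0}U_{\alpha,\lambda+\varepsilon}U_{2\alpha,2\lambda+\varepsilon}$ with $U'_{\alpha,\lambda}$) match the paper. But for $\varepsilon>0$ your proposal stops exactly where the proof begins: you correctly diagnose that the rank-one auxiliary group $Z$ cannot absorb the commutators $[x,u]$ with $u\in U_{\pm\beta,\cdot}$, and you then offer two unexecuted alternatives. The missing ingredient is the precise replacement for $Z$. The paper sets $\varepsilon(s)=0$ for $s>0$ and $\varepsilon(s)=\varepsilon$ for $s\leqslant 0$, and takes $Y$ to be the group generated by the $U_{r\alpha+s\beta,\,r\lambda+s\mu+\varepsilon(s)}$ for $r\in\mathbb{Z}_{>0}$, $s\in\mathbb{Z}$ with $r\alpha+s\beta\in\Phi$. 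A direct check with axiom~\ref{axiomV3} shows that $X=\langle U_{\beta,\mu},U_{-\beta,-\mu+\varepsilon},U_{-2\beta,-2\mu+\varepsilon}\rangle$ normalizes $Y$ and that $[X,U_{\alpha,\lambda}]\subset Y$ (the point being that every commutator lands in a root group $U_{qr\alpha+(p\pm qs)\beta,\cdot}$ whose valuation offset dominates $\varepsilon(p\pm qs)$ because $p\geqslant 1$). Since $T^\varepsilon_{\beta,\mu}\subset X\cap T$ normalizes $U_\alpha$, one gets $[T^\varepsilon_{\beta,\mu},U_{\alpha,\lambda}]\subset Y\cap U_\alpha$, and the final extraction $Y\cap U_\alpha=U_{\alpha,\lambda+\varepsilon}U_{2\alpha,2\lambda+\varepsilon}$ follows from Proposition~\ref{PropDecompositionQC} applied to the quasi-concave family $X_{r\alpha+s\beta}=U_{r\alpha+s\beta,r\lambda+s\mu+\varepsilon(s)}$ (for $r>0$) inside $\Phi(\alpha,\beta)$. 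Without specifying this indexing function $\varepsilon(s)$ you cannot see why the bound comes out as exactly $\lambda+\varepsilon$ on the $\alpha$-component, which is the whole content of the statement.

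A secondary remark: your detour through Proposition~\ref{PropTcomponent} (generating $T^\varepsilon_{\beta,\mu}$ by $T$-components $t(u,v)$ and conjugating the factorization $[u,v]=u't(u,v)v'$ by $x$) is unnecessary and makes the bookkeeping strictly harder, since you must then control $[x,u']$ and $[x,v']$ as well. The containment $T^\varepsilon_{\beta,\mu}\subset X$ holds by definition of $T^\varepsilon_{\beta,\mu}=T\cap L^\varepsilon_{\beta,\mu}$, and it suffices to bound $[X,U_{\alpha,\lambda}]$ wholesale; the $T$-component machinery is only needed in the rank-one situation of Corollary~\ref{CorCommutationTaUa}, where the element being conjugated lies in $U_{\pm\alpha}$ itself.
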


%\green{
\begin{proof}
There is nothing to prove for $\varepsilon =0$,
since $T^0_{\beta,\mu} \subset T_b$ (Proposition~\ref{PropRankOneLevi}~\ref{PropRankOneLevi:tori}) normalizes $U_{\alpha,\lambda} = U_{\alpha,\lambda} U_{2\alpha,2\lambda}$ (Corollary~\ref{CorActionNUa}).
Assume $\varepsilon > 0$ and consider the group $X$ generated by the subsets $U_{\beta,\mu}$, $U_{-\beta,-\mu+\varepsilon}$ and $U_{-2\beta,-2\mu+\varepsilon}$.
Then $X$ contains $T^\varepsilon_{\beta,\mu}$ by definition.

For $s \in \mathbb{Z}$, denote $\varepsilon(s) = \left\{\begin{array}{rl} 0 & \text{ if } s > 0\\ \varepsilon & \text{ if } s \leqslant 0\end{array}\right.$.
Consider the group $Y$ generated by the $U_{r\alpha+s\beta,r\lambda+s\mu+\varepsilon(s)}$ for $(r,s) \in \mathbb{Z}_{>0} \times \mathbb{Z}$ such that $r\alpha+s\beta \in \Phi$.

By axiom~\ref{axiomV3}, we observe that the commutator $[x,y]$ for $x \in U_{\beta,\mu} \cup U_{-\beta,-\mu+\varepsilon} \cup U_{-2\beta,-2\mu+\varepsilon}$ and $y\in \bigcup_{(r,s) \in \mathbb{Z}_{>0} \times \mathbb{Z}} U_{r\alpha+s\beta,r\lambda+s\mu+\varepsilon(s)}$ belongs to $Y$.
Thus $X$ normalizes $Y$ and $[X,U_{\alpha,\lambda}] \subset Y$.
Hence $[T^\varepsilon_{\beta,\mu},U_{\alpha,\lambda}] \subset Y \cap U_\alpha$ since $T^\varepsilon_{\beta,\mu} \subset X \cap T$ normalizes $U_\alpha$.

Consider the root system $\Phi(\alpha,\beta) = \Phi \cap \left( \mathbb{Z} \alpha + \mathbb{Z} \beta \right)$ and the family of groups defined by $X_{r\alpha+s\beta} = U_{r\alpha+s\beta,r\lambda + s \mu + \varepsilon(s)}$ for $r > 0$ and $r\alpha+s\beta \in \Phi$ and by $X_{r\alpha+s\beta} = 1$ otherwise.
Then the family of groups $ \left(X_\gamma \right)_{\gamma \in \Phi(\alpha,\beta)}$
is quasi-concave (axiom~\ref{axiomQC1} is obvious for this set since either $X_{-2\gamma} = X_{-\gamma} = 1$ or $X_{2\gamma} = X_{\gamma}=1$ and axiom~\ref{axiomQC2} is a consequence of axiom~\ref{axiomV3}).
Thus by Proposition~\ref{PropDecompositionQC}, we get that $Y \cap U_\alpha = X_\alpha X_{2\alpha} = U_{\alpha,\lambda+\varepsilon} U_{2\alpha,2\lambda+\varepsilon}$.

Since $T'_{\beta,\mu} = \bigcup_{\varepsilon > 0} T^\varepsilon_{\beta,\mu}$, we get that any element in the commutator subgroup $[T'_{\beta,\mu},U_{\alpha,\lambda}]$ belongs to the commutator subgroup $[T^\varepsilon_{\beta,\mu},U_{\alpha,\lambda}]$ for some $\varepsilon > 0$.
Thus $[T'_{\beta,\mu},U_{\alpha,\lambda}] \subset \bigcup_{\varepsilon > 0} [T^\varepsilon_{\beta,\mu},U_{\alpha,\lambda}] = \bigcup_{\varepsilon > 0} U_{\alpha,\lambda+\varepsilon} U_{2\alpha,2\lambda+\varepsilon} = U'_{\alpha,\lambda}$.
\end{proof}
%}

\subsection{Local root systems}

In this section, we work under data and notations of~\ref{HypCAVRGD}.
Moreover, we fix a non-empty subset $\Omega \subset \mathbb{A}_\Rtot$.
For simplicity, we will forget the chosen origin $o$ of $\mathbb{A}_\Rtot$ in this section, i.e. we will denote by $\alpha(x)$ the quantity $\alpha(x-o)$ by abuse of notation.
The goal of this section is to define a root system $\Phi^*_\Omega$ depending on the local geometry of $\Omega$ inside $\mathbb{A}_\Rtot$ with respect to some hyperplanes $H_{\alpha,\lambda}$ for $\alpha \in \Phi$ and $\lambda \in \Gamma'_\alpha$ that will be the walls of $\mathbb{A}_\Rtot$.

\begin{Not}
We define the following subset of roots:
\[\Phi^*_{\Omega} = \{\alpha \in \Phi,\ \exists \lambda_\alpha \in \Rtot,\ \forall x \in \Omega,\ -\alpha(x) = \lambda_\alpha\}\]
\[\Phi_\Omega = \{\alpha \in \Phi,\ \exists \lambda_\alpha \in \Gamma'_\alpha,\ \forall x \in \Omega,\ -\alpha(x) = \lambda_\alpha \}.\]
Note that, by definition, we have $\Phi_\Omega \subset \Phi^*_\Omega \subset \Phi$.
\index[notation]{p@$\Phi^*_{\Omega}$}\index[notation]{p@$\Phi_{\Omega}$}
We will observe (see Proposition~\ref{PropRootSystemPhiOmega}) that $\Phi_\Omega$ is a root system. We call it the \textbf{local root system} associated to $\Omega$.\index{root system!local}\index{local root system}
\end{Not}

\begin{Lem}\label{LemEquivalenceDefPhiOmega}
Let $\alpha \in \Phi$.
The following are equivalent:
\begin{enumerate}[label={(\roman*)}]
\item\label{LemPhiOmega-1} $\alpha \in \Phi_\Omega$;
\item\label{LemPhiOmega-2} $\exists u \in U_\alpha$ such that $\forall x \in \Omega,\ \varphi_\alpha(u) = -\alpha(x)$ and $\forall v \in U_{2\alpha},\ \varphi_\alpha(u) \geqslant \varphi_\alpha(uv)$;
\item\label{LemPhiOmega-3} $\alpha \in \Phi^*_\Omega$ and $\exists u \in U_{\alpha,\Omega}$ such that $\forall v \in U_{2\alpha},\ uv \not\in U'_{\alpha,\Omega}$.
\end{enumerate}
\end{Lem}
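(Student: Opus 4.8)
The statement is an equivalence of three conditions on a root $\alpha \in \Phi$, and the natural strategy is to prove the cycle \ref{LemPhiOmega-1} $\Rightarrow$ \ref{LemPhiOmega-2} $\Rightarrow$ \ref{LemPhiOmega-3} $\Rightarrow$ \ref{LemPhiOmega-1}, unwinding the definitions of $\Phi_\Omega$, $U_{\alpha,\Omega}$, $U'_{\alpha,\Omega}$ and $\Gamma'_\alpha$ along the way. The key is that membership $\alpha \in \Phi_\Omega$ means there is a single value $\lambda_\alpha \in \Gamma'_\alpha$ with $-\alpha(x) = \lambda_\alpha$ for \emph{all} $x \in \Omega$; in particular $\bigcap_{x\in\Omega}[-\alpha(x),\infty] = [\lambda_\alpha,\infty]$ so that $U_{\alpha,\Omega} = U_{\alpha,\lambda_\alpha}$, and similarly $U'_{\alpha,\Omega} = U'_{\alpha,\lambda_\alpha}$. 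The definition of $\Gamma'_\alpha$ (Notation~\ref{not:Gamma}) then supplies an element $u \in U_\alpha \setminus \{1\}$ with $\varphi_\alpha(u) = \lambda_\alpha$ and $U_{\alpha,\lambda_\alpha} = \bigcap_{v \in U_{2\alpha}} U_{\alpha,\varphi_\alpha(uv)}$.

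\textbf{\ref{LemPhiOmega-1} $\Rightarrow$ \ref{LemPhiOmega-2}.} Take $\lambda_\alpha \in \Gamma'_\alpha$ as above and the associated $u$. Then $\varphi_\alpha(u) = \lambda_\alpha = -\alpha(x)$ for every $x \in \Omega$, giving the first half of \ref{LemPhiOmega-2}. For the second half: for any $v \in U_{2\alpha}$, since $U_{\alpha,\lambda_\alpha} \subseteq U_{\alpha,\varphi_\alpha(uv)}$, every element of $U_{\alpha,\lambda_\alpha}$ — in particular $u$ itself (as $\varphi_\alpha(u) = \lambda_\alpha$) — lies in $U_{\alpha,\varphi_\alpha(uv)}$, hence $\lambda_\alpha = \varphi_\alpha(u) \geqslant \varphi_\alpha(uv)$.

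\textbf{\ref{LemPhiOmega-2} $\Rightarrow$ \ref{LemPhiOmega-3}.} Given such a $u$, set $\lambda_\alpha := \varphi_\alpha(u)$; then $-\alpha(x) = \lambda_\alpha$ for all $x \in \Omega$, so $\alpha \in \Phi^*_\Omega$ and moreover $u \in U_{\alpha,\lambda_\alpha} = U_{\alpha,\Omega}$. It remains to show $uv \notin U'_{\alpha,\Omega}$ for any $v \in U_{2\alpha}$. Since $U'_{\alpha,\Omega} = U'_{\alpha,\lambda_\alpha} = \varphi_\alpha^{-1}(\,]\lambda_\alpha,\infty]\,)$, we must check $\varphi_\alpha(uv) \leqslant \lambda_\alpha$, which is exactly the hypothesis $\varphi_\alpha(u) \geqslant \varphi_\alpha(uv)$. (If $2\alpha \notin \Phi$ then $U_{2\alpha} = \{1\}$ and the condition $uv = u \notin U'_{\alpha,\Omega}$ holds since $\varphi_\alpha(u) = \lambda_\alpha \notin\, ]\lambda_\alpha,\infty]$; by Remark~\ref{RkGamma_non_multipliable}, $\Gamma'_\alpha = \Gamma_\alpha$ in that case, which will be needed to close the last implication.)

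\textbf{\ref{LemPhiOmega-3} $\Rightarrow$ \ref{LemPhiOmega-1}.} From $\alpha \in \Phi^*_\Omega$ we get a common value $\lambda_\alpha$ with $-\alpha(x) = \lambda_\alpha$ for all $x \in \Omega$, hence $U_{\alpha,\Omega} = U_{\alpha,\lambda_\alpha}$ and $U'_{\alpha,\Omega} = U'_{\alpha,\lambda_\alpha}$. It suffices to show $\lambda_\alpha \in \Gamma'_\alpha$. The hypothesis gives $u \in U_{\alpha,\lambda_\alpha}$ with $uv \notin U'_{\alpha,\lambda_\alpha}$ for all $v \in U_{2\alpha}$; taking $v = 1$ shows $u \notin U'_{\alpha,\lambda_\alpha}$, so $\varphi_\alpha(u) = \lambda_\alpha$ (it lies in $[\lambda_\alpha,\infty]\setminus\,]\lambda_\alpha,\infty]$), and in particular $\lambda_\alpha \in \Gamma_\alpha$. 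Now for each $v \in U_{2\alpha}$, $uv \notin U'_{\alpha,\lambda_\alpha}$ forces $\varphi_\alpha(uv) \leqslant \lambda_\alpha$, i.e. $U_{\alpha,\lambda_\alpha} \subseteq U_{\alpha,\varphi_\alpha(uv)}$; conversely $v = 1$ gives the reverse inclusion $\bigcap_{v} U_{\alpha,\varphi_\alpha(uv)} \subseteq U_{\alpha,\varphi_\alpha(u)} = U_{\alpha,\lambda_\alpha}$. Hence $U_{\alpha,\lambda_\alpha} = \bigcap_{v \in U_{2\alpha}} U_{\alpha,\varphi_\alpha(uv)}$, which is precisely the defining condition of $\Gamma'_\alpha$, so $\lambda_\alpha = \varphi_\alpha(u) \in \Gamma'_\alpha$ and $\alpha \in \Phi_\Omega$.

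\textbf{Main obstacle.} There is no serious obstacle here — the lemma is a careful bookkeeping exercise in translating the set-of-values condition into the language of the groups $U_{\alpha,\lambda}$, $U'_{\alpha,\lambda}$. The one point requiring attention is the role of $U_{2\alpha}$: one must consistently handle the non-multipliable case ($U_{2\alpha} = \{1\}$, $\Gamma'_\alpha = \Gamma_\alpha$) and the multipliable case uniformly, and be careful that the element $u$ produced always satisfies $u \neq 1$ (which is automatic since $\varphi_\alpha(u) = \lambda_\alpha \neq \infty$, using axiom~\ref{axiomV1bis}). The passage to a common value $\lambda_\alpha$ on all of $\Omega$, and the resulting identifications $U_{\alpha,\Omega} = U_{\alpha,\lambda_\alpha}$ and $U'_{\alpha,\Omega} = U'_{\alpha,\lambda_\alpha}$, is what makes the rank-one machinery of the previous subsections applicable and should be stated explicitly at the outset.
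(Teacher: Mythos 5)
Your proof is correct and follows essentially the same route as the paper's: the cycle \ref{LemPhiOmega-1} $\Rightarrow$ \ref{LemPhiOmega-2} $\Rightarrow$ \ref{LemPhiOmega-3} $\Rightarrow$ \ref{LemPhiOmega-1}, unwinding the definitions of $\Gamma'_\alpha$, $U_{\alpha,\Omega}$ and $U'_{\alpha,\Omega}$. The only cosmetic difference is in \ref{LemPhiOmega-3} $\Rightarrow$ \ref{LemPhiOmega-1}, where you invoke $\alpha\in\Phi^*_\Omega$ at the outset to identify $U_{\alpha,\Omega}=U_{\alpha,\lambda_\alpha}$, whereas the paper first extracts points $x_v\in\Omega$ witnessing $uv\notin U'_{\alpha,x_v}$ and brings in $\Phi^*_\Omega$ only at the end; both are valid.
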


\begin{proof}~

\paragraph{\ref{LemPhiOmega-1} $ \Rightarrow $~\ref{LemPhiOmega-2}:}
If $\alpha \in \Phi_\Omega$, let $\lambda_\alpha \in \Gamma'_\alpha$ be such that $\forall x \in \Omega,\ -\alpha(x) = \lambda_\alpha$. By definition, there is $u \in U_\alpha$ such that $-\alpha(x) = \varphi_\alpha(u)$ and $U_{\alpha,\varphi_\alpha(u)} = \bigcap_{v \in U_{2\alpha}} U_{\alpha,\varphi_\alpha(uv)}$.
Thus $\forall v \in U_{2\alpha}$, we have $\varphi_\alpha^{-1}([\varphi_\alpha(u),+\infty] ) \subset \varphi_\alpha^{-1}([\varphi_\alpha(uv),+\infty])$ and therefore $\varphi_\alpha(u) \geqslant \varphi_\alpha(uv)$.

\paragraph{\ref{LemPhiOmega-2} $\Rightarrow$~\ref{LemPhiOmega-3}:}
Let $u \in U_\alpha$ be such that $\forall x \in \Omega,\ \varphi_\alpha(u) = -\alpha(x)$ and $\forall v \in U_{2\alpha},\ \varphi_\alpha(u) \geqslant \varphi_\alpha(uv)$.
For any $x,y \in \Omega$, we have $-\alpha(x) = \varphi_\alpha(u) = -\alpha(y)$ so that $ u \in \bigcap_{x \in \Omega} U_{\alpha,x} = U_{\alpha,\Omega}$.
Suppose by contradiction that there is a $v \in U_{2\alpha}$ such that $uv \in U'_{\alpha,\Omega}$.
Then for any $x \in \Omega$, we have $uv \in U'_{\alpha,x}$ and therefore $\varphi_\alpha(uv) > -\alpha(x) = \varphi_\alpha(u)$ which contradicts the assumption.

\paragraph{\ref{LemPhiOmega-3} $\Rightarrow$~\ref{LemPhiOmega-1}:}
Assume that there exists a $u \in U_{\alpha,\Omega}$ such that $\forall v \in U_{2\alpha},\ uv \not\in U'_{\alpha,\Omega}$.
Let $\lambda_\alpha = \varphi_\alpha(u)$.
For $x \in \Omega$, since $u \in U_{\alpha,\Omega} \subset U_{\alpha,x}$, we have $\lambda_\alpha = \varphi_\alpha(u) \geqslant -\alpha(x)$.
Let $v \in U_{2\alpha}$. Then $uv \not\in U'_{\alpha,\Omega}$ and there is $x_v \in \Omega$ such that $uv \not\in U'_{\alpha,x_v}$. Thus $\varphi_{\alpha}(uv) \leqslant -\alpha(x_v) \leqslant \varphi_\alpha(u)$.
Hence $U_{\alpha,\varphi_\alpha(u)} \subset U_{\alpha,\varphi_\alpha(uv)}$ for every $v \in U_{2\alpha}$ and we get $U_{\alpha,\varphi_\alpha(u)} = \bigcap_{v \in U_{2\alpha}} U_{\alpha,\varphi_\alpha(uv)}$ by taking $v=1$, so that $\lambda_\alpha = \varphi_\alpha(u) \in \Gamma'_\alpha$.
If, moreover, $\alpha \in \Phi^*_\Omega$, then $\forall y \in \Omega$, we have $-\alpha(y) = -\alpha(x) = \lambda_\alpha$.
\end{proof}

Without infimum and supremum, we provide a different proof and a slightly different statement of \cite[6.4.11]{BruhatTits1}:

\begin{Lem}\label{LemRootInNaOmega}~
For any $\alpha \in \Phi_{\mathrm{nd}}$, we have ${^v\!}\nu(\widetilde{N}_{\alpha,\Omega}) \subset \{1,r_\alpha\}$ with equality if, and only if, either $\alpha$ or $2\alpha$ belongs to $\Phi_{\Omega}$.
\end{Lem}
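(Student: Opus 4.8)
The statement asserts two things about $\widetilde{N}_{\alpha,\Omega} = L_{\alpha,\Omega} \cap N$ for a non-divisible root $\alpha$: first that $^v\!\nu(\widetilde{N}_{\alpha,\Omega})$ is contained in $\{1, r_\alpha\}$, and second that it equals $\{1,r_\alpha\}$ if and only if $\alpha \in \Phi_\Omega$ or $2\alpha \in \Phi_\Omega$. The plan is to reduce everything to the rank-one Levi computations of Proposition~\ref{PropRankOneLevi} together with the characterization of $\Phi_\Omega$ provided by Lemma~\ref{LemEquivalenceDefPhiOmega}.

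First I would prove the inclusion $^v\!\nu(\widetilde{N}_{\alpha,\Omega}) \subset \{1,r_\alpha\}$. By Lemma~\ref{LemQC1}, every $g \in L_{\alpha,\Omega}$ can be written $g = u_- u_+ n$ with $u_\pm \in U_{\pm\alpha,\Omega}$ and $n \in \widetilde{N}_{\alpha,\Omega}$; more directly, following the argument in the proof of (\ref{eqQC1}), any element of $L_{\alpha,\Omega}$ lies in some $L^\varepsilon_{\alpha,\lambda}$ with $U_{\alpha,\lambda} \subset U_{\alpha,\Omega}$ and $U_{-\alpha,-\lambda+\varepsilon}\subset U_{-\alpha,\Omega}$, where $\lambda = -\alpha(x)$ for a chosen $x\in\Omega$ and $\varepsilon \geqslant 0$. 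Hence $\widetilde{N}_{\alpha,\Omega} \subset \bigcup_{\varepsilon\geqslant 0} N_{\alpha,\lambda}^\varepsilon$. Now by Proposition~\ref{PropRankOneLevi}, for $\varepsilon > 0$ we have $N^\varepsilon_{\alpha,\lambda} = T^\varepsilon_{\alpha,\lambda} \subset T_b \subset \ker{^v\!\nu}$, and for $\varepsilon = 0$ either $N_{\alpha,\lambda} = T_{\alpha,\lambda} \subset T_b$ (case $\lambda\notin\Gamma_\alpha$, part~\ref{PropRankOneLevi:notinGamma}) or $N_{\alpha,\lambda} = T_{\alpha,\lambda}\{1,m\}$ with $m\in M_{\alpha,\lambda}$ (case $\lambda\in\Gamma_\alpha$, part~\ref{PropRankOneLevi:inGamma}), so that $^v\!\nu(N_{\alpha,\lambda}) \subseteq \{1, r_\alpha\}$ in all cases. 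This gives the inclusion.

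Next, for the equivalence. If $^v\!\nu(\widetilde{N}_{\alpha,\Omega}) = \{1,r_\alpha\}$, then by the previous paragraph there is $x\in\Omega$, $\lambda = -\alpha(x)$ and some $m \in M_{\alpha,\lambda}\cap L_{\alpha,\Omega}$; moreover this forces $\lambda \in \Gamma_\alpha$ (else $N_{\alpha,\lambda}\subset T_b$). Writing $m = u'uu''$ with $u\in\varphi_\alpha^{-1}(\{\lambda\})$ and $u',u''\in U_{-\alpha,-\lambda}$ (Proposition~\ref{PropValuedCoset}\ref{PropValuedCoset:insideLevi}), one checks $u \in U_{\alpha,\Omega}$: indeed $\lambda = -\alpha(x)$, and since $\Omega$ need not be a single point one must argue that $m \in L_{\alpha,\Omega}$ forces $\varphi_\alpha(u) = -\alpha(y)$ for all $y\in\Omega$ — this will follow by running the decomposition argument at an arbitrary $y\in\Omega$ and using uniqueness in Lemma~\ref{LemTripleLeviCommutation}\ref{LemTripleLeviCommutation:1} together with axiom~\ref{axiomV5}. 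Then I would invoke Fact~\ref{FactDecompositionSetOfValue}: $2\lambda \in 2\Gamma_\alpha = 2\Gamma'_\alpha \cup \Gamma_{2\alpha}$, so either $\lambda \in \Gamma'_\alpha$, giving $\alpha\in\Phi_\Omega$, or $2\lambda\in\Gamma_{2\alpha}$; in the latter case, since $2\alpha(x) = 2\lambda$ is constant on $\Omega$, if moreover $2\lambda\in\Gamma'_{2\alpha}$ we get $2\alpha\in\Phi_\Omega$, and if not, one refines using $2\Gamma_\alpha = 2\Gamma'_\alpha\cup\Gamma_{2\alpha}$ again to land in $\Gamma'_\alpha$ — here Lemma~\ref{LemEquivalenceDefPhiOmega}\ref{LemPhiOmega-2} is the clean tool, since the existence of such a $u$ with $\varphi_\alpha(u) \geqslant \varphi_\alpha(uv)$ for all $v\in U_{2\alpha}$ is exactly what membership in $\Phi_\Omega$ demands. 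Conversely, if $\alpha\in\Phi_\Omega$ (resp. $2\alpha\in\Phi_\Omega$), pick the corresponding $u\in U_{\alpha,\Omega}$ from Lemma~\ref{LemEquivalenceDefPhiOmega}\ref{LemPhiOmega-2}; then $m(u)\in M_{\alpha,\varphi_\alpha(u)}$ and $m(u) = u'uu''$ with $u',u''\in U_{-\alpha,-\varphi_\alpha(u)}\subset U_{-\alpha,\Omega}$ by axiom~\ref{axiomV5}, so $m(u)\in L_{\alpha,\Omega}\cap N = \widetilde{N}_{\alpha,\Omega}$ with $^v\!\nu(m(u)) = r_\alpha$; the divisible case $2\alpha\in\Phi_\Omega$ reduces to this via $U_{2\alpha}\subset U_\alpha$ and $M_{2\alpha} = M_\alpha$.

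The main obstacle I anticipate is the bookkeeping in the non-single-point case $\Omega$: one must carefully extract a genuine element of $M_{\alpha,\lambda}$ lying inside $L_{\alpha,\Omega}$ (not merely inside some $L^\varepsilon_{\alpha,\lambda}$), and control its valuation simultaneously at all points of $\Omega$. The interplay between $\Gamma_\alpha$, $\Gamma'_\alpha$ and $\Gamma_{2\alpha}$ for multipliable $\alpha$ via Fact~\ref{FactDecompositionSetOfValue} is the delicate point; everything else is a direct application of the rank-one structure theory already established.
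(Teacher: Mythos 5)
Your overall route is the paper's: reduce everything to the rank-one Levi structure of Proposition~\ref{PropRankOneLevi} and characterize membership in $\Phi_\Omega$ via Lemma~\ref{LemEquivalenceDefPhiOmega} and Fact~\ref{FactDecompositionSetOfValue}; the forward direction ($\alpha$ or $2\alpha$ in $\Phi_\Omega$ implies $r_\alpha\in{^v\!}\nu(\widetilde{N}_{\alpha,\Omega})$) is fine as you sketch it. For the inclusion ${^v\!}\nu(\widetilde{N}_{\alpha,\Omega})\subset\{1,r_\alpha\}$ the paper is shorter: $\widetilde{N}_{\alpha,\Omega}\subset N\cap\langle U_\alpha,U_{-\alpha},T\rangle=T\sqcup M_\alpha$ by \cite[6.1.2(7) \& (10) and 6.1.15(c)]{BruhatTits1}, and ${^v\!}\nu$ sends $T$ to $1$ and $M_\alpha$ to $r_\alpha$. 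Your detour through the groups $N^\varepsilon_{\alpha,\lambda}$ is correct but does more work than necessary at that step.

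The ``main obstacle'' you flag in the converse direction is not a real obstacle, and the fix you propose (uniqueness in Lemma~\ref{LemTripleLeviCommutation} plus~\ref{axiomV5}) is the wrong tool: the constancy of $\alpha$ on $\Omega$ falls out of the setup you already have. Write $n\in\widetilde{N}_{\alpha,\Omega}$ with ${^v\!}\nu(n)=r_\alpha$ as a product of nontrivial generators $u_i\in U_{\alpha,\Omega}\cup U_{-\alpha,\Omega}$ and set $\lambda=\min_i\varphi_\alpha(u_i)$, $\mu=\min_i\varphi_{-\alpha}(u_i)$ --- not ``$\lambda=-\alpha(x)$ for a chosen $x\in\Omega$'', which is a misstatement of your own first paragraph (a priori one only gets inequalities). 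Since each $u_i$ lies in $U_{\pm\alpha,\Omega}=\bigcap_{y\in\Omega}U_{\pm\alpha,y}$, the inequalities $\lambda\geqslant-\alpha(y)$ and $\mu\geqslant\alpha(y)$ hold simultaneously for \emph{every} $y\in\Omega$. Then $n\in L^{\varepsilon}_{\alpha,\lambda}\setminus T$ with $\varepsilon=\lambda+\mu\geqslant0$, and Proposition~\ref{PropRankOneLevi} forces $\varepsilon=0$ and $\lambda\in\Gamma_\alpha$; now $\lambda\geqslant-\alpha(y)\geqslant-\mu=\lambda$ gives $-\alpha(y)=\lambda$ for all $y\in\Omega$ at once. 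This is exactly how the paper proceeds, and it makes the extraction of a single $m\in M_{\alpha,\lambda}\cap L_{\alpha,\Omega}$ unnecessary. Finally, in your case analysis via Fact~\ref{FactDecompositionSetOfValue}, note that $\Gamma_{2\alpha}=\Gamma'_{2\alpha}$ (Remark~\ref{RkGamma_non_multipliable} applied to $2\alpha$), so your ``refine again'' branch is vacuous: either $\lambda\in\Gamma'_\alpha$ and $\alpha\in\Phi_\Omega$, or $2\lambda\in\Gamma_{2\alpha}=\Gamma'_{2\alpha}$ and $2\alpha\in\Phi_\Omega$.
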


\begin{proof}
According to \cite[6.1.2(7) \& (10)]{BruhatTits1}  and \cite[6.1.15 (c)]{BruhatTits1}, since $\widetilde{N}_{\alpha,\Omega} \subset N \cap \langle U_\alpha, U_{-\alpha}, T \rangle = T \sqcup M_\alpha$, we have ${^v\!}\nu(\widetilde{N}_{\alpha,\Omega}) \subset \{1,r_\alpha\}$.

Suppose that $\alpha \in \Phi_\Omega$.
Let $u \in U_\alpha$ be given by Lemma~\ref{LemEquivalenceDefPhiOmega}\ref{LemPhiOmega-2} and $\lambda = \varphi_\alpha(u) \in \Gamma'_\alpha \subset \Gamma_\alpha$.
Then, by Lemma~\ref{LemMuNu}, we have ${^v\!}\nu(M_{\alpha,\lambda}) = \{r_\alpha\}$.
Moreover, by Proposition~\ref{PropValuedCoset}\ref{PropValuedCoset:insideLevi}, the set $M_{\alpha,\lambda}$ is contained in $L_{\alpha,\lambda}$.
Moreover, for any $x \in \Omega$, we have $-\alpha(x) = \lambda$ which gives $U_{\alpha,x} = U_{\alpha,\Omega}$ and $U_{-\alpha,x} = U_{-\alpha,\Omega}$ and therefore $M_{\alpha,\lambda} \subset \widetilde{N}_{\alpha,\Omega}$.
Thus $r_\alpha \in {^v\!}\nu(\widetilde{N}_{\alpha,\Omega})$.

Suppose that $2\alpha \in \Phi_\Omega$, then $r_\alpha \in {^v\!}\nu(\widetilde{N}_{\alpha,\Omega})$ since $\widetilde{N}_{2\alpha,\Omega} \subset \widetilde{N}_{\alpha,\Omega}$ by definition.

Conversely, suppose that $r_\alpha \in {^v\!}\nu(\widetilde{N}_{\alpha,\Omega})$.
Consider any $n \in {^v\!}\nu^{-1}(\{r_\alpha\}) \cap \widetilde{N}_{\alpha,\Omega}$.
Note that $n \not\in T$ since ${^v\!}\nu(T) = \operatorname{id}$ by \cite[6.1.11(ii)]{BruhatTits1}.
Since $U_{\alpha,\Omega} \cup U_{-\alpha,\Omega}$ is a generating set of the group $L_{\alpha,\Omega}$, we can write $n$ as a product $n = \prod_{i =1}^r u_i$ where $r \in \mathbb{Z}_{> 0}$ and $u_i \in U_{\alpha,\Omega} \cup U_{-\alpha,\Omega}$ with $u_i \neq 1$.

For any $i \in \llbracket 1,r \rrbracket$, denote
\begin{align*}
\lambda_i =& \left\{\begin{array}{rl}
 \varphi_\alpha(u_i) & \text{ if } u_i \in U_\alpha,\\
 \infty & \text{ if } u_i \in U_{-\alpha},
\end{array}\right.&
\mu_i =& \left\{\begin{array}{rl}
 \varphi_{-\alpha}(u_i) & \text{ if } u_i \in U_{-\alpha},\\
 \infty & \text{ if } u_i \in U_{\alpha},
\end{array}\right.
\end{align*}
and consider $\lambda = \min \{ \lambda_i,\ i \in \llbracket 1,r \rrbracket \} \in \Rtot \cup \{\infty\}$ and $\mu = \min \{ \mu_i,\ i \in \llbracket 1,r \rrbracket \} \in \Rtot \cup \{\infty\}$.
For any $i \in \llbracket 1,r\rrbracket$, we have either $\lambda_i = \infty$ or $u_i \in U_{\alpha,\Omega} = \bigcap_{x \in \Omega} U_{\alpha,x}$ which gives $\varphi_\alpha(u_i) = \lambda_i \geqslant -\alpha(x)$ for every $x \in \Omega$. Thus $U_{\alpha,\lambda_i} \subset U_{\alpha,\Omega}$.
By the same way, $U_{-\alpha,\mu_i} \subset U_{-\alpha,\Omega}$.
Thus $U_{\alpha,\lambda} \subset U_{\alpha,\Omega}$ and $U_{-\alpha,\mu} \subset U_{-\alpha,\Omega}$.
Hence, for $x \in \Omega$, we have $\lambda \geqslant -\alpha(x)$ and $\mu \geqslant \alpha(x)$.
Denote $\varepsilon = \lambda+\mu \geqslant 0$.
Then $n \in L^\varepsilon_{\alpha,\lambda}$ and thus $n \in N^\varepsilon_{\alpha,\Omega} \setminus T$ which gives $\varepsilon = 0$ and $\lambda \in \Gamma_\alpha$ by Proposition~\ref{PropRankOneLevi}.
Hence $\forall x \in \Omega, \lambda \geqslant -\alpha(x)$ and $\mu = -\lambda \geqslant \alpha(x)$.
Thus $\forall x \in \Omega,\ -\alpha(x) = \lambda \in \Gamma_\alpha$.
If $\lambda \in \Gamma'_\alpha$, then we have $\alpha \in \Phi_\Omega$.
Otherwise, by Fact~\ref{FactDecompositionSetOfValue},  $2\lambda \in \Gamma'_{2\alpha}$ and thus $\forall x \in \Omega,\ -(2\alpha)(x) = 2 \lambda \in \Gamma'_{2\alpha}$ which means $2\alpha \in \Phi_\Omega$.

\end{proof}

\begin{Prop}\label{PropNOmegaNormalizesUprime}
The group $\widetilde{N}_\Omega$ normalizes $U'_{\Omega}$.
\end{Prop}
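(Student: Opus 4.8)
The goal is to show that $\widetilde{N}_\Omega = U_\Omega \cap N$ normalizes the group $U'_\Omega$. The natural strategy is to reduce to generators on both sides: it suffices to show that for every $\alpha \in \Phi_{\mathrm{nd}}$ and every $n \in \widetilde{N}_{\alpha,\Omega}$, we have $n U'_\Omega n^{-1} = U'_\Omega$, since by Example~\ref{ExQC}\ref{ExDecUOmega:4} the group $\widetilde{N}_\Omega$ is generated by the $\widetilde{N}_{\alpha,\Omega}$ for $\alpha \in \Phi_{\mathrm{nd}}$. In turn, $U'_\Omega$ is generated by the $U'_{\beta,\Omega}$ for $\beta \in \Phi$, so it is enough to prove that $n U'_{\beta,\Omega} n^{-1} \subset U'_{\Omega}$ for every such $\alpha$, $n$ and $\beta$ (applying this to $n^{-1}$ gives the reverse inclusion).

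\textbf{Key steps.} First I would fix $\alpha \in \Phi_{\mathrm{nd}}$ and $n \in \widetilde{N}_{\alpha,\Omega}$. By Lemma~\ref{LemRootInNaOmega}, ${^v\!}\nu(n) \in \{1, r_\alpha\}$. If ${^v\!}\nu(n) = 1$, then $n \in T$ by \cite[6.1.11(ii)]{BruhatTits1}; moreover, writing $n$ as a product of elements of $U_{\alpha,\Omega} \cup U_{-\alpha,\Omega}$ and arguing as in the proof of Lemma~\ref{LemRootInNaOmega} (or via Lemma~\ref{LemQC1}(\ref{eqQC1})), one gets $n \in T_{\alpha,\Omega} \subset T_b$; then $n$ normalizes each $U'_{\beta,\Omega}$ (this is the analogue of the Fact stating that $T_b$ normalizes $U_{\alpha,\Omega}$ and $U'_{\alpha,\Omega}$), hence normalizes $U'_\Omega$, and we are done. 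So the substantial case is ${^v\!}\nu(n) = r_\alpha$. By Lemma~\ref{LemRootInNaOmega}, either $\alpha$ or $2\alpha$ lies in $\Phi_\Omega$; in both cases there is $\lambda \in \Gamma'_\alpha$ (resp. $\lambda = \tfrac12 \lambda_{2\alpha}$) with $-\alpha(x) = \lambda$ for all $x \in \Omega$, and the $M_{\alpha,\lambda}$-type computations show $\widetilde{N}_{\alpha,\Omega} = T_{\alpha,\Omega}\{1,m\}$ for some $m \in M_{\alpha,\lambda} \cap \widetilde{N}_{\alpha,\Omega}$ (using Proposition~\ref{PropRankOneLevi}\ref{PropRankOneLevi:inGamma} together with the fact that $U_{\alpha,x} = U_{\alpha,\Omega}$, $U_{-\alpha,x} = U_{-\alpha,\Omega}$ for each $x \in \Omega$). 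Since the $T_{\alpha,\Omega}$-part already normalizes $U'_\Omega$, it remains to treat $n = m$.

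\textbf{The main computation.} For $n = m \in M_{\alpha,\lambda}$, I would use Lemma~\ref{LemConjugationMaUb}: for any $\beta \in \Phi$ and any $\mu \in \Rtot$, $m U_{\beta,\mu} m^{-1} = U_{r_\alpha(\beta), \mu - \beta(\alpha^\vee)\lambda}$, and correspondingly $m U'_{\beta,\mu} m^{-1} = U'_{r_\alpha(\beta), \mu - \beta(\alpha^\vee)\lambda}$ by taking the increasing union over $\mu' > \mu$. Now for $\beta \in \Phi$,
\[
m U'_{\beta,\Omega} m^{-1} = m \Big( \bigcap_{x \in \Omega} U'_{\beta,-\beta(x)} \Big) m^{-1} = \bigcap_{x \in \Omega} U'_{r_\alpha(\beta), -\beta(x) - \beta(\alpha^\vee)\lambda}.
\]
Since $\lambda = -\alpha(x)$ for every $x \in \Omega$, we get $-\beta(x) - \beta(\alpha^\vee)\lambda = -\beta(x) + \beta(\alpha^\vee)\alpha(x) = -\big(\beta - \beta(\alpha^\vee)\alpha\big)(x) = -r_\alpha(\beta)(x)$, so that the right-hand side equals $\bigcap_{x\in\Omega} U'_{r_\alpha(\beta), -r_\alpha(\beta)(x)} = U'_{r_\alpha(\beta),\Omega} \subset U'_\Omega$. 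Applying the same to $m^{-1}$ (note $m^{-1} \in M_{\alpha,\lambda}$ as well) gives the reverse inclusion, so $m U'_\Omega m^{-1} = U'_\Omega$. Combining the two cases, every generator $n \in \widetilde{N}_{\alpha,\Omega}$ normalizes $U'_\Omega$, and hence so does all of $\widetilde{N}_\Omega$.

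\textbf{Expected obstacle.} The delicate point is the case ${^v\!}\nu(n) = r_\alpha$: one must be sure that the value $\lambda$ with $-\alpha(x) = \lambda$ ($x \in \Omega$) is exactly the value $\varphi_\alpha$ realizes on the relevant element $u$ producing $m \in \widetilde{N}_{\alpha,\Omega}$, so that Lemma~\ref{LemConjugationMaUb} applies with this very $\lambda$ — this is where the hypothesis $\alpha \in \Phi_\Omega$ or $2\alpha \in \Phi_\Omega$ (rather than merely $\alpha \in \Phi^*_\Omega$) is used, via Lemma~\ref{LemRootInNaOmega} and Proposition~\ref{PropRankOneLevi}. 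Handling multipliable roots (the case $2\alpha \in \Phi_\Omega$ but $\alpha \notin \Phi_\Omega$) requires the identity $r_{\alpha,\lambda} = r_{2\alpha,2\lambda}$ and Fact~\ref{FactDecompositionSetOfValue} to stay inside the $\Gamma'$-sets, but the conjugation formula itself is insensitive to this since $r_\alpha = r_{2\alpha}$ and $(2\alpha)^\vee = \tfrac12\alpha^\vee$ scale consistently.
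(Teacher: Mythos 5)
Your proof is correct and follows essentially the same route as the paper's: reduce to the generators $\widetilde{N}_{\alpha,\Omega}$ of $\widetilde{N}_\Omega$ via Example~\ref{ExQC}, split according to ${^v\!}\nu(n)\in\{1,r_\alpha\}$ using Lemma~\ref{LemRootInNaOmega}, handle the torus case through $T_{\alpha,\Omega}\subset T_b$, and in the reflection case write $n\in T_b m$ with $m\in M_{\alpha,\lambda}$ (Proposition~\ref{PropRankOneLevi}) and apply Lemma~\ref{LemConjugationMaUb} together with the identity $-\beta(x)-\beta(\alpha^\vee)\lambda=-r_\alpha(\beta)(x)$. The only cosmetic difference is that you conjugate the whole subgroup $U'_{\beta,\Omega}$ while the paper conjugates a single element $u$, which is equivalent.
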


\begin{proof}
Let $\alpha \in \Phi_{\mathrm{nd}}$ and $n \in \widetilde{N}_{\alpha,\Omega}$.
Let $\beta \in \Phi$ and $u \in U'_{\beta,\Omega}$.

If ${^v\!}\nu(n) = \operatorname{id}$, then $n \in T$ by \cite[6.1.11(ii)]{BruhatTits1} and therefore $n \in T_{\alpha,\Omega} \subset T_b$.
Thus $nun^{-1} \in U'_{\beta,\Omega}$.

Otherwise $\alpha$ or $2\alpha \in \Phi_\Omega$ by Lemma~\ref{LemRootInNaOmega}, so that for any $x \in \Omega$, we have $-\alpha(x) = \lambda \in \Gamma_\alpha$.
Denote by $\gamma = {^v\!}\nu(n)(\beta)$.
Then $n \in L_{\alpha,x}$ since $U_{\pm \alpha,\Omega} \subset U_{\pm \alpha,x}$ and therefore $n \in T_b m$ for any $m \in M_{\alpha,\lambda}$ by Proposition~\ref{PropRankOneLevi}.

Let $x \in \Omega$ and $\varepsilon = \varphi_\beta(u) + \beta(x) > 0$.
Then $m u m^{-1} \in U_{\gamma,-\beta(x)+\varepsilon + \beta(\alpha^\vee) \lambda} = U_{\gamma,-\gamma(x) +\varepsilon}$ with $\gamma = r_\alpha(\beta) = \beta - \beta(\alpha^\vee) \alpha$ according to Lemma~\ref{LemConjugationMaUb}.
Thus $n u n^{-1} \in U'_{\gamma,x}$ for every $x \in \Omega$ and therefore $n u n^{-1} \in U'_{\Omega}$.
Hence $n$ normalizes $U'_{\Omega}$ since it is generated by the $U'_{\beta,\Omega}$.

Thus $\widetilde{N}_\Omega$ normalizes $U'_{\Omega}$ since it is generated by those elements $n$ according to Example~\ref{ExQC}\ref{ExDecUOmega:4}.
\end{proof}

In the following Proposition, we detail the proof of \cite[6.4.10]{BruhatTits1} with some changes since we do not use infimum here.

\begin{Prop}\label{PropRootSystemPhiOmega}
The subset $\Phi_\Omega$ is a sub-root system of $\Phi$, i.e. a root system in the $\mathbb{R}$-subspace $V^*_\Omega$ generated by $\Phi_\Omega$.

Moreover, the group homomorphism $ {^v}\!\nu : N \to \mathrm{GL}(V^*)$ induces a group homomorphism $\widetilde{N}_\Omega \to \mathrm{GL}(V^*_\Omega)$ sending $n$ onto the restriction of ${^v}\!\nu(n)$ to $V^*_\Omega$ with image the Weyl group of $\Phi_\Omega$ and kernel $T_\Omega$.
\end{Prop}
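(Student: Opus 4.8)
The plan is to prove the two assertions of Proposition~\ref{PropRootSystemPhiOmega} in turn, following the strategy of \cite[6.4.10]{BruhatTits1} but replacing all uses of infimum by the explicit group-theoretic description of $\Phi_\Omega$ provided by Lemma~\ref{LemEquivalenceDefPhiOmega} and Lemma~\ref{LemRootInNaOmega}. First I would establish that $\Phi_\Omega$ is stable under the reflections $r_\alpha$ for $\alpha \in \Phi_\Omega$. Given $\alpha, \beta \in \Phi_\Omega$, say with $-\alpha(x) = \lambda_\alpha \in \Gamma'_\alpha$ and $-\beta(x) = \lambda_\beta \in \Gamma'_\beta$ for all $x \in \Omega$, I want to show $r_\alpha(\beta) \in \Phi_\Omega$. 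For every $x \in \Omega$ we have $-r_\alpha(\beta)(x) = -\beta(x) + \beta(\alpha^\vee)\alpha(x) = \lambda_\beta - \beta(\alpha^\vee)\lambda_\alpha$, which is a \emph{constant} $\mu$ independent of $x$; so $r_\alpha(\beta) \in \Phi^*_\Omega$. It remains to check $\mu \in \Gamma'_{r_\alpha(\beta)}$. For this I would pick $m \in M_{\alpha,\lambda_\alpha}$ (non-empty by Proposition~\ref{PropValuedCoset}\ref{PropValuedCoset:emptyness} since $\lambda_\alpha \in \Gamma'_\alpha \subset \Gamma_\alpha$), take $u \in U_\beta$ realizing the criterion \ref{LemPhiOmega-2} of Lemma~\ref{LemEquivalenceDefPhiOmega} (so $\varphi_\beta(u) = \lambda_\beta$ and $\varphi_\beta(u) \geqslant \varphi_\beta(uv)$ for all $v \in U_{2\beta}$), and set $u' = mum^{-1} \in U_{r_\alpha(\beta)}$. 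By Proposition~\ref{PropWeylSetOfValues}, $\varphi_{r_\alpha(\beta)}(u') = \varphi_\beta(u) - \beta(\alpha^\vee)\lambda_\alpha = \mu$; and since conjugation by $m$ carries $U_{2\beta}$ onto $U_{2r_\alpha(\beta)}$ and preserves the filtration indices shifted by the constant $-2\beta(\alpha^\vee)\lambda_\alpha$ (again Proposition~\ref{PropWeylSetOfValues} applied to $2\beta$), the condition $\varphi_\beta(u) \geqslant \varphi_\beta(uv)$ for all $v$ transports to the analogous condition for $u'$, so $\mu \in \Gamma'_{r_\alpha(\beta)}$ by Lemma~\ref{LemEquivalenceDefPhiOmega}\ref{LemPhiOmega-2}. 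Hence $r_\alpha(\beta) \in \Phi_\Omega$. Together with the fact that $\Phi_\Omega \subset \Phi$ is finite, does not contain $0$, and satisfies $\alpha(\beta^\vee) \in \mathbb{Z}$ (inherited from $\Phi$), and that $\mathbb{R}\alpha \cap \Phi_\Omega \subset \mathbb{R}\alpha \cap \Phi = \{\pm\alpha, \pm 2\alpha\}$ (inherited), this shows $\Phi_\Omega$ is a root system in $V^*_\Omega := \langle \Phi_\Omega \rangle_\mathbb{R}$.

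Next I would treat the homomorphism statement. The restriction map $\mathrm{GL}(V^*) \to \mathrm{GL}(V^*_\Omega)$ makes sense on the stabilizer of $V^*_\Omega$; since ${^v}\!\nu(\widetilde{N}_\Omega)$ permutes $\Phi_\Omega$ — which I would deduce from the stability just proved, noting that $\widetilde{N}_\Omega$ is generated by the $\widetilde{N}_{\alpha,\Omega}$ for $\alpha \in \Phi_{\mathrm{nd}}$ (Example~\ref{ExQC}\ref{ExDecUOmega:4}) and that ${^v}\!\nu(\widetilde{N}_{\alpha,\Omega}) \subset \{1, r_\alpha\}$ by Lemma~\ref{LemRootInNaOmega}, while $r_\alpha$ preserves $\Phi_\Omega$ precisely when $\alpha$ or $2\alpha \in \Phi_\Omega$ — we get that $V^*_\Omega$ is ${^v}\!\nu(\widetilde{N}_\Omega)$-stable and hence a group homomorphism $\widetilde{N}_\Omega \to \mathrm{GL}(V^*_\Omega)$, $n \mapsto {^v}\!\nu(n)|_{V^*_\Omega}$. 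For the image: by Lemma~\ref{LemRootInNaOmega}, for each $\alpha \in \Phi_\Omega$ (non-divisible, or its non-divisible half) there is $n \in \widetilde{N}_{\alpha,\Omega}$ with ${^v}\!\nu(n) = r_\alpha$; these reflections $r_\alpha|_{V^*_\Omega}$ for $\alpha$ running through a basis of $\Phi_\Omega$ generate $W(\Phi_\Omega)$, so the image contains $W(\Phi_\Omega)$. Conversely ${^v}\!\nu(\widetilde{N}_\Omega)|_{V^*_\Omega}$ is generated by the ${^v}\!\nu(\widetilde{N}_{\alpha,\Omega})|_{V^*_\Omega} \subset \{1, r_\alpha|_{V^*_\Omega}\}$, all lying in $W(\Phi_\Omega)$, so the image is exactly $W(\Phi_\Omega)$.

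Finally, for the kernel: an element $n \in \widetilde{N}_\Omega$ with ${^v}\!\nu(n)|_{V^*_\Omega} = \operatorname{id}$ need not a priori satisfy ${^v}\!\nu(n) = \operatorname{id}$ on all of $V^*$, so I would argue as in \cite{BruhatTits1}: write $n$ as a product of elements $n_i \in \widetilde{N}_{\alpha_i, \Omega}$ and use that each factor with ${^v}\!\nu(n_i) = r_{\alpha_i} \neq 1$ forces $\alpha_i \in \Phi_\Omega$ (Lemma~\ref{LemRootInNaOmega}), so that if the product acts trivially on $V^*_\Omega$ the corresponding word in the $r_{\alpha_i}$'s is trivial in $W(\Phi_\Omega)$; pushing the relations back into $N$ (using the Bruhat decomposition \cite[6.1.15(c)]{BruhatTits1} to separate the $T$-part) one concludes $n \in T$. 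Then $n \in T \cap \widetilde{N}_\Omega = T_\Omega$, and conversely $T_\Omega \subset T = \ker {^v}\!\nu$ by \cite[6.1.11(ii)]{BruhatTits1}, giving $\ker = T_\Omega$. The main obstacle I anticipate is this last kernel computation: controlling that "trivial on $V^*_\Omega$" already implies "lies in $T$" requires care because $\widetilde{N}_\Omega$ may genuinely move vectors outside $V^*_\Omega$ via its $T$-part; the clean way around it, which I would adopt, is to observe that $\widetilde{N}_\Omega / T_\Omega \to W(\Phi_\Omega)$ is well-defined and injective by construction once one knows (via \ref{eqQC1} of Lemma~\ref{LemQC1} and Proposition~\ref{PropRankOneLevi}) that $T_\Omega = \widetilde{N}_{\alpha,\Omega} \cap T$ is exactly the fiber over $1$ in each rank-one piece, and then patch using \cite[6.1.15(c)]{BruhatTits1}.
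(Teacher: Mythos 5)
Your argument is correct in substance but takes a genuinely different route on the central step. Where the paper proves stability of $\Phi_\Omega$ under the whole group ${^v\!}\nu(\widetilde{N}_\Omega)$ at once --- conjugating the witness $u$ of Lemma~\ref{LemEquivalenceDefPhiOmega}\ref{LemPhiOmega-2} by an arbitrary $n\in\widetilde{N}_\Omega$ and invoking Proposition~\ref{PropNOmegaNormalizesUprime} (that $\widetilde{N}_\Omega$ normalizes $U'_\Omega$) to see that the conjugate still avoids $U'_{\beta,\Omega}$ --- you prove closure under the reflections $r_\alpha$, $\alpha\in\Phi_\Omega$, directly by conjugating with a single $m\in M_{\alpha,\lambda_\alpha}$ and transporting the $\Gamma'$-condition via Proposition~\ref{PropWeylSetOfValues}; this is essentially the mechanism of Proposition~\ref{PropWeylInvarianceSetOfValues} run at level $\lambda_\alpha$ instead of $0$. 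Your version is more local and bypasses Proposition~\ref{PropNOmegaNormalizesUprime} for this step; the paper's version obtains $W_\Omega$-stability of $\Phi_\Omega$ (not merely $r_\alpha$-stability) in one pass, and both then combine it with Lemma~\ref{LemRootInNaOmega} and Example~\ref{ExQC}\ref{ExDecUOmega:4} in the same way to identify the image with $W(\Phi_\Omega)$.

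Three small points. First, when $\alpha\in\Phi_\Omega$ is divisible you should reduce to the non-divisible root $\tfrac{1}{2}\alpha$ before invoking $M_{\alpha,\lambda_\alpha}$, Lemma~\ref{LemConjugationMaUb} and Proposition~\ref{PropWeylSetOfValues}: Fact~\ref{FactDecompositionSetOfValue} gives $\tfrac{1}{2}\lambda_\alpha\in\Gamma_{\alpha/2}$, and $r_\alpha=r_{\alpha/2}$ with $\beta(\alpha^\vee)\lambda_\alpha=\beta\big((\tfrac{1}{2}\alpha)^\vee\big)\tfrac{1}{2}\lambda_\alpha$, so nothing is lost. Second, your parenthetical ``$T_\Omega=\widetilde{N}_{\alpha,\Omega}\cap T$'' is not right: that intersection is $T_{\alpha,\Omega}$, in general a proper subgroup of $T_\Omega=T\cap U_\Omega$. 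Third, the kernel computation is much simpler than you fear: every generator $r_\alpha$ of $W_\Omega$ fixes the orthogonal complement of $V^*_\Omega$ pointwise (for a $W(\Phi)$-invariant inner product on $V^*$), so the restriction $W_\Omega\to\mathrm{GL}(V^*_\Omega)$ is already injective, and ``trivial on $V^*_\Omega$'' forces ``trivial on $V^*$''. The kernel is then $\ker{^v\!}\nu\cap\widetilde{N}_\Omega=T\cap N\cap U_\Omega=T_\Omega$, with no need to push relations back into $N$ or to patch rank-one pieces via the Bruhat decomposition; this is exactly how the paper concludes.
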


\begin{proof}
Denote by $W_\Omega = {^v}\!\nu(\widetilde{N}_\Omega)$ which is a subgroup of the Weyl group of $\Phi$.

Firstly we prove that $\Phi_\Omega$ is stable by $W_\Omega$.
Let $n \in \widetilde{N}_\Omega$ and $w = {^v\!}\nu(n) \in W_\Omega \subset W(\Phi)$.
Consider any $\alpha \in \Phi_\Omega$ and denote $\beta = w(\alpha)$.
Consider an element $u \in U_\alpha$ such that $\forall x \in \Omega,\ \varphi_\alpha(u) = -\alpha(x)$ and $\forall v \in U_{2\alpha},\ \varphi_\alpha(u) \geqslant \varphi_\alpha(uv)$ given by~\ref{LemEquivalenceDefPhiOmega}\ref{LemPhiOmega-2}.
Let $\widetilde{u} = n u n^{-1}$.
By \cite[6.1.2 (10)]{BruhatTits1}, we have $\widetilde{u} \in U_\beta$.
Since $n \in \widetilde{N}_\Omega \subset U_{\Omega}$, we have $\widetilde{u} \in U_\beta \cap U_\Omega = U_{\beta,\Omega}$ by Example~\ref{ExQC}\ref{ExDecUOmega:1}.
Let $\widetilde{v} \in U_{2\beta}$ and denote $v = n^{-1} \widetilde{v} n \in U_{w^{-1}(2\beta)} = U_{2\alpha}$. Suppose that $\widetilde{u} \widetilde{v} \in U'_{\beta,\Omega}$.
Since $\widetilde{N}_\Omega$ normalizes $U'_\Omega$ by Proposition~\ref{PropNOmegaNormalizesUprime}, 
we have $uv = n^{-1} \widetilde{u} \widetilde{v} n \in U'_\Omega \cap U_\alpha = U'_{\alpha,\Omega}$ by Proposition~\ref{PropDecompositionQC}\ref{PropDecQC:1} applied to $U'_{\Omega}$ (this is possible according to Example~\ref{ExQC}).
This contradicts the assumption on $u$ since for any $x \in \Omega$, we have $\varphi_\alpha(uv) > -\alpha(x) = \varphi_\alpha(u)$.
Thus $\widetilde{u} \widetilde{v} \not\in U'_{\beta,\Omega}$.

Finally, for $x\in \Omega$, we have $U'_{\beta,x} = U'_x \cap U_\beta = n (U'_x \cap U_\alpha) n^{-1} = n U'_{\alpha,x} n^{-1}$ since $n \in \widetilde{N}_\Omega \subset N_x$ normalizes $U'_x$ by Proposition~\ref{PropNOmegaNormalizesUprime}.
Thus for $x,y \in \Omega$, we have $U'_{\beta,x} = n U'_{\alpha,x} n^{-1} =  n U'_{\alpha,y} n^{-1} = U'_{\beta,y}$.
Thus, for $\widetilde{v} = 1$, we have $\widetilde{u} \not\in U'_{\beta,\Omega} = U'_{\beta,x}$ for any $x \in \Omega$.
Hence, for any $x \in \Omega$, we get $\varphi_\beta(\widetilde{u}) \leqslant -\beta(x)$.
But we also have $\varphi_\beta(\widetilde{u}) \geqslant -\beta(x)$ since $\widetilde{u} \in U_{\beta,x}$.
Hence $\beta\in \Phi_\Omega^*$.
Thus, by  the characterisation~\ref{LemPhiOmega-3} in Lemma~\ref{LemEquivalenceDefPhiOmega}, we have $\beta \in \Phi_\Omega$.

Secondly, since $\widetilde{N}_\Omega$ is generated by the $\widetilde{N}_{\alpha,\Omega}$ according to Example~\ref{ExQC}\ref{ExDecUOmega:4}, we get from Lemma~\ref{LemRootInNaOmega} that $W_\Omega$ is the group generated by the reflections $r_\alpha$ for $\alpha \in \Phi_\Omega$.
Hence $\Phi_\Omega$ is a root system inside $V^*_\Omega$ by definition and the restriction of elements in $W_\Omega$ to $V^*_\Omega$ is the Weyl group of $\Phi_\Omega$.

Finally, write $V^* = V_\Omega^* \oplus (\Phi_\Omega^\vee)^\perp$.
Since any $r_\alpha$ pointwise stabilizes $(\Phi_\Omega^\vee)^\perp$ for $\alpha \in \Phi_\Omega$ and these $r_\alpha$ generate $W_\Omega$, we know that $W_\Omega$ pointwise stabilizes $(\Phi_\Omega^\vee)^\perp$.
Hence an element $w \in W_\Omega$ pointwise stabilizes $V^*$ if, and only if, it pointwise stabilizes $V_\Omega^*$.
Thus,
the kernel of $\widetilde{N}_\Omega \to \mathrm{GL}(V^*_\Omega)$ is $\ker {^v\!}\nu \cap \widetilde{N}_\Omega = T \cap (N \cap U_\Omega) = T_\Omega$ by definition.
\end{proof}

We say that a point $x \in \mathbb{A}$ is a \textbf{special vertex}\index{special vertex}\index{vertex!special} if $W(\Phi_x) = W(\Phi)$.

\begin{Ex}\label{ExCalculationInSL2}
Consider $G = \mathrm{SL}_2(\mathbb{K})$, let $T$ be the subgroup of diagonal matrices and $U_\alpha$ (resp. $U_{-\alpha}$) the subgroup of upper (resp. lower) unitriangular matrices. We get a generating root group datum of $G$ if we take $m = \begin{pmatrix} 0 & 1\\-1 & 0\end{pmatrix}$ and $M_\alpha = M_{-\alpha} = m T$.
There are parametrizations $\widehat{\alpha} : \mathbb{K}^* \to T$, $u_\alpha : \mathbb{K} \to U_\alpha$ and $u_{-\alpha} : \mathbb{K} \to U_{-\alpha}$ of these groups given by $\widehat{\alpha}(z) = \begin{pmatrix} \frac{1}{z} & 0\\0& z\end{pmatrix}$, $u_\alpha(x) = \begin{pmatrix} 1 & x\\0&1\end{pmatrix}$ and $u_{-\alpha}(y) = \begin{pmatrix} 1 & 0 \\-y & 1\end{pmatrix}$.
We define a valuation of the root group datum by setting $\varphi_{\pm \alpha}(u_{\pm \alpha}(x)) = \omega(x)$.
For $x,y \in \mathbb{K}$ with $\omega(x)\geqslant 0$ and $\omega(y) > 0$, 
define $u = u_\alpha(x) \in U_{\alpha,0}$ and $v = u_{-\alpha}(y) \in U'_{-\alpha,0}$.
Then
\[
[u,v] =
  \begin{pmatrix} 1 & x\\0&1\end{pmatrix}
  \begin{pmatrix} 1 & 0\\-y&1\end{pmatrix}
  \begin{pmatrix} 1 & -x\\0&1\end{pmatrix}
  \begin{pmatrix} 1 & 0\\y&1\end{pmatrix}
  =\begin{pmatrix} 1-xy+x^2y^2 & x^2y\\xy^2 & 1+xy\end{pmatrix}
\]
We set $z = 1+xy \in \mathbb{K}^*$ since $\omega(xy) > 0$.
Denote by $t = \widehat{\alpha}(z) \in T$, by $u' = u_\alpha\left( \frac{x^2y}{z} \right)$ and by $v' = u_{-\alpha}\left( - \frac{x y^2}{z} \right)$.
One can easily check that 
$[u,v] = u' t v'$
so that $t = \widehat{\alpha}(1+xy) = t(u,v) \in T'_{\alpha,0}$ for every $x,y$.

Assume, for instance, that $\omega : \mathbb{K} \to \mathbb{Z}\cup \{\infty\}$ is a discrete valuation of rank $1$.
Thus $T'_{\alpha,0} = \{ \widehat{\alpha}(z),\ \omega(z-1) \geqslant 1\}$.
By the same way, we get $T'_{-\alpha,0} = T'_{\alpha,0}$ and $T'_0 = T \cap \langle U'_{\alpha,0} ,U'_{-\alpha,0} \rangle = T \cap \langle U_{\alpha,1} ,U_{-\alpha,1} \rangle = \{ \widehat{\alpha}(z),\ \omega(z-1) \geqslant 2\}$.
In this case, we have $T'_0 \varsubsetneq \langle T'_{\alpha,0}, T'_{-\alpha,0} \rangle = T'_{\alpha,0}$.
\end{Ex}

This example and the second inclusion of Lemma~\ref{LemCommutationUaU'a} suggest to introduce the following subgroup of $T$ (which is denoted by $H_{f,f^*}$ in \cite[6.4]{BruhatTits1}).

\begin{Not}\label{T*not}
Let $T^*_\Omega$\index[notation]{t@$T^*_\Omega$} be the subgroup of $T_b$ (see Proposition~\ref{PropRankOneLevi}~\ref{PropRankOneLevi:tori} and Notation~\ref{NotLevialphaOmega}) generated by the $T'_{\alpha,\Omega}$ for $\alpha \in \Phi$. 
\end{Not}

\begin{Lem}\label{LemInclusionTOmega}
We have $T^*_\Omega \subset T_\Omega \subset T_b$ and all these groups are normal in $T_b$.
In particular, all those inclusions are inclusions of normal subgroups.
\end{Lem}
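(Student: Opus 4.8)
The statement to prove is Lemma~\ref{LemInclusionTOmega}: $T^*_\Omega \subset T_\Omega \subset T_b$, with all three groups normal in $T_b$.

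\textbf{Plan of proof.} The inclusion $T_\Omega \subset T_b$ is essentially already available: $T_\Omega = U_\Omega \cap T$ and, by Example~\ref{ExQC}\ref{ExDecUOmega:3}, one has $U_\Omega = U^+_\Omega U^-_\Omega \widetilde{N}_\Omega$; intersecting with $T$ and using Lemma~\ref{LemTripleLeviCommutation}\ref{LemTripleLeviGroup} (applied after a spherical Bruhat decomposition, as is done repeatedly earlier, e.g. in the proof of Proposition~\ref{PropRankOneLevi}) forces any element of $T_\Omega$ to lie in $\widetilde{N}_\Omega$, and in fact the $N$-part must be an actual $T$-element which the computation of $T$-components shows lies in $T_b$. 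More directly: for each $\alpha \in \Phi$ one has $T_{\alpha,\Omega} \subset T_b$ by the remark following Notation~\ref{NotLevialphaOmega} (via Proposition~\ref{PropRankOneLevi}\ref{PropRankOneLevi:tori} applied to $L_{\alpha,x} = L_{\alpha,-\alpha(x)}$ for any chosen $x \in \Omega$), and since $\widetilde{N}_\Omega$ is generated by the $\widetilde{N}_{\alpha,\Omega}$ for $\alpha \in \Phi_{\mathrm{nd}}$ (Example~\ref{ExQC}\ref{ExDecUOmega:4}), an element of $T_\Omega = \widetilde N_\Omega \cap T$ is a product of elements each of which lies in $\widetilde{N}_{\alpha,\Omega}$; by Lemma~\ref{LemRootInNaOmega} each such factor maps under ${^v\!}\nu$ into $\{1,r_\alpha\}$, and an element of $T$ has trivial image, so — using ${^v\!}\nu(T)=\operatorname{id}$ from \cite[6.1.11(ii)]{BruhatTits1} — each relevant factor actually lies in $\ker{^v\!}\nu \cap L_{\alpha,\Omega} = T_{\alpha,\Omega} \subset T_b$. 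Hence $T_\Omega \subset T_b$.

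\textbf{The inclusion $T^*_\Omega \subset T_\Omega$.} By definition $T^*_\Omega$ is generated by the $T'_{\alpha,\Omega}$, so it suffices to show $T'_{\alpha,\Omega} \subset T_\Omega$ for each $\alpha \in \Phi$, i.e. $T'_{\alpha,\Omega} \subset U_\Omega$. Now $T'_{\alpha,\Omega} = L'_{\alpha,\Omega} \cap T$, and by Lemma~\ref{LemQC1}(\ref{eqQC1'}) we have $T'_{\alpha,\Omega} = \widetilde{N}'_{\alpha,\Omega}$. More usefully, Proposition~\ref{PropTcomponent} identifies $T'_{\alpha,\Omega}$-type groups with groups generated by $T$-components $t(u,v)$ of commutators $[u,v]$ with $u \in U_{\alpha,\Omega}$, $v \in U'_{-\alpha,\Omega}$ — indeed the same argument localized at $\Omega$ (taking $x\in\Omega$, $\lambda = -\alpha(x)$, and forming the increasing union over the relevant $\varepsilon$) shows that $T'_{\alpha,\Omega}$ is generated by such $t(u,v)$. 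But $[u,v] \in U_\Omega$ since $u \in U_{\alpha,\Omega} \subset U_\Omega$ and $v \in U'_{-\alpha,\Omega} \subset U_{-\alpha,\Omega} \subset U_\Omega$, and by Lemma~\ref{LemTripleLeviCommutation}\ref{LemTripleLeviGroup} together with Lemma~\ref{LemCommutationUaU'a} the element $t(u,v)$ is the unique $T$-component of $[u,v]$ and lies in $U'_{\alpha,\Omega}U'_{-\alpha,\Omega}T'_{\alpha,\Omega}$; in particular $t(u,v) = v'^{-1}\,[u,v]\,u'^{-1}$ for suitable $u' \in U'_{\alpha,\Omega} \subset U_\Omega$, $v'\in U'_{-\alpha,\Omega}\subset U_\Omega$, so $t(u,v) \in U_\Omega \cap T = T_\Omega$. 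Thus $T'_{\alpha,\Omega} \subset T_\Omega$ and hence $T^*_\Omega \subset T_\Omega$.

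\textbf{Normality in $T_b$.} The key point is that $T_b$ normalizes every $U_{\alpha,\Omega}$ and every $U'_{\alpha,\Omega}$ (the two Facts stated just before Lemma~\ref{LemU2ainUa}), hence conjugation by $T_b$ preserves the generating families of $U_\Omega$ and of $U'_\Omega$; therefore $T_b$ normalizes both $U_\Omega$ and $U'_\Omega$. Consequently $T_b$ normalizes $T_\Omega = U_\Omega \cap T$ and $T^*_\Omega$: for the latter, conjugation by $t \in T_b$ sends $L'_{\alpha,\Omega} = \langle U_{\alpha,\Omega}, U'_{-\alpha,\Omega}\rangle$ to $L'_{\alpha,\Omega}$ (since it fixes each generating subgroup setwise), hence sends $T'_{\alpha,\Omega} = L'_{\alpha,\Omega}\cap T$ to itself, and these generate $T^*_\Omega$. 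Finally $T_b$ is trivially normal in itself. I do not expect a serious obstacle here; the only mild subtlety is bookkeeping — being careful that the localized version of Proposition~\ref{PropTcomponent} applies with $\Omega$ in place of a single pair $(\lambda,\varepsilon)$, which follows from writing $U'_{-\alpha,\Omega}$ as an increasing union of the $U_{-\alpha,-\lambda+\varepsilon}$ over the appropriate $\varepsilon$ (compare Remark~\ref{RkVarChangeinT} and the proof of Lemma~\ref{LemQC1}). The hardest part is really just assembling the right earlier lemmas in the right order; no new idea is needed.
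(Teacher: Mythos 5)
Your normality argument and your reduction of $T^*_\Omega \subset T_\Omega$ to $T'_{\alpha,\Omega}\subset U_\Omega$ follow the same lines as the paper; note, though, that the second point is immediate once you observe that $L'_{\alpha,\Omega}=\langle U_{\alpha,\Omega},U'_{-\alpha,\Omega}\rangle$ is generated by two subgroups of $U_\Omega$ (since $U'_{-\alpha,\Omega}\subset U_{-\alpha,\Omega}$), so $T'_{\alpha,\Omega}=L'_{\alpha,\Omega}\cap T\subset U_\Omega\cap T=T_\Omega$ — your detour through a localized version of Proposition~\ref{PropTcomponent} and the $T$-components $t(u,v)$ is correct in substance but adds an unproved intermediate claim you do not need. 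The genuine problem is your argument for $T_\Omega\subset T_b$. You write that an element $t\in T_\Omega=\widetilde{N}_\Omega\cap T$ is a product $n_1\cdots n_k$ with $n_i\in\widetilde{N}_{\alpha_i,\Omega}$, that ${^v\!}\nu(n_i)\in\{1,r_{\alpha_i}\}$ by Lemma~\ref{LemRootInNaOmega}, and that since ${^v\!}\nu(t)=1$ ``each relevant factor actually lies in $\ker{^v\!}\nu$''. That inference is false: a word in reflections can be trivial without each letter being trivial. Concretely, for $\alpha\in\Phi_\Omega$ and $\lambda=-\alpha(x)$ ($x\in\Omega$) one has $M_{\alpha,\lambda}\subset\widetilde{N}_{\alpha,\Omega}$, and for $m\in M_{\alpha,\lambda}$ the product $m\cdot m=m^2$ lies in $T$ while ${^v\!}\nu(m)=r_\alpha\neq 1$. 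So the step that is supposed to place each factor in $T_{\alpha,\Omega}\subset T_b$ fails, and the preceding ``essentially already available'' paragraph only gestures at ``the computation of $T$-components'' without carrying it out.

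The conclusion is true and the repair is short, but it needs the affine (not just the vectorial) information. Fix $x\in\Omega$. Each $\widetilde{N}_{\alpha,\Omega}=N\cap L_{\alpha,\Omega}$ is contained in $N\cap L_{\alpha,-\alpha(x)}=N_{\alpha,-\alpha(x)}$, and by Proposition~\ref{PropRankOneLevi} together with Lemma~\ref{LemMuNu} one has $\nu\big(N_{\alpha,-\alpha(x)}\big)\subset\{\operatorname{id},\,r_{\alpha,-\alpha(x)}\}$, all of whose elements fix $x$. Hence every element of $\nu(\widetilde{N}_\Omega)$ fixes $x$. Now for $t\in T\cap\widetilde{N}_\Omega$ the map $\nu(t)$ is a translation (its linear part ${^v\!}\nu(t)$ is trivial) fixing $x$, hence is the identity, i.e. $t\in T_b$. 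This is the content the paper extracts via $U_\Omega\subset U_x$ and Proposition~\ref{PropRankOneLevi}: one must use that the reflections involved all fix a common point of $\Omega$, not merely that ${^v\!}\nu$ kills the product.
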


\begin{Rq}
The second assertion of this lemma becomes obvious when $T = \mathcal{Z}_\mathbf{G}(\mathbf{S})(\mathbb{K})$ is the group of rational points of the centralizer of a maximal split torus $\mathbf{S}$ of a quasi-split reductive group $\mathbf{G}$.
Indeed, in this case $\mathcal{Z}_\mathbf{G}(\mathbf{S})$ is a maximal torus of $\mathbf{G}$ by definition and thus $T$ is commutative.
But note that $\mathcal{Z}_\mathbf{G}(\mathbf{S})(\mathbb{K})$ may  not be commutative if $\mathbf{G}$ is not quasi-split.
\end{Rq}

\begin{proof}
Since $T_b$ normalizes the $U_{ \alpha,\lambda}$ and the $U'_{ \alpha,\lambda}$ for any $\lambda \in \Rtot$ and any $\alpha\in \Phi$, it normalizes $U_{\alpha,\Omega}$ and $U'_{\alpha,\Omega}$ as intersection of such groups. 
Therefore $T_b$ normalizes $U_\Omega$, $U'_\Omega$ and $L'_{\alpha,\Omega}$ for any $\alpha \in \Phi$.
Since $T_b$ is a subgroup of $T$, it normalizes the intersections $T_\Omega = T \cap U_{\Omega}$, $T'_\Omega = T \cap U'_{\Omega}$ and $T'_{\alpha,\Omega} = L'_{\alpha,\Omega} \cap T$ for any $\alpha \in \Phi$.
In particular, $T_b$ normalizes $T^*_\Omega$ being generated by the $T'_{\alpha,\Omega}$ for $\alpha\in \Phi$. Thus it remains to prove the inclusions.

For $\alpha \in \Phi$, we have $T'_{\alpha,\Omega} = L'_{\alpha,\Omega} \cap T \subset U_{\Omega} \cap T$. Hence a generating set of $T^*_\Omega$ is contained in $T_\Omega$ and therefore $T^*_\Omega \subset T_\Omega$.

If $x \in \Omega$, then $U_{\alpha,\Omega} \subset U_{\alpha,x}$ for any $\alpha \in \Phi$.
Hence $U_{\Omega} \subset U_{x}$ and thus $T_{\Omega} = T \cap U_\Omega \subset T_{\alpha,x} = T_{\alpha,-\alpha(x)} \subset T_b$ by Proposition~\ref{PropRankOneLevi}\ref{PropRankOneLevi:tori}.
\end{proof}

We have the analogous to \cite[6.4.27]{BruhatTits1}:

\begin{Lem}\label{LemCommutationTstarUaOmega}
For any $\alpha \in \Phi$, the following commutator subgroup satisfies
\[ [T^*_\Omega,U_{\alpha,\Omega}] \subset U'_{\alpha,\Omega}. \]
\end{Lem}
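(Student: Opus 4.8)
The claim $[T^*_\Omega, U_{\alpha,\Omega}] \subset U'_{\alpha,\Omega}$ is stated for a single root $\alpha \in \Phi$, and since $T^*_\Omega$ is generated by the subgroups $T'_{\beta,\Omega}$ for $\beta \in \Phi$, it suffices to show that $[T'_{\beta,\Omega}, U_{\alpha,\Omega}] \subset U'_{\alpha,\Omega}$ for every pair $\alpha,\beta \in \Phi$. The plan is to split into two cases according to whether $\beta$ is proportional to $\alpha$ or not, and in each case to reduce to a statement about the rank-one data $U_{\alpha,\lambda}$, $U'_{\alpha,\lambda}$, $T'_{\alpha,\lambda}$ that has already been proved, by intersecting over the points $x \in \Omega$.

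\textbf{First case: $\beta \notin \mathbb{R}\alpha$.} Here I would invoke Proposition~\ref{PropCommutationTbUa} directly. Pick any $x \in \Omega$ and set $\lambda = -\alpha(x)$, $\mu = -\beta(x)$. Since $U_{\alpha,\Omega} \subset U_{\alpha,\lambda} = U_{\alpha,-\alpha(x)}$ and $T'_{\beta,\Omega} \subset T'_{\beta,\mu}$ (because $U_{\pm\beta,\Omega} \subset U_{\pm\beta,x}$, hence $L'_{\beta,\Omega} \subset L'_{\beta,x} = L'_{\beta,-\beta(x)}$, and one intersects with $T$), the commutator $[T'_{\beta,\Omega},U_{\alpha,\Omega}]$ is contained in $[T'_{\beta,\mu}, U_{\alpha,\lambda}] \subset U'_{\alpha,\lambda} = U'_{\alpha,-\alpha(x)} = U'_{\alpha,x}$ by the last assertion of Proposition~\ref{PropCommutationTbUa}. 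As this holds for every $x \in \Omega$, we get $[T'_{\beta,\Omega},U_{\alpha,\Omega}] \subset \bigcap_{x\in\Omega} U'_{\alpha,x} = U'_{\alpha,\Omega}$.

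\textbf{Second case: $\beta \in \mathbb{R}\alpha$.} Then $L'_{\beta,\Omega}$ and $L'_{\alpha,\Omega}$ involve the same non-divisible root $\gamma$ with $\{\alpha,\beta\} \subset \{\pm\gamma, \pm 2\gamma\}$, and one checks $T'_{\beta,\Omega} \subset T_b$ normalizes $U_{\gamma,\Omega}$, $U'_{\gamma,\Omega}$ and the corresponding $2\gamma$-groups; but normalization is not quite what is wanted — we need the commutator to land in $U'_{\alpha,\Omega}$, strictly smaller than $U_{\alpha,\Omega}$. For this I would mimic the rank-one argument of Corollary~\ref{CorCommutationTaUa} (inclusions (\ref{eqnCommutator7}) and (\ref{eqnCommutator8})): fixing $x \in \Omega$ and $\lambda = -\alpha(x)$, one has $T'_{\beta,\Omega} \subset T'_{\alpha,\lambda}$ when $\beta = \pm\alpha$ (using $T'_{\alpha,\lambda} = T'_{-\alpha,-\lambda}$, cf. Proposition~\ref{PropRankOneLevi}), and $T'_{2\alpha,\lambda'} \subset T'_{\alpha,\lambda}$-type inclusions handle the divisible/multipliable subcases, so $[T'_{\beta,\Omega},U_{\alpha,\Omega}] \subset [T'_{\alpha,\lambda}, U_{\alpha,\lambda}] \subset U'_{\alpha,\lambda} = U'_{\alpha,x}$ by (\ref{eqnCommutator7}) (resp. (\ref{eqnCommutator8})). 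Intersecting over $x \in \Omega$ finishes the case. The only genuinely delicate point — and the one I expect to be the main obstacle — is bookkeeping the comparison of the various $T'$-groups attached to $\beta$, $\alpha$ and their multiples/divisors so that one really lands inside $T'_{\alpha,-\alpha(x)}$ for the right value at each $x$; once these inclusions are pinned down, the result follows by assembling the already-established rank-one commutator estimates and taking the intersection over $\Omega$.
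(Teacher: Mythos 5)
Your overall architecture matches the paper's proof: reduce to the generators $T'_{\beta,\Omega}$ of $T^*_\Omega$ (which is legitimate because $T^*_\Omega\subset T_b$ normalizes $U'_{\alpha,\Omega}$), treat $\beta\notin\mathbb{R}\alpha$ via Proposition~\ref{PropCommutationTbUa}, treat $\beta\in\mathbb{R}\alpha$ via Corollary~\ref{CorCommutationTaUa}, and intersect over $x\in\Omega$ at the end. Your first case is correct as written.

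However, your second case rests on the identity $T'_{\alpha,\lambda}=T'_{-\alpha,-\lambda}$, and this is false in general: Proposition~\ref{PropRankOneLevi} only gives $L^\varepsilon_{\alpha,\lambda}=L^\varepsilon_{-\alpha,-\lambda+\varepsilon}$, and Remark~\ref{RkRankOneRootGroup} explicitly warns that $L'_{-\alpha,-\lambda}$ may differ from $L'_{\alpha,\lambda}$ (because $U_{\alpha,\lambda}\neq U'_{\alpha,\lambda}$ when $\lambda\in\Gamma_\alpha$), hence the corresponding $T'$-groups may differ. The correct route, which is what the paper does, is not to convert $T'_{-\alpha,\alpha(x)}$ into $T'_{\alpha,-\alpha(x)}$ at all: for $\beta\in\mathbb{R}_{<0}\alpha$ one has $t\in T'_{-\alpha,\alpha(x)}$ (using $T'_{-2\alpha,x}\subset T'_{-\alpha,x}$ if needed), and one applies inclusion~(\ref{eqnCommutator8}) of Corollary~\ref{CorCommutationTaUa} \emph{to the root $-\alpha$}, which directly yields $[T'_{-\alpha,\alpha(x)},U_{\alpha,-\alpha(x)}]\subset U'_{\alpha,-\alpha(x)}$. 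A second point you leave vague is the case where $\alpha$ itself is divisible: the paper first establishes the inclusion for non-divisible $\alpha$, then for divisible $\alpha$ observes that $u\in U_{\alpha/2,x}$, so $[t,u]\in U'_{\alpha/2,x}$ by the previous step, and concludes $[t,u]\in U_\alpha\cap U'_{\alpha/2,x}=U'_{\alpha,x}$ since $T$ normalizes $U_\alpha$. Your phrase about ``$T'_{2\alpha,\lambda'}\subset T'_{\alpha,\lambda}$-type inclusions'' addresses only the $T'$-side of the bookkeeping, not this reduction on the $U$-side. With these two repairs your argument coincides with the paper's.
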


\begin{proof}
Since $T^*_\Omega \subset T_b$ normalizes $U'_{\alpha,\Omega}$ and, by Lemma~\ref{LemCommutationUaU'a}, $U_{\alpha,\Omega}$ normalizes $U'_{\alpha,\Omega}$, it suffices to prove that for a generating set $X$ of $T^*_\Omega$, for every $u \in U_{\alpha,\Omega}$ and every $x \in X$, we have $[x,u] \subset U'_{\alpha,\Omega}$. 
We consider $X = \bigcup_{\beta\in \Phi} T'_{\beta,\Omega}$.
Thus, let $\beta \in \Phi$, $u \in U_{\alpha,\Omega}$ and $t \in T'_{\beta,\Omega}$.
Consider any $x \in \Omega$.

Firstly we  assume that $\alpha$ is non-divisible.
Then $u  \in U_{\alpha,x} = U_{\alpha,-\alpha(x)}$
and $t \in T'_{\beta,x} = T'_{\beta,-\beta(x)}$.
We have the following three possibilities:

\paragraph{Case 1: $\beta \not\in \mathbb{R}\alpha$}
Then, by Proposition~\ref{PropCommutationTbUa}, we get that $[t,u] \in U'_{\alpha,-\alpha(x)} = U'_{\alpha,x}$.

\paragraph{Case 2: $\beta \in \mathbb{R}_{>0} \alpha$}
Then $\beta \in \{\alpha,2\alpha\}$ since $\alpha$ is non-divisible and $t \in T'_{\alpha,x}$ since $T'_{2\alpha,x} \subset T'_{\alpha,x}$.
Thus, by Corollary~\ref{CorCommutationTaUa}(\ref{eqnCommutator7}), we get that $[t,u] \in U'_{\alpha,-\alpha(x)} = U'_{\alpha,x}$.

\paragraph{Case 3: $\beta \in \mathbb{R}_{<0} \alpha$}
Then $\beta \in \{-\alpha,-2\alpha\}$ since $\alpha$ is non-divisible and $t \in T'_{-\alpha,x}$ since $T'_{-2\alpha,x} \subset T'_{-\alpha,x}$.
Thus, by Corollary~\ref{CorCommutationTaUa}(\ref{eqnCommutator8}) applied to $-\alpha$, we get that $[t,u] \in U'_{\alpha,-\alpha(x)} = U'_{\alpha,x} $.

\medskip

Finally, if $\alpha$ is divisible, we have that $u \in U_{\frac{\alpha}{2},x}$ and we have already shown that $[t,u] \in U'_{\frac{\alpha}{2},x}$.
But since $T$ normalizes $U_{\alpha}$, we have $[t,u] \in U_\alpha \cap U'_{\frac{\alpha}{2},x} = U'_{\alpha,x}$.

Since the inclusion $[t,u] \in U'_{\alpha,x}$ holds for every $x \in \Omega$, we get $[t,u] \in \bigcap_{x \in \Omega} U'_{\alpha,x} = U'_{\alpha,\Omega}$.
\end{proof}

\begin{Not}
For $\alpha \in \Phi$, we denote by
\[U^*_{\alpha,\Omega} = \left\{ \begin{array}{rl}
 U_{\alpha,\Omega} & \text{ if } \alpha \not\in \Phi^*_\Omega\\
 U'_{\alpha,\Omega} & \text{ if } \alpha \in \Phi^*_\Omega
\end{array}\right. .\]
\index[notation]{u@$U^*_{\alpha,\Omega}$}
We denote by $U^*_\Omega$\index[notation]{u@$U^*_\Omega$} the subgroup generated by $T^*_\Omega$ and the $U^*_{\alpha,\Omega}$ for $\alpha \in \Phi$.
\end{Not}

\begin{Rq}\label{RqPhiStarNonCollinear}
If $\alpha \in \Phi^*_\Omega$, then $\mathbb{R} \alpha \cap \Phi \subset \Phi^*_\Omega$.

Indeed, let $\beta \in \mathbb{R}\alpha$ and write it $\beta = r \alpha$ with $2r \in \{\pm 1, \pm 2, \pm 4\}$.
By definition, there is $\lambda_\alpha \in \Rtot$ such that $\forall x \in \Omega,\ -\alpha(x) = \lambda_\alpha$.
Thus, with $2 \lambda_\beta = 2 r \lambda_\alpha$ (that belongs to $\Rtot$ since it is a $\mathbb{Z}$-module), we have $\forall x \in \Omega,\ -2 \beta(x) = -2 r \alpha(x) = 2 \lambda_\beta$ so that $\beta \in \Phi^*_\Omega$ since $\Rtot$ is $\mathbb{Z}$-torsion free and $-\beta(x) \in \Rtot$ for any $x \in \mathbb{A}_\Rtot$. 
\end{Rq}

The difference between $\Phi_\Omega$ and $\Phi^*_\Omega$ is that $\Phi_\Omega$ takes the group structure into account whereas the set $\Phi^*_\Omega$ only considers the structure of $\Omega$.
For instance, any point $x \in \mathbb{A}_\Rtot$ satisfies $\Phi^*_x = \Phi$ whereas we will have $W(\Phi_x) = W(\Phi)$ if, and only if, $x$ is a special vertex.

We recall that, in Bruhat-Tits theory, there exist situations in which we never have $\Phi_x = \Phi$ (for instance a dense valuation with $\Gamma'_\alpha = \emptyset$ for a multipliable root $\alpha \in \Phi$, or a discrete valuation with a totally ramified extension $\widetilde{\mathbb{K}} / \mathbb{K}$ splitting $\mathbf{G}$).

The following Proposition corresponds to the beginning of the statement \cite[6.4.23]{BruhatTits1} with a different proof since we do not define quasi-concave maps.

\begin{Prop}\label{PropUstarQC}
The family of groups $\left((U^*_{\alpha,\Omega})_{\alpha\in\Phi},T^*_\Omega\right)$ is quasi-concave and the group $U^*_\Omega$ is a normal subgroup of $U_\Omega$.
\end{Prop}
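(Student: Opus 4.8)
\textbf{Proof plan for Proposition~\ref{PropUstarQC}.}

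The plan is to verify the three quasi-concavity axioms \ref{axiomQC1}, \ref{axiomQC2}, \ref{axiomQC3} for the pair $\left((U^*_{\alpha,\Omega})_{\alpha\in\Phi},T^*_\Omega\right)$, then deduce normality of $U^*_\Omega$ in $U_\Omega$. First I would check \ref{axiomQC3}: by Lemma~\ref{LemInclusionTOmega} the group $T^*_\Omega$ is a subgroup of $T_b$, hence it normalizes each $U_{\alpha,\Omega}$ and each $U'_{\alpha,\Omega}$ (this is recorded just after Notation~\ref{NotLevialphaOmega}, coming from Corollary~\ref{CorActionNUa}); so it normalizes $U^*_{\alpha,\Omega}$ for every $\alpha$, whichever of the two cases of the definition applies. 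For \ref{axiomQC2}, fix $\alpha,\beta\in\Phi$ with $\beta\notin-\mathbb{R}_{>0}\alpha$; we must show $[U^*_{\alpha,\Omega},U^*_{\beta,\Omega}]\subset U^*_{(\alpha,\beta),\Omega}$, where the right-hand side is generated by the $U^*_{\gamma,\Omega}$ for $\gamma\in(\alpha,\beta)$. Since $U^*_{\alpha,\Omega}\subset U_{\alpha,\Omega}$ in all cases, Lemma~\ref{LemQC2}(\ref{eqQC2}) gives $[U^*_{\alpha,\Omega},U^*_{\beta,\Omega}]\subset[U_{\alpha,\Omega},U_{\beta,\Omega}]\subset U_{(\alpha,\beta),\Omega}$; the point is to upgrade each factor $U_{\gamma,\Omega}$ to $U^*_{\gamma,\Omega}$ when $\gamma\in\Phi^*_\Omega$. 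The key observation is Remark~\ref{RqPhiStarNonCollinear}: if $\gamma=r\alpha+s\beta$ with $r,s\in\mathbb{Z}_{>0}$ lies in $\Phi^*_\Omega$, then $-\gamma$ is constant on $\Omega$, and one can re-run the proof of Lemma~\ref{LemQC2} keeping track of strict versus non-strict inequalities exactly as in Lemma~\ref{LemCommutationUaU'a}: when the input from $U^*_{\alpha,\Omega}$ or $U^*_{\beta,\Omega}$ contributes a strict inequality $\varphi>-(\cdot)(x)$, then $r\varphi_\alpha(u)+s\varphi_\beta(v)>-\gamma(x)$, forcing the commutator into $U'_{\gamma,\Omega}=U^*_{\gamma,\Omega}$; when $\gamma\notin\Phi^*_\Omega$ there is nothing to upgrade. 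A case analysis according to whether $\alpha$, $\beta$ belong to $\Phi^*_\Omega$ (so that $U^*_{\alpha,\Omega}=U'_{\alpha,\Omega}$, resp.\ $U_{\alpha,\Omega}$) finishes \ref{axiomQC2}; note that if either of $\alpha,\beta$ is in $\Phi^*_\Omega$ and $r,s>0$ then $\gamma\in\Phi^*_\Omega$ too, so the strict inequality propagates correctly.

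The main work is axiom~\ref{axiomQC1}, namely $L^*_{\alpha,\Omega}:=\langle U^*_{\alpha,\Omega},U^*_{2\alpha,\Omega},U^*_{-\alpha,\Omega},U^*_{-2\alpha,\Omega},T^*_\Omega\rangle = U^*_{\alpha,\Omega}U^*_{2\alpha,\Omega}U^*_{-\alpha,\Omega}U^*_{-2\alpha,\Omega}N^*_\alpha$ with $N^*_\alpha=L^*_{\alpha,\Omega}\cap N$ (and the reversed-order product). Here I would distinguish two cases for a non-divisible $\alpha$. If $\alpha\notin\Phi^*_\Omega$ and $-\alpha\notin\Phi^*_\Omega$ (which by Remark~\ref{RqPhiStarNonCollinear} and Corollary~\ref{CorGamma-alpha} forces also $2\alpha,-2\alpha\notin\Phi^*_\Omega$), then $U^*_{\pm\alpha,\Omega}=U_{\pm\alpha,\Omega}$ and $U^*_{\pm2\alpha,\Omega}=U_{\pm2\alpha,\Omega}$, so $L^*_{\alpha,\Omega}=L_{\alpha,\Omega}$ possibly enlarged by $T^*_\Omega\subset T_b$, and the decomposition follows from Lemma~\ref{LemQC1}(\ref{eqQC1}) together with the fact that $T^*_\Omega$ normalizes all the relevant root subgroups. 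If $\alpha\in\Phi^*_\Omega$ (equivalently, by Remark~\ref{RqPhiStarNonCollinear} and Corollary~\ref{CorGamma-alpha}, $-\alpha\in\Phi^*_\Omega$, and $2\alpha,-2\alpha$ whenever they are roots), then $U^*_{\pm\alpha,\Omega}=U'_{\pm\alpha,\Omega}$ and $U^*_{\pm2\alpha,\Omega}=U'_{\pm2\alpha,\Omega}$, and I claim $L^*_{\alpha,\Omega}=L'_{\alpha,\Omega}$ enlarged by $T^*_\Omega$; then Lemma~\ref{LemQC1}(\ref{eqQC1'}) gives $L'_{\alpha,\Omega}=U'_{\alpha,\Omega}T'_{\alpha,\Omega}U'_{-\alpha,\Omega}$, and since $T'_{\alpha,\Omega}\subset T^*_\Omega\subset T_b$ normalizes $U'_{\pm\alpha,\Omega}$, combined with $U'_{2\alpha,\Omega}\subset U'_{\alpha,\Omega}$ (Lemma~\ref{LemU2ainUa} applied to the $U'$-family, valid by Example~\ref{ExQC}), the desired normal form with $N^*_\alpha=T^*_{\alpha,\Omega}:=T^*_\Omega\cap L^*_{\alpha,\Omega}$ drops out. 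I should check that $L'_{-\alpha,\Omega}=L'_{\alpha,\Omega}$ in this symmetric situation — this holds because $U'_{\alpha,\Omega}=U_{\alpha,\Omega}$ whenever $\alpha\notin\Phi^*_\Omega$, so the asymmetry flagged after Notation~\ref{NotLevialphaOmega} disappears precisely when $\alpha\in\Phi^*_\Omega$ forces $U_{\pm\alpha,\Omega}$ and $U'_{\pm\alpha,\Omega}$ to be comparable in the way needed.

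Finally, for normality of $U^*_\Omega$ in $U_\Omega$: it suffices to show that $U^*_\Omega$ is normalized by the generators of $U_\Omega$, i.e.\ by each $U_{\beta,\Omega}$ for $\beta\in\Phi$. Writing $U^*_\Omega=\langle T^*_\Omega,(U^*_{\alpha,\Omega})_{\alpha\in\Phi}\rangle$, I would show $[U_{\beta,\Omega},U^*_{\alpha,\Omega}]\subset U^*_\Omega$ and $[U_{\beta,\Omega},T^*_\Omega]\subset U^*_\Omega$ for all $\alpha,\beta$. The first containment follows from \ref{axiomQC2} when $\beta\notin-\mathbb{R}_{>0}\alpha$ (the commutator lands in $U^*_{(\alpha,\beta),\Omega}\subset U^*_\Omega$); the case $\beta\in-\mathbb{R}_{>0}\alpha$ is handled by the rank-one computation \ref{axiomQC1} just established together with Lemma~\ref{LemCommutationUaU'a}, which gives $[U_{\alpha,\Omega},U'_{-\alpha,\Omega}]\subset U'_{\alpha,\Omega}U'_{-\alpha,\Omega}T'_{\alpha,\Omega}\subset U^*_\Omega$. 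The second containment, $[U_{\beta,\Omega},T^*_\Omega]\subset U^*_\Omega$, is exactly Lemma~\ref{LemCommutationTstarUaOmega}, which yields $[T^*_\Omega,U_{\beta,\Omega}]\subset U'_{\beta,\Omega}\subset U^*_\Omega$. Hence every generator of $U_\Omega$ normalizes $U^*_\Omega$, so $U^*_\Omega\trianglelefteq U_\Omega$. The delicate point throughout is bookkeeping of the dichotomy $\alpha\in\Phi^*_\Omega$ versus $\alpha\notin\Phi^*_\Omega$ and checking it is stable under the operations $\alpha\mapsto-\alpha$, $\alpha\mapsto2\alpha$ and $\alpha,\beta\mapsto r\alpha+s\beta$ (Remark~\ref{RqPhiStarNonCollinear}); this is what makes the strict/non-strict inequalities match up so that the $U'$-groups appear in exactly the positions prescribed by the definition of $U^*_{\alpha,\Omega}$.
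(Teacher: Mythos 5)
Your overall architecture (check \ref{axiomQC3} via $T^*_\Omega\subset T_b$, get \ref{axiomQC1} from the quasi-concavity of the unprimed and primed families together with the symmetry of $\Phi^*_\Omega$, then deduce normality from commutator estimates) matches the paper's proof, and the \ref{axiomQC1} and \ref{axiomQC3} parts are essentially sound. But there is a genuine gap in your treatment of \ref{axiomQC2}, pinpointed by the sentence ``if either of $\alpha,\beta$ is in $\Phi^*_\Omega$ and $r,s>0$ then $\gamma\in\Phi^*_\Omega$ too.'' This is false: $\Phi^*_\Omega$ consists of the roots that are \emph{constant} on $\Omega$, so if $\alpha$ is constant and $\beta$ is not, then $\gamma=r\alpha+s\beta$ with $s>0$ is non-constant, i.e.\ $\gamma\notin\Phi^*_\Omega$. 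Worse, the case your argument actually needs to handle is the one it cannot see: it can happen that $\alpha,\beta\notin\Phi^*_\Omega$ while some $\gamma=r\alpha+s\beta\in(\alpha,\beta)$ lies in $\Phi^*_\Omega$ (take $\Omega$ contained in a translate of $\ker(\alpha+\beta)$ but in neither $\ker\alpha$ nor $\ker\beta$). In that situation $U^*_{\alpha,\Omega}=U_{\alpha,\Omega}$ and $U^*_{\beta,\Omega}=U_{\beta,\Omega}$, so the inputs contribute no strict inequality whatsoever, yet the $\gamma$-factor of the commutator must be shown to lie in $U'_{\gamma,\Omega}=U^*_{\gamma,\Omega}$. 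Your ``strict inequality propagates from the inputs'' mechanism produces nothing here. The paper closes this case by exploiting the non-constancy of $\beta$ rather than strictness of the valuations: choose $y,z\in\Omega$ with $-\beta(y)<-\beta(z)$; then $\varphi_\beta(u_\beta)\geqslant-\beta(z)>-\beta(y)$ and $\varphi_\alpha(u_\alpha)\geqslant-\alpha(y)$, so $\varphi_\gamma(u_\gamma)\geqslant r\varphi_\alpha(u_\alpha)+s\varphi_\beta(u_\beta)>-\gamma(y)=\lambda_\gamma$, and since $\gamma$ is constant on $\Omega$ this single strict inequality at $y$ gives $u_\gamma\in U'_{\gamma,x}$ for every $x\in\Omega$ (the factors $u_\gamma$ being well defined by Proposition~\ref{PropDecompositionQC}).

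A secondary imprecision: for normality you need $[U_{\alpha,\Omega},U^*_{\beta,\Omega}]\subset U^*_\Omega$ with the \emph{full} group $U_{\alpha,\Omega}$ on one side, and when $\alpha\in\Phi^*_\Omega$ this is strictly more than what \ref{axiomQC2} for the starred family gives (since $U^*_{\alpha,\Omega}=U'_{\alpha,\Omega}\subsetneq U_{\alpha,\Omega}$ there). The correct inputs are the asymmetric estimates Lemma~\ref{LemQC2}(\ref{eqQC2'}) and Lemma~\ref{LemCommutationUaU'a} when $\beta\in\Phi^*_\Omega$, and, when $\beta\notin\Phi^*_\Omega$ but $\alpha\in\Phi^*_\Omega$, the observation that every $\gamma\in(\alpha,\beta)$ is then non-constant on $\Omega$, so $U_{\gamma,\Omega}=U^*_{\gamma,\Omega}$ and the unstarred estimate already suffices. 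Once these two points are repaired, the rest of your plan goes through.
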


\begin{proof}
Since $U^*_{\alpha,\Omega} \subset U_\Omega$ for any $\alpha \in \Phi$ and, by Lemma~\ref{LemInclusionTOmega}, $T^*_\Omega \subset T_\Omega$, we know that $U^*_\Omega$ is a subgroup of $U_\Omega$.
Moreover, $T^*_\Omega$ normalizes the $U_{\alpha,\Omega}$ for $\alpha \in \Phi$ since $T^*_\Omega$ is a subgroup of $T_b$ by Lemma~\ref{LemInclusionTOmega}, which gives~\ref{axiomQC3}.
Let $\alpha \in \Phi$ be a non-divisible root.
By Lemma~\ref{LemCommutationTstarUaOmega}, we have $[U_{\alpha,\Omega},T^*_\Omega] \subset U'_{\alpha,\Omega} \subset U^*_{\alpha,\Omega}$.
In particular, $T^*_\Omega$ normalizes $U^*_{\alpha,\Omega}$.
By Example~\ref{ExQC}, since the families $(U_{\alpha,\Omega})_{\alpha \in \Phi}$ and $(U'_{\alpha,\Omega})_{\alpha \in \Phi}$ are quasi-concave and since $\Phi_\Omega^*=-\Phi_\Omega^*$, we deduce that the family $\left((U^*_{\alpha,\Omega})_{\alpha \in \Phi},T^*_\Omega\right)$ satisfies axiom~\ref{axiomQC1}.

Now, we distinguish cases in order to prove axiom~\ref{axiomQC2} on the one hand, and that $[U_{\alpha,\Omega},U^*_{\beta,\Omega}] \subset U^*_\Omega$ for any $\alpha,\beta \in \Phi$ on the other hand.
Consider any $\alpha,\beta \in \Phi$.

\paragraph{Case $\beta\in \Phi^*_\Omega$:}
If $\beta \in -\mathbb{R}_{>0} \alpha$, by Lemma~\ref{LemCommutationUaU'a}, we get that $[U_{\alpha,\Omega},U^*_{\beta,\Omega}] = [U_{\alpha,\Omega},U'_{\beta,\Omega}] \subset U'_{\alpha,\Omega} U'_{-\alpha,\Omega} T'_{\alpha,\Omega} \subset U^*_\Omega$ (since $[U_{\alpha,\Omega},U'_{- 2\alpha,\Omega}] \subset [U_{\alpha,\Omega},U'_{- \alpha,\Omega}]$ and $\alpha$ is non-divisible).
Otherwise, by Lemma~\ref{LemQC2}(\ref{eqQC2'}), we get that $[U_{\alpha,\Omega},U^*_{\beta,\Omega}] = [U_{\alpha,\Omega},U'_{\beta,\Omega}] \subset \prod_{\gamma \in (\alpha,\beta)} U'_{\gamma,\Omega} \subset \prod_{\gamma \in (\alpha,\beta)} U^*_{\gamma,\Omega}$.
In particular, we get axiom~\ref{axiomQC2} for any $\alpha \in \Phi$ and any $\beta \in \Phi^*_\Omega$ satisfying $\beta \not\in - \mathbb{R}_{>0} \alpha$.

\paragraph{Case $\beta \in \Phi \setminus \Phi^*_{\Omega}$:}

Suppose that $\beta \not\in -\mathbb{R}_{>0}\alpha$.
Let $u_\alpha \in U^*_{\alpha,\Omega} \subset U_{\alpha,\Omega}$ and $u_\beta \in U^*_{\beta,\Omega} = U_{\beta,\Omega}$.
By Proposition~\ref{PropDecompositionQC}~\ref{PropDecQC:2}, since $(U_{\alpha,\Omega})_{\alpha\in\Phi}$ is quasi-concave (see Example~\ref{ExQC}), there is a unique family of elements $u_\gamma \in U_{\gamma,\Omega}$ for $\gamma \in (\alpha,\beta)$ such that $[u_\alpha,u_\beta] = \prod_{\gamma \in (\alpha,\beta)} u_\gamma$ (up to the choice of an ordering on the factors).
Let $\gamma \in (\alpha,\beta)$ and write it $\gamma = r \alpha + s \beta$ with $r,s \in \mathbb{Z}_{>0}$.
If $\gamma \not\in \Phi^*_\Omega$, then $u_\gamma \in U_{\gamma,\Omega} = U^*_{\gamma,\Omega}$.
Otherwise, we have $\gamma \in \Phi^*_\Omega$.
Let $\lambda_\gamma$ be such that $\lambda_\gamma = -\gamma(x)$ for every $x \in \Omega$.
Since $\beta \not\in \Phi^*_\Omega$, there are points $y,z \in \Omega$ such that $-\beta(y) < -\beta(z)$.
Thus $\varphi_\beta(u_\beta) \geqslant -\beta(z) > -\beta(y)$ and $\varphi_\alpha(u_\alpha) \geqslant -\alpha(y)$.
Since $\varphi_\gamma(u_\gamma) \geqslant r \varphi_\alpha(u_\alpha) + s \varphi_\beta(u_\beta)$ by axiom~\ref{axiomV3} 
we get that $\varphi_\gamma(u_\gamma) > -r\alpha(y) - s \beta(y) = - \gamma(y) = \lambda_\gamma$.
Thus $u_\gamma \in U'_{\gamma,\lambda_\gamma} = U'_{\gamma,x}$ for every $x \in \Omega$ since $\lambda_\gamma = - \gamma(x)$.
Hence $u_\gamma \in U'_{\gamma,\Omega} = U^*_{\gamma,\Omega}$ for every $\gamma \in (\alpha,\beta)$.
This ends the proof of axiom~\ref{axiomQC2}.

Now, consider any $\beta \not\in \Phi^*_\Omega$.
If $\alpha \not\in \Phi^*_\Omega$,
then $[U_{\alpha,\Omega},U^*_{\beta,\Omega}] = [U^*_{\alpha,\Omega},U^*_{\beta,\Omega}] \subset U^*_\Omega$ by definition.

Otherwise, $\alpha \in \Phi^*_\Omega$.
By Remark~\ref{RqPhiStarNonCollinear}, we know that $\beta \not\in \mathbb{R}\alpha$.
Since $U^*_{\beta,\Omega} = U_{\beta,\Omega}$, we have the inclusion $[U_{\alpha,\Omega},U^*_{\beta,\Omega}] \subset U_{(\alpha,\beta),\Omega}$ by Lemma~\ref{LemQC2}.
Let $\gamma=r\alpha+s\beta \in (\alpha,\beta)$ with $r,s \in \mathbb{Z}_{>0}$.
By definition, the linear form $\alpha$ is constant on $\Omega$ but $\beta$ is not. Therefore $\gamma$ is non-constant so that $\gamma \not\in \Phi^*_\Omega$.
Thus for every $\gamma \in (\alpha,\beta)$, we have $U_{\gamma,\Omega} = U^*_{\gamma,\Omega}$.

Thus, we have shown that for any $\alpha \in \Phi$, we have $[U_{\alpha,\Omega},T^*_\Omega] \subset U^*_\Omega$ and $[U_{\alpha,\Omega},U^*_{\beta,\Omega}] \subset U^*_\Omega$ for every $\beta \in \Phi$.
Hence $U_{\alpha,\Omega}$ normalizes $U^*_\Omega$ since $U^*_\Omega$ is generated by $T^*_\Omega$ and the $U^*_{\beta,\Omega}$ for $\beta \in \Phi$.

Also does $U_{2\alpha,\Omega}$ since it is a subgroup of $U_{\alpha,\Omega}$.
Since $U_{\alpha,\Omega}$ normalizes $U^*_\Omega$ for every $\alpha \in \Phi$, the group $U_\Omega$ generated by the $U_{\alpha,\Omega}$ also normalizes $U^*_\Omega$. 
\end{proof}

\begin{Not}
We denote by $\overline{G_\Omega}$ the quotient group $U_\Omega / U^*_\Omega$.

For $\alpha \in \Phi$, we denote by $\overline{U_{\alpha,\Omega}}$ the canonical image of $U_{\alpha,\Omega}$ in $\overline{G_\Omega}$.

We denote by $\overline{T_\Omega}$ the canonical image of $T_\Omega$ in $\overline{G_\Omega}$.

For $\alpha \in \Phi^*_\Omega$, we denote by $\overline{M_{\alpha,\Omega}}$ the canonical image of $M_{\alpha,\lambda_\alpha}$ with $\lambda_\alpha = -\alpha(x)$ for any $x \in \Omega$ (it does not depend on the choice of $x \in \Omega$ by definition of $\Phi^*_\Omega$).
\end{Not}

The following Lemma corresponds to part of \cite[6.4.23]{BruhatTits1} with a completely different proof.

\begin{Lem}\label{LemReducedRootSystem}
For any $\alpha \in \Phi$, we have $\overline{U_{\alpha,\Omega}} \subset \overline{U_{2\alpha,\Omega}} \Longleftrightarrow \alpha \not\in \Phi_\Omega$.
\end{Lem}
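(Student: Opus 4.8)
The statement is an equivalence $\overline{U_{\alpha,\Omega}} \subset \overline{U_{2\alpha,\Omega}} \iff \alpha \notin \Phi_\Omega$, and it is natural to split it into the reduced case and the multipliable case. If $2\alpha \notin \Phi$ then $U_{2\alpha,\Omega} = \{1\}$ by convention, $U^*_{2\alpha,\Omega}$ is trivial as well, so $\overline{U_{2\alpha,\Omega}}$ is the trivial subgroup of $\overline{G_\Omega}$; the claim then becomes $\overline{U_{\alpha,\Omega}} = \{1\} \iff \alpha \notin \Phi_\Omega$, i.e. $U_{\alpha,\Omega} \subset U^*_\Omega \iff \alpha \notin \Phi_\Omega$. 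So I would first reduce to proving: for $\alpha \in \Phi_{\mathrm{nd}}$, one has $U_{\alpha,\Omega} \subset U^*_{\alpha,\Omega} U^*_{2\alpha,\Omega} \iff \alpha \notin \Phi_\Omega$, and then transfer this to $\overline{G_\Omega}$ using the decomposition of $U^*_\Omega$ coming from Proposition~\ref{PropUstarQC} and Proposition~\ref{PropDecompositionQC}\ref{PropDecQC:1} applied to the quasi-concave family $\left((U^*_{\alpha,\Omega})_{\alpha\in\Phi},T^*_\Omega\right)$, which gives $U^*_\Omega \cap U_\alpha = U^*_{\alpha,\Omega} U^*_{2\alpha,\Omega}$ for $\alpha \in \Phi_{\mathrm{nd}}$. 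This is the key algebraic input that lets me pass between the statement about $\overline{U_{\alpha,\Omega}}$ inside $\overline{G_\Omega} = U_\Omega/U^*_\Omega$ and a statement purely about subgroups of $U_\alpha$.

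\textbf{The direction $\alpha \notin \Phi_\Omega \Rightarrow \overline{U_{\alpha,\Omega}} \subset \overline{U_{2\alpha,\Omega}}$.} Here I distinguish whether $\alpha \in \Phi^*_\Omega$ or not. If $\alpha \notin \Phi^*_\Omega$, then $\alpha$ is non-constant on $\Omega$; I would show directly that $U_{\alpha,\Omega} = U'_{\alpha,\Omega}$ in this case (using $\bigcap_{x\in\Omega}[-\alpha(x),\infty] = \bigcap_{x\in\Omega}]-\alpha(x),\infty]$ when the values $-\alpha(x)$ have no minimum attained, i.e. the set is not of the form $[\lambda,\infty]$), and conclude since $U^*_{\alpha,\Omega} = U_{\alpha,\Omega}$ is not the case — rather we need $U_{\alpha,\Omega}$ to lie in $U^*_\Omega$, which it does not obviously. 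Let me instead follow the cleaner route: if $\alpha \in \Phi^*_\Omega \setminus \Phi_\Omega$, then by Lemma~\ref{LemEquivalenceDefPhiOmega} (negating~\ref{LemPhiOmega-3}) for every $u \in U_{\alpha,\Omega}$ there is $v \in U_{2\alpha}$ with $uv \in U'_{\alpha,\Omega}$; since $v \in U_{2\alpha,\Omega}$ as well (because $2\alpha \in \Phi^*_\Omega$ by Remark~\ref{RqPhiStarNonCollinear} and the relevant value lies in $\Gamma'_{2\alpha}$ — here I must be slightly careful and may need $\Gamma'$ rather than $\Gamma$, which is where $\Phi_\Omega$ versus $\Phi^*_\Omega$ matters), modulo $U^*_\Omega \supset U'_{\alpha,\Omega}$ the class of $u$ equals the class of $v^{-1} \in U_{2\alpha,\Omega}$, giving $\overline{U_{\alpha,\Omega}} \subset \overline{U_{2\alpha,\Omega}}$. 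If $\alpha \in \Phi^*_\Omega$ and $2\alpha \in \Phi_\Omega$ but $\alpha\notin\Phi_\Omega$, a similar analysis using Fact~\ref{FactDecompositionSetOfValue} handles it. If $\alpha \notin \Phi^*_\Omega$, then $\alpha$ non-constant forces, via axiom~\ref{axiomV3}-type estimates, that $U_{\alpha,\Omega}$ is contained in an $U'$-group and one argues it lies in $U^*_\Omega$ after decomposing into root-group factors; this will require care.

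\textbf{The converse $\alpha \in \Phi_\Omega \Rightarrow \overline{U_{\alpha,\Omega}} \not\subset \overline{U_{2\alpha,\Omega}}$.} I would produce an explicit element: by Lemma~\ref{LemEquivalenceDefPhiOmega}\ref{LemPhiOmega-2} there is $u \in U_{\alpha,\Omega}$ with $\varphi_\alpha(u) = -\alpha(x) = \lambda_\alpha$ for all $x\in\Omega$ and $\varphi_\alpha(u) \geqslant \varphi_\alpha(uv)$ for all $v\in U_{2\alpha}$. I claim $u \notin U^*_\Omega$. Indeed $U^*_\Omega \cap U_\alpha = U^*_{\alpha,\Omega}U^*_{2\alpha,\Omega} = U'_{\alpha,\Omega}U'_{2\alpha,\Omega}$ (since $\alpha,2\alpha \in \Phi^*_\Omega$), and by the characterization~\ref{LemPhiOmega-3} in Lemma~\ref{LemEquivalenceDefPhiOmega} the element $u$ satisfies $uv \notin U'_{\alpha,\Omega}$ for every $v \in U_{2\alpha}$; combined with $U'_{2\alpha,\Omega} \subset U_{2\alpha}$ this forces $u \notin U'_{\alpha,\Omega}U'_{2\alpha,\Omega}$, hence $u \notin U^*_\Omega$, so the class $\overline{u}$ is nontrivial. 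To get $\overline{u} \notin \overline{U_{2\alpha,\Omega}}$ I use that $\overline{U_{2\alpha,\Omega}}$ is the image of $U_{2\alpha,\Omega} \subset U_{2\alpha}$ and that $U_{2\alpha} \cap (U^*_\Omega \cdot u) $ would force $u \in U_{2\alpha} U^*_\Omega \cap U_\alpha = U_{2\alpha,\Omega}U^*_{2\alpha,\Omega} = U_{2\alpha,\Omega} \subset U'_{\alpha,\Omega}$ ... wait, this needs $U_{2\alpha,\Omega}\subset U'_{\alpha,\Omega}$, which holds exactly because $2\alpha$'s values strictly exceed $2\lambda_\alpha$ when... — here is precisely the subtle point. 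The main obstacle, I expect, is exactly this bookkeeping: correctly matching the four groups $\Gamma_\alpha, \Gamma'_\alpha, \Gamma_{2\alpha}, \Gamma'_{2\alpha}$ and the inclusions between $U_{\alpha,\Omega}, U'_{\alpha,\Omega}, U_{2\alpha,\Omega}, U'_{2\alpha,\Omega}$ so that the modular-arithmetic statement "$u \in U^*_\Omega \cdot U_{2\alpha,\Omega}$" is equivalent to "$uv \in U'_{\alpha,\Omega}$ for some $v \in U_{2\alpha}$", which by Lemma~\ref{LemEquivalenceDefPhiOmega} is the negation of $\alpha \in \Phi_\Omega$. Once that equivalence is nailed down (using Fact~\ref{FactDecompositionSetOfValue}, Lemma~\ref{LemAxiomV1}, and Lemma~\ref{LemU2ainUa}), both directions fall out, and I would present the whole argument as a single clean chain: $\overline{U_{\alpha,\Omega}} \subset \overline{U_{2\alpha,\Omega}} \iff U_{\alpha,\Omega} \subset U_{2\alpha,\Omega}U^*_\Omega \iff U_{\alpha,\Omega} \subset U_{2\alpha,\Omega}U'_{\alpha,\Omega} \iff \alpha \notin \Phi_\Omega$, where the last equivalence is Lemma~\ref{LemEquivalenceDefPhiOmega} read through the definition of $\Gamma'_\alpha$.
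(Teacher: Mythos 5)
Your overall strategy is the same as the paper's: translate $\overline{U_{\alpha,\Omega}} \subset \overline{U_{2\alpha,\Omega}}$ into the containment $U_{\alpha,\Omega} \subset U_{2\alpha,\Omega}U^*_\Omega$, compute $U^*_\Omega \cap U_\alpha$ via Proposition~\ref{PropDecompositionQC}\ref{PropDecQC:1} applied to the quasi-concave family $\left((U^*_{\beta,\Omega})_{\beta\in\Phi},T^*_\Omega\right)$, and reduce to the characterization of $\Phi_\Omega$ in Lemma~\ref{LemEquivalenceDefPhiOmega}. Two remarks on the forward direction: when $\alpha \notin \Phi^*_\Omega$ you do not need any \ref{axiomV3}-type estimate — by definition $U^*_{\alpha,\Omega} = U_{\alpha,\Omega}$, so $U_{\alpha,\Omega} \subset U^*_\Omega$ and $\overline{U_{\alpha,\Omega}}$ is trivial; and your abandoned claim that $U_{\alpha,\Omega} = U'_{\alpha,\Omega}$ whenever $\alpha$ is non-constant on $\Omega$ is false in general (take $\Omega$ finite: the minimum of the $-\alpha(x)$ is attained), so it is good that you did not rely on it.

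The step you explicitly leave open in the converse is a genuine gap as written, because the set equality you propose, $U_{2\alpha}U^*_\Omega \cap U_\alpha = U_{2\alpha,\Omega}U^*_{2\alpha,\Omega}$, is wrong (it omits the $U^*_{\alpha,\Omega}$ factor), and the inclusion $U_{2\alpha,\Omega} \subset U'_{\alpha,\Omega}$ you then reach for is false in general and not needed. The correct bookkeeping, which is how the paper closes the argument, is: if $\overline{u} \in \overline{U_{2\alpha,\Omega}}$, write $u = wv$ with $w \in U_{2\alpha,\Omega}$ and $v \in U^*_\Omega$; then $v = w^{-1}u \in U_\alpha \cap U^*_\Omega = U^*_{\alpha,\Omega}U^*_{2\alpha,\Omega}$, and since $U^*_{2\alpha,\Omega} \subset U_{2\alpha,\Omega}$ and $U_{2\alpha}$ commutes with $U_\alpha$ (axiom~\ref{axiomRGD2}), this gives $u \in U^*_{\alpha,\Omega}U_{2\alpha,\Omega} = U'_{\alpha,\Omega}U_{2\alpha,\Omega}$, using $\alpha \in \Phi_\Omega \subset \Phi^*_\Omega$. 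Hence $uv' \in U'_{\alpha,\Omega}$ for some $v' \in U_{2\alpha}$, contradicting the defining property of the element $u$ supplied by Lemma~\ref{LemEquivalenceDefPhiOmega}\ref{LemPhiOmega-2}. Finally, your concluding ``single clean chain'' is not an unconditional chain of equivalences: the middle step uses $U^*_{\alpha,\Omega} = U'_{\alpha,\Omega}$, which holds only for $\alpha \in \Phi^*_\Omega$, so the case $\alpha \notin \Phi^*_\Omega$ must be split off (there both extremities of the chain hold for trivial reasons, which is consistent with the paper's remark that $\overline{U_{\alpha,\Omega}} \neq 1$ forces $\alpha \in \Phi^*_\Omega$).
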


\begin{proof}
Assume that $\overline{U_{\alpha,\Omega}} \not\subset \overline{U_{2\alpha,\Omega}}$.
Suppose, by contradiction that $U_{\alpha,\Omega} \subset U_{2\alpha} U'_{\alpha,\Omega}$.
Let $u \in U_{\alpha,\Omega}$ and write it $u = v w$ with $v \in U'_{\alpha,\Omega}$, $w \in U_{2\alpha}$.
Then $w = v^{-1} u \in U_{\alpha,\Omega} \cap U_{2\alpha} = U_{2\alpha,\Omega}$. 
Since $v \in U'_{\alpha,\Omega} \subset U^*_\Omega$, it contradicts the assumption.
Hence there exists $u \in U_{\alpha,\Omega} \setminus U_{2\alpha} U'_{\alpha,\Omega}$.
Let $\lambda_\alpha = \varphi_\alpha(u) \in \Gamma_\alpha$.
We prove that $\lambda_\alpha \in \Gamma'_\alpha$.
Indeed, let $v \in U_{2\alpha}$.
If, by contradiction, we have $\varphi_{\alpha}(uv) > \varphi_\alpha(u)$, then $uv \in U'_{\alpha,\Omega}$ and therefore $u \in U'_{\alpha,\Omega} U_{2\alpha}$ which contradicts the definition of $u$.
Thus $\varphi_{\alpha}(uv) \leqslant \varphi_\alpha(u)$ for every $v \in U_{2\alpha}$ and therefore $U_{\alpha,\varphi_\alpha(u)} = \bigcap_{v \in U_{2\alpha}} U_{\alpha,\varphi_\alpha(uv)}$ which means $\varphi_\alpha(u) = \lambda_\alpha \in \Gamma'_\alpha$.
Now, for any $x \in \Omega$, we have $u \in U_{\alpha,x}$ and therefore $-\alpha(x) \leqslant \varphi_\alpha(u)$.
But if $-\alpha(x) < \varphi_\alpha(u)$ for every $x \in \Omega$, then $u \in \bigcap_{x\in \Omega} U'_{\alpha,x} = U'_{\alpha,\Omega}$ which contradicts the definition of $u$.
Thus, there exists an element $y \in \Omega$ such that $-\alpha(y) = \varphi_\alpha(u) = \lambda_\alpha \in \Gamma'_\alpha$.
Finally, $\alpha \in \Phi^*_\Omega$ since $\overline{U_{\alpha,\Omega}}$ is not trivial.
Thus for every $x \in \Omega$, we have $-\alpha(x) = -\alpha(y) = \lambda_\alpha \in \Gamma'_\alpha$ which proves $\alpha \in \Phi_\Omega$.

Conversely, let $\alpha \in \Phi_\Omega \subset \Phi^*_\Omega$.
Then $U^*_{\alpha,\Omega} = U'_{\alpha,\Omega}$ by definition.
Let $u \in U_{\alpha}$ be such that $\forall x \in \Omega,\ \varphi_\alpha(u) = -\alpha(x)$ and $\forall v \in U_{2\alpha},\ \varphi_\alpha(uv) \leqslant \varphi_\alpha(u)$ (the existence of such a $u$ is provided by Lemma~\ref{LemEquivalenceDefPhiOmega}\ref{LemPhiOmega-2}).
By contradiction, suppose that $u \in U^*_{\alpha,\Omega} U_{2\alpha,\Omega}$.
Then we would have some $v \in U_{2\alpha,\Omega}$ such that $uv \in U^*_{\alpha,\Omega} = U'_{\alpha,\Omega} = U'_{\alpha,x}$ for any $x \in \Omega$.
Thus $\varphi_\alpha(uv) > -\alpha(x) = \varphi_\alpha(u)$ which contradicts the assumption on $u$.
Hence $U_{\alpha,\Omega} \not\subset U^*_{\alpha,\Omega} U_{2\alpha,\Omega}$.
This gives $\overline{U_{\alpha,\Omega}} \not\subset \overline{U_{2\alpha,\Omega}}$ because if we had $\overline{U_{\alpha,\Omega}} \subset \overline{U_{2\alpha,\Omega}}$, then we would have $U_{\alpha,\Omega} \subset U_{2\alpha,\Omega} U^*_{\Omega} \cap U_\alpha$.
But $U^*_\Omega \cap U_\alpha = U^*_{\alpha,\Omega}$ according to Proposition~\ref{PropDecompositionQC}\ref{PropDecQC:1} since the family $\left((U^*_{\alpha,\Omega})_{\alpha \in \Phi},T^*_\Omega\right)$ is quasi-concave. 
\end{proof}

The following theorem summarizes the work of this section and of the previous one. It  corresponds to the last statement of \cite[6.4.23]{BruhatTits1}, applied to $X = T_\Omega$ and $X^* = T^*_{\Omega}$.

\begin{Thm}\label{ThmLocalRootSystem}
The system $\left( \overline{T_\Omega}, \left( \overline{U_{\alpha,\Omega}}, \overline{M_{\alpha,\Omega}} \right)_{\alpha \in \Phi_{\Omega}} \right)$ is a generating root group datum of type $\Phi_\Omega$ in $\overline{G_\Omega}$.
\end{Thm}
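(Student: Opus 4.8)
The plan is to verify the six axioms \ref{axiomRGD1}--\ref{axiomRGD6} of Definition~\ref{DefRGD} for the system $\left( \overline{T_\Omega}, \left( \overline{U_{\alpha,\Omega}}, \overline{M_{\alpha,\Omega}} \right)_{\alpha \in \Phi_\Omega}\right)$ in the quotient group $\overline{G_\Omega} = U_\Omega / U^*_\Omega$, and then to check that it is generating. The backbone of the argument is the bijectivity statement in Proposition~\ref{PropDecompositionQC}\ref{PropDecQC:2} applied to the quasi-concave family $\left((U^*_{\alpha,\Omega})_{\alpha\in\Phi},T^*_\Omega\right)$ (Proposition~\ref{PropUstarQC}): it tells us that $U^*_\Omega$ decomposes as $U^{*,+}_\Omega U^{*,-}_\Omega (U^*_\Omega \cap N)$ and, crucially, that $U^*_\Omega \cap U_\alpha = U^*_{\alpha,\Omega}$ for each $\alpha \in \Phi_{\mathrm{nd}}$. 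Combined with Proposition~\ref{PropDecompositionQC}\ref{PropDecQC:1} for the family $(U_{\alpha,\Omega})_{\alpha\in\Phi}$ (Example~\ref{ExQC}), this will let me identify the canonical images $\overline{U_{\alpha,\Omega}}$ with quotients $U_{\alpha,\Omega}/U^*_{\alpha,\Omega}$ (for $\alpha \in \Phi^*_\Omega$, where $U^*_{\alpha,\Omega} = U'_{\alpha,\Omega}$, by the definition of $U^*$) without collapsing distinct root groups together.

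\textbf{Key steps.} First I would establish that $\overline{U_{\alpha,\Omega}}$ is a nontrivial subgroup of $\overline{G_\Omega}$ for every $\alpha \in \Phi_\Omega$: nontriviality is exactly the content of Lemma~\ref{LemReducedRootSystem} (which gives $\overline{U_{\alpha,\Omega}} \not\subset \overline{U_{2\alpha,\Omega}}$, so in particular $\overline{U_{\alpha,\Omega}} \neq 1$), giving axiom~\ref{axiomRGD1}. Axiom~\ref{axiomRGD2} (commutator relations) is inherited from Lemma~\ref{LemQC2}(\ref{eqQC2}) applied to $(U_{\alpha,\Omega})_{\alpha\in\Phi}$ after passing to the quotient, noting that for $\gamma \in (\alpha,\beta)$ with $\alpha,\beta \in \Phi_\Omega$ one has $\gamma \in \Phi^*_\Omega$, hence $\overline{U_{\gamma,\Omega}}$ is defined. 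Axiom~\ref{axiomRGD3} ($\overline{U_{2\alpha,\Omega}} \subsetneq \overline{U_{\alpha,\Omega}}$ for multipliable $\alpha \in \Phi_\Omega$): the inclusion is Lemma~\ref{LemU2ainUa} and strictness is again Lemma~\ref{LemReducedRootSystem}. For axioms~\ref{axiomRGD4} and~\ref{axiomRGD5} I would use that, for $\alpha \in \Phi_\Omega$, the set $M_{\alpha,\lambda_\alpha}$ (with $\lambda_\alpha = -\alpha(x)$, $x\in\Omega$) is nonempty (Proposition~\ref{PropValuedCoset}\ref{PropValuedCoset:emptyness} since $\lambda_\alpha \in \Gamma'_\alpha \subset \Gamma_\alpha$), lies in $L_{\alpha,\Omega} \subset U_\Omega$ by Proposition~\ref{PropValuedCoset}\ref{PropValuedCoset:insideLevi}, and that $M_{\alpha,\lambda_\alpha}$ normalizes the $U_{\beta,\Omega}$ via Lemma~\ref{LemConjugationMaUb} (giving $m U_{\beta,\lambda_\beta} m^{-1} = U_{r_\alpha(\beta),\lambda_{r_\alpha(\beta)}}$, since $\lambda_{r_\alpha(\beta)} = \lambda_\beta - \beta(\alpha^\vee)\lambda_\alpha$ when $\beta \in \Phi_\Omega$); the coset-of-$\overline{T_\Omega}$ property for $\overline{M_{\alpha,\Omega}}$ comes from Proposition~\ref{PropRankOneLevi}\ref{PropRankOneLevi:inGamma}, which says $N_{\alpha,\lambda_\alpha} = T_{\alpha,\lambda_\alpha}\{1,m\}$, combined with the fact that $T'_{\alpha,\Omega} \subset T^*_\Omega$ dies in the quotient. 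The relation $U_{-\alpha,\Omega}\setminus\{1\} \subset U_{\alpha,\Omega} M_{\alpha,\lambda_\alpha} U_{\alpha,\Omega}$ modulo $U^*_\Omega$ follows from Proposition~\ref{PropValuedCoset}\ref{PropValuedCoset:reverselambda}. Finally axiom~\ref{axiomRGD6} ($\overline{T_\Omega}\,\overline{U^+_\Omega} \cap \overline{U^-_\Omega} = 1$) is the delicate bookkeeping point: I would lift to $U_\Omega$, use the bijective product decompositions of Example~\ref{ExQC}\ref{ExDecUOmega:2},\ref{ExDecUOmega:3} for $U_\Omega$ and of Proposition~\ref{PropDecompositionQC} for $U^*_\Omega$, and deduce uniqueness of the $U^+ T U^-$-type writing in the quotient from uniqueness in $G$ (axiom~\ref{axiomRGD6} for the ambient root group datum) together with $U^*_\Omega \cap U_\alpha = U^*_{\alpha,\Omega}$ and $U^*_\Omega \cap T = T^*_\Omega$ (the latter from Lemma~\ref{LemTripleLeviCommutation}\ref{LemTripleLeviGroup} applied inside the decomposition of $U^*_\Omega$). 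The generating property is immediate: $U_\Omega$ is generated by $T_\Omega$ (which lies in $\widetilde{N}_\Omega$, cf. Example~\ref{ExQC}) and the $U_{\alpha,\Omega}$, and $\widetilde{N}_\Omega$ is generated by the $\widetilde{N}_{\alpha,\Omega}$ (Example~\ref{ExQC}\ref{ExDecUOmega:4}), each of which maps into $\langle \overline{T_\Omega},\overline{M_{\alpha,\Omega}}\rangle$ by Lemma~\ref{LemRootInNaOmega} and Proposition~\ref{PropRankOneLevi}\ref{PropRankOneLevi:inGamma},\ref{PropRankOneLevi:notinGamma}; roots $\alpha \notin \Phi_\Omega$ contribute nothing new since then either $U_{\alpha,\Omega} \subset U^*_\Omega$-translates or $\widetilde{N}_{\alpha,\Omega} \subset T$.

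\textbf{Main obstacle.} The hard part will be axiom~\ref{axiomRGD6}, i.e.\ showing that after quotienting by $U^*_\Omega$ there is no unexpected overlap between $\overline{T_\Omega}\,\overline{U^+_\Omega}$ and $\overline{U^-_\Omega}$. The subtlety is that $U^*_\Omega$ is a genuinely nonabelian normal subgroup whose own $U^{*,+}U^{*,-}(U^*\cap N)$ decomposition must be threaded through the $U^+ T U^-$ decomposition of $U_\Omega$ coherently; one has to rule out that an element of $U^-_\Omega$ becomes, modulo $U^*_\Omega$, equal to something in $T_\Omega U^+_\Omega$ purely because of a cancellation happening inside $U^*_\Omega$. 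The clean way around this is the pair of identities $U^*_\Omega \cap U_{\pm\alpha} = U^*_{\pm\alpha,\Omega}$ and $U^*_\Omega \cap T = T^*_\Omega$ together with the uniqueness of the $U^+TU^-$ factorization in $G$ itself (axiom~\ref{axiomRGD6} for the ambient datum, as already exploited repeatedly in Proposition~\ref{PropDecompositionQC} and Proposition~\ref{PropRankOneLevi}); once these are in hand the argument reduces to a diagram-chase in $G$, mirroring the final paragraph of the proof of Proposition~\ref{PropDecompositionQC}. A secondary point requiring care is checking that $\overline{M_{\alpha,\Omega}}$ is genuinely a \emph{single} right coset of $\overline{T_\Omega}$ and not smaller or larger — this is where Example~\ref{ExCalculationInSL2} is the guiding example, showing that $T'_{\alpha,\Omega}$ can be strictly smaller than $\langle T'_{\alpha,\Omega}, T'_{-\alpha,\Omega}\rangle$, so one must be precise that it is $T^*_\Omega$ (generated by \emph{all} the $T'_{\beta,\Omega}$) that one quotients by, making $\overline{M_{\alpha,\Omega}} = \overline{T_\Omega}\cdot\overline{m}$ well-defined and independent of the choice of $m \in M_{\alpha,\lambda_\alpha}$.
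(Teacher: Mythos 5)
Your proposal is correct and follows essentially the same route as the paper: each of \ref{axiomRGD1}--\ref{axiomRGD6} is verified against the same supporting results (quasi-concavity from Example~\ref{ExQC}, Lemma~\ref{LemReducedRootSystem}, Proposition~\ref{PropValuedCoset}, Proposition~\ref{PropRootSystemPhiOmega}, and Propositions~\ref{PropDecompositionQC} and~\ref{PropUstarQC} for \ref{axiomRGD6}), with the generating property coming from the fact that the $U_{\alpha,\Omega}$ generate $U_\Omega$. One small correction: the identity $U^*_\Omega \cap T = T^*_\Omega$ that you list among the ingredients for \ref{axiomRGD6} is not actually needed and can fail (for $\alpha \notin \Phi^*_\Omega$ one has $L_{\alpha,\Omega} \subset U^*_\Omega$, hence $T_{\alpha,\Omega} \subset U^*_\Omega \cap T$, yet $T_{\alpha,\Omega}$ need not be contained in $T^*_\Omega$); the paper instead conjugates the lifted relation $tu_+ = u_- v$ by the $U^+$-component of the decomposition of $v \in U^*_\Omega$ and concludes directly with the spherical Bruhat decomposition.
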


\begin{proof}
Axiom~\ref{axiomRGD1} is satisfied because if $\overline{U_{\alpha,\Omega}} = 1$, then $U_{\alpha,\Omega} \subset U^*_\Omega \cap U_\alpha = U'_{\alpha,\Omega}$ would contradict the assumption $\lambda_\alpha \in \Gamma'_\alpha$ in the definition of $\alpha \in \Phi_\Omega$.

Axiom~\ref{axiomRGD2} is satisfied because the family $(U_{\alpha,\Omega})_{\alpha \in \Phi}$ is quasi-concave by Example~\ref{ExQC}.

Axiom~\ref{axiomRGD3} is a consequence of Lemma~\ref{LemReducedRootSystem}.

Axiom~\ref{axiomRGD4} is a consequence of Proposition~\ref{PropValuedCoset}.

Axiom~\ref{axiomRGD5} is a consequence of Proposition~\ref{PropRootSystemPhiOmega}.

For Axiom~\ref{axiomRGD6}, denote by $\pi: U_\Omega \to \overline{G_\Omega}$ the projection homomorphism.
Denote by $\overline{U^\pm_\Omega} = \langle \overline{U_{\alpha,\Omega}}, \alpha \in \Phi_\Omega^\pm \rangle = \pi(U_\Omega^\pm)$.
Let $\overline{g} \in \overline{T_\Omega} \overline{U^+_\Omega} \cap \overline{U^-_\Omega}$ and let $tu_+ \in T_\Omega U_\Omega^+ \subset U_\Omega$ be a lifting of $\overline{g}$ with $t \in T_\Omega$ and $u_+ \in U^+_\Omega$.
There exist $u_- \in U^-_\Omega$ and $v \in U^*_\Omega$ such that $tu_+ =u_-v$.
If we write (using Proposition~\ref{PropUstarQC}) $v = v_-v_+ n$ for $v_\pm \in U^*_\Omega \cap U^\pm$ and $n \in N \cap U^*_\Omega$, we have
\[v_-^{-1} u_-^{-1} t v_+ = n (n^{-1} v_+ n)\]
By Proposition~\ref{PropDecompositionQC} applied to the quasi-concave family of groups $(U_{\alpha, \Omega}^*)$, the group $N_\Omega^*:=U_\Omega^* \cap N$ is generated by the subgroups $N_{\alpha,\Omega}^* = N \cap \langle U_{\alpha, \Omega}^* \cup U_{-\alpha,\Omega}^* \rangle$.

\textbf{Claim:} $N_{\alpha,\Omega}^*$ is a subgroup of $T_b$.

Indeed, if $\alpha \in \Phi_\Omega^*$, then so is $-\alpha$ and the result follows from Lemma~\ref{LemQC1} since $U_{-\alpha, \Omega}^* = U_{-\alpha, \Omega}'$.
If $\alpha \not\in \Phi_{\Omega}^*$, consider $x, y \in \Omega$ such that $-\alpha(y) < -\alpha(x)$ and denote by $\lambda = -\alpha(x)$ and $\mu = -\alpha(y)$.
Then $U_{\alpha,\Omega}^* = U_{\alpha, \Omega} \subset U_{\alpha,x} = U_{\alpha,\lambda}$ and $U_{-\alpha, \Omega}^* \subset U_{-\alpha, y} = U_{-\alpha,-\mu}$.
Since $\lambda - \mu = \varepsilon >0$, we have by Proposition~\ref{PropRankOneLevi} that $N_{\alpha,\Omega}^* \subset N_{\alpha, \lambda}^\varepsilon \subset T_b$.

Hence $N_\Omega^*$ is a subgroup of $T_b$, whence it normalizes $U^+$.
Thus $n^{-1} v_+ n \in U^+$.
Hence $t=n$ and $v_-^{-1}u_-^{-1} =1$ by~\cite[6.1.15(c)]{BruhatTits1}.
Therefore $u_- \in U_\Omega^*$, whence $\overline{g} = \pi(u_-) = 1$.

It is a generating root group datum since the $U_{\alpha,\Omega}$ generate $U_\Omega$ by definition.
\end{proof}

\section{Parahoric subgroups and Bruhat decomposition}
\label{SecParahoricBruhat}

In this chapter, we work under data and notations of~\ref{HypCAVRGD}.
Note that all statements can be trivially generalized for the empty root system ($G=N=T$ and $\mathbb{A}_\Rtot = \{o\}$ when $\Phi$ is empty).

\subsection{Parahoric subgroups}

In this section, we will consider various subsets $\Omega$ of the apartment $\mathbb{A}_\Rtot$.

\begin{Not}
For a non-empty subset $\Omega \subset \mathbb{A}_\Rtot$, we let $N_\Omega$\index[notation]{n@$N_\Omega$} be the subgroup of $G$ generated by $\widetilde{N}_\Omega$ and $T_b$ (as defined in \cite[7.1.3]{BruhatTits1}) and we let $P_\Omega$ (resp. $P'_\Omega$)\index[notation]{p@$P_\Omega$}\index[notation]{p@$P'_\Omega$} be the subgroup of $G$ generated by $T_b$ and $U_\Omega$ (resp. $T_b$ and $U'_{\Omega}$).
\end{Not}

\begin{Not}
For any basis $\Delta$ of $\Phi$, we denote by $U_\Delta^+$\index[notation]{u@$U_\Delta^+$} (resp. $U_\Delta^-$\index[notation]{u@$U_\Delta^-$}) the subgroup of $G$ generated by the $U_\alpha$ (resp. $U_{-\alpha}$) for $\alpha \in \Phi_\Delta^+$.
\end{Not}

\begin{Lem}\label{LemPOmegaQC}
For any non-empty subset $\Omega \subset \mathbb{A}_\Rtot$ and any basis $\Delta$ of $\Phi$, we have:
\begin{itemize}
\item $P_\Omega \cap N = N_\Omega = T_b \widetilde{N}_\Omega = \widetilde{N}_\Omega T_b$;
\item $P_\Omega \cap U_\Delta^+ = U_\Omega \cap U_\Delta^+$;
\item $P_\Omega \cap U_\Delta^- = U_\Omega \cap U_\Delta^-$;
\item $P_\Omega = (P_\Omega \cap U_\Delta^+) (P_\Omega \cap U_\Delta^-)(P_\Omega \cap N) = T_b U_\Omega$.
\end{itemize}
\end{Lem}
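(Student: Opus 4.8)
The plan is to mimic the strategy of \cite[7.1.4]{BruhatTits1}, building the group $P_\Omega$ up from its "unipotent part" $U_\Omega$ and the bounded torus $T_b$, and using the structure results already established for $U_\Omega$ in Example~\ref{ExQC}. First I would record the easy inclusions. Since $U_\Omega$ is generated by the $U_{\alpha,\Omega}$ and since $T_b$ normalizes each $U_{\alpha,\Omega}$ (Corollary~\ref{CorActionNUa}), $T_b$ normalizes $U_\Omega$; hence the set $T_b U_\Omega$ is a subgroup of $G$ containing $T_b$ and $U_\Omega$, so $P_\Omega = T_b U_\Omega = U_\Omega T_b$. This gives the last displayed equality immediately and is the backbone for everything else.

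Next I would handle the intersections with $U_\Delta^{\pm}$. Fix a basis $\Delta$ and let $\Phi^+ = \Phi_\Delta^+$ be the associated positive system, so $U^+ = U_\Delta^+$ and $U^- = U_\Delta^-$ in the notation of~\ref{axiomRGD6}. The inclusion $U_\Omega \cap U_\Delta^+ \subseteq P_\Omega \cap U_\Delta^+$ is trivial. For the converse, take $g \in P_\Omega \cap U_\Delta^+$ and write $g = tu$ with $t \in T_b$, $u \in U_\Omega$. By Example~\ref{ExQC}\ref{ExDecUOmega:3} decompose $u = u^+ u^- \tilde n$ with $u^\pm \in U_\Omega^\pm$ and $\tilde n \in \widetilde N_\Omega$; then $t u^+ u^- \tilde n = g \in U^+$. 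Using the spherical Bruhat decomposition \cite[6.1.15(c)]{BruhatTits1} (the map $N \to U^+\backslash G/U^-$ being a bijection, together with ${^v\!}\nu$ being trivial on $T$), one forces $\tilde n \in T$; then $\tilde n \in T \cap \widetilde N_\Omega = T_\Omega \subseteq T_b$ and $u^- \in U^- \cap T_b U^+ = \{1\}$ by axiom~\ref{axiomRGD6}, whence $g = (t\tilde n) u^+$ with $t\tilde n \in T_b$ and $u^+ \in U_\Omega^+ = U_\Omega \cap U^+$. Since $g \in U^+$ and $U^+ \cap T_b = \{1\}$ (again~\ref{axiomRGD6}), we get $t\tilde n = 1$, so $g = u^+ \in U_\Omega \cap U_\Delta^+$. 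The argument for $U_\Delta^-$ is symmetric.

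For the $N$-part, the inclusion $N_\Omega \subseteq P_\Omega \cap N$ is clear from the definition $N_\Omega = \langle \widetilde N_\Omega, T_b\rangle$ and $T_b, \widetilde N_\Omega \subseteq P_\Omega$. Moreover $T_b$ normalizes $\widetilde N_\Omega$: indeed $T_b$ normalizes $U_\Omega$ and $N$, hence their intersection $\widetilde N_\Omega$, so $N_\Omega = T_b \widetilde N_\Omega = \widetilde N_\Omega T_b$, which is one of the claimed equalities. For the reverse inclusion $P_\Omega \cap N \subseteq N_\Omega$, take $n \in P_\Omega \cap N$ and write $n = tu$, $t \in T_b$, $u \in U_\Omega$; then $u = t^{-1}n \in U_\Omega \cap N = \widetilde N_\Omega$ (by definition of $\widetilde N_\Omega$), so $n = t u \in T_b \widetilde N_\Omega = N_\Omega$.

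Finally, the product decomposition. Starting from $P_\Omega = T_b U_\Omega$ and Example~\ref{ExQC}\ref{ExDecUOmega:3}, $P_\Omega = T_b U_\Omega^+ U_\Omega^- \widetilde N_\Omega$. Since $T_b$ normalizes $U_\Omega^+$, I can move it past: $P_\Omega = U_\Omega^+ T_b U_\Omega^- \widetilde N_\Omega = U_\Omega^+ U_\Omega^- (T_b \widetilde N_\Omega) = (P_\Omega \cap U_\Delta^+)(P_\Omega \cap U_\Delta^-)(P_\Omega \cap N)$, using the three intersection identities just proved. I expect the main obstacle to be the bookkeeping in the intersection-with-$U_\Delta^\pm$ step: one must be careful that the relevant Bruhat uniqueness statement \cite[6.1.15(c)]{BruhatTits1} applies to our abstract root group datum (it does, since such a datum satisfies all of \cite[6.1]{BruhatTits1}), and one must invoke~\ref{axiomRGD6} twice with the correct choice of positive system matching $\Delta$; everything else is straightforward normalization juggling.
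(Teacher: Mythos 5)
Your proof is correct and follows essentially the same route as the paper: establish $P_\Omega = T_b U_\Omega$ from the fact that $T_b$ normalizes $U_\Omega$, then combine the decomposition $U_\Omega = U_\Omega^+ U_\Omega^- \widetilde{N}_\Omega$ of Example~\ref{ExQC} with the spherical Bruhat decomposition and axiom~\ref{axiomRGD6} to identify the intersections with $U_\Delta^\pm$ and $N$. Your treatment of $P_\Omega \cap N$ (observing directly that $t^{-1}n \in U_\Omega \cap N = \widetilde{N}_\Omega$) is a slight shortcut compared to the paper's appeal to Bruhat uniqueness there, but the argument is otherwise the same.
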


\begin{proof}
Since $T_b \subset N$ normalizes $U_\Omega$, it normalizes $\widetilde{N}_\Omega= N \cap U_\Omega$, so that $N_\Omega = T_b \widetilde{N}_\Omega = \widetilde{N}_\Omega T_b$ and $P_\Omega = U_\Omega T_b = (U_\Omega \cap U_\Delta^+) (U_\Omega \cap U_\Delta^-) \widetilde{N}_\Omega T_b$ according to example~\ref{ExQC}(3).

Take $p \in P_\Omega$ and write $p$ as $p = u v n$ with $n \in \widetilde{N}_\Omega T_b \subset N$, $u \in U_\Omega \cap U_\Delta^+$ and $v \in U_\Omega \cap U_\Delta^-$.
If $p \in N$, then $pn^{-1} = uv$ gives $p = n$ by spherical Bruhat decomposition \cite[6.1.15(c)]{BruhatTits1}.
Thus $N \cap P_\Omega = N_\Omega$.
If $p \in U_\Delta^+$, then $n = v^{-1} (u^{-1}p) \in U_\Delta^- U_{\Delta}^+$.
Thus $n = 1$ by \cite[6.1.15(c)]{BruhatTits1} again and $u^{-1} p = v \in U_\Delta^- \cap U_\Delta^+ = \{1\}$ by~\ref{axiomRGD6}.
Thus $p = u$ and therefore $P_\Omega \cap U_\Delta^+ = U_\Omega \cap U_\Delta^+$.
By an analogous method, we get that $P_\Omega \cap U_\Delta^- = U_\Omega \cap U_\Delta^-$.
\end{proof}

\begin{Not}\label{notParahoric_subgroup}
We denote by:
\[\widehat{N}_\Omega = \{n \in N,\ \forall x \in \Omega,\ \nu(n)(x) =x\}\] \index[notation]{n@$\widehat{N}_\Omega$} the pointwise stabilizer of $\Omega$ in $N$ and by $\widehat{P}_\Omega$ (resp. $\widehat{P}'_\Omega$)\index[notation]{p@$\widehat{P}_\Omega$}\index[notation]{p@$\widehat{P}'_\Omega$} the subgroup of $G$ generated by $U_\Omega$ (resp. $U'_{\Omega}$) and $\widehat{N}_\Omega$.
The group $\widehat{P}_\Omega$ is called the \textbf{parahoric subgroup}\index{p@parahoric subgroup}\index{subgroup!parahoric} of $\Omega$ in $G$.
We will see later in Lemma~\ref{lemParahoric_fixator} that $\widehat{P}_\Omega$ is prescribed to be the pointwise stabilizer of $\Omega$ in $G$.
\end{Not}

\begin{Fact}
If $\Omega' \subset \Omega \subset \mathbb{A}_\Rtot$ are two non-empty subsets, then $U_{\Omega}$ is a subgroup of $U_{\Omega'}$.
\end{Fact}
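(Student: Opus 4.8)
The claim is that if $\emptyset \neq \Omega' \subset \Omega \subset \mathbb{A}_\Rtot$, then $U_\Omega$ is a subgroup of $U_{\Omega'}$. The plan is to reduce immediately to the generating root subgroups and then to the level of sets of values in $\Rtot$, where the inclusion becomes a triviality of convex subsets.

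First I would recall that, by definition, $U_\Omega$ is generated by the subgroups $U_{\alpha,\Omega} = \bigcap_{x \in \Omega} U_{\alpha,-\alpha(x)}$ for $\alpha \in \Phi$, and likewise $U_{\Omega'}$ is generated by the $U_{\alpha,\Omega'} = \bigcap_{x \in \Omega'} U_{\alpha,-\alpha(x)}$. Since $\Omega' \subset \Omega$, the intersection defining $U_{\alpha,\Omega}$ runs over a larger index set than the one defining $U_{\alpha,\Omega'}$, so $U_{\alpha,\Omega} = \bigcap_{x \in \Omega} U_{\alpha,-\alpha(x)} \subset \bigcap_{x \in \Omega'} U_{\alpha,-\alpha(x)} = U_{\alpha,\Omega'}$ for every $\alpha \in \Phi$. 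Equivalently, at the level of sets of values, $\bigcap_{x\in\Omega}[-\alpha(x),\infty] \subset \bigcap_{x\in\Omega'}[-\alpha(x),\infty]$ and one applies $\varphi_\alpha^{-1}$.

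Then each generator of $U_\Omega$ lies in $U_{\Omega'}$: indeed a generator of $U_\Omega$ is an element of some $U_{\alpha,\Omega}$, hence of $U_{\alpha,\Omega'} \subset U_{\Omega'}$. Since $U_{\Omega'}$ is a subgroup of $G$ containing all these generators, it contains the subgroup they generate, which is exactly $U_\Omega$. Therefore $U_\Omega \subset U_{\Omega'}$, as desired. There is no real obstacle here; the only thing to be careful about is the direction of the inclusion (a larger set $\Omega$ gives a smaller group $U_\Omega$), which is the content of the monotonicity of $\Omega \mapsto U_{\alpha,\Omega}$ already implicit in the definitions.
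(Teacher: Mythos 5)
Your proof is correct and follows exactly the paper's own argument: the inclusion $U_{\alpha,\Omega}=\bigcap_{x\in\Omega}U_{\alpha,x}\subset\bigcap_{x\in\Omega'}U_{\alpha,x}=U_{\alpha,\Omega'}$ for each $\alpha\in\Phi$, and then the observation that the generators of $U_\Omega$ all lie in the subgroup $U_{\Omega'}$. Nothing to add.
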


%\green{
\begin{proof}
For $\alpha \in \Phi$, we have $U_{\alpha,\Omega} = \bigcap_{x \in \Omega} U_{\alpha,x} \subset \bigcap_{x \in \Omega'} U_{\alpha,x} = U_{\alpha,\Omega'}$.
Since $U_\Omega$ and $ U_{\Omega'}$ are respectively generated by the $U_{\alpha,\Omega}$ and the $U_{\alpha,\Omega'}$ for $\alpha \in \Phi$, we deduce that $U_\Omega \subset U_{\Omega'}$.
\end{proof}
%}

\begin{Rq}
As a consequence, the same inclusions hold for the groups $\widetilde{N}_\Omega$, $T_\Omega$, $P_\Omega$, $N_\Omega$, $\widehat{N}_\Omega$ being the pointwise stabilizer of $\Omega$ in $N$, and finally $\widehat{P}_\Omega = \widehat{N}_\Omega U_\Omega$.
\end{Rq}

\subsection{Action of \texorpdfstring{$N$}{N} on parahoric subgroups}

The action of $N$ on $\mathbb{A}_\Rtot$ can be compared with the action of $N$ on $V^*$ as follows:

\begin{Lem}\label{LemContravariantAction}
For any $\alpha \in \Phi$, any $n \in N$ and any $x \in \Omega$, we have:
\[ {^v\!}{\nu}(n)(\alpha)(x-o) = \alpha( \nu(n^{-1})(x)-\nu(n^{-1})(o) ).\]
\end{Lem}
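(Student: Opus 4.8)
The statement to prove is the compatibility identity
\[ {^v\!}{\nu}(n)(\alpha)(x-o) = \alpha\big( \nu(n^{-1})(x)-\nu(n^{-1})(o) \big)\]
for all $\alpha \in \Phi$, $n \in N$ and $x \in \Omega$ (in fact for all $x \in \mathbb{A}_\Rtot$). The plan is to unwind both sides using the description of the action of $N$ on $\mathbb{A}_\Rtot$ provided by Definition~\ref{DefCompatibleAction}, in particular axiom~\ref{axiomCA1}, which says that the linear part of $\nu(n)$ is ${^v\!}\nu(n) \in W(\Phi)$, together with the basic identity $w(\alpha) = \alpha \circ w^{-1}$ relating the action of $W^v$ on roots and on $V_\Rtot$ (recorded in Subsection~\ref{subsec:rootsystems}).

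First I would write $\nu(n) \in \operatorname{Aff}_R(\mathbb{A}_\Rtot)$ in its linear-plus-translation form: for any $y \in \mathbb{A}_\Rtot$ one has $\nu(n)(y) = \overrightarrow{\nu(n)}(y - o) + \nu(n)(o)$, where $\overrightarrow{\nu(n)} = {^v\!}\nu(n) \in \mathrm{GL}(V_\mathbb{Z})$ by~\ref{axiomCA1}. Applying this to $\nu(n^{-1})$ and to the two points $x$ and $o$, the translation parts cancel in the difference:
\[ \nu(n^{-1})(x) - \nu(n^{-1})(o) = \overrightarrow{\nu(n^{-1})}(x-o) = {^v\!}\nu(n^{-1})(x-o) = {^v\!}\nu(n)^{-1}(x-o),\]
using that ${^v\!}\nu$ is a group homomorphism. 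Thus the right-hand side equals $\alpha\big( {^v\!}\nu(n)^{-1}(x-o) \big)$.

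It then remains to observe that $\alpha \circ {^v\!}\nu(n)^{-1} = {^v\!}\nu(n)(\alpha)$ as linear forms on $V_\Rtot$: this is precisely the formula $w(\alpha) = \alpha \circ w^{-1}$ applied to $w = {^v\!}\nu(n) \in W(\Phi)$, where one must be mildly careful that the ${}^{-1}$ appearing there is the image of $w$ under the canonical isomorphism $W(\Phi) \to W(\Phi^\vee)$ (the abuse of notation recalled in Subsection~\ref{subsec:rootsystems}), so that $w(\alpha)$ is the root obtained by precomposing $\alpha$ with this dual element, and that the extension of roots and of $W^v$ from $V_\mathbb{Z}$ to $V_\Rtot = V_\mathbb{Z} \otimes_\mathbb{Z} R$ by tensorization is compatible with composition. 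Evaluating at $x-o$ gives ${^v\!}\nu(n)(\alpha)(x-o) = \alpha\big({^v\!}\nu(n)^{-1}(x-o)\big)$, which matches the right-hand side computed above, completing the proof.

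There is essentially no obstacle here: the statement is a bookkeeping identity and the only point requiring a little care is keeping track of the two competing conventions for the $W^v$-action (on $(V_\R)^*$ versus on $V_\R$) and the fact that $\overrightarrow{\nu(n)}$ is identified with ${^v\!}\nu(n)$ only through~\ref{axiomCA1}; once those are in place the computation is a two-line manipulation. If anything, I would phrase the final identity as a chain of equalities in a single \texttt{align*} block to make the cancellation of translation parts and the use of $w(\alpha) = \alpha \circ w^{-1}$ transparent.
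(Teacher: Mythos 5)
Your proof is correct. It does, however, take a slightly different route from the paper's: the paper first reduces to the generators of $N$, namely the cosets $M_\beta$, for which $\nu(m)=r_{\beta,\mu}$, and then verifies the identity by the explicit reflection formula, computing $r_{\beta,\mu}(x)-r_{\beta,\mu}(o)=x-o-\beta(x-o)\beta^\vee$ and applying $\alpha$ — in effect re-deriving $r_\beta(\alpha)=\alpha\circ r_\beta$ in the affine setting. You instead work with an arbitrary $n$, extract the linear part of $\nu(n^{-1})$ via the affine decomposition guaranteed by~\ref{axiomCA1}, and invoke the duality formula $w(\alpha)=\alpha\circ w^{-1}$ wholesale. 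Your version is shorter and makes transparent that the lemma is pure bookkeeping, at the cost of leaning on a careful reading of~\ref{axiomCA1} (that the identification of $\overrightarrow{\nu(n)}$ with ${^v\!}\nu(n)$ holds for all $n$, through the isomorphism $W(\Phi)\to W(\Phi^\vee)$ with the paper's $w\mapsto w^{-1}$ convention); the paper's generator computation avoids having to unwind that convention for non-reflection elements. You correctly flag this as the one delicate point, and your handling of it is sound, so there is no gap.
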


\begin{proof}
We know that $\nu(N)$ is contained in the subgroup of $\operatorname{Aff}_{\Rtot}(\mathbb{A}_\Rtot)$ generated by the $r_{\beta,\mu}$ for $\beta \in \Phi$ and $\mu \in \Rtot$ and that ${^v\!}\nu(M_\beta) = \{r_\beta\}$ by~\ref{axiomCA1} and \cite[6.1.2(10)]{BruhatTits1}.
Moreover, $N$ is generated by the $M_\beta$ for $\beta \in \Phi$.
Thus, it suffices to prove that for any $x \in \mathbb{A}_\Rtot$, any $\beta \in \Phi$ and any $\mu \in \Rtot$, we have:
\[ r_\beta(\alpha)(x-o) = \alpha( r_{\beta,\mu}^{-1}(x)-r_{\beta,\mu}^{-1}(o) ).\]
Note that $r_{\beta,\mu}^{-1} = r_{\beta,\mu}$. For any $x \in \mathbb{A}_\Rtot$, we have:
\begin{align*}
r_{\beta,\mu}(x) - r_{\beta,\mu}(o) =& \left( x- (\beta(x-o) + \mu) \beta^\vee \right) -\left( o- (\beta(o-o) + \mu) \beta^\vee \right)\\
=&x-o - \beta(x-o) \beta^\vee
\end{align*}
Hence
\[ \alpha\left( r_{\beta,\mu}(x) - r_{\beta,\mu}(o) \right) =
\alpha(x-o) -\alpha(\beta^\vee) \beta(x-o) = r_\beta(\alpha)(x-o).\]
\end{proof}

\begin{Lem}\label{LemActionNUaOmega}
For any $n \in N$ and any $\alpha \in \Phi$, we have $n U_{\alpha,\Omega} n^{-1} = U_{{^v\!}\nu(n)(\alpha), \nu(n)(\Omega)}$ and $n U'_{\alpha,\Omega} n^{-1} = U'_{{^v\!}\nu(n)(\alpha), \nu(n)(\Omega)}$.
\end{Lem}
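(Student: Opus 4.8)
The statement is the natural equivariance of the groups $U_{\alpha,\Omega}$ and $U'_{\alpha,\Omega}$ under the action of $N$, and it should reduce to the pointwise equivariance already established in Corollary~\ref{CorActionNUa} together with the contravariance identity of Lemma~\ref{LemContravariantAction}. First I would fix $n \in N$, set $\beta = {^v\!}\nu(n)(\alpha)$, and recall that by Corollary~\ref{CorActionNUa} one has $n U_{\alpha,\lambda} n^{-1} = U_{\beta, \lambda + \alpha(\nu(n^{-1})(o) - o)}$ for every $\lambda \in \Rtot$. The plan is to apply this with $\lambda = -\alpha(x - o)$ for each $x \in \Omega$ and then take the intersection over $x$.

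The key computation is to identify, for each $x \in \Omega$, the shifted value $-\alpha(x-o) + \alpha(\nu(n^{-1})(o) - o)$ with $-\beta(\nu(n)(x) - o)$, so that $n U_{\alpha,x} n^{-1} = U_{\beta, \nu(n)(x)}$. By Lemma~\ref{LemContravariantAction} applied to the element $n^{-1}$ (so that ${^v\!}\nu(n)(\alpha)(y-o) = \alpha(\nu(n^{-1})(y) - \nu(n^{-1})(o))$ for all $y$), we get, taking $y = \nu(n)(x)$, that $\beta(\nu(n)(x) - o) = {^v\!}\nu(n)(\alpha)(\nu(n)(x) - o) = \alpha(x - \nu(n^{-1})(o))= \alpha(x-o) - \alpha(\nu(n^{-1})(o) - o)$. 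Negating gives exactly the desired equality of shift parameters. Hence $n U_{\alpha,x} n^{-1} = U_{{^v\!}\nu(n)(\alpha),\nu(n)(x)}$ for every $x \in \Omega$.

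Since conjugation by $n$ is a group automorphism of $G$, it commutes with arbitrary intersections of subgroups, so
\[
n U_{\alpha,\Omega} n^{-1} = n \Big( \bigcap_{x \in \Omega} U_{\alpha,x} \Big) n^{-1} = \bigcap_{x \in \Omega} n U_{\alpha,x} n^{-1} = \bigcap_{x \in \Omega} U_{\beta,\nu(n)(x)} = \bigcap_{y \in \nu(n)(\Omega)} U_{\beta,y} = U_{\beta,\nu(n)(\Omega)},
\]
which is the first claimed identity. For the primed version one argues identically: $U'_{\alpha,\lambda} = \bigcup_{\mu > \lambda} U_{\alpha,\mu}$, and conjugation by $n$ also commutes with this increasing union (again because it is an automorphism, and the shift $\lambda \mapsto \lambda + \alpha(\nu(n^{-1})(o)-o)$ is order-preserving on $\Rtot$), so $n U'_{\alpha,x} n^{-1} = U'_{\beta,\nu(n)(x)}$; intersecting over $x \in \Omega$ then yields $n U'_{\alpha,\Omega} n^{-1} = U'_{\beta,\nu(n)(\Omega)}$.

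I do not anticipate a genuine obstacle here: the only thing requiring care is getting the direction of the contravariant action right (that is, applying Lemma~\ref{LemContravariantAction} to $n^{-1}$ rather than $n$, and remembering $r_{\beta,\mu}^{-1} = r_{\beta,\mu}$ is not needed in the general-$n$ form), and checking that both intersections and increasing unions are preserved by the inner automorphism $g \mapsto n g n^{-1}$, which is immediate. Everything else is a substitution into Corollary~\ref{CorActionNUa}.
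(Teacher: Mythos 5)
Your proof is correct and follows essentially the same route as the paper: apply Corollary~\ref{CorActionNUa} pointwise, identify the shift parameter with $-{^v\!}\nu(n)(\alpha)(\nu(n)(x)-o)$ via Lemma~\ref{LemContravariantAction}, and pass to the intersection over $\Omega$ (and the increasing union for the primed groups). The only cosmetic slip is your parenthetical "applied to the element $n^{-1}$" — the identity you actually invoke is Lemma~\ref{LemContravariantAction} stated for $n$ itself — but the formula you use is the right one, so nothing is affected.
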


\begin{proof}
Let $x \in \Omega$.
We have $n U_{\alpha,x} n^{-1} = U_{{^v\!}\nu(\alpha),-\alpha(x-o) + \alpha(\nu(n^{-1})(o) - o)}$ according to Corollary~\ref{CorActionNUa}.
But:
$$
-\alpha(x-o) + \alpha(\nu(n^{-1})(o) - o) = \alpha(\nu(n^{-1})(o) - \nu(n^{-1}) \circ \nu(n)(x)))=- {^v\!}\nu(n)(\alpha) (\nu(n)(x) - o)$$ by Lemma~\ref{LemContravariantAction}.
Thus $n U_{\alpha,x} n^{-1} = U_{{^v\!}\nu(n)(\alpha),\nu(n)(x)}$ by definition.
Hence: $$n U_{\alpha,\Omega} n^{-1} = \bigcap_{x\in\Omega} n U_{\alpha,x} n^{-1} = \bigcap_{x\in\Omega} U_{{^v\!}\nu(n)(\alpha),\nu(n)(x)} = U_{{^v\!}\nu(n)(\alpha),\nu(n)(\Omega)}.$$
We proceed in the same way for $U'_{\alpha,\Omega} = \bigcap_{x \in \Omega} \bigcup_{\varepsilon > 0} U_{\alpha,-\alpha(x) + \varepsilon}$.
\end{proof}

\begin{Prop}\label{PropActionNUOmega}
For any $n \in N$, we have $n U_\Omega n^{-1} = U_{\nu(n)(\Omega)}$ and $n U'_\Omega n^{-1} = U'_{\nu(n)(\Omega)}$.
\end{Prop}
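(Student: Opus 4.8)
The plan is to deduce this Proposition directly from Lemma~\ref{LemActionNUaOmega}. First I would recall that $U_\Omega$ is by definition the subgroup of $G$ generated by the family of subgroups $U_{\alpha,\Omega}$ for $\alpha \in \Phi$, and similarly $U'_\Omega$ is generated by the $U'_{\alpha,\Omega}$ for $\alpha \in \Phi$. Likewise, since ${^v\!}\nu(n)$ permutes $\Phi$, the group $U_{\nu(n)(\Omega)}$ is generated by the $U_{\beta,\nu(n)(\Omega)}$ for $\beta \in \Phi$, which is the same as saying it is generated by the $U_{{^v\!}\nu(n)(\alpha),\nu(n)(\Omega)}$ as $\alpha$ runs over $\Phi$ (and analogously for $U'_{\nu(n)(\Omega)}$).

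Now fix $n \in N$. For each $\alpha \in \Phi$, Lemma~\ref{LemActionNUaOmega} gives $n U_{\alpha,\Omega} n^{-1} = U_{{^v\!}\nu(n)(\alpha),\nu(n)(\Omega)}$. Since conjugation by $n$ is a group automorphism of $G$, it sends the subgroup generated by a family of subsets to the subgroup generated by the images of those subsets. Applying this with the family $\{U_{\alpha,\Omega}\}_{\alpha \in \Phi}$, we obtain
\[ n U_\Omega n^{-1} = n \left\langle U_{\alpha,\Omega},\ \alpha \in \Phi \right\rangle n^{-1} = \left\langle n U_{\alpha,\Omega} n^{-1},\ \alpha \in \Phi \right\rangle = \left\langle U_{{^v\!}\nu(n)(\alpha),\nu(n)(\Omega)},\ \alpha \in \Phi \right\rangle. \]
Because ${^v\!}\nu(n)$ is a bijection of $\Phi$ onto itself, the index set $\{{^v\!}\nu(n)(\alpha) : \alpha \in \Phi\}$ is just $\Phi$ again, so the right-hand side equals $\left\langle U_{\beta,\nu(n)(\Omega)},\ \beta \in \Phi \right\rangle = U_{\nu(n)(\Omega)}$. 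This proves the first equality, and the second is obtained by repeating verbatim the same argument with $U'$ in place of $U$, using the second statement of Lemma~\ref{LemActionNUaOmega}.

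There is no real obstacle here: the content is entirely in Lemma~\ref{LemActionNUaOmega} (which in turn rests on Corollary~\ref{CorActionNUa} and Lemma~\ref{LemContravariantAction}), and the present statement is just the passage from the individual root groups to the group they generate, which is automatic since group automorphisms commute with the formation of generated subgroups. The only point to be slightly careful about is that one must use the surjectivity of ${^v\!}\nu(n)$ on $\Phi$ so that reindexing by ${^v\!}\nu(n)$ does not change the generated subgroup.
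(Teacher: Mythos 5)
Your proof is correct and follows exactly the paper's argument: apply Lemma~\ref{LemActionNUaOmega} to each root group, use that conjugation commutes with taking generated subgroups, and reindex via the permutation ${^v\!}\nu(n)$ of $\Phi$. No further comment is needed.
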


\begin{proof}
By Lemma~\ref{LemActionNUaOmega}, we have $n U_{\alpha,\Omega} n^{-1} = U_{{^v\!}\nu(n)(\alpha), \nu(n)(\Omega)}$.
Since $U_{\nu(n)(\Omega)}$ is generated by the $U_{\beta,\nu(n)(\Omega)}$ for $\beta \in \Phi$ and ${^v\!}\nu(\Phi) = \Phi$ by definition of root systems, we are done.
We proceed in the same way for $U'_\Omega$.
\end{proof}

\begin{Cor}\label{CorNchapeauNormaliseUOmega}
The group $\widehat{N}_\Omega$ normalizes $U_\Omega$ and $U'_{\Omega}$.
\end{Cor}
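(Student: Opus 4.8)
The statement to prove is Corollary~\ref{CorNchapeauNormaliseUOmega}: the group $\widehat{N}_\Omega$ normalizes $U_\Omega$ and $U'_\Omega$. This should follow almost immediately from Proposition~\ref{PropActionNUOmega} together with the very definition of $\widehat{N}_\Omega$ as the pointwise stabilizer of $\Omega$ in $N$ (Notation~\ref{notParahoric_subgroup}).

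The plan is as follows. Let $n \in \widehat{N}_\Omega$. By definition, $\nu(n)$ fixes $\Omega$ pointwise, i.e. $\nu(n)(x) = x$ for every $x \in \Omega$, and hence $\nu(n)(\Omega) = \Omega$. Apply Proposition~\ref{PropActionNUOmega} to this element $n$: it gives $n U_\Omega n^{-1} = U_{\nu(n)(\Omega)} = U_\Omega$ and likewise $n U'_\Omega n^{-1} = U'_{\nu(n)(\Omega)} = U'_\Omega$. Since this holds for every $n \in \widehat{N}_\Omega$, the group $\widehat{N}_\Omega$ normalizes both $U_\Omega$ and $U'_\Omega$.

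There is essentially no obstacle here: the entire content has been pushed into Proposition~\ref{PropActionNUOmega} (which in turn rests on Lemma~\ref{LemActionNUaOmega} and ultimately on Corollary~\ref{CorActionNUa} and Lemma~\ref{LemContravariantAction}), and the corollary is just the specialization to the stabilizing elements. The only point worth spelling out is the passage from ``$\nu(n)$ fixes $\Omega$ pointwise'' to ``$\nu(n)(\Omega) = \Omega$'', which is trivial. One could also remark that $\widehat{N}_\Omega$ contains $T_b$ (since $T_b = \ker\nu$ acts trivially on $\mathbb{A}_\Rtot$, a fortiori fixes $\Omega$ pointwise), which is consistent with the already-established fact that $T_b$ normalizes $U_\Omega$ and $U'_\Omega$; but this is not needed for the argument. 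So I would keep the proof to two or three lines invoking Proposition~\ref{PropActionNUOmega}.
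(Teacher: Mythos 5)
Your proof is correct and coincides with the paper's: the paper likewise observes that $\nu(n)(\Omega)=\Omega$ for $n\in\widehat{N}_\Omega$ and concludes immediately by Proposition~\ref{PropActionNUOmega}. Nothing further is needed.
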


\begin{proof}
Since $\nu(n)(\Omega) = \Omega$ for any $n \in \widehat{N}_\Omega$, we are done by Proposition~\ref{PropActionNUOmega}.
\end{proof}

\begin{Cor}\label{CorConjugationParahorics}
For any $n \in N$, we have $n \widehat{P}_\Omega n^{-1} = \widehat{P}_{\nu(n)(\Omega)}$ and $n \widehat{P}'_\Omega n^{-1} = \widehat{P}'_{\nu(n)(\Omega)}$.
\end{Cor}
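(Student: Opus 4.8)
The statement to prove is Corollary~\ref{CorConjugationParahorics}: for any $n \in N$, we have $n \widehat{P}_\Omega n^{-1} = \widehat{P}_{\nu(n)(\Omega)}$ and $n \widehat{P}'_\Omega n^{-1} = \widehat{P}'_{\nu(n)(\Omega)}$.

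\begin{proof}
Recall from Notation~\ref{notParahoric_subgroup} that $\widehat{P}_\Omega$ is the subgroup of $G$ generated by $U_\Omega$ and $\widehat{N}_\Omega$, where $\widehat{N}_\Omega = \{n' \in N,\ \forall x \in \Omega,\ \nu(n')(x) = x\}$ is the pointwise stabilizer of $\Omega$ in $N$ (and similarly $\widehat{P}'_\Omega$ is generated by $U'_\Omega$ and $\widehat{N}_\Omega$). Since conjugation by $n$ is an automorphism of $G$, it sends the subgroup generated by $U_\Omega \cup \widehat{N}_\Omega$ to the subgroup generated by $n U_\Omega n^{-1} \cup n \widehat{N}_\Omega n^{-1}$. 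The plan is therefore to show separately that $n U_\Omega n^{-1} = U_{\nu(n)(\Omega)}$ and that $n \widehat{N}_\Omega n^{-1} = \widehat{N}_{\nu(n)(\Omega)}$, and then conclude that the subgroups they generate agree; the same argument with $U'_\Omega$ in place of $U_\Omega$ handles the primed version.

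The first equality $n U_\Omega n^{-1} = U_{\nu(n)(\Omega)}$ (and likewise $n U'_\Omega n^{-1} = U'_{\nu(n)(\Omega)}$) is precisely Proposition~\ref{PropActionNUOmega}, so nothing new is needed there. For the second equality, let $n' \in \widehat{N}_\Omega$, so $\nu(n')$ fixes every point of $\Omega$. Then $\nu(n n' n^{-1}) = \nu(n) \nu(n') \nu(n)^{-1}$ fixes every point of $\nu(n)(\Omega)$: indeed for $y = \nu(n)(x)$ with $x \in \Omega$ we have $\nu(n)\nu(n')\nu(n)^{-1}(y) = \nu(n)\nu(n')(x) = \nu(n)(x) = y$. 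Hence $n n' n^{-1} \in \widehat{N}_{\nu(n)(\Omega)}$, which gives $n \widehat{N}_\Omega n^{-1} \subseteq \widehat{N}_{\nu(n)(\Omega)}$. Applying this inclusion to the pair $(n^{-1}, \nu(n)(\Omega))$ yields $n^{-1} \widehat{N}_{\nu(n)(\Omega)} n \subseteq \widehat{N}_{\nu(n^{-1})(\nu(n)(\Omega))} = \widehat{N}_\Omega$, and combining the two inclusions gives the equality $n \widehat{N}_\Omega n^{-1} = \widehat{N}_{\nu(n)(\Omega)}$.

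Putting these together: $n \widehat{P}_\Omega n^{-1}$ is the subgroup generated by $n U_\Omega n^{-1} = U_{\nu(n)(\Omega)}$ and $n \widehat{N}_\Omega n^{-1} = \widehat{N}_{\nu(n)(\Omega)}$, which is by definition $\widehat{P}_{\nu(n)(\Omega)}$. Replacing $U_\Omega$ by $U'_\Omega$ throughout (and using the primed half of Proposition~\ref{PropActionNUOmega}) gives $n \widehat{P}'_\Omega n^{-1} = \widehat{P}'_{\nu(n)(\Omega)}$. This is an entirely routine bookkeeping argument; there is no real obstacle, the only point to be careful about is the contravariance issue already encoded in Proposition~\ref{PropActionNUOmega} and Lemma~\ref{LemActionNUaOmega}, which has been dealt with upstream, so here it suffices to invoke those results.
\end{proof}
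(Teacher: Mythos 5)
Your proof is correct and follows essentially the same route as the paper's: split $\widehat{P}_\Omega$ into the pieces $U_\Omega$ and $\widehat{N}_\Omega$, conjugate each (Proposition~\ref{PropActionNUOmega} for the first, the pointwise-stabilizer definition for the second), and recombine. The only cosmetic difference is that you work with generated subgroups throughout, whereas the paper first writes $\widehat{P}_\Omega = \widehat{N}_\Omega U_\Omega$ as a product via Corollary~\ref{CorNchapeauNormaliseUOmega}; both are fine.
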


\begin{proof}
By definition, $\widehat{N}_\Omega$ is the pointwise stabilizer of $\Omega$ in $N$ and hence $n \widehat{N}_\Omega n^{-1} = \widehat{N}_{\nu(n)(\Omega)}$.
We have $\widehat{P}_\Omega = \widehat{N}_\Omega U_\Omega$ since $\widehat{N}_\Omega$ normalizes $U_\Omega$ by Corollary~\ref{CorNchapeauNormaliseUOmega}.
Thus, by applying Proposition~\ref{PropActionNUOmega}, we get that $n \widehat{P}_\Omega n^{-1} = \widehat{N}_{\nu(n)(\Omega)} U_{\nu(n)(\Omega)}$.
The same holds for $\widehat{P}'_\Omega$.
\end{proof}

\begin{Cor}\label{CorPchapeauOmegaQC}
For any non-empty subset $\Omega \subset \mathbb{A}_\Rtot$ and any basis $\Delta$ of $\Phi$, we have:
\begin{itemize}
\item $\widehat{P}_\Omega \cap N = \widehat{N}_\Omega$;
\item $\widehat{P}_\Omega \cap U_\Delta^+ = U_\Omega \cap U_\Delta^+$;
\item $\widehat{P}_\Omega \cap U_\Delta^- = U_\Omega \cap U_\Delta^-$;
\item $\widehat{P}_\Omega = (\widehat{P}_\Omega \cap U_\Delta^+) (\widehat{P}_\Omega \cap U_\Delta^-)(\widehat{P}_\Omega \cap N) = U_\Omega \widehat{N}_\Omega$.
\end{itemize}
\end{Cor}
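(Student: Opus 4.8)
\textbf{Proof of Corollary~\ref{CorPchapeauOmegaQC}.}
The plan is to deduce this statement from the analogous decomposition of $P_\Omega$ given in Lemma~\ref{LemPOmegaQC}, using the fact that $\widehat{P}_\Omega$ and $P_\Omega$ differ only in their $N$-components. More precisely, I would start by recalling that by definition $\widehat{P}_\Omega$ is generated by $U_\Omega$ and $\widehat{N}_\Omega$, and that by Corollary~\ref{CorNchapeauNormaliseUOmega} the group $\widehat{N}_\Omega$ normalizes $U_\Omega$; hence $\widehat{P}_\Omega = \widehat{N}_\Omega U_\Omega = U_\Omega \widehat{N}_\Omega$, which already gives the last equality of the fourth bullet once we have established the first three bullets. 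Next, I would combine this with the decomposition $U_\Omega = (U_\Omega \cap U_\Delta^+)(U_\Omega \cap U_\Delta^-)\widetilde{N}_\Omega$ from Example~\ref{ExQC}\ref{ExDecUOmega:3} (note $\widetilde{N}_\Omega \subset \widehat{N}_\Omega$ since $\widetilde{N}_\Omega = N \cap U_\Omega$ fixes $\Omega$ pointwise because each $U_{\alpha,\Omega}$ fixes $\Omega$ pointwise — this follows from~\ref{axiomCA2} and the definition of $U_{\alpha,\Omega}$, or can simply be invoked as in Lemma~\ref{LemPOmegaQC}), so that $\widehat{P}_\Omega = (\widehat{P}_\Omega \cap U_\Delta^+)(\widehat{P}_\Omega \cap U_\Delta^-)\widehat{N}_\Omega$ once the intersection identities are in hand.

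To prove the intersection identities, I would argue exactly as in the proof of Lemma~\ref{LemPOmegaQC}, replacing $N_\Omega$ by $\widehat{N}_\Omega$ throughout. Take $p \in \widehat{P}_\Omega$ and write $p = uvn$ with $u \in U_\Omega \cap U_\Delta^+$, $v \in U_\Omega \cap U_\Delta^-$, and $n \in \widehat{N}_\Omega$; this is possible by the previous paragraph. If $p \in N$, then $pn^{-1} = uv \in U_\Delta^+ U_\Delta^-$ forces $p = n$ by the spherical Bruhat decomposition \cite[6.1.15(c)]{BruhatTits1}, giving $\widehat{P}_\Omega \cap N = \widehat{N}_\Omega$. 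If $p \in U_\Delta^+$, then $n = v^{-1}(u^{-1}p) \in U_\Delta^- U_\Delta^+$, so $n = 1$ by \cite[6.1.15(c)]{BruhatTits1} and then $u^{-1}p = v \in U_\Delta^+ \cap U_\Delta^- = \{1\}$ by~\ref{axiomRGD6}, whence $p = u \in U_\Omega \cap U_\Delta^+$; the reverse inclusion is trivial, so $\widehat{P}_\Omega \cap U_\Delta^+ = U_\Omega \cap U_\Delta^+$. The identity $\widehat{P}_\Omega \cap U_\Delta^- = U_\Omega \cap U_\Delta^-$ is obtained by the symmetric argument.

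I do not expect any serious obstacle here: the statement is a routine transcription of Lemma~\ref{LemPOmegaQC} with $T_b$ (resp. $N_\Omega$) replaced by the trivial group (resp. $\widehat{N}_\Omega$), and all the structural input — the quasi-concavity of $(U_{\alpha,\Omega})_{\alpha\in\Phi}$, the decomposition of $U_\Omega$, the normalization of $U_\Omega$ by $\widehat{N}_\Omega$, and the spherical Bruhat decomposition — is already available. The only mild point of care is to make sure that the factorization $\widehat{P}_\Omega = U_\Omega \widehat{N}_\Omega$ is set up before the intersection computations so that every element of $\widehat{P}_\Omega$ really can be written in the form $uvn$ used above; this is immediate from $\widehat{P}_\Omega = \langle U_\Omega, \widehat{N}_\Omega \rangle$ together with $\widehat{N}_\Omega$ normalizing $U_\Omega$.
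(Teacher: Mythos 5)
Your proof is correct and follows essentially the same route as the paper, whose entire argument is: by Corollary~\ref{CorNchapeauNormaliseUOmega} one has $\widehat{P}_\Omega = U_\Omega \widehat{N}_\Omega$, and one then concludes exactly as in Lemma~\ref{LemPOmegaQC} using the spherical Bruhat decomposition \cite[6.1.15(c)]{BruhatTits1}. Your write-up just spells out the details.

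One point deserves more care than you give it, namely the inclusion $\widetilde{N}_\Omega \subseteq \widehat{N}_\Omega$, which you genuinely need for the first bullet: after combining $\widehat{P}_\Omega = U_\Omega\widehat{N}_\Omega$ with Example~\ref{ExQC}\ref{ExDecUOmega:3}, your $N$-component lies a priori only in $\widetilde{N}_\Omega\widehat{N}_\Omega$, so the Bruhat argument yields $\widehat{P}_\Omega\cap N = \widetilde{N}_\Omega\widehat{N}_\Omega$ and the inclusion is what collapses this to $\widehat{N}_\Omega$ (the other three identities do not need it). Your justification --- that each $U_{\alpha,\Omega}$ ``fixes $\Omega$ pointwise by~\ref{axiomCA2}'' --- does not parse in this abstract setting: only $N$ acts on $\mathbb{A}_\Rtot$ via $\nu$, the root groups do not act on anything yet, and the building on which $U_{\alpha,\Omega}$ would act is constructed only in section~\ref{sectionLambdaBuildingFromVRGD}. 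Nor can the inclusion ``be invoked as in Lemma~\ref{LemPOmegaQC}'', which says nothing about $\widehat{N}_\Omega$; in the paper it appears only afterwards, as Corollary~\ref{CorNtildeFixOmega}, whose stated proof cites the present corollary. The non-circular argument is: $\widetilde{N}_\Omega$ is generated by the $\widetilde{N}_{\alpha,\Omega}$ (Example~\ref{ExQC}\ref{ExDecUOmega:4}), and by the proof of Lemma~\ref{LemRootInNaOmega} any $n\in\widetilde{N}_{\alpha,\Omega}$ either lies in $T_b=\ker\nu$ or satisfies $\nu(n)=r_{\alpha,\lambda}$ with $\lambda=-\alpha(x)$ for every $x\in\Omega$ (Lemma~\ref{LemMuNu}), so that $\nu(n)$ fixes $\Omega\subseteq H_{\alpha,\lambda}$ pointwise. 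With that in hand, the rest of your argument goes through verbatim.
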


\begin{proof}
By Corollary~\ref{CorNchapeauNormaliseUOmega}, we have $\widehat{P}_\Omega = U_\Omega \widehat{N}_\Omega$.
Thus, one can conclude as in Lemma~\ref{LemPOmegaQC} using \cite[6.1.15(c)]{BruhatTits1}.
\end{proof}

\begin{Cor}\label{CorNtildeFixOmega}
We have $\widetilde{N}_\Omega \subset N_\Omega \subset \widehat{N}_\Omega$.
In particular, the groups $\widetilde{N}_\Omega$ and $N_\Omega$ fix $\Omega$.
\end{Cor}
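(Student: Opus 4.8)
The goal is to prove Corollary~\ref{CorNtildeFixOmega}: $\widetilde{N}_\Omega \subset N_\Omega \subset \widehat{N}_\Omega$, so that $\widetilde{N}_\Omega$ and $N_\Omega$ fix $\Omega$ pointwise.

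The plan is as follows. First, the inclusion $\widetilde{N}_\Omega \subset N_\Omega$ is immediate: by definition (see Notation preceding Lemma~\ref{LemPOmegaQC}), $N_\Omega$ is the subgroup of $G$ generated by $\widetilde{N}_\Omega$ and $T_b$, so it contains $\widetilde{N}_\Omega$. It thus suffices to prove $N_\Omega \subset \widehat{N}_\Omega$, and since $\widehat{N}_\Omega$ is a subgroup of $N$, it is enough to check that both generating subgroups $T_b$ and $\widetilde{N}_\Omega$ of $N_\Omega$ are contained in $\widehat{N}_\Omega$, i.e. that each of them acts trivially on $\Omega$ via $\nu$.

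For $T_b$: by definition $T_b = \ker \nu$ (Notation~\ref{NotWeylAffine}), so $\nu(t) = \operatorname{id}_{\mathbb{A}_\Rtot}$ for every $t \in T_b$; in particular $\nu(t)$ fixes every point of $\Omega$, whence $T_b \subset \widehat{N}_\Omega$. For $\widetilde{N}_\Omega$: let $n \in \widetilde{N}_\Omega = N \cap U_\Omega$. We must show $\nu(n)(x) = x$ for all $x \in \Omega$. The key input is Proposition~\ref{PropActionNUOmega}: conjugation by $n \in N$ sends $U_\Omega$ to $U_{\nu(n)(\Omega)}$. Since $n \in U_\Omega$, we have $n U_\Omega n^{-1} = U_\Omega$, hence $U_{\nu(n)(\Omega)} = U_\Omega$. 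Now I would use that $U_\Omega$ determines $\Omega$ up to the relevant ambiguity — more precisely, I would argue via the local root data: for each $x \in \Omega$ the groups $U_{\alpha,x} = U_{\alpha,-\alpha(x)}$ refine the $U_{\alpha,\Omega} = \bigcap_{y\in\Omega} U_{\alpha,-\alpha(y)}$, and comparing the filtration jumps of $U_\Omega = U_{\nu(n)(\Omega)}$ forces the families $\{-\alpha(x) : x \in \Omega\}$ and $\{-\alpha(\nu(n)(y)) : y \in \Omega\}$ to generate the same convex subsets of $\Rtot$ for every $\alpha$. Actually the cleanest route: since $n \in U_\Omega$, for each $x\in\Omega$ the element $n$ lies in $U_x \cap N = \widetilde N_x \subset \widehat N_x$ if we already know the corollary for singletons; so it suffices to prove $\widetilde{N}_x \subset \widehat{N}_x$ for a single point $x$, i.e. that $n \in \widetilde{N}_x$ fixes $x$.

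For the single-point case $n \in \widetilde{N}_x = N \cap U_x$: write $n$ using Example~\ref{ExQC}\ref{ExDecUOmega:3}, $U_x = U_x^+ U_x^- \widetilde{N}_x$, and recall $\widetilde{N}_x$ is generated by the rank-one pieces $\widetilde{N}_{\alpha,x}$ (Example~\ref{ExQC}\ref{ExDecUOmega:4}); by Lemma~\ref{LemRootInNaOmega} each $\widetilde{N}_{\alpha,x}$ maps under ${^v\!}\nu$ into $\{1, r_\alpha\}$, and when it hits $r_\alpha$ there is an element $m \in M_{\alpha,\lambda_\alpha}$ with $\lambda_\alpha = -\alpha(x) \in \Gamma_\alpha$ lying in $\widetilde{N}_{\alpha,x}$; by Lemma~\ref{LemMuNu}, $\nu(m) = r_{\alpha,\lambda_\alpha}$, whose fixed hyperplane $H_{\alpha,\lambda_\alpha} = \theta_{\alpha,\lambda_\alpha}^{-1}(\{0\})$ contains $x$ since $\theta_{\alpha,\lambda_\alpha}(x) = \alpha(x-o) + (-\alpha(x-o)) = 0$. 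So each generator of $\widetilde{N}_x$ either lies in $T_b$ (when its vectorial part is trivial, using \cite[6.1.11(ii)]{BruhatTits1}) and fixes $x$, or is of the form $t_b m$ with $t_b \in T_b$, $m \in M_{\alpha,-\alpha(x)}$, and fixes $x$. Hence $\widetilde{N}_x \subset \widehat{N}_x$. The general case then follows because for $n \in \widetilde{N}_\Omega$ and any $x \in \Omega$ we have $U_\Omega \subset U_x$, so $n \in N \cap U_x = \widetilde{N}_x \subset \widehat{N}_x$, i.e. $\nu(n)(x) = x$; as $x\in\Omega$ was arbitrary, $n \in \widehat{N}_\Omega$.

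The main obstacle is the single-point statement $\widetilde N_x\subset\widehat N_x$: one must show that every element of $N$ that also lies in the unipotent-generated group $U_x$ actually fixes $x$, which requires decomposing such an element into rank-one contributions and checking that each reflection $r_{\alpha,\lambda}$ that can appear has its wall through $x$ — this is exactly where Lemma~\ref{LemRootInNaOmega}, Lemma~\ref{LemMuNu} and the compatibility axiom~\ref{axiomCA2} (through Fact~\ref{FactReflectionIdentification}) are used. Once this is in place, the rest is formal from Proposition~\ref{PropActionNUOmega} and the definitions of $N_\Omega$ and $\widehat N_\Omega$.
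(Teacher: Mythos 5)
Your proof is correct, but it takes a genuinely different route from the paper's. The paper's argument is a two-line formal deduction: by definition $U_\Omega \subset P_\Omega \subset \widehat{P}_\Omega$, and intersecting this chain with $N$ gives $\widetilde{N}_\Omega = U_\Omega \cap N \subset P_\Omega \cap N = N_\Omega \subset \widehat{P}_\Omega \cap N = \widehat{N}_\Omega$, where the two identifications are Lemma~\ref{LemPOmegaQC} and Corollary~\ref{CorPchapeauOmegaQC} (themselves consequences of the spherical Bruhat decomposition applied to the decompositions of $P_\Omega$ and $\widehat{P}_\Omega$). You instead prove the substantive inclusion $\widetilde{N}_\Omega \subset \widehat{N}_\Omega$ from scratch: reduce to the single-point case via $U_\Omega \subset U_x$, then use the generation of $\widetilde{N}_x$ by the rank-one pieces $\widetilde{N}_{\alpha,x}$ (Example~\ref{ExQC}\ref{ExDecUOmega:4}), Proposition~\ref{PropRankOneLevi} to write each generator as $t_b$ or $t_b m$ with $m \in M_{\alpha,-\alpha(x)}$, and Lemma~\ref{LemMuNu} plus the observation $\theta_{\alpha,-\alpha(x)}(x)=0$ to see that each such element fixes $x$. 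This is valid and makes explicit \emph{why} elements of $N \cap U_x$ fix $x$, at the cost of redoing work the paper has already packaged into Corollary~\ref{CorPchapeauOmegaQC}. One remark: the detour in your middle paragraph (trying to deduce $\nu(n)(\Omega)=\Omega$ from $U_{\nu(n)(\Omega)} = U_\Omega$ by ``comparing filtration jumps'') would not go through as stated, since $U_\Omega$ does not determine $\Omega$; but you correctly abandon it, and your final argument does not rely on it.
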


\begin{proof}
By definition, we have $U_\Omega \subset P_\Omega \subset \widehat{P}_\Omega$.
Thus, by intersecting with $N$ and by using Lemma~\ref{LemPOmegaQC} and Corollary~\ref{CorPchapeauOmegaQC}, we deduce $\widetilde{N}_\Omega \subset N_\Omega \subset \widehat{N}_\Omega$.
\end{proof}

\begin{Lem}\label{LemNOmegaNormaliseTstarOmega}
The group $\widehat{N}_\Omega$ normalizes $T^*_\Omega$.
\end{Lem}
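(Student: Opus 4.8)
The goal is to show that $\widehat{N}_\Omega$ normalizes $T^*_\Omega$, where $T^*_\Omega$ is generated by the $T'_{\alpha,\Omega}$ for $\alpha \in \Phi$. Since $\widehat{N}_\Omega$ is generated by $\widetilde{N}_\Omega$ and $T_b$ (this is not quite true in general, but $\widehat{N}_\Omega$ contains $N_\Omega = T_b \widetilde{N}_\Omega$; for the normalization statement it suffices to check the two types of generators separately, so I will treat $T_b$ and arbitrary $n \in \widehat{N}_\Omega$, using that elements of $\widehat{N}_\Omega$ act trivially on $\Omega$). The first observation is that $T_b$ normalizes each $T'_{\alpha,\Omega}$: indeed $T_b$ normalizes $U_{\alpha,\lambda}$ and $U'_{\alpha,\lambda}$ for all $\alpha,\lambda$ by Corollary~\ref{CorActionNUa}, hence normalizes $U_{\alpha,\Omega}$ and $U'_{\alpha,\Omega}$ as intersections, hence normalizes $L'_{\alpha,\Omega} = \langle U_{\alpha,\Omega}, U'_{-\alpha,\Omega}\rangle$, and therefore normalizes $T'_{\alpha,\Omega} = L'_{\alpha,\Omega} \cap T$ since $T_b \subset T$. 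Consequently $T_b$ normalizes $T^*_\Omega$; this is already contained in Lemma~\ref{LemInclusionTOmega}.

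The main point is then to treat a general $n \in \widehat{N}_\Omega$. Here the key tool is Lemma~\ref{LemActionNUaOmega}: for any $\alpha \in \Phi$ we have $n U_{\alpha,\Omega} n^{-1} = U_{{^v\!}\nu(n)(\alpha), \nu(n)(\Omega)}$ and similarly with primes. Since $n \in \widehat{N}_\Omega$ fixes $\Omega$ pointwise, $\nu(n)(\Omega) = \Omega$, so this reads $n U_{\alpha,\Omega} n^{-1} = U_{\beta,\Omega}$ and $n U'_{\alpha,\Omega} n^{-1} = U'_{\beta,\Omega}$ where $\beta = {^v\!}\nu(n)(\alpha) \in \Phi$. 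Therefore $n L'_{\alpha,\Omega} n^{-1} = \langle n U_{\alpha,\Omega} n^{-1}, n U'_{-\alpha,\Omega} n^{-1}\rangle = \langle U_{\beta,\Omega}, U'_{-\beta,\Omega}\rangle = L'_{\beta,\Omega}$, and intersecting with $T$ (which is normalized by $N$, hence by $n$) gives $n T'_{\alpha,\Omega} n^{-1} = T'_{\beta,\Omega}$. Since ${^v\!}\nu(n)$ permutes $\Phi$, as $\alpha$ ranges over $\Phi$ so does $\beta$, and hence $n$ permutes the generating set $\{T'_{\alpha,\Omega}\}_{\alpha\in\Phi}$ of $T^*_\Omega$; thus $n$ normalizes $T^*_\Omega$.

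Finally, $\widehat{N}_\Omega$ is generated by such elements $n$ (together with $T_b \subset \widehat{N}_\Omega$), so $\widehat{N}_\Omega$ normalizes $T^*_\Omega$. There is no serious obstacle here: the statement is essentially a formal consequence of the equivariance Lemma~\ref{LemActionNUaOmega} combined with the fact that elements of $\widehat{N}_\Omega$ fix $\Omega$; the only mild subtlety is bookkeeping the distinction between $U_{\alpha,\Omega}$ and $U'_{\alpha,\Omega}$ inside the definition of $L'_{\alpha,\Omega}$ (which is not symmetric in $\alpha \leftrightarrow -\alpha$), but this causes no trouble because $n$ conjugates the unprimed group to an unprimed group and the primed group to a primed group with the root transformed consistently under ${^v\!}\nu(n)$.
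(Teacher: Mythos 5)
Your proof is correct and follows essentially the same route as the paper: the key point in both is that $n \in \widehat{N}_\Omega$ satisfies $\nu(n)(\Omega) = \Omega$, so Lemma~\ref{LemActionNUaOmega} gives $n L'_{\alpha,\Omega} n^{-1} = L'_{{^v\!}\nu(n)(\alpha),\Omega}$, and intersecting with $T$ shows $n$ permutes the generators $T'_{\alpha,\Omega}$ of $T^*_\Omega$. The preliminary discussion about generators of $\widehat{N}_\Omega$ is unnecessary (and slightly off, since $\widehat{N}_\Omega$ is defined as the pointwise stabilizer of $\Omega$ in $N$, not as $T_b\widetilde{N}_\Omega$), but it is harmless because your main argument already applies to an arbitrary element of $\widehat{N}_\Omega$.
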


\begin{proof}
For any $\alpha \in \Phi$ and any $n \in \widehat{N}_\Omega$, we get that $n U_{\alpha,\Omega} n^{-1} = U_{{^v\!}\nu(n)(\alpha),\Omega}$ and $n U'_{-\alpha,\Omega} n^{-1} = U'_{- {^v\!}\nu(n)(\alpha), \Omega}$ thanks to Lemma~\ref{LemActionNUaOmega} and the observation that $\nu(n)(\Omega) = \Omega$.
Thus $n L'_{\alpha,\Omega} n^{-1} = L'_{{^v\!}\nu(n)(\alpha),\Omega}$.
Since $N$ normalizes $T$, by intersecting the previous equality with $T$, we get $n T'_{\alpha,\Omega} n^{-1} = T'_{{^v\!}\nu(n)(\alpha),\Omega}$.
Thus $n$ normalizes $T^*_\Omega$ since $T^*_\Omega$ is generated by the $T'_{\alpha,\Omega}$ for $\alpha \in \Phi$ and ${^v\!}\nu(n)(\Phi)=\Phi$.
Hence $\widehat{N}_\Omega$ normalizes $T^*_\Omega$.
\end{proof}

\begin{Prop}\label{PropPchapeauDistingueUprimeTstar}
The subset $U'_\Omega T^*_\Omega$ is a normal subgroup of $\widehat{P}_\Omega$.
\end{Prop}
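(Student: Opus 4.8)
The goal is to show that $U'_\Omega T^*_\Omega$ is a normal subgroup of $\widehat{P}_\Omega = U_\Omega \widehat{N}_\Omega$. The plan is first to establish that $U'_\Omega T^*_\Omega$ is a subgroup, and then to verify that it is normalized by a generating set of $\widehat{P}_\Omega$, namely by $U_\Omega$ (equivalently by the $U_{\alpha,\Omega}$ for $\alpha\in\Phi$) and by $\widehat{N}_\Omega$.

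\emph{Step 1: $U'_\Omega T^*_\Omega$ is a subgroup.} By definition (Notation~\ref{T*not}), $T^*_\Omega$ is generated by the $T'_{\alpha,\Omega}$ for $\alpha\in\Phi$, and by Lemma~\ref{LemInclusionTOmega} we have $T^*_\Omega \subset T_b$. Since $T_b$ normalizes each $U'_{\alpha,\Omega}$, it normalizes $U'_\Omega$; hence $T^*_\Omega$ normalizes $U'_\Omega$ and $U'_\Omega T^*_\Omega$ is a subgroup of $G$. (It is contained in $\widehat{P}_\Omega$ since $U'_\Omega \subset U_\Omega$ and $T^*_\Omega \subset T_b \subset \widehat{N}_\Omega$ by Corollary~\ref{CorNtildeFixOmega} combined with the fact that $T_b \subset N_\Omega$.)

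\emph{Step 2: $U_\Omega$ normalizes $U'_\Omega T^*_\Omega$.} It suffices to check that each $U_{\alpha,\Omega}$, $\alpha\in\Phi$, normalizes the set $U'_\Omega T^*_\Omega$. For this I would combine two commutator estimates already proved: Lemma~\ref{LemQC2}(\ref{eqQC2'}), which gives $[U_{\alpha,\Omega},U'_{\beta,\Omega}] \subset U'_{(\alpha,\beta),\Omega} \subset U'_\Omega$ whenever $\beta \not\in -\mathbb{R}_{>0}\alpha$, together with the case $\beta\in -\mathbb{R}_{>0}\alpha$ handled by Lemma~\ref{LemCommutationUaU'a} which gives $[U_{\alpha,\Omega},U'_{-\alpha,\Omega}] \subset U'_{\alpha,\Omega}U'_{-\alpha,\Omega}T'_{\alpha,\Omega} \subset U'_\Omega T^*_\Omega$ (recall $\alpha$ non-divisible, so $U'_{-2\alpha,\Omega}\subset U'_{-\alpha,\Omega}$); hence $U_{\alpha,\Omega}$ normalizes $U'_\Omega$ up to elements of $U'_\Omega T^*_\Omega$, so in any case $U_{\alpha,\Omega}\, U'_\Omega\, U_{\alpha,\Omega}^{-1} \subset U'_\Omega T^*_\Omega$. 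For the $T^*_\Omega$-factor, Lemma~\ref{LemCommutationTstarUaOmega} gives $[T^*_\Omega,U_{\alpha,\Omega}] \subset U'_{\alpha,\Omega} \subset U'_\Omega$, so $U_{\alpha,\Omega}\, T^*_\Omega\, U_{\alpha,\Omega}^{-1} \subset U'_\Omega T^*_\Omega$. Multiplying, $U_{\alpha,\Omega}\,(U'_\Omega T^*_\Omega)\,U_{\alpha,\Omega}^{-1} \subset U'_\Omega T^*_\Omega$, and since $U_\Omega$ is generated by the $U_{\alpha,\Omega}$ we conclude $U_\Omega$ normalizes $U'_\Omega T^*_\Omega$.

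\emph{Step 3: $\widehat{N}_\Omega$ normalizes $U'_\Omega T^*_\Omega$.} By Corollary~\ref{CorNchapeauNormaliseUOmega}, $\widehat{N}_\Omega$ normalizes $U'_\Omega$, and by Lemma~\ref{LemNOmegaNormaliseTstarOmega}, $\widehat{N}_\Omega$ normalizes $T^*_\Omega$; hence it normalizes the product $U'_\Omega T^*_\Omega$. Combining Steps 2 and 3 and using $\widehat{P}_\Omega = U_\Omega\widehat{N}_\Omega$ (Corollary~\ref{CorPchapeauOmegaQC}), every generator of $\widehat{P}_\Omega$ normalizes $U'_\Omega T^*_\Omega$, so $U'_\Omega T^*_\Omega \trianglelefteq \widehat{P}_\Omega$. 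The only delicate point is the $\beta\in -\mathbb{R}_{>0}\alpha$ case in Step 2, where the commutator of $U_{\alpha,\Omega}$ and $U'_{-\alpha,\Omega}$ produces a genuine $T$-component; this is exactly why the normal subgroup must be enlarged from $U'_\Omega$ to $U'_\Omega T^*_\Omega$, and it is handled cleanly by Lemma~\ref{LemCommutationUaU'a} together with the fact that those $T$-components lie in $T'_{\alpha,\Omega}\subset T^*_\Omega$.
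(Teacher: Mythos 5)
Your proof is correct and follows essentially the same route as the paper: it establishes the subgroup property via $T^*_\Omega\subset T_b$ (Lemma~\ref{LemInclusionTOmega}), handles conjugation by $U_{\alpha,\Omega}$ using exactly Lemmas~\ref{LemCommutationTstarUaOmega}, \ref{LemCommutationUaU'a} and \ref{LemQC2}(\ref{eqQC2'}), and concludes with Lemma~\ref{LemNOmegaNormaliseTstarOmega}, Corollary~\ref{CorNchapeauNormaliseUOmega} and the decomposition $\widehat{P}_\Omega = U_\Omega\widehat{N}_\Omega$. No gaps; your explicit separation of the collinear case $\beta\in-\mathbb{R}_{>0}\alpha$ is just a slightly more detailed write-up of the same argument.
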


\begin{proof}
Let us recall that $T^*_\Omega \subset T_\Omega \subset T_b$ by Lemma~\ref{LemInclusionTOmega}.
Thus $T^*_\Omega$ normalizes $U'_\Omega$ so that $U'_\Omega T^*_\Omega$ is a subgroup of $U_\Omega = U_\Omega T_\Omega$ and hence of $\widehat{P}_\Omega$.

Let $\alpha \in \Phi$ and $u \in U_{\alpha,\Omega}$.
By Lemma~\ref{LemCommutationTstarUaOmega}, we have $[U_{\alpha,\Omega},T^*_\Omega] \subset U'_{\alpha,\Omega}$. Thus $u T^*_\Omega u^{-1} \subset U'_{\Omega} T^*_\Omega$.
By Lemmas~\ref{LemCommutationUaU'a} and~\ref{LemQC2}(\ref{eqQC2'}), we get that $[U_{\alpha,\Omega},U'_{\beta,\Omega}] \subset U'_\Omega T^*_{\Omega}$.
Thus $u U'_{\beta,\Omega} u^{-1} \subset U'_\Omega T^*_{\Omega}$.
Hence $U_{\alpha,\Omega}$ normalizes $U'_\Omega T^*_{\Omega}$ for any $\alpha \in \Phi$.
Therefore $U_\Omega$ normalizes $U'_\Omega T^*_{\Omega}$.

Moreover, according to Lemma~\ref{LemNOmegaNormaliseTstarOmega} and Corollary~\ref{CorNchapeauNormaliseUOmega}, we deduce that $\widehat{N}_\Omega$ and thus $\widehat{P}_\Omega$ normalizes $U'_\Omega T^*_{\Omega}$.
\end{proof}

\begin{Prop}\label{PropNchapeauNormaliseUstarOmega}
The group $\widehat{N}_\Omega$ normalizes $U^*_\Omega$
\end{Prop}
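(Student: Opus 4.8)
The plan is to mimic the strategy used for Proposition~\ref{PropPchapeauDistingueUprimeTstar}, namely to reduce the claim to the generators of $U^*_\Omega$ and to control their conjugates under elements of $\widehat{N}_\Omega$ by combining the action formulas of Section~\ref{SecVRGD} with the local root system structure. Recall that, by definition, $U^*_\Omega$ is generated by $T^*_\Omega$ together with the groups $U^*_{\alpha,\Omega}$ for $\alpha \in \Phi$, where $U^*_{\alpha,\Omega} = U_{\alpha,\Omega}$ if $\alpha \notin \Phi^*_\Omega$ and $U^*_{\alpha,\Omega} = U'_{\alpha,\Omega}$ if $\alpha \in \Phi^*_\Omega$. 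So it suffices to show that for every $n \in \widehat{N}_\Omega$ one has $n T^*_\Omega n^{-1} \subset U^*_\Omega$ and $n U^*_{\alpha,\Omega} n^{-1} \subset U^*_\Omega$ for each $\alpha \in \Phi$.

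First I would dispose of $T^*_\Omega$: this is exactly Lemma~\ref{LemNOmegaNormaliseTstarOmega}, which gives $n T^*_\Omega n^{-1} = T^*_\Omega \subset U^*_\Omega$. Next, for a root $\alpha \in \Phi$ and $n \in \widehat{N}_\Omega$, set $\beta = {^v\!}\nu(n)(\alpha)$. By Lemma~\ref{LemActionNUaOmega} together with the fact that $\nu(n)(\Omega) = \Omega$ (since $n$ fixes $\Omega$ pointwise), we get $n U_{\alpha,\Omega} n^{-1} = U_{\beta,\Omega}$ and $n U'_{\alpha,\Omega} n^{-1} = U'_{\beta,\Omega}$. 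The key remaining point is that $\Phi^*_\Omega$ is stable under ${^v\!}\nu(\widehat{N}_\Omega)$, i.e. $\alpha \in \Phi^*_\Omega \iff \beta \in \Phi^*_\Omega$. This follows directly from the definition of $\Phi^*_\Omega$: if there exists $\lambda_\alpha \in \Rtot$ with $-\alpha(x-o) = \lambda_\alpha$ for all $x \in \Omega$, then using the contravariant relation $\beta(y-o) = {^v\!}\nu(n)(\alpha)(y-o) = \alpha(\nu(n^{-1})(y) - \nu(n^{-1})(o))$ from Lemma~\ref{LemContravariantAction} and the equality $\nu(n^{-1})(\Omega) = \Omega$ and $\nu(n^{-1})(o) = o + (\text{a constant shift})$, one checks that $-\beta(x-o)$ is likewise constant on $\Omega$; the converse is obtained by applying the same reasoning to $n^{-1}$. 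With this stability in hand, $n U^*_{\alpha,\Omega} n^{-1} = U^*_{\beta,\Omega} \subset U^*_\Omega$ in both cases ($\alpha \in \Phi^*_\Omega$ or not), since conjugation by $n$ sends $U_{\alpha,\Omega}$ to $U_{\beta,\Omega}$ and $U'_{\alpha,\Omega}$ to $U'_{\beta,\Omega}$ while respecting the dichotomy defining $U^*$.

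Putting these together, every generator of $U^*_\Omega$ is sent into $U^*_\Omega$ under conjugation by any $n \in \widehat{N}_\Omega$, so $n U^*_\Omega n^{-1} \subset U^*_\Omega$, and applying this to $n^{-1}$ as well yields $n U^*_\Omega n^{-1} = U^*_\Omega$; hence $\widehat{N}_\Omega$ normalizes $U^*_\Omega$. I expect the main (though minor) obstacle to be the careful bookkeeping around the origin $o$ when transporting the constancy condition defining $\Phi^*_\Omega$ through $\nu(n^{-1})$ — one must be attentive to the affine (not merely linear) nature of the $N$-action, but Lemma~\ref{LemContravariantAction} is precisely tailored to handle exactly this, since the constant $\alpha(\nu(n^{-1})(o) - o)$ that appears is independent of $x \in \Omega$ and therefore does not affect whether $-\beta(\cdot)$ is constant on $\Omega$.
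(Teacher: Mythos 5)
Your proof is correct and follows essentially the same route as the paper: establish the stability of $\Phi^*_\Omega$ under ${^v\!}\nu(\widehat{N}_\Omega)$ via Lemma~\ref{LemContravariantAction}, then conjugate the generators $U^*_{\alpha,\Omega}$ using Lemma~\ref{LemActionNUaOmega} together with $\nu(n)(\Omega)=\Omega$. Your explicit treatment of the generator $T^*_\Omega$ via Lemma~\ref{LemNOmegaNormaliseTstarOmega} is a welcome bit of extra care that the paper's own proof leaves implicit.
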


\begin{proof}
We firstly prove that the action of $\widehat{N}_\Omega$ on $\Phi$ via ${^v\!}\nu$ stabilizes $\Phi^*_\Omega$.
Let $\alpha \in \Phi^*_\Omega$ and $n \in \widehat{N}_\Omega$.
By definition, there is a constant $\lambda_\alpha \in \Rtot$ such that $\forall x \in \Omega,\ -\alpha(x) = \lambda_\alpha$.
For any $x \in \mathbb{A}_\Rtot$, we have ${^v\!}\nu(n)(\alpha)(x-o) = \alpha(\nu(n^{-1})(x) - \nu(n^{-1})(o))$ by Lemma~\ref{LemContravariantAction}.
If $x \in \Omega$, then $\nu(n^{-1})(x) =x$ so that $-{^v\!}\nu(n)(\alpha)(x-o) -\alpha(x-\nu(n^{-1})(o)) = -\alpha(x-o) + \alpha(\nu(n^{-1})(o)-o) = \lambda_\alpha + \alpha(\nu(n^{-1})(o)-o)$.
Hence, ${^v\!}\nu(n)(\alpha) \in \Phi^*_\Omega$ and therefore $\widehat{N}_\Omega$ stabilizes $\Phi^*_\Omega$.

Let $n \in \widehat{N}_\Omega$ and $\alpha \in \Phi$.
If $\alpha \in \Phi^*_\Omega$, we have $U^*_{\alpha,\Omega} = U'_{\alpha,\Omega}$ by definition.
Thus, by Lemma~\ref{LemActionNUaOmega}, we have $n U^*_{\alpha,\Omega} n^{-1} = U'_{{^v\!}\nu(n)(\alpha),\Omega} = U^*_{{^v\!}\nu(n)(\alpha),\Omega}$ since ${^v\!}\nu(n)(\alpha) \in \Phi^*_\Omega$ and $\nu(n)(\Omega) = \Omega$.
If $\alpha \not\in \Phi^*_\Omega$, then we have $U^*_{\alpha,\Omega} = U_{\alpha,\Omega}$ by definition.
Thus, by Lemma~\ref{LemActionNUaOmega}, we have $n U^*_{\alpha,\Omega} n^{-1} = U_{{^v\!}\nu(n)(\alpha),\Omega} = U^*_{{^v\!}\nu(n)(\alpha),\Omega}$ since ${^v\!}\nu(n)(\alpha) \not\in \Phi^*_\Omega$ and $\nu(n)(\Omega) = \Omega$.

Since $U^*_\Omega$ is generated by the $U^*_{\alpha,\Omega}$ for $\alpha \in \Phi$, we get that $n$ normalizes $U^*_\Omega$ for any $n \in \widehat{N}_\Omega$.
\end{proof}

\begin{Cor}\label{CorPchapeauNormaliseUstarOmega}
The group $U^*_\Omega$ is a normal subgroup of $\widehat{P}_\Omega$.
\end{Cor}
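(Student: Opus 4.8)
The plan is to combine the two facts we have just established: on the one hand, by Proposition~\ref{PropUstarQC}, the subgroup $U^*_\Omega$ is a normal subgroup of $U_\Omega$; on the other hand, by Proposition~\ref{PropNchapeauNormaliseUstarOmega}, the group $\widehat{N}_\Omega$ normalizes $U^*_\Omega$. Since $\widehat{P}_\Omega = U_\Omega \widehat{N}_\Omega$ by Corollary~\ref{CorPchapeauOmegaQC}, every element $g \in \widehat{P}_\Omega$ can be written as $g = u n$ with $u \in U_\Omega$ and $n \in \widehat{N}_\Omega$. Then $g U^*_\Omega g^{-1} = u (n U^*_\Omega n^{-1}) u^{-1} = u U^*_\Omega u^{-1} = U^*_\Omega$, using first that $n$ normalizes $U^*_\Omega$ and then that $u \in U_\Omega$ normalizes $U^*_\Omega$. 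Since $U^*_\Omega$ is a subgroup of $U_\Omega$, hence of $\widehat{P}_\Omega$, this shows that $U^*_\Omega$ is normal in $\widehat{P}_\Omega$.

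There is no real obstacle here: the statement is a formal consequence of the normalization properties already proved, together with the decomposition $\widehat{P}_\Omega = U_\Omega \widehat{N}_\Omega$. The only point to keep in mind is to invoke the correct decomposition of $\widehat{P}_\Omega$ (Corollary~\ref{CorPchapeauOmegaQC}) rather than trying to argue directly on generators, which would amount to re-proving Propositions~\ref{PropUstarQC} and~\ref{PropNchapeauNormaliseUstarOmega}.
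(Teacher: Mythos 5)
Your proof is correct and follows essentially the same route as the paper: combine Proposition~\ref{PropUstarQC} (normality in $U_\Omega$) with Proposition~\ref{PropNchapeauNormaliseUstarOmega} ($\widehat{N}_\Omega$ normalizes $U^*_\Omega$), using that $\widehat{P}_\Omega$ is built from $U_\Omega$ and $\widehat{N}_\Omega$. The only cosmetic difference is that the paper argues via the generating set $\widehat{N}_\Omega \cup U_\Omega$ rather than the product decomposition $U_\Omega \widehat{N}_\Omega$; both are immediate and equally valid here.
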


\begin{proof}
It is an immediate consequence of Proposition~\ref{PropNchapeauNormaliseUstarOmega} and Proposition~\ref{PropUstarQC} since $\widehat{P}_\Omega$ is generated by $\widehat{N}_\Omega$ and $U_\Omega$.
\end{proof}

\subsection{Parahoric subgroups as intersections over their fixed points}

We get statements analogous to those of \cite[7.1.4]{BruhatTits1} as follows:

\begin{Lem}\label{LemUsector}
Let $\Delta$ be a basis of $\Phi$ with order $\Phi_\Delta^+$ in $\Phi$.
Let $C_{\Rtot,\Delta}^v$ be the vector chamber over $\Delta$.
Then, for any $x \in \mathbb{A}_\Rtot$, we have $U_{x + C_{\Rtot,\Delta}^v} \subset U_{\Delta}^+$.
In particular, $\widetilde{N}_{x+C_{\Rtot,\Delta}^v} = \{1\}$ and $N_{x+C_{\Rtot,\Delta}^v} = T_b$.
\end{Lem}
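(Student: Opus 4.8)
The plan is to show the inclusion $U_{x+C^v_{\Rtot,\Delta}} \subset U_\Delta^+$ by reducing it to the generators of $U_{x+C^v_{\Rtot,\Delta}}$, namely the groups $U_{\alpha, x+C^v_{\Rtot,\Delta}}$ for $\alpha \in \Phi$. Since $U_\Delta^+$ is a subgroup of $G$, it suffices to prove that $U_{\alpha, x+C^v_{\Rtot,\Delta}}$ is trivial whenever $\alpha \notin \Phi_\Delta^+$, i.e. whenever $\alpha \in \Phi_\Delta^-$. Recall that $U_{\alpha,\Omega} = \bigcap_{y \in \Omega} U_{\alpha,-\alpha(y)} = \varphi_\alpha^{-1}\big( \bigcap_{y\in\Omega}[-\alpha(y),\infty]\big)$ by the Fact just preceding Notation~\ref{NotLevialphaOmega}. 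So the key point is to understand the set $\{-\alpha(y),\ y \in x+C^v_{\Rtot,\Delta}\}$ for $\alpha \in \Phi_\Delta^-$.

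First I would fix $\alpha \in \Phi_\Delta^-$, so that $-\alpha \in \Phi_\Delta^+$. Choose $v \in C^v_{\mathbb{Z},\Delta}$ with $(-\alpha)(v) \in \mathbb{Z}_{>0}$ (such $v$ exists since $C^v_{\mathbb{Z},\Delta} = F^v_{\mathbb{Z}}(\Delta,\emptyset)$ is non-empty by Lemma~\ref{LemWellChosen}, and $-\alpha$ being a positive root is a non-negative non-zero integer combination of simple roots, hence strictly positive on $v$; concretely this uses Lemma~\ref{LemIntersectionChambers}). By Lemma~\ref{LemHalfLineInSector}, the half-line $\delta_v = \{\lambda v,\ \lambda \in \Rtot_{>0}\}$ is contained in $C^v_{\Rtot,\Delta}$, so $x + \delta_v \subset x + C^v_{\Rtot,\Delta}$. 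For $y = x + \lambda v$ with $\lambda \in \Rtot_{>0}$ we have $-\alpha(y) = -\alpha(x) + (-\alpha)(v)\lambda$, and since $(-\alpha)(v)$ is a positive integer and $\Rtot$ is a totally ordered abelian group, $(-\alpha)(v)\lambda$ takes arbitrarily large values in $\Rtot$ as $\lambda$ ranges over $\Rtot_{>0}$; more precisely, for any $\mu \in \Rtot$ there is $\lambda \in \Rtot_{>0}$ with $-\alpha(x) + (-\alpha)(v)\lambda > \mu$ (take $\lambda$ large). Hence $\bigcap_{y \in x+C^v_{\Rtot,\Delta}}[-\alpha(y),\infty] \subset \bigcap_{y \in x+\delta_v}[-\alpha(y),\infty]$, and this intersection contains no element of $\Rtot$: it is $\{\infty\}$. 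Therefore $U_{\alpha, x+C^v_{\Rtot,\Delta}} = \varphi_\alpha^{-1}(\{\infty\}) = \{1\}$ by axiom~\ref{axiomV1} (or~\ref{axiomV1bis}).

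Consequently $U_{x+C^v_{\Rtot,\Delta}}$ is generated by the $U_{\alpha, x+C^v_{\Rtot,\Delta}}$ for $\alpha \in \Phi_\Delta^+$, each of which is a subgroup of $U_\alpha \subset U_\Delta^+$, so $U_{x+C^v_{\Rtot,\Delta}} \subset U_\Delta^+$. For the ``in particular'' part: by Lemma~\ref{LemPOmegaQC} (or directly by Example~\ref{ExQC}\ref{ExDecUOmega:3}), $\widetilde{N}_\Omega = U_\Omega \cap N$, and here $\widetilde{N}_{x+C^v_{\Rtot,\Delta}} = U_{x+C^v_{\Rtot,\Delta}} \cap N \subset U_\Delta^+ \cap N$. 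By the spherical Bruhat decomposition \cite[6.1.15(c)]{BruhatTits1} (or axiom~\ref{axiomRGD6} applied with trivial $U^-$-component and $T$-component), $U_\Delta^+ \cap N = \{1\}$, giving $\widetilde{N}_{x+C^v_{\Rtot,\Delta}} = \{1\}$. Then $N_{x+C^v_{\Rtot,\Delta}} = T_b \widetilde{N}_{x+C^v_{\Rtot,\Delta}} = T_b$ by the definition of $N_\Omega$ as the subgroup generated by $T_b$ and $\widetilde{N}_\Omega$. The main obstacle is the first step — making rigorous the claim that $-\alpha$ is unbounded above on the sector $x + C^v_{\Rtot,\Delta}$ in the possibly-exotic ordered group $\Rtot$, which is precisely why one must work with an explicit half-line $\delta_v$ via Lemmas~\ref{LemWellChosen} and~\ref{LemHalfLineInSector} rather than with topological or compactness arguments.
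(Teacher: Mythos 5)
Your proof is correct and follows essentially the same route as the paper: both arguments pick $v \in C^v_{\mathbb{Z},\Delta}$, use Lemma~\ref{LemHalfLineInSector} to place the half-line $x+\delta_v$ inside the sector, and deduce from $\alpha(v)\in\mathbb{Z}_{>0}$ that the relevant intersection of intervals reduces to $\{\infty\}$, killing the negative root groups; the ``in particular'' part is then handled exactly as you do, via $U_\Delta^+\cap N=\{1\}$ and Lemma~\ref{LemPOmegaQC}. No gaps.
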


\begin{proof}
Let $\Omega = x + C_{\Rtot,\Delta}^v$.
Let $v \in C_{\mathbb{Z},\Delta}^v$ so that $x + \delta_v \subset x + C_{\Rtot,\Delta}^v$ according to Lemma~\ref{LemHalfLineInSector}.
Let $\alpha \in \Phi_\Delta^+$.
Thus $\alpha(v) \in \mathbb{Z}_{>0}$ by definition of $v$.

Let $\varepsilon \in \Rtot_{>0}$.
Then $y \in x + \varepsilon v \in x + \delta_v \subset \Omega$.
Moreover, we have $\alpha(y) = \alpha(x) + \alpha(\varepsilon v) = \alpha(x) + \alpha(v) \varepsilon$.
Hence $U_{-\alpha,\Omega} \subset U_{-\alpha,y} = \varphi_{-\alpha}^{-1}([\alpha(x) + \alpha(v) \varepsilon, \infty])$.
Since this is true for any $\varepsilon > 0$ and $\bigcap_{\varepsilon > 0} [\alpha(x) +  \alpha(v) \varepsilon,\infty] = \bigcap_{\varepsilon > 0} [\alpha(x) + \varepsilon,\infty]= \{\infty\}$ (because $\alpha(v) \in \mathbb{Z}_{>0}$ and $\Rtot$ is $\mathbb{Z}$-torsion free), we get that $U_{-\alpha,\Omega} = U_{-\alpha,\infty} = \{1\}$.
Thus, $U_\Omega$ is generated by $U_{\alpha,\Omega}$ for $\alpha \in \Phi_\Delta^+$, and is therefore a subgroup of $U_{\Delta}^+$. We conclude by applying Lemma~\ref{LemPOmegaQC}.
\end{proof}

\begin{Lem}\label{LemUaAlcove}
For any basis $\Delta$ of $\Phi$, any positive root $\alpha \in \Phi_\Delta^+$ and any non-empty subset $\Omega \subset \mathbb{A}_\Rtot$, we have
\[ U_{\alpha,\Omega + C_{\Rtot,\Delta}^v} = U_{\alpha,\Omega + \overline{C}_{\Rtot,\Delta}^v} = U_{\alpha,\Omega}.\]
\end{Lem}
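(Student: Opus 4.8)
We must show that for a positive root $\alpha \in \Phi_\Delta^+$ and a non-empty $\Omega \subset \mathbb{A}_\Rtot$, the three groups $U_{\alpha,\Omega}$, $U_{\alpha,\Omega + C_{\Rtot,\Delta}^v}$ and $U_{\alpha,\Omega + \overline{C}_{\Rtot,\Delta}^v}$ all coincide. Since $\Omega \subset \Omega + \overline{C}_{\Rtot,\Delta}^v$ (as $0 \in \overline{C}_{\Rtot,\Delta}^v$) and $\Omega + C_{\Rtot,\Delta}^v \subset \Omega + \overline{C}_{\Rtot,\Delta}^v$, the ``Fact'' immediately preceding the statement (monotonicity of $U_{\alpha,\cdot}$ with respect to inclusion of subsets) gives
\[
U_{\alpha,\Omega + \overline{C}_{\Rtot,\Delta}^v} \subset U_{\alpha,\Omega + C_{\Rtot,\Delta}^v}
\qquad \text{and} \qquad
U_{\alpha,\Omega + \overline{C}_{\Rtot,\Delta}^v} \subset U_{\alpha,\Omega}.
\]
So it suffices to prove the reverse inclusion $U_{\alpha,\Omega} \subset U_{\alpha,\Omega + \overline{C}_{\Rtot,\Delta}^v}$.

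\textbf{Key step.} Unwinding the definitions, $U_{\alpha,\Omega + \overline{C}_{\Rtot,\Delta}^v} = \bigcap_{x \in \Omega,\, c \in \overline{C}_{\Rtot,\Delta}^v} U_{\alpha,-\alpha(x+c)}$, and $U_{\alpha,-\alpha(x+c)} = \varphi_\alpha^{-1}([-\alpha(x)-\alpha(c),\infty])$. The crucial observation is that for $\alpha \in \Phi_\Delta^+$ and any $c \in \overline{C}_{\Rtot,\Delta}^v$, we have $\alpha(c) \geqslant 0$ by Lemma~\ref{LemIntersectionChambers} (the characterization $\overline{C}^v_\Delta = \{v \mid \forall \beta \in \Phi^+_\Delta,\ \beta(v) \geqslant 0\}$). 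Hence $-\alpha(x) - \alpha(c) \leqslant -\alpha(x)$, which gives $[-\alpha(x),\infty] \subset [-\alpha(x)-\alpha(c),\infty]$ and therefore $U_{\alpha,-\alpha(x)} = U_{\alpha,x} \subset U_{\alpha,-\alpha(x+c)}$. Taking the intersection over $c \in \overline{C}_{\Rtot,\Delta}^v$ (for fixed $x$) yields $U_{\alpha,x} \subset \bigcap_{c} U_{\alpha,-\alpha(x+c)}$, and then intersecting over $x \in \Omega$ gives exactly $U_{\alpha,\Omega} = \bigcap_{x\in\Omega} U_{\alpha,x} \subset U_{\alpha,\Omega+\overline{C}_{\Rtot,\Delta}^v}$. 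Combined with the two inclusions from monotonicity, all three groups are equal.

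\textbf{Expected difficulty.} This statement is essentially a bookkeeping lemma, so there is no serious obstacle; the only point requiring a little care is making sure the direction of the inequality $\alpha(c) \geqslant 0$ is used correctly — since $\alpha$ is a \emph{positive} root with respect to $\Delta$ and $c$ lies in the closed \emph{dominant} cone $\overline{C}_{\Rtot,\Delta}^v$, the value $\alpha(c)$ is non-negative, which pushes the threshold $-\alpha(x+c)$ \emph{down}, hence enlarges the subgroup $U_{\alpha,-\alpha(x+c)}$ relative to $U_{\alpha,x}$. One should also note that the argument works uniformly in $R$ (no least-upper-bound property of $\Rtot$ is needed, since we only manipulate inclusions of the convex sets $[-\alpha(x+c),\infty]$ directly, never their infima), which is consistent with the paper's stated policy of avoiding suprema and infima in $\Rtot$.
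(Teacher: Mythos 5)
Your reduction is incomplete. From monotonicity you obtain $U_{\alpha,\Omega+\overline{C}_{\Rtot,\Delta}^v}\subset U_{\alpha,\Omega+C_{\Rtot,\Delta}^v}$ and $U_{\alpha,\Omega+\overline{C}_{\Rtot,\Delta}^v}\subset U_{\alpha,\Omega}$, and your key step gives $U_{\alpha,\Omega}\subset U_{\alpha,\Omega+\overline{C}_{\Rtot,\Delta}^v}$; this correctly proves $U_{\alpha,\Omega}=U_{\alpha,\Omega+\overline{C}_{\Rtot,\Delta}^v}$, but it never closes the circle on the \emph{open} cone. You still need $U_{\alpha,\Omega+C_{\Rtot,\Delta}^v}\subset U_{\alpha,\Omega}$ (or $\subset U_{\alpha,\Omega+\overline{C}_{\Rtot,\Delta}^v}$), and this does not follow from monotonicity because $\Omega\not\subset\Omega+C_{\Rtot,\Delta}^v$: the open cone does not contain $0$. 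This missing inclusion is in fact the only non-trivial content of the lemma. An element $u\in U_{\alpha,\Omega+C_{\Rtot,\Delta}^v}$ is a priori only known to satisfy $\varphi_\alpha(u)\geqslant-\alpha(y)-\alpha(c)$ for all $y\in\Omega$ and $c\in C_{\Rtot,\Delta}^v$, where each $\alpha(c)$ is strictly positive, so one must pass to the limit $\alpha(c)\to 0$ to conclude $\varphi_\alpha(u)\geqslant-\alpha(y)$.

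The paper carries out exactly this step: it picks $v\in C_{\mathbb{Z},\Delta}^v$, so that $\alpha(v)\in\mathbb{Z}_{>0}$ and the half-line $\delta_v$ of Definition~\ref{DefHalfLine} satisfies $\Omega+\delta_v\subset\Omega+C_{\Rtot,\Delta}^v$ by Lemma~\ref{LemHalfLineInSector}; monotonicity then gives $U_{\alpha,\Omega+C_{\Rtot,\Delta}^v}\subset U_{\alpha,\Omega+\delta_v}$, and for $u\in U_{\alpha,\Omega+\delta_v}$, $y\in\Omega$ and any $\varepsilon\in\Rtot_{>0}$, evaluating at the point $y+\tfrac{1}{\alpha(v)}\varepsilon\,v$ yields $\varphi_\alpha(u)\geqslant-\alpha(y)-\varepsilon$. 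Since this holds for every $\varepsilon>0$, one gets $\varphi_\alpha(u)\geqslant-\alpha(y)$, i.e.\ $u\in U_{\alpha,\Omega}$. Note that this last implication is precisely the kind of limit argument your closing remark claims to avoid: it requires producing, for any $\delta>0$, a positive $\varepsilon<\delta$ (divisibility of $\Rtot$), and it is where the openness of the cone is actually confronted. The remainder of your argument (the monotonicity inclusions and the inclusion $U_{\alpha,\Omega}\subset U_{\alpha,\Omega+\overline{C}_{\Rtot,\Delta}^v}$ via $\alpha(c)\geqslant0$ on the closed cone) is correct and coincides with the corresponding parts of the paper's proof.
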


\begin{proof}
Let $v \in C_{\mathbb{Z},\Delta}^v$ so that $\alpha(v) \in \mathbb{Z}_{>0}$.
Since $ \Omega + \overline{C}_{\Rtot,\Delta}^v \supset \Omega+ C_{\Rtot,\Delta}^v \supset \Omega + \delta_v$, we have $U_{\alpha,\Omega + \overline{C}_{\Rtot,\Delta}^v} \subset U_{\alpha,\Omega +C_{\Rtot,\Delta}^v} \subset U_{\alpha,\Omega + \delta_v}$.
Thus, it suffices to prove that $U_{\alpha,\Omega + \delta_v} \subset U_{\alpha,\Omega} \subset U_{\alpha,\Omega + \overline{C}_{\Rtot,\Delta}^v}$.

Let $u \in U_{\alpha,\Omega + \delta_v}$.
Then for any $y \in \Omega$ and any $\varepsilon \in \alpha(v) \cdot \Rtot_{>0}$,
if $\lambda \in \Rtot$ is such that $\alpha(v) \lambda = \varepsilon$,
then we have $y + \lambda v \in \Omega + \delta_v$ and hence:
\[\varphi_\alpha(u) \geqslant -\alpha(y + \lambda v) = -\alpha(x) - \varepsilon.\]
Since this inequality holds for every $\varepsilon \in \alpha(v) \cdot \Rtot_{>0}$, we get that $\varphi_\alpha(u) \geqslant -\alpha(y)$.
Thus $u \in U_{\alpha,\Omega}$ by definition.
For any $x \in \Omega + \overline{C}_{\Rtot,\Delta}^v$, write $x = y +z $ with $y \in \Omega$ and $z \in \overline{C}_{\Rtot,\Delta}^v$.
Then $-\alpha(z) \leqslant 0$ by definition so that $-\alpha(x) \leqslant -\alpha(y) \leqslant \varphi_\alpha(u)$.
Thus, $U_{\alpha,\Omega} \subset U_{\alpha,\Omega + \overline{C}_{\Rtot,\Delta}^v}$.
\end{proof}

\begin{Prop}\label{PropPintersectionSector}
For any non-empty subset $\Omega \subset \mathbb{A}_\Rtot$, and any basis $\Delta$ of $\Phi$, we have
\begin{align*}
P_\Omega \cap U^+_\Delta &= U_{\Omega + C_{\Rtot,\Delta}^v} = U_{\Omega + \overline{C}_{\Rtot,\Delta}^v}&
P_\Omega \cap U^-_\Delta &= U_{\Omega - \overline{C}_{\Rtot,\Delta}^v}= U_{\Omega - C_{\Rtot,\Delta}^v}
\end{align*}
\end{Prop}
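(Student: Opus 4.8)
The statement asserts two symmetric equalities; it suffices to prove the first one, namely $P_\Omega \cap U^+_\Delta = U_{\Omega + C_{\Rtot,\Delta}^v} = U_{\Omega + \overline{C}_{\Rtot,\Delta}^v}$, since the second follows by applying the first with $\Delta$ replaced by the opposite basis (or equivalently by replacing $C^v_{\Rtot,\Delta}$ by $-C^v_{\Rtot,\Delta}$). The middle equality $U_{\Omega + C_{\Rtot,\Delta}^v} = U_{\Omega + \overline{C}_{\Rtot,\Delta}^v}$ is essentially already in hand: by Lemma~\ref{LemUaAlcove} we have $U_{\alpha,\Omega + C_{\Rtot,\Delta}^v} = U_{\alpha,\Omega + \overline{C}_{\Rtot,\Delta}^v}$ for $\alpha \in \Phi^+_\Delta$, while for $\alpha \in \Phi^-_\Delta$ the argument in Lemma~\ref{LemUsector} (using that along a ray $x+\delta_v$ with $v \in C^v_{\mathbb{Z},\Delta}$ the value $-\alpha(x)$ tends to $+\infty$) shows $U_{-\beta,\Omega + C^v_{\Rtot,\Delta}} = \{1\}$ for $\beta \in \Phi^+_\Delta$, and likewise $U_{-\beta,\Omega+\overline{C}^v_{\Rtot,\Delta}}=\{1\}$ since $\Omega+C^v \subset \Omega+\overline{C}^v$ gives one inclusion and the ray argument the other. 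Hence both $U_{\Omega+C^v_{\Rtot,\Delta}}$ and $U_{\Omega+\overline{C}^v_{\Rtot,\Delta}}$ are generated by the $U_{\alpha,\Omega}$ for $\alpha \in \Phi^+_\Delta$ only, and coincide. This also shows both groups are contained in $U^+_\Delta$.

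For the inclusion $U_{\Omega+\overline{C}^v_{\Rtot,\Delta}} \subseteq P_\Omega \cap U^+_\Delta$: we have just seen this group lies in $U^+_\Delta$; moreover since $\Omega \subset \Omega + \overline{C}^v_{\Rtot,\Delta}$ (as $0 \in \overline{C}^v_{\Rtot,\Delta}$), the Fact on monotonicity of $U_\bullet$ gives $U_{\Omega+\overline{C}^v_{\Rtot,\Delta}} \subset U_\Omega \subset P_\Omega$. So $U_{\Omega+\overline{C}^v_{\Rtot,\Delta}} \subset P_\Omega \cap U^+_\Delta$, and the same for the $C^v$ version.

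The substantive direction is the reverse inclusion $P_\Omega \cap U^+_\Delta \subseteq U_{\Omega + \overline{C}^v_{\Rtot,\Delta}}$. By Lemma~\ref{LemPOmegaQC} we know $P_\Omega \cap U^+_\Delta = U_\Omega \cap U^+_\Delta = U^+_\Omega$, and by Example~\ref{ExQC}\ref{ExDecUOmega:2} the product map $\prod_{\alpha \in \Phi^+_{\mathrm{nd}}} U_{\alpha,\Omega} \to U^+_\Omega$ is a bijection for any chosen ordering. Pick an ordering of $\Phi^+_{\mathrm{nd}}$ and take $g \in U^+_\Omega$, written uniquely as $g = \prod_{\alpha \in \Phi^+_{\mathrm{nd}}} u_\alpha$ with $u_\alpha \in U_{\alpha,\Omega}$. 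For $\alpha \in \Phi^+_\Delta$ non-divisible, Lemma~\ref{LemUaAlcove} gives $U_{\alpha,\Omega} = U_{\alpha,\Omega+\overline{C}^v_{\Rtot,\Delta}}$, so each factor $u_\alpha$ already lies in the corresponding root group attached to $\Omega+\overline{C}^v_{\Rtot,\Delta}$; the divisible roots are handled since $U_{2\alpha,\Omega} \subset U_{\alpha,\Omega}$ and the two families decompose compatibly. Therefore $g \in U_{\Omega+\overline{C}^v_{\Rtot,\Delta}}$, which is exactly what we want. Combining the two inclusions with the middle equality proved above yields $P_\Omega \cap U^+_\Delta = U_{\Omega+C^v_{\Rtot,\Delta}} = U_{\Omega+\overline{C}^v_{\Rtot,\Delta}}$, and the $U^-_\Delta$ statement follows symmetrically.

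\textbf{Main obstacle.} The one point requiring care is the bookkeeping between $\Phi^+$, $\Phi^+_{\mathrm{nd}}$ and divisible/multipliable roots when transferring the bijective product decomposition of Example~\ref{ExQC}\ref{ExDecUOmega:2} from $\Omega$ to $\Omega+\overline{C}^v_{\Rtot,\Delta}$: one must check that $U_{\alpha,\Omega}=U_{\alpha,\Omega+\overline{C}^v_{\Rtot,\Delta}}$ for \emph{all} $\alpha\in\Phi^+_\Delta$ (including $2\alpha$), which is exactly the content of Lemma~\ref{LemUaAlcove} applied to each, and that $\Phi^+_{\mathrm{nd}}$ is the same index set for both parabolic data — true because $\Phi$ itself is unchanged. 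Once this is granted, everything else is a matter of invoking Lemmas~\ref{LemPOmegaQC}, \ref{LemUsector}, \ref{LemUaAlcove} and Example~\ref{ExQC}.
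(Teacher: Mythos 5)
Your proof is correct and follows essentially the same route as the paper: both arguments combine Lemma~\ref{LemUsector} (to kill the negative root groups and land in $U^+_\Delta$), Lemma~\ref{LemUaAlcove} (to identify $U_{\alpha,\Omega+\overline{C}^v_{\Rtot,\Delta}}=U_{\alpha,\Omega+C^v_{\Rtot,\Delta}}=U_{\alpha,\Omega}$ for positive roots), Lemma~\ref{LemPOmegaQC} together with Example~\ref{ExQC} for the reverse inclusion, and the substitution $\Delta\mapsto-\Delta$ for the $U^-_\Delta$ case. The only cosmetic difference is that you spell out the product decomposition of Example~\ref{ExQC}\ref{ExDecUOmega:2} factor by factor where the paper simply observes that both groups have the same generating set.
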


\begin{proof}
On the one hand, we have $\Omega + C_{\Rtot,\Delta}^v \subset \Omega + \overline{C}_{\Rtot,\Delta}^v$ so that $U_{\Omega + \overline{C}_{\Rtot,\Delta}^v} \subset U_{\Omega + C_{\Rtot,\Delta}^v} \subset U_\Delta^+$ by Lemma~\ref{LemUsector}.
Hence, $U_{\Omega + \overline{C}_{\Rtot,\Delta}^v}$ and $U_{\Omega + C_{\Rtot,\Delta}^v}$ are generated by the same subgroups $U_{\alpha,\Omega + \overline{C}_{\Rtot,\Delta}^v} = U_{\alpha,\Omega + C_{\Rtot,\Delta}^v} = U_{\alpha,\Omega}$ for $\alpha \in (\Phi_\Delta^+)_{\mathrm{nd}}$ according to Example~\ref{ExQC} and Lemma~\ref{LemUaAlcove}.
Thus $U_{\Omega + \overline{C}_{\Rtot,\Delta}^v} = U_{\Omega + C_{\Rtot,\Delta}^v} \subset P_\Omega \cap U_\Delta^+$.

On the other hand, we have $P_\Omega \cap U_\Delta^+ = U_\Omega \cap U_\Delta^+$ by Lemma~\ref{LemPOmegaQC} and this group is generated by the $U_{\alpha,\Omega}$ for $\alpha \in (\Phi_\Delta^+)_{\mathrm{nd}}$ according to Example~\ref{ExQC}.
Hence we get the first equalities.

Since $U_\Delta^- = U_{-\Delta}^+$ and $-\overline{C}_{\Rtot,\Delta}^v = \overline{C}_{\Rtot,-\Delta}^v$, we get the second equalities by applying the first ones with $-\Delta$ instead of $\Delta$.
\end{proof}

\begin{Cor}\label{CorPOmegaDecQC}
For any non-empty subset $\Omega \subset \mathbb{A}_\Rtot$ and any basis $\Delta$ of $\Phi$, we have $P_\Omega = \widetilde{N}_\Omega U_{\Omega + \overline{C}_{\Rtot,\Delta}^v}U_{\Omega - \overline{C}_{\Rtot,\Delta}^v} = \widetilde{N}_\Omega U_{\Omega + C_{\Rtot,\Delta}^v}U_{\Omega - C_{\Rtot,\Delta}^v}$.
\end{Cor}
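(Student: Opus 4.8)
The statement to prove is Corollary~\ref{CorPOmegaDecQC}, which asserts for any non-empty $\Omega \subset \mathbb{A}_\Rtot$ and any basis $\Delta$ of $\Phi$ that
\[ P_\Omega = \widetilde{N}_\Omega U_{\Omega + \overline{C}_{\Rtot,\Delta}^v} U_{\Omega - \overline{C}_{\Rtot,\Delta}^v} = \widetilde{N}_\Omega U_{\Omega + C_{\Rtot,\Delta}^v} U_{\Omega - C_{\Rtot,\Delta}^v}. \]
The plan is to simply combine the decomposition of $P_\Omega$ already established in Lemma~\ref{LemPOmegaQC} with the identification of the factors $P_\Omega \cap U_\Delta^\pm$ carried out in Proposition~\ref{PropPintersectionSector}. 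First I would recall from Lemma~\ref{LemPOmegaQC} that
\[ P_\Omega = (P_\Omega \cap U_\Delta^+)(P_\Omega \cap U_\Delta^-)(P_\Omega \cap N), \]
and that $P_\Omega \cap N = N_\Omega = T_b \widetilde{N}_\Omega = \widetilde{N}_\Omega T_b$. Then I would substitute the equalities from Proposition~\ref{PropPintersectionSector}, namely $P_\Omega \cap U_\Delta^+ = U_{\Omega + \overline{C}_{\Rtot,\Delta}^v} = U_{\Omega + C_{\Rtot,\Delta}^v}$ and $P_\Omega \cap U_\Delta^- = U_{\Omega - \overline{C}_{\Rtot,\Delta}^v} = U_{\Omega - C_{\Rtot,\Delta}^v}$, to obtain
\[ P_\Omega = U_{\Omega + \overline{C}_{\Rtot,\Delta}^v} U_{\Omega - \overline{C}_{\Rtot,\Delta}^v} N_\Omega = U_{\Omega + C_{\Rtot,\Delta}^v} U_{\Omega - C_{\Rtot,\Delta}^v} N_\Omega. \]

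It then remains only to move the factor $N_\Omega = \widetilde{N}_\Omega T_b$ to the front and absorb the $T_b$-part. For the first factor, note that $T_b$ normalizes each $U_{\alpha,\Omega'}$ for any non-empty $\Omega'$ by Corollary~\ref{CorActionNUa} (taking $\Omega' = \Omega \pm \overline{C}_{\Rtot,\Delta}^v$ or $\Omega \pm C_{\Rtot,\Delta}^v$), hence $T_b$ normalizes $U_{\Omega + \overline{C}_{\Rtot,\Delta}^v}$ and $U_{\Omega - \overline{C}_{\Rtot,\Delta}^v}$; likewise $\widetilde{N}_\Omega \subset N$ permutes the groups $U_{\alpha,\Omega}$ among themselves (via ${^v\!}\nu$ and Lemma~\ref{LemActionNUaOmega}, using $\nu(n)(\Omega) = \Omega$ for $n \in \widetilde{N}_\Omega \subset \widehat{N}_\Omega$ by Corollary~\ref{CorNtildeFixOmega}), and it also normalizes $U_{\Omega \pm \overline{C}_{\Rtot,\Delta}^v}$ by the same argument since it fixes $\Omega$ pointwise. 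Thus $\widetilde{N}_\Omega U_{\Omega + \overline{C}_{\Rtot,\Delta}^v} U_{\Omega - \overline{C}_{\Rtot,\Delta}^v} T_b = U_{\Omega + \overline{C}_{\Rtot,\Delta}^v} U_{\Omega - \overline{C}_{\Rtot,\Delta}^v} \widetilde{N}_\Omega T_b = U_{\Omega + \overline{C}_{\Rtot,\Delta}^v} U_{\Omega - \overline{C}_{\Rtot,\Delta}^v} N_\Omega = P_\Omega$, where the reorderings are justified by the normalization facts just listed, and similarly with $C_{\Rtot,\Delta}^v$ in place of $\overline{C}_{\Rtot,\Delta}^v$. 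This gives both claimed equalities.

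\textbf{Main obstacle.} There is essentially no obstacle here: the corollary is a formal consequence of the two cited results, and the only care needed is in bookkeeping the various normalization statements (that $T_b$ normalizes the $U_{\Omega'}$ and that $\widetilde{N}_\Omega$ does too) so that the factors $\widetilde{N}_\Omega$ and $T_b$ can be shuffled past the unipotent parts without changing the product set. The slight subtlety worth spelling out is that Proposition~\ref{PropPintersectionSector} is stated for arbitrary non-empty $\Omega$, and here it is applied to the (also non-empty) shifted sets $\Omega \pm \overline{C}_{\Rtot,\Delta}^v$ only implicitly — actually it is applied directly to $\Omega$ itself, so even this is a non-issue. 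I expect the proof to be two or three lines long in the final write-up.
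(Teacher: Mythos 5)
Your proof is correct and is exactly the paper's argument: the paper's proof of this corollary is the single line ``this is a combination of Proposition~\ref{PropPintersectionSector} with Lemma~\ref{LemPOmegaQC}'', which you have simply spelled out together with the routine normalization facts. The only point worth noting is that your bookkeeping actually lands on $P_\Omega = U_{\Omega+\overline{C}_{\Rtot,\Delta}^v}U_{\Omega-\overline{C}_{\Rtot,\Delta}^v}N_\Omega$ with $N_\Omega = T_b\widetilde{N}_\Omega$, so the final formula as displayed (with $\widetilde{N}_\Omega$ alone rather than $N_\Omega$) silently absorbs a $T_b$; this imprecision is already present in the paper's own statement, whose later uses of the corollary (e.g.\ in the proof of Theorem~\ref{ThmBruhatDecomposition}) quote it with $N_\Omega$.
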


\begin{proof}
This is a combination of Proposition~\ref{PropPintersectionSector} with Lemma~\ref{LemPOmegaQC}.
\end{proof}

Thus we get, as in \cite[7.1.8]{BruhatTits1}:

\begin{Cor}\label{CorPchapeauOmegaDecQC}
For any non-empty subset $\Omega \subset \mathbb{A}_\Rtot$ and any basis $\Delta$ of $\Phi$, we have $\widehat{P}_\Omega = \widehat{N}_\Omega U_{\Omega + \overline{C}_{\Rtot,\Delta}^v}U_{\Omega - \overline{C}_{\Rtot,\Delta}^v} = \widehat{N}_\Omega U_{\Omega + C_{\Rtot,\Delta}^v}U_{\Omega - C_{\Rtot,\Delta}^v}$.
Moreover
\begin{align*}
\widehat{P}_\Omega \cap N =& \widehat{N}_\Omega &
\widehat{P}_\Omega \cap U_\Delta^+ &= U_{\Omega + \overline{C}_{\Rtot,\Delta}^v} = U_{\Omega + C_{\Rtot,\Delta}^v}&
\widehat{P}_\Omega \cap U_\Delta^- &= U_{\Omega - \overline{C}_{\Rtot,\Delta}^v} = U_{\Omega - C_{\Rtot,\Delta}^v}
\end{align*}
Moreover, $P_\Omega$ and $U_\Omega$ are normal subgroups of $\widehat{P}_\Omega$.
\end{Cor}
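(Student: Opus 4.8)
\textbf{Proof plan for Corollary~\ref{CorPchapeauOmegaDecQC}.}

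The plan is to reduce almost everything to the two preceding results, Corollary~\ref{CorPOmegaDecQC} (the analogous decomposition for $P_\Omega$ with the ordinary normalizer component $\widetilde{N}_\Omega$) and Proposition~\ref{PropPintersectionSector} (computing $P_\Omega \cap U_\Delta^\pm$), together with the fact that $\widehat{P}_\Omega = U_\Omega \widehat{N}_\Omega = \widehat{N}_\Omega U_\Omega$ established in Corollary~\ref{CorNchapeauNormaliseUOmega}. First I would record that, since $\widehat{N}_\Omega$ normalizes $U_\Omega$, one has $\widehat{P}_\Omega = U_\Omega \widehat{N}_\Omega$; combining this with Corollary~\ref{CorPOmegaDecQC}, which gives $U_\Omega = \widetilde{N}_\Omega U_{\Omega + \overline{C}_{\Rtot,\Delta}^v} U_{\Omega - \overline{C}_{\Rtot,\Delta}^v}$ up to reordering factors (here one uses that $P_\Omega = T_b U_\Omega$ and $T_b \subset \widehat{N}_\Omega$, so the $T_b$ can be absorbed into $\widehat{N}_\Omega$), yields $\widehat{P}_\Omega = \widehat{N}_\Omega U_{\Omega + \overline{C}_{\Rtot,\Delta}^v} U_{\Omega - \overline{C}_{\Rtot,\Delta}^v}$ since $\widetilde{N}_\Omega \subset \widehat{N}_\Omega$ by Corollary~\ref{CorNtildeFixOmega}. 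The equality with the open-chamber version follows from $U_{\Omega + \overline{C}_{\Rtot,\Delta}^v} = U_{\Omega + C_{\Rtot,\Delta}^v}$, which is Proposition~\ref{PropPintersectionSector} (or directly Lemma~\ref{LemUaAlcove} applied rootwise).

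For the three intersection identities, I would argue exactly as in Lemma~\ref{LemPOmegaQC} but with $\widehat{N}_\Omega$ in place of $N_\Omega$. Write $p \in \widehat{P}_\Omega$ as $p = u v n$ with $u \in U_\Omega \cap U_\Delta^+$, $v \in U_\Omega \cap U_\Delta^-$, $n \in \widehat{N}_\Omega$, using the decomposition just obtained together with Example~\ref{ExQC}\ref{ExDecUOmega:3} and Proposition~\ref{PropPintersectionSector} to identify $U_\Omega \cap U_\Delta^\pm$ with $U_{\Omega \pm \overline{C}_{\Rtot,\Delta}^v}$. If $p \in N$, then $p n^{-1} = u v$ forces $p = n$ by the spherical Bruhat decomposition \cite[6.1.15(c)]{BruhatTits1}, giving $\widehat{P}_\Omega \cap N = \widehat{N}_\Omega$. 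If $p \in U_\Delta^+$, then $n = v^{-1}(u^{-1} p) \in U_\Delta^- U_\Delta^+$, so $n = 1$ by \cite[6.1.15(c)]{BruhatTits1} and $u^{-1} p = v \in U_\Delta^+ \cap U_\Delta^- = \{1\}$ by axiom~\ref{axiomRGD6}; hence $p = u \in U_{\Omega + \overline{C}_{\Rtot,\Delta}^v}$, and the reverse inclusion is immediate from Proposition~\ref{PropPintersectionSector}. The case $p \in U_\Delta^-$ is symmetric.

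Finally, for the normality statements: $P_\Omega = T_b U_\Omega$ is generated by $T_b$ and $U_\Omega$, and $\widehat{P}_\Omega$ is generated by $\widehat{N}_\Omega$ and $U_\Omega$; since $T_b \subset \widehat{N}_\Omega$, it suffices to check that both $\widehat{N}_\Omega$ and $U_\Omega$ normalize $P_\Omega$ and $U_\Omega$. That $U_\Omega$ normalizes $P_\Omega$ is clear, and $\widehat{N}_\Omega$ normalizes $U_\Omega$ by Corollary~\ref{CorNchapeauNormaliseUOmega}, hence normalizes $P_\Omega = T_b U_\Omega$ because it also normalizes $T_b$ (as $T_b \trianglelefteq N$, e.g.\ $T_b = \ker\nu$). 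Thus $U_\Omega \trianglelefteq \widehat{P}_\Omega$ and $P_\Omega \trianglelefteq \widehat{P}_\Omega$. I do not expect a serious obstacle here; the only point requiring mild care is the bookkeeping of factor orderings in the products $U_{\Omega + \overline{C}_{\Rtot,\Delta}^v} U_{\Omega - \overline{C}_{\Rtot,\Delta}^v}$, which is handled by Example~\ref{ExQC}\ref{ExDecUOmega:3} guaranteeing the decomposition holds for either order, and the observation that passing between $\pm \Delta$ swaps the roles of the two unipotent factors.
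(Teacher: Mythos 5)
Your proposal is correct and follows essentially the same route as the paper, whose proof is a one-line combination of Proposition~\ref{PropPintersectionSector}, Corollary~\ref{CorNchapeauNormaliseUOmega} (giving $\widehat{P}_\Omega=U_\Omega\widehat{N}_\Omega$) and the spherical Bruhat decomposition, exactly the ingredients you expand on. The only nitpick is that Corollary~\ref{CorPOmegaDecQC} decomposes $P_\Omega$ rather than $U_\Omega$, but your parenthetical absorption of $T_b$ into $\widehat{N}_\Omega$ (or a direct appeal to Example~\ref{ExQC} and Lemma~\ref{LemPOmegaQC}) handles this.
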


\begin{proof}
This is a combination of Proposition~\ref{PropPintersectionSector}, Corollary~\ref{CorNchapeauNormaliseUOmega} and spherical Bruhat decomposition \cite[7.1.15(c)]{BruhatTits1}.
\end{proof}

The previous Proposition~\ref{PropPintersectionSector} and Corollary~\ref{CorPchapeauOmegaDecQC} enable us to state the following proposition (see~\cite[7.1.5]{BruhatTits1}).

\begin{Prop}\label{PropBizarre}
Let $\Omega$ and $\Omega'$ be two subsets of $\mathbb{A}_\Rtot$.
Let $\Delta$ be any basis of $\Phi$.
\begin{enumerate}[label={(\arabic*)}]
\item\label{PropBizarre1} If $\Omega' \subset \Omega + \overline{C}_{\Rtot,\Delta}^v$, then $P_\Omega P_{\Omega'} \subset \widetilde{N}_\Omega U_\Delta^- U_{\Omega' + \overline{C}_{\Rtot,\Delta}^v} \widetilde{N}_{\Omega'}$.
\item\label{PropBizarre2} If $\Omega' \subset \Omega + \overline{C}_{\Rtot,\Delta}^v$ and $\Omega \subset \Omega' - \overline{C}_{\Rtot,\Delta}^v$, then $P_\Omega P_{\Omega'} = \widetilde{N}_\Omega U_{\Omega - \overline{C}_{\Rtot,\Delta}^v} U_{\Omega' + \overline{C}_{\Rtot,\Delta}^v} \widetilde{N}_{\Omega'}$.
\end{enumerate}
\end{Prop}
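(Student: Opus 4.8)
\textbf{Proof strategy for Proposition~\ref{PropBizarre}.}
The plan is to reduce everything to the decompositions of parahoric-type subgroups along a fixed Weyl chamber direction, namely Proposition~\ref{PropPintersectionSector} and Corollary~\ref{CorPOmegaDecQC}, together with the basic fact that if $\Omega' \subset \Omega + \overline{C}_{\Rtot,\Delta}^v$ then $\Omega' + \overline{C}_{\Rtot,\Delta}^v \subset \Omega + \overline{C}_{\Rtot,\Delta}^v$ (because $\overline{C}_{\Rtot,\Delta}^v$ is stable under addition of vectors in $\overline{C}_{\mathbb{Z},\Delta}^v$, equivalently it is a convex cone), which by the monotonicity Fact right after Notation~\ref{notParahoric_subgroup} gives an inclusion $U_{\Omega + \overline{C}_{\Rtot,\Delta}^v} \subset U_{\Omega' + \overline{C}_{\Rtot,\Delta}^v}$ of subgroups of $U_\Delta^+$. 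First I would write, using Corollary~\ref{CorPOmegaDecQC} with the chamber $C_{\Rtot,\Delta}^v$ for $P_\Omega$ and the opposite chamber $-C_{\Rtot,\Delta}^v$ for $P_{\Omega'}$, the decompositions
\[ P_\Omega = \widetilde{N}_\Omega\, U_{\Omega + \overline{C}_{\Rtot,\Delta}^v}\, U_{\Omega - \overline{C}_{\Rtot,\Delta}^v}, \qquad P_{\Omega'} = U_{\Omega' + \overline{C}_{\Rtot,\Delta}^v}\, U_{\Omega' - \overline{C}_{\Rtot,\Delta}^v}\, \widetilde{N}_{\Omega'}. \]
Here for $P_{\Omega'}$ I use the same Corollary but have moved $\widetilde{N}_{\Omega'}$ to the right, which is legitimate since $\widetilde{N}_{\Omega'} \subset N_{\Omega'}$ normalizes $U_{\Omega'}$ and hence both factors $U_{\Omega' \pm \overline{C}_{\Rtot,\Delta}^v} = P_{\Omega'} \cap U_\Delta^\pm$ (by Corollary~\ref{CorNtildeFixOmega} and Proposition~\ref{PropActionNUOmega}, using $\nu(n)(\Omega')=\Omega'$).

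For part~\ref{PropBizarre1}, multiplying these two expressions gives
\[ P_\Omega P_{\Omega'} = \widetilde{N}_\Omega\, U_{\Omega + \overline{C}_{\Rtot,\Delta}^v}\, \bigl( U_{\Omega - \overline{C}_{\Rtot,\Delta}^v}\, U_{\Omega' + \overline{C}_{\Rtot,\Delta}^v} \bigr)\, U_{\Omega' - \overline{C}_{\Rtot,\Delta}^v}\, \widetilde{N}_{\Omega'}. \]
Now $U_{\Omega - \overline{C}_{\Rtot,\Delta}^v} \subset U_\Delta^-$ and $U_{\Omega' + \overline{C}_{\Rtot,\Delta}^v} \subset U_\Delta^+$, so the middle product lies in $U_\Delta^- U_\Delta^+$; and $U_{\Omega + \overline{C}_{\Rtot,\Delta}^v} \subset U_{\Omega' + \overline{C}_{\Rtot,\Delta}^v}$ by the monotonicity observation above, so I can absorb the leftmost $U_{\Omega+\overline{C}^v}$ factor and the $U_{\Omega' + \overline{C}^v}$ factor after it into a single $U_{\Omega' + \overline{C}_{\Rtot,\Delta}^v}$ once I have pushed everything to the correct side. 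Concretely $U_{\Omega+\overline C^v}U_{\Omega-\overline C^v}U_{\Omega'+\overline C^v} \subset U_{\Omega'+\overline C^v}U_{\Omega-\overline C^v}U_{\Omega'+\overline C^v}$, and then $U_{\Omega'+\overline C^v}$ normalizes nothing a priori, but since both $U_{\Omega'+\overline C^v}$-factors lie in the group $U_{\Delta}^+$ and $U_{\Omega-\overline C^v} \subset U_\Delta^-$, the expression $U_{\Omega'+\overline C^v}U_{\Omega-\overline C^v}U_{\Omega'+\overline C^v}$ is contained in $U_\Delta^- U_{\Omega'+\overline C^v}$ once one uses that $U_{\Omega'-\overline C^v}$ can be enlarged to $U_\Delta^-$ and that $U_{\Omega'+\overline C^v}$ is a group; finally $U_{\Omega'-\overline C^v} \subset U_\Delta^-$ gets absorbed into that same $U_\Delta^-$. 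One therefore lands in $\widetilde{N}_\Omega U_\Delta^- U_{\Omega' + \overline{C}_{\Rtot,\Delta}^v} \widetilde{N}_{\Omega'}$, which is the claim. The place where I must be careful is the manipulation of the unipotent factors: the groups $U_\Delta^+$ and $U_\Delta^-$ do not commute, so one cannot freely reorder, and the legitimate moves are (a) enlarging $U_{\Omega \pm \overline{C}^v}$ to $U_\Delta^\pm$, (b) using that $U_\Delta^+$ and $U_{\Omega'+\overline C^v}$ are subgroups, and (c) using the monotone inclusion $U_{\Omega + \overline{C}^v} \subset U_{\Omega' + \overline{C}^v}$; everything has to be arranged so that only these are used.

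For part~\ref{PropBizarre2} I add the symmetric hypothesis $\Omega \subset \Omega' - \overline{C}_{\Rtot,\Delta}^v$, which by the same monotonicity gives $U_{\Omega' - \overline{C}_{\Rtot,\Delta}^v} \subset U_{\Omega - \overline{C}_{\Rtot,\Delta}^v}$. The inclusion $\subseteq$ follows from the computation in part~\ref{PropBizarre1} but keeping track more precisely: in the middle product $U_{\Omega - \overline C^v} U_{\Omega' + \overline C^v}$, both hypotheses let me rewrite the whole product so that the $U_\Delta^-$-part is exactly $U_{\Omega - \overline{C}^v}$ (the largest of the two lower factors) and the $U_\Delta^+$-part is exactly $U_{\Omega' + \overline{C}^v}$ (the largest of the two upper factors), giving $P_\Omega P_{\Omega'} \subseteq \widetilde{N}_\Omega U_{\Omega - \overline{C}_{\Rtot,\Delta}^v} U_{\Omega' + \overline{C}_{\Rtot,\Delta}^v} \widetilde{N}_{\Omega'}$. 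For the reverse inclusion $\supseteq$, I note $U_{\Omega - \overline{C}^v} \subset P_\Omega$ and $\widetilde{N}_\Omega \subset P_\Omega$ by Corollary~\ref{CorNtildeFixOmega}, and symmetrically $U_{\Omega' + \overline{C}^v} \subset P_{\Omega'}$ and $\widetilde{N}_{\Omega'} \subset P_{\Omega'}$, so the right-hand side is visibly contained in $P_\Omega P_{\Omega'}$. The main obstacle, as above, is purely bookkeeping with noncommuting unipotent subgroups; once the two monotonicity inclusions for $U_{\cdot + \overline{C}^v}$ and $U_{\cdot - \overline{C}^v}$ are in hand and one systematically enlarges intermediate factors to $U_\Delta^\pm$ and then reabsorbs, the identities fall out, exactly mirroring \cite[7.1.5]{BruhatTits1}.
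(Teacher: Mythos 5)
Your overall strategy — decompose $P_\Omega$ and $P_{\Omega'}$ along $\Delta$ via Corollary~\ref{CorPOmegaDecQC} and absorb factors using the monotone inclusions $U_{\Omega + \overline{C}_{\Rtot,\Delta}^v} \subset U_{\Omega' + \overline{C}_{\Rtot,\Delta}^v}$ and $U_{\Omega' - \overline{C}_{\Rtot,\Delta}^v} \subset U_{\Omega - \overline{C}_{\Rtot,\Delta}^v}$ — is the right one and is the paper's. But the orderings you chose create a gap your bookkeeping cannot close. In your product
\[ P_\Omega P_{\Omega'} = \widetilde{N}_\Omega\, U_{\Omega + \overline{C}_{\Rtot,\Delta}^v}\, U_{\Omega - \overline{C}_{\Rtot,\Delta}^v}\, U_{\Omega' + \overline{C}_{\Rtot,\Delta}^v}\, U_{\Omega' - \overline{C}_{\Rtot,\Delta}^v}\, \widetilde{N}_{\Omega'} \]
the two factors you want to merge are separated by the negative factor $U_{\Omega - \overline{C}_{\Rtot,\Delta}^v}$, and your key step — that $U_{\Omega'+\overline{C}^v}U_{\Omega-\overline{C}^v}U_{\Omega'+\overline{C}^v} \subset U_\Delta^- U_{\Omega'+\overline{C}^v}$ — amounts to rewriting a product $u^+u^-$ with $u^+\in U_\Delta^+$, $u^-\in U_\Delta^-$ in the form $v^-v^+$. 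This is false in a group with a generating root group datum: by axiom~\ref{axiomRGD4} already in rank one a product $u\,v$ with $u\in U_{\alpha}$, $v\in U_{-\alpha}$ can lie in the cell $U_\alpha M_\alpha U_\alpha$, so that by \cite[6.1.15(c)]{BruhatTits1} it is not in $U_\Delta^- T U_\Delta^+$. The hypotheses only give $\varphi_\alpha(u)+\varphi_{-\alpha}(v)\geqslant 0$ for $u\in U_{\alpha,\Omega'+\overline{C}^v}$, $v\in U_{-\alpha,\Omega-\overline{C}^v}$, which is not enough to invoke Lemma~\ref{LemTripleLeviCommutation} (it requires strict positivity), and this is precisely why $\widetilde{N}_\Omega$ must appear in these decompositions at all. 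The same illegitimate commutation reappears in your argument for part~(2). (A smaller inaccuracy: $\widetilde{N}_{\Omega'}$ does not normalize the individual factors $U_{\Omega'\pm\overline{C}^v}$, since ${^v\!}\nu$ may move $C^v$; the ordering $U^+U^-\widetilde{N}_{\Omega'}$ is nevertheless valid, directly from Example~\ref{ExQC} and Lemma~\ref{LemPOmegaQC}.)

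The repair is to choose the orderings so that no positive factor ever has to cross a negative one. Write $P_\Omega = \widetilde{N}_\Omega\, U_{\Omega - \overline{C}_{\Rtot,\Delta}^v}\, U_{\Omega + \overline{C}_{\Rtot,\Delta}^v}$ (negative part first) and $P_{\Omega'} = U_{\Omega' + \overline{C}_{\Rtot,\Delta}^v}\, U_{\Omega' - \overline{C}_{\Rtot,\Delta}^v}\, \widetilde{N}_{\Omega'}$ (positive part first); both orders are available from Example~\ref{ExQC} by passing to inverses. Then the two adjacent middle factors both lie in $U_\Delta^+$, and $U_{\Omega + \overline{C}^v}\subset U_{\Omega' + \overline{C}^v}\subset P_{\Omega'}$ yields the exact equality $P_\Omega P_{\Omega'} = \widetilde{N}_\Omega\, U_{\Omega - \overline{C}^v}\, P_{\Omega'}$. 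Re-expanding $P_{\Omega'}$ with its negative part first makes the two negative factors adjacent:
\[ P_\Omega P_{\Omega'} = \widetilde{N}_\Omega\, U_{\Omega - \overline{C}_{\Rtot,\Delta}^v}\, U_{\Omega' - \overline{C}_{\Rtot,\Delta}^v}\, U_{\Omega' + \overline{C}_{\Rtot,\Delta}^v}\, \widetilde{N}_{\Omega'}. \]
Part~(1) follows since $U_{\Omega - \overline{C}^v} U_{\Omega' - \overline{C}^v}\subset U_\Delta^-$, and part~(2) follows since the extra hypothesis gives $U_{\Omega - \overline{C}^v} U_{\Omega' - \overline{C}^v} = U_{\Omega - \overline{C}^v}$; every step being an equality, no separate argument for the reverse inclusion in~(2) is needed.
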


%\green{
\begin{proof}
According to Corollary~\ref{CorPOmegaDecQC}, one can write 
\[ P_\Omega P_{\Omega'} = \widetilde{N}_\Omega
U_{\Omega - \overline{C}_{\Rtot,\Delta}^v}
U_{\Omega + \overline{C}_{\Rtot,\Delta}^v}
U_{\Omega' + \overline{C}_{\Rtot,\Delta}^v}
U_{\Omega' - \overline{C}_{\Rtot,\Delta}^v}
\widetilde{N}_{\Omega'}. \]

Assume that $\Omega' \subset \Omega + \overline{C}_{\Rtot,\Delta}^v$.
Thus $\Omega' + \overline{C}_{\Rtot,\Delta}^v \subset \Omega + \overline{C}_{\Rtot,\Delta}^v$ so that $U_{\Omega + \overline{C}_{\Rtot,\Delta}^v} \subset U_{\Omega' + \overline{C}_{\Rtot,\Delta}^v}$.
Hence $P_\Omega P_{\Omega'} = \widetilde{N}_\Omega U_{\Omega - \overline{C}_{\Rtot,\Delta}^v} P_{\Omega'}$.
Now, write $P_{\Omega'} = 
U_{\Omega' - \overline{C}_{\Rtot,\Delta}^v}
U_{\Omega' + \overline{C}_{\Rtot,\Delta}^v}
\widetilde{N}_{\Omega'}$.
Then
\[ P_\Omega P_{\Omega'} =\widetilde{N}_\Omega
U_{\Omega - \overline{C}_{\Rtot,\Delta}^v}
U_{\Omega' - \overline{C}_{\Rtot,\Delta}^v}
U_{\Omega' + \overline{C}_{\Rtot,\Delta}^v}
\widetilde{N}_{\Omega'}\]
and, since $U_{\Omega - \overline{C}_{\Rtot,\Delta}^v} U_{\Omega' - \overline{C}_{\Rtot,\Delta}^v} \subset U_\Delta^-$ according to Proposition~\ref{PropPintersectionSector}, we get~\ref{PropBizarre1}.

Assume, furthermore, that $\Omega \subset \Omega' - \overline{C}_{\Rtot,\Delta}^v$.
Then $\Omega - \overline{C}_{\Rtot,\Delta}^v \subset \Omega' - \overline{C}_{\Rtot,\Delta}^v$ so that $U_{\Omega' - \overline{C}_{\Rtot,\Delta}^v} \subset U_{\Omega - \overline{C}_{\Rtot,\Delta}^v}$.
Thus
\[ P_\Omega P_{\Omega'} =\widetilde{N}_\Omega
U_{\Omega - \overline{C}_{\Rtot,\Delta}^v}
U_{\Omega' + \overline{C}_{\Rtot,\Delta}^v}
\widetilde{N}_{\Omega'}\]
\end{proof}
%}

Thus, we deduce from Proposition~\ref{PropGoodSector}, as in \cite[7.1.6 \& 7.1.7]{BruhatTits1}:

\begin{Cor}\label{corBT7.1.6}
Suppose that $\Rtot = \Rtot_\Q$.
For any non-empty subset $\Omega \subset \mathbb{A}_\Rtot$ and any point $x \in \mathbb{A}_\Rtot$, there is a basis $\Delta$ of $\Phi$ such that 
\[ P_\Omega P_x \subset \widetilde{N}_\Omega U_\Delta^- U_{x + \overline{C}_{\Rtot,\Delta}^v} \widetilde{N}_x.\]
\end{Cor}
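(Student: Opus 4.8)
The statement to prove is Corollary~\ref{corBT7.1.6}: for any non-empty $\Omega \subset \mathbb{A}_\Rtot$ and any point $x \in \mathbb{A}_\Rtot$, there exists a basis $\Delta$ of $\Phi$ such that $P_\Omega P_x \subset \widetilde{N}_\Omega U_\Delta^- U_{x+\overline{C}^v_{\Rtot,\Delta}} \widetilde{N}_x$. The natural approach is to reduce to Proposition~\ref{PropBizarre}\ref{PropBizarre1}, which gives exactly this kind of inclusion under the hypothesis $\Omega' \subset \Omega + \overline{C}^v_{\Rtot,\Delta}$; here we will take $\Omega' = \{x\}$, so that what we need is a basis $\Delta$ with $x \in \Omega + \overline{C}^v_{\Rtot,\Delta}$, i.e. there exists $y \in \Omega$ with $x - y \in \overline{C}^v_{\Rtot,\Delta}$.

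First I would pick an arbitrary base point $y_0 \in \Omega$ (possible since $\Omega$ is non-empty), and consider the vector $v := x - y_0 \in V_\Rtot$. By Corollary~\ref{corWeylVectorial}, we have $V_\Rtot = \bigcup_{w \in W(\Phi)} w\cdot\overline{C}^v_{\Rtot,\Delta_0}$ for any fixed basis $\Delta_0$, so there is some $w \in W(\Phi)$ with $v \in w\cdot\overline{C}^v_{\Rtot,\Delta_0} = \overline{C}^v_{\Rtot,w(\Delta_0)}$ (using the identity $w\cdot\overline{F}^v_\Rtot(\Delta,\Delta_P) = \overline{F}^v_\Rtot(w(\Delta),w(\Delta_P))$ which follows from Corollary~\ref{corWeylVectorial}). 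Setting $\Delta := w(\Delta_0)$, we get $x - y_0 \in \overline{C}^v_{\Rtot,\Delta}$, hence $x \in y_0 + \overline{C}^v_{\Rtot,\Delta} \subset \Omega + \overline{C}^v_{\Rtot,\Delta}$. (One should be a little careful: Corollary~\ref{corWeylVectorial} and Proposition~\ref{PropGoodSector} are stated under the hypothesis $\Rtot = \Rtot_\mathbb{Q}$; since in the application $\Rtot = \RF^S$ is a real vector space this is satisfied, and in any case the needed covering statement $V_\Rtot = \bigcup_w w\cdot\overline{C}^v_\Delta$ is precisely the content of the second assertion of Corollary~\ref{corWeylVectorial}.)

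Then I would apply Proposition~\ref{PropBizarre}\ref{PropBizarre1} with $\Omega' = \{x\}$: since $\Omega' = \{x\} \subset \Omega + \overline{C}^v_{\Rtot,\Delta}$, the proposition gives $P_\Omega P_x = P_\Omega P_{\Omega'} \subset \widetilde{N}_\Omega U_\Delta^- U_{\{x\} + \overline{C}^v_{\Rtot,\Delta}} \widetilde{N}_{\{x\}} = \widetilde{N}_\Omega U_\Delta^- U_{x+\overline{C}^v_{\Rtot,\Delta}} \widetilde{N}_x$, which is exactly the desired inclusion. The main (and essentially only) obstacle is making sure the Weyl-group covering of $V_\Rtot$ by closed vector chambers is available in the generality needed — i.e. citing Corollary~\ref{corWeylVectorial} correctly and checking that its rationality hypothesis is met in the situation at hand — after which the result is an immediate specialization of Proposition~\ref{PropBizarre}. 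Everything else is bookkeeping with the notations $\widetilde{N}_\Omega$, $U_\Delta^-$, $U_{x+\overline{C}^v_{\Rtot,\Delta}}$ already set up in the previous subsections.
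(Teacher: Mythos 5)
Your proof is correct and follows essentially the same route as the paper: the paper picks $y \in \Omega$, invokes Proposition~\ref{PropGoodSector} to find $\Delta$ with $x - y \in \overline{C}^v_{\Rtot,\Delta}$, and then applies Proposition~\ref{PropBizarre}\ref{PropBizarre1} with $\Omega' = \{x\}$; your use of Corollary~\ref{corWeylVectorial} (itself a consequence of Proposition~\ref{PropGoodSector}) for the covering $V_\Rtot = \bigcup_w w\cdot\overline{C}^v_\Delta$ is the same idea. Your remark about the hypothesis $\Rtot = \Rtot_{\mathbb{Q}}$ is a fair observation, as the paper leaves this implicit when citing Proposition~\ref{PropGoodSector} here.
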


%\green{
\begin{proof}
Let $y \in \Omega$ be any point.
By Proposition~\ref{PropGoodSector}, there exists a basis $\Delta$ of $\Phi$ such that $x-y \in \overline{C}_{\Rtot,\Delta}^v$.
Thus $x \in y + \overline{C}_{\Rtot,\Delta}^v \subset \Omega + \overline{C}_{\Rtot,\Delta}^v$.
Thus, by Proposition~\ref{PropBizarre}\ref{PropBizarre1} applied to $\Omega' = \{x\}$, we have the desired inclusion.
\end{proof}
%}

\begin{Cor}\label{corBT7.1.7}
Suppose that $\Rtot = \Rtot_\Q$.
Let $x,y \in \mathbb{A}_\Rtot$. There exists a basis $\Delta$ of $\Phi$ such that
\[P_y P_x = \widetilde{N}_y U_{y - \overline{C}_{\Rtot,\Delta}^v}U_{x +  \overline{C}_{\Rtot,\Delta}^v} \widetilde{N}_x.\]
\end{Cor}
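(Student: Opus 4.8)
\textbf{Proof plan for Corollary~\ref{corBT7.1.7}.} The statement is the symmetric companion of Corollary~\ref{corBT7.1.6}: we want to find one basis $\Delta$ that \emph{simultaneously} witnesses $x \in y + \overline{C}_{\Rtot,\Delta}^v$ and $y \in x - \overline{C}_{\Rtot,\Delta}^v$, so that Proposition~\ref{PropBizarre}\ref{PropBizarre2} applies with $\Omega = \{y\}$ and $\Omega' = \{x\}$ and yields exactly the displayed equality $P_y P_x = \widetilde{N}_y U_{y - \overline{C}_{\Rtot,\Delta}^v} U_{x + \overline{C}_{\Rtot,\Delta}^v} \widetilde{N}_x$. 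The plan is therefore to reduce everything to a single application of Proposition~\ref{PropGoodSector}.

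First I would normalize the situation: since $x-o$, $y-o \in V_\Rtot = V_\Z \otimes R$ and $R$ is a totally ordered real vector space, hence a $\Q$-vector space, we may work inside $V_{\Rtot_\Q}$ with $\Rtot = \Rtot_\Q$, which is the hypothesis needed to invoke Proposition~\ref{PropGoodSector}. Set $v = x - y \in V_\Rtot$. Fix an arbitrary basis $\Delta'$ of $\Phi$. By Proposition~\ref{PropGoodSector} applied to the vector $v$ and the two bases $\Delta'$ and $\Delta'$ (the choice of second basis is irrelevant here — what we use is the first conclusion), there exists $w \in W(\Phi)$ such that $v \in \overline{C}_{w(\Delta')}^v$; equivalently, setting $\Delta = w(\Delta')$, we get $v \in \overline{C}_{\Rtot,\Delta}^v$. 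Then $x - y \in \overline{C}_{\Rtot,\Delta}^v$ gives $x \in y + \overline{C}_{\Rtot,\Delta}^v$, i.e.\ $\{x\} \subset \{y\} + \overline{C}_{\Rtot,\Delta}^v$. For the other containment, observe that $y - x = -v$, and $-\overline{C}_{\Rtot,\Delta}^v = \overline{C}_{\Rtot,-\Delta}^v$; but $v \in \overline{C}_{\Rtot,\Delta}^v$ means $\alpha(v) \geqslant 0$ for all $\alpha \in \Phi^+_\Delta$ by Lemma~\ref{LemIntersectionChambers}, hence $-\alpha(v) \leqslant 0$, i.e.\ $\alpha(-v)\leqslant 0$ for all $\alpha\in\Phi_\Delta^+$, which is precisely $-v \in -\overline{C}_{\Rtot,\Delta}^v$. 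Thus $y - x \in -\overline{C}_{\Rtot,\Delta}^v$, i.e.\ $y \in x - \overline{C}_{\Rtot,\Delta}^v$, so $\{y\} \subset \{x\} - \overline{C}_{\Rtot,\Delta}^v$.

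With both containments $\Omega' = \{x\} \subset \Omega + \overline{C}_{\Rtot,\Delta}^v$ and $\Omega = \{y\} \subset \Omega' - \overline{C}_{\Rtot,\Delta}^v$ in hand for $\Omega = \{y\}$, Proposition~\ref{PropBizarre}\ref{PropBizarre2} gives directly $P_y P_x = \widetilde{N}_y U_{y - \overline{C}_{\Rtot,\Delta}^v} U_{x + \overline{C}_{\Rtot,\Delta}^v} \widetilde{N}_x$, which is the claim. The only mildly delicate point — and the one I would state carefully — is the reduction to $\Rtot = \Rtot_\Q$, since Proposition~\ref{PropGoodSector} is stated under exactly that hypothesis; here it is harmless because the apartment's underlying module $V_\Rtot$ is already an $R$-module with $R$ a $\Q$-vector space, so no genuine obstruction arises. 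In fact a cleaner route avoiding even that remark is to note that $v = x-y$ lies in $V_\Rtot$, and $V_\Rtot = \bigcup_{w \in W(\Phi)} w \cdot \overline{C}_{\Rtot,\Delta'}^v$ by Corollary~\ref{corWeylVectorial}, which immediately produces $w$ with $v \in \overline{C}_{\Rtot,w(\Delta')}^v$; then proceed as above. I expect the "hard part" to be essentially notational — correctly matching the roles of $\Omega,\Omega'$ in Proposition~\ref{PropBizarre}\ref{PropBizarre2} and checking the sign bookkeeping for $-\overline{C}^v_{\Rtot,\Delta} = \overline{C}^v_{\Rtot,-\Delta}$ — rather than conceptual.
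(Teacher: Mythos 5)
Your proposal is correct and follows exactly the paper's own argument: Proposition~\ref{PropGoodSector} produces a basis $\Delta$ with $x-y\in\overline{C}^v_{\Rtot,\Delta}$, whence $x\in y+\overline{C}^v_{\Rtot,\Delta}$ and $y\in x-\overline{C}^v_{\Rtot,\Delta}$, and Proposition~\ref{PropBizarre}\ref{PropBizarre2} applied to $\Omega=\{y\}$, $\Omega'=\{x\}$ gives the equality. Your extra care about the sign bookkeeping and the $\Rtot=\Rtot_\Q$ hypothesis is sound but does not change the route.
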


%\green{
\begin{proof}
By Proposition~\ref{PropGoodSector}, there exists a basis $\Delta$ of $\Phi$ such that $x-y \in \overline{C}_{\Rtot,\Delta}^v$.
Then $x \in y + \overline{C}_{\Rtot,\Delta}^v \subset \Omega + \overline{C}_{\Rtot,\Delta}^v$ and 
 $y \in x - \overline{C}_{\Rtot,\Delta}^v \subset \Omega + \overline{C}_{\Rtot,\Delta}^v$.
Thus, by Proposition~\ref{PropBizarre}\ref{PropBizarre2} applied to $\Omega' = \{x\}$ and $\Omega = \{y\}$, we have the desired equality.
\end{proof}
%}

Finally, we deduce that $\widehat{P}_\Omega$ can be written as an intersection, as in \cite[7.1.11]{BruhatTits1}:

\begin{proposition}\label{propBT7.1.11}
Suppose that $\Rtot = \Rtot_\Q$.
Let $\Omega\subset \A$, $\Omega\neq \emptyset$. Then $\widehat{P}_{\Omega}=\bigcap_{x\in \Omega} \widehat{P}_x$.
\end{proposition}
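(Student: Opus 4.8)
The plan is to prove the two inclusions separately. The inclusion $\widehat P_\Omega\subseteq\bigcap_{x\in\Omega}\widehat P_x$ is immediate: Corollary~\ref{CorPchapeauOmegaQC} gives $\widehat P_\Omega = U_\Omega\widehat N_\Omega$, and for each $x\in\Omega$ one has $U_\Omega\subseteq U_x$ (monotonicity of $\Omega\mapsto U_\Omega$ applied to $\{x\}\subseteq\Omega$) and $\widehat N_\Omega\subseteq\widehat N_x$ (a pointwise stabilizer of $\Omega$ fixes $x$), whence $\widehat P_\Omega\subseteq U_x\widehat N_x = \widehat P_x$.

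For the reverse inclusion I would fix a basis $\Delta$ of $\Phi$ (the case $\Phi=\emptyset$ being trivial), set $H = \bigcap_{x\in\Omega}\widehat P_x$, and first establish the identifications
\[ H\cap U_\Delta^+ = U_\Omega\cap U_\Delta^+,\qquad H\cap U_\Delta^- = U_\Omega\cap U_\Delta^-,\qquad H\cap N = \widehat N_\Omega.\]
The third is the tautology $\bigcap_{x\in\Omega}\widehat N_x = \widehat N_\Omega$ combined with $\widehat P_x\cap N = \widehat N_x$ (Corollary~\ref{CorPchapeauOmegaQC}). For the first, Corollary~\ref{CorPchapeauOmegaQC} gives $\widehat P_x\cap U_\Delta^+ = U_x\cap U_\Delta^+$, so $H\cap U_\Delta^+ = \bigcap_{x\in\Omega}(U_x\cap U_\Delta^+)$; now by Example~\ref{ExQC} the product map $\prod_{\alpha\in(\Phi_\Delta^+)_{\mathrm{nd}}}U_{\alpha,x}\to U_x\cap U_\Delta^+$ is a bijection (and similarly with $\Omega$ in place of $x$), which is the restriction of the bijective product map $\prod_{\alpha\in(\Phi_\Delta^+)_{\mathrm{nd}}}U_\alpha\to U_\Delta^+$ of \cite[6.1.6]{BruhatTits1}. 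Decomposing $h\in\bigcap_x(U_x\cap U_\Delta^+)$ as $\prod_\alpha h_\alpha$ with $h_\alpha\in U_\alpha$, the uniqueness in $U_\Delta^+$ forces $h_\alpha\in U_{\alpha,x}$ for every $x\in\Omega$, hence $h_\alpha\in\bigcap_{x\in\Omega}U_{\alpha,x} = U_{\alpha,\Omega}$ and $h\in U_\Omega\cap U_\Delta^+$. The case of $U_\Delta^-$ is symmetric (apply the above with $-\Delta$).

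Granting these, it suffices to show that every $g\in H$ can be written $g = u^+u^-n$ with $u^+\in H\cap U_\Delta^+$, $u^-\in H\cap U_\Delta^-$, $n\in H\cap N$; for then, by the identifications above and Corollary~\ref{CorPchapeauOmegaQC}, $g\in(U_\Omega\cap U_\Delta^+)(U_\Omega\cap U_\Delta^-)\widehat N_\Omega = \widehat P_\Omega$. To produce such a decomposition I would follow \cite[7.1.11]{BruhatTits1}: for each point, write $g = u^+_xu^-_xn_x$ using the decomposition of $\widehat P_x$ (Corollary~\ref{CorPchapeauOmegaQC}), and — choosing, for pairs of points, a basis adapted via Proposition~\ref{PropGoodSector} so that the base points and the relevant sectors become comparable (Corollaries~\ref{corBT7.1.6} and~\ref{corBT7.1.7}) — ``peel off'' the unipotent factors one point at a time, showing that each lies in the corresponding $\Omega$-group, while keeping the surviving $N$-component under control so that it ultimately lies in $\bigcap_{x\in\Omega}\widehat N_x = \widehat N_\Omega$.

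The step I expect to be the main obstacle is precisely this last one, the control of the $N$-component. The factorization $U_\Delta^+U_\Delta^-N$ inside $G$ is not unique — two decompositions of the same $g$ can differ in their $N$-parts, even in the vectorial part of those $N$-parts — so one cannot naively argue that the factors obtained at one point of $\Omega$ serve at all the others. Overcoming this requires the careful decomposition and commutator bookkeeping of \cite[7.1.5,\,7.1.11]{BruhatTits1}, using $N\cap U_\Delta^+U_\Delta^- = N\cap U_\Delta^-U_\Delta^+ = \{1\}$ (spherical Bruhat decomposition, \cite[6.1.15(c)]{BruhatTits1}) and axiom~\ref{axiomRGD6} to pin down how the admissible factorizations of $g$ can differ, and thereby to select one whose $N$-component fixes all of $\Omega$.
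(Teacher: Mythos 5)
Your easy inclusion and your three identifications $H\cap U_\Delta^{\pm}=U_\Omega\cap U_\Delta^{\pm}$, $H\cap N=\widehat N_\Omega$ are correct and are indeed the right reductions (the uniqueness argument via Example~\ref{ExQC} and \cite[6.1.6]{BruhatTits1} is exactly what one needs there). But the proof has a genuine gap at the step you yourself flag: you never actually produce, for an arbitrary $g\in H$, a decomposition $g=u^+u^-n$ whose factors lie in $H$; you defer this to ``the careful bookkeeping of \cite[7.1.5, 7.1.11]{BruhatTits1}''. That bookkeeping is the entire content of the hard direction, so as written the proposal is a plan rather than a proof. Concretely, the paper first proves $\widehat P_{\Omega_0}\cap\widehat P_{x}=\widehat P_{\Omega_0\cup\{x\}}$ for $\Omega_0$ finite, by choosing a vector chamber $C^v$ with $y-x\in\overline{C}^v$ (Proposition~\ref{PropGoodSector}), invoking Corollary~\ref{corBT7.1.6} to write $\widehat P_{\Omega_0}\widehat P_{x}\subset\widehat N_{\Omega_0}U^-_{C^v}U^+_{C^v}\widehat N_x$, and then using $N\cap U^-U^+=\{1\}$ and axiom~\ref{axiomRGD6} to match the two factorizations. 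Your ``peel off one point at a time'' sketch is compatible with this, but only establishes the statement for \emph{finite} $\Omega$.

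The more serious omission is that your proposal contains no mechanism for passing from finite subsets to an infinite $\Omega$: an inductive point-by-point peeling cannot terminate, and the factors $u^{\pm}_{\Omega'},n_{\Omega'}$ obtained for larger and larger finite $\Omega'\subset\Omega$ could a priori keep changing. The paper closes this with a finiteness/pigeonhole argument that you would need to supply: since $\widehat N_{x_0}/T_b$ embeds into the finite group $W^v$, among the decompositions $g=n_{\Omega'}u_{\Omega'}v_{\Omega'}$ for finite $\Omega'\ni x_0$ there is one coset $n_\ell T_b$ realized cofinally (i.e. for a family $\mathcal F$ of finite subsets whose union is $\Omega$); uniqueness of the $U^+TU^-$ factorization (axiom~\ref{axiomRGD6} and \cite[6.1.15(c)]{BruhatTits1}) then forces $u_{\Omega'},v_{\Omega'}$ and the $T_b$-part to be \emph{constant} over $\mathcal F$, so the common factors lie in $U_{\Omega+C^v}$, $U_{\Omega-C^v}$ and $\widehat N_\Omega$ respectively, and $g\in\widehat P_\Omega$ by Corollary~\ref{CorPchapeauOmegaDecQC}. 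Without this limiting argument the proof does not go through for general $\Omega$.
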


\begin{proof}
The inclusion $\bigcap_{x\in \Omega}\widehat{P}_x\supset \widehat{P}_\Omega$ is a consequence of the fact $\Omega \mapsto \widehat{P}_\Omega$ is decreasing.  

Let us prove that $\bigcap_{x\in \Omega}\widehat{P}_x\subset \widehat{P}_\Omega$.

% À supprimer
%\green{
 Let $\Omega_0  \subset \A_\Rtot$.
 We begin by proving that if $x\in \A_\Rtot$, one has $\widehat{P}_{\Omega_0}\cap \widehat{P}_x=\widehat{P}_{\Omega_0\cup \{x\}}$. Let $y\in \Omega_0$ and $C^v$ be a  vector chamber such that $y-x\in \overline{C}^v$. Then $U_{\Omega_0+C^v}\subset U_{y+C^v}\subset U_{x+C^v}$. Let $g\in \widehat{P}_{\Omega_0}\cap \widehat{P}_{x}$.  Write $g=nvu$, with $n\in \widehat{N}_{\Omega_0}$, $v\in U_{\Omega_0-C^v}$ and $u\in U_{\Omega_0+C^v}$, which is possible by Corollary~\ref{CorPchapeauOmegaDecQC}.
 By Lemmas~\ref{lemInclusion_point_local_face} and  \ref{lemUV=UclV}, that will be proved in the next section, one has $U_{x+C^v}\subset \widehat{P}_x$.
 Therefore $u\in \widehat{P}_x$ and $g^{-1}u\in \widehat{P}_x$. Thus $nv=n'u'v'$, with $n'\in \widehat{N}_x$, $u'\in U_{x+C^v}$ and $v'\in U_{x-C^v}$. 
 Therefore, $n'^{-1}n=u'(v'v^{-1})\in U_{\Delta_{C^v}}^+.U_{\Delta_{C^v}}^-$ (where $\Delta_{C^v}$ denotes the basis of $\Phi$ associated to $C^v$). By \cite[6.1.15 c)]{ BruhatTits1},  $n'=n$ and by axiom~\ref{axiomRGD6}, $v=v'$. 
 Therefore, $n \in \widehat{N}_x\cap \widehat{N}_{\Omega_0}\subset \widehat{N}_{\Omega_0\cup \{x\}}$, $v\in U_{\Omega_0-C^v}\cap U_{x-C^v}$ and $u\in U_{\Omega_0+C^v}\cap \widehat{P}_x$. 
 Moreover, by Corollary~\ref{CorPchapeauOmegaDecQC} and by Proposition~\ref{PropDecompositionQC}, one has:
  \[U_{\Omega_0-C^v}\cap \widehat{P}_x\subset U_{\Omega_0-C^v}\cap \big(U_{\Delta_{C^v}}^-\cap \widehat{P}_x\big)=U_{\Omega_0-C^v}\cap U_{x-C^v}\subset U_{(\Omega_0\cup\{x\})-C^v}\] and symmetrically, $U_{\Omega_0+C^v}\cap \widehat{P}_x \subset U_{(\Omega_0\cup \{x\})+C^v}$.  Therefore $\widehat{P}_{\Omega_0}\cap \widehat{P}_x=\widehat{P}_{\Omega_0\cup \{x\}}$ (by Corollary~\ref{CorPchapeauOmegaDecQC}). \\
 By induction, we deduce that for each finite subset $\Omega'$ of $\Omega$, one has: \[\widehat{P}_{\Omega'}=\bigcap_{x\in \Omega'}\widehat{P}_x.\]
%}

Let $x_0\in \Omega$. Let $\mathrm{Fin}(\Omega,x_0)$ be the set of finite subsets   of $\Omega$ containing $x_0$. Let $g\in \bigcap_{x\in \Omega} \widehat{P}_x$. Then for all $\Omega'\in \mathrm{Fin}(\Omega,x_0)$, one has $g\in \bigcap_{x\in \Omega'} \widehat{P}_x=\widehat{P}_{\Omega'}$ and thus $g\in \bigcap_{\Omega'\in \mathrm{Fin}(\Omega,x_0)} \widehat{P}_{\Omega'}$. 
Let us prove that $\bigcap_{\Omega'\in \mathrm{Fin}(\Omega,x_0)} \widehat{P}_{\Omega'}\subset \widehat{P}_{\Omega}$.

Let $g\in \bigcap_{\Omega'\in \mathrm{Fin}(\Omega,x_0)} \widehat{P}_{\Omega'}$.
Since $\widehat{N}_{x_0}$ is, by definition the stabilizer of $x_0$ and $T_b$ is the kernel of the action $\nu: N \to \operatorname{Aff}(\mathbb{A}_\Rtot)$, the quotient group $\widehat{N}_{x_0} / T_b$ can be identified with a subgroup of $W^v$ which is finite.
We write the cosets $n_1 T_b,\ldots,n_k T_b$, with $k\in \N$ and $n_i\in \widehat{N}_{x_0}$, for all $i\in \llbracket 1,k\rrbracket$.
Choose a vector chamber $C^v$ (for example, $C^v=C^v_f$).
For $\Omega'\in \mathrm{Fin}(\Omega,x_0)$, one can write  $g=n_{\Omega'}u_{\Omega'}v_{\Omega'}$, with $n_{\Omega'}\in \widehat{N}_{\Omega'}$, $u_{\Omega'}\in U_{\Omega'+C^v}$ and $v_{\Omega'}\in U_{\Omega'-C^v}$. Let $J$ be the set of element $j\in \llbracket 1,k\rrbracket$ such that there exists $\Omega'\in \mathrm{Fin}(\Omega,x_0)$ satisfying the following property: \[\forall \tilde{\Omega}\in \mathrm{Fin}(\Omega,x_0)|\ \tilde{\Omega}\supset \Omega', n_{\tilde{\Omega}}\notin n_j T_b.\]

For $j\in J$, we pick $\Omega_j\in \mathrm{Fin}(\Omega,x_0)$ such that for every $\Omega'\in \mathrm{Fin}(\Omega,x_0)$ satisfying $\Omega'\supset \Omega_j$ the element $n_{\Omega'}$ is not in $ n_j T_b$.
Let $\tilde{\Omega}=\bigcup_{j\in J} \Omega_j$ and $\ell\in \llbracket 1,k\rrbracket$ be such that $n_{\tilde{\Omega}}\in n_{\ell} T_b$. Then $\ell\in \llbracket 1,k\rrbracket \setminus J$.

Let \[\mathcal{F}=\{\Omega'\in \mathrm{Fin}(\Omega,x_0)|n_{\Omega'}\in n_\ell T_b\}.\]  Then for every $\Omega'\in \mathrm{Fin}(\Omega,x_0)$, there exists $\Omega''\in \mathcal{F}$ such that $\Omega'\subset \Omega''$ and in particular, $\Omega=\bigcup_{\Omega'\in \mathcal{F}}\Omega'$. 

 Let $\Omega'\in \mathrm{Fin}(\Omega,x_0)$. Let  $\Omega''\in \mathrm{Fin}(\Omega,x_0)$ be such that $\Omega''\in \mathcal{F}$. As $T_b\subset \widehat{N}_{\Omega''}$, we deduce that $n_\ell\in \widehat{N}_{\Omega''}\subset \widehat{N}_{\Omega'}$. Consequently, $n_\ell\in \bigcap_{\Omega'\in  \mathrm{Fin}(\Omega,x_0)} \widehat{N}_{\Omega'}\subset \widehat{N}_{\Omega}$.

For $\Omega'\in\mathcal{F}$, write $n_{\Omega'}=n_\ell h_{\Omega'}$, with $h_{\Omega'}\in H$.  
Let $\Omega_1,\Omega_2\in \mathcal{F}$.  Then $h_{\Omega_1}u_{\Omega_1}v_{\Omega_1}=h_{\Omega_2}u_{\Omega_2}v_{\Omega_2}$ and by axiom~\ref{axiomRGD6}, we deduce that $h_{\Omega_1}=h_{\Omega_2}:=h$, $u_{\Omega_1}=u_{\Omega_2}:=u$ and $v_{\Omega_1}=v_{\Omega_2}:=v$. Then $g=n_\ell huv$.
Moreover $u\in \bigcap_{\Omega'\in \mathcal{F}} U_{\Omega'+C^v}\subset U_{\Omega+C^v}$ and $v\in \bigcap_{\Omega'\in \mathcal{F}} U_{\Omega'-C^v} \subset U_{\Omega-C^v}$. 
Therefore, $g=n_\ell h u v \in \widehat{N}_{\Omega}U_{\Omega+C^v}U_{\Omega-C^v}=\widehat{P}_{\Omega}$ (by Corollary~\ref{CorPOmegaDecQC}) and hence $g\in \widehat{P}_{\Omega}$.
Consequently, \[\bigcap_{x\in \Omega} \widehat{P}_x\subset\bigcap_{\Omega'\in \mathrm{Fin}(\Omega,x_0)} \widehat{P}_{\Omega'}\subset \widehat{P}_\Omega\subset \bigcap_{x\in \Omega} \widehat{P}_x,\]  which proves the proposition.
\end{proof}

\subsection{Subgroups associated to a filter}

If $\EC$ is a set, we denote by $\PCC(\EC)$ the set of subsets of $\EC$. Let 
\begin{align*}
X:\PCC(\A_\Rtot)&\rightarrow \PCC(G) \\ \Omega & \mapsto X_\Omega
\end{align*} be a decreasing map (for example, $X=U,N,\widehat{P},\ldots$). If $\VC$ is a filter on $\A_\Rtot$, we set $X_\VC=\bigcup_{\Omega\in \VC} X_{\Omega}$.
If $X_\Omega$ is a subgroup of $G$ for every subset $\Omega$ of $\A_\Rtot$, then $X_\VC$ is a subgroup of $G$.\index{subgroup!associated to a filter}\index{filter!subgroup associated to}
 
\begin{lemma}\label{lemUV=UclV}
Let $\VC$ be  a filter on $\A_\Rtot$. Then $U_{\VC}=U_{\cl(\VC)}$, $\widetilde{N}_\VC = \widetilde{N}_{\cl(\VC)}$ and $T_\VC = T_{\cl(\VC)}$.
Moreover $N_\VC = N_{\cl(\VC)}$, $P_\VC = P_{\cl(\VC)}$ and $U_{\alpha,\VC} = U_{\alpha,\cl(\VC)}$ for every $\alpha \in \Phi$.
\end{lemma}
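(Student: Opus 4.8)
The statement is a ``pass to the enclosure'' property for the various subgroups attached to a filter $\VC$ on $\A_\Rtot$. The key point is that the enclosure $\cl(\VC)$ is obtained from $\VC$ by a combination of two operations: enlarging sets (passing to supersets, which is harmless since the maps $\Omega \mapsto X_\Omega$ are decreasing, so $X_\VC$ only depends on a cofinal part of $\VC$) and replacing a set $\Omega$ by an intersection of half-apartments $\bigcap_{\alpha\in\Phi} D_{\alpha,\lambda_\alpha} \supset \Omega$ with $\lambda_\alpha \in \Gamma_\alpha \cup \{\infty\}$. So the whole statement reduces, by the definition of $X_\VC$ as a union over $\VC$, to the corresponding statement at the level of a single subset: for any non-empty $\Omega \subset \A_\Rtot$ and any such family $(\lambda_\alpha)$ with $D := \bigcap_{\alpha\in\Phi} D_{\alpha,\lambda_\alpha} \supset \Omega$, one has $X_\Omega = X_D$ for $X \in \{U, \widetilde N, T, N, P\}$ and $U_{\alpha,\Omega} = U_{\alpha,D}$ for each $\alpha$.

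\textbf{Main steps.} First I would establish the root-group statement $U_{\alpha,\Omega} = U_{\alpha, D}$. Since $\Omega \subset D$ and $\Omega\mapsto U_{\alpha,\Omega}$ is decreasing, the inclusion $U_{\alpha,D}\subset U_{\alpha,\Omega}$ is free. For the reverse inclusion, recall $U_{\alpha,\Omega} = \varphi_\alpha^{-1}\!\big(\bigcap_{x\in\Omega}[-\alpha(x),\infty]\big)$ and likewise for $D$. An element $u\in U_{\alpha,\Omega}\setminus\{1\}$ has $\varphi_\alpha(u)\in\Gamma_\alpha$ and $\varphi_\alpha(u)\geqslant -\alpha(x)$ for all $x\in\Omega$; I must check $\varphi_\alpha(u)\geqslant -\alpha(y)$ for all $y\in D$. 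Here is where the choice of $D$ as an intersection of half-apartments $D_{\alpha,\lambda_\alpha}$ with $\lambda_\alpha\in\Gamma_\alpha\cup\{\infty\}$ is used: for $y\in D$ we have $-\alpha(y)\leqslant \lambda_\alpha$, and on the other hand I claim $\varphi_\alpha(u)\geqslant\lambda_\alpha$ whenever $\lambda_\alpha$ is the relevant bound. More precisely, the enclosure in Definition~\ref{defEnclosure} only requires $\bigcap_\alpha D_{\alpha,\lambda_\alpha}\supset\VC$, so among all valid families I may choose $\lambda_\alpha$ as small as possible compatible with $D_{\alpha,\lambda_\alpha}\supset\Omega$; since the admissible values lie in $\Gamma_\alpha\cup\{\infty\}$ and $\varphi_\alpha(u)\in\Gamma_\alpha$ with $\varphi_\alpha(u)\geqslant -\alpha(x)$ for all $x\in\Omega$, one shows $U_{\alpha,\Omega}\subset U_{\alpha,\lambda_\alpha}$ for such a choice, hence $U_{\alpha,\Omega}\subset\bigcap_\alpha U_{\alpha,D_{\alpha,\lambda_\alpha}} = U_{\alpha,D}$. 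Running this over all sets in $\cl(\VC)$ gives $U_{\alpha,\VC} = U_{\alpha,\cl(\VC)}$.

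\textbf{Deducing the group statements.} Once $U_{\alpha,\VC}=U_{\alpha,\cl(\VC)}$ is known for every $\alpha\in\Phi$, the equality $U_\VC = U_{\cl(\VC)}$ is immediate since $U_\Omega$ is by definition generated by the $U_{\alpha,\Omega}$ and taking the union over $\VC$ commutes with ``generated by''. For $\widetilde N_\VC$ and $T_\VC$ I would use Example~\ref{ExQC}: the decomposition $U_\Omega = U_\Omega^+ U_\Omega^- \widetilde N_\Omega$ (for a fixed choice of $\Phi^+$) together with the uniqueness coming from axiom~\ref{axiomRGD6} shows $\widetilde N_\Omega = U_\Omega\cap N$ and $T_\Omega = U_\Omega\cap T$; hence $\widetilde N_\VC = \bigcup_\Omega (U_\Omega\cap N)$, and since the $U_\Omega$ form a directed (decreasing) family of subgroups their union satisfies $(\bigcup U_\Omega)\cap N = \bigcup(U_\Omega\cap N)$, giving $\widetilde N_\VC = U_\VC\cap N = U_{\cl(\VC)}\cap N = \widetilde N_{\cl(\VC)}$, and similarly for $T$. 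Finally $N_\Omega = T_b\widetilde N_\Omega$ and $P_\Omega = T_b U_\Omega$ by Lemma~\ref{LemPOmegaQC}, so taking unions over $\VC$ yields $N_\VC = T_b\widetilde N_\VC = T_b\widetilde N_{\cl(\VC)} = N_{\cl(\VC)}$ and $P_\VC = T_b U_\VC = T_b U_{\cl(\VC)} = P_{\cl(\VC)}$.

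\textbf{Expected main obstacle.} The only genuinely delicate point is the minimality/choice argument for the $\lambda_\alpha$ in the root-group step: one must be careful that the intersection $\bigcap_{x\in\Omega}[-\alpha(x),\infty]$ need not be a half-line of $\Rtot$ (as warned in the text just before Notation~\ref{NotLevialphaOmega}), so one cannot simply speak of its ``infimum''. The correct formulation avoids infima: one shows directly that for any $u\in U_{\alpha,\Omega}\setminus\{1\}$ and any admissible family $(\lambda_\alpha)$ with $\bigcap_\alpha D_{\alpha,\lambda_\alpha}\supset\Omega$, one has $\varphi_\alpha(u)\geqslant\lambda_\alpha$ — which holds because $\lambda_\alpha\in\Gamma_\alpha\cup\{\infty\}$ may be taken to be $\varphi_\alpha(u_0)$ for a suitable $u_0\in U_{\alpha,\Omega}$, or else $\lambda_\alpha=\infty$, and in the first case $D_{\alpha,\lambda_\alpha}\supset\Omega$ forces $\lambda_\alpha\leqslant\varphi_\alpha(u)$ via $\lambda_\alpha\geqslant -\alpha(x)$ for $x\in\Omega$ together with the fact that admissible bounds cannot be smaller than $\sup_{x\in\Omega}(-\alpha(x))$ realized inside $\Gamma_\alpha$. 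Making this last sentence precise without suprema is the crux; everything else is bookkeeping with directed unions of subgroups.
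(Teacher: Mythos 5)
There is a genuine gap, and it sits exactly where you locate "the crux". Your reduction asks for $X_\Omega = X_D$ for \emph{every} admissible $D=\bigcap_{\alpha\in\Phi}D_{\alpha,\lambda_\alpha}\supset\Omega$ with $\lambda_\alpha\in\Gamma_\alpha\cup\{\infty\}$; this is false — taking all $\lambda_\alpha=\infty$ gives $D=\A_\Rtot\in\cl(\Omega)$, and $U_{\A_\Rtot}=\{1\}$ (see the proof of Corollary~\ref{CorStabilizerA}) while $U_\Omega$ is generally nontrivial. What the lemma actually requires is only that every element of $X_\Omega$ lie in $X_D$ for \emph{some} such $D$. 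Your repair — choose $\lambda_\alpha$ "as small as possible", equivalently prove $\varphi_\alpha(u)\geqslant\lambda_\alpha$ for every admissible family — cannot be carried out: a uniform minimal choice would have to be a least element of $\{\mu\in\Gamma_\alpha\cup\{\infty\}\mid \mu\geqslant-\alpha(x)\ \forall x\in\Omega\}$, which need not exist (this is precisely the absence of attained suprema that the paper warns about), and the inequality $\varphi_\alpha(u)\geqslant\lambda_\alpha$ fails for any large admissible $\lambda_\alpha$. Your sentence "$D_{\alpha,\lambda_\alpha}\supset\Omega$ forces $\lambda_\alpha\leqslant\varphi_\alpha(u)$" has the inequality backwards: containment of $\Omega$ only yields the lower bound $\lambda_\alpha\geqslant-\alpha(x)$. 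Indeed, when that set has no least element, $U_{\alpha,\Omega}$ is a strictly increasing union of groups $U_{\alpha,\mu}$ and equals $U_{\alpha,D}$ for no single enclosed $D$, so no uniform choice can succeed.

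The correct device — and the paper's proof — is to let the enclosed set depend on the element. Given $u\in U_\Omega$, write $u=\prod_{i=1}^k u_i$ with $u_i\in U_{\alpha_i,\Omega}$ and set $\lambda_\alpha=\min\{\varphi_{\alpha_i}(u_i)\mid \alpha_i=\alpha\}$: this is a minimum over a \emph{finite} set, hence exists and lies in $\Gamma_\alpha\cup\{\infty\}$ because it is an attained value of $\varphi_\alpha$ (or $\infty$). Each $\lambda_\alpha\geqslant-\alpha(x)$ for $x\in\Omega$, so $\Omega':=\bigcap_\alpha D_{\alpha,\lambda_\alpha}$ belongs to $\cl(\Omega)$ by Definition~\ref{defEnclosure}, and by construction $u_i\in U_{\alpha_i,\Omega'}$ for all $i$, whence $u\in U_{\Omega'}\subset U_{\cl(\Omega)}$; the filter case follows by applying this to any $\Omega\in\VC$ containing $u$ in its group. (For a single root group the same trick with all other $\lambda_\beta=\infty$ handles $U_{\alpha,\VC}=U_{\alpha,\cl(\VC)}$.) Your downstream deductions are sound and agree with the paper: the easy inclusion from $\VC\Subset\cl(\VC)$, then $\widetilde N_\VC=N\cap U_\VC$, $T_\VC=T\cap U_\VC$ and $U_{\alpha,\VC}=U_\alpha\cap U_\VC$ via Example~\ref{ExQC}, and $P_\VC=T_bU_\VC$, $N_\VC=N\cap P_\VC$ via Lemma~\ref{LemPOmegaQC}; the interchange of the union over the filter with "generated by" is legitimate because filters are closed under finite intersections, so the relevant families of subgroups are directed.
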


\begin{proof}
As $\VC\Subset \cl(\VC)$, one has $U_{\cl(\VC)}\subset U_{\VC}$.
Let us prove the reverse inclusion.
We first assume that $\VC=\Omega$ is a set.
Let $u\in U_\Omega$.
Then by definition of $U_\Omega$, there exists $k\in \mathbb{Z}_{\geqslant 0}$, roots $\alpha_1,\ldots,\alpha_k\in \Phi$ and elements $u_i \in U_{\alpha_i,\Omega}$ such that $u=\prod_{i=1}^k u_i$.
For $\alpha\in \Phi$, set $\lambda_\alpha=\min \{\varphi_{\alpha_i}(u_i),\ i\in \llbracket 1,k\rrbracket\mathrm{\ and\ }\alpha_i=\alpha\}$ (one may have $\lambda_\alpha=\infty$).
Set $\Omega'=\bigcap_{\alpha\in \Phi} D_{\alpha,\lambda_\alpha}$.
For $i \in \llbracket 1,k \rrbracket$ and $x \in D_{\alpha_i,\lambda_{\alpha_i}}$, we have $\varphi_{\alpha_i}(u_i) \geqslant \lambda_{\alpha_i} \geqslant - \alpha_i(x)$ so that $u_i \in U_{\alpha_i,D_{\alpha_i,\lambda_{\alpha_i}}} \subset U_{\alpha_i,\Omega'}$.
Hence $u\in U_{\Omega'}$ and $\Omega'\in \cl(\Omega)$.
Thus $u\in U_{\cl(\Omega)}$ and therefore, $U_\Omega=U_{\cl(\Omega)}$.

We no longer assume that $\VC$ is a set. Let $u\in U_\VC$. Then there exists $\Omega\in \VC$ such that $u\in U_{\Omega}$. Therefore, $u\in U_{\cl(\Omega)}$ and thus there exists $\Omega'\in \cl(\Omega)$ such that $u\in U_{\Omega'}$. As $\Omega'\in \cl(\Omega)$, $\Omega'\in \cl(\VC)$ and thus $u\in U_{\cl(\VC)}$. Hence $U_{\VC}\subset U_{\cl(\VC)}$, which proves that $U_\VC=U_{\cl(\VC)}$.

For any filter $\VC$, by definition, we have $\widetilde{N}_\VC = \bigcup_{\Omega \in \VC} \widetilde{N}_\Omega = \bigcup_{\Omega \in \VC} N \cap U_\Omega = N \cap U_\VC$.
Thus $\widetilde{N}_\VC=N\cap U_\VC=N\cap U_{\cl(\VC)}=\widetilde{N}_{\cl(\VC)}$.
By the same way, we have $T_\VC = T_{\cl(\VC)}$ and $U_{\alpha,\VC} = U_{\alpha,\cl(\VC)}$ for any $\alpha \in \Phi$, since $U_{\Omega} \cap U_{\alpha} = U_{\alpha,\Omega}$ according to Example~\ref{ExQC}.

For any filter $\VC$, by definition and Lemma~\ref{LemPOmegaQC}, we have $P_\VC = \bigcup_{\Omega \in \VC} T_b U_\Omega = T_b U_\VC$.
Thus $P_\VC = P_{\cl(\VC)}$ and, by intersecting with $N$, we have $N_\VC = N_{\cl(\VC)}$.
\end{proof}

\begin{Rq}
As $\VC\Subset \cl(\VC)$, one has $\widehat{N}_{\cl(\VC)}\subset \widehat{N}_{\VC}$ but this inclusion is strict in general.
For instance, if $\mathbf{G} = \mathrm{PGL}(2)$ and $\mathbb{K}$ is a local field, if $x$ is the center of an edge of the Bruhat-Tits tree, there is an element of $N \subset G = \mathbf{G}(\mathbb{K})$ permuting the two vertices of the edge and, therefore, fixing $x$.
The enclosure $\cl(\VC)$ of $\{x\}$ is the edge and the pointwise stabilizer of $\cl(\VC)$ cannot exchange the vertices of the edge so that $\widehat{N}_{\cl(\VC)} \neq \widehat{N}_\VC$.
Thus $\widehat{P}_{\cl(\VC)} \neq \widehat{P}_\VC$ in general.
\end{Rq}

\begin{Rq}\label{RkDecQCfiltres}
Using Example~\ref{ExQC}(3) and Lemma~\ref{LemPOmegaQC} we deduce the following decompositions, when $\VC$ is a filter on $\A_{\Rtot}$: \[ U_\VC = \left( U_\VC \cap U_\Delta^+ \right) \left( U_\VC \cap U_\Delta^- \right) \left( U_\VC \cap N \right) \text{ with } U_\VC \cap N = \widetilde{N}_\VC,\]
\[ P_\VC = \left( P_\VC \cap U_\Delta^+ \right) \left( P_\VC \cap U_\Delta^- \right) \left( P_\VC \cap N \right) \text{ with } P_\VC \cap N = N_\VC.\] Indeed, let $u\in U_{\VC}$. Let $\Omega\in \VC$ be such that $u\in U_\Omega$. Then one can write $u=u^-u^+n$, with $u^+\in U_{\Omega}\cap U_{\Delta}^+$, $u^-\in U_\Omega\cap U_{\Delta}^-$ and $n\in U_\Omega\cap N$ and thus $u\in (U_{\VC}\cap U_{\Delta}^+)(U_\VC\cap U_{\Delta}^-)(U_\VC\cap N)$. Thus $U_\VC\subset (U_{\VC}\cap U_{\Delta}^+)(U_\VC\cap U_{\Delta}^-)(U_\VC\cap N)\subset U_{\VC}$ and we get similarly the statement with $P$.
\end{Rq}

Recall that given a vector face $F^v$ and a point $x \in \mathbb{A}_\Rtot$, we denote by $\mathcal{F}_{x,F^v}$ the face of direction $F^v$ at $x$ (see definition in section~\ref{SubsecGerms}). 

\begin{Prop}\label{PropPchapeauGermSectorDecQC}
Let $C^v$ be a vector chamber of $V_\Rtot$ and $x \in \mathbb{A}_\Rtot$.
Denote by $\mathcal{F} = \mathcal{F}_{x,C^v}$ the face of direction $C^v$ at $x$.
Then, for any basis $\Delta$ of $\Phi$, we have:
\begin{itemize}
\item $\widehat{P}_\mathcal{F} \cap U_\alpha = U_{\alpha,\mathcal{F}}$ for any root $\alpha\in \Phi$;
\item $\widehat{P}_\mathcal{F} \cap N = \widehat{N}_\mathcal{F} = T_b$;
\item $\widehat{P}_\mathcal{F} \cap U_\Delta^+ = U_\mathcal{F} \cap U_\Delta^+$;
\item $\widehat{P}_\mathcal{F} \cap U_\Delta^- = U_\mathcal{F} \cap U_\Delta^-$;
\item $\widehat{P}_\mathcal{F} = (\widehat{P}_\mathcal{F} \cap U_\Delta^+) (\widehat{P}_\mathcal{F} \cap U_\Delta^-)(\widehat{P}_\mathcal{F} \cap N)$.
\end{itemize}
\end{Prop}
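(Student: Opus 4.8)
The statement concerns the parahoric subgroup $\widehat{P}_\mathcal{F}$ attached to the germ $\mathcal{F} = \mathcal{F}_{x,C^v}$ of a sector based at $x$ with chamber direction $C^v$. The plan is to first reduce the germ to a filter that is already encoded by enclosed subsets (half-apartments and phantom ones), so that we can apply the results already proved for subsets $\Omega \subset \mathbb{A}_\Rtot$. More precisely, I would begin by describing $\mathcal{F}$ explicitly: by Lemma~\ref{lemEnclosure_point_in_sector}, for any $y \in x + C^v$, one has $\cl(\{x,y\}) \Supset \germ_x(x+C^v) = \mathcal{F}$, so $\mathcal{F}$ contains cofinally the enclosed sets $[x,y]_{\mathrm{cl}}$; more usefully, $U_\mathcal{F} = \bigcup_{\Omega \in \mathcal{F}} U_\Omega$, and by Lemma~\ref{lemUV=UclV} we may replace each $\Omega$ by $\cl(\Omega)$, so $U_\mathcal{F} = U_{\cl(\mathcal{F})}$, and similarly $\widetilde{N}_\mathcal{F} = \widetilde{N}_{\cl(\mathcal{F})}$, $T_\mathcal{F} = T_{\cl(\mathcal{F})}$, $P_\mathcal{F} = P_{\cl(\mathcal{F})}$, and $U_{\alpha,\mathcal{F}} = U_{\alpha,\cl(\mathcal{F})}$.

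Next I would establish the decomposition statements by the same mechanism as in Lemma~\ref{LemPOmegaQC} and Remark~\ref{RkDecQCfiltres}, which already give filtered versions of the quasi-concave decomposition: for any filter $\VC$ one has $U_\VC = (U_\VC \cap U_\Delta^+)(U_\VC \cap U_\Delta^-)(U_\VC \cap N)$ with $U_\VC \cap N = \widetilde{N}_\VC$, and similarly $P_\VC = (P_\VC \cap U_\Delta^+)(P_\VC \cap U_\Delta^-)(P_\VC \cap N)$ with $P_\VC \cap N = N_\VC$. So the first, third, fourth and fifth bullets will follow once I know $\widehat{P}_\mathcal{F} \cap N = \widehat{N}_\mathcal{F} = T_b$: indeed, if the pointwise stabilizer in $N$ of the germ is exactly $T_b = \ker\nu$, then $\widehat{P}_\mathcal{F} = U_\mathcal{F} \widehat{N}_\mathcal{F} = U_\mathcal{F} T_b = P_\mathcal{F}$ (using $T_b \subset N$ normalizes $U_\mathcal{F}$, which holds by Proposition~\ref{PropActionNUOmega} taking unions over $\Omega \in \mathcal{F}$ and the fact that $T_b = \ker\nu$ fixes every point hence every element of $\mathcal{F}$), and then all the assertions reduce to the already-proved decompositions for $P_\mathcal{F}$, intersected with $U_\alpha$ (for the first bullet) via Example~\ref{ExQC}\ref{ExDecUOmega:1} applied along the filter.

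The main obstacle is therefore the identity $\widehat{N}_\mathcal{F} = T_b$, i.e.\ that an element $n \in N$ fixing (some neighborhood intersected with) the sector germ $x + C^v$ must lie in $\ker\nu$. The inclusion $T_b \subset \widehat{N}_\mathcal{F}$ is clear since $T_b$ acts trivially on $\mathbb{A}_\Rtot$. For the converse: if $n \in \widehat{N}_\mathcal{F}$, then $\nu(n)$ is an $R$-aff automorphism of $\mathbb{A}_\Rtot$ fixing an entire subsector $x + \xi + C^v$ for some $\xi \in C^v$ (this is what "fixing the germ $\mathcal{F}$" unwinds to — $n$ fixes at least one element of $\mathcal{F}$, which contains a subsector). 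An affine automorphism fixing a nonempty open subset of $\mathbb{A}_\Rtot$ — and a subsector $x+\xi+C^v$ contains, for each direction $v \in C^v_{\mathbb{Z},\Delta}$, a half-line $\delta_v$ translated, hence affinely spans $\mathbb{A}_\Rtot$ since $C^v$ spans $V_\Rtot$ — must be the identity; here one uses that $V_\Z$ (spanned by the coroots) has the roots separating points, and that the linear part $^v\!\nu(n) \in W^v$ is determined by and must fix the spanning set of directions, forcing $^v\!\nu(n) = \mathrm{id}$, hence $n \in T = \ker{^v\!\nu}$, and then the translation part must vanish since $\nu(n)$ fixes a point. Thus $\nu(n) = \mathrm{id}$, i.e.\ $n \in T_b$. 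Once this is in place, I would assemble the five bullets: $\widehat{P}_\mathcal{F} = U_\mathcal{F} T_b = P_\mathcal{F}$, so $\widehat{P}_\mathcal{F} \cap N = P_\mathcal{F} \cap N = N_\mathcal{F} = T_b \widetilde{N}_\mathcal{F}$; but I must check $\widetilde{N}_\mathcal{F} \subset T_b$, which follows because $\widetilde{N}_\mathcal{F} \subset \widehat{N}_\mathcal{F} = T_b$ (Corollary~\ref{CorNtildeFixOmega} in filtered form together with the obstacle step), giving $\widehat{P}_\mathcal{F} \cap N = T_b = \widehat{N}_\mathcal{F}$; the remaining bullets then are the quasi-concave decompositions of $P_\mathcal{F} = \widehat{P}_\mathcal{F}$ from Remark~\ref{RkDecQCfiltres}, and the first bullet is $\widehat{P}_\mathcal{F} \cap U_\alpha = P_\mathcal{F} \cap U_\alpha = U_{\alpha,\mathcal{F}}$ by Example~\ref{ExQC}\ref{ExDecUOmega:1} (applied with $\Omega$ ranging over $\mathcal{F}$ and taking the union, using that $U_{\alpha,\mathcal{F}} = \bigcup_{\Omega \in \mathcal{F}} U_{\alpha,\Omega}$).
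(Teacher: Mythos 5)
Your overall strategy coincides with the paper's: both proofs reduce every assertion to the per-subset statements of Lemma~\ref{LemPOmegaQC} and Corollary~\ref{CorPchapeauOmegaQC} and then take increasing unions over $\Omega \in \mathcal{F}$ (the detour through $\cl(\mathcal{F})$ via Lemma~\ref{lemUV=UclV} is harmless but not needed), the only genuinely new input being the identity $\widehat{N}_\Omega = T_b$ for $\Omega$ in the germ.

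One step, as you wrote it, is false and needs repairing. You assert that $n \in \widehat{N}_\mathcal{F}$ fixes pointwise ``an entire subsector $x+\xi+C^v$'' on the grounds that the element of $\mathcal{F}$ it fixes ``contains a subsector''. This confuses the germ at $x$ with the germ at infinity: the elements of $\mathcal{F}_{x,C^v}=\germ_x(x+C^v)$ that $n$ is known to fix are (supersets of) sets of the form $B(x,\varepsilon)\cap(x+C^v)$, and such a set contains no translated sector; nothing forces $n$ to fix one of the large elements of the filter such as $x+C^v$ itself. What saves your argument is that $B(x,\varepsilon)\cap(x+C^v)$ still contains $x$ together with $x+\lambda v$ for every $v$ in the integral chamber $C^v_{\mathbb{Z},\Delta_{C^v}}$ and all sufficiently small $\lambda\in\Rtot_{>0}$ (Lemma~\ref{LemHalfLineInSector}), and this set already affinely spans $\mathbb{A}_\Rtot$ because $V_\mathbb{Z}$ is free and $\Rtot$ is $\mathbb{Z}$-torsion-free; hence any element of $\nu(N)\subset V_\Rtot\rtimes W^v$ fixing such an $\Omega$ pointwise is the identity, exactly as you argue for the (nonexistent) fixed subsector. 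This spanning observation is also how the paper gets $\widehat{N}_\Omega=T_b$, phrased there as: every root $\alpha$ is non-constant on $\Omega$, so no $r_{\alpha,\lambda}$ fixes $\Omega$ pointwise. With that correction the rest of your assembly of the five bullets goes through.
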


\begin{proof}
Let $\alpha \in \Phi^+_\Delta$ and $\Omega\in \mathcal{F}$.
According to Corollary~\ref{CorPchapeauOmegaQC} and Example~\ref{ExQC}, we have $\widehat{P}_\Omega \cap U_\alpha = U_\Omega \cap U_\alpha = U_{\alpha,\Omega}$ since $U_\alpha$ is a subgroup of $U^+_\Delta$ by definition.
Thus
\[ \widehat{P}_\mathcal{F} \cap U_\alpha = \left( \bigcup_{\Omega \in \mathcal{F}} \widehat{P}_\Omega \right) \cap U_\alpha = \bigcup_{\Omega \in \mathcal{F}} \widehat{P}_\Omega \cap U_\alpha = \bigcup_{\Omega \in \mathcal{F}} U_{\alpha,\Omega} = U_{\alpha,\mathcal{F}}\]
and we proceed in the same way for $\alpha \in \Phi^-_\Delta$.

According to Corollary~\ref{CorPchapeauOmegaQC}, we have
\[\widehat{P}_\mathcal{F} \cap U^+_\Delta = \bigcup_{\Omega \in \mathcal{F}} \widehat{P}_\Omega \cap U^+_\Delta = \bigcup_{\Omega \in \mathcal{F}} U_{\Omega} \cap U^+_\Delta = U_{\mathcal{F}} \cap U^+_\Delta\]
and the same holds for $U^-_\Delta$.
Let $\Omega \in \mathcal{F}$.
There exists $\varepsilon > 0$ such that $\Omega \supset B(0,\varepsilon) \cap \left( x + C^v\right)$.
Since $\widehat{N}_\Omega$ is the pointwise stabilizer of $\Omega$ and, for any $\alpha \in \Phi$, the affine map $\alpha$ is non-constant on $\Omega$, there exists no $\lambda \in \Rtot$ such that $r_{\alpha,\lambda}$ fixes $\Omega$ pointwise.
Thus $\nu(\widehat{N}_\Omega)$ is trivial and, therefore, $\widehat{N}_\Omega =T_b$.
Hence
\[\widehat{P}_\mathcal{F} \cap N = \bigcup_{\Omega \in \mathcal{F}} \widehat{P}_\Omega \cap N = \bigcup_{\Omega \in \mathcal{F}} \widehat{N}_\Omega = T_b = \widehat{N}_\mathcal{F}.\]
Finally, if $p \in \widehat{P}_\mathcal{F}$, there exists $\Omega \in \mathcal{F}$ such that $p \in \widehat{P}_\Omega$.
By Corollary~\ref{CorPchapeauOmegaQC}, 
\[p \in \left(\widehat{P}_\Omega \cap U^+_\Delta \right) \left(\widehat{P}_\Omega \cap U^-_\Delta \right)\left(\widehat{P}_\Omega \cap N \right) \subset \left(\widehat{P}_\mathcal{F} \cap U^+_\Delta \right) \left(\widehat{P}_\mathcal{F} \cap U^-_\Delta \right)\left(\widehat{P}_\mathcal{F} \cap N \right).\]
Hence $\widehat{P}_\mathcal{F} = \left(\widehat{P}_\mathcal{F} \cap U^+_\Delta \right) \left(\widehat{P}_\mathcal{F} \cap U^-_\Delta \right)\left(\widehat{P}_\mathcal{F} \cap N \right)$.
\end{proof}

\begin{Cor}\label{corP=Phat}
Let $x \in \mathbb{A}_\Rtot$ and $C^v$ be a vector chamber of $V_\Rtot$.
Consider the sector $Q = x+C^v$ and let the filter $\mathcal{F} = \mathcal{F}_{x,C^v}$ be the chamber of direction $C^v$ at $x$.
Then $\widehat{P}_Q = P_Q$ and $\widehat{P}_\mathcal{F} = P_\mathcal{F}$.
\end{Cor}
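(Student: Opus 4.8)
The plan is to deduce the statement directly from the structural results already established about parahoric subgroups, in particular from Proposition~\ref{PropPchapeauGermSectorDecQC} and from the decompositions of $P_\Omega$ and $\widehat{P}_\Omega$ in Corollary~\ref{CorPOmegaDecQC} and Corollary~\ref{CorPchapeauOmegaDecQC}. The only difference between $P_\Omega$ and $\widehat{P}_\Omega$ for a general $\Omega$ lies in the $N$-component: $P_\Omega \cap N = N_\Omega = T_b \widetilde{N}_\Omega$ whereas $\widehat{P}_\Omega \cap N = \widehat{N}_\Omega$, the full pointwise stabilizer. So the heart of the matter is to show that for the sector $Q = x+C^v$ and for its germ $\mathcal{F} = \mathcal{F}_{x,C^v}$, these $N$-components actually coincide, i.e. $\widehat{N}_Q = N_Q$ and $\widehat{N}_\mathcal{F} = N_\mathcal{F}$.

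First I would treat the germ $\mathcal{F}$. By Proposition~\ref{PropPchapeauGermSectorDecQC}, we already have $\widehat{P}_\mathcal{F} \cap N = \widehat{N}_\mathcal{F} = T_b$. On the other hand, $N_\mathcal{F} = \bigcup_{\Omega \in \mathcal{F}} N_\Omega$, and each $N_\Omega = T_b \widetilde{N}_\Omega$ contains $T_b$; moreover by Lemma~\ref{LemUsector} applied to sufficiently small truncations of $Q$ — or more directly because for any $\Omega \in \mathcal{F}$ no root $\alpha$ is constant on $\Omega$, so $\widetilde{N}_\Omega \subset \widehat{N}_\Omega = T_b$ as noted in the proof of Proposition~\ref{PropPchapeauGermSectorDecQC} — we get $N_\Omega = T_b$ for every $\Omega \in \mathcal{F}$. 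Hence $N_\mathcal{F} = T_b = \widehat{N}_\mathcal{F}$. Since by Lemma~\ref{lemUV=UclV}-type reasoning (or directly from the definitions of the filter-indexed groups) $P_\mathcal{F}$ and $\widehat{P}_\mathcal{F}$ are generated respectively by $T_b \cup U_\mathcal{F}$ and $\widehat{N}_\mathcal{F} \cup U_\mathcal{F}$, and these generating sets now agree, we conclude $\widehat{P}_\mathcal{F} = P_\mathcal{F}$.

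Next I would treat the sector $Q$ itself. Here the key point is that $\widehat{N}_Q \subseteq T_b$: an element $n \in N$ fixing $Q$ pointwise fixes in particular the germ $\mathcal{F}$, hence lies in $\widehat{N}_\mathcal{F} = T_b$. Conversely $T_b \subseteq \widehat{N}_Q$ since $T_b = \ker\nu$ acts trivially on all of $\mathbb{A}_\Rtot$. So $\widehat{N}_Q = T_b$. On the $P$-side, $N_Q = \bigcup_{\Omega \supseteq Q} N_\Omega$, but again each such $N_\Omega$ equals $T_b$ by the same argument (no root is constant on $Q$, so $\widetilde{N}_Q = \{1\}$ by Lemma~\ref{LemUsector}, which gives $N_Q = T_b$). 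Thus $\widehat{N}_Q = N_Q = T_b$, and since both $\widehat{P}_Q$ and $P_Q$ are generated by this common $N$-part together with $U_Q$, we obtain $\widehat{P}_Q = P_Q$.

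The main obstacle — really the only subtle point — is making the identification of the $N$-components fully rigorous: one must be careful that "$N_Q$" and "$N_\mathcal{F}$" are the filter/subset-indexed unions $\bigcup N_\Omega$ and that the fact that no affine root $\alpha \in \Phi$ is constant on $Q$ (equivalently on any $\Omega$ in the germ filter) genuinely forces $\widetilde{N}_\Omega$ to be trivial via Lemma~\ref{LemUsector} and Lemma~\ref{LemPOmegaQC}. Once that is in hand, everything else is a formal consequence of Corollary~\ref{CorPchapeauOmegaDecQC} and the fact that, with equal $N$-components, the generating sets of $P$ and $\widehat{P}$ coincide.
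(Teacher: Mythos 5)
Your proposal is correct and is exactly the intended derivation: the corollary is stated in the paper without proof as an immediate consequence of Proposition~\ref{PropPchapeauGermSectorDecQC} (which gives $\widehat{N}_{\mathcal{F}}=T_b$), Lemma~\ref{LemUsector} (which gives $\widetilde{N}_{Q}=\{1\}$ and $N_Q=T_b$), and the decompositions $P_\Omega=T_bU_\Omega$, $\widehat{P}_\Omega=U_\Omega\widehat{N}_\Omega$ of Lemma~\ref{LemPOmegaQC} and Corollary~\ref{CorPchapeauOmegaQC}. Your reduction to comparing the $N$-components, and your verification that both equal $T_b$ for the sector and for its germ, is precisely the argument the authors have in mind.
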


The following three statements are intended to prove analogous results to \cite[7.2.6]{BruhatTits1}.

\begin{lemma}\label{lemInclusion_point_local_face}
Suppose that $\Rtot = \Rtot_\Q$.
Let $x\in \A_\Rtot$ and $F^v\subset V_\Rtot$ be a vector face.
Then $\{x\}\Subset \cl\left(\mathcal{F}_{x,F^v}\right)$.

More generally, write $F^v=F^v_\Rtot(\Delta,\Delta_P)$, where $\Delta$ is a basis of $\Phi$ and $\Delta_P\subset \Delta$. Let $\tilde{\Delta}_P$ be a set such that  $\Delta\supset \tilde{\Delta}_P\supset \Delta_P$. Set $\tilde{F}^v=F^v(\Delta,\tilde{\Delta}_P)$. Then $\cl\left(\mathcal{F}_{x,F^v}\right)=\cl(\germ_x(x+F^v))\Supset \cl(\mathcal{F}_{x,\tilde{F}^v})$. 
\end{lemma}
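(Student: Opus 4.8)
The statement is about the enclosure of a germ of a sector-face. Recall that $\cl(\VC)$ is by definition the filter of subsets $X$ containing some $\bigcap_{\alpha\in\Phi}D_{\alpha,\lambda_\alpha}$ (with $\lambda_\alpha\in\Gamma_\alpha\cup\{\infty\}$) which itself contains $\VC$. So to prove $\cl(\mathcal{F}_{x,F^v})\Supset\mathcal{F}_{x,\tilde F^v}$ — and in particular $\cl(\mathcal{F}_{x,F^v})\Supset\{x\}$ when $\tilde\Delta_P=\Delta$ — I must show that \emph{every} half-apartment intersection $\bigcap_\alpha D_{\alpha,\lambda_\alpha}$ that contains an element of $\mathcal{F}_{x,F^v}$ (equivalently, contains a neighbourhood of $x$ intersected with $x+F^v$) already contains an element of $\mathcal{F}_{x,\tilde F^v}$, i.e.\ contains a neighbourhood of $x$ intersected with $x+\tilde F^v$. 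The key point is a sign analysis: for such a $D_{\alpha,\lambda_\alpha}$ the constraint $\theta_{\alpha,\lambda_\alpha}\geqslant 0$, which holds on a neighbourhood of $x$ inside $x+F^v$, forces either $\alpha(x)>-\lambda_\alpha$ (strict, so the constraint is ``free'' near $x$ in every direction) or $\alpha(x)=-\lambda_\alpha$ together with $\alpha(F^v)\subset R_{\geqslant 0}$; and in the latter case, since $F^v$ and $\tilde F^v$ lie in the closure of the same chamber $C^v_\Delta$ (both are faces determined by $\Delta$), the sign of $\alpha$ on $\tilde F^v$ is also $\geqslant 0$, so the constraint is still satisfied on a neighbourhood of $x$ inside $x+\tilde F^v$.

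\textbf{Key steps.} First I would set up notation: let $Y\in\cl(\mathcal{F}_{x,F^v})$, so there exist $(\lambda_\alpha)_{\alpha\in\Phi}\in\prod_\alpha(\Gamma_\alpha\cup\{\infty\})$ with $Y\supset D:=\bigcap_{\alpha\in\Phi}D_{\alpha,\lambda_\alpha}\Supset\mathcal{F}_{x,F^v}$. The containment $D\Supset\mathcal{F}_{x,F^v}$ means $D$ contains $\Omega\cap(x+F^v)$ for some neighbourhood $\Omega$ of $x$ in $\A_\Rtot$. Second, I fix $\alpha\in\Phi$ with $\lambda_\alpha\neq\infty$ and analyse the value $\theta_{\alpha,\lambda_\alpha}(x)=\alpha(x)+\lambda_\alpha$. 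I claim it is $\geqslant 0$: pick points of $x+F^v$ arbitrarily close to $x$ (using Lemma~\ref{LemHalfLineInSector}, $x+\delta_v\subset x+F^v$ for $v\in F^v_\mathbb{Z}(\Delta,\Delta_P)$, which is non-empty by Lemma~\ref{LemWellChosen}, so $x+\varepsilon v\in\Omega\cap(x+F^v)\subset D$ for small $\varepsilon>0$), evaluate $\theta_{\alpha,\lambda_\alpha}(x+\varepsilon v)=\theta_{\alpha,\lambda_\alpha}(x)+\alpha(v)\varepsilon\geqslant 0$, and let $\varepsilon\to 0$ in the ordered-group sense to conclude $\theta_{\alpha,\lambda_\alpha}(x)\geqslant 0$. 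Third, I split into the two cases $\theta_{\alpha,\lambda_\alpha}(x)>0$ and $\theta_{\alpha,\lambda_\alpha}(x)=0$. In the strict case, by continuity of $\alpha$ (the linear maps $\alpha$ are continuous for the order topology, as recorded in Subsubsection~\ref{subsubTopology_totally_ordered_ring}) there is a neighbourhood $\Omega_\alpha$ of $x$ on which $\theta_{\alpha,\lambda_\alpha}>0$, so $x+\tilde F^v$ meets $D_{\alpha,\lambda_\alpha}$ near $x$ with no constraint coming from $\alpha$. In the equality case, $\alpha(x)=-\lambda_\alpha$; from $\theta_{\alpha,\lambda_\alpha}\geqslant 0$ on $\Omega\cap(x+F^v)$ I get $\alpha(F^v)\subset\Rtot_{\geqslant 0}$, hence by Lemma~\ref{lemSign_of_a_vectorial_face} either $\alpha(F^v)=\{0\}$ or $\alpha(F^v)\subset\Rtot_{>0}$; writing $\alpha$ in terms of $\Delta$ (Lemma~\ref{LemIntersectionChambers} / the argument in Lemma~\ref{lemSign_of_a_vectorial_face}) and using $\Delta_P\subset\tilde\Delta_P$, I get that $\alpha(\tilde F^v)\subset\Rtot_{\geqslant 0}$ as well, so $\theta_{\alpha,\lambda_\alpha}(x+\xi)=\alpha(\xi)\geqslant 0$ for $\xi\in\tilde F^v$. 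Fourth, I assemble: intersecting the finitely many neighbourhoods $\Omega_\alpha$ (one for each $\alpha$ in the strict case; only finitely many $\alpha$ since $\Phi$ is finite) with $\Omega$, I obtain a neighbourhood $\Omega'$ of $x$ with $\Omega'\cap(x+\tilde F^v)\subset D\subset Y$. Hence $Y\in\mathcal{F}_{x,\tilde F^v}$, which is exactly $\cl(\mathcal{F}_{x,F^v})\Supset\mathcal{F}_{x,\tilde F^v}$. Taking $\tilde\Delta_P=\Delta$ (so $\tilde F^v=\{0\}$ and $\mathcal{F}_{x,\tilde F^v}$ is the principal filter of $\{x\}$) gives the first assertion $\{x\}\Subset\cl(\mathcal{F}_{x,F^v})$.

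\textbf{Main obstacle.} The delicate point is the sign comparison between $\alpha(F^v)$ and $\alpha(\tilde F^v)$ in the equality case, and the clean handling of the limit $\varepsilon\to 0$ over the ordered group $\Rtot$ (which need not have the least-upper-bound property, so I must argue directly: if $c+a\varepsilon\geqslant 0$ for all $\varepsilon>0$ with $a\in\mathbb{Z}_{>0}$, then $c\geqslant 0$, because $c<0$ would give $-c>0$ and then taking $\varepsilon$ with $a\varepsilon<-c$ — e.g.\ $\varepsilon=\frac{-c}{2a}$, using that $\Rtot=\Rtot_\mathbb{Q}$ — yields a contradiction). I must also be careful that $v$ can be chosen with $\alpha(v)\in\mathbb{Z}$ of a definite sign for the relevant roots: taking $v\in F^v_\mathbb{Z}(\Delta,\Delta_P)$ gives $\alpha(v)=0$ for $\alpha\in\langle\Delta_P\rangle$ and $\alpha(v)>0$ for $\alpha\in\Phi^+_\Delta\setminus\langle\Delta_P\rangle$, which is precisely the dichotomy matching Lemma~\ref{lemSign_of_a_vectorial_face}, so the case analysis closes. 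The rest is routine bookkeeping with neighbourhoods and the definition of the enclosure.
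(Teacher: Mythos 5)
Your proposal is correct and follows essentially the same route as the paper: the half-line $x+\delta_v$ with $v\in F^v_{\mathbb{Z}}(\Delta,\Delta_P)$ and the "let $\varepsilon\to 0$ by contradiction" argument over $\Rtot=\Rtot_\Q$ to get $\alpha(x)+\lambda_\alpha\geqslant 0$, then a sign analysis over the basis $\Delta$ to pass from $F^v$ to $\tilde F^v$ (the paper packages your strict-inequality roots into the open set $\XC=\bigcap_{\alpha\in\Phi_-\setminus\Psi}\mathring{D}_{\alpha,\lambda_\alpha}$ rather than invoking continuity, but this is the same mechanism). The only cosmetic difference is that you deduce the first assertion from the general one by taking $\tilde\Delta_P=\Delta$, whereas the paper proves it first and reuses it.
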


\begin{proof}
The same proof as in the proof of Lemma~\ref{lemEnclosure_point_in_sector}, with $F^v$ instead of $C^v$, shows  that  $\cl(\FC_{x,F^v})=\cl(\germ_x(x+F^v))$.

Let $\Omega \in \cl\left( \mathcal{F}_{x,F^v} \right)$. By definition, there exists $\varepsilon > 0$ and values $\lambda_\alpha \in \Gamma_\alpha \cup \{\infty \}$ such that $\Omega \supset \bigcap_{\alpha \in \Phi} D_{\alpha, \lambda_\alpha} \supset B(x,\varepsilon) \cap (x + F^v)$.

Let $v\in F^v_\Z(\Delta,\Delta_P)$. Let $\lambda\in \Rtot_{>0}$. Then $\lambda v\in \delta_v$. We write $\|\cdot\|$ instead of $\|\cdot\|_\Z$. Then $\|\lambda v\|=\lambda\|v\|<\epsilon$ for all $\lambda\in ]0,\frac{\epsilon}{\|v\|}[$ (with $\frac{\epsilon}{\|v\|}=\infty$ if $\|v\|=0$). Thus if $\epsilon'=\frac{\epsilon}{\|v\|}$, we have $x+\lambda v\in \Omega$ for every $\lambda\in ]0,\epsilon'[$ (by Lemma~\ref{LemHalfLineInSector}). 

Let $\alpha\in \Phi$ and $\lambda\in ]0,\epsilon'[$. Then we have  $D_{\alpha,\lambda_\alpha}\ni x+\lambda v$.  Therefore \begin{equation}\label{e_inequality_lambda}\lambda_\alpha+\alpha(x)\geq -\lambda \alpha(v), \forall \lambda\in ]0,\epsilon'[.\end{equation}

Let $\alpha\in \Phi$. If $\alpha$ is such that $\alpha(v)<0$, then \eqref{e_inequality_lambda} implies $\lambda_\alpha+\alpha(x)>0$ and in particular, \begin{equation}\label{e_inclusion_mathringD}
\mathring{D}_{\alpha,\lambda_\alpha}\ni x,\  \forall \alpha\in \Phi\mid \alpha(v)<0.
\end{equation}

Let $\alpha\in \Phi$ and assume that $\alpha(v)\geq 0$. Suppose $\mu_\alpha:=\lambda_\alpha+\alpha(x)<0$.   By setting $\lambda=\min(\epsilon',-\frac{1}{2}\frac{\mu_\alpha}{\alpha(v)})$, we have $\lambda_\alpha+\alpha(x)=\mu_\alpha\geq -\lambda \alpha(v)\geq \frac{\mu_\alpha}{2}$: a contradiction. Therefore \begin{equation}\label{e_ineq_2}
    \lambda_\alpha+\alpha(x)\geq 0, \forall \alpha\in \Phi\mid \alpha(v)\geq 0. 
\end{equation}

Let $\alpha\in \Phi$. If $\alpha(v)<0$, then by \eqref{e_inclusion_mathringD}, $D_{\alpha,\lambda_\alpha}\supset \mathring{D}_{\alpha,\lambda_\alpha}\ni x$ and thus $D_{\alpha,\lambda_\alpha}\Supset \FC_{x,\tilde{F}^v}$. Assume now $\alpha(v)\geq 0$. Let $\eta\in \{-,+\}$ be such that $\alpha\in \Phi^\eta_\Delta$. Write $\alpha=\sum_{\beta\in \Delta} n_\beta \beta$, where $(n_\beta)\in (\eta \Z_{\geq 0})^{\Delta}$. If $\alpha(v)=0$, then $n_\beta=0$ for all $\beta\in \Delta\setminus \Delta_P$. As $\tilde{\Delta}\supset \Delta_P$, we deduce $\alpha(y)=0$ for all $y\in \tilde{F}^v$, thus by \eqref{e_ineq_2}, $D_{\alpha,\lambda_\alpha}\supset x+\tilde{F}^v$ and $D_{\alpha,\lambda_\alpha}\Supset \cl(\FC_{x,\tilde{F}^v})$. Assume now $\alpha(v)>0$. Then $\eta=+$ and $\alpha(y)\geq 0$ for all $y\in \tilde{F}^v$. Therefore by \eqref{e_ineq_2}, $D_{\alpha,\lambda_\alpha}\supset x+\tilde{F}^v$ and $D_{\alpha,\lambda}\Supset \cl(\FC_{x,\tilde{F^v}})$. Thus we proved : \[D_{\alpha,\lambda_\alpha}\Supset \cl(\FC_{x,\tilde{F^v}}), \forall \alpha\in \Phi,\] which proves  that $\cl(\FC_{x,F^v})\Supset \cl(\FC_{x,\tilde{F}^v})$. 

\end{proof}

\begin{Prop}\label{PropUaGermofFacet}
Suppose that $\Rtot = \Rtot_\Q$.
Let $\alpha\in \Phi$, $x\in \A_\Rtot$ and $F^v$ be a vector face in $V_\Rtot$.
Then \[ U_{\alpha,\mathcal{F}_{x,F^v}} \subseteq \left\{\begin{aligned} &   U_{\alpha,x}\mathrm{\ if\ }\alpha\in \Phi_{F^v}^+ \sqcup \Phi_{F^v}^0\\  
&   U'_{\alpha,x}\mathrm{\ if\ }\alpha\in \Phi_{F^v}^-.\end{aligned}\right.\] 
\end{Prop}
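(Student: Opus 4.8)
The statement compares the group $U_{\alpha,\mathcal{F}_{x,F^v}}$ attached to the germ of a sector-face at $x$ with the groups $U_{\alpha,x}$ and $U'_{\alpha,x}$ attached to the point $x$ itself, and the sign of $\alpha$ on $F^v$ dictates which comparison is relevant. The plan is to unwind the definition $U_{\alpha,\mathcal{F}_{x,F^v}} = \bigcup_{\Omega \in \mathcal{F}_{x,F^v}} U_{\alpha,\Omega}$ and, for each $\Omega$ in the germ, to estimate $\varphi_\alpha(u)$ for $u \in U_{\alpha,\Omega}$ by evaluating $-\alpha$ along a well-chosen half-line emanating from $x$ inside $x+F^v$. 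The key tool is Lemma~\ref{LemHalfLineInSector}: if $v \in F^v_{\mathbb{Z}}(\Delta,\Delta_P)$ realizes $F^v$, then $x + \delta_v \subset x + F^v$, and for small $\lambda > 0$ the point $y = x + \lambda v$ lies in any given $\Omega \in \mathcal{F}_{x,F^v}$ (since $\Omega$ contains $B(x,\varepsilon) \cap (x+F^v)$ for some $\varepsilon$). Then $u \in U_{\alpha,\Omega} \subset U_{\alpha,y}$ forces $\varphi_\alpha(u) \geqslant -\alpha(y) = -\alpha(x) - \lambda\,\alpha(v)$.

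\textbf{Case analysis.} Write $F^v = w\cdot F^v_\Rtot(\Delta_P)$ and recall from Lemma~\ref{lemSign_of_a_vectorial_face} and the definitions of $\Phi^\pm_{F^v}, \Phi^0_{F^v}$ that $\alpha$ has constant sign on $F^v$. If $\alpha \in \Phi^+_{F^v}$, then $\alpha(v) > 0$ for the chosen $v \in F^v_{\mathbb{Z}}$ (one can arrange $\alpha(v) \in \mathbb{Z}_{>0}$ exactly as in the proof of Lemma~\ref{LemHalfLineInSector}), so letting $\lambda \to 0^+$ in $\varphi_\alpha(u) \geqslant -\alpha(x) - \lambda\,\alpha(v)$ through positive values, using that $\Rtot$ is $\mathbb{Z}$-torsion-free and the elementary fact (already exploited in the proof of Lemma~\ref{lemInclusion_point_local_face}) that $\bigcap_{\lambda \in \,]0,\eta[}[-\lambda\,\alpha(v), \infty[ \subset \Rtot_{\geqslant 0}$, we get $\varphi_\alpha(u) > -\alpha(x)$; hence $u \in U'_{\alpha,x}$, which is even stronger than the claimed $u \in U_{\alpha,x}$. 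Wait — this is not quite what is asked: for $\alpha \in \Phi^+_{F^v}$ the statement only claims $U_{\alpha,x}$, and indeed $U'_{\alpha,x} \subset U_{\alpha,x}$ so the inclusion holds a fortiori. If $\alpha \in \Phi^0_{F^v}$, then $\alpha(v) = 0$, so $\varphi_\alpha(u) \geqslant -\alpha(x)$ directly, i.e. $u \in U_{\alpha,x}$. If $\alpha \in \Phi^-_{F^v}$, then $-\alpha \in \Phi^+_{F^v}$ and the first case applied to $-\alpha$ gives... no: here I must be careful, since $U_{\alpha}$ and $U_{-\alpha}$ are different groups. Instead, for $\alpha \in \Phi^-_{F^v}$ we have $\alpha(v) < 0$, so $-\lambda\,\alpha(v) > 0$ and $\varphi_\alpha(u) \geqslant -\alpha(x) - \lambda\,\alpha(v) > -\alpha(x)$ for every small $\lambda > 0$; thus $u \in U'_{\alpha,x}$, as claimed.

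\textbf{Assembling.} For each of the three cases, the estimate on $\varphi_\alpha(u)$ holds for every $u \in U_{\alpha,\Omega}$ and every $\Omega$ in the germ $\mathcal{F}_{x,F^v}$, so taking the union over $\Omega \in \mathcal{F}_{x,F^v}$ yields $U_{\alpha,\mathcal{F}_{x,F^v}} \subseteq U_{\alpha,x}$ when $\alpha \in \Phi^+_{F^v} \sqcup \Phi^0_{F^v}$ and $U_{\alpha,\mathcal{F}_{x,F^v}} \subseteq U'_{\alpha,x}$ when $\alpha \in \Phi^-_{F^v}$. One should note that the hypothesis $\Rtot = \Rtot_\Q$ is used (as in Lemma~\ref{LemHalfLineInSector} and Lemma~\ref{lemInclusion_point_local_face}) to guarantee that $F^v_{\mathbb{Z}}(\Delta,\Delta_P)$ is non-empty and, more precisely, that it contains an integral vector $v$ with $\alpha(v) \in \mathbb{Z}_{>0}$ for every $\alpha \in \Phi$ positive on $F^v$ — this relies on Lemma~\ref{LemWellChosen} together with scaling.

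\textbf{Main obstacle.} The only subtle point is the limiting argument: $\Rtot$ has no infimum/supremum, so one cannot simply ``take $\lambda \to 0$'' and must instead phrase the conclusion as an intersection of half-lines and invoke the torsion-freeness and the completeness-of-ordering lemma exactly as in the proof of Lemma~\ref{lemInclusion_point_local_face}. Getting the bookkeeping of which inclusion ($U_{\alpha,x}$ versus $U'_{\alpha,x}$) matches which sign right — and being consistent with the fact that $U'_{\alpha,x} \subseteq U_{\alpha,x}$ so the $\Phi^+_{F^v}$ case could be stated more sharply but is stated with $U_{\alpha,x}$ — is the main thing to check carefully, but it is routine once the half-line $x+\delta_v$ is fixed.
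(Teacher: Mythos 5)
Your argument does prove the stated inclusions, but one intermediate claim is wrong and should be flagged. For $\alpha\in\Phi^+_{F^v}$ you assert that the limiting argument yields the \emph{strict} inequality $\varphi_\alpha(u)>-\alpha(x)$, hence $u\in U'_{\alpha,x}$. It does not: the fact you invoke (from the proof of Lemma~\ref{lemInclusion_point_local_face}) is that $\bigcap_{\lambda\in\,]0,\eta[}[-\lambda\,\alpha(v),\infty[\;\subset \Rtot_{\geqslant 0}$, and since each half-line $[-\lambda\,\alpha(v),\infty[$ contains $0$, the intersection is exactly $\Rtot_{\geqslant 0}$; you therefore only get $\varphi_\alpha(u)\geqslant-\alpha(x)$, i.e. $u\in U_{\alpha,x}$. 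The sharper containment in $U'_{\alpha,x}$ is in fact false in general: Corollary~\ref{CorUaGermofChamber} gives $U_{\alpha,\mathcal{F}_{x,C^v}}=U_{\alpha,x}$ for $\alpha\in\Phi^+_{C^v}$, which strictly contains $U'_{\alpha,x}$ whenever $-\alpha(x)\in\Gamma_\alpha$. Since the proposition only claims $U_{\alpha,x}$ in that case, your proof survives once the overreach is deleted, but as written the sentence is incorrect.

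Concerning the route: your treatment of $\Phi^-_{F^v}$ (a single well-chosen point $y=x+\lambda v$ forcing the strict inequality) is exactly the paper's argument. For $\Phi^+_{F^v}\sqcup\Phi^0_{F^v}$ the paper proceeds differently: it observes that $\{x\}\Subset\cl(\mathcal{F}_{x,F^v})$ (Lemma~\ref{lemInclusion_point_local_face}) and that $U_{\alpha,\mathcal{V}}=U_{\alpha,\cl(\mathcal{V})}$ (Lemma~\ref{lemUV=UclV}), which gives $U_{\alpha,\mathcal{F}_{x,F^v}}\subseteq U_{\alpha,x}$ for \emph{all} $\alpha$ in one stroke, with no case distinction on the sign. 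Your direct half-line estimate for these roots re-derives, in effect, the computation inside the proof of Lemma~\ref{lemInclusion_point_local_face}; it is self-contained but duplicates work the paper has already packaged into the enclosure formalism. Both approaches use $\Rtot=\Rtot_\Q$ and Lemma~\ref{LemWellChosen}/\ref{LemHalfLineInSector} in the same way, and both must avoid taking infima, which you handle correctly by phrasing the limit as an intersection of half-lines.
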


\begin{proof}
By Lemma~\ref{lemInclusion_point_local_face},  $\{x\}\Subset \cl\left(\mathcal{F}_{x,F^v}\right)$, and thus  $U_{\alpha,\cl\left(\mathcal{F}_{x,F^v}\right)}=U_{\alpha,\mathcal{F}_{x,F^v}}\subset U_{\alpha,x}$ according to Lemma~\ref{lemUV=UclV}.

By definition, there exists a basis $\Delta$ of $\Phi$ and a subset $\Delta_P$ of $\Delta$ such that $F^v = F^v_{\Rtot}(\Delta,\Delta_P)$.
If $\Delta_P = \Delta$, then $F^v = 0$ and we have $\Phi_{F^v}^- = \emptyset$: there is nothing to prove.
Suppose now that $\Delta_P \neq \Delta$ and consider $\alpha\in \Phi_{F^v}^+$.
Let $u\in U_{-\alpha, \mathcal{F}_{x,F^v}}$.
Let $\Omega\in \mathcal{F}_{x,F^v}$ be such that $u\in U_{-\alpha,\Omega}$.
Then there exists $\varepsilon\in \Rtot_{>0}$ such that $\Omega\supset B(x,\varepsilon)\cap (x+F^v)$.
Let $v \in F^v_{\mathbb{Z}}(\Delta,\Delta_P)$.
Then $\|v\|_{\Z} \in \mathbb{Z}_{>0}$ since $0 \not\in F^v_{\mathbb{Z}}(\Delta,\Delta_P)$ by assumption on $\Delta_P$.

Let $\lambda=\frac{\varepsilon}{2\|v\|}$ and $y= x + \lambda v$. Then  $y\in B(x,\varepsilon) \cap \left( x + \delta_v\right) \subset B(x,\varepsilon) \cap \left( x + F^v\right) \subset \Omega $ by Lemma~\ref{LemHalfLineInSector} and thus  $\varphi_{-\alpha}(u)\geqslant -(-\alpha)(y)$. Moreover $\alpha(\lambda v)>0$, thus $\varphi_{-\alpha}(u)>-(-\alpha)(x)$, so that $u\in U'_{-\alpha,x}$. This proves the Lemma since $\Phi_{F^v}^- = - \Phi_{F^v}^+$.
\end{proof}

\begin{Cor}\label{CorUaGermofChamber}
Suppose that $\Rtot = \Rtot_\Q$.
Let $\alpha\in \Phi$, $x\in \A_\Rtot$ and $C^v$ be a vector chamber in $V_\Rtot$.
Then \[ U_{\alpha,\mathcal{F}_{x,C^v}} = \left\{\begin{aligned} &   U_{\alpha,x}\mathrm{\ if\ }\alpha\in \Phi_{C^v}^+\\  
&   U'_{\alpha,x}\mathrm{\ if\ }\alpha\in \Phi_{C^v}^-.\end{aligned}\right.\]
\end{Cor}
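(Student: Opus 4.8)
The statement to establish is Corollary~\ref{CorUaGermofChamber}: for $\alpha \in \Phi$, $x \in \A_\Rtot$ and a vector chamber $C^v$ of $V_\Rtot$, one has $U_{\alpha,\mathcal{F}_{x,C^v}} = U_{\alpha,x}$ if $\alpha \in \Phi_{C^v}^+$ and $U_{\alpha,\mathcal{F}_{x,C^v}} = U'_{\alpha,x}$ if $\alpha \in \Phi_{C^v}^-$. Note that since $C^v$ is a chamber, $\Phi_{C^v}^0 = \emptyset$ and $\Phi = \Phi_{C^v}^+ \sqcup \Phi_{C^v}^-$, so these two cases cover every root. The plan is to obtain the two inclusions separately: Proposition~\ref{PropUaGermofFacet} already gives the inclusions $\subseteq$ in both cases (taking $F^v = C^v$), so only the reverse inclusions $\supseteq$ remain to be proved.

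\textbf{First case: $\alpha \in \Phi_{C^v}^+$.} Here I must show $U_{\alpha,x} \subseteq U_{\alpha,\mathcal{F}_{x,C^v}}$, i.e. that every $u \in U_{\alpha,x}$ already lies in $U_{\alpha,\Omega}$ for some $\Omega$ in the germ $\mathcal{F}_{x,C^v}$. Fix $u \in U_{\alpha,x}$, so $\varphi_\alpha(u) \geqslant -\alpha(x)$. The key geometric observation is that for $\alpha \in \Phi_{C^v}^+$ one has $\alpha(C^v) \subset \Rtot_{>0}$, so on a small sector-germ neighbourhood $B(x,\varepsilon) \cap (x + C^v)$ the function $y \mapsto -\alpha(y)$ only decreases away from $x$: for such $y$ we have $-\alpha(y) = -\alpha(x) - \alpha(y-x) \leqslant -\alpha(x) \leqslant \varphi_\alpha(u)$. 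Hence $u \in U_{\alpha,y}$ for every such $y$, and so $u \in U_{\alpha,\Omega}$ where $\Omega = B(x,\varepsilon) \cap (x+C^v) \in \mathcal{F}_{x,C^v}$ (any $\varepsilon > 0$ works). This gives $u \in U_{\alpha,\mathcal{F}_{x,C^v}}$ and finishes this case. I should take a moment to confirm that $B(x,\varepsilon) \cap (x+C^v)$ is indeed an element of the germ filter $\mathcal{F}_{x,C^v} = \{\Omega \cap (x+C^v) \mid \Omega \text{ neighbourhood of } x\}$, which is immediate since $B(x,\varepsilon)$ is a neighbourhood of $x$ (Subsubsection~\ref{subsubMetric}).

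\textbf{Second case: $\alpha \in \Phi_{C^v}^-$.} Now I must show $U'_{\alpha,x} \subseteq U_{\alpha,\mathcal{F}_{x,C^v}}$. Take $u \in U'_{\alpha,x}$, so $\varphi_\alpha(u) > -\alpha(x)$, say $\varphi_\alpha(u) = -\alpha(x) + \eta$ with $\eta \in \Rtot_{>0}$ (if $u = 1$ the claim is trivial). Since $\alpha \in \Phi_{C^v}^-$, we have $-\alpha(C^v) \subset \Rtot_{>0}$, so on the sector $x + C^v$ the function $-\alpha$ increases. The point is to cut the sector off at a controlled distance: I will use Lemma~\ref{lemDistance}(\ref{itBalls_bounded}), or more directly the structure of the $\RF^S$-distance, to choose $\varepsilon \in \Rtot_{>0}$ small enough that for every $y \in B(x,\varepsilon) \cap (x+C^v)$ one has $-\alpha(y) < -\alpha(x) + \eta = \varphi_\alpha(u)$, hence $u \in U_{\alpha,y}$. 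Concretely: $-\alpha$ is a continuous $\RF^S$-linear form, so $|{-\alpha(y-x)}| \leqslant \varepsilon$ whenever $d_S^{\mathrm{std}}(x,y) < \varepsilon$ up to a fixed constant, and picking $\varepsilon$ with $2\varepsilon < \eta$ (in the sense of $\RF^S$, replacing $\eta$ by a suitable truncation as in the proof of Proposition~\ref{PropUaGermofFacet}) does the job. Then $u \in U_{\alpha,\Omega}$ with $\Omega = B(x,\varepsilon) \cap (x+C^v) \in \mathcal{F}_{x,C^v}$, giving $u \in U_{\alpha,\mathcal{F}_{x,C^v}}$.

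\textbf{Main obstacle.} The only delicate point is the second case: unlike in the real case, $\RF^S$ has no least upper bound property, so I cannot simply "take $\varepsilon$ small" — I must exhibit an explicit $\varepsilon$ and verify the inequality $-\alpha(y) < \varphi_\alpha(u)$ for all $y$ in the truncated sector using the lexicographic order, exactly the kind of bookkeeping already carried out in the proofs of Lemma~\ref{lemInclusion_point_local_face} and Proposition~\ref{PropUaGermofFacet}. I expect to mimic that computation: write $\varphi_\alpha(u) + \alpha(x) = \eta > 0$, pick a direction $v \in C^v_{\mathbb{Z}}$ as in those proofs to control the sector combinatorially, and choose $\varepsilon$ as a rational multiple of a coordinate of $\eta$ so that the comparison goes through for every $\alpha$ simultaneously. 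Everything else is a direct application of Proposition~\ref{PropUaGermofFacet} for the easy inclusions together with the elementary monotonicity of $\pm\alpha$ along the sector.
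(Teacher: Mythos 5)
Your proposal is correct and follows essentially the same route as the paper: the easy inclusions come from Proposition~\ref{PropUaGermofFacet}, and the reverse inclusions are obtained by exhibiting an explicit element of the germ on which $-\alpha$ is bounded by $\varphi_\alpha(u)$. The only (cosmetic) differences are that the paper handles the positive case by citing Lemma~\ref{LemUaAlcove} rather than redoing the monotonicity computation, and in the negative case truncates the sector with the half-space $x+\bigl(C^v\cap\mathring{D}_{\alpha,\varepsilon}\bigr)$ instead of a metric ball — both choices lie in the germ and yield the same estimate, and your worry about the lack of least upper bounds is moot since $\Rtot=\Rtot_\Q$ lets you take $\varepsilon$ to be an explicit rational multiple of $\eta$.
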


\begin{proof}
Let $\alpha \in \Phi$.
If $\alpha \in \Phi^+_{C^v}$, then we have $U_{\alpha,x} = U_{\alpha,x + C^v}=U_{\alpha,\cl(x+C^v)}$ according to Lemmas~\ref{LemUaAlcove}  and \ref{lemUV=UclV}.
As $\cl(x+C^v)\Supset \FC_{x,C^v}$, we have $U_{\alpha,\cl(x + C^v)}\subseteq U_{\alpha,\mathcal{F}_{x,C^v}}$ and $U_{\alpha,\mathcal{F}_{x,C^v}} \subseteq U_{\alpha,x}$ by Proposition~\ref{PropUaGermofFacet}.
Thus $U_{\alpha,x} =U_{\alpha,x + C^v} = U_{\alpha,\mathcal{F}_{x,C^v}}$. 

If $\alpha \in \Phi^-_{C^v}$, let $u \in U'_{\alpha,x}$. Then $\varphi_\alpha(u)+\alpha(x)>0$ and thus $\mathring{D}_{\alpha,\varphi_\alpha(u)}\ni x$. Consequently, $\mathring{D}_{\alpha,\varphi_\alpha(u)}\Supset \FC_{x,C^v}$ and $u\in U_{\alpha,\mathring{D}_{\alpha,\varphi_\alpha(u)}}\subset U_{\alpha,\FC_{x,C^v}}$. Therefore $U'_{\alpha,x}\subset U_{\alpha,\FC_{x,C^v}}$ and the converse inclusion holds by Proposition~\ref{PropUaGermofFacet}.\end{proof}

\begin{Cor}\label{CorDecompositionIwahori}
Suppose that $\Rtot = \Rtot_\Q$.
Let $x\in \A_\Rtot$ and $C$  be a vector chamber in $V_\Rtot$.
Let $\Delta = \Delta_{C}$.
For any ordering of $\left(\Phi^+_\Delta \right)_{\mathrm{nd}}$ and of $\left(\Phi^-_\Delta \right)_{\mathrm{nd}}$, we have
\[ \widehat{P}_{\mathcal{F}_{x,C}} =
\left(\prod_{\alpha \in \left(\Phi^+_\Delta \right)_{\mathrm{nd}}} U_{\alpha,x}\right)
\left( \prod_{\alpha \in \left(\Phi^-_\Delta \right)_{\mathrm{nd}}} U_{\alpha,x}'\right)
T_b
=
\left( \prod_{\alpha \in \left(\Phi^-_\Delta \right)_{\mathrm{nd}}} U_{\alpha,x}' \right)
\left(\prod_{\alpha \in \left(\Phi^+_\Delta \right)_{\mathrm{nd}}} U_{\alpha,x}\right)
T_b\]
\end{Cor}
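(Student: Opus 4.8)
\textbf{Proof plan for Corollary~\ref{CorDecompositionIwahori}.}

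The plan is to combine the structural description of $\widehat{P}_{\mathcal{F}_{x,C}}$ furnished by Proposition~\ref{PropPchapeauGermSectorDecQC} with the identification of the factor root groups supplied by Corollary~\ref{CorUaGermofChamber}. First I would set $\mathcal{F} = \mathcal{F}_{x,C}$ and $\Delta = \Delta_C$. By Proposition~\ref{PropPchapeauGermSectorDecQC}, applied with the basis $\Delta$, we have the triple decomposition
\[\widehat{P}_\mathcal{F} = \left( \widehat{P}_\mathcal{F} \cap U_\Delta^+ \right) \left( \widehat{P}_\mathcal{F} \cap U_\Delta^- \right) \left( \widehat{P}_\mathcal{F} \cap N \right),\]
together with $\widehat{P}_\mathcal{F} \cap N = T_b$. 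It therefore remains to describe $\widehat{P}_\mathcal{F} \cap U_\Delta^+$ and $\widehat{P}_\mathcal{F} \cap U_\Delta^-$ as ordered products of root groups.

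For the positive part: by Proposition~\ref{PropPchapeauGermSectorDecQC} we have $\widehat{P}_\mathcal{F} \cap U_\Delta^+ = U_\mathcal{F} \cap U_\Delta^+$, and the family $(U_{\alpha,\mathcal{F}})_{\alpha \in \Phi}$ is quasi-concave by Example~\ref{ExQC} (taking $\Omega$ to be an element of the filter and then passing to the filter, or by the same argument at the level of the germ, since $U_{\alpha,\mathcal{F}} = \bigcup_{\Omega \in \mathcal{F}} U_{\alpha,\Omega}$ and quasi-concavity is preserved under such increasing unions). Hence Proposition~\ref{PropDecompositionQC}\ref{PropDecQC:2}, applied to the group generated by the $U_{\alpha,\mathcal{F}}$, gives that the product map $\prod_{\alpha \in (\Phi^+_\Delta)_{\mathrm{nd}}} U_{\alpha,\mathcal{F}} U_{2\alpha,\mathcal{F}} \to U_\mathcal{F} \cap U_\Delta^+$ is a bijection for any ordering. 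Now Corollary~\ref{CorUaGermofChamber} identifies the factors: since all roots in $\Phi^+_\Delta$ lie in $\Phi^+_{C^v}$ (by definition of $\Delta = \Delta_C$, one has $C \subset C^v_\Delta$, so $\Phi^+_C \subseteq \Phi^+_{C^v}$; more precisely $\Phi^+_{C^v} = \Phi^+_\Delta$), we get $U_{\alpha,\mathcal{F}} = U_{\alpha,x}$ for every $\alpha \in \Phi^+_\Delta$, and in particular $U_{2\alpha,\mathcal{F}} = U_{2\alpha,x} \subset U_{\alpha,x}$ whenever $\alpha$ is multipliable. Thus $\widehat{P}_\mathcal{F} \cap U_\Delta^+ = \prod_{\alpha \in (\Phi^+_\Delta)_{\mathrm{nd}}} U_{\alpha,x}$ for any chosen ordering. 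Symmetrically, for the negative part: $\widehat{P}_\mathcal{F} \cap U_\Delta^- = U_\mathcal{F} \cap U_\Delta^-$, the same quasi-concavity argument (now over $(\Phi^-_\Delta)_{\mathrm{nd}}$) gives a bijective product decomposition, and Corollary~\ref{CorUaGermofChamber} yields $U_{-\alpha,\mathcal{F}} = U'_{-\alpha,x}$ for $\alpha \in \Phi^+_\Delta$, i.e. $U_{\beta,\mathcal{F}} = U'_{\beta,x}$ for $\beta \in \Phi^-_\Delta$. Substituting both into the triple decomposition gives the first displayed equality. The second displayed equality is obtained by applying the inverse map $g \mapsto g^{-1}$ to the first: inversion reverses the order of factors, sends each $U_{\alpha,x}$ and $U'_{\alpha,x}$ to itself (they are groups), and fixes $T_b$; combining this with the fact that $T_b$ normalizes each $U_{\alpha,x}$ and $U'_{\alpha,x}$ (Corollary~\ref{CorActionNUa} and the fact that these are intersections/unions of groups normalized by $T_b$) allows us to move $T_b$ back to the right, yielding the product of the negative factors, then the positive factors, then $T_b$.

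The main obstacle is bookkeeping rather than substance: one must be careful that the quasi-concave family hypothesis of Proposition~\ref{PropDecompositionQC} is genuinely satisfied for $(U_{\alpha,\mathcal{F}})_{\alpha\in\Phi}$ — this requires noting that $U_{\alpha,\mathcal{F}}U_{2\alpha,\mathcal{F}} = U_{\alpha,\mathcal{F}}$ (the analogue of Lemma~\ref{LemU2ainUa} at the level of the germ, which follows since $U_{2\alpha,\Omega}\subset U_{\alpha,\Omega}$ for each $\Omega\in\mathcal{F}$) — and that the identification $\Phi^+_{C^v} = \Phi^+_\Delta$ with $\Delta=\Delta_C$ is used correctly so that Corollary~\ref{CorUaGermofChamber} applies to all of $(\Phi^+_\Delta)_{\mathrm{nd}}$ and $(\Phi^-_\Delta)_{\mathrm{nd}}$. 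Once these points are in place, the two displayed equalities follow immediately.
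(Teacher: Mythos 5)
Your proof is correct and follows the same route as the paper, whose own proof is a one-line citation of Proposition~\ref{PropPchapeauGermSectorDecQC} and Corollary~\ref{CorUaGermofChamber}; you have merely spelled out the quasi-concavity/ordered-product details that the paper treats as immediate from Example~\ref{ExQC} and Proposition~\ref{PropDecompositionQC}.
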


\begin{proof}
It is an immediate consequence of Proposition~\ref{PropPchapeauGermSectorDecQC} and Corollary~\ref{CorUaGermofChamber}.
\end{proof}

\subsection{Iwasawa decomposition}

Thus, we obtain the Iwasava decomposition whose proof can be conducted in the same way as in \cite[7.3.1]{BruhatTits1}:

\begin{Thm}[Iwasawa decomposition]\label{thmIwasawa}\index{Iwasawa decomposition}\index{decomposition!Iwasawa}
Suppose that the totally ordered abelian group $\Rtot$ is equipped with a $\Q$-module structure. 
Let $C$, $C'$ be two vector chambers of $V_\Rtot$ and $x \in \mathbb{A}_\Rtot$.
Let $\mathcal{F} = \mathcal{F}_{x,C}$ be the face of $C$ at $x$. 
Then 
\[ G = U_{C'}  N \widehat{P}_{\mathcal{F}} \]
and there is a natural one-to-one correspondence
\[ \Wext = N / T_b \to U_{C'} \backslash G / \widehat{P}_{\mathcal{F}}. \]
\end{Thm}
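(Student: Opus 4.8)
The statement is the Iwasawa decomposition $G = U_{C'} N \widehat{P}_{\mathcal{F}}$ together with the bijection $W^{\mathrm{aff}} = N/T_b \to U_{C'} \backslash G / \widehat{P}_{\mathcal{F}}$, and the plan is to follow Bruhat--Tits \cite[7.3.1]{BruhatTits1} closely, using the structural results established in this section. First I would fix a basis $\Delta' = \Delta_{C'}$ associated to the vector chamber $C'$, so that $U_{C'} \subseteq U^+_{\Delta'}$ by Lemma~\ref{LemUsector}, and I would set $\Delta = \Delta_C$ for the chamber $C$ whose germ at $x$ is $\mathcal{F}$. The key decomposition of the parahoric at a sector-germ is Corollary~\ref{CorDecompositionIwahori}, which gives $\widehat{P}_{\mathcal{F}} = \big(\prod_{\alpha\in(\Phi^+_\Delta)_{\mathrm{nd}}} U_{\alpha,x}\big)\big(\prod_{\alpha\in(\Phi^-_\Delta)_{\mathrm{nd}}} U'_{\alpha,x}\big) T_b$ (and the reverse ordering), using that $\Rtot = \RF^S$ is a $\mathbb{Q}$-module so that $\Rtot = \Rtot_{\mathbb{Q}}$ as required.

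The core of the argument is to show that $X := U_{C'} N \widehat{P}_{\mathcal{F}}$ is all of $G$. Since the root group datum is generating, $G$ is generated by $T$, $N$ (which contains $T$) and the root groups $U_\beta$ for $\beta \in \Phi$; as $N \subseteq X$ and $T \subseteq N$, it suffices to show that $X$ is stable under left multiplication by each $U_\beta$. I would argue this one root group at a time, exactly as in \cite{BruhatTits1}: given $\beta \in \Phi$ and $u \in U_\beta$, I want $u \cdot U_{C'} N \widehat{P}_{\mathcal{F}} \subseteq X$. After pushing $u$ past $U_{C'}$ (using the commutation relations from axiom~\ref{axiomRGD2}, which keep everything inside $U^+_{\Delta'}$ when $\beta \in \Phi^+_{\Delta'}$) the remaining case is $\beta \in \Phi^-_{\Delta'}$, which is handled by conjugating by a suitable $m \in M_\gamma$ to reduce to the positive case, so that the $N$-factor absorbs the change; here Corollary~\ref{CorActionNUa} and the rank-one Levi computations of Proposition~\ref{PropRankOneLevi} (in particular $m U_{\alpha,\lambda} m^{-1} = U_{-\alpha,-\lambda}$ from Lemma~\ref{LemConjugationMaUb}) are what make the bookkeeping work. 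The point is that conjugation by $N$ permutes the root groups $U_{\alpha,x}$ and $U'_{\alpha,x}$ among themselves (up to the shift coming from $\nu(n)$), and $U_{C'}$ can always re-absorb the positive part, so $X$ is left-$U_\beta$-stable. This gives $G = U_{C'} N \widehat{P}_{\mathcal{F}}$.

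For the bijection, the surjectivity of $N \to U_{C'} \backslash G / \widehat{P}_{\mathcal{F}}$ is immediate from the decomposition just proved. For the fibres, I need to show that if $n, n' \in N$ satisfy $U_{C'} n \widehat{P}_{\mathcal{F}} = U_{C'} n' \widehat{P}_{\mathcal{F}}$, then $n \in n' T_b$, i.e. $\nu(n) = \nu(n')$ in $W^{\mathrm{aff}}$; equivalently, $N \cap U_{C'} n \widehat{P}_{\mathcal{F}} = n T_b$ for each $n$. Writing $n^{-1} n'' \in n^{-1} U_{C'} n'' \widehat{P}_{\mathcal{F}}$ and using $\widehat{P}_{\mathcal{F}} \cap N = \widehat{N}_{\mathcal{F}} = T_b$ from Proposition~\ref{PropPchapeauGermSectorDecQC}, together with the decomposition $\widehat{P}_{\mathcal{F}} = (\widehat{P}_{\mathcal{F}} \cap U^+_\Delta)(\widehat{P}_{\mathcal{F}} \cap U^-_\Delta)(\widehat{P}_{\mathcal{F}} \cap N)$, I reduce the equality $n' = u n p$ (with $u \in U_{C'} \subseteq U^+_{\Delta'}$, $p \in \widehat{P}_{\mathcal{F}}$) to a spherical Bruhat decomposition statement: after moving everything into a $U^+ T U^-$ (or $U^- T U^+$) form, uniqueness in \cite[6.1.15(c)]{BruhatTits1} and axiom~\ref{axiomRGD6} force the unipotent parts to be trivial and $n' \in n T_b$. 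The main obstacle, as flagged in the introduction of this section and in the paper's overview, is that $\Rtot$ need not have the least-upper-bound property, so one cannot use infima/suprema of value sets; all the estimates on $U_{\alpha,\Omega}$, $U'_{\alpha,\Omega}$ must be phrased via the intersections-of-groups formalism already set up (Lemmas~\ref{LemUaAlcove}, \ref{LemUsector}, Corollary~\ref{CorUaGermofChamber}), and one must be careful that the "re-absorption" of positive root groups into $U_{C'}$ genuinely works at the level of the germ $\mathcal{F}$ rather than at a fixed alcove — this is precisely what Corollary~\ref{CorUaGermofChamber} guarantees ($U_{\alpha,\mathcal{F}_{x,C}} = U_{\alpha,x}$ for $\alpha \in \Phi^+_C$ and $= U'_{\alpha,x}$ for $\alpha \in \Phi^-_C$), so the only real work is organizing the case analysis over $\beta \in \Phi$ in the stability argument.
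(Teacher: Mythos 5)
Your overall architecture is the paper's: set $Z = U_{C'} N \widehat{P}_{\mathcal{F}}$, show it is stable under left multiplication by a generating set of $G$, and count the double cosets via the spherical Bruhat decomposition, following \cite[7.3.1]{BruhatTits1}. The double-coset part of your sketch is essentially correct (the one adjustment is that the decomposition of $\widehat{P}_{\mathcal{F}}$ from Proposition~\ref{PropPchapeauGermSectorDecQC} must be taken with respect to the basis of the chamber obtained by transporting $C'$ by $n^{-1}$, not with respect to $\Delta_C$ or $\Delta_{C'}$). There is, however, a genuine gap in the existence part, precisely where the valuation enters. Your treatment of $u \in U_{\beta}$ for $\beta \in \Phi^-_{\Delta'}$ --- ``conjugating by a suitable $m \in M_\gamma$ to reduce to the positive case, so that the $N$-factor absorbs the change'' --- does not work as stated: writing $u = m\,(m^{-1}um)$ moves $u$ into a positive root group at the cost of a left factor $m$, and $mZ$ is the Iwasawa set attached to the chamber $m\cdot C'$, not to $C'$, so nothing has been absorbed. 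What is actually needed is the rank-one statement that the paper isolates as its first step: for every $\gamma \in \Phi$, the full rank-one Levi $L_\gamma = \langle U_\gamma, U_{-\gamma}, T\rangle$ is contained in $U_\gamma M_\gamma B_\gamma$ with $B_\gamma = \langle U_{\gamma,\mathcal{F}}, U_{-\gamma,\mathcal{F}}, T_b\rangle \subseteq \widehat{P}_{\mathcal{F}}$. Proving this is where the work lies: given $u \in U_\gamma$, one splits according to whether $\varphi_\gamma(u)$ clears the threshold $-\gamma(x)$ dictated by the germ (Corollary~\ref{CorUaGermofChamber}); if it does, then $u \in U_{\gamma,\mathcal{F}} \subseteq B_\gamma$, and if not, one uses the rank-one Bruhat factorization $u = v'mv''$ together with axiom~\ref{axiomV5bis} to see that the factor $v''$ lands in $U_{-\gamma,\mathcal{F}} \subseteq B_\gamma$. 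This dichotomy is governed by $\Delta_C$ and the point $x$, not by $\Delta'$, and it is absent from your sketch; note also that Proposition~\ref{PropRankOneLevi}, which you cite in its place, concerns the bounded Levi subgroups $L^{\varepsilon}_{\alpha,\lambda}$ and is not the statement needed here. Finally, once the rank-one inclusion is available, stability of $Z$ under $U_{-\alpha}$ is checked only for \emph{simple} $\alpha \in \Delta'$ (using that $U_{\pm\alpha}$ normalizes $\langle U_\beta \mid \beta \in \Phi^+_{\Delta'}\setminus\{\alpha\}\rangle$); for a non-simple negative root the commutators with $U^+_{\Delta'}$ do not stay in $U^+_{\Delta'}$, so your plan of running the case analysis over all $\beta \in \Phi$ would also need repair.

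A second, smaller point: your inclusion ``$U_{C'} \subseteq U^+_{\Delta'}$ by Lemma~\ref{LemUsector}'' indicates you are reading $U_{C'}$ as the bounded group $U_\Omega$ attached to a sector of direction $C'$. In the theorem, $U_{C'}$ is the full spherical unipotent group $U^+_{\Delta'} = \langle U_\alpha \mid \alpha \in \Phi^+_{\Delta'}\rangle$ (this is how the statement is used in the proof of Theorem~\ref{ThmBruhatDecomposition}). With the bounded group the statement is false already for $\mathrm{SL}_2(\mathbb{Q}_p)$: an element $x_\alpha(y)$ with $\omega(y) < 0$ does not lie in $U_{\alpha,0}\, N\, \widehat{P}_{\mathcal{F}}$ when $\widehat{P}_{\mathcal{F}}$ is the standard Iwahori. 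Your ``re-absorption of the positive part into $U_{C'}$'' only makes sense with the unbounded group.
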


\begin{proof}
In this proof, we denote by $U^+ = U_{C'}$, by $B = \widehat{P}_\mathcal{F}$ and by $Z = U^+ N B$.
Set $\Delta = \Delta_C$ and $\Delta' = \Delta_{C'}$, and let $\Phi^+_{\Delta}$ and $\Phi^+_{\Delta'}$ be the associated subsets of positive roots.

\paragraph{First step: rank-one Levi subroups are contained in $Z$.} Let $\alpha \in \Phi$ be any root.
Let $L_\alpha = \langle U_{-\alpha}, U_\alpha, T \rangle$ and $m_\alpha = m(u)$ for some $u \in U_{\alpha} \setminus \{1\}$.
By \cite[6.1.2(5)]{BruhatTits1}, we recall that $M_\alpha' = T \cdot \{1,m_\alpha\}$ is a group.
By \cite[6.1.2(7)]{BruhatTits1}, we know that
\[L_\alpha = U_\alpha T \cup U_\alpha T m_\alpha U_\alpha.\]
Let $B_\alpha = \langle U_{\alpha,\mathcal{F}}, U_{-\alpha,\mathcal{F}}, T_b \rangle$.
Consider the set $Z_\alpha = U_\alpha M_\alpha' B_\alpha$.
We want to prove that $L_\alpha \subset Z_\alpha$.
The inclusion $U_\alpha T \subset Z_\alpha$ is obvious.
Hence, it suffices to prove that $m_\alpha u \in Z_\alpha$ for any $u \in U_\alpha$ since $U_\alpha T$ is a group (indeed, the subgroup $T$ normalizes $U_\alpha$ by definition of $U_\alpha$).

Suppose that $\alpha \in \Phi_\Delta^+$ (resp. $\alpha \in \Phi_\Delta^-$).
If $\varphi_\alpha(u) \geqslant -\alpha(x)$ (resp. $\varphi_\alpha(u) > -\alpha(x)$), then $u \in U_{\alpha,-\alpha(x)} = U_{\alpha,\mathcal{F}}$ (resp. $u \in U'_{\alpha,-\alpha(x)} = U_{\alpha,\mathcal{F}}$) by Corollary~\ref{CorUaGermofChamber}.
Thus $u \in B_\alpha \subset Z_\alpha$.
Otherwise, $\varphi_\alpha(u) < -\alpha(x)$ (resp. $\varphi_\alpha(u) \leqslant -\alpha(x)$) . Write $u = v'mv''$ for some $m \in M_\alpha$ and $v',v'' \in U_{-\alpha}$.
Write $m = t m_\alpha$ for $t \in T$.
By axiom~\ref{axiomV5bis}, we know that $\varphi_{-\alpha}(v'') = - \varphi_\alpha(u)$ so that $\varphi_{-\alpha}(v'') > - (-\alpha(x))$ (resp. $\varphi_{-\alpha}(v'') \geqslant - (-\alpha(x))$).
Thus $v'' \in U'_{-\alpha, x} = U_{-\alpha,\mathcal{F}}$ (resp. $v'' \in U_{-\alpha,x} = U_{-\alpha,\mathcal{F}}$) by Corollary~\ref{CorUaGermofChamber} since $-\alpha \in - \Phi_\Delta^+ = \Phi_\Delta^-$ (resp. $-\alpha \in \Phi_\Delta^+ = - \Phi_\Delta^-$).
Hence $m_\alpha u = m_\alpha v' t m_\alpha^{-1} v'' = \underbrace{(m_\alpha v' m_\alpha^{-1})}_{\in U_\alpha} \underbrace{(m_\alpha t m_\alpha^{-1})}_{\in T} v'' \in Z_\alpha$.

\paragraph{Second step: $Z$ is stable by left multiplication by root groups of simple roots with respect to $-C'$.}
Let $\alpha \in \Delta'$ and $V_\alpha = \langle U_\beta\ | \ \beta \in \Phi_{\Delta'}^+ \setminus \{\alpha\} \rangle$.
We recall that $V_\alpha$ is normalized by $U_\alpha$ and $U_{-\alpha}$ by axiom~\ref{axiomRGD2} and that $U^+_{\Delta'} = V_\alpha U_\alpha = U_\alpha V_\alpha$ by \cite[6.1.6]{BruhatTits1}.
Hence
\[ U_{-\alpha} Z = U_{-\alpha} V_\alpha U_{\alpha} N B = V_\alpha U_{-\alpha} U_{\alpha} N B \subset V_\alpha L_\alpha N B.\]
But since $L_\alpha \subset Z_\alpha$, we have 
\[ U_{-\alpha} Z \subset V_\alpha U_\alpha T \{1,m_\alpha\} B_\alpha N B.\]
On the one hand, since $B_\alpha \subset U_\alpha U_{-\alpha} N$ by Remark~\ref{RkDecQCfiltres}, we get
\begin{align*} 
V_\alpha U_\alpha T \{1\} B_\alpha N B \subset &
 U^+ T U_\alpha U_{-\alpha} N B\\
= & U^+ T U_{-\alpha} N B
\end{align*}
On the other hand, since $B_\alpha \subset U_{-\alpha} U_{\alpha} N$ by Remark~\ref{RkDecQCfiltres}, we get
\begin{align*}
 V_\alpha U_\alpha T \{m_\alpha\} B_\alpha N B \subset
 & U^+ T m_\alpha U_{-\alpha} U_{\alpha} N B \\
 = & U^+ T m_\alpha U_{-\alpha} m_\alpha^{-1} m_\alpha U_{\alpha} m_\alpha^{-1} N B\\
 = & U^+ T U_{\alpha} U_{-\alpha}N B\\
 = & U^+ T U_{-\alpha} N B
\end{align*}
Hence $U_{-\alpha} Z \subset U^+ T U_{-\alpha} N B$.

To conclude, we want to show that $U_{-\alpha} N \subset Z$.
Consider any $u \in U_{-\alpha}$ and any $n \in N$.
Let $v = n^{-1}un$.
Then $v \in U_\beta$ with $\beta = {^v\!}\nu(n^{-1})(-\alpha)$.
>From the first step applied to $\beta$, we get that since $v \in L_{-\beta}$:
\begin{align*}
un = nv \in & nU_{-\beta} T \{1,m_\beta\} B_{-\beta} \\
& \subset n U_{-\beta} n^{-1} n N B\\
& = U_{\alpha} N B \\
& \subset Z
\end{align*} 

Hence $U_{-\alpha} Z \subset U^+ T Z B = U^+ T U^+ N B B = U^+ N B = Z$.

\paragraph{Third step: $Z$ contains a generating subset of $G$.}
The set $Z$ is stable by left multiplication by $U_{-\alpha}$ for $\alpha \in \Delta_{C'}$, by $U_\alpha$ for $\alpha \in \Phi_{\Delta'}^+$ and by $T$.
Since these groups generate $G$, we deduce that $G = Z$.
Indeed, let $H$ the group generated by the $U_{\alpha}$ for $\alpha \in \Phi^+_{\Delta'}$, by $U_{-\alpha}$ for $\alpha \in \Delta'$ and by $T$.
For any $\alpha \in \Delta'$, the element $m_\alpha$ belongs to $L_\alpha$ and since $W^v$ is a Coxeter group generated by the ${^v}\nu(m_\alpha)$ for $\alpha \in \Delta'$, the group $N$ is contained in $H$.
Thus for any $\beta \in \Phi^-_{\Delta'}$, there exist a root $\alpha \in \Phi^+_{\Delta'}$ and an element $n \in N$ such that the root group $U_\beta = n U_\alpha n^{-1}$ is contained in $H$.
Since the root group datum is generating, we deduce that $H = G$.

\paragraph{Fourth step: determination of double cosets}
From the equality $G = Z = U_{\Delta'}^+ N \widehat{P}_\mathcal{F}$, we deduce a natural surjective map:
\[
\begin{array}{cccc}
\psi: &N & \to & U_{\Delta'}^+ \backslash G / \widehat{P}_{\mathcal{F}}\\
&n & \mapsto & U_{\Delta'}^+ n \widehat{P}_{\mathcal{F}} 
\end{array}
\]
Let $n,n' \in N$ such that $n'\in U_{\Delta'}^+ n \widehat{P}_{\mathcal{F}}$.
Denote $C'' = n^{-1} \cdot C'$ another vector chamber.
Then 
\begin{align*}
(n')^{-1} n \in & \widehat{P}_{\mathcal{F}} n^{-1} U_{\Delta'}^+ n & \\
& = \widehat{P}_{\mathcal{F}} U_{n^{-1} \cdot C'}^+ & \text{ by \cite[6.1.2(10)]{BruhatTits1}}\\
& = \widehat{P}_{\mathcal{F}} U_{\Delta''}^+ & \\
& = T_b \left( U_{\Delta''}^- \cap \widehat{P}_{\mathcal{F}} \right) U_{\Delta''}^+ & \text{ by Proposition~\ref{PropPchapeauGermSectorDecQC}}\\
& = \left( U_{\Delta''}^- \cap \widehat{P}_{\mathcal{F}} \right) T_b U_{\Delta''}^+
\end{align*}
Hence, by \cite[6.1.15(c)]{BruhatTits1}, the element $(n')^{-1}n$ corresponds to a double coset in $U_{\Delta''}^- \backslash G / U_{\Delta''}^+$ arising from some element in $T_b$ and therefore $(n')^{-1} n \in T_b$.
Hence, $\psi$ induces a bijection $N / T_b \to U_{\Delta'}^+ \backslash G / \widehat{P}_{\mathcal{F}}$.
\end{proof}

\subsection{Bruhat decomposition}

In the following theorem, we generalize  the affine Bruhat decomposition \cite[7.3.4]{BruhatTits1}. In the particular case where $G=\mathbf{G}(\mathbb{K})$ for a split reductive group $\mathbf{G}$ over a $2$-local field $\mathbb{K}$, we recover a result of Kapranov (see \cite[Proposition (1.2.3)]{kapranov2001double}).

\begin{Thm}\label{ThmBruhatDecomposition}\index{Bruhat decomposition}\index{decomposition!Bruhat}
Suppose that the totally ordered abelian group $\Rtot$ is equipped with a $\Q$-module structure. 
Let $C,C'$ be two vector chambers of $V_\Rtot$ and $x,x' \in \mathbb{A}_\Rtot$ be two points.
Then
\[ G = \widehat{P}_{\mathcal{F}_{x,C}} N \widehat{P}_{\mathcal{F}_{x',C'}} \]
and there is a natural one-to-one correspondence
\[ N / T_b \to \widehat{P}_{\mathcal{F}_{x,C}} \backslash G / \widehat{P}_{\mathcal{F}_{x',C'}} \]
\end{Thm}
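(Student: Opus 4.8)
The plan is to deduce the affine Bruhat decomposition from the Iwasawa decomposition (Theorem~\ref{thmIwasawa}), following the strategy of \cite[7.3.4]{BruhatTits1}. Write $B = \widehat{P}_{\mathcal{F}_{x,C}}$ and $B' = \widehat{P}_{\mathcal{F}_{x',C'}}$, and set $\Delta = \Delta_C$, $\Delta' = \Delta_{C'}$. By Theorem~\ref{thmIwasawa} applied with the chamber $-C'$ (so that $U_{-C'} = U_{\Delta'}^-$) and the germ $\mathcal{F}_{x',C'}$, we have $G = U_{\Delta'}^- N B'$, hence it suffices to show that $U_{\Delta'}^- \subset B N B'$. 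Actually, the cleaner route is to first observe that $G = U_{C}^- N B'$ and $G = U_{C}^+ N B'$ both hold (Iwasawa for the two opposite chambers), so $G = \langle U_C^+, U_C^-\rangle N B'$ as a set-theoretic statement would suffice if that were a group; instead we argue as in Bruhat--Tits that $B \supset T_b$ together with the rank-one Levi subgroups $L_{\alpha,\mathcal{F}_{x,C}}$ generates enough. The key point to establish is: for each simple root $\alpha \in \Delta$ (relative to $C$), the set $B N B'$ is stable under left multiplication by $U_{-\alpha}$ and by $U_\alpha$; since these root groups for $\alpha \in \Delta$ together with $N$ and $T$ generate $G$ (the root group datum is generating and $N$ surjects onto $W^v$), this forces $B N B' = G$.

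For the stability step I would mimic the second step of the proof of Theorem~\ref{thmIwasawa}. Fix $\alpha \in \Delta$ and let $L_\alpha = \langle U_\alpha, U_{-\alpha}, T\rangle$. By Corollary~\ref{CorUaGermofChamber}, $U_{\alpha,\mathcal{F}_{x,C}} = U_{\alpha,x}$ for $\alpha \in \Phi_C^+$ and $= U'_{\alpha,x}$ for $\alpha \in \Phi_C^-$; combined with Corollary~\ref{CorDecompositionIwahori} this gives the explicit Iwahori-type decomposition of $B$. Writing an element $u \in U_{-\alpha}$ in the form $u = v' m v''$ with $m \in M_\alpha$, $v',v'' \in U_\alpha$ (using axiom~\ref{axiomRGD4}), and using axiom~\ref{axiomV5bis} to control $\varphi_\alpha(v'')$, one checks that either $u$ already lies in $B$ or, after absorbing the $M_\alpha$ part into $N$, the remaining factors land back in $U_{\pm\alpha} B$. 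Then, using $U_\Delta^+ = V_\alpha U_\alpha$ with $V_\alpha$ normalized by $U_{\pm\alpha}$ (via \cite[6.1.6]{BruhatTits1} and axiom~\ref{axiomRGD2}) exactly as in the Iwasawa proof, one propagates stability of $B N B'$ under the whole of $\langle U_\alpha, U_{-\alpha} : \alpha \in \Delta\rangle T = G$. I expect this Levi-by-Levi bookkeeping to be the main obstacle: one must be careful that the germ $\mathcal{F}_{x,C}$ (rather than a point) is what enters, so that it is the primed groups $U'_{\alpha,x}$ that appear on the negative side, and the case distinction $\varphi_\alpha(u) \gtrless -\alpha(x)$ has to be handled with the correct strict/non-strict inequalities — this is where passing to a $\mathbb{Q}$-module structure on $\Rtot$ (so that Corollaries~\ref{CorUaGermofChamber} and~\ref{CorDecompositionIwahori} apply) is used.

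For the uniqueness part — the bijection $N/T_b \to B \backslash G / B'$ — I would argue as follows. Surjectivity of $n \mapsto B n B'$ is immediate from $G = B N B'$. For injectivity, suppose $n' \in B n B'$ with $n, n' \in N$. Conjugating, $(n')^{-1} n \in B'' n^{-1} B n$ where $B'' = (n')^{-1} B n'$... more precisely, reduce to showing $B n B' \cap N = n T_b$: if $n' = b n b'$, then $(n')^{-1} n \in b' {}^{-1} n^{-1} b^{-1} \cdot$ (stuff), and one uses the Iwahori decompositions of $B$ and of $n^{-1} B n = \widehat{P}_{\mathcal{F}_{\nu(n^{-1})(x), n^{-1}\cdot C}}$ (Corollary~\ref{CorConjugationParahorics}) together with the spherical Bruhat decomposition \cite[6.1.15(c)]{BruhatTits1} relative to the chamber $n^{-1}\cdot C'$ (or $C'$) to conclude that the double coset of $(n')^{-1}n$ in $U^- \backslash G / U^+$ comes from $T_b$, hence $(n')^{-1} n \in T_b$. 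This is the same endgame as in the fourth step of the proof of Theorem~\ref{thmIwasawa}; the translation-component of the chambers is handled by $W^{\mathrm{aff}} = N/T_b$ so no new difficulty arises beyond carefully tracking which chamber's positive/negative unipotent subgroups are being used. Finally, since both parts follow formally once $G = B N B'$ is proved, essentially all the work is concentrated in the stability argument sketched above.
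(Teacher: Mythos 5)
Your reduction of the bijectivity statement to the equality $G = \widehat{P}_{\mathcal{F}_{x,C}} N \widehat{P}_{\mathcal{F}_{x',C'}}$, and your starting point (Iwasawa with respect to an opposite chamber), both match the paper. But the core of your existence argument --- proving that $Z := \widehat{P}_{\mathcal{F}_{x,C}} N \widehat{P}_{\mathcal{F}_{x',C'}}$ is stable under left multiplication by the full root groups $U_{\pm\alpha}$ by mimicking the second step of the Iwasawa proof --- does not go through, and this is precisely where the paper has to do different and harder work. In the Iwasawa proof the left-hand factor of $Z$ is the full unipotent group $U^+ = U_{C'}$, so after writing $u = v'mv''$ with $v',v'' \in U_\alpha$ the stray factors (and the conjugates $nU_{-\beta}n^{-1} = U_\alpha$) are absorbed into $U^+$. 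Here the left-hand factor is the Iwahori-type group $\widehat{P}_{\mathcal{F}_{x,C}}$, which contains only $U_{\alpha,x}$ or $U'_{\alpha,x}$ and never the full $U_\alpha$; so ``the remaining factors land back in $U_{\pm\alpha}B$'' leaves you needing $U_\alpha \subset BNB'$ for full root groups and all their $N$-conjugates, which is a special case of the theorem itself --- the argument regresses rather than closes. This is exactly why the double cosets are indexed by the infinite group $N/T_b$ rather than by $W^v$: no finite list of rank-one decompositions $L_\alpha = U_\alpha T \cup U_\alpha T m_\alpha U_\alpha$ can suffice, and one would instead need the full Tits-system product formula $BsBwB \subset BswB \cup BwB$ for affine reflections, which you have not established.

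The paper's actual route is: apply Iwasawa to write $g = unu'$ with $u \in U_\Delta^-$; choose $y = x - \lambda v$ far enough in the $-C$ direction that $u \in U'_{-\beta,y}$ for every $\beta$, hence $u \in \widehat{P}_{\mathcal{F}_{y,C}}$ and $g \in \widehat{P}_{\mathcal{F}_{y,C}} N \widehat{P}_{\mathcal{F}_{x',C'}}$; and then transport the base point from $y$ back to $x = y + \lambda v$ along the half-line $y + \delta_v$ using Lemma~\ref{LemTechnicalBruhat}. That lemma is the heart of the proof and has no counterpart in your sketch: it is an induction on the length $\ell(x,C,x',C')$ of the Weyl element provided by Proposition~\ref{PropGoodSector}, which at each ``jump point'' invokes the spherical Bruhat decomposition in the quotient root group datum $\overline{G_y}$ of Theorem~\ref{ThmLocalRootSystem} and a delicate comparison of lengths. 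The $\mathbb{Q}$-module hypothesis on $\Rtot$ is needed there to form the scalars $\lambda_\beta = \big(-\varphi_\beta(u_\beta)-\beta(x)\big)/\beta(v)$, not only for Corollaries~\ref{CorUaGermofChamber} and~\ref{CorDecompositionIwahori}. Your uniqueness sketch is closer to the mark, but ``no new difficulty arises'' is too quick: since $x \neq x''$ in general, one needs the common chamber $C^{(3)} = C(x,x'',C'')$ from Proposition~\ref{PropGoodSector} and the product formula of Proposition~\ref{PropBizarre}, together with $\widehat{P}_{\mathcal{F}} = P_{\cl(\mathcal{F})}$ and the filter manipulations around it, before the spherical Bruhat decomposition can be applied.
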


\begin{Not}\label{NotGoodSector}
Let $C,C'$ be two vector chambers of $V_\Rtot$ and $x,x' \in \mathbb{A}_\Rtot$ be two points.
We denote by $Q = x+C$ and $Q' = x'+C'$ the corresponding sectors in $\mathbb{A}_\Rtot$.
Denote by $\Delta = \Delta_C$ and $\Delta' = \Delta_{C'}$ respectively the bases of $\Phi$ defining $C = C^v_{\Rtot,\Delta}$ and $C' = C^v_{\Rtot,\Delta'}$.

According to Proposition~\ref{PropGoodSector}, there is a unique $w \in W(\Phi)$ such that $x'-x \in \overline{C}^v_{w(\Delta)}$ and $C^v_{w(\Delta)} \cap (x'-x + C') \neq \emptyset$.
We denote it by $w(x,C,x',C') = w$.
We also denote by $\ell(x,C,x',C')$ the length of $w$ in the Coxeter system $\left( W(\Phi), (r_\alpha)_{\alpha \in \Delta} \right)$.

We denote by $\Delta'' = w(\Delta)$, by $C'' = C^v_{w(\Delta)}$ and by $Q'' = x + C''$ so that $x' \in \overline{Q''}$ and $Q' \cap Q'' \neq \emptyset$.

Note that the uniqueness of $w \in W(\Phi)$ and the simple transitivity of the action of $W(\Phi)$ imply that $\Delta''$, and therefore $C''$, is uniquely determined by $x,x',C'$.
We denote it by $C(x,x',C') = C''$.
\end{Not}

\begin{lemma}\label{l_neighbourhood_chambers}
    Let $x,x'\in \A_\Rtot$ and $C,C'$ be two vector chambers of $\A_\Rtot$. Let $C''=C(x,x',C')$. Then there exists a neighbourhood $\Omega$ of $x'$ in $\A_\Rtot$ such that $\Omega\cap (x'+C')\subset x+C''$. In other words, $x+C''\in \germ_{x'}(x'+C')$. \end{lemma}
    
\begin{proof}
   Let $\mathscr{C}_{x'-x}$ be the set of vector chamber dominating $x'-x$ (i.e containing $x'-x$ in their closures). Let $\tilde{C}\in \mathscr{C}_{x'-x}$. Let us prove that there exists a neighbourhood $\Omega_{\tilde{C}}$ of $x'-x$ in $V_\Rtot$ such that $\overline{\tilde{C}}\cap \Omega_{\tilde {C}}\cap ((x'-x)+C')=\emptyset$, unless $\tilde{C}=C''$. 

 Set $\Delta''=\Delta_{C''}$ and set $\Delta''_{x'-x}=\{\alpha\in \Delta''\mid \alpha(x'-x)=0\}$. Let $\tilde{\Delta}=\Delta_{\tilde{C}}$  and $\tilde{\Delta}_{x'-x}=\{\alpha\in \tilde{\Delta}\mid \alpha(x'-x)=0\}$. If $\tilde{\Delta}_{x'-x}$ is non-empty, take $\beta\in \tilde{\Delta}_{x'-x}$. We can write $\beta=\sum_{\alpha\in \Delta''} n_\alpha \alpha$, where $(n_\alpha)\in ( \epsilon\Z_{\geq 0})^{\Delta''}$ for some $\epsilon\in \{-1,1\}$. Then $\beta(x'-x)=\sum_{\alpha\in \Delta''\setminus \Delta''_{x'-x}} n_\alpha \alpha(x'-x)=0$ and thus $n_\alpha=0$ for all $\alpha\in \Delta''\setminus \Delta''_{x'-x}$. Consequently, \begin{equation}\label{e_inclusions_Delta_x'x}
  \tilde{\Delta}_{x'-x}\subset \bigoplus_{\alpha\in \Delta''_{x'-x}}\epsilon \Z_{\geq 0} \alpha.
 \end{equation}
    
    If $\Delta''_{x'-x}$ is non-empty, take $\alpha\in \Delta''_{x'-x}$. By definition of $C''$, there exists $z\in (x+C'')\cap (x'+C')$. Then $\alpha(z)>\alpha(x)=\alpha(x')$ and as the sign of $\alpha$ is constant on $C'$ (by Lemma~\ref{lemSign_of_a_vectorial_face}), we have $\alpha(C')>0.$ Therefore \begin{equation}\label{e_sign_C'}\alpha(C')>0,\  \forall\alpha\in \Delta''_{x'-x}.\end{equation}

 For $\alpha\in \tilde{\Delta}\setminus \tilde{\Delta}_{x'-x}$, we have $\alpha(x'-x)> 0$.   Let \[\Omega_{\tilde{C}}=\{y\in \A_\Rtot\mid \alpha(y)>0,\forall \alpha\in \tilde{\Delta}\setminus \tilde{\Delta}_{x'-x}\}.\] This is an open subset of $\A_\Rtot$ containing $x'-x$. Assume the existence of $y\in (x+\Omega_{\tilde{C}})\cap (x'+C')\cap  (x +\overline{\tilde{C}})$. Then $y-x\in \Omega_{\tilde{C}}\cap (x'-x+C')\cap \overline{\tilde{C}}$. Let $\alpha\in \tilde{\Delta}$. If $\alpha\notin \tilde{\Delta}_{x'-x}$, we have $\alpha(y-x)>0$, by  choice of $\Omega_{\tilde{C}}$. Assume now $\alpha\in \tilde{\Delta}_{x'-x}$. Then by \eqref{e_inclusions_Delta_x'x}, there exists $\epsilon\in \{-1,1\}$ and $(n_\beta)\in (\epsilon\Z_{\geq 0})^{\Delta''_{x'-x}}$ such that $\alpha=\sum_{\beta\in \Delta''_{x'-x}} n_\beta \beta$. We thus have 
$$\alpha(y-x)=\alpha(y-x')=\sum_{\beta\in \Delta''_{x'-x}} n_\beta \beta(y-x')\in \epsilon \Rtot_{>0}$$
since $\beta(C')>0$ for $\beta \in \Delta''_{x'-x}$.
 
 Moreover, $\alpha(\tilde{C})\geq 0$, and thus $\alpha(y-x)\in \Rtot_{>0}$. Therefore we proved:  \[\alpha(y-x)>0, \forall \alpha\in \tilde{\Delta}.\]  Therefore $y-x\in \tilde{C}$ and $y\in (x'+C')\cap (x+\tilde{C})$. By definition of $C''$, we necessarily have $\tilde{C}=C''$. Consequently:\begin{equation}\label{e_intersection_neighbourhoods}(x+\Omega_{\tilde{C}})\cap (x'+C')\cap  (x +\overline{\tilde{C}})=\emptyset,\forall \tilde{C}\in \mathscr{C}_{x'-x}\setminus \{C''\} 
 \end{equation}

and \begin{equation}\label{e_inclusion_Omega}
(x+\Omega_{C''})\cap (x'+C')\cap (x+\overline{C''})\subset x+C''.
\end{equation}
   
   Let now $\Omega''=x+\bigcap_{\tilde{C}\in \mathscr{C}_{x'-x}}\Omega_{\tilde{C}}$.  Let $\Omega'=x+\bigcup_{\tilde{C}\in \mathscr{C}_{x'-x}} \overline{\tilde{C}}$.  By Lemma~\ref{l_neighbourhood}, $\Omega'$ is a neighbourhood of $x'$ in $\A_\Rtot$. Set $\Omega=\Omega'\cap \Omega''$. 
   
   Let $y\in \Omega\cap (x'+C')$. Let $\tilde{C}\in \mathscr{C}_{x'-x}$ be such that $y\in x+\overline{\tilde{C}}$.  Then by \eqref{e_intersection_neighbourhoods} and \eqref{e_inclusion_Omega},  $\tilde{C}=C''$ and =$y\in (x+\Omega_{C''})\cap (x'+C')\cap (x+C'').$  Therefore $\Omega\cap (x'+C')\subset x+C''$.\end{proof}

\begin{Lem}\label{LemActionWeylIwahoriFilters}
Let $x \in \mathbb{A}_\Rtot$ and let $C$ be a vector chamber of $V_\Rtot$.
Then for any $n \in N$, we have $n \widehat{P}_{\mathcal{F}_{x,C}} n^{-1} = \widehat{P}_{\mathcal{F}_{\nu(n)(x), {^v\!}\nu( n )(C) }}$.
\end{Lem}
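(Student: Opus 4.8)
\textbf{Proof strategy for Lemma~\ref{LemActionWeylIwahoriFilters}.}

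The plan is to reduce the statement about parahoric subgroups attached to germs of sectors to the already-established conjugation formula for parahorics attached to arbitrary subsets, namely Corollary~\ref{CorConjugationParahorics}, which gives $n \widehat{P}_\Omega n^{-1} = \widehat{P}_{\nu(n)(\Omega)}$ for every non-empty $\Omega \subset \mathbb{A}_\Rtot$ and every $n \in N$. First I would recall that, by definition (see Section~\ref{SubsecGerms}), the germ $\mathcal{F}_{x,C}$ is the filter $\{\Omega \cap (x+C) \mid \Omega \text{ is a neighborhood of } x\}$, and that for a filter $\VC$ one sets $\widehat{P}_\VC = \bigcup_{\Omega \in \VC} \widehat{P}_\Omega$. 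Since $\widehat{P}$ is a decreasing map from subsets to subgroups, $\widehat{P}_{\mathcal{F}_{x,C}}$ is the increasing union $\bigcup_{\Omega \in \mathcal{F}_{x,C}} \widehat{P}_\Omega$, hence a subgroup of $G$.

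The key step is then to observe that conjugation by $n$ carries the filter $\mathcal{F}_{x,C}$ onto the filter $\mathcal{F}_{\nu(n)(x), {^v\!}\nu(n)(C)}$, in the sense that $\nu(n)\big(\mathcal{F}_{x,C}\big) = \mathcal{F}_{\nu(n)(x),{^v\!}\nu(n)(C)}$ as filters on $\mathbb{A}_\Rtot$. This follows because $\nu(n)$ is an $R$-aff automorphism of $\mathbb{A}_\Rtot$ (in particular a homeomorphism for the order topology), so it sends neighborhoods of $x$ to neighborhoods of $\nu(n)(x)$; and its linear part is ${^v\!}\nu(n)$ by axiom~\ref{axiomCA1}, so it sends the sector $x+C$ to $\nu(n)(x) + {^v\!}\nu(n)(C)$ (using that the direction of a translated vector chamber transforms through the linear part, together with $\nu(n)(C^v_{\Rtot,\Delta}) = C^v_{\Rtot,{^v\!}\nu(n)(\Delta)}$ as already used in Corollary~\ref{corWeylVectorial} and Fact~\ref{FactActionOfReflection}). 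Intersecting a neighborhood of $x$ with $x+C$ and applying $\nu(n)$ therefore yields the intersection of a neighborhood of $\nu(n)(x)$ with $\nu(n)(x) + {^v\!}\nu(n)(C)$, and conversely every such set arises this way since $\nu(n)$ is bijective; this establishes the claimed equality of filters.

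Combining these two observations finishes the proof:
\[
n \widehat{P}_{\mathcal{F}_{x,C}} n^{-1} = n \left( \bigcup_{\Omega \in \mathcal{F}_{x,C}} \widehat{P}_\Omega \right) n^{-1} = \bigcup_{\Omega \in \mathcal{F}_{x,C}} n \widehat{P}_\Omega n^{-1} = \bigcup_{\Omega \in \mathcal{F}_{x,C}} \widehat{P}_{\nu(n)(\Omega)} = \bigcup_{\Omega' \in \mathcal{F}_{\nu(n)(x),{^v\!}\nu(n)(C)}} \widehat{P}_{\Omega'} = \widehat{P}_{\mathcal{F}_{\nu(n)(x),{^v\!}\nu(n)(C)}},
\]
where the third equality is Corollary~\ref{CorConjugationParahorics} and the fourth uses the reindexing $\Omega' = \nu(n)(\Omega)$ justified above. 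The only point requiring a little care — and the one I expect to be the mildest obstacle — is the bookkeeping that conjugation commutes with the union defining $\widehat{P}_\VC$ and that $\nu(n)$ really does induce a bijection between the two germ filters rather than merely mapping one into the other; both are routine once one unwinds that $\nu(n)$ is an invertible $R$-aff map with linear part ${^v\!}\nu(n)$.
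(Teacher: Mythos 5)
Your proof is correct and follows essentially the same route as the paper: both reduce to Corollary~\ref{CorConjugationParahorics} via the union defining $\widehat{P}_{\mathcal{F}_{x,C}}$ and then identify $\{\nu(n)(\Omega)\mid \Omega\in\mathcal{F}_{x,C}\}$ with $\mathcal{F}_{\nu(n)(x),{^v\!}\nu(n)(C)}$ using $\nu(n)(x+C)=\nu(n)(x)+{^v\!}\nu(n)(C)$. You merely spell out in more detail the bijection of filters that the paper treats as immediate from the definitions.
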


\begin{proof}
We have $n \widehat{P}_{\mathcal{F}_{x,C}} n^{-1} = \bigcup_{\Omega \in \mathcal{F}_{x,C}} n \widehat{P}_\Omega n^{-1}$.
By~\ref{CorConjugationParahorics}, we have $n \widehat{P}_\Omega n^{-1} = \widehat{P}_{\nu(n)(\Omega)}$.
Moreover, by definition:  $$\left\{ \nu(n)(\Omega),\ \Omega \in \mathcal{F}_{x,C} \right\} = \mathcal{F}_{\nu(n)(x),{^v\!}\nu(n)(C)}$$ since $\nu(n)(x+C) = \nu(n)(x) + {^v\!}\nu(n)(C)$.
Thus: $$n \widehat{P}_{\mathcal{F}_{x,C}} n^{-1} = \bigcup_{\Omega \in \mathcal{F}_{\nu(n)(x),{^v\!}\nu(n)(C)}} \widehat{P}_\Omega = \widehat{P}_{\mathcal{F}_{\nu(n)(x), {^v\!}\nu( n )(C) }}.$$
\end{proof}

We follow the proof given by Bruhat and Tits. It relies on the following technical Lemma from \cite[7.3.6]{BruhatTits1}. In our context, the notion of a half-line is more delicate but it has been introduced in Definition~\ref{DefHalfLine} so that for a well-chosen $v \in V_\Z$, given by Lemma~\ref{LemWellChosen}, one can follow word by word the proof given by Bruhat and Tits.

\begin{Lem}\label{LemTechnicalBruhat}
Suppose that $\Rtot$ is equipped with a $\Q$-module structure.
Let $C,C'$ be two vector chambers of $V_\Rtot$ and $x,x' \in \mathbb{A}_\Rtot$ be two points.
Let $g \in G$ and $n \in N$ be such that $g \in \widehat{P}_{\mathcal{F}_{x,C}} n \widehat{P}_{\mathcal{F}_{x',C'}}$.
Let $\Delta = \Delta_C$ the basis of the vector chamber $C$.
Let $v \in C^v_{\mathbb{Z},\Delta}$.

\begin{enumerate}[label=(\arabic*)]
\item Then we have either:\label{technicalBruhatStep1}
\begin{enumerate}[label=(\alph*)]
\item $g \in \widehat{P}_{\mathcal{F}_{z,C}} n \widehat{P}_{\mathcal{F}_{x',C'}}$ for any $z \in x + \delta_v$ or\label{technicalBruhatCond1}
\item there exist a value $\lambda \in \Rtot_{>0}$ and an element $n' \in N$ such that:\label{technicalBruhatCond2}
\begin{enumerate}
\item $g \in \widehat{P}_{\mathcal{F}_{z,C}} n \widehat{P}_{\mathcal{F}_{x',C'}}$ for any $z = x + \mu v$ with $\mu \in ]0,\lambda[$,\label{technicalBruhatEq1}
\item $g \in \widehat{P}_{\mathcal{F}_{y,C}} n' \widehat{P}_{\mathcal{F}_{x',C'}}$ with $y = x + \lambda v$,\label{technicalBruhatEq2}
\item $\ell(y,C,n'\cdot x',n' \cdot C') > \ell(x,C,n\cdot x',n\cdot C')$.\label{technicalBruhatEq3}
\end{enumerate}
\end{enumerate}
\item Moreover, we have $g \in \widehat{P}_{\mathcal{F}_{z,C}} N \widehat{P}_{\mathcal{F}_{x',C'}}$ for any $z \in x +\delta_v$.\label{technicalBruhatStep2}
\end{enumerate}
\end{Lem}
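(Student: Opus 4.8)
The statement is the key technical lemma driving the Bruhat decomposition, and the plan is to follow \cite[7.3.6]{BruhatTits1} as closely as possible, the only real novelty being that ``moving towards infinity along a half-line'' must be interpreted via the half-line $\delta_v$ of Definition~\ref{DefHalfLine} for a well-chosen $v \in C^v_{\mathbb{Z},\Delta}$ (which is non-empty by Lemma~\ref{LemWellChosen}), rather than via the topology of $\mathbb{R}$. First I would fix notation: write $Q = x + C$, $Q' = x' + C'$, pick $v \in C^v_{\mathbb{Z},\Delta}$ so that $\alpha(v) \in \mathbb{Z}_{>0}$ for every $\alpha \in \Phi^+_\Delta$ and $\delta_v \subset C^v_{\Rtot,\Delta}$ by Lemma~\ref{LemHalfLineInSector}. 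Since $g \in \widehat{P}_{\mathcal{F}_{x,C}}\, n\, \widehat{P}_{\mathcal{F}_{x',C'}}$, I would use the Iwahori-type decomposition of Corollary~\ref{CorDecompositionIwahori} (together with Corollary~\ref{CorUaGermofChamber}) to write $g = p\, n\, q$ with $p \in \widehat{P}_{\mathcal{F}_{x,C}}$ decomposed as a product of root-group elements $U_{\alpha,x}$ for $\alpha \in (\Phi^+_\Delta)_{\mathrm{nd}}$, $U'_{\alpha,x}$ for $\alpha \in (\Phi^-_\Delta)_{\mathrm{nd}}$ and an element of $T_b$.

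The core of the argument is to analyze how the ``negative'' factors of $p$, i.e.\ the elements $u \in U'_{\alpha,x}$ for $\alpha \in (\Phi^-_\Delta)_{\mathrm{nd}}$, behave as one slides the base point from $x$ to $x + \mu v$. For such a root $\alpha$ we have $\alpha(v) < 0$, so for $z = x + \mu v$ with $\mu$ small enough the inequality $\varphi_\alpha(u) > -\alpha(z) = -\alpha(x) - \mu\alpha(v)$ persists, placing $u$ in $U'_{\alpha,z} = U_{\alpha,\mathcal{F}_{z,C}}$ and hence in $\widehat{P}_{\mathcal{F}_{z,C}}$; this gives case~\ref{technicalBruhatCond1} unless some negative factor ``crosses a wall''. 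When a wall crossing occurs — precisely when $-\alpha(x) - \mu\alpha(v) = \varphi_\alpha(u) \in \Gamma'_\alpha$ for the critical value $\mu = \lambda$ — the offending factor $u \in U_{\alpha,y} \setminus U'_{\alpha,y}$ ($y = x + \lambda v$) can be written using axioms~\ref{axiomRGD4} and Proposition~\ref{PropValuedCoset} as $u = u' m_\alpha u''$ with $m_\alpha \in M_{\alpha,\varphi_\alpha(u)}$ and $u', u'' \in U'_{-\alpha, -\varphi_\alpha(u)} = U'_{-\alpha,y} \subset \widehat{P}_{\mathcal{F}_{y,C}}$; absorbing $u', u''$ into the parabolic factors and commuting $m_\alpha$ across $n$ (using axiom~\ref{axiomRGD5} and Lemma~\ref{LemConjugationMaUb} to control valuations) replaces $n$ by $n' = m_\alpha n$ (up to $T_b$). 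The length increase~\ref{technicalBruhatEq3} is exactly the statement that $r_\alpha$ applied to the relevant chamber increases the combinatorial length, which follows from the characterization of $w(\cdot,\cdot,\cdot,\cdot)$ in Notation~\ref{NotGoodSector} via Proposition~\ref{PropGoodSector} and the standard Coxeter-group fact that $\ell(r_\alpha w) = \ell(w) + 1$ when $w^{-1}(\alpha) > 0$; here $\alpha$ is simple for $\Delta = \Delta_C$, so one checks the sign condition directly from the position of $x' - y$ relative to the walls.

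Part~\ref{technicalBruhatStep2} then follows from part~\ref{technicalBruhatStep1} by iteration: starting from $x$, each application of~\ref{technicalBruhatCond2} strictly increases $\ell(\cdot,C,\cdot,\cdot)$, which is bounded by $\lvert\Phi^+\rvert$ since $W(\Phi)$ is finite, so after finitely many steps only case~\ref{technicalBruhatCond1} can occur, giving $g \in \widehat{P}_{\mathcal{F}_{z,C}}\, N\, \widehat{P}_{\mathcal{F}_{x',C'}}$ for all $z$ in the relevant portion of $x + \delta_v$; a covering/connectedness argument along the half-line $\delta_v$ — using that between consecutive wall-crossing values the double coset is constant — propagates this to every $z \in x + \delta_v$. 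The main obstacle I anticipate is the bookkeeping in the wall-crossing step: one must verify carefully that after extracting $m_\alpha$ and re-absorbing the $U'_{\pm\alpha}$ factors into the two parahorics, the remaining positive factors still lie in $\widehat{P}_{\mathcal{F}_{y,C}}$ and the factor on the right still lies in $\widehat{P}_{\mathcal{F}_{x',C'}}$, which requires invoking the various commutator estimates of Section~\ref{SecVRGD} (Lemmas~\ref{LemCommutatorRankOne}, \ref{LemCommutationUaU'a} and Proposition~\ref{PropCommutationTbUa}) to move $m_\alpha$ past the other root-group factors without losing control of valuations — the same subtlety as in \cite{BruhatTits1}, but now one cannot appeal to $\inf$/$\sup$ in $\Rtot$ and must argue with explicit intersections of half-spaces instead.
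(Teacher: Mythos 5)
Your overall skeleton (Iwahori-type factorisation of the left parahoric via Corollary~\ref{CorDecompositionIwahori}, sliding the base point along $\delta_v$, a wall-crossing that produces a new $n'$, and an iteration bounded by the length in the finite Weyl group) is the same as the paper's, and your treatment of part~\ref{technicalBruhatStep2} is essentially correct. But part~\ref{technicalBruhatStep1} has genuine gaps. First, you never invoke the chamber $C''=C(x,x',C')$ of Notation~\ref{NotGoodSector}: the negative factors of the left parahoric element must be split according to $\Psi^{\pm}=(\Phi_\Delta^-)_{\mathrm{nd}}\cap\Phi_{\Delta''}^{\pm}$, and the $\Psi^+$ part absorbed into $\widehat{P}_{\mathcal{F}_{x',C'}}$ \emph{before} any sliding. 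If those factors stay on the left, their wall-crossings correspond to reflections that shorten $w(x,C,x',C')$, and both~\ref{technicalBruhatEq3} and the termination of the iteration fail. Second, your wall-crossing step treats a single root-group element via~\ref{axiomRGD4}, but $u=\prod_{\alpha\in\Psi^-}u_\alpha$ is a product over several roots, several walls may be reached at the same critical value $\lambda$, and extracting an $m_\alpha$ from the middle of the product is precisely the uncontrolled commutator problem you flag at the end. The mechanism that resolves it is the residue root group datum of Theorem~\ref{ThmLocalRootSystem}: since $u\in U_y\setminus U'_y$, its image $\overline{u}$ in $\overline{G_y}$ is nontrivial, and the \emph{spherical} Bruhat decomposition there gives $\overline{u}=\overline{u'}\,\overline{n'}\,\overline{u''}$ with lifts in $U_{y+C}$, $N_y$ and $U_{y+C'''}$ (where $C'''=C(y,x',C')$), modulo $U'_y\subset\widehat{P}_{\mathcal{F}_{y,C}}$; in particular $n'$ is a general element of $N_y$, not $m_\alpha n$ for a single reflection.

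Third, and most seriously, your justification of~\ref{technicalBruhatEq3} does not go through: the crossed root $\alpha\in\Psi^-$ is a negative, generally non-simple, root for $\Delta=\Delta_C$, so the fact ``$\ell(r_\alpha w)=\ell(w)+1$ for $\alpha$ simple'' does not apply, and since $\nu(n')$ need not be a reflection the claim cannot be reduced to a single exchange condition. What is actually required is (a) the monotonicity $\ell(y,C,x',C')\geq\ell(x,C,x',C')$, obtained by counting the hyperplanes separating the chambers as one moves along $x+\delta_v$, and (b) the inequality $\ell(\nu(n')w')\geq\ell(w')$ with equality only if $\nu(n')=1$, which the paper proves by a gallery argument in the residue building (tracking reduced decompositions of the longest element through the chambers $w_i(C)$ and their $W_y$-envelopes, with the partition into the index sets $I_1,I_2,I_3$), together with the observation that if $\nu(n')=1$ and the lengths were equal then $\overline{u}$ would lie in $\overline{U_\Delta^-}\cap\overline{U_{\Delta''}^-}\cap\overline{U_\Delta^+}\cdot\overline{T}\cdot\overline{U_{\Delta''}^+}=\{1\}$, contradicting $u\notin U'_y$. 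This is the heart of the lemma and cannot be dismissed as a routine Coxeter computation. (A smaller slip: by~\ref{axiomV5} the factors $u',u''$ in a decomposition $u=u'mu''$ satisfy $\varphi_{-\alpha}(u')=-\varphi_\alpha(u)$ exactly, so they lie in $U_{-\alpha,y}$ rather than $U'_{-\alpha,y}$; this happens to be harmless only because $-\alpha\in\Phi_\Delta^+$.)
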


%\green{
\begin{proof}[Proof of~\ref{technicalBruhatStep1}]
Recall that $\Delta = \Delta_C$ and by $\Delta' = \Delta_{C'}$. 

\paragraph{Reduction step:} If $g \in \widehat{P}_{\mathcal{F}_{x,C}} n \widehat{P}_{\mathcal{F}_{x',C'}}$, then $gn^{-1} \in \widehat{P}_{\mathcal{F}_{x,C}} \widehat{P}_{\mathcal{F}_{n \cdot x',n \cdot C'}}$ by Lemma~\ref{LemActionWeylIwahoriFilters}.
Thus, up to replace $g$, $x'$ and $C'$ by $gn^{-1}$, $n\cdot x'$ and $ n\cdot C'$ respectively, we can assume that $n=1$.

Let $C'' = C(x,x',C')$, $\Delta'' = \Delta_{C''}$ and $Q,Q',Q''$ as in Notation~\ref{NotGoodSector}.
Denote $\Psi^- = \left(\Phi_{\Delta}^-\right)_{\mathrm{nd}} \cap \Phi_{\Delta''}^-$ and $\Psi^+ = \left(\Phi_{\Delta}^-\right)_{\mathrm{nd}} \cap \Phi_{\Delta''}^+$.
Let $b \in \widehat{P}_{\mathcal{F}_{x,C}}$.
By Corollary~\ref{CorDecompositionIwahori}, we can write $b$ as a product:
\[
b = \left( \prod_{\alpha\in \left(\Phi_{\Delta}^+\right)_{\mathrm{nd}}} u_\alpha \right)\left( \prod_{\alpha \in \Psi^-} u_\alpha \right)\left( \prod_{\alpha \in \Psi^+} u_\alpha \right) t\]
where $u_\alpha \in U_{\alpha,x}$ for every $\alpha \in \left(\Phi_{\Delta}^+\right)_{\mathrm{nd}}$ and $u_\alpha \in U'_{\alpha,x}$ for every $\alpha \in \left(\Phi_{\Delta}^-\right)_{\mathrm{nd}}$ and $t \in T_b$.

For $\alpha \in \Phi_{\Delta}^+$ and $z \in x + \delta_v$, write $z = x + \lambda v$ with $\lambda \in \Rtot_{>0}$.
Then, we have $\alpha(z) = \alpha(x+\lambda v) = \alpha(x) + \lambda \alpha(v) > \alpha(x)$ where the last inequality follows from the assumptions on $v$.
Thus
\begin{align*} u_\alpha \in& U_{\alpha,\mathcal{F}_{x,C}} = U_{\alpha,x} & \text{ by Corollary }~\ref{CorUaGermofChamber} \text{ since } \alpha \in \Phi_{\Delta}^+\\
& = \varphi_\alpha^{-1}([-\alpha(x),+\infty]) \subset \varphi_\alpha^{-1}([-\alpha(z),+\infty]) & \text{ since } -\alpha(z) < -\alpha(x)\\
& = U_{\alpha,z} = U_{\alpha,\mathcal{F}_{z,C}}& \text{ by Corollary }~\ref{CorUaGermofChamber} \text{ since } \alpha \in \Phi_{\Delta}^+\\
& \subset \widehat{P}_{\mathcal{F}_{z,C}}&
\end{align*}
On the other side, we have $u_\alpha \in U_{\alpha,\mathcal{F}_{x,C}} \subset U_{\alpha,x}$ since $U_{\alpha,\mathcal{F}_{x,C}}$ is either equal to $U_{\alpha,x}$ or $U'_{\alpha,x}$ by Corollary~\ref{CorUaGermofChamber} and $U'_{\alpha,x} \subset U_{\alpha,x}$.
By definition of $C''$, we know that $x' \in x + \overline{C''} = \overline{Q''}$ and there is some element $y \in Q' \cap Q''$.
Write $y=x' + v' \in x + C''$ with $v' \in C'$.
For any $\alpha \in \Phi_{\Delta''}^+$, we have $\alpha(x') - \alpha(x)\geqslant 0$ since $x'-x \in \overline{C''}$ and $\alpha(y-x) = \alpha(x') + \alpha(v') - \alpha(x) > 0$ since $(x'-x) + v' \in C''$.
Hence, if $\alpha \in \Phi_{\Delta'}^+$, then 
\[u_\alpha \in U_{\alpha,x} = \varphi_\alpha^{-1}([-\alpha(x),+\infty]) \subset \varphi_\alpha^{-1}([-\alpha(x'),+\infty]) = U_{\alpha,x'} = U_{\alpha,\mathcal{F}_{x',C'}}\]
because $U_{\alpha,\mathcal{F}_{x',C'}} = U_{\alpha,x'}$ for $\alpha \in \Phi_{\Delta'}^+$ by Corollary~\ref{CorUaGermofChamber}.
Otherwise, $\alpha \in \Phi_{\Delta'}^-$ and $\alpha(x') - \alpha(x) > -\alpha(v') > 0$.
Hence
\[u_\alpha \in U_{\alpha,x} = \varphi_\alpha^{-1}([-\alpha(x),+\infty]) \subset \varphi_\alpha^{-1}(]-\alpha(x'),+\infty]) = U'_{\alpha,x'} = U_{\alpha,\mathcal{F}_{x',C'}}\]
because $U_{\alpha,\mathcal{F}_{x',C'}} = U'_{\alpha,x'}$ for $\alpha \in \Phi_{\Delta'}^-$ by Corollary~\ref{CorUaGermofChamber}.
Thus $u_\alpha \in \widehat{P}_{\mathcal{F}_{x',C'}}$ for every $\alpha \in \Phi_{\Delta''}^+$ and, in particular, for every $\alpha \in \Psi^+$.
Therefore, we have \[u_+ = \left( \prod_{\alpha\in \left(\Phi_{\Delta}^+\right)_{\mathrm{nd}}} u_\alpha \right) \in \bigcap_{z \in x + \delta_v} \widehat{P}_{\mathcal{F}_{z,C}}
\qquad \text{ and } \qquad
\left( \prod_{\alpha \in \Psi^+} u_\alpha \right)t \in \widehat{P}_{\mathcal{F}_{x',C'}}.\]

Hence, up to replace $g$ by $u_+^{-1}g$, we can assume that $g \in u \widehat{P}_{\mathcal{F}_{x',C'}}$ for some $u = \prod_{\alpha \in \Psi^-} u_\alpha$ with $u_\alpha \in U_{\alpha,\mathcal{F}_{x,C}} = U'_{\alpha,x}$ for $\alpha \in \Psi^-$.

{\bf{} Decomposition step:} 
If $u = 1$, then the condition~\ref{technicalBruhatCond1} is satisfied.
In particular, if $C'' = -C$, then $\ell(x,C,x',C')$ is maximal by definition of the length in $W(\Phi)$ with respect to $\Delta_D$ and, in this case, $u=1$ since $\Psi^- = \left(\Phi_{\Delta}^-\right)_{\mathrm{nd}} \cap \Phi_{\Delta''}^- = \left(\Phi_{\Delta}^-\right)_{\mathrm{nd}} \cap \Phi_{\Delta}^+ = \emptyset$.

Suppose that $u \neq 1$ (and therefore $C'' \neq - C$).
By assumption on $v$ and $\Rtot$,
for $\alpha \in \Psi^-$, we can define $\lambda_\alpha = \frac{- \varphi_\alpha(u_\alpha) - \alpha(x)}{\alpha(v)}$ since $\alpha(v) \in \mathbb{Z}_{<0}$.
Because $u_\alpha \in U'_{\alpha,x} = \varphi_\alpha^{-1}(]-\alpha(x),+\infty])$, we have that $- \varphi_\alpha(u_\alpha) - \alpha(x) < 0$.
Hence $\lambda_\alpha \in \Rtot$ is positive.
By finiteness of $\Psi^-$, we can find $\beta \in \Psi^-$ such that $\lambda := \lambda_\beta \in \Rtot_{>0}$ is maximal.
If we set $y = x + \lambda v \in x + \delta_v$, we have that for any $\alpha \in \Psi^-$:
\[ \alpha(y) = \alpha(x) + \lambda \alpha(v) \geqslant \alpha(x) + \lambda_\alpha \alpha(v) = -\varphi_\alpha(u_\alpha)\]
with equality at least for $\alpha=\beta$.

We firstly prove~\ref{technicalBruhatEq2}.
We have shown that $u_\beta \in U_{\beta,y} \setminus U_{\beta,y}'$ and that $u_\alpha \in U_{\alpha,y}$ for any $\alpha \in \Psi^-$.
Thus $u \in U_y$.
Since $U'_{y} \cap U_\beta = U'_{\beta,y}$ according to Example~\ref{ExQC}, we have $u \in U_{y} \setminus U'_{y}$.
Since $U_{\beta,y} \neq U'_{\beta,y}$, we know by Lemma~\ref{LemReducedRootSystem} that either $\beta \in \Phi_y$ or $2\beta \in \Phi_y$ so that, in particular, the image $\overline{u_\beta}$ of $u_\beta$ in $\overline{G}_y$ is non-trivial.
Let $C''' = C(y,x',C')$ and $\Delta''' = \Delta_{C'''}$.
Thus, by Theorem~\ref{ThmLocalRootSystem} and by the spherical Bruhat decomposition \cite[6.1.15]{BruhatTits1}, we know that there are $\overline{u'} \in \prod_{\alpha \in \Delta} \overline{U_\alpha}$, $\overline{u''} \in \prod_{\alpha \in \Delta'''} \overline{U_\alpha}$ and $\overline{n'} \in \overline{N_y}$ such that $\overline{u} = \overline{u'} \overline{n'} \overline{u''}$.
According to Proposition~\ref{PropPintersectionSector}, one can lift those elements in elements $u' \in U_{y+C} = \prod_{\alpha \in \Delta} U_{\alpha,y}$, $u'' \in U_{y+C'''} = \prod_{\alpha \in \Delta'''} U_{\alpha,y}$ and $n' \in N_y$ such that there exists an element $v \in U^*_{y} = U'_{y}$ (indeed, $y$ is a point so that $\Phi^*_y = \Phi$ by definition) satisfying $u = v u' n' u''$.
Since $v \in U'_{y} \subset \widehat{P}_{\mathcal{F}_{y,C}}$, $u' \in U_{y+C} \subset \widehat{P}_{\mathcal{F}_{y,C}}$ and $u''\in U_{y+C'''} \subset \widehat{P}_{\mathcal{F}_{x',C'}}$ since $\mathcal{F}_{x',C'} \cap (y + C''') \neq \emptyset$ by construction of $C''' = C(y,x',C')$, we get that $u \in \widehat{P}_{\mathcal{F}_{y,C}} n' \widehat{P}_{\mathcal{F}_{x',C'}}$.

We secondly prove~\ref{technicalBruhatEq1}.
Consider any value $\mu \in ]0,\lambda[$ and denote by $z = x + \mu v$.
For any $\alpha \in \Psi^-$, since $\alpha(v) \in \mathbb{Z}_{<0}$, we have that $\alpha(z) = \alpha(x) + \mu \alpha(v) > \alpha(x) + \lambda \alpha(v) \geqslant -\varphi_\alpha(u_\alpha)$.
Hence $u_\alpha \in U'_{\alpha,z} \subset U_{\mathcal{F}_{z,C}}$.
Hence $g \in \widehat{P}_{\mathcal{F}_{z,C}} \widehat{P}_{\mathcal{F}_{x',C'}}$.

It remains to prove~\ref{technicalBruhatEq3}
with the $n' \in N_y$ and the $\lambda \in \Rtot$ introduced in the proof of~\ref{technicalBruhatEq2}.
We will prove successively the inequalities $\ell(y,C,n' \cdot x', n' \cdot C') \geqslant \ell(y,C,x',C') \geqslant \ell(x,C,x',C)$ and we will check that one of them is strict, depending on whether $n' \in T_b$ or not.

We firstly prove that $\ell(y,C, x',C') \geqslant \ell(x,C,x',C')$.
Let $\mathcal{H}$ be the set of hyperplanes in $V_\Rtot$ that are kernels of elements in $\Phi$.
We know that the length $\ell(w)$ of an element $w \in W(\Phi)$ is equal to the number of hyperplanes in $\mathcal{H}$ separating $C^v_{\mathbb{R},\Delta}$ and $C^v_{\mathbb{R},w(\Delta)}$ \cite[chap. VI § 1, no. 6]{bourbaki1981elements}
and that two vector chambers of $V_\mathbb{R}$ intersect if and only the corresponding vector chamber in $V_\Rtot$ intersect, according to Lemma~\ref{LemIntersectionChambers}.
Hence, for $z \in \mathbb{A}_\Rtot$, the number $\ell(z,C,x',C')$ is the cardinality of the set $\mathcal{H}_z$ of elements $H \in \mathcal{H}$ such that $z+H$ separates $z + C$ and a neighbourhood of $x$ in $x+\overline{C'}$.
But, for $H \in \mathcal{H}$, the set of elements $z \in \mathbb{A}_\Rtot$ such that $H \in \mathcal{H}_z$ is either the open half-space $x'+H+C$ or the closed half-space $x'+H+\overline{C}$. Its intersection with the open half-line $x+\delta_v$ contains an open half-line.
Hence $\mathcal{H}_y \supset \mathcal{H}_x$ so that $\ell(y,C,x',C') \geqslant \ell(x,C,x',C')$.
More precisely, $w'=w(y,C,x',C')$ is longer than $w=w(x,C,x',C')$ in the sense that there exists $w'' \in W(\Phi)$ such that $w' = ww''$ with $\ell(w') = \ell(w) + \ell(w'')$.

We now prove that $\ell(y,C, n' \cdot x',n' \cdot C') =\ell(y,C,x',C') > \ell(x,C,x',C')$ whenever $n' \in T_b$.
Indeed, suppose by contradiction that it is an equality.
Then $w' = w$ and therefore $C''' = C''$.
But since the image $\overline{u}\in \overline{G}_y$ of $u$ is in the subgroup $ \overline{U_\Delta^-} \cap \overline{U_{\Delta''}^-}$ generated by the root groups $\overline{U_\alpha}$ for $\alpha \in \Phi_y \cap \Phi_\Delta^- \cap \Phi_{\Delta''}^-$, whereas the image $\overline{u'} \in \overline{G}_y$ (resp. $\overline{u''} \in \overline{G}_y$) of $u'$ (resp. $u''$) is contained in the group $\overline{U_\Delta^+}$ (resp. $\overline{U_{\Delta'''}^+}$) generated by the root groups $\overline{U_\alpha}$ for $\alpha \in \Phi_y \cap \Phi_\Delta^+$ (resp. $\Phi_y \cap \Phi_{\Delta'''}^+$).
If $n' \in T_b$ and $\Delta'' = \Delta'''$, then we have $\overline{u} \in \overline{U_\Delta^-} \cap \overline{U_{\Delta''}^-} \cap \overline{U_\Delta^+} \cdot \overline{T} \cdot \overline{U_{\Delta''}^+}$.
In particular, $\overline{u} = 1$ because of axiom~\ref{axiomRGD6} of spherical root groups data and \cite[6.1.6]{BruhatTits1}.
In particular $u \in U'_{y}$ which is a contradiction.
As a consequence, when $n' \in T_b$, we have $\ell(y,C, n' \cdot x',n' \cdot C') > \ell(x,C,x',C')$.

We prove that $\ell(y,C,n'\cdot x',n' \cdot C') \geqslant \ell(y,C,x',C')$ with equality if, and only if $n' \in T_b$.
Since $n' \in N_y$, we get that $w(y,C,n' \cdot x, n' \cdot C') = \nu^v(n') w(y,C,x',C')$. Indeed, $x'-y \in w(y,C,x',C') \cdot \overline{C^v_\Delta}$ and $n' \cdot x'-y \in w(y,C,n' \cdot x',n' \cdot C') \cdot \overline{C^v_\Delta}$ by definition. But $n' \cdot( x' - y ) = n' \cdot x' - y \in \nu^v(n') w(y,C,x',C') \cdot \overline{C^v_\Delta}$.
Moreover, we have $w(y,C,x',C') \cdot C^v_{\Delta} \cap (x'-y + C') \neq \emptyset$ whence $n' \cdot \big( w(y,C,x',C') \cdot C^v_{\Delta} \cap (x'-y + C') \big) = \big( \nu^v(n') w(y,C,x',C') \cdot C^v_{\Delta} \big) \cap (n' \cdot x' - y + n' \cdot C') \neq \emptyset$.
Hence the equality is a consequence of the uniqueness in $W(\Phi)$ of the element $w(y,C,n' \cdot x',n' \cdot C')$ satisfying this property.
Set $t = \nu^v(n')$ and $w'=w(y,C,x',C')$.
Then we want to prove that $\ell(tw') \geqslant \ell(w')$ with equality if, and only if, $t=1$.
Let $W_y = W(\Phi_y)$ the Weyl group of the root system $\Phi_y$ and identify it with a subgroup of $W := W(\Phi)$.
Any vector chamber of $\Phi_y$, which is a simplicial cone in some quotient of $V_\Rtot$, can be identified with its inverse image in $V_\Rtot$.
Let $\widetilde{C}$ be the vector chamber of $\Phi_y$ containing $C$.
Let $R$ (resp. $R_y$) be the generating system of $W$ (resp. $W_y$) of reflections with respect to the walls of $C$ (resp. $\widetilde{C}$).
Let $w_0$ (resp. $w'_0$) be the longest element with respect to $R$ (resp. $R_y$) of $W$ (resp. of $W_y$).
We have $w_0(C) = -C$ and $w'_0(\widetilde{C}) = -\widetilde{C}$ so that $w'_0(\widetilde{C}) \supset w_0(C)$.

In the quotient group $\overline{G}_y$, consider the minimal parabolic subgroup $\overline{B}$ associated to the vector chamber $\widetilde{C}$.
Write $\overline{B} = \overline{T} \cdot \prod_{\alpha \in \Phi_y \cap \Phi_\Delta^+} \overline{U_\alpha}$.
Since $w'(C) = C'''$, we have $\overline{U_{y+C'''}} \subset w' \overline{B} {w'}^{-1}$.
Since $u \in U_{y-C}$, we have $\overline{u} \in w'_0 \overline{B} {w'}_{0}^{-1}$.
Since $u \in U'_{y} U_{y+C} n' U_{y+C'''}$, we have $\overline{u} \in \overline{B} t w' \overline{B} {w'}^{-1}$.
Let $(r_k, \dots, r_1)$ be a reduced decomposition of ${w'}^{-1} w_0 \in W$ with respect to $R$.
Then $w_0 = w' r_k \cdots r_1$ and we know that $\ell(w') = \ell(w_0) - k$ \cite[chap.VI,§1, no.6 cor.3 of prop.17]{bourbaki1981elements}.
For $1 \leqslant i \leqslant k-1$, write $w_i = w'r_k \cdots r_{i+1}$ and, $w_k = w'$.
Then $w_i = w_{i+1} r_{i+1}$ so that the vector chambers $w_i(C)$ and $w_{i+1}(C)$ have a wall $H_i$ in common and the reflection $s_i$ with respect to $H_i$ is $s_i = w_{i+1}w_i^{-1} = w_{i} r_{i+1} w_{i}^{-1}  $.

On the other hand, let $w'_i$ be the unique element in $W_y$ such that the chamber $w'_i(\widetilde{C})$ of $\Phi_y$ contains the chamber $w_i(C)$ of $\Phi$.
Note that for $i=0$, the element $w'_0$ is equal to the $w'_0$ previously defined.
We have $w'_k = w_k = w'$ since $w' \in W_y$.
Finally, the two chambers $w'_i(\widetilde{C})$ and $w'_{i+1}(\widetilde{C})$ are adjoining (i.e. the bases $\Delta_{w'_i(\widetilde{C})}$ and $\Delta_{w'_{i+1}(\widetilde{C})}$ differ by at most one root) or equal according to the fact that $s_i$ belongs or not to $W_y$.
When $s_i \in W_y$, we also have that $s_i = w'_{i+1} {w'}_i^{-1}$.
By \cite[6.1.15(a)]{BruhatTits1}, for any $0 \leqslant i \leqslant k$, there exists a unique $\overline{w'_i} \in W_y$ such that $\overline{B} \overline{u} w'_i \overline{B} = \overline{B} \overline{w'_i} \overline{B}$.
Since $\overline{u} \in w'_0 \overline{B} {w'}_0^{-1}$, we have $\overline{w'_0} = w'_0$.
Since $\overline{u} \in \overline{B} t w' \overline{B} {w'}^{-1}$ and $w'_k = w'$, we have $\overline{w'_k} = t w'$.

Let $I = \llbracket 0, k-1 \rrbracket$ and let $I_1 = \{ i \in I,\ w'_{i+1} = w'_i \}$.
For $i \in I_1$, we have $w'_{i+1} = w'_{i}$ and $\overline{w'_{i+1}} = \overline{w'_i}$.
Moreover, we have seen that if $i \not\in I_1$, then we have $w'_{i+1} {w'}_{i}^{-1} = w_{i+1} w_i^{-1} = s_i$.
If $i \not\in I_1$, we observe that $r'_i = {w'}_{i+1}^{-1} w'_i = {w'}_i^{-1} s_i w'_i$ is a reflection with respect to a wall of $\widetilde{C}$: in other words, we have $r'_i \in R_y$.
From \cite[6.1.15]{BruhatTits1} and axiom (T3) of Tits systems, we therefore deduce that for $i \not\in I_1$:
\[\overline{B} \overline{w'_{i+1}} \overline{B} = \overline{B} \overline{u} w'_{i+1} \overline{B} = \overline{B} \overline{u} w'_i r'_i \overline{B} \subset \overline{B} \overline{w'_i} \overline{B} r'_i \overline{B} \subset \overline{B} \overline{w'_i} \overline{B} \cup \overline{B} \overline{w'_i} r'_i \overline{B}.\]
Hence, we get a partition of $I \setminus I_1$ in two (possibly empty) subsets:
\[I_2 = \{i \in I \setminus I_1,\ \overline{w'_{i+1}} = \overline{w'_i}\}
\qquad \text{ and } \qquad
I_3 = \{i \in I \setminus I_1,\ \overline{w'_{i+1}} = \overline{w'_i} r'_i\}\]
so that $I = I_1 \sqcup I_2 \sqcup I_3$.
Finally, we denote $\overline{w_i} = \overline{w'_i} {w'}_i^{-1} w_i$ for $0 \leqslant i \leqslant k$.
Note that we have $\overline{w_0} = w_0$ and $\overline{w_k} = t w' {w'}^{-1} w' = tw'$.

For $i \in I$, define $d_i = \overline{w_{i+1}}^{-1} \overline{w_i} = w_{i+1}^{-1} w'_{i+1} \overline{w'_{i+1}}^{-1} \overline{w'_{i}} {w'}_i^{-1} w_i$.

If $i \in I_1$, since $w'_{i+1} = w'_i$ and $\overline{w'_{i+1}} = \overline{w'_i}$, we have $d_i = w_{i+1}^{-1} w'_{i} \overline{w'_{i}}^{-1} \overline{w'_{i}} {w'}_i^{-1} w_i = w_{i+1}^{-1} w_i = r_{i+1}$.

If $i \in I_2$, since $w'_{i+1} = s_i w'_i$ and $w_{i+1} = s_i w_i$, we have $d_i =w_{i+1}^{-1} w'_{i+1} {w'}_i^{-1} w_i= w_{i}^{-1} s_i s_i w'_{i} {w'}_i^{-1} w_i = 1$.

If $i \in I_3$, then $d_i = w_{i+1}^{-1} w'_{i+1} \overline{w'_{i+1}}^{-1} \overline{w'_{i}} {w'}_i^{-1} w_i = w_{i+1}^{-1} w'_{i+1} r'_i {w'}_i^{-1} w_i = w_{i+1}^{-1} w_i = r_{i+1}$.

As a consequence, we have $w_0 = \overline{w_0} = \overline{w_k} \left( \overline{w_k}^{-1} \overline{w_{k-1}} \right) \cdots \left(\overline{w_1}^{-1} \overline{w_0} \right) = \overline{w_k} d_{k-1} \cdots d_0 = t w' \prod_{i \in I_1 \sqcup I_3} r_{i+1}$.
Hence $\ell(tw') \geqslant \ell(w_0) - \operatorname{Card}(I_1 \sqcup I_3) \geqslant \ell(w_0) - k = \ell(w')$ and, if this is an equality, then we necessarily have that $I_2 = \emptyset$.
In that case, $w_0 = t w_0$ and therefore $t = 1$.

To conclude, if we have $\ell(y,C,n'\cdot x',n' \cdot C') > \ell(y,C,x',C')$, then $\ell(y,C,n'\cdot x',n' \cdot C') > \ell(x,C,x',C')$ since $\ell(y,C,x',C') \geqslant \ell(x,C,x',C')$.
Otherwise, we have $\ell(y,C,n'\cdot x',n' \cdot C') = \ell(y,C,x',C')$ and $n' \in T_b$.
We have seen that in this case $\ell(y,C,x',C') > \ell(x,C,x',C')$ and therefore we also have that $\ell(y,C,n'\cdot x',n' \cdot C') > \ell(x,C,x',C')$.
\end{proof}

\begin{proof}[Proof of~\ref{technicalBruhatStep2}] If $g$ satisfies condition~\ref{technicalBruhatCond1}, there is nothing to prove.
Otherwise, we define a strictly increasing sequence $(\lambda_i)_i$ of values in $\Rtot$ and, by induction, a sequence of elements $(n_i)_i$ in $N$ as follows:
\begin{itemize}
\item $\lambda_0=0$ and $n_0 = n$;
\item for $i \geqslant 1$, we apply step~\ref{technicalBruhatStep1} to $x + \lambda_{i-1} v$, $C$, $x'$, $C'$ and $n=n_{i-1} \in N$. If we are in case~\ref{technicalBruhatCond1}, we are done.
Otherwise, case~\ref{technicalBruhatCond2} provides $\lambda \in R_{>0}$ and $n' \in N$.
We set $\lambda_i = \lambda_{i-1} + \lambda$ and $n_i := n'$.
\end{itemize}
At some point $k\in \mathbb{N}$, the element $g$ will be in case~\ref{technicalBruhatCond1} for $y_k = x + \lambda_k v$ and $n_k$ since the length in the spherical (hence finite) Weyl group is bounded so that this process stops.
Thus $g \in \widehat{P}_{\mathcal{F}_{z,C}} n_k \widehat{P}_{\mathcal{F}_{x',C'}} \subset \widehat{P}_{\mathcal{F}_{z,C}} N \widehat{P}_{\mathcal{F}_{x',C'}}$ for any $z \in y_k + \delta_v$.
For any $i \in \llbracket 0, k-1 \rrbracket$ we have:
\begin{itemize}
\item $g \in \widehat{P}_{\mathcal{F}_{z,C}} n_{i} \widehat{P}_{\mathcal{F}_{x',C'}} \subset \widehat{P}_{\mathcal{F}_{z,C}} N \widehat{P}_{\mathcal{F}_{x',C'}}$ for any $z = x + \mu v$ with $\mu \in ]\lambda_i,\lambda_{i+1}[$;
\item $g \in \widehat{P}_{\mathcal{F}_{y_{i+1},C}} n_{i+1} \widehat{P}_{\mathcal{F}_{x',C'}} \subset \widehat{P}_{\mathcal{F}_{y_{i+1},C}} N \widehat{P}_{\mathcal{F}_{x',C'}}$ for $y_{i+1} = x + \lambda_{i+1} v$.
\end{itemize}
We deduce the result from the decomposition $\displaystyle\Rtot_{>0} = \left( \bigsqcup_{i=0}^k ]\lambda_i,\lambda_{i+1} [ \sqcup \{\lambda_{i+1}\} \right) \sqcup ]\lambda_k,\infty [$.
\end{proof}
%}

Since the topology of $\mathbb{A}_\Rtot$ is less usual than the topology of $\mathbb{A}_\mathbb{R}$, we detail how to generalize the proof of the affine Bruhat decomposition.

\begin{proof}[Proof of Theorem~\ref{ThmBruhatDecomposition}]
Let $g \in G$.
By Iwasawa Decomposition~\ref{thmIwasawa} applied to $-C$ and $\mathcal{F}_{x',C'}$, there exist $u \in U_{\Delta}^-$, $n \in N$ and $u' \in \widehat{P}_{\mathcal{F}_{x',C'}}$ such that $g= unu'$.
Write $u = \prod_{\beta \in \left(\Phi_{\Delta}^+\right)_{\mathrm{nd}}} u_\beta$ with $u_\beta \in U_{-\beta}$ for $\beta \in \left(\Phi_{\Delta}^+\right)_{\mathrm{nd}}$.

Let $v \in C^v_{\mathbb{Z},\Delta}$ so that $\beta(v) \in \mathbb{Z}_{>0}$ for any $\beta \in \Phi^+_\Delta$.
For $\beta \in \Phi^+_\Delta$, define $\lambda_\beta = 0$ if $\varphi_{-\beta}(u_\beta) = \infty$ and $\lambda_\beta = \frac{1}{\beta(v)} \Big( \beta(x) - \varphi_{-\beta}(u_\beta) \Big)$ otherwise.
Let $\lambda \in \Rtot_{>0}$ be such that $\lambda > \max \{ \lambda_\beta,\ \beta \in \Phi^+_\Delta\}$ and consider $y = x - \lambda v \in x - \delta_v$.
For any $\beta \in \Phi^+_\Delta$ such that $u_\beta \neq 1$, we have $\beta(v) \lambda > \beta(v) \lambda_\beta = \beta(x) - \varphi_{-\beta}(u_\beta)$.
Thus $-\beta(x-\lambda v) + \varphi_{-\beta}(u_\beta) > 0$.
Hence we get that $u_\beta \in U'_{-\beta,y}$ for any $\beta \in \Phi^+_\Delta$.

By Proposition~\ref{PropPchapeauGermSectorDecQC} and Corollary~\ref{CorUaGermofChamber}, we know that $U'_{\beta,y} \subset \widehat{P}_{\mathcal{F}_{y,C}}$ for any $\beta \in \Phi$.
Therefore $u \in \widehat{P}_{\mathcal{F}_{y,C}}$.
Hence $g = u n u' \in \widehat{P}_{\mathcal{F}_{y,C}} N \widehat{P}_{\mathcal{F}_{x',C'}}$.
Since $x = y + \lambda v \in y + \delta_v \subset y + C$ according to Lemma~\ref{LemHalfLineInSector}, Lemma~\ref{LemTechnicalBruhat}(2) gives that $g \in \widehat{P}_{\mathcal{F}_{x,C}} N \widehat{P}_{\mathcal{F}_{x',C'}}$.
Hence, $G = \widehat{P}_{\mathcal{F}_{x,C}} N \widehat{P}_{\mathcal{F}_{x',C'}}$.

Now, let $n,n' \in N$ be such that $n' \in \widehat{P}_{\mathcal{F}_{x,C}} n \widehat{P}_{\mathcal{F}_{x',C'}}$.
Let $x'' = \nu(n')(x')$ and $C'' = {^v\!}\nu(n')(C')$, so that $n' \widehat{P}_{\mathcal{F}_{x',C'}} (n')^{-1} = \widehat{P}_{\mathcal{F}_{x'',C''}}$ according to Lemma~\ref{LemActionWeylIwahoriFilters}.
Let $n'' = n (n')^{-1}$.
Then $1 \in \widehat{P}_{\mathcal{F}_{x,C}} n'' \widehat{P}_{\mathcal{F}_{x'',C''}}$ which gives $n'' \in \widehat{P}_{\mathcal{F}_{x,C}} \widehat{P}_{\mathcal{F}_{x'',C''}}$.
By Corollary~\ref{corP=Phat}, we have $\widehat{P}_{\mathcal{F}_{x,C}} = P_{\mathcal{F}_{x,C}}$ and $\widehat{P}_{\mathcal{F}_{x'',C''}} = P_{\mathcal{F}_{x'',C''}}$.
%By Lemma~\ref{lemUV=UclV}, we have $P_{\mathcal{F}_{x,C}} = P_{\cl(\mathcal{F}_{x,C})}$ and $P_{\mathcal{F}_{x'',C''}} = P_{\cl(\mathcal{F}_{x'',C''})}$.

Since $\cl\big(\mathcal{F}_{x'',C''}\big) = \cl\big( \germ_{x''}(x''+C'')\big)$ by Lemma~\ref{lemEnclosure_point_in_sector}, ,
we have  $P_{\mathcal{F}_{x'',C''}} = P_{\germ_{x''}(x''+C'')}$ by Lemma~\ref{lemUV=UclV}.

Let $C^{(3)}=C(x,x'',C'')$ (see Notation~\ref{NotGoodSector}), so that $(x +C^{(3)}) \cap (x''+C'') \neq \emptyset$ and $x'' \in x+\overline{C^{(3)}}$. 

Let $\XC=\{\Omega''\in \germ_{x''}(x''+C'')\ |\ \Omega''\subset x+C^{(3)}\}$.
Since $\XC\subset \germ_{x''}(x''+C'')$ (as sets of subsets of $\A_{\Rtot}$), we have that $P_{\germ_{x''}(x''+C'')} \supset \bigcup_{\Omega''\in \XC} P_{\Omega''}$.

Conversely, let $\widetilde{\Omega}'' \in \germ_{x''}(x''+C'')$.
Then $\Omega''=\widetilde{\Omega}'' \cap (x+C^{(3)}) \in \XC$ by Lemma~\ref{l_neighbourhood_chambers}.
%Thus $\bigcup_{\widetilde{\Omega}'' \in \XC} P_{\widetilde{\Omega}''} \subset \bigcup_{\Omega''\in \XC} P_{\Omega''}$. \red{A : je dirais plutôt: }
Thus \[P_{\germ_{x''}(x''+C'')}=\bigcup_{\widetilde{\Omega}'' \in \germ_{x''}(x''+C'')} P_{\widetilde{\Omega}''} \subset \bigcup_{\Omega''\in \XC} P_{\Omega''}.\]

\begin{comment}
\blue{Ancienne version}

Let $C^{(3)}=C(x,x'',C'')$ (see Notation~\ref{NotGoodSector}), so that $(x +C^{(3)}) \cap (x''+C'') \neq \emptyset$ and $x'' \in \x+\overline{C^{(3)}}$.
\red{B: changements en cours}
Let $\XC=\{\Omega''\in \mathcal{F}_{x'',C''}|\ \Omega''\subset x+C^{(3)}\}$.
Then $\XC\subset \mathcal{F}_{x'',C''}$ (as sets of subsets of $\A_{\Rtot}$) and thus $P_{\mathcal{F}_{x'',C''}}\supset \bigcup_{\Omega''\in \XC} P_{\Omega''}$.
Let $y\in (x+C^{(3)})\cap(x''+C'')$. Then by Lemma~\ref{lemEnclosure_point_in_sector}, $\cl(\{x'',y\})\Supset \mathcal{F}_{x'',C''}$.
\sout{
Moreover $x+\overline{C^{(3)}}\in \cl(\{x'',y\})$ and thus $\mathcal{F}_{x'',C''}\Subset x+\overline{C^{(3)}}$.}
\red{B: Remplacer $\mathcal{F}_{x'',C''}$ par $\germ_{x''}(x''+C'')$ et voir que les sous-groupes parahoriques sont les mêmes.}
In other words, $x+\overline{C^{(3)}}\in \mathcal{F}_{x'',C''}$. Therefore, for every $\tilde{\Omega}''\in \mathcal{F}_{x'',C''}$, there exists $\Omega''\in \XC$ such that $\tilde{\Omega}''\supset \Omega''$ (one can take $\Omega''=\Omega\cap (x+\overline{C^{(3)}})$ for example).
\red{B: Le referee suggère de remplacer les $x+\overline{C^{(3)}}$ par $x+C^{(3)}$ dans ce paragraphe.}

\hline\hfill
\end{comment}
Consequently,  \[
P_{\mathcal{F}_{x'',C''}}=P_{\germ_{x''}(x''+C'')}= \bigcup_{\Omega''\in \XC} P_{\Omega''}.
\]

Let $\Omega\in \cl(\mathcal{F}_{x,C})$ and $\Omega''\in \XC$. Set $\Omega'=\Omega\cap (\Omega''-\overline{C^{(3)}}$). By Lemma~\ref{lemInclusion_point_local_face}, $x\in \Omega$, $x''\in \Omega''$ and thus $x\in \Omega'$ (since $x''\in x+\overline{C^{(3)}}$). Therefore \[\Omega'\subset \Omega''-C^{(3)}\text{ and } \Omega''\subset x+C^{(3)}\subset \Omega'+C^{(3)}.\]

Using Proposition~\ref{PropBizarre}\ref{PropBizarre2}, we get that $P_{\Omega'} P_{\Omega''} \subset N_{\Omega'} U_{\Omega' - \overline{C^{(3)}}} U_{\Omega'' + \overline{C^{(3)}}} N_{\Omega''}$.
By Proposition~\ref{PropPchapeauGermSectorDecQC}, we have $N_{\Omega'} = N_{\Omega''} = T_b$.
By Proposition~\ref{PropPintersectionSector}, we have $U_{\Omega' - \overline{C^{(3)}}} \subset U^-_{\Delta^{(3)}}$
 and
$U_{\Omega'' + \overline{C^{(3)}}} \subset U^+_{\Delta^{(3)}}$, where $\Delta^{(3)}$ is the basis associated to $C^{(3)}$.
Thus $P_{\Omega} P_{\Omega''} \subset P_{\Omega'} P_{\Omega''} \subset U^-_{\Delta^{(3)}} T_b U^+_{\Delta^{(3)}}$ for every $\Omega \in \mathcal{F}_{x,C}$ and every $\Omega'' \in \XC$.
Hence $n'' \in \widehat{P}_{\mathcal{F}_{x,C}} \widehat{P}_{\mathcal{F}_{x'',C''}} = P_{\cl(\mathcal{F}_{x,C})} \bigcup_{\Omega''\in \XC} P_{\Omega''} \subset U^-_{\Delta^{(3)}} T_b U^+_{\Delta^{(3)}}$.
By \cite[6.1.15]{BruhatTits1}, we get that $n'' \in T_b$.
Hence $n' \in n T_b$.
This provides the correspondence between the quotient group $N/T_b$ and the double cosets $\widehat{P}_{\mathcal{F}_{x,C}} \backslash G / \widehat{P}_{\mathcal{F}_{x',C'}}$.
\end{proof}

\section{Building associated to a valued root group datum}
\label{sectionLambdaBuildingFromVRGD}

Let $V_{\mathbb{R}}$ be a finite dimensional $\mathbb{R}$-vector space and let $\Phi \subseteq V_{\mathbb{R}}^*$ be a root system such that $V_{\mathbb{R}}^*=\left\langle \Phi \right\rangle_{\mathbb{R}} $.
Let $V_\Z$ be the lattice of coweights in $V_\R$ of $\Phi$.
Let $\Rtot$ be a nonzero totally ordered abelian group such that $\Rtot_\Q = \Rtot$.
Let $\mathbb{A}$ be an $\Rtot$-aff space with some origin $o$ and underlying $\Rtot$-module $V_\Rtot:=V_\Z \otimes_{\Z} \Rtot$.
Let $G$ be a group and let $(T,(U_{\alpha})_{\alpha\in\Phi},(M_{\alpha})_{\alpha\in \Phi},(\varphi_{\alpha})_{\alpha\in\Phi})$ be a generating $\Rtot$-valued root group datum of $G$ of type $\Phi$.
Denote by $N$ the subgroup of $G$ that is generated by the $M_{\alpha}$ for $\alpha \in \Phi$, and assume that we are given a compatible action $\nu: N \rightarrow \operatorname{Aff}_\Rtot(\mathbb{A})$ of $N $ on $\mathbb{A}$.  We adopt all the notations that have been introduced in sections~\ref{SecVRGD} and~\ref{SecParahoricBruhat} and that are associated to the data we are given. \\

Under these assumptions, we can define the following relation on $G \times \mathbb{A}$:
$$(g,x) \sim (h,y) \Leftrightarrow \exists n \in N, \begin{cases}
y=\nu(n)(x),\\
g^{-1}hn \in U_x.
\end{cases}$$
This relation is reflexive and symmetric. Moreover, for any $(g_1,g_2,g_3) \in G$ and $(x_1,x_2,x_3) \in \mathbb{A}$, if we can find $n_{12}, n_{23} \in N$ such that 
$$\begin{cases}
x_2=\nu(n_{12})(x_1),\\
x_3=\nu(n_{23})(x_2),\\
g_1^{-1}g_2n_{12} \in U_{x_1},\\
g_2^{-1}g_3n_{23} \in U_{x_2},
\end{cases}$$
then:
$$\begin{cases}
x_3=\nu(n_{23}n_{12})(x_1),\\
g_1^{-1}g_3n_{23}n_{12} \in g_1^{-1}g_2 U_{x_2}n_{12} \subseteq U_{x_1}n_{12}^{-1} U_{x_2}n_{12} = U_{x_1}
\end{cases}
$$
where the last equality is given by Proposition~\ref{PropActionNUOmega}.
Hence $\sim$ is an equivalence relation on $G \times \mathbb{A}$.

\begin{definition}\label{DefLambdaBuildingFromDatum}
The \textbf{$\Rtot$-building associated to the datum:}\index{building!associated to a valued root group datum} \[\left(G,T,(U_{\alpha})_{\alpha\in\Phi},(M_{\alpha})_{\alpha\in \Phi},(\varphi_{\alpha})_{\alpha\in\Phi},\nu\right)\]
is the quotient:
\[\mathcal{I}\left(G,T,(U_{\alpha})_{\alpha\in\Phi},(M_{\alpha})_{\alpha\in \Phi},(\varphi_{\alpha})_{\alpha\in\Phi},\nu\right):=(G \times \mathbb{A})/\sim.\]
To simplify notations, we will denote it $\mathcal{I}(G)$\index[notation]{i@$\mathcal{I}(G)$}\index[notation]{i@$\mathcal{I}\left(G,T,(U_{\alpha})_{\alpha\in\Phi},(M_{\alpha})_{\alpha\in \Phi},(\varphi_{\alpha})_{\alpha\in\Phi},\nu\right)$} in the rest of this section.

We denote by $[g,x]$ the class in $\mathcal{I}(G)$ of $(g,x) \in G \times \mathbb{A}$.

The group $G$ then acts on $\mathcal{I}(G)$ by:
$$g \cdot [h,x]:= [gh,x].$$
\end{definition}

\begin{lemma}
The map:
\begin{align*}
i: \mathbb{A} & \rightarrow \mathcal{I}(G)\\
x & \mapsto [1,x]
\end{align*}
is injective.
\end{lemma}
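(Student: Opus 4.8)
The plan is to show that if $[1,x] = [1,y]$ for some $x,y \in \mathbb{A}$, then $x = y$. By the definition of the equivalence relation $\sim$, the equality $[1,x] = [1,y]$ means there exists $n \in N$ with $y = \nu(n)(x)$ and $1^{-1} \cdot 1 \cdot n = n \in U_x$. So the whole matter reduces to proving that $N \cap U_x = \{1\}$, or more precisely that any $n \in N \cap U_x$ acts trivially on $x$ (which then forces $y = \nu(n)(x) = x$).

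To handle $N \cap U_x$, I would invoke the results of Section~\ref{SecParahoricBruhat} specialized to $\Omega = \{x\}$. By Lemma~\ref{LemPOmegaQC}, $U_x \cap N = \widetilde{N}_x$, and by Corollary~\ref{CorNtildeFixOmega} we have $\widetilde{N}_x \subset N_x \subset \widehat{N}_x$; since $\widehat{N}_x$ is by definition (Notation~\ref{notParahoric_subgroup}) the pointwise stabilizer of $\{x\}$ in $N$, every $n \in N \cap U_x = \widetilde{N}_x$ satisfies $\nu(n)(x) = x$. Therefore $y = \nu(n)(x) = x$, which gives injectivity of $i$. Alternatively — and perhaps more cheaply — one can argue directly: $\widetilde{N}_x$ is generated by the $\widetilde{N}_{\alpha,x}$ for $\alpha \in \Phi_{\mathrm{nd}}$ (Example~\ref{ExQC}\ref{ExDecUOmega:4}), and by Lemma~\ref{LemRootInNaOmega} the image ${^v\!}\nu(\widetilde{N}_{\alpha,x})$ is contained in $\{1, r_\alpha\}$; whenever $r_\alpha$ is attained, $\widetilde{N}_{\alpha,x}$ contains an element of $M_{\alpha,\lambda}$ with $\lambda = -\alpha(x) \in \Gamma_\alpha$, and by Lemma~\ref{LemMuNu} that element acts as $r_{\alpha,\lambda}$, which fixes $x$ since $\theta_{\alpha,\lambda}(x) = \alpha(x) + \lambda = 0$. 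Combining these, every generator of $\widetilde{N}_x$ fixes $x$, hence so does every element of $\widetilde{N}_x$.

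I do not anticipate a genuine obstacle here: the statement is essentially a bookkeeping consequence of the parahoric machinery already developed, and the only thing to be careful about is citing the correct specialization ($\Omega$ a singleton) of the general lemmas about $\widehat{N}_\Omega$, $\widetilde{N}_\Omega$, and $U_\Omega$. The mild subtlety worth a sentence in the write-up is that one should note the equivalence relation was defined using $U_x$ (the group attached to the singleton $\{x\}$), so that $N \cap U_x$ is exactly $\widetilde{N}_x$ and not a larger group like $\widehat{N}_x$; this is precisely why the pointwise stabilizer $\widehat{N}_x$ appears only as an \emph{upper} bound in Corollary~\ref{CorNtildeFixOmega}, which is all we need. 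The proof will thus be just a few lines.

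\begin{proof}
Let $x,y \in \mathbb{A}$ be such that $i(x) = i(y)$, i.e. $[1,x] = [1,y]$ in $\mathcal{I}(G)$. By the definition of $\sim$, there exists $n \in N$ with $y = \nu(n)(x)$ and $n = 1^{-1} \cdot 1 \cdot n \in U_x$. Hence $n \in N \cap U_x = \widetilde{N}_x$ by Lemma~\ref{LemPOmegaQC} applied to the singleton $\Omega = \{x\}$. By Corollary~\ref{CorNtildeFixOmega}, $\widetilde{N}_x \subset \widehat{N}_x$, and $\widehat{N}_x$ is the pointwise stabilizer of $\{x\}$ in $N$ by definition (Notation~\ref{notParahoric_subgroup}). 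Therefore $\nu(n)(x) = x$, which gives $y = \nu(n)(x) = x$. This proves that $i$ is injective.
\end{proof}
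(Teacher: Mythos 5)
Your proof is correct and follows the same route as the paper: identify $n \in N \cap U_x = \widetilde{N}_x$ and conclude via Corollary~\ref{CorNtildeFixOmega} that $n$ fixes $x$, hence $y = \nu(n)(x) = x$. The alternative generator-by-generator argument you sketch is also valid but unnecessary given that corollary.
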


\begin{proof}
Let $x,y \in \mathbb{A}$ such that $i(x)=i(y)$. We can then find $n\in N \cap U_x = \widetilde{N}_x$
such that $y=\nu(n)(x)$. By Corollary~\ref{CorNtildeFixOmega}, we deduce that $x=y$.
\end{proof}

\begin{Fact}
For any $n \in N$ and any $x \in \mathbb{A}$, we have $n \cdot [1,x] = [n,x] = [1,\nu(n)(x)]$.
In particular, the subgroup $N$ stabilizes $i(\mathbb{A})$.
\end{Fact}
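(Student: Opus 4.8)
The plan is to unwind the two definitions at play --- the $G$-action on $\mathcal{I}(G)$, given by $g \cdot [h,x] = [gh,x]$, and the equivalence relation $\sim$ on $G \times \mathbb{A}$ --- and to observe that both asserted equalities are then immediate.

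First I would note that $n \cdot [1,x] = [n \cdot 1, x] = [n,x]$ holds by the very definition of the $G$-action, taking $g = n$ and $h = 1$. This is the first equality and requires no computation. Next, to obtain $[n,x] = [1,\nu(n)(x)]$, I would check directly that $(n,x) \sim (1,\nu(n)(x))$. By definition, $(n,x) \sim (1,y)$ means that there is some $m \in N$ with $y = \nu(m)(x)$ and $n^{-1} \cdot 1 \cdot m \in U_x$. Choosing $m = n$, the first condition becomes $y = \nu(n)(x)$ and the second becomes $n^{-1} n = 1 \in U_x$, which holds because $U_x = U_{\{x\}}$ is a subgroup of $G$ (being generated by subgroups of $G$) and hence contains the identity. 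Therefore the two pairs are equivalent, which is exactly the second equality.

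Finally, the ``in particular'' assertion follows at once: since $i(\mathbb{A}) = \{[1,x] : x \in \mathbb{A}\}$, the equalities just established give $n \cdot i(x) = n \cdot [1,x] = [1,\nu(n)(x)] = i(\nu(n)(x)) \in i(\mathbb{A})$ for every $n \in N$ and every $x \in \mathbb{A}$, so $N$ stabilizes $i(\mathbb{A})$. I do not anticipate any genuine obstacle here: the statement is a purely formal consequence of the definitions, the only point worth spelling out being that $1 \in U_x$.
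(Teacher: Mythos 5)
Your proof is correct and matches what the paper intends: the Fact is stated without proof precisely because it follows immediately from the definition of the $G$-action and of $\sim$ (with witness $m=n$ and $1\in U_x$), exactly as you spell out. Nothing further is needed.
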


In particular, we identify $\mathbb{A}$ with the subset $i(\mathbb{A})$ of $\mathcal{I}(G)$.
More generally, we identify any subset $\Omega$ of $\mathbb{A}$ with the subset $i(\Omega)$ of $\mathcal{I}(G)$.

\begin{definition}\label{DefCombinatorialStructureBuilding}
  An \textbf{apartment}\index{apartment} of $\mathcal{I}(G)$ is a subset of $\mathcal{I}(G)$ of the form: $$A=g \cdot \mathbb{A} = \{[g,x],\ x\in \mathbb{A}\}$$ for some $g\in G$, endowed with the set $\mathrm{Isom}(\mathbb{A},A)$ of bijections $\iota: \mathbb{A} \rightarrow A$ given by $\iota: x \mapsto [g,\nu(n)(x)]$ for some $n\in N$.
  
  A \textbf{local face}\index{face!local} (resp. \textbf{local chamber}\index{chamber!local}) of $\mathcal{I}(G)$ is a filter on $\mathcal{I}(G)$ of the form: $$\mathcal{F}=g \cdot \germ_x(x+F^v) = \{g \cdot \Omega,\ \Omega \in \germ_x(x+F^v)\}$$
  for some $g\in G$, some $x \in \mathbb{A}$ and some vector face (resp. vector chamber) $F^v$ in $V_\Rtot$.
\end{definition}

\begin{Lem}\label{lemParahoric_fixator}[{see \cite[7.4.4]{BruhatTits1}}]
Let $\Omega$ be a non-empty subset of $\mathbb{A}$.
The group $\widehat{P}_\Omega$ is the pointwise stabilizer of $\Omega$ in $G$.

In particular, for any filter $\mathcal{V}$ on $\mathbb{A}$, the group $\widehat{P}_{\mathcal{V}}$ fixes $\mathcal{V}$.
\end{Lem}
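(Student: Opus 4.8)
The plan is to show the two inclusions separately: that $\widehat{P}_\Omega$ fixes $\Omega$ pointwise, and that any $g \in G$ fixing $\Omega$ pointwise lies in $\widehat{P}_\Omega$. For the first inclusion, $\widehat{P}_\Omega$ is generated by $U_\Omega$ and $\widehat{N}_\Omega$; the group $\widehat{N}_\Omega$ fixes $\Omega$ pointwise by definition of the action $\nu$, so it suffices to check that $U_\Omega$ (equivalently, each $U_{\alpha,\Omega}$ for $\alpha \in \Phi$) fixes every point of $\Omega$ in $\mathcal{I}(G)$. For $u \in U_{\alpha,\Omega}$ and $x \in \Omega$, one has $\varphi_\alpha(u) \geqslant -\alpha(x)$, so $u \in U_{\alpha,x} \subseteq U_x$; taking $n=1$ in the defining relation $\sim$ gives $u \cdot [1,x] = [u,x] = [1,x]$. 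Hence $u$ fixes $x$, and therefore $U_\Omega$, and thus $\widehat{P}_\Omega$, fixes $\Omega$ pointwise.

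For the reverse inclusion, suppose $g \in G$ fixes every point of $\Omega$. First I would fix a point $x_0 \in \Omega$ and handle the one-point case: since $g \cdot [1,x_0] = [g,x_0] = [1,x_0]$, the relation $\sim$ provides $n \in N$ with $\nu(n)(x_0) = x_0$ and $g n \in U_{x_0}$; then $n \in \widehat{N}_{x_0}$ and $g \in U_{x_0} n^{-1} \subseteq \widehat{P}_{x_0}$. So $g \in \widehat{P}_x$ for each $x \in \Omega$, i.e. $g \in \bigcap_{x \in \Omega} \widehat{P}_x$. The key input is now Proposition~\ref{propBT7.1.11}, which asserts exactly $\bigcap_{x \in \Omega} \widehat{P}_x = \widehat{P}_\Omega$. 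This gives $g \in \widehat{P}_\Omega$ and completes the proof of the first assertion.

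For the final assertion, let $\mathcal{V}$ be a filter on $\mathbb{A}$. Recall $\widehat{P}_\mathcal{V} = \bigcup_{\Omega \in \mathcal{V}} \widehat{P}_\Omega$, so any $g \in \widehat{P}_\mathcal{V}$ lies in $\widehat{P}_\Omega$ for some $\Omega \in \mathcal{V}$; by what we just proved, $g$ fixes $\Omega$ pointwise, hence $g$ fixes the element $\Omega$ of $\mathcal{V}$. Thus $\widehat{P}_\mathcal{V}$ fixes $\mathcal{V}$ in the sense that each of its elements fixes some member of $\mathcal{V}$.

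I expect the only genuine obstacle to be the one-point case of the reverse inclusion, where one must correctly unwind the equivalence relation $\sim$ to extract the element $n \in \widehat{N}_{x_0}$; everything else is either immediate from definitions or a direct appeal to Proposition~\ref{propBT7.1.11} (whose proof, already given above, is where the real work lies). One should also double-check that $\widehat{P}_x = \widehat{N}_x U_x$ — which holds by Corollary~\ref{CorPchapeauOmegaDecQC} since $\widehat{N}_x$ normalizes $U_x$ — so that the factorization $g \in U_{x_0} n^{-1} \subseteq \widehat{P}_{x_0}$ is legitimate.
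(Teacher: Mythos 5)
Your proposal is correct and follows essentially the same route as the paper: reduce to the single-point case (where the equivalence relation $\sim$ identifies the stabilizer of $x$ with $U_x\widehat{N}_x=\widehat{P}_x$, legitimized by Corollary~\ref{CorPchapeauOmegaQC}) and then invoke Proposition~\ref{propBT7.1.11} to pass from $\bigcap_{x\in\Omega}\widehat{P}_x$ to $\widehat{P}_\Omega$. The paper merely packages the two inclusions into a single equivalence at the one-point stage, so there is no substantive difference.
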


%\green{
\begin{proof}
If $\Omega =\{x\}$ is a single point and $g \in G$, then
\[ [1,x] = g \cdot [1,x] \Longleftrightarrow
\exists n \in N, \begin{cases}
x=\nu(n^{-1})(x)\\
gn^{-1} \in U_x
\end{cases}
\Longleftrightarrow
\exists n \in \widehat{N}_x,\ g \in U_x n\]
since $\widehat{N}_x$ is, by definition, the stabilizer of $x$ in $N$.
Thus, the stabilizer of $x$ in $G$ is $U_x \widehat{N}_x$, which is $\widehat{P}_x$ by Corollary~\ref{CorPchapeauOmegaQC}.
Hence, Proposition~\ref{propBT7.1.11} gives that $\widehat{P}_\Omega$ is the pointwise stabilizer of $\Omega$.
\end{proof}
%}

\begin{proposition}\label{propAxiom(A2)}(see \cite[7.4.8]{ BruhatTits1}) The set $\I(G)$ satisfies~\ref{axiomA2}. More precisely, let $g\in G$. Then:\begin{enumerate}

\item there exists $n\in N$ such that $g^{-1} \cdot x=n \cdot x$ for all $x\in  \A \cap g\cdot \A$. 

\item $\A \cap g \cdot \A$ is enclosed.
\end{enumerate}

\end{proposition}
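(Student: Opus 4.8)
The plan is to follow the classical strategy of Bruhat and Tits in \cite[7.4.8]{BruhatTits1}, adapted to our framework where $\Rtot$ need not be $\R$ and points are replaced by filters only where necessary. First I would reduce to a statement about $N$. Let $\Omega = \A \cap g\cdot\A$ and assume it is non-empty (otherwise there is nothing to prove). For each $x \in \Omega$, since $x \in g\cdot\A$, there is $h_x \in \A$ with $x = [g,h_x]$; comparing with $x = [1,x]$ and using the definition of $\sim$, we get an element $n_x \in N$ with $\nu(n_x)(h_x) = x$ and $g^{-1} \cdot x = [g^{-1},x] = [n_x, h_x]$, i.e. $g^{-1}$ acts on $x$ the same way some element of $N$ does. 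The first task is to show that a \emph{single} $n \in N$ works simultaneously for all $x \in \Omega$. The key point here is that if $n, n' \in N$ both send $\Omega$ into $\A$ in a way compatible with the action of $g^{-1}$, then $n^{-1}n'$ fixes $\Omega$ pointwise (via $\nu$), hence lies in $\widehat{N}_\Omega$, so $n$ and $n'$ induce the same map on $\Omega$. To produce such an $n$ I would use the Bruhat decomposition (Theorem~\ref{ThmBruhatDecomposition}): writing $g^{-1} \in \widehat{P}_{\mathcal{F}_{x,C}} N \widehat{P}_{\mathcal{F}_{x',C'}}$ for suitable germs extracts an element of $N$ realizing the action of $g^{-1}$ near a given point, and the coset $N/T_b$ is uniquely determined, which is exactly what pins down $n$ up to $\widehat{N}_\Omega$. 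This gives part~(1).

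For part~(2), that $\Omega = \A \cap g\cdot\A$ is enclosed, I would argue as in Bruhat--Tits. Fix $n \in N$ as in part~(1), so $gn$ fixes $\Omega$ pointwise by construction, i.e.\ $gn \in \widehat{P}_\Omega$ by Lemma~\ref{lemParahoric_fixator}. Now $x \in g\cdot\A$ iff $g^{-1}\cdot x \in \A$; I want to characterize this membership in terms of half-apartments. Using the decomposition $\widehat{P}_\Omega = \widehat{N}_\Omega\, U_{\Omega + \overline{C}^v_{\Rtot,\Delta}}\, U_{\Omega - \overline{C}^v_{\Rtot,\Delta}}$ from Corollary~\ref{CorPchapeauOmegaDecQC}, together with the groups $U_{\alpha,\Omega} = \bigcap_{x\in\Omega} U_{\alpha,-\alpha(x)} = U_{\alpha,\cl(\Omega)}$ (Lemma~\ref{lemUV=UclV}), one sees that the set of points fixed by $gn$ inside $\A$ is controlled by the finitely many roots $\alpha$ for which $U_{\alpha,\Omega} \neq \{1\}$ and the corresponding values; this should show $\A \cap g\cdot\A = \A \cap (gn)\cdot\A$ is a finite intersection of sets of the form $D_{\alpha,\lambda_\alpha}$, i.e.\ enclosed in the sense of Definition~\ref{defEnclosure}. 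The translation of ``$gn$ fixes $\A \cap g\cdot\A$ pointwise and nothing more'' into ``$\cl(\Omega) = \Omega$'' is the step where I expect to have to be careful: one direction ($\Omega \subseteq \cl(\Omega)$, giving that the fixed set contains an enclosed set) is formal, but showing the fixed set is \emph{exactly} enclosed requires knowing that $U_{\Omega}$ detects no more than the half-apartments $D_{\alpha,\lambda_\alpha}$ it is built from, which rests on Proposition~\ref{PropDecompositionQC} and the injectivity of the product maps there.

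Finally I would assemble~(1) and~(2) into axiom~\ref{axiomA2}: given apartments $A = g\cdot\A$ and $A' = g'\cdot\A$, after translating by $g^{-1}$ we reduce to the case $A = \A$, and part~(1) produces $n \in N$ with $n$ acting on $\A \cap g^{-1}g'\cdot\A$ as $(g^{-1}g')^{-1}$ does; the induced map $\phi: A' \to A$ is then an isomorphism of apartments fixing $A \cap A'$, and part~(2) gives that $A \cap A'$ is enclosed in $A$. The main obstacle, as indicated, is the enclosedness in part~(2): making precise that the fixed-point set of $\widehat{P}_\Omega$ in an apartment is a finite intersection of half-apartments, since in our setting $\Rtot$ lacks the order-completeness of $\R$ and we cannot invoke infima of root values — but this is precisely what the groups $U_{\alpha,\Omega}$ and the enclosure $\cl$ were designed to circumvent, and Lemmas~\ref{lemUV=UclV} and Corollary~\ref{CorPchapeauOmegaDecQC} carry exactly the needed information.
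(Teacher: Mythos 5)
Your overall strategy (reduce the action of $g^{-1}$ on $\Omega=\A\cap g\cdot\A$ to that of an element of $N$, then use the parahoric machinery and the enclosure) is the right one, but the proposal has a genuine gap at the central point of part~(1): the passage from ``some $n_x\in N$ works at each point $x\in\Omega$'' to ``a single $n$ works on all of $\Omega$''. The uniqueness observation you make (two such $n$ differ by an element of $\widehat N_\Omega$) does not produce existence, and the uniqueness in the Bruhat decomposition~\ref{ThmBruhatDecomposition} only pins down the $N$-coset relative to a \emph{fixed} pair of chamber germs; as the base point varies over $\Omega$ the double coset varies, so nothing in your sketch forces the local witnesses to agree. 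The paper's proof spends most of its length exactly here, in two steps: first an induction showing that every \emph{finite} subset of $\Omega$ admits a common witness, using the mixed decomposition $\widehat P_{\Omega_1}\widehat P_{x_2}\subset \widehat N_{\Omega_1}U^-_{C^v}U^+_{C^v}\widehat N_{x_2}$ of Corollary~\ref{corBT7.1.6} together with the spherical Bruhat decomposition and axiom~\ref{axiomRGD6}; second, a pigeonhole argument over the finitely many cosets $n_jT_b$ (finiteness of $\widehat N_{x_0}/T_b$) combined with $\widehat P_\Omega=\bigcap_{x\in\Omega}\widehat P_x$ (Proposition~\ref{propBT7.1.11}) to pass from finite subsets to all of $\Omega$. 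In the classical setting over $\R$ one could lean on compactness or convexity here; in the present setting this finiteness argument is the substitute, and without it part~(1) is not established.

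For part~(2) you correctly identify Lemma~\ref{lemUV=UclV} and Corollary~\ref{CorPchapeauOmegaDecQC} as the relevant tools, but you stop precisely at the step you yourself flag as delicate, and the route you sketch (showing the fixed-point set of $gn$ in $\A$ is cut out by the finitely many roots with $U_{\alpha,\Omega}\neq\{1\}$) is not what makes the argument work. The paper's argument is shorter and avoids analysing fixed-point sets of unipotent elements: with $\tilde g=gn\in\widehat P_\Omega$, write $\tilde n\tilde g=p\in P_\Omega=P_{\cl(\Omega)}$ for some $\tilde n\in\widehat N_\Omega$, pick $\Omega'\in\cl(\Omega)$ with $p\in P_{\Omega'}$; then $\tilde g\cdot y=\tilde n^{-1}\cdot y\in\A$ for $y\in\Omega'$, so $\tilde g(\Omega')\subset\Omega$, and applying $\tilde g$ twice and using that $\tilde g$ fixes $\Omega$ pointwise gives $\tilde g\cdot y=y$, hence $\Omega'\subset\Omega$ and $\Omega=\Omega'$ is enclosed. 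You would need either to reproduce this idempotency trick or to actually carry out your fixed-point analysis; as written, neither is done.
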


\begin{proof}

We may assume that $\Omega:=\A\cap g.\A$ is nonempty. Let $\XC$ be the set of subsets $\widetilde{\Omega}\subset \Omega$ such that $g. N\cap \widehat{P}_{\widetilde{\Omega}}\neq \emptyset$. By definition of $\I(G)$, $\XC$ contains $\{x\}$, for all $x\in \Omega$. Let $\Omega_1\in \XC$, $x_2\in \Omega$ and $n_1,n_2\in N$ be such that $gn_1\in \widehat{P}_{\Omega_1}$ and $gn_2\in \widehat{P}_{\{x_2\}}$. Let us prove that $\Omega_1\cup\{x_2\}\in \XC$. 

By Corollary~\ref{corBT7.1.6}, there exists a vector chamber $C^v\subset V_\Rtot$ such that $n_1^{-1}n_2\in \widehat{P}_{\Omega_1}.\widehat{P}_{x_2}\subset \widehat{N}_{\Omega_1}.U_{C^v}^-.U_{C^v}^+.\widehat{N}_{x_2}$ (we used the relations $\widehat{P}_{\Omega_1}=\widehat{N}_{\Omega_1}P_{\Omega_1}$ and $\widehat{P}_{x_2}=P_{x_2}\widehat{N}_{x_2}$ from Lemma~\ref{LemPOmegaQC} and Corollary~\ref{CorPchapeauOmegaQC}). Therefore, there exists $n_1'\in \widehat{N}_{\Omega_1}$ and $n_2'\in \widehat{N}_{x_2}$ such that $n_1'^{-1}n_1^{-1}n_2n'_2\in N\cap U_{C^v}^-U_{C^v}^+=\{1\}$ by \cite[6.1.15(c)]{BruhatTits1}. Set $n=n_1n'_1=n_2n'_2$. Then $gn\in \widehat{P}_{\Omega_1}\cap \widehat{P}_{x_2}=\widehat{P}_{\Omega_1\cup\{x_2\}}$ (by Proposition~\ref{propBT7.1.11}). Consequently, $\Omega_1\cup\{x_2\}\in \XC$ and by induction, every nonempty finite subset of $\Omega$ is in $\XC$.

The group $N/T_b$ is finite. Indeed,  $\widehat{N}_{x_0}$ is  by definition the stabilizer of $x_0$ and $T_b$ is the kernel of the action $\nu: N \to \operatorname{Aff}_{\Rtot}(\mathbb{A})$. Thus the quotient group $\widehat{N}_{x_0} / T_b$ can be identified with a subgroup of $W^v$ which is finite.
Write $N/T_b=\{n_1T_b,\ldots,n_kT_b\}$, with $k\in \N$ and $n_1,\ldots,n_k\in N$.
Choose $x_0\in \Omega$. 
Let $\mathrm{Fin}(\Omega,x_0)$ be the set of finite subsets $\widetilde{\Omega}$ of $\Omega$ such that
 $x_0\in \widetilde{\Omega}$. Let $J$ be the set of elements of $j\in \llbracket 1,k\rrbracket$ such that  there exists $\Omega_j\in \mathrm{Fin}(\Omega,x_0)$ such that $gn_j\notin \widehat{P}_{\Omega_j}$. Let $\tilde{\Omega}=\bigcup_{j\in J} \Omega_j$. Then $\widetilde{\Omega}\in \mathrm{Fin}(\Omega,x_0)$. 
 Moreover, if $j\in J$, $gn_j\notin \widehat{P}_{\Omega_i}\supset \widehat{P}_{\tilde{\Omega}}$. 
 Let $i\in \llbracket 1,k\rrbracket$ be such that $gn_i\in \widehat{P}_{\widetilde{\Omega}}$. Then $i\notin J$ and thus for all $\Omega'\in \mathrm{Fin}(\Omega,x_0)$, $gn_i\in \widehat{P}_{\Omega'}$. In particular, for all $x\in \Omega$, $gn_i\in \widehat{P}_x$. Consequently, $gn_i\in \bigcap_{x\in \Omega} \widehat{P}_{x}=\widehat{P}_{\Omega}$ (by Proposition~\ref{propBT7.1.11}) and thus for all $x\in \Omega$, $g^{-1}.x=n_i.x$. 

It remains to prove that $\Omega$ is enclosed. Let $\tilde{g}=gn_i$. Then $\tilde{g} \cdot \A \cap \A=g\cdot \A \cap \A=\Omega$.
Moreover, $\tilde{g}\in \widehat{P}_{\Omega}$ and thus  there exists $\tilde{n}\in \widehat{N}_\Omega$ such that $p:=\tilde{n}\tilde{g}\in P_{\Omega}$.
By Lemma~\ref{lemUV=UclV}, $P_{\Omega}=P_{\cl(\Omega)}$: there exists $\Omega'\in \cl(\Omega)$ such that $p\in P_{\Omega'}$.
Then for $x\in \Omega'$, one has $\tilde{g}.x=\tilde{n}^{-1}p.x=\tilde{n}^{-1}.x\in \A$ and thus $\tilde{g}.x\in \Omega$ for all $x\in \Omega'$.
Let $y\in \Omega'$.
Then $\tilde{g}.y\in \Omega$ and thus $\tilde{g}\tilde{g}.y=\tilde{g}.y$.
Consequently, $\tilde{g}.y=y$ and thus $y\in \Omega$.
Therefore, $\Omega'\subset\Omega$ and thus $\Omega=\Omega'\in\cl(\Omega)$: $\Omega$ is enclosed, which proves the proposition.

\end{proof}

\begin{remark}\label{rmkDef_sector_point}

\item Let $x\in \I(G)$ and $Q_\infty$ be a sector-germ at infinity. Then by Lemma~\ref{lemDecompositions_axioms}, there exists an apartment $A$ containing $x$ and $Q_\infty$. Let $Q\subset A$ be a sector whose germ at infinity is $Q_\infty$. Then we denote by $x+Q_\infty$ the translate of $Q$ at $x$. This does not depend on the choice of $A$ by~\ref{axiomA2} (Proposition~\ref{propAxiom(A2)}).

\end{remark}

\begin{Cor}[{see \cite[9.7(i)]{Landvogt}}]\label{corLandvogt9.7}
Let $\Omega$ be a non-empty subset of $\mathbb{A}$.
The group $U_\Omega$ acts transitively on the set of apartments of $\mathcal{I}(G)$ containing $\Omega$.
\end{Cor}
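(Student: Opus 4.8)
The statement to prove is Corollary~\ref{corLandvogt9.7}: for a non-empty subset $\Omega \subset \mathbb{A}$, the group $U_\Omega$ acts transitively on the set of apartments of $\mathcal{I}(G)$ containing $\Omega$. The plan is to reduce to the standard apartment $\mathbb{A}$ and then exploit the decomposition $\widehat{P}_\Omega = U_\Omega \widehat{N}_\Omega$ together with the structural results on $U_\Omega$ from Example~\ref{ExQC} and the description of apartment intersections in Proposition~\ref{propAxiom(A2)}.

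First I would fix an apartment $A$ of $\mathcal{I}(G)$ containing $\Omega$; since apartments are the $G$-translates $g\cdot\mathbb{A}$, write $A = g\cdot\mathbb{A}$ for some $g\in G$. The goal is to produce $u\in U_\Omega$ with $u\cdot\mathbb{A} = A$, i.e.\ $u^{-1}g \in \Stab_G(\mathbb{A})$, after suitably modifying $g$ by an element of $N$ (which stabilizes $\mathbb{A}$). The key point is that $\Omega \subset A \cap \mathbb{A}$, so by Proposition~\ref{propAxiom(A2)}\,(1) there exists $n\in N$ such that $g^{-1}\cdot x = n\cdot x$ for all $x\in A\cap\mathbb{A}$, and in particular for all $x\in\Omega$. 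Replacing $g$ by $g n$ (which does not change the apartment $g\cdot\mathbb{A} = gn\cdot\mathbb{A}$ since $N$ stabilizes $\mathbb{A}$), we may assume that $g$ fixes $\Omega$ pointwise. Then $g \in \widehat{P}_\Omega$ by Lemma~\ref{lemParahoric_fixator}, the parahoric subgroup being exactly the pointwise stabilizer of $\Omega$.

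Next I would invoke Corollary~\ref{CorPchapeauOmegaQC} (or its filter analogue), which gives $\widehat{P}_\Omega = U_\Omega \widehat{N}_\Omega$. Write $g = u n'$ with $u\in U_\Omega$ and $n'\in\widehat{N}_\Omega \subset N$. Since $n'\in N$ stabilizes $\mathbb{A}$, we get $A = g\cdot\mathbb{A} = un'\cdot\mathbb{A} = u\cdot\mathbb{A}$, which is exactly the desired conclusion: $A$ is the image of $\mathbb{A}$ under an element of $U_\Omega$. For the general case where neither apartment is $\mathbb{A}$: given two apartments $A_1 = g_1\cdot\mathbb{A}$ and $A_2 = g_2\cdot\mathbb{A}$ both containing $\Omega$, the argument just given produces $u_1, u_2 \in U_\Omega$ with $A_i = u_i\cdot\mathbb{A}$, and then $u_2 u_1^{-1} \in U_\Omega$ sends $A_1$ to $A_2$; this is where I use that $U_\Omega$ is a group, stable under inverses, so transitivity follows formally from the fact that every apartment through $\Omega$ lies in the $U_\Omega$-orbit of $\mathbb{A}$.

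The main obstacle I anticipate is making sure the reduction in the first paragraph is airtight: one must check that $\Omega$ genuinely sits inside $A \cap \mathbb{A}$ as a subset (not merely as a filter) so that Proposition~\ref{propAxiom(A2)}\,(1) applies verbatim, and one must verify that modifying $g$ by $n\in N$ on the right really leaves the apartment $g\cdot\mathbb{A}$ unchanged — this uses that $N$ preserves $i(\mathbb{A})$, which is recorded in the Fact following Definition~\ref{DefLambdaBuildingFromDatum}, but some care is needed because $\mathrm{Isom}(\mathbb{A},A)$ is defined only up to $N$-action and one should confirm the identification of $A\cap\mathbb{A}$ with a subset of $\mathbb{A}$ is compatible with the chosen chart. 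Once that bookkeeping is settled, the rest is a direct application of the parahoric decomposition and causes no difficulty. If $\Omega$ is replaced by a filter $\mathcal{V}$, I would run the same argument using $\widehat{P}_\mathcal{V}$, $U_\mathcal{V}$ and the filter version of the decomposition together with Lemma~\ref{lemParahoric_fixator}, which already covers filters.
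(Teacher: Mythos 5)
Your proof is correct and follows essentially the same route as the paper: apply Proposition~\ref{propAxiom(A2)} to replace $g$ by $gn$ fixing $\Omega$ pointwise, identify $gn$ as an element of $\widehat{P}_\Omega$ via Lemma~\ref{lemParahoric_fixator}, and conclude with the decomposition $\widehat{P}_\Omega = U_\Omega\widehat{N}_\Omega$ from Corollary~\ref{CorPchapeauOmegaQC}. The concerns you raise in the last paragraph are already handled by the Fact following Definition~\ref{DefLambdaBuildingFromDatum} and Proposition~\ref{propAxiom(A2)}, so no further work is needed.
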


\begin{proof}
Let $A'$ an apartment containing $\Omega$ and $g \in G$ such that $A' = g \cdot A$.
Let $n \in N$ such that $g^{-1} \cdot x = n \cdot x$ for all $x \in \mathbb{A} \cap g \cdot \mathbb{A} \supset \Omega$ as in Proposition~\ref{propAxiom(A2)}.
Hence $gn \in \widehat{P}_\Omega$ by Lemma~\ref{lemParahoric_fixator}.
By Corollary~\ref{CorPchapeauOmegaQC}, there exist $u \in U_\Omega$ and $n' \in \widehat{N}_\Omega \subset N$ such that $gn = un'$.
Hence $A' = g \cdot A = gn \cdot A = un' \cdot A = u \cdot A$.
This proves the transitivity.
\end{proof}

\begin{Cor}[{see \cite[7.4.10]{BruhatTits1}}]\label{CorStabilizerA}
The group $N$ is the stabilizer of $\A$ in $G$.

The group $T_b$ is the pointwise stabilizer of $\A$ in $G$.
\end{Cor}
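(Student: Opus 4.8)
The plan is to prove both statements by combining the transitivity result just established with the description of parahoric subgroups as intersections and the decompositions from section~\ref{SecParahoricBruhat}.

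First I would prove that $N$ is the stabilizer of $\A$ in $G$. The inclusion $N \subseteq \Stab_G(\A)$ is immediate: for $n \in N$ and $x \in \A$ we have $n \cdot [1,x] = [n,x] = [1,\nu(n)(x)] \in \A$, which is exactly the Fact recorded after the injectivity lemma. For the reverse inclusion, take $g \in G$ with $g \cdot \A = \A$. Then $\A = \A \cap g \cdot \A$, so Proposition~\ref{propAxiom(A2)} (applied with $\Omega = \A$, which is non-empty) produces an element $n \in N$ such that $g^{-1} \cdot x = \nu(n)(x)$ for all $x \in \A \cap g \cdot \A = \A$; equivalently $gn$ fixes $\A$ pointwise. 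By Lemma~\ref{lemParahoric_fixator}, $gn \in \widehat{P}_\A = \bigcap_{x \in \A} \widehat{P}_x$. Now $\widehat{N}_\A$, the pointwise stabilizer of $\A$ in $N$, is contained in $T_b$: any $m \in \widehat{N}_\A$ acts trivially on $\A$, so $\nu(m) = \operatorname{id}$, i.e. $m \in \ker\nu = T_b$ by Notation~\ref{NotWeylAffine}; conversely $T_b = \ker\nu$ fixes $\A$ pointwise. So it suffices to show $gn \in N$. Using the decomposition $\widehat{P}_\A = U_\A \widehat{N}_\A$ from Corollary~\ref{CorPchapeauOmegaQC}, write $gn = u h$ with $u \in U_\A$ and $h \in \widehat{N}_\A \subseteq T_b \subseteq N$. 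But $u = (gn) h^{-1}$ stabilizes $\A$ setwise (both $gn$ and $h$ do) and fixes $\A$ pointwise? Not quite — instead I will argue directly: $u \in U_\A$ fixes $\A$ pointwise by Lemma~\ref{lemParahoric_fixator}, hence $u \in \widehat{P}_\A$; applying Corollary~\ref{CorPchapeauOmegaQC} once more with a chosen basis $\Delta$, $u \in U_\A \cap U^+_\Delta$ etc., and since $U_\A$ is generated by the $U_{\alpha,\A}$ with $U_{\alpha,\A} = \bigcap_{x\in\A} U_{\alpha,-\alpha(x)}$, the fact that $\alpha$ is non-constant on $\A$ forces each $U_{\alpha,\A} = U_{\alpha,\infty} = \{1\}$ (as in the proof of Lemma~\ref{LemUsector}), so $U_\A = \{1\}$. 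Therefore $gn = h \in N$, giving $g \in N$.

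For the second assertion, the pointwise stabilizer of $\A$ in $G$ is $\widehat{P}_\A$ by Lemma~\ref{lemParahoric_fixator}. By the computation just made, $U_\A = \{1\}$, and $\widehat{N}_\A = T_b$ as observed above; hence $\widehat{P}_\A = U_\A \widehat{N}_\A = T_b$.

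The main obstacle will be the bookkeeping in the first part: one must be careful that Proposition~\ref{propAxiom(A2)} only asserts the existence of a single $n \in N$ matching $g^{-1}$ on the intersection $\A \cap g\cdot\A$, and to leverage it one needs $\A$ itself to be enclosed (which it is, trivially, being the empty intersection of half-apartments) and non-empty. The cleanest route is to isolate the sublemma $U_\A = \{1\}$ (equivalently $\widehat{N}_\A = T_b$), which follows from the non-constancy of every root on the full apartment together with $\Rtot$ being $\mathbb{Z}$-torsion-free, exactly mimicking Lemma~\ref{LemUsector}; once that is in hand both statements drop out of Lemma~\ref{lemParahoric_fixator} and Corollary~\ref{CorPchapeauOmegaQC} with no further work.
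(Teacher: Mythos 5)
Your proposal is correct and follows essentially the same route as the paper's proof: both reduce to showing $U_{\A}=\{1\}$ (the paper does this by evaluating $\alpha$ on $o+\lambda\alpha^\vee$ so that $\bigcap_{\lambda}[-2\lambda,\infty]=\{\infty\}$, which is the precise form of your "non-constancy" argument), then apply Proposition~\ref{propAxiom(A2)}, Lemma~\ref{lemParahoric_fixator} and Corollary~\ref{CorPchapeauOmegaQC} to get $gn\in\widehat{P}_{\A}=\widehat{N}_{\A}\subset N$, and identify the pointwise stabilizer with $\ker\nu=T_b$. The brief false start about $u=(gn)h^{-1}$ is harmless since you immediately replace it with the correct direct computation of $U_{\A}$.
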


\begin{proof}
We firstly prove that $U_\mathbb{A} = \{1\}$.
Let $\alpha \in \Phi$.
Then $U_{\alpha,\mathbb{A}} = \bigcap_{x \in \mathbb{A}} U_{\alpha, -\alpha(x-o)}$.
Considering the elements $x = o+ \lambda \alpha^\vee \in \mathbb{A}$ for $\lambda \in \Rtot$, we get that
$$U_{\alpha,\mathbb{A}} \subset \bigcap_{\lambda \in \Rtot} U_{\alpha,-\alpha(\lambda \alpha^\vee)} = \varphi_\alpha^{-1}\left( \bigcap_{\lambda \in \Rtot} [-2\lambda,\infty] \right).$$
But the last intersection is reduced to $\infty$ since for any $\varepsilon \in \Rtot_{>0}$ and any $\mu \in \Rtot$, we have $\mu \not\in [-2(-\mu - \varepsilon),\infty]$.
Thus $U_{\alpha,\mathbb{A}} = \varphi_\alpha^{-1}(\{\infty\}) = \{1\}$ for any $\alpha \in \Phi$.
Hence $U_{\mathbb{A}} = \{1\}$.

Let $g \in G$ be such that $g \cdot \mathbb{A} = \mathbb{A}$.
By Proposition~\ref{propAxiom(A2)}, there is $n \in N$ such that $\forall x\in \mathbb{A},\ g^{-1} \cdot x = n \cdot x$.
Hence $gn \in \widehat{P}_{\mathbb{A}}$ by Lemma~\ref{lemParahoric_fixator}.
Since $\widehat{P}_\mathbb{A} = \widehat{N}_\mathbb{A}$ by Corollary~\ref{CorPchapeauOmegaQC}, we have that $gn \in N$. Thus $g \in N$.
Moreover, since the action of $N$ on $\mathbb{A}$ is induced by that of $\nu$ via $n \cdot [1,x] = [1,\nu(n)(x)]$, we deduce the result.
\end{proof}

\begin{lemma}\label{lemDecompositions_axioms}
\begin{enumerate}[label={(\arabic*)}]
\item\label{itemAxiomA3GG} Any two local faces are contained in a single apartment of $\mathcal{I}(G)$. In particular, $\mathcal{I}(G)$ satisfies axioms~\ref{axiomA3} and \ref{axiomGG}.
  \item\label{itemAxiomA4} Any two sector-germs are contained in a single apartment of $\mathcal{I}(G)$. 
  In other words, $\mathcal{I}(G)$ satisfies axiom~\ref{axiomA4}.
  
\item\label{itemVisualBoundary}  If $\mathcal{F}$ is a local face and $Q_\infty$ is the germ at infinity of a sector, there exists an apartment containing $\mathcal{F}$ and $Q_\infty$.

  \end{enumerate}
  \end{lemma}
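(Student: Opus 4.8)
The plan is to deduce all three parts of Lemma~\ref{lemDecompositions_axioms} from the decomposition theorems proved above (the Iwasawa decomposition~\ref{thmIwasawa} and the Bruhat decomposition~\ref{ThmBruhatDecomposition}), together with Proposition~\ref{propAxiom(A2)} which identifies intersections of apartments as enclosed subsets and gives the required isomorphisms fixing them. The general strategy is the classical one of Bruhat--Tits: to say that two objects ``lie in a common apartment'' is, after translating by an element of $G$, to say that one of the objects lies in the standard apartment $\mathbb{A}$ and that some coset $gN$ meets a suitable parahoric subgroup; this is then exactly what the decomposition theorems provide.

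For~\ref{itemAxiomA3GG}, I would take two local faces $\mathcal{F}_1 = g_1 \cdot \germ_{x_1}(x_1 + F_1^v)$ and $\mathcal{F}_2 = g_2 \cdot \germ_{x_2}(x_2 + F_2^v)$. After replacing $g_2$ by $g_1^{-1} g_2$ we may assume $g_1 = 1$, so that $\mathcal{F}_1$ is a local face of $\mathbb{A}$; we must find $g \in G$ with $g \cdot \mathbb{A} \supseteq \mathcal{F}_2$ and $\mathbb{A} \supseteq \mathcal{F}_1$, i.e. (using Lemma~\ref{lemParahoric_fixator}) we need $g_2 \in \widehat{P}_{\mathcal{F}_1} N \widehat{P}_{\mathcal{F}_2}$ after enlarging each $F_i^v$ to a vector chamber $C_i^v$ containing it in its closure (a local face is contained in a local chamber, and an apartment containing the chamber contains the face). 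This membership is precisely the content of the Bruhat decomposition Theorem~\ref{ThmBruhatDecomposition}, since $G = \widehat{P}_{\mathcal{F}_{x_1,C_1}} N \widehat{P}_{\mathcal{F}_{x_2,C_2}}$. Axiom~\ref{axiomA3} follows because a point is contained in a local face (take $F^v$ arbitrary and the appropriate germ), and axiom~\ref{axiomGG} is the statement for two local sector-germs based at vertices, again a special case. The hypothesis $\Rtot = \Rtot_\Q$ needed for Theorem~\ref{ThmBruhatDecomposition} holds by the standing assumptions of section~\ref{sectionLambdaBuildingFromVRGD}.

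For~\ref{itemAxiomA4}, given two sector-germs at infinity $Q_{1,\infty}, Q_{2,\infty}$, I would again translate so that $Q_{1,\infty}$ is the germ at infinity of a sector $x_1 + C_1^v$ in $\mathbb{A}$; finding a common apartment amounts to showing that the conjugating element $g_2$ lies in $U_{C_1} N U_{C_2}$ (up to the relevant parahorics $\widehat{P}_{Q_{2,\infty}}$, which by the Corollary following Proposition~\ref{PropPchapeauGermSectorDecQC} equal $P_{Q_{2,\infty}}$). This is what the Iwasawa decomposition Theorem~\ref{thmIwasawa} gives: $G = U_{C'} N \widehat{P}_{\mathcal{F}}$, and passing to germs at infinity one gets $G = U_{C_1} N U_{Q_{2,\infty}}$. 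Alternatively one can invoke~\ref{itemAxiomA3GG} plus a limiting argument, but the Iwasawa route is cleaner. For~\ref{itemVisualBoundary}, given a local face $\mathcal{F}$ and a sector-germ at infinity $Q_\infty$, I would translate so that $\mathcal{F}$ is a local face of $\mathbb{A}$, enlarge it to a local chamber, and apply the Iwasawa decomposition $G = U_{C'} N \widehat{P}_{\mathcal{F}_{x,C}}$ exactly as restated right after Theorem~\ref{thmIwasawa}: ``if $F$ is a face of $\mathcal{I}$ and $C_\infty$ is a sector-germ at infinity, then there exists an apartment containing $F$ and $C_\infty$.''

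The main obstacle, and the step I would be most careful about, is the bookkeeping of enlarging vector faces to vector chambers and checking that an apartment containing a local chamber (resp. a sector) does contain the local face (resp. sector-germ) it refines, and that the $G$-translation reductions are legitimate — i.e. that $\widehat{P}_{g \cdot \mathcal{F}} = g \widehat{P}_{\mathcal{F}} g^{-1}$ and that membership $g \in \widehat{P}_{\mathcal{F}_1} N \widehat{P}_{\mathcal{F}_2}$ really is equivalent to existence of a common apartment. Both of these are routine consequences of Corollary~\ref{CorConjugationParahorics}, Lemma~\ref{lemParahoric_fixator}, and the definition of apartments in Definition~\ref{DefCombinatorialStructureBuilding}, but they must be spelled out once so that all three items reduce cleanly to Theorems~\ref{thmIwasawa} and~\ref{ThmBruhatDecomposition}. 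No genuinely new geometric input is needed beyond those two decompositions.
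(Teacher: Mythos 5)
Your parts~\ref{itemAxiomA3GG} and~\ref{itemVisualBoundary} follow the paper's proof exactly: translate by an element of $G$ so that one of the two objects lies in $\mathbb{A}$, apply the Bruhat decomposition (Theorem~\ref{ThmBruhatDecomposition}) for two local germs and the Iwasawa decomposition (Theorem~\ref{thmIwasawa}) for a local germ together with a germ at infinity, and use Lemma~\ref{lemParahoric_fixator} to convert the factorization $g = g_x n g_{x''}$ into the statement that $g_x n \cdot \mathbb{A}$ contains both objects. Your extra care about enlarging a local face to a local chamber and checking $\widehat{P}_{g\cdot\mathcal{F}} = g\widehat{P}_{\mathcal{F}}g^{-1}$ is reasonable bookkeeping (the paper's proof of~\ref{itemAxiomA3GG} is in fact written only for local sector-germs).

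The one place where you genuinely diverge, and where your argument as written has a gap, is part~\ref{itemAxiomA4}. The Iwasawa decomposition reads $G = U_{C'} N \widehat{P}_{\mathcal{F}_{x,C}}$, and by Proposition~\ref{PropPchapeauGermSectorDecQC} together with Corollary~\ref{CorUaGermofChamber} the factor $\widehat{P}_{\mathcal{F}_{x,C}}$ contains $U_{\alpha,\mathcal{F}_{x,C}} = U'_{\alpha,x}$ for every $\alpha \in \Phi^-_{C}$. Such an element fixes only a half-apartment $D_{\alpha,\lambda}$ on which $\alpha$ is bounded below, and since $\alpha$ is negative on $C$ the sector $x+C$ eventually leaves that half-apartment; so these elements fix the local germ at $x$ but \emph{not} the germ at infinity of $x+C$. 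Hence one cannot ``pass to germs at infinity'' in the Iwasawa decomposition to obtain $G = U_{C_1} N U_{Q_{2,\infty}}$, and the cited corollary ($\widehat{P}_Q = P_Q$) concerns the germ of $Q$ at its base point, not at infinity. What is actually needed --- and what the paper uses --- is the purely group-theoretic spherical Bruhat decomposition $G = U^+ N U^+$ of \cite[6.1.15(a)]{BruhatTits1}, applied with $U^+ = U_{\Phi^+_{Q_\infty}}$: writing $g = u_+ n v_+$ with $u_+, v_+ \in U_{\Phi^+_{Q_\infty}}$ fixing $Q_\infty$ pointwise on a subsector immediately gives $Q_\infty,\, g\cdot Q_\infty \Subset u_+ n \cdot \mathbb{A}$. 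This is an easily repaired gap, since that decomposition is already available from the root group datum axioms and requires no valuation data, but as stated your derivation of the key factorization for~\ref{itemAxiomA4} does not go through.
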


  \begin{proof}

\ref{itemAxiomA3GG} This follows the proof of \cite[Théorème 7.4.18]{BruhatTits1}. By Proposition~\ref{propAxiom(A2)}, we know that $\mathcal{I}(G)$ satisfies axiom~\ref{axiomA2}. Hence it suffices to prove that any two local chambers $C,C'$ are contained in a single apartment. Write $C=\germ_{x}(Q)$ and $C'=\germ_{x'}(Q')$, where $x,x'\in \I(G)$ and $Q,Q'$ are sectors of $\I(G)$, based at $x$ and $x'$ respectively. As $G$ acts transitively on the set of apartments, we may assume that $Q\subset \A$. Let $g\in G$ be such that $g^{-1}.Q'\subset \A$. By the Bruhat decomposition~\ref{ThmBruhatDecomposition}, we can write $g=bnb'$ with $b\in \widehat{P}_{C}$, $b'\in \widehat{P}_{g^{-1}.C'}$ and $n\in N$. Then $b.C=C\subset b.\A$ and \[C'=g.g^{-1}.C'=bnb'.\germ_{g^{-1}.x'}(g^{-1}.Q')=bn.\germ_{g^{-1}.x'}(g^{-1}.Q')\subset b.\A.\]

\ref{itemAxiomA4} Let $Q_{\infty}$ and $Q'_{\infty}$ be two sector-germs at infinity in $\mathcal{I}(G)$. Since $G$ acts transitively on the set of apartments in $\mathcal{I}(G)$, we may assume that $Q_{\infty} \Subset \mathbb{A}$. Consider an element $g \in G$ such that
$Q'_{\infty}= g\cdot Q_{\infty}$. Let $\Phi_{Q_{\infty}}^+$ be the set of roots in $\Phi$ that are positive on $Q_{\infty}$. By \cite[6.1.15(a)]{BruhatTits1}, we can write $g=u_+nv_+$ with
$u_+,v_+ \in U_{\Phi_{Q_{\infty}}^+}$ and $n\in N$. Since $u_+$ and $v_+$ fix $Q_{\infty}$, we then have $Q_{\infty}\Subset u_+n\cdot \mathbb{A}$ and:
$$Q'_{\infty}=g\cdot Q_{\infty} =u_+n\cdot Q_{\infty} \Subset u_+n\cdot \mathbb{A}.$$

\ref{itemVisualBoundary} We obtain it similarly as~(i), by replacing the Bruhat decomposition by the Iwasawa decomposition (\ref{thmIwasawa}).
  \end{proof}

\begin{remark}
Let $d:\A\times \A\rightarrow \Rtot$ be invariant under the action of $N$. Then $d$ extends uniquely to a $G$-invariant   map $d:\I(G)\times \I(G)\rightarrow \Rtot$. Indeed, suppose that such a $d$ exists. Let $x,y\in \I(G)$. Then by Lemma~\ref{lemDecompositions_axioms}, there exists $g\in G$ such that $x,y\in A:=g.\A$. One necessarily have $d(x,y)=d(g^{-1}.x,g^{-1}.y)$. It remains to prove that the last formula is well defined, that is  that it does not depend on the choice of $g\in G$. Let $g'\in G$ be such that $x,y\in g'.\A$. Let $A'=g'.\A$.  Let $h\in G$ be such that $h.A=A'$ and such that $h$ fixes pointwise $A\cap A'$. Then $g'^{-1}hg\in N=\mathrm{Stab}(\A)$ (by Corollary~\ref{CorStabilizerA}). Thus one has $d(g^{-1}.x,g^{-1}.y)=d(g'^{-1}hg.g^{-1}.x,g'^{-1}hg.g^{-1}.y)=d(g'^{-1}.x,g'^{-1}.y)$, which proves our assertion. 
\end{remark}

\section{Valuation for quasi-split reductive groups}
\label{SecQuasiSplitGroups}

In the following, given a separable field extension $\mathbb{L}/\mathbb{K}$ and an affine $\mathbb{L}$-scheme $\mathbf{X}$, we denote by $R_{\mathbb{L}/\mathbb{K}}(\mathbf{X})$\index[notation]{r@$R_{\mathbb{L}/\mathbb{K}}(\mathbf{\cdot})$} the Weil restriction\index{Weil restriction} of $\mathbf{X}$ to $\mathbb{K}$.
For more general considerations on Weil restrictions, see \cite[7.6]{BoschLutkebohmertRaynaud-NeronModels} or \cite[A5]{ConradGabberPrasad}.

\subsection{Notations and recalls for quasi-split reductive groups}

Let $\mathbb{K}$\index[notation]{k@$\mathbb{K}$} be any field and $\mathbf{G}$\index[notation]{g@$\mathbf{G}$} be any reductive $\mathbb{K}$-group.
Recall that $\mathbf{G}$ splits over a finite Galois extension of $\mathbb{K}$ and denote by $\widetilde{\mathbb{K}}/\mathbb{K}$\index[notation]{k@$\widetilde{\mathbb{K}}$} a minimal one \cite[4.1.2]{BruhatTits2}.
Denote by $\widetilde{\mathbf{G}} = \mathbf{G}_{\widetilde{\mathbb{K}}}$\index[notation]{g@$\widetilde{\mathbf{G}}$} the $\widetilde{\mathbb{K}}$-group obtained by a base change from $\mathbb{K}$ to $\widetilde{\mathbb{K}}$.

When $\mathbb{K}$ is algebraically closed, the theory of structure of reductive groups enables us to consider Borel subgroups.
In general, over an arbitrary field, a reductive group does not admit any Borel subgroup defined over the ground field and we therefore need to consider the minimal parabolic subgroups. 
The intermediate situation is that of quasi-split reductive groups:

\begin{Def,Prop}\label{def:quasi:split} \cite[4.1.1]{BruhatTits2}
One says that a reductive $\mathbb{K}$-group $\mathbf{G}$ is {\em quasi-split}\index{quasi-split} if it satisfies the following equivalent conditions:
\begin{enumerate}
\item[(i)] $\mathbf{G}$ contains a Borel subgroup defined over $\mathbb{K}$;
\item[(ii)] $\mathbf{G}$ contains a maximal $\mathbb{K}$-split torus $\mathbf{S}$ such that its centralizer $\mathcal{Z}_\mathbf{G}(\mathbf{S})$ is a torus; 
\item[(iii)] for any maximal $\mathbb{K}$-split torus $\mathbf{S}$ of $\mathbf{G}$, its centralizer $\mathcal{Z}_\mathbf{G}(\mathbf{S})$ is a torus.
\end{enumerate}
\end{Def,Prop}

We now assume that $\mathbf{G}$ is a quasi-split reductive $\mathbb{K}$-group.
We provide a choice of a maximal $\mathbb{K}$-split torus $\mathbf{S}$\index[notation]{s@$\mathbf{S}$} and a Borel subgroup $\mathbf{B}$\index[notation]{b@$\mathbf{B}$} such that $\mathbf{T} = \mathcal{Z}_\mathbf{G}(\mathbf{S})$\index[notation]{t@$\mathbf{T}$} is a maximal torus of $\mathbf{G}$ contained in $\mathbf{B}$. This is always possible \cite[20.5, 20.6 (iii)]{Borel}.
Thus $\widetilde{\mathbf{T}} = \mathbf{T}_{\widetilde{\mathbb{K}}}$ is a maximal $\widetilde{\mathbb{K}}$-torus of $\widetilde{\mathbf{G}}$ containing $\widetilde{\mathbf{S}} = \mathbf{S}_{\widetilde{\mathbb{K}}}$.

We denote by $\Phi = \Phi(\mathbf{G},\mathbf{S})$\index[notation]{p@$\Phi$} the root system of $\mathbf{G}$ with respect to $\mathbf{S}$ and we call it the \textbf{ relative root system}\index{root system!relative}.
We denote by $\widetilde{\Phi} = \Phi(\widetilde{\mathbf{G}},\widetilde{\mathbf{T}})$\index[notation]{p@$\widetilde{\Phi}$} the root system of the split group $\widetilde{\mathbf{G}}$ with respect to $\widetilde{\mathbf{T}}$ and we call it the \textbf{absolute root system}.\index{root system!absolute}

\begin{Ex}
Let $\mathbb{L} / \mathbb{K}$ be a nontrivial separable field extension.
If $\widetilde{\mathbf{G}}$ is a split reductive $\mathbb{L}$-group, then the Weil restriction $\mathbf{G} = R_{\mathbb{L}/\mathbb{K}}(\widetilde{\mathbf{G}})$ of $\widetilde{\mathbf{G}}$ is a quasi-split but non-split reductive $\mathbb{K}$-group.
If $\mathbf{S}$ is a maximal $\mathbb{K}$-split torus of $\mathbf{G}$, then $\mathcal{Z}_{\mathbf{G}}(\mathbf{S}) = R_{\mathbb{L}/\mathbb{K}} (\mathbf{S}_\mathbb{L})$ is a maximal $\mathbb{K}$-torus of $\mathbf{G}$, isomorphic to $R_{\mathbb{L}/\mathbb{K}}(\mathbb{G}_{m,\mathbb{L}})^n \not\simeq \mathbb{G}_{m,\mathbb{K}}^n$.
Thus $\mathbf{G}$ is quasi-split but non-split.
Typically, in $\mathbf{G} = R_{\mathbb{L}/\mathbb{K}}(\mathrm{GL}_{n,\mathbb{L}})$ that is the general linear group over $\mathbb{L}$ seen as a $\mathbb{K}$-group, one can take $\mathbf{S}$ the maximal split torus of diagonal matrices with entries in $\mathbb{K}$ and its centralizer in $\mathbf{G}$ is the subgroup of diagonal matrices with entries in $\mathbb{L}$. 

There are also quasi-split but non-split groups that do not come from Weil restriction.
For instance, if $\mathbb{L}/\mathbb{K}$ is a quadratic Galois extension and $h$ is the standard Hermitian form, then the group $\mathrm{SU}(h)$ is a quasi-split but non-split $\mathbb{K}$-group.
We provide an example of such a group in Example~\ref{ExSUhGalois}.
\end{Ex}

\subsection{Recalls on root groups and their parametrizations}
\label{section root groups}

\subsubsection{Definition of root groups}

Given a basis $\Delta$ of $\Phi$ (resp. $\widetilde{\Delta}$ of $\widetilde{\Phi}$), we denote $\mathrm{Dyn}(\Delta)$ (resp. $\mathrm{Dyn}(\widetilde{\Delta})$) its Dynkin diagram.
The edges represent orthogonal defects of the basis which will translate defects of commutativity between root groups.
Multiple edges appear between two simple roots of different lengths and are oriented from the long root to the short root.

Given a reductive $\mathbb{K}$-group $\mathbf{G}$ and a maximal $\mathbb{K}$-split torus $\mathbf{S}$, the choice of a minimal $\mathbb{K}$-parabolic subgroup of $\mathbf{G}$ containing $\mathcal{Z}_\mathbf{G}(\mathbf{S})$ is equivalent to the choice of a basis $\Delta$ of the relative root system \cite[4.15]{BorelTits-Reductifs}.
In particular, if $\mathbf{G}$ is quasi-split, the choice of $\mathbf{S} \subset \mathbf{T} \subset \mathbf{B}$ as before naturally determines a basis $\widetilde{\Delta} = \Delta(\widetilde{\mathbf{G}},\widetilde{\mathbf{T}},\widetilde{\mathbf{B}})$ of $\widetilde{\Phi} = \Phi(\widetilde{\mathbf{G}},\widetilde{\mathbf{T}})$ and a basis $\Delta = \Delta(\mathbf{G},\mathbf{S},\mathbf{B})$ of $\Phi = \Phi(\mathbf{G},\mathbf{S})$.

Recall that the root groups of $\mathbf{G}$ over $\mathbb{K}$ are defined by the following proposition:
\begin{Def,Prop}[{\cite[14.5 \& 21.9]{Borel}}]\label{definition root groups}
For any root $\alpha \in \Phi$, there exists a unique $\mathbb{K}$-subgroup of $\mathbf{G}$, denoted by $\mathbf{U}_\alpha$,\index[notation]{u@$\mathbf{U}_\alpha$} which is closed, connected, unipotent, normalized by $\mathcal{Z}_\mathbf{G}(\mathbf{S})$ and whose Lie algebra is $\mathfrak{g}_\alpha + \mathfrak{g}_{2\alpha}$.
It is called the {\em root group}\index{root group} of $\mathbf{G}$ with respect to $\alpha$.

If $\Psi$ is a positively closed subset of $\Phi$, then there exists a unique $\mathbb{K}$-subgroup of $\mathbf{G}$, denoted by $\mathbf{U}_\Psi$, which is closed, connected, unipotent, normalized by $\mathcal{Z}_\mathbf{G}(\mathbf{S})$ and whose Lie algebra is $\displaystyle \sum_{\alpha\in\Psi} \mathfrak{g}_\alpha$.
\end{Def,Prop}

Note that the definition depends on $\Phi$ and, therefore, on the choice of the maximal split torus $\mathbf{S}$ defining $\mathbf{T} = \mathcal{Z}_\mathbf{G}(\mathbf{S})$.

Moreover, these root groups satisfy the following proposition:

\begin{Prop}[{\cite[21.9]{Borel}}]
\label{prop:closed:subset:root:group}
For any ordering on a positively closed subset $\Psi$ of $\Phi$, the product map $\prod_{\alpha \in \Psi_{nd}} \mathbf{U}_\alpha \rightarrow \mathbf{U}_\Psi$ is an isomorphism of $\mathbb{K}$-varieties.

For any pair of non-collinear roots $\alpha,\beta \in \Phi$, the subset $(\alpha,\beta)=\{r\alpha+s\beta \in \Phi,\ r,s \in \mathbb{Z}_{> 0} \}$ is positively closed and $[\mathbf{U}_\alpha,\mathbf{U}_\beta] \subset \mathbf{U}_{(\alpha,\beta)}$.
\end{Prop}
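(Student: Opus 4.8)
The plan is to follow the route of \cite[21.9]{Borel}, which treats an arbitrary reductive $\mathbb{K}$-group; I recall its shape. Throughout I use the properties recorded in Proposition~\ref{definition root groups}: for a positively closed $\Psi\subseteq\Phi$ the group $\mathbf{U}_\Psi$ is the unique closed connected unipotent $\mathbb{K}$-subgroup normalized by $\mathcal{Z}_\mathbf{G}(\mathbf{S})$ with Lie algebra $\sum_{\gamma\in\Psi}\mathfrak{g}_\gamma$, and similarly for $\mathbf{U}_\alpha$ with $\mathfrak{g}_\alpha+\mathfrak{g}_{2\alpha}$. The commutator relation of the second assertion will be proved first and then used for the first.

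That $(\alpha,\beta)$ is positively closed is combinatorial: if $\gamma_1=r_1\alpha+s_1\beta$ and $\gamma_2=r_2\alpha+s_2\beta$ lie in $(\alpha,\beta)$ and $\gamma_1+\gamma_2\in\Phi$, then $\gamma_1+\gamma_2=(r_1+r_2)\alpha+(s_1+s_2)\beta$ has both coefficients positive, hence is in $(\alpha,\beta)$. For $[\mathbf{U}_\alpha,\mathbf{U}_\beta]\subseteq\mathbf{U}_{(\alpha,\beta)}$ I would set $\Psi_{\alpha,\beta}=\Phi\cap(\mathbb{Z}_{\geq 0}\alpha+\mathbb{Z}_{\geq 0}\beta)$, a positively closed set containing $\alpha$ and $\beta$, so that $\mathbf{U}_\alpha,\mathbf{U}_\beta\subseteq\mathbf{U}_{\Psi_{\alpha,\beta}}$ by Proposition~\ref{definition root groups}. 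One checks that $(\alpha,\beta)$ is an ideal of $\Psi_{\alpha,\beta}$ (the sum of an element of $(\alpha,\beta)$ with any element of $\Psi_{\alpha,\beta}$, when a root, stays in $(\alpha,\beta)$), so $\mathbf{U}_{(\alpha,\beta)}$ is normal in $\mathbf{U}_{\Psi_{\alpha,\beta}}$ and the quotient $Q$ has Lie algebra $(\mathfrak{g}_\alpha+\mathfrak{g}_{2\alpha})\oplus(\mathfrak{g}_\beta+\mathfrak{g}_{2\beta})$ — taking $\alpha,\beta$ non-divisible, the divisible case being immediate — which is abelian since the remaining brackets land in $\sum_{\gamma\in(\alpha,\beta)}\mathfrak{g}_\gamma$. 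Hence the images of $\mathbf{U}_\alpha$ and $\mathbf{U}_\beta$ in $Q$ commute, which is the desired containment; the precise commutator (the Chevalley commutator formula) would then be read off from an explicit computation in the semisimple subgroup of rank at most $2$ attached to $\{\alpha,\beta\}$, but only the containment is needed here.

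For the product map, order $\Psi_{\mathrm{nd}}=\{\beta_1,\dots,\beta_k\}$. Each $\mathbf{U}_{\beta_i}$ lies in $\mathbf{U}_\Psi$, since $\mathfrak{g}_{\beta_i}+\mathfrak{g}_{2\beta_i}\subseteq\sum_{\gamma\in\Psi}\mathfrak{g}_\gamma$ — here $2\beta_i\in\Psi$ whenever it is a root, by positive closedness. I would then show the image of $m:\prod_{i=1}^k\mathbf{U}_{\beta_i}\to\mathbf{U}_\Psi$ is a subgroup by the commutator-sweeping argument: a product of two ordered words is reordered into the prescribed order using $[\mathbf{U}_{\beta_i},\mathbf{U}_{\beta_j}]\subseteq\mathbf{U}_{(\beta_i,\beta_j)}$ and $(\beta_i,\beta_j)\subseteq\Psi$, which introduces only factors indexed by roots of $\Psi$. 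The differential of $m$ at the identity is the sum map $\bigoplus_i(\mathfrak{g}_{\beta_i}+\mathfrak{g}_{2\beta_i})\to\sum_{\gamma\in\Psi}\mathfrak{g}_\gamma$, bijective because each summand on the right is hit exactly once, so $m$ is \'etale at the identity; being injective on points, it is an open immersion. Its image is then an open subgroup of the connected group $\mathbf{U}_\Psi$, hence all of $\mathbf{U}_\Psi$.

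The main obstacle is entirely characteristic-$p$ bookkeeping: the implications ``abelian Lie algebra $\Rightarrow$ commuting images in $Q$'', ``$m$ injective on points'', and ``$m$ open immersion'' are not automatic over imperfect fields, and are dealt with using the smoothness of all $\mathbf{U}_\alpha$ and $\mathbf{U}_\Psi$ from Proposition~\ref{definition root groups}, the filtration of each multipliable $\mathbf{U}_\alpha$ by $\mathbf{U}_{2\alpha}$, and, for the commutator formula, the case-by-case analysis of rank-$2$ root systems. As all of this is exactly the content of \cite[21.9]{Borel}, in the paper I would simply cite that reference.
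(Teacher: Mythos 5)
The paper states this proposition as a recalled classical fact and gives no proof beyond the citation to \cite[21.9]{Borel}, which is exactly where your sketch also lands. Your outline of that reference's argument is a fair one, and you correctly flag the two genuine soft spots (injectivity of the product map on points, and the fact that an abelian Lie algebra does not by itself force the quotient to be abelian in characteristic $p$) as the content that \cite[21.9]{Borel} actually supplies, so citing it as the paper does is the right call.
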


We denote by $\widetilde{\mathbf{U}}_{\widetilde{\alpha}}$ for $\widetilde{\alpha} \in \widetilde{\Phi}$ the root groups of $\widetilde{\mathbf{G}}$ with respect to $\widetilde{\mathbf{T}}$.

\subsubsection{The Galois action on the absolute root system}

Even if we can define a $*$-action on the Dynkin diagram for an arbitrary reductive $\mathbb{K}$-group, we assume for simplicity that $\mathbf{G}$ is a quasi-split reductive $\mathbb{K}$-group.

We consider the canonical action of the absolute Galois group $\Sigma = \operatorname{Gal}(\mathbb{K}_s / \mathbb{K})$ on the abstract group $\mathbf{G}(\mathbb{K}_s)$.
Since $\mathbf{G}$ is quasi-split, we can choose a maximal $\mathbb{K}$-split torus $\mathbf{S}$ and we get a maximal torus $\mathbf{T} =\mathcal{Z}_\mathbf{G}(\mathbf{S})$ of $\mathbf{G}$ defined over $\mathbb{K}$.
Thus, we define an action of $\Sigma$ on $X^*(\mathbf{T}_{\mathbb{K}_s})$ by:
\[
\forall \sigma \in \Sigma,\ \forall \chi \in X^*(\mathbf{T}_{\mathbb{K}_s}),\ \sigma \cdot \chi = t \mapsto \sigma\Big( \chi\big( \sigma^{-1}(t)\big)\Big)
\]

\begin{Not}[The Galois action on the absolute root system]
\label{NotStarAction}
This is a summary of \cite[§6]{BorelTits-Reductifs} for a quasi-split reductive $\mathbb{K}$-group $\mathbf{G}$.
Denote by $\widetilde{\Delta}$ a set of absolute simple roots and by $\operatorname{Dyn}(\widetilde{\Delta})$ its associated Dynkin diagram.
There exists an action of the Galois group $\Sigma = \operatorname{Gal}(\widetilde{\mathbb{K}} / \mathbb{K})$ on $\operatorname{Dyn}(\widetilde{\Delta})$ which preserves the diagram structure.
This action can be extended, by linearity, to an action of $\Sigma$ on $\widetilde{V}^* = X^*(\mathbf{T}_{\widetilde{\mathbb{K}}}) \otimes_\mathbb{Z} \mathbb{R}$, and on $\widetilde{\Phi}$.
The restriction morphism $j = \iota^* : X^*(\mathbf{T}) \rightarrow X^*(\mathbf{S})$, where $\iota : \mathbf{S} \subset \mathbf{T}$ is the inclusion morphism, can be extended to an endomorphism $\rho : \widetilde{V}^* \rightarrow\ \widetilde{V}^*$  of the Euclidean space $\widetilde{V}^*$.
This morphism $\rho$ is the orthogonal projection onto the subspace $V^*$ of fixed points by the action of $\Sigma$ on $\widetilde{V}^*$.
 The inclusion of $\widetilde{\Phi}$ in the Euclidean space $\widetilde{V}^*$ provides a geometric realization of the  absolute roots from which we deduce a geometric realization of  of $\Phi=\rho(\widetilde{\Phi})$ in $V^*$.
The orbits of the action of $\Sigma$ on $\widetilde{\Phi}$ are the fibers of the map $\rho: \widetilde{\Phi} \rightarrow \Phi$.
\end{Not}

\begin{Def}
\label{DefSplittingField}
Let $\widetilde{\alpha} \in \widetilde{\Phi}$ be an absolute root.
Denote by $\Sigma_{\widetilde{\alpha}}$ be the stabilizer of $\widetilde{\alpha}$ for the canonical Galois action.
The \textbf{field of definition}\index{field!of definition} of the root $\widetilde{\alpha}$ is the subfield of $\widetilde{\mathbb{K}}$ fixed by $\Sigma_{\widetilde{\alpha}}$, denoted by $\mathbb{L}_{\widetilde{\alpha}} = \widetilde{\mathbb{K}}^{\Sigma_{\widetilde{\alpha}}}$.\index[notation]{l@$\mathbb{L}_{\widetilde{\alpha}}$}
\end{Def}

This is determined, up to isomorphism by the relative root $\alpha = \widetilde{\alpha}|_\mathbf{S}$.
Indeed, $\alpha$ is an orbit of absolute roots, which means that if $\widetilde{\beta}|_\mathbf{S} = \widetilde{\alpha}|_\mathbf{S} = \alpha$, then $\widetilde{\beta} = \sigma \cdot \widetilde{\alpha}$ and $\mathbb{L}_{\widetilde{\beta}} = \sigma(\mathbb{L}_{\widetilde{\alpha}})$.
For a root $\alpha \in \Phi$, we denote by $\mathbb{L}_\alpha$\index[notation]{l@$\mathbb{L}_\alpha$} the class of $\mathbb{L}_{\widetilde{\alpha}}$ for $\widetilde{\alpha}|_{\mathbf{S}} = \alpha$.
We call it the \textbf{splitting field}\index{field!splitting} of $\alpha$.

\begin{Rq}
\label{rq:splitting:field:multipliable:root}
If $\alpha \in \Phi$ is a multipliable root,
then there exists $\widetilde{\alpha}, \widetilde{\alpha}' \in \rho^{-1}( \alpha )$ such that $\widetilde{\alpha} + \widetilde{\alpha}' \in \widetilde{\Phi}$ \cite[4.1.4 Cas II]{BruhatTits2}.
Because $\alpha$ is an orbit, we can write $\widetilde{\alpha}' = \sigma(\widetilde{\alpha})$ where $\sigma \in \Sigma$ is of order $2$.
As a consequence, the extension of fields $\mathbb{L}_{\widetilde{\alpha}} / \mathbb{L}_{\widetilde{\alpha}+\widetilde{\alpha}'}$ is quadratic.
By abuse of notation, we denote this extension, determined up to isomorphism, by $\mathbb{L}_\alpha / \mathbb{L}_{2\alpha}$.
\end{Rq}

\subsubsection{Parametrization of root groups}
\label{subsecParametrization}

In order to valuate the root groups thanks to the $\Lambda$-valuation of the field, we have to define a parametrization of each root group.
Moreover, these valuations have to be compatible.
That is why we furthermore have to get relations between the parametrizations.

A {\em Chevalley-Steinberg system}\index{Chevalley-Steinberg system} of $(\mathbf{G},\widetilde{\mathbb{K}},\mathbb{K})$ is the datum of morphisms: $\widetilde{x}_{\widetilde{\alpha}} : \mathbb{G}_{a,\widetilde{\mathbb{K}}} \rightarrow \widetilde{\mathbf{U}}_{\widetilde{\alpha}}$\index[notation]{x@$\widetilde{x}_{\widetilde{\alpha}}$} parametrizing the various root groups of $\widetilde{\mathbf{G}}$, and satisfying some axioms of compatibility, given in \cite[4.1.3]{BruhatTits2}.
These axioms take into account the commutation relations of absolute root groups and the $\operatorname{Gal}(\widetilde{\mathbb{K}} / \mathbb{K})$-action on root groups.
Note that despite the fact that the morphisms parametrize the root groups of $\widetilde{\mathbf{G}}$, a Chevalley-Steinberg system also depends on the quasi-split group $\mathbf{G}$ because of the relations between the $\widetilde{x}_{\widetilde{\alpha}}$ where $\widetilde{\alpha} \in \widetilde{\Phi}$.
According to \cite[4.1.3]{BruhatTits2}, a quasi-split reductive $\mathbb {K}$-group always admits a Chevalley-Steinberg system $(\widetilde{x}_{\widetilde{\alpha}})_{\widetilde{\alpha} \in \widetilde{\Phi}}$.

\begin{Not}\label{NotMalphaInSTsystems}
Let us recall  that there are elements in $\mathcal{N}_\mathbf{G}(\mathbf{S})(\widetilde{\mathbb{K}})$ defined by (see \cite[3.2.1]{BruhatTits2}):
\[m_{\widetilde{\alpha}} = \widetilde{x}_{\widetilde{\alpha}}(1) \widetilde{x}_{-\widetilde{\alpha}}(1) \widetilde{x}_{\widetilde{\alpha}}(1)\]
\index[notation]{m@$m_{\widetilde{\alpha}}$}
for $\widetilde{\alpha} \in \widetilde{\Phi}$ such that for any $\widetilde{\beta} \in \widetilde{\Phi}$ and any $u \in \widetilde{\mathbb{K}}$, we have:
\[ m_{\widetilde{\alpha}} \widetilde{x}_{\widetilde{\beta}}(u) m_{-\widetilde{\alpha}} \in \{ \widetilde{x}_{r_{\widetilde{\alpha}}(\widetilde{\beta})}( \pm u )\}\]
according to the second axiom defining Chevalley systems.
Moreover, one can observe that $m_{-{\widetilde{\alpha}}} = m_{\widetilde{\alpha}}$ from the matrix realization in $\mathrm{SL}_2$.
\end{Not}

Let $\alpha\in \Phi$ be a relative root.
Let $\pi : \mathbf{G}^\alpha \rightarrow \langle \mathbf{U}_{-\alpha} , \mathbf{U}_\alpha \rangle$ be the universal covering of the quasi-split semi-simple $\mathbb{K}$-subgroup of relative rank $1$ generated by $\mathbf{U}_\alpha$ and $\mathbf{U}_{-\alpha}$.
The group $\mathbf{G}^\alpha$ splits over $\mathbb{L}_\alpha$ (this explains the terminology of splitting field of a root).
A parametrization of the simply-connected group $\mathbf{G}^\alpha$ is given by \cite[4.1.1 to 4.1.9]{BruhatTits2}.
We now recall it to fix the notation.

\paragraph{The non-multipliable case}

Let $\alpha\in\Phi_{\mathrm{nd}}$ be a relative root such that $2\alpha \not\in \Phi$ and choose $\widetilde{\alpha} \in \alpha$.
By \cite[4.1.4]{BruhatTits2}, the rank-$1$ group $\mathbf{G}^\alpha$ is isomorphic to $R_{\mathbb{L}_{\widetilde{\alpha}} / \mathbb{K}}(\mathrm{SL}_{2,\mathbb{L}_{\widetilde{\alpha}}})$. 
Inside the classical group $\mathrm{SL}_{2,\mathbb{L}_{\widetilde{\alpha}}}$, a maximal $\mathbb{L}_{\widetilde{\alpha}}$-split torus of $\mathrm{SL}_{2,\mathbb{L}_{\widetilde{\alpha}}}$ can be parametrized by the following homomorphism:
\[
\begin{array}{cccc}z :&\mathbb{G}_{m,\mathbb{L}_{\widetilde{\alpha}}} & \rightarrow & \mathrm{SL}_{2,\mathbb{L}_{\widetilde{\alpha}}}\\&t&\mapsto&\begin{pmatrix}t&0\\0&t^{-1}\end{pmatrix}\end{array}
\]
The corresponding root groups can be parametrized by the following homomorphisms:
\begin{align*}
\begin{array}{cccc}y_- :&\mathbb{G}_{a,\mathbb{L}_{\widetilde{\alpha}}} & \rightarrow & \mathrm{SL}_{2,\mathbb{L}_{\widetilde{\alpha}}}\\&v&\mapsto&\begin{pmatrix}1&0\\-v&1\end{pmatrix}\end{array}&&&
\begin{array}{cccc}y_+ :&\mathbb{G}_{a,\mathbb{L}_{\widetilde{\alpha}}} & \rightarrow & \mathrm{SL}_{2,\mathbb{L}_{\widetilde{\alpha}}}\\&u&\mapsto&\begin{pmatrix}1&u\\0&1\end{pmatrix}\end{array}
\end{align*}
According to \cite[4.1.5]{BruhatTits2}, there exists a unique $\mathbb{L}_{\widetilde{\alpha}}$-group isomorphism $\xi_{\widetilde{\alpha}} : \mathrm{SL}_{2,\mathbb{L}_{\widetilde{\alpha}}} \rightarrow \widetilde{\mathbf{G}}^{\widetilde{\alpha}}$ satisfying $\widetilde{x}_{\pm \widetilde{\alpha}} = \pi \circ \xi_{\widetilde{\alpha}} \circ y_{\pm}$, where $\widetilde{\mathbf{G}}^{\widetilde{\alpha}}$ is the simple factor of $\mathbf{G}^\alpha_{\widetilde{\mathbb{K}}}$ of index $\widetilde{\alpha}$. 

\begin{Not}\label{NotParamNonMult}
Thus, we define $\mathbb{K}$-homomorphisms
\begin{align*}
 x_{\alpha} =& \pi \circ R_{\mathbb{L}_{\widetilde{\alpha}} / \mathbb{K}}(\xi_{\widetilde{\alpha}} \circ y_{+})&
 x_{-\alpha} =& \pi \circ R_{\mathbb{L}_{\widetilde{\alpha}} / \mathbb{K}}(\xi_{\widetilde{\alpha}} \circ y_{-})
\end{align*}
\index[notation]{x@$x_\alpha$}
which are $\mathbb{K}$-group isomorphisms between $R_{\mathbb{L}_{\widetilde{\alpha}}/\mathbb{K}}(\mathbb{G}_{a,\mathbb{L}_{\widetilde{\alpha}}})$ and respectively $\mathbf{U}_{\alpha}$ and $\mathbf{U}_{-\alpha}$.

We also define the following $\mathbb{K}$-group homomorphism:
$$\widehat{\alpha} = \pi \circ R_{\mathbb{L}_{\widetilde{\alpha}} / \mathbb{K}}(\xi_{\widetilde{\alpha}} \circ z) :
 R_{\mathbb{L}_{\widetilde{\alpha}} / \mathbb{K}}(\mathbb{G}_{m,\mathbb{L}_{\widetilde{\alpha}}}) \rightarrow \mathbf{T}^\alpha$$
 \index[notation]{a@$\widehat{\alpha}$}
 where $\mathbf{T}^\alpha = \mathbf{T} \cap \langle \mathbf{U}_{-\alpha}, \mathbf{U}_\alpha \rangle$.
\end{Not}

\begin{Fact}\label{FactCommutationTXaNonMult}
For any $\widetilde{\alpha} \in \widetilde{\Phi}$, any $t \in \mathbf{T}(\mathbb{L}_{\widetilde{\alpha}})$ and any $u \in \mathbb{L}_{\widetilde{\alpha}}$, we have $t \widetilde{x}_{\widetilde{\alpha}}(u) t^{-1} = \widetilde{x}_{\widetilde{\alpha}}(\widetilde{\alpha}(t)u)$ by definition of root groups.
Thus, for any $\widetilde{\alpha} \in \alpha$, we have that:
\[t x_\alpha(u) t^{-1} = x_\alpha(\widetilde{\alpha}(t) u)\]
by definition of the Weil restriction.
Thus, since a matrix calculation gives $\widehat{\alpha}(z) x_\alpha(u) \widehat{\alpha}(z^{-1}) = x_\alpha(z^2 u)$, we get that for any $\widetilde{\alpha} \in \alpha$, any $z \in \mathbb{L}_{\widetilde{\alpha}}^*$, we have:
\[\widetilde{\alpha}(\widehat{\alpha}(z)) = z^2.\]
\end{Fact}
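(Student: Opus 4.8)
\textbf{Plan of proof for Fact~\ref{FactCommutationTXaNonMult}.}

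The strategy is to reduce everything to an explicit computation inside $\mathrm{SL}_2$ and then transport it via the Weil restriction and the universal covering. First I would recall that, by the very definition of the root group $\widetilde{\mathbf{U}}_{\widetilde{\alpha}}$ as the weight space for the character $\widetilde{\alpha}$ under the conjugation action of $\widetilde{\mathbf{T}}$, one has $t \widetilde{x}_{\widetilde{\alpha}}(u) t^{-1} = \widetilde{x}_{\widetilde{\alpha}}(\widetilde{\alpha}(t)u)$ for $t \in \mathbf{T}(\mathbb{L}_{\widetilde{\alpha}})$ and $u \in \mathbb{L}_{\widetilde{\alpha}}$; this is the content of the fact that $\widetilde{x}_{\widetilde{\alpha}}$ is $\widetilde{\mathbf{T}}$-equivariant with respect to the character $\widetilde{\alpha}$ (see Proposition~\ref{definition root groups} and the axioms of a Chevalley--Steinberg system in \cite[4.1.3]{BruhatTits2}). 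This is the only input about the absolute group that is needed.

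Next I would pass from the absolute statement to the relative one. By Notation~\ref{NotParamNonMult}, the map $x_\alpha$ is defined as $\pi \circ R_{\mathbb{L}_{\widetilde{\alpha}}/\mathbb{K}}(\xi_{\widetilde{\alpha}} \circ y_+)$, and conjugation is compatible with both the Weil restriction functor $R_{\mathbb{L}_{\widetilde{\alpha}}/\mathbb{K}}$ and the central isogeny $\pi$ (conjugation by $\pi(g)$ on the image equals the image under $\pi$ of conjugation by $g$, since $\pi$ is a group homomorphism). On $\mathbb{L}_{\widetilde{\alpha}}$-points, $R_{\mathbb{L}_{\widetilde{\alpha}}/\mathbb{K}}(\mathbf{X})(\mathbb{K})$ is identified with $\mathbf{X}(\mathbb{L}_{\widetilde{\alpha}})$, so the relation $t\,\widetilde x_{\widetilde\alpha}(u)\,t^{-1}=\widetilde x_{\widetilde\alpha}(\widetilde\alpha(t)u)$ in $\widetilde{\mathbf{G}}^{\widetilde\alpha}(\widetilde{\mathbb{K}})$ descends — after applying $\xi_{\widetilde\alpha}^{-1}$, restricting scalars, and applying $\pi$ — to $t\,x_\alpha(u)\,t^{-1}=x_\alpha(\widetilde\alpha(t)u)$ for $t \in \mathbf{T}(\mathbb{L}_{\widetilde{\alpha}})$ and $u\in\mathbb{L}_{\widetilde\alpha}$, which is the first displayed formula.

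For the second displayed formula, I would do the matrix computation in $\mathrm{SL}_2$: with $z(t)=\begin{pmatrix} t & 0\\ 0 & t^{-1}\end{pmatrix}$ and $y_+(u)=\begin{pmatrix} 1 & u\\ 0 & 1\end{pmatrix}$, one checks directly that $z(t)\,y_+(u)\,z(t)^{-1}=y_+(t^2 u)$. Transporting this through $\xi_{\widetilde\alpha}$, $R_{\mathbb{L}_{\widetilde\alpha}/\mathbb{K}}$ and $\pi$ exactly as above gives $\widehat{\alpha}(z)\,x_\alpha(u)\,\widehat{\alpha}(z)^{-1}=x_\alpha(z^2 u)$ for $z\in\mathbb{L}_{\widetilde\alpha}^*$. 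Now comparing with the first displayed formula applied to $t=\widehat{\alpha}(z)\in\mathbf{T}^\alpha(\mathbb{L}_{\widetilde{\alpha}})\subset\mathbf{T}(\mathbb{L}_{\widetilde\alpha})$, we get $x_\alpha(\widetilde\alpha(\widehat\alpha(z))u)=x_\alpha(z^2 u)$ for every $u\in\mathbb{L}_{\widetilde\alpha}$; since $x_\alpha$ is an isomorphism onto $\mathbf{U}_\alpha$, hence injective, taking $u=1$ yields $\widetilde\alpha(\widehat\alpha(z))=z^2$.

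\textbf{Main obstacle.} None of the steps is deep; the only point requiring care is the bookkeeping around the universal covering $\pi\colon\mathbf{G}^\alpha\to\langle\mathbf{U}_{-\alpha},\mathbf{U}_\alpha\rangle$ and the Weil restriction. One must make sure that conjugating by $\pi\circ R_{\mathbb{L}_{\widetilde\alpha}/\mathbb{K}}(\xi_{\widetilde\alpha}\circ z)(z)$ on $\mathbf{U}_\alpha$ is legitimately computed by pulling back along $\pi$ — this is fine because $\pi$ is a homomorphism and $x_\alpha$, $\widehat\alpha$ are defined as push-forwards of the $\mathrm{SL}_2$-parametrizations through the same $\pi$, so the identity is really an identity of maps $\mathbb{G}_a\to\mathbf{U}_\alpha$ obtained by functoriality — and because the character $\widetilde\alpha$ restricted to the $\mathbb{L}_{\widetilde\alpha}$-split torus of $R_{\mathbb{L}_{\widetilde\alpha}/\mathbb{K}}(\mathrm{SL}_{2,\mathbb{L}_{\widetilde\alpha}})$ corresponds to $t\mapsto t^2$ under $z$, which is exactly the content of the matrix computation. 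Once this identification is set up cleanly, the rest is immediate.
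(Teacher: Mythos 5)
Your proposal is correct and follows essentially the same route as the paper, which justifies this Fact only by the inline remarks (the weight-space definition of root groups, functoriality of the Weil restriction and of the covering $\pi$, and the $\mathrm{SL}_2$ matrix computation $z(t)\,y_+(u)\,z(t)^{-1}=y_+(t^2u)$). Your write-up simply makes those three steps explicit, and the final deduction of $\widetilde{\alpha}(\widehat{\alpha}(z))=z^2$ by comparing the two conjugation formulas and using injectivity of $x_\alpha$ is exactly what is intended.
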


\begin{Not}\label{NotMaNonMult}
We define maps $m_\alpha : \mathbb{G}_{m,\mathbb{L}_\alpha} \to \mathcal{N}_\mathbf{G}(\mathbf{S})$ and $m_{-\alpha} : \mathbb{G}_{m,\mathbb{L}_\alpha} \to \mathcal{N}_\mathbf{G}(\mathbf{S})$ \cite[4.1.5]{BruhatTits2} by:
\begin{align*}
 m_\alpha(u) =& x_\alpha(u) x_{-\alpha}(u^{-1}) x_\alpha(u)&
 m_{-\alpha}(u) =& x_{-\alpha}(u) x_{\alpha}(u^{-1}) x_{-\alpha}(u)
\end{align*}
whose matrix realizations in $\mathrm{SL}_{2,\mathbb{L}_\alpha}$ are respectively $\begin{pmatrix} 0 & u\\-u^{-1} & 0\end{pmatrix}$ and $\begin{pmatrix} 0 & u^{-1}\\-u & 0\end{pmatrix}$.

The unique elements $m(x_\alpha(u))$ and $m(x_{-\alpha}(v))$ defined in \cite[6.1.2(2)]{BruhatTits1} are then:
\begin{align*}
m(x_\alpha(u)) &= m_{-\alpha}(u^{-1}) &
m(x_{-\alpha}(v)) &= m_\alpha(v^{-1})
\end{align*}

We define an element $m_{\alpha} = m_\alpha(1) = m_{-\alpha}(1) = m_{-\alpha} \in \mathcal{N}_\mathbf{G}(\mathbf{S})(\mathbb{K})$.\index[notation]{m@$m_\alpha$}
\end{Not}

\begin{Fact}\label{FactMaNonMult}
We observe that $m_\alpha = m_{\widetilde{\alpha}}$ for any $\widetilde{\alpha} \in \alpha$ by definition of $x_\alpha$ as Weil restriction.

From the matrix realization in $\mathrm{SL}_2$ we can easily check that:
\begin{itemize}
\item $\forall u \in \mathbb{L}_\alpha,\ x_{-\alpha}(u) = m_\alpha x_\alpha(u) m_\alpha^{-1}$;
\item $\forall u \in \mathbb{L}^*_\alpha,\ m_\alpha(u) = \widehat{\alpha}(u) m_\alpha = m_\alpha \widehat{\alpha}(u^{-1})$;
\item $m_\alpha^2 = \widehat{\alpha}(-1)$;
\item $m_\alpha^4 = \operatorname{id}$.
\end{itemize}
\end{Fact}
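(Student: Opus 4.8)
The plan is to reduce every assertion to an elementary computation with $2\times 2$ matrices over the splitting field $\mathbb{L}_{\widetilde\alpha}$. The key point is that, by Notation~\ref{NotParamNonMult} and Notation~\ref{NotMaNonMult}, every element occurring in the statement is the image under the group homomorphism $\pi\circ R_{\mathbb{L}_{\widetilde\alpha}/\mathbb{K}}(\xi_{\widetilde\alpha})$ of an explicit element of $R_{\mathbb{L}_{\widetilde\alpha}/\mathbb{K}}(\mathrm{SL}_{2,\mathbb{L}_{\widetilde\alpha}})$: for $u\in\mathbb{L}_{\widetilde\alpha}$ the element $x_{\pm\alpha}(u)$ comes from $y_\pm(u)$, $\widehat\alpha(u)$ comes from $z(u)=\begin{pmatrix}u&0\\0&u^{-1}\end{pmatrix}$, $m_\alpha(u)$ comes from $y_+(u)y_-(u^{-1})y_+(u)=\begin{pmatrix}0&u\\-u^{-1}&0\end{pmatrix}$, and in particular $m_\alpha=m_\alpha(1)$ comes from the standard Weyl matrix $w=\begin{pmatrix}0&1\\-1&0\end{pmatrix}$. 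Since $R_{\mathbb{L}_{\widetilde\alpha}/\mathbb{K}}$ is a functor on $\mathbb{L}_{\widetilde\alpha}$-group schemes and $\xi_{\widetilde\alpha}$, $\pi$ are group homomorphisms, any relation among these $\mathrm{SL}_2$-matrices expressed purely via products, inverses and conjugation transfers verbatim to the corresponding relation in $\langle\mathbf{U}_{-\alpha},\mathbf{U}_\alpha\rangle\subset\mathbf{G}(\mathbb{K})$. So I would fix this dictionary once and then simply read off each claim.

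With the dictionary in hand, the three bulleted identities become purely computational. First, $w\begin{pmatrix}1&u\\0&1\end{pmatrix}w^{-1}=\begin{pmatrix}1&0\\-u&1\end{pmatrix}$, which is the matrix of $y_-(u)$, so $m_\alpha x_\alpha(u)m_\alpha^{-1}=x_{-\alpha}(u)$. Second, $z(u)\,w=\begin{pmatrix}0&u\\-u^{-1}&0\end{pmatrix}=w\,z(u^{-1})$, so $\widehat\alpha(u)m_\alpha=m_\alpha(u)=m_\alpha\widehat\alpha(u^{-1})$, using that $m_\alpha(u)$ corresponds to $\begin{pmatrix}0&u\\-u^{-1}&0\end{pmatrix}$. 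Third, $w^2=-\mathrm{Id}$ and hence $w^4=\mathrm{Id}$ in $\mathrm{SL}_2$, so $m_\alpha^4=\operatorname{id}$; one cannot do better than order $4$ in general because $m_\alpha^2$ is the image of the central element $-\mathrm{Id}$, which need not be trivial in $\mathbf{G}$.

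For the remaining identity $m_\alpha=m_{\widetilde\alpha}$, I would use that the relative parametrizations $x_{\pm\alpha}=\pi\circ R_{\mathbb{L}_{\widetilde\alpha}/\mathbb{K}}(\xi_{\widetilde\alpha}\circ y_\pm)$ of Notation~\ref{NotParamNonMult} are precisely the Weil restrictions of the $\mathbb{L}_{\widetilde\alpha}$-morphisms $\pi\circ\xi_{\widetilde\alpha}\circ y_\pm$ whose restriction to the index-$\widetilde\alpha$ factor is the Chevalley–Steinberg parametrization $\widetilde x_{\pm\widetilde\alpha}$ (\cite[4.1.5]{BruhatTits2}). Evaluating at the relevant $\mathbb{L}_{\widetilde\alpha}$-point, the triple products defining $m_\alpha=m_\alpha(1)$ and $m_{\widetilde\alpha}$ (Notation~\ref{NotMalphaInSTsystems}) both become $\pi\big(\xi_{\widetilde\alpha}(w)\big)$ for the standard Weyl matrix $w$, hence agree. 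The main obstacle, such as it is, is purely bookkeeping of normalization conventions: one has to check that the sign convention for $\widetilde x_{-\widetilde\alpha}$ used in Springer's formula $m_{\widetilde\alpha}=\widetilde x_{\widetilde\alpha}(1)\widetilde x_{-\widetilde\alpha}(-1)\widetilde x_{\widetilde\alpha}(1)$ is consistent with $y_-(v)=\begin{pmatrix}1&0\\-v&1\end{pmatrix}$, i.e. that $\widetilde x_{-\widetilde\alpha}(-1)$ corresponds to $x_{-\alpha}(1)$, so that $\widetilde x_{\widetilde\alpha}(1)\widetilde x_{-\widetilde\alpha}(-1)\widetilde x_{\widetilde\alpha}(1)$ and $x_\alpha(1)x_{-\alpha}(1)x_\alpha(1)$ indeed give the same matrix $w$. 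Once conventions are aligned this equality is formal, and together with the matrix computations above it completes the Fact.
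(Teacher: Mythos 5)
Your proof is correct and takes the same route the paper intends: the paper offers no written argument beyond ``from the matrix realization in $\mathrm{SL}_2$ we can easily check,'' and your proposal simply carries out those matrix computations (conjugation of $y_+(u)$ by $w$, the identity $z(u)w=wz(u^{-1})$, and $w^4=\mathrm{Id}$) together with the functoriality of Weil restriction and the sign bookkeeping for $\widetilde{x}_{-\widetilde{\alpha}}$ needed to match $m_\alpha$ with $m_{\widetilde{\alpha}}$. Nothing further is needed.
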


\paragraph{The multipliable case:} \label{paragraph:parametrization:multipliable}

Let $\alpha\in\Phi_{\mathrm{nd}}$ be a relative root such that $2\alpha \in \Phi$.
Let ${\widetilde{\alpha}} \in \alpha$ be an absolute root from which $\alpha$ arises, and let $\tau \in \Sigma$ be an element of the Galois group such that ${\widetilde{\alpha}} + \tau({\widetilde{\alpha}})$ is again an absolute root.
To simplify notations, we let (up to compatible isomorphisms in $\Sigma$) $\mathbb{L} = \mathbb{L}_{\widetilde{\alpha}}$ and $\mathbb{L}_2 = \mathbb{L}_{{\widetilde{\alpha}} + \tau({\widetilde{\alpha}})}$ in this paragraph.
For any $x \in \mathbb{L}$, we denote ${^\tau\!}x$ instead of $\tau(x)$.
By \cite[4.1.4]{BruhatTits2}, the $\mathbb{K}$-group $\mathbf{G}^\alpha$ is isomorphic to $R_{\mathbb{L}_2 / \mathbb{K}}(\mathrm{SU}(h))$, where $h$ denotes the hermitian form on $\mathbb{L}\times \mathbb{L} \times \mathbb{L}$ given by the formula:
$$h : (x_{-1},x_0,x_1) \mapsto \sum_{i=-1}^{1} x_i {^\tau\!}x_{-i}.$$

The group $\mathbf{G}^\alpha_{\mathbb{L}_2}$ can be written as $\displaystyle \mathbf{G}^\alpha_{\mathbb{L}_2} = \prod_{\sigma \in \mathrm{Gal}(\mathbb{L}_2 / \mathbb{K})} \widetilde{\mathbf{G}}^{\sigma({\widetilde{\alpha}}),\sigma(\tau({\widetilde{\alpha}}))}$ where each $\widetilde{\mathbf{G}}^{\sigma({\widetilde{\alpha}}),\sigma(\tau({\widetilde{\alpha}}))}$ denotes a simple factor isomorphic to $\mathrm{SU}(h)$, so that $\mathrm{SU}(h)_{\mathbb{L}} \simeq \mathrm{SL}_{3,\mathbb{L}}$.

We define a connected unipotent $\mathbb{L}_{2}$-group scheme by providing the $\mathbb{L}_{2}$-subvariety of $R_{\mathbb{L}/\mathbb{L}_2}(\mathbb{A}^2_{\mathbb{L}})$:
\[ H_0(\mathbb{L},\mathbb{L}_{2}) = \left\{ (u,v),\ u {^\tau\!}u = v + {^\tau\!}v \right\}\]

with the following group law:
\[(u,v),(u',v') \mapsto (u+u',v+v'+{^\tau\!}u u').\]
Then, we let $H(\mathbb{L},\mathbb{L}_{2}) = R_{\mathbb{L}_{2}/\mathbb{K}}(H_0(\mathbb{L},\mathbb{L}_{2}))$.
For the rational points, we get
\[H(\mathbb{L},\mathbb{L}_{2})(\mathbb{K}) = \{ (u,v) \in \mathbb{L} \times \mathbb{L},\ u {^\tau}\!u = v + {^\tau}\!v \}.\]

We parametrize a maximal torus of $\mathrm{SU}(h)$ by the isomorphism
$$\begin{array}{cccc}z :&R_{\mathbb{L}/\mathbb{L}_2}(\mathbb{G}_{m,\mathbb{L}}) & \rightarrow & \mathrm{SU}(h)\\&t&\mapsto&\begin{pmatrix}t&0&0\\0&t^{-1}{^\tau}\!t&0\\0&0&{^\tau}\!t^{-1}\end{pmatrix}\end{array}$$

We parametrize the corresponding root groups of $\mathrm{SU}(h)$ by the homomorphisms:
\begin{align*}
\begin{array}{cccc}y_- : &H_0(\mathbb{L},\mathbb{L}_2) & \rightarrow & \mathrm{SU}(h)\\&(u,v)&\mapsto&\begin{pmatrix}1&0&0\\u&1&0\\-v&-{^\tau}\!u&1\end{pmatrix}\end{array}
&&&
\begin{array}{cccc}y_+ :&H_0(\mathbb{L},\mathbb{L}_2) & \rightarrow & \mathrm{SU}(h)\\&(u,v)&\mapsto&\begin{pmatrix}1&-{^\tau}\!u&-v\\0&1&u\\0&0&1\end{pmatrix}\end{array}
\end{align*}
By \cite[4.1.9]{BruhatTits2}, there exists a unique $\mathbb{L}_2$-group isomorphism $\xi_{\widetilde{\alpha}} : \mathrm{SU}(h) \rightarrow \widetilde{\mathbf{G}}^{{\widetilde{\alpha}},\tau({\widetilde{\alpha}})}$ satisfying:
\begin{align*}
\pi(\xi_{\widetilde{\alpha}}(y_+(u,v))) =& \widetilde{x}_{{\widetilde{\alpha}}}(u) \widetilde{x}_{{\widetilde{\alpha}} + {^\tau\!}{\widetilde{\alpha}}}(-v) \widetilde{x}_{{^\tau\!}{\widetilde{\alpha}}}({^\tau\!}u),&
\pi(\xi_{\widetilde{\alpha}}(y_-(u,v))) =& \widetilde{x}_{-{^\tau\!}{\widetilde{\alpha}}}(u) \widetilde{x}_{-{\widetilde{\alpha}} - {^\tau\!}{\widetilde{\alpha}}}(v) \widetilde{x}_{-{\widetilde{\alpha}}}({^\tau\!}u)
\end{align*}

\begin{Not}\label{NotParamMult}
From this, we define $\mathbb{K}$-homomorphisms
\begin{align*}
x_\alpha =& \pi \circ R_{\mathbb{L}_2 / \mathbb{K}}(\xi_{\widetilde{\alpha}} \circ y_+)&
x_{-\alpha} =& \pi \circ R_{\mathbb{L}_2 / \mathbb{K}}(\xi_{\widetilde{\alpha}} \circ y_-)
\end{align*}\index[notation]{x@$x_\alpha$}
which are $\mathbb{K}$-group isomorphisms between the $\mathbb{K}$-group $H(\mathbb{L},\mathbb{L}_{2})$ and the   root groups $\mathbf{U}_\alpha$ and $\mathbf{U}_{-\alpha}$ respectively. The group law is given by $x_{\pm \alpha}(u,v) x_{\pm \alpha}(u',v') = x_{\pm \alpha}(u+u',v+v'+{^\tau}\!u u')$.

We also define the following $\mathbb{K}$-group homomorphism:
$$\widehat{\alpha} = \pi \circ R_{\mathbb{L}_2 / \mathbb{K}}(\xi_{\widetilde{\alpha}} \circ z) :
 R_{\mathbb{L}_{\widetilde{\alpha}} / \mathbb{K}}(\mathbb{G}_{m,\mathbb{L}_{\widetilde{\alpha}}}) \rightarrow \mathbf{T}^\alpha$$\index[notation]{a@$\widehat{\alpha}$}
 where $\mathbf{T}^\alpha = \mathbf{T} \cap \langle \mathbf{U}_{-\alpha}, \mathbf{U}_\alpha \rangle$.
 \end{Not}

\begin{Fact}\label{FactCommutationTXaMult}
Let ${\widetilde{\alpha}}$ and $\tau$ be as before.
For any $t \in \mathbf{T}(\mathbb{L}_{\widetilde{\alpha}})$ and any $(u,v) \in H(\mathbb{L},\mathbb{L}_2)$, we have
\[t \widetilde{x}_{{\widetilde{\alpha}}}(u) \widetilde{x}_{{\widetilde{\alpha}} + {^\tau\!}{\widetilde{\alpha}}}(-v) \widetilde{x}_{{^\tau\!}{\widetilde{\alpha}}}({^\tau\!}u) t^{-1} =
\widetilde{x}_{{\widetilde{\alpha}}}({\widetilde{\alpha}}(t)u) \widetilde{x}_{{\widetilde{\alpha}} + {^\tau\!}{\widetilde{\alpha}}}(-({\widetilde{\alpha}} + {^\tau\!}{\widetilde{\alpha}})(t)v) \widetilde{x}_{{^\tau\!}{\widetilde{\alpha}}}({^\tau\!}{\widetilde{\alpha}}(t) {^\tau\!}u)\]
by definition of the root groups.
Thus:
\[t x_\alpha(u,v) t^{-1} = x_\alpha({\widetilde{\alpha}}(t) u, {\widetilde{\alpha}}(t) {^\tau\!}{\widetilde{\alpha}}(t) v)\]
by definition of the Weil restriction.
Thus, since a matrix calculation gives $\widehat{\alpha}(z) x_\alpha(u,v) \widehat{\alpha}(z^{-1}) = x_\alpha({^\tau\!}z^2 z^{-1} u, z {^\tau\!}z v)$, we get that for any $z \in \mathbb{L}_{\widetilde{\alpha}}^*$, we have:
\[{\widetilde{\alpha}}(\widehat{\alpha}(z)) = ({^\tau\!}z)^2 z^{-1}.\]
\end{Fact}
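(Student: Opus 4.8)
The plan is to follow the pattern of Fact~\ref{FactCommutationTXaNonMult}: first establish an identity of absolute root group elements over $\widetilde{\mathbb{K}}$, then transport it through the parametrization to the $\mathbb{K}$-rational statement, and finally specialize to the cocharacter $\widehat{\alpha}$. For the first equality I would argue directly from the defining property of absolute root groups recalled in Definition-Proposition~\ref{definition root groups}: the torus $\widetilde{\mathbf{T}}$ normalizes each $\widetilde{\mathbf{U}}_{\widetilde{\beta}}$ and acts on it through the character $\widetilde{\beta}$, so $t\,\widetilde{x}_{\widetilde{\beta}}(w)\,t^{-1}=\widetilde{x}_{\widetilde{\beta}}(\widetilde{\beta}(t)w)$ for every scalar $w$. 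Since $t\in\mathbf{T}(\mathbb{L}_{\widetilde{\alpha}})\subseteq\widetilde{\mathbf{T}}(\widetilde{\mathbb{K}})$ and conjugation is a group homomorphism, one conjugates the threefold product $\widetilde{x}_{\widetilde{\alpha}}(u)\widetilde{x}_{\widetilde{\alpha}+{^\tau\!}\widetilde{\alpha}}(-v)\widetilde{x}_{{^\tau\!}\widetilde{\alpha}}({^\tau\!}u)$ factor by factor, which yields the stated right-hand side with scalars $\widetilde{\alpha}(t)$, $(\widetilde{\alpha}+{^\tau\!}\widetilde{\alpha})(t)$ and $({^\tau\!}\widetilde{\alpha})(t)$; note in passing that $\widetilde{\alpha}(t)\in\mathbb{L}_{\widetilde{\alpha}}$ because both $t$ and $\widetilde{\alpha}$ are $\Sigma_{\widetilde{\alpha}}$-invariant.

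Next I would transport this through the parametrization. Recall from Notation~\ref{NotParamMult} and the displays preceding it that $\xi_{\widetilde{\alpha}}(y_+(a,b))=\widetilde{x}_{\widetilde{\alpha}}(a)\widetilde{x}_{\widetilde{\alpha}+{^\tau\!}\widetilde{\alpha}}(-b)\widetilde{x}_{{^\tau\!}\widetilde{\alpha}}({^\tau\!}a)$ and that $x_\alpha=\pi\circ R_{\mathbb{L}_2/\mathbb{K}}(\xi_{\widetilde{\alpha}}\circ y_+)$ is an isomorphism of $\mathbb{K}$-groups $H(\mathbb{L},\mathbb{L}_2)\to\mathbf{U}_\alpha$; conjugation by $t$ normalizes $\mathbf{U}_\alpha$, hence transports through $x_\alpha$ to an automorphism of $H(\mathbb{L},\mathbb{L}_2)$. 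I would set $(u',v')=\big(\widetilde{\alpha}(t)u,\ (\widetilde{\alpha}+{^\tau\!}\widetilde{\alpha})(t)v\big)$ and first check that $(u',v')$ still lies in $H(\mathbb{L},\mathbb{L}_2)(\mathbb{K})$: since the root $\widetilde{\alpha}+{^\tau\!}\widetilde{\alpha}$ is $\tau$-stable, the scalar $(\widetilde{\alpha}+{^\tau\!}\widetilde{\alpha})(t)=\widetilde{\alpha}(t)\cdot{^\tau\!}\widetilde{\alpha}(t)$ is fixed by $\tau$, and the Hermitian relation $u'\,{^\tau\!}u'=v'+{^\tau\!}v'$ then follows in one line from $u\,{^\tau\!}u=v+{^\tau\!}v$. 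With this, $\xi_{\widetilde{\alpha}}(y_+(u',v'))$ is exactly the right-hand side of the first equality — here one uses ${^\tau\!}u'={^\tau\!}\big(\widetilde{\alpha}(t)\big)\,{^\tau\!}u=({^\tau\!}\widetilde{\alpha})(t)\,{^\tau\!}u$, i.e. the identification of "$\tau$ applied to the scalar $\widetilde{\alpha}(t)$" with "the twisted character ${^\tau\!}\widetilde{\alpha}$ evaluated at $t$", valid because $t$ is $\Sigma_{\widetilde{\alpha}}$-invariant and, by Remark~\ref{rq:splitting:field:multipliable:root}, $\Sigma_{\widetilde{\alpha}}$ sits inside $\Sigma_{\widetilde{\alpha}+{^\tau\!}\widetilde{\alpha}}$ as a normal subgroup of index $2$. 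Applying $\pi\circ R_{\mathbb{L}_2/\mathbb{K}}$ gives $t\,x_\alpha(u,v)\,t^{-1}=x_\alpha\big(\widetilde{\alpha}(t)u,\ \widetilde{\alpha}(t)\,{^\tau\!}\widetilde{\alpha}(t)\,v\big)$.

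Finally I would specialize $t=\widehat{\alpha}(z)$ with $z\in\mathbb{L}_{\widetilde{\alpha}}^{*}$. On one hand the formula just obtained reads $\widehat{\alpha}(z)x_\alpha(u,v)\widehat{\alpha}(z^{-1})=x_\alpha\big(\widetilde{\alpha}(\widehat{\alpha}(z))u,\ \widetilde{\alpha}(\widehat{\alpha}(z))\,{^\tau\!}\widetilde{\alpha}(\widehat{\alpha}(z))\,v\big)$. On the other hand, using the matrix realizations of $z$ and $y_+$ inside $\mathrm{SU}(h)$ recalled above together with $\mathrm{SU}(h)_{\mathbb{L}}\simeq\mathrm{SL}_{3,\mathbb{L}}$, a direct conjugation and reading-off of entries — precisely the "matrix calculation" alluded to in the statement — yields $\widehat{\alpha}(z)x_\alpha(u,v)\widehat{\alpha}(z^{-1})=x_\alpha\big(({^\tau\!}z)^2z^{-1}u,\ z\,{^\tau\!}z\,v\big)$. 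Comparing the first coordinates of the two expressions forces $\widetilde{\alpha}(\widehat{\alpha}(z))=({^\tau\!}z)^2z^{-1}$, and the second coordinates are then automatically consistent since ${^\tau\!}\big(({^\tau\!}z)^2z^{-1}\big)\cdot({^\tau\!}z)^2z^{-1}=z\,{^\tau\!}z$.

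I expect the main obstacle to be the Galois bookkeeping in the middle step: keeping precise track of which scalars belong to $\mathbb{L}_\alpha$ and which to $\mathbb{L}_{2\alpha}$, checking that the candidate pair $(u',v')$ remains in the Hermitian $\mathbb{K}$-variety $H(\mathbb{L},\mathbb{L}_2)$, and justifying the compatibility ${^\tau\!}\big(\widetilde{\alpha}(t)\big)=({^\tau\!}\widetilde{\alpha})(t)$ for $t\in\mathbf{T}(\mathbb{L}_{\widetilde{\alpha}})$ from the normality of $\Sigma_{\widetilde{\alpha}}$ in $\Sigma_{\widetilde{\alpha}+{^\tau\!}\widetilde{\alpha}}$. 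By contrast, the first equality and the $\mathrm{SL}_3$ computation of the last step are routine.
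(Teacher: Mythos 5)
Your argument is correct and is essentially the paper's own: the Fact is proved there by exactly the same three-step chain (torus conjugation acts on each absolute root group through its character, transport via the Weil-restriction parametrization $x_\alpha=\pi\circ R_{\mathbb{L}_2/\mathbb{K}}(\xi_{\widetilde{\alpha}}\circ y_+)$, then the explicit $\mathrm{SU}(h)$ matrix conjugation to identify $\widetilde{\alpha}(\widehat{\alpha}(z))$). The extra Galois bookkeeping you flag — that $(u',v')$ stays in $H(\mathbb{L},\mathbb{L}_2)$ and that ${^\tau\!}\big(\widetilde{\alpha}(t)\big)=({^\tau\!}\widetilde{\alpha})(t)$ for the relevant $t$ — is precisely the verification the paper leaves implicit behind "by definition of the Weil restriction", so no comparison beyond this is needed.
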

 
\begin{Not}\label{NotMaMult}
We define maps $m_\alpha : H(\mathbb{L},\mathbb{L}_2) \to \mathcal{N}_\mathbf{G}(\mathbf{S})$ and $m_{-\alpha} : H(\mathbb{L},\mathbb{L}_2) \to \mathcal{N}_\mathbf{G}(\mathbf{S})$ by \cite[4.1.11]{BruhatTits2}:
\begin{align*}
 m_\alpha(u,v) =& x_\alpha(uv^{-1},({^\tau\!}v)^{-1}) x_{-\alpha}(u,v) x_\alpha(u({^\tau\!}v)^{-1},({^\tau\!}v)^{-1})\\
 m_{-\alpha}(u,v) =& x_{-\alpha}(uv^{-1},({^\tau\!}v)^{-1}) x_{\alpha}(u,v) x_{-\alpha}(u({^\tau\!}v)^{-1},({^\tau\!}v)^{-1})
\end{align*}
Their matrix realizations in $\mathrm{SU}(h)$ are respectively 
\begin{align*}
\begin{pmatrix} 0 & 0 & -({^\tau\!}v)^{-1}\\0 & -({^\tau\!}v)v^{-1} & 0\\ -v& 0 & 0\end{pmatrix}
&&&
\begin{pmatrix} 0 & 0 & -v\\0 & -({^\tau\!}v)v^{-1} & 0\\ -({^\tau\!}v)^{-1}& 0 & 0\end{pmatrix}
\end{align*}

The unique elements $m(x_\alpha(u,v))$ and $m(x_{-\alpha}(u,v))$ defined in \cite[6.1.2(2)]{BruhatTits1} are then:
\begin{align*}
m(x_\alpha(u,v)) &= m_{-\alpha}(u,v) &
m(x_{-\alpha}(u,v)) &= m_\alpha(u,v)
\end{align*}
Even if $(0,1) \not\in H(\mathbb{L},\mathbb{L}_2)(\mathbb{K})$ in general, one can define an element $m_\alpha = m_\alpha(0,1) = m_{-\alpha}(0,1) = m_{-\alpha}$, that in fact belongs to $\mathcal{N}_\mathbf{G}(\mathbf{S})(\mathbb{K})$.

By convention, we set $m_{2\alpha} = m_\alpha$.\index[notation]{m@$m_\alpha$}
\end{Not}

\begin{Fact}\label{FactMaMult}
We observe that $ \widetilde{x}_{\widetilde{\alpha}}(1) \widetilde{x}_{-{\widetilde{\alpha}}}(1) \widetilde{x}_{\widetilde{\alpha}}(1) = m_{\widetilde{\alpha}}$ (resp. $m_{{^\tau\!}{\widetilde{\alpha}}}$) has the following matrix realization:
\begin{align*} \begin{pmatrix} 1 & 0 & 0\\ 0& 1& 1\\ 0 & 0 & 1\end{pmatrix}
\begin{pmatrix} 1 & 0 & 0\\ 0& 1& 0\\ 0 & -1 & 1\end{pmatrix}
\begin{pmatrix} 1 & 0 & 0\\ 0& 1& 1\\ 0 & 0 & 1\end{pmatrix}
&= \begin{pmatrix} 1 & 0 & 0\\ 0& 0& 1\\ 0 & -1 & 0\end{pmatrix}&
\text{resp. } & \begin{pmatrix} 0 & 1 & 0\\ -1& 0& 0\\ 0 & 0 & 1\end{pmatrix}
\end{align*}
so that we have:
\[ m_\alpha = m_{{\widetilde{\alpha}}} m_{{^\tau\!}{\widetilde{\alpha}}}^{-1} m_{{\widetilde{\alpha}}} = m_{{^\tau\!}{\widetilde{\alpha}}} m_{{\widetilde{\alpha}}}^{-1} m_{{^\tau\!}{\widetilde{\alpha}}}.\]
Moreover
\begin{itemize}
\item $\forall (u,v) \in H(\mathbb{L},\mathbb{L}_2),\ x_{-\alpha}(u,v) = m_\alpha x_\alpha(u,v) m_\alpha^{-1}$;
\item $\forall (u,v) \in H(\mathbb{L},\mathbb{L}_2) \setminus \{(0,0)\},\ m_\alpha(u,v) = \widehat{\alpha}({^\tau\!}v^{-1}) m_\alpha = m_\alpha \widehat{\alpha}(v)$;
\item $m_\alpha^2 = \operatorname{id}$.
\item $\forall (u,v) \in H(\mathbb{L},\mathbb{L}_2) \setminus \{(0,0)\}, m_\alpha(u,v)^2 = \widehat{\alpha}({^\tau\!}v^{-1} v)$.
\end{itemize}
\end{Fact}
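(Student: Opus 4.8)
The plan is to reduce every assertion to an explicit computation inside $\mathrm{SL}_3$, exactly as the analogous Fact~\ref{FactMaNonMult} is handled in $\mathrm{SL}_2$. Over $\widetilde{\mathbb{K}}$ the simple factor $\widetilde{\mathbf{G}}^{\widetilde\alpha,{}^\tau\widetilde\alpha}$ of $\mathbf{G}^\alpha$ is identified through $\xi_{\widetilde\alpha}$ with $\mathrm{SU}(h)$, and $\mathrm{SU}(h)_{\mathbb{L}}\simeq \mathrm{SL}_{3,\mathbb{L}}$; the relations $y_+(u,v)=\widetilde{x}_{\widetilde\alpha}(u)\widetilde{x}_{\widetilde\alpha+{}^\tau\widetilde\alpha}(-v)\widetilde{x}_{{}^\tau\widetilde\alpha}({}^\tau u)$ and $y_-(u,v)=\widetilde{x}_{-{}^\tau\widetilde\alpha}(u)\widetilde{x}_{-\widetilde\alpha-{}^\tau\widetilde\alpha}(v)\widetilde{x}_{-\widetilde\alpha}({}^\tau u)$ from the paragraph on the multipliable case, together with the explicit form of $y_\pm$, pin down the elementary unipotent matrices realizing $\widetilde{x}_{\pm\widetilde\alpha}$, $\widetilde{x}_{\pm{}^\tau\widetilde\alpha}$ and $\widetilde{x}_{\pm(\widetilde\alpha+{}^\tau\widetilde\alpha)}$ (with $\widetilde\alpha$ attached to the $(2,3)$ entry, ${}^\tau\widetilde\alpha$ to the $(1,2)$ entry, and their sum to the $(1,3)$ entry). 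First I would multiply the three matrices $\widetilde{x}_{\widetilde\alpha}(1)$, $\widetilde{x}_{-\widetilde\alpha}(-1)$, $\widetilde{x}_{\widetilde\alpha}(1)$ to recover the matrix realization of $m_{\widetilde\alpha}$ from Notation~\ref{NotMalphaInSTsystems}, and likewise for $m_{{}^\tau\widetilde\alpha}$, obtaining the two monomial matrices displayed in the statement.

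Next I would establish $m_\alpha=m_{\widetilde\alpha}m_{{}^\tau\widetilde\alpha}^{-1}m_{\widetilde\alpha}$. By Notation~\ref{NotMaMult} one has $m_\alpha=m_\alpha(0,1)$, whose matrix realization in $\mathrm{SU}(h)$ has already been recorded there; it then suffices to multiply the two monomial matrices just found (and the inverse of one of them) and check that the product equals that matrix, which is an immediate verification. The symmetric identity $m_\alpha=m_{{}^\tau\widetilde\alpha}m_{\widetilde\alpha}^{-1}m_{{}^\tau\widetilde\alpha}$ follows by exchanging the roles of $\widetilde\alpha$ and ${}^\tau\widetilde\alpha$, or simply because both sides equal $m_\alpha$.

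For the three remaining assertions I would use that $x_{\pm\alpha}$, $\widehat\alpha$ and the maps $m_{\pm\alpha}(u,v)$ are all $\mathbb{K}$-group morphisms built by Weil restriction from $\mathbb{L}_2$ out of $\xi_{\widetilde\alpha}$: hence each identity may be checked after base change to $\widetilde{\mathbb{K}}$, where the group splits as a product of copies of $\mathrm{SL}_3$ and every map becomes one of the explicit matrices above, and faithfulness of these realizations together with functoriality of Weil restriction gives the identity over $\mathbb{K}$. Concretely, $x_{-\alpha}(u,v)=m_\alpha x_\alpha(u,v)m_\alpha^{-1}$ is obtained by conjugating the matrix $y_+(u,v)$ by the matrix realizing $m_\alpha$ and comparing with $y_-(u,v)$; the formula $m_\alpha(u,v)=\widehat\alpha({}^\tau v^{-1})m_\alpha=m_\alpha\widehat\alpha(v)$ is obtained by multiplying the diagonal matrix $\mathrm{diag}({}^\tau v^{-1},\,{}^\tau v\,v^{-1},\,v)$ realizing $\widehat\alpha({}^\tau v^{-1})$ with the monomial matrix realizing $m_\alpha$ and comparing with the recorded realization of $m_\alpha(u,v)$, the second equality also following from Fact~\ref{FactCommutationTXaMult}; and $m_\alpha^2=\operatorname{id}$ is the squaring of an explicit $3\times3$ monomial matrix.

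The only genuine source of trouble is bookkeeping: keeping the signs straight in the unipotent matrices attached to each absolute root, tracking the Galois twist $x\mapsto {}^\tau x$ between $\mathbb{L}$ and $\mathbb{L}_2$, and making sure identities verified inside a single $\mathrm{SL}_3$ factor really descend to the $\mathbb{K}$-group $\mathbf{G}^\alpha$ — which they do precisely because $x_{\pm\alpha}$, $\widehat\alpha$ and $m_{\pm\alpha}$ were constructed functorially from $\xi_{\widetilde\alpha}$. Once the normalization of $\xi_{\widetilde\alpha}$ provided by \cite[4.1.9]{BruhatTits2} is fixed, there is no further conceptual obstacle.
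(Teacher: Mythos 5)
Your proposal is correct and follows essentially the same route as the paper: the paper treats this Fact as an explicit matrix verification in $\mathrm{SU}(h)\subset\mathrm{SL}_{3,\mathbb{L}}$ using the realizations of $y_\pm$, $z$ and $m_{\alpha}(u,v)$ recorded in Notations~\ref{NotParamMult} and~\ref{NotMaMult}, which is exactly what you do. Your additional remark that the identities descend from a single $\mathrm{SL}_3$ factor to the $\mathbb{K}$-group by functoriality of the Weil restriction and faithfulness of the realization is left implicit in the paper but is the right justification.
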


\subsection{\texorpdfstring{$\mathfrak{R}^S$}{RS}-valuation of a root groups datum}\label{subsecRvaluation}

As before, let $\mathbf{G}$\index[notation]{g@$\mathbf{G}$} be a quasi-split reductive $\mathbb{K}$-group with a choice of a maximal split torus $\mathbf{S}$\index[notation]{s@$\mathbf{S}$} contained in the maximal $\mathbb{K}$-torus $\mathbf{T} =\mathcal{Z}_\mathbf{G}(\mathbf{S})$\index[notation]{t@$\mathbf{T}$} contained in a Borel subgroup $\mathbf{B}$,\index[notation]{b@$\mathbf{B}$} together with a parametrization of the root groups $\left(x_\alpha\right)_{\alpha \in \Phi}$\index[notation]{x@$x_\alpha$} deduced from a Chevalley-Steinberg system, defined in Notations~\ref{NotParamNonMult} and~\ref{NotParamMult}.

Denote $G = \mathbf{G}(\mathbb{K})$,\index[notation]{g@$G$} $T = \mathbf{T}(\mathbb{K})$\index[notation]{t@$T$} and $N = \mathbf{N}(\mathbb{K})$.\index[notation]{n@$N$}
For any relative root $\alpha \in \Phi$, denote $U_\alpha = \mathbf{U}_\alpha(\mathbb{K})$\index[notation]{u@$U_\alpha$} and $M_\alpha = T m_\alpha$\index[notation]{m@$M_\alpha$} where the element $m_\alpha \in G$ is defined as in Notations~\ref{NotMaNonMult} and~\ref{NotMaMult}.
Then, by \cite[4.1.19(ii)]{BruhatTits2}, we know that $\Big(T,\big(U_\alpha,M_\alpha\big)_{\alpha\in\Phi}\Big)$ is a generating root group datum of $G$ of type $\Phi$.

From now on, considering a valuation $\omega: \mathbb{K} \to \Lambda \cup \{\infty\}$, we assume that the extension $\widetilde{\mathbb{K}} / \mathbb{K}$ is \textbf{univalent} (generalizing the definition of \cite[1.6.1]{BruhatTits2}).\index{univalent} This means that the $\Lambda$-valued ground field $\mathbb{K}$ satisfies the following:
\begin{assumption}\label{assumpfield}
There is a unique surjective valuation $\omega': \widetilde{\mathbb{K}}^\times \to \Lambda'$ such that:
\begin{itemize}
\item $\Lambda'$ is a non-zero totally ordered abelian group;
\item there is a strictly increasing map $\omega(\mathbb{K}^\times) \to \Lambda'$ that identifies $\omega(\mathbb{K}^\times)$ with a finite index subgroup of $\Lambda'$;
\item for all $x \in \mathbb{K}^\times$, we have $\omega'(x) = \omega(x)$.
\end{itemize}
\end{assumption}

Thus $\Lambda'$ identifies with a subgroup of $\Lambda \otimes_\mathbb{Z} \Q \subset \RF^S$.
Note that for any sub-extension $\widetilde{\mathbb{K}}/\mathbb{L}/\mathbb{K}$ and any $\sigma \in \operatorname{Aut}_{\mathbb{K}}(\mathbb{L})$, we have $\omega' \circ \sigma = \omega'$.
We still denote, by abuse of notation, the valuation $\omega: \mathbb{L}^\times \to \RF^S$ for any sub-extension $\widetilde{\mathbb{K}}/\mathbb{L}/\mathbb{K}$.

\begin{Ex}
According to Corollary 3.2.3 and section 4.1 of \cite{engler}, the assumptions~\ref{assumpfield} are all satisfied if $\mathbb{K}$ is Henselian. 
\end{Ex}

\begin{Not}\label{NotRSvaluation}
For each root $\alpha \in \Phi$, we use the parametrization $x_\alpha$ of the root group $\mathbf{U}_\alpha$, given by the choice of a Chevalley-Steinberg system and the choice of an absolute root $\widetilde{\alpha}$ in the orbit $\alpha$, to define a map $\varphi_\alpha : U_\alpha \to \RF^S \cup \{\infty\}$\index[notation]{p@$\varphi_\alpha$}\index{root group datum!valuation} as follows:
\begin{itemize}
\item $\varphi_\alpha(x_\alpha(y)) = \omega(y)$ if $\alpha$ is a non-multipliable and non-divisible root, and if $y \in \mathbb{L}_\alpha$;
\item $\varphi_\alpha(x_\alpha(y,y')) = \frac{1}{2} \omega(y')$ if $\alpha$ is a multipliable root and if $(y,y') \in H(\mathbb{L}_\alpha,\mathbb{L}_{2\alpha})$;
\item $\varphi_{2\alpha}(x_\alpha(0,y')) = \omega(y')$ if $\alpha$ is a multipliable root and if $y' \in \mathbb{L}_\alpha^0:=\{ v \in \mathbb{L}_\alpha,\  v + {^\tau}\!v=0 \}$\index[notation]{L@$\mathbb{L}_\alpha^0$}.
\end{itemize}
\end{Not}

Note that, by convention, we set $\omega(0) = \infty = \frac{1}{2} \omega(0)$.

\begin{Rq}
Despite the fact that the parametrization $x_\alpha$ depends on the choice of $\widetilde{\alpha} \in \widetilde{\Phi}$ such that $\widetilde{\alpha}|_\mathbf{S} = \alpha$, the value $\varphi_\alpha(u) \in \RF^S \cup \{\infty\}$ for $u \in U_\alpha$ does not depend on this choice.
Indeed, assume for instance that $\alpha \in \Phi_{\mathrm{nd}}$ is non-multipliable.
For any $\sigma \in \operatorname{Gal}\left(\widetilde{\mathbb{K}} / \mathbb{K}\right)$, the isomorphism $\sigma^{-1}: \mathbb{L}_{\sigma(\widetilde{\alpha})} \to \mathbb{L}_\alpha$ induces a $\mathbb{K}$-isomorphism of the Weil restrictions: $j: R_{\mathbb{L}_{\sigma(\widetilde{\alpha})} / \mathbb{K}}\left( \mathbb{G}_{a,\mathbb{L}_{\sigma(\widetilde{\alpha})}} \right) \to R_{\mathbb{L}_{\widetilde{\alpha}} / \mathbb{K}}\left( \mathbb{G}_{a,\mathbb{L}_{\widetilde{\alpha}}} \right)$.
Thus the parametrization of $U_\alpha$ defined by $\widetilde{\alpha}$ instead of $\alpha$ would be precisely $x_\alpha \circ j: R_{\mathbb{L}_{\sigma(\widetilde{\alpha})} / \mathbb{K}}\left( \mathbb{G}_{a,\mathbb{L}_{\sigma(\widetilde{\alpha})}} \right) \to U_\alpha$.
Since $\omega = \omega \circ j$, we deduce that the value of $\varphi_\alpha$ does not depend on $\sigma$.
If $\alpha$ is a multipliable root, we can make a similar observation.
\end{Rq}

\begin{Prop}\label{Vtotal}
The datum $\left( \varphi_\alpha \right)_{\alpha\in \Phi}$ is an $\RF^S$-valuation of the root group datum $\Big(T,\big(U_\alpha,M_\alpha\big)_{\alpha\in\Phi}\Big)$, i.e. it satisfies axioms~\ref{axiomV0} to~\ref{axiomV5}.
\end{Prop}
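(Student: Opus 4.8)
The plan is to verify axioms \ref{axiomV0} through \ref{axiomV5} one by one, reducing each to the corresponding property of the valuation $\omega$ on the fields $\mathbb{L}_\alpha$ and $\mathbb{L}_{2\alpha}$ and to the commutation formulas established for the Chevalley--Steinberg parametrizations $x_\alpha$ in Section~\ref{subsecParametrization}. Throughout, the key point is that $\omega$ restricts to a genuine valuation on every subextension $\widetilde{\mathbb{K}}/\mathbb{L}/\mathbb{K}$ (Assumption~\ref{assumpfield}), so that $\omega(y+y') \geqslant \min(\omega(y),\omega(y'))$, $\omega(yy') = \omega(y) + \omega(y')$ and $\omega(y) = \infty \Leftrightarrow y = 0$ hold in each $\mathbb{L}_\alpha$.

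First I would handle the non-multipliable, non-divisible roots, where $x_\alpha : R_{\mathbb{L}_\alpha/\mathbb{K}}(\mathbb{G}_a) \to \mathbf{U}_\alpha$ is a group isomorphism and $\varphi_\alpha = \omega \circ x_\alpha^{-1}$. Then \ref{axiomV0} is clear since $\omega(\mathbb{L}_\alpha^\times)$ has at least two elements (in fact infinitely many, using that $\Lambda \neq 0$) together with $\infty$; \ref{axiomV1} (equivalently \ref{axiomV1bis} via Lemma~\ref{LemAxiomV1}) is the ultrametric inequality for $\omega$ on $\mathbb{L}_\alpha$; axiom \ref{axiomV4} is vacuous; and \ref{axiomV3} follows from Proposition~\ref{prop:closed:subset:root:group} (the commutation relations $[\mathbf{U}_\alpha,\mathbf{U}_\beta]\subset \mathbf{U}_{(\alpha,\beta)}$ of the Chevalley system) combined with the additivity $\omega(r\lambda + s\mu)$-type estimates; here one must unwind the structure constants appearing in the Chevalley commutator formula and check they are units, which is guaranteed by the integrality built into a Chevalley--Steinberg system. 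For \ref{axiomV2} and \ref{axiomV5} I would use the explicit elements $m_\alpha(u)$ of Notations~\ref{NotMaNonMult}: the conjugation formula $\widehat{\alpha}(z) x_\alpha(u)\widehat{\alpha}(z^{-1}) = x_\alpha(z^2 u)$ from Fact~\ref{FactCommutationTXaNonMult} shows that conjugation by $m(u) = m_{-\alpha}(u^{-1})$ sends $x_{-\alpha}(v)$ to $x_\alpha(\pm u^{-2} v)$, so the difference $\varphi_{-\alpha}(v) - \varphi_\alpha(m v m^{-1})$ equals $2\omega(u)$, a constant; and the identity $m_\alpha(u) = x_\alpha(u) x_{-\alpha}(u^{-1}) x_\alpha(u)$ directly gives \ref{axiomV5} since if $u' x_\alpha(u) u'' \in M_\alpha$ then, by uniqueness in \cite[6.1.2(2)]{BruhatTits1}, $u' = x_{-\alpha}(u^{-1})$ up to the relevant normalization, whence $\varphi_{-\alpha}(u') = -\omega(u) = -\varphi_\alpha(x_\alpha(u))$.

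Next I would treat the multipliable case, which is the main obstacle. Here $x_\alpha : H(\mathbb{L}_\alpha,\mathbb{L}_{2\alpha}) \to \mathbf{U}_\alpha$ is an isomorphism onto a non-commutative unipotent group with group law $x_\alpha(u,v) x_\alpha(u',v') = x_\alpha(u+u', v+v'+u\,{}^\tau\!u')$, and $\varphi_\alpha(x_\alpha(u,v)) = \tfrac12\omega(v)$ while $\varphi_{2\alpha}(x_\alpha(0,v)) = \omega(v)$. Axiom \ref{axiomV4} is then immediate by definition. For \ref{axiomV1bis} I would compute $x_\alpha(u,v) x_\alpha(u',v')^{-1} = x_\alpha(u-u', v - v' - u\,{}^\tau\!u' + \dots)$ and use that for $(u,v) \in H(\mathbb{L}_\alpha,\mathbb{L}_{2\alpha})$ one has $u\,{}^\tau\!u = v + {}^\tau\!v$, so $\omega(u\,{}^\tau\!u) = 2\omega(u) \geqslant \omega(v)$ (the norm form lower bound), which lets one dominate the cross term by $\min(\omega(v),\omega(v'))$; this is the crucial field-theoretic estimate, and it also shows $\varphi_\alpha^{-1}(\infty) = \{1\}$. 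For \ref{axiomV3} one combines Proposition~\ref{prop:closed:subset:root:group} with the explicit commutator $[x_\alpha(u,v), x_{-\alpha}(u',v')]$ expanded via the embedding into $\mathrm{SU}(h)_{\mathbb{L}_\alpha} \simeq \mathrm{SL}_{3,\mathbb{L}_\alpha}$, tracking the valuations of the matrix entries; the $\mathrm{SL}_3$-matrix realizations in Notation~\ref{NotMaMult} make this a finite computation. Finally, for \ref{axiomV2} and \ref{axiomV5} I would use Fact~\ref{FactCommutationTXaMult}, namely $\widehat{\alpha}(z) x_\alpha(u,v)\widehat{\alpha}(z^{-1}) = x_\alpha({}^\tau\!z^2 z^{-1} u, z\,{}^\tau\!z\, v)$, which shows conjugation by $m_\alpha(u,v) = \widehat{\alpha}(v)m_\alpha$ (up to the relation in Fact~\ref{FactMaMult}) shifts $\varphi_{-\alpha}$ by $-\omega(z\,{}^\tau\!z) = -\omega(N_{\mathbb{L}_\alpha/\mathbb{L}_{2\alpha}}(z))$, a constant on $U_{-\alpha}\setminus\{1\}$; and the defining formula $m_\alpha(u,v) = x_\alpha(uv^{-1}, {}^\tau\!v^{-1}) x_{-\alpha}(u,v) x_\alpha(\dots)$ together with uniqueness in \cite[6.1.2(2)]{BruhatTits1} pins down $\varphi_{-\alpha}$ of the outer factors as $-\tfrac12\omega({}^\tau\!v) = -\varphi_\alpha(x_\alpha(u,v))$, giving \ref{axiomV5}. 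Collecting the non-multipliable and multipliable verifications, and noting that divisible roots are handled by restriction from their halves (axiom \ref{axiomV4}), completes the proof that $(\varphi_\alpha)_{\alpha\in\Phi}$ is an $\RF^S$-valuation.
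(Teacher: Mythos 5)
Your proposal is correct and follows essentially the same route as the paper: each axiom is checked case by case (non-multipliable versus multipliable) using the ultrametric inequality in $\mathbb{L}_\alpha$, the conjugation formulas of Facts~\ref{FactCommutationTXaNonMult}--\ref{FactMaMult}, and uniqueness in \cite[6.1.2(2)]{BruhatTits1} for~\ref{axiomV5}. The only difference is cosmetic: for~\ref{axiomV3} the paper simply invokes \cite[Annexe A]{BruhatTits2} rather than redoing the commutator computations you sketch, which is the same content.
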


According to Bruhat-Tits~\cite[4.2.11]{BruhatTits2}, it is easy to check it.
Such a verification is carried out by Landvogt \cite[7.4]{Landvogt}.

%\green{
\begin{proof}
Axiom~\ref{axiomV0} is immediate since $\varphi_\alpha(\mathbf{U}_\alpha(\mathbb{K}))$ contains $\Lambda \cup \{\infty\}$ and the totally ordered group $\Lambda$ is not trivial by assumption.
Axiom~\ref{axiomV4} is immediate by definition.

\paragraph{Axioms~\ref{axiomV1},~\ref{axiomV2} and~\ref{axiomV5} for $\alpha$ non-multipliable:}

Let $\lambda \in \RF^S$ and let $g_1 = x_\alpha(u_1)$, $g_2 = x_\alpha(u_2)$ be elements in $U_{\alpha,\lambda}$ for some parameters $u_1, u_2 \in \mathbb{L}_\alpha$. Then $g_1 g_2^{-1} = x_\alpha(u_1) x_\alpha(u_2)^{-1} = x_\alpha(u_1) x_\alpha(-u_2) = x_\alpha(u_1-u_2)$.
Thus $\varphi_\alpha(g_1 g_2^{-1}) = \omega(u_1-u_2) \geqslant \min (\omega(u_1),\omega(u_2)) \geqslant \lambda$.
Moreover, $x_\alpha(0)$ is the only element with valuation $\infty$ which gives~\ref{axiomV1}.

Let $x_\alpha(u) \in U_\alpha$ with $u \in \mathbb{L}^*_\alpha$ and $ m = m_\alpha t \in M_\alpha = M_{-\alpha}$ with $t \in T$.
By formulas in~\ref{FactCommutationTXaNonMult} and~\ref{FactMaNonMult}, we have $m x_\alpha(u) m^{-1} = m_\alpha t x_\alpha(u) t^{-1} m_\alpha^{-1} = m_\alpha x_\alpha(\alpha(t)u) m_\alpha^{-1} = x_{-\alpha}(\alpha(t)u)$.
Hence $\varphi_\alpha(x_\alpha(u)) - \varphi_\alpha(m x_\alpha(u) m^{-1}) = \omega(u) - \omega(\alpha(t)u) = -\omega(\alpha(t))$ which does not depend on $u$.
This proves~\ref{axiomV2}.

Let $x_\alpha(u) \in U_\alpha$ and $x_{-\alpha}(u'),x_{-\alpha}(u'') \in U_{-\alpha}$ such that $x_{-\alpha}(u') x_\alpha(u) x_{-\alpha}(u'') \in M_\alpha$.
By the uniqueness in \cite[6.1.2(2)]{BruhatTits1} and the formula defining $m_{-\alpha}(u^{-1})$ in~\ref{NotMaNonMult}, we get $u' = u'' = u^{-1}$.
Thus $\varphi_{-\alpha}(x_{-\alpha}(u'))= \omega(u^{-1}) = -\varphi_\alpha(x_\alpha(u))$ which gives~\ref{axiomV5}.

\paragraph{Axioms~\ref{axiomV1},~\ref{axiomV2} and~\ref{axiomV5} for $\alpha$ multipliable:}
Let $\lambda \in \RF^S$ and let $g_1 = x_\alpha(u_1,v_1)$, $g_2 = x_\alpha(u_2,v_2)\in U_{\alpha,\lambda}$ for some parameters $(u_1,v_1), (u_2,v_2) \in H(\mathbb{L}_\alpha,\mathbb{L}_{2\alpha})$.
Then $u_i {^\tau\!}u_i = v_i + {^\tau\!}v_i$ gives $\omega(u_i) \geqslant \frac{1}{2} \omega(v_i) \geqslant \lambda$.
Thus \[g_1 g_2^{-1} = x_\alpha(u_1,v_1) x_\alpha(u_2,v_2)^{-1} = x_\alpha(u_1,v_1) x_\alpha(-u_2,{^\tau\!}v_2) = x_\alpha(u_1-u_2, -{^\tau\!}u_1 u_2+v_1+{^\tau\!}v_2).\]
Hence $\varphi_\alpha(g_1 g_2^{-1}) = \frac{1}{2}\omega(-{^\tau\!}u_1 u_2+v_1+{^\tau\!}v_2) \geqslant \frac{1}{2}\min (\omega(u_1)+\omega(u_2),\omega(v_1),\omega(v_2)) \geqslant \lambda$.
Since, for $(u,v) \in H(\mathbb{L}_\alpha,\mathbb{L}_{2\alpha})$, we have $u=0$ whenever $v=0$, we get that $x_\alpha(0,0)$ is the only element with valuation $\infty$.
This gives~\ref{axiomV1}.

Let $x_\alpha(u,v) \in U_\alpha$ with $(u,v) \in H(\mathbb{L}_\alpha,\mathbb{L}_{2\alpha}) \setminus \{(0,0)\}$ and $ m = m_\alpha t \in M_\alpha = M_{-\alpha}$ with $t \in T$.
By formulas in~\ref{FactCommutationTXaMult} and~\ref{FactMaMult}, we have $m x_\alpha(u,v) m^{-1} = m_\alpha t x_ \alpha(u,v) t^{-1} m_\alpha^{-1} = m_\alpha x_\alpha(\widetilde{\alpha}(t)u,\widetilde{\alpha}(t)\ {^\tau\!}\widetilde{\alpha}(t)v) m_\alpha^{-1} = x_{-\alpha}(-\widetilde{\alpha}(t)u, \widetilde{\alpha}(t)\ {^\tau\!}\widetilde{\alpha}(t) {^\tau\!}v).$
Hence \[\varphi_\alpha(x_\alpha(u)) - \varphi_{-\alpha}(m x_\alpha(u) m^{-1}) = \frac{1}{2}\omega(v) - \frac{1}{2}\omega(\widetilde{\alpha}(t)\ {^\tau\!}\widetilde{\alpha}(t)v) = -\omega(\alpha(t))\] which does not depend on $(u,v)$.
This proves~\ref{axiomV2}.

Let $x_\alpha(u,v) \in U_\alpha$ and $x_{-\alpha}(u',v'),x_{-\alpha}(u'',v'') \in U_{-\alpha}$ such that $x_{-\alpha}(u',v') x_\alpha(u,v) x_{-\alpha}(u'',v'') \in M_\alpha$.
By the uniqueness in \cite[6.1.2(2)]{BruhatTits1} and the formula defining $m_{-\alpha}(u,v)$ in~\ref{NotMaMult}, we get $(u',v') = (uv^{-1},{^\tau\!}v^{-1})$.
Thus $\varphi_{-\alpha}(x_{-\alpha}(u',v'))= \frac{1}{2}\omega({^\tau\!}v^{-1}) = -\varphi_\alpha(x_\alpha(u,v))$ which gives~\ref{axiomV5}.

\paragraph{Axiom~\ref{axiomV3} of commutation:}

It is a consequence of \cite[Annexe A]{BruhatTits2}.

\end{proof}
%}

\subsection{Action of \texorpdfstring{$N$}{N} on an \texorpdfstring{$\RF^S$}{RS}-aff space}\label{subsecAction_N_affine_space}

In the sequel, we adopt the following notations:
\begin{align*}
\mathbf{G}: &\;\text{a quasi-split reductive group over $\mathbb{K}$,}\\
\widetilde{\mathbb{K}}: &\; \text{a finite Galois extenion of $\mathbb{K}$ on which $\mathbf{G}$ splits,}\\
\mathbf{S}: &\;\text{a maximal $\mathbb{K}$-split torus of $\mathbf{G}$,}\\
\mathbf{T}: &\;\text{the centralizer of $\mathbf{S}$ in $\mathbf{G}$,}\\
\mathbf{N}: &\;\text{the normalizer of $\mathbf{S}$ in $\mathbf{G}$,}
\end{align*}
and for any algebraic $\mathbb{K}$-group $\mathbf{H}$:
\begin{align*}
\widetilde{\mathbf{H}}: &\;\text{the scalar extension $\mathbf{H}_{\widetilde{\mathbb{K}}}$ of $\mathbf{H}$ to $\widetilde{\mathbb{K}}$,}\\
X_*(\mathbf{H}):&\;\text{the group of cocharacters of $\mathbf{H}$,}\\
X^*(\mathbf{H}):&\;\text{the group of characters of $\mathbf{H}$,}\\
X^*_{\mathbb{K}}(\mathbf{H})
:&\;\text{the group of rational characters of $\mathbf{H}$ over $\mathbb{K}$.}\\
\end{align*}

The natural pairing of abelian groups:
$$X_*(\mathbf{S}) \otimes X^*(\mathbf{S}) \rightarrow \mathbb{Z}$$
is perfect and therefore induces an isomorphism:
$$X_*(\mathbf{S}) \otimes_{\mathbb{Z}} \mathbb{R} \cong \mathrm{Hom}_{\mathbb{R}}(X^*(\mathbf{S}) \otimes_{\mathbb{Z}} \mathbb{R},\mathbb{R}).$$
By tensorization by the abelian group $\mathfrak{R}^S$, we obtain an isomorphism of $\mathfrak{R}^S$-modules:
$$V_1:=X_*(\mathbf{S}) \otimes \mathfrak{R}^S \cong \mathrm{Hom}_{\mathbb{R}}(X^*(\mathbf{S}) \otimes_{\mathbb{Z}} \mathbb{R},\mathbb{R})\otimes \mathfrak{R}^S \cong \mathrm{Hom}_{\mathbb{R}}(X^*(\mathbf{S}) \otimes_{\mathbb{Z}} \mathbb{R},\mathfrak{R}^S).$$
The Weyl group $W:= \mathbf{N}(\mathbb{K})/\mathbf{T}(\mathbb{K})$\index[notation]{w@$W$}
acts  $\mathbb{R}$-linearly on $X_*(S) \otimes \mathbb{R}$ and hence $\RF^S$-linearly on $V_1$.
Since $X^*_\mathbb{K}(\mathbf{T})$ is a finite index subgroup of $X^*(\mathbf{S})$, we have:
$$X^*(\mathbf{S}) \otimes_{\mathbb{Z}} \mathbb{R} = X^*_{\mathbb{K}}(\mathbf{T}) \otimes_{\mathbb{Z}} \mathbb{R}.$$
Moreover, for each $t \in \mathbf{T}(\mathbb{K})$, the map:
\begin{align*}
\rho(t): X_{\mathbb{K}}^*(\mathbf{T}) \otimes \mathbb{R} & \rightarrow \mathfrak{R}^S\\
\chi \otimes \lambda & \mapsto -\lambda \omega(\chi(t))
\end{align*}
is well-defined and is $\mathbb{R}$-linear.

We can therefore see $\rho(t)$ as an element in $V_1$ and we get a group homomorphism: 
\begin{align*}
\rho: \mathbf{T}(\mathbb{K}) &\rightarrow V_1\\
t& \mapsto \rho(t).
\end{align*}
Let $\mathbf{T}_b(\mathbb{K})$ be the kernel of $\rho$ and let $V_0$ (resp. $V_{0,\Z}$) be the subspace of $V_1$ (resp. $X_*(\mathbf{S})$) given by the vectors $v$ such that $\alpha(v)=0$ for every root $\alpha\in \Phi$.
The quotient $V'_\Z:=X_*(\mathbf{S})/V_{0,\Z}$ is then a $\Z$-module.
By flatness of $\RF^S$ over $\Z$, the quotient $V:=V_1/V_0= V'_\Z \otimes \RF^S$ is then an $\RF^S$-module that is endowed with the following structures:
\begin{itemize}
\item[-] a morphism:
$$\overline{\rho}: \mathbf{T}(\mathbb{K})/\mathbf{T}_b(\mathbb{K}) \rightarrow V,$$
induced by $\rho$;
\item[-] a morphism:
$$j: W \rightarrow \mathrm{GL}(V'_\Z),$$
induced by the action of $W $ on $X_*(\mathbf{S})$.
\end{itemize}

Let $W'$ be the push-out of the morphism $\overline{\rho}: \mathbf{T}(\mathbb{K})/\mathbf{T}_b(\mathbb{K}) \rightarrow V$ and the inclusion $\mathbf{T}(\mathbb{K})/\mathbf{T}_b(\mathbb{K}) \subseteq \mathbf{N}(\mathbb{K})/\mathbf{T}_b(\mathbb{K})$. The group $W'$ is then an extension of $W$ by $V$:
\begin{equation}\label{exten}
1 \rightarrow V \rightarrow W'\rightarrow W \rightarrow 1.
\end{equation}
If $\kappa: W \rightarrow \mathrm{Aut}_{\mathrm{group}}(V)$ is the induced action by conjugation of $W$ on $V$, we can see the previous exact sequence as a class in the cohomology group $H^2(W,V)$. But this group is trivial since $W$ is finite and $V$ is uniquely divisible. Hence exact sequence (\ref{exten}) splits and $W' = V\rtimes_{\kappa} W$. The action $\kappa$ is computed as follows: 
$$\forall v\in V, \forall w \in W, \kappa(w)(v)=j(w)(v),$$
Hence $j$ induces a morphism:
$$j':W' = V\rtimes_{\kappa} W \rightarrow V\rtimes \mathrm{GL}(V'_\Z) =\mathrm{Aff}_{\RF^S}(V).$$
By composing the projection $\mathbf{N}(\mathbb{K})\rightarrow \mathbf{N}(\mathbb{K})/\mathbf{T}_b(\mathbb{K})$, the natural morphism $\mathbf{N}(\mathbb{K})/\mathbf{T}_b(\mathbb{K}) \rightarrow W'$ and $j'$, we get a morphism:
$$\nu: \mathbf{N}(\mathbb{K})\rightarrow \mathrm{Aff}_{\RF^S}(V)$$
  such that the following diagram commutes:
 \begin{equation}\label{diagramAffineAction}
 \xymatrix{
 1 \ar[r] & \mathbf{T}(\mathbb{K})\ar[d]^{\rho_0} \ar[r] & \mathbf{N}(\mathbb{K})\ar[d]^{\nu}\ar[r] & W\ar[d]^j \ar[r] & 1\\
 1 \ar[r] & V \ar[r] & \mathrm{Aff}_{\RF^S}(V)\ar[r] & \mathrm{GL}(V'_\Z) \ar[r] & 1,
 }
 \end{equation}
 where $\rho_0$ is the composite of the map $\rho:\mathbf{T}(\mathbb{K}) \rightarrow V_1$ and the projection $V_1 \rightarrow V = V_1 / V_0$.

\begin{Rq}\label{RkTbK}
Denote by $\mathbf{T}(\mathbb{K})_b$ the kernel of $\nu$.
By construction, it is the kernel of the composition of the natural projection $\mathbf{T}(\mathbb{K}) \to \mathbf{T}(\mathbb{K}) /  \mathbf{T}_b(\mathbb{K})$ and $\overline{\rho}$.
Hence $\ker \rho = \mathbf{T}_b(\mathbb{K})$ is a subgroup of $\ker \nu = \mathbf{T}(\mathbb{K})_b$ but this inclusion is strict in general (see Example~\ref{ExTbGLn} for $\mathbf{G} = \mathrm{GL}_n$).

If $\mathbf{G}$ is semi-simple, then $V_{0,\mathbb{Z}} = 0$ by \cite[8.1.8(ii)]{Springer} and therefore $\overline{\rho}$ is injective. Thus $\ker \nu = \mathbf{T}(\mathbb{K})_b = \mathbf{T}_b(\mathbb{K}) = \ker \rho$.
\end{Rq}
 
\begin{Lem}[see {\cite[4.2.5, 4.2.6 and 4.2.7]{BruhatTits2}}]\label{LemActionOfT}
For any relative root $\alpha \in \Phi$, any absolute root $\widetilde{\alpha} \in \widetilde{\Phi}$ such that $\widetilde{\alpha}|_\mathbf{S} = \alpha$ and any $t \in \mathbf{T}(\mathbb{K})$, we have:
\[ \alpha\big( \nu(t) \big) = - \omega\big(\widetilde{\alpha}(t) \big). \]
\end{Lem}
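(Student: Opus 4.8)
The statement is the compatibility between the affine action $\nu$ of $N = \mathbf{N}(\mathbb{K})$ on $V$, constructed in Subsection~\ref{subsecAction_N_affine_space}, and the valuation of the absolute roots, when restricted to elements of the torus $\mathbf{T}(\mathbb{K})$. The plan is to unwind the definition of $\nu$ on the torus, which by construction factors through the map $\rho: \mathbf{T}(\mathbb{K}) \to V_1$ followed by the projection $V_1 \to V = V_1/V_0$, and then apply the root $\alpha$ viewed as a linear form on $V$.

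First I would recall that, by the commutative diagram~(\ref{diagramAffineAction}), for $t \in \mathbf{T}(\mathbb{K})$ the affine transformation $\nu(t) \in \mathrm{Aff}_{\RF^S}(V)$ is the translation by the vector $\rho_0(t) \in V$, where $\rho_0$ is $\rho$ composed with $V_1 \to V$. Thus, with the abuse of notation $\alpha(\nu(t))$ meaning $\alpha$ evaluated on the translation vector $\rho_0(t)$ (consistent with the convention $\alpha(x) = \alpha(x-o)$ fixed earlier), we must compute $\alpha(\rho_0(t))$. Since $\alpha$ descends from $V_1$ (it vanishes on $V_0$ by definition of $V_0$), this equals $\alpha(\rho(t))$ computed in $V_1 = X_*(\mathbf{S}) \otimes \RF^S \cong \mathrm{Hom}_{\mathbb{R}}(X^*(\mathbf{S})\otimes\mathbb{R}, \RF^S)$. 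Now $\rho(t)$ is, by its very definition in Subsection~\ref{subsecAction_N_affine_space}, the $\mathbb{R}$-linear form $\chi \otimes \lambda \mapsto -\lambda\,\omega(\chi(t))$ on $X^*_{\mathbb{K}}(\mathbf{T})\otimes\mathbb{R} = X^*(\mathbf{S})\otimes\mathbb{R}$. Evaluating this form at the relative root $\alpha \in X^*(\mathbf{S})$ (regarded with coefficient $1 \in \RF^S$) directly gives $-\omega(\alpha(t))$, where here $\alpha(t)$ means the image of $t$ under the character $\alpha$ of $\mathbf{S}$, extended to $\mathbf{T}$ via the inclusion $X^*(\mathbf{S}) \hookrightarrow X^*_{\mathbb{K}}(\mathbf{T})\otimes\mathbb{Q}$ — but $\alpha$ itself need not be a character of $\mathbf{T}$, only a multiple of it is.

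The one genuine point to check — and the step I expect to be the main (if modest) obstacle — is identifying $\omega(\alpha(t))$ with $\omega(\widetilde{\alpha}(t))$ for any absolute root $\widetilde{\alpha}$ lying over $\alpha$. Here $\widetilde{\alpha}$ is a genuine character of $\widetilde{\mathbf{T}}$, hence $\widetilde{\alpha}(t) \in \widetilde{\mathbb{K}}^\times$ for $t \in \mathbf{T}(\mathbb{K})$, and $\omega(\widetilde{\alpha}(t))$ is well-defined by the univalence assumption~\ref{assumpfield}. I would argue as in \cite[4.2.5--4.2.7]{BruhatTits2}: the relative root $\alpha = \rho(\widetilde{\alpha})$ is the orthogonal projection of $\widetilde{\alpha}$ onto $V^* = (\widetilde{V}^*)^\Sigma$, which equals the average $\frac{1}{|\Sigma_{\widetilde{\alpha}}\backslash\Sigma|}\sum_{\sigma} \sigma\cdot\widetilde{\alpha}$ over the Galois orbit; restricted to $\mathbf{S}$ all the $\sigma\cdot\widetilde{\alpha}$ agree with $\alpha$, so some positive integer multiple $n\alpha = \sum_\sigma \sigma\cdot\widetilde{\alpha}$ is a character of $\mathbf{T}$ defined over $\mathbb{K}$. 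For $t \in \mathbf{T}(\mathbb{K})$ we then get $n\,\omega(\alpha(t)) = \omega\big(\prod_\sigma (\sigma\cdot\widetilde{\alpha})(t)\big) = \sum_\sigma \omega\big((\sigma\cdot\widetilde{\alpha})(t)\big)$, and since $(\sigma\cdot\widetilde{\alpha})(t) = \sigma(\widetilde{\alpha}(\sigma^{-1}(t))) = \sigma(\widetilde{\alpha}(t))$ (as $t$ is $\mathbb{K}$-rational) and $\omega'\circ\sigma = \omega'$ by the last remark after Assumption~\ref{assumpfield}, each term equals $\omega(\widetilde{\alpha}(t))$; dividing by $n$ (legitimate in $\RF^S$, which is torsion-free and divisible) yields $\omega(\alpha(t)) = \omega(\widetilde{\alpha}(t))$. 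Combining, $\alpha(\nu(t)) = -\omega(\alpha(t)) = -\omega(\widetilde{\alpha}(t))$, which is the claim. I should also double-check the sign: it is built into the definition of $\rho(t)$ via the minus sign in $\chi\otimes\lambda \mapsto -\lambda\,\omega(\chi(t))$, so no extra bookkeeping is needed there.
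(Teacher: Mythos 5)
Your proof is correct and follows essentially the same route as the paper's: unwind the definition of $\rho$ so that $\chi(\nu(t)) = -\omega(\chi(t))$ for $\mathbb{K}$-rational characters $\chi$, write $\alpha$ (after clearing denominators) as the average of the Galois orbit of $\widetilde{\alpha}$ inside $X^*(\mathbf{S})\otimes\mathbb{R} = X^*_{\mathbb{K}}(\mathbf{T})\otimes\mathbb{R}$, and use $\omega\circ\sigma=\omega$ together with the $\mathbb{K}$-rationality of $t$ to see that every term in the sum equals $\omega(\widetilde{\alpha}(t))$. The only cosmetic difference is that the paper sums over all of $\operatorname{Gal}(\widetilde{\mathbb{K}}/\mathbb{K})$ with weight $1/[\widetilde{\mathbb{K}}:\mathbb{K}]$ rather than over orbit representatives, which is the same computation.
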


\begin{proof}
For $\chi \in X^*_{\mathbb{K}}(\mathbf{T})$, we have by definition of the action that
\begin{equation} \chi(\nu(t)) = \chi(\rho(t)) = - \omega(\chi(t)).\label{eqChiRhoT} \end{equation}
Inside the $\mathbb{R}$-module $X^*(\mathbf{S}) \otimes \mathbb{R} = X^*_\mathbb{K}(\mathbf{T}) \otimes \mathbb{R}$, we have the identification
\[\alpha \otimes 1 = \sum_{\sigma \in \operatorname{Gal}(\widetilde{\mathbb{K}} / \mathbb{K})} \sigma(\widetilde{\alpha}) \otimes \frac{1}{[\widetilde{\mathbb{K}}:\mathbb{K}]}.\]
Thus, applying the formula (\ref{eqChiRhoT}) to $\chi = \sigma(\widetilde{\alpha})$ for $\sigma \in \operatorname{Gal}(\widetilde{\mathbb{K}} / \mathbb{K})$, we get
\[
 \alpha\big( \nu(t) \big) = - \frac{1}{[\widetilde{\mathbb{K}}:\mathbb{K}]} \sum_{\sigma \in \operatorname{Gal}(\widetilde{\mathbb{K}} / \mathbb{K})} \omega\Big(\sigma\big(\widetilde{\alpha}\big)(t)\Big)
= - \omega\big( \widetilde{\alpha}(t) \big).\]
\end{proof}

\begin{Lem}\label{LemN1finite}
The subgroup of $\mathbf{N}(\widetilde{\mathbb{K}})$ generated by the $m_{\widetilde{\alpha}}$ for $\widetilde{\alpha} \in \widetilde{\Phi}$ is finite.
\end{Lem}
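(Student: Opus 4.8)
The plan is to recognize that the group in question—the subgroup $W^0$ of $\mathbf{N}(\widetilde{\mathbb{K}})$ generated by the $m_{\widetilde{\alpha}}$ for $\widetilde{\alpha} \in \widetilde{\Phi}$—surjects onto the (finite) absolute Weyl group $W(\widetilde{\Phi})$ via the natural map $\mathbf{N}(\widetilde{\mathbb{K}}) \to \mathbf{N}(\widetilde{\mathbb{K}})/\widetilde{\mathbf{T}}(\widetilde{\mathbb{K}}) = W(\widetilde{\Phi})$, and to control the kernel of this surjection intersected with $W^0$. Concretely, first I would note that $\widetilde{\mathbf{G}}$ is split with maximal torus $\widetilde{\mathbf{T}}$, that the Chevalley-Steinberg system provides the pinning data $\widetilde{x}_{\widetilde{\alpha}}$, and that $m_{\widetilde{\alpha}} = \widetilde{x}_{\widetilde{\alpha}}(1)\widetilde{x}_{-\widetilde{\alpha}}(-1)\widetilde{x}_{\widetilde{\alpha}}(1)$ normalizes $\widetilde{\mathbf{T}}$ with image $r_{\widetilde{\alpha}}$ in $W(\widetilde{\Phi})$. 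So the image of $W^0$ in $W(\widetilde{\Phi})$ is all of $W(\widetilde{\Phi})$, which is finite.

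The key step is then to show $W^0 \cap \widetilde{\mathbf{T}}(\widetilde{\mathbb{K}})$ is finite. For this I would use the standard Chevalley relation $m_{\widetilde{\alpha}}(u) = \widehat{\alpha}(u) m_{\widetilde{\alpha}}$ together with $m_{\widetilde{\alpha}}^2 \in \widetilde{\mathbf{T}}(\widetilde{\mathbb{K}})$ and in fact $m_{\widetilde{\alpha}}^4 = \operatorname{id}$ (or $m_{\widetilde{\alpha}}^2 = \widehat{\alpha}(-1)$, an element of order dividing $2$ in the torus), facts that are available from the matrix realization in $\mathrm{SL}_2$ recalled in the excerpt. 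The subgroup of $\widetilde{\mathbf{T}}(\widetilde{\mathbb{K}})$ generated by the elements $\widehat{\alpha}(-1)$, $\widetilde{\alpha} \in \widetilde{\Phi}$, is finite: each $\widehat{\alpha}(-1)$ is a $2$-torsion element of the torus, they all lie in the finitely-generated (since $\widetilde{\Phi}$ is finite) subgroup they generate, and a finitely generated subgroup of a torus consisting of torsion elements is finite. More carefully, one realizes $W^0$ as generated by finitely many elements $m_{\widetilde{\alpha}}$ each of finite order, with the subgroup they generate inside the torus being the finite group $\langle \widehat{\alpha}(-1) : \widetilde{\alpha}\in\widetilde{\Phi}\rangle$; then $W^0$ is an extension of the finite group $W(\widetilde{\Phi})$ by this finite abelian group, hence finite.

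An alternative, cleaner route I might prefer: observe that $W^0$ is generated by a finite set $\{m_{\widetilde{\alpha}}\}$, each of finite order, and that $W^0$ normalizes the finite group $T^0 := \langle \widehat{\alpha}(-1)\rangle$ (since conjugating $\widehat{\alpha}(-1)$ by $m_{\widetilde{\beta}}$ yields $\widehat{\gamma}(\pm 1)$ for $\gamma = r_{\widetilde{\beta}}(\widetilde{\alpha})$, again a $2$-torsion torus element, using the Chevalley commutation formulas); then $W^0 T^0 / T^0$ is a subgroup of $W(\widetilde{\Phi})$, hence finite, and $W^0 \subset W^0 T^0$ which is a finite-by-finite, hence finite, group. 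The main obstacle is bookkeeping: making precise that $m_{\widetilde{\alpha}}^2 = \widehat{\alpha}(-1)$ (or whatever the correct constant is in the given Chevalley-Steinberg normalization) and that conjugates of these square-elements stay inside a fixed finite subgroup of the torus. All of this is classical Chevalley-group theory and is already implicitly used in \cite[4.1.11]{BruhatTits2} and \cite[8.1.4]{Springer}, so I would cite those for the relations and keep the argument short.
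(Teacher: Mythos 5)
Your proposal is correct and essentially coincides with the paper's proof: the paper likewise introduces the finite elementary abelian $2$-group $\widetilde{T}_1=\langle \widehat{\widetilde{\alpha}}(-1)\rangle$, uses the Chevalley relations to show the conjugation formulas $\widehat{\widetilde{\alpha}}(-1)\,m_{\widetilde{\beta}}\,\widehat{\widetilde{\alpha}}(-1)^{-1}=m_{\widetilde{\beta}}\,\widehat{\widetilde{\beta}}\big((-1)^{\langle\widetilde{\beta},\widetilde{\alpha}^\vee\rangle}\big)$ and $m_{\widetilde{\alpha}}m_{\widetilde{\beta}}m_{\widetilde{\alpha}}=m_{r_{\widetilde{\alpha}}(\widetilde{\beta})}\widehat{r_{\widetilde{\alpha}}(\widetilde{\beta})}(\varepsilon_1)$, and concludes that $\widetilde{N}_1\widetilde{T}_1/\widetilde{T}_1$ embeds in the finite Weyl group $W(\widetilde{\Phi})$. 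Your ``alternative, cleaner route'' is exactly this argument.
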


\begin{proof}
As a split group is, in particular, a quasi-split group with $\widetilde{\mathbb{K}} = \mathbb{K}$, we keep the notations $x_{\widetilde{\alpha}}$, $\widehat{\widetilde{\alpha}}$, $m_{\widetilde{\alpha}}$ of section~\ref{subsecParametrization} for $\widetilde{\alpha} \in \widetilde{\Phi} = \Phi(\widetilde{\mathbf{G}},\widetilde{\mathbf{T}})$.
Denote by $\widetilde{T} = \widetilde{\mathbf{T}}(\widetilde{\mathbb{K}})$ and by $\widetilde{N} = \widetilde{\mathbf{N}}(\widetilde{\mathbb{K}})$.
Let $\widetilde{N}_1$ be the subgroup of $\widetilde{N}$ generated by the $m_{\widetilde{\alpha}}$ for $\widetilde{\alpha} \in \widetilde{\Phi}$.
Let $\widetilde{T}_1$ be the subgroup of $\widetilde{T}$ generated by the $\widehat{\widetilde{\alpha}}(-1) \in \widetilde{T}$ for $\widetilde{\alpha} \in \widetilde{\Phi}$. It is a subgroup of $\widetilde{N}_1$ since $m_{\widetilde{\alpha}}^2=\widehat{\widetilde{\alpha}}(-1)$ for each $\widetilde{\alpha} \in \widetilde{\Phi}$.

The group $\widetilde{T}_1$ is finite of exponent $2$ since it is a subgroup of a commutative group $\widetilde{T}$ generated by finitely many elements of order $2$.
Moreover, for any $\widetilde{\alpha}, \widetilde{\beta} \in \widetilde{\Phi}$, we have
\begin{align*}
\widehat{\widetilde{\alpha}}(-1) m_{\widetilde{\beta}} \widehat{\widetilde{\alpha}}(-1)^{-1}
&=\widehat{\widetilde{\alpha}}(-1) x_{\widetilde{\beta}}(1) x_{-\widetilde{\beta}}(1) x_{\widetilde{\beta}}(1) \widehat{\widetilde{\alpha}}(-1)^{-1}\\
&= x_{\widetilde{\beta}}\left( \widetilde{\beta}\left( \widehat{\widetilde{\alpha}}(-1) \right) \right) 
x_{-\widetilde{\beta}}\left( (-\widetilde{\beta})\left( \widehat{\widetilde{\alpha}}(-1) \right) \right)
x_{\widetilde{\beta}}\left( \widetilde{\beta}\left( \widehat{\widetilde{\alpha}}(-1) \right) \right)\\
&= m_{\widetilde{\beta}} \widehat{\widetilde{\beta}}\left((-1)^{\langle \widetilde{\beta},\widetilde{\alpha}^\vee \rangle}\right)
\end{align*}
Thus $\widetilde{N}_1$ normalizes $\widetilde{T}_1$.
Moreover for every $\widetilde{\alpha},\widetilde{\beta} \in \widetilde{\Phi}$, we have
\begin{align*}
m_{\widetilde{\alpha}} m_{\widetilde{\beta}} m_{\widetilde{\alpha}}
& = m_{\widetilde{\alpha}} x_{\widetilde{\beta}}(1) x_{-\widetilde{\beta}}(1) x_{\widetilde{\beta}}(1) m_{\widetilde{\alpha}}^{-1}\\
& = x_{r_{\widetilde{\alpha}}(\widetilde{\beta})}(\varepsilon_1)
x_{r_{\widetilde{\alpha}}(-\widetilde{\beta})}(\varepsilon_2)
x_{r_{\widetilde{\alpha}}(\widetilde{\beta})}(\varepsilon_1)
\end{align*}
for two signs $\varepsilon_1, \varepsilon_2 \in \{\pm 1\}$, according to  axioms of Chevalley systems.

By contradiction, suppose that $\varepsilon_2 = -\varepsilon_1$ and $\operatorname{char}(\widetilde{\mathbb{K}}) \neq 2$.
Denote by $\widetilde{\gamma} = r_{\widetilde{\alpha}}(\widetilde{\beta})$.

Consider the element $n:=m_{\widetilde{\alpha}} m_{\widetilde{\beta}} m_{\widetilde{\alpha}} \in \widetilde{N_1}$.
Since $\widetilde{N_1}$ normalizes $\widetilde{T_1}$, we have 
\[n^2 \widetilde{T_1} = \left( m_{\widetilde{\alpha}} m_{\widetilde{\beta}} m_{\widetilde{\alpha}} \right)^2 \widetilde{T_1} = m_{\widetilde{\alpha}} m_{\widetilde{\beta}} \widehat{\widetilde{\alpha}}(-1)m_{\widetilde{\beta}}m_{\widetilde{\alpha}} \widetilde{T_1} = m_{\widetilde{\alpha}} \widehat{\widetilde{\beta}}(-1) m_{\widetilde{\alpha}} \widetilde{T_1} = \widehat{\widetilde{\alpha}}(-1) \widetilde{T_1} = \widetilde{T_1}.\]
Then $n \widetilde{T_1}$ has order at most $2$ in $\widetilde{N_1}/\widetilde{T_1}$ and therefore, since $\widetilde{T_1}$ has exponent $2$, we deduce that $n$ has order at most $4$.

Then the matrix realisation of $m_{\widetilde{\alpha}} m_{\widetilde{\beta}} m_{\widetilde{\alpha}}$ in the universal covering $\widetilde{G}^{\widetilde{\gamma}}$ of the subgroup spanned by $U_{-\widetilde{\gamma}}(\widetilde{\mathbb{K}})$ and $U_{\widetilde{\gamma}}(\widetilde{\mathbb{K}})$ given in Notation~\ref{NotParamNonMult} is
\[\begin{pmatrix}1 & \varepsilon_1\\0&1\end{pmatrix}
\begin{pmatrix}1 & 0\\-\varepsilon_2&1\end{pmatrix}
\begin{pmatrix}1 & \varepsilon_1\\0&1\end{pmatrix}=
\begin{pmatrix}2 & 3\varepsilon_1\\\varepsilon_1&2\end{pmatrix}.\]

If $\operatorname{char}(\widetilde{\mathbb{K}}) = 7$,
since $\begin{pmatrix}2 & \varepsilon_1\\3\varepsilon_1&2\end{pmatrix}^2 = \begin{pmatrix}7 & 12\varepsilon_1\\4\varepsilon_1&7\end{pmatrix}$
does not centralizes $\begin{pmatrix} 2 & 0\\ 0 & 4\end{pmatrix}$ which is the matrix realization of $\widehat{\widetilde{\gamma}}(2) \in \widetilde{T_1}$ in $G^{\widetilde{\gamma}}$, it contradicts the fact that $n^2$ is in the commutative group $\widetilde{T_1}$.

If $\operatorname{char}(\widetilde{\mathbb{K}}) \neq 7$, since $\begin{pmatrix}2 & \varepsilon_1\\3\varepsilon_1&2\end{pmatrix}^4 = \begin{pmatrix}97 & 168\varepsilon_1\\56\varepsilon_1&97\end{pmatrix}$ is not an homothety, we get a contradiction with the order of $n$ that is at most $4$ since $\pi : G^{\widetilde{\gamma}} \simeq \mathrm{SL}_2(\widetilde{K}) \to \langle U_{-\widetilde{\gamma}}(\widetilde{\mathbb{K}}), U_{\widetilde{\gamma}}(\widetilde{\mathbb{K}}) \rangle$ is a central isogeny.

As a consequence, in any case, we have that $\varepsilon_2 = \varepsilon_1$, whence
 $m_{\widetilde{\alpha}} m_{\widetilde{\beta}} m_{\widetilde{\alpha}} = m_{r_{\widetilde{\alpha}}(\widetilde{\beta})} \widehat{r_{\widetilde{\alpha}}(\widetilde{\beta})} (\varepsilon_1)$.

Let $w_{\widetilde{\alpha}}$ be the image of $m_{\widetilde{\alpha}}$ in $\widetilde{W}_1:=\widetilde{N_1} / \widetilde{T_1}$ for any $\widetilde{\alpha} \in \widetilde{\Phi}$.
We have seen that $w_{\widetilde{\alpha}} w_{\widetilde{\beta}} w_{\widetilde{\alpha}} = w_{r_{\widetilde{\alpha}}(\widetilde{\beta})}$ for any $\widetilde{\alpha},\widetilde{\beta} \in \widetilde{\Phi}$. Denote by $\widetilde{\Delta}$ a basis of the root system $\widetilde{\Phi}$.
Note that the $w_{\widetilde{\alpha}}$ for $\widetilde{\alpha} \in \widetilde{\Delta}$ generate $\widetilde{W}_1$.
Indeed, let $\widetilde{\gamma} \in \widetilde{\Phi}$ and write it $\widetilde{\gamma} =w(\widetilde{\alpha})$ for $\widetilde{\alpha} \in \widetilde{\Delta}$ and $w \in W(\widetilde{\Phi})$ since $W(\widetilde{\Phi}) \cdot \widetilde{\Delta} = \widetilde{\Phi}$.
Since the $r_{\widetilde{\beta}}$ generate $W(\widetilde{\Phi})$, one can write $w = r_{\widetilde{\beta}_1} \circ \cdots \circ r_{\widetilde{\beta}_n}$ for some $\widetilde{\beta}_1,\dots,\widetilde{\beta}_n \in \widetilde{\Delta}$.
Thus, we have $w_{\widetilde{\gamma}} = w_{\widetilde{\beta}_1} \cdots w_{\widetilde{\beta}_n} w_{\widetilde{\alpha}} w_{\widetilde{\beta}_n} \cdots w_{\widetilde{\beta}_1}$.

\textbf{Claim: $\widetilde{W}_1$ is isomorphic to $W(\widetilde{\Phi})$.}

It suffices to observe that the relations characterizing the spherical Coxeter group $W(\widetilde{\Phi})$ are satisfied in $\widetilde{W}_1$ by the family of generators $(w_{\widetilde{\alpha}})_{\widetilde{\alpha}\in\widetilde{\Delta}}$.
Let $\widetilde{\alpha}\neq \widetilde{\beta} \in \widetilde{\Delta}$ and $n(\widetilde{\alpha},\widetilde{\beta})\in \Z_{\geqslant 2}$ be the order of $r_{\widetilde{\alpha}} \circ r_{\widetilde{\beta}} \in W(\widetilde{\Phi})$.
We claim that $(m_{\widetilde{\alpha}} m_{\widetilde{\beta}})^{n(\widetilde{\alpha},\widetilde{\beta})} \in \widetilde{T}_1$.
Indeed, $(m_{\widetilde{\alpha}} m_{\widetilde{\beta}})^{n(\widetilde{\alpha},\widetilde{\beta})-1} m_{\widetilde{\alpha}} = m_{\widetilde{\alpha}} m_{\widetilde{\beta}} \cdots m_{\widetilde{\beta}} m_{\widetilde{\alpha}}= m_{\widetilde{\gamma}} \mod \widetilde{T}_1$ for some $\widetilde{\gamma} \in \widetilde{\Phi}$.
Multiplying by $m_{\widetilde{\beta}}$ on the right and applying ${}^{v\!}\nu$, we get that ${}^{v\!}\nu\left( m_{\widetilde{\gamma}} m_{\widetilde{\beta}} \right) = {}^{v\!}\nu\left( ( m_{\widetilde{\alpha}} m_{\widetilde{\beta}} )^{n(\widetilde{\alpha},\widetilde{\beta})} \right)$.
Whence $m_{\widetilde{\alpha}} T = m_{\widetilde{\beta}} T$.
Because the classes $M_{\widetilde{\alpha}}$ are pairwise disjoint and we took exactly one $m_{\widetilde{\alpha}}$ in each, we deduce that $m_{\widetilde{\gamma}} = m_{\widetilde{\beta}}$.
Hence $(m_{\widetilde{\alpha}} m_{\widetilde{\beta}})^{n(\widetilde{\alpha},\widetilde{\beta})}\widetilde{T}_1 = m_{\widetilde{\beta}}^2 \widetilde{T}_1 = \widetilde{T}_1$.
Thus $\widetilde{W}_1$ is a quotient of $W(\widetilde{\Phi})$.

Conversely, applying $^{v\!}\nu$ to the $m_{\widetilde{\alpha}} \in \widetilde{N}_1 \subset N$, we obtain a surjective group homomorphism $\widetilde{W}_1 \to W(\widetilde{\Phi})$, whence $\widetilde{W}_1 = W(\widetilde{\Phi})$.
In particular, $\widetilde{N}_1$ is finite as an extension of finite groups.
\end{proof}

\begin{Lem}[see {\cite[4.2.9]{BruhatTits2}}]\label{LemChoiceOfOrigin}
Let $m_\alpha$ for $\alpha \in \Phi$ be defined as in Notations~\ref{NotMaNonMult} and~\ref{NotMaMult}.
There is a point $o \in V$ such that $\nu(m_\alpha)(o) = o$ for every $\alpha \in \Phi$.
\end{Lem}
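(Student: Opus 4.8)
The plan is to produce a point $o\in V$ fixed by all the $\nu(m_\alpha)$ for $\alpha\in\Phi$ by first controlling the ``defect'' of these elements inside the affine group $\mathrm{Aff}_{\RF^S}(V)=V\rtimes\mathrm{GL}(V_\mathbb{Z})$, and then exploiting the fact that $V$ is uniquely divisible (it is an $\RF^S$-module, hence a $\mathbb{Q}$-vector space) to average. Concretely, by Fact~\ref{FactReflectionIdentification} and axiom~\ref{axiomCA1} we know each $\nu(m_\alpha)$ has vectorial part $r_\alpha\in W^v$, so $\nu(m_\alpha)=(v_\alpha,r_\alpha)$ for some $v_\alpha\in V$. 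Let $N_1$ be the subgroup of $N=\mathbf{N}(\mathbb{K})$ generated by the $m_\alpha$ for $\alpha\in\Phi$; its image $\nu(N_1)$ lies in the subgroup of $\mathrm{Aff}_{\RF^S}(V)$ generated by the $(v_\alpha,r_\alpha)$, and its linear part ${}^v\!\nu(N_1)$ is a subgroup of the finite group $W(\Phi)$.

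First I would show that $\nu(N_1)$ is actually a finite group. This is the analogue of \cite[4.2.9]{BruhatTits2} and is where Lemma~\ref{LemN1finite} enters: the subgroup $\widetilde{N}_1$ of $\mathbf{N}(\widetilde{\mathbb{K}})$ generated by the $m_{\widetilde\alpha}$ for $\widetilde\alpha\in\widetilde\Phi$ is finite, and by the parametrizations of section~\ref{subsecParametrization} (Facts~\ref{FactMaNonMult} and~\ref{FactMaMult}, which express $m_\alpha$ in terms of the $m_{\widetilde\alpha}$) each $m_\alpha$ lies in $\widetilde{N}_1$, so $N_1\subseteq\widetilde{N}_1$ is finite; hence $\nu(N_1)$ is finite. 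Alternatively, and more in the spirit of the surrounding abstract framework, one can argue directly: $\nu(N_1)\cap V$ is a finitely generated subgroup of $V$ that is normalized by the finite group ${}^v\!\nu(N_1)$ and generated by elements of the form $[\nu(m_\alpha),\nu(m_\beta)]$-type translations; using Lemma~\ref{LemActionOfT} to identify $\alpha(\nu(t))=-\omega(\widetilde\alpha(t))$ for $t\in\mathbf{T}(\mathbb{K})$ together with the relations $m_\alpha^4=\mathrm{id}$ (resp. $m_\alpha^2=\mathrm{id}$), $m_\alpha m_\beta m_\alpha=m_{r_\alpha(\beta)}\cdot(\text{torsion in }\mathbf{T})$ from Facts~\ref{FactMaNonMult},~\ref{FactMaMult}, one sees that the $\mathbf{T}(\mathbb{K})$-part of any word in the $m_\alpha$ lies in the finite $2$-torsion subgroup generated by the $\widehat{\alpha}(-1)$; pushing forward by $\nu$ and using that $\rho$ kills torsion (since $V$ is torsion-free), $\nu(N_1)\cap V$ is trivial, so $\nu(N_1)\xrightarrow{\sim}{}^v\!\nu(N_1)$ is finite.

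Once $\Gamma:=\nu(N_1)\subseteq\mathrm{Aff}_{\RF^S}(V)$ is known to be finite, I would pick any point $p\in V$ and set
\[
o=\frac{1}{|\Gamma|}\sum_{\gamma\in\Gamma}\gamma(p)\in V,
\]
which makes sense because $V$ is a $\mathbb{Q}$-vector space. Since each element of $\Gamma$ acts on $V$ as an affine $\RF^S$-aff map (affine over $\mathbb{Q}$ in particular), this barycentre is fixed by every $\gamma\in\Gamma$: for $\gamma_0\in\Gamma$ one computes $\gamma_0(o)=\frac{1}{|\Gamma|}\sum_{\gamma}\gamma_0\gamma(p)=\frac{1}{|\Gamma|}\sum_{\gamma'}\gamma'(p)=o$ by reindexing and affinity, using that an affine map commutes with taking barycentres with coefficients summing to $1$. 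In particular $\nu(m_\alpha)(o)=o$ for every $\alpha\in\Phi$, which is the desired statement.

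The main obstacle is the finiteness of $\nu(N_1)$, i.e. controlling the translation parts $v_\alpha$ and the relations among the $m_\alpha$ well enough to conclude the group they generate in $\mathrm{Aff}_{\RF^S}(V)$ is finite; everything after that is the standard barycentre trick and is routine given unique divisibility of $V$. The cleanest route is to reduce to Lemma~\ref{LemN1finite} via the explicit Chevalley--Steinberg parametrizations of section~\ref{subsecParametrization}, so the work is really in checking that each $m_\alpha$ (multipliable and non-multipliable cases) is indeed a word in the $m_{\widetilde\alpha}$, which is exactly the content of Facts~\ref{FactMaNonMult} and~\ref{FactMaMult}.
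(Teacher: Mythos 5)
Your proposal is correct and follows essentially the same route as the paper: the paper also reduces to Lemma~\ref{LemN1finite} via Facts~\ref{FactMaNonMult} and~\ref{FactMaMult} to see that the group generated by the $m_\alpha$ is finite, and then invokes the existence of a fixed point for a finite group acting affinely on a real vector space (the barycentre argument you spell out). The extra ``alternative'' direct finiteness argument you sketch is not needed and is looser than the main route, but the main route is exactly the paper's proof.
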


\begin{proof}
According to Facts~\ref{FactMaNonMult} and~\ref{FactMaMult}, the subgroup $N_1$ of $\mathbf{N}(\mathbb{K})$ generated by the $m_\alpha$ for $\alpha \in \Phi$ is contained in the subgroup of $\mathbf{N}(\widetilde{\mathbb{K}})$ generated by the $m_{\widetilde{\alpha}}$ for $\widetilde{\alpha} \in \widetilde{\Phi}$.
Thus, according to Lemma~\ref{LemN1finite}, the group $N_1$ is a finite subgroup of $N = \mathbf{N}(\mathbb{K})$.
Since $V$ is, in particular, an $\mathbb{R}$-vector space on which $N$ acts by affine transformation, the group $N_1$ has to fix a point $o \in V$.
In particular, this point is fixed by the $m_\alpha$ for $\alpha \in \Phi$.
\end{proof}

\begin{Not}\label{NotAffineApartment}
We denote by $\mathbb{A}$\index[notation]{a@$\mathbb{A}$} the $\RF^S$-aff space with underlying free $\Z$-module of finite type $V_\Z$ and affine space $V$ with origin $o$\index[notation]{o@$o$} chosen as in Lemma~\ref{LemChoiceOfOrigin}.

Thus, we have by definition that $\nu(m_\alpha) = r_{\alpha,0} =o - \alpha( \cdot - o) \alpha^\vee$
for any $\alpha \in \Phi$.
\end{Not}

\begin{Prop}\label{PropCompatibleAction}
The action $\nu: \mathbf{N}(\mathbb{K}) \to \operatorname{Aff}_{\RF^S}(\mathbb{A})$ satisfies~\ref{axiomCA1}, \ref{axiomCA2} and~\ref{axiomCA3}.\index{compatible action}
\end{Prop}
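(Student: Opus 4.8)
The plan is to verify the two axioms of a compatible action (Definition~\ref{DefCompatibleAction}) directly from the construction of $\nu$ carried out in Subsection~\ref{subsecAction_N_affine_space}, together with the formulas relating the parametrizations $x_\alpha$, the elements $m_\alpha$, and the characters $\widetilde{\alpha}$ established in Subsection~\ref{subsecParametrization}.

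First I would treat axiom~\ref{axiomCA1}. By construction, the commutative diagram~(\ref{diagramAffineAction}) shows that the composite $\mathbf{N}(\mathbb{K}) \xrightarrow{\nu} \operatorname{Aff}_{\RF^S}(\mathbb{A}) \to \mathrm{GL}(V_\Z)$ coincides with the map $\mathbf{N}(\mathbb{K}) \to W \xrightarrow{j} \mathrm{GL}(V_\Z)$, where $W = \mathbf{N}(\mathbb{K})/\mathbf{T}(\mathbb{K})$ acts on $V_\Z = X_*(\mathbf{S})/V_{0,\Z}$ via its natural action on $X_*(\mathbf{S})$. So I need to check that this action of $W$ on $V_\Z$ agrees with the homomorphism ${^v\!}\nu: N \to W(\Phi)$ from \cite[6.1.2(10)]{BruhatTits1} under the identification of $W$ with $W(\Phi)$. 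This is standard: the relative Weyl group $\mathbf{N}(\mathbb{K})/\mathbf{T}(\mathbb{K})$ is canonically isomorphic to $W(\Phi)$ by Borel--Tits theory, and the reflection $r_\alpha$ acts on $X_*(\mathbf{S}) \otimes \mathbb{R}$ by $v \mapsto v - \alpha(v)\alpha^\vee$ on the quotient $V$, which is exactly the linear part prescribed by Fact~\ref{FactReflectionIdentification}; since $\overline{m_\alpha} = r_\alpha$ in $W$ and $\overline{m_\alpha}$ maps to $r_\alpha \in W(\Phi)$ under ${^v\!}\nu$ (this is how the $M_\alpha = T m_\alpha$ are set up, via $m_\alpha = m(x_{-\alpha}(1))$ etc.), the two actions coincide on a generating set, hence everywhere.

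Next I would treat axiom~\ref{axiomCA2}: for $\alpha \in \Phi$ and $u \in U_\alpha \setminus \{1\}$, one must show $2\varphi_\alpha(u) + \alpha\big(\nu(m(u))(o) - o\big) = 0$. Here $m(u) \in M_\alpha = T m_\alpha$, so write $m(u) = t m_\alpha$ with $t \in \mathbf{T}(\mathbb{K})$; since $\nu(m_\alpha)(o) = o$ by the choice of origin in Lemma~\ref{LemChoiceOfOrigin}, we get $\nu(m(u))(o) - o = \nu(t)(o) - o$, and by Lemma~\ref{LemActionOfT}, $\alpha(\nu(t)(o) - o) = \alpha(\nu(t)) = -\omega(\widetilde{\alpha}(t))$ for any absolute root $\widetilde{\alpha}$ above $\alpha$. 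So the identity reduces to $\omega(\widetilde{\alpha}(t)) = 2\varphi_\alpha(u)$. This I would check in the two cases. In the non-multipliable case, $u = x_\alpha(y)$ with $y \in \mathbb{L}_\alpha^\times$, $m(x_\alpha(y)) = m_{-\alpha}(y^{-1}) = \widehat{\alpha}(y^{-1}) m_\alpha$ (Notation~\ref{NotMaNonMult} and Fact~\ref{FactMaNonMult}), so $t = \widehat{\alpha}(y^{-1})$ and by Fact~\ref{FactCommutationTXaNonMult}, $\widetilde{\alpha}(t) = \widetilde{\alpha}(\widehat{\alpha}(y^{-1})) = y^{-2}$; hence $\omega(\widetilde{\alpha}(t)) = -2\omega(y) = -2\varphi_\alpha(u)$ — wait, I must be careful with the sign: $m(u)$ should be taken as the element with $m(u) u \in \ldots$; using the definition $m(x_\alpha(y)) = m_{-\alpha}(y^{-1})$, one has $t = \widehat{\alpha}(y^{-1})$, $\widetilde{\alpha}(t) = y^{-2}$, so $-\omega(\widetilde{\alpha}(t)) = 2\omega(y) = 2\varphi_\alpha(u)$, and indeed $2\varphi_\alpha(u) + \alpha(\nu(m(u))(o)-o) = 2\varphi_\alpha(u) - 2\varphi_\alpha(u) = 0$, as required. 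In the multipliable case, $u = x_\alpha(y,y')$, $m(x_\alpha(y,y')) = m_{-\alpha}(y,y') = \widehat{\alpha}({^\tau\!}{y'}^{-1}) m_\alpha$ (Notation~\ref{NotMaMult}, Fact~\ref{FactMaMult}), so $t = \widehat{\alpha}({^\tau\!}{y'}^{-1})$ and by Fact~\ref{FactCommutationTXaMult}, $\widetilde{\alpha}(t) = {^\tau\!}({^\tau\!}{y'}^{-1})^2 ({^\tau\!}{y'}^{-1})^{-1} = {y'}^{-2}\,{^\tau\!}y'$; using $\omega({^\tau\!}y') = \omega(y')$ this gives $\omega(\widetilde{\alpha}(t)) = -2\omega(y') + \omega(y') = -\omega(y') = -2\varphi_\alpha(u)$, so again the identity holds.

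The main obstacle I anticipate is bookkeeping rather than conceptual: keeping the sign conventions straight between the definition of $m(u)$ in \cite[6.1.2(2)]{BruhatTits1}, the explicit formulas for $m_\alpha(\cdot)$ in Notations~\ref{NotMaNonMult} and~\ref{NotMaMult}, the formula $\alpha(\nu(t)) = -\omega(\widetilde{\alpha}(t))$ of Lemma~\ref{LemActionOfT}, and the definition of $\varphi_\alpha$ in Notation~\ref{NotRSvaluation} (with the factor $\tfrac12$ for multipliable roots). Once the case analysis on $\alpha$ being non-multipliable, multipliable, or divisible is set up (the divisible case $2\alpha$ following from the multipliable case together with axiom~\ref{axiomV4} and the convention $m_{2\alpha} = m_\alpha$), each verification is a short computation with the parametrization identities already recorded in the excerpt. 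I would also remark that axiom~\ref{axiomCA1} guarantees $\nu(m(u))$ has the form $r_{\alpha,\lambda}$ for some $\lambda$, so that $\alpha(\nu(m(u))(o)-o) = -2\lambda$ and~\ref{axiomCA2} is simply the statement $\lambda = \varphi_\alpha(u)$, which makes the structure of the proof transparent and connects directly to Lemma~\ref{LemMuNu}.
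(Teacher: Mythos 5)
Your strategy is the same as the paper's: axiom~\ref{axiomCA1} is read off from the commutativity of diagram~(\ref{diagramAffineAction}), and axiom~\ref{axiomCA2} is reduced, via the choice of origin in Lemma~\ref{LemChoiceOfOrigin} and the formula $\alpha(\nu(t))=-\omega(\widetilde{\alpha}(t))$ of Lemma~\ref{LemActionOfT}, to an explicit computation of the torus component of $m(u)$ in the non-multipliable, multipliable and divisible cases. The only difference is cosmetic: you factor $m(u)=t\,m_\alpha$, whereas the paper works with $t=m_\alpha m(u)$; both work, but the two $t$'s differ by conjugation by $m_\alpha$, hence their values under $\widetilde{\alpha}$ are inverse to each other, and this is exactly where your computation slips.

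Concretely, Fact~\ref{FactMaNonMult} applied to the root $-\alpha$ gives $m(x_\alpha(y))=m_{-\alpha}(y^{-1})=\widehat{-\alpha}(y^{-1})\,m_{-\alpha}=\widehat{-\alpha}(y^{-1})\,m_\alpha$, not $\widehat{\alpha}(y^{-1})\,m_\alpha$. Since $\widetilde{\alpha}\big(\widehat{-\alpha}(z)\big)=z^{-2}$, the correct torus component $t=\widehat{-\alpha}(y^{-1})$ satisfies $\widetilde{\alpha}(t)=y^{2}$, so $\alpha\big(\nu(m(u))(o)-o\big)=\alpha\big(\nu(t)(o)-o\big)=-\omega(y^{2})=-2\varphi_\alpha(u)$ and the sum vanishes. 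With your $t=\widehat{\alpha}(y^{-1})$ you get $\widetilde{\alpha}(t)=y^{-2}$ and hence $\alpha\big(\nu(m(u))(o)-o\big)=-\omega(\widetilde{\alpha}(t))=+2\varphi_\alpha(u)$, which contradicts the final line of your verification (where you silently use $-2\varphi_\alpha(u)$); as written the sum would be $4\varphi_\alpha(u)$. The same confusion between $\widehat{\alpha}$ and $\widehat{-\alpha}$ occurs in your multipliable case, where $m_{-\alpha}(y,y')=\widehat{-\alpha}\big({^\tau\!}{y'}^{-1}\big)m_\alpha$. This is a local, fixable bookkeeping error rather than a wrong approach — the paper avoids it by computing $\widetilde{\alpha}(t)=(-\widetilde{\alpha})(t)^{-1}$ for $t=m_\alpha m(u)$ — but as stated the case-by-case verification of~\ref{axiomCA2} does not close.
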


\begin{proof}
The condition~\ref{axiomCA1} is a consequence of the commutation of the diagram (\ref{diagramAffineAction}) since it is well-known in reductive groups that $W \simeq N/T$ naturally identifies with $W(\Phi)$.

We use the notations of sections~\ref{subsecParametrization} and~\ref{raff}.b).
Let $\alpha \in \Phi$ and $u \in U_\alpha \setminus \{1\}$.
Consider the unique element $m(u) \in M_\alpha$ given by \cite[6.1.2(2)]{BruhatTits1}.
If $\alpha$ is non-multipliable, we deduce from Fact~\ref{FactMaNonMult} that $m(u)^2 = \widehat{\alpha}(-1) \in \mathbf{T}(\mathbb{K})_b \subset \ker \nu$, whence axiom~\ref{axiomCA3} is satisfied.
If $\alpha$ is multipliable, we deduce from Fact~\ref{FactMaMult} that $m(u)^2 = \widehat{\alpha}({^\tau\!}v^{-1} v) \in \mathbf{T}(\mathbb{K})_b \subset \ker \nu$ for some couple $(u',v) \in H(\mathbb{L},\mathbb{L}_2) \setminus \{(0,0)\}$, whence axiom~\ref{axiomCA3} is satisfied.

Consider $t = m_\alpha m(u) \in T$.
Let $v \in V_\Rtot$ be such that $\nu\big(m(u)\big) = \big(r_\alpha, v \big)$.
We have that $\nu(m(u))(o) - o = v$, whence $\alpha\Big(\nu\big(m(u)\big)(o) - o\Big) = \alpha(v)$.
On the one hand, $\nu(t) = \nu\big(m_\alpha m(u)\big) = r_{\alpha,0} \circ \big(r_\alpha, v \big)$ is the translation by $r_\alpha(v) \in V_\Rtot$.
Hence $\alpha(\nu(t)) = \alpha(r_\alpha(v)) = - \alpha(v)$.
On the other hand, according to Lemma~\ref{LemActionOfT}, we have $\alpha( \nu(t) ) = - \omega(\widetilde{\alpha}(t))$ for any $\widetilde{\alpha} \in \widetilde{\Phi}$ such that $\widetilde{\alpha}|_{\mathbf{S}} = \alpha$.

If we show that $\omega(\widetilde{\alpha}(t)) = - 2 \varphi_\alpha(u)$, then condition~\ref{axiomCA2} will be proven.

If $\alpha$ is non-multipliable and non-divisible, then one can write $u = x_\alpha(z)$ with $z \in \mathbb{L}_\alpha^*$.
Then $m(u) = m_{-\alpha}(z^{-1}) =m_{-\alpha} \widehat{-\alpha}(z)$ according to Notation~\ref{NotMaNonMult} and Fact~\ref{FactMaNonMult}.
Thus $m_\alpha m(u) = - \widehat{-\alpha}(z)$ and therefore $\widetilde{\alpha}(t) = (-\widetilde{\alpha})(t)^{-1} = (-\widetilde{\alpha})( - \widehat{-\alpha}(z))^{-1} = -z^{-2}$.
Thus $\omega( \widetilde{\alpha}(t)) = -2 \omega(z) = -2 \varphi_\alpha(u)$.

If $\alpha$ is multipliable, then one can write $u = x_\alpha(y,z)$ with $(y,z) \in H(\mathbb{L}_\alpha,\mathbb{L}_{2\alpha}) \setminus \{(0,0)\}$.
Then $m(u) = m_{-\alpha}(y,z) = m_{-\alpha} \widehat{-\alpha}(z)$ according to Notation~\ref{NotMaMult}.
Thus $m_\alpha m(u) = \widehat{-\alpha}(z)$ and therefore $\widetilde{\alpha}(t) = (-\widetilde{\alpha})(t)^{-1}= (-\widetilde{\alpha})( \widehat{-\alpha}(z))^{-1} = z {^\tau\!}z^{-2}$.
Thus $\omega( \widetilde{\alpha}(t)) = - \omega(z) = -2 \varphi_\alpha(u)$.

If $\alpha$ is divisible, then there are $\beta \in \Phi$, $\widetilde{\beta} \in \widetilde{\Phi}$ and $\tau \in \operatorname{Gal}(\mathbb{L}_\beta/ \mathbb{L}_{2\beta})$ such that $\alpha = 2 \beta$, $\widetilde{\beta}|_{\mathbf{S}} = \beta$ and $\widetilde{\alpha} = \widetilde{\beta} + \tau(\widetilde{\beta})$.
By definition $m_\beta = m_\alpha$.
In particular, $u \in U_\beta$ and $t = m_\beta m(u)$.
Thus, we have shown that $\omega(\widetilde{\beta}(t)) = - 2 \varphi_\beta(u)$.
Hence, using~\ref{axiomV4}, we get $\omega( \widetilde{\alpha}(t)) = \omega(\widetilde{\beta}(t)) + \omega(\tau(\widetilde{\beta})(t)) = - 4 \varphi_\beta(u) = - 2 \varphi_\alpha(u)$.
\end{proof}

\subsection{The \texorpdfstring{$\RF^S$}{RS}-building of a quasi-split reductive group}\label{buildredgp}

In this section, we have defined a generating root group datum $\Big(T,\big(U_\alpha,M_\alpha\big)_{\alpha\in\Phi}\Big)$ (in the sense of Definition~\ref{DefRGD}) together with an $\RF^S$ valuation $\left( \varphi_\alpha \right)_{\alpha\in \Phi}$ of this root group datum (in the sense of Definition~\ref{DefValuation}, see Proposition~\ref{Vtotal}).
Moreover, in Notation~\ref{NotAffineApartment} we defined an $\RF^S$-aff space $\mathbb{A}$ together with an action $\nu$ of $N$ by $\RF^S$-aff transformations given by commutative diagram (\ref{diagramAffineAction}).
According to Proposition~\ref{PropCompatibleAction}, this action is compatible with the valuation in the sense of Definition~\ref{DefCompatibleAction}.
Thus assumption~\ref{HypCAVRGD} is satisfied and by construction $\RF^S = \RF^S_\Q$.
Thus, we provided a datum as in assumptions of section~\ref{sectionLambdaBuildingFromVRGD} so that we can define, as in Definition~\ref{DefLambdaBuildingFromDatum}, the following space:

\begin{definition}\label{defBuilding_quasi_split_group}
The \textbf{$\RF^S$-building associated to the quasi-split reductive $\mathbb{K}$-group $\mathbf{G}$}\index{building!associated to a quasi-split reductive group} is:
$$\mathcal{I}(\mathbf{G})=\mathcal{I}(\mathbb{K},\omega,\mathbf{G}):=\mathcal{I}\left(G,T,(U_{\alpha})_{\alpha\in\Phi},(M_{\alpha})_{\alpha\in \Phi},(\varphi_{\alpha})_{\alpha\in\Phi},\nu\right).$$
\index[notation]{i@$\mathcal{I}(\mathbb{K},\omega,\mathbf{G})$}
\end{definition}

\subsection{Extended Affine Weyl group}

This section is dedicated to describing more precisely the extended affine Weyl group $\Wext= \nu(N) \simeq N/T_b$.\index{Weyl group!extended affine}

\subsubsection{Quadratic Galois extensions}

This first part is devoted to the proof of some useful lemmas on quadratic Galois extensions of valued fields.
We consider a quadratic Galois extension $L/K$ of valued fields, and we assume that $L/K$ is univalent (see~\ref{assumpfield}).
We denote by $\tau$ the non-trivial element of $\operatorname{Gal}(L/K)$, by $N : x \in L \mapsto x \tau(x) \in K$ the norm map and by $T : x \in L \mapsto x + \tau(x) \in K$ the trace map.

\begin{Not}
For $u \in K$, we denote by $L^u = \{y \in L,\ T(y) = u\}$ and by
\[L^u_{\max} = \left\{y \in L^u,\ \forall z \in L^0,\ \omega(y) \geqslant \omega(y+z)\right\}\]
the subset, possibly empty, of elements $y \in L^u$ such that the map $z \in L^0 \mapsto \omega(y+z)$ admits a maximum reached at $z=0$.

Note that $0 \in L^u \Leftrightarrow u = 0$ and that the $L^u$ are $1$-dimensional $K$-affine spaces forming a partition of $L$.
Indeed, $L^u \neq \emptyset$ because the trace map is surjective when $L/K$ is separable.
\end{Not}

\begin{Lem}[{see \cite[4.2.20(i)]{BruhatTits2}}]\label{LemLmaxNonempty}
The following are equivalent:
\begin{enumerate}[label={(\roman*)}]
\item\label{LemLmaxNonempty1} $\forall u \in K^\times,\ L^u_{\max} \neq \emptyset$;
\item\label{LemLmaxNonempty2} $L^1_{\max} \neq \emptyset$.
\end{enumerate}
If these conditions are satisfied, then $\omega(L^u_{\max}) = \omega(L^1_{\max}) + \omega(u)$.
\end{Lem}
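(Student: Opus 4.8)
The plan is to reduce everything to the case $u = 1$ by a scaling argument, exploiting that both the trace map $T$ and the valuation $\omega$ behave well under multiplication by elements of $K^\times$.

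First I would observe that, since $T$ is $K$-linear and $\tau$ fixes $K$ pointwise, one has $T(uy) = u\,T(y)$ for every $u \in K^\times$ and $y \in L$; hence $L^u = u\,L^1$ for $u \in K^\times$. Likewise $L^0 = \ker T$ is a $K$-subspace of $L$, so it is stable under multiplication by $K^\times$, i.e. $L^0 = u\,L^0$. Next, using that $\omega$ is a valuation on $L$ (so $\omega(ux) = \omega(u) + \omega(x)$ for $u \in K^\times$, $x \in L^\times$), I would check that for $y \in L^1$ one has $y \in L^1_{\max}$ if and only if $uy \in L^u_{\max}$: writing an arbitrary element of $L^0$ as $uz'$ with $z' \in L^0$ (possible by the previous step), the inequality $\omega(uy) \geqslant \omega(uy + uz')$ is equivalent, after subtracting $\omega(u)$ from both sides, to $\omega(y) \geqslant \omega(y + z')$. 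This proves the identity $L^u_{\max} = u\,L^1_{\max}$ for every $u \in K^\times$.

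From this identity both assertions follow at once. The implication \ref{LemLmaxNonempty2}$\Rightarrow$\ref{LemLmaxNonempty1} is obtained by multiplying a chosen element of $L^1_{\max}$ by $u$, and the converse is the special case $u = 1$. Applying $\omega$ to $L^u_{\max} = u\,L^1_{\max}$ and using additivity of the valuation yields $\omega(L^u_{\max}) = \omega(L^1_{\max}) + \omega(u)$ as subsets of the value group. There is essentially no serious obstacle here: the only points requiring a little care are to ensure the scaling is by an element of $K^\times$ (so that it commutes with $\tau$ and preserves $L^u$, $L^0$ and the maximality condition), and to read the displayed equality $\omega(L^u_{\max}) = \omega(L^1_{\max}) + \omega(u)$ as an equality of sets of values rather than of single elements.
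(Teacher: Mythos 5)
Your proof is correct and follows essentially the same route as the paper's: both reduce to the case $u=1$ by scaling with $u \in K^\times$, using that $z \mapsto uz$ is a bijection of $L^0$ and that $\omega$ is additive, to obtain $L^u_{\max} = u\,L^1_{\max}$ and hence the equality of value sets. No gaps.
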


\begin{proof}
Obviously, \ref{LemLmaxNonempty1} implies \ref{LemLmaxNonempty2}.
Conversely, let $u \in K^\times$ and suppose that $y \in L^1_{\max} \neq \emptyset$.
Set $x = uy \in L^u$.
For any $z \in L^0$, we have
\[\omega(x) = \omega(y) + \omega(u) \geqslant \omega(y+z) + \omega(u) = \omega(x + uz).\]
Since the map $L^0 \to L^0$ given by $z \mapsto uz$ is a bijection, we have proven that $x \in L^u_{\max}$ and that $\omega(u) + \omega(L^1_{\max}) \subset \omega(L^u_{\max})$.
Conversely, any $x \in L^u_{\max}$ can be written $x = uy$ with $y = \frac{x}{u} \in L^1_{\max}$.
Thus $\omega(L^u_{\max}) = \omega(L^1_{\max}) + \omega(u)$.
\end{proof}

\begin{Lem}\label{Lem1/2inL1max}
In any case, $\omega(L^1) \subset \Lambda'_{\leqslant 0}$.

If $\operatorname{char}(K) = 2$ or $\omega(L^\times) = \omega(K^\times)$, then $\omega\big(L^0 \setminus \{0\}\big) = \omega\big(K^\times\big)$.

If $\operatorname{char}(K) \neq 2$ and $\omega(L^\times) \neq \omega(K^\times)$, then $\frac{1}{2} \in L^1_{\max}$ and $\omega(L^\times) = \omega(L^0 \setminus \{0\}) \sqcup \omega(K^\times)$.
\end{Lem}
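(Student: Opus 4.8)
The plan is to prove the three assertions in order, keeping in mind that the extension $L/K$ is univalent, so that $\omega$ extends uniquely to $L$ with $\Lambda'=\omega(L^\times)$ containing $\Lambda=\omega(K^\times)$ with finite index (necessarily index $1$ or $2$ since $[L:K]=2$).

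First I would prove $\omega(L^1)\subset\Lambda'_{\leqslant 0}$. Take $y\in L^1$, so $y+\tau(y)=1$. Since $\omega'\circ\tau=\omega'$, we have $\omega(\tau(y))=\omega(y)$, hence $\omega(y+\tau(y))\geqslant\min(\omega(y),\omega(\tau(y)))=\omega(y)$. But $\omega(y+\tau(y))=\omega(1)=0$, so $0\geqslant\omega(y)$, i.e. $\omega(y)\leqslant 0$. This is immediate and needs no case distinction.

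Next I would handle the statement about $\omega(L^0\setminus\{0\})$. If $z\in L^0\setminus\{0\}$, then $z=-\tau(z)$, so $N(z)=z\tau(z)=-z^2$ and in particular $\omega(N(z))=2\omega(z)$; since $N(z)\in K^\times$, this shows $2\omega(L^0\setminus\{0\})\subset\Lambda$, but I want the sharper $\omega(L^0\setminus\{0\})=\Lambda$. In characteristic $2$ one has $L^0=L^1$ up to translation — more precisely $z\in L^0$ iff $T(z)=0$, and since $\operatorname{char}(K)=2$, $T(z)=2z=0$ would be wrong; rather I should argue: in characteristic $2$, pick a uniformizing-type element. Actually the clean approach: choose $d\in L$ with $L=K(d)$ and $\tau(d)=d+1$ (Artin–Schreier form, valid when $\operatorname{char}K=2$), so $d-\tau(d)=-1=1$, wait that gives $d+\tau(d)=2d+1=1$, so $d\in L^1$ but $d-\tau(d)$... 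Let me instead use: $L^0$ is the kernel of the trace, a $1$-dimensional $K$-vector space, say $L^0=K\theta$ for some fixed $\theta\in L^0\setminus\{0\}$; then $\omega(L^0\setminus\{0\})=\omega(K^\times)+\omega(\theta)=\Lambda+\omega(\theta)$. So the claim $\omega(L^0\setminus\{0\})=\Lambda$ is equivalent to $\omega(\theta)\in\Lambda$. When $\operatorname{char}(K)=2$: taking $\theta=1-2d'$ fails, so instead note $L^0\ni 1$ is false ($T(1)=2=0$ in char $2$, so $1\in L^0$!); thus $\theta=1$ works and $\omega(\theta)=0\in\Lambda$. When $\omega(L^\times)=\omega(K^\times)$: then $\Lambda'=\Lambda$ so automatically $\omega(\theta)\in\Lambda'=\Lambda$. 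This disposes of the second assertion.

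Finally, the third assertion: suppose $\operatorname{char}(K)\neq 2$ and $\omega(L^\times)\neq\omega(K^\times)$, so $[\Lambda':\Lambda]=2$. In characteristic $\neq 2$ we have $\tfrac12\in K$ and $T(\tfrac12)=1$, so $\tfrac12\in L^1$. I would show $\tfrac12\in L^1_{\max}$: for $z\in L^0\setminus\{0\}$, I need $\omega(\tfrac12)\geqslant\omega(\tfrac12+z)$, i.e. $0\geqslant\omega(1+2z)$. Since $z\in L^0\setminus\{0\}$, writing $L^0=K\theta$ the previous paragraph's analysis shows $\omega(z)\in\omega(K^\times)+\omega(\theta)=\Lambda+\omega(\theta)$; the key point is that $\omega(\theta)\notin\Lambda$ in this case (otherwise $\Lambda'=\Lambda$, contradiction, using $\Lambda'=\Lambda\cup(\Lambda+\omega(\theta))$ — this needs that every element of $L^\times$ has valuation in $\Lambda\sqcup(\Lambda+\omega(\theta))$, which follows from $L=K\oplus K\theta$ and $\omega$ of a sum equalling the min when the two summands have distinct valuations, which holds here since $\Lambda$ and $\Lambda+\omega(\theta)$ are disjoint cosets). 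Hence $\omega(z)\notin\Lambda$, so $\omega(2z)=\omega(z)\neq 0=\omega(1)$, so $\omega(1+2z)=\min(0,\omega(2z))\leqslant 0$, giving $\tfrac12\in L^1_{\max}$. The decomposition $\omega(L^\times)=\omega(L^0\setminus\{0\})\sqcup\omega(K^\times)$ then follows: $\omega(K^\times)=\Lambda$, $\omega(L^0\setminus\{0\})=\Lambda+\omega(\theta)$ (a nontrivial coset, hence disjoint from $\Lambda$), and their union is all of $\Lambda'=\omega(L^\times)$. The main obstacle I anticipate is being careful about the structure of $\Lambda'/\Lambda$ and justifying cleanly that in the ramified case $\omega(\theta)\notin\Lambda$ and that $\Lambda'=\Lambda\sqcup(\Lambda+\omega(\theta))$; everything else is routine manipulation with the ultrametric inequality and $\omega\circ\tau=\omega$.
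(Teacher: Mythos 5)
Your treatment of the first two assertions is correct. For the first one, the direct ultrametric argument ($0=\omega\big(y+\tau(y)\big)\geqslant\min\big(\omega(y),\omega(\tau(y))\big)=\omega(y)$, using $\omega\circ\tau=\omega$) is even slightly cleaner than the paper's, which instead writes $N(y)=y(1-y)$ and compares valuations. For the second, identifying $L^0=\ker T$ as a line $K\theta$ and checking $\omega(\theta)\in\Lambda$ in each case is exactly what the paper does (in characteristic $2$ one has $\tau=\operatorname{id}$ on $L^0$, so $L^0=K$ and $\theta=1$).

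The gap is in the third assertion, precisely at the point you flagged as the main obstacle. You want $\omega(\theta)\notin\Lambda$ whenever $\omega(L^\times)\neq\omega(K^\times)$, and you argue by contradiction: if $\omega(\theta)\in\Lambda$ then $\Lambda'=\Lambda\sqcup(\Lambda+\omega(\theta))=\Lambda$. But the decomposition $\Lambda'=\Lambda\sqcup(\Lambda+\omega(\theta))$ rests on $\omega(a+b\theta)=\min\big(\omega(a),\omega(b\theta)\big)$, which is only automatic when the two valuations differ; in the very case you are excluding ($\omega(\theta)\in\Lambda$) they can coincide, and then $\omega(a+b\theta)$ may leave $\Lambda$ entirely. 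This is not a removable technicality: take $K=\mathbb{Q}_2$, $L=\mathbb{Q}_2(i)$, $\theta=i$. Then $\omega(\theta)=0\in\Lambda=\mathbb{Z}$ and $\omega(L^0\setminus\{0\})=\mathbb{Z}=\omega(K^\times)$, yet $\omega(1+i)=\tfrac12$, so $\omega(L^\times)=\tfrac12\mathbb{Z}\neq\mathbb{Z}$. Thus the implication ``ramified $\Rightarrow\omega(\theta)\notin\Lambda$'' fails in residue characteristic $2$ (wild ramification); one also checks that $\tfrac12\notin L^1_{\max}$ there, since $\omega\big(\tfrac12+\tfrac{i}{2}\big)=-\tfrac12>-1=\omega\big(\tfrac12\big)$, so the conclusion itself breaks. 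To be fair, the paper's own proof makes the exact analogous assertion without justification (it picks $t$ with $t^2=x\in K$ and declares $\omega(t)=\tfrac12\omega(x)\notin\omega(K^\times)$ in the ramified case, which $\mathbb{Q}_2(i)=\mathbb{Q}_2[t]/(t^2+1)$ contradicts). So you have faithfully reproduced the intended route, but the crux is not closed by either argument: the third assertion needs the residue characteristic to be different from $2$ (e.g.\ $\omega(2)=0$), or a separate treatment of wildly ramified quadratic extensions.
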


\begin{proof}
Let $x \in L^1$.
By contradiction, suppose that $\omega(x) > 0$.
Then $\omega(N(x)) = 2 \omega(x) = \omega(x(1-x)) = \omega(x) + \omega(1-x)$.
But $\omega(1-x) = \omega(1) = 0$ since $\omega(x) > 0$.
Hence $\omega(x) = 2 \omega(x)$ which is a contradiction.
Thus $\omega(L^1) \subset \Lambda'_{\leqslant 0}$.

If $\operatorname{char}(K) = 2$, then $ L^0 = \ker (\tau - \operatorname{id}) = K$.

If $\omega(L^\times) = \omega(K^\times)$ and $x_0 \in L^0$, then $L^0 = x_0 K$ and $\omega\big(L^0 \setminus \{0\} \big) = \omega(x_0) + \omega\big(K^\times\big) = \omega\big(K^\times\big)$ since $\omega(x_0) \in \omega\big(K^\times\big)$.

If $\operatorname{char}(K) \neq 2$, then there exists $x \in K$ such that $L \simeq K[t] / (t^2-x)$.
If, moreover, $\omega(K^\times) \neq \omega(L^\times)$, we have that $\omega(t) = \frac{1}{2} \omega(x) \not\in \omega(K^\times)$.
We have that $T(t) = 0$ so that $L^0 = t K$ and $L^1 = \frac{1}{2} + tK$.
In particular, we have $\omega(L^0 \setminus \{0\}) = \omega(t) + \omega(K^\times)$ with $\omega(t) \not\in \omega(K^\times)$ so that $\omega(L^\times) = \Big(  \omega(t) + \omega(K^\times) \Big) \sqcup \omega(K^\times)$.
Moreover $\omega(\frac{1}{2}) \in \omega(K^\times)$.
Hence $\omega(\frac{1}{2} + ty) = \min\big( \omega(\frac{1}{2}),\omega(t) + \omega(y)\big)$ for any $y \in K$ since $\omega(\frac{1}{2}) \neq \omega(ty)$.
Hence $\frac{1}{2} \in L^1_{\max}$.
\end{proof}

\begin{Ex}\label{ExGaloisRamified}
Consider $K = \Q_2(\!( x )\!)$ and the rupture field $L = K[t] / (t^2-f)$ for some $f \in K^\times \setminus {K^\times}^2$.
Consider the canonical valuation $\omega: K \to \mathbb{Z} \times \mathbb{Z} \cup \{\infty\}$ given by $\omega(2) = (0,1)$ and $\omega(x) = (1,0)$.
Assume that $\omega(f) \not \in 2 \omega(K^\times)$ and write it $\omega(f) \in (a,b) + 2 \omega(K^\times)$ with $(a,b) \in \{(0,1), (1,0), (1,1)\}$.
The valuation uniquely extends to $L$.
Then
\[L^0 = \{y+tz,\ y,z \in K,\ T(y+tz) = 2y = 0\} = tK\]
Thus $\omega(L^0 \setminus \{0\}) = \omega(t) + \omega(K^\times) = \left( \frac{a}{2}, \frac{b}{2} \right) + \mathbb{Z} \times \mathbb{Z}$.
\[L^1 = \{y+tz,\ y,z \in K,\ T(y+tz) = 2y = 1\} = \frac{1}{2} + tK\]
One can check that $\frac{1}{2} \in L^1_{\max}$.

In the context of Lemma~\ref{LemSetOfValues}.\ref{LemSetOfValuesB}, the set $2\Gamma_{\alpha}$ will then be :
\begin{align*}2\Gamma_\alpha
&= \omega(L^0 \setminus \{0\}) \cup \left( \omega(\frac{1}{2}) + \omega(N(L^\times)) \right)\\
&=\left(\frac{1}{2} \omega(f) + \omega(K^\times) \right)
\cup \left( \omega(\frac{1}{2}) + 2 \omega(K^\times) \right)
\cup \left( \omega(\frac{1}{2}) + \omega(f) + 2 \omega(K^\times) \right)\\
&=\bigg(\Big(\frac{a}{2}+ \mathbb{Z}\Big) \times \Big( \frac{b}{2} + \mathbb{Z} \Big) \bigg)
\cup \bigg( 2 \mathbb{Z} \times (1 + 2\mathbb{Z}) \bigg)
\cup \bigg( \Big( a + 2 \mathbb{Z}\Big) \times \Big( (b-1) + 2 \mathbb{Z}\Big) \bigg).
\end{align*}
One can easily check that $\frac{1}{2} \omega(L^\times) = \widetilde{\Gamma}_\alpha = \langle \Gamma_\alpha - \Gamma_\alpha \rangle \supset \Gamma_\alpha$ and that $0 \in \Gamma_\alpha$ if, and only if, $(a,b) = (0,1)$.
\end{Ex}

\subsubsection{Sets of values}

\begin{Lem}[{see \cite[4.2.21]{BruhatTits2}}]\label{LemSetOfValues}
Let $\alpha \in \Phi_{\mathrm{nd}}$.
\begin{enumerate}
\item\label{LemSetOfValues1} Suppose that $\alpha$ is non-multipliable. Then $\Gamma_\alpha = \Gamma'_\alpha = \omega(\mathbb{L}_\alpha^\times)$.
\item\label{LemSetOfValues2} Suppose that $\alpha$ is multipliable.
\begin{enumerate}
\item\label{LemSetOfValuesA} If $\left(\mathbb{L}_\alpha\right)^1_{\max} = \emptyset$, then
$\Gamma'_\alpha = \emptyset$ and $\Gamma_\alpha= \frac{1}{2} \omega\left( \left(\mathbb{L}_{\alpha}\right)^0 \setminus \{0\}\right)$.
\item\label{LemSetOfValuesB} If $x \in \left(\mathbb{L}_\alpha\right)^1_{\max} \neq \emptyset$, then
$\Gamma'_\alpha = \frac{1}{2} \omega(x) + \frac{1}{2} \omega\left(N \left(\mathbb{L}_{\alpha}^\times\right)\right)$ 
and
\[\Gamma_\alpha = \frac{1}{2} \omega\Big(\mathbb{L}_\alpha^0 \setminus \{0\}\Big) \cup \bigg( \frac{1}{2} \omega(x) + \frac{1}{2} \omega\Big( N\big( \mathbb{L}_\alpha^\times \big) \Big) \bigg).\]
\item\label{LemSetOfValuesC} In both cases $ \Gamma_{2\alpha} = \Gamma'_{2\alpha} = \omega\left( \left(\mathbb{L}_{\alpha}\right)^0 \setminus \{0\}\right)$.
\end{enumerate}
\end{enumerate}
\end{Lem}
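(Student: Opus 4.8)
The plan is to follow the strategy of \cite[4.2.21]{BruhatTits2}: unwind the definitions of $\varphi_\alpha$ (Notation~\ref{NotRSvaluation}) and of $\Gamma_\alpha,\Gamma'_\alpha$ (Notation~\ref{not:Gamma}) through the explicit parametrizations $x_\alpha$ coming from the Chevalley--Steinberg system (section~\ref{subsecParametrization}), and then feed in the arithmetic of the quadratic extension $\mathbb{L}_\alpha/\mathbb{L}_{2\alpha}$ supplied by Lemmas~\ref{LemLmaxNonempty} and~\ref{Lem1/2inL1max}. The non-multipliable statements are immediate: if $\alpha$ is non-multipliable then $2\alpha\notin\Phi$, so $\Gamma'_\alpha=\Gamma_\alpha$ by Remark~\ref{RkGamma_non_multipliable}, and $x_\alpha\colon\mathbb{L}_\alpha\to U_\alpha$ is a group isomorphism (Notation~\ref{NotParamNonMult}) with $x_\alpha(0)=1$ and $\varphi_\alpha(x_\alpha(y))=\omega(y)$, whence $\Gamma_\alpha=\varphi_\alpha(U_\alpha\setminus\{1\})=\omega(\mathbb{L}_\alpha^\times)$, which is~\ref{LemSetOfValues1}. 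For~\ref{LemSetOfValuesC}, when $\alpha$ is multipliable I would note from Notation~\ref{NotParamMult} that $U_{2\alpha}$ consists exactly of the elements $x_\alpha(0,v)$ with $v\in\mathbb{L}_\alpha^0$ (the images of the parameter attached to $\widetilde{\alpha}+\tau(\widetilde{\alpha})$) and that $\varphi_{2\alpha}(x_\alpha(0,v))=\omega(v)$; since $4\alpha\notin\Phi$, Remark~\ref{RkGamma_non_multipliable} applied to the root $2\alpha$ gives $\Gamma_{2\alpha}=\Gamma'_{2\alpha}=\omega(\mathbb{L}_\alpha^0\setminus\{0\})$.

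The core is the multipliable case for $\alpha$ itself. Write $\mathbb{L}=\mathbb{L}_\alpha$, $\mathbb{L}_2=\mathbb{L}_{2\alpha}$, let $\tau$ generate $\operatorname{Gal}(\mathbb{L}/\mathbb{L}_2)$ with norm $N$ and trace $T$, and set $\mathbb{L}^w=\{b\in\mathbb{L}:T(b)=w\}$, so that $\mathbb{L}^0=\ker T$ is a nonzero $\mathbb{L}_2$-line. By Notation~\ref{NotParamMult}, $U_\alpha=\{x_\alpha(a,b):N(a)=T(b)\}$, with $x_\alpha(a,b)=1\Leftrightarrow(a,b)=(0,0)$, with $a=0\Leftrightarrow b\in\mathbb{L}^0$ (since $N(a)=0$ forces $a=0$ in a field), and $\varphi_\alpha(x_\alpha(a,b))=\tfrac12\omega(b)$. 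Separating the pairs with $a=0$ from those with $a\neq0$, I would read off
\[\Gamma_\alpha=\tfrac12\,\omega\big(\mathbb{L}^0\setminus\{0\}\big)\ \cup\ \tfrac12\,\omega\Big(\textstyle\bigcup_{w\in N(\mathbb{L}^\times)}\mathbb{L}^w\Big).\]
For $\Gamma'_\alpha$, the plan is to use the group law $x_\alpha(a,b)\,x_\alpha(0,c)=x_\alpha(a,b+c)$ of $H(\mathbb{L},\mathbb{L}_2)$ recalled in section~\ref{subsecParametrization} together with axiom~\ref{axiomV1} (so that $U_{\alpha,\lambda}=\varphi_\alpha^{-1}([\lambda,\infty])$): the condition $U_{\alpha,\varphi_\alpha(u)}=\bigcap_{v\in U_{2\alpha}}U_{\alpha,\varphi_\alpha(uv)}$ defining $\Gamma'_\alpha$ becomes, for $u=x_\alpha(a,b)$, the requirement $\omega(b)\geqslant\omega(b+c)$ for every $c\in\mathbb{L}^0$, i.e.\ $b\in(\mathbb{L}_\alpha)^{T(b)}_{\max}$ (as $b+c$ runs over $b+\mathbb{L}^0=\mathbb{L}^{T(b)}$). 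A nonzero $b\in\mathbb{L}^0$ never satisfies this, since valuations on the $\mathbb{L}_2$-line $\mathbb{L}^0$ are unbounded above; hence only pairs with $a\neq0$ contribute and $\Gamma'_\alpha=\tfrac12\,\omega\big(\bigcup_{w\in N(\mathbb{L}^\times)}(\mathbb{L}_\alpha)^w_{\max}\big)$.

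It then remains to feed in Lemma~\ref{LemLmaxNonempty}. If $(\mathbb{L}_\alpha)^1_{\max}=\emptyset$, then $(\mathbb{L}_\alpha)^w_{\max}=w\cdot(\mathbb{L}_\alpha)^1_{\max}=\emptyset$ for every $w\in\mathbb{L}_2^\times$, so $\Gamma'_\alpha=\emptyset$; and for $w\in N(\mathbb{L}^\times)$ and $b\in\mathbb{L}^w$, choosing $c\in\mathbb{L}^0$ with $\omega(b+c)>\omega(b)$ gives $\omega(b)=\omega\big((b+c)-b\big)=\omega(c)\in\omega(\mathbb{L}^0\setminus\{0\})$, so $\omega\big(\bigcup_w\mathbb{L}^w\big)\subseteq\omega(\mathbb{L}^0\setminus\{0\})$ and $\Gamma_\alpha=\tfrac12\omega(\mathbb{L}^0\setminus\{0\})$, which is~\ref{LemSetOfValuesA}. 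If $x\in(\mathbb{L}_\alpha)^1_{\max}$, then every element of $(\mathbb{L}_\alpha)^1_{\max}$ has valuation $\max\omega(\mathbb{L}^1)=\omega(x)$, so Lemma~\ref{LemLmaxNonempty} gives $\omega\big((\mathbb{L}_\alpha)^w_{\max}\big)=\{\omega(x)+\omega(w)\}$ and hence $\Gamma'_\alpha=\tfrac12\omega(x)+\tfrac12\omega\big(N(\mathbb{L}_\alpha^\times)\big)$. Finally, for $w=N(a)\in N(\mathbb{L}^\times)$ and $b\in\mathbb{L}^w$, writing $b=xw+z$ with $z=b-xw\in\mathbb{L}^0$ and using $\omega(xw)=\omega(x)+\omega(w)=\max\omega(\mathbb{L}^w)$, either $\omega(z)<\omega(xw)$ and then $\omega(b)=\omega(z)\in\omega(\mathbb{L}^0\setminus\{0\})$, or $\omega(z)\geqslant\omega(xw)$ and then $\omega(b)=\omega(xw)\in\omega(x)+\omega(N(\mathbb{L}^\times))$; combining this with $\Gamma'_\alpha\subseteq\Gamma_\alpha$ and the $u=0$ contribution yields the stated description of $\Gamma_\alpha$ in~\ref{LemSetOfValuesB}.

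The main obstacle will be this multipliable analysis, in two places: faithfully translating the abstract condition defining $\Gamma'_\alpha$ into ``$b$ has maximal valuation in its trace fibre'', which forces one to handle the twisted group law of $H(\mathbb{L},\mathbb{L}_2)$ and to identify $U_{2\alpha}$ precisely; and the ultrametric collapse of the non-central fibres $\mathbb{L}^w$ into $\omega(\mathbb{L}^0\setminus\{0\})\cup\big(\omega(x)+\omega(N(\mathbb{L}^\times))\big)$, which rests squarely on Lemma~\ref{LemLmaxNonempty} together with the structural facts of Lemma~\ref{Lem1/2inL1max}.
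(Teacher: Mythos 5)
Your proposal is correct and follows essentially the same route as the paper: both reduce the non‑multipliable case and the case of $2\alpha$ to Remark~\ref{rqGamma_alpha}/the definitions, translate the condition defining $\Gamma'_\alpha$ through the parametrization $x_\alpha(a,b)$ and the group law of $H(\mathbb{L}_\alpha,\mathbb{L}_{2\alpha})$ into ``$b$ has maximal valuation in its trace fibre $\mathbb{L}^{N(a)}$'', and then invoke Lemma~\ref{LemLmaxNonempty}. The only (harmless) divergence is that, to pass from $\Gamma'_\alpha$ and $\Gamma_{2\alpha}$ to $\Gamma_\alpha$, the paper simply cites Fact~\ref{FactDecompositionSetOfValue} ($2\Gamma_\alpha=2\Gamma'_\alpha\cup\Gamma_{2\alpha}$), whereas you re-derive this concretely via the decomposition $b=xw+z$ with $z\in\mathbb{L}^0$ and the ultrametric inequality — a correct, slightly more explicit inlining of the same fact.
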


\begin{proof}
(\ref{LemSetOfValues1})
By Fact~\ref{FactDecompositionSetOfValue}, $\Gamma_{\alpha} = \Gamma'_\alpha$ since $\Gamma_{2\alpha} = \emptyset$.
From definitions and notations, we have $\Gamma_\alpha = \varphi_\alpha(U_\alpha \setminus \{1\})  = \{\omega(y) = \varphi_\alpha(x_\alpha(y)),\ y \in \mathbb{L}_\alpha\} = \omega(\mathbb{L}_\alpha^\times)$.

(\ref{LemSetOfValues2})
By definition,
\begin{align*}
\Gamma_{2\alpha} = \Gamma'_{2\alpha} 
&= \bigg\{\varphi_{2\alpha}(u),\ u \in U_{2\alpha} \setminus \{1\} \bigg\}\\
&= \bigg\{ \omega(y),\ (0,y) \in H(\mathbb{L}_\alpha,\mathbb{L}_{2\alpha}) \setminus \{(0,0)\} \bigg\}\\
& = \bigg\{ \omega(y),\ y \in \mathbb{L}_\alpha^0 \setminus \{0\} \bigg\}
\end{align*}
Thus, we get \ref{LemSetOfValuesC}.

By definitions and notations,
\begin{align*}
\Gamma'_{\alpha}  =& \bigg\{ \varphi_\alpha(u),\ u \in U_\alpha \setminus \{1\} \text{ and } U_{\alpha,\varphi_\alpha(u)} = \bigcap_{v \in U_{2\alpha}} U_{\alpha,\varphi_\alpha(uv)}\bigg\}\\
=& \bigg\{ \frac{1}{2} \omega(y),\ (x,y) \in H(\mathbb{L}_\alpha,\mathbb{L}_{2\alpha}) \setminus \{(0,0)\} \text{ and } \forall (x',y') \in H(\mathbb{L}_\alpha,\mathbb{L}_{2\alpha}),\\
& \quad \Big(\frac{1}{2} \omega(y') \geqslant \frac{1}{2} \omega(y) \Leftrightarrow \forall (0,z) \in H(\mathbb{L}_\alpha,\mathbb{L}_{2\alpha}),\ \frac{1}{2} \omega(y') \geqslant \frac{1}{2} \omega(y+z) \Big) \bigg\}\\
 =& \bigg\{ \frac{1}{2}\omega(y),\ (x,y) \in H(\mathbb{L}_\alpha,\mathbb{L}_{2\alpha}) \setminus \{(0,0)\} \text{ and } \forall z \in \mathbb{L}_{\alpha}^0,\ \omega(y) \geqslant \omega(y+z) \bigg\}\\
=& \bigcup_{u \in N\left( \mathbb{L}_{\alpha}^\times \right)} \frac{1}{2} \omega\Big(\big(\mathbb{L}_\alpha\big)^u_{\max}\Big).
\end{align*}
Hence, according to Lemma~\ref{LemLmaxNonempty} and Fact~\ref{FactDecompositionSetOfValue}, we get \ref{LemSetOfValuesA}.

From now on, assume that $x \in \left(\mathbb{L}_\alpha\right)^1_{\max} \neq \emptyset$.
From Lemma~\ref{LemLmaxNonempty}, we deduce that
\[\Gamma'_\alpha = \frac{1}{2} \omega(x) + \frac{1}{2} \omega\Big(N\big(\mathbb{L}_{\alpha}^\times\big)\Big)\]
so that
\[\Gamma_{\alpha} = \frac{1}{2} \Gamma'_{2\alpha} \cup \Gamma'_\alpha = \bigg( \frac{1}{2} \omega\Big(\mathbb{L}_\alpha^0 \setminus \{0\}\Big)\bigg) \cup \bigg(\frac{1}{2} \omega(x) + \frac{1}{2} \omega\Big( N\big(\mathbb{L}_{\alpha}^\times\big)\Big) \bigg).\]
Hence we get \ref{LemSetOfValuesB}.
\end{proof}

\begin{Prop}\label{PropSemiSpecialValuation}
Let $\alpha\in \Phi_{\mathrm{nd}}$.
\begin{itemize}
\item If $\alpha$ is non-multipliable, then $\widetilde{\Gamma}_\alpha = \omega\left(\mathbb{L}_\alpha^\times\right) = \Gamma_\alpha$.
\item If $\alpha$ is multipliable and $\left(\mathbb{L}_\alpha\right)^1_{\max} = \emptyset$, then
$\widetilde{\Gamma}_\alpha = \frac{1}{2} \omega\left(\mathbb{L}_{2\alpha}^\times\right) = \Gamma_\alpha$.
\item If $\alpha$ is multipliable and $\left(\mathbb{L}_\alpha\right)^1_{\max} \neq \emptyset$, then $\widetilde{\Gamma}_\alpha = \frac{1}{2} \omega\left(\mathbb{L}_{\alpha}^\times\right) \supset \Gamma_\alpha$.
\end{itemize}
Moreover, if $0 \not\in \Gamma_\alpha$, then $\alpha$ is multipliable, $\omega(2) \not\in \{0,\infty\}$ and $\omega(\mathbb{L}_\alpha^\times) \neq \omega(\mathbb{L}_{2\alpha}^\times)$.
\end{Prop}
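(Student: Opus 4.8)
The plan is to combine the explicit computation of the sets of values $\Gamma_\alpha$ and $\widetilde{\Gamma}_\alpha$ from Lemma~\ref{LemSetOfValues} with the description of $\omega(\mathbb{L}_\alpha^\times)$, $\omega(\mathbb{L}_{2\alpha}^\times)$, $\omega(\mathbb{L}_\alpha^0\setminus\{0\})$ and $\omega(N(\mathbb{L}_\alpha^\times))$ provided by the quadratic-extension lemmas~\ref{LemLmaxNonempty}, \ref{Lem1/2inL1max}. The three displayed equalities are a direct bookkeeping exercise once one knows that $\widetilde{\Gamma}_\alpha = \langle \Gamma_\alpha - \Gamma_\alpha\rangle$ and that a union of cosets of $\omega(K^\times)$-type subgroups generates the ``obvious'' subgroup. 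The final assertion (the conditions forcing $0\notin\Gamma_\alpha$) is then extracted by examining exactly when $0$ fails to lie in the explicit description of $\Gamma_\alpha$.

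First I would treat the non-multipliable case. Here Lemma~\ref{LemSetOfValues}\ref{LemSetOfValues1} gives $\Gamma_\alpha = \Gamma'_\alpha = \omega(\mathbb{L}_\alpha^\times)$, which is already a subgroup of $\mathfrak{R}^S$ containing $0$ (the valuation $\omega$ restricted to $\mathbb{L}_\alpha$ is a nonzero surjective valuation onto a group). Hence $\widetilde{\Gamma}_\alpha = \langle \Gamma_\alpha - \Gamma_\alpha\rangle = \Gamma_\alpha$, and $0\in\Gamma_\alpha$, so there is nothing to prove for the last assertion in this case. Second, I would treat the multipliable case with $(\mathbb{L}_\alpha)^1_{\max}=\emptyset$: by Lemma~\ref{LemSetOfValues}\ref{LemSetOfValuesA} together with~\ref{LemSetOfValuesC}, $\Gamma_\alpha = \tfrac12\omega(\mathbb{L}_\alpha^0\setminus\{0\}) = \tfrac12\Gamma_{2\alpha}$ and $\Gamma_{2\alpha}=\omega(\mathbb{L}_\alpha^0\setminus\{0\})$. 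Since $\mathbb{L}_\alpha^0$ is a one-dimensional $\mathbb{L}_{2\alpha}$-vector space, $\omega(\mathbb{L}_\alpha^0\setminus\{0\})$ is a coset of $\omega(\mathbb{L}_{2\alpha}^\times)$; but $0$ lies in this set (as $1\in\mathbb{L}_{2\alpha}\subset\mathbb{L}_\alpha^0$ when $\operatorname{char}=2$, and in general $\mathbb{L}_\alpha^0$ contains an element of valuation $0$ exactly when... ) — this is precisely the point where one must invoke Lemma~\ref{Lem1/2inL1max}, which tells us $\omega(\mathbb{L}_\alpha^0\setminus\{0\})=\omega(\mathbb{L}_{2\alpha}^\times)$ whenever $\operatorname{char}(K)=2$ or $\omega(\mathbb{L}_\alpha^\times)=\omega(\mathbb{L}_{2\alpha}^\times)$. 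In either of those subcases $\Gamma_{2\alpha}$ is a subgroup containing $0$, so $\Gamma_\alpha=\tfrac12\omega(\mathbb{L}_{2\alpha}^\times)$ is a subgroup containing $0$ and equals $\widetilde{\Gamma}_\alpha$; and one checks that $(\mathbb{L}_\alpha)^1_{\max}=\emptyset$ indeed forces $\operatorname{char}\neq2$ is impossible together with $\omega(\mathbb{L}_\alpha^\times)\neq\omega(\mathbb{L}_{2\alpha}^\times)$ (since Lemma~\ref{Lem1/2inL1max} shows $\tfrac12\in(\mathbb{L}_\alpha)^1_{\max}$ in that last situation), so $(\mathbb{L}_\alpha)^1_{\max}=\emptyset$ exactly means $\operatorname{char}=2$ or $\omega(\mathbb{L}_\alpha^\times)=\omega(\mathbb{L}_{2\alpha}^\times)$, giving $\widetilde{\Gamma}_\alpha=\tfrac12\omega(\mathbb{L}_{2\alpha}^\times)=\Gamma_\alpha$ with $0\in\Gamma_\alpha$.

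The remaining, and genuinely computational, case is $\alpha$ multipliable with $x\in(\mathbb{L}_\alpha)^1_{\max}\neq\emptyset$. Here Lemma~\ref{LemSetOfValues}\ref{LemSetOfValuesB} gives
\[
\Gamma_\alpha = \tfrac12\omega\!\big(\mathbb{L}_\alpha^0\setminus\{0\}\big)\ \cup\ \Big(\tfrac12\omega(x)+\tfrac12\omega\big(N(\mathbb{L}_\alpha^\times)\big)\Big),
\qquad \Gamma'_\alpha=\tfrac12\omega(x)+\tfrac12\omega\big(N(\mathbb{L}_\alpha^\times)\big).
\]
I would compute $\widetilde{\Gamma}_\alpha$ by noting that $\langle\Gamma_\alpha-\Gamma_\alpha\rangle$ contains $\tfrac12\langle\omega(\mathbb{L}_\alpha^0\setminus\{0\})-\omega(\mathbb{L}_\alpha^0\setminus\{0\})\rangle = \tfrac12\omega(\mathbb{L}_{2\alpha}^\times)$, contains $\tfrac12\langle\omega(N(\mathbb{L}_\alpha^\times))\rangle\supseteq\omega(\mathbb{L}_{2\alpha}^\times)$ (as $N(x)=x\tau(x)$, $\omega(N(y))=2\omega(y)$, so $\tfrac12\omega(N(\mathbb{L}_\alpha^\times))=\omega(\mathbb{L}_\alpha^\times)$), and contains the difference of an element of the first piece and an element of the second. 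Thus $\widetilde{\Gamma}_\alpha\supseteq\omega(\mathbb{L}_\alpha^\times)$; conversely $\Gamma_\alpha\subseteq\tfrac12\omega(\mathbb{L}_\alpha^\times)$ since all the pieces visibly land there, using $\omega(\mathbb{L}_\alpha^0\setminus\{0\})\subseteq\omega(\mathbb{L}_\alpha^\times)$, $\omega(x)\in\omega(\mathbb{L}_\alpha^\times)$ and $\tfrac12\omega(N(\mathbb{L}_\alpha^\times))=\omega(\mathbb{L}_\alpha^\times)$. Combining, $\widetilde{\Gamma}_\alpha=\tfrac12\omega(\mathbb{L}_\alpha^\times)$, and $\Gamma_\alpha\subseteq\widetilde{\Gamma}_\alpha$ but typically strictly. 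For the last assertion: if $0\notin\Gamma_\alpha$, then we are necessarily in this third case (the first two always gave $0\in\Gamma_\alpha$), so $\alpha$ is multipliable; moreover if $\omega(\mathbb{L}_\alpha^\times)=\omega(\mathbb{L}_{2\alpha}^\times)$ then Lemma~\ref{Lem1/2inL1max} gives $\omega(\mathbb{L}_\alpha^0\setminus\{0\})=\omega(\mathbb{L}_{2\alpha}^\times)\ni0$, contradiction, so $\omega(\mathbb{L}_\alpha^\times)\neq\omega(\mathbb{L}_{2\alpha}^\times)$; and if $\omega(2)\in\{0,\infty\}$: when $\omega(2)=\infty$ we have $\operatorname{char}(K)=2$ and again $\omega(\mathbb{L}_\alpha^0\setminus\{0\})\ni0$; when $\omega(2)=0$, I would argue that $\tfrac12\omega(N(\mathbb{L}_\alpha^\times))$ together with $\tfrac12\omega(x)$ — using $\tfrac12\in(\mathbb{L}_\alpha)^1_{\max}$ which holds here by Lemma~\ref{Lem1/2inL1max} and $\omega(\tfrac12)=0$ — forces $0\in\Gamma_\alpha$, again a contradiction. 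The main obstacle is keeping the bookkeeping of the three cosets straight and correctly chasing through the various subcases of Lemma~\ref{Lem1/2inL1max} in the multipliable $x$-case, especially the $\omega(2)$ analysis for the final assertion.
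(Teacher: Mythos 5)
Your overall strategy (reading off $\Gamma_\alpha$ and $\Gamma'_\alpha$ from Lemma~\ref{LemSetOfValues} and then chasing cosets using Lemma~\ref{Lem1/2inL1max}) is the same as the paper's, and your treatment of the first two bullets and of the final ``Moreover'' clause is sound; in fact you make the $\omega(2)=0$ step of the last assertion more explicit than the paper does.

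There is, however, a genuine gap in the third bullet, which is the only hard case. You establish $\widetilde{\Gamma}_\alpha \supseteq \omega(\mathbb{L}_\alpha^\times)$ (from differences inside the piece $\frac12\omega(x)+\frac12\omega(N(\mathbb{L}_\alpha^\times))$) and $\widetilde{\Gamma}_\alpha \subseteq \frac12\omega(\mathbb{L}_\alpha^\times)$, and then write ``Combining, $\widetilde{\Gamma}_\alpha = \frac12\omega(\mathbb{L}_\alpha^\times)$.'' This does not follow: $\omega(\mathbb{L}_\alpha^\times)$ has index $|\omega(\mathbb{L}_\alpha^\times)/2\omega(\mathbb{L}_\alpha^\times)|$ in $\frac12\omega(\mathbb{L}_\alpha^\times)$, which is nontrivial already for $\omega(\mathbb{L}_\alpha^\times)=\mathbb{Z}$. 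The missing generator must come from the cross-difference you mention in passing but never exploit. Concretely, after separating off the subcase $\omega(\mathbb{L}_\alpha^\times)=\omega(\mathbb{L}_{2\alpha}^\times)$ (where the first piece $\frac12\omega(\mathbb{L}_\alpha^0\setminus\{0\})=\frac12\omega(\mathbb{L}_{2\alpha}^\times)$ already equals the target group and you are done), one is reduced to $\operatorname{char}\neq 2$ and $\omega(\mathbb{L}_\alpha^\times)\neq\omega(\mathbb{L}_{2\alpha}^\times)$; there Lemma~\ref{Lem1/2inL1max} gives $\frac12\in(\mathbb{L}_\alpha)^1_{\max}$ and $\mathbb{L}_\alpha=\mathbb{L}_{2\alpha}[t]/(t^2-x)$ with $\omega(t)\notin\omega(\mathbb{L}_{2\alpha}^\times)$ and $\mathbb{L}_\alpha^0=t\mathbb{L}_{2\alpha}$, and the paper computes
\[\tfrac12\omega\bigl(\tfrac{t}{2}\bigr)-\tfrac12\omega\bigl(\tfrac12\bigr)=\tfrac12\omega(t)\in\widetilde{\Gamma}_\alpha,\]
the first term lying in $\frac12\Gamma_{2\alpha}\subseteq\Gamma_\alpha$ and the second in $\Gamma'_\alpha$. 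Together with $\frac12\omega(\mathbb{L}_{2\alpha}^\times)\subseteq\widetilde{\Gamma}_\alpha$ and $\frac12\omega(\mathbb{L}_\alpha^\times)=\frac12\omega(\mathbb{L}_{2\alpha}^\times)\sqcup\bigl(\frac12\omega(t)+\frac12\omega(\mathbb{L}_{2\alpha}^\times)\bigr)$, this yields the missing inclusion. The essential point your sketch omits is that the two pieces of $\Gamma_\alpha$ sit in \emph{different} cosets of $\frac12\omega(\mathbb{L}_{2\alpha}^\times)$ (because $\omega(1/2)\in\omega(\mathbb{L}_{2\alpha}^\times)$ while $\omega(t)\notin\omega(\mathbb{L}_{2\alpha}^\times)$), so their difference supplies exactly the generator of $\frac12\omega(\mathbb{L}_\alpha^\times)$ over $\frac12\omega(\mathbb{L}_{2\alpha}^\times)$; without identifying where those base points lie, the cross-difference gives nothing beyond what you already had.
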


In particular, the valuation  $\left( \varphi_\alpha \right)_{\alpha \in \Phi}$ is semi-special, and this corollary provides a sufficient condition for the valuation to be special.
Example~\ref{ExGaloisRamified} shows that this sufficient condition is not necessary.

\begin{proof}
The non-multipliable case is treated by Lemma~\ref{LemSetOfValues}.
Assume that $\alpha$ is multipliable and observe that $\Gamma_\alpha \subset \frac{1}{2}  \omega(\mathbb{L}_{\alpha}^\times)$ in any case.

If $\omega(\mathbb{L}_\alpha^\times) = \omega(\mathbb{L}_{2\alpha}^\times)$, then $\omega(\mathbb{L}_\alpha^0 \setminus\{0\}) = \omega(\mathbb{L}_{2\alpha}^\times) = \Gamma_{2\alpha}$.
If $\left(\mathbb{L}_\alpha\right)^1_{\max} = \emptyset$, then $\Gamma_\alpha = \frac{1}{2}  \omega(\mathbb{L}_{2\alpha}^\times)$.
If $\left(\mathbb{L}_\alpha\right)^1_{\max} \neq \emptyset$, then $\frac{1}{2}  \omega(\mathbb{L}_{2\alpha}^\times) = \frac{1}{2}  \omega(\mathbb{L}_{\alpha}^\times) \subset \Gamma_\alpha \subset \frac{1}{2} \omega(\mathbb{L}_{\alpha}^\times)$ so that $\Gamma_\alpha = \frac{1}{2}  \omega(\mathbb{L}_{\alpha}^\times)$.
In both cases, $\Gamma_\alpha = \widetilde{\Gamma}_\alpha$ since it is a group.

If $\omega(2) = \infty$, then $\mathbb{L}_{\alpha}^0 = \mathbb{L}_{2\alpha}$.
Hence $\Gamma_{2\alpha} = \omega(\mathbb{L}_{2\alpha}^\times)$ and $0 \in \Gamma_{\alpha}$.
Thus $\Gamma_\alpha \subset \widetilde{\Gamma}_\alpha$ and we conclude distinguishing the cases on $\left(\mathbb{L}_\alpha\right)^1_{\max}$.

Otherwise, $\operatorname{char}\left(\mathbb{L}_{2\alpha}\right) \neq 2$ and $\omega(\mathbb{L}_\alpha^\times) \neq \omega(\mathbb{L}_{2\alpha}^\times)$.
Hence, by Lemma~\ref{Lem1/2inL1max}, we have $\frac{1}{2} \in \left(\mathbb{L}_\alpha\right)^1_{\max} \neq \emptyset$.
Let $t,x$ as in proof of Lemma~\ref{Lem1/2inL1max} such that $\mathbb{L}_{\alpha} \simeq \mathbb{L}_{2\alpha}[t] / (t^2 - x)$.
On the one side, we have $\frac{1}{2} \omega(\frac{t}{2} ) \in \frac{1}{2} \omega(L^0 \setminus \{0\}) = \frac{1}{2} \Gamma_{2\alpha}$ and $\frac{1}{2} \omega( \frac{1}{2}) \in \Gamma'_\alpha$.
Hence $\frac{1}{2} \omega(\frac{t}{2} ) - \frac{1}{2} \omega( \frac{1}{2}) = \frac{1}{2} \omega( t ) \in \widetilde{\Gamma}_\alpha$.
On the other side, for any $y \in \mathbb{L}_{2\alpha}^\times$, we have that 
$\frac{1}{2} \omega(ty ) \in \frac{1}{2} \Gamma_{2\alpha}$ and $\frac{1}{2} \omega( t ) \in \frac{1}{2} \Gamma_{2\alpha}$.
Hence $\frac{1}{2} \omega(ty ) - \frac{1}{2} \omega( t ) = \frac{1}{2} \omega( y ) \in \widetilde{\Gamma}_\alpha$.
As a consequence, we have the chain of inclusions
\[\frac{1}{2} \omega(\mathbb{L}_{\alpha}^\times) = \frac{1}{2} \omega(\mathbb{L}_{2\alpha}^\times) \sqcup \frac{1}{2} \omega(t) + \frac{1}{2} \omega(\mathbb{L}_{2\alpha}^\times)  \subset \widetilde{\Gamma}_\alpha \subset \langle \Gamma_\alpha \rangle \subset \frac{1}{2} \omega(\mathbb{L}_{\alpha}^\times)\]
that are, in fact equalities.
We deduce that $\Gamma_\alpha \subset \widetilde{\Gamma}_\alpha = \frac{1}{2} \omega(\mathbb{L}_{\alpha}^\times)$.
\end{proof}

\subsubsection{Consequences for the extended affine Weyl group}

\begin{Cor}\label{corAffine_weyl_group}
We have $\mathbf{N}(\mathbb{K}) / T_b \simeq \Wext = \nu(\mathbf{T}(\mathbb{K})) \rtimes W^v$ with $\nu(\mathbf{T}(\mathbb{K})) \subset \left( \bigoplus_{\alpha \in \Delta} \widetilde{\Gamma}_\alpha \varpi_\alpha^\vee \right)$.
\end{Cor}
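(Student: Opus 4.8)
The plan is to assemble this corollary from the general machinery of section~\ref{SecVRGD} applied to the specific datum constructed in sections~\ref{subsecRvaluation} and~\ref{subsecAction_N_affine_space}. The key point is that everything needed has already been established: by Proposition~\ref{Vtotal} the family $(\varphi_\alpha)_{\alpha\in\Phi}$ is an $\RF^S$-valuation of the generating root group datum $(T,(U_\alpha,M_\alpha)_{\alpha\in\Phi})$, and by Proposition~\ref{PropCompatibleAction} the action $\nu:\mathbf{N}(\mathbb{K})\to\operatorname{Aff}_{\RF^S}(\mathbb{A})$ is compatible with this valuation. Thus Assumption~\ref{HypCAVRGD} holds, and moreover $\RF^S=\RF^S_\mathbb{Q}$ by construction, so all results of sections~\ref{SecVRGD} and~\ref{SecParahoricBruhat} apply with $R=\RF^S$, $G=\mathbf{G}(\mathbb{K})$, $N=\mathbf{N}(\mathbb{K})$, $T=\mathbf{T}(\mathbb{K})$.

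First I would recall from Notation~\ref{NotWeylAffine} that $T_b=\ker\nu$, so $W^{\mathrm{aff}}=\nu(N)\simeq N/T_b$ by definition, and that $W^v={^v\!}\nu(N)$, which identifies with $W(\Phi)$ (this identification is standard for reductive groups, and is the content of~\ref{axiomCA1} via the commutative diagram~(\ref{diagramAffineAction})). Next, the crucial input is that our valuation is semi-special: this is exactly Proposition~\ref{PropSemiSpecialValuation}, which shows $\Gamma_\alpha\subset\widetilde{\Gamma}_\alpha$ for every $\alpha\in\Phi_{\mathrm{nd}}$ (the three cases there all yield either $\Gamma_\alpha=\widetilde\Gamma_\alpha$ or $\Gamma_\alpha\subset\widetilde\Gamma_\alpha$). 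Having checked semi-speciality, I would invoke Proposition~\ref{PropAffineWeylGroup}: part~\ref{PropWeylAffine1} gives the semidirect product decomposition $W^{\mathrm{aff}}=(V_{\RF^S}\cap W^{\mathrm{aff}})\rtimes W^v$, and part~\ref{PropWeylAffine3} gives the inclusion $V_{\RF^S}\cap W^{\mathrm{aff}}\subset\bigoplus_{\alpha\in\Delta}\widetilde{\Gamma}_\alpha\varpi_\alpha^\vee$ for any basis $\Delta$ of $\Phi$.

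It then remains only to identify the translation part $V_{\RF^S}\cap W^{\mathrm{aff}}$ with $\nu(\mathbf{T}(\mathbb{K}))=\nu(T)$. For one inclusion, $\nu(T)$ is contained in the group of translations of $\mathbb{A}$ by construction of $\nu$ (diagram~(\ref{diagramAffineAction}) shows $\nu(T)=\overline\rho(T/\mathbf{T}_b(\mathbb{K}))\subset V$ under the identification $\operatorname{Aff}$-translations $=V$), and clearly $\nu(T)\subset W^{\mathrm{aff}}$, so $\nu(T)\subset V_{\RF^S}\cap W^{\mathrm{aff}}$. For the reverse inclusion, since $W^{\mathrm{aff}}=\nu(N)$ and $W^v={^v\!}\nu(N)=N/T$, any element of $W^{\mathrm{aff}}$ with trivial linear part is $\nu(n)$ for some $n\in\ker{^v\!}\nu=T$ (using \cite[6.1.11(ii)]{BruhatTits1} that $T=\ker{^v\!}\nu$), hence lies in $\nu(T)$; combined with the fact that $\nu(T)$ consists of translations this gives $V_{\RF^S}\cap W^{\mathrm{aff}}=\nu(T)$. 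Putting these together yields $W^{\mathrm{aff}}=\nu(\mathbf{T}(\mathbb{K}))\rtimes W^v$ with $\nu(\mathbf{T}(\mathbb{K}))\subset\bigoplus_{\alpha\in\Delta}\widetilde{\Gamma}_\alpha\varpi_\alpha^\vee$, as claimed (the statement writes $\varpi_\alpha$ for $\varpi_\alpha^\vee$). I do not anticipate a serious obstacle here: the corollary is essentially a translation of Proposition~\ref{PropAffineWeylGroup} into the concrete setting, and the only mild subtlety — the precise comparison of $T_b=\ker\nu$ with $\mathbf{T}_b(\mathbb{K})=\ker\rho$ — is already addressed in Remark~\ref{RkTbK} and is not needed for the statement, which is phrased entirely in terms of $T_b$.
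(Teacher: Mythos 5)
Your proposal is correct and follows essentially the same route as the paper: the paper's own proof consists precisely of noting that $\mathbf{T}(\mathbb{K}) = \ker {^v\!}\nu$ (giving the identification of the translation part with $\nu(\mathbf{T}(\mathbb{K}))$), invoking Proposition~\ref{PropSemiSpecialValuation} for semi-speciality, and then applying Proposition~\ref{PropAffineWeylGroup}. Your version merely spells out the verification of Assumption~\ref{HypCAVRGD} and the two inclusions identifying $V_{\RF^S}\cap W^{\mathrm{aff}}$ with $\nu(\mathbf{T}(\mathbb{K}))$ in more detail, which is harmless.
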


\begin{proof}
We know that $\mathbf{T}(\mathbb{K}) = \ker {^v\!}\nu$ by \cite[6.1.11(ii)]{BruhatTits1} so that we deduce the equality.
By Proposition~\ref{PropSemiSpecialValuation}, the valuation is semi-special.
Thus we deduce the inclusion from Proposition~\ref{PropAffineWeylGroup}.
\end{proof}

\begin{Cor}\label{CorWeylsssc}
Suppose furthermore that the quasi-split reductive group $\mathbf{G}$ is simply connected semi-simple.
Let $\Delta$ be a basis of $\Phi$ and suppose that for any $\alpha \in \Delta$ multipliable, we have that $\left(\mathbb{L}_\alpha\right)^1_{\max} \neq \emptyset$.
Then 
$\Waff = \Wext = \left( \bigoplus_{\alpha \in \Delta} \omega\left( \mathbb{L}_\alpha^\times \right) \widehat{\alpha} \right) \rtimes W^v$ where $\widehat{\alpha} = \alpha^\vee$ when $\alpha$ is non-multipliable and $\widehat{\alpha} = \frac{1}{2} \alpha^\vee = (2\alpha)^\vee$ when $\alpha$ is multipliable.
\end{Cor}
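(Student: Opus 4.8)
The plan is to reduce the identity $W^{\mathrm{aff}}=\widetilde{W}$ to a single inclusion of translation subgroups of $V_\Rtot$, and then to compute $\nu(\mathbf{T}(\mathbb{K}))$ explicitly from the parametrizations $\widehat{\alpha}$ of the rank‑one tori. First I would observe that, by Proposition~\ref{PropSemiSpecialValuation}, the valuation $(\varphi_\alpha)_{\alpha\in\Phi}$ is semi‑special, so Proposition~\ref{PropAffineWeylGroup} applies: $\widetilde{W}$ is a normal subgroup of $W^{\mathrm{aff}}$, one has $\widetilde{W}=\bigl(\bigoplus_{\alpha\in\Delta}\widetilde{\Gamma}_\alpha\alpha^\vee\bigr)\rtimes W^v$, $W^{\mathrm{aff}}=(V_\Rtot\cap W^{\mathrm{aff}})\rtimes W^v$, and $\overline{W}\subseteq\widetilde{W}$, so in particular $\nu(m_\alpha)=r_{\alpha,0}\in\widetilde{W}$ for every $\alpha\in\Phi$. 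Since $\mathbf{G}$ is semisimple, $V_{0,\mathbb{Z}}=0$ by Remark~\ref{RkTbK}, whence $T_b=\mathbf{T}_b(\mathbb{K})$, $\nu|_{\mathbf{T}(\mathbb{K})}=\rho$, and $V_\Rtot\cap W^{\mathrm{aff}}=\nu(\mathbf{T}(\mathbb{K}))\subseteq V_\Rtot$. As $N$ is generated by $\mathbf{T}(\mathbb{K})$ and the $m_\alpha$, the group $W^{\mathrm{aff}}=\nu(N)$ is generated by $\widetilde{W}$ and $\nu(\mathbf{T}(\mathbb{K}))$; therefore $W^{\mathrm{aff}}=\widetilde{W}$ is equivalent to $\nu(\mathbf{T}(\mathbb{K}))\subseteq V_\Rtot\cap\widetilde{W}=\bigoplus_{\alpha\in\Delta}\widetilde{\Gamma}_\alpha\alpha^\vee$, the reverse inclusion being automatic since $\widetilde{\Gamma}_\alpha\alpha^\vee\subseteq V_\Rtot\cap\widetilde{W}\subseteq\nu(\mathbf{T}(\mathbb{K}))$ for each $\alpha\in\Delta$.

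Next I would compute $\nu(\mathbf{T}(\mathbb{K}))$. Because $\mathbf{G}$ is simply connected, $\mathbf{T}=\mathcal{Z}_\mathbf{G}(\mathbf{S})$ is the almost‑direct product of the rank‑one tori $\mathbf{T}^\alpha=\mathbf{T}\cap\langle\mathbf{U}_{-\alpha},\mathbf{U}_\alpha\rangle$ over $\alpha\in\Delta$, and $\mathbf{T}^\alpha(\mathbb{K})$ is the image of the homomorphism $\widehat{\alpha}$ of Notations~\ref{NotParamNonMult} and~\ref{NotParamMult}; hence $\mathbf{T}(\mathbb{K})$ is generated by the subgroups $\widehat{\alpha}(\mathbb{L}_\alpha^\times)$ for $\alpha\in\Delta$ (the quasi‑split counterpart of $X_*(\mathbf{T})=\bigoplus\mathbb{Z}\,\widehat{\alpha}$; this is the only place where simple connectedness is really used, and I would invoke the relevant statement from the structure theory of simply connected groups, cf.\ Bruhat–Tits~\S4.2). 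Consequently $\nu(\mathbf{T}(\mathbb{K}))$ is generated by the $\nu(\widehat{\alpha}(\mathbb{L}_\alpha^\times))$. Fix $\alpha\in\Delta$ and $z\in\mathbb{L}_\alpha^\times$. Then $\nu(\widehat{\alpha}(z))$ is a translation (since $\mathbf{T}(\mathbb{K})=\ker{^v\!}\nu$), and as $\widehat{\alpha}(z)$ lies in the $\mathbb{K}$‑rank‑one torus $\mathbf{T}^\alpha$ with relative coweight $\widehat{\alpha}$, this translation belongs to $\RF^S\widehat{\alpha}$. Its coefficient is read off from Lemma~\ref{LemActionOfT}: $\alpha\bigl(\nu(\widehat{\alpha}(z))\bigr)=-\omega\bigl(\widetilde{\alpha}(\widehat{\alpha}(z))\bigr)$, which by Fact~\ref{FactCommutationTXaNonMult} equals $-\omega(z^2)=-2\omega(z)$ when $\alpha$ is non‑multipliable, and by Fact~\ref{FactCommutationTXaMult} equals $-\omega\bigl(({^\tau\!}z)^2z^{-1}\bigr)=-\omega(z)$ when $\alpha$ is multipliable, using $\omega\circ\tau=\omega$ from assumption~\ref{assumpfield}. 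Since $\alpha(\alpha^\vee)=2$ and $\alpha(\widehat{\alpha})=1$ in the multipliable case, in both cases $\nu(\widehat{\alpha}(z))=-\omega(z)\,\widehat{\alpha}$, so $\nu(\widehat{\alpha}(\mathbb{L}_\alpha^\times))=\omega(\mathbb{L}_\alpha^\times)\,\widehat{\alpha}$.

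Finally, I would match this with the $\widetilde{\Gamma}_\alpha$. By Proposition~\ref{PropSemiSpecialValuation}, $\widetilde{\Gamma}_\alpha=\omega(\mathbb{L}_\alpha^\times)$ when $\alpha$ is non‑multipliable, and $\widetilde{\Gamma}_\alpha=\frac{1}{2}\omega(\mathbb{L}_\alpha^\times)$ when $\alpha$ is multipliable, using here the hypothesis $(\mathbb{L}_\alpha)^1_{\max}\neq\emptyset$. Since $\widehat{\alpha}=\alpha^\vee$ (resp.\ $\widehat{\alpha}=\frac{1}{2}\alpha^\vee$) in the non‑multipliable (resp.\ multipliable) case, we get $\omega(\mathbb{L}_\alpha^\times)\,\widehat{\alpha}=\widetilde{\Gamma}_\alpha\alpha^\vee$ in all cases. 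Summing over $\alpha\in\Delta$, and using that the $\widehat{\alpha}$ ($\alpha\in\Delta$) are $\RF^S$‑linearly independent in $V$ because $\mathbf{G}$ is simply connected semisimple, yields $\nu(\mathbf{T}(\mathbb{K}))=\bigoplus_{\alpha\in\Delta}\widetilde{\Gamma}_\alpha\alpha^\vee=\bigoplus_{\alpha\in\Delta}\omega(\mathbb{L}_\alpha^\times)\,\widehat{\alpha}=V_\Rtot\cap\widetilde{W}$, hence $W^{\mathrm{aff}}=\widetilde{W}$, and the explicit description $W^{\mathrm{aff}}=\bigl(\bigoplus_{\alpha\in\Delta}\omega(\mathbb{L}_\alpha^\times)\,\widehat{\alpha}\bigr)\rtimes W^v$ then follows from Proposition~\ref{PropAffineWeylGroup}\ref{PropWeylAffine2}. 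The main obstacle I expect is the middle paragraph: giving a clean justification that $\mathbf{T}(\mathbb{K})$ is generated by the $\widehat{\alpha}(\mathbb{L}_\alpha^\times)$ and that $\nu(\widehat{\alpha}(z))$ is exactly the translation $-\omega(z)\widehat{\alpha}$ (with no extra bounded correction term), which requires careful bookkeeping of the Weil‑restriction parametrizations and of the non‑reduced coroots $(2\alpha)^\vee$; the rest is a formal consequence of Propositions~\ref{PropAffineWeylGroup} and~\ref{PropSemiSpecialValuation}.
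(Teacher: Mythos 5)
Your proposal is correct and follows essentially the same route as the paper: reduce to computing $\nu(\mathbf{T}(\mathbb{K}))$, use simple connectedness to write $\mathbf{T}(\mathbb{K})=\prod_{\alpha\in\Delta}\widehat{\alpha}(\mathbb{L}_\alpha^\times)$, show $\nu(\widehat{\alpha}(z))=-\omega(z)\,\widehat{\alpha}$, and match $\omega(\mathbb{L}_\alpha^\times)\widehat{\alpha}=\widetilde{\Gamma}_\alpha\alpha^\vee$ via Proposition~\ref{PropSemiSpecialValuation} and Proposition~\ref{PropAffineWeylGroup}. The one step you flag as an obstacle is exactly what the paper supplies: it identifies $X_*(\mathbf{T}_{\widetilde{\mathbb{K}}})=\bigoplus_{\widetilde{\alpha}\in\widetilde{\Delta}}\mathbb{Z}\widehat{\widetilde{\alpha}}$ for the split simply connected form and takes Galois fixed points to get $\mathbf{T}=\prod_{\alpha\in\Delta}\widehat{\alpha}\bigl(R_{\mathbb{L}_\alpha/\mathbb{K}}(\mathbb{G}_{m,\mathbb{L}_\alpha})\bigr)$, then pins the translation to the line $\RF^S\widehat{\alpha}$ by evaluating all simple roots $\beta\in\Delta$ on $\nu(\widehat{\alpha}(z))$ via $\beta(\nu(t))=-\omega(\beta(t))=-\beta(\widehat{\alpha})\omega(z)$.
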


Note that $\widehat{\alpha}$ is the basis of $\Phi^\vee$ canonically associated to $\Delta$.
The assumption is satisfied for instance when $\operatorname{char}(\mathbb{K}) \neq 2$ and the minimal extension $\widetilde{\mathbb{K}} / \mathbb{K}$ splitting $\mathbf{G}$ is totally ramified.

\begin{proof}
Let $\widetilde{\Delta}$ be the basis of $\widetilde{\Phi}$ that restricts in $\Delta$ and denote $\widehat{\widetilde{\alpha}}$ the associated coroots as in statement.
Then, the $\widehat{\widetilde{\alpha}}$ form a basis of $\widetilde{\Phi}^\vee$ and therefore of $X_*(\mathbf{T}_{\widetilde{\mathbb{K}}})$.
Hence, we have that
\[\mathbf{T}_{\widetilde{\mathbb{K}}} = \prod_{\widetilde{\alpha} \in \widetilde{\Delta}} \widehat{\widetilde{\alpha}}(\mathbb{G}_{m,\widetilde{\mathbb{K}}}) = \prod_{\alpha \in \Delta} \prod_{\widetilde{\alpha} \in \alpha}  \widehat{\widetilde{\alpha}}(\mathbb{G}_{m,\widetilde{\mathbb{K}}})\]
since $\mathbf{G}_{\widetilde{\mathbb{K}}}$ is simply connected semi-simple and split.
Consider the Galois group $\Sigma = \operatorname{Gal}(\widetilde{\mathbb{K}} / \mathbb{K})$ and its $*$-action on $\Phi$ as recalled in Notation~\ref{NotStarAction}.
Let $\alpha \in \Delta$ and fix an absolute root $\widetilde{\alpha} \in \alpha$.
Denote by $\Sigma_\alpha = \Sigma / \Sigma_{\widetilde{\alpha}}$ the quotient where $\Sigma_{\widetilde{\alpha}}$ is the stabilizer of $\widetilde{\alpha}$ as in Definition~\ref{DefSplittingField}.
Then $\alpha = \Sigma_\alpha \cdot \widetilde{\alpha}$.
Then, we have
\[\prod_{\widetilde{\alpha} \in \alpha}  \widehat{\widetilde{\alpha}}(\mathbb{G}_{m,\widetilde{\mathbb{K}}})
=\prod_{\sigma \in \Sigma_\alpha}  \sigma \cdot \widehat{\widetilde{\alpha}}(\mathbb{G}_{m,\widetilde{\mathbb{K}}})
=\widehat{\alpha}\left( R_{\mathbb{L}_\alpha / \mathbb{K}}(\mathbb{G}_{m,\mathbb{L}_\alpha}) \right)_{\widetilde{\mathbb{K}}}.\]
Hence
\[\mathbf{T} = \mathbf{T}_{\widetilde{\mathbb{K}}}^\Sigma = \prod_{\alpha \in \Delta} \widehat{\alpha}\left( R_{\mathbb{L}_\alpha / \mathbb{K}}(\mathbb{G}_{m,\mathbb{L}_\alpha}) \right)\]
and therefore
\[\mathbf{T}(\mathbb{K}) = \prod_{\alpha \in \Delta}  \widehat{\alpha}(\mathbb{L}_\alpha^\times).\]

Let $t = \widehat{\alpha}(x) \in \widehat{\alpha}(\mathbb{L}_\alpha^\times)$
For any $\beta \in \Delta$, we have that $\beta(\nu(t)) = - \omega \circ \beta(t) = - \beta(\widehat{\alpha}) \omega(x)$.
Hence $\nu(t) = -\omega(x) \widehat{\alpha} \in \omega(\mathbb{L}_\alpha^\times) \widehat{\alpha}$.
Therefore, $\nu(\mathbf{T}(\mathbb{K})) = \bigoplus_{\alpha \in \Delta} \omega(\mathbb{L}_\alpha^\times) \widehat{\alpha}$.
But for $\alpha \in \Delta$, we apply Proposition~\ref{PropSemiSpecialValuation} under the assumption $\left(\mathbb{L}_\alpha\right)^1_{\max} \neq \emptyset$:
\begin{itemize}
\item if $\alpha$ is non-multipliable we have $\widehat{\alpha} = \alpha^\vee$ and $\widetilde{\Gamma}_\alpha = \omega(\mathbb{L}_\alpha^\times)$;
\item if $\alpha$ is multipliable we have $\widehat{\alpha} = \frac{1}{2} \alpha^\vee$ and $\widetilde{\Gamma}_\alpha = \frac{1}{2} \omega(\mathbb{L}_\alpha^\times)$.
\end{itemize}
In both cases, we have that $\widetilde{\Gamma}_\alpha \alpha^\vee = \omega(\mathbb{L}_\alpha^\times) \widehat{\alpha}$.
Using Lemma \ref{PropAffineWeylGroup}, we get $\Waff \cap V_{\mathfrak{R}^S} = \bigoplus_{\alpha \in \Delta} \widetilde{\Gamma}_\alpha \alpha^\vee = \bigoplus_{\alpha \in \Delta} \omega(\mathbb{L}_\alpha^\times) \widehat{\alpha} = \nu(\mathbf{T}(\mathbb{K}))$.
Hence $\Wext = \Waff$.
\end{proof}

\begin{Ex}\label{ExSUhGalois}
Let $\mathbb{K} = K = \mathbb{Q}_2(\!( x )\!)$ and $\widetilde{\mathbb{K}} =L = K[t]/(t^2-f)$ a quadratic Galois extension with $f \in K^{\times} \smallsetminus K^{\times 2}$ (as in Example~\ref{ExGaloisRamified}).
Denote by $\Lambda = \omega(K^\times) = \mathbb{Z}^2$ the totally ordered abelian group, ordered by lexicographic order and pick, for instance $\Rtot = \mathfrak{R}^S = \mathbb{R}[t]/(t^2)$ ordered by lexicographic order.
Denote by $(a,b) = 2 \omega(f) \in \mathbb{Z}^2 \smallsetminus 2 \cdot \mathbb{Z}^2$. Up to a multiplying $f$ by a uniformizer, one can assume that $(a,b) \in \{(1,0),(0,1),(1,1)\}$.

Consider the simply connected semi-simple quasi-split group $\mathbf{G} = \mathrm{SU}(h)$ defined over the extension $L/K$ as before and the natural faithful linear representation $G=\mathrm{SU}(h)(K) \to \mathrm{SL}_3(L)$.
Let $\mathbf{T}$ be the maximal torus consisting in diagonal matrices parametrized by $\widehat{\alpha} : R_{L/K}(\mathbb{G}_{m,L}) \to \mathbf{T}$ as in Notation~\ref{NotParamMult}.
Then $\mathbf{T}$ is the centralizer of the maximal split rank-$1$ torus $\mathbf{S} = \widehat{\alpha}(\mathbb{G}_{m,K})$.
The cocharacter module $X_*(\mathbf{S})$ is the rank-$1$ $\mathbb{Z}$-module spanned by $\widehat{\alpha}_{|\mathbb{G}_{m,K}} = \varpi_\alpha$, whence $\mathbb{A} =X_*(\mathbf{S}) \otimes_\mathbb{Z} \Rtot$ is the $\Rtot$-module $\Rtot \varpi_\alpha$.
The set of walls, that also are the vertices, of $\mathbb{A}$ is given by $\Gamma_\alpha \varpi_\alpha$. Explicitely, it is the set:
\[\Gamma_\alpha \varpi_\alpha =\bigg(\Big(\big(\frac{a}{2},\frac{b}{2}\big) + \mathbb{Z}^2\Big) \cdot \frac{\varpi_\alpha}{2} \bigg)
\cup \bigg( \Big( \big(0,1) + 2 \cdot \mathbb{Z}^2 \Big) \cdot \frac{\varpi_\alpha}{2} \bigg)
\cup \bigg( \Big( \big( a,b \big) + \big( 0,1 \big) + 2 \cdot \mathbb{Z}^2 \Big) \cdot \frac{\varpi_\alpha}{2} \bigg)\]
according to Example~\ref{ExGaloisRamified}.

The root system of $\mathrm{SU}(h)$ with respect to $\mathbf{S}$ is $\{\pm \alpha,\pm 2\alpha\}$, we that $W^\vee = \{1,r_\alpha\}$
Since $\mathrm{SU}(h)$ is simply connected, we know by Corollary~\ref{CorWeylsssc} that $\Wext = \Waff = \bigoplus \omega(L^\times) \varpi_\alpha \rtimes W^\vee$.
Note that $\omega(L^\times) \varpi_\alpha = 2 \widetilde{\Gamma}_\alpha \varpi_\alpha = \widetilde{\Gamma}_\alpha \alpha^\vee$. It is the subgroup of translations of $\Wext$. Explicitely, it is the free $\mathbb{Z}$-module of rank $2$:
\[\widetilde{\Gamma}_\alpha \alpha^\vee = (2,0) \mathbb{Z} \cdot \varpi_\alpha + (0,2) \mathbb{Z} \cdot \varpi_\alpha + (a,b) \mathbb{Z} \cdot \varpi_\alpha\]
Thus, one can observe that the action of $\Wext = \Waff$ on $\Gamma_\alpha \varpi_\alpha$ defines 3 orbits that correspond to conjugacy classes of maximal parahoric subgroups (i.e. maximal compact subgroups here). These orbits are:
\begin{itemize}
    \item $\bigg( \Big\{ \big(\frac{a}{2},\frac{b}{2}\big), \big(\frac{3a}{2},\frac{3b}{2}\big) \Big\} + 2 \cdot \mathbb{Z}^2 \bigg) \cdot \frac{\varpi_\alpha}{2}$
    \item $\bigg( \Big\{ \big(\frac{3a}{2},\frac{b}{2}\big), \big(\frac{a}{2},\frac{3b}{2}\big) \Big\} + 2 \cdot \mathbb{Z}^2 \bigg) \cdot \frac{\varpi_\alpha}{2}$
    \item $\bigg( \Big\{ \big(0,1\big), \big(a,b+1) \Big\} + 2 \cdot \mathbb{Z}^2 \bigg) \cdot \frac{\varpi_\alpha}{2}$
\end{itemize}
We have $T_b = \mathbf{T}(K) \cap \mathrm{SL}_3(\mathbb{O}_L) = \widehat{\alpha}(\mathbb{O}_L^\times)$.
The fact that $0 \not\in \Gamma_\alpha$ when $a=1$ means that the subgroup $\mathrm{SU}(h) \cap \mathrm{SL}_3(\mathbb{O}_L)$ is not a maximal parahoric subgroup.
In fact, it is contained in the two maximal parahoric subgroups $P_{x_+}$ and $P_{x_-}$ with $x_+ = \min (\Gamma_\alpha \cap \Lambda_{>0}) \cdot \varpi_\alpha$ and $x_- = \max (\Gamma_\alpha \cap \Lambda_{<0}) \cdot \varpi_\alpha$.
\end{Ex}

\subsection{The case of a split reductive group}

In this paragraph, we focus on the case of a split reductive group $\mathbf{G}$ over a field $\mathbb{K}$ equipped with a non-trivial valuation $\omega$. Let $\Lambda = \omega(\mathbb{K}^\times)$ be the non-trivial totally ordered abelian group of valuation of $\mathbb{K}$.

Since $\mathbf{G}$ is split, $\widetilde{\mathbb{K}} = \mathbb{K}$ by definition and we know that the root system $\widetilde{\Phi} = \Phi$ is reduced \cite[7.4.4]{Springer}.
In fact, by \cite[5.1.5]{Demazure}, $\mathbf{G}$ can be realized as the scalar extension to $\mathbb{K}$ of a Chevalley-Demazure group scheme over $\mathbb{Z}$, still denoted by $\mathbf{G}$.
One can pick a (split) maximal torus $\mathbf{T} = \mathbf{S}$ defined over $\mathbb{Z}$ together with root groups $\mathbf{U}_\alpha$ for $\alpha\in \Phi$.
One can pick a Chevalley system $\left(x_\alpha\right)_{\alpha\in \Phi}$ parametrizing the root groups over $\mathbb{Z}$ \cite[5.4.2]{Demazure}, so that $x_\alpha: \mathbb{G}_{a,\mathbb{Z}} \stackrel{\simeq}{\longrightarrow} \mathbf{U}_\alpha$ for any $\alpha \in \Phi$.

Recall that for any sub-ring $R$ of $\mathbb{K}$, there are natural embeddings of groups $\mathbf{T}(R) \subset \mathbf{G}(R)$, $\mathbf{U}_\alpha(R) \subset \mathbf{G}(R)$, $\mathbf{T}(R) \subset \mathbf{T}(\mathbb{K})$, $\mathbf{U}_\alpha(R) \subset \mathbf{U_\alpha}(\mathbb{K})$ and $\mathbf{G}(R) \subset \mathbf{G}(\mathbb{K})$.
Here, we will focus on the valuation ring $\mathbb{O} := \omega^{-1}(\RF^S_{\geqslant 0} \cup \{\infty\})$ of $\mathbb{K}$.

Denote $G = \mathbf{G}(\mathbb{K})$, $T = \mathbf{T}(\mathbb{K})$, $N = \mathcal{N}_\mathbf{G}(\mathbf{T})(\mathbb{K})$ and $U_\alpha = \mathbf{U}_\alpha(\mathbb{K})$ as done in section~\ref{subsecRvaluation}.
Note that the elements $m_\alpha = x_\alpha(1) x_{-\alpha}(-1) x_\alpha(1)$ defined in Notation~\ref{NotMalphaInSTsystems} belongs to $N \cap \mathbf{G}(\mathbb{O})$.
Denoting $M_\alpha = T m_\alpha$, the generating root group datum of $G$ is given by $\Big(T, \big( U_\alpha, M_\alpha \big)_{\alpha \in \Phi}\Big) $.

The parametrization $x_\alpha : \mathbb{K} \to U_\alpha$ identifies $U_{\alpha,\lambda}$ with $\{x \in \mathbb{K},\ \omega(x) \geqslant \lambda\}$ for any $\lambda \in \RF^S$ and $\alpha \in \Phi$.
The valuation maps $\varphi_\alpha : x_\alpha(x) \in U_\alpha \mapsto \omega(x)$, defined in Notation~\ref{NotRSvaluation}, take values in $\Lambda \cup \{\infty\}$.
As a consequence:
\begin{Fact}\label{FactGamma_split}
For any $\alpha \in \Phi$, we have $\Gamma_\alpha= \Gamma'_\alpha = \Lambda$.

In particular, we note that $0 \in \Lambda = \Gamma_\alpha$ for any $\alpha \in \Phi$ and therefore the $\RF^S$-valuation $\left(\varphi_\alpha\right)_{\alpha \in \Phi}$ is special.

If $\mathbf{G}$ is a semi-simple simply connected split $\mathbb{K}$-group, then the associated  extended affine Weyl group is $\Wext = \left( \bigoplus_{\alpha^\vee \in \Delta^\vee} \Lambda \alpha^\vee \right) \rtimes W(\Phi)$ where $\Delta^\vee$ is a basis of coroots (see Corollary~\ref{CorWeylsssc}).
\end{Fact}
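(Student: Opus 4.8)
The plan is to unwind the definitions of the previous subsections in the split case, where everything collapses. First I would record that since $\mathbf{G}$ is split we have $\widetilde{\mathbb{K}} = \mathbb{K}$, so every splitting field $\mathbb{L}_\alpha$ equals $\mathbb{K}$, every root is non-multipliable (the root system $\Phi = \widetilde{\Phi}$ is reduced), and the parametrization $x_\alpha$ of Notation~\ref{NotParamNonMult} simplifies to the Chevalley parametrization $x_\alpha : \mathbb{K} \xrightarrow{\sim} U_\alpha$ over $\mathbb{Z}$. Then, by Notation~\ref{NotRSvaluation}, for $\alpha$ non-multipliable and non-divisible we have $\varphi_\alpha(x_\alpha(y)) = \omega(y)$, so directly
$$\Gamma_\alpha = \varphi_\alpha(U_\alpha \setminus \{1\}) = \{\omega(y),\ y \in \mathbb{K}^\times\} = \omega(\mathbb{K}^\times) = \Lambda,$$
which is exactly the content of Lemma~\ref{LemSetOfValues}\ref{LemSetOfValues1} with $\mathbb{L}_\alpha = \mathbb{K}$; and since $2\alpha \notin \Phi$, Remark~\ref{rqGamma_alpha}\ref{RkGamma_non_multipliable} gives $\Gamma'_\alpha = \Gamma_\alpha = \Lambda$.

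Next I would observe that $0 \in \Lambda$ trivially (a valuation sends $1$ to $0$, and $\omega$ is surjective onto $\Lambda \cup \{\infty\}$ by hypothesis on the setup of section~\ref{subsecRvaluation}), so $0 \in \Gamma_\alpha$ for every $\alpha \in \Phi_{\mathrm{nd}} = \Phi$; by the Definition of special valuation this means $\left(\varphi_\alpha\right)_{\alpha \in \Phi}$ is special. Alternatively one can invoke Proposition~\ref{PropSemiSpecialValuation}: in the non-multipliable case it already yields $\widetilde{\Gamma}_\alpha = \omega(\mathbb{L}_\alpha^\times) = \Gamma_\alpha = \Lambda$, and the last sentence of that proposition shows $0 \notin \Gamma_\alpha$ forces $\alpha$ multipliable, which cannot happen here.

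Finally, for the affine Weyl group statement in the simply connected semisimple split case: I would apply Corollary~\ref{CorWeylsssc}. Its hypothesis — that $\left(\mathbb{L}_\alpha\right)^1_{\max} \neq \emptyset$ for every multipliable $\alpha \in \Delta$ — is vacuous since $\Phi$ has no multipliable roots. For $\alpha$ non-multipliable, $\widehat{\alpha} = \alpha^\vee$ and $\mathbb{L}_\alpha = \mathbb{K}$, so the corollary gives
$$W^{\mathrm{aff}} = \widetilde{W} = \Big( \bigoplus_{\alpha^\vee \in \Delta^\vee} \omega(\mathbb{K}^\times)\, \alpha^\vee \Big) \rtimes W^v = \Big( \bigoplus_{\alpha^\vee \in \Delta^\vee} \Lambda\, \alpha^\vee \Big) \rtimes W(\Phi),$$
using $W^v = W(\Phi)$ (valid by Corollary~\ref{corAffine_weyl_group} together with $N/T \simeq W(\Phi)$ for a reductive group). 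I do not anticipate a genuine obstacle here; the only point requiring a little care is the routine bookkeeping that the general parametrizations and Galois-theoretic notations of section~\ref{section root groups} really do degenerate to the Chevalley situation when $\widetilde{\mathbb{K}} = \mathbb{K}$, and that the various sets $\mathbb{L}_\alpha^0$, $\mathbb{L}_\alpha^1$ etc. simply do not enter because no root is divisible or multipliable.
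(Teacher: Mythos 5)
Your proposal is correct and follows essentially the same route as the paper, which simply reads off $\Gamma_\alpha = \omega(\mathbb{K}^\times) = \Lambda$ from the Chevalley parametrization, uses reducedness of $\Phi$ in the split case to get $\Gamma'_\alpha = \Gamma_\alpha$, and cites Corollary~\ref{CorWeylsssc} (with vacuous hypothesis on multipliable roots) for the affine Weyl group. The only cosmetic remark is that surjectivity of $\omega$ onto $\Lambda$ is here a matter of definition ($\Lambda := \omega(\mathbb{K}^\times)$), and $0 = \omega(1) \in \Lambda$ since $\Lambda$ is a group, so no appeal to Proposition~\ref{PropSemiSpecialValuation} is needed.
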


Let us interpret the natural subgroups $\mathbf{G}(\mathbb{O})$, $\mathbf{T}(\mathbb{O})$ and $\mathbf{U}_\alpha(\mathbb{O})$ in terms of parahoric subgroups.

\begin{Lem}\label{LemTUoverO}
We have $\mathbf{U}_\alpha(\mathbb{O}) = U_{\alpha,0}$ for any $\alpha \in \Phi$ and $\mathbf{T}(\mathbb{O}) = \mathbf{T}_b(\mathbb{K}) \subseteq T_b$, with equality whenever the split $\mathbb{K}$-group $\mathbf{G}$ is semi-simple.
\end{Lem}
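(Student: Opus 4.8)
The plan is to prove the two assertions separately, each time by unwinding the explicit parametrizations used to build the root group datum and the action $\nu$. For the root group assertion, I would use that the Chevalley parametrization $x_\alpha : \mathbb{G}_{a,\mathbb{Z}} \to \mathbf{U}_\alpha$ is an isomorphism of $\mathbb{Z}$-schemes, so it identifies $\mathbf{U}_\alpha(\mathbb{O})$ with $x_\alpha(\mathbb{O})$. By Notation~\ref{NotRSvaluation}, the valuation $\varphi_\alpha$ satisfies $\varphi_\alpha(x_\alpha(y)) = \omega(y)$ for $y \in \mathbb{K}$, hence
\[ U_{\alpha,0} = \varphi_\alpha^{-1}\big([0,\infty]\big) = x_\alpha\big( \{ y \in \mathbb{K},\ \omega(y) \geqslant 0 \} \big) = x_\alpha(\mathbb{O}) \]
since $\mathbb{O} = \omega^{-1}(\mathfrak{R}^S_{\geqslant 0} \cup \{\infty\})$. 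Comparing the two descriptions gives $\mathbf{U}_\alpha(\mathbb{O}) = U_{\alpha,0}$.

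For the torus, the key point is that a split torus $\mathbf{T} \cong \mathbb{G}_{m,\mathbb{Z}}^n$ satisfies $\mathbf{T}(R) = (R^\times)^n$ for any ring $R$; concretely, fixing a $\mathbb{Z}$-basis $(\chi_1,\dots,\chi_n)$ of $X^*(\mathbf{T})$, the map $t \mapsto (\chi_1(t),\dots,\chi_n(t))$ identifies $\mathbf{T}(\mathbb{O})$ with the set of $t \in \mathbf{T}(\mathbb{K})$ such that $\omega(\chi_i(t)) = 0$ for every $i$ (using that the units of $\mathbb{O}$ are exactly the elements of valuation $0$). On the other hand, since $\mathbf{G}$ is split we have $\mathbf{T} = \mathbf{S}$ and $X^*_{\mathbb{K}}(\mathbf{T}) = X^*(\mathbf{T})$, so from the definition of $\rho$ in Section~\ref{subsecAction_N_affine_space} an element $t \in \mathbf{T}(\mathbb{K})$ lies in $\mathbf{T}_b(\mathbb{K}) = \ker \rho$ exactly when $\omega(\chi(t)) = 0$ for $\chi$ running over a generating family of $X^*(\mathbf{T})$, in particular for each $\chi_i$. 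Comparing, $\mathbf{T}(\mathbb{O}) = \mathbf{T}_b(\mathbb{K})$.

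Finally, the inclusion $\mathbf{T}_b(\mathbb{K}) \subseteq T_b$ and the equality in the semisimple case are exactly Remark~\ref{RkTbK}: since $\nu$ restricted to $\mathbf{T}(\mathbb{K})$ equals $\rho_0$, which is $\rho$ followed by the projection $V_1 \to V = V_1/V_0$, one has $\mathbf{T}_b(\mathbb{K}) = \ker \rho \subseteq \ker \rho_0 = T_b$; and when $\mathbf{G}$ is semisimple, $V_{0,\mathbb{Z}} = 0$, so $\rho$ and $\rho_0$ have the same kernel, whence $\mathbf{T}_b(\mathbb{K}) = T_b$ and therefore $\mathbf{T}(\mathbb{O}) = T_b$. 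I do not expect any real obstacle here: the proof is a direct unwinding of definitions. The only two points deserving a line of care are the bookkeeping between $\rho$, $\rho_0$ and $\nu|_{\mathbf{T}(\mathbb{K})}$, and the observation that membership in $\ker\rho$ can be tested on a generating family of characters rather than on all of $X^*(\mathbf{T})$.
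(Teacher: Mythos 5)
Your proof is correct and follows essentially the same route as the paper: the first equality is read off from the Chevalley parametrization, the second by testing $\omega(\chi(t))=0$ on a basis of characters (the paper phrases this dually, via a basis of cocharacters and its antedual character basis, but the content is identical), and the comparison with $T_b$ is delegated to Remark~\ref{RkTbK} exactly as the paper does.
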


\begin{proof}
The first equality has been explained. Let us explain the second one.
Recall that $\mathbf{T}_b(\mathbb{K}) = \{t \in \mathbf{T}(\mathbb{K}):\ \forall \chi \in X_{\mathbb{K}}^*(\mathbf{T}),\, \omega(\chi(t)) = 0\}$.
Since $\mathbf{T}$ is split, it can be parametrized by a basis of cocharacters $(\lambda_i)$.
Thus, for any $t \in \mathbf{T}(\mathbb{O})$ and any $\chi \in X^*(\mathbf{T})$, we have that $\chi(t) \in \mathbb{O}^\times$ and therefore $\omega(\chi(t)) = 0$.
Hence $\mathbf{T}(\mathbb{O}) \subseteq \mathbf{T}_b(\mathbb{K})$.

Conversely, let $t \in \mathbf{T}_b(\mathbb{K})$ and write it $t = \prod_i \lambda_i(x_i)$ for some parameters in $\mathbb{K}^\times = \mathbb{G}_{m}(\mathbb{K})$.
Since the dual pairing is perfect, consider the antedual basis $(\chi_i)$ of $(\lambda_i)$.
Then $0 = \omega(\chi_i(t)) = \omega(x_i)$ and therefore $x_i \in \mathbb{O}^\times$ which means that $t \in \mathbf{T}(\mathbb{O})$.
Hence $\mathbf{T}_b(\mathbb{K}) \subseteq \mathbf{T}(\mathbb{O})$.
The comparison between $\mathbf{T}_b(\mathbb{K})$ and $T_b = \mathbf{T}(\mathbb{K})_b$ is given by Remark~\ref{RkTbK}.
\end{proof}

\begin{Ex}\label{ExTbGLn}
Take $\mathbf{G} = \mathrm{GL}_n$ and $\mathbf{T}$ the split maximal torus of diagonal matrices.
Then the $\Big(\chi_i: \operatorname{diag}(x_1,\dots,x_n) \mapsto x_i\Big)$ form a basis of $X^*(\mathbf{T})$ and roots are given by $\alpha_{i,j} = \chi_i - \chi_{j}$ for $i \neq j$.
Here, $\ker \nu = \mathbf{T}(\mathbb{K})_b = \mathcal{Z}_{\mathbf{G}}(\mathbb{K}) \cdot \mathbf{T}(\mathbb{O}) \supsetneq \mathbf{T}_b(\mathbb{K}) = \mathbf{T}(\mathbb{O})$.
\end{Ex}

Let $\mathbb{A}$ be the $\RF^S$-aff space defined as in Notation~\ref{NotAffineApartment} and pick $o := 0 \in V_{\RF^S}$ as origin of the standard apartment $\mathbb{A}$.

\begin{Prop}
The point $o$ is a special vertex and $P_o = \widehat{P}_o$.

If the split group $\mathbf{G}$ is semi-simple, then $\mathbf{G}(\mathbb{O}) = P_o$ is a parahoric subgroup.
\end{Prop}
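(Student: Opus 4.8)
Here is my plan for proving the final proposition.

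\medskip

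The plan is to prove the three assertions in order: first that $o$ is a special vertex, then that $P_o = \widehat{P}_o$, and finally, under the semisimplicity hypothesis, that $\mathbf{G}(\mathbb{O}) = P_o$. For the first point, recall that by definition $o$ is a special vertex when $W(\Phi_o) = W(\Phi)$. Using the preceding Fact, we know that $\Gamma_\alpha = \Gamma'_\alpha = \Lambda \ni 0$ for every $\alpha \in \Phi$. Since the constant function $x \mapsto -\alpha(x-o) = 0$ on the singleton $\{o\}$ takes the value $0 \in \Gamma'_\alpha$ for every $\alpha \in \Phi$, we get directly from the definition of $\Phi_\Omega$ (applied to $\Omega = \{o\}$) that $\Phi_o = \Phi$. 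Hence $W(\Phi_o) = W(\Phi)$ and $o$ is a special vertex. For the equality $P_o = \widehat{P}_o$: by Corollary~\ref{CorPchapeauOmegaQC} we have $\widehat{P}_o = U_o \widehat{N}_o$ and $P_o = T_b U_o$, so it suffices to show $\widehat{N}_o \subseteq P_o$, i.e. $\widehat{N}_o \subseteq N_o = P_o \cap N$. An element $n \in \widehat{N}_o$ fixes $o$, so $\nu(n)$ has vectorial part ${^v\!}\nu(n) = w \in W^v$ fixing $o$; since $o$ is special, $w \in W(\Phi_o) = W(\Phi) = W^v$, and moreover $r_{\alpha,0} = \nu(m_\alpha)$ for each $\alpha$ (Notation~\ref{NotAffineApartment}), so $\nu(n)$ lies in the group generated by the $r_{\alpha,0}$, which is $\overline{W} \subseteq \widetilde{W} \subseteq W^{\mathrm{aff}}$. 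Writing $n = n' t$ with $n'$ a product of the $m_\alpha$'s realizing $w$ and $t \in T$, one checks $\nu(t) = 0$, so $t \in T_b$; since the $m_\alpha \in \mathbf{G}(\mathbb{O})$ lie in $U_o N_o$-type decompositions (more precisely $m_\alpha = m(x_\alpha(1))$ normalizes the apartment and fixes $o$, hence $m_\alpha \in \widehat{N}_o$, and $\widetilde{N}_o \subseteq N_o$ by Corollary~\ref{CorNtildeFixOmega} after noting $m_\alpha \in U_o$ via its Bruhat-type writing), we conclude $n \in N_o \subseteq P_o$. I expect the cleanest route is: $\widehat{N}_o$ is generated modulo $T_b$ by the $m_\alpha$'s because $o$ is special, and $m_\alpha \in U_o$ since $x_\alpha(\pm 1), x_{-\alpha}(\mp 1) \in U_{\alpha,0} \cup U_{-\alpha,0}$ with $-\alpha(o) = 0$; thus $\widehat{N}_o \subseteq T_b \langle m_\alpha \rangle \subseteq T_b U_o = P_o$.

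\medskip

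For the third assertion, assume $\mathbf{G}$ semisimple. By Lemma~\ref{LemTUoverO} we have $\mathbf{U}_\alpha(\mathbb{O}) = U_{\alpha,0}$ for every $\alpha \in \Phi$ and $\mathbf{T}(\mathbb{O}) = \mathbf{T}_b(\mathbb{K}) = T_b$. Since $-\alpha(o) = 0$, Corollary~\ref{CorUaGermofChamber} (or directly the definition with $\Omega = \{o\}$) gives $U_{\alpha,o} = U_{\alpha,0} = \mathbf{U}_\alpha(\mathbb{O})$, so $U_o$ is the subgroup of $G$ generated by all the $\mathbf{U}_\alpha(\mathbb{O})$, which is visibly contained in $\mathbf{G}(\mathbb{O})$; together with $T_b = \mathbf{T}(\mathbb{O}) \subseteq \mathbf{G}(\mathbb{O})$, this yields $P_o = T_b U_o \subseteq \mathbf{G}(\mathbb{O})$. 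For the reverse inclusion I would invoke the structure theory of Chevalley group schemes over $\mathbb{O}$: since $\mathbf{G}$ is (semisimple and, one reduces to, simply connected via the standard trick, or one argues directly) a Chevalley–Demazure scheme and $\mathbb{O}$ is a valuation ring, the group $\mathbf{G}(\mathbb{O})$ admits a Bruhat-type (or "big cell") decomposition $\mathbf{G}(\mathbb{O}) = \bigcup_{w} \mathbf{U}^-(\mathbb{O}) \mathbf{T}(\mathbb{O}) n_w \mathbf{U}^+(\mathbb{O})$ with representatives $n_w$ lying in $\langle m_\alpha \rangle$; each such factor lies in $U_o^- T_b \langle m_\alpha\rangle U_o^+ \subseteq P_o$ by what precedes and Corollary~\ref{CorPOmegaDecQC}. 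Alternatively, and perhaps more in the spirit of the paper, one uses that $\widehat{P}_o$ is the pointwise stabilizer of $o$ (Lemma~\ref{lemParahoric_fixator}) together with the observation that $\mathbf{G}(\mathbb{O})$ stabilizes $o$: this last fact follows from the Iwasawa decomposition $\mathbf{G}(\mathbb{K}) = U_{C'} N \widehat{P}_o$ (Theorem~\ref{thmIwasawa}) combined with the split Iwahori/Cartan decomposition of $\mathbf{G}(\mathbb{O})$, which forces any $g \in \mathbf{G}(\mathbb{O})$ to lie in $\widehat{P}_o$. Either way we obtain $\mathbf{G}(\mathbb{O}) \subseteq \widehat{P}_o = P_o$, hence equality, and $P_o$ is a parahoric subgroup by definition.

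\medskip

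The main obstacle I anticipate is the reverse inclusion $\mathbf{G}(\mathbb{O}) \subseteq P_o$ in the semisimple case: the inclusion $P_o \subseteq \mathbf{G}(\mathbb{O})$ and the special-vertex statement are essentially formal consequences of the Fact and of Lemma~\ref{LemTUoverO}, but showing that every $\mathbb{O}$-point of $\mathbf{G}$ fixes the base point $o$ of the building requires either importing a decomposition result for $\mathbf{G}(\mathbb{O})$ over a general valuation ring (which is standard for Chevalley groups but is not proved in this excerpt) or a careful argument via the Iwasawa decomposition identifying the image of $\mathbf{G}(\mathbb{O})$ in the double-coset space $U_{C'} \backslash G / \widehat{P}_o \cong N/T_b$ with the trivial class. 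I would carry out the latter: given $g \in \mathbf{G}(\mathbb{O})$, write $g = u n b$ with $u \in U_{C'}$, $n \in N$, $b \in \widehat{P}_o$; reducing modulo the maximal ideal of $\mathbb{O}$ (using that $\mathbf{G}(\mathbb{O}) \to \mathbf{G}(\kappa)$ is well-behaved for the smooth affine group scheme $\mathbf{G}$) shows the Weyl component $n$ must be trivial modulo $T_b$, whence $g \in U_{C'} T_b \widehat{P}_o \cap \mathbf{G}(\mathbb{O})$, and a symmetric argument (or directly using that $u$ and $u^{-1}g$ both lie in $\mathbf{G}(\mathbb{O})$ forces $u \in U_o$) pins $g$ down inside $P_o$.
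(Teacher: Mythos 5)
Your argument is correct and follows essentially the same route as the paper's proof: the special-vertex claim and the equality $P_o=\widehat{P}_o$ come from $U_{\alpha,o}=U_{\alpha,0}=\mathbf{U}_\alpha(\mathbb{O})$ together with $m_\alpha\in U_o$ (your direct verification that $\Phi_o=\Phi$ via $0\in\Gamma'_\alpha$ is an equivalent reformulation of the paper's observation that $W_o=W^v$), and the inclusion $P_o\subseteq\mathbf{G}(\mathbb{O})$ is immediate. For the reverse inclusion in the semisimple case the paper does exactly what your first alternative proposes: it imports the standard structural fact (cited from Abe) that over a local ring $\mathbf{G}(\mathbb{O})$ is generated by $\mathbf{T}(\mathbb{O})$ and the $\mathbf{U}_\alpha(\mathbb{O})$; you correctly identify this external input as the crux. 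One caution: your second alternative (Iwasawa decomposition plus reduction modulo the maximal ideal) is not sound as sketched, since the reduction $\mathbf{G}(\mathbb{O})\to\mathbf{G}(\kappa)$ does not force the Weyl component to be trivial ($\mathbf{G}(\kappa)$ has its own full Bruhat decomposition), but as you present it only as a backup to the Chevalley-group argument this does not affect the proof.
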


\begin{proof}
For any $\alpha \in \Phi$, we have $\alpha(o) = 0$ and therefore $U_{\alpha,o} = U_{\alpha,0} = \mathbf{U}_\alpha(\mathbb{O})$ by Lemma~\ref{LemTUoverO}.
By definition, $m_\alpha = x_\alpha(1)x_{-\alpha}(-1) x_\alpha(1) \in U_o$ for any $\alpha \in \Phi$.
Therefore $W_o = {^v}\nu(\widetilde{N}_o) = W^v$.
Hence $o$ is a special vertex and $\widehat{P}_o = P_o$.

Since $P_o$ is generated by $T_b = \mathbf{T}(\mathbb{O})$ and the $U_{\alpha,o} = \mathbf{U}_\alpha(\mathbb{O})$, it is contained in $\mathbf{G}(\mathbb{O})$.
Conversely, by \cite[1.6]{Abe}, we know that $\mathbf{G}(\mathbb{O})$ is generated by $\mathbf{T}(\mathbb{O})$ and the $\mathbf{U}_\alpha(\mathbb{O})$ for $\alpha \in \Phi$ since $\mathbb{O}$ is a local ring (being the valuation ring of a valued field).
Hence $\mathbf{G}(\mathbb{O}) = P_o$.
\end{proof}

\section{Projection maps}\label{projectionsec}

\subsection{A remark on Hahn's embedding theorem}\label{subRemark_Hahn}

By Hahn's embedding theorem (see for instance \cite{Gravett}), there exists an increasing embedding $\iota: \Lambda \rightarrow \mathfrak{R}^{\mathrm{rk}(\Lambda)}$ of the totally ordered abelian group $\Lambda$ into the totally ordered real vector space $\mathfrak{R}^{\mathrm{rk}(\Lambda)}$. Set:
$$S:=\{ \min \mathrm{Supp}(\iota(\lambda)) \, | \,   \lambda \in \Lambda \setminus \{0\}  \} \subseteq \mathrm{rk}(\Lambda),$$
and let $\iota': \Lambda \rightarrow \mathfrak{R}^S$ be the composite of $\iota:\Lambda \rightarrow \mathfrak{R}^{\mathrm{rk}(\Lambda)}$ with the projection $\mathfrak{R}^{\mathrm{rk}(\Lambda)} \rightarrow \mathfrak{R}^S$. The map $\iota'$ is a non-decreasing group homomorphism with trivial kernel. It is therefore an increasing embedding of $\Lambda$ into $\mathfrak{R}^S$. Moreover, $\iota'$ induces an increasing bijection
  $\iota'_{\mathrm{rk}}: \mathrm{rk}(\Lambda) \rightarrow S$ such that, for any $x \in \Lambda \setminus \{0\}$, if $[x]$ stands for the Archimedian class of $x$ in $\Lambda$:
  $$\min \mathrm{Supp}(\iota'(\lambda)) = \iota'_{\mathrm{rk}}([\lambda]).$$

Therefore, up to replacing $\iota$ by $\iota'$, we may and do assume that $\iota: \Lambda \rightarrow \mathfrak{R}^{\mathrm{rk}(\Lambda)}$ is an increasing embedding such that, for any $\lambda \in \Lambda \setminus \{0\}$, the minimum of the support of $ \iota(\lambda)$ is the Archimedian class of $\lambda$. 

\subsection{Construction and explicit description of the fibers}\label{SubsecContructionFibers}

Let $\Lambda_0$ be a convex subgroup of $\Lambda$ such that $\omega(\mathbb{K}^\times) \cap \Lambda_0 \neq 0$ and $\omega(\mathbb{K}^\times)$ is not contained in $\Lambda_0$. Set $\Lambda_1 := \Lambda/\Lambda_0$. The group $\Lambda_1$ is then naturally endowed with the structure of a totally ordered abelian group. The rank $\mathrm{rk}(\Lambda)$ is isomorphic to the set $\mathrm{rk}(\Lambda_1) \amalg \mathrm{rk}(\Lambda_0)$ endowed with the total order such that $s_1 < s_0$ for any $s_0 \in \mathrm{rk}(\Lambda_0)$ and $s_1\in \mathrm{rk}(\Lambda_1)$. Hence $\mathfrak{R}^{\mathrm{rk}(\Lambda_0)}$ is a convex vector subspace of $\mathfrak{R}^{\mathrm{rk}(\Lambda)}$ such that: $$\mathfrak{R}^{\mathrm{rk}(\Lambda)}/\mathfrak{R}^{\mathrm{rk}(\Lambda_0)}\cong \mathfrak{R}^{\mathrm{rk}(\Lambda_1)}$$ as ordered real vector spaces.  We denote by $\pi: \mathfrak{R}^{\mathrm{rk}(\Lambda)}\rightarrow \mathfrak{R}^{\mathrm{rk}(\Lambda_1)}$ the projection.\\

Since $\min \mathrm{Supp}(\iota(\lambda))$ is the Archimedian class of $\lambda$ in $\Lambda$ for any $\lambda \in \Lambda \setminus \{0\}$, we have $\iota(\Lambda_0) \subseteq \mathfrak{R}^{\mathrm{rk}(\Lambda_0)}$. We deduce that $\iota$ induces two increasing embeddings $\iota_0: \Lambda_0 \rightarrow \mathfrak{R}^{\mathrm{rk}(\Lambda_0)}$ and $\iota_1: \Lambda_0 \rightarrow \mathfrak{R}^{\mathrm{rk}(\Lambda_0)}$ such that the following diagram with exact lines commutes:
\begin{equation*}
\xymatrix{
0 \ar[r] & \Lambda_0 \ar[r]\ar[d]^{\iota_0} & \Lambda \ar[r]\ar[d]^{\iota}&\Lambda_1 \ar[r]\ar[d]^{\iota_1}&0\\
0 \ar[r] & \mathfrak{R}^{\mathrm{rk}(\Lambda_0)} \ar[r] & \mathfrak{R}^{\mathrm{rk}(\Lambda)} \ar[r]&\mathfrak{R}^{\mathrm{rk}(\Lambda_1)} \ar[r]&0
}
\end{equation*}

Let $\mathbb{A}$ be an $\mathfrak{R}^{\mathrm{rk}(\Lambda)}$-aff space with underlying $\mathbb{Z}$-module $V$ and let $o$ be a fixed origin of $\mathbb{A}$. 
Similarly, let $\mathbb{A}_1$ be an $\mathfrak{R}^{\mathrm{rk}(\Lambda_1)}$-aff space with underlying $\mathbb{Z}$-module $V$ and let $o_1$ be a fixed origin of $\mathbb{A}_1$. The projection $\pi: \mathfrak{R}^{\mathrm{rk}(\Lambda)}\rightarrow \mathfrak{R}^{\mathrm{rk}(\Lambda_1)}$ induces by tensorization a linear map:
$$\pi_{\mathrm{vect}}: V \otimes \mathfrak{R}^{\mathrm{rk}(\Lambda)}\rightarrow V \otimes \mathfrak{R}^{\mathrm{rk}(\Lambda_1)},$$
and hence an epimorphism:
\begin{align*}
\pi: \mathbb{A}&\rightarrow \mathbb{A}_1\\
x=o+v & \mapsto \pi(x) = o'+\pi_{\mathrm{vect}}(v).
\end{align*}
By abuse of notations, we will denote $\pi$ instead of $\pi_{\mathrm{vect}}$ and $\pi$ in the sequel.

Let now $\omega_1: \mathbb{K} \rightarrow \Lambda_1 \cup \{\infty\}$ be the composite of the valuation $\omega$ followed by the projection $\Lambda \rightarrow \Lambda_1$. It is a non-trivial valuation on $\mathbb{K}$, and hence, all the work that was done in the previous sections for the quasi-split reductive group $\mathbf{G}$ over the valued field $(\mathbb{K},\omega)$ can be done over the valued field $(\mathbb{K},\omega_1)$. In particular, the root group datum:
$$(G,T,(U_{\alpha})_{\alpha \in \Phi}, (M_{\alpha})_{\alpha \in \Phi})$$
associated to $\mathbf{G}$ can be endowed with an $\mathfrak{R}^{\mathrm{rk}(\Lambda)}$-valuation $(\varphi_{\alpha})_{\alpha \in \Phi}$ induced by $\omega$ and with an $\mathfrak{R}^{\mathrm{rk}(\Lambda_1)}$-valuation $(\varphi^1_{\alpha})_{\alpha \in \Phi}$ induced by $\omega_1$. This second valuation can and will be chosen in a way that $\varphi^1_{\alpha}=\pi \circ \varphi_{\alpha}$ for $\alpha \in \Phi$. 

Similarly, the aff spaces $\mathbb{A}$ and $\mathbb{A}_1$ can both be naturally endowed with an action of $N$. More precisely, the $\mathfrak{R}^{\mathrm{rk}(\Lambda)}$-aff space $\mathbb{A}$ can be endowed with an action $\nu$ of $N$ by $\mathfrak{R}^{\mathrm{rk}(\Lambda)}$-aff maps that is compatible with 
the valuation $(\varphi_{\alpha})_{\alpha \in \Phi}$, and the $\mathfrak{R}^{\mathrm{rk}(\Lambda_1)}$-aff space $\mathbb{A}_1$ can be endowed with an action $\nu_1$ of $N$ by $\mathfrak{R}^{\mathrm{rk}(\Lambda_1)}$-aff maps that is compatible with 
the valuation $(\varphi^1_{\alpha})_{\alpha \in \Phi}$. This second action can and will be chosen so that $\nu_1 (n)(\pi(x)) = \pi(\nu(n)(x))$ for $n \in N$ and $x \in \mathbb{A}$.

By gathering all the  previous observations, we can then construct the $\mathfrak{R}^{\mathrm{rk}(\Lambda)}$-building $\mathcal{I}(\mathbb{K},\omega,\mathbf{G})$ associated to $\mathbf{G}$ over $(\mathbb{K},\omega)$ and the $\mathfrak{R}^{\mathrm{rk}(\Lambda_1)}$-building $\mathcal{I}(\mathbb{K},\omega_1,\mathbf{G})$ associated to $\mathbf{G}$ over $(\mathbb{K},\omega_1)$. Moreover, the epimorphism $\pi = \pi: \mathbb{A} \rightarrow \mathbb{A}_1$ induces a surjective map:
$$\pi: \mathcal{I}(\mathbb{K},\omega,\mathbf{G}) \rightarrow \mathcal{I}(\mathbb{K},\omega_1,\mathbf{G})$$
which is compatible with the $G$-action.

\begin{Not} In the sequel, any symbol $\mathfrak{X}$ that has been introduced in the previous sections for $\mathcal{I}(\mathbb{K},\omega,\mathbf{G})$ will be denoted $\mathfrak{X}_1$ when it refers to $\mathcal{I}(\mathbb{K},\omega_1,\mathbf{G})$. For instance, the notation $\Gamma_{\alpha,1}'$ will stand for the set that plays the same role as $\Gamma_{\alpha}$ for the valuation $\omega_1$.
\end{Not}

Take now $g_1\in G$ and $x_1\in \mathbb{A}_1$, 
and consider the point $X_1:=[g_1,x_1]$
in $\mathcal{I}(\mathbb{K},\omega_1,\mathbf{G})$. 
By setting $U_{X_1}:=g_1U_{x_1}g_1^{-1}$, we can define the map:
\begin{align*}
\varphi_{X_1}: U_{X_1} \times \pi^{-1}(\{x_1\}) &\rightarrow \mathcal{I}(\mathbb{K},\omega,\mathbf{G})\\
(u,z) & \mapsto [ug_1,z].
\end{align*}

\begin{proposition}
The image of $\varphi_{X_1}$ is the fiber $\pi^{-1}(\{X_1\})$.
\end{proposition}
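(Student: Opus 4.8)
The statement asserts two inclusions, and I would treat them separately. For the inclusion $\mathrm{im}(\varphi_{X_1}) \subseteq \pi^{-1}(\{X_1\})$, the key observation is that $\pi$ is $G$-equivariant and $\pi(z) = x_1$ for any $z \in \pi^{-1}(\{x_1\})$, hence for $u \in U_{X_1}$ and $z \in \pi^{-1}(\{x_1\})$ we have $\pi([ug_1,z]) = ug_1 \cdot \pi([1,z]) = ug_1 \cdot [1,x_1]_1 = [ug_1,x_1]_1$ in $\mathcal{I}(\mathbb{K},\omega_1,\mathbf{G})$. So it remains to check that $[ug_1,x_1]_1 = [g_1,x_1]_1 = X_1$, i.e. that $u \in U_{X_1}$ fixes the point $X_1$ in $\mathcal{I}(\mathbb{K},\omega_1,\mathbf{G})$. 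Since $u = g_1 u' g_1^{-1}$ with $u' \in U_{x_1}$, and $U_{x_1}$ is (contained in) the fixator $\widehat P_{x_1}$ of $x_1$ in $\mathcal{I}(\mathbb{K},\omega_1,\mathbf{G})$ by Lemma~\ref{lemParahoric_fixator} applied over $(\mathbb{K},\omega_1)$ --- here I need $U_{x_1} \subseteq U_{1,x_1}$, which requires comparing the groups $U_{\alpha,-\alpha(x_1)}$ for the two valuations; since $\varphi^1_\alpha = \pi \circ \varphi_\alpha$, one gets $U_{\alpha,x_1} \subseteq U^1_{\alpha,x_1}$ directly from $\varphi_\alpha(v) \geq -\alpha(x_1) \Rightarrow \varphi^1_\alpha(v) = \pi(\varphi_\alpha(v)) \geq \pi(-\alpha(x_1)) = -\alpha(\pi(x_1))$ --- we conclude $g_1 u' g_1^{-1} \cdot [g_1,x_1]_1 = [g_1 u',x_1]_1 = [g_1,x_1]_1$.

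For the reverse inclusion $\pi^{-1}(\{X_1\}) \subseteq \mathrm{im}(\varphi_{X_1})$, take a point $Y = [h,y] \in \mathcal{I}(\mathbb{K},\omega,\mathbf{G})$ with $\pi(Y) = X_1$. Then $[h, \pi(y)]_1 = \pi(Y) = X_1 = [g_1, x_1]_1$, so by definition of the equivalence relation defining $\mathcal{I}(\mathbb{K},\omega_1,\mathbf{G})$ there is $n \in N$ with $\pi(y) = \nu_1(n)(\pi(x_1))$ and $g_1^{-1} h n \in U^1_{\pi(x_1)}$. Replacing $h$ by $hn$ and $y$ by $\nu(n)^{-1}(y)$ --- which does not change the class $[h,y]$, since $[hn, \nu(n)^{-1}(y)] = [h,y]$ --- and using that $\nu_1(n)\circ \pi = \pi \circ \nu(n)$, I may assume $\pi(y) = \pi(x_1)$ and $u_1 := g_1^{-1} h \in U^1_{\pi(x_1)}$. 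Now I would like to lift $u_1$ to an element of $U_{x_1}$: here the decomposition of $U^1_{\pi(x_1)}$ as a product of root groups (Example~\ref{ExQC}(2)-(3) over $(\mathbb{K},\omega_1)$) lets me write $u_1$ via its components in the $U^1_{\alpha,\pi(x_1)}$ and $\widetilde N^1_{\pi(x_1)}$, and for the unipotent part I should be able to adjust $y$ within $\pi^{-1}(\{x_1\})$ and $h$ within its $U_{X_1}$-orbit so that the discrepancy lies in $U_{x_1}$; more precisely, after these reductions $[h,y] = [g_1 u_1, y]$ with $u_1 \in U^1_{\pi(x_1)}$ fixing $X_1$, and writing $u := g_1 u_1 g_1^{-1} \in g_1 U^1_{\pi(x_1)} g_1^{-1}$, one checks $[h,y] = [u g_1, y] = \varphi_{X_1}(u', y)$ for a suitable $u' \in U_{X_1}$ once it is known that $y \in \pi^{-1}(\{x_1\})$. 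The last point, $\pi(y) = x_1$ rather than merely $\pi(y) = \pi(x_1)$, holds because $\pi(y)$ and $x_1$ both lie in $\mathbb{A}$ with the same image in $\mathbb{A}_1$ --- no, this needs care: $\pi: \mathbb{A} \to \mathbb{A}_1$ is not injective, so $\pi(y) = \pi(x_1)$ only gives $y \in \pi^{-1}(\pi(x_1))$; but the definition of $\pi^{-1}(\{x_1\})$ as a subset of $\mathcal{I}(\mathbb{K},\omega,\mathbf{G})$ is exactly $\{z : \pi(z) = x_1 \text{ in } \mathbb{A}_1\}$ and $x_1$ is identified with $[1,x_1]$, whose image is $[1,\pi(x_1)]_1 = \pi(x_1)$; so $\pi(y) = \pi(x_1)$ as points of $\mathbb{A}_1$ is precisely the condition $y \in \pi^{-1}(\{\pi(x_1)\})$, and since $x_1 \in \pi^{-1}(\{\pi(x_1)\})$ too, the fiber $\pi^{-1}(\{x_1\})$ in the statement must mean $\pi^{-1}$ of the point $x_1 \in \mathbb{A}_1$, i.e. $\pi^{-1}(\{\pi(x_1)\})$ --- I would clarify this notational point at the start of the proof.

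The main obstacle I anticipate is the lifting step in the reverse inclusion: given $u_1 \in U^1_{\pi(x_1)}$ (a group defined using the coarser valuation $\omega_1$), produce an element of $U_{x_1}$ and a point of the fiber that together realize $[h,y]$ via $\varphi_{X_1}$. This is where one must exploit the precise relationship $\varphi^1_\alpha = \pi \circ \varphi_\alpha$ and the product decompositions of $U_\Omega$ from Example~\ref{ExQC}, together with the freedom to move $y$ inside $\pi^{-1}(\{\pi(x_1)\})$ by elements of $U^1_{\pi(x_1)}$ that act trivially modulo $\Lambda_0$; concretely, for a root $\alpha$ with $\pi(-\alpha(x_1)) = -\alpha(\pi(x_1))$ fixed, the group $U^1_{\alpha,\pi(x_1)} / U_{\alpha,x_1}$ is controlled by the $\ker(\pi)$-module structure, and one pushes the excess into the "vertical" directions of the fiber. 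I expect this to reduce, after the decomposition, to a rank-one computation handled by Proposition~\ref{PropRankOneLevi} and Lemma~\ref{LemConjugationMaUb}, so the argument should go through but requires bookkeeping; I would present it by first establishing the two inclusions modulo this lifting lemma, then proving the lifting lemma via the root-group decompositions.
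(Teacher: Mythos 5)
Your first inclusion is fine, and your overall strategy for the converse (unwind the equivalence relation defining $X_1$ in $\mathcal{I}(\mathbb{K},\omega_1,\mathbf{G})$ to produce $n\in N$, then transport $x$ by $\nu(n^{-1})$ into the fiber over $x_1$) is exactly the paper's. But the ``lifting lemma'' you defer to --- producing from $u_1=g_1^{-1}hn\in U^1_{x_1}$ an element of an $\omega$-group attached to a point of the fiber --- rests on a misreading of the definition of $\varphi_{X_1}$, and it is precisely the step you cannot complete. Here $x_1$ is a point of $\mathbb{A}_1$, and in the definition $U_{X_1}:=g_1U_{x_1}g_1^{-1}$ the group $U_{x_1}$ is the one attached to $x_1$ for the coarse valuation $\omega_1$, i.e.\ what you denote $U^1_{\pi(x_1)}$ (this is confirmed later in the section, e.g.\ by Lemma~\ref{uprime} and Corollary~\ref{egal}, which compare $U_{X_1}\cap g_1U_\alpha g_1^{-1}=g_1U_{\alpha,x_1}g_1^{-1}$ with the $\omega$-groups of the points of the fiber). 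Consequently no lifting is needed: once you have $g_1^{-1}gn\in U_{x_1}$ and $z:=\nu(n^{-1})(x)$ with $\pi(z)=x_1$, the element $u:=gng_1^{-1}$ already lies in $U_{X_1}$ and
\[
\varphi_{X_1}(u,z)=[ug_1,z]=[gn,\nu(n^{-1})(x)]=g\cdot[1,x]=X .
\]
This is the whole proof of surjectivity.

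Two further remarks. First, the lifting statement you would need under your reading is genuinely false in general: $U^1_{\alpha,x_1}=(\varphi^1_\alpha)^{-1}([-\alpha(x_1),\infty])$ is strictly larger than $U_{\alpha,z}$ for any single $z\in\pi^{-1}(x_1)$ (the intersection over all such $z$ is only $U'_{\alpha,x_1}$ by Lemma~\ref{uprime}), so an arbitrary element of $U^1_{x_1}$ cannot be absorbed into the $\omega$-fixator of a point of the fiber, and surjectivity of $\varphi_{X_1}$ would fail if its domain were the group you have in mind. Second, in the forward inclusion your detour through $U_{\alpha,x}\subseteq U^1_{\alpha,\pi(x)}$ is harmless but unnecessary for the same reason: $u\in U_{X_1}$ fixes $X_1$ directly because $g_1^{-1}ug_1\in U_{x_1}$, taking $n=1$ in the equivalence relation defining $\mathcal{I}(\mathbb{K},\omega_1,\mathbf{G})$.
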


\begin{proof}
For any $(u,z) \in U_{X_1} \times \pi^{-1}(\{x_1\})$, we have:
$$\pi(\varphi_{X_1}(u,z))= [ug_1,\pi(z)]=[ug_1,x_1]=u\cdot X_1=X_1.$$
Hence $\mathrm{Im}(\varphi_{X_1}) \subseteq \pi^{-1}(\{X_1\})$. 
Conversely, choose an element $X:=[g,x]$ in the fiber $\pi^{-1}(\{X_1\})$. 
We then have $[g,\pi(x)] = [g_1,x_1] \in \mathcal{I}(\mathbb{K},\omega_1,\mathbf{G})$ 
and hence we can find $n\in N$ such that 
 $\nu(n)(x_1)=\pi(x)$ and $g_1^{-1}gn \in U_{x_1}=g_1^{-1}U_{X_1}g_1$. 
Set $u:=gng_1^{-1}$ and $z:=\nu(n^{-1})(x)$. Then $u\in U_{X_1} $, $z\in \pi^{-1}(\{x_1\})$, and  :
$$\pi(\varphi_{X_1}(u,z))=[gn,\nu(n^{-1})(x)]=gnn^{-1}\cdot [1,x] = g \cdot [1,x] = [g,x]=X.$$
Hence $X \in \mathrm{Im}(\varphi_{X_1})$, and $\mathrm{Im}(\varphi_{X_1}) = \pi^{-1}(\{X_1\})$.
\end{proof}

As a consequence, if $U_{\pi^{-1}(X_1)}$ stands for the pointwise stabilizer of 
$\pi^{-1}(X_1)$ in $U_{X_1}$, then 
for any $u_0 \in U_{\pi^{-1}(X_1)}$, $u \in U_{X_1}$ and 
$z \in \pi^{-1}(\{x_1\})$, we have 
$[ug_1,z] \in \pi^{-1}(X_1)$, so that:
$$\varphi_{X_1}(u_0u,z)=[u_0ug_1,z]=u_0\cdot [ug_1,z] = [ug_1,z] = 
\varphi_{X_1}(u,z).$$
Hence $\varphi_{X_1}$ induces a surjective map:
$$\overline{\varphi}_{X_1}: (U_{X_1}/U_{\pi^{-1}(\{X_1\})}) \times \pi^{-1}(\{x_1\}) \rightarrow \pi^{-1}(\{X_1\}).$$

Consider now two elements $(u,z)$ and $(u',z')$ in $U_{X_1}\times \pi^{-1}(\{x_1\})$
 such that 
$\varphi_{X_1}(u,z)=
\varphi_{X_1}(u',z')$. Then we have 
$[ug_1,z]=[u'g_1,z']$, and hence we can find $n \in N$ such that 
\begin{equation}\label{calglo}
\begin{cases}
z'=\nu(n)(z),\\
g_1^{-1}u^{-1}u'g_1n \in U_{z} \subseteq U_{x_1}.
\end{cases}
\end{equation}
By setting $m:=g_1ng_1^{-1}$, we get 
$m \in U_{X_1}\cap g_1Ng_1^{-1}$ and :
$$\begin{cases}
  z'=\nu(g_1^{-1}mg_1)(z),\\
  u^{-1}u'm \in g_1U_{z}g_1^{-1}.
\end{cases}$$
In other words, if we introduce the groups:
\begin{gather*}
  N_{0,X_1}:= U_{X_1}\cap g_1Ng_1^{-1},\\
  U_{0,z}:=  g_1 U_{z} g_1^{-1},
\end{gather*}
and the group homomorphism:
\begin{align*}
\nu_{0,X_1}: N_{0,X_1} &\rightarrow \mathrm{Aff}_{\mathfrak{R}^{\mathrm{rk}(\Lambda_0)}}\left(\pi^{-1}(\{x_1\})\right)\\
\eta & \mapsto \nu(g_1^{-1}\eta g_1)|_{\pi^{-1}(\{x_1\})}
\end{align*}
where $\mathrm{Aff}_{\mathfrak{R}^{\mathrm{rk}(\Lambda_0)}}\left(\pi^{-1}(\{x_1\})\right)$ has been defined in section \ref{raff}, 
 then $m \in N_{0,X_1}$ and:
\begin{equation} \label{calinfi}
\begin{cases}
    z'=\nu_{0,X_1}(m)(z),\\
  u^{-1}u'm \in U_{0,z}.
\end{cases}
\end{equation}

  Conversely, if we can find $m \in N_{0,X_1}$ satisfying (\ref{calinfi}),
  then we can write $u^{-1}u'm=g_1vg_1^{-1}$ with $v \in U_{z}$, and hence:
  \begin{align*}
    \varphi_{X_1}(u',z') &= [u'g_1,z'] = u'g_1[1,z']\\
    &= ug_1vg_1^{-1}m^{-1}g_1 [1,z'] =ug_1v([1,\nu_{0,X_1} (m^{-1})z']) \\
    &= ug_1v([1,z]) = ug_1[1,z]=u[g_1,z]= \varphi_{X_1}(u,z).
  \end{align*}

We have thus proved the following proposition:

 \begin{proposition} 
Consider the group $\overline{N}_{0,X_1}:= N_{0,X_1}/\left(U_{\pi^{-1}(X_1)} \cap g_1Ng_1^{-1}\right)$ and the group
homomorphism:
$$  \overline{\nu}_{0,X_1}: \overline{N}_{0,X_1} \rightarrow \mathrm{Aff}\left(\pi^{-1}(\{x_1\})\right)$$
induced by $\nu_{0,X_1}$. For each $z\in \pi^{-1}(\{x_1\})$,
 set $\overline{U}_{0,z}:=U_{0,z}/\left( U_{\pi^{-1}(X_1)}\cap g_1U_zg_1^{-1}\right)$. Endow the set $(U_{X_1}/U_{\pi^{-1}(\{X_1\})}) \times \pi^{-1}(\{x_1\})$ 
with the equivalence relation defined in the following way: $(p,z)\sim (p',z')$ if, and only if, 
there exists $n \in \overline{N}_{0,X_1}$ satisfying equations:
\begin{equation} 
  \begin{cases}
      z'=\overline{\nu}_{0,X_1}(n)(z),\\
    p^{-1}p'n \in \overline{U}_{0,z}.
  \end{cases}
  \end{equation}
Then the map $\overline{\varphi}_{X_1}$ induces a bijection:
   $$ \frac{(U_{X_1}/U_{\pi^{-1}(\{X_1\})}) \times \pi^{-1}(\{x_1\})}{\sim} \rightarrow \pi^{-1}(\{X_1\}).$$
 which is compatible with the action of $U_{X_1}/U_{\pi^{-1}(\{X_1\})}$ and which will still be denoted $\overline{\varphi}_{X_1}$.
  \end{proposition}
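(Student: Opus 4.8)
The statement to be proved asserts that $\overline{\varphi}_{X_1}$ descends to a well-defined bijection on the quotient $\big((U_{X_1}/U_{\pi^{-1}(\{X_1\})}) \times \pi^{-1}(\{x_1\})\big)/\sim$. The strategy is to carry out three verifications in turn: first that $\sim$ is genuinely an equivalence relation; second that $\overline{\varphi}_{X_1}$ is constant on $\sim$-classes (so that the induced map $\overline{\overline{\varphi}}_{X_1}$ is well defined); and third that this induced map is injective (surjectivity being already established, since $\overline{\varphi}_{X_1}$ is surjective). The $U_{X_1}/U_{\pi^{-1}(\{X_1\})}$-equivariance is then immediate from the formula $\varphi_{X_1}(u,z)=[ug_1,z]$, which intertwines left multiplication by $U_{X_1}$ on the source with the $G$-action on $\mathcal{I}(\mathbb{K},\omega,\mathbf{G})$ restricted to $U_{X_1}$, and passes to the quotients because $U_{\pi^{-1}(\{X_1\})}$ is normal in $U_{X_1}$ (it is defined as a pointwise stabilizer, hence a normal subgroup of the stabilizer).

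First I would check that $\sim$ is an equivalence relation. Reflexivity uses $1 \in \overline{N}_{0,X_1}$ and $\overline{\nu}_{0,X_1}(1)=\mathrm{id}$ together with $1 \in \overline{U}_{0,z}$. Symmetry and transitivity require the ``cocycle'' behaviour already implicitly used in the construction: if $(p,z)\sim(p',z')$ via $n$ and $(p',z')\sim(p'',z'')$ via $n'$, then one composes to get $(p,z)\sim(p'',z'')$ via $n'n$ (or the appropriately ordered product), using that $\overline{\nu}_{0,X_1}$ is a group homomorphism and that, by the analogue of Proposition~\ref{PropActionNUOmega} applied inside the apartment, conjugation by elements of $N_{0,X_1}$ carries the groups $U_{0,z}$ to the $U_{0,z'}$ with $z'=\nu_{0,X_1}(n)(z)$; the key identity $n^{-1} U_{0,z'} n = U_{0,z}$ is exactly what makes the condition $p^{-1}p'n \in \overline{U}_{0,z}$ compose correctly. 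This is the same bookkeeping as in the verification that $\sim$ on $G\times\mathbb{A}$ is an equivalence relation in Section~\ref{sectionLambdaBuildingFromVRGD}, transported to the ``infinitesimal'' situation.

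Next, the well-definedness of the induced map and its injectivity are precisely the computations displayed in the excerpt just before the proposition: the equivalences $\varphi_{X_1}(u,z)=\varphi_{X_1}(u',z') \iff$ equations (\ref{calglo}) hold for some $n\in N$ $\iff$ equations (\ref{calinfi}) hold for some $m\in N_{0,X_1}$, together with the final display chain showing that (\ref{calinfi}) implies $\varphi_{X_1}(u',z')=\varphi_{X_1}(u,z)$. I would simply organize these into: (a) if $(p,z)\sim(p',z')$ then $\overline{\varphi}_{X_1}(p,z)=\overline{\varphi}_{X_1}(p',z')$ — this is the ``conversely'' computation, after checking it is insensitive to the choice of representatives $u,u'$ of $p,p'$ and to the chosen lift of $n$, which holds because modifying $u$ by an element of $U_{\pi^{-1}(\{X_1\})}$ does not change the class $[ug_1,z]$ and modifying $m$ by an element of $U_{\pi^{-1}(X_1)}\cap g_1Ng_1^{-1}$ changes $z'$ only within its class and $u^{-1}u'm$ only within $\overline{U}_{0,z}$; (b) injectivity — if $\overline{\varphi}_{X_1}(p,z)=\overline{\varphi}_{X_1}(p',z')$ then $[ug_1,z]=[u'g_1,z']$ for some lifts, giving (\ref{calglo}), hence (\ref{calinfi}) with some $m\in N_{0,X_1}$, whose class $n$ in $\overline{N}_{0,X_1}$ witnesses $(p,z)\sim(p',z')$.

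The main obstacle I anticipate is purely notational rather than conceptual: one must be careful that the three groups $\overline{N}_{0,X_1}$, $\overline{U}_{0,z}$, and $U_{X_1}/U_{\pi^{-1}(\{X_1\})}$ are quotients by compatible subgroups, so that the relation $p^{-1}p'n \in \overline{U}_{0,z}$ even makes sense — this requires $U_{\pi^{-1}(X_1)}\cap g_1 U_z g_1^{-1} \subseteq U_{\pi^{-1}(\{X_1\})}$ and $U_{\pi^{-1}(X_1)}\cap g_1 N g_1^{-1}$ to be normal in $N_{0,X_1}$, which I would verify at the outset — and that the homomorphism $\overline{\nu}_{0,X_1}$ is genuinely well defined on the quotient $\overline{N}_{0,X_1}$, i.e. that $\nu_{0,X_1}$ kills $U_{\pi^{-1}(X_1)}\cap g_1 N g_1^{-1}$. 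The latter holds because an element of $U_{\pi^{-1}(X_1)}$ fixes every point of $\pi^{-1}(\{X_1\})$, in particular every $[g_1,z]$ with $z\in\pi^{-1}(\{x_1\})$, which forces the corresponding affine transformation $\nu(g_1^{-1}\eta g_1)$ to fix $\pi^{-1}(\{x_1\})$ pointwise (using injectivity of $i$ from the Lemma after Definition~\ref{DefLambdaBuildingFromDatum} and the fact that $[g_1,z]=[g_1,z']\iff z=z'$ within a fixed apartment chart). Once these compatibilities are nailed down, everything else is a transcription of the displayed computations.
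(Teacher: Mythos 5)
Your proposal is correct and follows essentially the same route as the paper, whose proof of this proposition consists precisely of the computations displayed immediately before its statement (the equivalence between $\varphi_{X_1}(u,z)=\varphi_{X_1}(u',z')$ and the existence of $m\in N_{0,X_1}$ satisfying the system (\ref{calinfi}), together with the converse chain of equalities). The additional compatibility checks you flag — normality of $U_{\pi^{-1}(\{X_1\})}$ in $U_{X_1}$, triviality of $\nu_{0,X_1}$ on $U_{\pi^{-1}(X_1)}\cap g_1Ng_1^{-1}$, and independence of representatives — are exactly the points the paper leaves implicit, and your justifications of them are sound.
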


  We set $\widetilde{W}_{X_1}:=\mathrm{im}(\overline{\nu}_{0,X_1})$.

  \subsection{The root group data axioms for the fibers}
  
Recall that $\Phi_{x_1} = \{\alpha \in \Phi \, | \, -\alpha (x_1) \in \Gamma'_{\alpha,1}\}$. For $\alpha \in \Phi_{x_1}$, set:
\begin{gather*}
U_{0,\alpha}:=U_{X_1}\cap g_1U_{\alpha}g_1^{-1},\\
\overline{U}_{0,\alpha}:=U_{0,\alpha}/\left( U_{\pi^{-1}(\{X_1\})} \cap g_1U_{\alpha}g_1^{-1} \right),\\
S_{0,X_1}:= U_{X_1}\cap g_1Sg_1^{-1},\\
\overline{S}_{0,X_1}:=S_{0,X_1}/\left( U_{\pi^{-1}(\{X_1\})} \cap g_1Sg_1^{-1} \right),\\
T_{0,X_1}:= U_{X_1}\cap g_1Tg_1^{-1},\\
\overline{T}_{0,X_1}:=T_{0,X_1}/\left( U_{\pi^{-1}(\{X_1\})} \cap g_1Tg_1^{-1} \right),\\
M_{0,\alpha}:= U_{X_1}\cap g_1M_{\alpha}g_1^{-1}\subset N_{0,X_1},\\
\overline{M}_{0,\alpha}:=\mathrm{Im}\left( M_{0,\alpha} \hookrightarrow N_{0,X_1} \twoheadrightarrow \overline{N}_{0,X_1}   \right).
\end{gather*}

Note that here $S$ does not stand for a subset of $\mathrm{rk}(\Lambda)$, but for the group of $\mathbb{K}$-rational points of a maximal split torus of $\mathbf{G}$.

\begin{Lem}\label{LemDiegoValuation}
Let $L/K$ be any finite extension of $K$ and consider the extension of the valuations $\omega_1$ and $\omega$ to $L$.
For any $x \in L$, if $\omega_1(x) > 0$ then $\omega(x) >0$.
\end{Lem}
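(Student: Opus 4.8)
The statement concerns a finite extension $L/K$ equipped with the extensions of the two valuations $\omega$ (with values in $\Lambda$, or rather $\RF^{\mathrm{rk}(\Lambda)}$) and $\omega_1 = \pi \circ \omega$ (with values in $\Lambda_1 = \Lambda/\Lambda_0$). The key observation is that $\omega_1$ on $K$ is simply the composition of $\omega$ with the quotient map $\Lambda \to \Lambda_1$ whose kernel is the \emph{convex} subgroup $\Lambda_0$; the extension of $\omega_1$ to $L$ should likewise be realized as the composition of the extension of $\omega$ to $L$ with the quotient by the convex hull of $\Lambda_0$ inside the (divisible hull of the) value group of $L$. First I would make this compatibility precise: write $\Lambda_L$ for the value group of $(\,\omega$ extended to $L)$ and $\Lambda_{1,L}$ for the value group of $(\omega_1$ extended to $L)$, recall that $\Lambda_L \subseteq \Lambda \otimes_\mathbb{Z} \mathbb{Q} \subseteq \RF^S$ and that $\Lambda_{1,L}$ is the corresponding subgroup of $\Lambda_1 \otimes_\mathbb{Z}\mathbb{Q}$, and check — using uniqueness of the extension of the valuation and the fact that two valuations of $L$ inducing the same valuation on $K$ up to equivalence must coincide when the value groups are pinned down as above — that the diagram relating $\omega$ and $\omega_1$ on $K$ extends to a commuting diagram on $L$, where the right vertical map is projection along the convex subgroup generated by $\Lambda_0$.

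Once that is set up, the proof is immediate: the projection map $\Lambda_L \otimes \mathbb{Q} \to \Lambda_{1,L}\otimes\mathbb{Q}$ (equivalently, the restriction to $\RF^S$ of $\pi_{\RF^S,\leq s}$ or the relevant $\pi$) is an \emph{increasing} (order-preserving) homomorphism of ordered abelian groups, because it is a quotient by a convex subgroup. Hence for $x \in L^\times$, if $\omega_1(x) = \pi(\omega(x)) > 0$, and if we had $\omega(x) \leq 0$, then applying the monotone map $\pi$ would give $\omega_1(x) = \pi(\omega(x)) \leq \pi(0) = 0$, a contradiction. Therefore $\omega(x) > 0$. (The case $x = 0$ is vacuous under the convention $\omega_1(0) = \infty$, or one notes $\omega(0) = \infty > 0$ trivially.) So the entire content is the monotonicity of the projection, inherited from convexity of $\Lambda_0$ and its convex hull in $\Lambda_L$.

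The main obstacle — really the only nontrivial point — is the very first step: verifying that the extension of $\omega_1$ to $L$ is genuinely the composition of the extension of $\omega$ to $L$ with a projection along a convex subgroup, rather than some a priori unrelated valuation of $L$ that happens to restrict to $\omega_1$ on $K$. This requires invoking that $\widetilde{\mathbb{K}}/\mathbb{K}$ (and more generally the finite extensions occurring) are univalent in the sense of Assumption~\ref{assumpfield}, so that the extensions are unique, together with the general fact that pushing forward a valuation along a convex quotient of its value group yields a valuation whose restriction is the corresponding convex quotient of the restriction. I would phrase this as: the convex subgroup $\Lambda_0 \subseteq \Lambda \subseteq \Lambda_L$ has a convex hull $\langle\Lambda_0\rangle_{\mathrm{conv}}$ in $\Lambda_L$, the composite $\Lambda_L \to \Lambda_L / \langle\Lambda_0\rangle_{\mathrm{conv}}$ followed by the valuation $\omega$ on $L$ is a valuation of $L$ restricting on $K$ to $\omega$ composed with $\Lambda \to \Lambda/\Lambda_0 = \Lambda_1$, i.e. to $\omega_1$; by uniqueness of the extension (univalence), this composite valuation \emph{is} the extension of $\omega_1$ to $L$, up to the canonical identification of value groups. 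With that identification in hand, $\pi$ on $L$ is exactly the monotone quotient map, and the claimed implication follows as above.
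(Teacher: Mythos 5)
Your proof is correct and follows essentially the same route as the paper's: the paper simply observes that $\omega_1 = \pi\circ\omega$ on $L$ with $\pi:\RF^{\mathrm{rk}(\Lambda)}\to\RF^{\mathrm{rk}(\Lambda_1)}$ the natural (non-decreasing) projection, and concludes by monotonicity exactly as you do. The only difference is that you spell out the compatibility of the extended valuations via univalence, a point the paper treats as immediate from the way the extensions are set up inside $\RF^S$ in Section~\ref{SubsecContructionFibers}.
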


\begin{proof}
We have $\omega(L\setminus \{0\})\subseteq \mathfrak{R}^{\mathrm{rk}(\Lambda)}$ and $\omega_1 = \pi\circ \omega$ where $\pi: \mathfrak{R}^{\mathrm{rk}(\Lambda)} \rightarrow \mathfrak{R}^{\mathrm{rk}(\Lambda_1)}$ is the natural projection. The result then follows from the fact that $\pi$ is non-decreasing.
\end{proof}

\begin{Prop}\label{PropTstar1InTb}
Let $\Omega_1$ be a non-empty subset of $\mathbb{A}(\mathbb{K},\omega_1,\mathbf{G})$. Let $T_{\Omega_1,1}$ be the subgroup of $T$ defined as in notation~\ref{T*not} and associated to $\Omega_1$. We then have $T^*_{\Omega_1,1} \subset T_{b}$.
\end{Prop}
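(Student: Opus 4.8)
# Proof plan

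The statement $T^*_{\Omega_1,1}\subset T_b$ concerns two different valuations on the same root group datum: the $\RF^S$-valuation $(\varphi_\alpha)$ induced by $\omega$ and the $\RF^{S_1}$-valuation $(\varphi^1_\alpha)$ induced by $\omega_1$, where $S_1 = \mathrm{rk}(\Lambda_1)$ and $\varphi^1_\alpha = \pi\circ\varphi_\alpha$. The subgroup $T^*_{\Omega_1,1}$ is generated, by Notation~\ref{T*not} and Proposition~\ref{PropTcomponent}, by the $T$-components of commutators $[u,v]$ for $u\in U_{\alpha,\Omega_1,1}$ and $v\in U'_{-\alpha,\Omega_1,1}$ (the subscript $1$ meaning these are filtration subgroups for the valuation $\varphi^1$). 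On the other side, $T_b = \ker\nu$ is the pointwise stabilizer of the apartment $\A$ for the action $\nu$ attached to $\omega$. So the plan is to show that each such commutator has trivial image under $\nu$, equivalently that its $T$-component lies in $\ker\nu$.

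First I would reduce to a single root $\alpha\in\Phi$ and analyze $T'_{\alpha,\Omega_1,1}$, which by Proposition~\ref{PropTcomponent} is generated by the $T$-components $t(u,v)$ of $[u,v]$ for $u\in U_{\alpha,\Omega_1,1}$ and $v\in U'_{-\alpha,-\lambda,1}$ with $\lambda$ running over the relevant values. Fixing a point $x_1\in\Omega_1$, one has $u\in U_{\alpha,-\alpha(x_1),1}$ and $v\in U'_{-\alpha,\alpha(x_1),1}$, where the filtration subscript refers to $\varphi^1$. The key comparison is: if $u\in U_\alpha$ satisfies $\varphi^1_\alpha(u)\geqslant -\alpha(x_1)$ and $v\in U_{-\alpha}$ satisfies $\varphi^1_{-\alpha}(v) > \alpha(x_1)$, what can be said about the ordinary valuations $\varphi_\alpha(u)$, $\varphi_{-\alpha}(v)$? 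Here is where Lemma~\ref{LemDiegoValuation} enters: since $\varphi^1_{-\alpha}(v)-\alpha(x_1) > 0$ in $\RF^{S_1}$, and this value is (up to a $\mathbb{Z}$-module identification and the parametrization) $\omega_1$ of a field element, Lemma~\ref{LemDiegoValuation} gives that the corresponding $\omega$-value is $>0$ in $\RF^S$; concretely $\varphi_\alpha(u)+\varphi_{-\alpha}(v) > 0$. In other words the element $[u,v]$ lies in $L^\varepsilon_{\alpha,\lambda}$ for suitable $\lambda\in\RF^S$ and $\varepsilon = \varphi_\alpha(u)+\varphi_{-\alpha}(v) > 0$ in the $\omega$-world. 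Then Lemma~\ref{LemTripleLeviCommutation}\ref{LemTripleLeviCommutation:2} (applied with the $\omega$-valuation) tells us precisely that the $T$-component of $[u,v]$ lies in $T_b$.

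So the skeleton is: (1) unwind the definition of $T^*_{\Omega_1,1}$ via Proposition~\ref{PropTcomponent} into $T$-components of commutators; (2) for each such commutator $[u,v]$, fix $x_1\in\Omega_1$ and note $\varphi^1_{-\alpha}(v) > -(-\alpha)(x_1) = \alpha(x_1)$ and $\varphi^1_\alpha(u)\geqslant -\alpha(x_1)$; (3) invoke Lemma~\ref{LemDiegoValuation} together with the compatibility $\varphi^1_\alpha = \pi\circ\varphi_\alpha$ and the fact that $\pi$ is the projection $\RF^{\mathrm{rk}(\Lambda)}\to\RF^{\mathrm{rk}(\Lambda_1)}$ (which is monotone and whose positivity-reflection is exactly Lemma~\ref{LemDiegoValuation}) to conclude $\varphi_\alpha(u)+\varphi_{-\alpha}(v) > 0$ in $\RF^S$; (4) apply Lemma~\ref{LemTripleLeviCommutation} in the $\omega$-theory to get $t(u,v)\in T_b$; (5) since $T^*_{\Omega_1,1}$ is generated by such elements and $T_b$ is a subgroup, conclude $T^*_{\Omega_1,1}\subset T_b$.

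The main obstacle I anticipate is the bookkeeping in step (3): one must be careful that the inequality "$\varphi^1_{-\alpha}(v) > \alpha(x_1)$" genuinely transports to "$\varphi_{-\alpha}(v) > \alpha(x_1)$" or at least that the sum $\varphi_\alpha(u)+\varphi_{-\alpha}(v)$ is strictly positive. Strictly, $\varphi^1 = \pi\circ\varphi$ means $\varphi^1_{-\alpha}(v) = \pi(\varphi_{-\alpha}(v))$, and $\alpha(x_1)$ as an element of $\RF^{S_1}$ is the image under $\pi$ of $\alpha(x_1)$ viewed in $\RF^S$ (this uses that $x_1$ can be chosen in $\A$ and that the projection $\A\to\A_1$ is compatible with the root evaluations, i.e. $\alpha\circ\pi = \pi\circ\alpha$, as set up in Subsection~\ref{SubsecContructionFibers}). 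So $\pi\big(\varphi_\alpha(u)+\varphi_{-\alpha}(v)\big) = \varphi^1_\alpha(u)+\varphi^1_{-\alpha}(v) > 0$ in $\RF^{S_1}$, and Lemma~\ref{LemDiegoValuation} — whose content is exactly that $\pi$ reflects strict positivity for valuation values of field elements, hence for $\mathbb{Z}$-linear combinations such as $\varphi_\alpha(u)+\varphi_{-\alpha}(v) = \varphi_\alpha(u)-\varphi_\alpha(mvm^{-1})+\dots$, or more directly because $\ker\pi = \RF^{\mathrm{rk}(\Lambda_0)}$ is a convex subgroup so $\pi(y)>0 \Rightarrow y>0$ for $y$ in the value group — yields $\varphi_\alpha(u)+\varphi_{-\alpha}(v) > 0$. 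One subtlety is multipliable roots, where $\varphi_\alpha$ takes the form $\tfrac12\omega(\cdot)$ and $U'_{-\alpha}$ involves the subgroup $U'_{-\alpha,-\lambda}$ rather than $U_{-\alpha,-\lambda+\varepsilon}$; but Lemma~\ref{LemDiegoValuation} is stated for arbitrary finite extensions and the halving is harmless since $\RF^S$ is torsion-free, so the same argument goes through after passing to the splitting field $\mathbb{L}_\alpha$ and its extension of $\omega,\omega_1$. Once step (3) is secured, the rest is a direct appeal to the structure lemmas already proved.
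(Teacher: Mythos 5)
Your proposal is correct and follows essentially the same route as the paper's proof: reduce to the generators $t(u,v)$ via Proposition~\ref{PropTcomponent}, transport the strict inequality $\varphi^1_\alpha(u)+\varphi^1_{-\alpha}(v)>0$ to $\varphi_\alpha(u)+\varphi_{-\alpha}(v)>0$ using the explicit parametrizations and Lemma~\ref{LemDiegoValuation} (treating the multipliable case separately), and conclude with Lemma~\ref{LemTripleLeviCommutation}. The bookkeeping concern you raise in step (3) is resolved in the paper exactly as you suggest, by reducing to the valuation of a single field element $xy$.
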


\begin{remark}
Note that, in the previous proposition, the group $T^*_{\Omega_1,1}$ is defined thanks to the apartment $\mathbb{A}(\mathbb{K},\omega_1,\mathbf{G})$ while the group $T_b$ is defined thanks to the apartment $\mathbb{A}(\mathbb{K},\omega,\mathbf{G})$
\end{remark}

\begin{proof}
Let $x_1 \in \Omega_1$.
Since $T^*_{\Omega_1,1} \subset T^*_{x_1,1}$, it suffices to prove the proposition for $\Omega_1 = \{x_1\}$.
Let $a \in \Phi$.
Consider any $u^+ \in U_{a,x_1}$ and any $u^- \in U'_{-a,x_1}$ so that $\varphi_a^1(u^+) + \varphi_{-a}^1(u^-) > 0$.
Let $t(u^+,u^-)$ be the unique element in $T$ given by Lemma~\ref{LemTripleLeviCommutation}\ref{LemTripleLeviCommutation:1} so that $u^- (u^+)^{-1} \in U_{a} t(u^+,u^-) U_{-a}$.

We firstly prove that $\varphi_a(u^+) + \varphi_{-a}(u^-) > 0$.

\paragraph{Case $a$ non-multipliable root:}
Write $u^+ = x_{a}(x)$ and $u^- = x_{-a}(y)$ with $x,y \in L_a$.
By definition $\varphi_a^1(u^+) = \omega_1(x)$ and $\varphi_a^1(u^-) = \omega_1(y)$.
We have $\varphi_a^1(u^+) + \varphi_{-a}^1(u^-) = \omega_1(x) + \omega_1(y) = \omega_1(xy)>0$.
Hence $\varphi_a(u^+) + \varphi_{-a}(u^-) = \omega(x) + \omega(y) = \omega(xy)>0$ by Lemma~\ref{LemDiegoValuation}.

\paragraph{Case $a$ multipliable root:}
Write $u^+ = x_{a}(u,x)$ and $u^- = x_{-a}(v,y)$ with $(u,x),(v,y) \in H(L_a,L_{2a})$.
By definition $\varphi_a^1(u^+) = \frac{1}{2} \omega_1(x)$ and $\varphi_a^1(u^-) = \frac{1}{2} \omega_1(y)$.
We have $\varphi_a^1(u^+) + \varphi_{-a}^1(u^-) = \frac{1}{2} \omega_1(x) + \frac{1}{2} \omega_1(y) = \frac{1}{2} \omega_1(xy)>0$.
Hence $\varphi_a(u^+) + \varphi_{-a}(u^-) = \frac{1}{2} \omega(x) + \frac{1}{2} \omega(y) = \frac{1}{2} \omega(xy)>0$ by Lemma~\ref{LemDiegoValuation}.

Thus, in both cases, we get that $\varphi_a(u^+) + \varphi_{-a}(u^-) > 0$ which implies that $t(u^+,u^-) \in T_b$ according to Lemma~\ref{LemTripleLeviCommutation}\ref{LemTripleLeviCommutation:2}.

According to definition of $T'_{a,x_1}$ and Proposition~\ref{PropTcomponent}, we know that the group $T'_{a,x_1,1}$ is generated by the $t(u^+, u^-)$ for $u^+ \in U_{a,x_1}$ and $u^- \in U'_{-a,x_1}$.
But we have shown that the $t(u^+,u^-)$ all are contained in the group $T_b$, hence we get that $T'_{a,x_1,1} \subset T_b$.
Since, by definition, the group $T^*_{x_1,1}$ is generated by the $T'_{a,x_1,1}$ for $a \in \Phi$, then it is contained in $T_b$.
\end{proof}

\begin{Cor}\label{conjpchap}
For any non-empty subset $\Omega_1 \subset \mathbb{A}_1$, any $p \in \widehat{P}_{\Omega_1}$ and any $u \in U'_{\Omega_1}$, we have $pup^{-1} \in U'_{\Omega_1} T_b$.
\end{Cor}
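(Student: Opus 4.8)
The statement to establish is Corollary~\ref{conjpchap}: for a non-empty $\Omega_1 \subset \mathbb{A}_1$, an element $p \in \widehat{P}_{\Omega_1}$ and $u \in U'_{\Omega_1}$, one has $pup^{-1} \in U'_{\Omega_1} T_b$. The natural approach is to reduce the problem to the structure of $\widehat{P}_{\Omega_1}$ established in Section~\ref{SecParahoricBruhat} and then to use Proposition~\ref{PropPchapeauDistingueUprimeTstar}, which asserts that $U'_{\Omega_1} T^*_{\Omega_1,1}$ is a normal subgroup of $\widehat{P}_{\Omega_1}$, combined with the new input Proposition~\ref{PropTstar1InTb}, namely $T^*_{\Omega_1,1} \subset T_b$. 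The point is that the normality statement is only available with $T^*_{\Omega_1,1}$ (which is defined using the valuation $\omega_1$), whereas the conclusion we want involves $T_b$ (defined using $\omega$); Proposition~\ref{PropTstar1InTb} is exactly what bridges these two groups.

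First I would invoke Proposition~\ref{PropPchapeauDistingueUprimeTstar} applied to the datum associated with $\omega_1$: since $U'_{\Omega_1} T^*_{\Omega_1,1}$ is normal in $\widehat{P}_{\Omega_1}$ and $u \in U'_{\Omega_1} \subset U'_{\Omega_1} T^*_{\Omega_1,1}$, we immediately get $p u p^{-1} \in U'_{\Omega_1} T^*_{\Omega_1,1}$. Then, by Proposition~\ref{PropTstar1InTb}, $T^*_{\Omega_1,1} \subset T_b$, so $U'_{\Omega_1} T^*_{\Omega_1,1} \subset U'_{\Omega_1} T_b$, and therefore $pup^{-1} \in U'_{\Omega_1} T_b$, which is the claim. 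One small point to be careful about: in this mixed setting, $U'_{\Omega_1}$ is the group built from the $U'_{\alpha,-\alpha(x)}$ using $\varphi^1_\alpha = \pi \circ \varphi_\alpha$, so I should make sure that the statement of Proposition~\ref{PropPchapeauDistingueUprimeTstar} (which is proved abstractly under Assumption~\ref{HypCAVRGD} for an arbitrary valued root group datum) applies verbatim to the datum $(G,T,(U_\alpha),(M_\alpha),(\varphi^1_\alpha),\nu_1)$ — it does, since all the abstract hypotheses are satisfied by construction in Section~\ref{SubsecContructionFibers}.

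The only genuine subtlety — and it is not really an obstacle for this corollary but was the content of the preceding proposition — is that $T^*_{\Omega_1,1}$ and $T_b$ live in different apartments, i.e. are defined relative to different valuations; once Proposition~\ref{PropTstar1InTb} is granted, the deduction of the corollary is purely formal and consists of the two inclusions above. Thus the whole argument is: normality (Proposition~\ref{PropPchapeauDistingueUprimeTstar}) plus the inclusion $T^*_{\Omega_1,1}\subset T_b$ (Proposition~\ref{PropTstar1InTb}). I expect no calculation beyond unwinding the definition of the product set $U'_{\Omega_1} T_b$ and checking that membership in $U'_{\Omega_1} T^*_{\Omega_1,1}$ implies membership in $U'_{\Omega_1} T_b$, which is immediate from $T^*_{\Omega_1,1}\subset T_b$.
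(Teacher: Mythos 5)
Your argument is exactly the paper's: the corollary is deduced immediately from Proposition~\ref{PropPchapeauDistingueUprimeTstar} (normality of $U'_{\Omega_1}T^*_{\Omega_1,1}$ in $\widehat{P}_{\Omega_1}$) together with Proposition~\ref{PropTstar1InTb} ($T^*_{\Omega_1,1}\subset T_b$). Your additional remarks about applying the abstract results to the datum for $\omega_1$ are correct and only make explicit what the paper leaves implicit.
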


\begin{proof}
It is an immediate consequence of Proposition~\ref{PropPchapeauDistingueUprimeTstar} and Proposition~\ref{PropTstar1InTb}.
\end{proof}

\begin{lemma}\label{uprime}
For $\alpha \in \Phi$, we have:
$$ \bigcap_{\substack{x \in \mathbb{A}(\mathbb{K},\omega,\mathbf{G})\\ \pi(x)=x_1}} U_{\alpha,x}= U'_{\alpha,x_1}.$$
\end{lemma}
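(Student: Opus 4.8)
The statement compares, for a fixed $\alpha \in \Phi$ and a fixed point $x_1 \in \mathbb{A}_1 = \mathbb{A}(\mathbb{K},\omega_1,\mathbf{G})$, the intersection $\bigcap_{x \in \pi^{-1}(\{x_1\})} U_{\alpha,x}$ of root subgroups relative to the valuation $\omega$ over the fiber of $\pi$, with the single group $U'_{\alpha,x_1}$ relative to the valuation $\omega_1$. Recall that $U_{\alpha,x} = \varphi_\alpha^{-1}([-\alpha(x),\infty])$ and $U'_{\alpha,x_1} = \varphi^1_\alpha{}^{-1}(]-\alpha(x_1),\infty])$, and that by our choices $\varphi^1_\alpha = \pi \circ \varphi_\alpha$ and $\nu_1(n)(\pi(x)) = \pi(\nu(n)(x))$, so that $\alpha(\pi(x)) = \pi(\alpha(x))$ (reading $\alpha$ on both apartments via the cocharacter module, and using that $\pi = \pi_{\mathrm{vect}}$ is $\mathbb{R}$-linear and commutes with evaluation of roots). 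The essential point is therefore a purely order-theoretic fact: for $u \in U_\alpha$ and the fixed value $\alpha(x_1) \in \mathfrak{R}^{\mathrm{rk}(\Lambda_1)}$,
\[
u \in \bigcap_{\substack{x \in \mathbb{A},\ \pi(x) = x_1}} U_{\alpha,x}
\quad\Longleftrightarrow\quad
\forall x \in \pi^{-1}(\{x_1\}),\ \varphi_\alpha(u) \geqslant -\alpha(x)
\quad\Longleftrightarrow\quad
\pi(\varphi_\alpha(u)) > -\alpha(x_1) = \varphi^1_\alpha(u') \text{-condition},
\]
i.e. $\varphi^1_\alpha(u) > -\alpha(x_1)$, which is exactly $u \in U'_{\alpha,x_1}$.

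\textbf{Key steps.} First I would record that $\pi^{-1}(\{x_1\})$ is a coset of $\ker(\pi_{\mathrm{vect}}) = V \otimes \mathfrak{R}^{\mathrm{rk}(\Lambda_0)}$ inside $\mathbb{A}$: fixing one preimage $x_0$ with $\pi(x_0) = x_1$, every $x \in \pi^{-1}(\{x_1\})$ is $x_0 + w$ with $w \in V \otimes \mathfrak{R}^{\mathrm{rk}(\Lambda_0)}$, hence $-\alpha(x) = -\alpha(x_0) - \alpha(w)$ where $\alpha(x_0)$ has $\pi(\alpha(x_0)) = \alpha(x_1)$ and $\alpha(w) \in \mathfrak{R}^{\mathrm{rk}(\Lambda_0)}$. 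Since $\alpha \colon V \otimes \mathfrak{R}^{\mathrm{rk}(\Lambda_0)} \to \mathfrak{R}^{\mathrm{rk}(\Lambda_0)}$ is surjective (as $\alpha \ne 0$ and $\mathfrak{R}^{\mathrm{rk}(\Lambda_0)}$ is a real vector space), the set $\{-\alpha(x) : x \in \pi^{-1}(\{x_1\})\}$ is exactly $-\alpha(x_0) + \mathfrak{R}^{\mathrm{rk}(\Lambda_0)}$, i.e. the full convex class of elements of $\mathfrak{R}^{\mathrm{rk}(\Lambda)}$ projecting to $\alpha(x_1)$ under $\pi$.

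\textbf{The order-theoretic core.} The claim then reduces to: for $\mu := \varphi_\alpha(u) \in \mathfrak{R}^{\mathrm{rk}(\Lambda)} \cup \{\infty\}$ and $\lambda_1 := -\alpha(x_1) \in \mathfrak{R}^{\mathrm{rk}(\Lambda_1)}$, one has $\mu \geqslant \lambda$ for every $\lambda$ in the fiber $\pi^{-1}(\{\lambda_1\}) \subset \mathfrak{R}^{\mathrm{rk}(\Lambda)}$ if and only if $\pi(\mu) > \lambda_1$. For the direction ($\Leftarrow$): if $\pi(\mu) > \lambda_1$, then for any $\lambda$ with $\pi(\lambda) = \lambda_1 < \pi(\mu)$, lexicographic comparison on $\mathfrak{R}^{\mathrm{rk}(\Lambda)} = \mathfrak{R}^{\mathrm{rk}(\Lambda_1)} \times \mathfrak{R}^{\mathrm{rk}(\Lambda_0)}$ (with $\mathrm{rk}(\Lambda_1)$-coordinates dominant) gives $\mu > \lambda$ regardless of the $\mathfrak{R}^{\mathrm{rk}(\Lambda_0)}$-components, since the leading block of $\mu - \lambda$ is $\pi(\mu) - \lambda_1 > 0$. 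For the direction ($\Rightarrow$): suppose $\mu \geqslant \lambda$ for all such $\lambda$. If $\pi(\mu) < \lambda_1$, pick $\lambda$ in the fiber with large $\mathfrak{R}^{\mathrm{rk}(\Lambda_0)}$-part — already $\mu < \lambda$ from the leading block, contradiction. If $\pi(\mu) = \lambda_1$, then write $\mu = (\lambda_1, \mu_0)$ with $\mu_0 \in \mathfrak{R}^{\mathrm{rk}(\Lambda_0)}$ (here $\mu \ne \infty$, else $u = 1 \in U'_{\alpha,x_1}$ trivially and we are done); choosing $\lambda = (\lambda_1, \mu_0 + \varepsilon)$ for any $\varepsilon > 0$ in $\mathfrak{R}^{\mathrm{rk}(\Lambda_0)}$ gives $\lambda > \mu$, contradicting $\mu \geqslant \lambda$. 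Hence necessarily $\pi(\mu) > \lambda_1$, which means $\varphi^1_\alpha(u) = \pi(\varphi_\alpha(u)) > -\alpha(x_1)$, i.e. $u \in U'_{\alpha,x_1}$. Combining the two inclusions, both sides of the asserted equality coincide, so $\bigcap_{x\in\pi^{-1}(\{x_1\})} U_{\alpha,x} = U'_{\alpha,x_1}$.

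\textbf{Main obstacle.} There is no deep obstacle; the only care needed is bookkeeping — making sure that $\alpha$ evaluated on $\mathbb{A}$ and on $\mathbb{A}_1$ really intertwine via $\pi$ with the stated sign/origin conventions, and that the convex subgroup $\mathfrak{R}^{\mathrm{rk}(\Lambda_0)}$ is precisely the kernel of $\pi$ as a totally ordered group (so that lexicographic comparisons split cleanly into a dominant $\mathfrak{R}^{\mathrm{rk}(\Lambda_1)}$-block over a subordinate $\mathfrak{R}^{\mathrm{rk}(\Lambda_0)}$-block, exactly as arranged in Subsection~\ref{SubsecContructionFibers} where $\mathrm{rk}(\Lambda) \cong \mathrm{rk}(\Lambda_1) \amalg \mathrm{rk}(\Lambda_0)$ with $s_1 < s_0$). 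Once these conventions are pinned down, the argument is the short case analysis above, and the surjectivity of $\alpha$ on $V \otimes \mathfrak{R}^{\mathrm{rk}(\Lambda_0)}$ guarantees the fiber of $-\alpha(\cdot)$ over $\pi^{-1}(\{x_1\})$ is the entire $\pi$-fiber of values, which is what drives the ``$U'$'' (strict inequality) rather than ``$U$'' (non-strict) on the right-hand side.
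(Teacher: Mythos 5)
Your proof is correct and follows essentially the same route as the paper: both arguments reduce the statement to the order-theoretic identity $\pi^{-1}(]\lambda_1,+\infty]) = \bigcap_{\lambda \in \pi^{-1}(\lambda_1)} [\lambda,+\infty]$ in $\mathfrak{R}^{\mathrm{rk}(\Lambda)}$ (which you verify by the explicit lexicographic case analysis the paper leaves implicit) together with the observation that $x \mapsto -\alpha(x)$ maps the fiber $\pi^{-1}(\{x_1\})$ onto the full value-fiber $\pi^{-1}(-\alpha(x_1))$, using $\varphi_\alpha^1 = \pi \circ \varphi_\alpha$. Your extra care about the coset structure of $\pi^{-1}(\{x_1\})$ and the surjectivity of $\alpha$ on $V \otimes \mathfrak{R}^{\mathrm{rk}(\Lambda_0)}$ just makes explicit a step the paper asserts without proof.
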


\begin{proof}
Observe that, for any $\lambda_1 \in \mathfrak{R}^{\mathrm{rk}(\Lambda_1)}$:
$$\pi^{-1}\left(]\lambda_1,+\infty]\right)  = \bigcap_{\lambda \in \pi^{-1}(\lambda_1)} [\lambda,+\infty].$$
Hence:
\begin{align*}
U'_{\alpha,x_1} & =(\varphi_{\alpha}^1)^{-1}\left( ]-\alpha(x_1),+\infty] \right)
 = \varphi_{\alpha}^{-1} \left( \pi^{-1} \left( ]-\alpha(x_1),+\infty] \right) \right)
\\&= \varphi_{\alpha}^{-1} \left( \bigcap_{\lambda \in \pi^{-1}(-\alpha(x_1))} [\lambda,+\infty] \right)
= \bigcap_{\lambda \in \pi^{-1}(-\alpha(x_1))} \varphi_{\alpha}^{-1}( [\lambda,+\infty] )
\\&=\bigcap_{\lambda \in \pi^{-1}(-\alpha(x_1))} U_{\alpha,\lambda} .
 \end{align*}
 But:
 $$\pi^{-1}(-\alpha(x_1)) = \{ -\alpha(x) \, | \, x \in \pi^{-1}(x_1)\},$$
 and therefore:
 $$U'_{\alpha,x_1} =\bigcap_{\lambda \in \pi^{-1}(-\alpha(x_1))} U_{\alpha,\lambda} 
 =\bigcap_{\substack{x \in \mathbb{A}(\mathbb{K},\omega,\mathbf{G})\\ \pi(x)=x_1}} U_{\alpha,x}.$$
\end{proof}

\begin{corollary} \label{egal}
 For $\alpha\in \Phi$, we have:
  \begin{gather*}
    U_{X_1} \cap g_1U_{\alpha}g_1^{-1} = g_1U_{\alpha,x_1}g_1^{-1}, \\
    U_{\pi^{-1}(X_1)}\cap g_1U_{\alpha}g_1^{-1} = g_1U'_{\alpha,x_1}g_1^{-1}.
  \end{gather*}
  \end{corollary}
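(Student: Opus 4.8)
\textbf{Proof plan for Corollary~\ref{egal}.}
The statement unravels the definitions of $U_{X_1}$ and $U_{\pi^{-1}(\{X_1\})}$ by conjugating the relevant subgroups of $G$ through $g_1$. Recall that $X_1 = [g_1,x_1] \in \mathcal{I}(\mathbb{K},\omega_1,\mathbf{G})$, so that $U_{X_1} = g_1 U_{x_1}^{(1)} g_1^{-1}$, where $U_{x_1}^{(1)}$ denotes the group $U_{\Omega}$ of section~\ref{SecVRGD} built using the valuation $\omega_1$ (i.e. with $\Omega = \{x_1\} \subset \mathbb{A}_1$). First I would conjugate everything back by $g_1^{-1}$: since $g_1 U_\alpha g_1^{-1}$ is conjugate to $U_\alpha$, the first equality is equivalent to $U_{x_1}^{(1)} \cap U_\alpha = U_{\alpha,x_1,1}$, which is precisely Example~\ref{ExQC}\ref{ExDecUOmega:1} applied to the datum associated to $\omega_1$ and to the singleton $\Omega_1 = \{x_1\}$ (recall that for a single point the $\omega_1$-parahoric root subgroup is $U_{\alpha,x_1,1} = U_{\alpha,-\alpha(x_1),1}$). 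That disposes of the first line.

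For the second equality, the key is to identify the pointwise stabilizer $U_{\pi^{-1}(\{X_1\})}$ of the fiber inside $U_{X_1}$. Conjugating by $g_1^{-1}$, an element $u \in U_{x_1}^{(1)} \cap U_\alpha$ lies in the conjugate of $U_{\pi^{-1}(\{X_1\})}$ if and only if the element $g_1 u g_1^{-1}$ fixes $\pi^{-1}(\{X_1\})$ pointwise; using the explicit description of the fiber via $\varphi_{X_1}$ from subsection~\ref{SubsecContructionFibers}, this amounts to asking that $u$ fix pointwise the set $\{[1,x] : x \in \mathbb{A}(\mathbb{K},\omega,\mathbf{G}),\ \pi(x) = x_1\}$, i.e. $u \in \bigcap_{\pi(x)=x_1} U_x$, where now $U_x$ is the $\omega$-parahoric group and we used Lemma~\ref{lemParahoric_fixator} to say that the pointwise stabilizer of a point $x$ in $G$ is $\widehat{P}_x$, hence that of a subset is $\bigcap \widehat{P}_x$ by Proposition~\ref{propBT7.1.11}. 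Intersecting with $U_\alpha$ and invoking Example~\ref{ExQC}\ref{ExDecUOmega:1} again for the valuation $\omega$, this intersection becomes $\bigcap_{\pi(x)=x_1} U_{\alpha,x}$, which is exactly $U'_{\alpha,x_1}$ by Lemma~\ref{uprime}. Conjugating back by $g_1$ yields $U_{\pi^{-1}(\{X_1\})}\cap g_1 U_\alpha g_1^{-1} = g_1 U'_{\alpha,x_1} g_1^{-1}$.

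The one point that needs a little care — and which I expect to be the main obstacle — is the passage "$u$ fixes $\pi^{-1}(\{X_1\})$ pointwise $\iff$ $u$ fixes each $[1,x]$ with $\pi(x)=x_1$". One inclusion is trivial since the points $[1,x]$ lie in the fiber; for the converse one must check that fixing all the "standard" representatives $[1,x]$ of the fiber forces fixing the whole fiber, using that $u \in U_{X_1}$ and that $U_{X_1}$ acts on $\pi^{-1}(\{X_1\})$ compatibly with the bijection $\overline{\varphi}_{X_1}$ of the previous subsection (the action on the first $U_{X_1}/U_{\pi^{-1}(\{X_1\})}$-factor). Concretely, if $u$ fixes $[1,x]$ for all such $x$ then, writing an arbitrary fiber point as $[u'g_1, z] = \varphi_{X_1}(u',z)$ with $z \in \pi^{-1}(\{x_1\})$, one has $u \cdot [u'g_1,z] = [uu'g_1,z]$, and one reduces to the case $u' = 1$ by a conjugation argument: $u u' g_1 = u'(u'^{-1} u u') g_1$ and $u'^{-1} u u'$ again fixes all the $[1,x]$ because $U_{X_1}$ normalizes the set of these representatives up to the relation $\sim$. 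I would record this as a short preliminary lemma, after which the corollary follows by assembling Example~\ref{ExQC}\ref{ExDecUOmega:1}, Lemma~\ref{lemParahoric_fixator}, Proposition~\ref{propBT7.1.11} and Lemma~\ref{uprime} as above.
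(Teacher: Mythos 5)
Your treatment of the first equality and of the inclusion $U_{\pi^{-1}(X_1)}\cap g_1U_{\alpha}g_1^{-1} \subseteq g_1U'_{\alpha,x_1}g_1^{-1}$ is correct and matches the paper: restrict attention to the points $[g_1,x]$ with $\pi(x)=x_1$, observe that the fixator of such a point meets $g_1U_\alpha g_1^{-1}$ in $g_1U_{\alpha,x}g_1^{-1}$, and conclude with Lemma~\ref{uprime}.

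The gap is in the reverse inclusion, precisely at the step you yourself flag as the main obstacle. To show that $g_1ug_1^{-1}$ with $u\in U'_{\alpha,x_1}$ fixes an arbitrary fiber point $[u'g_1,z]$, you reduce to showing that $u'^{-1}uu'$ still fixes $[g_1,z]$, and you justify this by asserting that ``$U_{X_1}$ normalizes the set of these representatives up to $\sim$''. That is not an argument: the subset $\{[g_1,x]\,:\,\pi(x)=x_1\}$ of the fiber is not $U_{X_1}$-stable, so its pointwise fixator $g_1\bigl(\bigcap_{\pi(x)=x_1}\widehat{P}_x\bigr)g_1^{-1}$ is not normalized by $U_{X_1}$ in any evident way, and there is no a priori reason why conjugating $u$ by $u'\in U_{X_1}$ should land back in it. The paper closes exactly this gap with Corollary~\ref{conjpchap}: for $p\in\widehat{P}_{x_1}$ (the $\omega_1$-parahoric) and $u\in U'_{x_1}$ one has $pup^{-1}\in U'_{x_1}T_b$, and both factors fix every point of $\mathbb{A}$ above $x_1$. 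That corollary in turn rests on two nontrivial inputs your sketch never invokes: the normality of $U'_{\Omega}T^*_{\Omega}$ in $\widehat{P}_{\Omega}$ (Proposition~\ref{PropPchapeauDistingueUprimeTstar}), and above all Proposition~\ref{PropTstar1InTb}, which asserts that the $T$-components $T^*_{x_1,1}$ arising from commutators for the coarser valuation $\omega_1$ already lie in the kernel $T_b$ of the $\omega$-action — a genuine comparison between the two valuations proved via Lemma~\ref{LemDiegoValuation}. Without this, the conjugate $pup^{-1}$ could a priori acquire a $T$-component that moves the points of $\pi^{-1}(x_1)\subset\mathbb{A}$, and the reduction fails. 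So the missing ingredient is not a routine verification but the valuation-theoretic normality statement itself.
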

  
  \begin{proof}
   The first equality immediately follows from:
  $$U_{X_1}\cap g_1U_{\alpha}g_1^{-1} = g_1\left( U_{x_1}\cap U_{\alpha} \right) g_1^{-1}
   = g_1 U_{\alpha,x_1} g_1^{-1}.$$
  The second equality is a bit more delicate. If we choose $x \in \mathbb{A}(\mathbb{K},\omega,\mathbf{G})$
  such that $\pi(x) = x_1$ and we set $X:=[g_1,x]\in \pi^{-1}(X_1)$,
  then we have:
  \begin{align*}
    U_{\pi^{-1}(X_1)}\cap g_1U_{\alpha}g_1^{-1} 
    & \subseteq \widehat{P}_{X}\cap g_1U_{\alpha}g_1^{-1}\\
    & \subseteq g_1 \left( \widehat{P}_x \cap U_{\alpha}\right) g_1^{-1} \\
    & \subseteq g_1 U_{\alpha,x} g_1^{-1}.
  \end{align*}
  Hence, by lemma \ref{uprime}:
  $$ U_{\pi^{-1}(X_1)}\cap g_1U_{\alpha}g_1^{-1} 
     \subseteq g_1 \left( \bigcap_{\substack{x\in \mathbb{A}(\mathbb{K},\omega,\mathbf{G})\\ \pi(x)=x_1}} U_{\alpha,x} \right) g_1^{-1}=g_1U'_{\alpha,x_1}g_1^{-1}.$$
  Conversely, let us take $u \in U'_{\alpha,x_1}$ and let us prove that 
  $g_1ug_1^{-1} \in U_{\pi^{-1}(X_1)}$. In other words, we have to check that $g_1 u g_1^{-1}$ 
  fixes $\pi^{-1}(X_1)$. To do so, take $X:= [vg_1,x] \in \pi^{-1}(X_1)$ with $v\in U_{X_1}$ and $x\in \pi^{-1}_{\mathrm{aff}}(x_1)$. By corollary~\ref{conjpchap}, we have $(g_1^{-1}vg_1)u(g_1^{-1}vg_1)^{-1} \in U'_{x_1} T_b$. But the groups $U'_{x_1} $ and $T_b$ both fix $x$. Hence $v(g_1ug_1^{-1})v^{-1} $ fixes $[g_1,x]$, so that $g_1ug_1^{-1}$ fixes $v\cdot[g_1,x]=X$, as wished. 
  \end{proof}

\begin{lemma}
For $\alpha\in \Phi_{x_1}$, the subset $\overline{M}_{0,\alpha}$ of $\overline{N}_{0,X_1}$
 is a right coset of $\overline{T}_{0,X_1}$ in $\overline{N}_{0,X_1}$.
\end{lemma}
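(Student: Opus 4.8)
The statement to prove is that $\overline{M}_{0,\alpha}$ is a right coset of $\overline{T}_{0,X_1}$ in $\overline{N}_{0,X_1}$, for $\alpha \in \Phi_{x_1}$.

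My plan is to reduce this to the corresponding statement ``upstairs'', namely that $M_\alpha = T m_\alpha$ is a right coset of $T$ in $N$ (an axiom of the root group datum, see~\ref{axiomRGD4}), transported through the conjugation by $g_1$ and the definitions of the decorated groups. First I would recall the definitions: $N_{0,X_1} = U_{X_1} \cap g_1 N g_1^{-1}$, $T_{0,X_1} = U_{X_1} \cap g_1 T g_1^{-1}$, $M_{0,\alpha} = U_{X_1} \cap g_1 M_\alpha g_1^{-1}$, and $\overline{N}_{0,X_1}$, $\overline{T}_{0,X_1}$, $\overline{M}_{0,\alpha}$ are the respective images in $\overline{N}_{0,X_1} = N_{0,X_1} / (U_{\pi^{-1}(X_1)} \cap g_1 N g_1^{-1})$. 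Conjugation by $g_1^{-1}$ identifies $N_{0,X_1}$ with $U_{x_1} \cap N = \widetilde{N}_{x_1}$ (using that $U_{X_1} = g_1 U_{x_1} g_1^{-1}$), and identifies $T_{0,X_1}$ with $U_{x_1} \cap T = T_{x_1}$ and $M_{0,\alpha}$ with $U_{x_1} \cap M_\alpha = M_{\alpha,\lambda_{x_1}}$ where $\lambda_{x_1} = -\alpha(x_1)$ (this last identification uses Notation~\ref{NotValuedCoset} together with Proposition~\ref{PropValuedCoset}\ref{PropValuedCoset:emptyness}, since $\alpha \in \Phi_{x_1}$ guarantees $-\alpha(x_1) \in \Gamma'_{\alpha,1} \subseteq \Gamma_{\alpha,1}$, hence $M_{\alpha,\lambda_{x_1},1}$ is non-empty; the analogous membership in $\Gamma_\alpha$ should follow, or one argues directly that $g_1 M_{\alpha,\lambda_{x_1},1} g_1^{-1} \cap U_{X_1}$ is non-empty because an element $m \in M_{\alpha,\lambda_{x_1},1}$ actually lies in $U_{-\alpha,x_1} U_{\alpha,x_1} U_{-\alpha,x_1}$ by Proposition~\ref{PropValuedCoset}\ref{PropValuedCoset:insideLevi}, and these root subgroups are the same for $\omega$ and $\omega_1$ at the point $x_1$... this point needs care). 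Then by Proposition~\ref{PropRankOneLevi}\ref{PropRankOneLevi:inGamma}, we have $N_{\alpha,\lambda_{x_1}} = T_{\alpha,\lambda_{x_1}} \{1,m\}$ for any $m \in M_{\alpha,\lambda_{x_1}}$, and $M_{\alpha,\lambda_{x_1}}$ is exactly $T_{\alpha,\lambda_{x_1}} m = m T_{\alpha,\lambda_{x_1}}$... actually $M_\alpha = T m_\alpha$ globally and $M_{\alpha,\lambda} = M_\alpha \cap (\text{valued part})$, so one should check $M_{\alpha,\lambda_{x_1}}$ is a right coset of $T_{\alpha,\lambda_{x_1}}$ inside $\widetilde{N}_{\alpha,x_1}$ — but here we want it inside $\widetilde{N}_{x_1}$, not just $\widetilde{N}_{\alpha,x_1}$; however, since $m M_{\alpha,\lambda_{x_1}}$ consists of elements of $T$ and $T \cap \widetilde{N}_{x_1} = T_{x_1}$, we do get $M_{\alpha,\lambda_{x_1}} = T_{x_1} m \cap M_{\alpha,\lambda_{x_1}}$... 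I would need to verify $T m \cap \widetilde{N}_{x_1} = T_{x_1} m$, which holds since $m \in \widetilde{N}_{x_1}$ and $T \cap \widetilde{N}_{x_1} = T_{x_1}$.

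So the core reduction is: conjugation by $g_1^{-1}$ carries the triple $(\overline{N}_{0,X_1}, \overline{T}_{0,X_1}, \overline{M}_{0,\alpha})$ to the quotient of $(\widetilde{N}_{x_1}, T_{x_1}, M_{\alpha,\lambda_{x_1}})$ by the normal subgroup $\widetilde{N}_{x_1} \cap (g_1^{-1} U_{\pi^{-1}(X_1)} g_1 \cap N)$. By Corollary~\ref{egal}, $g_1^{-1} U_{\pi^{-1}(X_1)} g_1 \cap U_\alpha = U'_{\alpha,x_1}$, and more relevantly $g_1^{-1} U_{\pi^{-1}(X_1)} g_1 \cap N$ should be computed or at least shown to be a subgroup of $T_{x_1}$ — this would follow from Corollary~\ref{conjpchap}/\ref{conjpchap} type arguments showing that anything in $U_{\pi^{-1}(X_1)} \cap g_1 N g_1^{-1}$ acts trivially on the whole fiber and in particular on $\mathbb{A}$, hence lies in $T_b$ upstairs intersected with things, so its conjugate lies in $T_{x_1}$ (using Proposition~\ref{PropTstar1InTb} or $T^*_{x_1,1} \subset T_b$). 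Once I know $N_{0,X_1}' := g_1^{-1}(U_{\pi^{-1}(X_1)} \cap g_1 N g_1^{-1})g_1$ is a normal subgroup contained in $T_{x_1}$, then since $M_{\alpha,\lambda_{x_1}} = T_{x_1} m$ for some fixed $m$, its image modulo $N_{0,X_1}'$ is $(T_{x_1}/N_{0,X_1}') \bar m$, which is exactly a right coset of $\overline{T}_{0,X_1}$ in $\overline{N}_{0,X_1}$.

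The main obstacle I anticipate is the bookkeeping around identifying $U_{X_1} \cap g_1 M_\alpha g_1^{-1}$ with the ``$\omega$-valued'' coset $M_{\alpha,\lambda_{x_1}}$ rather than the ``$\omega_1$-valued'' one $M_{\alpha,\lambda_{x_1},1}$: the point $x_1$ is a point of $\mathbb{A}_1$ but the conjugation $g_1(\cdot)g_1^{-1}$ with $U_{X_1} = g_1 U_{x_1} g_1^{-1}$ involves $U_{x_1}$ where $x_1$ is now being read as a point of $\mathbb{A}$ (via a lift, since $\pi(x) = x_1$), and $U_{x_1}$ for $\omega$ is $\bigcap U_{\alpha,-\alpha(x)}$ over lifts $x$, which by Lemma~\ref{uprime} equals the group generated by the $U'_{\alpha,x_1}$. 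So actually $U_{X_1} \cap g_1 U_\alpha g_1^{-1} = g_1 U_{\alpha,x_1} g_1^{-1}$ by Corollary~\ref{egal}, using the $\omega_1$-valued $U_{\alpha,x_1}$. This means the right framework is genuinely the $\omega_1$-valued one at $x_1 \in \mathbb{A}_1$, and the quotient is by the $\omega$-valued data — this is the subtle interplay, and I would carefully apply Corollary~\ref{egal}, Corollary~\ref{conjpchap}, and the structural Proposition~\ref{PropRankOneLevi}\ref{PropRankOneLevi:inGamma} (applied with the $\omega_1$-valuation) to conclude. The cleanest way is: show $\overline{N}_{0,X_1} = \overline{T}_{0,X_1} \sqcup \overline{M}_{0,\alpha}$ is false in general (there can be more), so instead directly show $\overline{M}_{0,\alpha} = \overline{T}_{0,X_1} \bar m$ for a single $\bar m$, by picking $m \in M_{0,\alpha}$, checking $M_{0,\alpha} \subseteq T_{0,X_1} m$ (each element differs from $m$ by an element of $g_1 T g_1^{-1} \cap U_{X_1} = T_{0,X_1}$ since $M_\alpha = T m$), and conversely $T_{0,X_1} m \subseteq M_{0,\alpha}$ (clear as $T_{0,X_1} m \subseteq U_{X_1} \cap g_1 M_\alpha g_1^{-1}$), then pass to the quotient.
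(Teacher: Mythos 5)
Your proposal is correct, and its skeleton coincides with the paper's: one first observes that $M_\alpha = T m_\alpha$ being a right coset of $T$ upstairs forces $m^{-1}m' \in T_{0,X_1}$ and $T_{0,X_1}m \subseteq M_{0,\alpha}$ for $m,m' \in M_{0,\alpha}$, so that everything reduces to showing $M_{0,\alpha} \neq \emptyset$, after which passing to the quotient is immediate (the image of $T_{0,X_1}$ in $\overline{N}_{0,X_1}$ is $\overline{T}_{0,X_1}$ because $T \subseteq N$). Where you diverge is in the non-emptiness step, which is the entire substance of the paper's proof: the paper constructs an explicit element $m_{0,\alpha}(y) = g_1 x_\alpha(y)x_{-\alpha}(y^{-1})x_\alpha(y)g_1^{-1}$ (resp.\ $m_{0,\alpha}(y,y')$) using the Chevalley--Steinberg parametrization, with a case distinction between multipliable and non-multipliable $\alpha$, choosing the parameter so that $\omega_1(y) = -\alpha(x_1)$, which is possible precisely because $-\alpha(x_1) \in \Gamma'_{\alpha,1}$. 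You instead invoke the abstract Proposition~\ref{PropValuedCoset}: part \ref{PropValuedCoset:emptyness} gives $M_{\alpha,-\alpha(x_1),1} \neq \emptyset$ since $-\alpha(x_1) \in \Gamma'_{\alpha,1} \subseteq \Gamma_{\alpha,1}$, and part \ref{PropValuedCoset:insideLevi} places such an element in $U_{-\alpha,x_1}U_{\alpha,x_1}U_{-\alpha,x_1} \subseteq U_{x_1}$, whence its $g_1$-conjugate lies in $M_{0,\alpha}$. This is a legitimate and arguably cleaner route that avoids the case split, since the multipliable/non-multipliable dichotomy is already absorbed into the proof of that proposition. One clarification resolves the point you flag as "needing care": the groups $U_{x_1}$ and $U_{\alpha,x_1}$ entering the definitions of $U_{X_1}$ and $U_{0,\alpha}$ are the $\omega_1$-valued ones (the point $x_1$ lives in $\mathbb{A}_1$), so Proposition~\ref{PropValuedCoset} must simply be applied throughout with the valuation $(\varphi^1_\alpha)_{\alpha}$; no comparison between the $\omega$- and $\omega_1$-valued root subgroups is needed, and your detour through $M_{\alpha,\lambda_{x_1}}$ for the $\omega$-valuation, as well as the discussion of the kernel of $N_{0,X_1} \to \overline{N}_{0,X_1}$ being contained in a torus, can be dropped.
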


\begin{proof}
For any $m,m' \in M_{0,\alpha}$ and $t \in T_{0,X_1}$, 
we have $m^{-1}m' \in T_{0,X_1}$ and 
 $mt \in M_{0,\alpha}$. It is therefore enough to check that $M_{0,\alpha}$ 
 is not empty. To do so, we distinguish two cases.\\

Assume first that $\alpha$ is non-multipliable. 
Since $\alpha \in \Phi_{x_1}$, we have $-\alpha(x_1) \in \Gamma'_{\alpha,1}$, and hence we can find $y \in \mathbb{L}_\alpha$ such that 
$\omega_1(y)=-\alpha(x_1)$. We then have:
$$\begin{cases}
x_{\alpha}(y) \in U_{\alpha,-\alpha(x_1)} = U_{\alpha}\cap U_{x_1}\\
x_{-\alpha}(y^{-1}) \in U_{-\alpha,\alpha(x_1)} = U_{-\alpha}\cap U_{x_1},
\end{cases}$$
so that: 
$$\begin{cases}
g_1x_{\alpha}(y)g_1^{-1} \in U_{0,\alpha}\\
g_1x_{-\alpha}(y^{-1})g_1^{-1} \in U_{0,-\alpha}.
\end{cases}$$
Hence: $$m_{0,\alpha}(y):=g_1 m_{\alpha}(y) g_1^{-1} = g_1x_{\alpha}(y)x_{-\alpha}(y^{-1})x_{\alpha}(y)g_1^{-1} \in M_{0,\alpha}.$$

Assume now that $\alpha$ is multipliable.
Since $\alpha \in \Phi_{x_1}$, we have $\alpha(x_1) \in \Gamma'_{\alpha,1}$ and hence we can find 
 $(y,y') \in H(\mathbb{L}_{\alpha},\mathbb{L}_{2\alpha})$ such that $2\varphi^1_\alpha(x_{\alpha}(y,y'))=\omega_1(y')=2\alpha(x_1)$. We then have:
$$\begin{cases}
x_{\alpha}(yy'^{-1},({^{\tau}}y')^{-1}) \in U_{\alpha,-\alpha(x_1)} = U_{\alpha}\cap U_{x_1}\\
x_{\alpha}(y({^{\tau}}y')^{-1},({^{\tau}}y')^{-1}) \in U_{\alpha,-\alpha(x_1)} = U_{\alpha}\cap U_{x_1}\\
x_{-\alpha}(y,y') \in U_{-\alpha,\alpha(x_1)} = U_{-\alpha}\cap U_{x_1},
\end{cases}$$
so that: 
$$\begin{cases}
g_1x_{\alpha}(yy'^{-1},({^{\tau}}y')^{-1})g_1^{-1} \in U_{0,\alpha}\\
g_1x_{\alpha}(y({^{\tau}}y')^{-1},({^{\tau}}y')^{-1}) \in U_{0,\alpha}\\
g_1x_{-\alpha}(y,y')g_1^{-1} \in U_{0,-\alpha}.
\end{cases}$$
Hence: 
$$m_{0,\alpha}(y,y'):=g_1 m_{\alpha}(y,y') g_1^{-1}=g_1x_{\alpha}(yy'^{-1},({^{\tau}}y')^{-1})x_{-\alpha}(y,y')x_{\alpha}(y({^{\tau}}y')^{-1},({^{\tau}}y')^{-1})g_1^{-1} \in M_{0,\alpha}.$$
\end{proof}

In the sequel, we will keep the notations $m_{0,\alpha}(y)$ and $m_{0,\alpha}(y,y')$ 
that have been used in the previous proof.

\begin{proposition}
The system $\left( \overline{T}_{0,X_1}, (\overline{U}_{0,\alpha})_{\alpha\in\Phi_{x_1}}, (\overline{M}_{0,\alpha})_{\alpha\in\Phi_{x_1}}\right)$ is a generating root group datum in $U_{X_1}/U_{\pi^{-1}(X_1)}$.
\end{proposition}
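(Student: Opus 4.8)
The plan is to verify directly the six root group datum axioms \ref{axiomRGD1}--\ref{axiomRGD6} of Definition~\ref{DefRGD} for the system $\left( \overline{T}_{0,X_1}, (\overline{U}_{0,\alpha})_{\alpha\in\Phi_{x_1}}, (\overline{M}_{0,\alpha})_{\alpha\in\Phi_{x_1}}\right)$ in the group $\overline{G}_0 := U_{X_1}/U_{\pi^{-1}(X_1)}$, using the fact (already established earlier in the excerpt) that the analogous ``local'' system is a root group datum and transporting everything through the conjugation isomorphism $g \mapsto g_1 g g_1^{-1}$. The key observation is that $\Phi_{x_1}$ is precisely the relative local root system $\Phi_{\Omega_1}$ attached to $\Omega_1 = \{x_1\}$ inside $\mathcal{I}(\mathbb{K},\omega_1,\mathbf{G})$ (with $\Gamma'_{\alpha,1}$ playing the role of $\Gamma'_\alpha$), so that by Theorem~\ref{ThmLocalRootSystem} the system $\left( \overline{T_{x_1,1}}, (\overline{U_{\alpha,x_1,1}}, \overline{M_{\alpha,x_1,1}})_{\alpha\in\Phi_{x_1}}\right)$ is already a generating root group datum of type $\Phi_{x_1}$ in $\overline{G_{x_1,1}} = U_{x_1}/U^*_{x_1}$. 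The heart of the proof is therefore to identify $\overline{G}_0$ with $\overline{G_{x_1,1}}$ via conjugation by $g_1$ and to check that this identification matches the two families of subgroups.

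The steps I would carry out, in order, are the following. First, I would record that conjugation by $g_1^{-1}$ sends $U_{X_1}$ to $U_{x_1}$ and, by Corollary~\ref{egal}, sends $U_{0,\alpha}$ to $U_{\alpha,x_1}$ and $U_{\pi^{-1}(X_1)} \cap g_1 U_\alpha g_1^{-1}$ to $U'_{\alpha,x_1}$. Next, the crucial claim is that conjugation by $g_1^{-1}$ identifies $U_{\pi^{-1}(X_1)}$ with $U^*_{x_1,1}$, where $U^*_{x_1,1}$ is the subgroup of $U_{x_1}$ from Proposition~\ref{PropUstarQC} computed for the valuation $\omega_1$; here one uses that $\Phi^*_{x_1} = \Phi$ (since $x_1$ is a point, every affine root function is constant on $\{x_1\}$) so that $U^*_{\alpha,x_1} = U'_{\alpha,x_1}$ for all $\alpha$, and that $U^*_{x_1,1}$ is generated by $T^*_{x_1,1}$ together with the $U'_{\alpha,x_1}$. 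One inclusion follows from Corollary~\ref{egal} combined with the fact that $T^*_{x_1,1} \subset T_b$ (Proposition~\ref{PropTstar1InTb}) and that $T_b$ and all $U'_{\alpha,x_1}$ fix every point of $\pi^{-1}(X_1)$, which is exactly the computation carried out in the proof of Corollary~\ref{egal}; the reverse inclusion uses that an element of $U_{X_1}$ fixing $\pi^{-1}(X_1)$ pointwise must, after conjugating to $U_{x_1}$ and decomposing via Example~\ref{ExQC}, have all its root-group components in the $U'_{\alpha,x_1}$ (because $U_{\alpha,x}$ for varying $x \in \pi^{-1}(x_1)$ intersect in $U'_{\alpha,x_1}$ by Lemma~\ref{uprime}) and its $N$-component, which lies in $T$, in $T^*_{x_1,1}$ (this last point is the role of Proposition~\ref{PropTcomponent}/Proposition~\ref{PropTstar1InTb}). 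Granting this, the conjugation isomorphism descends to an isomorphism $\overline{G}_0 \xrightarrow{\sim} \overline{G_{x_1,1}}$ carrying $\overline{U}_{0,\alpha}$ onto $\overline{U_{\alpha,x_1,1}}$, $\overline{T}_{0,X_1}$ onto $\overline{T_{x_1,1}}$, and $\overline{M}_{0,\alpha}$ onto $\overline{M_{\alpha,x_1,1}}$ (using the explicit lifts $m_{0,\alpha}(y)$, $m_{0,\alpha}(y,y')$ from the previous lemma, which are conjugates of the $m_\alpha(y)$, $m_\alpha(y,y')$ of Notations~\ref{NotMaNonMult} and~\ref{NotMaMult}). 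Finally, since a root group datum is preserved under group isomorphisms, Theorem~\ref{ThmLocalRootSystem} applied over $\omega_1$ immediately gives that $\left( \overline{T}_{0,X_1}, (\overline{U}_{0,\alpha})_{\alpha\in\Phi_{x_1}}, (\overline{M}_{0,\alpha})_{\alpha\in\Phi_{x_1}}\right)$ is a generating root group datum of type $\Phi_{x_1}$, and the ``generating'' part follows because the $U_{0,\alpha}$ together with $T_{0,X_1}$ generate $U_{X_1}$ (which is itself generated by the $g_1 U_{\alpha,x_1} g_1^{-1}$).

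The main obstacle I expect is the second step: the clean identification $g_1^{-1} U_{\pi^{-1}(X_1)} g_1 = U^*_{x_1,1}$, and in particular the reverse inclusion $U^*_{x_1,1} \subseteq g_1^{-1} U_{\pi^{-1}(X_1)} g_1$ together with surjectivity onto the quotient. This requires knowing precisely which elements of $U_{x_1}$ act trivially on the whole fiber $\pi^{-1}(X_1)$, and the delicate point is controlling the $T$-component: one must show that the subgroup of $T \cap U_{x_1}$ acting trivially on $\pi^{-1}(X_1)$ is exactly $T^*_{x_1,1}$ (not something larger or smaller), which is where the commutator estimates of Corollary~\ref{CorCommutationTaUa} and the characterization of $T'_{\alpha,x_1}$ by $T$-components of commutators (Proposition~\ref{PropTcomponent}) enter, combined with Proposition~\ref{PropTstar1InTb} to ensure these $T$-components land in $T_b$ and hence fix the fiber. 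Once this is pinned down, everything else is bookkeeping with the conjugation isomorphism and the results already proved.
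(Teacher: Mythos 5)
Your reduction to Theorem~\ref{ThmLocalRootSystem} via the conjugation $g\mapsto g_1gg_1^{-1}$ hinges on the identification $g_1^{-1}U_{\pi^{-1}(X_1)}g_1 = U^*_{x_1}$, and this identification is false in general; you correctly flagged it as the main obstacle, but it is not merely delicate, it fails. The inclusion $g_1U^*_{x_1}g_1^{-1}\subseteq U_{\pi^{-1}(X_1)}$ does hold (by Corollary~\ref{egal}, Proposition~\ref{PropTstar1InTb} and normality of $U^*_{x_1}$ in $\widehat{P}_{x_1}$), and the root-group parts of the two kernels agree by Corollary~\ref{egal} and Lemma~\ref{uprime}. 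The problem is the $T$-part. Take $\mathbf{G}=\mathrm{SL}_2$, $\mathbb{K}=\mathbb{F}_q(\!(u)\!)(\!(t)\!)$ with $q$ odd, $g_1=1$, $x_1=0$. Then $-I=\widehat{\alpha}(-1)$ lies in $U_{x_1}=\langle U_{\alpha,0},U_{-\alpha,0}\rangle=\mathrm{SL}_2(\mathcal{O}_1)$ and, being central and in $T_b$, it fixes every point of $\mathcal{I}(\mathbb{K},\omega,\mathbf{G})$, hence $-I\in U_{\pi^{-1}(X_1)}$. But $U^*_{x_1}$ is generated by $u_{\pm\alpha}(\mathcal{M})$ and $T^*_{x_1}=\widehat{\alpha}(1+\mathcal{M})$, all of which reduce to $I$ modulo $\mathcal{M}$, so $-I\notin U^*_{x_1}$. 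Thus $U_{\pi^{-1}(X_1)}\supsetneq g_1U^*_{x_1}g_1^{-1}$: the pointwise stabilizer of the fiber may strictly contain $U^*_{x_1}$, because $T\cap U_{\pi^{-1}(X_1)}$ can exceed $T^*_{x_1}$ even after imposing that the element fix the whole fiber.

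Consequently conjugation only yields a surjection $U_{x_1}/U^*_{x_1}\twoheadrightarrow U_{X_1}/U_{\pi^{-1}(X_1)}$, not an isomorphism, and the axioms of Definition~\ref{DefRGD} do not descend automatically along surjections: \ref{axiomRGD1} could fail if a root group were killed, and \ref{axiomRGD6} requires a fresh argument in the quotient. One could try to repair your argument by proving that the extra kernel is a normal subgroup of $\overline{T_{x_1,1}}$ meeting $\overline{U^{\pm}}$ trivially and then re-deriving the axioms for the further quotient, but that is essentially the content of the direct verification. This is in fact what the paper does: it checks \ref{axiomRGD1}--\ref{axiomRGD6} one by one in $U_{X_1}/U_{\pi^{-1}(X_1)}$ itself, using Corollary~\ref{egal} to control the root-group components of the kernel (including the multipliable-root subtlety where a class in $\overline{U}_{0,\gamma}$ with $\gamma\notin\Phi_{x_1}$ is pushed into $\overline{U}_{0,2\gamma}$), and the spherical Bruhat decomposition together with the decomposition of $\widehat{P}_{g_1\mathbb{A}\cap\pi^{-1}(X_1)}$ for \ref{axiomRGD6}. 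You should abandon the transport-of-structure shortcut and argue axiom by axiom in the correct quotient.
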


\begin{proof}~\\

\textbf{Axiom \ref{axiomRGD1}.} Since $\alpha \in \Phi_{x_1}$, we can find 
$u \in U_{\alpha}$ such that 
 $\varphi^1_\alpha(u) = \pi(\varphi_{\alpha}(u))=-\alpha(x_1)$. By lemma~\ref{egal}, we have:
$$g_1ug_1^{-1} \in g_1U_{\alpha}g_1^{-1} \cap \left( U_{X_1} \setminus U_{\pi^{-1}(X_1)} \right).$$ 
Hence $\overline{U}_{0,\alpha}$ is not trivial.\\

\textbf{Axiom \ref{axiomRGD2}.} Consider two roots $\alpha,\beta \in \Phi_{x_1}$ such that $\alpha \not\in -\mathbb{R}_+\beta$ and take an element $\overline{u} \in [\overline{U}_{0,\alpha},\overline{U}_{0,\beta}]$. Fix a lifting $u$ of $\overline{u}$ in $[U_{0,\alpha},U_{0,\beta}]$. By proposition~\ref{Vtotal}, the group $[U_{0,\alpha},U_{0,\beta}]=g_1[U_{\alpha,x_1},U_{\beta,x_1}]g_1^{-1}$ is contained in 
the subgroup of $G$ spanned by the groups $U_{0,\gamma}=g_1U_{\gamma,x_1}g_1^{-1}$ with $\gamma \in \Phi \cap (\mathbb{Z}_{>0}\alpha + \mathbb{Z}_{>0}\beta)$. Hence we can find $\gamma_1,...,\gamma_m \in \Phi \cap (\mathbb{Z}_{>0}\alpha + \mathbb{Z}_{>0}\beta)$ and $u_1 \in U_{0,\gamma_1}$, ..., $u_m \in U_{0,\gamma_m}$ such that $u = u_1...u_m$.  \\

Fix now $i \in \{1,...,m\}$ and let $\overline{u}_i$ be the image of $u_i$ in $U_{X_1}/U_{\pi^{-1}(X_1)}$. Three cases arise:
\begin{itemize}
\item[(i)] \textit{First case:} $\gamma_i \in \Phi_{x_1}$. We then have $\overline{u}_i \in \overline{U}_{0,\gamma_i}$ and $\gamma_i \in \Phi_{x_1} \cap (\mathbb{Z}_{>0}\alpha + \mathbb{Z}_{>0}\beta)$.
\item[(ii)] \textit{Second case:} $\gamma_i \in \Phi \setminus \Phi_{x_1}$ and $u_i \in U_{\pi^{-1}(X_1)}$. We then have $\overline{u}_i=1$.
\item[(iii)] \textit{Third case:} $\gamma_i \in \Phi \setminus \Phi_{x_1}$ and $u_i \in U_{0,\gamma_i} \setminus (U_{\pi^{-1}(X_1)}\cap g_1U_{\gamma_i}g_1^{-1})$. Let's then prove that $2\gamma_i \in \Phi_{x_1}$ and that $\overline{u}_i \in \overline{U}_{0,2\gamma_i}$. By corollary \ref{egal}, we have $u_i \in g_1(U_{\gamma_i,x_1} \setminus U'_{\gamma_i,x_1})g_1^{-1}$, so that $\pi(\varphi_{\gamma_i}(g_1^{-1}u_ig_1))=-\gamma_i(x_1)$. But $\gamma_i \not\in \Phi_{x_1}$, and hence by Lemma~\ref{LemEquivalenceDefPhiOmega} there exists $u'_i \in U_{2\gamma_i}$ such that:
\begin{equation}\label{eqqqq}
-\gamma_i(x_1)=\pi(\varphi_{\gamma_i}(g_1^{-1}u_ig_1))<\pi(\varphi_{\gamma_i}(g_1^{-1}u_ig_1u'_i)).
\end{equation}
 In particular: 
 \begin{equation}\label{eqqqq2}
 \varphi^1_{2\gamma_i}(u'_i)=\pi(\varphi_{2\gamma_i}(u'_i))=2\pi(\varphi_{\gamma_i}(u'_i))=2\pi(\varphi_{\gamma_i}(g_1^{-1}u_ig_1))=-2\gamma_i(x_1).
 \end{equation}
 We deduce that $2\gamma_i \in \Phi_{x_1}$. Moreover $u_i$ can be factored as: 
 $$u_i=(u_ig_1u'_ig_1^{-1}) \cdot (g_1 u_i'^{-1}g_1^{-1})$$
and we have:
$$u_ig_1u'_i g_1^{-1} =g_1(g_1^{-1}u_ig_1u'_i)g_1^{-1}\in g_1 U'_{\gamma_i,x_1} g_1^{-1} = U_{\pi^{-1}(X_1)}\cap g_1U_{\gamma_i}g_1^{-1}$$
by (\ref{eqqqq}) and by corollary \ref{egal}, and:
$$g_1u_i'^{-1}g_1^{-1} \in g_1 U_{2\gamma_i,x_1} g_1^{-1} = U_{0,2\gamma_i}$$
by (\ref{eqqqq2}). We deduce that $\overline{u}_i \in \overline{U}_{0,2\gamma_i}$.
\end{itemize}
The previous three cases show that $\overline{u} = \overline{u}_1...\overline{u}_m$ is contained in 
the subgroup of $U_{X_1}/U_{\pi^{-1}(X_1)}$ spanned by the groups $\overline{U}_{0,\gamma}$ with $\gamma \in \Phi_{x_1} \cap (\mathbb{Z}_{>0}\alpha + \mathbb{Z}_{>0}\beta)$. That is exactly what we wanted to prove.\\

\textbf{Axiom \ref{axiomRGD3}.} If $\alpha$ and $2\alpha$ belong to $\Phi_{x_1}$, 
then $U_{2\alpha} \subseteq U_{\alpha}$, and hence
$\overline{U}_{0,2\alpha} \subseteq \overline{U}_{0,\alpha}$. 
Let's check that this inclusion is strict. The condition that $\alpha \in \Phi_{x_1}$ implies that we can find $(y,y') \in 
H(\mathbb{L}_{\alpha},\mathbb{L}_{2\alpha})$ such that $\mathrm{Tr}_{L_{\alpha}/L_{2\alpha}}(y')=N_{L_{\alpha}/L_{2\alpha}}(y)$ 
and $\omega_1(y')=-2\alpha(x_1)$. 

If $y \neq 0$, then: $$g_1x_{\alpha}(y,y')g_1^{-1} 
\in \left( U_{X_1}\cap g_1U_{\alpha}g_1^{-1}\right) \setminus 
\left(U_{\pi^{-1}(X_1)}\cap g_1U_{\alpha}g_1^{-1}\right)$$
and:
$$g_1x_{\alpha}(y,y')g_1^{-1} 
\not\in \left( U_{X_1}\cap g_1U_{2\alpha}g_1^{-1}\right).$$
Hence the class of $g_1x_{\alpha}(y,y')g_1^{-1} $ in $\overline{U}_{0,\alpha}$ is not 
in $\overline{U}_{0,2\alpha}$.

Now assume that $y=0$. Let $(z,z')$ be any element of $H(\mathbb{L}_{\alpha},\mathbb{L}_{2\alpha})$
with $z \neq 0$. Let $\lambda \in \mathbb{L}_{2\alpha}^{\times}$ such that $\omega_1(\lambda^2z') > \omega_1(y')$. 
We then have:
\begin{gather*}
(\lambda z , \lambda^2z'+y')\in H(\mathbb{L}_{\alpha},\mathbb{L}_{2\alpha})\\
\lambda z \neq 0,\\
\omega_1(\lambda^2z'+y')=-2\alpha(x_1).
\end{gather*} 
Hence the class of $g_1x_{\alpha}(\lambda z , \lambda^2z'+y')g_1^{-1}$ in $\overline{U}_{0,\alpha}$ is not 
in $\overline{U}_{0,2\alpha}$.\\

\textbf{Axiom \ref{axiomRGD4}.} Take any element $\overline{u} \in \overline{U}_{0,-\alpha} \setminus \{1\}$ and 
fix a lifting $$u\in \left(U_{X_1}\cap g_1U_{-\alpha}g_1^{-1}\right) \setminus \left( U_{\pi^{-1}(X_1)}\cap g_1U_{-\alpha}g_1^{-1}\right)$$ 
of $\overline{u}$. \\

Assume first that $\alpha$ is non-multipliable. We can then 
find $y\in L_{-\alpha}^{\times}$ such that: 
$$u=g_1x_{-\alpha}(y^{-1})g_1^{-1}.$$ By 
Corollary~\ref{egal}, we have $\omega_1(y)=-\alpha(x_1)$. Hence:
$$u=(g_1x_{\alpha}(y)^{-1}g_1^{-1})m_{\alpha}(y)(g_1x_{\alpha}(y)^{-1}g_1^{-1}) \in U_{0,\alpha}m_{\alpha}(y)U_{0,\alpha} \subseteq U_{0,\alpha}M_{0,\alpha}U_{0,\alpha}.$$

Now assume that $\alpha$ is multipliable. We can then find $(y,y') \in H(\mathbb{L}_{\alpha},\mathbb{L}_{2\alpha})$
such that:
$$u=g_1x_{-\alpha}(y,y')g_1^{-1}.$$
By Corollary~\ref{egal}, we have $\omega_1(y')=2\alpha(x_1)$. Hence:
\begin{align*}
u&=\Big(g_1x_{\alpha}\big(yy'^{-1},({^{\tau}}y')^{-1}\big)^{-1}g_1^{-1}\Big)\cdot  m_{\alpha}(y,y') \cdot \Big(g_1x_{\alpha}\big(y({^{\tau}}y')^{-1},({^{\tau}}y')^{-1}\big)^{-1}g_1^{-1}\Big)
\\ &\in U_{0,\alpha}m_{\alpha}(y,y')U_{0,\alpha} \subseteq U_{0,\alpha}M_{0,\alpha}U_{0,\alpha}.
\end{align*}

\textbf{Axiom \ref{axiomRGD5}.} Take $\alpha, \beta \in \Phi_{x_1}$ and $\overline{m} \in \overline{M}_{0,\alpha}$. 
 Let $m \in M_{0,\alpha}$ be a lifting of $\overline{m}$. Since 
 $g_1^{-1}mg_1 \in M_{\alpha}$, Proposition~\ref{Vtotal} implies that:
 $$(g_1^{-1}mg_1)U_{\beta}(g_1^{-1}mg_1)^{-1} = U_{r_{\alpha}(\beta)}.$$
Since $m \in U_{X_1}$, we deduce that $mU_{0,\beta}m^{-1} = 
U_{0,r_{\alpha}(\beta)}$, and hence: $$\overline{m}\overline{U}_{0,\beta}\overline{m}^{-1} = 
\overline{U}_{0,r_{\alpha}(\beta)}.$$

\textbf{Axiom \ref{axiomRGD6}.} Let $\overline{U}^{\pm}_{0,X_1}$ be the subgroup of $U_{X_1}/U_{\pi^{-1}(X_1)}$ 
generated by the $\overline{U}_{0,\alpha}$ for $\alpha \in \Phi_{x_1}^{\pm}$. Take $\overline{u}_+ \in \overline{U}^{+}_{0,X_1}$, $\overline{u}_- \in \overline{U}^{-}_{0,X_1}$ and $\overline{t} \in \overline{T}_{0,X_1}$ such that $\overline{t}\overline{u}_+ = \overline{u}_-$. By setting $U_{0,X_1}^{\pm} := \langle U_{0,\alpha}, \alpha \in \Phi_{x_1}^{\pm} \rangle$ , we can find a lifting $u_+$ of $\overline{u}_+$ in $U_{0,X_1}^{+}$, a lifting $u_-$ of $\overline{u}_-$ in $U_{0,X_1}^{-}$, a lifting $t$ of $\overline{t}$ in $T_{0,X_1}$ and an element $u \in U_{\pi^{-1}(X_1)}$ such that $tu_+=uu_-$. Since $\widehat{P}_{g_1\mathbb{A}\cap\pi^{-1}(X_1)} = g_1\widehat{N}_{\mathbb{A}\cap\pi^{-1}(x_1)} U_{\mathbb{A}\cap\pi^{-1}(x_1)}^+ U_{\mathbb{A}\cap\pi^{-1}(x_1)}^-g_1^{-1}$ by Corollary~\ref{CorPchapeauOmegaQC}, we can find $u'_+ \in U_{\mathbb{A}\cap\pi^{-1}(x_1)}^+$, $u'_- \in U_{\mathbb{A}\cap\pi^{-1}(x_1)}^-$ and $n' \in \widehat{N}_{\mathbb{A}\cap\pi^{-1}(x_1)}$ such that $u=g_1n'u'_+u'_-g_1^{-1}$. We therefore have:
$$g_1^{-1}tu_+g_1=n'u'_+u'_-(g_1^{-1}u_-g_1),$$
and hence:
$$ U^+g_1^{-1}tu_+g_1U^-= U^+n'u'_+U^-.$$
But $g_1^{-1}tg_1\in T$, $n' \in N_{\mathbb{A}\cap\pi^{-1}(x_1)} \subseteq T^1_b \subseteq T$ and $T$ normalizes $U^+$. Hence:
$$U^+(g_1^{-1}tg_1)U^-=U^+n'U^-.$$
We deduce from the spherical Bruhat decomposition \cite[6.1.15(c)]{BruhatTits1} that: 
\begin{gather*}
g_1^{-1}tg_1=n',\\
{u'_+}^{-1} (g_1^{-1}u_+g_1) = u'_-(g_1^{-1}u_-g_1) \in U^+\cap U^-.
\end{gather*}
But $U^+\cap U^- = \{1\}$. 
Hence, by using corollary~\ref{egal} and the inclusion of groups $U_{\mathbb{A} \cap \pi^{-1}(x_1)} \subseteq U'_{x_1}$, we get:
\begin{gather*}
u_+=g_1u'_+g_1^{-1} \in g_1U_{\mathbb{A}\cap\pi^{-1}(x_1)}g_1^{-1}\subseteq U_{\pi^{-1}(X_1)},\\
u_-=g_1{u'_-}^{-1}g_1^{-1} \in g_1U_{\mathbb{A}\cap\pi^{-1}(x_1)}g_1^{-1}\subseteq U_{\pi^{-1}(X_1)},\\
t=g_1n'g_1^{-1}=uu_-^{-1}u_+^{-1} \in U_{\pi^{-1}(X_1)},
\end{gather*} 
so that $\overline{u}_+=\overline{u}_-=\overline{t}=1$.\\

\textbf{The root group datum is generating.} Indeed, $U_{X_1}$ is spanned by the $g_1U_{\alpha,x_1}g_1^{-1}$ for $\alpha \in \Phi$. So it suffices to check that:
$$g_1U_{\alpha,x_1}g_1^{-1}\subseteq \langle U_{\pi^{-1}(X_1)},T_{0,X_1},U_{0,\beta} | \beta \in \Phi_{x_1}\rangle$$
for each $\alpha\in \Phi$. To do so, fix a root $\alpha\in \Phi$ and an element $u \in g_1U_{\alpha,x_1}g_1^{-1}$. If $u \in g_1U'_{\alpha,x_1}g_1^{-1}$, then $u \in U_{\pi^{-1}(X_1)}$ by corollary~\ref{egal}. Otherwise, two cases arise:
\begin{itemize}
\item[$\bullet$] if $\alpha \in \Phi_{x_1}$, then corollary~\ref{egal} implies that $U_{0,\alpha}=g_1U_{\alpha,x_1}g_1^{-1}$, and hence $u\in U_{0,\alpha}$.
\item[$\bullet$] if $\alpha \not\in \Phi_{x_1}$, then, by proceeding as in the proof of axiom~\ref{axiomRGD2}, we have $2\alpha \in \Phi_{x_1}$ and $u \in U_{\pi^{-1}(X_1)}U_{0,2\alpha}$.
\end{itemize}
\end{proof}

\subsection{The valuation axioms for the fibers}

Fix $\alpha \in \Phi_{x_1}$. Observe that, according to Corollary~\ref{egal}, given $u\in \left( U_{X_1}\setminus U_{\pi^{-1}(X_1)} \right) \cap g_1U_{\alpha}g_1^{-1}  $ and 
$u'\in U_{\pi^{-1}(X_1)} \cap g_1U_{\alpha}g_1^{-1}$, we have $\pi(\varphi_{\alpha}(g_1^{-1}ug_1)) = -\alpha(x_1)$ 
and $\pi(\varphi_{\alpha}(g_1^{-1}u'g_1)) > -\alpha(x_1)$. Hence: 
$$\varphi_{\alpha}(g_1^{-1}ug_1) < \varphi_{\alpha}(g_1^{-1}u'g_1),$$
so that:
$$\varphi_{\alpha}(g_1^{-1}uu'g_1) = \varphi_{\alpha}(g_1^{-1}ug_1).$$
By choosing an element $\tilde{x}_1 \in \pi^{-1}(\{x_1\})$, we can therefore define the map:
 $$\overline{\varphi}_{\alpha}: \overline{U}_{0,\alpha} \rightarrow \mathfrak{R}^{\mathrm{rk}(\Lambda_0)} \cup \{\infty\}$$
 that sends the class in $\overline{U}_{0,\alpha}$ of an element $u \in U_{X_1} \cap g_1U_{\alpha}g_1^{-1}$
 to $\varphi_{\alpha}(g_1^{-1}ug_1)+\alpha (\tilde{x}_1)$ if $u\not\in U_{\pi^{-1}(X_1)}$ and to $\infty$ otherwise.
 
 \begin{proposition}
The system $\left( \overline{T}_{0,X_1}, (\overline{U}_{0,\alpha})_{\alpha\in\Phi_{x_1}}, (\overline{M}_{0,\alpha})_{\alpha\in\Phi_{x_1}},(\overline{\varphi}_{\alpha})_{\alpha\in\Phi_{x_1}}\right)$ is a valued generating root group datum in $U_{X_1}/U_{\pi^{-1}(X_1)}$.
 \end{proposition}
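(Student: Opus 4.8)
The plan is to verify the axioms \ref{axiomV0} to \ref{axiomV5} of Definition~\ref{DefValuation} for the system $\left( \overline{T}_{0,X_1}, (\overline{U}_{0,\alpha})_{\alpha\in\Phi_{x_1}}, (\overline{M}_{0,\alpha})_{\alpha\in\Phi_{x_1}},(\overline{\varphi}_{\alpha})_{\alpha\in\Phi_{x_1}}\right)$, having already established that the underlying datum is a generating root group datum of type $\Phi_{x_1}$ in $U_{X_1}/U_{\pi^{-1}(X_1)}$. The guiding principle throughout is that $\overline{\varphi}_\alpha$ is, up to the shift by $\alpha(\tilde x_1)$, the ``$\Lambda_0$-component'' of the original valuation $\varphi_\alpha$ restricted to $U_{0,\alpha} = g_1 U_{\alpha,x_1} g_1^{-1}$, and that Corollary~\ref{egal} precisely identifies $U_{\pi^{-1}(X_1)} \cap g_1 U_\alpha g_1^{-1} = g_1 U'_{\alpha,x_1} g_1^{-1}$ as the kernel of this component. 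Concretely, for $u \in U_{0,\alpha} \setminus U_{\pi^{-1}(X_1)}$, write $u = g_1 x_\alpha(\ldots) g_1^{-1}$; then $\pi(\varphi_\alpha(g_1^{-1} u g_1)) = -\alpha(x_1)$, so $\varphi_\alpha(g_1^{-1}ug_1) + \alpha(\tilde x_1)$ has trivial $\Lambda_1$-component and hence lands in $\mathfrak{R}^{\mathrm{rk}(\Lambda_0)}$, which is what makes $\overline{\varphi}_\alpha$ well-defined with values in $\mathfrak{R}^{\mathrm{rk}(\Lambda_0)} \cup \{\infty\}$.

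First I would handle the easy axioms. Axiom~\ref{axiomV0}: the image $\overline{\varphi}_\alpha(\overline{U}_{0,\alpha})$ contains $\infty$ together with a coset of $\Lambda_0$ (coming from elements $g_1 x_\alpha(y) g_1^{-1}$ with $\omega_1(y) = -\alpha(x_1)$ but $\omega(y)$ varying over its $\Lambda_0$-fiber), which has at least $3$ elements since $\Lambda_0 \neq 0$. Axiom~\ref{axiomV4} for multipliable $\alpha$ with $2\alpha \in \Phi_{x_1}$ is immediate from the definition of $\overline{\varphi}_{2\alpha}$ and the relation $\varphi_{2\alpha} = 2\varphi_\alpha|_{U_{2\alpha}}$, noting the shift $\alpha(\tilde x_1)$ doubles correctly. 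For axiom~\ref{axiomV1} I would use the criterion \ref{axiomV1bis} of Lemma~\ref{LemAxiomV1}: given two elements of $\overline{U}_{0,\alpha}$, lift them to $U_{0,\alpha}$, apply the inequality $\varphi_\alpha(uv^{-1}) \geqslant \min(\varphi_\alpha(u), \varphi_\alpha(v))$ from axiom~\ref{axiomV1} for $\varphi_\alpha$, and project; the point $\overline{\varphi}_\alpha^{-1}(\{\infty\}) = \{1\}$ is exactly the content of the second equality of Corollary~\ref{egal}. For axiom~\ref{axiomV3}, I would lift commutators from $\overline{U}_{0,\alpha}, \overline{U}_{0,\beta}$ to $U_{0,\alpha} = g_1 U_{\alpha,x_1} g_1^{-1}$, $U_{0,\beta} = g_1 U_{\beta,x_1} g_1^{-1}$, apply axiom~\ref{axiomV3} for $\varphi_\alpha$ to get containment in the group generated by $U_{r\alpha+s\beta, r\varphi_\alpha(u) + s\varphi_\beta(v)}$; then conjugate back and project, using the argument already carried out in the proof of axiom~\ref{axiomRGD2} to see that each factor whose root $\gamma = r\alpha+s\beta$ is \emph{not} in $\Phi_{x_1}$ either dies in the quotient or contributes to $\overline{U}_{0,2\gamma}$, and that the linear relation $r\cdot(-\alpha(x_1)) + s\cdot(-\beta(x_1)) = -\gamma(x_1)$ on $\Lambda_1$-components forces the shifted values to add up correctly on the $\Lambda_0$-side.

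The axioms involving the element $m_\alpha$ — namely \ref{axiomV2} and \ref{axiomV5} — are where the bookkeeping is most delicate, and I expect \ref{axiomV2} to be the main obstacle. For \ref{axiomV5}: given $\overline{u} \in \overline{U}_{0,\alpha}$ and $\overline{u}', \overline{u}'' \in \overline{U}_{0,-\alpha}$ with $\overline{u}'\overline{u}\,\overline{u}'' \in \overline{M}_{0,\alpha}$, I would lift to $m \in M_{0,\alpha}$ and use the explicit factorizations $m_{0,\alpha}(y) = g_1 m_\alpha(y) g_1^{-1}$ (resp. $m_{0,\alpha}(y,y')$) from the lemma preceding the statement, together with the uniqueness in \cite[6.1.2(2)]{BruhatTits1}, to read off the parameters; then axiom~\ref{axiomV5} for $\varphi_{-\alpha}$ gives $\varphi_{-\alpha}(g_1^{-1}u'g_1) = -\varphi_\alpha(g_1^{-1}ug_1)$, and adding/subtracting the shifts $-\alpha(\tilde x_1)$ and $\alpha(\tilde x_1)$ yields $\overline{\varphi}_{-\alpha}(\overline{u}') = -\overline{\varphi}_\alpha(\overline{u})$. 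For \ref{axiomV2}: given $m \in M_{0,\alpha}$, the map $\overline{u} \in \overline{U}_{0,-\alpha}\setminus\{1\} \mapsto \overline{\varphi}_{-\alpha}(\overline{u}) - \overline{\varphi}_\alpha(\overline{m}\,\overline{u}\,\overline{m}^{-1})$ must be shown constant. Here I would lift and write the quantity as $\big(\varphi_{-\alpha}(g_1^{-1}ug_1) + \alpha(\tilde x_1)\big) - \big(\varphi_\alpha(g_1^{-1} m u m^{-1} g_1) - \alpha(\tilde x_1)\big)$, which equals $\big(\varphi_{-\alpha}(g_1^{-1}ug_1) - \varphi_\alpha(g_1^{-1}mg_1 \cdot g_1^{-1}ug_1 \cdot g_1^{-1}m^{-1}g_1)\big) + 2\alpha(\tilde x_1)$; by axiom~\ref{axiomV2} for $\varphi$ applied to $g_1^{-1}mg_1 \in M_\alpha$, the bracketed term is a constant $c \in \mathfrak{R}^{\mathrm{rk}(\Lambda)}$ independent of $u$, so the difference is $c + 2\alpha(\tilde x_1)$. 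The subtlety to check is that $c + 2\alpha(\tilde x_1)$ genuinely lies in $\mathfrak{R}^{\mathrm{rk}(\Lambda_0)}$ (equivalently, has trivial $\Lambda_1$-component): this follows because $\overline{m}\,\overline{u}\,\overline{m}^{-1} \in \overline{U}_{0,-\alpha}$ by axiom~\ref{axiomRGD5}, so $\overline{\varphi}_{-\alpha}$ is defined on it and takes values in $\mathfrak{R}^{\mathrm{rk}(\Lambda_0)}$, whence the whole expression does. Finally I would remark that $\overline{\varphi}_\alpha$ does not depend on the choice of lift nor on the representative $\tilde x_1 \in \pi^{-1}(\{x_1\})$ — two such choices differ by an element of $V_0 \otimes \ker\pi$, but $\alpha$ vanishes on $V_0$, so $\alpha(\tilde x_1)$ is independent of the choice within a fiber — which is the observation already recorded just before the proposition.
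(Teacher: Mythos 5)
Your proposal is correct and follows essentially the same route as the paper's proof: each axiom \ref{axiomV0}--\ref{axiomV5} is checked by lifting to $U_{0,\alpha}=g_1U_{\alpha,x_1}g_1^{-1}$, invoking the corresponding axiom for $(\varphi_\alpha)$ via Proposition~\ref{Vtotal}, and using Corollary~\ref{egal} to identify $U_{\pi^{-1}(X_1)}\cap g_1U_\alpha g_1^{-1}=g_1U'_{\alpha,x_1}g_1^{-1}$ as what dies in the quotient. In particular your treatment of \ref{axiomV3} (factors indexed by $\gamma\notin\Phi_{x_1}$ either vanish or contribute to $\overline{U}_{0,2\gamma}$ with the doubled value, exactly as in the verification of \ref{axiomRGD2}) and of \ref{axiomV2}/\ref{axiomV5} via the explicit $m_{0,\alpha}$ and the uniqueness of \cite[6.1.2(2)]{BruhatTits1} is the paper's argument.

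One peripheral claim in your last sentence is false: $\overline{\varphi}_\alpha$ \emph{does} depend on the representative $\tilde{x}_1\in\pi^{-1}(\{x_1\})$. Two such representatives differ by an arbitrary element of $V\otimes\mathfrak{R}^{\mathrm{rk}(\Lambda_0)}$, on which $\alpha$ does not vanish, so $\alpha(\tilde{x}_1)$ changes by an element of $\mathfrak{R}^{\mathrm{rk}(\Lambda_0)}$; different choices yield valuations differing by constants (equipollent but not equal). This does not affect the proposition, which only requires one fixed choice. Two further harmless slips in your \ref{axiomV2} argument: the conjugate $\overline{m}\,\overline{u}\,\overline{m}^{-1}$ of $\overline{u}\in\overline{U}_{0,-\alpha}$ lies in $\overline{U}_{0,\alpha}$ (not $\overline{U}_{0,-\alpha}$) by \ref{axiomRGD5}, and the shifts are $-\alpha(\tilde{x}_1)$ for $\overline{\varphi}_{-\alpha}$ and $+\alpha(\tilde{x}_1)$ for $\overline{\varphi}_\alpha$, giving $c-2\alpha(\tilde{x}_1)$ rather than $c+2\alpha(\tilde{x}_1)$; either way the quantity is constant and lies in $\mathfrak{R}^{\mathrm{rk}(\Lambda_0)}$ for the reason you give.
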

 
\begin{proof} Observe that axiom \ref{axiomV4} is obviously satisfied. We will therefore use it freely to prove the other axioms.\\

\textbf{Axiom \ref{axiomV0}.} Assume first that $\alpha$ is non-multipliable. By Corollary~\ref{egal}, the image of $\overline{\varphi}_{\alpha}$ contains $\infty$ as well as 
   $\{\lambda \in \Gamma_{\alpha} + \alpha(\tilde{x}_1) | \pi(\lambda)= 0\}$. 
   Since  $\alpha \in \Phi_{x_1}$, we can find $\gamma \in \Gamma_{\alpha}$ such that 
    $-\alpha(x_1) = -\pi(\alpha(\tilde{x}_1))  = \pi(\gamma)$, and hence:
   $$\{\lambda \in \Gamma_{\alpha} + \alpha(\tilde{x}_1) | \pi(\lambda)= 0\}=\{\mu + \gamma + \alpha(\tilde{x}_1) |\ \mu\in (\Gamma_{\alpha} - \gamma) \cap \mathfrak{R}^{\mathrm{rk}(\Lambda_0)}\}.$$
   We have that $\gamma \in \Gamma_{\alpha}$, whence $\Gamma_\alpha - \gamma = \Gamma_\alpha = \omega(\mathbb{L}_\alpha^\times)$ by Proposition~\ref{PropSemiSpecialValuation}. Thus 
   the set $(\Gamma_{\alpha} - \gamma) \cap \mathfrak{R}^{\mathrm{rk}(\Lambda_0)}=\Gamma_{\alpha} \cap \mathfrak{R}^{\mathrm{rk}(\Lambda_0)}$ contains $\omega(\mathbb{K}^\times)\cap \Lambda_0$ and hence it is infinite by assumption on $\Lambda_0$ at beginning of section~\ref{SubsecContructionFibers}.

Now, if $\alpha$ is multipliable, then, by axiom~\ref{axiomV4}, the set $\mathrm{Im}(\overline{\varphi}_{\alpha})$ contains $\frac{1}{2}\mathrm{Im}(\overline{\varphi}_{2\alpha})$, and is hence also infinite.\\

\textbf{Axiom \ref{axiomV1}.}  Fix $\lambda \in \mathfrak{R}^{\mathrm{rk}(\Lambda_0)} \subseteq 
  \mathfrak{R}^{\mathrm{rk}(\Lambda)}$, and take $u,v \in U_{X_1} \cap g_1U_{\alpha}g_1^{-1}$ 
  such that: 
  \begin{gather*}
  \varphi_{\alpha}(g_1^{-1}ug_1)+\alpha(\tilde{x}_1) \geq \lambda,\\
  \varphi_{\alpha}(g_1^{-1}vg_1)+\alpha(\tilde{x}_1) \geq \lambda.
  \end{gather*}
  Then:
  \begin{gather*}
    \varphi_{\alpha}(g_1^{-1}uvg_1)+\alpha(\tilde{x}_1) \geq \min \{ \varphi_{\alpha}(g_1^{-1}ug_1), \varphi_{\alpha}(g_1^{-1}vg_1) \} +\alpha(\tilde{x}_1) \geq 
    \lambda,\\
    \varphi_{\alpha}(g_1^{-1}u^{-1}g_1)+\alpha(\tilde{x}_1)=\varphi_{\alpha}(g_1^{-1}ug_1)+\alpha(\tilde{x}_1) \geq \lambda.
    \end{gather*}
    Hence $\overline{U}_{0,\alpha,\lambda}:= \overline{\varphi}_{\alpha}^{-1}([\lambda,+\infty])$ is a subgroup 
    of $\overline{U}_{0,\alpha}$. Moreover, 
    $\overline{\varphi}_{\alpha}^{-1}(\{\infty\})$ is the trivial
    subgroup of $\overline{U}_{0,\alpha}$ by definition.\\
    
  \textbf{Axiom \ref{axiomV2}.}  Let $\alpha \in \Phi_{x_1}$, $\overline{n} \in \overline{M}_{0,\alpha}$ 
  and $\overline{u} \in \overline{U}_{0,-\alpha}\setminus \{1\}$. Denote by $n$ (resp. 
  $u$) a lifting of $\overline{n}$ to $M_{0,\alpha}$ (resp. of $\overline{u}$
   to $U_{0,-\alpha}$). Since $\overline{u} \neq 1$, 
   we have $u \not\in U_{\pi^{-1}(\{X_1\})}$ and $nun^{-1} \not\in U_{\pi^{-1}(\{X_1\})}$.
   Hence:
   $$\overline{\varphi}_{-\alpha}(\overline{u}) 
   - \overline{\varphi}_{\alpha}(\overline{n}\overline{u}\overline{n}^{-1})
   =\varphi_{-\alpha}(g_1^{-1}ug_1) 
   - \varphi_{\alpha}(g_1^{-1}nun^{-1}g_1).$$
   But the function:
   $$u \in U_{0,\alpha} \mapsto \varphi_{-\alpha}(g_1^{-1}ug_1) 
   - \varphi_{\alpha}(g_1^{-1}nun^{-1}g_1)$$ is constant by proposition~\ref{Vtotal}.\\
   
   \textbf{Axiom \ref{axiomV3}.}  Fix $\alpha, \beta \in \Phi_{x_1}$ and $\lambda, \mu \in \mathfrak{R}^{\mathrm{rk}(\Lambda_0)}$
  such that $\beta \not\in -\mathbb{R}_+ \alpha$. By setting $U_{0,\alpha,\lambda}:= g_1U_{\alpha,\lambda-\alpha(\tilde{x}_1)}g_1^{-1}$ and $\overline{U}_{0,\alpha,\lambda}:= \overline{\varphi}_{\alpha}^{-1}([\lambda,\infty])$, corollary~\ref{egal} implies that: 
  $$\overline{U}_{0,\alpha,\lambda} = \left(U_{0,\alpha,\lambda}\right)/\left( g_1U_{\alpha}g_1^{-1} \cap U_{\pi^{-1}(X_1)}\right) = U_{0,\alpha,\lambda}/\left( g_1U'_{\alpha,x_1}g_1^{-1} \right).$$
 By proposition~\ref{Vtotal}, the group $[U_{0,\alpha,\lambda},U_{0,\beta,\mu}]$ is contained in: 
$$\langle U_{0,p\alpha+q\beta,p\lambda+q\mu}\;|\; p,q\in \mathbb{Z}_{>0},\; p\alpha+q\beta \in \Phi\rangle.$$
Now take $p,q\in \mathbb{Z}_{>0}$ such that $\gamma:=p\alpha+q\beta \in \Phi \setminus \Phi_{x_1}$ and fix an element: $$u \in U_{0,\gamma,p\lambda+q\mu} \setminus \left( g_1U'_{\gamma,x_1}g_1^{-1}\right) \subseteq g_1 \left( U_{\gamma,x_1} \setminus U'_{\gamma,x_1}\right)g_1^{-1}.$$
Since $\pi(\varphi_{\gamma}(g_1^{-1}ug_1)) = -\gamma(x_1)$ and $\gamma\not\in\Phi_{x_1}$, Lemma~\ref{LemEquivalenceDefPhiOmega} implies that there exists $u' \in U_{2\gamma}$ such that:
$$-\gamma(x_1) = \pi(\varphi_{\gamma}(g_1^{-1}ug_1)) < \pi(\varphi_{\gamma}(g_1^{-1}ug_1u')),$$
and we necessarily have: 
\begin{gather*}
\varphi_{2\gamma}(u')=2\varphi_{\gamma}(u')=2\varphi_{\gamma}(g_1^{-1}ug_1) \geq 2\left( p\lambda +q\mu - \gamma(\tilde{x_1})\right),\\
\pi(\varphi_{2\gamma}(u'))=2\pi(\varphi_{\gamma}(g_1^{-1}ug_1)) =-2\gamma(x_1).
\end{gather*}
We deduce that $2\gamma \in \Phi_{x_1}$ and that:
$$g_1^{-1}ug_1=(g_1^{-1}ug_1u') \cdot u'^{-1}$$
with:
\begin{gather*}
ug_1u' g_1^{-1} \in g_1 U'_{\gamma,x_1} g_1^{-1} = U_{\pi^{-1}(X_1)}\cap g_1U_{\gamma}g_1^{-1},\\
g_1u'^{-1}g_1^{-1} \in U_{0,2\gamma,2p\lambda+2q\mu} .
\end{gather*}
The last equalities show that the image of $u$ in $\overline{U}_{0,\gamma,p\lambda+q\mu}$ belongs to $\overline{U}_{0,2\gamma,2p\lambda+2q\mu}$, and hence the group $[\overline{U}_{0,\alpha,\lambda},\overline{U}_{0,\beta,\mu}]$ is necessarily contained in:
$$\langle \overline{U}_{0,p\alpha+q\beta,p\lambda+q\mu}\;|\; p,q\in \mathbb{Z}_{>0},\; p\alpha+q\beta \in \Phi_{x_1}\rangle.$$

\textbf{Axiom \ref{axiomV5}.}  Let $\alpha \in \Phi_{x_1}$, $\overline{u} \in \overline{U}_{0,\alpha}$ and 
  $\overline{u}',\overline{u}'' \in  \overline{U}_{0,-\alpha}$ such that $\overline{u}'\overline{u}\overline{u}'' \in \overline{M}_{0,\alpha}$.   Let $u$ be a lifting of $\overline{u}$ in $U_{0,-\alpha}$. Note that $\overline{u}\neq 1$.

Assume first that $\alpha$ is non-multipliable. By proceeding as in the proof of the axiom~\ref{axiomRGD4},  there exists  $y\in \mathbb{L}_{\alpha}^{\times}$ such that: 
\begin{gather*}
\omega_1(y)=-\alpha(x_1),\\
(g_1x_{\alpha}(y)g_1^{-1})u(g_1x_{\alpha}(y)g_1^{-1})=m_{0,\alpha}(y).
\end{gather*}
 Since $g_1x_{\alpha}(y)g_1^{-1} \in U_{0,\alpha}$ and $m_{0,\alpha}(y) \in M_{0,\alpha}$, uniqueness in paragraph 6.1.2.(2) of {\color{red}\cite{BruhatTits1}} implies that $g_1x_{\alpha}(y)g_1^{-1}$ is a lifting of both $\overline{u}'$ and $\overline{u}''$.
Hence:
$$\overline{\varphi}_{-\alpha}(\overline{u}) = \varphi_{-\alpha}(g_1^{-1}ug_1) - \alpha(\tilde{x}_1) = -\varphi_{\alpha}(x_{\alpha}(y)) - \alpha(\tilde{x}_1) = -\overline{\varphi}_{\alpha}(\overline{u}').$$

Assume now that $\alpha$ is multipliable. By proceeding as in the proof of the axiom~\ref{axiomRGD4}, there exists $(y,y') \in H(\mathbb{L}_\alpha,\mathbb{L}_{2\alpha})$ such that: 
\begin{gather*}
\omega_1(y')=-2\alpha(x_1),\\
(g_1x_{\alpha}(yy'^{-1},({^{\tau}}y')^{-1})g_1^{-1})\cdot u \cdot (g_1x_{\alpha}(y({^{\tau}}y')^{-1},({^{\tau}}y')^{-1})g_1^{-1})= m_{0,\alpha}(y,y').
\end{gather*}
 Since $g_1x_{\alpha}(yy'^{-1},({^{\tau}}y')^{-1})g_1^{-1}$ and $ g_1x_{\alpha}(y({^{\tau}}y')^{-1},({^{\tau}}y')^{-1})g_1^{-1}$ are both in $ U_{0,\alpha}$ and $m_{0,\alpha}(y,y') \in M_{0,\alpha}$, uniqueness in paragraph 6.1.2.(2) of {\color{red}\cite{BruhatTits1}} implies that $g_1x_{\alpha}(yy'^{-1},({^{\tau}}y')^{-1})g_1^{-1}$ is a lifting of $\overline{u}'$.
Hence:
$$\overline{\varphi}_{-\alpha}(\overline{u}) = \varphi_{-\alpha}(g_1^{-1}ug_1) - \alpha(\tilde{x}_1) = -\varphi_{\alpha}(x_{\alpha}(yy'^{-1},({^{\tau}}y')^{-1})) - \alpha(\tilde{x}_1) = -\overline{\varphi}_{\alpha}(\overline{u}').$$
  \end{proof}

\subsection{Compatibility axioms for the \texorpdfstring{$N$}{N}-action}

\begin{lemma}
The group $\overline{N}_{0,X_1}$ is the subgroup of $U_{X_1}/U_{\pi^{-1}(X_1)}$ spanned by the $\overline{M}_{0,\alpha}$ for $\alpha \in \Phi_{x_1}$.
\end{lemma}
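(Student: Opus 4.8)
The plan is to identify $\overline{N}_{0,X_1}$ with the image in $\overline{G} := U_{X_1}/U_{\pi^{-1}(X_1)}$ of the group $\widetilde{N}_{x_1}$ formed with respect to the valuation $\omega_1$, and then to exhibit generators of $\widetilde{N}_{x_1}$ whose images lie in $\overline{T}_{0,X_1}$ or in the $\overline{M}_{0,\alpha}$. The inclusion $\langle \overline{M}_{0,\alpha} : \alpha \in \Phi_{x_1}\rangle \subseteq \overline{N}_{0,X_1}$ is immediate from $M_{0,\alpha} = U_{X_1} \cap g_1 M_\alpha g_1^{-1} \subseteq U_{X_1}\cap g_1 N g_1^{-1} = N_{0,X_1}$, so the work is in the reverse inclusion.

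For that, first note $N_{0,X_1} = U_{X_1}\cap g_1 N g_1^{-1} = g_1(U_{x_1}\cap N)g_1^{-1} = g_1\widetilde{N}_{x_1}g_1^{-1}$, so $\overline{N}_{0,X_1}$ is exactly the image of $g_1\widetilde{N}_{x_1}g_1^{-1}$ under the projection $U_{X_1}\to\overline{G}$. I would then describe $\widetilde{N}_{x_1}$ (relative to $\omega_1$) via its local root system: by Proposition~\ref{PropRootSystemPhiOmega} applied to $\omega_1$ and $\Omega=\{x_1\}$, the homomorphism ${}^v\nu$ induces a surjection $\widetilde{N}_{x_1}\twoheadrightarrow W(\Phi_{x_1})$ with kernel $T_{x_1}$, and $W(\Phi_{x_1})$ is generated by the reflections $r_\alpha$ for $\alpha\in\Phi_{x_1}$. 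For each such $\alpha$ one has $-\alpha(x_1)\in\Gamma'_{\alpha,1}\subseteq\Gamma_{\alpha,1}$, so by Proposition~\ref{PropValuedCoset} the $\omega_1$-valued coset $M_{\alpha,-\alpha(x_1)}$ is non-empty and, by Proposition~\ref{PropValuedCoset}\ref{PropValuedCoset:insideLevi}, it is contained in $U_{-\alpha,x_1}U_{\alpha,x_1}U_{-\alpha,x_1}\subseteq U_{x_1}$, hence in $\widetilde{N}_{x_1}$; moreover ${}^v\nu(M_{\alpha,-\alpha(x_1)})=\{r_\alpha\}$ by Lemma~\ref{LemMuNu}. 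Choosing $n_\alpha\in M_{\alpha,-\alpha(x_1)}$ for each $\alpha\in\Phi_{x_1}$, we get that $\widetilde{N}_{x_1}$ is generated by $T_{x_1}=T\cap U_{x_1}$ together with the $n_\alpha$. Conjugating by $g_1$ gives $g_1 T_{x_1}g_1^{-1}=U_{X_1}\cap g_1 T g_1^{-1}=T_{0,X_1}$ and $g_1 n_\alpha g_1^{-1}\in U_{X_1}\cap g_1 M_\alpha g_1^{-1}=M_{0,\alpha}$, so passing to $\overline{G}$ shows that $\overline{N}_{0,X_1}$ is generated by $\overline{T}_{0,X_1}$ and the $\overline{M}_{0,\alpha}$, $\alpha\in\Phi_{x_1}$.

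To conclude, I would invoke the preceding proposition, which asserts that $\big(\overline{T}_{0,X_1},(\overline{U}_{0,\alpha})_{\alpha\in\Phi_{x_1}},(\overline{M}_{0,\alpha})_{\alpha\in\Phi_{x_1}}\big)$ is a generating root group datum of $\overline{G}$, together with the general fact about root group data that $T=M_\alpha m_\alpha^{-1}$ lies in the subgroup generated by the $M_\beta$; applied here this yields $\overline{T}_{0,X_1}\subseteq\langle\overline{M}_{0,\alpha}:\alpha\in\Phi_{x_1}\rangle$. Combined with the previous step we obtain $\overline{N}_{0,X_1}\subseteq\langle\overline{M}_{0,\alpha}:\alpha\in\Phi_{x_1}\rangle$, and with the first inclusion this proves the lemma. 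The only delicate points are bookkeeping between the two valuations (making sure $\Phi_{x_1}$, $\widetilde{N}_{x_1}$, $T_{x_1}$, and the cosets $M_{\alpha,-\alpha(x_1)}$ are the ones attached to $\omega_1$, while $\overline{G}$ and $U_{\pi^{-1}(X_1)}$ involve the $\omega$-building) and checking that the reflections generating $W(\Phi_{x_1})$ can be lifted into valued cosets already contained in $U_{x_1}$; both are handled by Propositions~\ref{PropRootSystemPhiOmega}, \ref{PropValuedCoset} and Lemma~\ref{LemMuNu}, so no genuine obstacle remains.
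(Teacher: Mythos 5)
Your argument is correct in substance but takes a genuinely different route from the paper's. You identify $\overline{N}_{0,X_1}$ with the image of $g_1\widetilde{N}_{x_1}g_1^{-1}$, use the local Weyl group of Proposition~\ref{PropRootSystemPhiOmega} to see that $\widetilde{N}_{x_1}$ is generated by $T_{x_1}$ together with lifts $n_\alpha\in M_{\alpha,-\alpha(x_1)}\subseteq U_{x_1}$ of the reflections $r_\alpha$, $\alpha\in\Phi_{x_1}$, and then absorb $\overline{T}_{0,X_1}$ into $\langle\overline{M}_{0,\alpha}\rangle$ via the coset identity $\overline{M}_{0,\alpha}^{\,-1}\,\overline{M}_{0,\alpha}=\overline{T}_{0,X_1}$. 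The paper instead starts from the generation of $\widetilde{N}_{x_1}$ by the rank-one pieces $N\cap L_{\alpha,x_1}$ (Example~\ref{ExQC}, i.e.\ Proposition~\ref{PropDecompositionQC}) and runs a three-way case analysis on whether $-\alpha(x_1)$ lies in $\Gamma'_{\alpha,1}$, in $\Gamma_{\alpha,1}\setminus\Gamma'_{\alpha,1}$, or outside $\Gamma_{\alpha,1}$, pushing the last case into $U_{\pi^{-1}(X_1)}$ via Corollary~\ref{egal}. Your route is cleaner at the level of the Weyl group; the paper's case analysis keeps track root by root of which contributions die in the quotient, and in particular handles divisible roots and degenerate situations explicitly.

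There is, however, one point where a step of yours would fail: the final containment $\overline{T}_{0,X_1}\subseteq\langle\overline{M}_{0,\alpha}:\alpha\in\Phi_{x_1}\rangle$ requires at least one $\alpha\in\Phi_{x_1}$, and $\Phi_{x_1}$ can be empty (the paper itself points out that $\Gamma'_{\alpha}$ may be empty for a multipliable root with dense valuation, and for generic $x_1$ no value $-\alpha(x_1)$ lies in $\Gamma'_{\alpha,1}$). In that case your argument only gives $\overline{N}_{0,X_1}=\overline{T}_{0,X_1}$, whereas the lemma asserts $\overline{N}_{0,X_1}=1$, so you still owe the inclusion $T_{x_1}\subseteq g_1^{-1}U_{\pi^{-1}(X_1)}g_1$. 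This is true, but proving it is essentially the paper's first case: if $\Phi_{x_1}=\emptyset$ then $-\alpha(x_1)\notin\Gamma_{\alpha,1}$ for every $\alpha$ (otherwise Fact~\ref{FactDecompositionSetOfValue} and Remark~\ref{rqGamma_alpha} would place $\alpha$ or $2\alpha$ in $\Phi_{x_1}$), hence $U_{\pm\alpha,x_1}=U'_{\pm\alpha,x_1}$ by Remark~\ref{RkRankOneRootGroup} and $g_1L_{\alpha,x_1}g_1^{-1}\subseteq U_{\pi^{-1}(X_1)}$ by Corollary~\ref{egal}, so that $T_{x_1}\subseteq\widetilde{N}_{x_1}=\langle N\cap L_{\alpha,x_1}\rangle_{\alpha}$ indeed dies in the quotient. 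Adding this observation closes the gap and your proof is complete.
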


\begin{proof}
 Example~\ref{ExQC} shows that the group $N_{0,X_1}$ is spanned by the $g_1(N\cap L_{\alpha,x_1})g_1^{-1}$ for $\alpha \in \Phi$. We distinguish 3 cases:
\begin{itemize}
\item[$\bullet$] if $-\alpha(x_1)\not\in \Gamma_{\alpha}$, then $g_1 L_{\alpha,x_1}g_1^{-1} \subseteq U_{\pi^{-1}(X_1)}$. Hence $g_1(N\cap L_{\alpha,x_1})g_1^{-1}\subseteq U_{\pi^{-1}(X_1)} \cap g_1Ng_1^{-1}$.
\item[$\bullet$] if $-\alpha(x_1)\in \Gamma_{\alpha} \setminus \Gamma'_{\alpha}$, then, by Fact~\ref{FactDecompositionSetOfValue}, one has $2\alpha \in \Phi_{x_1}$ and, by \cite[6.1.2]{BruhatTits1}: 
\begin{align*}
g_1(N\cap L_{\alpha,x_1})g_1^{-1}&\subseteq g_1(N\cap L_{\alpha} \cap U_{x_1})g_1^{-1} \subseteq g_1(T \cup M_{\alpha})g_1^{-1} \cap U_{X_1}\\ & = g_1(T \cup M_{2\alpha})g_1^{-1} \cap U_{X_1}\subseteq T_{0,X_1} \cup M_{0,2\alpha}.
\end{align*}
\item[$\bullet$] if $-\alpha(x_1)\in \Gamma'_{\alpha}$, then $\alpha\in\Phi_{x_1}$ and: $$g_1(N\cap L_{\alpha,x_1})g_1^{-1}\subseteq g_1(N\cap L_{\alpha} \cap U_{x_1})g_1^{-1} \subseteq g_1(T \cup M_{\alpha})g_1^{-1} \cap U_{X_1} = T_{0,X_1} \cup M_{0,\alpha}.$$
\end{itemize} 
We deduce that $N_{0,X_1}$ is the subgroup of $U_{X_1}$ spanned by $U_{\pi^{-1}(X_1)}\cap g_1Ng_1^{-1}$ and the $M_{0,\alpha}$ for $\alpha \in \Phi_{x_1}$. Hence $\overline{N}_{0,X_1}$ is the subgroup of $U_{X_1}/U_{\pi^{-1}(X_1)}$ spanned by the $\overline{M}_{0,\alpha}$ for $\alpha \in \Phi_{x_1}$.
\end{proof}

Consider the $\mathfrak{R}^{\mathrm{rk}(\Lambda_0)}$-aff space:
$$\mathbb{A}_{x_1}:=\pi^{-1}(x_1)/\left(\langle \Phi_{x_1} \rangle^{\perp}\cap \ker(\pi)\right)$$
with underlying real vector space $V_{x_1}=\ker (\pi)/\left(\langle \Phi_{x_1} \rangle^{\perp}\cap \ker (\pi)\right)$.
By the previous lemma, the image of $\overline{\nu}_{0,X_1}(\overline{N}_{0,X_1})$ in $W(\Phi)$ is $\overline{N}_{0,X_1}/\overline{T}_{0,X_1} \cong W(\Phi_{x_1}) \subseteq \mathrm{Fix}(\langle \Phi_{x_1} \rangle^{\perp})$. Hence $\overline{\nu}_{0,X_1}$ induces a morphism:
$$\overline{\nu}_{X_1}:  \overline{N}_{0,X_1} \rightarrow \mathrm{Aff}_{\mathfrak{R}^{\mathrm{rk}(\Lambda_0)}}(\mathbb{A}_{x_1}).$$

\begin{proposition}
The action $\overline{\nu}_{0,X_1}$ is compatible with the valuation $(\overline{\varphi}_{\alpha})_{\alpha\in\Phi_{x_1}}$.
\end{proposition}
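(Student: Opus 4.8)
The statement to prove is that the action $\overline{\nu}_{0,X_1} : \overline{N}_{0,X_1} \to \operatorname{Aff}_{\mathfrak{R}^{\mathrm{rk}(\Lambda_0)}}(\mathbb{A}_{x_1})$ is compatible with the valuation $(\overline{\varphi}_\alpha)_{\alpha \in \Phi_{x_1}}$, i.e. that it satisfies axioms~\ref{axiomCA1} and~\ref{axiomCA2} of Definition~\ref{DefCompatibleAction}. Since by the previous proposition we already know that $\left( \overline{T}_{0,X_1}, (\overline{U}_{0,\alpha})_{\alpha\in\Phi_{x_1}}, (\overline{M}_{0,\alpha})_{\alpha\in\Phi_{x_1}},(\overline{\varphi}_{\alpha})_{\alpha\in\Phi_{x_1}}\right)$ is a valued generating root group datum in $\overline{G}_{X_1} := U_{X_1}/U_{\pi^{-1}(X_1)}$ of type $\Phi_{x_1}$, this is the last ingredient needed to apply the machinery of section~\ref{sectionLambdaBuildingFromVRGD} to the fiber. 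The whole proof consists in tracing through the definitions: everything has been constructed so that $\overline{\nu}_{0,X_1}$ is the restriction-and-quotient of the original action $\nu$ of $N$ on $\mathbb{A}$, and $\overline{\varphi}_\alpha$ is the translate by $\alpha(\tilde{x}_1)$ of the restriction of $\varphi_\alpha$; so each axiom for the fiber will follow from the corresponding axiom for $(G,\nu,(\varphi_\alpha))$, which holds by Proposition~\ref{PropCompatibleAction}.

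For axiom~\ref{axiomCA1}, I first check that $\overline{\nu}_{0,X_1}$ does land in $\operatorname{Aff}_{\mathfrak{R}^{\mathrm{rk}(\Lambda_0)}}(\mathbb{A}_{x_1})$, as indicated just before the statement: an element $\eta \in N_{0,X_1}$ acts on $\pi^{-1}(x_1)$ via $\nu(g_1^{-1}\eta g_1)$, which is an $\mathfrak{R}^{\mathrm{rk}(\Lambda)}$-aff map fixing the affine subspace $\pi^{-1}(x_1)$ (a translate of $\ker\pi = V \otimes \mathfrak{R}^{\mathrm{rk}(\Lambda_0)}$), so its restriction is $\mathfrak{R}^{\mathrm{rk}(\Lambda_0)}$-aff; the image in $W(\Phi)$ of ${}^v\!\nu(g_1^{-1}\eta g_1)$ lies in $W(\Phi_{x_1}) \subseteq \operatorname{Fix}(\langle \Phi_{x_1}\rangle^\perp)$ (using the preceding lemma that $\overline{N}_{0,X_1}$ is generated by the $\overline{M}_{0,\alpha}$ and axiom~\ref{axiomRGD5}/Proposition~\ref{PropRootSystemPhiOmega}), so it descends to $V_{x_1}$ and fixes the quotient space $\mathbb{A}_{x_1}$ well. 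Then the linear part of $\overline{\nu}_{0,X_1}$ is, by construction via this quotient, exactly the epimorphism ${}^v\!\overline{\nu}_{0,X_1}: \overline{N}_{0,X_1} \to W(\Phi_{x_1})$ attached to the root group datum of $\overline{G}_{X_1}$; this follows because for $\overline{m} \in \overline{M}_{0,\alpha}$ with lift $m \in M_{0,\alpha}$, the element $g_1^{-1}mg_1 \in M_\alpha$ so ${}^v\!\nu(g_1^{-1}mg_1) = r_\alpha$, which restricts to the reflection $r_\alpha$ of $V^*_{x_1}$.

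For axiom~\ref{axiomCA2}, I fix $\alpha \in \Phi_{x_1}$ and a nontrivial $\overline{u} \in \overline{U}_{0,\alpha}$ with lift $u \in U_{X_1} \cap g_1 U_\alpha g_1^{-1}$, $u \notin U_{\pi^{-1}(X_1)}$. Let $\overline{m}(\overline{u}) \in \overline{M}_{0,\alpha}$ be the unique element given by \cite[6.1.2(2)]{BruhatTits1} in the root group datum of $\overline{G}_{X_1}$; I check that a lift is $m(g_1^{-1}ug_1)$ conjugated by $g_1$, using uniqueness and that the $U_{0,\pm\alpha}$ surject onto the $\overline{U}_{0,\pm\alpha}$. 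Fix the base point $o_{x_1}$ of $\mathbb{A}_{x_1}$ induced by a chosen $x \in \pi^{-1}(x_1)$ (e.g. $\tilde{x}_1$). Then $\overline{\nu}_{0,X_1}(\overline{m}(\overline{u}))(o_{x_1}) - o_{x_1}$ is the image in $V_{x_1}$ of $\nu(g_1^{-1}mg_1)(\tilde{x}_1) - \tilde{x}_1$, so $\alpha\big(\overline{\nu}_{0,X_1}(\overline{m}(\overline{u}))(o_{x_1}) - o_{x_1}\big) = \alpha\big(\nu(g_1^{-1}mg_1)(\tilde{x}_1) - \tilde{x}_1\big)$ computed in $\mathfrak{R}^{\mathrm{rk}(\Lambda)}$ but actually lying in $\mathfrak{R}^{\mathrm{rk}(\Lambda_0)}$. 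By axiom~\ref{axiomCA2} for $(G,\nu,(\varphi_\alpha))$ applied at the point $\tilde x_1$ — here I use the version of Lemma~\ref{LemMuNu} giving $\nu(m(v)) = r_{\alpha,\varphi_\alpha(v)}$ and that $r_{\alpha,\lambda}(y) - y = -\theta_{\alpha,\lambda}(y)\alpha^\vee$ — this equals $-2\varphi_\alpha(g_1^{-1}ug_1) - 2\alpha(\tilde{x}_1) = -2\overline{\varphi}_\alpha(\overline{u})$ by the definition of $\overline{\varphi}_\alpha$. Hence $2\overline{\varphi}_\alpha(\overline{u}) + \alpha\big(\overline{\nu}_{0,X_1}(\overline{m}(\overline{u}))(o_{x_1}) - o_{x_1}\big) = 0$, which is~\ref{axiomCA2}.

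The only real subtlety — and the step I would treat most carefully — is the bookkeeping of base points and quotients: one must make sure that passing from $\pi^{-1}(x_1) \subset \mathbb{A}$ to its quotient $\mathbb{A}_{x_1}$ by $\langle \Phi_{x_1}\rangle^\perp \cap \ker\pi$ does not affect the evaluation $\alpha\big(\cdot\big)$ for $\alpha \in \Phi_{x_1}$ (it does not, precisely because $\alpha$ vanishes on what we quotient by), and that the origin $o_{x_1}$ is the image of $\tilde{x}_1$, which is the very point used to normalize $\overline{\varphi}_\alpha$. Everything else is a direct transcription of Proposition~\ref{PropCompatibleAction} and Lemma~\ref{LemMuNu} through the conjugation-by-$g_1$ and quotient maps; no new computation in the group is required beyond what was already done to establish the root group datum and valuation axioms for the fiber.
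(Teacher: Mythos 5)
Your proof is correct and follows essentially the same route as the paper: axiom (CA1) is a routine check, and (CA2) is obtained by reducing to (CA2) for $(G,\nu,(\varphi_\alpha))$ at the point $\tilde{x}_1$, where the base-point shift produces exactly the term $-2\alpha(\tilde{x}_1)$ absorbed by the definition of $\overline{\varphi}_\alpha$. The only cosmetic difference is that you phrase the computation abstractly via $m(g_1^{-1}ug_1)$ and Lemma~\ref{LemMuNu}, whereas the paper uses the explicit parametrized elements $m_{0,\alpha}(y)$ and treats the multipliable case separately; both yield the same identity.
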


\begin{proof} 
Axiom~\ref{axiomCA1} is obvious. Let's prove axiom~\ref{axiomCA2}. To do so, let $\alpha$ be a root in $ \Phi_{x_1}$ and take $\overline{u} \in \overline{U}_{0,\alpha} \setminus \{1\}$. We can find $y \in L_{\alpha}$ such that $u:=g_1x_{\alpha}(y)g_1^{-1}$ is a lifting of $\overline{u}$ in $U_{0,\alpha}$ and $v_1(y)=-\alpha(x_1)$. We then have:
\begin{align*}
\alpha \Big(\nu_{0,X_1}\big(m_{0,\alpha}(y)\big)(\tilde{x}_1) - \tilde{x}_1\Big) &=
\alpha \Big(\nu_{0,X_1}\big(m_{0,\alpha}(y)\big)(o) - o\Big) + \alpha \Big(\nu_{0,X_1}^v\big(m_{0,\alpha}(y)\big)(\tilde{x}_1-o) - (\tilde{x}_1-o)\Big) \\&=  -2\varphi_{\alpha}\big(x_{\alpha}(y)\big)-2\alpha(\tilde{x}_1)\\
&=-2\overline{\varphi}_{\alpha}(\overline{u}).
\end{align*}
The multipliable case is analoguous. Finally, for $m\in M_{0,\alpha}$:
$$\nu_{0,X_1}(m)^2=\nu(g_1^{-1}m g_1)^2|_{\pi^{-1}(\{x_1\})}=1,$$
and hence axiom~\ref{axiomCA3} holds.
\end{proof}

\subsection{Conclusion}

\begin{theorem}\label{thmFibers_buildings}
The fiber $\pi^{-1}(X_1)$ is isomorphic to:
$$\mathcal{I}\left( U_{X_1}/U_{\pi^{-1}(X_1)}, (\overline{U}_{0,\alpha})_{\alpha\in\Phi_{x_1}},(\overline{M}_{0,\alpha})_{\alpha\in\Phi_{x_1}},(\overline{\varphi}_{0,\alpha})_{\alpha\in\Phi_{x_1}},\overline{\nu}_{X_1}) \right) \times \left\langle \Phi_{x_1} \right\rangle^{\perp}.$$
\end{theorem}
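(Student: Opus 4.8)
The plan is to assemble the pieces established in the previous subsections and recognize the fiber $\pi^{-1}(X_1)$ as (a product of) the $\Rtot$-building attached to a valued root group datum, in the sense of Definition~\ref{DefLambdaBuildingFromDatum}. Concretely, the work done so far shows that the system
\[
\left( U_{X_1}/U_{\pi^{-1}(X_1)},\ \overline{T}_{0,X_1},\ (\overline{U}_{0,\alpha})_{\alpha\in\Phi_{x_1}},\ (\overline{M}_{0,\alpha})_{\alpha\in\Phi_{x_1}},\ (\overline{\varphi}_{\alpha})_{\alpha\in\Phi_{x_1}},\ \overline{\nu}_{X_1} \right)
\]
satisfies all the hypotheses of section~\ref{sectionLambdaBuildingFromVRGD}: it is a generating root group datum of type $\Phi_{x_1}$ (proved in the proposition on the $\ref{axiomRGD1}$--$\ref{axiomRGD6}$ axioms), equipped with an $\mathfrak{R}^{\mathrm{rk}(\Lambda_0)}$-valuation (the proposition on the $\ref{axiomV0}$--$\ref{axiomV5}$ axioms), together with a compatible action $\overline{\nu}_{X_1}$ on the $\mathfrak{R}^{\mathrm{rk}(\Lambda_0)}$-aff space $\mathbb{A}_{x_1}$ (the last proposition, giving $\ref{axiomCA1}$ and $\ref{axiomCA2}$); and $\mathfrak{R}^{\mathrm{rk}(\Lambda_0)} = \mathfrak{R}^{\mathrm{rk}(\Lambda_0)}_\mathbb{Q}$ since it is a real vector space. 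Hence Definition~\ref{DefLambdaBuildingFromDatum} applies and produces a space which, by abbreviation, I call $\mathcal{I}_{X_1}$.

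First I would recall from subsection~\ref{SubsecContructionFibers} the explicit bijection: the map $\overline{\varphi}_{X_1}$ induces
\[
\frac{(U_{X_1}/U_{\pi^{-1}(\{X_1\})}) \times \pi^{-1}(\{x_1\})}{\sim} \ \xrightarrow{\ \sim\ }\ \pi^{-1}(\{X_1\}),
\]
where the equivalence relation $\sim$ is exactly the one defined by the data $(\overline{N}_{0,X_1},\overline{\nu}_{0,X_1},(\overline{U}_{0,z})_z)$ — that is, $(p,z)\sim(p',z')$ iff there is $n\in\overline{N}_{0,X_1}$ with $z'=\overline{\nu}_{0,X_1}(n)(z)$ and $p^{-1}p'n\in\overline{U}_{0,z}$. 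The next step is to decompose the affine space $\pi^{-1}(\{x_1\})$: since $\ker(\pi) = V\otimes\mathfrak{R}^{\mathrm{rk}(\Lambda_0)}$ splits as $\left(\langle\Phi_{x_1}\rangle^{\perp}\cap\ker\pi\right) \oplus \big(V_{x_1}\big)$, the fiber $\pi^{-1}(\{x_1\})$ is $\mathbb{A}_{x_1} \times \langle\Phi_{x_1}\rangle^{\perp}$ as $\mathfrak{R}^{\mathrm{rk}(\Lambda_0)}$-aff spaces, with the $\overline{\nu}_{0,X_1}$-action trivial on the second factor (because $\overline{\nu}_{0,X_1}(\overline{N}_{0,X_1})$ maps into $W(\Phi_{x_1}) \subseteq \mathrm{Fix}(\langle\Phi_{x_1}\rangle^{\perp})$, as observed just before the compatibility proposition). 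One must also check that the root groups $\overline{U}_{0,z}$ depend only on the $\mathbb{A}_{x_1}$-component of $z$, which follows from the formula $U_{0,z} = g_1 U_z g_1^{-1}$ and the fact that the value $-\alpha(z)$ depends only on that component since $\alpha$ is constant in the $\langle\Phi_{x_1}\rangle^{\perp}$-direction. Granting this, the relation $\sim$ ignores the $\langle\Phi_{x_1}\rangle^{\perp}$-coordinate entirely, so the quotient splits off a free factor $\langle\Phi_{x_1}\rangle^{\perp}$ and the remaining quotient is, by Definition~\ref{DefLambdaBuildingFromDatum}, exactly $\mathcal{I}_{X_1}$. Finally I would note that the identifications are $U_{X_1}/U_{\pi^{-1}(\{X_1\})}$-equivariant, which is already recorded for $\overline{\varphi}_{X_1}$ and is visibly compatible with the product decomposition.

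The main obstacle I anticipate is bookkeeping rather than conceptual: one must verify carefully that the relation $\sim$ on $(U_{X_1}/U_{\pi^{-1}(\{X_1\})}) \times \pi^{-1}(\{x_1\})$ matches, term by term, the relation defining the building $\mathcal{I}(U_{X_1}/U_{\pi^{-1}(\{X_1\})},\dots)$ after projecting $\pi^{-1}(\{x_1\})$ onto $\mathbb{A}_{x_1}$, and in particular that quotienting $\pi^{-1}(\{x_1\})$ by $\langle\Phi_{x_1}\rangle^{\perp}\cap\ker\pi$ does not collapse any two inequivalent classes nor identify classes that should stay distinct. This amounts to checking that $\overline{U}_{0,z} = \overline{U}_{0,z'}$ and $\overline{\nu}_{0,X_1}(n)(z) = z' \iff \overline{\nu}_{X_1}(n)(\bar z) = \bar z'$ whenever $z,z'$ have the same image in $\mathbb{A}_{x_1}$, which is precisely the content of the trivial-action-on-$\langle\Phi_{x_1}\rangle^{\perp}$ observation combined with the description of the $U_{0,z}$; once this is in hand the theorem follows formally from Definition~\ref{DefLambdaBuildingFromDatum} and the $G$-equivariance of $\pi$.
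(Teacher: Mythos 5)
Your proposal is correct and follows essentially the same route as the paper: both recognize the quintuple as a generating valued root group datum with compatible action, invoke Definition~\ref{DefLambdaBuildingFromDatum}, and obtain the product by observing that $\overline{N}_{0,X_1}$ acts through $W(\Phi_{x_1})$ and that the groups $\overline{U}_{0,z}$ depend only on the image of $z$ in $\mathbb{A}_{x_1}$ (the paper phrases this as the bijectivity of $x\mapsto[u_0,x]$ from $\mathrm{pr}^{-1}(x_0)$ onto each fiber of the induced projection to the building, which is the same bookkeeping you identify). No gap.
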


\begin{proof}
  All the previous considerations show that the $5$-tuple: 
  $$\left( U_{X_1}/U_{\pi^{-1}(X_1)}, (\overline{U}_{0,\alpha})_{\alpha\in\Phi_{x_1}},(\overline{M}_{0,\alpha})_{\alpha\in\Phi_{x_1}},(\overline{\varphi}_{0,\alpha})_{\alpha\in\Phi_{x_1}},\overline{\nu}_{X_1}) \right)$$ 
is a generating and valued root group datum together with a compatible action. We can therefore consider the building:
$$\mathcal{I}\left( U_{X_1}/U_{\pi^{-1}(X_1)}, (\overline{U}_{0,\alpha})_{\alpha\in\Phi_{x_1}},(\overline{M}_{0,\alpha})_{\alpha\in\Phi_{x_1}},(\overline{\varphi}_{0,\alpha})_{\alpha\in\Phi_{x_1}},\overline{\nu}_{X_1}) \right)$$
 as defined in Definition~\ref{DefLambdaBuildingFromDatum}. It can be described as:
$$\left( U_{X_1}/U_{\pi^{-1}(X_1)} \times \mathbb{A}_{x_1} \right)/\sim,$$
where:
$$(u,x) \sim (v,y) \Leftrightarrow \exists n \in \overline{N}_{0,X_1}, \begin{cases}
y=\overline{\nu}_{X_1}(n)(x),\\
u^{-1}vn \in U_x.
\end{cases}$$

 Consider now the surjective map:
$$\mathrm{pr}: \pi^{-1}(X_1) \rightarrow \mathcal{I}\left( U_{X_1}/U_{\pi^{-1}(X_1)}, (\overline{U}_{0,\alpha})_{\alpha\in\Phi_{x_1}},(\overline{M}_{0,\alpha})_{\alpha\in\Phi_{x_1}},(\overline{\varphi}_{0,\alpha})_{\alpha\in\Phi_{x_1}},\overline{\nu}_{X_1}) \right)$$
induced by the projection $\mathrm{pr}:\pi^{-1}(x_1) \rightarrow \mathbb{A}_{x_1} $. 
It is compatible with the $U_{X_1}$-action. Take a point: 
$$X_0:=[u_0,x_0] \in \mathcal{I}\left( U_{X_1}/U_{\pi^{-1}(X_1)}, (\overline{U}_{0,\alpha})_{\alpha\in\Phi_{x_1}},(\overline{M}_{0,\alpha})_{\alpha\in\Phi_{x_1}},(\overline{\varphi}_{0,\alpha})_{\alpha\in\Phi_{x_1}},\overline{\nu}_{X_1}) \right),$$
and fix a lifting $\tilde{x}_0$ of $x_0$ in $\pi^{-1}(x_1)$. Observe that, if $X:=[u,x] \in \mathrm{pr}^{-1}(X_0)$, then there exists $n\in 
\overline{N}_{0,X_1}$ such that:
$$\begin{cases}
\mathrm{pr}(x)=\overline{\nu}_{X_1}(n)(x_0)\\
u_0^{-1}un \in \overline{U}_{0,\tilde{x}_0}.
\end{cases}$$
Hence $x-\overline{\nu}_{0,X_1}(n)(\tilde{x}_0) 
\in \langle \Phi_{x_1}\rangle^{\perp}$, and we can find $y \in \langle \Phi_{x_1}\rangle^{\perp}$ 
such that $x-\overline{\nu}_{0,X_1}(n)(\tilde{x}_0) =\overline{n} \cdot y$ where $\overline{n} $ 
is the image of $n$ in $W(\Phi_{x_1})$. Since $\alpha(\tilde{x}_0+y) = 
\alpha(\tilde{x}_0) = \alpha(x_0)$ for each $\alpha\in \Phi_{x_1}$, we deduce that:
$$\begin{cases}
  x=\overline{\nu}_{0,X_1}(n)(\tilde{x}_0+y)\\
  u_0^{-1}un \in \overline{U}_{0,\tilde{x}_0}=\langle \overline{U}_{\alpha,\tilde{x}_0} \, | \, \alpha \in \Phi_{x_1}\rangle 
  =\langle \overline{U}_{\alpha, \tilde{x}_0+y} \, | \, \alpha \in \Phi_{x_1}\rangle 
  =\overline{U}_{\tilde{x}_0+y}
\end{cases}$$
Hence $X=[u_0,\tilde{x}_0+y]$, so that the map:
\begin{align*}
\psi_{X_0}: (\mathrm{pr})^{-1}(x_0) &\rightarrow \mathrm{pr}^{-1}(X_0)\\
x & \mapsto [u_0,x]
\end{align*}
is surjective. Now take $x,x' \in (\mathrm{pr})^{-1}(x_0)$ such that $\psi_{X_0}(x)=\psi_{X_0}(x')$. 
We can then find $n \in\overline{N}_{0,X_1}\cap \overline{U}_x$ such that $x' = \overline{\nu}_{0,X_1}(n)(x)$. 
Hence $x=x'$, and $\psi_{X_0}$ is a bijection.
\end{proof}

\begin{remark}
\begin{itemize}
\item[(i)] A subset of $\pi^{-1}(X_1)$ is the intersection of an apartment of $\mathcal{I}(\mathbb{K},\omega,\mathbf{G})$ with $\pi^{-1}(X_1)$ if, and only if, it is of the form $A_{x_1} \times \langle \Phi_{x_1} \rangle^{\perp}$ with $A_{x_1}$ an apartment of $\mathcal{I}\left( U_{X_1}/U_{\pi^{-1}(X_1)}, (\overline{U}_{0,\alpha})_{\alpha\in\Phi_{x_1}},(\overline{M}_{0,\alpha})_{\alpha\in\Phi_{x_1}},(\overline{\varphi}_{0,\alpha})_{\alpha\in\Phi_{x_1}},\overline{\nu}_{X_1} \right)$.
\item[(ii)] By considering all the intersections of an apartment of $\mathcal{I}(\mathbb{K},\omega,\mathbf{G})$ with $\pi^{-1}(X_1)$, one endows the fiber $\pi^{-1}(X_1)$ with a system of apartments of type: $$(\pi^{-1}(X_1),\Phi,(\tilde{\Gamma}_{\alpha})_{\alpha\in\Phi})$$ where $\tilde{\Gamma}_{\alpha} = \Gamma_{\alpha}$ if $\alpha\in\Phi_{x_1}$ and $\tilde{\Gamma}_{\alpha}=\emptyset$ otherwise. The fiber $\pi^{-1}(X_1)$ then automatically satisfies axioms~\ref{axiomA2},~\ref{axiomA3},~\ref{axiomA4} and \ref{axiomGG}. 

\item[(iii)] If we already knew that $\mathcal{I}(\mathbb{K},\omega,\mathbf{G})$ is an $\RF^{\mathrm{rk}(\Lambda)}$-building, then it would follow from \cite[Main result 2]{schwer2012lambda} that $\pi^{-1}(X_1)$ is an $\RF^{\mathrm{rk}(\Lambda_0)}$-building. However we cannot apply this result yet, and actually, we will use  Theorem~\ref{thmFibers_buildings} in our proof of the fact that $\mathcal{I}(\mathbb{K},\omega,\mathbf{G})$ is indeed an $\RF^{\mathrm{rk}(\Lambda)}$-building. 
\end{itemize}
\end{remark}

\begin{example}\label{constantex}
Let $\Lambda_0$ be a non-zero totally ordered abelian group and let $k$ be a field endowed with a valuation $\omega_0: k \rightarrow 
\Lambda_0 \cup \{\infty\}$. Set 
$\mathbb{K}:=k((t))$, let $\omega_1$ be the 
$t$-adic valuation on $\mathbb{K}$, and let 
$\omega: \mathbb{K}\rightarrow \mathbb{Z} \times \Lambda_0$ be the valuation 
defined by $\omega (x)=(\omega_1(x),\omega_0(xt^{-\omega_1(x)}))$.    

Consider a quasi-split reductive $k$-group $\mathbf{G}_k$ and set $ \mathbf{G}:= \mathbf{G}_k\times_k \mathbb{K}$. For $\alpha\in \Phi$, we denote $\mathbf{U}_{\alpha,k}$ the root group of $\mathbf{G}_k$ associated to $\alpha$. We may and do assume that the valuations on $U_{\alpha,k}$ and on $U_{\alpha}$ are compatible. 

The group $\mathbf{G}_k$ splits over a finite separable extension $k'$ of $k$, and hence one can find a finite extension $\mathbb{K}'$ of $\mathbb{K}$ over which 
$\omega_1$ does not ramify and $\mathbf{G}$ splits. Thus $0 \in \Gamma_{\alpha,1}'$ for every $\alpha \in \Phi$, and hence the point 
$X_1:= [1,0] \in \mathcal{I}(\mathbb{K},\omega_1,\mathbf{G})$ is a hyperspecial point (see Section 1.10.2 of \cite{Tits} for the definition). 

Now take $u \in U_{X_1}$ and $x \in \pi^{-1}(\{0\})$ such that $[u,x] \in \pi^{-1} (X_1)$. Since $u \in U_{X_1}$, we can find  $\alpha_1,...,\alpha_n \in \Phi$ so that $u$ can be written as $u_1...u_n$ with $u_i \in U_{\alpha_i,0}$ for each $i \in \{1,...,n\}$. Each $u_i$ can then be written as $v_iu'_i$ with $v_i \in U_{\alpha_i,k}$ and $u'_i \in U'_{\alpha_i,0}$. As a consequence, the product $u_1...u_n$ can be written as $u'v_1...v_n$ for some $u' \in U_{X_1}$ that fixes $\pi^{-1}(X_1)$. Hence:
$$[u,x] = [u'v_1...v_n,x] = u' \cdot [v_1...v_n,x] = [v_1...v_n,x],$$
and the map:
\begin{align*}
\psi_{X_1}: G_k \times \pi^{-1}(\{0\}) &\rightarrow \pi^{-1}(X_1)\\
(g,x) & \mapsto [g,x]
\end{align*}
is surjective. Now, if $\psi_{X_1}(g,x) = \psi_{X_1}(g',x')$ for some $g, g', x, x'$, then we can find $n\in N$ such that:
$$\begin{cases}
x'=\nu(n)(x)\\
g^{-1}g'n \in U_x.
\end{cases}$$
In particular: $$n\in g'^{-1}gU_x \cap N \subseteq \mathbf{G}(k[[t]]) \cap N = N_k.$$
Hence, $\psi_{X_1}$ induces a bijection:
$$\overline{\psi}_{X_1}:\mathcal{I}(k, \omega_0, \mathbf{G}_k) \rightarrow \pi^{-1}(X_1).$$
\end{example}

\subsection{Further notations related to the projection maps}\label{subFurther_notation}

In this section, we fix some notations that will be used throughout the paper.
 Let's fix some $s_0 \in \mathrm{rk}(\Lambda)$. 
 Given a positive element $\lambda_{s_0}$ 
in the archimedean class of $\Lambda$ corresponding to $s_0$, we define:
\begin{gather*}
\Lambda_{\geq s_0} := \{ \lambda \in \Lambda | \exists n >0, \lambda < n \lambda_{s_0} \},\\
\Lambda_{> s_0} := \{ \lambda \in \Lambda | \forall n >0, n\lambda < \lambda_{s_0} \}.
\end{gather*}
In terms of Hahn's embedding $\Lambda \subset \RF^{\mathrm{rk}(\Lambda)}$:
\begin{gather*}
\Lambda_{\geq s_0} = \Lambda \cap \RF^{\geq s_0} , \\ \Lambda_{> s_0} = \Lambda \cap \RF^{> s_0} .
\end{gather*}
The sets $\Lambda_{\geq s_0}$ and $\Lambda_{> s_0}$ are both convex subgroups 
of $\Lambda$ and they do not depend on the choice of $\lambda_{s_0}$. 
We may therefore introduce the quotients:
\begin{gather*}
  \Lambda_{< s_0} := \Lambda/\Lambda_{\geq s_0}, \\
  \Lambda_{\leq  s_0} := \Lambda/\Lambda_{> s_0}.
  \end{gather*}
  Again, in terms of Hahn's embedding $\Lambda \subset \RF^{\mathrm{rk}(\Lambda)}$, the quotients $\Lambda_{< s_0}$ and $\Lambda_{\leq  s_0}$ are the images of $\Lambda$ under the projections $\RF^{\mathrm{rk}(\Lambda)} \rightarrow \RF^{< s_0}$ and $\RF^{\mathrm{rk}(\Lambda)} \rightarrow \RF^{\leq s_0}$ respectively.

  \begin{example}
      If $\Lambda=\RF^{\mathrm{rk}(\Lambda)}$, we have:
      \begin{gather*}
      \Lambda_{\geq s_0} = \RF^{\geq s_0} , \;\;\; \Lambda_{> s_0} = \RF^{> s_0},\\
  \Lambda_{< s_0} := \RF^{< s_0}, \;\;\;
  \Lambda_{\leq  s_0} := \RF^{\leq s_0}.
  \end{gather*}
  Note that, in this case, we have isomorphisms of totally ordered abelian groups:
  $$\Lambda \cong \RF^{< s_0} \times \RF^{\geq s_0}\cong \RF^{\leq s_0} \times \RF^{> s_0}\cong \RF^{< s_0}\times \mathbb{R} \times \RF^{> s_0}.$$
  \end{example}
  
Denote by $\omega_{<s_0}: \mathbb{K} \rightarrow \Lambda_{< s_0} \cup \{\infty\}$ 
(resp. $\omega_{\leq  s_0} : \mathbb{K} \rightarrow \Lambda_{\leq s_0} \cup \{\infty\}$ ) 
the composite of the valuation $\omega$ followed by the projection 
$\Lambda \rightarrow \Lambda_{< s_0}$ (resp. $\Lambda \rightarrow \Lambda_{\leq s_0}$). 
According to the previous section, when $\Lambda_{\geq s_0}$ and $\Lambda_{> s_0}$ are strict subgroups of $\Lambda$, we have three projection maps:
\begin{gather*}
\pi_{\leq s_0} : \mathcal{I}(\mathbb{K},\omega,\mathbf{G}) \rightarrow \mathcal{I}(\mathbb{K},\omega_{\leq s_0},\mathbf{G}),\\
\pi_{< s_0}^{\leq s_0} : \mathcal{I}(\mathbb{K},\omega_{\leq s_0},\mathbf{G}) \rightarrow \mathcal{I}(\mathbb{K},\omega_{< s_0},\mathbf{G}),\\
\pi_{< s_0} = \pi_{< s_0}^{\leq s_0} \circ \pi_{\leq s_0}: \mathcal{I}(\mathbb{K},\omega,\mathbf{G}) \rightarrow \mathcal{I}(\mathbb{K},\omega_{< s_0},\mathbf{G}).
\end{gather*}
The map 
$\pi_{\leq s_0} : \mathcal{I}(\mathbb{K},\omega,\mathbf{G}) \rightarrow \mathcal{I}(\mathbb{K},\omega_{\leq s_0},\mathbf{G})$ 
then induces a surjection:
$$\pi_{=s_0,X} : \pi_{<s_0}^{-1}(X) \rightarrow \left( \pi_{< s_0}^{\leq s_0} \right)^{-1}(X)$$
for each $X \in \mathcal{I}(\mathbb{K},\omega_{< s_0},\mathbf{G})$.

\section{Axiom~\ref{axiomCO}}
\label{sectionCO}

Let $\mathbf{G}$ be a quasi-split reductive group over a field $\mathbb{K}$ equipped with a 
non-trivial  valuation $\omega:\mathbb{K}\rightarrow \RF^S\cup\{\infty\}$,  where $S$ is a totally ordered set. We assume that $\mathbb{K}$ satisfies Assumption~\ref{assumpfield}.  Let $\I=\I(\Kb,\omega,\GB)$ be the building defined in Definition~\ref{defBuilding_quasi_split_group}.

In this section, we prove that axiom~\ref{axiomCO} is satisfied by the building  $\I$ that we defined in subsection~\ref{buildredgp} and by its fibers for the projections, which completes the proof of Theorem~\ref{thmMain}.
 We keep the notation of Subsection~\ref{subsubRSMetrics}.
Let us sketch the ideas of our proof. We  proceed as follows. \begin{enumerate}
\item\label{itStep_R_bulding} (see Subsection~\ref{subSufficient_condition_CO}) We begin by giving a sufficient condition for an $\R$-building to satisfy~\ref{axiomCO}. 

\item\label{itStep_preservation_opposition} (see Subsection~\ref{subPreservation_opposition}) We prove that when $S$ admits a minimum $0_S$ (and when $\pi_{\leq 0_S}\left(\omega(\Kb^*)\right)$ is non-trivial), if $C$ and $\tilde{C}$ are two sectors opposed at $x\in \I$, then the corresponding sectors of $\pi_{\leq s}(\I)$ are opposed at $\pi_{\leq s}(x)$ for every $s\in S$.

\item (see Subsection~\ref{subProof_CO_when_S_admits_minimum}) We still assume that $S$ admits a minimum and we prove~\ref{axiomCO}. Let $x\in \I$ and $C$, $\tilde{C}$ be two sectors opposed at $x$. Let $A$ be the apartment containing the germs at infinity $C_\infty$, $\tilde{C}_\infty$ of $C$, $\tilde{C}$. We want to prove that $x\in A$. We consider an apartment $A_C$ containing $C$ and an isomorphism $\phi:A_C\rightarrow A$ fixing $A_C\cap A$. We then prove that $\phi(x)=x$. For this, we act by contradiction and assume that $\phi(x)\neq x$.  Then we prove that there exists a minimal $s\in S$ such that $\pi_{\leq s}\big(\phi(x)\big)\neq  \pi_{\leq s}(x)$. By working in the  $\R$-building  $\pi_{=s}\big(\pi_{\leq s}^{-1}\big(\pi_{\leq s}(x)\big)\big)$ and by using steps~(\ref{itStep_R_bulding}) and~(\ref{itStep_preservation_opposition}), we reach a contradiction.

\item (see subsection~\ref{subProof_CO_general_case} and subsection~\ref{subsecCO_fibers}) We then prove that $\I$ satisfies~\ref{axiomCO} in the general case, by considering $\GB\big(\Kb(\!(t)\!)\big)$. We then deduce that the fibers for the projections also satisfy~\ref{axiomCO}.
\end{enumerate}

\subsection{A sufficient condition for~\ref{axiomCO} for \texorpdfstring{$\R$}{R}-buildings}\label{subSufficient_condition_CO}

In this subsection, we assume that $S$ is reduced to a single element. Let $\I_\R$ be a set covered with apartments and satisfying~\ref{axiomA1},~\ref{axiomA2}, \ref{axiomGG},~\ref{axiomA4} and 

(Iwa) : for all local face $F$, for all sector-germ at infinity $C_\infty$, there exists an apartment containing $F$ and $C_\infty$.

Let $x,y\in \I_\R$, and $A$ be an apartment containing $x$ and $y$, which exists by \ref{axiomGG} and~\ref{axiomA2}. The \textbf{line segment}\index{line segment} $[x,y]$ is the line segment in $A$ between $x$ and $y$. One also defines the \textbf{germ at $x$ of $[x,y]$}\index{germ} as the filter $\germ_x([x,y])$ of subsets of $A$ containing  $\Omega\cap [x,y]$, for some neighborhood $\Omega$ of $x$ in $A$. By~\ref{axiomA2}, all these notions are well defined independently of the choice of $A$.

The \textbf{ray}\index{ray} based at $x$ and containing $y$ is the closed half-line of $A$ based at $x$ and containing $y$.

\begin{lemma}\label{lemSpliting_segments_sector_friendly}
Let $x,y\in \I_\R$ and $C_\infty$ be a sector-germ at infinity of $\I_\R$. Then there exist $n\in \Z_{>0}$ , $x_1,\ldots,x_n\in [x,y]$ such that $[x,y]=\bigcup_{i=1}^{n-1} [x_i,x_{i+1}]$ and for all $i\in \llbracket 1,n-1\rrbracket$, there exists an apartment $A_i$ containing $[x_i,x_{i+1}]$ and $C_\infty$.
\end{lemma}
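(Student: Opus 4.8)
The statement is the $\R$-building analogue of a standard ``covering a segment by finitely many sector-friendly subsegments'' lemma (compare \cite[7.4.18]{BruhatTits1} or \cite[2.3]{parreau2000immeubles}). The plan is to run a one-dimensional compactness argument along $[x,y]$, which is legitimate here since $S$ is a single element, so $\RF^S\simeq\R$ and the line segment $[x,y]\subset A$ is a genuine compact segment of a real affine space. First I would fix an apartment $A$ containing $x$ and $y$ (this exists by \ref{axiomGG} and \ref{axiomA2}) and parametrize $[x,y]$ by $[0,1]$ via $t\mapsto z_t$ with $z_0=x$, $z_1=y$. For each $t\in[0,1]$ I want to produce an apartment $A_t$ containing the local germ $\germ_{z_t}([x,y])$ (or, at the endpoints, a suitable half-segment germ) together with $C_\infty$; this is exactly what axiom (Iwa) provides, since the germ of a segment at a point is, up to enlarging, a local face.

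The key step is then the following ``spreading'' observation: if an apartment $A_t$ contains $C_\infty$ and the germ of $[x,y]$ at $z_t$, then $A_t$ in fact contains a genuine subsegment $[z_{t-\varepsilon},z_{t+\varepsilon}]\cap[x,y]$ for some $\varepsilon>0$. This is because $A_t\cap A$ is enclosed in $A$ by \ref{axiomA2}, hence (being a finite intersection of half-apartments of $A$) a closed convex subset of the real affine space $A$; since it contains a neighbourhood of $z_t$ in $[x,y]$, it must contain a whole subsegment of $[x,y]$ around $z_t$. Thus the sets $I_t:=\{s\in[0,1]\mid [z_{\min(s,t)},z_{\max(s,t)}]\subset A_t\}$ are relatively open neighbourhoods of $t$ in $[0,1]$ (using also that $A_t\cap A$ is convex, so being a segment-neighbourhood on one side forces it on both). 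The collection $\{I_t\}_{t\in[0,1]}$ is then an open cover of the compact interval $[0,1]$; extract a finite subcover $I_{t_1},\dots,I_{t_k}$, refine it to a partition $0=s_0<s_1<\cdots<s_n=1$ such that each $[z_{s_j},z_{s_{j+1}}]$ is contained in one of the $A_{t_i}$, and set $x_j=z_{s_j}$, $A_j=A_{t_i}$ for the appropriate $i$. This yields $[x,y]=\bigcup_{j=0}^{n-1}[x_j,x_{j+1}]$ with each $[x_j,x_{j+1}]$ and $C_\infty$ contained in the common apartment $A_j$, as desired.

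The main obstacle I anticipate is the careful bookkeeping around germs at the endpoints and the verification that ``$A_t\cap A$ contains a neighbourhood of $z_t$ in $[x,y]$'' really follows from ``$A_t$ contains $\germ_{z_t}([x,y])$'': one must check that containing a local germ (a filter) implies containing some honest relatively open piece of the segment, which uses that $A_t$ is a subset of $\I_\R$ so ``contains a filter'' means ``contains an element of that filter''. Once that is pinned down, convexity and enclosedness of $A_t\cap A$ in $A$ do all the work, and the compactness of $[0,1]$ finishes it. I would also remark that this is the only place where $S=\{*\}$ (hence compactness of real segments) is genuinely used, which is precisely why the general case is later reduced, via the projection maps, to this situation.
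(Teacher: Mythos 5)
Your overall strategy---apply (Iwa) pointwise along the segment and conclude by compactness of a real interval, using enclosedness and convexity of intersections of apartments to turn ``contains a germ'' into ``contains an honest subsegment''---is exactly the strategy of the paper's proof. But one step fails as written: at an interior point $z_t$ of $[x,y]$ you ask (Iwa) for a single apartment $A_t$ containing the two-sided germ $\germ_{z_t}([x,y])$ together with $C_\infty$, on the grounds that this germ is ``up to enlarging, a local face''. It is not: a local face is the germ at $z_t$ of a sector-face $z_t+F^v$, and the two directions $\pm v$ in which the segment leaves an interior point lie in opposite vector faces, so the two-sided germ is not contained in any single local face, and (Iwa) does not apply to it. Worse, the apartment you want genuinely need not exist: already in a tree, if $z_t$ is a branch point at which the geodesic ray towards the end $C_\infty$ leaves $[x,y]$ in a third direction, then no apartment (bi-infinite line) contains both a two-sided neighbourhood of $z_t$ in $[x,y]$ and the end $C_\infty$.

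The repair is precisely what the paper does: at each $a\in[x,y]$ take the two one-sided germs $\germ_a\big([a,y)\big)$ and $\germ_a\big([a,x]\big)$, each of which \emph{is} contained in a local chamber of an apartment through $[x,y]$; apply (Iwa) separately to each to obtain two apartments $A_a^+$ and $A_a^-$, each containing $C_\infty$; and cover a neighbourhood of $a$ in $[x,y]$ by $V_a^+\cup V_a^-$ with $V_a^{\pm}\subset A_a^{\pm}$ (your ``spreading'' argument via enclosedness of $A_a^{\pm}\cap A$ is the correct justification for the existence of these $V_a^{\pm}$). Compactness of $[x,y]$ then yields a finite cover by such two-sided pieces, which you refine into the subdivision $x_1,\ldots,x_n$ so that each $[x_i,x_{i+1}]$ lies in a single one of the finitely many apartments $A_a^{\pm}$. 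With this one-sided modification your argument coincides with the paper's proof.
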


\begin{proof}
This is a standard result. Let $A$ be an apartment containing $[x,y]$. For $a\in [x,y)$ (resp. $a\in (x,y]$) we choose a local chamber $C_a^+$ (resp. $C_a^-$) of $A$ such that $C_a^+\cap [x,y]\Supset \germ_a\big([a,y)\big)$ (resp. $C_a^-\cap [x,a)\Supset \germ_a\big([a,x]\big)$). For $a\in (x,y]$ (resp. $a\in [x,y)$) we choose an apartment $A_a^+$ (resp. $A_a^-$) containing $C_a^+$ and $C_\infty$ (resp. $C_a^-$ and $C_\infty$), which is possible by (Iwa). We choose a neighborhood $V_a^+$ (resp. $V_a^-$) of $a$ in $[a,y]$ (resp. in $[x,a]$) such that $V_a^+\subset A_a^+$ (resp. $V_a^-\subset A_a^-$). Set $V_x=V_x^+$, $V_y=V_y^-$ and $V_a=V_a^+\cup V_a^-$, for $a\in (x,y)$. Then by compactness of $[x,y]$, there exists a finite subset $\{a_1,\ldots,a_k\}$ of $[x,y]$ such that $[x,y]=\bigcup_{i=1}^k V_{a_i}$, and the result follows.
\end{proof}

\begin{lemma}\label{lemCO_for_R_buildings}
Let $C_\infty,\tilde{C}_\infty$ be two sector-germs at infinity of $\I_\R$. Let $x\in \I_\R$. We assume that $\germ_x(x+C_\infty)$ and $\germ_x(x+\tilde{C}_\infty)$ are opposite. 
 Then:\begin{enumerate}
\item $C_\infty$ and $\tilde{C}_\infty$ are opposite,

\item there exists  a unique apartment $A_{C_\infty,\tilde{C}_\infty}$ containing $C_\infty$ and $\tilde{C}_\infty$,

\item the point  $x$ belongs to $A_{C_\infty,\tilde{C}_\infty}$.
\end{enumerate}

\end{lemma}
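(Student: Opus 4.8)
The plan is to prove the three assertions in order; assertion (1) and the uniqueness in (2) are soft (they rest on the enclosure formalism and on Axiom~\ref{axiomA2}), the existence in (2) is Axiom~\ref{axiomA4} (Lemma~\ref{lemDecompositions_axioms}(2)), and the real content is (3), locating the point $x$. Throughout I use that the \emph{direction} of a sector-germ — at a point or at infinity — is the associated vector chamber, which is independent of the apartment in which the germ is read off, by the rigidity of Axiom~\ref{axiomA2}; and that the longest element $w_0$ of $W^v$ sends every vector chamber $C^v$ to its negative, $w_0\cdot C^v=-C^v$.

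\textbf{Step 1: $C_\infty$ and $\tilde C_\infty$ are opposite.} Unwinding the hypothesis, there is an apartment $A_0$ containing the two local sector-germs $\germ_x(x+C_\infty)$ and $\germ_x(x+\tilde C_\infty)$ and in which they are opposite, i.e. their directions are a vector chamber $C^v$ and $-C^v$. Since the direction of $\germ_x(x+C_\infty)$ equals the direction of $C_\infty$ (both being the vector chamber underlying the sector $x+C_\infty$), and similarly for $\tilde C_\infty$, the sector-germs at infinity $C_\infty$ and $\tilde C_\infty$ have opposite directions $C^v$ and $-C^v$. Choosing, by Axiom~\ref{axiomA4}, an apartment $A$ with $C_\infty\Subset A$ and $\tilde C_\infty\Subset A$, these directions are read inside $A$ and are related by $w_0$, so $\germ_\infty(C_\infty)$ and $\germ_\infty(\tilde C_\infty)$ are opposite in $A$. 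This proves (1).

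\textbf{Step 2: existence and uniqueness of $A_{C_\infty,\tilde C_\infty}$.} Existence is again Axiom~\ref{axiomA4}. For uniqueness, let $A,A'$ be apartments both containing $C_\infty$ and $\tilde C_\infty$. Picking elements of the filter $C_\infty$ lying in $A$ and in $A'$ respectively and intersecting them, $A\cap A'$ contains a subsector $Q$ with $\germ_\infty(Q)=C_\infty$; likewise it contains a subsector $\tilde Q$ with $\germ_\infty(\tilde Q)=\tilde C_\infty$. By (1), in $A$ the directions of $Q$ and $\tilde Q$ are $C^v$ and $-C^v$. By Axiom~\ref{axiomA2}, $A\cap A'$ is enclosed in $A$, hence a finite intersection $\bigcap_{i\in I}D_{\alpha_i,\lambda_i}$ of half-apartments of $A$. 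If $I\neq\emptyset$, a factor $D_{\alpha_i,\lambda_i}$ must contain the sector $Q$, which forces $\alpha_i\geq 0$ on $C^v$ (otherwise $\alpha_i$ is unbounded below on $Q$), and it must contain $\tilde Q$, which forces $\alpha_i\leq 0$ on $C^v$; thus $\alpha_i$ vanishes on the chamber $C^v$, i.e. $\alpha_i=0$, contradicting $\alpha_i\in\Phi$. Hence $I=\emptyset$, $A\cap A'=A$, and symmetrically $A\cap A'=A'$, so $A=A'$. We denote this apartment by $A_{C_\infty,\tilde C_\infty}$.

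\textbf{Step 3: $x\in A_{C_\infty,\tilde C_\infty}$ — the main obstacle.} A first observation is that $x\in A_0$: fixing an apartment $B$ with $x\in B$ and $C_\infty\Subset B$ (so $x+C_\infty$ is a sector of $B$), some set $\Omega\cap(x+C_\infty)$ with $\Omega$ a convex neighbourhood of $x$ lies in $A_0$, whence $A_0\cap B$ contains a half-open segment $(x,y]$ pointing into $x+C_\infty$; as $A_0\cap B$ is enclosed in $B$ by Axiom~\ref{axiomA2}, hence a finite intersection of closed half-apartments, it is closed in $B$, so $x\in A_0\cap B$. However $A_0$ need \emph{not} contain $C_\infty$ or $\tilde C_\infty$, so $A_0$ is in general not $A_{C_\infty,\tilde C_\infty}$ and this observation alone does not suffice. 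Instead, note it is enough to exhibit an apartment containing the two \emph{full} sectors $x+C_\infty$ and $x+\tilde C_\infty$: being opposite at $x$ it will contain $C_\infty$ and $\tilde C_\infty$, hence equal $A_{C_\infty,\tilde C_\infty}$ by Step 2, and it will contain their common tip $x$. To construct such an apartment I will fix a point $y$ lying in $A_{C_\infty,\tilde C_\infty}$ and deep inside a subsector of $x+C_\infty$ contained in $A_{C_\infty,\tilde C_\infty}$ (two sectors with the same germ at infinity share a subsector, so such a subsector exists), and then transport the datum along the segment $[x,y]$: by Lemma~\ref{lemSpliting_segments_sector_friendly} applied with the sector-germ $\tilde C_\infty$ together with the Iwasawa-type property (Iwa) (Lemma~\ref{lemDecompositions_axioms}(iii)), decompose $[x,y]$ into subsegments each contained in an apartment containing $\tilde C_\infty$, and run an induction from $y$ back to $x$ showing that at each step the current subsegment and a subsector of $x+C_\infty$ lie in one common apartment containing $\tilde C_\infty$; the enclosure computation of Step~2 then forces that apartment to be $A_{C_\infty,\tilde C_\infty}$, and the final step of the induction places $x$ in it. The delicate point — where essentially all the work is — is keeping \emph{both} $C_\infty$ and $\tilde C_\infty$ under control while walking from $y$ to $x$; the tools are the closedness of enclosed subsets of apartments and the rigidity supplied by Axiom~\ref{axiomA2}.
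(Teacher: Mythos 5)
Your Step 2 (uniqueness via the enclosure computation) is correct and is a nice explicit version of what the paper simply attributes to~\ref{axiomA2}. But there are two genuine gaps, and they sit exactly where the content of the lemma is.

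\emph{Step 1 assumes what it must prove.} You write that ``the direction of $\germ_x(x+C_\infty)$ equals the direction of $C_\infty$, both being the vector chamber underlying the sector $x+C_\infty$.'' A direction is only defined relative to a chart $f\in\mathrm{Isom}(\A_S,A)$ of a particular apartment, and charts of different apartments agree only up to $W^{\mathrm{aff}}$ on their overlap. The hypothesis gives the relative position of the two \emph{local} germs read in an apartment $A_0$ containing both of them; the conclusion concerns the relative position of the two germs \emph{at infinity} read in an apartment $A$ containing both of them. Passing from $A_0$ to $A$ goes through intermediate apartments ($B\ni x$ with $C_\infty\Subset B$, etc.) whose transition isomorphisms need not compose to the identity, so nothing forces the two relative positions to agree --- that agreement \emph{is} assertion (1). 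The paper avoids the issue by proving (3) first: once $x\in A_{C_\infty,\tilde C_\infty}$, both sectors $x+C_\infty$ and $x+\tilde C_\infty$ lie in that single apartment, where the comparison of the local germs with the germs at infinity is immediate.

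\emph{Step 3 defers the entire argument.} You propose to walk from $y$ back to $x$ with the invariant ``the current subsegment and a subsector of $x+C_\infty$ lie in a common apartment containing $\tilde C_\infty$,'' but you give no mechanism for propagating it from one subsegment of the subdivision to the next, and --- decisively --- the construction as sketched never uses the hypothesis that the germs are opposite \emph{at $x$}. Without that hypothesis the conclusion is false: in a tree, take $x$ off the line joining two ends; the ray from $x$ to either end eventually meets the line, yet $x$ is not on it. The paper's induction runs the other way and with a different invariant: parametrize the ray $\delta$ from $x$ through $y$, subdivide $[x,y']$ (with $y'\in\delta\cap A_{C_\infty,\tilde C_\infty}$) so that each piece $[r(t_i),r(t_{i+1})]$ lies in an apartment $A_i$ containing $\tilde C_\infty$, and propagate forward the statement $r(t_{i-1})\in r(t_i)+\tilde C_\infty$, using that $r(t_i)+\tilde C_\infty$ and $r(t_{i+1})+\tilde C_\infty$ are parallel sectors of $A_i$. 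The base case is $\tilde y\in x+\tilde C_\infty$, where $\tilde y$ is the point of $\tilde C$ with $x\in[\tilde y,y]$ --- this is precisely where opposition at $x$ enters. One then gets $x\in y'+\tilde C_\infty\subset A_{C_\infty,\tilde C_\infty}$, and (1) and the uniqueness in (2) follow. You need to supply this (or an equivalent) inductive mechanism and make the use of the opposition hypothesis explicit.
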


\begin{proof}
We follow \cite[Proposition 1.12]{parreau2000immeubles} and \cite[Proposition 5.4 2)]{rousseau2011masures}. Let $A_{C_\infty,\tilde{C}_\infty}$ be an apartment containing $C_\infty$ and $\tilde{C}_\infty$. Let $C=x+C_\infty$, $\tilde{C}=x+\tilde{C}_\infty$, $C_x=\germ_x(C)$ and $\tilde{C}_x=\germ_x(\tilde{C})$. Let $A_C$ and $A_{\tilde{C}}$ be apartments containing $C$ and $\tilde{C}$ respectively. Let $A_{C_x,\tilde{C}_x}$ be an apartment containing $C_x,\tilde{C}_x$. Let $y,\tilde{y}\in A_{C_x,\tilde{C}_x}$ be such that $y\in C$, $\tilde{y}\in \tilde{C}$ and $x\in [\tilde{y},y]$. Let $\delta$ (resp. $\tilde{\delta}$) be the ray of $A_C$ (resp. $A_{\tilde{C}}$) based at $x$ and  containing $y$ (resp.  $\tilde{y}$). 

The ray $\delta$ meets $A_{C_\infty,\tilde{C}_\infty}$. We choose $y'$ in $\delta\cap A_{C_\infty,\tilde{C}_\infty}$. Let $r:[0,\infty[\rightarrow \delta$ be the affine parametrization of $\delta$ such that $r(0)=x$ and $r(1)=y'$. Using Lemma~\ref{lemSpliting_segments_sector_friendly}, we choose  $n\in \N$, $t_0=0<t_1<\ldots <t_n=1$ such that for all $i\in \llbracket 0,n-1\rrbracket$, $[r(t_i),r(t_{i+1})]$ and $\tilde{C}_\infty$ are contained in  an apartment $A_i$.

 Let $r:(-\infty,0]\rightarrow \tilde{\delta}$ be the affine parametrization of   $\tilde{\delta}$ such that $r(0)=x$ and $r(-1)=\tilde{y}$. We set $t_{-1}=-1$. Then $\tilde{y}=r(t_{-1})\in r(t_0)+\tilde{C}_\infty=x+\tilde{C}_\infty$.  Let $i\in \llbracket 0,n-1\rrbracket$.  We assume that $r(t_i)+\tilde{C}_\infty$ contains $r(t_{i-1})$. Let $B_i$ be an apartment containing $[r(t_i-\epsilon),r(t_i)]\cup [r(t_i),r(t_i+\epsilon)]$, for $\epsilon>0$ small enough (one may take $B_0=A_{C_x,\tilde{C}_x}$ and $B_i=A_C$, for $i>0$). By assumption, $A_i$ contains $r(t_{i-1})$ and as $[r(t_i-\epsilon),r(t_i+\epsilon)]$ is a line segment in $B_i$, $[r(t_{i-1}),r(t_{i+1})]$ is a line segment in $A_i$. In the apartment $A_i$, $r(t_i)+\tilde{C}_\infty$ is a sector parallel to $r(t_{i+1})+\tilde{C}_\infty$. As $r(t_i)+\tilde{C}_\infty$ contains $r(t_{i-1})$, we deduce that $r(t_{i+1})+\tilde{C}_\infty$ contains $r(t_i)$. By induction, we deduce that $r(t_0)=x\in r(t_n)+\tilde{C}_\infty=y'+\tilde{C}_\infty$. Therefore $x\in A_{C_\infty,\tilde{C}_\infty}$. Consequently, $x+C_\infty$ and $x+\tilde{C}_\infty$ are two opposite sectors of $A_{C_\infty,\tilde{C}_\infty}$. Therefore, $C_\infty$ and $\tilde{C}_\infty$ are opposite and by~\ref{axiomA2}, there exists at most one apartment containing $C_\infty$ and $\tilde{C}_\infty$, which proves the lemma.

\end{proof}

\subsection{Preservation of the opposition}\label{subPreservation_opposition}

The aim of this subsection is to prove that if $S$ admits a minimum and if  $C$ and $\tilde{C}$ are two sectors opposite at some point $x\in \I$, then for every $s\in S$, $C_{\leq s}$ and $\tilde{C}_{\leq s}$ are opposite at $\pi_{\leq s}(x)$. For this, our idea is to prove that if $C_{1,\infty}, \ldots, C_{n,\infty}$ is a gallery of sector-germs at infinity such that $\germ_{\pi_{\leq s}(x)}(\pi_{\leq s}(x)+C_{1,\infty,\leq s}),\ldots, \germ_{\pi_{\leq s}(x)}(\pi_{\leq s}(x)+C_{n,\infty,\leq s})$ is a gallery from $\germ_{\pi_{\leq s}(x)}(C)$ to $\germ_{\pi_{\leq s}(x)}(\tilde{C})$, then $\germ_x(x+C_{1,\infty}),\ldots,\germ_{x}(x+C_{n,\infty})$ is a gallery from $\germ_x(C)$ to $\germ_x(\tilde{C})$. 

\subsubsection{Preleminaries on enclosed sets}\label{subsubPreliminaries_enclosed_sets}

\begin{Not}\label{notProjection_sector}

For the moment, we do not assume that $S$ admits a minimum. We will make this assumption from Lemma~\ref{l_projection_0_sector} to Lemma~\ref{lemCO_when_S_admits_minimum}. Let $C$ be a sector of $\A_S$ and $s\in S$. Write $C=x+w.C^v_{f,\A_S}$, where $x\in \A_S$ and $w\in W^v$. One sets $C_{\leq s}=x+w.\{y_{\leq s}\in \A_{\leq s} |\alpha(y_{\leq s})> 0,\ \forall \alpha\in \Delta_f\}$\index[notation]{c@$C_{\leq s}$}. This is the sector of $\A_{\leq s}$ corresponding to $C$. One has $C_{\leq s}\subset \pi_{\leq s}(C)$ but this containment is strict, because $\pi_{\leq s}$ preserves large inequalities but not strict inequalities.

Let $C$ be a sector of $\I$. Write $C=g.\tilde{C}$, with $g\in \mathbf{G}(\mathbb{K})$ and $\tilde{C}$ a subsector of $\A_S$. We set $C_{\leq s}=g.\tilde{C}_{\leq s}$. This does not depend on the choice of $g$ and $\tilde{C}$ by the lemma below.
\end{Not}

\begin{lemma}
Let $g_1,g_2\in \mathbf{G}(\mathbb{K})$, $C^{(1)},C^{(2)}$ be sectors of $\A_S$ and $s\in S$. Suppose that $g_1.C^{(1)}=g_2.C^{(2)}$. Then $g_1.C^{(1)}_{\leq s}=g_2.C^{(2)}_{\leq s}$.
\end{lemma}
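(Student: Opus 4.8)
The plan is to reduce the assertion to a computation with the two compatible affine actions $\nu$ and $\nu_{\leq s}$ of $N$ on the standard apartments, exploiting Proposition~\ref{propAxiom(A2)}. Set $g := g_2^{-1}g_1$, so that $g\cdot C^{(1)} = C^{(2)}$ inside $\mathcal{I} = \mathcal{I}(\mathbb{K},\omega,\mathbf{G})$; it is enough to prove $g\cdot C^{(1)}_{\leq s} = C^{(2)}_{\leq s}$ in $\mathcal{I}(\mathbb{K},\omega_{\leq s},\mathbf{G})$ and then apply $g_2$. Since $C^{(1)}\subseteq \A_S$, we have $C^{(2)} = g\cdot C^{(1)} \subseteq \A_S \cap g\cdot\A_S$, so by Proposition~\ref{propAxiom(A2)} there is $n\in N$ with $g^{-1}\cdot y = \nu(n)(y)$ for every $y\in \A_S\cap g\cdot\A_S$; in particular this holds for all $y\in C^{(2)}$, whence the identity of subsets of $\A_S$
\[ C^{(1)} = g^{-1}\cdot C^{(2)} = \nu(n)\big(C^{(2)}\big). \]

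First I would read off from this what it says about base points and directions. Write $C^{(2)} = x_2 + w_2\cdot C^v_{f,\A_S}$ and $C^{(1)} = x_1 + w_1\cdot C^v_{f,\A_S}$ with $x_i\in\A_S$, $w_i\in W^v$. The element $\nu(n)\in W^{\mathrm{aff}}$ is an $\RF^S$-aff automorphism of $\A_S$ with linear part ${^v\!}\nu(n)\in W^v$ by axiom~\ref{axiomCA1}, so $\nu(n)(C^{(2)})$ is the sector with base point $\nu(n)(x_2)$ and direction $\big({^v\!}\nu(n)w_2\big)\cdot C^v_{f,\A_S}$. Since a sector determines its base point and its direction uniquely and $W^v$ acts simply transitively on the vector chambers of $V_{\RF^S}$, comparison with $C^{(1)}$ gives $x_1 = \nu(n)(x_2)$ and $w_1 = {^v\!}\nu(n)w_2$. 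Next I would transport everything along the projection: $\pi_{\leq s}$ is $\mathbf{G}(\mathbb{K})$-equivariant, restricts on $\A_S$ to the affine projection $\A_S\to\A_{\leq s}$, and satisfies $\pi_{\leq s}\circ\nu(n) = \nu_{\leq s}(n)\circ\pi_{\leq s}$ on $\A_S$ by construction (see Subsection~\ref{SubsecContructionFibers}). Applying $\pi_{\leq s}$ to $g^{-1}\cdot y = \nu(n)(y)$ for $y\in\A_S\cap g\cdot\A_S$ yields $g^{-1}\cdot\pi_{\leq s}(y) = \nu_{\leq s}(n)\big(\pi_{\leq s}(y)\big)$, and since $C^{(2)}_{\leq s}\subseteq\pi_{\leq s}(C^{(2)})$, every point of $C^{(2)}_{\leq s}$ is of the form $\pi_{\leq s}(y)$ with $y\in C^{(2)}$, so that $g^{-1}\cdot C^{(2)}_{\leq s} = \nu_{\leq s}(n)\big(C^{(2)}_{\leq s}\big)$.

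To finish, $\nu_{\leq s}(n)$ is an $\RF^{\leq s}$-aff automorphism of $\A_{\leq s}$ with linear part ${^v\!}\nu(n)$ (axiom~\ref{axiomCA1} for the datum attached to $\omega_{\leq s}$), and by Notation~\ref{notProjection_sector} the sector $C^{(2)}_{\leq s}$ has base point $\pi_{\leq s}(x_2)$ and direction $w_2\cdot C^v_{f,\A_{\leq s}}$; hence $\nu_{\leq s}(n)(C^{(2)}_{\leq s})$ has base point $\nu_{\leq s}(n)(\pi_{\leq s}(x_2)) = \pi_{\leq s}(\nu(n)(x_2)) = \pi_{\leq s}(x_1)$ and direction $\big({^v\!}\nu(n)w_2\big)\cdot C^v_{f,\A_{\leq s}} = w_1\cdot C^v_{f,\A_{\leq s}}$, which is exactly $C^{(1)}_{\leq s}$. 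Therefore $g^{-1}\cdot C^{(2)}_{\leq s} = C^{(1)}_{\leq s}$, i.e. $g_1\cdot C^{(1)}_{\leq s} = g_2\cdot C^{(2)}_{\leq s}$, as wanted. I expect the only mildly delicate point to be the last identification, which rests on the uniqueness of the base point and direction of a sector in $\A_S$ and in $\A_{\leq s}$ (a consequence of the definitions together with Lemma~\ref{lemSign_of_a_vectorial_face} and the simple transitivity of $W^v$ on vector chambers); all the remaining steps are formal manipulations with the equivariance and compatibility properties already established.
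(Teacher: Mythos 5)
Your proof is correct and takes essentially the same route as the paper's: both arguments reduce the statement to an element $n\in N$ supplied by Proposition~\ref{propAxiom(A2)} whose restriction to the standard apartment lies in $W^{\mathrm{aff}}\subset W^v\ltimes V_S$, and then exploit the compatibility of the two $N$-actions with the projection $\pi_{\leq s}$. The only difference is organizational: the paper introduces an auxiliary $h$ fixing $g_1\cdot\A_S\cap g_2\cdot\A_S$ pointwise (and therefore needs the preliminary observation that such an $h$ also fixes $g_1\cdot C^{(1)}_{\leq s}$ pointwise), whereas you apply Proposition~\ref{propAxiom(A2)} directly to $g_2^{-1}g_1$ and instead spell out the base-point-and-direction identification that the paper leaves implicit.
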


\begin{proof}
Let $C$ be a sector of $\A_S$ and $g\in \mathbf{G}(\Kb)$. Let $h\in \mathbf{G}(\Kb)$ be such that $h$ fixes $g.C$ pointwise. Then as $\pi_{\leq s}$ is $\mathbf{G}(\Kb)$-equivariant, $h$ fixes $\pi_{\leq s}(g.C)=g.\pi_{\leq s}(C)$. Moreover $\pi_{\leq s}(C)\supset C_{\leq s}$ and thus $h$ fixes $g.C_{\leq s}$ pointwise.

For $i\in \{1,2\}$, set $A_i=g_i.\A_S$. By Proposition~\ref{propAxiom(A2)}, there exists $h\in \mathbf{G}(\mathbb{K})$ inducing an isomorphism $\phi:A_1\rightarrow A_2$ fixing $A_1\cap A_2$. Let $h\in \mathbf{G}(\Kb)$ be such that $h$ fixes $A_1\cap A_2$ and sends $A_1$ on $A_2$, which exists by Proposition~\ref{propAxiom(A2)}.
Let $n=g_2^{-1} hg_1$. Then $n$ stabilizes $\A_S$ and thus  by Corollary~\ref{CorStabilizerA},  $n \in N$.
Then $n.C^{(1)}=C^{(2)}$. By Corollary~\ref{corAffine_weyl_group}, the  restriction of $n$ to $\A_S$ is induced by an element of $\Wext \subset W^v \ltimes \A_S$
  and we deduce that $n.C^{(1)}_{\leq s}=C^{(2)}_{\leq s}$. Therefore $hg_1.C^{(1)}_{\leq s}=g_2.C^{(2)}_{\leq s}$. As $h$ fixes $g_1.C^{(1)}$ pointwise, we deduce that $hg_1.C^{(1)}_{\leq s}=g_1.C^{(1)}_{\leq s}=g_2.C^{(2)}_{\leq s}$  which proves the lemma.
\end{proof}

If $\top$ is a binary relation on $S\times S$ (for example $\top= ``<'', ``='', \ldots$) and $s\in S$ we define the projection $\pi_{\top s}:\RF^S\rightarrow \RF^{\top s}$  by $\pi_{\top s}\big((\lambda)_{t\in S}\big)=(\lambda_t)_{t\top s}$, for $(\lambda_t)\in \RF^S$.

\begin{lemma}\label{lemProjection_nonempty_polyhedron}
Let $(\lambda_\alpha)\in (\RF^{S}\cup\{\infty\})^\Phi$ and $\Omega=\bigcap_{\alpha\in \Phi}D_{\alpha,\lambda_\alpha}$. We assume that there exists a vector chamber $C^v_S$ of $\A_{S}$ and $x_{\leq s}\in \A_{\leq s}$ such that $\germ_{x_{\leq s}}(x_{\leq s}+C^v_{\leq s})\Subset \bigcap_{\alpha\in \Phi} D_{\alpha,\pi_{\leq s}(\lambda_\alpha)}$. Then $\pi_{\leq s}(\Omega)=\bigcap_{\alpha\in \Phi} D_{\alpha, \pi_{\leq s}(\lambda_\alpha)}$.
\end{lemma}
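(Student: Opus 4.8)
The inclusion $\pi_{\leq s}(\Omega) \subseteq \bigcap_{\alpha\in\Phi} D_{\alpha,\pi_{\leq s}(\lambda_\alpha)}$ is immediate: for any $x\in\Omega$ and any $\alpha\in\Phi$, we have $\lambda_\alpha \geq -\alpha(x)$, so applying the order-preserving projection $\pi_{\leq s}$ yields $\pi_{\leq s}(\lambda_\alpha)\geq -\alpha(\pi_{\leq s}(x))$, i.e. $\pi_{\leq s}(x)\in D_{\alpha,\pi_{\leq s}(\lambda_\alpha)}$. So the whole content is the reverse inclusion: given $x_{\leq s}'\in \bigcap_{\alpha\in\Phi}D_{\alpha,\pi_{\leq s}(\lambda_\alpha)}$, we must produce $x\in\Omega$ with $\pi_{\leq s}(x)=x_{\leq s}'$.

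The idea is to lift $x_{\leq s}'$ by correcting it far into the direction of the vector chamber $C^v_S$, using the kernel of $\pi_{\leq s}$. First I would fix a lift $x_0\in\A_S$ of $x_{\leq s}'$ (choose any preimage under $\pi_{\leq s}:\A_S\to\A_{\leq s}$). Pick $v\in C^v_{\mathbb{Z},\Delta}$ where $\Delta = \Delta_{C^v_S}$, so that $\alpha(v)\in\mathbb{Z}_{>0}$ for every $\alpha\in\Phi^+_\Delta$ and $\delta_v\subset C^v_S$ by Lemma~\ref{LemHalfLineInSector}; moreover $v$ lies in $\ker\pi_{\leq s}$ after tensoring appropriately, or more precisely we take $v$ scaled by an element $\mu\in\ker(\pi_{\RF^S,\leq s})_{>0}$ so that $\pi_{\leq s}(\mu v)=0$. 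I would then set $x = x_0 + \mu v$ for a suitable large $\mu\in\ker(\pi_{\RF^S,\leq s})_{>0}$, so that automatically $\pi_{\leq s}(x)=\pi_{\leq s}(x_0)=x_{\leq s}'$. It remains to check $x\in\Omega$, i.e. $-\alpha(x)\leq \lambda_\alpha$ for every $\alpha\in\Phi$. Split $\alpha$ according to its sign on $C^v_S$: if $\alpha\in\Phi^+_{C^v_S}$, then $\alpha(\mu v)>0$ (it lies in $\ker\pi_{\leq s}$ but is positive), and we know from the hypothesis $\germ_{x_{\leq s}'}(x_{\leq s}'+C^v_{\leq s})\Subset \bigcap_\alpha D_{\alpha,\pi_{\leq s}(\lambda_\alpha)}$ that $\pi_{\leq s}(-\alpha(x_0))\leq \pi_{\leq s}(\lambda_\alpha)$, with the "germ" condition giving us room to absorb the positive kernel contribution; one argues that $-\alpha(x)=-\alpha(x_0)-\alpha(\mu v)$ has $\pi_{\leq s}$-part $\leq\pi_{\leq s}(\lambda_\alpha)$ and strictly smaller kernel part (since $\alpha(\mu v)>0$), hence $-\alpha(x)\leq\lambda_\alpha$; here one should be careful whether $\pi_{\leq s}(-\alpha(x_0))<\pi_{\leq s}(\lambda_\alpha)$ or $=$, the germ hypothesis being exactly what handles the equality case by pushing into the interior. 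For $\alpha\in\Phi^0_{C^v_S}$, $\alpha(\mu v)=0$ and $\alpha$ is constant-in-kernel, so $-\alpha(x)=-\alpha(x_0)$ and one needs $\pi_{\leq s}$-injectivity on this value together with the germ condition — again the germ hypothesis forces $\alpha(x_{\leq s}')$ to satisfy the required (large) inequality, and since $\alpha$ on the kernel is zero we get equality of the two sides' kernel parts and the desired inequality. For $\alpha\in\Phi^-_{C^v_S}$, i.e. $\alpha(\mu v)<0$, the correction works in our favor: $-\alpha(\mu v)>0$, so choosing $\mu$ large enough (possible since $-\alpha(v)\in\mathbb{Z}_{>0}$ and $\ker(\pi_{\RF^S,\leq s})$ is a nonzero totally ordered group, so unbounded in the relevant Archimedean direction once $\pi_{\leq s}(-\alpha(x_0))<\pi_{\leq s}(\lambda_\alpha)$ strictly, while if equality holds the germ condition again ensures the kernel part of $-\alpha(x_0)$ is $\leq$ that of $\lambda_\alpha$) we get $-\alpha(x)\leq\lambda_\alpha$. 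Actually a cleaner route: since there are finitely many $\alpha$, first choose $x_0$ to be the image of a point of $x_{\leq s}'+C^v_{\leq s}$ lying in the polyhedron in $\A_{\leq s}$ (using the germ hypothesis, a sufficiently deep interior point $y_{\leq s}$ of $x_{\leq s}'+C^v_{\leq s}$ lies in $\bigcap D_{\alpha,\pi_{\leq s}(\lambda_\alpha)}$), lift it to $x_0\in x_{\leq s}'$'s fiber lying in a translate of $C^v_S$, then add $\mu v$; this makes all the $\pi_{\leq s}$-parts strict, so the kernel parts are irrelevant and a single large $\mu$ suffices for all $\alpha$ simultaneously.

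The main obstacle will be the careful bookkeeping of strict versus non-strict inequalities when passing between $\RF^S$, $\RF^{\leq s}$, and $\ker(\pi_{\RF^S,\leq s})$: the projection $\pi_{\leq s}$ does not preserve strict inequalities, so the "germ" hypothesis $\germ_{x_{\leq s}'}(x_{\leq s}'+C^v_{\leq s})\Subset\bigcap_\alpha D_{\alpha,\pi_{\leq s}(\lambda_\alpha)}$ is precisely the device that lets us choose an interior point $y_{\leq s}$ of the sector where every relevant inequality $-\alpha(y_{\leq s})\leq\pi_{\leq s}(\lambda_\alpha)$ holds, and — crucially — is strict whenever $\alpha$ is non-constant on the sector. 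I expect the proof to reduce, after choosing such a deep interior lift, to the elementary observation that adding a positive kernel-multiple of $v\in C^v_{\mathbb{Z},\Delta}$ only improves the inequalities indexed by $\Phi^+$ in the relevant Archimedean class, combined with $\mathbb{Z}$-torsion-freeness of $\RF^S$ (which the paper uses repeatedly, e.g. in Lemma~\ref{LemUsector}) to handle the comparison $\bigcap_{\mu>0}[-\alpha(x_0)-\alpha(\mu v),\infty]$-type arguments. Once the lift $x$ is produced with $\pi_{\leq s}(x)=x_{\leq s}'$ and $x\in\Omega$, the reverse inclusion follows and the lemma is proved.
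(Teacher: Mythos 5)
The easy inclusion and the overall strategy (lift a point of $\bigcap_\alpha D_{\alpha,\pi_{\leq s}(\lambda_\alpha)}$ and then correct it inside the fiber of $\pi_{\leq s}$, using the interior point supplied by the germ hypothesis) are right, but your correction mechanism has a genuine gap. First, the ``cleaner route'' is not admissible: replacing $x'_{\leq s}$ by a deep interior point $y_{\leq s}$ of the sector produces a lift whose image under $\pi_{\leq s}$ is $y_{\leq s}$, not $x'_{\leq s}$, so it does not show that the \emph{given} point lies in $\pi_{\leq s}(\Omega)$. The only freedom you have is in the coordinates indexed by $S_{>s}$. Second, in the main route the correction $\mu v$ with $v\in C^v_{\mathbb{Z},\Delta}$ only improves the constraints for roots $\alpha$ with $\alpha(v)>0$. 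For a tight root $\alpha$ (one with $-\alpha(x'_{\leq s})=\pi_{\leq s}(\lambda_\alpha)$) lying in $\Phi^-_{C^v}$, adding $\mu v$ \emph{increases} $-\alpha(x)$ in the kernel coordinates, so your claim that ``the correction works in our favor'' there has the sign backwards; if moreover $\pi_{>s}(\lambda_\alpha)<0$, no choice of $\mu\geqslant 0$ can make $-\alpha(x_0+\mu v)\leqslant\lambda_\alpha$ for the naive lift $x_0=(x'_{\leq s},0)$. And tight roots of negative sign on $C^v$ do occur: already for $\Phi=\{\pm\alpha\}$ the two ``endpoints'' of the segment $D_{\alpha,\pi_{\leq s}(\lambda_\alpha)}\cap D_{-\alpha,\pi_{\leq s}(\lambda_{-\alpha})}$ are tight for $\alpha$ and for $-\alpha$ respectively, so the set of tight roots at an arbitrary point of the target polyhedron is not one-sided with respect to any fixed chamber.

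What makes the paper's argument work is that the correction direction is not a chamber vector but $z_{\leq s}-y_{\leq s}$, where $z_{\leq s}$ is an interior point obtained from the germ hypothesis: every tight root is automatically \emph{positive} on this vector. The remaining difficulty, which your proposal does not address, is that $z_{\leq s}-y_{\leq s}$ lives in $\A_{\leq s}$ and must be converted into an element of the fiber direction. The paper does this coordinate by coordinate: for each tight root $\alpha_i$ it records the first coordinate $s_{\alpha_i}\in\supp(z_{\leq s}-y_{\leq s})$ where $\alpha_i$ sees the positivity, extracts the real vector $u_i=z_{s_{\alpha_i}}-y_{s_{\alpha_i}}\in\A_\R$, and uses the triangularity $\alpha_i(u_i)>0$, $\alpha_j(u_i)\geqslant 0$ for $j\geqslant i$ to choose coefficients $t_i$ by a downward induction so that $\sum_i t_i u_i$, placed at a single coordinate $\tilde s>s$, satisfies all tight constraints simultaneously. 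Some such lexicographic bookkeeping is unavoidable, and without it the reverse inclusion is not established.
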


\begin{proof}
Let $\Omega'=\bigcap_{\alpha\in  \Phi} D_{\alpha, \pi_{\leq s}(\lambda_\alpha)}$. The inclusion $\pi_{\leq s}(\Omega)\subset \Omega'$ is clear. Let us prove the reverse inclusion.   Let   $\Phi^+=\Phi^+_{C^v_\R}$. 

Let $z'_{\leq s}\in \Omega'\cap (x_{\leq s}+C^v_{\leq s})$ and $z_{\leq s}\in (x_{\leq s}+C^v_{\leq s})\cap (z'_{\leq s}-C^v_{\leq s})$. Then for all $\alpha\in \Phi^+$, one has $-\pi_{\leq s}(\lambda_\alpha)\leq \alpha(x_{\leq s})<\alpha (z_{\leq s})<\alpha(z'_{\leq s})$  and for all $\alpha\in \Phi^-$, $-\pi_{\leq s}(\lambda_\alpha)\leq \alpha(z'_{\leq s})<\alpha(z_{\leq s})$. In particular, \begin{equation}\label{e_inequality_zleqs}\alpha(z_{\leq s})>-\pi_{\leq s}(\lambda_\alpha),\end{equation} for every $\alpha\in \Phi$. 

Let $y_{\leq s}\in \Omega'$. Set $\Phi(y_{\leq s})=\{\alpha\in \Phi\mid \alpha(y_{\leq s})=-\pi_{\leq s}(\lambda_\alpha)\}$. Then by definition, 
\[\Omega\cap \big(\{y_{\leq s}\}\times \A_{>s}\big)=\{y_{\leq s}\}\times \bigcap_{\alpha\in \Phi(y_{\leq s})}D_{\alpha,\pi_{>s}(\lambda_\alpha) }.\] If $\Phi(y_{\leq s})$ is empty, then $\{y_{\leq s}\}\times \A_{>s}\subset \Omega$ and thus $y_{\leq s}\in \pi_{\leq s}(\Omega)$. We now assume that $\Phi(y_{\leq s})$ is nonempty. By \eqref{e_inequality_zleqs},   one has $\alpha(z_{\leq s}-y_{\leq s})>0$, for all $\alpha\in \Phi(y_{\leq s})$. Write $y_{\leq s}=(y_t)_{t\leq s}$ and $z_{\leq s}=(z_t)_{t\leq s}$. For $\alpha\in  \Phi(y_{\leq s})$, set \[s_\alpha=\min \{t\in \supp(z_{\leq s}-y_{\leq s})\mid \alpha(z_t)>\alpha(y_t)\}.\]

Let $k=|\Phi(y_{\leq s})| $. We write $\Phi(y_{\leq s})=\{\alpha_1,\ldots,\alpha_k\}$ in such a way that $(s_{\alpha_i})_{i\in \llbracket 1,k\rrbracket}$ is  non-decreasing.  For $i\in \llbracket 1,k\rrbracket$, one sets $u_i=z_{s_{\alpha_i}}-y_{s_{\alpha_i}}\in \A_\R$. Then  for all $j\in \llbracket 1,k\rrbracket$, one has $\alpha_j(u_i)>0$ if $s_{\alpha_i}=s_{\alpha_j}$ and $\alpha_j(u_i)=0$ if $s_{\alpha_j}>s_{\alpha_i}$. In particular, for all $i,j\in \llbracket 1,k\rrbracket$, one has: \begin{equation}\label{eqPositivity_relation}
\alpha_i(u_i)>0\text{ and }j\geq i\implies \alpha_j(u_i)\geq 0.
\end{equation}

If $(y_{\leq s},0)\in \Omega$, then we are done. Suppose that $(y_{\leq s},0)\notin \Omega$. Let \[\tilde{s}=\min \{t\in \supp (\lambda_\alpha)\mid \exists \alpha\in \Phi(y_{\leq s})\mid -\pi_{=t}(\lambda_\alpha)>0\}.\] One chooses $t_k\in\R_{\geq 0}$ such that $\alpha_k(t_ku_k)>-\pi_{=\tilde{s}}(\lambda_{\alpha_k})$. Let $i\in \llbracket 2,k\rrbracket$. Suppose we have constructed $(t_j)_{j\in \llbracket i,k\rrbracket}\in  (\R_{\geq 0})^{\llbracket i,k\rrbracket}$ such that \[\sum_{j=i}^k t_j \alpha_{\ell}(u_j)>-\pi_{=\tilde{s}}(\lambda_{\alpha _\ell})\] 
for all $\ell\in \llbracket i,k\rrbracket$. Then one chooses $t_{i-1}\in \R_{\geq 0}$ such that  $\sum_{j=i-1}^k t_j\alpha_{i-1}(u_j)>-\pi_{=\tilde{s}}(\lambda_{\alpha_{i-1}})$. 
Then by~(\ref{eqPositivity_relation}), one has $\sum_{j=i-1}^k \alpha_\ell(t_j u_j)>-\pi_{=\tilde{s}}(\lambda_{\alpha_\ell})$
for all $\ell\in \llbracket i-1,k\rrbracket$.
By induction, we thus find $t_1,\ldots,t_k\in\R_{\geq 0}$ such that $\sum_{j=1}^k \alpha_\ell(t_j u_j)>-\pi_{=\tilde s}(\lambda_{\alpha_\ell})$ for all $\ell\in \llbracket 1,k\rrbracket$. 

Let $y_{<\tilde{s}}=(a_t y_t)\in\A_{<\tilde{s}}$, where $a_t=1$ if $t\leq s$ and $a_t=0$ if $t\geq s$. Let \[y=(y_{<\tilde{s}},\sum_{i=1}^k t_i u_i,0)\in \A_{S}.\] Let $\alpha\in \Phi(y_{\leq s})$. Then:

\[\alpha(y)=\big(-\pi_{<\tilde{s}}(\lambda_{\alpha}),\sum_{j=i}^k t_j \alpha(u_j),0\big)>-\lambda_{\alpha _\ell}.\]

Therefore, $y\in \Omega$. Moreover $\pi_{\leq s}(y)=y_{\leq s}$ and thus $\pi_{\leq s}(\Omega)=\Omega'$.

\end{proof}

\begin{remark}
The above lemma is not true when  $\Omega$ is an arbitrary enclosed subset of $\A_{S}$. For example if $S=\{1,2\}$, $\A_\R=\R$ and $\alpha=\Id$. Set $s=0$. Then we have $\A_S=\R^2$ and $\pi:=\pi_{\leq s}$ is the projection on the first coordinate. Set $\lambda_\alpha=(0,0)$ and $\lambda_{-\alpha}=(0,-1)$. Then $D_{\alpha,\lambda_\alpha}\cap D_{-\alpha,\lambda_{-\alpha}}=\emptyset$ and $D_{\alpha,\pi(\lambda_\alpha)}\cap D_{-\alpha,\pi(\lambda_{-\alpha})}=\{0\}$.
\end{remark}

\subsubsection{Projection of an intersection of apartments}

\begin{lemma}\label{lemInjectivity_projection_apartments}
Let $s\in S$. The map $\pi_{\leq s}$ induces a bijection $A\mapsto\pi_{\leq s}(A)$ from the set of apartments of $\I$ to the set of apartments of $\pi_{\leq s}(\I)$.
\end{lemma}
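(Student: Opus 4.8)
<br>

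The plan is to establish surjectivity and injectivity of the map $A \mapsto \pi_{\leq s}(A)$ separately. For surjectivity, I would start from an arbitrary apartment $A'$ of $\pi_{\leq s}(\I) = \I(\Kb, \omega_{\leq s}, \GB)$. Since the $\GB(\Kb)$-action on the set of apartments is transitive (as follows from Corollary~\ref{corLandvogt9.7} applied to the standard apartment, or directly from the construction of the building as $\GB(\Kb) \times \A_{\leq s}/\sim$), there exists $g \in \GB(\Kb)$ with $A' = g \cdot \A_{\leq s}$. Then $A := g \cdot \A_S$ is an apartment of $\I$, and since $\pi_{\leq s}$ is $\GB(\Kb)$-equivariant and $\pi_{\leq s}(\A_S) = \A_{\leq s}$ (which is built into the construction of $\pi_{\leq s}$ in subsection~\ref{SubsecContructionFibers}, where the affine projection $\pi: \A_S \to \A_{\leq s}$ is surjective), we get $\pi_{\leq s}(A) = g \cdot \pi_{\leq s}(\A_S) = g \cdot \A_{\leq s} = A'$. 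So surjectivity is essentially immediate from equivariance and transitivity.

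For injectivity, I would take two apartments $A_1, A_2$ of $\I$ with $\pi_{\leq s}(A_1) = \pi_{\leq s}(A_2)$ and aim to show $A_1 = A_2$. Using transitivity again, I may assume $A_1 = \A_S$, so $A_2 = g \cdot \A_S$ for some $g \in \GB(\Kb)$ and $\pi_{\leq s}(g \cdot \A_S) = \A_{\leq s}$, i.e.\ $g \cdot \A_{\leq s} = \A_{\leq s}$ in $\I(\Kb, \omega_{\leq s}, \GB)$. By Corollary~\ref{CorStabilizerA} applied to the building $\I(\Kb, \omega_{\leq s}, \GB)$, the stabilizer of $\A_{\leq s}$ is the group $N = \mathbf{N}(\Kb)$. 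Hence $g \in N$. But $N$ also stabilizes $\A_S$ in $\I$ (again by Corollary~\ref{CorStabilizerA}), so $A_2 = g \cdot \A_S = \A_S = A_1$.

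The one subtlety I would want to check carefully is the compatibility statement $\pi_{\leq s}(g \cdot \A_S) = g \cdot \pi_{\leq s}(\A_S)$ together with $\pi_{\leq s}(\A_S) = \A_{\leq s}$: this is exactly the content of the construction in subsection~\ref{SubsecContructionFibers}, where one chooses the action $\nu_1$ of $N$ on $\A_{\leq s}$ so that $\nu_1(n) \circ \pi_{\leq s} = \pi_{\leq s} \circ \nu(n)$ and the valuation $\varphi^1_\alpha = \pi_{\leq s} \circ \varphi_\alpha$, ensuring that $\pi_{\leq s}$ descends to a well-defined $\GB(\Kb)$-equivariant surjection of buildings sending the standard apartment onto the standard apartment. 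I do not expect any real obstacle here; the main point is simply to invoke Corollary~\ref{CorStabilizerA} in both buildings and the equivariance of $\pi_{\leq s}$. The argument is short and structural, with no delicate estimates.
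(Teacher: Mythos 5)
Your proof is correct and follows essentially the same route as the paper: surjectivity via $G$-equivariance and transitivity, and injectivity by reducing to $g\cdot\A_{\leq s}=\A_{\leq s}$ and concluding $g\in N$. The only cosmetic difference is that you invoke Corollary~\ref{CorStabilizerA} in the building $\I(\Kb,\omega_{\leq s},\GB)$, whereas the paper re-runs that corollary's argument inline (Proposition~\ref{propAxiom(A2)}, Lemma~\ref{lemParahoric_fixator}, Corollary~\ref{CorPchapeauOmegaQC} and the vanishing $U_{\A_{\leq s}}\cap U_\Delta^{\pm}=\{1\}$); since that corollary applies verbatim to the root group datum valued by $\omega_{\leq s}$, with the same groups $G$ and $N$, your shortcut is legitimate.
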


\begin{proof}

By definition (see Subsection~\ref{SubsecContructionFibers}), $\pi_{\leq s}(\mathbb{A}_S) = \mathbb{A}_{\leq s}$ is an apartment of $\pi_{\leq s}(\mathcal{I})$ and the map $\pi_{\leq s} : \mathcal{I} \to \pi_{\leq s}(\mathcal{I})$ is $\mathbf{G}(\mathbb{K})$-equivariant.
Since by Definition~\ref{DefCombinatorialStructureBuilding} $\mathbf{G}(\mathbb{K})$ acts transitively on the set of apartments of $\mathcal{I}$ (resp. $\pi_{\leq s}(\mathcal{I})$), the map $A\mapsto\pi_{\leq s}(A)$ from the set of apartments of $\I$ to the set of apartments of $\pi_{\leq s}(\I)$ is well-defined and surjective.
Moreover, given two apartments $A$ and $B$ such that $\pi_{\leq s}(A)=\pi_{\leq s}(B)$, we can assume that $A = \mathbb{A}_S$ by $\mathbf{G}(\mathbb{K})$-equivariance of $\pi_{\leq s}$. Let $g\in \mathbf{G}(\mathbb{K})$ be such that $B=g \cdot \A_S$.
Then $g \cdot \pi_{\leq s}(\A_S) = \pi_{\leq s}(g \cdot \A_S)=\pi_{\leq s}(B)=\pi_{\leq s}(\A_S)$.

By Proposition~\ref{propAxiom(A2)} applied in $\pi_{\leq s}(\mathcal{I})$, there exists $n\in N$  such that $g.x=n.x$ for all $x\in \A_{\leq s}$. Then by Lemma~\ref{lemParahoric_fixator} $g^{-1}.n\in \widehat{P}_{\A_{\leq s}}$.

  By Corollary~\ref{CorPchapeauOmegaQC}, if $\Delta$ is a basis of $\Phi$, then: \[\widehat{P}_{\A_{\leq s}}=(U_{\A_{\leq s}}\cap U_{\Delta}^+)(U_{\A_{\leq s}}\cap U_{\Delta}^-) \widehat{N}_{\A_{\leq s}}.\] By Example~\ref{ExQC}, $U_{\A_{\leq s}}\cap U_{\Delta}^+=U_{\A_{\leq s}}\cap U_{\Delta}^-=\{1\}$,  thus $g^{-1}n\in N$, hence $g\in N$ and $B=\A_S$, which proves the lemma.
\end{proof}

\begin{lemma}\label{lemProjection_intersection_apartments}
Let $A$, $B$ be two apartments and $s\in S$. We assume that there exists a sector $C$ based at some $x\in \I$ such that  $\pi_{\leq s}(A)\cap \pi_{\leq s}(B)\Supset \germ_{\pi_{\leq s}(x)}(C_{\leq s})$. Then $\pi_{\leq s}(A\cap B)=\pi_{\leq s}(A)\cap \pi_{\leq s} (B)$.  
\end{lemma}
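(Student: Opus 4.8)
The inclusion $\pi_{\leq s}(A\cap B)\subseteq \pi_{\leq s}(A)\cap\pi_{\leq s}(B)$ is immediate, so only the reverse one has to be established. Since $\pi_{\leq s}$ is $\mathbf{G}(\mathbb{K})$-equivariant and $\mathbf{G}(\mathbb{K})$ acts transitively on the apartments of $\mathcal{I}$ (Definition~\ref{DefCombinatorialStructureBuilding}), I may assume that $A=\mathbb{A}_S$, so that $\pi_{\leq s}(A)=\mathbb{A}_{\leq s}$. By Proposition~\ref{propAxiom(A2)} applied inside $\mathcal{I}$, the subset $A\cap B$ is enclosed in $\mathbb{A}_S$, hence there is a family $(\lambda_\alpha)_{\alpha\in\Phi}\in(\mathfrak{R}^S\cup\{\infty\})^\Phi$ with $A\cap B=\bigcap_{\alpha\in\Phi}D_{\alpha,\lambda_\alpha}$; similarly, applying Proposition~\ref{propAxiom(A2)} inside $\pi_{\leq s}(\mathcal{I})$, the subset $\pi_{\leq s}(A)\cap\pi_{\leq s}(B)$ is enclosed in $\mathbb{A}_{\leq s}$.

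The plan is to prove that $\pi_{\leq s}(A\cap B)=\bigcap_{\alpha\in\Phi}D_{\alpha,\pi_{\leq s}(\lambda_\alpha)}$ and that $\pi_{\leq s}(A)\cap\pi_{\leq s}(B)=\bigcap_{\alpha\in\Phi}D_{\alpha,\pi_{\leq s}(\lambda_\alpha)}$ as well, which together give the missing inclusion. For the first equality I intend to invoke Lemma~\ref{lemProjection_nonempty_polyhedron} for $\Omega=A\cap B=\bigcap_\alpha D_{\alpha,\lambda_\alpha}$: its hypothesis asks for a vector chamber $C^v_S$ of $\mathbb{A}_S$ and a point of $\mathbb{A}_{\leq s}$ whose associated sector germ is $\Subset\bigcap_\alpha D_{\alpha,\pi_{\leq s}(\lambda_\alpha)}$, and the natural candidates are the direction of the sector $C$ (for $C^v_S$) together with $\pi_{\leq s}(x)$, or a point of $C_{\leq s}$ close to it; the input here is precisely the assumption $\germ_{\pi_{\leq s}(x)}(C_{\leq s})\Subset\pi_{\leq s}(A)\cap\pi_{\leq s}(B)$. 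Once Lemma~\ref{lemProjection_nonempty_polyhedron} applies it yields $\pi_{\leq s}(A\cap B)=\bigcap_\alpha D_{\alpha,\pi_{\leq s}(\lambda_\alpha)}$, and since $\pi_{\leq s}(A\cap B)\subseteq\pi_{\leq s}(A)\cap\pi_{\leq s}(B)$ always holds, it only remains to see that $\pi_{\leq s}(A)\cap\pi_{\leq s}(B)$ is not strictly larger than $\bigcap_\alpha D_{\alpha,\pi_{\leq s}(\lambda_\alpha)}$. For that I would use Lemma~\ref{lemInjectivity_projection_apartments} (the map $\pi_{\leq s}$ is a bijection on apartments), the compatibility of the $N$-action on $\mathbb{A}_S$ and $\mathbb{A}_{\leq s}$ with $\pi_{\leq s}$ built into the construction in Subsection~\ref{SubsecContructionFibers}, and the fact that the set of values of $\alpha$ downstairs is $\pi_{\leq s}(\Gamma_\alpha)$ (coming from the valuation $\pi_{\leq s}\circ\varphi_\alpha$ of the root groups for $\omega_{\leq s}$), in order to control the walls of $\mathbb{A}_{\leq s}$ that cut out $\pi_{\leq s}(A)\cap\pi_{\leq s}(B)$ in terms of the half-apartments $D_{\alpha,\lambda_\alpha}$.

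The delicate point — and the reason the hypothesis on the sector germ is needed — is exactly this comparison: without any non-degeneracy assumption, projecting two apartments and intersecting can genuinely produce more than the projection of their intersection, because $\pi_{\leq s}$ does not commute with the intersection of arbitrary enclosed sets (compare the Remark following Lemma~\ref{lemProjection_nonempty_polyhedron}). The assumption $\germ_{\pi_{\leq s}(x)}(C_{\leq s})\Subset\pi_{\leq s}(A)\cap\pi_{\leq s}(B)$ forces $\bigcap_\alpha D_{\alpha,\pi_{\leq s}(\lambda_\alpha)}$ to contain a full sector germ, which is precisely what makes Lemma~\ref{lemProjection_nonempty_polyhedron} applicable and, at the same time, rigidifies $\pi_{\leq s}(A)\cap\pi_{\leq s}(B)$ enough to be cut out by the very walls $H_{\alpha,\pi_{\leq s}(\lambda_\alpha)}$. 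Carrying out this wall bookkeeping carefully — while tracking which $\lambda_\alpha$ are infinite and the $\Gamma'_\alpha$-versus-$\Gamma_\alpha$ subtlety at multipliable roots — is the main body of the argument and the step I expect to be the real obstacle.
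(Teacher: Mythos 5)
There is a genuine gap, and it sits exactly where you locate ``the real obstacle.'' Your plan is to take the enclosed description $A\cap B=\bigcap_{\alpha}D_{\alpha,\lambda_\alpha}$ \emph{upstairs} and then argue that $\pi_{\leq s}(A)\cap\pi_{\leq s}(B)$ is cut out by the projected walls $H_{\alpha,\pi_{\leq s}(\lambda_\alpha)}$. But no mechanism is given for the containment $\pi_{\leq s}(A)\cap\pi_{\leq s}(B)\subseteq\bigcap_\alpha D_{\alpha,\pi_{\leq s}(\lambda_\alpha)}$, and in fact this containment is essentially equivalent to the lemma itself: the other containment $\bigcap_\alpha D_{\alpha,\pi_{\leq s}(\lambda_\alpha)}\supseteq\pi_{\leq s}(A\cap B)$ is trivial, so what you need is precisely that the downstairs intersection is no bigger, which is the statement to be proved. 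The circularity also infects your application of Lemma~\ref{lemProjection_nonempty_polyhedron}: its hypothesis asks for a local sector-germ $\Subset\bigcap_\alpha D_{\alpha,\pi_{\leq s}(\lambda_\alpha)}$, whereas the hypothesis of the lemma only places the germ inside $\pi_{\leq s}(A)\cap\pi_{\leq s}(B)$; passing from the latter to the former again requires the unproved containment (note that $x$ need not lie in $A\cap B$, so you cannot project $\germ_x(C)$ from upstairs either).

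The missing idea is to choose the family $(\lambda_\alpha)$ not from an enclosed description of $A\cap B$ but from a group element that controls both levels at once. Set $\Omega_{\leq s}=\pi_{\leq s}(A)\cap\pi_{\leq s}(B)$ and apply Corollary~\ref{corLandvogt9.7} \emph{downstairs} (this is where the sector-germ hypothesis enters, via Remark~\ref{RkDecQCfiltres} and Example~\ref{ExQC}) to get $u\in U_{\Omega_{\leq s}}$ with $u\cdot\pi_{\leq s}(\A_S)=\pi_{\leq s}(B)$; Lemma~\ref{lemInjectivity_projection_apartments} then lifts this to $u\cdot\A_S=B$. Writing $u=\prod_i u_i$ with $u_i\in U_{\alpha_i,\Omega_{\leq s}}$ and setting $\lambda_\alpha=\min\{\varphi_{\alpha_i}(u_i):\alpha_i=\alpha\}$, the membership $u_i\in U_{\alpha_i,\Omega_{\leq s}}$ gives $\Omega_{\leq s}\subseteq\bigcap_\alpha D_{\alpha,\pi_{\leq s}(\lambda_\alpha)}$ (so Lemma~\ref{lemProjection_nonempty_polyhedron} applies legitimately), while $u$ fixes $\bigcap_\alpha D_{\alpha,\lambda_\alpha}$ pointwise upstairs, giving $\bigcap_\alpha D_{\alpha,\lambda_\alpha}\subseteq\A_S\cap B$. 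The sandwich
\[
\pi_{\leq s}(\A_S\cap B)\supseteq\pi_{\leq s}\Bigl(\bigcap_\alpha D_{\alpha,\lambda_\alpha}\Bigr)=\bigcap_\alpha D_{\alpha,\pi_{\leq s}(\lambda_\alpha)}\supseteq\Omega_{\leq s}
\]
then closes the argument. Without routing through such a group element, the ``wall bookkeeping'' you defer to cannot be carried out.
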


\begin{proof}
Using some $g\in \mathbf{G}(\mathbb{K})$, we may assume that $A=\A_S$. Let $\Omega_{\leq s}=\pi_{\leq s}(\A_S)\cap \pi_{\leq s}(B)$. One has $\Omega_{\leq s}\supset \pi_{\leq s}(\A_S\cap B)$. Let us prove the reverse inclusion. 

By Corollary~\ref{corLandvogt9.7}, there exists $u\in U_{\Omega_{\leq s}}$ such that $u.\pi_{\leq s}(\A_S)=\pi_{\leq s} (B)$.
By Lemma~\ref{lemInjectivity_projection_apartments}, $u.\A_S=B$. By definition of $U_\Omega$, there 
exist $k\in \N$, $\alpha_1,\ldots,\alpha_k\in \Phi$ and elements $u_i\in U_{\alpha_i,\Omega_{\leq s}}$ 
such that $u=\prod_{i=1}^k u_i$. For $\alpha\in \Phi_{\mathrm{nd}}$, set $\lambda_\alpha=\min\{\varphi_{\alpha_i}(u_i)|i\in \llbracket 1,k\rrbracket \mathrm{\ and\  }\alpha_i=\alpha\}$
(one may have $\lambda_\alpha=\infty$). Then $\Omega_{\leq s}\subset \bigcap_{\alpha\in \Phi_{\mathrm{nd}}} D_{\alpha,\pi_{\leq s}(\lambda_\alpha)}$. As $\Omega_{\leq s}$ contains a local chamber, we can apply Lemma~\ref{lemProjection_nonempty_polyhedron} and one has 
$\pi_{\leq s}\big(\bigcap_{\alpha\in \Phi_{\mathrm{nd}}} D_{\alpha,\lambda_\alpha}\big)=\bigcap_{\alpha\in\Phi_{\mathrm{nd}}} D_{\alpha, \pi_{\leq s}(\lambda_\alpha)}$.    Moreover $u$ fixes $\bigcap_{\alpha\in \Phi_{\mathrm{nd}}}D_{\alpha,\lambda_\alpha}$ and thus $\A_{S}\cap B$ contains $\bigcap_{\alpha\in \Phi_{\mathrm{nd}}}D_{\alpha,\lambda_\alpha}$. Thus \[\pi_{\leq s}(\A_{S}\cap B)\supset \bigcap_{\alpha\in\Phi_{\mathrm{nd}}} D_{\alpha, \pi_{\leq s}(\lambda_\alpha)}\supset \Omega_{\leq s}.\] Therefore $\Omega_{\leq s}=\pi_{\leq s}(\A_S\cap B)$, which proves the lemma.  
\end{proof}

\begin{remark}
If we already knew that $\I$ is a building, we could use \cite[Lemma 3.7 and 3.10] {schwer2012lambda}, but their proof uses the fact that retractions are $1$-Lipschitz continuous, which is what we want to prove.
\end{remark}

\subsubsection{The exchange condition}

We now prove that $\I$ satisfies the exchange condition (EC) (see Lemma~\ref{lemExchange_condition} or \cite[Section 2]{bennett2014axiomatic} for the definition of (EC)). We will use it to prove that $\I$ satisfies a property called the sundial configuration (SC) in \cite[Section 2]{bennett2014axiomatic} (see Lemma~\ref{lemHeb3.6}).

\begin{lemma}\label{lemIntersection_apartments_containing_half-apartment}
Let $A$ be an apartment of $\I$. Let $D$ be a half-apartment of $\A_S$. Suppose that $A\cap \A_S$ contains $D$. Then either $A=\A_S$ or there exists $\alpha\in \Phi$ and  $\lambda\in \Gamma_\alpha$ such that $A\cap \A_S=D_{\alpha,\lambda}$. 
\end{lemma}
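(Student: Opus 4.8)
The plan is to show that an apartment $A$ whose intersection with $\A_S$ contains a half-apartment $D = D_{\alpha,\lambda}$ is either equal to $\A_S$ or meets $\A_S$ exactly in $D$. First I would set up the standard framework: by Corollary~\ref{corLandvogt9.7}, since $D \subset A \cap \A_S$, there exists $u \in U_D$ with $u \cdot \A_S = A$. Writing $u$ as a product $\prod_{i=1}^k u_i$ with $u_i \in U_{\alpha_i, D}$, and setting $\lambda_{\beta} = \min\{\varphi_{\beta}(u_i) \mid \alpha_i = \beta\}$ for $\beta \in \Phi_{\mathrm{nd}}$, I get that $u$ fixes pointwise the enclosed set $\Omega := \bigcap_{\beta \in \Phi_{\mathrm{nd}}} D_{\beta,\lambda_{\beta}} \supset D$, so that $A \cap \A_S \supset \Omega$ and, by Proposition~\ref{propAxiom(A2)}, $A \cap \A_S = \cl(A \cap \A_S)$ is itself enclosed, hence a finite intersection of half-apartments of $\A_S$.

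The key step is then a combinatorial analysis of enclosed sets of $\A_S$ containing a fixed half-apartment $D_{\alpha,\lambda}$ (with $\lambda \in \Gamma_\alpha$). If $A \cap \A_S$ strictly contains $D_{\alpha,\lambda}$, I want to force $A \cap \A_S = \A_S$. Here I would argue that if $A \cap \A_S$ is enclosed, is not all of $\A_S$, and contains $D_{\alpha,\lambda}$, then $A \cap \A_S$ must equal $D_{\beta,\mu}$ for some wall $H_{\beta,\mu}$ of $\A_S$: indeed, a finite intersection $\bigcap_j D_{\beta_j,\mu_j}$ containing $D_{\alpha,\lambda}$ must have each $D_{\beta_j,\mu_j} \supset D_{\alpha,\lambda}$, and a half-apartment containing another half-apartment whose boundary wall has direction $\alpha^{-1}(0)$ either equals $\A_S$, or has boundary wall parallel to $H_\alpha$ (i.e. $\beta_j \in \mathbb{R}\alpha$). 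So the intersection is a finite intersection of half-apartments with walls parallel to $H_\alpha$; such an intersection is either $\A_S$, a single half-apartment $D_{\beta,\mu}$ with $\beta \in \mathbb{R}\alpha$, or the wall $H_{\beta,\mu}$ itself (the latter excluded since it does not contain the half-apartment $D_{\alpha,\lambda}$). This reduces matters to showing $\beta$ can be taken in $\Phi$ and $\mu \in \Gamma_\beta$, which follows from the fact that the boundary wall of $A \cap \A_S$ belongs to the wall arrangement $\mathscr{H} = \{H_{\alpha,\lambda} \mid \alpha\in\Phi,\lambda\in\Gamma_\alpha\}$ of the apartment; this in turn I would deduce from the fact that $A \cap \A_S$ is enclosed in the sense of Definition~\ref{defEnclosure}, so its boundary walls are among the $H_{\alpha,\lambda}$ with $\lambda \in \Gamma_\alpha$, up to rescaling a multipliable/divisible root.

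The main obstacle I anticipate is the bookkeeping in the case of non-reduced root systems: when $\alpha$ is multipliable, the half-apartments $D_{\alpha,\lambda}$ and $D_{2\alpha,2\lambda}$ coincide as subsets of $\A_S$ but $\Gamma_\alpha$ and $\Gamma_{2\alpha}$ differ, and the statement only asks for \emph{some} $\beta\in\Phi$, $\lambda\in\Gamma_\beta$ with $A\cap\A_S = D_{\beta,\lambda}$, so one must check that whichever wall $H_{\beta,\mu}$ bounds the intersection can indeed be written with $\mu\in\Gamma_\beta$ (using Fact~\ref{FactDecompositionSetOfValue} to pass between $\Gamma'_\alpha$, $\Gamma_{2\alpha}$, and $\Gamma_\alpha$ when needed). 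Once that is settled, the dichotomy "$A = \A_S$ or $A\cap\A_S$ is a half-apartment" follows, which is exactly the statement. The argument is essentially a specialization of the proof of axiom~\ref{axiomA2} (Proposition~\ref{propAxiom(A2)}) combined with elementary convexity of enclosed sets, so no new machinery beyond what has been developed is required.
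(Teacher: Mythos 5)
Your proposal is correct and follows essentially the same route as the paper: use axiom~\ref{axiomA2} to write $A\cap \A_S$ as a finite intersection $\bigcap_{\beta}D_{\beta,\ell_\beta}$ with $\ell_\beta\in\Gamma_\beta\cup\{\infty\}$, show that every factor with $\ell_\beta\neq\infty$ must have $\beta\in\mathbb{R}_{>0}\alpha$, and then sort out the $\tfrac{1}{2}\alpha,\alpha,2\alpha$ bookkeeping. The only part you assert rather than verify is the claim that a proper half-apartment containing $D_{\alpha,\lambda}$ must have wall parallel to $H_\alpha$; this is exactly where the paper does its $\RF^S$-specific work (constructing points $y_n$ supported on a single coordinate $s=\min(\supp(\lambda),\supp(\mu))$ to make $-\beta$ exceed any prescribed $\mu\in\RF^S$), but the verification is routine and your reduction is sound.
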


\begin{proof}
Suppose $A\neq \A_S$. Let $\alpha\in \Phi$ be such that $A\cap \A_S\supset D_{\alpha,\lambda}$, for some $\lambda\in \RF^S$. Using~\ref{axiomA2} we write $A\cap \A_S=\bigcap_{\beta\in \Phi} D_{\beta,\ell_\beta}$, where $(\ell_\beta)\in \prod_{\beta\in \Phi} (\Gamma_\beta\cup \{\infty\})$. Let $\beta\in \Phi\setminus \R_{>0}\alpha$. Let $\mu\in \RF^S$. Let us prove that $\ell_\beta\geq \mu$. Let  $s=\min \big( \supp(\lambda)\cup \supp(\mu)\big)$. Write $\lambda=(\lambda_t)_{t\in S}$. Suppose $\beta\notin \R \alpha$. Then for every $n\in \N$, there exists $x_n\in \A_\R$ such that $\alpha(x_n)=-\lambda_s+1$ and $\beta(x_n)=-n$. Let $(y_n)=(\delta_{s,t}x_n)_{t\in S}\in \A_S$. Then $\alpha(y_n)\geq \lambda$  and thus $y_n\in D_{\alpha,\lambda}$, for $n\in \N$. If $s=\min\big(\supp(\mu)\big)$, then $-\beta(y_n)\geq \mu$, for $n\gg 0$. If $s<\min \big(\supp(\mu)\big)$, then $-\beta(y_n)\geq \mu$, for $n> 0$. Moreover, $\ell_\beta\geq -\beta(y_n)$, for all $n\in \N$ and thus $\ell_\beta\geq \mu$.

Suppose $\beta\in \R_{<0} \alpha$. For $n\in \N$, choose $x_n\in \A_\R$ such that $\alpha(x_n)=n$ and set $y_n=(\delta_{s,t}x_n)_{t\in S}\in \A_S$. Then for $n\gg 0$, $x_n\in D_{\alpha,\lambda}$ and $-\beta(x_n)\geq \mu$ and thus $\ell_\beta\geq \mu$. Therefore in both cases, $\ell_\beta\in \bigcap_{\mu\in \RF^S}[\mu,\infty]=\{\infty\}$. Consequently, $A\cap \A_S=\bigcap_{\beta\in \R_{>0}\alpha}D_{\alpha,\ell_\alpha}$. For $m\in \{\frac{1}{2},2\}$, set $\ell_{m\alpha}=\infty$ if $m\alpha\notin \Phi$.
Then $A\cap \A_S=D_{\alpha,\ell'}$, where $\ell'=\min_{m\in \{\frac{1}{2},1,2\}}\frac{1}{m}\ell_{m\alpha}$.  Let $m\in \{\frac{1}{2},1,2\}$ be such that $\ell'=\frac{1}{m}\ell_{m\alpha}$. Then $A\cap \A_S=D_{m\alpha,\ell_{m\alpha}}$, which proves the lemma.
\end{proof}

\begin{lemma}\label{lemExchange_condition}
The set $\I$ satisfies the exchange condition (EC): if $A$ and $B$ are apartments of $\I$ such that $A\cap B$ is a half-apartment, then $(A\cup B)\setminus (A\cap B)\cup M$ is an apartment of $\I$, where $M$ is the wall of $A\cap B$.
\end{lemma}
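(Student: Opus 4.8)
The plan is to reduce to a statement about apartments and walls that can be checked using the structure results already established, in particular Corollary~\ref{corLandvogt9.7} (transitivity of $U_\Omega$ on apartments containing $\Omega$), Lemma~\ref{lemIntersection_apartments_containing_half-apartment} (the possible intersections of an apartment with $\A_S$ containing a half-apartment), and Proposition~\ref{propAxiom(A2)}. Using the $\mathbf{G}(\mathbb{K})$-action I may assume $A=\A_S$ and that $A\cap B=D$ is a half-apartment of $\A_S$; write $D=D_{\alpha,\lambda}$ with $\alpha\in\Phi$, $\lambda\in\Gamma_\alpha$, and let $M=H_{\alpha,\lambda}$ be its wall. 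By Corollary~\ref{corLandvogt9.7} there is $u\in U_D$ with $u\cdot\A_S=B$; since $U_D$ is generated by the $U_{\beta,D}$ for $\beta\in\Phi$ and $U_{\beta,D}=U_{\beta,\lambda_\beta}$ for suitable $\lambda_\beta$ depending only on $D$, and since for $\beta\notin\mathbb{R}\alpha$ the relevant groups are already trivial on a half-apartment "pointing the right way", one sees that $B=u\cdot\A_S$ with $u$ controlled by a single root direction. The candidate apartment is $A':=(A\cup B)\setminus(A\cap B)\cup M = (\A_S\setminus\mathring D_{-\alpha,-\lambda})\cup (B\setminus \mathring D_{\alpha,\lambda})$, i.e. the union of the closed half $D_{\alpha,\lambda}$ of $\A_S$ and the closed half $u\cdot D_{-\alpha,-\lambda}$ of $B$, glued along $M$.

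The key step is to exhibit $A'$ as $g\cdot\A_S$ for some $g\in\mathbf{G}(\mathbb{K})$. Here I would use the reflection $r_{\alpha,\lambda}\in W^{\mathrm{aff}}$: pick $m\in M_{\alpha,\lambda}$ (nonempty since $\lambda\in\Gamma_\alpha$, by Proposition~\ref{PropValuedCoset}), so that $\nu(m)=r_{\alpha,\lambda}$ by Lemma~\ref{LemMuNu}. The element $u$ conjugating $\A_S$ to $B$ lies in $U_{\alpha,\lambda}$ up to the irrelevant directions, and the plan is to check that $g:=um$ (or an appropriate variant) satisfies $g\cdot\A_S=A'$: on the half $D_{\alpha,\lambda}$ the map $um$ agrees with the identity because $r_{\alpha,\lambda}$ fixes $M$ and flips the two sides while $u$ fixes $D$ and $m$ fixes $M$; on the other half $um$ reproduces the half of $B$. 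Concretely I would verify that $um$ fixes $D_{\alpha,\lambda}$ pointwise — using that $u\in U_D$ fixes $D$ (Lemma~\ref{lemParahoric_fixator}) and $m=u'u''$ with $u',u''\in U_{-\alpha}$ together with the commutation estimates of Lemma~\ref{LemTripleLeviCommutation} and Lemma~\ref{LemConjugationMaUb} — and that $um$ does \emph{not} fix any larger subset, so that $\A_S\cap (um\cdot\A_S)=D_{\alpha,\lambda}$ by Lemma~\ref{lemIntersection_apartments_containing_half-apartment} (the alternative "$um\cdot\A_S=\A_S$" being excluded because $um$ moves points of $\mathring D_{-\alpha,-\lambda}$, as $r_{\alpha,\lambda}$ does). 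Since $um\cdot\A_S$ is an apartment of $\I$ by~\ref{axiomA1}, and it contains $D_{\alpha,\lambda}=A\cap B$ as well as $um\cdot D_{-\alpha,-\lambda}$, it remains to identify $um\cdot D_{-\alpha,-\lambda}$ with $u\cdot D_{-\alpha,-\lambda}=B\setminus\mathring D_{\alpha,\lambda}$; this follows because $m$ fixes $D_{-\alpha,-\lambda}$'s complementary closed half only up to... — more precisely, $m\cdot D_{-\alpha,-\lambda}=D_{\alpha,\lambda}$ by Fact~\ref{FactActionOfReflection}, so $um\cdot D_{-\alpha,-\lambda}=u\cdot D_{\alpha,\lambda}$; one then checks $u\cdot D_{\alpha,\lambda}$ and $\A_S$ share exactly $D$, which forces $u\cdot D_{\alpha,\lambda}$ to be the "other half" of $B$ relative to $M$. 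Gluing gives $A'=D_{\alpha,\lambda}\cup (um\cdot D_{-\alpha,-\lambda})=um\cdot\A_S$, an apartment.

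The main obstacle I anticipate is the bookkeeping for non-reduced root systems and for roots $\beta\notin\mathbb{R}\alpha$: one must be careful that $u\in U_D$ really can be taken in the single rank-one group $U_{\alpha,\lambda}\cdot U_{2\alpha,2\lambda}$ (up to elements fixing a neighborhood of $D$ in $\A_S$), which requires the decomposition $U_D=(U_D\cap U_\Delta^+)(U_D\cap U_\Delta^-)\widetilde N_D$ of Example~\ref{ExQC} applied to a basis $\Delta$ adapted to $\alpha$, plus Lemma~\ref{LemUsector}/Lemma~\ref{LemUaAlcove} to kill the directions of roots that are not constant-signed on $D$. A secondary subtlety is checking injectivity/exactness of the gluing, i.e. that $A'$ is genuinely a set-theoretic union with the claimed intersection pattern and not something degenerate; this is handled by Lemma~\ref{lemIntersection_apartments_containing_half-apartment}, which leaves only the two options $A'\cap\A_S=\A_S$ or $=D_{\alpha,\lambda}$, the former being ruled out by a single explicit computation of $(um)\cdot x$ for one point $x\in\mathring D_{-\alpha,-\lambda}$.
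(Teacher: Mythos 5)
Your reduction to $A=\A_S$, $A\cap B=D_{\alpha,\lambda}$ and $B=u\cdot\A_S$ with $u\in U_D$ essentially a single root-group element matches the paper's setup, but the final step contains a genuine error. First, you have misidentified the exchanged set: since $A\cap B=D_{\alpha,\lambda}$, one has
\[(A\cup B)\setminus(A\cap B)\cup M=\big(\A_S\setminus\mathring D_{\alpha,\lambda}\big)\cup\big(B\setminus\mathring D_{\alpha,\lambda}\big)=D_{-\alpha,-\lambda}\cup u\cdot D_{-\alpha,-\lambda},\]
the union of the two closed halves \emph{opposite} to $D$ in $A$ and in $B$; your formula $(\A_S\setminus\mathring D_{-\alpha,-\lambda})\cup(B\setminus\mathring D_{\alpha,\lambda})=D_{\alpha,\lambda}\cup u\cdot D_{-\alpha,-\lambda}$ is just $B$ itself. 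Second, and consequently, the candidate $g=um$ cannot work: $m\in N$ stabilizes $\A_S$ (Corollary~\ref{CorStabilizerA}), so $um\cdot\A_S=u\cdot\A_S=B$, and your claim that $um$ fixes $D_{\alpha,\lambda}$ pointwise is false, since $\nu(m)=r_{\alpha,\lambda}$ fixes only $M$ and hence $um\cdot D_{\alpha,\lambda}=u\cdot D_{-\alpha,-\lambda}$ is the far half of $B$. No ``appropriate variant'' of the form $un$ with $n\in N$ can repair this, because $un\cdot\A_S=B$ for every such $n$.

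The correct witness must be extracted from $u$ itself via axiom~\ref{axiomRGD4}, which is what the paper does: write $u=v_+\,m\,u_+^{-1}$ with $m\in M_{\alpha,\lambda}$ and $u_+,v_+$ in the opposite root group satisfying $\varphi_{-\alpha}(u_+)=\varphi_{-\alpha}(v_+)=-\lambda$ (axioms~\ref{axiomV5} and~\ref{axiomV5bis}). Then $v_+\in U_{-\alpha,-\lambda}$ fixes $D_{-\alpha,-\lambda}$ pointwise, while
\[v_+\cdot D_{\alpha,\lambda}=v_+m\cdot D_{-\alpha,-\lambda}=uu_+\cdot D_{-\alpha,-\lambda}=u\cdot D_{-\alpha,-\lambda},\]
so $v_+\cdot\A_S=D_{-\alpha,-\lambda}\cup u\cdot D_{-\alpha,-\lambda}$ is exactly the exchanged apartment. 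The remainder of your setup (the use of Lemma~\ref{lemIntersection_apartments_containing_half-apartment}, Corollary~\ref{corLandvogt9.7} and the decomposition of $U_D$ from Example~\ref{ExQC}) is sound, so the fix is local to the choice of the witnessing element and to the identification of the target set.
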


\begin{proof}
Using isomorphisms of apartments, we may assume that $A=\A_S$. Then by Lemma~\ref{lemIntersection_apartments_containing_half-apartment}, there exists $\alpha\in \Phi$ and $\lambda\in \Gamma_\alpha$ such that $D:=\A_S\cap B=D_{\alpha, \lambda}$. By Corollary~\ref{corLandvogt9.7}, one has $B=u.\A_S$, where $u\in U_D$. By Example~\ref{ExQC}, we can write $u=u_D^+ u_D^-n_D$, where $u_D^+\in U_D^+$,  $u_D^-\in U_D^-$ and $n_D\in N$. Moreover, either  $U_{D}^+=\{1\}$ or $U_{D}^-=\{1\}$ (depending on whether $\alpha\in \Phi_+$ or $\alpha\in \Phi_-$). By symmetry, we may assume that $U_{D}^+=\{1\}$. Then  $U_{D}^-=U_{\alpha,D}$. 

One has $\lambda=\varphi_{\alpha}(u_D^-)$.  Using~\ref{axiomRGD4}, we write $u_D^-=v_+m u_+^{-1}$, with $u_+,v_+ \in U_D^+$  and $m\in M_\alpha$. By~\ref{axiomV5} and Lemma~\ref{LemAxiomV5}, we  have $\lambda=-\varphi_{-\alpha}(u_+)=-\varphi_{-\alpha}(v_+)=\varphi_{\alpha}(u_D^-)$.
Let   $D_-=D_{\alpha,\lambda}=D$ $D_+=D_{-\alpha,-\lambda}$ and $M=H_{\alpha,\lambda}=D_-\cap D_+$. Then one has: : \[u_D^-u_+.D_-=v_+m.D_-=v_+.D_+=D_+\text{ and }u_D^-.D_+ = u_D^-u_+.D_+=v_+m.D_+=v_+.D_-.\]
Therefore:
 since $u_D^-.\A_S\cup \A_S = D_- \cup u_D^-.D_+ \cup D_+$ and $\A_S\cap u_D^-.\A_S = D_-$, we have:

\[(u_D^-.\A_S\cup \A_S)\setminus(\A_S\cap u_D^-.\A_S)\cup M=u_D^-.D_+\cup D_+=v_+.D_+\cup D_+=v_+.\A_S, \]
which is an apartment of $\mathcal{I}$, which proves the lemma.
\end{proof}

\subsubsection{Germ of a gallery of local chambers and conclusion}

If $C$ is a sector and $s\in S$, recall the definition of $C_{\leq s}$ in Notation~\ref{notProjection_sector}.

\begin{lemma}\label{lemGerm_sectors_coinciding_before_s}
Let $C_\infty,\tilde{C}_\infty$ be two sector-germs at infinity and $x\in \I$. Let $C=x+C_\infty$ and $\tilde{C}=x+\tilde{C}_\infty$.  We assume that there exists $s\in S$ such that  $\germ_{\pi_{\leq s}(x)}(C_{\leq s})=\germ_{\pi_{\leq s}(x)}(\tilde{C}_{\leq s})$. Then $\germ_x(C)=\germ_x(\tilde{C})$. 
\end{lemma}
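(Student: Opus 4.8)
The plan is to reduce everything to the standard apartment and then invoke the purely combinatorial fact that, inside any affine apartment, the germ at a point of a sector based there determines the underlying vector chamber. By Lemma~\ref{lemDecompositions_axioms}\ref{itemAxiomA3GG} there is an apartment containing both local sectors $\germ_x(C)$ and $\germ_x(\tilde C)$; since $\mathbf{G}(\Kb)$ acts transitively on apartments and both the projection $\pi_{\leq s}$ and the assignment $Q\mapsto Q_{\leq s}$ of Notation~\ref{notProjection_sector} are $\mathbf{G}(\Kb)$-equivariant, I may assume that this apartment is $\A_S$ and that $x\in\A_S$. A standard argument using Axiom~\ref{axiomA2} (Proposition~\ref{propAxiom(A2)}) then shows that the germ at infinity of $C$ is that of a sector of $\A_S$, so that $C$ itself may be taken to be a sector of $\A_S$ based at $x$, say $C=x+C^v_{\A_S,\Delta}$, and likewise $\tilde C=x+C^v_{\A_S,\tilde\Delta}$, for bases $\Delta,\tilde\Delta$ of $\Phi$ determined by the directions of $\germ_x(C)$ and $\germ_x(\tilde C)$. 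With this normalisation, Notation~\ref{notProjection_sector} gives $C_{\leq s}=\pi_{\leq s}(x)+C^v_{\RF^{\leq s},\Delta}$ and $\tilde C_{\leq s}=\pi_{\leq s}(x)+C^v_{\RF^{\leq s},\tilde\Delta}$, which are sectors of the apartment $\mathbb{A}_{\leq s}$ of $\pi_{\leq s}(\I)$, so the hypothesis becomes the equality $\germ_{\pi_{\leq s}(x)}(\pi_{\leq s}(x)+C^v_{\RF^{\leq s},\Delta})=\germ_{\pi_{\leq s}(x)}(\pi_{\leq s}(x)+C^v_{\RF^{\leq s},\tilde\Delta})$ of germs computed inside $\mathbb{A}_{\leq s}$.

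Next I would prove the following fact: for any totally ordered abelian group $R$ with $R=R_\mathbb{Q}$, any $y\in\mathbb{A}_R$, and any two bases $\Delta_1,\Delta_2$ of $\Phi$, the equality $\germ_y(y+C^v_{R,\Delta_1})=\germ_y(y+C^v_{R,\Delta_2})$ forces $\Delta_1=\Delta_2$. Assume $\Delta_1\neq\Delta_2$; then $\Phi^+_{\Delta_1}\neq\Phi^+_{\Delta_2}$, so one can pick $\alpha\in\Phi^+_{\Delta_1}$ with $-\alpha\in\Phi^+_{\Delta_2}$, whence $y+C^v_{R,\Delta_1}\subseteq\mathring{D}_{\alpha,-\alpha(y)}$ while $y+C^v_{R,\Delta_2}\subseteq\mathring{D}_{-\alpha,\alpha(y)}$, two disjoint open subsets of $\mathbb{A}_R$. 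By Lemma~\ref{LemWellChosen} and Lemma~\ref{LemHalfLineInSector}, each sector $y+C^v_{R,\Delta_i}$ contains an open half-line $y+\delta_{v_i}$ with $v_i\neq 0$, and since $R=R_\mathbb{Q}$ and the balls $B_R(y,\varepsilon)$ form a base of the order topology of $\mathbb{A}_R$, every neighbourhood of $y$ meets $y+C^v_{R,\Delta_i}$. Consequently $y+C^v_{R,\Delta_1}$, which belongs to its own germ, cannot contain any set of the form $\Omega\cap(y+C^v_{R,\Delta_2})$, such a set being a nonempty subset of the set $y+C^v_{R,\Delta_2}$ that is disjoint from $y+C^v_{R,\Delta_1}$; so the two germs differ, a contradiction.

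Applying this fact with $R=\RF^{\leq s}$ to the equality obtained in the first step yields $\Delta=\tilde\Delta$, hence $C=x+C^v_{\A_S,\Delta}=x+C^v_{\A_S,\tilde\Delta}=\tilde C$ as sectors of $\A_S$, and therefore $\germ_x(C)=\germ_x(\tilde C)$, which is the claim. The step needing the most care is the reduction in the first paragraph: one must verify, via Axiom~\ref{axiomA2}, that after transport to $\A_S$ the sector $C$ may genuinely be taken inside $\A_S$ and that $C_{\leq s}$ as defined in Notation~\ref{notProjection_sector} is indeed the sector of $\mathbb{A}_{\leq s}$ cut out downstairs by $\Delta$; the remaining arguments only involve the mild topological care imposed by the order topology on $\mathbb{A}_{\leq s}$, which is handled by the half-line construction of Lemma~\ref{LemHalfLineInSector}.
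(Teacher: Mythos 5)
There is a genuine gap, and it sits in your first paragraph. The apartment produced by Lemma~\ref{lemDecompositions_axioms} contains only the \emph{local} germs $\germ_x(C)$ and $\germ_x(\tilde C)$, i.e.\ arbitrarily small initial pieces of the two sectors near $x$; it has no reason to contain $C$, $\tilde C$, or their germs at infinity. The claim that ``the germ at infinity of $C$ is that of a sector of $\A_S$'' does not follow from~\ref{axiomA2}, and arranging for $C$ and $\tilde C$ to lie in a \emph{common} apartment based at $x$ is essentially axiom~\ref{axiomCO} itself — the statement this whole section is working towards — so it cannot be assumed here. Moreover, even if you replace $C$ by a sector $C'$ of $\A_S$ with $\germ_x(C')=\germ_x(C)$, the hypothesis does not transfer: $C'_{\leq s}$ is computed from $C'$ via Notation~\ref{notProjection_sector}, and two sectors can agree on an infinitesimal neighbourhood of $x$ (entirely contained in one fiber of $\pi_{\leq s}$) while their $\leq s$--projections leave $\pi_{\leq s}(x)$ in different directions; so $\germ_{\pi_{\leq s}(x)}(C'_{\leq s})$ need not equal $\germ_{\pi_{\leq s}(x)}(C_{\leq s})$. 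A decisive sanity check: your argument concludes $\Delta=\tilde\Delta$, hence $C=\tilde C$ and $C_\infty=\tilde C_\infty$, which is strictly stronger than the lemma and false in general — in the $\Z^2$-tree of $\mathrm{SL}_2$, two distinct ends can have rays from $x$ that coincide on an initial segment, so they satisfy both the hypothesis and the conclusion while $C_\infty\neq\tilde C_\infty$. Your second paragraph (distinct bases of $\Phi$ give distinct germs of sectors at a common base point inside one apartment) is correct but is not the relevant statement.

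The paper's proof keeps the two apartments separate: take $A\supset C$ and $\tilde A\supset\tilde C$, note that $\pi_{\leq s}(A)$ and $\pi_{\leq s}(\tilde A)$ both contain the common local germ downstairs, apply Lemma~\ref{lemProjection_intersection_apartments} to get $\pi_{\leq s}(A\cap\tilde A)=\pi_{\leq s}(A)\cap\pi_{\leq s}(\tilde A)$, and lift a point $\tilde y_{\leq s}\in C_{\leq s}\cap\tilde C_{\leq s}$ to some $\tilde y\in A\cap\tilde A$. Because $C_{\leq s}$ is cut out by strict inequalities on the $\leq s$ coordinates, $\pi_{\leq s}(\tilde y)\in C_{\leq s}$ forces $\tilde y\in C$, and likewise $\tilde y\in\tilde C$; then Lemma~\ref{lemEnclosure_point_in_sector} gives $C\cap\tilde C\Supset\cl(\{x,\tilde y\})\Supset\germ_x(C),\germ_x(\tilde C)$, whence the germs coincide. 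If you want to repair your write-up, this two-apartment lifting argument is the missing idea.
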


\begin{proof}
Let $A$ be an apartment containing $C$.  Then $\pi_{\leq s}(A)\Supset \germ_{\pi_{\leq s}(x)}(C_{\leq s})=\germ_{\pi_{\leq s}(x)}(\tilde{C}_{\leq s})$.
Let $\tilde{A}$ be an apartment containing $\tilde{C}$. Let $\Omega_{\leq s}$ be a neighborhood of $\pi_{\leq s}(x)$ in $\pi_{\leq s}(A)$ such that $\Omega_{\leq s}\cap C_{\leq s}=\Omega_{\leq s}\cap \tilde{C}_{\leq s}$. Let $\tilde{y}_{\leq s}\in \Omega_{\leq s}\cap C_{\leq s}=\Omega_{\leq s}\cap \tilde{C}_{\leq s}$. 
By Lemma~\ref{lemProjection_intersection_apartments}, $\pi_{\leq s}(A\cap \tilde{A})=\pi_{\leq s}(A)\cap \pi_{\leq s}(\tilde{A})$. Thus  there exists $\tilde{y}\in A\cap \tilde{A}$ such that $\pi_{\leq s}(\tilde{y})=\tilde{y}_{\leq s}$.

Let us prove that $\tilde{y}\in C$. Identify   $A$ with $\A_S$. Write  $C=\{x\in \A_S \mid \alpha(x)>\lambda_\alpha,\forall \alpha\in \Delta_C\}$, where $\Delta_C$ is some subset of $\Phi$ and $(\lambda_\alpha)\in (\RF^S)^{\Delta_C}$. By definition, $C_{\leq s}=\{x\in \A_{\leq s}\mid \alpha\big(\pi_{\leq s}(x)\big)>\pi_{\leq s}(\lambda),\forall \alpha\in \Delta_C\}$. Then $\tilde{y}_{\leq s}=\pi_{\leq s}(\tilde{y})\in C_{\leq s}$ and thus $\tilde{y}\in C$. Similarly, as  $\tilde{y}\in \tilde{A}$, we have $\tilde{y}\in \tilde{C}$. Using Lemma~\ref{lemEnclosure_point_in_sector}  and Proposition~\ref{propAxiom(A2)}, we deduce that  $A\cap \tilde{A}\Supset \cl(\{x,\tilde{y}\})\Supset \germ_x(C)$. In other words, there exists a neighborhood $V$ of $x$ in $A$ such that $\Omega:=V\cap C\subset A\cap \tilde{A}$. Let $\phi:A\rightarrow \tilde{A}$ be the apartment isomorphism fixing $A\cap \tilde{A}$. Then $\phi(\tilde{y})=\tilde{y}$. The sector $C$ (resp. $\tilde{C}$) is the unique sector of $A$ (resp. $\tilde{A}$) based at $x$ and containing $\tilde{y}$. Therefore $\phi(C)=\tilde{C}$. As $\Omega\in \germ_x(C)$, we have   $\phi(\Omega)=\Omega\in \germ_{\phi(x)}(\phi(C))=\germ_x(\tilde{C})$ and thus $\germ_x(C)=\germ_x(\tilde{C})$.

\end{proof}

Let $C_1^v$ and $C^v_2$ be two vector chambers of $\A_S$. Let $\Delta_1$ be the  basis of $\Phi$ associated with $C_1^v$. We say that $C_1^v$ and $C_2^v$ are \textbf{adjacent}\index{adjacent}  if $C_1^v=C_2^v$ or if there exists $\alpha\in \Delta_1$ such that $C^v_2=r_\alpha.C^v_1$.  Let $C_1$ and $C_2$ be two sectors of $\A_S$ based at the same point. We say that $C_1$ and $C_2$ are \textbf{adjacent} if their directions are. We extend this definition to the pairs of sectors of $\I$ which are contained in a common apartment and which have the same base point, which is possible by~\ref{axiomA2}.

Two sector-germs $Q_{1,\infty}$ and $Q_{2,\infty}$ are said to be \textbf{adjacent} if there exists an  apartment $A$ containing $Q_{1,\infty}$ and $Q_{2,\infty}$ and such that $x+Q_{1,\infty}$ and $x+Q_{2,\infty}$ are adjacent, for any $x\in A$. If such an $A$ exists, then any apartment containing $Q_{1,\infty}$ and $Q_{2,\infty}$ satisfies this property.

\medskip

\textbf{From now on and until Lemma~\ref{lemCO_when_S_admits_minimum}, we assume that $S$ admits a minimum.} We denote it $0_S$. We denote $\pi_{=0_S}$ instead of $\pi_{\leq 0_S}$ and  if $C$ is a sector of $\I$, we denote $C_{=0_S}$ instead of $C_{\leq 0_S}$. We assume moreover that $\pi_{=0_S}\left(\omega(\Kb^*)\right)\neq \{0\}$.

\begin{lemma}\label{l_projection_0_sector}
Let $C$ be a sector of $\I$. Then $\germ_{\infty}(C_{=0_S})=\pi_{=0_S}\big(\germ_\infty(C)\big)$. In particular the sector-germs at infinity of $\pi_{=0_S}(\I)$ are exactly the $\pi_{=0_S}(C_\infty)$ such that $C_\infty$ is a sector-germ at infinity of $\I$.
\end{lemma}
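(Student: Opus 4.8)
The claim is that for a sector $C$ of $\mathcal{I}$, the germ at infinity of $C_{=0_S}$ in $\pi_{=0_S}(\mathcal{I})$ equals the image $\pi_{=0_S}(\germ_\infty(C))$, where $\germ_\infty(C)$ is the filter of subsets of $\mathcal{I}$ containing a subsector of $C$. Since everything here is $\mathbf{G}(\mathbb{K})$-equivariant and $\mathbf{G}(\mathbb{K})$ acts transitively on apartments (Definition~\ref{DefCombinatorialStructureBuilding}), I would first reduce to the case where $C$ is a subsector of the standard apartment $\mathbb{A}_S$, so that the whole computation takes place inside $\mathbb{A}_S$ and its projection $\mathbb{A}_{\leq 0_S} = \mathbb{A}_{=0_S}$. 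The point is then purely about the affine geometry of $\pi_{=0_S}: \mathbb{A}_S \to \mathbb{A}_{=0_S}$ together with the definitions of $\germ_\infty$ and of $C_{=0_S}$ (Notation~\ref{notProjection_sector}).

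The key computation is the following. Write $C = x + w\cdot C^v_{f,\mathbb{A}_S}$ for some $x \in \mathbb{A}_S$ and $w \in W^v$; by definition $C_{=0_S} = \pi_{=0_S}(x) + w\cdot\{y \in \mathbb{A}_{=0_S} \mid \alpha(y) > 0\ \forall \alpha \in \Delta_f\}$. One inclusion: if $\Omega \supset x + \xi + w\cdot C^v_{f,\mathbb{A}_S}$ for some $\xi \in w\cdot C^v_{f,\mathbb{A}_S}$, then $\pi_{=0_S}(\Omega) \supset \pi_{=0_S}(x) + \pi_{=0_S}(\xi) + C_{=0_S}^{\mathrm{dir}}$ where $C_{=0_S}^{\mathrm{dir}}$ is the direction, so it remains to check that $\pi_{=0_S}(\xi)$ is a valid translation exhibiting a subsector of $C_{=0_S}$; this follows because $\pi_{=0_S}$ is a non-decreasing projection of ordered vector spaces (Subsection~\ref{SubsecContructionFibers}), hence sends $w\cdot C^v_{f,\mathbb{A}_S}$ into $\overline{w\cdot C^v_{f,\mathbb{A}_{=0_S}}}$, and one uses that one can shrink by adding an interior direction. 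For the reverse inclusion, given $\Omega_{=0_S}$ containing a subsector $\pi_{=0_S}(x) + \xi_{=0_S} + C_{=0_S}^{\mathrm{dir}}$ of $C_{=0_S}$, I would lift $\xi_{=0_S}$ to an element $\xi$ of $w\cdot C^v_{f,\mathbb{A}_S}$ using Lemma~\ref{LemHalfLineInSector} together with the fact that the $\RF^{>0_S}$-coordinates can be chosen freely (indeed $0_S = \min S$, so $\mathbb{A}_{=0_S}$ is exactly the ``top'' factor and the kernel of $\pi_{=0_S}$ is spanned by strictly larger Archimedean classes); then $x + \xi + w\cdot C^v_{f,\mathbb{A}_S}$ is a subsector of $C$ whose image is contained in the given subsector of $C_{=0_S}$, and one concludes that $\Omega_{=0_S} \in \pi_{=0_S}(\germ_\infty(C))$. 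This uses the same kind of coordinate-filling argument as in the proof of Lemma~\ref{lemProjection_nonempty_polyhedron}, but is simpler here because the relevant polyhedron is an honest sector.

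Having established the equality $\germ_\infty(C_{=0_S}) = \pi_{=0_S}(\germ_\infty(C))$ for $C$ inside $\mathbb{A}_S$, the general case follows by equivariance, and the ``in particular'' clause is then immediate: every sector-germ at infinity of $\pi_{=0_S}(\mathcal{I})$ is based in some apartment $\pi_{=0_S}(A)$ (since these exhaust the apartments of $\pi_{=0_S}(\mathcal{I})$ by Lemma~\ref{lemInjectivity_projection_apartments} and the surjectivity in its proof), hence is the direction-at-infinity of some $C_{=0_S}$ for a sector $C \subset A$, and conversely each $\pi_{=0_S}(\germ_\infty(C))$ is such a germ by the equality just proved. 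I expect the main obstacle to be bookkeeping the interplay between ``large'' versus ``strict'' inequalities under $\pi_{=0_S}$ (the projection preserves $\geq$ but not $>$, as flagged in Notation~\ref{notProjection_sector} where $C_{\leq s} \subsetneq \pi_{\leq s}(C)$): one must be careful that the subsector witnessing membership in $\germ_\infty$ is taken with a strictly interior translation, so that after projection it still lies strictly inside $C_{=0_S}$ rather than merely in its closure. Apart from that, the argument is a direct unwinding of definitions.
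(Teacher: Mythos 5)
Your proposal is correct and follows essentially the same route as the paper: the paper's proof consists precisely of the reduction by $\mathbf{G}(\mathbb{K})$-equivariance of $\pi_{=0_S}$, $\germ_\infty$ and $C\mapsto C_{=0_S}$ to a subsector of the standard apartment, after which it declares the remaining affine computation "straightforward". Your unwinding of that computation (including the care with strict versus large inequalities under the projection) is sound and fills in exactly what the paper leaves implicit.
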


\begin{proof}
As $\pi_{=0_S}$, $\germ_\infty$ and $C\mapsto C_{=0_S}$ are $\mathbf{G}(\mathbb{K})$-equivariant, it suffices to check it when $C$ is a subsector of $C^v_{f,S}$, which is straightforward.
\end{proof}

\begin{lemma}\label{lemHeb3.6}
Let $A$ be an apartment of $\I$ and $C_\infty$ be a sector-germ of $A$. Let $\tilde{C}_\infty$ be a sector-germ of $\I$ adjacent to $C_\infty$ and different from it. Then one can write $\pi_{=0_S}(A)=D_{1,0_S}\cup D_{2,0_S}$, where for both $i\in \{1,2\}$, $D_{i,0_S}$ is a half-apartment of $\pi_{=0_S}(A)$ and there exists an apartment $\pi_{=0_S}(A_i)$ containing $D_{i,0_S}$ and $\pi_{=0_S}(\tilde{C}_\infty)$.
\end{lemma}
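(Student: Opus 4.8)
The statement is the "sundial configuration" (SC) in the $\RF^S$-setting, projected down to the $\R$-building $\pi_{=0_S}(\mathcal I)$. The idea is to lift the classical sundial argument for $\R$-buildings through the projection $\pi_{=0_S}$, using the exchange condition established in Lemma~\ref{lemExchange_condition}. First I would reduce to a convenient normalization: using the transitivity of the $\GB(\Kb)$-action on apartments (Definition~\ref{DefCombinatorialStructureBuilding}) together with the fact that, by Lemma~\ref{lemDecompositions_axioms}\ref{itemAxiomA4}, two sector-germs lie in a common apartment, I may assume $A = \A_S$ and that $C_\infty$ is the germ at infinity of (a subsector of) $C^v_{f,S}$, so that $\Delta_{C^v}=\Delta_f$. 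Since $\tilde C_\infty$ is adjacent to $C_\infty$ and distinct from it, by Lemma~\ref{lemDecompositions_axioms}\ref{itemAxiomA4} there is an apartment $B$ containing $C_\infty$ and $\tilde C_\infty$; fix $\alpha\in\Delta_f$ with $\tilde C_\infty$ corresponding to $r_\alpha\cdot C^v_{f,S}$ inside $B$.

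Next I would locate the wall. Applying Proposition~\ref{propAxiom(A2)}, $A\cap B$ is enclosed in $A=\A_S$ and contains $C_\infty$; since $\tilde C_\infty\neq C_\infty$, the apartment $B$ is not $\A_S$, and an argument analogous to Lemma~\ref{lemIntersection_apartments_containing_half-apartment} (here one needs the version "contains a sector-germ at infinity" rather than "contains a half-apartment") forces $A\cap B$ to be exactly a half-apartment $D = D_{\beta,\lambda}$ of $\A_S$ for some $\beta\in\Phi$, $\lambda\in\Gamma_\beta$, whose boundary wall $M=H_{\beta,\lambda}$ separates the directions of $C_\infty$ and $\tilde C_\infty$; concretely $\beta$ is $\pm\alpha$ up to the multipliable subtlety. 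Now I apply the exchange condition (Lemma~\ref{lemExchange_condition}) to the pair $(\A_S,B)$: the set $A_2 := (\A_S\cup B)\setminus(A\cap B)\cup M$ is again an apartment of $\mathcal I$. Set $A_1 := \A_S$. Then $D_{1} := D_{\beta,\lambda}$ and $D_{2} := D_{-\beta,-\lambda}$ are half-apartments of $\A_S$ with $\A_S = D_1\cup D_2$, the apartment $A_1=\A_S$ contains $D_1$, and $A_2$ contains $D_2$; moreover both $A_1$ and $A_2$ contain the sector-germ $\tilde C_\infty$ (by construction $\tilde C_\infty$ sits in $B$, hence its germ lies in $D_2\subset A_2$ after a suitable translation, and it also lies in $\A_S$ since $C_\infty$ and $\tilde C_\infty$ are adjacent germs of $\A_S\cap B$ only after translation — this is the point to be careful about, see below).

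Finally I push everything down through $\pi_{=0_S}$. By Lemma~\ref{lemInjectivity_projection_apartments}, $\pi_{=0_S}$ carries apartments to apartments bijectively, so $\pi_{=0_S}(\A_S)$, $\pi_{=0_S}(A_2)$ are apartments of $\pi_{=0_S}(\mathcal I)$; by Lemma~\ref{lemProjection_intersection_apartments} (applied with a sector-germ at infinity in place of a based sector, using the preceding lemma identifying $\pi_{=0_S}(\germ_\infty(\cdot))$ with $\germ_\infty((\cdot)_{=0_S})$) the image $\pi_{=0_S}(D_i)$ is a half-apartment $D_{i,0_S}$ of $\pi_{=0_S}(\A_S)=\pi_{=0_S}(A)$, and $\pi_{=0_S}(\A_S) = D_{1,0_S}\cup D_{2,0_S}$. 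The apartment $\pi_{=0_S}(A_i)$ then contains both $D_{i,0_S}$ and $\pi_{=0_S}(\tilde C_\infty)$, which is exactly the claimed decomposition.

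\textbf{Main obstacle.} The delicate point is verifying that $\tilde C_\infty$, which is only given as adjacent to $C_\infty$ as an abstract sector-germ at infinity, actually has its germ realized inside each of the two half-apartments $D_1,D_2$ produced by the exchange condition — i.e. that the wall $M$ chosen is genuinely the one separating the two directions, and that adjacency "before $0_S$" is preserved; this is where Lemma~\ref{lemGerm_sectors_coinciding_before_s} (coincidence of germs once they agree after projection) and a careful analysis of $A\cap B$ using enclosed-set combinatorics from Subsection~\ref{subsubPreliminaries_enclosed_sets} are needed, together with the possibility that $\beta$ and $2\beta$ both occur for a multipliable root, which must be handled as in the proof of Lemma~\ref{lemExchange_condition}.
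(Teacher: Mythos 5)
There is a genuine gap, and it concerns both a specific step and the overall logical direction. The paper does not prove the sundial configuration directly in the $\RF^S$-building and then project: it goes the other way. Its proof of this lemma is essentially one line: by Lemma~\ref{lemDecompositions_axioms}, Proposition~\ref{propAxiom(A2)} and Lemma~\ref{lemCO_for_R_buildings}, the $\R$-level space $\pi_{=0_S}(\I)$ satisfies \ref{axiomA1}--\ref{axiomA3}, \ref{axiomGG} and \ref{axiomCO}, hence by Bennett--Schwer's Theorem 3.3 it is an honest $\R$-building, and $\R$-buildings satisfy the sundial configuration (SC) by the general theory cited there. The point is that \ref{axiomCO} is available at the $\R$-level (via the compactness-flavoured argument of Lemma~\ref{lemCO_for_R_buildings}) but not yet at the $\RF^S$-level; the $\RF^S$-version of the sundial configuration is Lemma~\ref{lemSundial_configuration}, which is proved \emph{from} the present lemma together with Lemma~\ref{lemProjection_intersection_apartments} and the exchange condition, not the other way around. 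Your proposal inverts this order and thereby tries to establish directly in $\I$ a statement the paper deliberately avoids proving there.

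The concrete step that fails is your claim that the intersection $A\cap B$ of two apartments sharing the sector-germ $C_\infty$ at infinity "is exactly a half-apartment $D_{\beta,\lambda}$". Lemma~\ref{lemIntersection_apartments_containing_half-apartment} needs the hypothesis that the intersection already \emph{contains} a half-apartment; containing a sector-germ at infinity is strictly weaker as soon as $\mathrm{rk}(\Phi)\geq 2$. For instance, for $\mathrm{SL}_3$ two apartments can intersect exactly in an enclosed set of the form $D_{\alpha,\lambda}\cap D_{\beta,\mu}$ (a "quarter"), which contains a sector-germ at infinity but is not a half-apartment; the exchange condition (Lemma~\ref{lemExchange_condition}) then simply does not apply to the pair $(\A_S,B)$. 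Your own write-up also becomes internally inconsistent at the end of the second paragraph, where you assert that $\tilde C_\infty$ "also lies in $\A_S$" — in the only nontrivial case $\tilde C_\infty\not\Subset A$, so this cannot be used to show $A_1=\A_S$ contains $\tilde C_\infty$; in the correct statement each $A_i$ contains one half-apartment \emph{and} the germ $\tilde C_\infty$, and producing such $A_i$ is precisely the content of (SC), which cannot be obtained by a single application of the exchange condition to an arbitrary common apartment $B$. To repair the argument you should follow the paper: first verify that $\pi_{=0_S}(\I)$ is an $\R$-building and quote (SC) there, and only afterwards lift to $\I$ as in Lemma~\ref{lemSundial_configuration}.
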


\begin{proof}
By Lemma~\ref{lemDecompositions_axioms}, Proposition~\ref{propAxiom(A2)},  Lemma~\ref{lemCO_for_R_buildings}  by \cite[Theorem 3.3]{bennett2014axiomatic} and as $\pi_{=0_S}(\omega(\Kb^*))\neq \{0\}$,  $\pi_{=0_S}(\I)$ is an $\R$-building and in particular, it satisfies the axiom (SC) of \cite{bennett2014axiomatic}. Moreover $\pi_{=0_S}(C_\infty)$  and $\pi_{=0_S}(\tilde{C}_\infty)$ are adjacent and thus  the lemma is a consequence of the paragraph after (SC) in \cite[page 385]{bennett2014axiomatic}.

\end{proof}

\paragraph{Galleries of local chambers}

Let $x\in \I$. Two local chambers $C_{1,x}$, $C_{2,x}$ are called \textbf{adjacent} if there exist an apartment $A$ containing $C_{1,x},C_{2,x}$ and two adjacent sectors $Q_1,Q_2$ of $A$ based at $x$ such that  $\germ_x(Q_1)=C_{1,x}$ and $\germ_x(Q_2)=C_{2,x}$. By~\ref{axiomA2}, this does not depend on the choice of apartment $A$. A \textbf{gallery of local chambers based at $x$}\index{gallery} is a finite sequence $\Gamma_x=(C_{1,x},\ldots,C_{k,x})$ such that for all $i\in \llbracket 1,k-1\rrbracket$, $C_{i,x}$ and $C_{i+1,x}$ are adjacent local chambers based at $x$. The \textbf{length} of $\Gamma_x$ is then $k$. The gallery is called  \textbf{minimal} if $k$ is the minimal possible length for a gallery joining $C_{1,x}$ and $C_{k,x}$. 

If $C_{x}$, $\tilde{C}_{x}$ are two local chambers based at $x$, there exists a gallery $\Gamma_x$ joining $C_{x}$ to $\tilde{C}_{x}$. Indeed, by \ref{axiomGG}, there exists an apartment $A$ containing $C_x$ and $\tilde{C}_x$. Let $Q$ and $\tilde{Q}$ be the sectors of $A$ corresponding to $C_x$ and $\tilde{C}_x$. Then if $\Gamma$ is a gallery of sectors from $Q$ to $\tilde{Q}$, then the germ $\Gamma_x$ of $\Gamma$ at $x$ is a gallery joining $C_x$ to $\tilde{C}_x$. The minimal length of a  gallery joining $C_x$ to $\tilde{C}_x$ is called  the \textbf{distance between $C_x$ and $\tilde{C}_x$} and we denote it $d(C_x,\tilde{C}_x)$.

\begin{lemma}\label{lemApartment_projection_contains_germ}
Let $A$ be an apartment and $C_\infty$ be a sector-germ at infinity of $\I$. We assume that $\pi_{=0_S}(A)\Supset \pi_{=0_S}(C_\infty)$. Then $A\Supset C_\infty$.
\end{lemma}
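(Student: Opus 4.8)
The statement asserts that if an apartment $A$ of $\mathcal{I}$ contains a sector-germ at infinity $C_\infty$ \emph{after} projection to $\pi_{=0_S}(\mathcal{I})$ — meaning $\pi_{=0_S}(A)\Supset\pi_{=0_S}(C_\infty)$ — then in fact $A\Supset C_\infty$ already. The plan is to translate the hypothesis into the language of apartments and then invoke Lemma~\ref{lemInjectivity_projection_apartments} together with the description of $\pi_{=0_S}(C_\infty)$ as $\germ_\infty(C_{=0_S})$. First I would fix an apartment $B$ containing $C_\infty$: such a $B$ exists because every sector-germ at infinity lies in some apartment (this is part of Lemma~\ref{lemDecompositions_axioms}\ref{itemAxiomA4}, together with the fact that a sector-germ at infinity refines $\germ_\infty(Q)$ for a sector $Q$ in that apartment). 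Then I would compare $\pi_{=0_S}(A)$ and $\pi_{=0_S}(B)$: both are apartments of $\pi_{=0_S}(\mathcal{I})$ (by Lemma~\ref{lemInjectivity_projection_apartments}), and both contain $\pi_{=0_S}(C_\infty)$, which is the germ at infinity $\germ_\infty\big((C_{=0_S})\big)$ of a sector in each of them.

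Next I would use the fact that an $\mathfrak{R}^S$-building — and in particular the $\mathbb{R}$-building $\pi_{=0_S}(\mathcal{I})$, which we know is a genuine $\mathbb{R}$-building by the combination of Lemma~\ref{lemDecompositions_axioms}, Proposition~\ref{propAxiom(A2)}, Lemma~\ref{lemCO_for_R_buildings} and \cite[Theorem 3.3]{bennett2014axiomatic} — has the property that an apartment is determined by any sector-germ at infinity together with a point, but more to the point here, that two apartments sharing a sector-germ at infinity have an intersection which is a half-apartment or the whole apartment; in any case, by Proposition~\ref{propAxiom(A2)} applied in $\pi_{=0_S}(\mathcal{I})$, there is an isomorphism $\psi:\pi_{=0_S}(A)\to\pi_{=0_S}(B)$ fixing $\pi_{=0_S}(A)\cap\pi_{=0_S}(B)\supseteq \pi_{=0_S}(C_\infty)$ pointwise; since $\psi$ fixes (an element of) the sector-germ at infinity and preserves parallelism classes of sectors, one deduces the two apartments are equal, i.e. $\pi_{=0_S}(A)=\pi_{=0_S}(B)$. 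By the injectivity part of Lemma~\ref{lemInjectivity_projection_apartments}, this forces $A=B$, and since $B\Supset C_\infty$ by construction, we conclude $A\Supset C_\infty$.

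The delicate point is the step showing that two apartments of $\pi_{=0_S}(\mathcal{I})$ sharing a sector-germ at infinity must coincide. One cannot invoke axiom~\ref{axiomCO} here because that is precisely what is being proved in this section; instead I would argue directly: if $D:=\pi_{=0_S}(A)\cap\pi_{=0_S}(B)$ is a proper enclosed subset containing a sector-germ at infinity, pick a sector $Q$ in $\pi_{=0_S}(A)$ with $\germ_\infty(Q)=\pi_{=0_S}(C_\infty)$; by~\ref{axiomA2} there is $\phi:\pi_{=0_S}(A)\to\pi_{=0_S}(B)$ fixing $D$, hence fixing a subsector of $Q$ (any subsector small enough to lie in $D$, which exists since $D$ contains the germ), so $\phi$ is an isomorphism of $\mathbb{R}$-affine spaces fixing a subsector pointwise; such a map is the identity on the linear span of the sector, which is all of the vectorial apartment, and fixing one point too it is the identity — so $\pi_{=0_S}(A)=\pi_{=0_S}(B)$. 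I expect the main obstacle to be verifying carefully that $D$ actually contains a full subsector (not merely the germ filter), which follows because the germ at infinity of a sector is, by definition, the filter of subsets containing some subsector, and $D\in\pi_{=0_S}(C_\infty)$ means $D$ contains such a subsector; once this is pinned down the rest is routine affine geometry over $\mathbb{R}$.
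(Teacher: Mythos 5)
Your argument hinges on the claim that two apartments of the $\mathbb{R}$-building $\pi_{=0_S}(\I)$ sharing a sector-germ at infinity must coincide, and that claim is false. Already in a tree, two apartments (bi-infinite geodesics) can share an end without being equal. The flaw in your ``routine affine geometry'' step is a conflation: the isomorphism $\phi:\pi_{=0_S}(A)\to\pi_{=0_S}(B)$ fixing $D$ pointwise is indeed completely determined once it fixes a subsector (its formula in affine coordinates is the identity), but this says nothing about whether the \emph{images} of points outside $D$ land back in $\pi_{=0_S}(A)$; in the tree example $\phi$ is ``the identity formula'' yet the two lines diverge. Consequently your deduction $\pi_{=0_S}(A)=\pi_{=0_S}(B)$, and hence $A=B$ via Lemma~\ref{lemInjectivity_projection_apartments}, does not follow — and it could not, since many distinct apartments contain the same sector-germ at infinity, whereas the lemma only asks for the containment $A\Supset C_\infty$.

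The paper's proof avoids this entirely by working with the intersection $A\cap B$ rather than trying to force equality. Fix $B\supset C_\infty$ and $\Omega\in C_\infty$ with $\pi_{=0_S}(A)\supset\pi_{=0_S}(\Omega)$; then $\pi_{=0_S}(A)\cap\pi_{=0_S}(B)$ contains $\pi_{=0_S}(\Omega\cap B)$, hence a sector-germ, so Lemma~\ref{lemProjection_intersection_apartments} gives $\pi_{=0_S}(A\cap B)=\pi_{=0_S}(A)\cap\pi_{=0_S}(B)$. Identifying $B$ with $\A_S$ and $C_\infty$ with the germ of $C^v_{f,S}$, one picks points $y_{n,=0_S}\in\pi_{=0_S}(A\cap\A_S)$ with $\beta(y_{n,=0_S})\to+\infty$ for all $\beta\in\Delta_f$, lifts them to $y_n\in A\cap\A_S$, and uses that $A\cap\A_S$ is enclosed (axiom~\ref{axiomA2}) to conclude $A\cap\A_S\Supset\cl(\{y_n\mid n\in\N\})\Supset C_\infty$. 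The essential tool you are missing is Lemma~\ref{lemProjection_intersection_apartments}, which lets you lift information about the projected intersection back to $A\cap B$ itself; the injectivity statement of Lemma~\ref{lemInjectivity_projection_apartments} cannot do this job.
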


\begin{proof}
Let $\Omega\in C_\infty$ be such that $\pi_{=0_S}(A)\supset \pi_{=0_S}(\Omega)$. Let $B$ be an apartment containing $C_\infty$. Then $\Omega\cap B\in C_\infty$. Moreover $\pi_{=0_S}(A)\cap \pi_{=0_S}(B)\supset \pi_{=0_S}(\Omega\cap B)$ and by Lemma~\ref{lemProjection_intersection_apartments}, $\pi_{=0_S}(A\cap B)=\pi_{=0_S}(A)\cap \pi_{=0_S}(B)$.  By $\mathbf{G}(\mathbb{K})$-equivariance, we can identify $B$ and $\A_S$ and we can assume that $C_\infty$ is the germ of $C^v_{f,S}$.  We choose $(y_{n,=0_S})\in \pi_{=0_S}(A\cap \A_S)^{\N}$ such that $\beta(y_{n,=0_S})\rightarrow +\infty$ for all $\beta\in \Delta_f$. Let $(y_n)\in (A\cap \A_S)^{\N}$ be such that $\pi_{=0_S}(y_n)=y_{n,0_S}$ for all $n\in \N$. Then $A \cap \A_S\Supset \cl(\{y_n|n\in \N\})\Supset C_\infty$, which proves the lemma.

\end{proof}

 Recall that a vector panel of $\A_S$ is a set of the form $P^v=H_{\beta,0}\cap \bigcap_{\alpha\in \Delta\setminus\{\beta\}} D_{\alpha,0}$, where $\Delta$ is a basis of $\Phi$ and $\beta\in \Delta$. A panel of $\A_S$  is a set of the form $x+P^v$, for some vector panel $P^v$. 

Let $C^v$ be a vector chamber of $\A_S$ and $\Delta$ be the associated basis of $\Phi$. We set $\overline{C^v}=\bigcap_{\alpha\in \Delta}D_{\alpha,0}$. If $Q$ is a sector of $\A_S$ with direction $C^v$ and base point $x$, we set $\overline{Q}=x+\overline{C^v}=\{x+u\mid u\in \overline{C^v}\}$. Let $Q$ be a sector of $\I$ and  $g\in \mathbf{G}(\mathbb{K})$ be such that $g.Q\subset \A_S$. We set $\overline{Q}=g^{-1}.(g.\overline{Q})$. This is well defined by~\ref{axiomA2}. 

Let $x\in \I$, $Q$ be a sector of $\I$ based at $x$ and $F$ be a sector-face based at $x$. We say that $Q$ \textbf{dominates}\index{domination} $F$ if $F\subset \overline{Q}$.

\begin{lemma}\label{lemApartment_thin_building}
Let $P$ be a panel of $\A_S$. Then there exist exactly two sectors of $\A_S$ dominating $P$.
\end{lemma}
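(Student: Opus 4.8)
The statement is the $\RF^S$-analogue of the elementary fact that in a Euclidean building, a panel is dominated by exactly two chambers (indeed, by exactly two sectors sharing that panel's direction). The plan is to reduce the $\RF^S$-statement to combinatorics of the vectorial root system $\Phi$ and the affine structure of $\A_S$, which is entirely under control from Section~\ref{secAbstract_definition_buildings}.

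First I would fix the panel $P = x + P^v$ with $P^v = H_{\beta,0} \cap \bigcap_{\alpha \in \Delta \setminus \{\beta\}} D_{\alpha,0}$ for a basis $\Delta$ of $\Phi$ and a simple root $\beta \in \Delta$. A sector $Q = y + C^v$ (with $C^v$ a vector chamber, $\Delta_{C^v}$ its associated basis) dominates $P$ iff $P \subset \overline{Q} = y + \overline{C^v}$, i.e.\ iff $P - y \subset \overline{C^v} = \bigcap_{\gamma \in \Delta_{C^v}} D_{\gamma,0}$. The key observation is that $P^v$ spans (as an $\RF^S$-affine subspace through $o$) a hyperplane $H_{\beta,0}$ of $V_{\RF^S}$ which is a wall, and $P^v$ is the closure of a vectorial panel in the combinatorial sense; by the theory of Coxeter complexes (Bourbaki~\cite[VI]{bourbaki1981elements}, transported to $V_{\RF^S}$ via Lemma~\ref{LemIntersectionChambers} and Corollary~\ref{corWeylVectorial}), there are exactly two vector chambers $C^v_1 = C^v_\Delta$ and $C^v_2 = r_\beta \cdot C^v_\Delta$ whose closures contain $P^v$. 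So the "direction" of a dominating sector must be $C^v_1$ or $C^v_2$.

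Next I would pin down the base point. If $Q = y + C^v_i$ dominates $P$, then $P - y \subset \overline{C^v_i}$; conversely any such $y$ works. Writing $P - y = (x - y) + P^v$, the condition $P - y \subset \overline{C^v_i}$ together with $P^v \subset \overline{C^v_i}$ and the fact that $\overline{C^v_i}$ is a cone forces, via Lemma~\ref{LemIntersectionChambers} (evaluating positive roots), that $x - y$ lies in $\overline{C^v_i} \cap (-\overline{C^v_i})$ modulo the span of $P^v$; but $H_{\beta,0}$ is precisely the linear span of $P^v$, so the only freedom in $y$ is along $H_{\beta,0}$, and one checks $x - y \in H_{\beta,0}$ is both necessary and sufficient. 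Since $x + P^v = y + P^v$ exactly when $x - y \in H_{\beta,0}$ — here I use that $P^v$ is a full-dimensional subset of the hyperplane $H_{\beta,0}$, torsion-freeness of $\RF^S$ — the sector $Q = y + C^v_i$ equals $x + C^v_i$ as a set whenever it dominates $P$. Hence there are at most two dominating sectors, namely $x + C^v_1$ and $x + C^v_2$, and both visibly dominate $P$ since $P^v \subset \overline{C^v_i}$ gives $P = x + P^v \subset x + \overline{C^v_i} = \overline{x + C^v_i}$.

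The main obstacle I anticipate is the careful bookkeeping in the second and third steps over $\RF^S$ rather than $\R$: the cone $\overline{C^v_i}$ need not be generated by its ``extreme rays'' in the naive way (cf.\ the Remark after Lemma~\ref{LemHalfLineInSector}), so I cannot argue purely by $\R$-convexity and must instead phrase everything through the inequalities $\alpha(\cdot) \geq 0$ for $\alpha \in \Phi^+_{\Delta}$ as in Lemma~\ref{LemIntersectionChambers}, and use that $\RF^S$ is a torsion-free ordered $\mathbb{Z}$-module to conclude $x - y \in H_{\beta,0}$ exactly. Once the domination condition is translated into such a finite system of linear inequalities over the ordered group $\RF^S$, the argument is the same as in the classical case and closes quickly; the content is really just Lemma~\ref{LemIntersectionChambers}, Proposition~\ref{PropGoodSector}, and the standard gallery-adjacency combinatorics of $W(\Phi)$.
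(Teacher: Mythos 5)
Your first paragraph is essentially the paper's argument: reduce to base point $0$ and show that a vector chamber whose closure contains $P^v$ must be $C^v_\Delta$ or $r_\beta\cdot C^v_\Delta$. Be aware, though, that Lemma~\ref{LemIntersectionChambers} and Corollary~\ref{corWeylVectorial} do not by themselves ``transport'' the Coxeter-complex count to $V_{\RF^S}$; you still have to produce, from the $\RF^S$-statement, a genuine \emph{real} point of the real open panel lying in a real closed chamber before Bourbaki applies. The paper does this by taking $x_\R$ in the open real panel and a coordinate $s\in S$, setting $x=(\delta_{s,t}x_\R)_{t\in S}\in P$, and observing that $w\cdot x\in\overline{C^v}$ forces $w\cdot x_\R\in\overline{C^v_\R}$, whence $w\cdot x_\R=x_\R$ and $w\in\langle r_\beta\rangle$ by \cite[V.3.3]{bourbaki1981elements}. (One could instead evaluate on a point of $F^v_\Z(\Delta,\{\beta\})$, nonempty by Lemma~\ref{LemWellChosen}.) This is the one step where the $\RF^S$-setting requires care, and your sketch leaves it as an assertion.

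The genuine error is your treatment of base points, which is both unnecessary and incorrect. It is unnecessary because the paper's definition of domination only compares a sector and a sector-face \emph{based at the same point}, so a dominating sector is based at the base point of $P$ by fiat and only its direction is at issue. It is incorrect because, if you do allow an arbitrary base point $y$, the condition $P-y\subset\overline{C^v_\Delta}$ is equivalent to $x-y\in\overline{C^v_\Delta}$ (evaluate each $\gamma\in\Phi^+_\Delta$ at $0\in P^v$ for necessity; use $P^v\subset\overline{C^v_\Delta}$ for sufficiency), not to $x-y\in H_{\beta,0}$: there would then be a full cone of admissible base points, hence infinitely many distinct dominating sectors, and the lemma as you read it would be false. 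Likewise the claim that $x+P^v=y+P^v$ exactly when $x-y\in H_{\beta,0}$ is wrong: $P^v$ is a proper cone inside the hyperplane $H_{\beta,0}$ (it also satisfies $\alpha\geq 0$ for $\alpha\in\Delta\setminus\{\beta\}$), so its translation stabilizer is trivial and $x+P^v=y+P^v$ forces $x=y$. Drop the base-point discussion, record that dominating sectors share the base point of $P$ by definition, and supply the reduction to $\R$ described above; the proof then closes.
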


\begin{proof}
Using translations, we may assume that $P$ is based at $0$. Let $\Delta$ be a basis of $\Phi$ and $\beta\in \Delta$ be such that $P=H_{\beta,0}\cap \bigcap_{\alpha\in \Delta\setminus\{\beta\}} D_{\alpha,0}$. Let $C^v=\bigcap_{\alpha\in \Delta} \mathring{D}_{\alpha,0}$.  Then $C^v$ and  $r_\beta.C^v$ dominate $P$. Let $P_\R=H_{\beta,0_\R}\cap \bigcap_{\alpha\in \Delta\setminus\{\beta\}} \mathring{D}_{\alpha,0_\R}\subset \A_\R$ and $C^v_\R=\bigcap_{\alpha\in \Delta} D_{\alpha,0_\R}\subset \A_\R$. Let $C^v_1$ be a vector chamber of $\A_S$ dominating $P$.  Let $w\in W^v$ be such that $w.C^v_1=C^v$. Let $s\in S$ and $x_\R\in P_\R$. Let $x=(x_s)_{t\in S}$ be defined by $x_t=\delta_{s,t}x_\R$, for $t\in S$. Then $x\in P$ and thus $w.x\in \overline{C^v}$. Therefore $w.x_\R\in \overline{C^v_\R}$. By \cite[V. 3.3 Théorème 2]{bourbaki1981elements} we deduce that $w.x_\R=x_\R$. By \cite[V. 3.3 Proposition 1 \&2]{bourbaki1981elements}, $w\in \langle r_\beta\rangle$. Therefore $C^v_1\in \langle r_\beta\rangle C^v$, which proves the lemma.
\end{proof}

Let $H$ be a wall of $\A_S$ and $Q_1,Q_2$ be two sectors of $\A_S$. We say that $H$ \textbf{separates}\index{separation} $Q_1$ and $Q_2$ if the two half-apartments $D_1$ and $D_2$ delimited by $H$ satisfy either $Q_1\subset D_1$ and $Q_2\subset D_2$ or $Q_1\subset D_2$ and $Q_2\subset D_1$. We extend these notions to $\I$ using isomorphisms of apartments.

\begin{lemma}\label{lemSundial_configuration}
\begin{enumerate}
\item\label{itSundial_configuration} Let $C$ and $\tilde{C}$ be two sectors of $\I$ and $C_{\infty}$, $\tilde{C}_{\infty}$ be their germs. We assume that $C_{\infty}$ and $\tilde{C}_{\infty}$ are adjacent. Let  $A$ be an apartment containing $C_\infty$. Suppose that  $A$ does not  contain $\tilde{C}_\infty$. Then  one can write $A=D_1\cup D_2$, where $D_1,D_2$ are half-apartments of $A$ which have the same wall and such that for both $i\in \{1,2\}$, there exists an apartment containing $D_i$ and $\tilde{C}_\infty$. 

\item\label{itGerme_gallery} Let $y\in \I$.  Then $\germ_y(y+C_\infty)$ and $\germ_y(y+\tilde{C}_\infty)$ are adjacent.
\end{enumerate}
\end{lemma}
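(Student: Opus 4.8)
The statement we must establish, Lemma~\ref{lemSundial_configuration}, has two parts. For part~(\ref{itSundial_configuration}), the natural strategy is to transport the $\mathbb{R}$-building sundial configuration (SC) established for $\pi_{=0_S}(\I)$ up to $\I$. First I would fix an apartment $A$ containing $C_\infty$ but not $\tilde{C}_\infty$. Applying the projection $\pi_{=0_S}$, I claim $\pi_{=0_S}(A)$ does not contain $\pi_{=0_S}(\tilde C_\infty)$: this is exactly the contrapositive of Lemma~\ref{lemApartment_projection_contains_germ}. Since $\pi_{=0_S}(C_\infty)$ and $\pi_{=0_S}(\tilde C_\infty)$ are adjacent (by part~(\ref{itGerme_gallery}) applied inside $\pi_{=0_S}(\I)$, or more directly because $C\mapsto C_{=0_S}$ preserves adjacency of vector chambers), Lemma~\ref{lemHeb3.6} applies and gives $\pi_{=0_S}(A) = D_{1,0_S} \cup D_{2,0_S}$ with each $D_{i,0_S}$ a half-apartment of $\pi_{=0_S}(A)$ lying in an apartment $\pi_{=0_S}(A_i)$ that also contains $\pi_{=0_S}(\tilde C_\infty)$. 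Then I would lift: by Lemma~\ref{lemInjectivity_projection_apartments} the half-apartments $D_{i,0_S}$ and the apartments $\pi_{=0_S}(A_i)$ come from unique $D_i \subset A$ and apartments $A_i$ of $\I$; one needs to check $D_i$ is genuinely a half-apartment of $A$ (its wall is determined by a root $\alpha$ and a value $\lambda \in \Gamma_\alpha$, which one reads off from the wall downstairs together with the fact that $\Gamma_{\alpha}$ maps onto the corresponding set of values downstairs), that $A = D_1 \cup D_2$ with $D_1 \cap D_2$ the common wall, and finally that $A_i$ contains $\tilde C_\infty$. The last point is precisely Lemma~\ref{lemApartment_projection_contains_germ} applied to $A_i$ and $\tilde C_\infty$, since $\pi_{=0_S}(A_i) \Supset \pi_{=0_S}(\tilde C_\infty)$.

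For part~(\ref{itGerme_gallery}), I would argue as follows. Since $C_\infty$ and $\tilde C_\infty$ are adjacent, by definition there is an apartment $B$ containing both, and inside $B$ the sectors $x + C_\infty$ and $x + \tilde C_\infty$ are adjacent for every $x \in B$. Now for an arbitrary $y \in \I$: by Lemma~\ref{lemDecompositions_axioms}\ref{itemAxiomA4}, or rather by the fact (Iwasawa, Theorem~\ref{thmIwasawa} restated for buildings) that there is an apartment $B'$ containing $y$ together with the sector-germ at infinity $C_\infty$; then I would pass to a common apartment containing $y+C_\infty$ and the germ $\tilde C_\infty$ — the point is that adjacency of germs at infinity is an apartment-independent notion by~\ref{axiomA2}, and the two sector-faces $y + C_\infty$ and $y + \tilde C_\infty$, being translates at the common base point $y$ of directions $C^v$ and $\tilde C^v = r_\alpha \cdot C^v$ for a simple root $\alpha$ of $\Delta_{C^v}$, are adjacent in any apartment containing both of their germs at $y$. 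Concretely, I would produce such an apartment using the remark after Lemma~\ref{lemDecompositions_axioms} together with part~(\ref{itemVisualBoundary}) there: there is an apartment containing the local face $\germ_y(y+C_\infty)$ and the germ at infinity $\tilde C_\infty$; inside it $y + \tilde C_\infty$ is the translate of $\tilde C_\infty$ at $y$, and the vectorial directions of the two sectors based at $y$ differ by $r_\alpha$, so the local chambers are adjacent by definition.

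\textbf{Main obstacle.} The delicate step is the descent-and-lift in part~(\ref{itSundial_configuration}): verifying that the decomposition $\pi_{=0_S}(A) = D_{1,0_S} \cup D_{2,0_S}$ produced by Lemma~\ref{lemHeb3.6} lifts to a decomposition $A = D_1 \cup D_2$ into honest half-apartments of $\I$ with a common wall. The subtlety is that $\pi_{=0_S}$ is far from injective on walls — many walls of $A$ (those with direction $\alpha$ and value $\lambda$ with $\pi_{=0_S}(\lambda) \neq \pi_{=0_S}(\lambda')$ for other values) are collapsed — so I must check that the wall downstairs is the image of a wall of $\A_S$ in $\mathscr{H}$, i.e. that the separating value lies in $\Gamma_\alpha$ (not merely in $\RF^S$). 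This should follow because the two apartments $\pi_{=0_S}(A_1), \pi_{=0_S}(A_2)$ glue along $D_{i,0_S}$, and by Lemma~\ref{lemIntersection_apartments_containing_half-apartment} their intersection with $\pi_{=0_S}(A)$ — a half-apartment — is cut out by a root $\alpha$ and a value in $\Gamma_{\alpha,=0_S}$, which I then lift using the surjectivity of $\Gamma_\alpha$ onto this set and Lemma~\ref{lemInjectivity_projection_apartments}. The rest (that $A = D_1 \cup D_2$, that $D_1 \cap D_2$ equals the common wall, that $A_i \supset \tilde C_\infty$) is then a combination of Lemma~\ref{lemProjection_intersection_apartments}, Lemma~\ref{lemApartment_projection_contains_germ} and the injectivity of the projection on apartments.
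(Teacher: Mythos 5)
Your reduction to the $\R$-building via $\pi_{=0_S}$ and the appeal to Lemma~\ref{lemHeb3.6} match the paper's starting point, but the lifting step in part~(\ref{itSundial_configuration}) has a genuine gap exactly where you flag it, and the tools you list do not close it. If you lift the two apartments $\pi_{=0_S}(A_1)$ and $\pi_{=0_S}(A_2)$ independently via Lemma~\ref{lemInjectivity_projection_apartments}, then Lemma~\ref{lemProjection_intersection_apartments} together with~\ref{axiomA2} does show that each $D_i:=A\cap A_i$ is a half-apartment $D_{\pm\beta,\lambda_i}$ of $A$; but you only control the projections of the bounding values, i.e. $\pi_{=0_S}(\lambda_2)=-\pi_{=0_S}(\lambda_1)$, so a priori the two walls can differ by an element of $\ker(\pi_{=0_S})$, and then $D_1\cup D_2$ need not cover $A$ nor need $D_1\cap D_2$ be a wall. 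The paper sidesteps this entirely: it lifts only $A_1$, sets $H$ to be the wall of $D_1=A\cap A_1$, and \emph{constructs} $A_2$ upstairs by the exchange condition (Lemma~\ref{lemExchange_condition}) as $\big((A\cup A_1)\setminus(A\cap A_1)\big)\sqcup H$, so that $D_2=A_2\cap A$ is by construction the complementary half-apartment with the same wall. You never invoke (EC), and without it (or an equivalent such as~\ref{axiomA6}, which is not yet available at this stage) I do not see how to force the two independently lifted walls to coincide.

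Part~(\ref{itGerme_gallery}) is where the proposal really breaks. You assert that in any apartment containing $\germ_y(y+C_\infty)$ and $\tilde C_\infty$ the two local chambers at $y$ have directions differing by $r_\alpha$ ``by definition''. This assumes the conclusion: $y+\tilde C_\infty$ is a geometric translate defined via an apartment containing $y$ and $\tilde C_\infty$ (Remark~\ref{rmkDef_sector_point}), not a formal $r_\alpha$-image of $y+C_\infty$, and when $y$ lies in $A\setminus D_1$, on the far side of the wall from $C_\infty$, the two germs $\germ_y(y+C_\infty)$ and $\germ_y(y+\tilde C_\infty)$ actually \emph{coincide} (in the rank-one picture, both rays from $y$ first travel back towards the branch point). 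Establishing this is the entire content of the paper's proof of part~(\ref{itGerme_gallery}): using the configuration from part~(\ref{itSundial_configuration}), one shows via the panel/domination argument and Lemma~\ref{lemApartment_thin_building} that the isomorphism $\phi:A\to A_2$ fixing $D_2$ sends $C_\infty$ to $\tilde C_\infty$, whence for $y\in A\setminus D_1$ one gets $\germ_y(y+\tilde C_\infty)=\germ_y\big(\phi(y+C_\infty)\big)=\germ_y(y+C_\infty)$. None of this appears in your argument, and your claimed conclusion (the directions always differ by $r_\alpha$) is simply not what happens.
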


\begin{proof}
Let $\pi=\pi_{=0_S}$. By Lemma~\ref{lemHeb3.6}, one can write $\pi(A)=D_{1,\pi}\cup D_{2,\pi}$, where  $D_{1,\pi}$ and $\pi(\tilde{C}_\infty)$ are contained in an apartment $\pi(A_1)$. Then $\pi(A)\cap \pi(A_1)$ is a half-apartment and thus by Lemma~\ref{lemProjection_intersection_apartments} and~\ref{axiomA2}, $A\cap A_1$ is a half-apartment. Let $H$ be the wall of $A\cap A_1$. Then by (EC) (Lemma~\ref{lemExchange_condition}), $\big(A\cup A_1)\setminus (A\cap A_1)\big)\sqcup H$  is an apartment $A_2$ of $\I$. By Lemma~\ref{lemApartment_projection_contains_germ}, $A_1\Supset \tilde{C}_\infty$. Let $D_1=A\cap A_1$ and $D_2=A_2\cap A$. Then $(A,D_1,D_2)$  satisfies the condition of~(\ref{itSundial_configuration}).

Let now $y\in \I$. We want to prove that $\germ_y(y+C_\infty)$ and $\germ_y(y+\tilde{C}_\infty)$ are adjacent. As there exists an apartment containing $y$ and $C_\infty$,  there is no loss of generality in assuming that $y\in A$.

Maybe exchanging the roles of $D_1$ and $D_2$, we may assume that $D_1\Supset C_\infty$. Suppose $y\in D_1$. Then $A_1$ contains $y+C_\infty$ and $y+\tilde{C}_\infty$ and thus $\germ_y(y+C_\infty)$ and $\germ_y(y+\tilde{C}_\infty)$ are adjacent (and distinct).

  By construction, $H$ is also the wall of $A\cap A_2$. Let $x\in H$,  $C=x+C_\infty$ and $\tilde{C}=x+\tilde{C}_\infty$. Then $x+C_\infty\subset D_1$ and $x+\tilde{C}_\infty\nsubseteq A$. Therefore $H$ separates  $x+C_\infty$ and $x+\tilde{C}_\infty$ in $A_1$.
  
  Let $\phi:A\rightarrow A_2$ be the isomorphism of apartments fixing $A\cap A_2$. Then $\phi$ induces a map (still denoted $\phi$) from the set of sector-germs at infinity of $A$ to the set of sector-germs at infinity of $A_2$. If $x\in H$,  then  $x+C_\infty$ and $x+\tilde{C}_\infty$  dominate a panel  $P$ of  $H$. Let $C'_\infty$ be the sector-germ at infinity of $A$ different from $C_\infty$ such that $x+C'_\infty$ dominates $P$ (the uniqueness is a consequence of Lemma~\ref{lemApartment_thin_building}). Then $C'_\infty$ is not contained in $D_1$, hence $C'_\infty\Subset D_2$ and thus $\phi(C'_\infty)=C'_\infty$. Moreover $P\subset A\cap A_2$ and $P\subset \overline{x+C_\infty}$.   Therefore $P\subset \phi(\overline{x+C_\infty})=\overline{x+\phi(C_\infty)}$. Thus $x+\phi(C_\infty)$ dominates $P$.  As $\phi(C_\infty)\neq \phi(C_\infty')$, Lemma~\ref{lemApartment_thin_building} implies that  $\phi(C_\infty)=\tilde{C}_\infty$ and thus for all $y\in A$, $\phi(y+C_\infty)=\phi(y)+\tilde{C}_\infty$. 
  
  Let now $y\in A\setminus D_1$. Let $\epsilon\in \RF^S$ be such that $B(y,\epsilon)\subset D_2$, which exists by Lemma~\ref{lemDistance}~(\ref{itBalls_bounded}). Then $\phi\big(B(y,\epsilon)\cap (y+C_\infty)\big)= B(y,\epsilon)\cap \phi(y+C_\infty)=B(y,\epsilon)\cap (y+\tilde{C}_\infty)$.  Therefore $\germ_y(y+C_\infty)=\germ_y(y+\tilde{C}_\infty)$ and in particular $\germ_y(y+C_\infty)$ and $\germ_y(y+\tilde{C}_\infty)$ are adjacent. Lemma follows. 

\end{proof}

Using Lemma~\ref{lemSundial_configuration}~(\ref{itGerme_gallery}) we deduce the following proposition.

\begin{proposition}\label{propGerm_gallery_sector_germs}
Let $\Gamma=(C_{1,\infty},\ldots,C_{n,\infty})$ be a gallery of sector-germs at infinity. Then for all $x\in \I$, $\Gamma_x=\big(\germ_x(x+C_{1,\infty}),\ldots,\germ_x(x+C_{2,\infty})\big)$ is a gallery.
\end{proposition}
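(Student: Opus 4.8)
\textbf{Proof plan for Proposition~\ref{propGerm_gallery_sector_germs}.}
The plan is to reduce the statement to the single-step case and then invoke Lemma~\ref{lemSundial_configuration}. Since a gallery $\Gamma=(C_{1,\infty},\ldots,C_{n,\infty})$ of sector-germs at infinity is by definition a sequence in which consecutive terms $C_{i,\infty}$ and $C_{i+1,\infty}$ are adjacent, and since ``$\Gamma_x$ is a gallery'' means precisely that $\germ_x(x+C_{i,\infty})$ and $\germ_x(x+C_{i+1,\infty})$ are adjacent local chambers for every $i\in\llbracket 1,n-1\rrbracket$, it suffices to prove the assertion for $n=2$. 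So first I would fix $x\in\I$ and two adjacent sector-germs at infinity $C_\infty,\tilde C_\infty$, and aim to show $\germ_x(x+C_\infty)$ and $\germ_x(x+\tilde C_\infty)$ are adjacent.

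The core of the argument is exactly Lemma~\ref{lemSundial_configuration}\ref{itGerme_gallery}, which states that whenever $C_\infty$ and $\tilde C_\infty$ are adjacent sector-germs at infinity, then for every $y\in\I$ the local chambers $\germ_y(y+C_\infty)$ and $\germ_y(y+\tilde C_\infty)$ are adjacent. So in the $n=2$ case the proposition is literally a restatement of that lemma applied at the point $y=x$. The only mild subtlety is to check that the hypotheses of Lemma~\ref{lemSundial_configuration} are met: one needs $C_\infty$ and $\tilde C_\infty$ to be adjacent (given), and the lemma's preliminary part~\ref{itSundial_configuration} is stated under the extra assumption that a chosen apartment $A\Supset C_\infty$ does not contain $\tilde C_\infty$ — but if some apartment contains both $C_\infty$ and $\tilde C_\infty$, then $x+C_\infty$ and $x+\tilde C_\infty$ are adjacent sectors of that apartment and $\germ_x(x+C_\infty),\germ_x(x+\tilde C_\infty)$ are adjacent directly from the definition of adjacency of local chambers. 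So in both cases we conclude.

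Then I would run the straightforward induction on $n$: assume $(\germ_x(x+C_{1,\infty}),\ldots,\germ_x(x+C_{n-1,\infty}))$ is a gallery, and apply the $n=2$ case to the adjacent pair $(C_{n-1,\infty},C_{n,\infty})$ to get that $\germ_x(x+C_{n-1,\infty})$ and $\germ_x(x+C_{n,\infty})$ are adjacent, hence the full sequence $\Gamma_x$ is a gallery of local chambers based at $x$. (Implicitly one also uses that each $\germ_x(x+C_{i,\infty})$ genuinely is a local chamber, which is immediate from Definition~\ref{DefCombinatorialStructureBuilding} since $x+C_{i,\infty}$ is a sector based at $x$.)

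\textbf{Main obstacle.} There is essentially no obstacle here: all the real work has already been done in Lemma~\ref{lemSundial_configuration}, whose proof in turn rests on the exchange condition (Lemma~\ref{lemExchange_condition}), the sundial configuration for the $\R$-building $\pi_{=0_S}(\I)$ (Lemma~\ref{lemHeb3.6}), and the compatibility of projections with intersections of apartments (Lemma~\ref{lemProjection_intersection_apartments}). The proposition itself is just the bookkeeping that packages Lemma~\ref{lemSundial_configuration}\ref{itGerme_gallery} into a statement about galleries, so the proof is short. The one point worth stating carefully is the case distinction on whether $\tilde C_\infty$ lies in a fixed apartment through $C_\infty$, which I addressed above; everything else is formal.
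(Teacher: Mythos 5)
Your proof is correct and matches the paper exactly: the paper derives this proposition in one line as an immediate consequence of Lemma~\ref{lemSundial_configuration}(\ref{itGerme_gallery}), which is precisely your reduction to consecutive adjacent pairs. Your extra remark about the case where a single apartment already contains both sector-germs is a sensible clarification of the lemma's hypotheses, but it does not change the argument.
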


\begin{lemma}\label{lemGalleries_maximal_length}
Let $x\in \I$. Let  $C_x,\tilde{C}_x$ be two local chambers based at $x$.  Let $w_0$ be the longest element of $W^v$. Then:\begin{enumerate}
\item\label{itMajoration_combinatorial_distance}  $d(C_x,\tilde{C}_x)\leq \ell(w_0)$ and

\item\label{itChamber_maximal_combinatorial_distance} $d(C_x,\tilde{C}_x)=\ell(w_0)$ if and only if $C_x$ and $\tilde{C}_x$ are opposite.
\end{enumerate} 
\end{lemma}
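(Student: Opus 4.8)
The statement is a standard fact about galleries of local chambers in a (thin, model) apartment, transported to the building $\I$ via the combinatorics developed in this section. I would first reduce to a single apartment: by~\ref{axiomGG} there is an apartment $A$ containing both $C_x$ and $\tilde C_x$, and since all the notions involved (adjacency of local chambers, distance $d$, opposition) are preserved by isomorphisms of apartments thanks to~\ref{axiomA2}, it suffices to prove the two assertions when $A=\A_S$ and $C_x=\germ_x(x+C^v)$, $\tilde C_x=\germ_x(x+\tilde C^v)$ for two vector chambers $C^v$, $\tilde C^v$ of $\A_S$. Next I would show that a gallery of local chambers based at $x$ inside a fixed apartment corresponds to a gallery of vector chambers: if $(C_{1,x},\ldots,C_{k,x})$ is a gallery with all $C_{i,x}$ in $A$, write $C_{i,x}=\germ_x(x+C^v_i)$; adjacency of $C_{i,x}$ and $C_{i+1,x}$ inside $A$ means precisely that $C^v_i$ and $C^v_{i+1}$ are adjacent vector chambers, i.e. differ by a simple reflection for the basis attached to $C^v_i$. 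Thus $d(C_x,\tilde C_x)$ equals the gallery distance between $C^v$ and $\tilde C^v$ in the Coxeter complex of $(W^v,\{r_\alpha\})$, which is the length $\ell(w)$ of the unique $w\in W^v$ with $w\cdot C^v=\tilde C^v$ (using that $W^v$ acts simply transitively on vector chambers, cf.\ the discussion after Lemma~\ref{LemIntersectionChambers} and \cite[V.3.2]{bourbaki1981elements}).

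Granting this translation, both parts become classical statements about the finite Coxeter group $W^v$: for any $w\in W^v$ one has $\ell(w)\leq \ell(w_0)$, with equality if and only if $w=w_0$, because $w_0$ is the unique element of maximal length in a finite Coxeter group. For~(\ref{itMajoration_combinatorial_distance}) this gives $d(C_x,\tilde C_x)=\ell(w)\leq\ell(w_0)$. For~(\ref{itChamber_maximal_combinatorial_distance}), I would unwind the definition of opposition: two local chambers $C_x=\germ_x(x+C^v)$ and $\tilde C_x=\germ_x(x+\tilde C^v)$ based at $x$ in a common apartment are opposite (as defined in~\ref{axiomCO}, applied to sectors, and extended to local germs) exactly when $\tilde C^v=w_0\cdot C^v$, i.e.\ when the corresponding $w$ equals $w_0$. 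Combining, $d(C_x,\tilde C_x)=\ell(w_0)\iff w=w_0\iff C_x,\tilde C_x$ opposite.

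The one genuine point to be careful about is that the definitions of adjacency and opposition in $\I$ were phrased via sectors, not just vector chambers, and pass through~\ref{axiomA2}; so I must check the translation ``gallery of local chambers $\leftrightarrow$ gallery of vector chambers'' is an honest bijection, in particular that a minimal gallery between $C_x$ and $\tilde C_x$ can always be realised inside a single apartment. For this I would invoke~\ref{axiomGG}/\ref{axiomA2} to house $C_x,\tilde C_x$ in one apartment $A$, observe that a minimal gallery of sectors in $A$ between representatives $Q,\tilde Q$ restricts (germ at $x$) to a gallery of local chambers of the same length, and conversely that any gallery of local chambers based at $x$ starting and ending in $A$ stays in $A$ (each adjacency step being an elementary wall-crossing, so no shorter gallery can leave $A$ — this is where one uses that $W^v$ acts simply transitively and the gallery distance in the Coxeter complex is intrinsic). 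So the only real obstacle is this bookkeeping about realising minimal galleries inside one apartment; once that is in place the arithmetic of $\ell(w_0)$ finishes the proof.
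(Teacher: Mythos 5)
Your proposal is correct and follows essentially the same route as the paper: reduce to a single apartment via \ref{axiomGG}/\ref{axiomA2}, identify the gallery distance between the local chambers with the gallery distance $\ell(w)$ in the Coxeter complex of $W^v$ (the paper cites \cite[I Proposition 4]{brown1989buildings} for this), and conclude from the uniqueness of the longest element $w_0$, opposition being by definition $\tilde C^v=w_0\cdot C^v$. The only difference is that you spell out the bookkeeping (realising a minimal gallery inside one apartment) that the paper compresses into the citation.
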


\begin{proof}
Let $A$ be an apartment such that $\pi_{\leq s}(A)$ contains $C_x$ and $\tilde{C}_x$. Using an isomorphism of apartments, we identify $A$ and $\A_S$.   Let $C^v_S$ and $\tilde{C}^v_S$ be the vector chambers of $\A_S$ such that $C_x=\germ_x(x+C^v_S)$ and $\tilde{C}_x=\germ_x(x+\tilde{C}^v_S)$. Write $\tilde{C}^v_S=w.C^v_S$, with $w\in W^v$. By \cite[I Proposition 4]{brown1989buildings}, one has $\ell(w)=d(C^v_S,\tilde{C}^v_S)=d(C_x,\tilde{C}_x)$. By definition of $w_0$ we deduce that $d(C_x,\tilde{C}_x)\leq \ell(w_0)$. Point~\ref{itChamber_maximal_combinatorial_distance} is a consequence of the uniqueness of $w_0$, see \cite[Proposition 2.2.9]{bjorner2005combinatorics} for example.
\end{proof}

\begin{lemma}\label{lemProjection_opposite_sectors}
Let $x\in \I$ and $C$, $\tilde{C}$ be two sectors opposite at $x$. Let $s\in S$. Then $C_{\leq s}$ and $\tilde{C}_{\leq s}$ are opposite at $\pi_{\leq s}(x)$.
\end{lemma}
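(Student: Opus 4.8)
The statement asserts that the projection $\pi_{\leq s}$ sends opposite sectors at $x$ to opposite sectors at $\pi_{\leq s}(x)$. Recall that $C$ and $\tilde C$ are opposite at $x$ means (Definition~\ref{axiomCO}) that there is an apartment $A$ containing $\germ_x(C)$ and $\germ_x(\tilde C)$ in which these two local sector-germs are opposite; equivalently, by Lemma~\ref{lemGalleries_maximal_length}(\ref{itChamber_maximal_combinatorial_distance}) applied to the (unique) local chambers dominated by $\germ_x(C)$ and $\germ_x(\tilde C)$ --- or more simply, since sectors here are chambers, to the local chambers $C_x = \germ_x(C)$ and $\tilde C_x = \germ_x(\tilde C)$ themselves --- this is equivalent to $d(C_x, \tilde C_x) = \ell(w_0)$, the maximal possible combinatorial distance. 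So it suffices to show that $d\big(\germ_{\pi_{\leq s}(x)}(C_{\leq s}), \germ_{\pi_{\leq s}(x)}(\tilde C_{\leq s})\big) = \ell(w_0)$ in the $\mathfrak R^{\leq s}$-building $\pi_{\leq s}(\mathcal I)$.

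The first direction is automatic: by Lemma~\ref{lemGalleries_maximal_length}(\ref{itMajoration_combinatorial_distance}), the combinatorial distance between any two local chambers in $\pi_{\leq s}(\mathcal I)$ is at most $\ell(w_0)$. For the reverse inequality, I would argue by contradiction, assuming $d\big(\germ_{\pi_{\leq s}(x)}(C_{\leq s}), \germ_{\pi_{\leq s}(x)}(\tilde C_{\leq s})\big) = k < \ell(w_0)$. Then there is a minimal gallery of local chambers in $\pi_{\leq s}(\mathcal I)$ from $\germ_{\pi_{\leq s}(x)}(C_{\leq s})$ to $\germ_{\pi_{\leq s}(x)}(\tilde C_{\leq s})$ of length $k$; lifting through $\pi_{\leq s}$, this corresponds to a gallery of sector-germs at infinity $C_{1,\infty}, \ldots, C_{k,\infty}$ in $\mathcal I$ (with $C_{1,\infty}$ the germ of $C$ and $C_{k,\infty}$ the germ of $\tilde C$) whose images under $\pi_{\leq s}$ realize the minimal gallery. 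The key tool is Proposition~\ref{propGerm_gallery_sector_germs}: the germs $\germ_x(x + C_{i,\infty})$ form a gallery at $x$ of length $k$ joining $C_x$ to $\tilde C_x$. Hence $d(C_x, \tilde C_x) \leq k < \ell(w_0)$, which by Lemma~\ref{lemGalleries_maximal_length}(\ref{itChamber_maximal_combinatorial_distance}) contradicts the fact that $C$ and $\tilde C$ are opposite at $x$.

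A couple of points need care. First, Proposition~\ref{propGerm_gallery_sector_germs} is stated for a gallery of sector-germs at infinity of $\mathcal I$; I must check that a minimal gallery in $\pi_{\leq s}(\mathcal I)$ does lift to such a gallery in $\mathcal I$, i.e.\ that each adjacency of sector-germs at infinity downstairs is witnessed upstairs. Since $\pi_{\leq s}$ induces a bijection on apartments (Lemma~\ref{lemInjectivity_projection_apartments}) and is compatible with the combinatorial structure at infinity, consecutive chambers of the minimal gallery lie in a common apartment downstairs whose unique preimage apartment contains the corresponding sector-germs at infinity; this gives the required lift. Second, one should observe that opposition of $C_{\leq s}$ and $\tilde C_{\leq s}$ as sectors at $\pi_{\leq s}(x)$ --- in the sense of Definition~\ref{axiomCO} --- follows once we know their local germs are opposite in some common apartment, which is exactly what maximality of the combinatorial distance provides (via Lemma~\ref{lemGalleries_maximal_length} and axiom~\ref{axiomGG} to find a common apartment).

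\textbf{Main obstacle.} The delicate step is the lifting of the minimal gallery from $\pi_{\leq s}(\mathcal I)$ to $\mathcal I$, i.e.\ checking that adjacency of sector-germs at infinity is preserved and reflected by $\pi_{\leq s}$, and that the lifted sequence genuinely is a gallery of sector-germs at infinity (not merely of local chambers at $x$). This rests on the structural results of Subsection~\ref{subPreservation_opposition}, especially Lemmas~\ref{lemProjection_intersection_apartments} and~\ref{lemApartment_projection_contains_germ} and Lemma~\ref{lemSundial_configuration}, and I expect the bulk of the work to consist in assembling these to control how $\pi_{\leq s}$ interacts with galleries at infinity. Once that is in place, the contradiction via Proposition~\ref{propGerm_gallery_sector_germs} and Lemma~\ref{lemGalleries_maximal_length} is immediate.
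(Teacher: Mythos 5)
Your proposal is correct and follows essentially the same route as the paper: the paper likewise takes a minimal gallery of sector-germs at infinity inside a single apartment of $\I$ whose projection contains both local germs at $\pi_{\leq s}(x)$, applies Proposition~\ref{propGerm_gallery_sector_germs} to obtain a gallery of local chambers at $x$ of the same length $k$, and concludes $k=\ell(w_0)$ from Lemma~\ref{lemGalleries_maximal_length}, so your ``main obstacle'' of lifting adjacency dissolves once the gallery is chosen upstairs in one apartment. The only step you gloss over is the identification of the endpoints: the lifted sector-germs at infinity need not be the germs of $C$ and $\tilde C$ themselves, only sector-germs whose $\leq s$-projections agree with those of $C_{\leq s}$ and $\tilde C_{\leq s}$ at $\pi_{\leq s}(x)$, and one needs Lemma~\ref{lemGerm_sectors_coinciding_before_s} to conclude that their germs at $x$ coincide with $\germ_x(C)$ and $\germ_x(\tilde C)$.
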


\begin{proof}
Let $A$ be an apartment containing $\germ_{\pi_{\leq s}(x)}(C_{\leq s})$ and $\germ_{\pi_{\leq s}(x)}(\tilde{C}_{\leq s})$. Let $Q$ and $\tilde{Q}$ be the sectors of $A$ based at $\pi_{\leq s}(x)$ and such that $Q_{\leq s}$ and $\tilde{Q}_{\leq s}$ contain $\germ_{\pi_{\leq s}(x)}(C_{\leq s})$  and $\germ_{\pi_{\leq s}(x)}(\tilde{C}_{\leq s})$. Let $\Gamma=(Q_{1,\infty},Q_{2,\infty},\ldots,Q_{k,\infty})$ be a minimal gallery of sector-germs at infinity of $A$ from $\germ_\infty(Q)$ to $\germ_{\infty}(\tilde{Q})$. Then by Proposition~\ref{propGerm_gallery_sector_germs}, $\Gamma_x=\big(\germ_x(x+Q_{1,\infty}),\ldots,\germ_x(x+Q_{k,\infty})\big)$ is a gallery. By Lemma~\ref{lemGerm_sectors_coinciding_before_s}, $\germ_x(x+Q_{1,\infty})=\germ_x(C)$ and $\germ_{x}(x+Q_{k,\infty})=\germ_x(\tilde{C})$. As $\germ_x(C)$ and $\germ_x(\tilde{C})$ are opposite, we have $k\geq \ell(w_0)$. As $\Gamma$ is minimal, one has $k\leq \ell(w_0)$. Consequently, $k=\ell(w_0)$ and thus by Lemma~\ref{lemGalleries_maximal_length}, $\germ_{\pi_{\leq s}(x)}(C_{\leq s})$ and $\germ_{\pi_{\leq s}(x)}(\tilde{C}_{\leq s})$ are opposite, which proves the lemma.
\end{proof}

\subsection{Proof of~\ref{axiomCO} when \texorpdfstring{$S$}{S} admits a minimum}\label{subProof_CO_when_S_admits_minimum}
 
\begin{lemma}\label{l_fibers_opposite_sectors}
Let $T$ be a totally ordered set. Let $T_1,T_0\subset T$ be such that $T=T_1\sqcup T_0$ and $T_1<T_0$.   
Let $\A_{T_i}=\A_\R\otimes \RF^{T_i}$ for $i\in \{1,2\}$ and $\A_T=\A_{T_1}\times \A_{T_0}=\A_\R\otimes \RF^T$. Let $\pi:\RF^T\twoheadrightarrow \RF^{T_1}$ be the canonical projection. Let $x\in \A_T$ and  $C$ and $C'$ be two sectors of $\A_T$ opposite at $x$. Let $X_1=\pi(x)\in \A_{T_1}$. Write  $\pi^{-1}(X_1)\cap C=\{\pi(x)\}\times C_0$ and  $\pi^{-1}(X_1)\cap C'=\{\pi(x)\}\times C_0'$, where $C_0,C_0'\subset \A_{T_0}$. Then $C_0$ and $C_0'$ are sectors of $\A_{T_0}$ which are opposite at their common basis.
\end{lemma}

\begin{proof}
    Write $C=x+w.C^v_{f,T}$ and $C'=x+w'.C^v_{f,T}$, where $w,w'\in W^v$ and $C^v_{f,T}=\{a\in \A_T\mid \alpha(a),\forall \alpha\in \Phi^+\}$. Write $x=(x_t)_{t\in T}$ and set $x_{T_0}=(x_t)_{t\in T_0}$. Then: \[\begin{aligned} \pi^{-1}(X_1)\cap C&=\{a\in \{\pi(x)\}\times \A_{T_0}\mid \alpha(a)>\alpha(x),\forall\alpha\in w.\Phi^+\} \\
     &=\{(\pi(x),a')\in \{\pi(x)\}\times \A_{T_0}\mid \alpha(a')>\alpha(x_{T_0}), \forall\alpha\in w.\Phi^+\}\\
     &=\{\pi(x)\}\times (x_{T_0}+w.C^v_{f,T_0}).\end{aligned}\] Similarly,  $\pi^{-1}(X_1)\cap C'=\{\pi(x)\}\times (x_{T_0}+w'.C^{v}_{f,T_0}),$ and the lemma follows.
     \end{proof}

\begin{lemma}\label{lemCO_when_S_admits_minimum}
We assume that $S$ admits a minimum $0_S$ and that $\pi_{\leq 0_S}(\omega(\Kb^*))\neq \{0\}$). Let $x\in \mathcal{I}$ and let $C$ and $\tilde{C}$ be two sectors opposite at $x$. Then there exists an apartment $A$ containing $C$ and $\tilde{C}$.
\end{lemma}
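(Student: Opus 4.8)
<br>

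The plan is to follow the strategy outlined in the introduction to Section~\ref{sectionCO}, reducing everything to the already-established $\R$-building result Lemma~\ref{lemCO_for_R_buildings} via the projection maps $\pi_{\leq s}$. Let $C_\infty$ and $\tilde C_\infty$ denote the germs at infinity of $C$ and $\tilde C$. By Lemma~\ref{lemDecompositions_axioms}~\ref{itemAxiomA4}, there is an apartment $A$ containing both $C_\infty$ and $\tilde C_\infty$; the goal is to prove that $x\in A$, since then $x+C_\infty$ and $x+\tilde C_\infty$ are sectors of $A$ with the prescribed germs at $x$, hence (by the definition of opposition at $x$ and~\ref{axiomA2}) equal to $C$ and $\tilde C$ up to the choice of isomorphism, so $A$ contains $C$ and $\tilde C$. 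Fix an apartment $A_C$ containing the sector $C$ (it exists by~\ref{axiomA3} and~\ref{axiomGG}), and let $\phi\colon A_C\to A$ be an isomorphism of apartments fixing $A_C\cap A$, which exists by~\ref{axiomA2} (Proposition~\ref{propAxiom(A2)}). Since $\phi$ fixes $A_C\cap A\supset C_\infty$, it fixes $C_\infty$; it then maps $x+C_\infty$ to $\phi(x)+C_\infty$, so $\germ_{\phi(x)}(\phi(x)+C_\infty)=\phi(\germ_x(C))$. It suffices to prove $\phi(x)=x$, for then $x=\phi(x)\in A$.

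Suppose for contradiction that $\phi(x)\neq x$. Since $S$ admits a minimum $0_S$, and since the projections $\pi_{\leq s}$ are compatible with one another and with the $\mathbf{G}(\mathbb{K})$-action, I claim there is a minimal $s\in S$ with $\pi_{\leq s}(\phi(x))\neq\pi_{\leq s}(x)$: indeed $\pi_{\leq 0_S}(\phi(x))\neq\pi_{\leq 0_S}(x)$ is impossible would follow from the $\R$-building case applied to the image, so the set of such $s$ is nonempty, and one checks it has a least element using the description of $S=\mathrm{rk}(\Lambda)$ and the coherence of the $\pi_{\leq s}$; call it $s$. Let $s'<s$ be arbitrary (or use $\pi_{<s}$ directly): then $\pi_{<s}(\phi(x))=\pi_{<s}(x)=:X_1$, while $\pi_{\leq s}(\phi(x))\neq\pi_{\leq s}(x)$. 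Now work inside the fiber $\pi_{<s}^{-1}(X_1)$, which by Theorem~\ref{thmFibers_buildings} decomposes as $\mathcal{I}_{X_1}\times\langle\Phi_{X_1}\rangle^\perp$ where $\mathcal{I}_{X_1}$ is a $\ker(\pi_{\RF^S,<s})$-building; applying the map $\pi_{=s,X_1}$ of Subsection~\ref{subFurther_notation} and projecting the $\ker(\pi_{\RF^S,<s})$-coordinate further onto its $\R$-part, we land in an $\R$-building $\I_\R$ to which Lemma~\ref{lemCO_for_R_buildings} applies (here I use Lemma~\ref{lemDecompositions_axioms}, Proposition~\ref{propAxiom(A2)} and Theorem~\ref{thmBennett-Schwer-3.3} to see that this space satisfies the axioms~\ref{axiomA1}, \ref{axiomA2}, \ref{axiomGG}, \ref{axiomA4} and (Iwa) required there).

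Inside that $\R$-building, the images of $C$ and $\tilde C$ are sectors opposite at the image of $x$: this is exactly where Lemma~\ref{lemProjection_opposite_sectors} (preservation of opposition under $\pi_{\leq s}$, which rests on Proposition~\ref{propGerm_gallery_sector_germs} and Lemma~\ref{lemGalleries_maximal_length}) is used. The images of $C_\infty$ and $\tilde C_\infty$ lie in the image of $A$, and $\phi$ descends to an isomorphism of the corresponding $\R$-apartments fixing the intersection and fixing the image of $C_\infty$. By Lemma~\ref{lemCO_for_R_buildings}, the image of $x$ lies in the image of $A$, so the descended $\phi$ fixes the image of $x$; hence $\pi_{\leq s}(\phi(x))$ and $\pi_{\leq s}(x)$ have the same image under $\pi_{=s,X_1}$ composed with the $\R$-projection. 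Combined with $\pi_{<s}(\phi(x))=\pi_{<s}(x)$ and Lemma~\ref{lemGerm_sectors_coinciding_before_s} (to control the difference in the higher coordinates: the discrepancy, if any, must lie in $\langle\Phi_{X_1}\rangle^\perp$, but $\phi$ fixing $C_\infty$ forces it to vanish there too), this yields $\pi_{\leq s}(\phi(x))=\pi_{\leq s}(x)$, contradicting the minimality choice of $s$. The main obstacle I anticipate is the bookkeeping in this last step — precisely extracting, from the vanishing of the $\R$-part of the discrepancy at level $s$ together with the fiber product decomposition $\mathcal{I}_{X_1}\times\langle\Phi_{X_1}\rangle^\perp$ and the fact that $\phi$ preserves $C_\infty$, that the full discrepancy at level $s$ vanishes; this requires carefully tracking how $\phi$ acts on the $\langle\Phi_{X_1}\rangle^\perp$ factor and invoking Lemma~\ref{lemGerm_sectors_coinciding_before_s} in the correct fiber.
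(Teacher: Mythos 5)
Your proposal is correct and follows essentially the same route as the paper: assume $\phi(x)\neq x$, locate the minimal $s_0\in S$ where $\pi_{\leq s_0}(\phi(x))\neq\pi_{\leq s_0}(x)$, and derive a contradiction in the $\R$-building fiber over $\pi_{<s_0}(x)$ using Lemma~\ref{lemProjection_opposite_sectors} and Lemma~\ref{lemCO_for_R_buildings}. The only place where the paper is sharper is the existence of that minimal $s_0$, which it obtains cleanly as the minimum of the (well-ordered, by definition of $\RF^S$) support of $\phi(x)-x$ computed in an apartment containing both points; and the final contradiction is obtained directly ($\phi_{=s_0}$ must fix $\pi_{=s_0}(x)$ because that point lies in the projected intersection of the two apartments, yet it sends it to $\pi_{=s_0}(\phi(x))\neq\pi_{=s_0}(x)$), with no need for the extra bookkeeping in the $\langle\Phi_{X_1}\rangle^{\perp}$ factor that you anticipate.
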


\begin{proof}
Let $A_C$ and $A_{\tilde{C}}$ be apartments containing $C$ and $\tilde{C}$ respectively. By~\ref{axiomA4} (Lemma~\ref{lemDecompositions_axioms}) we can find an apartment $A_{C',\tilde{C}'}$ containing subsectors $C'$ and $\tilde{C}'$ of $C$ and $\tilde{C}$. Let us prove that $x\in A_{C',\tilde{C}'}$. \\

Let $\phi: A_C \rightarrow A_{C',\tilde{C}'}$ be an apartment isomorphism such that $\phi|_{A_C \cap A_{C',\tilde{C}'}} = \mathrm{Id}|_{A_C \cap A_{C',\tilde{C}'}}.$ Set $y:=\phi(x)$. We want to prove that $x=y$. By contradiction, assume that $x\neq y$.\\

Let $A_{x,y}$ be an apartment containing both $x$ and $y$ and $\psi: A_{x,y} \rightarrow \mathbb{A}_S$. Let $\supp(y-x)\subset S$ be the support  of $\psi(y)-\psi(x)\in \A_S$. Then $\supp(y-x)$ depends  neither on the choice of $A_{x,y}$ nor on the choice of $\psi$. By assumption, $\supp(y-x)$ is nonempty. Let $s_0$ be the minimum of $\supp(y-x)$.  Let $g \in \mathbf{G}(\mathbb{K})$ inducing the isomorphism $\psi$, whose existence is provided by Proposition~\ref{propAxiom(A2)}. Then we have:
$$g \cdot\pi_{<s_0}( x)=\pi_{<s_0}(g \cdot x) = \pi_{<s_0}(\psi( x)) =\pi_{<s_0}(\psi( y)) = \pi_{<s_0}(g \cdot y)=g \cdot\pi_{<s_0}( y)$$
and:
$$g \cdot\pi_{\leq s_0}( x)=\pi_{\leq s_0}(g \cdot x) = \pi_{\leq s_0}(\psi( x))\neq \pi_{\leq s_0}(\psi( y)) = \pi_{\leq s_0}(g \cdot y)=g \cdot\pi_{\leq s_0}( y)$$
and hence $\pi_{\leq s_0}( x)\neq \pi_{\leq s_0}( y)$.

Set $X=\pi_{<s_0}(x)=\pi_{<s_0}(y)$ and $\I_X=\pi_{<s_0}^{-1}(X)\subset \I$. Set $$\pi_{=s_0}=\pi_{=s_0,X}:\I_X\twoheadrightarrow (\pi_{<s_0}^{\leq s_0})^{-1}(X)\subset \I(\mathbb{K},\omega_{\leq s_0},G),$$ with the notation of~\ref{subFurther_notation}. Then $\pi_{=s_0}(x)\neq\pi_{=s_0}(y)$ and $(\pi_{<s_0}^{\leq s_0})^{-1}(X)$ is an $\R$-building.

Since $x\in A_C \cap \I_{X}$ and $y\in A_{C',\tilde{C}'} \cap \I_{X}$, the sets $A_C \cap \I_{X}$ and $A_{C',\tilde{C}'} \cap \I_{X}$ are nonempty. Moreover, let $h \in \mathbf{G}(\mathbb{K})$ inducing $\phi$. Then:
\begin{align*}
\phi(A_C \cap \I_{X}) \subseteq A_{C',\tilde{C}'} \cap (h\cdot \I_{X})& = A_{C',\tilde{C}'} \cap \pi_{<s_0}^{-1}(h\cdot X)= A_{C',\tilde{C}'} \cap \pi_{<s_0}^{-1}(h\cdot X)\\& =
A_{C',\tilde{C}'} \cap \pi_{<s_0}^{-1}\big(\phi( X)\big)=A_{C',\tilde{C}'} \cap \pi_{<s_0}^{-1}\big(\pi_{<s_0}(y)\big)\\&=A_{C',\tilde{C}'} \cap \I_{X}.
\end{align*}
Hence $\phi$ induces a map $\phi_{\geq s_0} : A_C \cap \I_{X}\rightarrow A_{C',\tilde{C}'} \cap \I_{X}$, which induces itself a map:
$$\phi_{=s_0}: A_{C,=s_0} \rightarrow A_{C',\tilde{C}',=s_0}$$
where $A_{C,=s_0}:= \pi_{=s_0}(A_C) \cap \I_{X}$ and
$A_{C',\tilde{C}',=s_0}=\pi_{=s_0}(A_{C',\tilde{C}'} )\cap \I_{X}$.  

We know that $\phi_{=s_0}\big(\pi_{=s_0}(x)\big) = \pi_{=s_0}(y) \neq \pi_{=s_0}(x)$. By Lemma~\ref{lemProjection_opposite_sectors}, $C_{\leq s_0}$ and $\tilde{C}_{\leq s_0}$ are opposite at $\pi_{\leq s_0}(x)$. By  Lemma~\ref{l_fibers_opposite_sectors} applied with $T=\{s\in S\mid s\leq s_0\}$, $T_1=T\setminus\{s_0\}$ and $T_0=\{s_0\}$, we deduce that  the sectors-germs $C_{=s_0}:=\pi_{=s_0}(C_{\leq s_0}\cap \I_{\pi_{<s_0}(x)})$ and $\tilde{C}_{=s_0}:=\pi_{=s_0}(\tilde{C}_{\leq s_0}\cap \I_{\pi_{<s_0}(x)})$ are opposite in $\pi_{=s_0}(x)$ (in the $\R$-building $(\pi_{<s_0}^{\leq s_0})^{-1}(X)$). 

 Hence, by Lemma~\ref{lemCO_for_R_buildings}, $\pi_{=s_0}(x)$ belongs to the unique appartment of $\pi_{=s_0}(\I_{\pi_{<s_0}(x)})$ containing $\mathrm{germ}_{\infty}(\pi_{=s_0}(C))$ and $\mathrm{germ}_{\infty}(\pi_{=s_0}(C))$. In other words, $\pi_{=s_0}(\I_{\pi_{<s_0}(x)}) \in A_{C,=s_0} \cap A_{C',\tilde{C}',=s_0}$. Since $\phi$ fixes $A_C \cap A_{C',\tilde{C}'}$, we deduce that $\phi_{=s_0}(\pi_{=s_0}(x)) = \pi_{=s_0}(y) \neq \pi_{=s_0}(x)$: contradiction! Hence $x=y$. 
\end{proof}

\subsection{Proof of~\ref{axiomCO} in the general case}\label{subProof_CO_general_case}

Let $\hat{\mathbb{K}}=\mathbb{K}(\!(t)\!)$. Let $\hat{\omega}:\hat{\mathbb{K}}\rightarrow \Z\times \Lambda$ be defined by
$\hat{\omega}(\sum_{k=n}^\infty a_k t^k)=\big(n,\omega(a_n)\big)$, 
if $a_n\neq 0$. Let $\I=\I(\Kb,\omega,\GB)$
and  $\hat{\I}=\I(\hat{\mathbb{K}},\hat{\omega},\mathbf{G})$. Note that the rank of the ordered abelian group $\Z\times \Lambda$ is $\hat{S}:=\{0_{\hat{S}}\}\sqcup S$, where $0_{\hat{S}}$ is an element of $\hat{S}$ such that $s>0_{\hat{S}}$ for all $s\in S$. In particular, we have an isomorphism of ordered $\R$-algebras $\R\times \RF^S \cong \RF^{\hat{S}}$.  \\
\begin{remark}\label{r_univalent}
    The extension $\widetilde{\mathbb{K}}(\!(t)\!)/\mathbb{K}(\!(t)\!)$ is univalent in the sense of Assumption \ref{assumpfield}. Let us briefly explain why. To do so, let $\omega': \widetilde{\mathbb{K}}^\times \rightarrow \Lambda '$ be the unique valuation given by that assumption, and let 
    $\hat{\omega}':\widetilde{\mathbb{K}}(\!(t)\!)^\times\rightarrow \hat{\Lambda} '$ be a valuation satisfying the conditions of that assumption for the extension $\widetilde{\mathbb{K}}(\!(t)\!)/\mathbb{K}(\!(t)\!)$. By uniqueness of $\omega'$, we have an identification 
    $\hat{\omega}'(\widetilde{\mathbb{K}}^\times)=\Lambda'$ such that $\hat{\omega}'|_{\widetilde{\mathbb{K}}^\times}=\omega'$. Since $\hat{\omega}'$ extends $\hat{\omega}$, we deduce an identification $\hat{\omega}'(\widetilde{\mathbb{K}}^\times t^\mathbb{Z})=\mathbb{Z}\times \Lambda'$ such that $\hat{\omega}'(\widetilde{a} t^m)=(m,\omega'(\widetilde{a}))$ for $m \in \mathbb{Z}$ and 
    $\widetilde{a}\in \widetilde{\mathbb{K}}^\times $. Now, given an element $f \in \widetilde{\mathbb{K}}(\!(t)\!)^\times$, one can always write $f=\widetilde{a}t^mf_0$ with $m \in \mathbb{Z}$, $\widetilde{a}\in \widetilde{\mathbb{K}}^\times $ and $f_0 \in 1+t\widetilde{\mathbb{K}}[\![t]\!]$. By observing that $\hat{\omega}'(1)=0$, $\hat{\omega}'(t)=(1,0) \in \mathbb{Z}\times \Lambda'\subset \hat{\Lambda} '$ and $\hat{\omega}'(f_1)>(-1,0)$ for every $f_1 \in \widetilde{\mathbb{K}}[\![t]\!]$, we deduce that $\hat{\omega}'(f_0)=0$, so that:
    $$\hat{\omega}'(f)=(m,\omega'(\widetilde{a}))\in \mathbb{Z}\times \Lambda'= \hat{\Lambda} '.$$
\end{remark}

Let $x\in \mathcal{I}$ and consider $C$ and $\tilde{C}$ two sectors in $\mathcal{I}$ opposite at $x$. According to example~\ref{constantex}, if $\omega_t$ stands for the $t$-adic valuation on $\hat{\mathbb{K}}$, we have a projection:
$$\pi: \hat{\mathcal{I}} \rightarrow \mathcal{I}(\hat{\mathbb{K}},\omega_t,\mathbf{G})$$
such that the fiber of the point $X_1:=[(1,0)]$ is $\mathcal{I}$. Let $\hat{C}$ and $\hat{\tilde{C}}$ be sectors of $\hat{\mathcal{I}}$ such that $\hat{C} \cap \pi^{-1}(X_1) = C$ and $\hat{\tilde{C}} \cap \pi^{-1}(X_1) = \tilde{C}$. The sectors $\hat{C}$ and $\hat{\tilde{C}}$ being opposite  (by  Lemma~\ref{l_opposite_local_chambers} below applied with $\mathbb{F}=\hat{\Kb}$, $T=\hat{S}$ and $T_1=\{0_{\hat{S}}\}$), we deduce that there is an apartment $A$ of $\hat{\mathcal{I}}$ that contains both $\hat{C}$ and $\hat{\tilde{C}}$. Hence $A\cap \pi^{-1}(X_1)$ is an apartment of $\mathcal{I}=\pi^{-1}(X_1)$ that contains both $C$ and $\tilde{C}$.

\begin{lemma}\label{l_opposite_local_chambers}
 Let $\mathbb{F}$ be a field, equipped with a valuation $\omega:\mathbb{F}\rightarrow \RF^T\cup\{\infty\}$, where $T$ is a totally ordered set. We assume that $\mathbb{F}$ satisfies Assumption~\ref{assumpfield}. Assume that $T=T_1\sqcup T_0$, where $T_1,T_0$ are subsets of $T$ such that  $T_1<T_0$. We assume that $\omega(\mathbb{F}^\times)\cap \left(\{0_{T_1}\}\times \RF^{T_0}\right)\neq \{0\}$ and $\omega(\mathbb{F}^\times)\not \subset \{0_{T_1}\}\times \RF^{T_0}$. Let $\pi:\RF^{T}\twoheadrightarrow \RF^{T_1}$ be the canonical projection. Assume that $\mathbf{G}$ is defined over $\mathbb{F}$  Let $\I_T=\I(\mathbb{F},\omega,\mathbf{G})$ and $\I_{T_1}=\I(\mathbb{F},\omega_1,\mathbf{G})$, where $\omega_1=\pi\circ \omega$. Denote by $\pi$ the canonical projection $\I_{T}\twoheadrightarrow \I_{T_1}$. Let $X_1\in \I_{T_1}$ and $\I=\pi^{-1}(X_1)$. Let $x\in \I$ and $C, \tilde{C}$ be two sectors of $\I$, opposite at $x$. Let $C_T$ and $\tilde{C}_T$ be two sectors of $\I_{T}$ such that $C_T\cap \I=C$ and $\tilde{C}_T\cap \I=\tilde{C}$. Then $C_T$ and $\tilde{C}_T$ are opposite at $x$.
\end{lemma}

\begin{proof}
Denote by $\A_T=\A_\R \otimes \RF^{T}$ and $\A_{T_1}=\A_{\R}\otimes \RF^{T_1}$ the standard apartments of $\I_T$ and $\I_{T_1}$.
 Denote by $d_T:\I_T\times \I_T\rightarrow (\RF^{T})_{\geq 0}$ (resp. $d_{T_1}:\I_{T_1}\times \I_{T_1}\rightarrow (\RF^{T_1})_{\geq 0}$)  the distance on $\I_T$ (resp. $\I_{T_1}$) for which $\mathbf{G}(\mathbb{F})$ acts by isometry on $\I_T$ (resp. $\I_{T_1}$) and such that $d_T(a,b)=\sum_{\alpha\in \Phi^+}|\alpha(b-a)|$ (resp. $d_{T_1}(a,b)=\sum_{\alpha\in \Phi^+}|\alpha(b-a)|$) for $a,b\in \A_{T}$ (resp. $a,b\in \A_{T_1}$).  Then as $\pi$ is $\mathbf{G}(\mathbb{F})$-equivariant and as $\pi(d_T(a,b))=d_{T_1}(\pi(a),\pi(b))$ for $a,b\in \A_{T}$, we have: \[\pi(d_T(a,b))=d_{T_1}(\pi(a),\pi(b)), \forall a,b\in \I_T.\]

 Therefore $\I=\{y\in \I_T\mid d_T(x,y)\in \{0\}\times \RF^{T_0}\}$ is open in $\I_T$. 

 Consequently, \begin{equation}\label{e_equality_germs} \forall \Omega\in \germ_x(C_T), \Omega\cap \I\in \germ_x(C)\cap \germ_x(C_T),
 \end{equation} and the same holds for $\tilde{C}_T$.

 Let $A_T$ be an apartment of $\I_T$ containing $\germ_x(C_T)$ and $\germ_x(\tilde{C}_T)$, which exists by Lemma~\ref{lemDecompositions_axioms}. We identify $A_T$ and $\A_{T}$. Then $\A_T\cap  \I=\{X_1\}\times \A_{T_0}$. Let $Q$ and $\tilde{Q}$ be the sectors of $\A_{T_0}$, based at $x$ and such that $\germ_x(Q)=\germ_x(C)$ and $\germ_x(\tilde{Q})=\germ_x(\tilde{C})$. Let $C^v_{T_0}$ and $\tilde{C}^v_{T_0}$ be the vector chambers of $\A_{T_0}$ such that $Q=x+C^v_{T_0}$ and $\tilde{Q}=x+\tilde{C}^v_{T_0}$. We may assume that $C^v_{T_0}$ is the fundamental vector chamber $C^v_{f,T_0}=\{a\in \A_{T_0}\mid \alpha(a)>0, \forall \alpha\in \Phi^+\}$. As $Q$ and $\tilde{Q}$ are opposite at $x$, we have $\tilde{C}^v_{T_0}=w_0.C^v_{f,T_0}$. Let $w\in W^v$ be such that $\germ_x(x+w.C^v_{f,T})=\germ_x(C_T)$. Then by \eqref{e_equality_germs}, \[(x+w.C^v_{f,T})\cap \I=x+(\{0_{T_1}\}\times w.C^v_{f,T_0})\in \germ_x(C)\] and thus there exists a neighborhood $\Omega$ of $x$ in $\A_{T_0}$ such that \[x+\{0_{T_1}\}\times w.C^v_{f,T_0}\supset \Omega \cap \left(x+\{0_{T_1}\}\times C^v_{f,T_0}\right).\]  Therefore there exists $\epsilon\in \RF^{T_0}$ such that \[B(0_{T_0},\epsilon) \cap C^v_{f,T_0}\subset w.C^v_{f,T_0}.\]  Let $s=\min \left(\supp(\epsilon)\right)$. Then if $a=(a_t)_{t\in T_0}$  is such that $a_s\in  C^v_{f,\R}\subset \A_\R$ is close  enough to $0$ and $a_t=0$ for all $t\in T_0$ such that $t<s$, we have: \[a\in B(0_{T_0},\epsilon)\cap C^v_{f,T_0}\cap w.C^v_{f,T_0}.\]  Let $\alpha\in \Phi^+$. Then $\alpha(a)>0$ and $\alpha(w^{-1}.a)>0$ and thus $w.\alpha\in \Phi^+$. Therefore $w.\Phi^+\subset \Phi^+$. Using \cite[Chap VI, 1.6 Corollaire 2]{Bourbaki-Lie456}, we deduce that $w=1$ and $\germ_x(x+C^v_{f,T})=\germ_x(C_T)$. Similarly, $\germ_x(x+w_0.C^v_{f,T})=\germ_{x}(\tilde{C}_T)$, which proves that $C_T$ and $\tilde{C}_T$ are opposite at $x$.
\end{proof}

\subsection{Proof of~\ref{axiomCO} for the fibers of the projection maps}\label{subsecCO_fibers}

Adopt the notations of section~\ref{projectionsec}. In particular, we have a projection map:
$$\pi: \mathcal{I}(\mathbb{K},\omega,\mathbf{G}) \rightarrow \mathcal{I}(\mathbb{K},\omega_1,\mathbf{G}).$$
Fix a point $X_1 \in \mathcal{I}(\mathbb{K},\omega_1,\mathbf{G})$, let $x\in \pi^{-1}(X_1) \subseteq \mathcal{I}(\mathbb{K},\omega,\mathbf{G})$ and consider $C$ and $\tilde{C}$ two sectors of $\pi^{-1}(X_1)$ opposite at $x$. Let $\hat{C}$ and $\hat{\tilde{C}}$ be sectors of $\mathcal{I}(\mathbb{K},\omega,\mathbf{G})$ such that $\hat{C} \cap \pi^{-1}(X_1) = C$ and $\hat{\tilde{C}} \cap \pi^{-1}(X_1) = \tilde{C}$. The sectors $\hat{C}$ and $\hat{\tilde{C}}$ being opposite (by Lemma~\ref{l_opposite_local_chambers} applied with $T=\mathrm{rk}(\Lambda)$, $T_1=\mathrm{rk}(\Lambda_1)$ and $T_0=\mathrm{rk}(\Lambda_0)$), we deduce that there is an apartment $A$ of $\mathcal{I}(\mathbb{K},\omega,\mathbf{G})$ that contains both $\hat{C}$ and $\hat{\tilde{C}}$. Hence $A\cap \pi^{-1}(X_1)$ is an apartment of $\pi^{-1}(X_1)$ that contains both $C$ and $\tilde{C}$.

\section{Replacing \texorpdfstring{$\mathfrak{R}^S$}{RS} by smaller ordered abelian groups}\label{section_replacingRS}

In \cite{bennett1994affine}, Bennett defines $\Gamma$-buildings for a totally ordered abelian group $\Gamma$ other than $\mathfrak{R}^S$, for instance for $\mathbb{Z}\times \mathbb{Z}$. The definition is similar to the one we gave in section~\ref{subBennett_definition}. One has to replace axiom~\ref{axiomA1} by the following:
\begin{enumerate}[label={(A1')}]
\item\label{axiomA1'} Each $A \in \ACC$ is equipped with the structure of an apartment over $\Gamma$ of type $\underline{\mathbb{A}_{\Gamma}} = (\mathbb{A}_{\Gamma},V_{\mathbb{R}},\Phi,(\Gamma_{\alpha})_{\alpha \in \Phi})$,\axiom{A1'@\ref{axiomA1'}}
\end{enumerate}
and then one keeps word by word the axioms \ref{axiomA2}-\ref{axiomA6}. The main goal of this section consists in proving the following statement:

\begin{proposition}\label{propLambda_building}
Let $\mathbf{G}$ be a quasi-split reductive group over a $\Lambda$-valued field $\mathbb{K}$ and let $\mathbf{S}$ be a maximal split torus in $\mathbf{G}$. We keep the notation $\Gamma_\alpha$ for $\alpha\in\Phi$ as defined in section \ref{subsection_Rvaluation}. Let $S$ be the rank of $\Lambda$ and let $\mathcal{I}$ be the $\mathfrak{R}^S$-building associated with $\mathbf{G}$. Let $\Gamma$ be a totally ordered subgroup of $\mathfrak{R}^S$ containing $\Gamma_\alpha$ for each $\alpha \in \Phi$. Then the $\mathfrak{R}^S$-building structure of $\mathcal{I}$ induces a $\Gamma$-building structure on the set:
$$G \cdot p\left( Y \otimes \Gamma\right), $$
where $Y \otimes \Gamma$ is seen inside $X_*(\mathbf{S}) \otimes \mathfrak{R}^S$ and $Y$ is the coweight lattice \[Y = \left\{ x \in X_*(\mathbf{S}) \otimes \mathbb{R},\ \forall \alpha \in \Phi,\ \alpha(x) \in \mathbb{Z} \right\},\] and $p$ is the projection from $X_*(\mathbf{S})\otimes \mathfrak{R}^S$ to the standard apartment $\mathbb{A}$.

\end{proposition}

Note that if $\mathbf{G}$ is semi-simple and adjoint, we have that $Y = X_*(\mathbf{S})$.
In particular, up to considering the adjoint quotient $\overline{\mathbf{G}}$ of the reductive group $\mathbf{G}$, one can always work with $X_*(\overline{\mathbf{S}}) \otimes \Gamma$ where $\overline{\mathbf{S}}$ denotes the image of $\mathbf{S}$ in $\overline{\mathbf{G}}$.

We say that two half-apartments $D_1,D_2$ of $\A$ have opposite directions  in $\A$ (resp. have the same direction) if there exists $\alpha\in \Phi$ and $\lambda,\lambda'\in \RF^S$ such that $D_1=D_{\alpha,\lambda}$ and $D_2=D_{-\alpha,\lambda'}$ (resp. $D_{\alpha,\lambda'}$). We extend these notions to any apartment using isomorphisms of apartments.

\begin{proof}(of Proposition~\ref{propLambda_building})
Set $\A_\Gamma = p\left( Y \otimes \Gamma\right) $ 
and  $\I_\Gamma=G . \A_\Gamma$.  We define the set of apartments of $\I_\Gamma$ to be the set of sets of the form $A\cap \I_\Gamma$ such that $A$ is an apartment of $\I$.

We first prove that $\A\cap \I_\Gamma=\A_\Gamma$. 
Let $y\in \I_\Gamma\cap \A$. There exists $g\in G$ 
and $x\in \A_\Gamma$ such that $y=g.x$. 
Let $h\in G$ be such that $hg.\A=\A$ and such that $h$ fixes $y$, which exists by (A2) (for $\I$). 
Then by Proposition~\ref{propAxiom(A2)}, there exists $n\in N$ such that $hg.z=n.z$ for all $z\in \A$. Thus $w = \nu(n) \in \Wext$ satisfies $y= h.y = hg.x = w(x)$
and thus $y\in \Wext.\A_\Gamma$.

According to Proposition~\ref{PropSemiSpecialValuation}, the valuation is semi-special so that the group $\widetilde{\Gamma}_\alpha$ is generated by $\Gamma_\alpha$ for any $\alpha \in \Phi$.
Hence, $\Gamma \supset \widetilde{\Gamma}_\alpha$ for any $\alpha \in \Phi$.
We have that $\mathbb{A}_\Gamma = \bigoplus_{\alpha \in \Delta} \Gamma \varpi_\alpha^\vee \supset \bigoplus_{\alpha \in \Delta} \widetilde{\Gamma}_\alpha \varpi_\alpha^\vee \supset \Wext \cap V_{\mathfrak{R}^S}$.
Since $W^v$ stabilizes the coweight lattice $Y = \bigoplus_{\alpha \in \Delta} \Z \varpi_\alpha^\vee$, it stabilizes $Y \otimes \Gamma = \mathbb{A}_\Gamma$.
Thus $\Wext$ stabilizes $\mathbb{A}_\Gamma$.

 Consequently \begin{equation}\label{eqIntersection_appartment_A_gamma}
\A_\Gamma=\A\cap \I_\Gamma. \end{equation} Let now $A$ be an apartment of $\I$. Write $A=g.\A$, $g\in G$. Then by \eqref{eqIntersection_appartment_A_gamma}, $A\cap \I_\Gamma=g.\A_\Gamma$. This enables us to equip each apartment with the structure of an apartment of type $\A_\Gamma$, which proves \ref{axiomA1'}. We also deduce that \ref{axiomA2}, \ref{axiomA3}, \ref{axiomA4} and \ref{axiomA5} are satisfied by $\I_\Gamma$, using the fact that they are satisfied by $\I$.

Let us prove that $\I_\Gamma$ satisfies \ref{axiomA6}. Let $A_1^\Gamma, A_2^\Gamma$ and $A_3^\Gamma$ be three apartments of $\I_\Gamma$ such that $A_1^\Gamma\cap A_2^\Gamma$, $A_1^\Gamma\cap A_3^\Gamma$ and $A_2^\Gamma\cap A_3^\Gamma$ are half-apartments.
Let $A_1,A_2$ and $A_3$ be the corresponding apartments of $\I$. Using isomorphisms of apartments, we may assume that $A_1=\A$.
Let $D_3=\A\cap A_2$ and  $D_2=\A\cap A_3$. By definition of half-apartments, there are $\alpha_2,\alpha_3\in \Phi$ and $\lambda_2,\lambda_3\in \Gamma$ such that $A_1^\Gamma \cap A_2^\Gamma = D_{\alpha_3,\lambda_3}^\Gamma \subset \mathbb{A}_\Gamma$ and $A_1^\Gamma \cap A_3^\Gamma = D_{\alpha_2,\lambda_2}^\Gamma \subset \mathbb{A}_\Gamma$.
Since $D_2$ and $D_3$ are enclosed by \ref{axiomA2}, we deduce that $D_2 = D_{\alpha_2,\lambda_2}$ and $D_3 = D_{\alpha_3,\lambda_3}$.
Suppose that $D_2$ and $D_3$ are not parallel (that is $\R\alpha_2\neq \R\alpha_3$).
Let $\Delta$ be a basis such that $C^v_\Delta \subset D_{\alpha_2} \cap D_{\alpha_3}$ (i.e. a basis such that $\alpha_2$ and $\alpha_3$ both are positive roots).
Let $x_\Z$ be any element of $C^v_{\Delta,\Z}$.
Then $\alpha_2(x_\Z), \alpha_3(x_\Z) \in \mathbb{N}$.
Consider any $\gamma > \max( |\lambda_2|,|\lambda_3| ) \in \Gamma_{> 0}$.
Then $\alpha_2( \gamma x_\Z ) > \alpha_2(x_\Z) |\lambda_2| \geqslant \lambda_2$ and $\alpha_3( \gamma x_\Z ) > \alpha_3(x_\Z) |\lambda_3| \geqslant \lambda_3$.
Hence $\gamma x_\Z \in D_2 \cap D_3 = A_1 \cap A_2 \cap A_3$.

If $D_2$ and $D_3$ have the same direction, it is  clear that $D_2\cap D_3$ is non empty. We now assume that for all $i\in \llbracket 1,3\rrbracket$, $A_i\cap A_j$ and $A_i\cap A_k$ have opposite directions in  $A_i$, where $\{j,k\}=\llbracket 1,3\rrbracket\setminus \{i\}$.  Let $M_3$ be the wall of $D_3$. By Lemma~\ref{lemExchange_condition}, $(\A\cup A_2)\setminus D_3\cup M_3$ is an apartment $A_3'$ of $\I$. Moreover, $A_3\cap A_3'\cap \A$ contains a half-apartment parallel to $D_2$ and $A_3\cap A_3'\cap A_2$ contains a half-apartment parallel to $A_2\cap A_3$. Therefore $A_3\cap A_3'$ contains two half-apartments which have opposite directions in  $A_3$. By \ref{axiomA2} we deduce that $A_3=A_3'$. In particular, $M_3\subset \A\cap A_2 \cap A_3$. Write $M_3=H_{\alpha, \lambda}$, for $\alpha\in \Phi$ and $\lambda\in \RF^S$. Then by the proof of Lemma~\ref{lemExchange_condition}, $\lambda\in \Gamma_\alpha\subset \Gamma$, which proves that $A_1^\Gamma\cap A_2^\Gamma\cap A_3^\Gamma\supset M_3\cap \A_\Gamma\neq \emptyset$. This proves that $\I_\Gamma$ satisfies \ref{axiomA6} and completes the proof of the proposition.
\end{proof}

As a particular case, we get the following corollary:

\begin{corollary}
Keep the previous notation and assume that $\mathbf{G}$ splits over $\mathbb{K}$. Then the $\mathfrak{R}^S$-building structure of $\mathcal{I}(\mathbb{K},\omega,\mathbf{G})$ induces a $\Lambda$-building structure on the set:
$$G \cdot p\left( Y \otimes \Lambda \right) ,$$
where $Y \otimes \Lambda$ is seen inside $X_*(\mathbf{S}) \otimes \mathfrak{R}^S$ and $p$ is the projection from $X_*(\mathbf{S})\otimes \mathfrak{R}^S$ to the standard apartment $\mathbb{A}$.
\end{corollary}

\begin{proof}
For a split reductive group $\mathbf{G}$, since $\Gamma_\alpha = \Lambda$ for any $\alpha \in \Phi$ according to Fact~\ref{FactGamma_split}, one can take $\Gamma = \Lambda$ in Proposition~\ref{propLambda_building}.
\end{proof}

\section{The building of the group \texorpdfstring{$\mathrm{SL}_{\ell+1}$}{SL(l+1)}}\label{secBuilding_sld}

Let $\mathbb{K}$ be  a field equipped with a valuation $\omega: \mathbb{K}\rightarrow \Lambda\subset \RF^\cup \{\infty\}$. Let $\ell\in \Z_{\geq 1}$. In this section, we detail the construction of the building $\I$ associated to $(\mathrm{SL}_{\ell+1}(\Kb),\omega)$ by Definition~\ref{DefLambdaBuildingFromDatum}. We then define an embedding of the building $\I^{\LC}=\I^{\LC}(\mathrm{SL}_{\ell+1}(\Kb),\omega)$ constructed by Bennett using lattices in \cite[Example 3.2]{bennett1994affine} in  $\I$. Using this embedding, we then revisit - in the particular case of $\mathrm{SL}_2$ and of $\Lambda\subset \Z^2$ - the  construction of the boundary of $\I$ developped by Parshin in \cite{parshin1994higher}.

\subsection{The parahoric and the lattice buildings of \texorpdfstring{$\mathrm{SL}_{\ell+1}$}{SL(l+1)}}

Let $\ell\in \Z_{\geq 1}$ and  $\mathbb{K}$ be a field equipped with a non trivial valuation $\omega:\mathbb{K}\rightarrow \Lambda$, where $\Lambda$ is a subgroup of $\RF^S$, for some totally ordered set $S$. Let $\mathbf{G}=\mathbf{SL}_{\ell+1}$ and $G=\mathbf{G}(\Kb)$. We explicitly describe a root generating datum for $G$ and we then recall Bennett's lattice construction of the building of $G$.

\paragraph{The  parahoric building $\I$} Let $\mathbf{T}_{\mathrm{SL}_{\ell+1}}$ (resp. $\mathbf{T}_{\mathrm{GL}_{\ell+1}}$) be the functor which associates to each field $\mathcal{F}$ the subgroup of $\mathrm{SL}_{\ell+1}(\mathcal{F})$ (resp. $\mathrm{GL}_{\ell+1}(\mathcal{F})$)  composed with diagonal matrices. Let  $\TS=\mathbf{T}_{\mathrm{SL}_{\ell+1}}(\mathbb{K})$ and $\TG=\mathbf{T}_{\mathrm{GL}_{\ell+1}}(\mathbb{K})$.

For $i,j\in \llbracket 1,\ell+1\rrbracket$, we denote by $E_{i,j}$ the matrix $(\delta_{i,k}\delta_{j,k'})_{k,k'\in \llbracket 1,\ell+1\rrbracket}$. For $t=(t_1,\ldots,t_{\ell+1})\in (\mathbb{K}^*)^{\ell+1}$, we set $D(t)=\sum_{i=1}^{\ell+1} t_i E_{i,i}$.  For $i,j\in \llbracket 1,\ell+1\rrbracket$ such that $i\neq j$, we define $\alpha_{i,j}\in X^*(\TS)$  by $\alpha_{i,j}\big(D(t_1,\ldots,t_{\ell+1})\big)=t_it_j^{-1}$, for $D(t_1,\ldots,t_{\ell+1})\in \TS$ and we define $\alpha_{i,j}^\vee\in X_*(\TS)$ by  $\alpha_{i,j}^\vee(u)=\sum_{k\in \llbracket 1,\ell+1\rrbracket \setminus \{i,j\}} E_{k,k}+uE_{i,i}+u^{-1}E_{j,j}\in \TS$, for $u\in \mathbb{K}^*$. The root system $\Phi=\Phi(\mathbf{SL}_{\ell+1},\mathbf{T}_{\mathrm{SL}_{\ell+1}})$ is $\{\alpha_{i,j}\mid i,j\in \llbracket 1,\ell+1\rrbracket|i\neq j\}$ and $\Delta=\{\alpha_{i,i+1}\mid i\in \llbracket 1,\ell\rrbracket\}$ is a basis of $\Phi$. For $i\in \llbracket 1,\ell+1\rrbracket$, define $\chi_i\in X^*(T_{\mathrm{SL}_{\ell+1}})$ by $\chi_i\left(D(t_1,\ldots,t_{\ell+1})\right)=t_i$ for $(t_1,\ldots,t_{\ell+1})\in (\mathbb{K}^*)^{\ell+1}$. One has $X_*(\TS)=\bigoplus_{i=1}^{\ell} \Z \alpha_{i,i+1}^\vee$ and  $X^*(\TS)=\bigoplus_{i=1}^{\ell}\Z\chi_i\supset \bigoplus_{i=1}^{\ell} \Z\alpha_{i,i+1}$. Let $\A_\R=X_*(\TS)\otimes_{\Z} \R$ and $\A_S=\A_\R\otimes_\R \RF^S$.  For $i\in \llbracket 1, \ell+1\rrbracket$, we denote by $\varpi_{i,i+1}^\vee$ the \textbf{fundamental weight} associated with $\alpha_{i,i+1}$, that is the unique element $\varpi_{i,i+1}^\vee$ of $\A_\R$ such that $\alpha_{j,j+1}(\varpi_{i,i+1}^\vee)=\delta_{i,j}$, for $j\in \llbracket 1,\ell\rrbracket$. 

For $\alpha=\alpha_{i,j}\in \Phi$, one defines $x_\alpha:\mathbb{K}^*\rightarrow G$ by $x_\alpha(u)=1+uE_{i,j}$ if $i< j$ and $x_\alpha(u)=1-uE_{i,j}$ is $i>j$. One sets $U_\alpha=x_\alpha(\mathbb{K})$ and one defines $\varphi_\alpha:U_\alpha\rightarrow \Lambda$ by $\varphi_\alpha\big(x_\alpha(u)\big)=\omega(u)$. One defines $m_\alpha:\mathbb{K}^*\rightarrow G$ by $m_\alpha(u)=x_\alpha(u)x_{-\alpha}(u^{-1})x_\alpha(u)$. One has $m_\alpha(u)=1-E_{i,i}-E_{j,j}+uE_{i,j}-u^{-1}E_{j,i}$ if $i<j$ and $m_\alpha(u)=1-E_{i,i}-E_{j,j}-uE_{i,j}+u^{-1}E_{j,i}$ if $i>j$. One sets $M_\alpha=m_\alpha(1)\TS$. 
Let $\mathfrak{S}_{\ell+1}$ be the set of permutations of $\llbracket 1,\ell+1\rrbracket$. For $\sigma\in \mathfrak{S}_{\ell+1}$, one chooses $P_\sigma=(P_{\sigma,i,j})\in G$ such that $P_{\sigma,i,j}\neq 0$ if and only if $j=\sigma(i)$ and such that $\omega(P_{\sigma,i,\sigma(i)})=0$, for $i,j\in \llbracket 1,\ell+1\rrbracket$ .  Let $\NS=\langle M_\alpha\mid\alpha\in \Phi\rangle\subset G$. Then $\NS=\bigcup_{\sigma\in\mathfrak{S}_{\ell+1}} P_\sigma \TS$ and $\NS$ is the normalizer of $\TS$ in $G$. Then $\big(\TS,(U_\alpha,M_{\alpha})_{\alpha\in\Phi}\big)$ is a root group datum and $(\varphi_{\alpha})_{\alpha\in \Phi}$ is a valuation of this datum in the sense of Definitions~\ref{DefRGD} and \ref{DefValuation}. We denote by $\I$ the building associated with this datum in Definition~\ref{DefLambdaBuildingFromDatum}.

Using the notation of subsection~\ref{subsecAction_N_affine_space}, one has $\rho(t)(\alpha_{i,j}\otimes \lambda)=\lambda\big(\omega(t_j)-\omega(t_i)\big)$ for $D(t_1,\ldots,t_{\ell+1})\in \TS$ and $\lambda\in \RF^S$. Thus $\rho(t)=\sum_{i=1}^{\ell} (\omega(t_{i+1})-\omega(t_i))\varpi_{i,i+1}^\vee\in \A_S$. We extend $\rho$ to a map $\rho:\TG\rightarrow \A_S$ by setting $\rho(t)=\sum_{i=1}^{\ell} (\omega(t_{i+1})-\omega(t_i))\varpi_{i,i+1}^\vee$, for $t\in \TG$. Then $\TG$ acts on $\A_S$ by $t.x=x+\rho(t)$ for $x\in \A_S$ and $t\in \TG$. 

\paragraph{The lattice building $\I^\LC$}

Let $\mathbb{O}:=\omega^{-1}(\Lambda_{\geq 0})$ be the ring of integers of $\mathbb{K}$.

 An \textbf{$\mathbb{O}$-lattice} in $\mathbb{K}^{\ell+1}$ is an $\mathbb{O}$-submodule of $\Kb^{\ell+1}$ of the form $\mathbb{O} b_1 \oplus ... \oplus \mathbb{O} b_{\ell+1}$ for some $\Kb$-basis $(b_1,...,b_{\ell+1})$ of $\mathbb{K}^{\ell+1}$. If $L_1$ and $L_2$ are two $\mathbb{O}$-lattices of $\mathbb{K}^{\ell+1}$, we say that they are \textbf{homothetic} if there exists $a\in \mathbb{K}^{\times}$ such that $L_2=aL_1$. In that case, we denote $L_1 \sim L_2$. Let $\I^{\LC}=\I^{\LC}(\mathbb{K},\omega)$ be the set of $\mathbb{O}$-lattices of $\mathbb{K}^{\ell+1}$ modulo the homothety relation. We say that $\I^{\LC}$ is the \textbf{lattice building} of $(G,\omega)$. The image of an $\mathbb{O}$-lattice $L$ in $\I^{\LC}(\mathbb{K},\omega)$ is denoted $[L]$.   If $t\in \TG$ and $[L]\in \A^{\LC}$, then $t.[L]=[t.L]\in \A^{\LC}$. If $(t_i),(x_i)\in (\mathbb{K}^*)^{\ell+1}$, then $D(t_1,\ldots,t_{\ell+1}).[\bigoplus_{i=1}^{\ell+1} \mathbb{O} x_ie_i]=[\bigoplus_{i=1}^{\ell+1}  \mathbb{O} t_i x_i e_i]=[\bigoplus_{i=1}^{\ell+1} \mathbb{O} t_1^{-1}t_i x_i e_i]$. 
 
 Choose a basis $(e_1,\ldots,e_{\ell+1})$ of $\mathbb{K}^{\ell+1}$. Let $\A^{\LC}=\{[\bigoplus_{i=1}^{\ell+1} \mathbb{O} x_ie_i]|(x_i)\in (\mathbb{K}^*)^{\ell+1}\}$ be the \textbf{standard lattice apartment}.  An apartment of $\I^\LC$ is a set of the form $g.\A^\LC$, for some $g\in G$. Set $\mathbb{O}^{\ell+1}=\bigoplus_{i=1}^{\ell+1} \mathbb{O} e_i$. For each $\lambda\in \Lambda$, choose $x_\lambda\in\Kb$ such that $\omega(x_\lambda)=\lambda$. The map $\iota:\Lambda^{\ell+1}/\Lambda (1,\ldots,1)\rightarrow \A^{\LC}$ defined by 
 $\iota(\overline{(\lambda_1,\ldots,\lambda_{\ell+1})})=[\mathbb{O} x_{\lambda_1}e_1\oplus \ldots \oplus \mathbb{O} x_{\lambda_{\ell+1}} e_{\ell+1}]$ for $\overline{(\lambda_1,\ldots,\lambda_{\ell+1})}\in \Lambda^{\ell+1}/\Lambda(1,\ldots,1)$ is a bijection which enables us to identify $\Lambda^{\ell+1}/\Lambda(1,\ldots,1)$ and $\A^\LC$ when convenient. In particular this equips $\A^\LC$ with the structure of a $\Z$-module. For $i,j\in \llbracket 1,\ell+1\rrbracket$ such that $i\neq j$, we define $\alpha_{i,j}^{\LC}:\A^{\LC} \rightarrow \Lambda$ by  $\alpha_{i,j}^{\LC}\big(\overline{(\lambda_1,\ldots,\lambda_{\ell+1})}\big)=\lambda_i-\lambda_j$, for $\overline{(\lambda_1,\ldots,\lambda_{\ell+1}})\in \Lambda^{\ell+1}/\Lambda(1,\ldots,1)$. For $[L],[L']\in \A^{\LC}$, we set $d^{\LC}([L],[L'])=\sum_{i,j\in \llbracket 1,\ell+1\rrbracket |i<j} |\alpha_{i,j}^{\LC}([L]-[L'])|\in \Lambda_{\geq 0}$. By \cite[Example 3.2 \& Remark 3.2]{bennett1994affine}, $d^{\LC}$ extends uniquely to a $G$-invariant distance $d^{\LC}:\I^\LC \rightarrow \Lambda$. 
 Note that although Bennett assumes that $\Lambda$ is a $\Q$-module in \cite{bennett1994affine}, it is useless for \cite[Example 3.2]{bennett1994affine}.

\subsection{Embedding of the lattice building in the parahoric building}\label{ss_Embedding_lattice_building}
In this subsection, we define a $G$-equivariant embedding $\psi:\I^{\LC}\rightarrow \I$ and  describe its image.

If $[L]=[\bigoplus_{i=1}^{\ell+1} \mathbb{O} x_i e_i]\in \A^{\LC}$, we set $\psi([L])=\sum_{i=1}^{\ell} \big(\omega(x_{i+1})-\omega(x_{i})\big)\varpi_{i,i+1}^\vee\in \A_S$.

\begin{lemma}\label{lemN_equivariance_psi}
The map $\psi:\A^{\LC}\rightarrow \A_S$ is $\NGL$-equivariant.
\end{lemma}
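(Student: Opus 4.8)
The statement to prove is that the map $\psi : \A^{\LC} \to \A_S$, defined on the standard lattice apartment by $\psi\big([\bigoplus_i \mathbb{O} x_i e_i]\big) = \sum_{i=1}^{\ell} \big(\omega(x_{i+1}) - \omega(x_i)\big) \varpi^\vee_{i,i+1}$, is equivariant for the action of $\NGL = N_{\mathrm{GL}_{\ell+1}} = \mathbf{N}_{\mathrm{GL}_{\ell+1}}(\mathbb{K})$. Recall that $\NGL = \bigcup_{\sigma \in \mathfrak{S}_{\ell+1}} P_\sigma \TG$, so $\NGL$ is generated by $\TG$ together with the permutation matrices $P_\sigma$. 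Since equivariance is preserved under composition of group elements, it suffices to check it separately on $\TG$ and on the $P_\sigma$ (equivalently, on the $P_\sigma$ for $\sigma$ ranging over a generating set of $\mathfrak{S}_{\ell+1}$, e.g. the simple transpositions, but it is no harder to handle arbitrary $\sigma$ directly). First I would verify that both sides are well defined on homothety classes: replacing $(x_i)$ by $(ax_i)$ changes each $\omega(x_i)$ by the same $\omega(a)$, so the differences $\omega(x_{i+1}) - \omega(x_i)$ are unchanged; and the $\TG$-action on $\A^{\LC}$ and on $\A_S$ are both well defined on classes by the formulas recalled just before the lemma.

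The torus case: for $t = D(t_1,\dots,t_{\ell+1}) \in \TG$ and $[L] = [\bigoplus_i \mathbb{O} x_i e_i]$, one has $t \cdot [L] = [\bigoplus_i \mathbb{O} (t_i x_i) e_i]$, hence
$$\psi(t\cdot[L]) = \sum_{i=1}^\ell \big(\omega(t_{i+1} x_{i+1}) - \omega(t_i x_i)\big)\varpi^\vee_{i,i+1} = \sum_{i=1}^\ell \big(\omega(x_{i+1})-\omega(x_i)\big)\varpi^\vee_{i,i+1} + \sum_{i=1}^\ell \big(\omega(t_{i+1})-\omega(t_i)\big)\varpi^\vee_{i,i+1}.$$
The first sum is $\psi([L])$ and the second is exactly $\rho(t)$ as extended to $\TG$ in the construction of the parahoric building; since $t$ acts on $\A_S$ by translation by $\rho(t)$, this reads $\psi(t\cdot[L]) = t\cdot \psi([L])$, as required.

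The permutation case: for $\sigma \in \mathfrak{S}_{\ell+1}$ and $P_\sigma$ a representative with $P_{\sigma,i,\sigma(i)}$ of valuation $0$, the lattice $P_\sigma \cdot (\bigoplus_i \mathbb{O} x_i e_i)$ equals $\bigoplus_i \mathbb{O} x_{\sigma^{-1}(i)} e_i$ up to homothety (the unit entries do not affect the $\mathbb{O}$-span), so $\psi(P_\sigma \cdot [L]) = \sum_{i=1}^\ell \big(\omega(x_{\sigma^{-1}(i+1)}) - \omega(x_{\sigma^{-1}(i)})\big)\varpi^\vee_{i,i+1}$. On the other side, $P_\sigma$ acts on $\A_S$ through its image in $\NS/\TS \cong W^v = W(\Phi)$, which is the permutation $\sigma$ acting on $X_*(\TS)\otimes\RF^S$; here the one subtle point is that $\varpi^\vee_{i,i+1}$ is a fundamental weight, not a coroot, so its image under $\sigma$ is not simply a permuted fundamental weight. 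The clean way around this is to rewrite $\psi([L])$ in a coordinate-free manner: setting $\lambda_i := \omega(x_i)$, the vector $\sum_{i=1}^\ell (\lambda_{i+1}-\lambda_i)\varpi^\vee_{i,i+1}$ is characterized by $\alpha_{i,i+1}$ taking the value $\lambda_{i+1}-\lambda_i$ on it; equivalently, it is the unique element of $\A_S$ (modulo the line $\sum_i \alpha^\vee_{i,i+1}\cdots$, i.e. in the quotient defining $V_{\mathbb{R}}$) whose pairing against $\alpha_{i,j}$ is $\lambda_j - \lambda_i$ for all $i,j$. Then $\sigma$ acts on this characterization by permuting indices, and one checks $\alpha_{i,j}(\sigma \cdot \psi([L])) = \alpha_{\sigma^{-1}(i),\sigma^{-1}(j)}(\psi([L])) = \lambda_{\sigma^{-1}(j)} - \lambda_{\sigma^{-1}(i)}$, which matches $\alpha_{i,j}(\psi(P_\sigma\cdot[L]))$. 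Since this holds for all roots $\alpha_{i,j}$, the two vectors agree in $\A_S$ (recall $\A_S$ is modelled on $X_*(\mathbf S)\otimes\RF^S$ modulo the orthogonal of the roots, so roots separate points).

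**Main obstacle.** The only place requiring genuine care is the compatibility between the two different "coordinate systems" — lattice coordinates $(\lambda_i)$ versus the fundamental-weight basis $(\varpi^\vee_{i,i+1})$ of $\A_S$ — under the Weyl-group action, since the permutation action is transparent on the $\lambda_i$ but not on the $\varpi^\vee_{i,i+1}$. The resolution is to phrase everything through the root values $\alpha_{i,j}(\,\cdot\,)$, which transform simply under both the $\TG$-action (by translation, adding $\omega(t_j)-\omega(t_i)$) and the permutation action (by relabelling indices), and to invoke that roots separate points of $\A_S$. Once this dictionary is set up, both the torus and permutation computations are immediate, and the general element of $\NGL$ follows by composition.
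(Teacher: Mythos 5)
Your proof is correct, and its overall skeleton (reduce to $\TG$ plus a set of Weyl representatives) matches the paper's, but the way you handle the Weyl part is genuinely different. The paper checks equivariance only on the simple-reflection representatives $m_{\alpha_{i,i+1}}(1)$ and does an explicit computation in the fundamental-coweight basis, using the Cartan integers $\alpha_{i,i+1}(\alpha_{j,j+1}^\vee)$ to verify $\psi\big(m_{\alpha_{i,i+1}}(1).[L]\big)=r_{\alpha_{i,i+1}}\big(\psi([L])\big)$; your argument treats all monomial matrices $P_\sigma$ at once by characterizing $\psi([L])$ through its root values $\alpha_{i,j}(\psi([L]))=\omega(x_j)-\omega(x_i)$ and invoking that the roots separate points of $\A_S$ (true here since $\mathbf{SL}_{\ell+1}$ is semisimple, so $\A_S=X_*(\TS)\otimes\RF^S$ and the $\alpha_{i,i+1}$ form a basis of the dual). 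Your route avoids the coweight bookkeeping entirely and is arguably cleaner; the paper's is more self-contained in that it never needs the separation statement. Two small points you should tighten: first, your formula $P_\sigma\cdot\bigl[\bigoplus_i\mathbb{O} x_ie_i\bigr]=\bigl[\bigoplus_i\mathbb{O} x_{\sigma^{-1}(i)}e_i\bigr]$ has the wrong variance for the paper's convention $P_{\sigma,i,j}\neq 0\Leftrightarrow j=\sigma(i)$ (one gets $e_j\mapsto(\mathrm{unit})\,e_{\sigma^{-1}(j)}$, hence coordinates $(\lambda_i)\mapsto(\lambda_{\sigma(i)})$); this is harmless provided the same convention is used for the action on roots, but it must be made consistent on both sides. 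Second, you assert that $P_\sigma$ acts on $\A_S$ purely through $W^v$, i.e.\ with no translation part; this needs the observation that $P_\sigma$ factors as a product of the $m_{\alpha_{i,i+1}}(1)$ (which fix the chosen origin by Lemma~\ref{LemChoiceOfOrigin}) times a diagonal matrix with unit entries, whose image under $\rho$ vanishes.
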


\begin{proof}
The fact that $\psi$ is $\TG$-equivariant is straightforward. Let $i,j\in \llbracket 1,\ell\rrbracket$. Then: \[\alpha_{i,i+1}(\alpha_{j,j+1}^\vee)=\left\{\begin{aligned}& 2\text{ if }i=j\\
& -1\text{ if }|j-i|=1\\
& 0 \text{ otherwise }\end{aligned}\right..\] Let $i\in \llbracket 1,\ell\rrbracket$. Let $[L]=[\bigoplus_{i=1}^{\ell+1} \mathbb{O} x_i e_i]\in \A^{\LC}$, where $(x_i)\in (\mathbb{K}^*)^{\ell+1}$. Suppose that $i\in \llbracket 2,\ell-2\rrbracket$.  Then: \[\psi([L])=\ldots+\big(\omega(x_i)-\omega(x_{i-1})\big)\varpi_{i-1,i}^\vee+\big(\omega(x_{i+1})-\omega(x_{i})\big)\varpi_{i,i+1}^\vee+\big(\omega(x_{i+2})-\omega(x_{i+1})\big)\varpi_{i+1,i+2}^\vee+\ldots\] and $m_{\alpha_{i,i+1}}(1).[L]=[\bigoplus_{j\in \llbracket 1,\ell\rrbracket \setminus\{i,i+1\}}\mathbb{O} x_je_j\oplus \mathbb{O} x_{i+1}e_i\oplus \mathbb{O} x_i e_{i+1}]$. Therefore: \[\begin{aligned} &\psi\big(m_{\alpha_{i,i+1}}(1).[L]\big)\\ = & \ldots +\big(\omega(x_{i+1})-\omega(x_{i-1})\big)\varpi_{i-1,i}^\vee +\big(\omega(x_i)-\omega(x_{i+1})\big)\varpi_{i,i+1}^\vee+\big(\omega(x_{i+2})-\omega(x_i)\big)\varpi_{i+1,i+2}^\vee+\ldots\\ = & \psi([L])+\big(\omega(x_{i+1})-\omega(x_i)\big)\varpi_{i-1,i}^\vee+2\big(\omega(x_{i})-\omega(x_{i+1})\big)\varpi_{i,i+1}^\vee+\big(\omega(x_{i+1})-\omega(x_{i})\big)\varpi_{i+1,i+2}^\vee+\ldots\\ = & \psi([L])+ \big(\omega(x_{i})-\omega(x_{i+1})\big)(-\varpi_{i-1,i}^\vee+2\varpi_{i,i+1}^\vee-\varpi_{i+1,i+2}^\vee ).\end{aligned}\]

 On the other side, one has: \[\begin{aligned}  r_{\alpha_{i,i+1}}\big(\psi([L])\big) = & \psi([L])-\alpha_{i,i+1}\big(\psi([L])\big)\alpha_{i,i+1}^\vee\\ = & \psi([L])+\big(\omega(x_i)-\omega(x_{i+1})\big)\alpha_{i,i+1}^\vee\\
 = & \psi([L])+\big(\omega(x_i)-\omega(x_{i+1})\big)\sum_{j=1}^{\ell}\alpha_{j,j+1}(\alpha_{i,i+1}^\vee)\varpi_{j,j+1}^\vee\\ 
 = & \psi([L])+ \big(\omega(x_{i})-\omega(x_{i+1})\big)(-\varpi_{i-1,i}^\vee+2\varpi_{i,i+1}^\vee-\varpi_{i+1,i+2}^\vee )=\psi\big(m_{\alpha_{i,i+1}}(1).[L]\big) .\end{aligned}\] Similar computations for $i=1$ and $i=\ell$ prove that $r_{\alpha_{i,i+1}}\big(\psi([L])\big) =\psi\big(m_{\alpha_{i,i+1}}(1).[L]\big)$ for all $i\in \llbracket 1,\ell\rrbracket$.
 
By Lemma~\ref{LemMuNu},  the image of $m_{\alpha_{i,i+1}}(1)$ in $W^v$ is $r_{\alpha_{i,i+1}}$.  We deduce that $\psi$ is $W^v$-equivariant and thus  $\NGL$-equivariant.

\end{proof}

Recall the definition of $\widehat{P}_0$ from Notation~\ref{notParahoric_subgroup}.

\begin{lemma}\label{lemComputation_P_0}
\begin{enumerate}
\item The fixator of $[\mathbb{O} ^{\ell+1}]$ in $G$ is $\mathrm{SL}_{\ell+1}(\mathbb{O} )$.

\item One has $\widehat{P}_{0}=\mathrm{SL}_{\ell+1}(\mathbb{O})$. In particular, the fixator of $0=\psi([\mathbb{O} ^{\ell+1}])$ in $\I$ is $\mathrm{SL}_{\ell+1}(\mathbb{O})$.
\end{enumerate}

\end{lemma}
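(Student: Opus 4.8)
The first part is an elementary statement in linear algebra over the valuation ring $\mathbb{O}$. To compute the fixator of $[\mathbb{O}^{\ell+1}]$ in $G = \mathrm{SL}_{\ell+1}(\mathbb{K})$, I would argue exactly as in Theorem~\ref{sll}(ii): if $g \in G$ satisfies $g \cdot [\mathbb{O}^{\ell+1}] = [\mathbb{O}^{\ell+1}]$, then $g \mathbb{O}^{\ell+1} = a \mathbb{O}^{\ell+1}$ for some $a \in \mathbb{K}^\times$, hence $a^{-1} g \in \mathrm{GL}_{\ell+1}(\mathbb{O})$, so that $\det(a^{-1}g) = a^{-\ell-1} \in \mathbb{O}^\times$. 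Since $\Lambda$ is torsion-free, $(\ell+1)\omega(a) = 0$ forces $\omega(a) = 0$, i.e. $a \in \mathbb{O}^\times$; therefore $g \in \mathrm{GL}_{\ell+1}(\mathbb{O})$ with $\det g = 1$, which is exactly $\mathrm{SL}_{\ell+1}(\mathbb{O})$. Conversely $\mathrm{SL}_{\ell+1}(\mathbb{O})$ clearly stabilizes $\mathbb{O}^{\ell+1}$, hence fixes its class.

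For the second part, I would identify the point $0 \in \mathbb{A}_S$ with its preimage $[\mathbb{O}^{\ell+1}] \in \mathcal{I}^{\mathcal{L}}$ via $\psi$, and compute $\widehat{P}_0$ using the results of Section~\ref{SecParahoricBruhat}. By Corollary~\ref{CorPchapeauOmegaQC} (applied with $\Omega = \{0\}$ and a basis $\Delta$ of $\Phi$), we have $\widehat{P}_0 = U_{0,\Delta}^+ \, U_{0,\Delta}^- \, \widehat{N}_0$, where $U_{0,\Delta}^{\pm} = U_0 \cap U_\Delta^{\pm}$ and, by Example~\ref{ExQC}, $U_{\alpha,0} = \{ x_\alpha(u),\ \omega(u) \geq 0\} = \mathbf{U}_\alpha(\mathbb{O})$ for each $\alpha \in \Phi$. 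Since $\mathbb{O}$ is the valuation ring of a valued field (hence a local ring), $\mathrm{SL}_{\ell+1}(\mathbb{O})$ is generated by $\mathbf{T}_{\mathrm{SL}_{\ell+1}}(\mathbb{O})$ and the $\mathbf{U}_\alpha(\mathbb{O})$ — this is exactly the argument used in Section~\ref{SecQuasiSplitGroups} for the split case via \cite{Abe}. All these generating subgroups fix $[\mathbb{O}^{\ell+1}]$ (the unipotent ones obviously, the torus $\mathbf{T}_{\mathrm{SL}_{\ell+1}}(\mathbb{O})$ because $\omega(t_i) = 0$ for all entries so $\rho(t) = 0$), hence $\mathrm{SL}_{\ell+1}(\mathbb{O}) \subseteq \widehat{P}_0$. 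Conversely, $\widehat{P}_0$ is contained in the fixator of $[\mathbb{O}^{\ell+1}]$ by Lemma~\ref{lemParahoric_fixator}, which is $\mathrm{SL}_{\ell+1}(\mathbb{O})$ by part (1). Therefore $\widehat{P}_0 = \mathrm{SL}_{\ell+1}(\mathbb{O})$, and the fixator of $0$ in $\mathcal{I}$ is again $\mathrm{SL}_{\ell+1}(\mathbb{O})$ since, by Lemma~\ref{lemParahoric_fixator}, $\widehat{P}_0$ is precisely the pointwise stabilizer of $\{0\}$ in $G$.

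The only mild subtlety — what I expect to be the main (small) obstacle — is checking that $\widehat{N}_0 \subseteq \mathrm{SL}_{\ell+1}(\mathbb{O})$, i.e. that the part of $\widehat{P}_0$ coming from $N$ also lies in $\mathrm{SL}_{\ell+1}(\mathbb{O})$: an element $n \in N$ fixing $0$ acts on $\mathbb{A}_S$ by a linear (vectorial) Weyl element, and is represented by a monomial matrix $P_\sigma t$ with $\sigma \in \mathfrak{S}_{\ell+1}$ and $t \in \TS$; the condition $\nu(n)(0) = 0$ together with $\rho(t) = \sum_i (\omega(t_{i+1}) - \omega(t_i)) \varpi_{i,i+1}^\vee = 0$ forces all $\omega(t_i)$ equal, hence (up to the scalar normalization, which is absorbed by $\mathrm{SL}$) $t \in \mathbf{T}_{\mathrm{SL}_{\ell+1}}(\mathbb{O})$, and one may choose the monomial representatives $P_\sigma$ to have entries of valuation $0$ as prescribed in the setup. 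Then $n \in \mathrm{SL}_{\ell+1}(\mathbb{O})$. Alternatively, and more cleanly, one simply invokes part (1): since $\widehat{P}_0$ fixes $[\mathbb{O}^{\ell+1}]$ it is contained in $\mathrm{SL}_{\ell+1}(\mathbb{O})$ outright, so only the reverse inclusion needs the generation-by-root-subgroups argument. I would present it in this second, shorter form.
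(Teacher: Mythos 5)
Your proof is correct and follows essentially the same route as the paper: part (1) is the standard determinant argument, the inclusion $U_0\subseteq \mathrm{SL}_{\ell+1}(\mathbb{O})$ is immediate, and $\widehat N_0\subseteq \mathrm{SL}_{\ell+1}(\mathbb{O})$ is the only real check. For the reverse inclusion the paper decomposes $g=u_-tu_+$ by explicit Gaussian elimination over the local ring $\mathbb{O}$ rather than citing Abe's generation result, but that is the same decomposition obtained by hand. One caution: your phrase ``$\widehat P_0$ is contained in the fixator of $[\mathbb{O}^{\ell+1}]$ by Lemma~\ref{lemParahoric_fixator}'' is not a valid appeal to that lemma — it identifies $\widehat P_0$ with the fixator of $0$ in the parahoric building $\I$, not with the fixator of the lattice class, and the compatibility of the two fixators is precisely what is being established here (it is needed later to show $\psi$ is well defined). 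The containment must instead be verified on generators, which you do: $U_0$ visibly lies in $\mathrm{SL}_{\ell+1}(\mathbb{O})$, and your monomial-matrix computation for $\widehat N_0$ is a direct substitute for the paper's route through the $\NS$-equivariance and injectivity of $\psi$ on $\A^{\LC}$ combined with part (1). With that phrasing repaired, the argument is complete.
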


\begin{proof}
(1) Let $g\in \mathrm{SL}_{\ell+1}(\mathbb{O})$. Then $g.\mathbb{O}^{\ell+1}\subset \mathbb{O}^{\ell+1}$ and $g^{-1}.\mathbb{O}^{\ell+1}\subset \mathbb{O}^{\ell+1}$. Thus $g.\mathbb{O}^{\ell+1}=\mathbb{O}^{\ell+1}$ and $\mathrm{SL}_{\ell+1}(\mathbb{O})$ fixes $[\mathbb{O}^{\ell+1}]$. Reciprocally,  let $g\in G$ be such that $g.[\mathbb{O}^{\ell+1}]=[\mathbb{O}^{\ell+1}]$. Then there exists $\lambda\in \Kb^*$ such that $g.\mathbb{O}^{\ell+1}=\lambda \mathbb{O}^{\ell+1}$. Then $g\in \mathrm{SL}_{\ell+1}(\lambda \mathbb{O})$. Therefore $1=\det(g)\in \lambda^{\ell+1} \mathbb{O}$ and hence $\omega(\lambda^{\ell+1})=(\ell+1)\omega(\lambda)\leq 0$. As $g^{-1}$ fixes $[\mathbb{O}^{\ell+1}]$, we also deduce that $\omega(\lambda)\geq 0$. Therefore $\omega(\lambda)=0$ and thus $g\in \mathrm{SL}_{\ell+1}(\mathbb{O})$, which proves (1). 

(2) By definition, $\widehat{P}_0=\langle \widehat{N}_0,U_0\rangle$. By definition and by Lemma~\ref{lemN_equivariance_psi}, one has, \[ \begin{aligned}\widehat{N}_0=\{n\in \NS |n.0=0=n.\psi([\mathbb{O} ^{\ell+1}])=\psi(n.[\mathbb{O} ^{\ell+1}])&=\psi([\mathbb{O} ^{\ell+1}])\}\\
& =\{n\in \NS |n.[\mathbb{O} ^{\ell+1}] = [\mathbb{O} ^{\ell+1}]\} \\ &= \NS\cap \mathrm{SL}_{\ell+1}(\mathbb{O} )\end{aligned}\] by (1). Moreover, $U_{0}=\langle U_{\alpha,0}|\alpha\in \Phi\rangle\subset \mathrm{SL}_{\ell+1}(\mathbb{O})$ by definition. Consequently, $\widehat{P}_0\subset \mathrm{SL}_{\ell+1}(\mathbb{O})$. Reciprocally, let $g\in \mathrm{SL}_{\ell+1}(\mathbb{O})$. Using Gaussian elimination, we can write $g=u_- t u_+$, where $u_-\in \mathrm{SL}_{\ell+1}(\mathbb{O})$ is lower triangular, $t \in \TS\cap \mathrm{SL}_{\ell+1}(\mathbb{O})$ and $u_+\in \mathrm{SL}_{\ell+1}(\mathbb{O})$ is upper triangular. Then $u_-\in U_0$, $t\in \widehat{N}_0$ and $u_+\in U_0$, which proves that $g\in \widehat{P}_0$. Therefore $\widehat{P}_0=\mathrm{SL}_{\ell+1}(\mathbb{O})$. 
\end{proof}

\begin{Lem}\label{lemComputation_Pt0}
Let $t\in \TG$. Then the fixator of $t.[\mathbb{O}^{\ell+1}]$ in $G$ and the fixator of $t.0$ in $G$ are equal to $t\mathrm{SL}_{\ell+1}(\mathbb{O})t^{-1}$.
\end{Lem}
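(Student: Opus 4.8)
The plan is to reduce the statement to the two lemmas already proved, namely Lemma~\ref{lemComputation_P_0} (which computes the fixator of $0$ and of $[\mathbb{O}^{\ell+1}]$) together with the $\TG$-equivariance of $\psi$ from Lemma~\ref{lemN_equivariance_psi}, by a routine conjugation argument. Write $x_0 = [\mathbb{O}^{\ell+1}] \in \I^{\LC}$ and recall that $\psi(x_0) = 0 \in \A_S$ by definition of $\psi$. By Lemma~\ref{lemComputation_P_0}, the fixator of $x_0$ in $G$ equals $\mathrm{SL}_{\ell+1}(\mathbb{O})$ and the fixator of $0$ in $G$ also equals $\widehat{P}_0 = \mathrm{SL}_{\ell+1}(\mathbb{O})$.

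First I would treat the lattice building. Since the $G$-action on $\I^{\LC}$ satisfies $g \cdot (h \cdot y) = (gh) \cdot y$, an element $g \in G$ fixes $t \cdot x_0$ if and only if $t^{-1} g t$ fixes $x_0$, i.e. if and only if $t^{-1} g t \in \mathrm{SL}_{\ell+1}(\mathbb{O})$, which is exactly $g \in t\,\mathrm{SL}_{\ell+1}(\mathbb{O})\,t^{-1}$. This gives the first assertion. For the second, I would use that $\psi$ is $\TG$-equivariant (Lemma~\ref{lemN_equivariance_psi} applies since $\TG \subset \NGL$, or directly by the explicit $\TG$-equivariance noted in its proof), so $\psi(t \cdot x_0) = t \cdot \psi(x_0) = t \cdot 0$. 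Then the same conjugation trick in $\I$ — $g$ fixes $t\cdot 0$ iff $t^{-1}gt$ fixes $0$ iff $t^{-1}gt \in \widehat{P}_0 = \mathrm{SL}_{\ell+1}(\mathbb{O})$ — yields that the fixator of $t \cdot 0$ is $t\,\mathrm{SL}_{\ell+1}(\mathbb{O})\,t^{-1}$ as well, hence equal to the fixator of $t \cdot [\mathbb{O}^{\ell+1}]$.

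There is essentially no obstacle here: the whole content is packaged in Lemma~\ref{lemComputation_P_0} and the equivariance of $\psi$, and what remains is the observation that stabilizers transform by conjugation under a group action. The only mild point of care is to make sure the action of $G$ on $\I$ is the left action $g\cdot[h,x] = [gh,x]$ from Definition~\ref{DefLambdaBuildingFromDatum}, so that $\mathrm{Fix}_G(t\cdot p) = t\,\mathrm{Fix}_G(p)\,t^{-1}$ holds with the conjugation on the correct side; this is immediate from $g \cdot (t\cdot p) = t\cdot p \iff (t^{-1}gt)\cdot p = p$. I would present the proof in three short sentences, invoking Lemma~\ref{lemComputation_P_0} and Lemma~\ref{lemN_equivariance_psi} explicitly, and not belabor the elementary group-action identity.
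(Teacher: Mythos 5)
Your treatment of the lattice building is fine: $\mathrm{GL}_{\ell+1}(\mathbb{K})$ genuinely acts on the set of $\mathbb{O}$-lattice classes, $G$ is normal in it, so $\mathrm{Fix}_G(t\cdot[\mathbb{O}^{\ell+1}])=t\,\mathrm{Fix}_G([\mathbb{O}^{\ell+1}])\,t^{-1}=t\,\mathrm{SL}_{\ell+1}(\mathbb{O})\,t^{-1}$ follows from Lemma~\ref{lemComputation_P_0} by honest conjugation. This matches the paper's (terse) first step.

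The second half has a genuine gap. The hypothesis is $t\in\TG=\mathbf{T}_{\mathrm{GL}_{\ell+1}}(\mathbb{K})$, and such a $t$ is in general \emph{not} an element of $G=\mathrm{SL}_{\ell+1}(\mathbb{K})$ (and the statement really needs this generality: not every point of $\A^{\LC}$, resp.\ of $\A_\Lambda$, lies in the $\TS$-orbit of the base point, since that would force $\prod_i x_i$ to be an $(\ell+1)$-th power). The paper only defines an action of $\TG$ on the standard apartment $\A_S$, namely translation by $\rho(t)$; it does not define an action of $\TG$ on the whole building $\I$. Consequently $t\cdot 0$ is just a point of $\A_S$, and your equivalence ``$g\cdot(t\cdot 0)=t\cdot 0\iff(t^{-1}gt)\cdot 0=0$'' is not the elementary group-action identity: to pass from the left side to the right you must apply $t^{-1}$ to the point $g\cdot(t\cdot 0)$, which lies in $\I$ but typically not in $\A_S$, and you must know that this putative $\mathrm{GL}_{\ell+1}(\mathbb{K})$-action is compatible with the $G$-action. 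Establishing that compatibility is exactly the content you are trying to prove. (For $t\in\TS$ your argument would be correct, since then $t\in G$ and $t\cdot 0=[1,\nu(t)(0)]=[1,\rho(t)]$.)

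What the paper does instead is compute the fixator of $t.0$ directly: by Lemma~\ref{lemParahoric_fixator} it equals $\widehat{P}_{t.0}=\langle\widehat{N}_{t.0},U_{t.0}\rangle$, and one checks generator by generator that this is $t\widehat{P}_0t^{-1}$. Concretely, $\widehat{N}_{t.0}=t\widehat{N}_0t^{-1}$ because conjugation by $t$ preserves $\NS$ and intertwines the affine actions (this is where the $\NGL$-compatibility from Lemma~\ref{lemN_equivariance_psi} enters), and for $\alpha=\alpha_{i,j}$ a short valuation computation gives $\varphi_\alpha(tut^{-1})=\varphi_\alpha(u)+\omega(t_i)-\omega(t_j)$, hence $tU_{\alpha,0}t^{-1}=U_{\alpha,-\alpha(t.0)}=U_{\alpha,t.0}$. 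Then Lemma~\ref{lemComputation_P_0} finishes. To repair your proof you should either carry out this computation of $\widehat{P}_{t.0}$, or first prove that $t\cdot[g,x]:=[tgt^{-1},x+\rho(t)]$ is a well-defined action of $\TG$ on $\I$ extending the translation on $\A_S$ --- but verifying well-definedness amounts to the same conjugation identities for $\widehat{N}_x$ and $U_x$, so nothing is saved.
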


\begin{proof}
By Lemma~\ref{lemComputation_P_0}, the fixator of $t.[\mathbb{O}^{\ell+1}]$ in $G$ is $t\mathrm{SL}_{\ell+1}(\mathbb{O})t^{-1}$. By definition, the fixator of $t.0$ in $G$ is $\widehat{P}_{t.0}=\langle \widehat{N}_{t.0},U_{t.0}\rangle$. One has $\widehat{N}_{t.0}=t \widehat{N}_0 t^{-1}$ because $\widehat{N}_{t.0}$ is the fixator of $t.0$ in $\NS$. Let $\alpha\in \Phi$. Write $\alpha=\alpha_{i,j}$, with $i,j\in \llbracket 1,\ell+1\rrbracket $ such that $i\neq j$. Write $t=D(t_1,\ldots,t_{\ell+1})$, with $t_1,\ldots,t_{\ell+1}\in \Kb^{\ell+1}$. Let $u\in U_\alpha$.  Then $\varphi_\alpha(tut^{-1})=\varphi_\alpha(u)+\omega(t_i)-\omega(t_j)$.  Thus $tU_{\alpha,0}t^{-1}=U_{\alpha,\omega(t_i)-\omega(t_j)}=U_{\alpha,-\alpha(t.0)}=U_{\alpha,t.0}$. Thus $\widehat{P}_{t.0}=\langle t \widehat{N}_0 t^{-1}, tU_{\alpha,0}t^{-1}|\alpha\in \Phi\rangle=t\widehat{P}_0 t^{-1}$ and we conclude with Lemma~\ref{lemComputation_P_0}.
\end{proof}

Let $\A_\Lambda=\{x\in \A\mid \alpha(x)\in \Lambda,\ \forall\alpha\in \Phi\}$. 

\begin{proposition}\label{propComparison_constructions_SL}
\begin{enumerate}
\item The map $\psi:\I^{\LC}\rightarrow \I$ defined by $\psi(g.[L])=g.\psi([L])$ for $g\in \mathrm{SL}_{\ell+1}(\Kb)$  and $[L]\in \A^{\LC}$ is well defined and is an isometry. 

\item One has $\psi^{-1}(\A)=\A^{\LC}$. 

\item The image of $\psi$ is the set $G.\A_\Lambda$.
\end{enumerate}
\end{proposition}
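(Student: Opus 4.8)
�The plan is to prove the three assertions in order, reducing everything to the computation of stabilizers and the $N$-equivariance established in Lemmas~\ref{lemN_equivariance_psi}, \ref{lemComputation_P_0} and \ref{lemComputation_Pt0}.

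For (1), I first need to check that $\psi$ is well defined, i.e. that $\psi(g.[L])=g.\psi([L])$ does not depend on the way one writes a point of $\I^\LC$ as $g.[L]$ with $g \in G$ and $[L]\in \A^\LC$. Suppose $g_1.[L_1]=g_2.[L_2]$ with $[L_i]\in \A^\LC$. Since $G$ acts transitively on the apartments of $\I^\LC$ and $\A^\LC$ is stabilized by $\NGL$, a standard argument (analogous to Proposition~\ref{propAxiom(A2)}) shows there is $n\in \NGL$ with $g_2^{-1}g_1.[L] = n.[L]$ for all $[L]\in \A^\LC\cap g_1^{-1}g_2.\A^\LC \ni [L_1]$; using $SL$ versus $GL$ one reduces to $n \in \NS \cdot \TG$, and then $\psi(g_2^{-1}g_1.[L_1]) = n.\psi([L_1])$ by Lemma~\ref{lemN_equivariance_psi} (extended to $\TG$, which acts on $\A_S$ by the translations $\rho$, compatibly with its action on $\A^\LC$). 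Applying $g_2$ gives $g_1.\psi([L_1]) = g_2.\psi([L_2])$. The $G$-equivariance of $\psi$ is then immediate. To see that $\psi$ is an isometry it suffices, by $G$-invariance of both distances, to check $d\big(\psi([L]),\psi([L'])\big)=d^\LC([L],[L'])$ for $[L],[L']\in \A^\LC$; this is a direct computation comparing the standard $\RF^S$-distance on $\A_S$ (written in terms of $\sum_{i<j}|\alpha_{i,j}|$) with $d^\LC = \sum_{i<j}|\alpha_{i,j}^\LC|$, using the explicit formula for $\psi$ and the identity $\alpha_{i,j}\big(\psi([L])\big) = \alpha_{i,j}^\LC([L])$. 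Injectivity of $\psi$ follows from it being an isometry.

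For (2), the inclusion $\A^\LC \subseteq \psi^{-1}(\A)$ is clear. Conversely, suppose $g.[L]\in \A$ with $[L]\in \A^\LC$, $g\in G$; I want $g.[L]\in \A^\LC$. By part (1), $\psi(g.[L]) = g.\psi([L]) \in \A = \A_S$, so $g$ sends the point $\psi([L])\in \A_S$ into $\A_S$. Arguing as in Proposition~\ref{propAxiom(A2)}, up to multiplying $g$ on the right by an element of the stabilizer of $\psi([L])$ we may assume $g\in \mathrm{Stab}_G(\A_S) = N = \NS$ (Corollary~\ref{CorStabilizerA}), using Lemma~\ref{lemComputation_Pt0} to identify the stabilizer of $\psi([L])=t.0$ with $t\mathrm{SL}_{\ell+1}(\mathbb{O})t^{-1}$, which also stabilizes $t.[\mathbb{O}^{\ell+1}]=[L]$; hence $g.[L] = n.[L]$ for some $n\in \NS$, and $\NS$ preserves $\A^\LC$.

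For (3), the point is that $\psi(\A^\LC)=\A_\Lambda$: indeed $\psi([L])$ runs over the lattice points of $\A_S$, which are exactly the $x\in\A_S$ with $\alpha(x)\in\Lambda$ for all $\alpha\in\Phi$, by the formula $\alpha_{i,j}(\psi([L]))=\alpha_{i,j}^\LC([L])\in\Lambda$ together with surjectivity of $[L]\mapsto\big(\omega(x_i)\big)$ onto $\Lambda^{\ell+1}/\Lambda(1,\dots,1)$. By $G$-equivariance, $\mathrm{Im}(\psi) = G.\psi(\A^\LC) = G.\A_\Lambda$. I expect the main obstacle to be the well-definedness argument in (1): making precise, without circularity, the reduction "$g_2^{-1}g_1$ acts like an element of $\NGL$ on the overlap of two lattice apartments," since at this stage we should only invoke Proposition~\ref{propAxiom(A2)} for $\I$ (already proved) and transport it to $\I^\LC$ via $\psi$ — which requires a little care to avoid assuming what we are proving. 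The cleanest route is probably to prove (1) and (2) together: first establish $\psi$ well-defined on $\A^\LC$, then use Lemma~\ref{lemComputation_Pt0} to compare stabilizers of $[L]$ in $\I^\LC$ and of $\psi([L])$ in $\I$, deduce that $\psi$ descends to a well-defined $G$-map $\I^\LC\to\I$, and only afterwards verify it is isometric.
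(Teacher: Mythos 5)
Your proposal is correct and follows essentially the same route as the paper: well-definedness is obtained from the $\NS$-equivariance of $\psi$ on $\A^{\LC}$ (Lemma~\ref{lemN_equivariance_psi}) combined with the coincidence of the fixators of $[L]$ and of $\psi([L])$ (Lemma~\ref{lemComputation_Pt0}), the isometry is checked on $\A^{\LC}$ via $\alpha_{i,j}\circ\psi=\alpha_{i,j}^{\LC}$ and transported by $G$-equivariance, and the image is computed as $G.\A_\Lambda$. The circularity you flag is resolved exactly as in your closing paragraph — the paper invokes axiom~\ref{axiomA2} for $\I^{\LC}$ directly from Bennett's Example 3.2 (so nothing about $\I^{\LC}$ is deduced through $\psi$), and your ``applying $g_2$'' step is precisely where the stabilizer comparison of Lemma~\ref{lemComputation_Pt0} must be inserted, as you correctly note.
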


\begin{proof}
(1) Let $g_1,g_2\in  G$ and $[L_1],[L_2]\in \A^{\LC}$ be such that $g_1.[L_1]=g_2.[L_2]$. Let us prove that $g_1.\psi([L_1])=g_2.\psi([L_2])$. By assumption, there exists $g\in G$ such that $g.[L_1]=[L_2]\in \A^{\LC}$. By~\ref{axiomA2} of \cite[Example 3.2]{bennett1994affine}, there exists $n\in \NS$ such that $g.[L1]=[L_2]=n.[L_1]$. Write $[L_1]=t.[\mathbb{O}^{\ell+1}]$, with $t\in \TG$. Then  $g_1t.[\mathbb{O}^{\ell+1}]=g_2nt.[\mathbb{O}^{\ell+1}]$ and thus $n^{-1}g_2^{-1}g_1t .[\mathbb{O}^{\ell+1}]=t.[\mathbb{O}^{\ell+1}]$. By Lemma~\ref{lemComputation_Pt0} and Lemma~\ref{lemN_equivariance_psi}, we deduce that \[n^{-1}g_2^{-1}g_1t .\psi([\mathbb{O}^{\ell+1}])=t.\psi([\mathbb{O}^{\ell+1}])=n^{-1}g_2^{-1}g_1.\psi([L_1])=\psi([L_1]).\]  Thus $g_1.\psi([L_1])=g_2.\psi([L_2])$ which proves that $\psi$ is well defined. It is $G$-equivariant by definition.

Let us prove that $\psi$ is an isometry. Let $k\in \llbracket 1,\ell\rrbracket$ and $[L]=[\bigoplus_{i=1}^{\ell+1} \mathbb{O} x_i e_i]\in \A^{\LC}$. Then $\alpha_{k,k+1}\big(\psi([L])\big)=\omega(x_{k+1})-\omega(x_k)=\alpha_{k,k+1}^{\LC}([L])$. Therefore for all $i,j\in \llbracket 1,\ell+1\rrbracket $ such that $i\neq j$, $\alpha_{i,j}\big(\psi([L])\big)=\alpha_{i,j}^{\LC}([L])$. Therefore the restriction of $\psi$ to $\A^\LC$ is an isometry. Let $[L_1],[L_2]\in \I^{\LC}$. By \cite[Example 3.2]{bennett1994affine}, $\I^{\LC}$ satisfies~\ref{axiomA3} and thus there exists $g\in G$ such that $g.[L_1],g.[L_2]\in \A^{\LC}$. Then \[\begin{aligned} d([L_1],[L_2])=d(g.[L_1],g.[L_2])=d\big(\psi(g.[L_1]),\psi(g.[L_2])\big)&= d\big(g.\psi([L_1],g.\psi([L_2])\big)\\ &=d\big(\psi([L_1]),\psi([L_2])\big),\end{aligned}\] which proves that $\psi$ is an isometry.

(2) Let $[L]\in \I^{\LC}$ be such that $\psi([L])\in \A_S$. Write $[L ]=g.[L']$, with $g\in G$ and $[L']\in \A^{\LC}$. Then $\psi([L])=\psi(g.[L'])=g.\psi([L'])$. By Proposition~\ref{propAxiom(A2)}, we deduce that there exists $n\in \NS$ such that $g.\psi([L'])=n.\psi([L'])$. Therefore $\psi([L])=\psi(g.[L'])=\psi(n.[L'])$ and as $\psi$ is injective, $n.[L']=[L]$. As $\NS$ stabilizes $\A^{\LC}$, we deduce that $[L]\in \A^{\LC}$. Therefore $\psi^{-1}(\A)=\A^{\LC}$, which proves (2).

(3) It remains to determine the image of $\psi$. By (2) and by $G$-equivariance, it suffices to prove that $\psi(\A^{\LC})=\A_\Lambda$. Let $(x_i)\in \Lambda^{\ell+1}$. Then $\psi([\bigoplus_{i=1}^{\ell+1} \mathbb{O}  x_i e_i])=\sum_{i=1}^{\ell} \big(\omega(x_i)-\omega(x_{i+1})\big)\varpi_{i,i+1}^\vee$ and hence $\alpha_{j,j+1}\big(\psi([\bigoplus_{i=1}^{\ell+1} \mathbb{O}  x_i e_i])\big)\in \Lambda$ for all $j\in \llbracket 1,\ell\rrbracket$. Thus
$\alpha\big(\psi([\bigoplus_{i=1}^{\ell+1} \mathbb{O}  x_i e_i])\big)\in \Lambda$, for all $\alpha\in \Phi$.   Reciprocally, let $x\in \A_\Lambda$,  $(\lambda_i)=\big(\alpha_{i,i+1}(x)\big)_{i\in\llbracket 1,\ell\rrbracket}\in \Lambda^{\ell}$ and $(x_i)\in (\Kb^*)^{\ell}$ be such that $\omega(x_\ell)=\lambda_\ell$, $\omega(x_{\ell-1})=\lambda_{\ell-1}+\lambda_\ell$, $\ldots$, $\omega(x_1)=\lambda_1+\ldots+\lambda_\ell$. Set $x_{\ell+1}=1$. Then 
$\psi([\bigoplus_{i=1}^{\ell+1}\mathbb{O}  x_i e_i])=\sum_{i=1}^{\ell} \lambda_i\varpi_{i,i+1}^\vee$, which proves that $\psi(\A^{\LC})=\A_{\Lambda}$ and completes the proof of the proposition. 
\end{proof}

Let $\Lambda_0$ be a strict convex subgroup of $\Lambda$. Set $\Lambda_1=\Lambda/\Lambda_0$. Denote by $\omega_1:\mathbb{K}\rightarrow \Lambda_1\cup \{\infty\}$ the composition of $\omega$ with the projection $\Lambda\rightarrow \Lambda_1$. Let $\pi:\I(\Kb,\omega,\GB)\rightarrow \I(\mathbb{K},\omega_1,\GB)$ be the projection defined in subsection~\ref{SubsecContructionFibers}. Let $\mathbb{O} _1=\{x\in \mathbb{K}|\omega_1(x)\geq 0\}$. One has $\mathbb{O} _1\supset \mathbb{O} $. Let $\I^{\LC}_1$ be the set of $\mathbb{O} _1$-lattices of $\Kb^{\ell+1}$. Let $\pi^\LC:\I^\LC\rightarrow\I_1^{\LC}$ be defined by $\pi^\LC([L])=[\mathbb{O} _1. L]$, for $[L]\in \I^{\LC}$, where $\mathbb{O} _1.L$ is the $\mathbb{O} _1$-submodule of $\Kb^{\ell+1}$ generated by $L$. Let $\psi_1:\I_1^\LC\rightarrow \I^\LC$ be defined by $\psi_1(g.[L])= \sum_{i=1}^{\ell} \big(\omega_1(x_{i+1})-\omega_1(x_{i})\big)\varpi_{i,i+1}^\vee\in \pi(\A^\LC)$, for $[L]=[\bigoplus_{i=1}^{\ell+1} \mathbb{O} _1 x_i e_i]\in \A_1^{\LC}$ and $g\in G$. By the following corollary, the projection $\pi$ defined in subsection~\ref{SubsecContructionFibers} corresponds to  a tensorization by $\mathbb{O}_1$ in the particular case of $\mathrm{SL}_{\ell+1}$. This is the viewpoint of Parshin in \cite{parshin1994higher}.

\begin{corollary}
One has $\pi\circ \psi=\psi_1\circ\pi^{\LC}$. 
\end{corollary}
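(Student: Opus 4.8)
The identity $\pi\circ\psi=\psi_1\circ\pi^{\LC}$ is an equality of maps $\I^{\LC}\to\I(\mathbb{K},\omega_1,\GB)$, so by $G$-equivariance of all four maps ($\psi$ and $\pi$ are $G$-equivariant by Proposition~\ref{propComparison_constructions_SL} and subsection~\ref{SubsecContructionFibers}, and $\psi_1$, $\pi^{\LC}$ are $G$-equivariant by construction) it suffices to check the equality on the standard lattice apartment $\A^{\LC}$, since $G\cdot\A^{\LC}=\I^{\LC}$ by axiom~\ref{axiomA3} for $\I^{\LC}$. So first I would reduce to a point $[L]=[\bigoplus_{i=1}^{\ell+1}\mathbb{O} x_ie_i]\in\A^{\LC}$ with $(x_i)\in(\Kb^\times)^{\ell+1}$.

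Next, the plan is to compute both sides explicitly on such an $[L]$. On the right: $\pi^{\LC}([L])=[\mathbb{O}_1.L]=[\bigoplus_{i=1}^{\ell+1}\mathbb{O}_1 x_ie_i]$ is in $\A_1^{\LC}$, and applying $\psi_1$ gives $\sum_{i=1}^{\ell}\bigl(\omega_1(x_{i+1})-\omega_1(x_i)\bigr)\varpi_{i,i+1}^\vee$. On the left: $\psi([L])=\sum_{i=1}^{\ell}\bigl(\omega(x_{i+1})-\omega(x_i)\bigr)\varpi_{i,i+1}^\vee\in\A_S$, and I must apply the projection $\pi$ to this element. Here I would invoke the description of $\pi$ on the standard apartment from subsection~\ref{SubsecContructionFibers}: $\pi$ restricted to $\A_S=\A_\R\otimes\RF^S$ is induced by tensorization from the projection $\RF^{\mathrm{rk}(\Lambda)}\to\RF^{\mathrm{rk}(\Lambda_1)}$, which on the image of $\Lambda$ is exactly the projection $\Lambda\to\Lambda_1$, i.e.\ the valuation $\omega_1=\pi_{\RF^S}\circ\omega$ corresponds to $\omega$ composed with this linear projection. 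Since $\varpi_{i,i+1}^\vee$ is a fixed vector in $V_\mathbb{Z}$ (independent of the valuation), $\pi$ sends $\sum_i\bigl(\omega(x_{i+1})-\omega(x_i)\bigr)\varpi_{i,i+1}^\vee$ to $\sum_i\pi_{\RF^S}\bigl(\omega(x_{i+1})-\omega(x_i)\bigr)\varpi_{i,i+1}^\vee=\sum_i\bigl(\omega_1(x_{i+1})-\omega_1(x_i)\bigr)\varpi_{i,i+1}^\vee$, which matches $\psi_1(\pi^{\LC}([L]))$. Thus the two maps agree on $\A^{\LC}$.

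The main point to be careful about is not a computational obstacle but a bookkeeping one: I must make sure the target apartment is being used consistently. Specifically, $\psi_1$ is defined to land in $\pi(\A^{\LC})$ viewed inside $\I(\mathbb{K},\omega_1,\GB)$, and I should check that $\pi(\A_S)=\A_{\leq}$ (the standard apartment of $\I(\mathbb{K},\omega_1,\GB)$) is identified with $\psi_1(\A_1^{\LC})$ compatibly with the identifications $\psi:\A^{\LC}\hookrightarrow\A_S$ and $\psi_1:\A_1^{\LC}\hookrightarrow\A_{\leq}$; this is exactly the content of Proposition~\ref{propComparison_constructions_SL}(2)--(3) applied to both valuations $\omega$ and $\omega_1$, together with the fact that $\pi(\A_\Lambda)\subseteq(\A_1)_{\Lambda_1}$. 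Once this compatibility of standard apartments is in place, the computation above closes the argument, and by $G$-equivariance the identity extends from $\A^{\LC}$ to all of $\I^{\LC}$.
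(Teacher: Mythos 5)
Your proof is correct and follows the same route as the paper: the paper's proof likewise checks the identity on $\A^{\LC}$ directly from the defining formulas of $\psi$ and $\psi_1$ (using $\omega_1=\pi\circ\omega$) and then extends to all of $\I^{\LC}$ by $G$-equivariance of the four maps. Your version simply spells out the computation and the apartment-compatibility bookkeeping that the paper leaves implicit.
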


\begin{proof}
If $[L]\in \A^{\LC}$, then by the formulas defining $\psi$ and $\psi_1$, one has $\pi\circ\psi([L])=\psi_1\circ\pi^{\LC}([L])$. The proposition follows by $G$-equivariance of $\pi,\pi^\LC, \psi,\psi_1$.
\end{proof}

\subsection{The Parshin building  of \texorpdfstring{$\mathrm{SL}_2$}{SL(2)} of a \texorpdfstring{$2$}{2}-local field revisited}

We now assume for simplicity that $d=2$ and that $S=\{1,2\}$.   In \cite{parshin1994higher}, Parshin constructs a kind of boundary to $\I$ (he works with $\mathrm{PGL}_{\ell+1}$ instead of $\mathrm{SL}_2$). We now give a new construction of this boundary, using the axioms of $\Lambda$-buildings. This construction relies on the following intuition. Let $x=(x_1,x_2)\in \A_S$. Then $(x_1,x_2+n)$ and $(x_1,x_2-n)$ converge to ``boundary points'' $(x_1,+\infty)$ and $(x_1,-\infty)$ when $n$ diverges to $+\infty$. We thus construct a set $\partial^{(-,\infty)}  \A_S$ corresponding to $\{(x,\epsilon\infty)\mid x\in \A_\R,\epsilon\in \{-,+\}\}$. We then define a set $\partial^{(-,\infty)}  \I$ using the action of $G$.  If $(x_k)=(x_k^{(1)},x_k^{(2)})\in \A_S$ is such that $x_k^{(1)}$ diverges to $\pm \infty$, we can consider that $x_k\rightarrow \pm \infty$. This defines an other boundary $\partial^\infty \A_S$ of $\A_S$ and thus (using the -action of $G$) an other boundary $\partial^{\infty}\I$ of $\I$. Actually $\partial^\infty \I$ is the usual building at infinity of $\I$ constructed in \cite[Theorem 3.7]{bennett1994affine}. The boundary of $\I$ is then $\partial \I=\partial^{\infty} \I\sqcup \partial^{(-,\infty)}\I$. This subsection is devoted to the construction of $\partial^{(-,\infty)}\I$ and to its study.

By our assumption on $d$ and $S$, one has $\A_\R=\R$ and  $\A_S=\RF^2$.  The vectorial Weyl group is then $\{\pm \Id_{\A_\R}\}$. 

\begin{Not}\label{n_parshin building}
If $x=(x_1,x_2)\in \A_S$ and $\epsilon\in \{-,+\}$, we define $(x,\epsilon\infty)$ as the filter on $\A_S$ composed with the sets containing $(x_1,x_2)+\{0\}\times V$, where $V\subset \R$ is a neighborhood of $\epsilon\infty$. Then  $(x,\epsilon\infty)$ does not depend on $x_2$ and we will sometimes write $(x_1,\epsilon\infty)$. We define the boundary (in the first coordinate) of $\A_S$ as the set 
\[\partial^{(-,\infty)} \A_S=\{(x,\epsilon\infty)\mid x\in \A_\R,\epsilon\in \{-,+\}\}.\] If $w=(u,\lambda)\in W^v\ltimes \RF^2$, we set $\vec{w}=u\in W^v$.   Then $W^v\ltimes \RF^2$ acts on $\partial^{(-,\infty)}\A_S$ by $w.(x,\epsilon\infty)=(w.x,(-1)^{\ell(\vec{w})}\epsilon \infty)$ for $w\in W^v\ltimes \RF^2$ and $(x,\epsilon\infty)\in \partial^{(-,\infty)} \A_S$. Set $N=N_{\mathrm{\mathrm{SL}_2}}$. By Corollary~\ref{corAffine_weyl_group}, $\Wext=\nu(N)\subset W^v\ltimes \RF^2$. This enables us to define the boundary (in the first coordinate) of $\I$ as the set \[\partial^{(-,\infty)} \I=\{g.(x,\epsilon \infty)\mid g\in G, x\in \A_\R,\epsilon\in \{-,+\}\}.\]
\end{Not}

Let $\pi:\RF^2\twoheadrightarrow \R$ be the projection on the first coordinate. By subsection~\ref{SubsecContructionFibers}, $\pi$ extends to a $G$-equivariant surjective map $\pi:\I(\Kb,\omega,\mathrm{SL}_2)\twoheadrightarrow \I(\Kb,\pi\circ \omega,\mathrm{SL}_2):=\pi(\I)$. 

We write  $\Phi=\{\alpha,-\alpha\}=\{\Id,-\Id\}$.

\begin{definition/proposition}
Let $(x_k)=(x_k^{(1)},x_k^{(2)})\in (\A_S)^{\N}$. We say that $(x_k)$ converges in $\partial^{(-,\infty)} \A_S$ if $\big(\pi(x_k)\big)=(x_k^{(1)})$ is stationary and $x_k^{(2)}\rightarrow \epsilon \infty$, for some $\epsilon\in \{-,+\}$. We then set $\lim x_k= (\lim x_k^{(1)},\lim x_k^{(2)})\in \partial^{(-,\infty)} \A_S$. 

Let now $(x_k)\in \I^\N$. We say that $(x_k)$ converges in $\partial^{(-,\infty)} \I$ if there exists $g\in G$ such that $g.x_k\in \A_S$ for $k\gg 0$ and  $(g.(x_k))$ converges in $\partial^{(-,\infty)}\A_S$. We then set $\lim x_k=g^{-1}.\lim g.x_k$. This is well defined, independently of the choice of $g$. More precisely, if $h\in G$ is such that $h.x_k\in \A_S$ for $k\gg 0$, then $(h.x_k)$ converges and $g^{-1}.\lim g.x_k=h^{-1}.\lim h.x_k$.
\end{definition/proposition}

\begin{proof}
Let $g'=hg^{-1}$, $A=g'^{-1}.\A_S$ and $\Omega=A\cap \A_S$. By Proposition~\ref{propAxiom(A2)} and Corollary~\ref{corAffine_weyl_group}, there exists $w\in \Wext$ such that for all $k\in \N$ such that $g.x_k,h.x_k\in \A_S$, one has $g.x_k=w.h.x_k$. Let $n\in N$ be such that $n$ induces $-\Id$ on $\A_S$. Let $\pi_2:\A_S\rightarrow \R$ be defined by $\pi_2(y_1,y_2)=y_2$, for $(y_1,y_2)\in \A_S$. Maybe replacing $g$ or $h$ by $ng$ or $nh$, we may assume $\pi_2(g.x_k),\pi_2(h.x_k)\rightarrow +\infty$. Then we have $\vec{w}=\Id$ and hence  there exists $a\in \A_S$ such that $w=\Id+a$. 

 By Proposition~\ref{propAxiom(A2)}, $\Omega$ is enclosed. Thus there exists $\lambda_\alpha,\lambda_{-\alpha}\in \RF^2$ such that $\Omega=D_{\alpha,\lambda_\alpha}\cap D_{-\alpha,\lambda_{-\alpha}}$. As $\Omega\supset \{g.x_k\ \mid k\gg 0\}$, we deduce that $\lambda_{-\alpha}>\lim \pi(g.x_k)$. Thus there exists a neighborhood $V$ of $+\infty$ such that $\Omega\supset E:=\{(\lim \pi(g.x_k),v)\mid v\in V\}$. Then $E\in \lim g.x_k$. Then: \[\begin{aligned} g'.E &= \{hg^{-1}.(\lim \pi(g.x_k),v)\mid v\in V\}\\ &= \{\big(\lim \pi( h.x_k),v+\pi_2(a)\big)\mid v\in V\}, \end{aligned}\] by $G$-equivariance of $\pi$. Thus $g'.E\in \lim h.x_k$. Moreover, $g'.E$ is of the form $y+\{0\}\times V'$, where $y\in \A_S$ and $V'$ is a neighborhood of $+\infty$ and thus $E\notin (z,+\infty)$ for all $z\in\A_\R$ such that $\pi(z)\neq \lim \pi(x_k)$. As $g'.E\in g'.\lim g.x_k$, we deduce that $g'.\lim g.x_k=\lim h.x_k= h g^{-1}.\lim g.x_k$, which proves the proposition. 
\end{proof}

\begin{Lem}\label{lemFixator_end}
Let $x\in \A_\R$, $\epsilon\in \{-,+\}$ and $g\in G$ be such that $g.(x,\epsilon\infty)=(x,\epsilon\infty)$. Let $\Omega=g^{-1}.\A_S\cap \A_S$. By Proposition~\ref{propAxiom(A2)} and Corollary~\ref{corAffine_weyl_group}, there exists $w\in  \Wext$ such that for all $y\in \Omega$, $g.y=w.y$. Then $w$ is a translation of $\A_S$ whose  vector is in $\{0\}\times \A_\R$.
\end{Lem}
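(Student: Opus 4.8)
The statement asserts that an element $g \in G$ fixing the end $(x,\epsilon\infty)$ acts on $\Omega = g^{-1}.\A_S \cap \A_S$ through an affine Weyl group element $w$ which is a \emph{translation} whose direction vector lies in $\{0\}\times\A_\R = \ker(\pi_{\mathrm{vect}})$. I would break this into two claims: first, $\vec w = \mathrm{Id}$ (so $w$ is a translation), and second, the translation vector $a$ of $w$ satisfies $\pi(a) = 0$, i.e. $a \in \{0\}\times\A_\R$.

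First I would establish that $\vec w = \mathrm{Id}$. The vectorial Weyl group here is $\{\pm\mathrm{Id}_{\A_\R}\}$, so either $\vec w = \mathrm{Id}$ or $\vec w = -\mathrm{Id}$. Suppose $\vec w = -\mathrm{Id}$; then for $y = (y_1,y_2) \in \Omega$ we have $w.y = (-y_1,-y_2) + a$ for some $a \in \A_S$. Now take a sequence $x_k = (x_1, x_2 + k\lambda_k)$ lying in $\Omega$ with second coordinate going to $\epsilon\infty$ (such a sequence exists because $\Omega$, being enclosed of the form $D_{\alpha,\lambda_\alpha}\cap D_{-\alpha,\lambda_{-\alpha}}$ by Proposition~\ref{propAxiom(A2)}, contains a subset of the form $\{(x_1',v)\ |\ v \in V\}$ with $V$ a neighborhood of $\epsilon\infty$ — indeed $(x,\epsilon\infty) \Subset \A_S \cap g^{-1}.\A_S$ since $g^{-1}.\A_S$ contains $g^{-1}.(x,\epsilon\infty) = (x,\epsilon\infty)$). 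Then $x_k \to (x_1,\epsilon\infty)$ in $\partial^{(-,\infty)}\A_S$, whereas $g.x_k = w.x_k$ has first coordinate constantly $-x_1 + \pi(a)$ and second coordinate tending to $-\epsilon\infty$. So $g.x_k$ converges in $\partial^{(-,\infty)}\A_S$ to the point $(-x_1+\pi(a), -\epsilon\infty)$, which, applying $g^{-1}$ and using $g$-equivariance of convergence (the Definition/Proposition above), gives $g.(x,\epsilon\infty) = (-x_1+\pi(a),-\epsilon\infty)$. But by hypothesis $g.(x,\epsilon\infty) = (x,\epsilon\infty) = (x_1,\epsilon\infty)$, and since the sign of $\infty$ is part of the data of a boundary point and $-\epsilon \neq \epsilon$, this is a contradiction. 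Hence $\vec w = \mathrm{Id}$ and $w$ is a translation by some $a \in \A_S$.

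Next I would show $\pi(a) = 0$. With $\vec w = \mathrm{Id}$, for $y \in \Omega$ we have $g.y = y + a$, so $\pi(g.y) = \pi(y) + \pi(a)$. Using the same sequence $x_k \in \Omega$ converging to $(x_1,\epsilon\infty)$, the images $g.x_k = x_k + a$ have first coordinate $x_1 + \pi(a)$ (constant) and second coordinate still tending to $\epsilon\infty$ — note the sign of $\infty$ is preserved since $\ell(\vec w) = 0$ — so $g.x_k$ converges in $\partial^{(-,\infty)}\A_S$ to $(x_1 + \pi(a),\epsilon\infty)$. By $g$-equivariance of the limit this equals $g.(x,\epsilon\infty) = (x_1,\epsilon\infty)$. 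Comparing first coordinates forces $\pi(a) = 0$, i.e. $a \in \{0\}\times\A_\R$. This completes the proof.

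\textbf{Main obstacle.} The delicate point is justifying that the sequence $(g.x_k)$ actually \emph{converges} in $\partial^{(-,\infty)}\I$ with the value computed inside $\A_S$, so that the $g$-equivariance of $\lim$ from the Definition/Proposition applies and yields $g.(x,\epsilon\infty) = \lim g.x_k$. This requires being careful that $x_k$ eventually lies in $\Omega \subseteq \A_S \cap g^{-1}.\A_S$, hence $g.x_k \in \A_S$, and that $(g.x_k) = (w.x_k)$ genuinely converges in $\partial^{(-,\infty)}\A_S$ in the sense of the definition (stationary first coordinate, monotone divergence of the second) — which is exactly what the explicit form $w.x_k = x_k + a$ (resp. $(-x_1,-x_2)+a$) gives. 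Once that bookkeeping is in place, the contradiction in the first step and the coordinate comparison in the second are immediate.
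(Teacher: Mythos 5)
Your proof is correct and rests on the same two facts as the paper's: that $\Omega$ contains a tail $x+\{0\}\times V'$ of the vertical half-line (obtained exactly as you do, from $g^{-1}.E\in g^{-1}.(x,\epsilon\infty)=(x,\epsilon\infty)$ together with $g^{-1}.E\subset g^{-1}.\A_S$), and that the explicit affine form of $w$ on that tail forces first $\vec{w}=\Id$ and then $\pi(a)=0$. The only difference is cosmetic: the paper reads both conclusions directly off the filter inclusion $w.(x+\{0\}\times V')\subset x+\{0\}\times V$, whereas you route them through sequential limits and the $G$-equivariance of $\lim$ from the preceding Definition/Proposition — legitimate and non-circular, just slightly less self-contained.
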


\begin{proof}
Let $V$ be a neighborhood of $+\infty$ such that $(x,\epsilon\infty)\ni E=x+\{0\}\times V$. Then $g^{-1}.E\in g^{-1}.(x,\epsilon\infty)=(x,\epsilon\infty)$. Thus there exists a neighborhood $V'$ of $+\infty$ such that $g^{-1}.E\supset x+\{0\}\times V'$. Then $g.(x+\{0\}\times V')\subset x+\{0\}\times V$ and hence $\Omega\supset x+\{0\}\times V'$. One has $w.\big(x+(0,v)\big)=w.x+\vec{w}.(0,v)\in x+\{0\}\times V$ for all $v\in V'$ and thus $\vec{w}\neq -\Id$. Therefore $\vec{w}= \Id$ and $w$ is a translation. Let $a\in \A_S$ be such that $w.y=y+a$ for all $y\in \A_S$. Then $w.(x+\{0\}\times V')=x+a+\{0\}\times V'\subset x+\{0\}\times V$ and hence $a\in\{0\}\times \A_\R$. 
\end{proof}

We define $\pi:\partial^{(-,\infty)}\I\rightarrow \pi(\I)$ by $\pi\big(g.(x,\epsilon\infty)\big)=\pi(g.x)$, for $(x,\epsilon\infty)\in \partial^{(-,\infty)}\A_S$ and $g\in G$. We denote by $\R^*_+$ and by $\R^*_-$ the vector chambers of $\pi(\A_S)=\R$. We use the notation of subsection~\ref{SubsecGerms}.

We denote by $\mathrm{Alc}\big(\pi(\A_S)\big)$ (resp. $\mathrm{Alc}\big(\pi(\I)\big)$)  the set of alcoves of $\pi(\A_S)$ that is the set of faces of the form $F(x,\epsilon\infty)$, for $x\in \pi(\A_S)$ (resp. $g.F(x,\R^*_\epsilon)$) and $\epsilon\in \{-,+\}$ (resp. and $g\in G$). By Lemma~\ref{LemSetOfValues}, $\Gamma_\alpha=\Gamma_{-\alpha}=\Lambda$: if $\lambda\in \RF^2$, $D_{\alpha,\lambda}$ is a half-apartment if and only if $\lambda\in \Lambda$. Moreover $\Lambda$ is a group and in particular, $\pi(\Lambda)$ is either dense in $\R$ or discrete. In the case where it is discrete, we assume that $\pi(\Lambda)=\Z$.

\begin{proposition}\label{propBouts_aretes}(see Figure~\ref{figBouts_aretes})
\begin{enumerate}
\item Let $\Upsilon :\partial^{(-,\infty)} \A_S\rightarrow  \mathrm{Alc}\big(\pi(\A_S)\big)$ be the map defined by $\Upsilon\big((x,\epsilon\infty)\big)=F\big(\pi(x),\R^*_\epsilon\big)$, for $(x,\epsilon\infty)\in \partial^{(-,\infty)}(\A_S)$. Then $\Upsilon$ is  surjective. It extends uniquely to a surjective $G$-equivariant map $\Upsilon :\partial^{(-,\infty)} \I\rightarrow  \mathrm{Alc}\big(\pi(\I)\big)$.

\item Suppose  $\pi(\Lambda)$ is dense in $\R$. Let $g\in G$ and  $\pi(x)\in \pi(\A_S)$. Then 
\[\Upsilon^{-1}\big(\{g.F(\pi(x),\R^*_+)\}\big)= \left\{\begin{aligned}& \{g.(x,+\infty),g.(x,-\infty)\}&\text{ if }\pi(x)\notin \pi(\Lambda) \\ & g.(x,+\infty) &\text{ if }\pi(x)\in\pi(\Lambda)\end{aligned}\right..\]

\item Suppose $\pi(\Lambda)=\Z$. Let $x\in \R$. Then: \[\begin{aligned} \Upsilon^{-1}\big(\{g.F(\pi(x),\R^*_+)\}\big)=& \{g.(y,+\infty)\ |\ y\in [\lfloor\pi(x)\rfloor, \lfloor\pi(x)\rfloor+1[\}\\ 
&\cup \{g.(y,-\infty)\ |\ y\in ]\lfloor\pi(x)\rfloor, \lfloor\pi(x)\rfloor+1]\}.\end{aligned}\]
\end{enumerate}

\end{proposition}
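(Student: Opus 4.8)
\textbf{Plan of proof for Proposition~\ref{propBouts_aretes}.}

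The statement decomposes into three parts, and the natural strategy is to prove everything first in the standard apartment $\A_S$ and then transport it to $\I$ by $G$-equivariance, using the axioms established earlier for $\I$ (in particular~\ref{axiomA2} via Proposition~\ref{propAxiom(A2)}, the description of $W^{\mathrm{aff}}$ from Corollary~\ref{corAffine_weyl_group}, and the projection maps of subsection~\ref{SubsecContructionFibers}). For part (1), surjectivity of $\Upsilon$ on $\A_S$ is immediate: every alcove of $\pi(\A_S)=\R$ is of the form $F(\pi(x),\R^*_\epsilon)$ and is hit by $(x,\epsilon\infty)$. To extend $\Upsilon$ to $\partial^{(-,\infty)}\I$, the key point is well-definedness: if $g.(x,\epsilon\infty)=h.(y,\epsilon'\infty)$ with $(x,\epsilon\infty),(y,\epsilon'\infty)\in\partial^{(-,\infty)}\A_S$, one sets $w=h^{-1}g$ restricted to the intersection $g^{-1}.\A_S\cap h^{-1}.\A_S$; by Proposition~\ref{propAxiom(A2)} this agrees with an element of $W^{\mathrm{aff}}\subset W^v\ltimes\RF^2$ on that intersection, and since $w$ must fix the filter $(x,\epsilon\infty)$ (after identifying apartments), Lemma~\ref{lemFixator_end} forces $w$ to be a translation with vector in $\{0\}\times\A_\R$. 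Hence $\pi\circ w$ fixes $\pi(x)$ and preserves $\R^*_\epsilon$, so $g.F(\pi(x),\R^*_\epsilon)=h.F(\pi(y),\R^*_{\epsilon'})$, which is exactly the compatibility needed. Surjectivity and $G$-equivariance then follow formally since $G$ acts transitively on apartments.

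For parts (2) and (3), the computation is entirely local to $\A_S$ by $G$-equivariance, so it suffices to determine $\Upsilon^{-1}(\{F(\pi(x),\R^*_+)\})$ inside $\partial^{(-,\infty)}\A_S$. An element $(y,\epsilon\infty)$ lies in this fiber iff $F(\pi(y),\R^*_\epsilon)=F(\pi(x),\R^*_+)$ as faces of $\pi(\A_S)=\R$, i.e. iff the local face at $\pi(y)$ in the direction $\epsilon\cdot(0,\infty)$-projected-to-$\R$ coincides with the one at $\pi(x)$ toward $+\infty$. Here the walls of $\pi(\A_S)$ are exactly the points of $\pi(\Gamma_\alpha)=\pi(\Lambda)$ by Lemma~\ref{LemSetOfValues} (which gives $\Gamma_{\pm\alpha}=\Lambda$) together with the definition of walls. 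In the dense case, a point $\pi(y)$ is itself a wall iff $\pi(y)\in\pi(\Lambda)$; when $\pi(y)\notin\pi(\Lambda)$ the point $\{\pi(y)\}$ is a maximal face (an "alcove") and both $F(\pi(y),\R^*_+)$ and $F(\pi(y),\R^*_-)$ equal the same face, giving two preimages $(x,+\infty)$ and $(x,-\infty)$; when $\pi(y)\in\pi(\Lambda)$, the face $F(\pi(y),\R^*_+)$ is the (half-open) interval to the right of $\pi(y)$ and the only $(y',\epsilon\infty)$ with this germ is $(y,+\infty)$ — because moving toward $-\infty$ from a wall gives the left-hand face instead. In the discrete case $\pi(\Lambda)=\Z$, the alcoves are the segments $[\,n,n+1\,[$ oriented one way or $]\,n,n+1\,]$ oriented the other; $F(\pi(x),\R^*_+)$ is $[\lfloor\pi(x)\rfloor,\lfloor\pi(x)\rfloor+1[$ and is reached by $(y,+\infty)$ for any $y$ in that half-open interval and by $(y,-\infty)$ for any $y$ in the complementary half-open interval $]\lfloor\pi(x)\rfloor,\lfloor\pi(x)\rfloor+1]$, which is the claimed formula.

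The main obstacle I anticipate is the well-definedness argument in part (1): one must be careful that the two filters $(x,\epsilon\infty)$ and $(x',\epsilon'\infty)$ represent the same point of $\partial^{(-,\infty)}\I$ only through an element of $W^{\mathrm{aff}}$ that (by Lemma~\ref{lemFixator_end}) is a translation in the "vertical" direction $\{0\}\times\A_\R$, so that its image under $\pi$ is trivial and cannot change the alcove. This requires invoking Proposition~\ref{propAxiom(A2)} on the intersection of the relevant apartments and checking that the germ-fixing condition genuinely forces $\vec{w}=\Id$ and a vertical translation vector; once this is in place, everything else is bookkeeping about the chamber/face structure of the one-dimensional apartment $\pi(\A_S)=\R$, where the walls are the points of $\pi(\Lambda)$. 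The orientation subtlety in parts (2)–(3) — that a face at a wall in direction $+\infty$ is "to the right" and hence reached only from that wall, never from the wall to its right — is the one computational point to state carefully, but it is elementary given the explicit description of faces in Definition~\ref{defApartment} and subsection~\ref{SubsecGerms}.
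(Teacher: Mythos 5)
Your proposal follows essentially the same route as the paper's proof: surjectivity on $\A_S$ is immediate from the definition of alcoves, well-definedness of the extension to $\partial^{(-,\infty)}\I$ is obtained from Proposition~\ref{propAxiom(A2)} together with Lemma~\ref{lemFixator_end} (the transition element of $W^{\mathrm{aff}}$ is a vertical translation, hence acts trivially after applying $\pi$), and parts (2)--(3) reduce by $G$-equivariance to deciding when $F(\pi(x),\R^*_\epsilon)=F(\pi(x'),\R^*_{\epsilon'})$ in the one-dimensional apartment $\R$ whose walls are the points of $\pi(\Lambda)=\pi(\Gamma_{\pm\alpha})$, split into the dense and discrete cases exactly as in the paper. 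The argument is correct; the only point to make explicit when writing it up is the paper's verification that $\pi(\Omega)\Supset F(\pi(x),\R^*_+)$ for the relevant enclosed intersection $\Omega$, so that the action of the transition element on the face is indeed determined by its restriction to $\Omega$.
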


\begin{proof}
(1) By definition of  an alcove, $\Upsilon$ is surjective. Let $x\in\A_\R$, $\epsilon\in \{-,+\}$ and $g,h\in G$ be such $g.(x,\epsilon \infty)=h.(x,\epsilon\infty)$. By symmetry we can  assume that $\epsilon=+$.
 Let $g'=h^{-1}g$ and $A=g'^{-1}.\A_S$. Then by Proposition~\ref{propAxiom(A2)}, $\Omega:=A\cap \A_S$ is enclosed and contains $(x,+\infty)$. Write $\Omega=D_{\alpha,\lambda_\alpha}\cap D_{-\alpha,\lambda_{-\alpha}}$.
  Then $D_{-\alpha,\lambda_{-\alpha}}\Supset (x,+\infty)$ and thus $\alpha(\pi(x))<\pi(\lambda_{-\alpha})$.  Therefore $\pi(\Omega)\Supset F\big(\pi(x),\R^*_+\big)$. By Lemma~\ref{lemFixator_end}, $g'.F(\pi(x),\R^*_+)=F(\pi(x),\R^*_+)$ and thus $g.F(\pi(x),\R^*_+)=h.F(\pi(x),\R^*_+)$, which proves that $\Upsilon$ is well defined. It is surjective by definition of the alcoves.

(2), (3) As for all $x\in\A_\R$ and $g\in G$, one has $\Upsilon\big(g.(x,+\infty)\big)=g.F(x,+\infty)$, it suffices to determine for which $\pi(x),\pi(x)'\in \R$ and $\epsilon,\epsilon'\in \{-,+\}$, one has $F(\pi(x),\R^*_\epsilon)=F(\pi(x'),\R^*_{\epsilon'})$. Let $x,x'\in \pi(\A_S)$ and $\epsilon,\epsilon'\in \{-,+\}$ be such that $F(\pi(x),\R^*_\epsilon)=F(\pi(x'),\R^*_{\epsilon'})$. 

Suppose that $\pi(\Lambda)$ is dense in $\R$. Then if $\pi(x)\neq \pi(x')$, there exist $\lambda_\alpha,\lambda_{-\alpha},\lambda_{\alpha}',\lambda_{-\alpha}'\in \Lambda$ such that $D_{\alpha,\lambda_\alpha}\cap D_{-\alpha,\lambda_{-\alpha}}$ (resp. $D_{\alpha,\lambda_\alpha'}\cap D_{-\alpha,\lambda_{-\alpha}'}$) contains an open neighborhood of $x$ (resp. $x'$) and such that $D_{\alpha,\lambda_\alpha}\cap D_{-\alpha,\lambda_{-\alpha}}\cap D_{\alpha,\lambda_\alpha'}\cap D_{-\alpha,\lambda_{-\alpha}'}=\emptyset$. Consequently $F(\pi(x),\R^*_\epsilon)\neq F(\pi(x'),\R^*_{\epsilon'})$: a contradiction.  Therefore $\pi(x)=\pi(x')$. If $\pi(x)\notin\pi(\Lambda)$, then $F(\pi(x),\R^*_\epsilon)$  and $F(\pi(x),\R^*_{\epsilon'})$ are the filters of neighborhoods of $\pi(x)$ and thus  $F(\pi(x),\R^*_\epsilon)=F(\pi(x),\R^*_{\epsilon'})$. If $\pi(x)\in \pi(\Lambda)$, then $D_{\alpha,-\pi(x)}\in F(\pi(x),\R^*_+)\setminus F(\pi(x),\R^*_-)$ and thus $F(\pi(x),\R^*_+)\neq  F(\pi(x),\R^*_-)$, which proves (2). 

Suppose that $\pi(\Lambda)=\Z$. Suppose $\pi(x)\notin \Z$. Let $E=\mathring{D}_{\alpha,\lfloor \pi(x) +1\rfloor}\cap \mathring{D}_{-\alpha,\lfloor\pi(x) \rfloor}$.  Then $F(\pi(x),\R^*_\epsilon)=\FCC_{E,\pi(\A_S)}$,
with the notation of \ref{SubsecGerms}, for both $\epsilon\in \{-,+\}$. Suppose $\pi(x)\in \pi(\Lambda)$. Let $E_-=\mathring{D}_{\alpha,\pi(x)+1}\cap \mathring{D}_{-\alpha,-\pi(x)-1}$ and $E_+=\mathring{D}_{-\alpha,\pi(x)}\cap \mathring{D}_{\alpha,-\pi(x)+1}$. Then $F(\pi(x),\R^*_+)=\FCC_{E_+,\pi(\A_S)}$ and $F(\pi(x),\R^*_-)=\FCC_{E_-,\pi(\A_S)}$, and (3) follows.
\end{proof}

Suppose that $\pi(\Lambda)= \Z$.  Then $\mathrm{Alc}(\I)=\{g.]n,n+1[\ |\ n\in \Z\}$. Then if $n\in \Z$ and $g\in G$, $\Upsilon\big(g. (n,+\infty)\big)=g.]n,n+1[$ and $\Upsilon(g.(n,-\infty)\big)=g.]n-1,n[$.

\begin{corollary}\label{corBijection_ends_edges}
Suppose that $\pi(\Lambda)=\Z$. Then $\Upsilon$ induces a two to one map between $\partial^{(-,\infty)} \I_{\Z}:=\{g.(\pi(x),\epsilon\infty)\mid x\in \pi^{-1}(\Z)\subset  \A_S, \epsilon\in \{-,+\}\text{ and } g\in G\}$ and $\mathrm{Alc}\big(\pi(\I)\big)$. More precisely, $g.]n,n+1[\in \mathrm{Alc}\big(\pi(\I)\big)$, with $g\in G$ and $n\in \Z$, then $\Upsilon^{-1}(g.]n,n+1[)=\{g.(n,+\infty),g.(n+1,-\infty)\}$: $g.(n,+\infty)$ is an end of the tree $\pi^{-1}(\{g.n\})$ and $g.(n+1,-\infty)$ is an end of  the tree $\pi^{-1}(\{g.(n+1)\})$.
\end{corollary}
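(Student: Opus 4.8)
The plan is to derive Corollary~\ref{corBijection_ends_edges} from Proposition~\ref{propBouts_aretes}(3) together with the description of the fibers of $\pi$ as trees coming from Theorem~\ref{thmFibers_buildings} (specialized to $\mathrm{SL}_2$), keeping in mind that under the hypothesis $\pi(\Lambda) = \Z$ the projection $\pi$ is exactly the $\Z$-valued reduction $\pi = \pi_{\leq 1}$ whose fibers are the usual (simplicial) trees of $\mathrm{SL}_2$ over $\Z$-valued fields, as already observed in Theorem~\ref{sll} and the discussion of the $\mathbb{Z}^2$-tree. So the first thing I would do is unwind the definitions: an element of $\partial^{(-,\infty)}\I_\Z$ is of the form $g.(n,\epsilon\infty)$ with $n \in \pi^{-1}(\Z)$, i.e. $\pi(n)\in\Z$, $g\in G$, $\epsilon\in\{-,+\}$; and an alcove of $\pi(\I)$ is $g.]m,m+1[$ for $m\in\Z$, $g\in G$ (using $\pi(\Lambda)=\Z$ so the walls of $\pi(\A_S)=\R$ are exactly the integer points). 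The remark after Proposition~\ref{propBouts_aretes} already records that $\Upsilon\big(g.(n,+\infty)\big) = g.]n,n+1[$ and $\Upsilon\big(g.(n,-\infty)\big) = g.]n-1,n[$, which is the only computation needed, and it is immediate from the definition $\Upsilon\big((x,\epsilon\infty)\big) = F(\pi(x),\R^*_\epsilon)$ together with Proposition~\ref{propBouts_aretes}(3) in the case $\pi(x)\in\pi(\Lambda)=\Z$: there $F(\pi(x),\R^*_+)$ is the alcove to the right of the vertex $\pi(x)$ and $F(\pi(x),\R^*_-)$ the one to its left.

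Next I would compute the fiber $\Upsilon^{-1}(g.]n,n+1[)$. By $G$-equivariance of $\Upsilon$ it suffices to treat $g=1$; then Proposition~\ref{propBouts_aretes}(3) applied to any $x$ with $\lfloor\pi(x)\rfloor = n$ (say $\pi(x)=n$) gives
\[
\Upsilon^{-1}\big(\{F(n,\R^*_+)\}\big)
= \{(y,+\infty)\ |\ \pi(y)\in[n,n+1[\} \cup \{(y,-\infty)\ |\ \pi(y)\in]n,n+1]\}.
\]
Since two elements $(y,\epsilon\infty)$ and $(y',\epsilon'\infty)$ of $\partial^{(-,\infty)}\A_S$ coincide as soon as $\pi(y)=\pi(y')$ and $\epsilon=\epsilon'$ (the filter $(y,\epsilon\infty)$ depends only on $\pi(y)$ and $\epsilon$, by its very definition as generated by $\pi(y)+\{0\}\times V$), this set, restricted to $\partial^{(-,\infty)}\I_\Z$ (i.e. requiring $\pi(y)\in\Z$), reduces to exactly $\{(n,+\infty),(n+1,-\infty)\}$, a set of cardinality $2$ since $n\neq n+1$. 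Restoring $g$: $\Upsilon^{-1}(g.]n,n+1[)\cap \partial^{(-,\infty)}\I_\Z = \{g.(n,+\infty), g.(n+1,-\infty)\}$. This shows the restricted map is everywhere $2$-to-$1$, hence in particular surjective onto $\mathrm{Alc}(\pi(\I))$, which is the content of the first assertion. Here I should double-check that $g.(n,+\infty)\neq g.(n+1,-\infty)$, i.e. that these are genuinely two distinct points of $\partial^{(-,\infty)}\I$; this follows because $\Upsilon$ sends them to the same alcove but within the apartment $g.\A_S$ the filters $(n,+\infty)$ and $(n+1,-\infty)$ differ (the wall $H_{\alpha,-n}$ lies in one and not the other, using $n\in\pi(\Lambda)=\Z=\Gamma_\alpha$ so that $H_{\alpha,-n}$ is an actual wall), and injectivity of the apartment identification detects this.

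Finally, for the last sentence of the statement I would identify $g.(n,+\infty)$ and $g.(n+1,-\infty)$ as ends of the two $\Z$-trees $\pi^{-1}(\{g.n\})$ and $\pi^{-1}(\{g.(n+1)\})$ respectively. The point is that $g.(n,\epsilon\infty)$ is represented by limits of points $g.(n, n_k)$ with $n_k\to\epsilon\infty$ (the second coordinate going to infinity), and since $\pi(g.(n,n_k)) = g.n$ is fixed, all these points lie in the fiber $\pi^{-1}(\{g.n\})$; by Theorem~\ref{thmFibers_buildings} (with $\langle\Phi_x\rangle^\perp = 0$ for $\mathrm{SL}_2$) this fiber is a $\ker(\pi)$-building of rank one, i.e. a $\Z$-tree, and the sequence $(g.(n,n_k))_k$ is precisely a group-theoretic ray in it, whose equivalence class is an end. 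So $g.(n,+\infty)$ is an end of $\pi^{-1}(\{g.n\})$, and symmetrically $g.(n+1,-\infty)$ is an end of $\pi^{-1}(\{g.(n+1)\})$. The main obstacle I anticipate is purely bookkeeping: making the three identifications consistent — namely that the hypothesis $\pi(\Lambda)=\Z$ forces $\pi$ to be the rank-one reduction $\pi_{\leq 1}$ with tree fibers, that $\Gamma_\alpha=\Lambda$ so that the integer points of $\R$ are exactly the walls of $\pi(\A_S)$ (this is where Lemma~\ref{LemSetOfValues} enters), and that the ``ends'' defined combinatorially via group-theoretic rays in Section~\ref{secLattice_building} agree with the limits defining $\partial^{(-,\infty)}\I$ — none of which is deep, but all of which must be spelled out to make the word ``end'' in the statement precise.
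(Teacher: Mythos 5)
Your proposal is correct and follows essentially the same route as the paper, which states the corollary without a separate proof precisely because it is the specialization of Proposition~\ref{propBouts_aretes}(3) to integer base points combined with the displayed formulas $\Upsilon\big(g.(n,+\infty)\big)=g.]n,n+1[$ and $\Upsilon\big(g.(n,-\infty)\big)=g.]n-1,n[$. Your additional checks (distinctness of the two preimages, and the identification of $g.(n,\pm\infty)$ with ends of the fiber trees via Theorem~\ref{thmFibers_buildings}) fill in exactly the bookkeeping the paper leaves implicit.
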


\begin{figure}
 \centering
\includegraphics[scale=0.3]{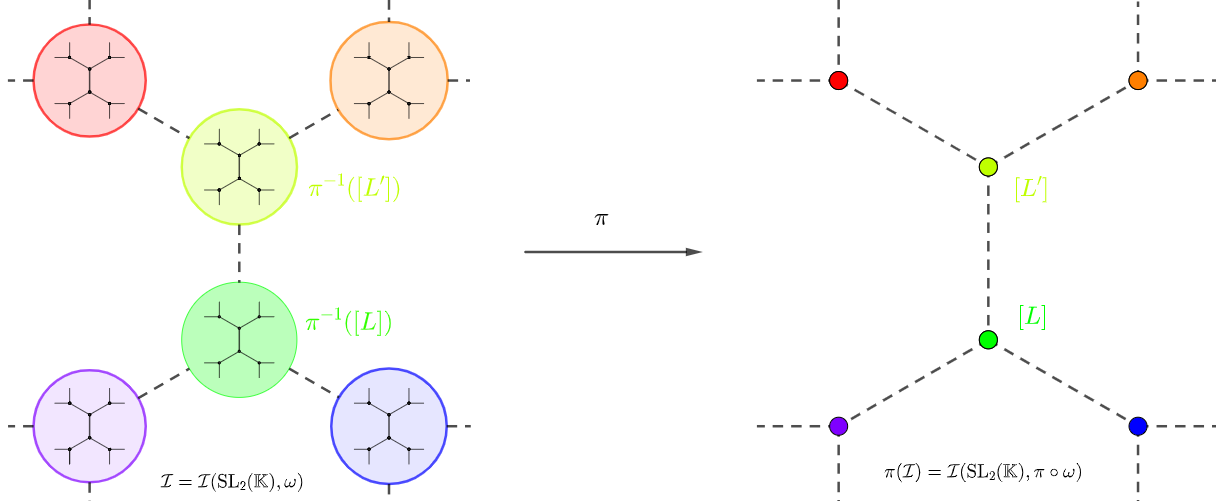}
\caption{Suppose for simplicity that $\pi(\Lambda)$ is discrete. Let $\A_\Lambda=\{x\in \A|\alpha(x)\in \Lambda\}$ and $\I_\Lambda=G.\A_\Lambda$. Let $[L],[L']\in \pi(\I_\Lambda)$. Then by Theorem~\ref{thmFibers_buildings}, $\I_{[L]}=\pi^{-1}([L])$ and $\I_{[L']}=\pi^{-1}([L'])$  are $\R$-trees.  By Proposition~\ref{propBouts_aretes} and  Corollary~\ref{corBijection_ends_edges},  the set of ends of $\I_{[L]}$ is in bijection with the set of edges of $\pi(\I)$ containing $\pi(x)$ and the edge $\pi([L])-\pi([L'])$ corresponds to a unique pair of ends $\mathcal{E}_{[L]}$, $\mathcal{E}_{[L']}$ of $\I_{[L]}$ and $\I_{[L']}$.}\label{figBouts_aretes}
\end{figure}

\paragraph{Interpretation in terms of lattices}(see Figure~\ref{figBouts_aretes_reseaux}) We assume that $\Lambda=\Z\times \Z$  and that $\mathbb{K}$ is a  field equipped with a surjective valuation $\omega_2:\mathbb{K}\rightarrow \Lambda\cup \{\infty\}$. Let $\omega_1=\pi\circ \omega_2:\mathbb{K}\rightarrow \Z$. Let $\mathbb{O}_2=\{x\in \mathbb{K}\mid  \omega_2(x)\geq 0\}$ and $\mathbb{O}_1=\{x\in \mathbb{K}\mid \omega_1(x)\geq 0\}$. For all $\lambda\in \Lambda$ choose $x_\lambda\in \mathbb{K}^*$ such that $\omega_2(x_\lambda)=\lambda$. Let $(e_1,e_2)$ be a  basis of $\mathbb{K}^2$ and $\A^{\LC}=\{[\mathbb{O}_2 x_1 e_1\oplus \mathbb{O}_2 x_2 e_2]\mid (x_1,x_2)\in \mathbb{K}^2\}=\{[\mathbb{O}_2 x_{\lambda_1} e_1  \oplus \mathbb{O}_2 x_{\lambda_2}e_2]\mid (\lambda_1,\lambda_2)\in \Lambda^2\}$. One has $\bigcap_{n\in \N} x_{(0,n)}\mathbb{O}_2 =x_{(1,0)}\mathbb{O}_1$ and $\bigcup_{n\in \N}x_{(0,-n)}\mathbb{O}_2 =\mathbb{O}_1$. Thus if $(b_1,b_2)$ is a basis of $\mathbb{K}^2$ and $\lambda_1\in \Z$, one has \[\bigcup_{n\in \N} \mathbb{O}_2 b_1\oplus \mathbb{O}_2 x_{(\lambda_1+1,-n)}=\mathbb{O}_2 b_1\oplus \mathbb{O}_1 x_{(\lambda_1+1,0)} b_2=\bigcap_{n\in \N}\mathbb{O}_2 b_1\oplus \mathbb{O}_2x_{(\lambda_1,n)} b_2.\] Thus we can go from $\pi^{-1}([\mathbb{O}_1b_1\oplus \mathbb{O}_1 x_{(\lambda_1,0)}b_2])$ to  $\pi^{-1}([\mathbb{O}_1b_1\oplus \mathbb{O}_1 x_{(\lambda_1+1,0)}b_2])$ via the ends $([\mathbb{O}_2 b_1\oplus \mathbb{O}_2x_{(\lambda_1,0)} b_2],+\infty)$ and $([\mathbb{O}_2 b_1\oplus \mathbb{O}_2x_{(\lambda_1+1,0)} b_2],-\infty)$: the edge $([\mathbb{O}_1b_1\oplus \mathbb{O}_1 x_{(\lambda_1,0)}b_2]),([\mathbb{O}_2 b_1\oplus \mathbb{O}_2x_{(\lambda_1+1,0)} b_2],+\infty)$ links $([\mathbb{O}_2 b_1\oplus \mathbb{O}_2x_{(\lambda_1,0)} b_2],+\infty)$ to $([\mathbb{O}_2 b_1\oplus \mathbb{O}_2x_{(\lambda_1+1,0)} b_2],-\infty)$.

\begin{figure}
 \centering
\includegraphics[scale=0.5]{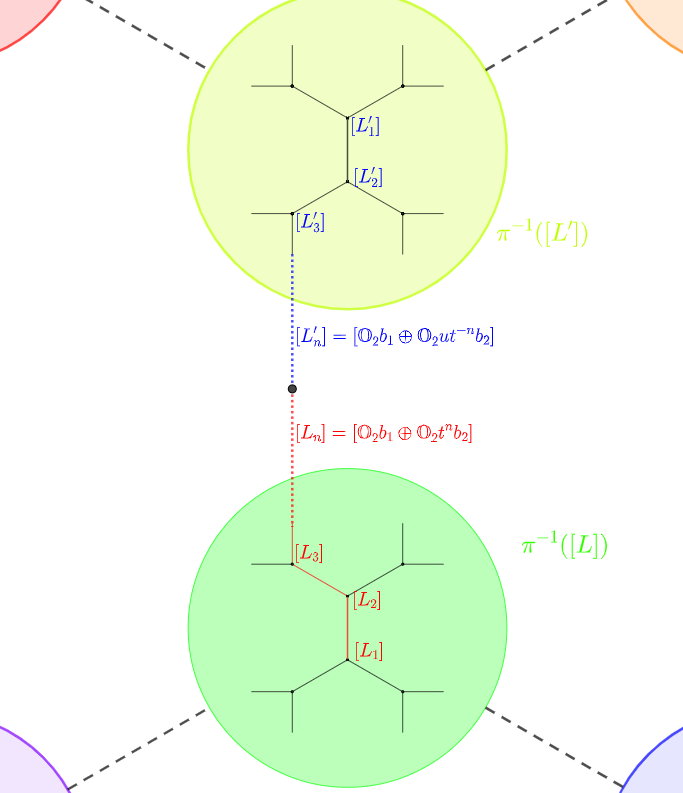}
\caption{We work with $\Kb=\F_q(\!(t)\!)(\!(u)\!)$. Let $(b_1,b_2)$ be a basis of $\Kb^2$. We can choose $x_{(1,0)}=u$ and $x_{(0,1)}=t$.} Then  $[L_n]=[\mathbb{O}_2 b_1\oplus \mathbb{O}_2 t^n b_2]$ and  $[L_n']=[\mathbb{O}_2 b_1 \oplus \mathbb{O}_2 ut^{-n} b_2]$ ``converge'' to $[\mathbb{O}_2 b_1\oplus \mathbb{O}_1 u b_2]$. \label{figBouts_aretes_reseaux}
\end{figure}

\addcontentsline{toc}{section}{Index}

\printindex

\addcontentsline{toc}{section}{Index of Axioms}

\printindex[axiom]

\addcontentsline{toc}{section}{Index of Symbols}

\printindex[notation]

\addcontentsline{toc}{section}{References}

\bibliography{biblio} % Remplacer "biblio" par le nom de votre fichier de r�f�rences bibliographiques.

\end{document}